 \renewcommand*\l@figure{\@tocline{0}{3pt plus2pt}{0pt}{2.0pc}{}}
\newtheorem{theorem}{Theorem}[chapter]
\newtheorem{lemma}[theorem]{Lemma}
\newtheorem{corollary}[theorem]{Corollary}
\newtheorem{proposition}[theorem]{Proposition}
\newtheorem{observation}[theorem]{Observation}
\theoremstyle{definition}
\newtheorem{definition}[theorem]{Definition}
\theoremstyle{remark}
\newtheorem{remark}[theorem]{Remark}
\newtheorem{claim}{Claim}
\newtheorem{subclaim}{Subclaim}
\numberwithin{section}{chapter}
\numberwithin{equation}{chapter}
\newcommand{\DEF}[1]{{\em #1\/}}
\newenvironment{proofof}[1]%
{\noindent{\bf Proof of #1.}\ }%
{\hfill\eopf\par\bigskip}%
\def\eop{\hfill{{\rule[0ex]{7pt}{7pt}}}}
\newenvironment{cproofof}[1]
{\bigskip\noindent{\bf Proof of #1.}\startClaims\ }
{\hfill{\eop}\par\bigskip}
\newenvironment{cproof}
{\noindent{\bf Proof.}\startClaims\ }
{\hfill{\eop}\par\bigskip}
\newcommand{\printFullDetails}[1]{#1}
\newcommand{\major}[1]{#1}
\newcommand{\majorrem}[1]{}
\newcommand{\minor}[1]{#1}
\newcommand{\minorrem}[1]{}
\newcommand{\wording}[1]{#1}
\newcommand{\wordingrem}[1]{}
\newcommand{\dragominor}[1]{{#1}}
\newcommand{\dragominorrem}[1]{}
\newcommand{\bogdan}[1]{{#1}}
\newcommand{\crn}{\operatorname{cr}}
\newcommand\tcrn{{\operatorname{tcr}}}
\newcommand{\cT}{{\mathcal T}}
\newcommand{\cS}{{\mathcal S}}
\newcommand{\ignore}[1]{}
\newcommand{\hvng}{ V_{2n}\topol H\subseteq G}
\newcommand{\hvfg}{ V_{10}\topol H\subseteq G}
\newcommand{\hveg}{V_8\topol H\subseteq G}
\newcommand{\Nuc}{\operatorname{Nuc}}
\newcommand{\startClaims}{\setcounter{claim}{0}}
\newcommand{\oo}[1]{\left\langle #1 \right\rangle}
\newcommand{\oc}[1]{\left\langle #1 \right]}
\newcommand{\co}[1]{\left[ #1 \right\rangle}
\newcommand{\cc}[1]{\left[ #1 \right]}
\newcommand{\bc}[1]{\left. #1 \right]}
\newcommand{\cb}[1]{\left[ #1 \right.}
\newcommand{\ob}[1]{\left\langle #1 \right.}
\newcommand{\bo}[1]{\left. #1 \right\rangle}
\newcommand{\Loc}{\operatorname{Loc}}
\newcommand{\bQ}{{\overline Q}}
\newcommand{\eopf}{\raisebox{0.8ex}{\framebox{}}}
\newcommand{\comp}[1]{#1^\#}
\newcommand{\cl}{\operatorname{cl}}
\newcommand{\startSubclaims}{\setcounter{subclaim}{0}}
\newcommand{\mc}[1]{\cM_2^{#1}}
\newcommand{\cM}{{\mathcal M}}
\def\lbsp{{\hbox{\hglue -1.5pt}}}
\def\rbsp{{\hbox{\hglue -1pt}}}
\def\i4c{{inter\-nally-4-con\-nec\-ted}}
\def\p4c{{peri\-phe\-rally-4-con\-nec\-ted}}
\def\ei4c{\operatorname{p4c}}
\def\eifc{\operatorname{i4c}}
\def\2cc{{2-cros\-sing-cri\-tical}}
\def\topol{{\,\cong\,}}
\def\m2{{{\mathcal M}_2^3}}
\def\att{{\hbox{\rm att}}}
\def\updown{{\mathstrut}^{\updownarrow}}
\def\cyclization{cyclization}
\def\tileS{\mathcal T(\cS)}
\def\kr{{\hbox{\rm cr}}}
\def\pp{\mathbb RP^2}
\def\rep{\operatorname{rep}}
\def\Disc{\mathfrak D}
\def\disc{\mathfrak D}
\def\mob{\mathfrak M}
\def\Mob{\mathfrak M}
\def\iso#1{\|#1\|}
\def\isouv{\iso{\{u,v\}}}
\def\ucr{{\hbox{\rm cr}}}
\def\bbb{{\mathcal B}}
\def\mD{{\mathcal D}}
\def\eye{\mathfrak n}
\def\fsq{$(\hskip-2pt(G,H,\Pi,\gamma)\hskip-2pt)$}
\def\topleft#1{\overset{\leftarrow}{#1}}
\def\topright#1{\overset{\rightarrow}{#1}}
\def\botleft#1{\underset{\leftarrow}{#1}{}}
\def\botright#1{\underset{\rightarrow}{#1}{}}
\def\leftspine{\mathbf\sqsupset}
\def\rightspine #1{{}_{#1}\kern-3pt\sqsubset}
\def\cupspace{\textrm{\hglue.5pt}}
\def\pUl{{\overset{\leftarrow}{P_1}}}
\def\pUr{{\overset{\rightarrow}{P_1}}}
\def\pDl{{\underset{\leftarrow}{P_1}}}
\def\pDr{{\underset{\rightarrow}{P_1}}}
\def\TRi{{{{}_{{}_1}}{}{\sqsubset}}}
\def\TLi{{\sqsupset_{{}_1}}}
\def\mfK{\mathfrak K}
\def\hug{hug}
\def\triang#1{#1-triangle}
\def\arm{arm}
\def\bearhug{bear hug}
\def\head{head}
\begin{document}

\frontmatter

\title{Characterizing 2-crossing-critical graphs}

%    Remove any unused author tags.

%    author one information
\author{Drago Bokal}
\address{University of Maribor\\Maribor SLOVENIA}
\curraddr{}
\email{drago.bokal@uni-mb.si}
\thanks{Bokal acknowledges the support of NSERC and U. Waterloo for 2006-2007, Slovenian Research Agency basic research projects L7-5459, J6-3600, J1-2043, L1-9338, J1-6150,  research programme P1-0297, and an international research grant GReGAS}

%    author two information
\author{Bogdan Oporowski}
\address{Louisiana State University\\ Baton Rouge U.S.A.}
\curraddr{}
\email{bogdan@math.lsu.edu}
\thanks{}

%    author three information
\author{R. Bruce Richter}
\address{University of Waterloo\\ Waterloo CANADA}
\curraddr{}
\email{brichter@uwaterloo.ca}
\thanks{Richter acknowledges the support of NSERC}

%    author four information
\author{Gelasio Salazar}
\address{UA de San Luis Potosi\\ San Luis Potosi MEXICO}
\curraddr{}
\email{gelasio.salazar@gmail.com}
\thanks{Salazar acknowledges the support of CONACYT Grant 106432.}

%    \date is required; it is the date received by the editor.
%\date{December 11, 2013}

\subjclass[2010]{Primary 05C10}
%    Recognition of the 2010 edition of the Mathematics Subject
%    Classification requires a version of amsbook.cls from July 2009
%    or later.  If "2010" is not recognized, please upgrade.

\keywords{crossing number, crossing-critical graphs.}

%\dedicatory{Dedication text (use \\[2pt] for line break if necessary)}

\begin{abstract}It is very well-known that there are precisely two minimal non-planar graphs:  $K_5$ and $K_{3,3}$ (degree 2 vertices being irrelevant in this context).  In the language of crossing numbers, these are the only 1-crossing-critical graphs:  they each have crossing number at least one, and every proper subgraph has crossing number less than one.  In 1987, Kochol exhibited an infinite family of 3-connected, simple 2-crossing-critical graphs.   In this work, we: (i) determine all the 3-connected 2-crossing-critical graphs that contain a subdivision of the M\"obius Ladder $V_{10}$; (ii) show how to obtain all the not 3-connected 2-crossing-critical graphs from the 3-connected ones; (iii) show that there are only finitely many 3-connected 2-crossing-critical graphs not containing a subdivision of  $V_{10}$; and (iv) determine all the 3-connected 2-crossing-critical graphs that do not contain a subdivision of $V_{8}$. 
\end{abstract}

\maketitle

\tableofcontents

\listoffigures

%    Include unnumbered chapters (preface, acknowledgments, etc.) here.
%\include{}

\mainmatter
%    Include main chapters here.
\chapter{Introduction}\label{intro}

 For a positive integer $k$, a graph $G$ is {\em $k$-crossing-critical\/}\index{crossing-critical} if the crossing number $\crn(G)$ is at least $k$, but every proper subgraph $H$ of $G$ has $\crn(H)<k$.  In general, it is not true that a $k$-crossing-critical graph has crossing number exactly $k$. For example, any edge-transitive non-planar graph $G$ satisfies $\crn(G-e)<\crn(G)$, for any edge $e$ of $G$, so every such graph is $k$-crossing-critical for any $k$ satisfying $\crn(G-e)<k\le \crn(G)$.  If $G$ is the complete graph $K_n$, then $\crn(K_n)-\crn(K_n-e)$ is of order $n^2$, so $K_n$ is $k$-crossing-critical for many different values of $k$.

Insertion and suppression of vertices of degree 2 do not affect the crossing number of a graph, and a $k$-crossing-critical graph has no vertices of degree 1 and no component that is a cycle.  Thus, if $G$ is a $k$-crossing-critical graph, the graph $G'$ whose vertex set consists of the {\em nodes\/}\index{node} of $G$ (i.e., the vertices of degree different from 2) and whose edges are the {\em branches\/} of $G$ (i.e., the maximal paths all of whose internal vertices have degree 2 in $G$) is also $k$-crossing-critical.   Our interest is, therefore, in $k$-crossing-critical graphs with minimum degree at least 3.

By Kuratowski's Theorem, the only 1-crossing-critical graphs are $K_{3,3}$ and $K_5$.  The classification of 2-crossing-critical graphs is currently not known.  The earliest published remarks on this classification of which we are aware is by Bloom, Kennedy, and Quintas \cite{bkq}, where they exhibit 21 such graphs.  Kochol \cite{kochol} gives an infinite family of 3-connected, simple 2-crossing-critical graphs, answering a question of \v Sir\'a\v n \cite{siran2} who gave, for each $n\ge 3$, an infinite family of 3-connected $n$-crossing-critical graphs.   Richter \cite{rbr} shows there are just eight cubic 2-crossing-critical graphs.

About 15 years ago, Oporowski gave several conference talks about showing that every large {peripherally}-4-connected, 2-crossing-critical graph has a very particular structure which was later denoted as `being composed of tiles'.  The method suggested was to show that if a {peripherally}-4-connected, 2-crossing-critical graph has a {subdivision of a} particular $V_{2k}$ (that is, $k$ is fixed), then it has the desired structure and that only finitely many {peripherally}-4-connected, 2-crossing-critical graphs do not have a {subdivision of} $V_{2k}$.   (The graph $V_{2n}$\index{$V_{2n}$} is obtained from a $2n$-cycle by adding the $n$ diagonals.   Note that $V_4$ is $K_4$ and $V_6$ is $K_{3,3}$.)  

Approximately 10 years ago, it was proved by Ding, Oporowski, Thomas, and Vertigan \cite{DOTV} that, for any $k$, a large (as a function of $k$) 3-connected, 2-crossing-critical graph necessarily has a {subdivision of} $V_{2k}$.  It remains to show that having the $V_{2k}$-{subdivision} implies having the desired global structure.  Their proof involves first showing a statement about non-planar graphs that is of significant independent interest:  for every $k$, any large (as a function of $k$) ``almost 4-connected" non-planar graph contains a subdivision of one of four non-planar graphs whose sizes grow with $k$.  One of the {four} graphs is $V_{2k}$.
This theorem is then used for the crossing-critical application mentioned above.

Tiles have come to be a very fruitful tool in the study of crossing-critical graphs. Their fundamentals were laid out by Pinontoan and Richter \cite{tiles1}, and later they turned out to be a key in Bokal's solution of Salazar's question regarding average degrees in crossing-critical graphs \cite{avgdeg2,tiles2,avgdeg}. These results all rely on the ease of establishing the crossing number of a sufficiently large tiled graph, and they generated considerable interest in the reverse question: what is the true structure of crossing-critical graphs? How far from a tiled graph can a large crossing-critical graph be? Hlin\v en\'y's result about bounded path-width of $k$-crossing-critical graphs \cite{petr} establishes a rough structure, but is it possible that, for small values of $k$, tiles would describe the structure completely? It turns out that, for $k=2$, the answer is positive.   A more detailed discussion of these and other matters relating to crossing numbers can be found in the survey by Richter and Salazar \cite{survey}.

Our goal in this work, not quite achieved, is to classify all 2-crossing-critical graphs.   The bulk of our effort is devoted to showing that if $G$ is a 3-connected 2-crossing-critical graph that contains a subdivision of $V_{10}$, then $G$ is one of a completely described infinite family of 3-connected 2-crossing-critical graphs.    These graphs are all composed from 42 tiles.  This takes up Chapters \ref{sec:projPlane} -- \ref{sec:nextRed+Tiles}.    This combines with \cite{DOTV} to prove that a ``large" 3-connected 2-crossing-critical graph is a member of this infinite family.

The remainder of the classification would involve determining all 2-crossing-critical graphs that either are not 3-connected or are 3-connected and do not have a subdivision of $V_{10}$.  {In Chapter \ref{sec:not2conn},  we deal with the 2-crossing-critical graphs that are not 3-connected:  they are either one of a small number of known particular examples, or they are 2-connected and easily obtained from 3-connected examples. }

There remains the problem of determining the 3-connected 2-crossing-critical graphs that do not contain a subdivision of $V_{10}$. {In the first five sections of Chapter \ref{sec:3conNotI4c}, we explain how to completely determine all the 3-connected 2-crossing-critical graphs from {peripherally}-4-connected graphs that either have crossing number 1 or are themselves 2-crossing-critical.  In the sixth and final subsection, we determine which {peripherally}-4-connected graphs do not contain a subdivision of $V_8$ and  either have crossing number 1 or are themselves 2-crossing-critical.  Combining the two parts yields a definite (and practical) procedure for finding all the 3-connected 2-crossing-critical graphs that do not contain a subdivision of $V_8$.}    This leaves open the problem of classifying those that contain a subdivision of $V_8$ but do not have a subdivision of $V_{10}$.     In Sections \ref{sec:bridgesSmall} and \ref{sec:fewBridges}, we show that there are only finitely many.  (Although this follows from [11], the approach is different and it keeps our work self-contained.) 

There is hope for a complete description.  
In her master's essay, Urrutia-Schroeder \cite{isabel} begins the determination of precisely these graphs and finds 326 of them.   Oporowski (personal communication) had previously determined 531 3-connected 2-crossing-critical graphs, of which 201 contain a subdivision of $V_8$ but not of $V_{10}$.  {Austin \cite{bethann} improves on Urrutia-Schroeder's work, correcting a minor error (only 214 of Urrutia-Schroeder's graphs are actually 2-crossing-critical) and finding several others, for a total of 312 examples.  Only 8 of Oporowski's examples are not among the 312.  A few} have been determined by us as stepping stones in our classification of those that have a subdivision of $V_{10}$.    We have hopes of completing the classification.  

The principal facts that we prove in this work are summarized in the following statement.

\begin{theorem}[Classification of 2-crossing-critical graphs]\label{th:main}  Let $G$ be a 2-cros\-sing-critical graph with minimum degree at least 3.  Then either:
\begin{itemize}
\item if $G$ is 3-connected, then either $G$ has a subdivision of $V_{10}$ and a very particular tile structure or has at most 3 million vertices; or
\item $G$ is not 3-connected and is one of $49$ particular examples; or
\item $G$ is 2- but not 3-connected and is obtained from a 3-connected example by replacing digons by digonal paths. 
\end{itemize}
\end{theorem}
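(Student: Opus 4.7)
The plan is to split on the connectivity of $G$. Since $G$ has minimum degree at least three, the three cases to consider are: (a) $G$ is $3$-connected; (b) $G$ is $2$-connected but not $3$-connected; and (c) $G$ has a cut vertex. For case (a) I would further split according to whether or not $G$ contains a subdivision of $V_{10}$. With this four-way split in hand, each of the three bulleted conclusions is established in a different block of chapters, and the theorem is obtained by collating those results.

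The hard case, and the one that occupies the bulk of the paper, is case (a) when $G$ contains a subdivision of $V_{10}$. My approach here would be to fix such a subdivision, extract from it a long cyclic ``spine'' together with its chord structure, and then study how the remainder of $G$ attaches to that spine. The $V_{10}$-subdivision provides enough rigidity that any attachment which is too flexible would permit two independent crossings to be eliminated by an edge deletion, contradicting $2$-crossing-criticality. First I would establish that $G$ admits a cyclic sequence of $4$-separations cutting it into small pieces (the tiles); next I would enumerate the finitely many tiles that can actually occur, together with the compatibility rules between consecutive tiles. The core obstacle is this second step: proving that the list of admissible tiles is exactly the $42$ described in Chapters \ref{sec:projPlane}--\ref{sec:nextRed+Tiles} requires a local analysis that rules out many plausible-looking candidates and matches the allowed ones along their boundary patterns, while keeping track of embeddings in the projective plane and related surfaces.

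In case (a) when $G$ has no subdivision of $V_{10}$, I would invoke the result of Ding, Oporowski, Thomas, and Vertigan \cite{DOTV}: a sufficiently large $3$-connected $2$-crossing-critical graph must contain a subdivision of $V_{2k}$ for any fixed $k$. Applied with $k=5$, this bounds $|V(G)|$ explicitly. Tracing the constants through the DOTV argument, or equivalently running the self-contained counting argument of Sections \ref{sec:bridgesSmall} and \ref{sec:fewBridges} (which first bounds the number of bridges off a $V_8$-subdivision and then the size of each bridge), yields the stated bound of three million. This case is essentially quantitative bookkeeping rather than new structural insight.

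For cases (b) and (c) the strategy is to reduce to case (a). If $\{u,v\}$ is a $2$-vertex cut of a $2$-connected example $G$, I would show that $G$ can be reconstituted from its $\{u,v\}$-cleavages together with digon replacements, so that every $2$-connected-but-not-$3$-connected example arises by substituting a ``digonal path'' (two internally disjoint paths joining the same pair of vertices) for some digon of a $3$-connected $2$-crossing-critical graph; this handles the third bullet. Graphs of connectivity at most one, together with the few exceptional low-connectivity examples that do not fit the digon-substitution template, are dispatched by a direct finite case analysis on cut vertices and crossing-minimal drawings of blocks, yielding exactly the $49$ graphs of the second bullet. The main obstacle across the whole theorem remains the tile enumeration in the first case; the other cases are technical but largely routine once the $3$-connected theory is in place.
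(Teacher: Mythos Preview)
Your case split and overall strategy are exactly the paper's: Theorem~\ref{th:main} is a collation theorem, proved by pointing to Chapters~\ref{sec:projPlane}--\ref{sec:nextRed+Tiles} for case~(a) with $V_{10}$, Chapter~\ref{sc:noV2n} (or \cite{DOTV}) for case~(a) without $V_{10}$, and Chapter~\ref{sec:not2conn} for cases~(b) and~(c). Your treatment of the low-connectivity cases via Tutte's cleavage decomposition is the paper's approach, though the accounting is slightly off: of the $49$ sporadic graphs, only $13$ fail to be $2$-connected; the remaining $36$ are $2$-connected with \emph{two} nonplanar cleavage units, and it is precisely the $2$-connected graphs with a \emph{single} nonplanar cleavage unit that arise from digonal-path substitution.

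Where your sketch diverges from the paper is the hard case. You propose to locate a cyclic sequence of $4$-separations directly and then enumerate tiles. The paper does not hunt for separations. Instead it works entirely through a representativity-$2$ embedding in $\mathbb{RP}^2$: it shows the $V_{10}$ can be taken so that all spokes lie in the M\"obius band bounded by the rim (Theorem~\ref{th:allSpokesMob}), refines it to a ``tidy'' $V_{10}$ with severely restricted bridges (Theorems~\ref{th:existTidy} and~\ref{th:globalBridges}), and then three-colours the rim edges as $H$-green, $H$-yellow, or red (Theorem~\ref{th:rimColoured}). The red edges are the tile separators: each carries a small triangle-like cycle $\Delta_e$ (Theorem~\ref{th:redHasDelta}), red edges exist (Theorem~\ref{th:redInRim}), and every red edge has a ``consecutive'' red edge on each side (Theorem~\ref{th:consecRed}). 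The tile between two consecutive red edges is then pinned down to one of the $13$ pictures. The $4$-vertex tile boundaries you anticipate do emerge, but only as a \emph{consequence} of this colouring and $\Delta$-structure analysis; the projective-plane machinery and the green/yellow/red trichotomy are what make the enumeration tractable, and your outline does not supply an alternative mechanism for isolating the tiles.
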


We remark again that vertices of degree 2 are uninteresting in the context of crossing-criticality, so we assume all graphs have minimum degree at least 3.  

 Chapters 2--13 of this work contain the proof of the following, which is the main contribution of this work.  (The formal definitions required for the statement given below are presented in Chapter 2.)

\begin{theorem}[2-crossing-critical graphs with $V_{10}$]  Let $G$ be a 3-connected, \2cc\ graph containing a subdivision of $V_{10}$.  Then $G$ is a twisted circular sequence $(T_1,T_2,\dots,T_n)$ of tiles, with each $T_i$ coming from a set of 42 possibilities.\end{theorem}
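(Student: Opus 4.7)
The plan is to exploit the rigid structure of $V_{10}$: it is projective planar but non-planar, and its five diagonals (``rungs'') provide a natural way to cut the surrounding graph $G$ into consecutive segments. I would start with the given subdivision $H$ of $V_{10}$ in $G$ and use it as a scaffold around which all the remaining structure of $G$ is organized.

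First I would analyze the bridges of $H$ in $G$. Since $G$ is \2cc, every proper subgraph of $G$ has crossing number at most $1$, so any bridge $B$ of $H$ whose attachments are spread widely around $H$ would already force $\crn(H\cup B) \ge 2$, or would create a dense minor forcing $\crn(G) \ge 3$, either of which is incompatible with criticality. This should yield a locality lemma: the attachments of each bridge are confined to a single ``rung-segment'' of $H$, that is, the arc of the central $10$-cycle between two consecutive diagonals together with controlled portions of those diagonals. The projective planarity of $V_{10}$ and the essential uniqueness of its projective embedding, developed in Chapter \ref{sec:projPlane}, are the natural tools for this step, since they allow one to transfer plane drawings of $G$ into the projective plane and identify where obstructions to low crossing number must live.

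Next I would partition $H$ into consecutive rung-segments and group the bridges according to the segment containing their attachments. Each segment, together with its bridges and portions of the two adjacent rungs, constitutes a tile with a left and right interface sitting on those two rungs. A case analysis, constrained simultaneously by 3-connectivity, minimum degree at least $3$, and the global condition $\crn(G)\ge 2$ with every proper subgraph of $G$ having crossing number less than $2$, enumerates the possibilities for the internal structure of such a tile. The principal obstacle is exactly this enumeration: it must be both complete (no admissible tile is missed) and tight (no inadmissible tile slips through), with all symmetries of the interface accounted for. Criticality plays a dual role here, upper-bounding tile complexity while simultaneously demanding enough internal bridges to actually realize $\crn(G)\ge 2$ when tiles are assembled. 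The target count of $42$ arises from this analysis and must be verified configuration by configuration; this is the work distributed across Chapters \ref{sec:projPlane}--\ref{sec:nextRed+Tiles}.

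Finally, I would assemble the tiles into a global structure. The interfaces on the two sides of each rung must match for 3-connectivity to be preserved and for the projective embedding to extend coherently, which forces a consistent gluing into a circular sequence $(T_1,\dots,T_n)$. Traversing the central cycle once produces the single reflection characteristic of a M\"obius ladder, so the sequence is \emph{twisted} in the tile-framework sense. The fact that $n$ may be arbitrary, even though $V_{10}$ itself has only five rungs, comes from subdivisions of cycle edges and from the freedom to insert additional compatible tiles, echoing the infinite families of Kochol and the tile constructions of Pinontoan--Richter and Bokal. I expect the main difficulty will not be the global assembly (which is largely forced once locality and interface compatibility are in hand) but the internal tile enumeration, where pruning near-duplicates and configurations that only just fail some criticality check will absorb the bulk of the argument.
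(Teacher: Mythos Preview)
Your outline captures the spirit of the argument --- use the embedded $V_{10}$ as a scaffold, control the bridges, and cut into tiles --- but the mechanism you propose for cutting is not the one that works, and this is a genuine gap rather than a detail.

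You propose to cut along the five spokes of $V_{10}$ into five rung-segments, with the variable tile count arising from ``subdivisions of cycle edges and the freedom to insert additional compatible tiles.'' This is not how the tiles arise. The paper's key structural insight is a three-colour classification of the \emph{rim} edges of $H$ into green, yellow, and red (Chapters 6, 11, 12). A rim edge $e$ is \emph{red} precisely when $G-e$ is planar. Each red edge $e$ carries an associated cycle $\Delta_e$ (Theorem~\ref{th:redHasDelta}) reaching across to the opposite side of the rim, and the tiles are the pieces of $G$ between \emph{consecutive red edges} (Chapter 13). The number of tiles is the number of red edges --- always odd and at least $3$, but otherwise unbounded --- and has no direct relation to the five spokes. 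For a graph in $\tileS$ built from seven tiles, the five spokes of any particular $V_{10}$-subdivision will not align with the seven tile boundaries; finding those boundaries requires the red-edge machinery.

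Your locality lemma also needs sharpening: not every $H$-bridge is local to a single quad. The paper shows (Theorem~\ref{th:globalBridges}) that global bridges do exist but are single edges of very restricted types ($2$-, $2.5$-, or $3$-jumps), and then rules out $3$-jumps entirely (Theorem~\ref{th:no3jump}). Getting to this point requires first arranging a ``tidy'' subdivision of $V_{10}$ (Chapter 10) with all spokes in the M\"obius band (Chapter 8), which in turn rests on the BOD analysis of quads (Chapter 5) and the uniqueness of green cycles through each rim edge (Theorem~\ref{th:twoGreenCycles}). These are the three ``major steps'' the paper flags, and none of them is visible from the spoke-segment picture you describe.
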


This is part of the first item in the statement of Theorem \ref{th:main}.

 Chapter 14 is devoted to \2cc\ graphs that are not 3-connected.  (We remind the reader of Tutte's theory of cleavage units and introduce digonal paths in Chapter 14.)  The results there are summarized in the following.
 
 \begin{theorem}[\2cc\ graphs with small cutsets]  Let $G$ be a  \2cc\ graph with minimum degree at least 3 that is not 3-connected.
 \begin{enumerate}  \item If $G$ is not 2-connected, then $G$ is one of 13 graphs.  (See Figure \ref{fg:1connected}.) 
 \item If $G$ is 2-connected and has two nonplanar cleavage units, then $G$ is one of 36 graphs.  (See Figures \ref{2units} and \ref{3units}.)
 \item If $G$ is 2-connected with at most one nonplanar cleavage unit, then $G$ has precisely one nonplanar cleavage unit and is obtained from a 3-connected, \2cc\ graph by replacing pairs of parallel edges by digonal paths.
 \end{enumerate}\end{theorem}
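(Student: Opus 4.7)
For part~(1), the $2$-crossing-criticality of $G$ rules out bridges (a bridge~$e$ would give $\crn(G-e)=\crn(G)\ge 2$, violating criticality), so every block of $G$ is $2$-connected and the crossing number is additive over the blocks. I would first exclude any block of crossing number~$\ge 2$: a second block exists since $G$ is not $2$-connected, and deleting any edge of it leaves the large block intact with $\crn(G-e)\ge 2$. Hence every block has crossing number at most~$1$ and at least two are nonplanar; a third nonplanar, or any planar, block would survive deletion of one of its own edges and again keep $\crn(G-e)\ge 2$. Therefore $G$ has exactly two blocks, each $1$-crossing-critical and hence a subdivision of $K_5$ or $K_{3,3}$. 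The minimum-degree-$3$ hypothesis confines any subdivision vertex to the unique cut vertex, and enumerating the three unordered pairs from $\{K_5, K_{3,3}\}$ together with the few admissible positions of that cut vertex produces the $13$ graphs of Figure~\ref{fg:1connected}.

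For part~(2), I would apply Tutte's cleavage-unit decomposition to $G$, producing $3$-connected, polygon, and bond cleavage units joined along virtual edges, and invoke additivity of the crossing number over $2$-sums to write $\crn(G)$ as the sum of the crossing numbers of the cleavage units together with their virtual edges. The critical-edge argument of~(1) then forces each nonplanar cleavage unit to be $1$-crossing-critical and $3$-connected, hence isomorphic to $K_5$ or $K_{3,3}$, and forbids a third nonplanar unit. The two nonplanar units are joined along a chain of planar cleavage units whose shapes are restricted by criticality: an internal edge of a polygon of length $\ge 3$ or of a bond with many parallel edges admits a shorter planar rerouting after its deletion and so survives it. Enumerating the three unordered pairs in $\{K_5, K_{3,3}\}$, the orbits of their edges under the respective automorphism groups, and the allowed chain configurations yields the $36$ graphs of Figures~\ref{2units} and~\ref{3units}.

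For part~(3), additivity together with the assumption that at most one cleavage unit is nonplanar forces exactly one nonplanar cleavage unit $U$, which must be $3$-connected because polygons and bonds are planar. For every real edge $e$ of $U$, the graph $G-e$ is the same $2$-sum with $U-e$ in place of $U$, so $\crn(U-e)<2$, proving $U$ is itself $2$-crossing-critical. The remaining cleavage units are planar, and edge-criticality of $G$ now restricts them: an inner edge of a polygon of length at least~$3$ or of a bond with more than two parallel edges survives deletion through a shorter planar rerouting. The only surviving planar configurations are therefore digons, which assemble in $G$ into the digonal paths of the statement; collapsing each such digonal path back to a single pair of parallel edges recovers the underlying $3$-connected $2$-crossing-critical graph. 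The main obstacle throughout is the combinatorial enumeration in parts~(1) and~(2), which requires careful bookkeeping of vertex and edge orbits and of the ways cleavage units can be identified along virtual edges, together with the criticality-based exclusion in~(3) that pins the intervening planar cleavage units down to digons.
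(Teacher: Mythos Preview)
Your central error is in part~(2): you ``invoke additivity of the crossing number over $2$-sums,'' but this is false. The paper explicitly warns that ``the crossing number is not additive over cleavage units,'' and the references to \v Sir\'a\v n, Lea\~nos--Salazar, and Beaudou--Bokal all concern exactly this subtlety. The paper's Lemma~\ref{twounits} establishes that two nonplanar cleavage units force $\crn(G)\ge 2$ by a direct minimality argument on $2$-separations, not by any additivity formula.

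This false additivity leads to a second error: you conclude that each nonplanar cleavage unit is $1$-crossing-critical and hence $K_5$ or $K_{3,3}$. In fact a cleavage unit carries a virtual edge, and criticality of $G$ only constrains deletions of \emph{real} edges. The paper's Claim~\ref{unittypes} shows five possible types for each $C_i$: beyond $K_5$ and $K_{3,3}$, one can have $K_{3,3}$ with one or two edges subdivided, plus the virtual edge $u_iv_i$ joining the subdivision vertices (or non-adjacent nodes). Your enumeration of ``three unordered pairs in $\{K_5,K_{3,3}\}$'' therefore misses most of the $36$ graphs.

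Part~(3) inherits the same problem. You argue that the nonplanar cleavage unit $U$ itself is $2$-crossing-critical, but you only establish $\crn(U-e)<2$ for real edges $e$; without additivity you have no argument that $\crn(U)\ge 2$. The paper's actual statement is that $\widetilde C$---the cleavage unit with each virtual edge replaced by a \emph{digon}---is the $3$-connected $2$-crossing-critical graph, and proving $\crn(\widetilde C)\ge 2$ requires a separate argument (a $1$-drawing of $\widetilde C$ would leave some digon uncrossed, allowing the planar pieces to be reinserted). Finally, in part~(1) your argument presumes a cut vertex and so overlooks the three disconnected graphs among the thirteen.
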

 
Chapter 15 shows how to reduce the determination of 3-connected 2-crossing-critical graphs to ``\p4c" 2-crossing-critical graphs.  A graph $G$ is {\em \p4c\/} if $G$ is 3-connected and, for every 3-cut $X$ in $G$, any partition of the components into nonnull subgraphs $H$ and $J$ has one of $H$ and $J$ being a single vertex. The main result here is the following.
 
 \begin{theorem}  Every 3-connected, \2cc\ graph is obtained from a \p4c, \2cc graph by replacing each degree 3 vertex with one of at most 20 different graphs, each having at most 6 vertices. \end{theorem}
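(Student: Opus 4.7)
The plan is to iteratively reduce each non-\p4c\ 3-cut by replacing the smaller side with a single degree-3 vertex, and then to enumerate the possible ``gadgets'' that can arise on the reverse direction. Suppose $G$ is 3-connected and \2cc\ but not \p4c, so that $G$ admits a 3-cut $X=\{v_1,v_2,v_3\}$ whose deletion leaves two components, each containing at least two vertices. Choose a side $J$ that is inclusion-minimal among such components, and form $G^\#$ by deleting $J$ (together with its edges to $X$) and inserting a single new vertex $v$ adjacent to $v_1,v_2,v_3$. The first step is to verify that $G^\#$ is again 3-connected and \2cc. Three-connectivity follows because any hypothetical 2-cut of $G^\#$ would, after reinserting $J$, give a 2-cut of $G$. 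The crossing-critical direction is the more delicate one and proceeds by a gluing argument: an optimal drawing of $G^\#-e$ for $e\notin J$ can be combined with an optimal drawing of $J$ placed in a disc bounded by $v_1,v_2,v_3$, and a symmetric argument handles edges of $J$.

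Iterating the reduction terminates in a \p4c, \2cc\ graph $G^{**}$; by construction, $G$ is recovered from $G^{**}$ by reinserting at each created degree-3 vertex the original subgraph that replaced it, where a vertex not produced by the reduction is assigned the trivial ``identity'' gadget. The substantive task is the classification of the possible $J$'s. Every such $J$ must satisfy: (i) a natural closure $J^+$ (for instance $J$ plus the triangle on $X$, or $J$ plus an apex adjacent to $X$) has $\crn(J^+)\le 2$; (ii) each edge of $J$ is critical in $J^+$, for otherwise the corresponding edge of $G$ would not be critical; and (iii) $J\cup X$ admits no further 3-cut isolating an internal vertex, by inclusion-minimality of $J$. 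A drawing argument in the spirit of those used throughout Chapters \ref{sec:projPlane}--\ref{sec:nextRed+Tiles} then forces $|V(J)|\le 6$, after which an explicit case analysis produces a list of at most twenty candidates, up to isomorphism fixing the labelling of $v_1,v_2,v_3$.

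The hard part will be the drawing analysis underlying both the reduction and the enumeration. For the reduction, one must show that an optimal drawing of $G^\#-e$ and an optimal drawing of $J^+$ glue along the triangle on $X$ without introducing extra crossings, and that the resulting drawing of $G-e$ still has fewer than two crossings, contradicting criticality; the subtle point is to arrange that the two partial drawings share a compatible facial triangle at $X$. For the enumeration, one must systematically generate all labelled graphs with three distinguished vertices satisfying (i)--(iii), prune by isomorphism fixing $X$, and rule out large examples by showing that they either contain a removable edge or fail the $\crn(J^+)\le 2$ constraint. Once these two pieces are in place, the statement of the theorem follows by combining the termination of the reduction with the bound on the number and size of the gadgets.
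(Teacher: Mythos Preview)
Your reduction step contains a genuine error: the graph $G^\#$ obtained by replacing the side $J$ with a single degree-3 vertex is \emph{not} 2-crossing-critical in general. The paper explicitly remarks on this just after Theorem~\ref{th:TUreplace}, giving a concrete example from the tile family $\mathcal T(\mathcal S)$ where contracting a particular $K_v$ to a point produces parallel edges not in the rim, and the resulting \p4c\ graph fails to be \2cc. So your ``gluing'' argument for criticality --- that a 1-drawing of $G^\#-e$ combined with a drawing of $J$ in a disc bounded by $X$ gives a 1-drawing of $G-e$ --- cannot go through as stated: for some edges $e$ of $G^\#$, $G^\#-e$ genuinely has a 1-drawing while $G-e$ does not.

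The paper's repair is the $(T,U)$-configuration machinery of Definitions~\ref{df:TUconfig} and~\ref{df:substitution}. The invariant that actually matters about the bridge $G_v$ is not its full structure but the pair $(T,U)$ recording which of $x,y,z$ admit edge-disjoint paths to the other two (that is $T$) and for which $w$ the remaining two are joined by edge-disjoint paths in $G_v-w$ (that is $U$). Theorem~\ref{th:Regrow} shows that substituting one $(T,U)$-configuration for another of the same type preserves $\crn\ge 2$; Theorem~\ref{th:TUreplace} then identifies, case by case, a \emph{small} representative of each type that gives a 3-connected \2cc\ graph $G_{\rep(v)}$. Crucially, this representative is not always a single vertex: in the ``doglike'' case $(|T|,|U|)=(3,2)$ it is an edge, and when a neighbour $G_x$ is doglike with nose in $K_v$ and $|T_v|\le 2$, it is a 3-cycle. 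The twenty gadgets then arise from enumerating the maximal $(T,U)$-configurations for each of the five possible $(|T|,|U|)$ types (Lemma~\ref{lm:buildingUp} and Figure~\ref{fg:TUreplace}), not from a direct bound on $|V(J)|$ via drawing constraints on $J^+$. Your programme would need to discover this $(T,U)$ invariant and the doglike exceptions to be made to work.
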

 
  We combine this with Robertson's characterization of $V_8$-free graphs to explain how to determine all the 3-connected 2-crossing-critical graphs that do not have a subdivision of $V_8$.  This requires a further reduction to ``internally 4-connected" graphs.

   Chapter 16 shows that a 3-connected, 2-crossing-critical graph with a subdivision of $V_8$ but no subdivision of $V_{10}$ has at most three million vertices.  The general result we prove there is the following.
   
   \begin{theorem} Suppose $G$ is a 3-connected, \2cc\ graph. Let $n\ge3$ be such that $G$ has a subdivision of $V_{2n}$  but not of $V_{2(n+1)}$.
Then $|V(G)|  = O(n^3)$. 
\end{theorem}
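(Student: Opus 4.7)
The plan is to fix a subdivision $H\cong V_{2n}$ in $G$ (guaranteed by hypothesis), label its $3n$ branches joining the $2n$ branch-vertices, and bound separately (a) the number of vertices of $G$ lying on $H$ and (b) the total size of the $H$-bridges in $G$. The only ``external'' hypothesis actually used to bound these sizes is the absence of a subdivision of $V_{2(n+1)}$, applied via rerouting arguments; 2-crossing-criticality together with minimum degree at least $3$ and 3-connectivity supplies the local inputs (every internal vertex of a branch must have an off-branch neighbor, every non-trivial bridge has at least $3$ attachments, and every edge of $G$ is essential for crossing number $2$). Each subclaim will be polynomial in $n$, and the numbers will combine to $|V(G)|=O(n^{3})$.

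\textbf{Step 1: every branch has length $O(n^{2})$.} Let $P=x_{0}x_{1}\cdots x_{\ell}$ be a branch. Each internal vertex $x_{j}$ has degree at least $3$ in $G$, so some edge $e_{j}\notin E(P)$ leaves $P$ at $x_{j}$. The far end of $e_{j}$ falls into one of three types: (i) elsewhere on $P$ (a chord within $P$), (ii) on another branch of $H$, or (iii) inside a non-trivial $H$-bridge. A pigeonhole/rerouting argument will show that once $\ell$ exceeds a suitable $cn^{2}$, two of the edges $e_{j}$ must occupy ``compatible'' positions in the same type-class; then a path through them can lengthen one rim cycle of $V_{2n}$ into a cycle of length $2(n+1)$ while providing an additional spoke joining antipodal vertices on the new rim, yielding a subdivision of $V_{2(n+1)}$, contradicting the hypothesis. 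Since $H$ has $3n$ branches, this contributes $O(n^{3})$ vertices of $G$ on $H$ itself.

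\textbf{Step 2: bridges are few and each is small.} By 3-connectivity, every non-trivial $H$-bridge has at least $3$ attachments on $H$. An argument analogous to Step 1 shows that if the attachments of a single bridge were to spread over more than a constant number of branches of $H$, one could route a new spoke and an extended rim through the bridge, again producing a $V_{2(n+1)}$-subdivision; hence each bridge has its attachments confined to $O(1)$ branches. Counting attachment vertices (which are either branch-vertices or lie on branches of length $O(n^{2})$) gives at most $O(n^{2})$ non-trivial bridges overall. Inside a single bridge, 2-crossing-criticality forces each edge to be essential and each internal vertex to have degree at least $3$, and a further rerouting argument on long internal paths bounds the size of each bridge by $O(n)$. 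The bridges therefore contribute at most $O(n^{2})\cdot O(n)=O(n^{3})$ more vertices, and Step 1 plus Step 2 yield $|V(G)|=O(n^{3})$.

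\textbf{Main obstacle.} The central difficulty is the rerouting lemma that underlies Step 1. To extend $H$ from a $V_{2n}$-subdivision to a $V_{2(n+1)}$-subdivision, one must simultaneously (a) lengthen some rim cycle to $2(n+1)$, (b) produce a new spoke joining antipodal vertices on the new rim, and (c) preserve the remaining $n$ spokes and the untouched part of the rim as internally disjoint paths. Because the endpoints of the edges $e_{j}$ leaving a long branch can land in $O(n)$ distinct ``types'' of positions in $H$, arranging that two of them give a compatible pair of detours requires a pigeonhole on pairs rather than singletons, which is the source of the quadratic threshold $\Theta(n^{2})$ on branch length (and hence the cubic bound on $|V(G)|$). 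Once this rerouting lemma is established, Step 2 is a variant of the same argument applied to bridges instead of chords, and the final counting is routine.
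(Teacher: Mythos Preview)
Your proposal outlines a plausible-sounding strategy but leaves the essential work undone, and the approach diverges significantly from what the paper actually does.

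\textbf{The gaps.} Your entire argument rests on the ``rerouting lemma'' you flag as the main obstacle, and you do not prove it. Constructing a subdivision of $V_{2(n+1)}$ from a long branch is genuinely delicate: you need to insert a new pair of antipodal rim vertices and a new spoke while keeping all $n$ existing spokes pairwise internally disjoint from each other and from the new one. A pigeonhole on the ``types'' of the edges $e_j$ does not by itself guarantee the internal-disjointness conditions, and you give no mechanism for it. The same vagueness infects Step~2: the claims that each bridge has attachments in $O(1)$ branches, that there are $O(n^{2})$ bridges, and that each bridge has $O(n)$ vertices are each asserted by analogy rather than argued. In particular, ``counting attachment vertices'' does not bound the number of bridges, since many bridges can share attachment vertices.

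\textbf{How the paper actually proceeds.} The paper's argument is structured very differently and leans much more heavily on 2-crossing-criticality than on the absence of $V_{2(n+1)}$. First, it shows every $H$-bridge $B$ is a tree (else $B$ contains a tripod and $|V(G)|\le 10$) with at most $45$ leaves, hence $|V(B)|\le 88$: a \emph{constant} bound, not $O(n)$. The bound on the number of attachments comes from analyzing where $B-e$ can sit in a 1-drawing of $G-e$ and showing that too many attachments on one branch would let one build a clean cycle with BOD, forcing $\crn(G)\le 1$. Second, after passing to a \emph{smooth} $H$ (so every non-node of $H$ is an attachment of a bridge reaching outside its branch), the paper bounds the number of bridges by $O(n^{3})$: at most $2\binom{2n}{3}$ bridges with only node attachments, at most $41\cdot 2n\cdot 3n$ with a node and an open-branch attachment, at most $2$ per pair of adjacent branches, and at most $164n+9$ per pair of disjoint branches (this last bound is where the no-$V_{2(n+1)}$ hypothesis is actually used, via an $ef$-twisting-cycle argument). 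Multiplying $O(n^{3})$ bridges by the constant $88$ gives the result. Your decomposition (branch lengths $\times$ number of branches, plus bridges) never appears; the paper never bounds branch lengths directly.
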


\chapter{Description of 2-crossing-critical graphs with $V_{10}$}\label{descr}

In this section, we describe the structure of the $2$-crossing-critical graphs that contain $V_{10}$. As mentioned in the introduction, they are composed of tiles. This concept was first formalized by Pinontoan and Richter \cite {tiles1,tiles2} who studied large sequences of equal tiles. Bokal \cite {avgdeg2} extended their results to sequences of arbitrary tiles, which are required in this section. In those results,  ``perfect'' tiles were introduced to establish the crossing number of the constructed graphs. However, this property required a lower bound on the number of the tiles that is just slightly too restrictive to include all our graphs. As we are able to establish the lower bound on the crossing number of all these graphs in a different way (Theorem \ref{th:tiledAre2cc}), we summarize the concepts of \cite{avgdeg2} without reference to ``perfect'' tiles. Where the reader feels we are imprecise, please refer to \cite{avgdeg2} for details.

\begin{definition}\label{df:tile}  \begin{enumerate}\item A {\em tile\/}\index{tile} is a triple $T=(G,\lambda,\rho)$, consisting of a graph $G$ and two sequences $\lambda$ and $\rho$ of distinct vertices of $G$, with no vertex of $G$ appearing in both $\lambda$ and $\rho$. 
\item A {\em tile drawing\/}\index{tile drawing} is a drawing $D$ of $G$ in the unit square $[0,1]\times[0,1]$ for which the intersection of the boundary of the square with $D[G]$ contains precisely 
the images of the vertices of the {\em left wall\/} $\lambda$ and the {\em right wall\/} $\rho$, and these are drawn in $\{0\}\times [0,1]$ and $\{1\}\times [0,1]$, respectively, such that the $y$-coordinates of the vertices are increasing with respect to their orders in the sequences $\lambda$ and $\rho$. 
\item The {\em tile crossing number\/}\index{tile!crossing number} $\tcrn(T)$\index{$\tcrn$} of a tile $T$ is the smallest number of crossings in a tile drawing of $T$. 
\item The tile $T$ is {\em planar\/} if $\tcrn(T)=0$.
\item {A {\em $k$-drawing\/}\index{1-drawing}\index{$k$-drawing} of a graph or a {\em $k$-tile-drawing\/} of a tile is a drawing or tile-drawing, respectively, with at most $k$ crossings.}
\end{enumerate}
\end{definition}

It is a central point for us that tiles may be ``glued together" to form larger tiles.   We formalize this as follows.

\begin{definition}\label{df:join}  \begin{enumerate}\item The tiles $T=(G,\lambda,\rho)$ and $T'=(G',\lambda',\rho')$  are {\em compatible\/}\index{tile!compatible} if $|\rho|=|\lambda'|$. 
\item A sequence $(T_0,T_1,\dots,T_m)$ of tiles is {\em compatible\/} if, for each $i=1,2,\dots,m$, $T_{i-1}$ is compatible with $T_i$.
\item The {\em join\/}\index{tile!join} of compatible tiles  $(G,\lambda,\rho)$ and $(G',\lambda',\rho')$ is the tile $(G,\lambda,\rho)\otimes(G',\lambda',\rho')$ whose graph is obtained from $G$ and $G'$ by identifying the sequence $\rho$ term by term with the sequence $\lambda'$;  left wall is $\lambda$; and  right wall is $\rho'$.
\item As $\otimes$\index{$\otimes$} is associative,  the join $\otimes \cT$ of a compatible sequence $\cT=(T_0,T_1,\dots,$ $T_m)$ of tiles is well-defined as $T_0\otimes T_1\otimes \cdots \otimes T_m$.
\end{enumerate}
\end{definition}{

Note that identifying wall vertices in a join may introduce either multiple edges or vertices of degree two. If we are interested in 3-connected graphs, we may suppress vertices of degree two, but we keep the multiple edges. 

We have the following simple observation.

}\begin{observation}\label{obs:otimes}  Let $(T_0,T_1,\dots,T_m)$ be a compatible sequence $\cT$ of tiles.  Then

\hskip 2truein${\displaystyle\tcrn(\otimes \cT)\le \sum_{i=0}^m\tcrn(T_i)}$. \hfill{\eop}
\end{observation}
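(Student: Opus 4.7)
The plan is to construct a tile drawing of $\otimes\cT$ directly by concatenating optimal tile drawings of the individual tiles, and then observe that the number of crossings in this combined drawing is exactly the sum of the individual tile crossing numbers.

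First, for each $i\in\{0,1,\dots,m\}$, I fix a tile drawing $D_i$ of $T_i=(G_i,\lambda_i,\rho_i)$ that realizes $\tcrn(T_i)$ crossings; by definition, $D_i$ lives in the unit square, with the vertices of $\lambda_i$ drawn on $\{0\}\times[0,1]$ and the vertices of $\rho_i$ drawn on $\{1\}\times[0,1]$, each wall listed in order of increasing $y$-coordinate. Next, I apply an affine map that sends the unit square to the rectangle $R_i=[i/(m+1),(i+1)/(m+1)]\times[0,1]$, obtaining a drawing $D_i'$ of $G_i$ inside $R_i$. This transformation preserves the number of crossings and keeps the wall vertices on the two vertical sides of $R_i$, still listed in order of increasing $y$-coordinate.

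Now I form the union $D=\bigcup_{i=0}^{m}D_i'$. Compatibility gives $|\rho_{i-1}|=|\lambda_i|$ for each $i\ge 1$, and because both sequences appear on the shared vertical line $\{i/(m+1)\}\times[0,1]$ listed in the same (increasing $y$) order, one can choose the affine maps so that the $j$-th vertex of $\rho_{i-1}$ in $D_{i-1}'$ coincides with the $j$-th vertex of $\lambda_i$ in $D_i'$. These identifications are exactly the identifications used to build the graph of $\otimes\cT$ in Definition \ref{df:join}, so $D$ is a drawing of $\otimes\cT$. Moreover, the drawings $D_i'$ for distinct $i$ live in rectangles with disjoint interiors and share only these wall vertices, so no new crossings are created in the gluing process. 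Hence $D$ has exactly $\sum_{i=0}^{m}\tcrn(T_i)$ crossings.

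Finally, applying an affine map from $[0,1]\times[0,1]$ to itself (or simply re-parameterizing) shows that $D$ is a tile drawing of $\otimes\cT=(G,\lambda,\rho_m)$ with left wall $\lambda_0$ and right wall $\rho_m$, and the bound $\tcrn(\otimes\cT)\le\sum_{i=0}^{m}\tcrn(T_i)$ follows. There is no significant obstacle here; the only thing to check carefully is the order-preserving alignment of the walls at each seam, which is built into the definition of a tile drawing.
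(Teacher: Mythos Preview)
Your proof is correct and is exactly the natural argument. The paper itself treats this as an obvious observation and provides no proof at all (the end-of-proof symbol immediately follows the statement), so your write-up simply spells out what the authors regard as self-evident from the definition of a tile drawing.
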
{

\def\cyclization{cyclization}

An important operation on tiles that we need converts a tile into a graph.

}\begin{definition}\label{df:cyclization}\begin{enumerate}\item A tile $T$ is {\em cyclically compatible\/} if $T$ is compatible with itself.
\item   For a cyclically-compatible tile $T$, the \DEF{cyclization}\index{tile!cyclization} of $T$ is the graph $\circ T$ obtained by identifying the respective vertices of the left wall with the right wall. A cyclization of a cyclically-compatible sequence of tiles is defined as $\circ \cT=\circ (\otimes \cT)$. 
\end{enumerate}
\end{definition}{

The following useful observation is easy to prove.  Typically, we will apply this to the tile $\otimes \cT$ obtained from a compatible sequence $\cT$ of tiles.

}\begin{lemma}[\cite{avgdeg2,tiles2}]\label{lm:join}
Let $T$ be a cyclically compatible tile. Then $\crn(\circ T) \le \tcrn(T)$. \hfill{\eop} \end{lemma}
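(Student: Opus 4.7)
The plan is to take an optimal tile drawing of $T$ and fold it into a plane drawing of $\circ T$ without creating any new crossings. Fix a tile drawing $D$ of $T=(G,\lambda,\rho)$ in the unit square $[0,1]\times[0,1]$ realizing $\tcrn(T)$ crossings. By Definition \ref{df:tile}, the vertices of $\lambda$ are drawn on $\{0\}\times[0,1]$ and those of $\rho$ on $\{1\}\times[0,1]$, both in increasing order of $y$-coordinate matching their sequence orders. Because $T$ is cyclically compatible, $|\lambda|=|\rho|$, so the $i$-th vertex of $\lambda$ is paired with the $i$-th vertex of $\rho$ under the identification that defines $\circ T$.

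Next, I would pass from the square to the cylinder $C$ obtained by identifying $(0,y)\sim(1,y)$ for every $y\in[0,1]$. Before making the identification, I would perturb a short initial segment of each wall-incident edge so that, for every $i$, the $i$-th vertex of $\lambda$ and the $i$-th vertex of $\rho$ have the same $y$-coordinate; since these adjustments can be localized to thin horizontal strips adjacent to the left and right boundary that contain no other pieces of the drawing, they create no new crossings. The quotient map then sends $D$ to a drawing $D'$ on $C$ in which, for each $i$, the two copies of the $i$-th wall vertex have been glued together, so $D'$ is a drawing of the cyclization $\circ T$ on $C$ with at most $\tcrn(T)$ crossings.

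Finally, the cylinder $C$ is homeomorphic to an annulus in the sphere $S^2$, and removing a point from the interior of one of the two faces bounded by the seam circles gives an embedding into the plane. Composing $D'$ with such an embedding yields a plane drawing of $\circ T$ with the same crossings as $D'$, hence at most $\tcrn(T)$ crossings, which gives $\crn(\circ T)\le \tcrn(T)$.

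The only mildly delicate point is the alignment of wall-vertex heights needed to make the boundary identification consistent with the drawing; since this is a local adjustment near the two vertical boundary segments, it is not a real obstacle but simply deserves explicit mention. Everything else is just a homeomorphism count.
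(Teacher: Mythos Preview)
Your argument is correct and is the standard one: wrap an optimal tile drawing around a cylinder by identifying the two vertical sides, then embed the cylinder in the plane. The paper does not actually prove this lemma; it is cited from \cite{avgdeg2,tiles2} and marked as immediate, so there is no in-paper proof to compare against. Your treatment of the height-alignment step is adequate, since Definition~\ref{df:tile} guarantees the boundary of the square meets the drawing only at the wall vertices, leaving room for the local adjustment you describe.
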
{ 

\def\updown{{\mathstrut}^{\updownarrow}}

We now describe various operations that turn one tile into another.

}\begin{definition}\label{df:inverting}
\begin{enumerate}
\item
For a sequence $\omega$, $\bar\omega$ denotes the reversed sequence. 
\item 
\begin{itemize} 
\item 
The \DEF{right-inverted} tile of a tile $T=(G,\lambda,\rho)$ is the tile $T{\updown}=(G,\lambda,\bar\rho)$; \item the  \DEF{left-inverted} tile is ${\updown}T=(G,\bar\lambda,\rho)$; \item the \DEF{inverted} tile is ${\updown}T{\updown}=(G,\bar\lambda,\bar\rho)$;  and 
\item the \DEF{reversed} tile is $T^{\leftrightarrow}=(G,\rho,\lambda)$. {($T^{\leftrightarrow}$ made an item.)}
\end{itemize}
\item A tile $T$ is \DEF{$k$-degenerate}\index{tile!$k$-degenerate} if $T$ is planar and, for every edge $e$ of $T$, \newline $\tcrn(T{\updown}-e)< k$. 
\end{enumerate}
\end{definition}{

Note that our $k$-degenerate tiles are not {necessarily} perfect{, as opposed to the definition in \cite{avgdeg2}}. However, the following analogue of \cite[Cor.~8]{avgdeg2} is still true.

}\begin{lemma} \label{lm:ksequence}
Let $\cT=(T_0,\ldots,T_m)$, $m\ge 0$, be a cyclically-compatible sequence of $k$-degenerate tiles. Then $\otimes (\cT)$ is a $k$-degenerate tile. 
\end{lemma}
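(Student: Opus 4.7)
The plan is to verify the two conditions in the definition of $k$-degeneracy for the tile $\otimes \cT$: planarity, and the bound $\tcrn((\otimes\cT)^\updown - e) < k$ for every edge $e$.

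First, planarity of $\otimes\cT$ follows directly from Observation \ref{obs:otimes}: each $T_i$ is planar (being $k$-degenerate), so $\tcrn(\otimes\cT) \le \sum_{i=0}^m \tcrn(T_i) = 0$.

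For the edge condition, given an edge $e$ of $\otimes\cT$, note that $e$ belongs to exactly one $T_j$ (wall-vertex identification does not merge edges). The idea is to obtain a tile drawing of $(\otimes\cT)^\updown - e$ by leaving $T_0,\dots,T_{j-1}$ as they are, right-inverting $T_j$ (and deleting $e$), and fully inverting every subsequent tile. Concretely, I would consider the sequence
\[
\cT' = \bigl(T_0,\,T_1,\,\dots,\,T_{j-1},\,T_j^\updown,\,{}^\updown T_{j+1}{}^\updown,\,\dots,\,{}^\updown T_m{}^\updown\bigr),
\]
and check compatibility wall by wall: for $i<j$, the joins are the original ones; for the join between $T_{j-1}$ and $T_j^\updown$, the right wall of $T_{j-1}$ is $\rho_{j-1}=\lambda_j$, which matches the left wall of $T_j^\updown$; for the join between $T_j^\updown$ and ${}^\updown T_{j+1}{}^\updown$, the right wall of $T_j^\updown$ is $\bar\rho_j$ and the left wall of ${}^\updown T_{j+1}{}^\updown$ is $\bar\lambda_{j+1}=\bar\rho_j$; and similarly the subsequent joins agree. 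The resulting tile $\otimes\cT'$ has left wall $\lambda_0$ and right wall $\bar\rho_m$, so its underlying tile (identifying walls along the joins) is exactly $(\otimes\cT)^\updown$.

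Now I apply Observation \ref{obs:otimes} to $\cT'$ after removing $e$ from the $j$-th tile: the term $\tcrn(T_j^\updown - e)$ is strictly less than $k$ by the $k$-degeneracy of $T_j$, and every other term is $0$, since each $T_i$ ($i\neq j$) is planar and tile-planarity is preserved by left- and right-inversion (reflect the tile drawing vertically). Thus
\[
\tcrn\bigl((\otimes\cT)^\updown - e\bigr) \;\le\; \tcrn(T_j^\updown - e) \;+\; \sum_{i\ne j}\tcrn({}^\updown T_i{}^\updown) \;<\; k,
\]
which completes the verification. The case $m=0$ is immediate from the hypothesis on $T_0$.

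The only delicate point is the bookkeeping around the boundary between $T_j^\updown$ and ${}^\updown T_{j+1}{}^\updown$: the vertical flips must be chosen so that the wall orders match after the inversion at position $j$, which is precisely why tiles after $T_j$ need to be fully inverted (both walls reversed) while $T_j$ itself is only right-inverted. This is the main place where one must be careful; once the compatibility of $\cT'$ is set up correctly, the rest is a one-line application of Observation \ref{obs:otimes}.
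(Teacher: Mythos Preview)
Your proof is correct and follows essentially the same approach as the paper: locate the tile $T_j$ containing $e$, right-invert it and fully invert all subsequent tiles to realize $(\otimes\cT)^\updown$, then apply subadditivity of tile crossing numbers. If anything, you are more careful than the paper in explicitly checking wall compatibility of $\cT'$ and in noting that planarity is preserved under inversion.
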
{
\begin{proof}
By Lemma \ref{lm:join}, $\otimes\cT$ is planar.   Let  $e$ be any edge of $\otimes\cT$. Let $T_i$ be the tile of $\cT$ containing $e$.  \wording{Let}  $\cT'=(T_0,\ldots,T_{i-1},T_i{\updown}-e,{\mathstrut}{\updown}T_{i+1}{\updown},\ldots,{\mathstrut}{\updown}T_m{\updown})${, so $\otimes \cT'=\otimes\cT{\updown}-e$; in particular, they} have the same tile crossing number. As $T_i{\updown}$ is $k$-degenerate, $\tcrn(T_i{\updown}-e)<k$. Since all other tiles of $\cT'$ are planar, Lemma \ref{lm:join} implies $\tcrn(\otimes\cT{\updown}-e)\le\tcrn(T_i{\updown}-e)<k$.
\end{proof}

The following is an obvious corollary. 

}\begin{corollary}\label{co:ktiles}
Let $T$ be a $k$-degenerate tile so that $\crn(\circ (T{\updown}))\ge k$. Then $\circ (T{\updown})$ is a $k$-crossing-critical graph. \hfill\eop
\end{corollary}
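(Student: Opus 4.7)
The plan is to show that $\circ(T\updown)$ is $k$-crossing-critical by verifying the two defining properties: the crossing number is at least $k$ (which is supplied by the hypothesis), and every proper subgraph has crossing number strictly less than $k$. Since the minimum degree of $\circ(T\updown)$ is positive, the second property reduces to showing $\crn(\circ(T\updown)-e)<k$ for each edge $e$ of $\circ(T\updown)$, because any proper subgraph is contained in $\circ(T\updown)-e$ for some edge $e$, and deleting edges cannot increase the crossing number.

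The main observation is that cyclization and edge-deletion commute. If $T=(G,\lambda,\rho)$, then $T\updown=(G,\lambda,\bar\rho)$, and removing an edge does not disturb the wall sequences, so $T\updown-e=(G-e,\lambda,\bar\rho)$ is again a cyclically-compatible tile (since $T$, and hence $T\updown$, is). Cyclization only identifies wall vertices, so it acts as the identity on edge sets; consequently, the graphs $\circ(T\updown)-e$ and $\circ(T\updown-e)$ coincide. Given an edge $e$ of $\circ(T\updown)$, corresponding to an edge of $T$, the $k$-degeneracy of $T$ yields $\tcrn(T\updown-e)<k$. Applying Lemma \ref{lm:join} to the cyclically-compatible tile $T\updown-e$ gives
\[
\crn\bigl(\circ(T\updown)-e\bigr)=\crn\bigl(\circ(T\updown-e)\bigr)\le \tcrn(T\updown-e)<k,
\]
which is the required bound.

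This is indeed the easy corollary advertised in the text; no real obstacle arises, since the work has already been absorbed into the definition of $k$-degeneracy and the tile-drawing-to-cyclization inequality of Lemma \ref{lm:join}. The only points worth double-checking in a careful write-up are that $T\updown$ is cyclically compatible precisely when $T$ is (immediate from the definition of $\updown$, which only reverses a wall rather than altering its length), and that identifying wall vertices in the cyclization step does not create or destroy edges, so that edge-deletion and cyclization genuinely commute.
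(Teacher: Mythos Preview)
Your proof is correct and is precisely the intended argument; the paper marks this corollary with a proof box and no text, since it follows immediately from the definition of $k$-degenerate together with Lemma~\ref{lm:join}, exactly as you have written.
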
{

}\begin{definition}\label{df:reverse}
\begin{enumerate}
\item  $\cT$ is a compatible sequence $(T_0,T_1,\ldots,T_m)$,
~then:
\begin{itemize}
\item  the \DEF{reversed sequence} $\cT^{\leftrightarrow}$ is the sequence {$(T_m^{\leftrightarrow},T_{m-1}^{\leftrightarrow},\ldots,T_0^{\leftrightarrow})$};
\item the \DEF{$i$-flip} $\cT^{i}$ is the sequence $(T_0,\ldots,T_i{\updown},{}{\updown}T_{i+1},T_{i+2}\ldots,T_m)$; and \item the \DEF{$i$-shift} $\cT_{i}$ is the sequence $(T_i,\ldots,T_m,T_0,\ldots,T_{i+1})$.
\end{itemize}
\item Two sequences of tiles are {\em equivalent\/} if one can be obtained from the other by a series of shifts, flips, and reversals. 
\end{enumerate}
\end{definition}{

Note that the \cyclization s of two equivalent sequences of tiles are the same graph.

}\begin{definition}\label{df:theTiles}  The set $\cS$\index{$\cS$} of tiles consists of those tiles obtained as combinations of two \DEF{frames}, illustrated in  Figure \ref{fg:frames}, and 13 \DEF{pictures}, shown in Figure \ref{fg:pictures}, in such a way, that a picture is inserted into a frame by identifying the two squares.  A given picture may be inserted into a frame either with the given orientation or with a $180^\circ$ rotation \wording{(some examples are given in Figure \ref{fg:cStiles})}.  
\end{definition}{

\begin{figure}[!ht]
\begin{center}
\includegraphics[scale=.8]{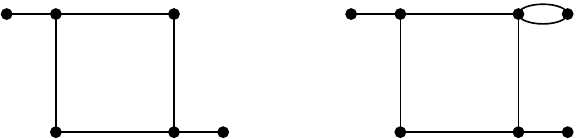}
\caption{The two frames.}\label{fg:frames}
\end{center}
\end{figure}

\begin{figure}[!ht]
\begin{center}
\includegraphics[scale=.8]{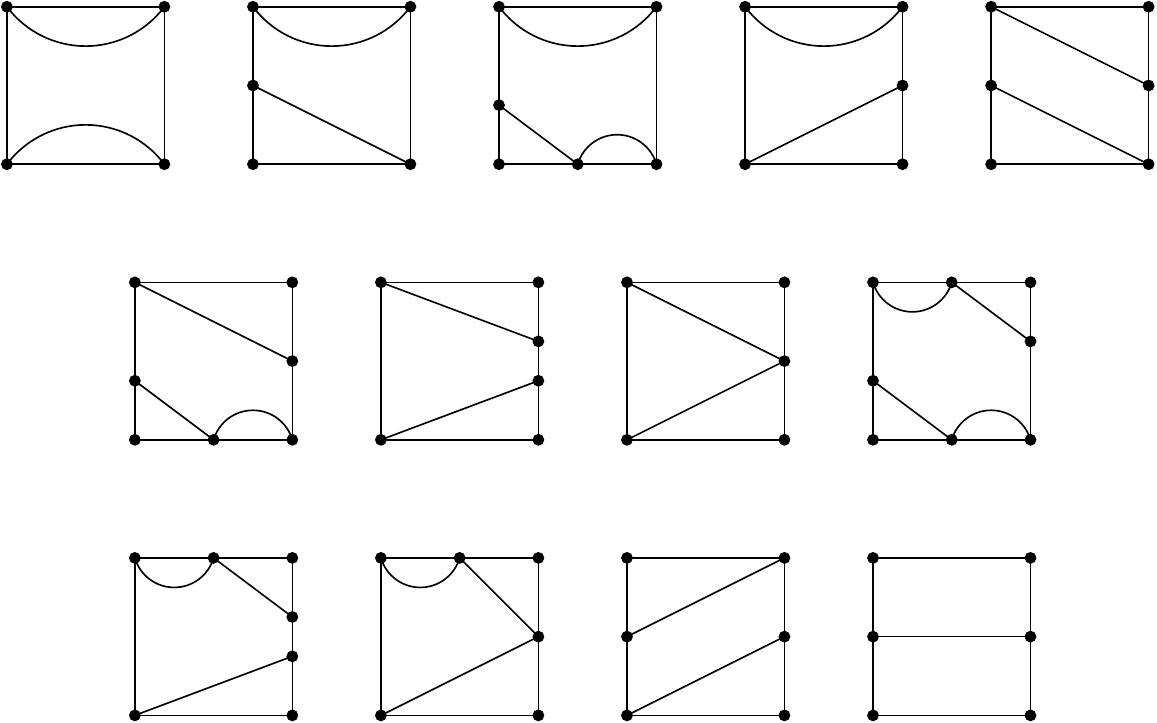}
\caption{The thirteen pictures.}\label{fg:pictures}
\end{center}
\end{figure}

We remark that each picture produces either two or four tiles in $\cS$; see Figure \ref{fg:cStiles}

\begin{figure}[!ht]
\begin{center}
\includegraphics[scale=.8]{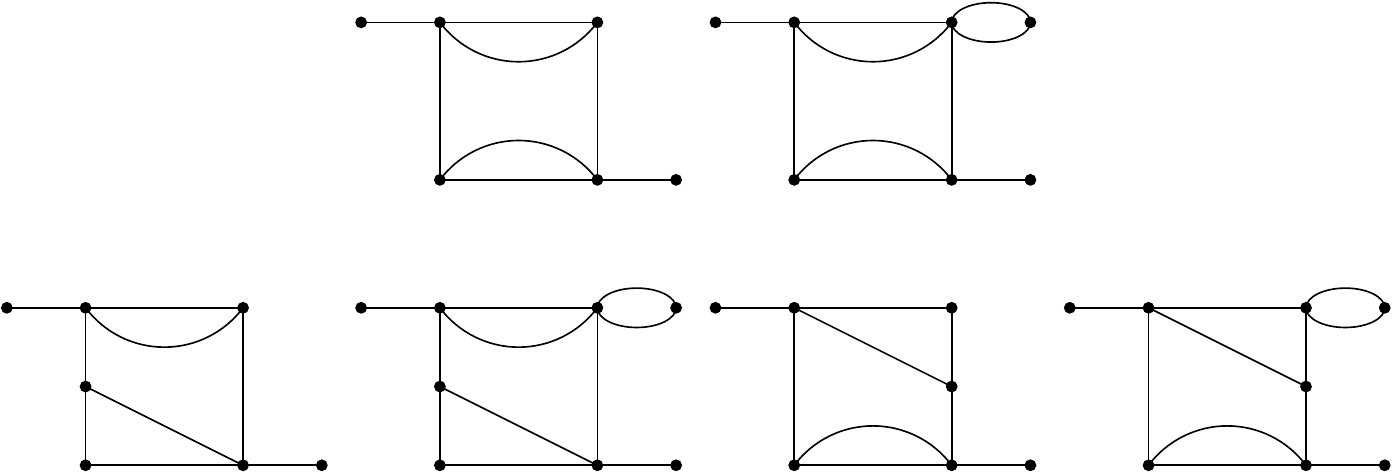}
\caption{Each picture produces either two or four tiles.}\label{fg:cStiles}
\end{center}
\end{figure}
}\begin{lemma}\label{lm:2degenerate}
Let $T$ be a tile in the set $\cS$. Then both $T$ and ${\updown}T_{i}{\updown}$ are $2$-degenerate.
\end{lemma}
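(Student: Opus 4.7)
The plan is a direct verification that each of the (up to) $42$ tiles $T$ in $\cS$ meets the two conditions of Definition \ref{df:inverting}(3): namely, that $T$ is planar and that $\tcrn(T^{\updown}-e)<2$ for every edge $e$ of $T$, and likewise for the inverted companion tile in the statement. Conceptually, nothing new is needed beyond exhibiting tile drawings in the unit square and counting crossings; the substance of the lemma is combinatorial, so I would organize the argument around the picture/frame decomposition of Definition \ref{df:theTiles} to keep the case analysis under control.

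First I would handle planarity. Every tile in $\cS$ is built by inserting one of the thirteen pictures of Figure \ref{fg:pictures} (in either orientation) into one of the two frames of Figure \ref{fg:frames}. Direct inspection of those figures shows that each of the resulting configurations admits a drawing in $[0,1]\times[0,1]$ with the correct wall orderings and no crossings, so $\tcrn(T)=0$. The same inspection handles the inverted companion tile in the statement, since the inversion operations in Definition \ref{df:inverting} only relabel boundary incidences and therefore preserve planarity.

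Next I would address the edge-deletion condition. For this, the key observation is that the right-inverted tile $T^{\updown}$ forces the right wall to be traversed in the opposite cyclic order, which typically introduces a small (at most two) number of unavoidable crossings among the edges passing through the picture's central region. I would exhibit, for each representative picture, one optimal $2$-drawing of $T^{\updown}$ chosen so that the two crossings use distinct edges, and then show that for every edge $e$ of $T$ there is a drawing of $T^{\updown}-e$ with at most one crossing. In most cases the optimal $2$-drawing already certifies this for every edge incident with a crossing, and for the remaining edges one reroutes through the region vacated by $e$. The symmetries of the frames, the $180^\circ$ rotations of the pictures permitted in Definition \ref{df:theTiles}, and the left/right wall symmetry collapse the $42$ tiles into a much smaller number of equivalence classes, and the same reductions apply to the inverted companion tile in the statement, so both are treated by a single case analysis.

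The main obstacle is not a conceptual one but rather the sheer number of edges and tiles to check. A clean presentation requires, for each equivalence class of pictures, a carefully chosen representative drawing of $T^{\updown}$ annotated so that the reader can read off a $1$-tile-drawing of $T^{\updown}-e$ for every edge $e$, including the frame edges that lie outside the picture. Producing this finite atlas of drawings, and convincing oneself that the symmetries really do reduce all $42$ tiles (and both their left- and right-inversions) to the chosen representatives, is where the bookkeeping effort is concentrated.
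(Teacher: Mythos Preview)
Your proposal is correct in spirit and would work, but the paper organizes the case analysis differently and more economically. Rather than going tile-by-tile (after symmetry reduction) and for each tile producing an annotated atlas of drawings of $T^{\updown}$, the paper classifies the edges of all tiles simultaneously into five types (dotted, thin solid, thick solid, thin dashed, thick dashed) via a single annotated figure, and then gives a uniform one-line description of a $1$-tile-drawing of $T^{\updown}-e$ for each type, valid across all thirteen pictures and both frames at once. For example, if $e$ is a dotted edge then $T-e$ has a wall with only one vertex, so $T^{\updown}-e$ is planar; if $e$ is thin solid, the two dotted edges can be made to cross each other; and so on. This collapses your ``atlas of drawings for each equivalence class'' into five short sentences.

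The paper also dispatches the inverted companion ${\updown}T{\updown}$ in one line by observing that the claim for $T$ implies it (since $({\updown}T{\updown})^{\updown}={\updown}T$ and tile drawings of ${\updown}T$ and $T^{\updown}$ differ by a reflection), rather than folding it into the symmetry reductions as you propose. Your detour through an explicit $2$-drawing of $T^{\updown}$ is not needed: one only has to exhibit, for each $e$, a $1$-tile-drawing of $T^{\updown}-e$, and the edge-type classification does this directly.
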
{
\begin{proof}
Figure \ref{fg:tilesInS} shows that all the tiles are planar. The claim for $T$ implies the result for ${\updown}T_{i}{\updown}$, so it is enough to prove the result for an arbitrary $T\in \cS$. Let $e$ be an arbitrary edge of $T$. We consider cases, depending on whether $e$ is either dotted, thin solid, thick solid, thin dashed, or thick dashed in Figure \ref{fg:tilesInS}. Using this classification, we argue that $\tcrn(T-e)<2$.

\begin{figure}[!ht]
\begin{center}
\includegraphics[scale=.8]{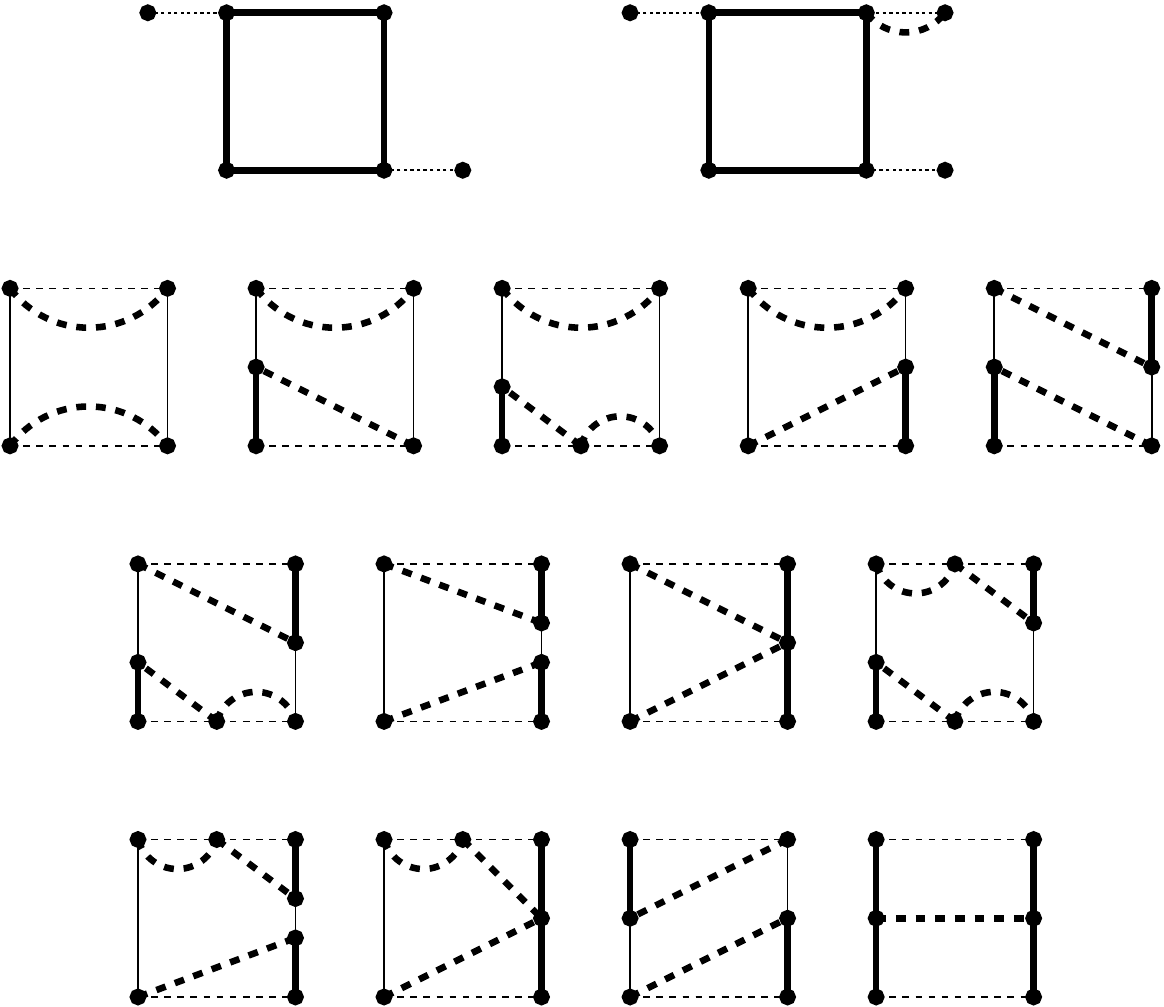}
\caption{The different kinds of edges in the pictures.}\label{fg:tilesInS}
\end{center}
\end{figure}

If $e$ is a dotted edge, then $T-e$ has a wall with a single vertex and $\tcrn(T{\updown}-e)=0$. 

If $e$ is a thin solid edge, then there is a $1$-tile-drawing of $T{\updown}$ with two dotted edges of $T$ crossing each other.

If $e$ is a thick solid edge, then there is a unique thin dashed edge $f$ adjacent to $e$, and there exists a 1-tile-drawing of $T{\updown}-e$ with $f$ crossing the dotted edge not on the same horizontal side of $T$ as $f$.

If $e$ is a thin dashed edge, then there is a unique thick dashed edge $e'$ such that $e$ and $e'$ are in the same face of the exhibited planar drawing of $T$, as well as a unique dotted edge $f$, that is not in the same horizontal side of $T$ as $e$. For such $e$ and $e'$, there exists a $1$-tile-drawing of $T{\updown}-e$ with $e'$ crossing $f$, as well as a $1$-tile-drawing of $T{\updown}-e'$ with $e$ crossing $f$. As each thick dashed edge corresponds to at least one thin dashed edge, this concludes the proof.
 \end{proof}

\def\tileS{\mathcal T(\cS)}

We now define the set of graphs that is central to this work.  

}\begin{definition} \label{df:t(s)}  The set $\tileS$\index{$\tileS$}\index{$\tileS$} consists of all graphs of the form {$\circ((\otimes \mathcal T)\updown)$}, where $\mathcal T$ is a sequence $(T_0,{\updown}T\updown_1,T_2,$ $\dots,{\updown}T\updown_{2m-1},T_{2m})$ so that $m\ge 1$ and, for each $i=0,1,2,\dots,2m$, $T_i\in \cS$.

\wording{The {\em rim}\index{rim} of an element of $\tileS$ is the cycle $R$ that consists of the top and bottom horizontal path in each frame (including the part that sticks out to either side) and, if there is a parallel pair in the frame, one of the two edges of the parallel pair.}
\end{definition}{

The following is an immediate consequence of Lemmas \ref{lm:ksequence} and \ref{lm:2degenerate}.  

}\begin{corollary}\label{co:t(s)2degen}  Let $G\in\tileS$.  For every edge $e$ of $G$, $\kr(G-e)<2$.  \hfill\eop\end{corollary}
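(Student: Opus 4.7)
The plan is to unwind the definition of $\tileS$ and chain together the three results already established: Lemma \ref{lm:2degenerate}, Lemma \ref{lm:ksequence}, and Lemma \ref{lm:join}. Fix $G\in\tileS$, so by Definition \ref{df:t(s)} we have $G=\circ((\otimes\mathcal{T})\updown)$ for some sequence $\mathcal{T}=(T_0,{\updown}T_1\updown,T_2,\dots,{\updown}T_{2m-1}\updown,T_{2m})$ with each $T_i\in\cS$. The first step is to observe that every tile appearing in $\mathcal{T}$ is $2$-degenerate: Lemma \ref{lm:2degenerate} gives this for the tiles $T_i\in\cS$ directly and also for their inversions ${\updown}T_i\updown$.

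Next I would apply Lemma \ref{lm:ksequence} to conclude that the joined tile $T^\star:=\otimes\mathcal{T}$ is itself $2$-degenerate. By Definition \ref{df:inverting}(3), this means $T^\star$ is planar and, for every edge $e$ of $T^\star$, $\tcrn(T^\star\updown-e)<2$.

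Finally, I would transfer this bound from the tile to the cyclization. Edges of $G=\circ(T^\star\updown)$ are in bijection with edges of $T^\star\updown$ (the cyclization identifies only wall vertices), and deletion commutes with cyclization, so $G-e=\circ(T^\star\updown-e)$. Since $T^\star\updown-e$ is cyclically compatible (removing an edge does not alter the walls), Lemma \ref{lm:join} yields
\[
\crn(G-e)=\crn(\circ(T^\star\updown-e))\le\tcrn(T^\star\updown-e)<2,
\]
which is the desired conclusion. There is no real obstacle here; the only subtlety worth double-checking is that the sequence $\mathcal{T}$ prescribed in Definition \ref{df:t(s)} is indeed cyclically compatible so that Lemma \ref{lm:ksequence} and the cyclization in Lemma \ref{lm:join} both apply, but this is built into the definition of $\tileS$.
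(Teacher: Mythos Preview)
Your proof is correct and follows exactly the approach the paper intends: it cites Lemmas \ref{lm:ksequence} and \ref{lm:2degenerate} as the immediate sources of the corollary, and you have correctly unpacked how these combine (with Lemma \ref{lm:join} supplying the final inequality $\crn(\circ(T^\star\updown-e))\le\tcrn(T^\star\updown-e)$). The only difference is that you have written out explicitly what the paper leaves as ``immediate.''
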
{
 
In Theorem \ref{th:tiledAre2cc}, we complete the proof that each graph $G$ in $\tileS$ is 2-crossing-critical by proving there that $\kr(G)\ge 2$.

We are now able to state the central result of this work.

}\begin{theorem}\label{th:classification}
 If $G$ is a $3$-connected $2$-crossing-critical graph containing a subdivision of $V_{10}$, then $G\in\tileS$.
\end{theorem}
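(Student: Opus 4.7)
The plan has three phases. Phase one locates a large cyclic ``rim'' $R$ in $G$ together with a near-embedding of $G-E(R)$ in the projective plane $\pp$. Phase two uses 2-criticality and 3-connectedness to classify the local structure between consecutive bridge attachments on $R$. Phase three assembles these local pieces into the cyclic sequence of tiles required by Definition \ref{df:t(s)}.

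For phase one, I would start from a subdivision $H \cong V_{10}$ in $G$ and exploit the fact that $V_{10}$ embeds in $\pp$ with its Hamilton cycle drawn as a noncontractible simple closed curve and its five diagonals on one side. Taking a 1-drawing of $G-e$ (for $e$ an edge of some diagonal of $H$), I would upgrade such drawings to a controlled projective embedding of most of $G$, so that an optimal 2-drawing of $G$ appears as a projective embedding cut open along a noncontractible cycle. I would then enlarge $R$ iteratively by absorbing edges of $G$ that follow the distinguished noncontractible cycle, using Lemma \ref{lm:join} and the associativity of $\otimes$ from Definition \ref{df:join} to ensure absorptions do not interfere with one another.

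For phase two, I would examine the bridges of $R$ in $G$. Because $\crn(G) \le 2$ and, by 2-criticality, $\crn(G-e) \le 1$ for every edge $e$, no single bridge can contribute more than one crossing in any drawing, and removing any edge of any bridge must leave a 1-drawing of the remainder. Combined with 3-connectedness (which rules out trivial and pendant bridges and forces a minimum number of attachments per bridge), these constraints leave only a finite list of admissible local configurations. By systematically drawing candidates and discarding those that fail the crossing-critical test, I would match the survivors with the thirteen pictures of Figure \ref{fg:pictures} inserted into one of the two frames of Figure \ref{fg:frames}, recovering the set $\cS$ exactly.

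For phase three, I would cut $R$ at the interfaces between consecutive local pieces and read off, going once around $R$, a cyclic sequence $\cT = (T_0, {\updown}T_1{\updown}, T_2, \ldots, {\updown}T_{2m-1}{\updown}, T_{2m})$ with each $T_i \in \cS$; the alternating flips are forced by the two ``sides'' of $R$ swapping each time $R$ crosses itself in the projective embedding. Definition \ref{df:t(s)} then identifies $G$ with $\circ((\otimes \cT){\updown}) \in \tileS$. Lemma \ref{lm:2degenerate} and Corollary \ref{co:t(s)2degen} ensure the construction is consistent with $G$ being 2-crossing-critical, and the matching lower bound $\crn(G) \ge 2$ for members of $\tileS$ is supplied by Theorem \ref{th:tiledAre2cc}.

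The principal obstacle is phase two: proving that the admissible list of local pictures is \emph{exactly} the thirteen of Figure \ref{fg:pictures}. Bridges can share attachment vertices on $R$, nest or interleave with neighbouring bridges, and jointly generate crossings that are hard to localize. Ruling out each forbidden configuration requires either exhibiting a plane drawing with fewer than two crossings (contradicting $\crn(G) \ge 2$) or identifying an edge whose removal still needs two crossings (contradicting 2-criticality). I expect this case analysis, together with the careful bookkeeping needed to verify that exactly the claimed symmetries produce the two-or-four tiles per picture seen in Figure \ref{fg:cStiles}, to account for the bulk of the technical work.
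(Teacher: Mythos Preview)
Your three-phase outline has the right global shape, but phase two rests on the wrong decomposition and would not go through as stated. The paper does not analyse bridges of the rim $R$. It works with $H$-bridges of a carefully normalised (``tidy'') subdivision $H$ of $V_{10}$, whose five spokes partition the M\"obius band into $H$-quads $Q_0,\ldots,Q_4$. This quad structure drives a three-colouring of rim edges as $H$-green, $H$-yellow, or \emph{red} (where $e$ is red iff $G-e$ is planar). Tile boundaries are not found by classifying bridges but by locating red edges: each red edge $e$ carries a triangle-like cycle $\Delta_e$ whose peak lies on the opposite rim branch, and the region of $G$ between $e$ and its ``consecutive'' red edge $e_w$ is shown to be one of the thirteen pictures. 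The heavy lifting is proving that every rim edge has a colour, that a red edge exists at all, that no global $H$-bridge is a 3-jump, and that consecutive red edges always exist with only the listed shapes between them. The pivotal lemma making all of this tractable is Theorem~\ref{th:twoGreenCycles} (no rim edge lies in two $H$-green cycles), which has no analogue in a bridges-of-$R$ framework.

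Your proposed bridges-of-$R$ viewpoint discards the spoke/quad scaffolding, so you lose the local/global dichotomy for $H$-bridges, the green-cycle machinery, and any mechanism to pinpoint where one tile ends and the next begins; the case analysis you anticipate would be unbounded without these. Separately, your phase-one invocation of Lemma~\ref{lm:join} and Definition~\ref{df:join} is a misreference: those concern joining abstract tiles, not absorbing edges of $G$ into a rim. The paper's phase-one substitute is a long argument (Chapters~\ref{sec:projPlane}--\ref{sec:nextRed+Tiles}) establishing a representativity-2 projective embedding with $H\subseteq\Mob$ and $H$ $\Pi$-tidy, which is what makes the colouring well-defined.
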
{

This theorem is proved in the course of Chapters \ref{sec:projPlane} -- \ref{sec:nextRed+Tiles}.
}
{We remark that not every graph in $\mathcal T(\mathcal S)$ contains a subdivision of $V_{10}$.}
\chapter{Moving into the projective plane}\printFullDetails{\label{sec:projPlane}

It turns out that considering the relation of a 2-crossing-critical graph to its embeddability in the projective plane is useful.  This perspective was employed by Richter to determine all eight cubic 2-crossing-critical graphs \cite{rbr}.  It is a triviality that, if $G$ has a 1-drawing, then $G$ embeds in the projective plane (put the crosscap on the crossing).  Therefore, any graph $G$ that does not embed in the projective plane has crossing number at least 2.  Moreover, Archdeacon \cite{danThesis,danJGT} proved that it contains one of the 103 graphs that do not embed in the projective plane but every proper subgraph does. Each obstruction for projective planar embedding has crossing number at least 2.  Of these, only the ones in Figure \ref{fig:2cc-nonPP} are 3-connected and 2-crossing-critical.  (The non-projective planar graphs that are not 3-connected are found by different \wording{means in Section \ref{sec:not2conn}}.)  \bogdan{These are the ones labelled --- left to right, top to bottom --- D17,  E20,  E22,  E23,
E26,  F4,   F5,   F10,
 F12,  F13,  and G1 in Glover, Huneke, and Wang \cite{ghw}.}

\begin{figure}[!ht]
\begin{center}
\includegraphics[scale=.8]{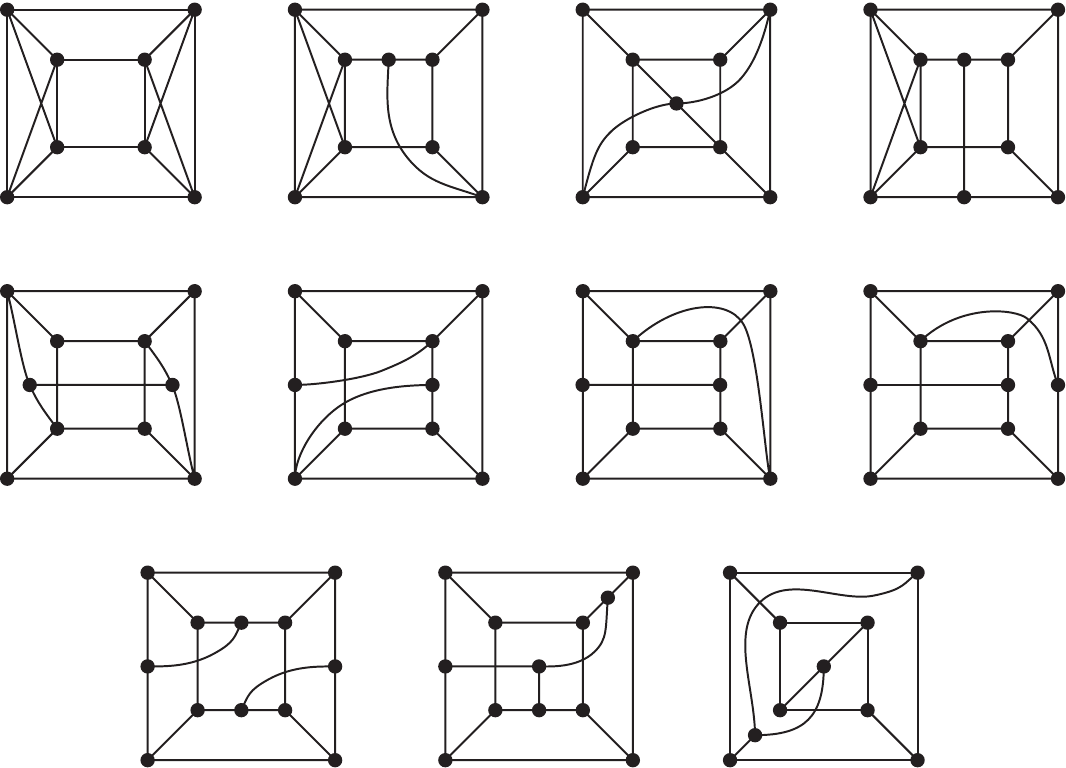}
\caption{\bogdan{The 3-connected, 2-crossing-critical graphs that do not embed in $\pp$.}}\label{fig:2cc-nonPP}
\end{center}
\end{figure}

}\begin{definition} Let $G$ be a graph embedded in a (compact, connected) surface $\Sigma$.  Then:
\begin{enumerate}
\item  the {\em representativity\/}\index{representativity} $\rep(G)$ of $G$ is the largest integer $n$ so that every non-contractible, simple, closed curve in $\Sigma$ intersects $G$ in at least $n$ points (this parameter is undefined when $\Sigma$ is the sphere);  
\item $G$ is {\em $n$-representative\/} if $n\ge r(G)$;                     
\item $G$ is {\em embedded with representativity $n$\/} if $\rep(G)=n$.
\end{enumerate}
\end{definition}\printFullDetails{

Representativity is also known as {\em face-width\/} and gained notoriety in the Graph Minors project of Robertson and Seymour.  We only require very elementary aspects of this parameter; the reader is invited to consult \cite{diestel} or \cite{mt} for further information on representativity and Graph Minors. 

Barnette \cite{barnette} and  Vitray \cite{vitray} independently proved that every 3-representative embedding in the projective plane topologically contains one of the 15 graphs \bogdan{(\cite[Figure 2.2]{vitray})}.   Vitray pointed out in a conference talk \cite{OSUtalk} that each of these 15 graphs has crossing number at least 2.  Therefore, any graph that has a 3-representative embedding in the projective plane has crossing number at least 2.  One immediate conclusion is that there are only finitely many \2cc graphs that embed in $\pp$ and do not have a representativity at most 2 embedding in $\pp$, and, not only are there only finitely many of these, but they are all known and \wording{are shown in Figure \ref{fig:3repPP}}.    \wording{ Vitray went on to show} that the only \2cc graph whose crossing number is not equal to 2 is $C_3\Box C_3$, whose crossing number is 3.

\ignore{\begin{figure}[!ht]
\begin{center}
\includegraphics[scale=.4]{Vitray}
\caption{The \dragominor{minimal} 3-representative embeddings in $\pp$.}\label{fig:3repPP}
\end{center}
\end{figure}}

\begin{figure}[!ht]
\begin{center}
\includegraphics[scale=.8]{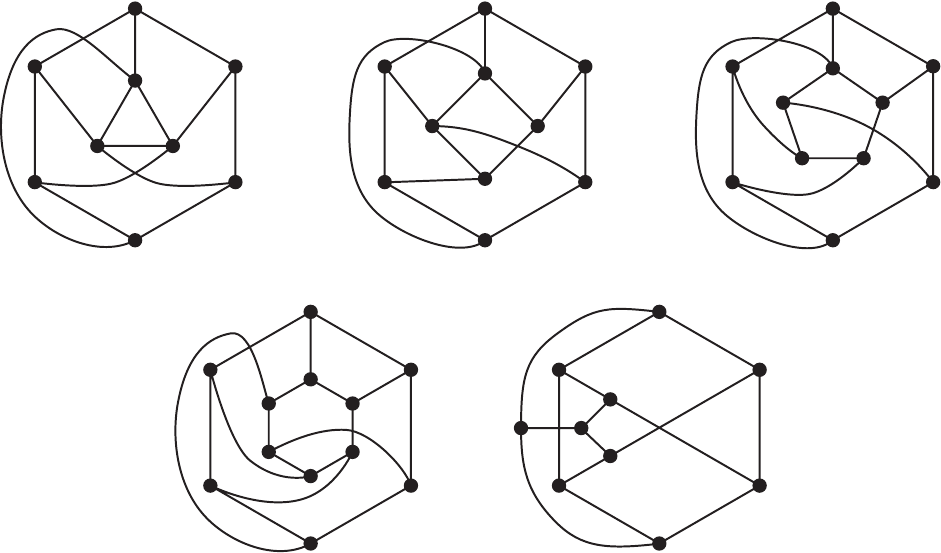}
\caption{The 2-crossing-critical 3-representative embeddings in $\pp$.}\label{fig:3repPP}
\end{center}
\end{figure}

Since every graph that has an embedding in the projective plane with representativity at most 1 is planar, 
it remains to explore those \2cc graphs that have an embedding in $\pp$ with representativity precisely 2.  To cement some terminology and notation, we have the following.

}\begin{definition}  Let $n\ge 3$ be an integer.  The graph $V_{2n}$\index{$V_{2n}$} is the {\em M\"obius ladder\/}\index{M\"obius ladder} consisting of:
\begin{itemize}\item the {\em rim\/}\index{rim}\index{M\"obius ladder!rim} $R$ of $V_{2n}$, which is a $2n$-cycle $(v_0,v_1,v_2,\dots,v_{2n-1},v_0)$; and,
\item for $i=0,1,2,\dots,n-1$, {\em the spoke\/}\index{spoke}\index{M\"obius ladder!spoke} $v_iv_{n+i}$.
\end{itemize}
Suppose $\hvng$.  (The notation $L\topol H$\index{$L\topol H$} means that $H$ is a subdivision of $L$.  Thus, $\hvng$ means $H$ is a subgraph of $G$ \wording{and is also} a subdivision of $V_{2n}$.)
\begin{itemize}\item The {\em $H$-nodes\/}\index{node}\index{$H$-node} are the vertices of $H$ corresponding to $v_0$, $v_1$, \dots, $v_{2n-1}$ in $V_{2n}$; the $H$-nodes are also labelled $v_0$, $v_1$, \dots, $v_{2n-1}$.
\item For $i=0,1,2,\dots,2n-1$, the {\em $H$-rim branch\/}\index{rim branch}\index{M\"obius ladder!rim branch} $r_i$ is the path in $H$ corresponding to the edge $v_iv_{i+1}$ of $V_{2n}$.
\item For $i=0,1,2,\dots,n-1$, the {\em $H$-spoke\/}\index{M\"obius ladder!$H$-spoke} is the path $s_i$ in $H$ corresponding to the edge $v_iv_{n+i}$ in $V_{2n}$.
\item We also use {\em $H$-rim\/}\index{rim}\index{$H$-rim}\index{M\"obius ladder!$H$-rim} and $R$\index{$R$} for the cycle in $H$ corresponding to the rim of $V_{2n}$.
\end{itemize}
\end{definition}\printFullDetails{

Whenever we discuss elements of a subdivision $H$ of \wording{the} M\"obius ladder $V_{2n}$, we presume the indices are read appropriately.  For the $H$-nodes $v_k$ and the $H$-rim branches $r_k$, the index $k$ is to be read modulo $2n$.  For the $H$-spokes $s_\ell$, the index $\ell$ is to be read modulo $n$.  Thus, for example, $s_{5+n}=s_5$ and $v_{8+2n}=v_8$, while $r_{8+n}\ne r_8$.

Let $G$ be a \2cc\ graph embedded in $\pp$ with representativity 2.  Let $\gamma$\index{$\gamma$} be a simple closed curve in $\pp$ meeting $G$ in precisely the two points $a$\index{$a$} and $b$\index{$b$}.  We further assume $\hvng$, with $n\ge 3$.  
Because $G-a$ and $G-b$ have 1-representative embeddings in the projective plane, they are both planar.  We note that, for $n\ge 3$,  $V_{2n}$ is not planar; therefore, $a,b\in H$. 

\begin{remark} \dragominor{Throughout this work, we abuse notation slightly.  If $K$ is any graph and $x$ is either a vertex or an edge of $K$, then we write $x\in K$, rather than the technically correct $x\in V(K)$ or $x\in E(K)$.  We have taken care so that, in any instance, the reader will never be in doubt about whether $x$ is a vertex or an edge.}\end{remark} 

If $n\ge 4$, the deletion of a spoke of $V_{2n}$ leaves a non-planar subgraph; thus, when $n\ge 4$, we conclude $a,b\in R$.  If $\gamma$ does not cross $R$ at $a$, say, then deleting the $H$-spoke incident with $a$ (if there is one), and shifting $\gamma$ away from $a$ leaves a subdivision of $K_{3,3}$ in $\pp$ that meets the adjusted $\gamma$ only at $b$.  But then this $K_{3,3}$ has a 1-representative embedding in $\pp$, showing $K_{3,3}$ is planar, a contradiction.  Therefore,  $\gamma$ must cross $R$ at $a$ and $b$.  
As any two non-contractible curves cross an odd number of times, $R$ is contractible and so  bounds a closed disc $\Disc$\index{$\Disc$} and a closed M\"obius strip $\Mob$\index{$\Mob$}.

Let $P$ and $Q$ be the two $ab$-subpaths of $R$, let $\alpha=\gamma\cap \Disc$\index{$\alpha$} and $\beta=\gamma\cap \Mob$\index{$\beta$}.  (We alert the reader that the notations $\Disc$, $\Mob$, $\alpha$, $\beta$, and $\gamma$ will be reserved for these objects.)
Since each spoke is internally disjoint from $\gamma$, the spoke is either contained in $\Disc$ or contained in $\Mob$.  Since the spokes interlace on $R$, at most one can be embedded in $\Disc$.  

Moreover, observe that $\alpha$ divides $\Disc$ into two regions, one bounded by $P\cup \alpha$ and the other bounded by $Q\cup \alpha$.  Thus, if a spoke --- label it $s_0$ --- is embedded in $\Disc$, then $s_0$ has both attachments in just one of $P$ and $Q$, say $P$.  In this case, $P$ contains either all the $H$-nodes $v_0,v_1,\dots,v_n$ or all the $H$-nodes $v_n,v_{n+1},\dots,v_{2n-1},v_0$.   It follows that, for $n\ge 4$, there are only two (up to relabelling) representativity 2 embeddings of $V_{2n}$ in the projective plane.  See Figure \ref{fig:Rep2Vten}\index{$V_{2n}$!embeddings}.  We remark that it is possible that one or both of $a$ and $b$ might be an $H$-node.

\begin{figure}[htb!]
\begin{center}
	\input{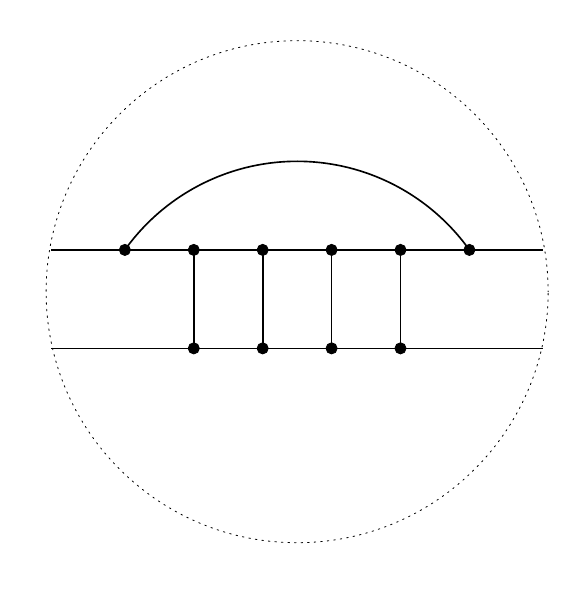_t}\hglue 1 cm \input{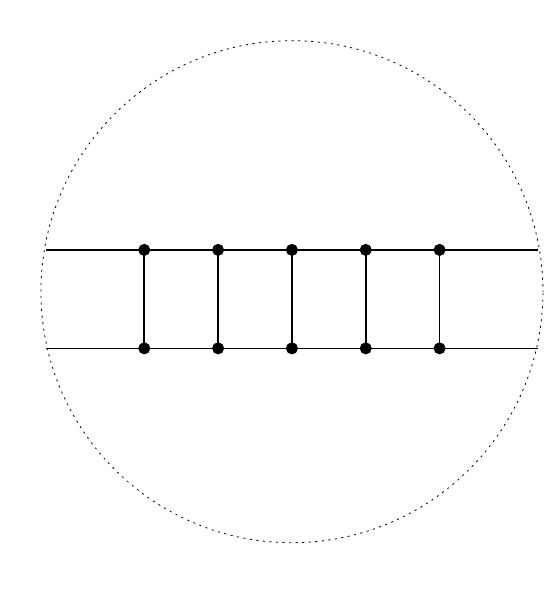_t}
	\caption{\dragominor{Standard labellings of the r}epresentativity $2$ embeddings of $V_{10}$.}
\label{fig:Rep2Vten}
\end{center}
\end{figure}

\ignore{\begin{figure}[!ht]
\begin{center}
\includegraphics[scale=.75]{example1}
\caption{Representativity 2 embeddings of $V_{10}$.}\label{fig:Rep2Vten}
\end{center}
\end{figure}}

We introduce a notation that will be used extensively in this work.

}\begin{definition}  The set of 3-connected, 2-crossing-critical graphs is denoted $\m2$\index{$\m2$}.
\end{definition}\printFullDetails{

It is a tedious \wording{(and unimportant)} exercise to check the observation that none of the  graphs in $\m2$ found among the obstructions to having a representativity 2 embedding in $\pp$ has a subdivision of $V_{10}$.   We record it in the following assertion.\wordingrem{(Text removed here.)}%(Even if we have overlooked some example here, it would henceforth suffice to consider $G\in\m2$ with $\hvfg$ so that $G$ has a representativity 2 embedding in $\pp$.)

}\begin{theorem}\label{th:v10rep2}  Let $G\in \m2$ and $\hvfg$.  Then $G$ has a representativity 2 embedding in $\pp$. \hfill\eop \end{theorem}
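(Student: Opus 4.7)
The plan is to combine Archdeacon's obstruction theorem for projective planarity with the Barnette--Vitray classification of large-representativity embeddings in $\pp$, thereby reducing the statement to a finite verification.

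First I would observe that $\crn(G)\ge 2$ precludes any embedding of $G$ in $\pp$ with $\rep(G)\le 1$: a single intersection of a non-contractible curve with $G$ permits one to cut open the crosscap along that point and obtain a $1$-drawing of $G$ in the plane, contradicting $\crn(G)\ge 2$. So either $G$ fails to embed in $\pp$, or $G$ embeds with $\rep(G)\ge 2$.

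Next I would exclude the two undesirable possibilities by the same argument. If $G$ does not embed in $\pp$, then by Archdeacon's theorem $G$ contains, as a topological subgraph, one of the $103$ minimal non-projective-planar graphs; each such obstruction already has crossing number at least $2$, so $2$-crossing-criticality forces $G$ to coincide (up to suppression of degree-$2$ vertices) with one of them. Combined with $G\in\m2$, this pins $G$ down to one of the eleven graphs displayed in Figure \ref{fig:2cc-nonPP}. If instead $G$ embeds in $\pp$ with $\rep(G)\ge 3$, then by Barnette--Vitray $G$ contains a subdivision of one of fifteen specific graphs, each of crossing number at least $2$; the same crossing-criticality argument forces $G$ to be one of these, and among the $3$-connected $2$-crossing-critical members this yields the finite list shown in Figure \ref{fig:3repPP}. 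Hence, should the conclusion of the theorem fail, $G$ must belong to a known finite list.

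The final step is to verify by inspection that no graph in Figures \ref{fig:2cc-nonPP} or \ref{fig:3repPP} contains a subdivision of $V_{10}$. Since $V_{10}$ is cubic on $10$ vertices, any such subdivision requires ten branch vertices of degree at least $3$ joined by fifteen internally disjoint paths in the rim-plus-spokes pattern of $V_{10}$. Several candidates on the two lists are ruled out at once on vertex-count grounds; the remainder succumb to a short case analysis on the possible placements of the $V_{10}$-nodes, exploiting the fact that the five spokes of $V_{10}$ pairwise interlace along the rim and hence that any such placement is severely constrained by the embedding structure of the candidate. The principal obstacle here is the organisation of this finite but fiddly bookkeeping---precisely the ``tedious (and unimportant) exercise'' acknowledged by the authors. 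Once the verification is completed, the two cases from the previous paragraph are excluded, and $G$ must admit an embedding in $\pp$ of representativity exactly $2$.
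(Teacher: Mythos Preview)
Your proposal is correct and follows essentially the same approach as the paper: the paper likewise derives the theorem from the Archdeacon obstruction list, the Barnette--Vitray classification of minimal $3$-representative projective embeddings, and the ``tedious (and unimportant) exercise'' of checking that none of the resulting finitely many $3$-connected $2$-crossing-critical graphs in Figures~\ref{fig:2cc-nonPP} and~\ref{fig:3repPP} contains a subdivision of $V_{10}$. One minor slip: cutting along a representativity-$\le 1$ curve yields a planar embedding (a $0$-drawing), not merely a $1$-drawing, but this only strengthens the contradiction with $\crn(G)\ge 2$.
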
\printFullDetails{

We will also need information about 1-drawings of $V_{2n}$, for $n\ge 4$.  These are similarly straightforward facts that can be proved by considering $K_{3,3}$'s in $V_{2n}$.

}\begin{lemma}\label{lm:1drawingsV2n}  Let $n\ge 4$ and let $D$ be a 1-drawing of $V_{2n}$.  Then there is an $i$ so that $r_i$ crosses one of $r_{i+n-1}$, $r_{i+n}$, and $r_{i+n+1}$.  \hfill\eop\end{lemma}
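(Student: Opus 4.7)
The plan is to exploit the non-planarity of $K_{3,3}$ via the many $K_{3,3}$-subdivisions of $V_{2n}$. For any three spokes of $V_{2n}$, the union of these spokes and the whole rim is a subdivision of $K_{3,3}$: the three spokes together with the six rim-arcs between their endpoints form a bipartite cubic graph on six vertices, which must be $K_{3,3}$. By the standard reduction to good drawings, we may assume $D$ is good, so its unique crossing $e\times f$ is between two non-adjacent edges. Since $n\ge 4$, if $e$ or $f$ were a spoke $s_k$, we could choose three spokes distinct from $s_k$, and the corresponding $K_{3,3}$-subdivision would inherit no crossings from $D$, contradicting the non-planarity of $K_{3,3}$. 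Hence both $e=r_i$ and $f=r_j$ are rim edges. Let $d$ denote the cyclic distance from $i$ to $j$ in some fixed direction, so that the two rim-arcs into which $r_i, r_j$ split the rim have $d$ and $2n-d$ vertices; the goal is $d\in\{n-1,n,n+1\}$.

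Fix any three spokes $s_a, s_b, s_c$ and let $H$ be the corresponding $K_{3,3}$-subdivision. The induced drawing of $H$ inherits exactly the single crossing $r_i\times r_j$. I claim the branches of $H$ that contain $r_i$ and $r_j$ must be distinct and non-adjacent. Indeed, if they coincided, the self-crossing branch could be rerouted by removing the loop between the two visits to the crossing point, producing a planar drawing of $K_{3,3}$; and if they were distinct but met at a common node of $H$, a local swap at that node of the two portions of the branches between the node and the crossing point would again yield a planar drawing of $K_{3,3}$. Since the six rim branches of $H$ are precisely the rim-arcs between consecutive spoke-endpoints, this distinctness-and-non-adjacency condition is equivalent to each of the two rim-arcs of lengths $d$ and $2n-d$ containing at least two of the six endpoints of $s_a, s_b, s_c$.

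A spoke of $V_{2n}$ has both endpoints inside an arc of length $\ell$ if and only if $\ell\ge n+1$, in which case the number of such spokes equals $\ell-n$. Hence the numbers of spokes lying entirely within the two rim-arcs are $\max(0,\,d-n)$ and $\max(0,\,n-d)$, while $d\notin\{0,2n\}$ guarantees at least one straddling spoke. If $\max(0,\,d-n)\ge 2$, then pairing those two ``interior'' spokes of the longer arc with one straddling spoke yields three spokes whose six endpoints place only one point in the shorter arc, contradicting the displayed condition; symmetrically for $\max(0,\,n-d)\ge 2$. We conclude that $\max(0,\,d-n)\le 1$ and $\max(0,\,n-d)\le 1$, forcing $d\in\{n-1,n,n+1\}$, as required. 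The hardest part is the two rerouting arguments in the middle paragraph: one has to check that the loop-removal (for a self-crossing branch) and the local swap (for an adjacent-branch crossing) strictly reduce the number of crossings without introducing new ones. Both are standard operations in the theory of good drawings.
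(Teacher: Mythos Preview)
Your proof is correct and follows precisely the approach the paper indicates: the paper does not actually prove this lemma, merely remarking that it ``can be proved by considering $K_{3,3}$'s in $V_{2n}$'' and leaving the details to the reader. You have supplied those details accurately, including the key observations that (i) a spoke crossing can be avoided by choosing three of the remaining $n-1\ge 3$ spokes, and (ii) for any triple of spokes the induced $K_{3,3}$ forces the two crossed rim edges into non-adjacent rim-arcs, which your counting argument then converts into the bound $d\in\{n-1,n,n+1\}$.
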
\printFullDetails{
}

\chapter{Bridges}\label{bridges}\printFullDetails{

The notion of a bridge of a subgraph of a graph is a valuable tool that allows us to organize many aspects of this work.  This section is devoted to their definition and an elucidation of their properties that are relevant to us.  Bridges are discussed at length in \cite{TutEncy} and, under the name $J$-components, in \cite{TutCinG}.  

}\begin{definition}\label{df:bridge} Let $G$ be a graph and let $H$ be a subgraph of $G$. 
 \begin{enumerate}
 \item\label{it:isolated} For a set $W$ of vertices of $G$, $\iso W$\index{$\iso W$} consists of the subgraph of $G$ with vertex set $W$ and no edges.
\item An {\em $H$-bridge in $G$\/}\index{bridge}\index{$H$-bridge} is a subgraph $B$ of $G$ such that either $B$ is an edge not in $H$, together with its ends, both of which are in $H$, or $B$ is obtained from a component $K$ of $G-V(H)$ by adding to $K$ all the edges from vertices in $K$ to vertices in $H$, along with their ends in $H$.  
\item For an $H$-bridge $B$ in $G$, a vertex $u$ of $B$ is an {\em attachment of $B$\/}\index{attachment}\index{bridge!attachment} \wording{if $u\in V(H)$; $\att(B)$\index{$att(B)$} denotes} the set of attachments of $B$. 
\item If $B$ is an $H$-bridge, then the {\em nucleus $\Nuc(B)$\index{$\Nuc(B)$} of $B$\/}\index{nucleus}\index{bridge!nucleus} is $B-\att(B)$.
\item For $u,v\in V(G)$, a $uv$-path $P$ in $G$ is {\em $H$-avoiding\/}\index{avoiding}\index{$H$-avoiding} if $P\cap H\subseteq \isouv$. 
 \item  \minor{Let $A$ and $B$ be either subsets of $V(G)$ or subgraphs of $G$.  An {\em $AB$-path\/}\index{$AB$-path}\index{path}\index{path!$AB$-path} is a path with an end in each of $A$ and $B$ but otherwise disjoint from $A\cup B$.  If, for example, $A$ is the single vertex $u$, we write $uB$-path for $\{u\}B$-path.}
 
\end{enumerate}
\end{definition}\printFullDetails{

We will be especially interested in the bridges of a cycle.

}\begin{definition}  Let $C$ be a cycle in a graph $G$ and let $B$ and $B'$ be distinct $C$-bridges.
\begin{enumerate}
\item The {\em residual arcs\/}\index{residual arc}\index{bridge!residual arc} of $B$ in $C$ are the $B$-bridges in $C\cup B$; if $B$ has at least two attachments, then these are \wording{the maximal $B$-avoiding subpaths of $C$.}
\item  The $C$-bridges $B$ and $B'$ {\em do not overlap\/}\index{overlap}\index{bridge!overlap} if all the attachments of $B$ are in the same residual arc of $B'$; otherwise, they {\em overlap\/}.
\item The {\em overlap diagram\/}\index{overlap diagram}\index{bridge!overlap diagram} $OD(C)$\index{$OD(C)$} of $C$ has as its vertices the $C$-bridges; two $C$-bridges are adjacent in $OD(C)$ precisely when they overlap.
\item The cycle $C$ has \wording{{\em bipartite overlap diagram}\index{bipartite overlap diagram}\index{overlap diagram!bipartite}\index{bridge!bipartite overlap diagram}, denoted} {\em BOD\/}\index{BOD}\index{$BOD$}\wording{,} if $OD(C)$ is bipartite; otherwise, $C$ has \wording{{\em non-bipartite overlap diagram}, denoted} {\em NBOD\/}\index{NBOD}\index{$NBOD$}.
\end{enumerate}
\end{definition}\printFullDetails{

The following is easy to see and well-known.

}\begin{lemma}\label{lm:overlapClaw}  Let $C$ be a cycle in a graph $G$.
The distinct $C$-bridges $B$ and $B'$ overlap if and only if either:
\begin{enumerate}\item\label{it:skew} there are attachments $u,v$ of $B$ and $u',v'$ of $B'$ so that the vertices $u,u',v,v'$ are distinct and occur in this order in $C$ (in which case $B$ and $B'$ are {\em skew $C$-bridges\/}\index{skew bridges}\index{bridge!skew}); or
\item\label{it:equivalent} $\att(B)=\att(B')$ and $|\att(B)|=3$ (in which case $B$ and $B'$ are {\em 3-equivalent\/}\index{equivalent}\index{3-equivalent bridges}\index{bridge!equivalent}).\hfill\eop
\end{enumerate}
\end{lemma}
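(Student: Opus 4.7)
I will verify each direction separately. The backward implication is a direct check that each of the two listed configurations forces an attachment of $B$ to lie outside some residual arc of $B'$, and the forward implication proceeds by a case analysis on whether $\att(B)\setminus\att(B')$ is empty.

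For the backward direction, suppose first that $u,u',v,v'$ witness skewness in this cyclic order on $C$. Then $u$ and $v$ lie in different components of $C-\{u',v'\}$, hence in different residual arcs of $B'$; so $\att(B)$ is not contained in a single residual arc of $B'$, and $B,B'$ overlap. If instead $\att(B)=\att(B')=\{x,y,z\}$, then the three residual arcs of $B'$ are the three $C$-arcs joining consecutive elements of $\{x,y,z\}$; each such arc contains only two of $\{x,y,z\}$, so no single residual arc of $B'$ contains all of $\att(B)$.

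For the forward direction, assume the distinct bridges $B,B'$ overlap. First suppose there exists $u\in\att(B)\setminus\att(B')$ (or, by symmetry, some vertex of $\att(B')\setminus\att(B)$). Then $u$ lies in the interior of a unique residual arc of $B'$, whose endpoints are some $b_1,b_2\in\att(B')$; since $\att(B)$ is not contained in this arc, some $v\in\att(B)$ lies outside it, and in particular $v\notin\{b_1,b_2\}$, so $u,b_2,v,b_1$ are four distinct vertices appearing in this cyclic order on $C$, witnessing the skew case. Otherwise $\att(B)=\att(B')$; since $|\att(B)|\le 2$ would make every element of $\att(B)$ an endpoint of every residual arc of $B'$ (contradicting overlap), we have $|\att(B)|\ge 3$. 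When $|\att(B)|=3$ this gives 3-equivalence; when $|\att(B)|\ge 4$, any four elements of $\att(B)$ taken in cyclic order on $C$ yield a skew configuration by alternating the assignment between $B$ and $B'$.

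The main obstacle is largely notational: shared attachments of $B$ and $B'$ lie on the boundary of two residual arcs of $B'$, so one must choose a non-shared attachment (in the first forward subcase) to cleanly exhibit the skew configuration, and the degenerate cases $|\att(B)|\le 2$ in the equal-attachment subcase must be explicitly ruled out. Beyond this bookkeeping, the argument is pigeonhole on the cyclic arrangement of attachments around $C$.
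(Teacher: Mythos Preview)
Your argument is correct. The paper does not prove this lemma at all---it is introduced as ``easy to see and well-known'' and closed immediately with a tombstone---so there is no approach in the paper to compare yours against.

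One small remark: since overlap is \emph{defined} asymmetrically (attachments of $B$ relative to residual arcs of $B'$), your parenthetical ``by symmetry'' in the forward direction quietly uses that the overlap relation is symmetric. That is standard and trivially verified, but if you prefer to stay strictly within the given definition, the leftover case $\att(B)\subsetneq\att(B')$ can be handled directly: overlap then forces two elements of $\att(B)$ to be non-consecutive in the cyclic order of $\att(B')$ around $C$, and the $\att(B')$-vertices separating them supply the skew quadruple.
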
\printFullDetails{

The following concept plays a central role through the next few sections of this work.

}\begin{definition}  Let $C$ be a cycle in a graph $G$ and let $B$ be a $C$-bridge.  Then $B$ is a {\em planar $C$-bridge\/}\index{planar $C$-bridge}\index{bridge!planar $C$-bridge} if $C\cup B$ is planar. Otherwise, $B$ is a {\em non-planar $C$-bridge\/}\index{non-planar $C$-bridge}.  \end{definition}\printFullDetails{

\dragominor{Note} that there is a difference between $C\cup B$ being planar and, in some embedding of $G$ in $\pp$, $C\cup B$ being {\em plane\/}, that is, embedded in some closed disc in $\pp$.   If $C\cup B$ is plane, then $B$ is planar, but the converse need not hold.

\wording{We now present the} major embedding and drawing results that we shall use.  The \dragominor{theorem is due to Tutte}, while the corollary is the form that we shall frequently use.

}\begin{theorem}\label{th:TutteOne}\cite[Theorems XI.48 and XI.49]{TutEncy}  Let $G$ be a graph.
\begin{enumerate}
\item\label{it:TutteOneA} $G$ is planar if and only if \minor{either $G$ is a forest or} there is a  cycle $C$ of $G$ having BOD and all $C$-bridges planar.
\item\label{it:TutteOneB} $G$ is planar if and only if, for every cycle $C$ of $G$, $C$ has BOD. \hfill\eop
\end{enumerate}
\end{theorem}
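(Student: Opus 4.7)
The plan is to establish (1) and reduce (2) to it as much as possible.

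For (1), the forward direction follows from the Jordan curve theorem. In a planar embedding of $G$, the cycle $C$ is a simple closed curve separating the plane into two open regions; each connected $C$-bridge lies entirely in one of them, and two bridges in the same region cannot have interlacing attachments on $C$ without forcing a crossing. Hence the ``inside/outside'' partition is a proper $2$-coloring of $OD(C)$, establishing BOD. Each subgraph $C\cup B$ inherits the planar embedding and is therefore planar.

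For the converse of (1), I would build a planar embedding explicitly. Draw $C$ as a circle in the plane. The bipartition $X,Y$ of $OD(C)$ determines which bridges go inside $C$ and which outside. For a single bridge $B\in X$, the planarity of $C\cup B$ together with connectivity of $B$ lets me embed $B$ with $C$ on its outer face and place it inside the disk bounded by $C$. To combine multiple non-overlapping $X$-bridges, non-overlap means $\att(B')$ lies in a single arc of $C$ determined by $\att(B)$; this nesting allows me to embed the bridges successively in disjoint sub-disks of the interior without introducing crossings. The symmetric construction handles $Y$ outside $C$.

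For (2), the forward direction is immediate from (1). For the converse, I would argue contrapositively: assume $G$ is non-planar, and exhibit a cycle of $G$ with NBOD. By Kuratowski, $G$ contains a subdivision $K$ of $K_5$ or $K_{3,3}$. A direct check shows a suitable cycle $C$ of $K$ has NBOD in $K$: for $K_5$ with $C=(v_1,\ldots,v_5,v_1)$, the five chords yield the $5$-cycle $13$--$24$--$35$--$14$--$25$--$13$ in $OD_K(C)$; the $K_{3,3}$ case is analogous using a Hamilton $6$-cycle. To lift NBOD to $G$, each $K$-bridge $B$ of $C$ sits inside a $G$-bridge $B^\star$ with $\att(B)\subseteq \att(B^\star)$, so overlap is monotone under this containment; if the $B_i^\star$ are all distinct then $C$ inherits NBOD in $G$.

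The main obstacle is that two distinct $K$-bridges of $C$ may merge into a common $G$-bridge, via a $C$-avoiding path in $G$. I would circumvent this by choosing $K$ to be \emph{minimal}: any $C$-avoiding path $Q$ joining the nuclei of two distinct $K$-bridges of $C$ can be absorbed into $K$, either producing a strictly smaller Kuratowski subdivision (contradicting minimality) or producing a new cycle $C'$ in the enlarged graph whose bridge structure still yields NBOD in $G$. The case analysis relies on the skew attachment pattern guaranteed by NBOD of $C$ in $K$: the two paths of $Q$'s residual arcs together with the interlaced attachments of the merged bridges force an odd overlap cycle to persist, either on $C$ itself or on $C'$. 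Making this re-routing argument go through cleanly, and verifying that the resulting overlap cycle is genuinely odd in $G$, will be the delicate part of the proof.
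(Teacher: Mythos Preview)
The paper does not prove this theorem; it is quoted from Tutte's book and closed immediately with the end-of-proof symbol, so there is nothing here to compare your argument against.

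On its own merits: your treatment of (1) is standard and essentially fine. For the converse of (2), the obstacle you flag is the real content, and your minimality patch is not yet a proof. Concretely, let $K$ be a $K_5$-subdivision with Hamilton cycle $C=(v_1,\dots,v_5)$ in which only the $v_1v_3$- and $v_2v_4$-branches are subdivided (once each, with middle vertices $p,q$), and set $G=K+pq$. The $C$-bridge $B^\star$ containing $p$ and $q$ has $\att(B^\star)=\{v_1,v_2,v_3,v_4\}$, and one checks that $OD_G(C)$ is the $4$-cycle on $B^\star,\,v_3v_5,\,v_4v_1,\,v_5v_2$, which is bipartite; so this $C$ fails outright. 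In this particular $G$ there does happen to be a smaller Kuratowski subgraph (the $K_{3,3}$ on $\{p,v_2,v_4\}$ versus $\{q,v_1,v_3\}$), and its Hamilton $6$-cycle does have NBOD in $G$; but turning that observation into a general argument --- that any $C$-avoiding path joining two $K$-chords forces a strictly smaller Kuratowski subgraph whose preferred cycle survives the same merging test --- is a genuine case analysis that you have deferred, not carried out. Tutte's own proof avoids Kuratowski altogether, working instead through peripheral cycles in the $3$-connected pieces; if you want a self-contained argument, that route is cleaner than repairing the contrapositive.
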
\printFullDetails{

For the corollary, we need the following important notion.

}\begin{definition}  Let $H$ be a subgraph of a graph $G$ and let $D$ be a drawing of $G$ in the plane. Then {\em 
$H$ is clean\index{clean} in $D$\/} if no edge of $H$ is crossed in $D$.\minorrem{(text removed)}
\end{definition}\printFullDetails{

}\begin{corollary}\label{co:TutteTwo}  Let $G$ be a graph and let $C$ be a cycle with BOD.  If there is a $C$-bridge $B$ so that every other $C$-bridge is planar and there is a 1-drawing of $C\cup B$ in which $C$ is clean, then $\ucr(G)\le 1$. \end{corollary}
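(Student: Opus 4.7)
The plan is to planarize the single crossing of the given drawing by introducing a degree-$4$ vertex, apply Theorem \ref{th:TutteOne}(\ref{it:TutteOneA}) to the resulting graph, and then recover a $1$-drawing of $G$ by undoing the subdivision.

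Concretely, in the given $1$-drawing $D$ of $C \cup B$ with $C$ clean, the at most one crossing must occur between two edges $e_1, e_2$ of $B$, since no edge of $C$ is crossed. I would form $G^+$ from $G$ by subdividing both $e_1$ and $e_2$ at their crossing point with a single new vertex $v^*$ of degree $4$; in $G^+$ the $C$-bridge $B$ is replaced by an enlarged $C$-bridge $B^+$ containing $v^*$ as an interior vertex, while all other $C$-bridges of $G$ remain $C$-bridges of $G^+$. Then $C$ has BOD in $G^+$: subdividing edges interior to a bridge leaves every bridge's attachment set and residual arcs unchanged, so the overlap relation on the $C$-bridges of $G^+$ coincides with that on the $C$-bridges of $G$. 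Every $C$-bridge of $G^+$ is planar as well: the non-$B^+$ bridges are planar by hypothesis, while $B^+$ is planar because $D$ with $v^*$ placed at the crossing is itself a planar drawing of $C \cup B^+$. Theorem \ref{th:TutteOne}(\ref{it:TutteOneA}) therefore yields that $G^+$ is planar.

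Finally, in any plane embedding of $G^+$ I would \emph{uncross} at $v^*$: delete $v^*$ and reconnect its four stubs to reform the two edges $e_1$ and $e_2$, with any resulting intersection placed at the former site of $v^*$. Depending on the cyclic order of the four edges at $v^*$ in the plane embedding, this produces either a single crossing between $e_1$ and $e_2$ or none at all, but in either case a drawing of $G$ with at most one crossing, yielding $\ucr(G) \le 1$. I do not foresee a substantive obstacle; the main technical point is verifying that BOD is preserved when passing from $G$ to $G^+$, which is immediate since $v^*$ lies interior to $B^+$ and therefore affects no attachment set on $C$.
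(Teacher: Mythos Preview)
Your proof is correct and follows essentially the same approach as the paper: introduce a degree-$4$ vertex at the crossing to form a modified graph, verify that $C$ still has BOD and all its bridges are planar there, conclude planarity via Theorem~\ref{th:TutteOne}, and then undo the vertex insertion to recover a $1$-drawing of $G$. Your citation of part~(\ref{it:TutteOneA}) of Theorem~\ref{th:TutteOne} is in fact the apt one given the hypotheses you verify (one cycle with BOD and all its bridges planar); the paper cites part~(\ref{it:TutteOneB}) at this step.
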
\printFullDetails{

\begin{cproof}  Let $\times$ denote the crossing in a 1-drawing $D$ of $C\cup B$ in which $C$ is clean.  As $C$ is not crossed in $D$, $\times$ is a crossing of two edges of $B$.  Let $G^\times$ denote the graph obtained from $G$ by deleting those two edges and adding a new vertex adjacent to the four ends of the deleted edges.  Then $C$ has BOD in $G^\times$ and every $C$-bridge in $G^\times$ is planar.  By \wording{Theorem \ref{th:TutteOne} (\ref{it:TutteOneB})}, $G^\times$ is planar.  Any planar embedding of $G^\times$ easily converts to a 1-drawing of $G$.  \end{cproof}

We will also need the following result.

}\begin{lemma}[Ordering Lemma]\label{lm:orderingLemma}
Let $G$ be a graph, $C$ a cycle in $G$, $\bbb$ a set of
non-overlapping $C$-bridges. 
Let $P$ and $Q$ be disjoint paths in $C$, with $V(C) = V(P\cup
Q)$. Suppose that each 
$B\in\bbb$ has at least one attachment in each of $P$ and
$Q$. Let $P_B$ and $Q_B$ be the minimal subpaths of $P$ and $Q$,
respectively, containing $P\cap B$ and $Q\cap B$, respectively. Then:
\begin{enumerate}
\item the $\{P_B\}$ and $\{Q_B\}$ are pairwise internally disjoint and there
is an ordering $$(B_1,\ldots,B_k)$$ of $\bbb$ so that $$P=P_{B_1}\ldots P_{B_2}\ldots P_{B_i}
\ldots P_{B_k}\qquad \textrm{and}\qquad Q=Q_{B_1}\ldots Q_{B_2}\ldots Q_{B_i} \ldots Q_{B_k}\,;$$ and
\item if,
for each $B, B' \in \bbb$, 
%$(P\cup Q)\cap\att(B) \neq  (P\cup Q)\cap\att(B')$, 
$\att(B) \neq \att(B')$, 
the order is
unique up to inversion.
\end{enumerate}
\end{lemma}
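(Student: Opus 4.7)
The plan is to exploit the residual-arc characterization of non-overlap, together with the hypothesis that every bridge in $\mathcal B$ has attachments on both $P$ and $Q$, to organize $\mathcal B$ into a linearly ordered configuration. First, orient $P$ from one endpoint to the other, and let $Q$ receive the orientation that will make the final concatenations in parts (1) and (2) agree. Since $V(C) = V(P \cup Q)$ with $P, Q$ disjoint, exactly two edges of $C$ join $P$ to $Q$; denote them $e_1$ and $e_2$. For each $B \in \mathcal B$, let $p_B^-$ and $p_B^+$ be the first and last attachments of $B$ along $P$, and $q_B^-, q_B^+$ the analogous extremes along $Q$, so $P_B$ is the $p_B^- p_B^+$-subpath of $P$ and $Q_B$ is the $q_B^- q_B^+$-subpath of $Q$.

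The structural core of the argument is this: for distinct $B, B' \in \mathcal B$, non-overlap forces every attachment of $B'$ into a single residual arc $R$ of $B$. Because $B$ itself has attachments in both $P$ and $Q$, each residual arc of $B$ is either a subpath of $P$, a subpath of $Q$, or one of the two ``transition'' arcs, $R_1$ (from $p_B^+$ through $e_1$ to $q_B^-$) and $R_2$ (from $q_B^+$ through $e_2$ to $p_B^-$); only $R_1$ and $R_2$ meet both $P$ and $Q$. Since $B'$ has attachments on both sides, $R \in \{R_1, R_2\}$. If $R = R_1$, containment yields $p_B^+ \le p_{B'}^-$ along $P$ and $q_{B'}^+ \le q_B^-$ along $Q$ (in its natural orientation); the case $R = R_2$ is symmetric. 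Choose the orientation of $Q$ to be the reverse of the natural one, so that $R = R_1$ is equivalent to both $p_B^+ \le p_{B'}^-$ on $P$ and $q_B^+ \le q_{B'}^-$ on $Q$.

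Now define $B \prec B'$ precisely when $R = R_1$. The structural step shows that $\prec$ is total (either $B \prec B'$ or $B' \prec B$), and the inequalities $p_B^+ \le p_{B'}^- \le p_{B'}^+ \le p_{B''}^-$ (and similarly on $Q$) give transitivity, so $\prec$ is a total preorder. Any linear extension yields an ordering $(B_1, \ldots, B_k)$ of $\mathcal B$ with $p_{B_i}^+ \le p_{B_{i+1}}^-$ on $P$ and $q_{B_i}^+ \le q_{B_{i+1}}^-$ on $Q$; this gives pairwise internal disjointness of $\{P_{B_i}\}$ and $\{Q_{B_i}\}$ and the concatenations claimed in part (1). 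For part (2), assume $\att(B) \ne \att(B')$ for distinct $B, B' \in \mathcal B$. Then $B \prec B'$ and $B' \prec B$ simultaneously would force $p_B^- = p_B^+ = p_{B'}^- = p_{B'}^+$ and $q_B^- = q_B^+ = q_{B'}^- = q_{B'}^+$, contradicting $\att(B) \ne \att(B')$. Hence $\prec$ is a strict total order, and the ordering is determined up to reversing the initial orientation of $P$, which reverses the entire sequence, i.e., the order is unique up to inversion.

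The main obstacle is the structural step, specifically showing that $R_1$ and $R_2$ are the only residual arcs of $B$ that can house a bridge $B' \in \mathcal B$. The hypothesis that every element of $\mathcal B$ has attachments in both $P$ and $Q$ is essential here: without it, bridges could be nested inside $P_B$ or $Q_B$ alone, and the coordinated linear ordering on the two sides would fail. Some boundary care is also required when attachments coincide (for instance, $p_B^+ = p_{B'}^-$), but because residual arcs are open subpaths of $C$ with $B$-attachments as endpoints, such a shared vertex serves cleanly as the meeting point of $P_B$ and $P_{B'}$ and the remaining inequalities go through verbatim.
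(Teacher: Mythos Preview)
Your proof is correct and takes a genuinely different route from the paper's. The paper argues in two separate stages: first it shows that $P_B$ and $P_{B'}$ sharing an edge would produce either skew attachments or $3$-equivalence; then it argues directly that if the orders on $P$ and $Q$ disagree for some pair $B,B'$, one of $P_B=P_{B'}$ or $Q_B=Q_{B'}$ must collapse to a single vertex, after which the orders can be exchanged. Uniqueness is treated by yet another short case analysis.

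Your approach unifies all of this through the residual-arc characterization. The single observation that a bridge attaching on both $P$ and $Q$ can only sit inside one of the two ``transition'' residual arcs $R_1(B),R_2(B)$ simultaneously yields internal disjointness, the compatible ordering on both sides, and (via the tie analysis) uniqueness. This is cleaner and more conceptual; the paper's argument is more hands-on but requires no setup of orientations. One minor terminological wobble: the relation you build is not literally a total preorder (it is irreflexive), but what you actually use---totality on distinct pairs plus transitivity---is exactly what you verify, so the argument goes through. Also, residual arcs in the paper's sense are closed (they include their $B$-attachment endpoints), not open; your inequalities are already non-strict, so this does not affect anything.
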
\printFullDetails{

\begin{cproof}
Suppose $B, B' \in \bbb$ are such that $P_B$ and $P_{B'}$ have a
common edge $e$. Then $B$ and $ B'$ have attachments $x_1, x_2, x_1', x_2'$
in both components of $P-e$ and attachments $x,x'$ in $Q$. If 
$|\{ x_1,x_1',x_2,x_2',x,x'\}|=3$, then they have $3$ common
attachments and so overlap, a contradiction. Otherwise, some $y\in\{x_1', x_2',
x'\}$ is not in $ \{x_1, x_2, x\}$.  Then $y$ is in one
residual arc $A$ of $x_1, x_2, x$ in $C$ and not both of the other two
of $\{ x_1', x_2', x'\}$ are in $A$. So again $B, B'$ overlap, a contradiction from which we conclude $P_B$ and $P_{B'}$ are internally disjoint.

Let $C = P^{-1} R_1 Q R_2$. Suppose $B, B' \in\bbb$ are such that $P =
\ldots P_B \ldots P_{B'} \ldots$ and $Q = \ldots Q_{B'} \ldots$ $ Q_B
\ldots$. We claim that either $P_B = P_{B'}$ or $Q_B = Q_{B'}$.
If not, then there is an attachment $u_P$ of one of $B$ and $B'$ in $P$ that is not an
attachment of the other and likewise an attachment
$u_Q$ of one of $B$ and $B'$ in  $Q$ that is not an attachment of the other.
Note that $u_P$ and $u_Q$ are not attachments of the same one of $B$ and $B'$, as otherwise the orderings in $P$ and $Q$ imply $B$ and $B'$ overlap.  

For the sake of definiteness, we assume $u_P\in \att(B)$, so that $u_Q\in \att(B')$.  \dragominorrem{(text removed)\ }%By way of contradiction, assume $u_P \in \att(B)$. 
Let $w_P\in\att(B')\cap P$ and let $w_Q\in \att(B)\cap Q$.  The ordering of $B$ and $B'$ in $P$ and $Q$ imply that, in $C$, these vertices appear in the cyclic order $w_P,u_P,u_Q,w_Q$.  Since $u_P,u_Q,w_P,w_Q$ are all different,   we conclude that $B$ and $B'$ overlap on $C$, a contradiction.

It follows that, by symmetry, we may assume $P_B = P_{B'}$. As $P_B$ and $P_{B'}$ are internally disjoint, they are just
a vertex. So if $P=\ldots P_B \ldots P_{B'} \ldots$ and $Q = \ldots
Q_{B'} \ldots Q_B \ldots$, we may exchange $P_B$ and $P_B'$, to see that 
$P=\ldots P_{B'} \ldots P_{B} \ldots$ 
and $Q = \ldots
Q_{B'} \ldots Q_B \ldots$\ . We conclude there is an ordering of $\bbb$ as
claimed.

Let $(B_1,\ldots,B_k)$ and 
$(B_{\pi(1)},\ldots,B_{\pi(k)})$ be distinct orderings so that $P=P_{B_1}, \ldots, $ $ P_{B_k}$,
$P = P_{B_{\pi(1)}},\ldots,P_{B_{\pi(k)}}$, 
$Q = Q_{B_1} \ldots Q_{B_k}$ and 
$Q = Q_{B_{\pi(1)}},\ldots,Q_{B_{\pi(k)}}$. There exist $i<j$ so
that
$\pi(i) > \pi(j)$.  We may choose the labelling ($P$ versus $Q$) so that the preceding argument implies that $P_{B_i} = P_{B_j} = u$. If $Q_{B_i} = Q_{B_j}$,
then $Q_{B_i}= Q_{B_j} = w$ and $\att(B_i) = \att(B_j)$, which is (2).  Therefore, we may assume there is an attachment $y$ of one of $B_i$
and $B_j$ that is not an attachment of the other. Let $z$ be an
attachment of the other. Since $Q$ is either
$(Q_1,y,Q_2,z,Q_3)$ or $(Q_3^{-1},z,Q_2^{-1},y,Q_1^{-1})$, the only
possibility is that $\pi$ is the inversion $(k,k-1,\ldots,1)$. 
\end{cproof}

}

\chapter{Quads have BOD}\printFullDetails{\label{sec:prebox}

There are two main results in this section.  One is to show that each graph in the set $\mathcal T(\mathcal S)$ is 2-crossing-critical and the other, rather more challenging and central to the characterization of 3-connected 2-crossing-critical graphs with a subdivision of $V_{10}$,  is to show that all $H$-quads and some $H$-hyperquads have BOD.   We start with the definition of quads and hyperquads.  

}\begin{definition}  Let $G$ be a graph and $\hvfg$.  
\begin{enumerate}
\item \minor{For a path $P$ and distinct vertices $u$ and $v$ in $P$, $\cc{uPv}$\index{$[{uPv}]$} denotes the $uv$-subpath of $P$, while $\co{uPv}$\index{$[{uPv}\!>$} denotes $\cc{uPv}-v$, $\oc{uPv}$\index{$<\!{uPv}]$} is $\cc{uPv}-u$, and $\oo{uPv}$\index{$<\!{uPv}\!>$} is $\oc{uPv}-v$.}
\item \minor{When concatenating a $uv$-path $P$ with a $vw$-path $Q$, we may write either $PQ$\index{$PQ$} or $\cc{uPvQw}$\index{$\cc{uPvQw}$}. If $u=w$ and $P$ and $Q$ are internally disjoint, then both $PQ$ and $\cc{uPvQu}$ are cycles.  The reader may have to choose the appropriate direction of traversal of either $P$ or $Q$ in order to make the concatenation meaningful.}  
\item \minor{If $L$ is a subgraph of $G$ and $P$ is a path in $G$, then $L-\oo{P}$\index{$L-{\oo{P}}$} is obtained from $L$ by deleting all the edges and interior vertices of $P$. (In particular, this includes the case $P$ has length 1, in which case $L-\oo{P}$ is just $L$ less one edge.)}
\item  For $i=0,1,2,3,4$, the \wording{$H$-quad $Q_i$\index{quad}\index{$H$-quad} is the cycle $r_i\,s_{i+1}\,r_{i+5}\,s_i$}. 
\item For $i=0,1,2,3,4$, the {\em $H$-hyperquad\/}\index{hyperquad}\index{$H$-hyperquad} $\bQ_i$ is the cycle $(Q_{i-1}\cup Q_i)-\oo{s_i}$.
\item The {\em M\"obius bridge of $Q_i$\/}\index{M\"obius bridge}\index{bridge!M\"obius} is the $Q_i$-bridge $M_{Q_i}$ in $G$  such that $H\subseteq Q_i\cup M_{Q_i}$.
\item The {\em M\"obius bridge of $\bQ_i$\/} is the $\bQ_i$-bridge $M_{\bQ_i}$ in $G$ for which $(H-\oo{s_i})\subseteq \bQ_i\cup M_{\bQ_i}$.
\end{enumerate}
\end{definition}\printFullDetails{

The following notions will help our analysis.

}\begin{definition}\label{df:Hclose}  Let $G$ be a graph, $\hvng$, $n\ge 3$, and let $K$ be a subgraph of $G$.  Then:
\begin{enumerate}\item a {\em claw\/}\index{claw} is a subdivision of $K_{1,3}$ with {\em centre\/}\index{centre}\index{claw!centre} the vertex of degree 3 and {\em talons\/}\index{talon}\index{claw!talon} the vertices of degree 1;
\item an {\em $\{x,y,z\}$-claw\/}\index{$\{x,y,z\}$-claw} is a claw with talons $x$, $y$, and $z$;
\item an {\em open $H$-claw\/}\index{open $H$-claw} is the subgraph of $H$ obtained from a claw in $H$ consisting of the three $H$-branches incident with an $H$-node, which is the {\em centre\/} of the open $H$-claw, but with the three talons deleted;
\item $K$ is {\em $H$-close\/}\index{close}\index{$H$-close} if $K\cap H$ is contained either in a closed $H$-branch or in a open $H$-claw.
\item  A cycle $C$ in $K$ is a {\em $K$-prebox\/}\index{prebox}\index{$K$-prebox} if, for each edge $e$ of $C$, $K-e$ is not planar.
\end{enumerate}
\end{definition}\printFullDetails{

The following is elementary but not trivial.

}\begin{lemma}\label{lm:closeIsPrebox}  Let $C$ be an $H$-close cycle, for some $H\topol V_6$.  Then $C$ is a $(C\cup H)$-prebox.\end{lemma}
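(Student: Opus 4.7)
My plan is to show that, for every edge $e$ of $C$, the graph $(C\cup H)-e$ contains a subdivision of $V_6$, which implies nonplanarity. If $e\notin E(H)$, then $(C\cup H)-e\supseteq H$, so $H$ itself provides such a subdivision. Henceforth assume $e\in E(H)\cap E(C)$; two cases arise from the definition of $H$-closeness.

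\textbf{Branch case.} If $C\cap H\subseteq \beta$ for a closed $H$-branch $\beta$ with $H$-nodes $u,v$, then $e\in\beta$; letting $p,q$ be the endpoints of $e$, the subgraph $(\beta-e)\cup(C-e)$ is connected (since $\beta-e$ decomposes as $[u,p]\cup[q,v]$ and $C-e$ is a $pq$-path) and contains both $u$ and $v$, so it contains a $uv$-path $P$. Because $V(P)\subseteq V(\beta)\cup V(C)$ and $V(C)\cap V(H)\subseteq V(\beta)$, one has $V(P)\cap V(H-\beta)\subseteq\{u,v\}$; thus $P$ is internally disjoint from $H-\beta$, and $(H-\beta)\cup P$ is the required $V_6$-subdivision in $(C\cup H)-e$.

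\textbf{Claw case.} If $C\cap H$ lies in the open $H$-claw at a node $a$ (with $V_6$-neighbors $x,y,z$ of $a$ and remaining $V_6$-nodes $b,c$), then $e$ belongs to one of the three branches $\beta_x,\beta_y,\beta_z$ of $H$ incident with $a$; WLOG $e\in\beta_x$, with endpoints $p,q$. Setting $T=\beta_x\cup\beta_y\cup\beta_z$, the six remaining $H$-branches form a subdivided $K_{2,3}$ between $\{b,c\}$ and $\{x,y,z\}$ that meets $T\cup C$ only at $\{x,y,z\}$ (because $V(C)\cap V(H)\subseteq V(T)\setminus\{x,y,z\}$). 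Hence it suffices to exhibit, in $(T\cup C)-e$, three internally disjoint paths from a common vertex $u$ to $x$, $y$, and $z$, one to each. The natural attempt, $u=a$ with paths $\beta_y$, $\beta_z$, and $[a,p]_{\beta_x}\cup(C-e)\cup[q,x]_{\beta_x}$, succeeds whenever the interior of $C-e$ meets $V(H)$ only inside $V(\beta_x)$.

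When that attempt fails, I plan to replace $u$ by an extremally-chosen $C$-vertex $w\in V(C)\cap(\operatorname{int}(\beta_y)\cup\operatorname{int}(\beta_z))$ -- specifically, the last $H$-vertex of $C-e$ (traversed from $p$ to $q$) lying in that set -- and, WLOG $w\in\operatorname{int}(\beta_y)$, use the substitute paths $[w,y]_{\beta_y}$, $[w,a]_{\beta_y}\cup\beta_z$, and $[w,q]_{C-e}\cup[q,x]_{\beta_x}$; the extremality of $w$ prevents the sub-arc $[w,q]_{C-e}$ from reintroducing $H$-vertices in $\operatorname{int}(\beta_y)\cup\operatorname{int}(\beta_z)$, which is the key input for internal disjointness of the three paths. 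The main technical obstacle is the residual subcases in which the $H$-vertex of $C-e$ immediately preceding $q$ is $a$ itself or lies in $\operatorname{int}(\beta_x)$: in those cases I plan to retain $u=a$ and shorten the $ax$-path to $[a,q']_{C-e}\cup[q',x]_{\beta_x}$ for an appropriate $C$-vertex $q'$ close to $q$ along $C-e$, using the cyclic structure of $C$ to keep that path internally disjoint from $\beta_y$ and $\beta_z$. Verifying that these subcases exhaust all remaining configurations -- relying on the hypothesis that, in the claw case, $C\cap H$ is not confined to any single closed branch -- is where the careful bookkeeping lies.
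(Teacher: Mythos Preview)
Your branch case is fine and matches the paper's argument. The claw case, however, is where you diverge from the paper and where your plan becomes unnecessarily tangled.

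The paper's key move, which you are missing, is this: rather than using the entire path $C-e$ as an arm of the new claw, extract from $C-e$ a single \emph{$H$-avoiding} subpath $P$ whose two ends lie in different components of $Y-e$ (where $Y$ is the open claw). Such a $P$ exists because every $H$-vertex of $C-e$ lies in $Y$ by $H$-closeness, and the endpoints $p,q$ of $e$ lie in different components of $Y-e$; so somewhere along $C-e$ there are consecutive $H$-vertices in different components, and the segment between them is the desired $P$. Now $(Y-e)\cup P$ contains an $\{x,y,z\}$-claw: if the $Y$-end of $P$ on the $a$-side lies in $\beta_y$, say, take that vertex as the new centre, with arms along $\beta_y$ to $y$, along $\beta_y$ to $a$ then $\beta_z$ to $z$, and along $P$ then $\beta_x$ to $x$. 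Because $P$ is $H$-avoiding, these arms are automatically internally disjoint --- no case analysis is needed.

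Your approach instead tries to route the entire $C-e$ through the claw, which forces you to worry about $C-e$ revisiting $\beta_y$, $\beta_z$, or $a$. Your ``extremal $w$'' fix handles the first two, but (as you partly recognise) the sub-arc $[w,q]_{C-e}$ can still pass through $a$, which lies on your second arm $[w,a]_{\beta_y}\cup\beta_z$; your proposed fallback to $u=a$ with a shortened path is underspecified and would need yet more subcases. All of this bookkeeping evaporates once you replace $C-e$ by an $H$-avoiding subpath.
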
\printFullDetails{

\begin{cproof}  For $e\in E(C)$, if $e\notin H$, then evidently $(C\cup H)-e$ contains $H$, which is a $V_6$; therefore $(C\cup H)-e$ is not planar.  So suppose $e\in H$.   Since $C$ is $H$-close, $C\cap H$ is contained in either a closed $H$-branch $b$ or an open $H$-claw $Y$.  There is an $H$-avoiding path $P$ in $C-e$ having ends in both components of either $b-e$ or $Y-e$. In the former case,  $(H-e)\cup P$, and hence $(C\cup H)-e$, contains a $V_6$.  In the latter case,  $(Y-e)\cup P$ contains a different claw that has the same talons as $Y$, so again $(H-e)\cup P$, and $(C\cup H)-e$, contains a $V_6$. \end{cproof}

}\begin{lemma}\label{lm:preboxClean} Let $K$ be a graph  and  $C$ a cycle of $K$.  If $C$ is a $K$-prebox, then, in any 1-drawing of $K$, $C$ is clean.\end{lemma}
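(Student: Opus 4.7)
The plan is to argue by contradiction, exploiting the fact that a 1-drawing (by Definition \ref{df:tile}(5)) has at most one crossing in total. Suppose $D$ is a 1-drawing of $K$ in which some edge $e$ of $C$ is crossed. Since the total number of crossings in $D$ is at most one, $e$ must be involved in the unique crossing of $D$. Deleting $e$ from $D$ therefore removes that crossing and leaves a crossing-free drawing of $K-e$ in the plane, so $K-e$ is planar.

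This contradicts the assumption that $C$ is a $K$-prebox, which requires $K-e$ to be non-planar for every edge $e$ of $C$. Hence no edge of $C$ is crossed in any 1-drawing $D$ of $K$, and $C$ is clean in $D$. There is no real obstacle to overcome here: the statement is essentially the contrapositive of the defining property of a prebox, packaged to be applied (via Corollary \ref{co:TutteTwo}) to subsequent structural arguments about cycles with BOD in 2-crossing-critical graphs.
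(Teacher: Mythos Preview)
Your proof is correct and essentially the same as the paper's: both observe that since $K-e$ is non-planar for every $e\in C$, the unique crossing of any 1-drawing must survive in $K-e$, so $e$ itself is not crossed. The paper phrases this directly rather than by contradiction, but the content is identical.
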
\printFullDetails{

\begin{cproof}  Let $D$ be a 1-drawing of $K$ and let $e$ be any edge of $C$.  Since $K-e$ is not planar, $D(K-e)$ has a crossing.  It must be the only crossing of $D(K)$ and, therefore, $e$ is not crossed in $D(K)$.  \end{cproof}

%}\begin{lemma}\label{lm:preboxCases}  Let $G\in\mc 3$, let $C$ be a cycle of $G$ and let $K$ be the union of $C$ and some $C$-bridges.  If $C$ is a $K$-prebox, then one of the following occurs:
%\begin{enumerate}\item $K=G$;
%\item $G=K\cup B$, where $B$ is one other $C$-bridge $B$ so that $C\cup B$ is not planar;
%\item $OD(C)$ is not bipartite.
%\end{enumerate}
%\end{lemma}\printFullDetails{

We can now show that any of the tiled graphs described in Section \ref{sec:nextRed+Tiles} in fact have crossing number 2, thereby completing the proof that they are all 2-crossing-critical.

}\begin{theorem}\label{th:tiledAre2cc}  If $G\in \tileS$, then $G\in\m2$.  \end{theorem}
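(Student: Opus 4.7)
The plan is to reduce to two claims, since Corollary \ref{co:t(s)2degen} already supplies $\kr(G-e) \le 1$ for every edge $e$ of $G$: to show $G\in\m2$ it remains to establish that $G$ is 3-connected and that $\kr(G)\ge 2$. For 3-connectivity, I would argue directly from Definition \ref{df:t(s)}. Each tile $T\in\cS$ is built from one of the two frames in Figure \ref{fg:frames} and one of the thirteen pictures in Figure \ref{fg:pictures}; inspection shows that within each tile every pair of non-wall vertices is joined by three internally disjoint paths, and the two walls each contain at least three vertices. In the cyclization $\circ((\otimes \cT)\updown)$ with $m\ge 1$ (so at least three tiles in the cycle), any hypothetical 2-vertex separation of $G$ would have to cut a single tile or split wall vertices; the wall sizes together with the intra-tile paths and the cyclic structure rule this out.

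For the crossing-number lower bound I would proceed by contradiction: suppose $D$ is a 1-drawing of $G$. The first subgoal is that $G$ is non-planar, which follows from the twist $\updown$ in the cyclization together with the diagonal edges inside consecutive frames, producing a topological $V_6$-subdivision (recall $V_6 = K_{3,3}$) that straddles any three consecutive tiles; this does not require a subdivision of $V_{10}$, matching the closing remark of the excerpt that not every graph in $\tileS$ contains one. The second and more delicate subgoal is to show that the rim $R$ of $G$ is a $G$-prebox in the sense of Definition \ref{df:Hclose}, i.e.\ that $G-e$ is non-planar for every edge $e$ of $R$. For this I would use the edge classification of Figure \ref{fg:tilesInS} exactly as in the proof of Lemma \ref{lm:2degenerate}: the non-rim ``interior'' edges of each tile, together with the remaining rim and a spoke routed through a neighbouring tile, always supply a Kuratowski subdivision that survives the deletion of any single rim edge.

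Once $R$ is known to be a $G$-prebox, Lemma \ref{lm:preboxClean} forces $R$ clean in $D$, so the unique crossing of $D$ lies between two non-rim edges inside a single tile $T_i$. Replacing the crossing by a dummy vertex of degree four yields a planar graph $G^\times$ in which $R$ persists as a cycle, so by Theorem \ref{th:TutteOne}\,\ref{it:TutteOneA} the cycle $R$ has BOD in $G^\times$. Pulling this bipartition back through the dummy vertex and comparing it with the interlacing of $R$-bridges forced by the Möbius twist in the cyclization $\circ((\otimes\cT)\updown)$ produces a contradiction, establishing $\kr(G)\ge 2$ and finishing the proof.

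The main obstacle will be the prebox verification. One must show, for each of the $42$ members of $\cS$ and for each rim-edge type it contributes, that deleting that edge from the whole $G$ still leaves a non-planar graph; organising by the edge classes in Figure \ref{fg:tilesInS} (dotted, thin and thick solid, thin and thick dashed) keeps the case analysis tractable, and when a deleted rim edge locally destroys the Kuratowski structure of its own tile one falls back to a Kuratowski subdivision that uses the Möbius twist to route through an adjacent tile. A secondary technical point is verifying non-planarity of $G$ itself uniformly across sequences $\cT$ that may avoid $V_{10}$, which is precisely why $V_6$ rather than $V_{10}$ is the right witness here.
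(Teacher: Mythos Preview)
Your prebox step fails: the rim $R$ is \emph{not} a $G$-prebox. For a dotted edge $e$ in the sense of Figure~\ref{fg:tilesInS} (these are rim edges---the frame edges whose removal leaves one wall vertex isolated), the proof of Lemma~\ref{lm:2degenerate} gives $\tcrn(T\updown - e) = 0$. Running this through the argument of Lemma~\ref{lm:ksequence} and then Lemma~\ref{lm:join} yields $\kr(G-e) \le \tcrn((\otimes\cT)\updown - e) = 0$, so $G-e$ is planar. Hence ``$G-e$ non-planar for every rim edge $e$'' is simply false, and no rerouting through a neighbouring tile can manufacture a Kuratowski subgraph that is not there. (The unprotected rim edges---the $\Delta$-bases---later become the ``red'' edges of Definition~\ref{df:red}, around which the whole tile decomposition of Chapter~\ref{sec:nextRed+Tiles} is organized.)

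The paper runs the argument in the opposite direction. It first shows that in a hypothetical 1-drawing the crossing must lie \emph{in} $R$: the (at least three) pairwise disjoint $\Delta$-cycles each supply two $RR$-paths, and with $R$ as rim one gets $2^3$ subdivisions of $V_6$, two of which are edge-disjoint on their spokes, so both crossed edges are in $R$. Then Lemmas~\ref{lm:closeIsPrebox} and~\ref{lm:preboxClean} show each rim edge inside a picture sits in an $H'$-close cycle and is therefore clean; digon rim edges are trivially clean; this forces the crossing between two $\Delta$-bases. A short case split finishes: with at least five tiles a $V_8$ places any two $\Delta$-bases in disjoint $H$-quads; with exactly three tiles the two crossing $\Delta$-bases are in consecutive $\Delta$-cycles sharing a picture $T$, the 1-drawing of the relevant $V_6$ with that crossing is unique, and the pair of totally disjoint $u_iw_i$-paths inside $T$ (whose four ends alternate on the boundary of a single face) must then cross a second time---contradiction. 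Your endgame is also too loose even setting the prebox issue aside: there is no argument that two crossing non-rim edges sit in a single tile, and the BOD-versus-M\"obius-twist step needs a concrete witness to count as a proof.
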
\printFullDetails{

\begin{cproof}  By Lemmas \ref{lm:ksequence} and \ref{lm:2degenerate} and Corollary \ref{co:ktiles}, we know that if $K$ is a proper subgraph of $G$, then $\crn(K)\le 1$.  Thus, it suffices to prove that $\crn(G)\ge 2$. 

\wording{There are two edges in a tile that are not in the corresponding picture and are not part of a parallel pair.  An edge of  $G$ is \dragominor{a {\em $\Delta$-base\/}} if it is one of these edges. A {\em $\Delta$-cycle\/} is a face-bounding cycle in the natural projective planar embedding of $G$ containing precisely one \dragominor{$\Delta$-base}.  Recall that the rim $R$ of $G$ is described in Definition \ref{df:t(s)}.}  

There are at least three $\Delta$-cycles contained in $G$ and any two are totally disjoint.  From each $\Delta$-cycle we choose either of its $RR$-paths \dragominor{(by definition, these are $R$-avoiding)} as a ``spoke", and, with $R$ as the rim, we find 8 different subdivisions of $V_6$.  There are two of these that are edge-disjoint on the spokes, so if $D$ is a 1-drawing of $G$, the crossing must involve two \wording{edges} of $R$.

\minorrem{(The following is now recast as a claim.) }

\begin{claim}\label{cl:pictureClose} If $e$ is a rim edge in one of the 13 \wording{pictures}, then $e$ is in an $H'$-close cycle $C_e$, for some $H'\topol V_6$ in $G$. \end{claim}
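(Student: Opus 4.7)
The plan is to construct, for each rim edge $e$ lying in a picture of the tile $T_j$ used in the cyclic sequence defining $G$, a short cycle $C_e$ localized inside $T_j$ together with a subdivision $H'$ of $V_6$ whose ``spokes'' avoid $T_j$ entirely. The edges of $R$ swept out by $C_e$ will then land inside a single arc of $R$ between consecutive $H'$-nodes (giving a closed $H'$-branch) or inside a short union of two such arcs sharing an $H'$-node (giving an open $H'$-claw).

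First I would set up the subdivisions of $V_6$. Exactly as in the preceding paragraph of the proof of Theorem \ref{th:tiledAre2cc}, every $\Delta$-cycle in $G$ contains an $RR$-path internally disjoint from $R$, and any three pairwise disjoint such paths, taken together with $R$, form a subdivision of $V_6$ whose rim is $R$. Because the cyclic sequence $\cT$ has length at least three and every tile in $\cS$ produces at least one $\Delta$-cycle, I can fix three pairwise disjoint $\Delta$-cycles in tiles other than $T_j$ and let $\sigma_1,\sigma_2,\sigma_3$ be the corresponding $RR$-paths. This gives $H'=R\cup\sigma_1\cup\sigma_2\cup\sigma_3\topol V_6$, and I retain the freedom to slide each $\sigma_k$'s attachments (or swap it for the other arc of the same $\Delta$-cycle) to adjust the $H'$-nodes.

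Next I would construct $C_e$ by a case analysis on the 13 pictures of Figure \ref{fg:pictures}. In each case, the picture is small and contains an evident short cycle through any of its rim edges: where the frame of $T_j$ contains a parallel pair, I take $C_e$ to be the resulting digon; otherwise I take a face-bounding cycle of the natural planar drawing of the picture, which uses $e$ together with one or two non-rim edges of the picture and possibly a short subarc of $R$ on the same horizontal side as $e$. In every case $C_e$ lies entirely inside the subgraph contributed to $G$ by the single tile $T_j$, so $C_e\cap R$ is a subpath of $R$ lying in the portion of $R$ traversed by $T_j$.

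The main obstacle is to certify $H'$-closeness, i.e.\ that $C_e\cap H'\subseteq$ a closed $H'$-branch or an open $H'$-claw. Since $\sigma_1,\sigma_2,\sigma_3$ are chosen inside tiles disjoint from $T_j$, $C_e\cap H'=C_e\cap R$, and this is a subpath of $R$ inside $T_j$. By relocating the attachments of the $\sigma_k$ (using the other arc of a $\Delta$-cycle, or a $\Delta$-cycle in a different tile) I arrange the $H'$-nodes so that either (i) $C_e\cap R$ is contained in a single arc of $R$ between two consecutive $H'$-nodes, giving a closed $H'$-branch containing $C_e\cap R$, or (ii) $C_e\cap R$ is contained in the union of two consecutive $R$-arcs meeting at a common $H'$-node $v$, giving the open $H'$-claw centred at $v$. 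The only pictures where (i) is not immediately available are those whose rim edges alternate top/bottom or straddle the vertical mid-line of the tile; for these I instead invoke (ii), pushing one $H'$-node into the interior of $T_j$ so that $v$ is an endpoint of $e$ (or of the short picture path used to close $C_e$) and the remaining rim segment of $C_e$ lies in the adjacent $H'$-branch. Running through the 13 pictures and checking that (i) or (ii) applies in each case completes the proof.
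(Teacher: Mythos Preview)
Your plan heads in the right direction but rests on an unjustified step: you assert that you can ``fix three pairwise disjoint $\Delta$-cycles in tiles other than $T_j$'', arguing that ``every tile in $\cS$ produces at least one $\Delta$-cycle''. With only three tiles (the base case $m=1$), that reasoning yields at most two $\Delta$-cycles outside $T_j$, not three, so your $H'$ is not well-defined there. Your fallback (ii), pushing an $H'$-node into $T_j$ to invoke an open claw, contradicts your own setup of keeping all spokes outside $T_j$ and is not made precise enough to carry the argument on its own.

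The paper's argument is shorter and uniform; no case analysis on the thirteen pictures is needed. Writing $r,r'$ for the two components of $T\cap R$ (with $T$ the picture and $e\in r$), one takes $C_e$ to be the unique cycle through $e$ in $T-r'$, so that $C_e\cap R\subseteq r$ automatically. For $H'$ the paper does \emph{not} try to keep all spokes away from $T_j$: of the two $\Delta$-bases incident with $T$, exactly one, call it $e'$, has an end in $r$; from the $e'$-containing $\Delta$-cycle one selects the $RR$-path disjoint from $r$, and the remaining two spokes are arbitrary $RR$-paths from any two other $\Delta$-cycles. With these choices no $H'$-node lies in $r$, so $C_e\cap H'\subseteq r$ lies inside a single closed $H'$-rim branch. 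The open-claw alternative never arises.
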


 The \wording{point} of this is that Lemmas \ref{lm:closeIsPrebox} and \ref{lm:preboxClean} imply that $C_e$ is clean in $D$.  This is also obviously true for the other edges of the rim that are in digons.  The conclusion is that we know the two crossing edges must be from among the \wording{$\Delta$-bases}.  We shall show below that no two of these can cross in a 1-drawing of $G$, the desired contradiction.

\bigskip \begin{proofof}{Claim \ref{cl:pictureClose}} \wordingrem{(Text that is now useless has been removed.)}\wording{Let $e$ be in edge in the rim $R$ of $G$ that is in the picture $T$, let $r$ be the component of $T\cap R$ containing $e$,} and let $r'$ be the other \wording{component of $T\cap R$}.  There is a unique cycle in $T-r'$ containing $e$; this is the cycle $C_e$.  \dragominor{Let $e'$ be the one of the two \dragominor{$\Delta$-bases} incident with $T$ that has an end in $r$.}  \wording{Choose the $RR$-subpath  of the $e'$-containing $\Delta$-cycle} that is disjoint from $r$.  For any other two of the $\Delta$-cycles, choose arbitrarily one of the $RR$-subpaths.  These three ``spokes", together with $R$, constitute a subdivision $H'$ of $V_6$ for which $C_e$ is $H'$-close, as required. \end{proofof}

The proof is completed by showing that no two \dragominor{$\Delta$-bases} can cross \dragominor{in a 1-drawing of $G$}.  If there are at least five tiles, then it is easy to find a subdivision of $V_8$ \wording{so that the two \dragominor{$\Delta$-bases}} are on disjoint $H$-quads and therefore cannot be crossed in a 1-drawing of $G$.  \wording{Thus,} we may assume there are precisely three tiles and the crossing \dragominor{$\Delta$-bases} $e_1$ and $e_2$ are\wording{, therefore,} in consecutive $\Delta$-cycles.

Let $T$ be the \wording{picture} incident with both $e_1$ and $e_2$.  Choose a subdivision $H'$ of $V_6$ containing $R$ but so that $T\cap H'=T\cap R$.  There is a unique 1-drawing $D$ of $H'$ with $e_1$ and $e_2$ being the crossing pair.  For $i=1,2$, let the $H'$-branch containing $e_i$ be $b_i$.  The end $u_i$ of $e_i$ that is in $T$ is in the interior of $b_i$.  

\wording{The vertices $u_1$ and $u_2$ are two of the four attachments of $T$ in $G$.  Let $w_1$ and $w_2$ be the other two, labelled so that $w_1$ is in the same component of $T\cap R$ as $u_2$.  It follows that $w_2$ is in the same component of $T\cap R$ as $u_1$.  In $T$, there}
is a unique \wording{pair of totally disjoint} $R$-avoiding $u_1w_1$- and $u_2w_2$-paths $P_1$ and $P_2$, respectively\wordingrem{(text deleted)}.  \minor{The crossing in $D$ is of $e_1$ with $e_2$, so $\cc{u_1b_1w_2}$ and $\cc{u_2b_2w_1}$ are both not crossed in $D$.  Therefore, $D[P_1]$ and $D[P_2]$ are both in the same face $F$ of $D$.}

Since the two paths $P_1$ and $P_2$ are totally disjoint\wording{(text deleted)},  $D[P_1]$ and $D[P_2]$ are disjoint arcs in $F$; the contradiction arises from the fact that their ends alternate in the boundary of $F$, showing there must be a second crossing.
\end{cproof}

One important by-product of cleanliness is that it frequently shows a cycle has BOD.

}\begin{lemma}\label{lm:cleanBOD}  Let $C$ be a cycle in a graph $G$.  Let $D$ be a 1-drawing of $G$ in which $C$ is clean.  If there is a non-planar $C$-bridge, then $C$ has BOD and exactly one non-planar bridge.\end{lemma}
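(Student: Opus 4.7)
The plan is to planarize the single crossing of $D$ and then appeal to Theorem \ref{th:TutteOne}(\ref{it:TutteOneB}). The first step is to localize the crossing. Since $C$ is clean in $D$, the unique crossing is between two non-$C$ edges $e_1, e_2$, and since distinct bridges are pairwise edge-disjoint, $e_1$ and $e_2$ belong to a common $C$-bridge $B'$. For every other $C$-bridge $B$, the restriction of $D$ to $C\cup B$ is then a crossing-free plane drawing, so $B$ is planar. In particular, the hypothesized non-planar bridge $B^*$ must coincide with $B'$, yielding the ``exactly one non-planar bridge'' conclusion and placing the crossing strictly inside $B^*$.

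Next I would replace the crossing point by a new degree-four vertex $x$ joined to the four endpoints of $e_1$ and $e_2$, producing a planar graph $G'$; because $C$ is clean, $C$ is still a cycle of $G'$. The $C$-bridges of $G'$ are those of $G$, except that $B^*$ is replaced by the single bridge $B^{*\prime}$ obtained from $B^*$ by deleting $e_1, e_2$ and attaching $x$ together with its four new incident edges. Because $x$ is adjacent only to vertices already lying in $B^*$, and only two edges interior to $B^*$ are modified, we have $\att(B^{*\prime}) = \att(B^*)$, while the other bridges and their attachment sets remain untouched.

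By Theorem \ref{th:TutteOne}(\ref{it:TutteOneB}) applied to the planar graph $G'$, every cycle of $G'$ has BOD; in particular $C$ has BOD in $G'$. Since overlap of two $C$-bridges is determined solely by their attachment sets on $C$ (Lemma \ref{lm:overlapClaw}), the bijection $B^*\leftrightarrow B^{*\prime}$, identity on all other bridges, is an isomorphism between the overlap diagrams $OD(C)$ computed in $G$ and in $G'$. Hence $OD(C)$ is bipartite in $G$, which together with the first step completes the proof.

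The main technical point to be careful about is the bridge bookkeeping in the second step: one must verify that planarizing does not merge $B^*$ with another $C$-bridge and does not change its attachment set. Both are straightforward once one notes that $x$ is adjacent only to vertices of $B^*$ and that no edge incident to $V(C)$ is altered, but this is the only place where the argument relies on the specific position of the crossing inside a single non-planar bridge.
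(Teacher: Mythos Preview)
Your approach is the same as the paper's---planarize the unique crossing and invoke Theorem~\ref{th:TutteOne}(\ref{it:TutteOneB})---and the bridge bookkeeping in your second and third paragraphs is exactly right.

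There is one slip in the first paragraph. The clause ``since distinct bridges are pairwise edge-disjoint, $e_1$ and $e_2$ belong to a common $C$-bridge $B'$'' is a non sequitur: edge-disjointness of bridges only tells you that each of $e_1,e_2$ lies in \emph{some} bridge, not that they lie in the \emph{same} one. Two edges from different bridges could perfectly well cross in a drawing. The correct argument---and the one the paper uses---runs in the other direction: start with the hypothesized non-planar bridge $B^*$; since $C\cup B^*$ is non-planar, $D[C\cup B^*]$ must contain a crossing; since $C$ is clean and $D$ has a unique crossing, both $e_1$ and $e_2$ are forced into $B^*$. (Alternatively, you could argue by contradiction: if $e_1\in B_1$ and $e_2\in B_2$ with $B_1\ne B_2$, then every $D[C\cup B]$ is crossing-free and every bridge is planar, contradicting the hypothesis.) Your next sentence already contains the ingredients for this fix, just in the wrong logical order.
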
\printFullDetails{

\begin{cproof}  Let $B$ be a non-planar $C$-bridge.  Then $D[C\cup B]$ has a crossing, and, since $C$ is clean in $D$, the crossing does not involve an edge of $C$.  Therefore, it involves two edges of $B$.  This is the only crossing of $D$, so inserting a vertex at this crossing turns $D$ into a planar embedding of a graph $G^\times$.  As $C$ is still a cycle of $G^\times$, $C$ has BOD in $G^\times$ and all $C$-bridges in $G^\times$ are planar.  But $OD_{G^\times}(C)$ is the same as $OD_{G}(C)$ and all $C$-bridges other than $B$ are the same in $G$ and $G^\times$. \end{cproof}

We shall routinely make use of the following notions.

}\begin{definition}  Let $G$ be a connected graph and let $H$ be a subgraph of $G$.  Then:
\begin{enumerate}\item  $\comp H$\index{$\comp H$} is the subgraph of $G$ induced by $E(G)\setminus E(H)$; and
\item if $G$ is embedded in $\pp$, then an {\em $H$-face\/}\index{face}\index{$H$-face} is a face of the induced embedding of $H$ in $\pp$.
\end{enumerate}
\end{definition}\printFullDetails{

 We will often use this when $B$ is a $C$-bridge, for some cycle $C$ in a graph $G$, in which case $\comp B$ is the union of $C$ and all $C$-bridges other than $B$.  The following two lemmas are useful examples.

}\begin{lemma}\label{lm:nucMeetGamma}  Let $G$ be a graph embedded in $\pp$ with representativity 2 and let $\gamma$ be a non-contractible curve in $\pp$ so that $G\cap \gamma=\{a,b\}$.  Let $C$ be a contractible cycle in $G$ and let $B$ be a $C$-bridge so that $\Nuc(B)\cap \{a,b\}\ne\varnothing$.  Then $\comp{B}$ is planar. \end{lemma}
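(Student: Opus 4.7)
The plan is to construct a non-contractible simple closed curve $\tau$ in $\pp$ disjoint from $\comp B$; the complement $\pp \setminus \tau$ is then an open disc containing $\comp B$, witnessing that $\comp B$ is planar.

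First I would locate $\gamma$ topologically. Since $a \in \Nuc(B)$ is not on $C$, the set $\gamma \cap C$ is contained in $\{b\}$. The mod-$2$ intersection number of the non-contractible $\gamma$ with the contractible cycle $C$ is zero, so $\gamma$ meets $C$ transversely an even number of times; with $|\gamma \cap C| \le 1$, this forces $\gamma \cap C$ to be either empty or a tangential contact at $b$. In either case $\gamma$ lies in one closed side of $C$ in $\pp$, and non-contractibility forces this side to be $\Mob$. Connectedness of $\Nuc(B)$ and its disjointness from $C$ then place $\Nuc(B) \subseteq \operatorname{int}(\Mob)$, so $B \subseteq \Mob$.

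Let $F$ be the face of the embedding of $\comp B$ in $\pp$ inherited from $G$ that contains $a$. Since $a \notin V(\comp B)$ and $\gamma \cap \comp B \subseteq \{b\}$, the connected arc $\gamma \setminus \{b\}$ is a subset of $\pp \setminus \comp B$ meeting $F$ at $a$, so $\gamma \setminus \{b\} \subseteq F$ and $\gamma \subseteq \overline F$. If $b \notin \comp B$, take $\tau = \gamma$. Otherwise, perturb $\gamma$ near $b$ by replacing the short arc through $b$ with a nearby arc in $F$, yielding a simple closed curve $\tau$ contained in $F$ and homotopic to $\gamma$ in $\pp$. In either case, $\tau$ is non-contractible in $\pp$ and disjoint from $\comp B$, and the conclusion follows.

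The main obstacle is the perturbation of $\gamma$ near $b$ when $b \in \comp B$: one needs to verify that $\gamma$'s entry and exit at $b$ lie in a common locally-connected piece of $F$ so that a short detour---homotopic to $\gamma$'s arc within a small disc $D_b$ around $b$---can be drawn inside $F$ without crossing $\comp B$. This is confirmed case by case using the tangency of $\gamma$ to $C$ at $b$ when $b \in V(C)$ (so $\gamma$ approaches $b$ only from the $\Mob$ side), and a direct local analysis---combined with the first paragraph applied to $B'$ in place of $B$---when $b$ lies in the nucleus of some other $C$-bridge $B' \subseteq \Mob$.
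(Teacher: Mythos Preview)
Your approach has a genuine gap that cannot be repaired as stated: the non-contractible curve $\tau$ disjoint from $\comp B$ need not exist. Suppose some other $C$-bridge $B'$ (on the $\Mob$ side) is such that $C\cup B'$ contains a non-contractible cycle $C''$. Then $C''\subseteq\comp B$, and by mod-$2$ intersection any non-contractible curve in $\pp$ meets $C''$; hence no curve disjoint from $\comp B$ can be non-contractible. Your perturbation fails precisely here: since $\gamma\cap C''\subseteq\{b\}$ and the mod-$2$ intersection of two non-contractible curves in $\pp$ is odd, $\gamma$ crosses $C''$ transversely at $b$, so the two arms of $\gamma$ at $b$ lie on opposite sides of $C''$ and are separated by edges of $\comp B$. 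A detour within $F$ connecting them produces a curve contained in a single face of $\comp B$; but that face is disjoint from $C''$, so any closed curve in it has even intersection with $C''$ and is therefore contractible. Thus neither the small nor the large perturbation yields a non-contractible $\tau$.

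The paper's proof sidesteps all of this with a one-line observation: since $a\in\Nuc(B)$, we have $\comp B\subseteq G-a$, and $G-a$ inherits from $\Pi$ an embedding in $\pp$ in which $\gamma$ meets it in at most the single point $b$. A representativity~$\le 1$ embedding in $\pp$ forces planarity (cut along $\gamma$ to obtain a disc), so $G-a$ and hence $\comp B$ is planar. The point is that you were trying to prove representativity~$0$ for $\comp B$, which is strictly stronger than needed and can be false; representativity~$\le 1$ is immediate and suffices.
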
\printFullDetails{

\begin{cproof}  This is straightforward:  $\comp{B}=G-\Nuc(B)\subseteq G-( \{a,b\}\cap \Nuc{B})$ and the latter has a representativity at most 1 embedding in $\pp$.  Therefore it is planar.  \end{cproof}

The following result, when combined with the (not yet proved) fact that $H$-quads and some $H$-hyperquads have BOD, yields the fact, often used in the sections to follow, that deleting some edge results in a 1-drawing in which a particular $H$-quad or $H$-hyperquad must be crossed.

}\begin{lemma}\label{lm:BODcrossed}  Let $G$ be a graph with $\crn(G)\ge 2$ and let $C$ be a cycle in $G$.  If $C$ has BOD in $G$, then, for any planar $C$-bridge $B$, $C$ is crossed in any 1-drawing of $\comp B$. \end{lemma}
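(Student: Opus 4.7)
The plan is to prove the contrapositive: assume there is a 1-drawing $D$ of $\comp{B}$ in which $C$ is clean, and derive $\crn(G)\le 1$, contradicting $\crn(G)\ge 2$. Since $C$ is clean in $D$, the at-most-one crossing of $D$ (if present) involves two edges $e$ and $f$ neither of which lies on $C$.

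First I would argue that essentially every $C$-bridge of $\comp{B}$ is planar. Any $C$-bridge $B'$ of $\comp{B}$ that contains neither $e$ nor $f$ is drawn completely without crossings in $D$, so $C\cup B'$ is planar and hence $B'$ is planar. If $e$ and $f$ lie in distinct $C$-bridges $B_1$ and $B_2$ of $\comp{B}$, then the restriction of $D$ to $C\cup B_1$ deletes the edge $f\in B_2$, erases the single crossing, and yields a planar drawing of $C\cup B_1$; by the symmetric argument $C\cup B_2$ is also planar. Therefore the only way a $C$-bridge of $\comp{B}$ can fail to be planar is if both $e$ and $f$ lie in the same $C$-bridge $B^*$ of $\comp{B}$.

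This leaves two cases. If every $C$-bridge of $\comp{B}$ is planar, then combining with the hypothesis that $B$ is planar gives that every $C$-bridge of $G$ is planar; together with the assumption that $C$ has BOD in $G$, Theorem \ref{th:TutteOne}(\ref{it:TutteOneA}) forces $G$ to be planar, giving $\crn(G)=0$. If instead $B^*$ is the unique $C$-bridge of $\comp{B}$ containing both $e$ and $f$, then every $C$-bridge of $G$ other than $B^*$ is planar (the planar bridge $B$ included, and all other bridges of $\comp{B}$ by the previous paragraph), and the restriction of $D$ to $C\cup B^*$ is a 1-drawing in which $C$ is clean; so Corollary \ref{co:TutteTwo} applied with $B^*$ as the special bridge yields $\crn(G)\le 1$. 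Either conclusion contradicts $\crn(G)\ge 2$.

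There is no serious obstacle; the proof is an accounting exercise built on the Tutte-style planarity criteria (Theorem \ref{th:TutteOne} and Corollary \ref{co:TutteTwo}) already assembled in the section. The only point worth flagging is the observation that when the two crossing edges lie in two distinct $C$-bridges of $\comp{B}$, each of those bridges is individually planar because the crossing is erased upon restricting to either bridge alone — so in that subcase every $C$-bridge of $\comp{B}$ (and then of $G$) is planar and the argument collapses to the first of the two cases above.
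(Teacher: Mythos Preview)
Your proof is correct and essentially the same as the paper's: both assume a 1-drawing of $\comp B$ with $C$ clean and reach a contradiction via Corollary~\ref{co:TutteTwo}. The paper's version is marginally slicker in that it first notes (from $\crn(G)\ge 2$, $C$ having BOD, and Theorem~\ref{th:TutteOne}) that some $C$-bridge $B'$ of $G$ must be non-planar, whence the single crossing of $D$ is forced to lie entirely within $B'$; this bypasses your case split on whether the two crossing edges lie in the same bridge. Your extra case (crossing edges in distinct bridges, hence all bridges planar, hence $G$ planar) is handled correctly but is in fact vacuous once one makes that observation.
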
\printFullDetails{

\begin{cproof}  Suppose there is a 1-drawing $D$ of $\comp B$ with $C$ clean.  Since $C$ has BOD and $G$ is not planar,  there is a non-planar $C$-bridge $B'$.   Because $C$ is clean, any crossing in $D[C\cup B']$ involves two edges of $B'$.  The only crossing in $D$ involves two edges of $B'$, so every other $C$-bridge in $\comp{B}$ is planar.  Since $B$ is planar, it follows from Corollary \ref{co:TutteTwo} that $\crn(G)\le 1$, a contradiction. \end{cproof}

%}\begin{definition} If $\hvfg$ and $Q$ is either an $H$-quad or an $H$-hyperquad, then there is a $Q$-bridge $M_Q$ so that $Q\cup M_Q$ contains either all of $H$, if $Q$ is an $H$-quad, or $H-\oo{s_i}$, if $Q$ is the $H$-hyperquad $\bQ_i$.  This is the {\em M\"obius bridge of $Q$\/}. \end{definition}\printFullDetails{   

We remark that $M_Q$ is a non-planar $Q$-bridge whenever $Q$ is an $H$-quad or $H$-hyperquad.

}\begin{corollary}\label{co:hyperBODquadBOD}  Let $G\in \m2$ and $\hvfg$.  If the $H$-quad $Q_i$ and $H$-hyperquad $\bQ_j$ are disjoint,  $\bQ_j$ has BOD, and there is a planar $\bQ_j$-bridge $B$,  then $Q_i$ has BOD and there is precisely one non-planar $Q_i$-bridge. \end{corollary}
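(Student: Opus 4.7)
The plan is to produce a 1-drawing of $G-e$, for a judiciously chosen edge $e$, in which $Q_i$ is clean, then apply Lemma \ref{lm:cleanBOD} in $G-e$ and transfer the conclusion back to $G$. Two preliminary observations: the $Q_i$-bridge $M_{Q_i}$ is non-planar, because $Q_i\cup M_{Q_i}$ contains the non-planar $V_{10}$-subdivision $H$; and since $Q_i$ and $\bQ_j$ are disjoint, the attachments of $B$ lie on $\bQ_j\subseteq M_{Q_i}$ while the nucleus of $B$ lies in a component of $G-V(\bQ_j)$ disjoint from $Q_i$, so $B\subseteq M_{Q_i}$ and any $e\in E(B)$ satisfies $e\in E(M_{Q_i})\setminus (E(Q_i)\cup E(\bQ_j))$.

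The first substantive step is to apply Lemma \ref{lm:BODcrossed} to $\bQ_j$ (which has BOD by hypothesis) and its planar bridge $B$: every 1-drawing of $\comp B$ crosses $\bQ_j$. Pick any $e\in E(B)$ and let $D$ be a 1-drawing of $G-e$, guaranteed by the 2-crossing-criticality of $G$. Because $\comp B\subseteq G-e$, the restriction $D|_{\comp B}$ is a 1-drawing of $\comp B$, so the sole crossing of $D|_{\comp B}$ — and hence of $D$ itself — involves some edge $f_1\in E(\bQ_j)$.

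The crucial step is to show that $Q_i$ is clean in $D$. For this, I verify that $Q_i$ is a $(Q_i\cup M_{Q_i})$-prebox: for each $f\in E(Q_i)$, the subgraph $(Q_i\cup M_{Q_i})-f$ contains $H-f$, and $V_{10}$ minus any single edge is non-planar (a direct check shows that the remaining four or five spokes form pairwise interlacing chords on the resulting cycle or path, so their overlap graph is a complete graph on at least three vertices and hence carries an odd cycle). Lemma \ref{lm:preboxClean} then forces $Q_i$ to be clean in every 1-drawing of $Q_i\cup M_{Q_i}$. Let $f_2$ denote the other crossing edge of $D$. If $f_2\in E(Q_i\cup M_{Q_i})$, then $D|_{Q_i\cup M_{Q_i}}$ is a 1-drawing containing this crossing, so the prebox property rules out $f_2\in E(Q_i)$; otherwise $f_2$ lies in a planar $Q_i$-bridge other than $M_{Q_i}$, whose edges are by definition disjoint from $E(Q_i)$. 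Combined with $f_1\in E(\bQ_j)\subseteq E(M_{Q_i})\setminus E(Q_i)$, this proves $Q_i$ clean in $D$.

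To finish, $(M_{Q_i}-e)\cup Q_i\supseteq H-e$ remains non-planar, so $G-e$ has a non-planar $Q_i$-bridge. Lemma \ref{lm:cleanBOD} applied to $D$ then yields BOD of $Q_i$ in $G-e$ together with a unique non-planar $Q_i$-bridge. Reinstating $e$, which lies in $M_{Q_i}$, only modifies or merges pieces of the M\"obius bridge, and this operation preserves both BOD and the uniqueness of the non-planar bridge in $G$. The main obstacle is the third step: the cleanliness of $Q_i$ in $D$ must be extracted from the prebox structure of $Q_i\cup M_{Q_i}$, and one has to be somewhat careful because the restriction $D|_{Q_i\cup M_{Q_i}}$ is really a drawing of $(Q_i\cup M_{Q_i})-e$ — an issue that is handled by the fact that the prebox property survives this edge-deletion, since $V_{10}-e$ is non-planar for every edge $e$.
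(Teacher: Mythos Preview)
Your overall architecture is sound and close in spirit to the paper's proof, but the ``crucial step'' has a genuine gap. You claim that the prebox property of $Q_i$ in $Q_i\cup M_{Q_i}$ survives the deletion of $e$, justified by ``$V_{10}-e$ is non-planar for every edge $e$''. What you actually need is that $((Q_i\cup M_{Q_i})-e)-f$ is non-planar for every $f\in E(Q_i)$, i.e.\ that $H-e-f$ is non-planar. This can fail. The planar $\bQ_j$-bridge $B$ may be exactly the bridge containing $s_j$ (for instance in the applications inside Theorem~\ref{th:BODquads}), which forces $e\in E(s_j)$. Taking $j=0$, $i=2$, $e=v_0v_5$ and $f=v_2v_3$ in the unsubdivided $V_{10}$, the graph $H-e-f$ has four degree-$2$ vertices; suppressing them yields the triangular prism $K_3\,\Box\,K_2$, which is planar. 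So $Q_i$ is \emph{not} a $((Q_i\cup M_{Q_i})-e)$-prebox in general, and Lemma~\ref{lm:preboxClean} does not apply to $D|_{(Q_i\cup M_{Q_i})-e}$.

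The paper sidesteps this by locating the crossing directly rather than through a prebox argument. Since $H-\oo{s_j}\subseteq\comp B\subseteq G-e$ is a subdivision of $V_8$ and is non-planar, the restriction $D|_{H-\oo{s_j}}$ carries the unique crossing of $D$; hence both crossing edges lie in $H-\oo{s_j}$. Lemma~\ref{lm:1drawingsV2n} then forces them into specific rim branches adjacent to $\bQ_j$ (namely $r_{j-2}\cup\cdots\cup r_{j+1}$ against $r_{j+3}\cup\cdots\cup r_{j+6}$), and these are disjoint from the rim branches $r_i,r_{i+5}$ of $Q_i$, while the spokes $s_i,s_{i+1}$ of $Q_i$ are never crossed in a $1$-drawing of $V_8$. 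That gives $Q_i$ clean in $D$ for free. The paper also works with $\comp B$ rather than $G-e$, so that $M_{Q_i}$ loses only $\Nuc(B)$ but keeps all its attachments, making $OD_G(Q_i)\cong OD_{\comp B}(Q_i)$ immediate and the transfer back to $G$ a one-liner; your ``reinstating $e$'' step is in fact correct (by $3$-connectedness, $M_{Q_i}-e$ remains a single $Q_i$-bridge with unchanged attachments), but it is left unjustified in your write-up.
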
\printFullDetails{

\begin{cproof}  Let $B$ be a planar $\bQ_j$-bridge.  Because $G$ is 2-crossing-critical, there is a 1-drawing $D$ of $\comp B$.  By Lemma \ref{lm:BODcrossed}, $\bQ_j$ is crossed in $D$.  Note that $H-\oo{s_j}\subseteq \comp B$.   In any 1-drawing of $H-\oo{s_j}$ in which $\bQ_j$ is crossed, the crossing is between $r_{j-2}\cup r_{j-1}\cup r_j\cup r_{j+1}$ and  $r_{n+j-2}\cup r_{n+j-1}\cup r_{n+j}\cup r_{n+j+1}$.  Since $Q_i$ is edge-disjoint from these crossing rim segments,  $Q_i$ is clean in $D$.  

The two graphs $OD_G(Q_i)$ and $OD_{\comp B}(Q_i)$ are isomorphic:  the $Q_i$-bridges in both $G$ and $\comp B$ are the same, except $M_{Q_i}$ in $G$ becomes $M_{Q_i}-\Nuc(B)$ in $\comp B$ and they have the same attachments. Since $Q_i$ is clean in $D$, $OD_{\comp B}(Q_i)$ is bipartite.  Furthermore, the crossing in $D$ is between two edges of $\bQ_j$, so $D$ shows that every $Q_i$-bridge other than $M_{Q_i}$ is planar. \end{cproof}

We next introduce boxes, which are cycles that, it turns out, cannot exist in a 2-crossing-critical graph $G$.  On several occasions in the subsequent sections, we prove a result by showing that otherwise $G$ has a box.

}\begin{definition}  Let $C$ be a cycle in a graph $G$.  Then $C$ is a {\em box\/}\index{box} in $G$ if $C$ has BOD in $G$ and there is a planar $C$-bridge $B$ so that $C$ is a $\comp B$-prebox.\end{definition}\printFullDetails{

}\begin{lemma}\label{lm:noBox}  Let $G\in \m2$.  Then $G$ has no box.\end{lemma}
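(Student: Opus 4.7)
The plan is to argue by contradiction, playing the two hypotheses defining a box against each other using the preceding lemmas. Suppose $C$ is a box in $G$, so $C$ has BOD and there is a planar $C$-bridge $B$ for which $C$ is a $\comp{B}$-prebox.

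First I would observe that $\comp{B}$ is a proper subgraph of $G$: every $C$-bridge contains at least one edge not in $C$ (either it is an edge-bridge, which is itself such an edge, or it contains a component of $G - V(C)$ which in particular contains edges outside $C$). Consequently, since $G \in \m2$ is $2$-crossing-critical, $\crn(\comp{B}) \le 1$, and we may fix a $1$-drawing $D$ of $\comp{B}$.

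Now I would apply the two earlier lemmas to this drawing $D$. On the one hand, Lemma \ref{lm:preboxClean} applied to the $\comp{B}$-prebox $C$ forces $C$ to be clean in $D$. On the other hand, Lemma \ref{lm:BODcrossed}, whose hypotheses are met since $\crn(G) \ge 2$, $C$ has BOD in $G$, and $B$ is a planar $C$-bridge, forces $C$ to be crossed in every $1$-drawing of $\comp{B}$, in particular in $D$. This direct contradiction finishes the proof.

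There is no real obstacle here; the work has already been done in Lemmas \ref{lm:preboxClean} and \ref{lm:BODcrossed}, and the only point to double-check is that the box definition leaves $\comp{B}$ as a proper subgraph of $G$, so that 2-crossing-criticality yields the required $1$-drawing on which both lemmas can be brought to bear simultaneously.
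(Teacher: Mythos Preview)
Your proof is correct and follows exactly the same route as the paper: assume a box exists, use 2-crossing-criticality to get a 1-drawing of $\comp{B}$, then invoke Lemma~\ref{lm:preboxClean} to get cleanliness and Lemma~\ref{lm:BODcrossed} to get a crossing, yielding the contradiction. The paper's version is terser (it takes $\comp{B}$ being a proper subgraph as evident), but the argument is identical.
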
\printFullDetails{

\begin{cproof}  Suppose $C$ is a box in $G$.  Then $C$ has BOD and there is a planar $C$-bridge $B$ so that $C$ is a $\comp B$-prebox.  As $\comp B$ is a proper subgraph of $G$, there is a 1-drawing $D$ of $\comp B$.  By Lemma \ref{lm:preboxClean}, $D[C]$ is clean.  This contradicts Lemma \ref{lm:BODcrossed}.  \end{cproof}

\majorrem{(The following lemma and its two corollaries were moved from the old Section 7, which otherwise seems irrelevant.)}\major{We can now determine the complete structure of a 2-connected $H$-close subgraph.}

}\major{\begin{lemma}\label{lm:closeIsCycle}  Let $G\in\m2$ and $\hvng$ with $n\ge 4$.  If $K$ is a 2-connected $H$-close subgraph of $G$, then $K$ is a cycle.  \end{lemma}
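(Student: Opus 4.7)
My plan is to argue by contradiction: assuming $K$ is not a cycle, I would exhibit a cycle $C\subsetneq K$ that is a box in $G$, which contradicts Lemma~\ref{lm:noBox}. Since $K$ is 2-connected and not a cycle, it contains a theta-subgraph $\Theta = P_1\cup P_2\cup P_3$ sharing endpoints $u$ and $w$. The $H$-closeness of $K$ forces $K\cap H$ into a closed $H$-branch (a path) or an open $H$-claw (a subdivision of $K_{1,3}$, hence a tree), and neither of these graphs contains a theta. Hence at least one edge $e$ of $\Theta$ lies outside $E(H)$. I relabel so that $e\in E(P_3)$ and take $C = P_1\cup P_2$, which is a cycle in $K$. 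As a subgraph of $K$, the cycle $C$ is itself $H$-close, so by Lemma~\ref{lm:closeIsPrebox} it is a $(C\cup H)$-prebox.

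The next step uses 2-crossing-criticality: $G-e$ has a 1-drawing $D$. For every $f\in E(C)$, the graph $(G-e)-f$ contains $(C\cup H)-f$, which is non-planar by the prebox property, so $f$ cannot be crossed in $D$; thus $C$ is clean in $D$. Since $H\subseteq G-e$ is non-planar, $D$ has exactly one crossing, and that crossing lies in some non-planar $C$-bridge $B^\ast$ of $G-e$. Lemma~\ref{lm:cleanBOD} then yields that $C$ has BOD in $G-e$ and that $B^\ast$ is the unique non-planar $C$-bridge of $G-e$. To finish, I need two further ingredients: (i) a planar $C$-bridge $\tilde B$ of $G$ whose edges avoid $E(H)$, and (ii) BOD of $C$ upgraded from $G-e$ to $G$. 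Once both are in place, $\comp{\tilde B}\supseteq C\cup H$, so for each $f\in E(C)$ the graph $\comp{\tilde B}-f\supseteq (C\cup H)-f$ is non-planar, making $C$ a $\comp{\tilde B}$-prebox; together with BOD this exhibits $C$ as a box in $G$, contradicting Lemma~\ref{lm:noBox}.

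The natural candidate for $\tilde B$ is the $C$-bridge of $G$ containing $e$. To show planarity and disjointness from $E(H)$, I would exploit the drawing $D$ together with the $H$-close structure of $K$: in $D$ the single crossing lies inside $B^\ast$, which must ``carry'' the non-planarity of $H$, and since $e\notin H$ and the portion of $\tilde B$ inherited from $K-E(C)$ consists of non-$H$ edges (because $K\cap H$ lies in a single branch or claw), $\tilde B$ is forced onto the opposite side of $C$ from $B^\ast$ and so is planar and free of $H$-edges. The main obstacle I expect is step (ii), the BOD upgrade: in general, restoring $e$ modifies $OD_G(C)$ either by introducing a chord-bridge $\{e\}$ (if both ends of $e$ lie on $C$) or by adjusting the attachment set of an existing bridge, and either modification could in principle create an odd cycle in the overlap diagram. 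Handling this will require a careful case analysis that combines the two-sided partition of $C$-bridges induced by $D$ with the attachment structure dictated by the theta-decomposition of $\Theta$ and by the location of $K\cap H$ inside a branch or claw.
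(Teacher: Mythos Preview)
Your box-finding strategy matches the paper's, but the gap you yourself flag—upgrading BOD from $G-e$ to $G$—is real and stems from your choice of $e$. With $e\in P_3\subseteq K$, restoring $e$ can change $OD(C)$ in ways you cannot control; moreover, the $C$-bridge of $G$ containing $e$ is a bridge in $G$, not in $K$, and there is no reason it should be the small planar piece you want for $\tilde B$—it might well coincide with the bridge carrying $H$.

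The paper dissolves both problems by choosing $e$ \emph{outside} $K$: since $K$ is $H$-close and $n\ge 4$, some $H$-spoke is disjoint from $K$, and one takes $e$ there. Then all of $K$ sits in $G-e$, every edge of the 2-connected $K$ lies in an $H$-close cycle, and Lemmas~\ref{lm:closeIsPrebox} and~\ref{lm:preboxClean} make the whole of $D[K]$ clean in any 1-drawing $D$ of $G-e$. The unique $K$-bridge $M_K$ with $H\subseteq K\cup M_K$ loses only the edge $e$ (keeping its attachments), so $D[M_K-e]$ lies in one face of $D[K]$; take $C$ to be the boundary cycle of that face. If $K\ne C$, any $C$-bridge on the other side of $D[C]$ is automatically planar, while the $C$-bridges of $G$ and $G-e$ agree except that the one containing $H$ gains back the single edge $e$ with identical attachments, so $OD_G(C)\cong OD_{G-e}(C)$ and Lemma~\ref{lm:cleanBOD} gives BOD in $G$ for free. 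Then $C$ is a box, contradicting Lemma~\ref{lm:noBox}. The two moves you are missing: put $e$ in a spoke disjoint from $K$, and let the drawing (not an arbitrary theta) pick $C$.
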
}\printFullDetails{

\wordingrem{(The text of the proof has also substantially changed.)}

\major{\begin{cproof}  If $K\cap H$ consists of at least two vertices, then we include in $K$ the minimal connected subgraph of the $H$-branch or open $H$-claw containing $K\cap H$.  
\dragominor{Since $K$ is $H$-close, there is} a $K$-bridge $M_K$ in $G$ so that $H\subseteq K\cup M_K$.  Let $e$ be an edge of any $H$-spoke totally disjoint from $K$.  Note that $M_K-e$ is a $K$-bridge in $G-e$ 
and that $M_K$ has the same attachments in $G$ as $M_K-e$ has in $G-e$.
 \newline\indent
  Since $K$ is 2-connected, every edge of $K$ is in an $H$-close cycle contained in $K$.  Thus, for any 1-drawing $D$ of $G-e$, Lemmas \ref{lm:closeIsPrebox} and \ref{lm:preboxClean} imply that $D[K]$ is clean.  There 
 is a face $F$ of $D[K]$ containing 
$D[M_K-e]$. As $D[K]$ is clean and $K$ is 2-connected, $F$ is bounded by a cycle $C$ of $K$. 
 \newline\indent
Lemma \ref{lm:closeIsPrebox} implies the cycle $C$ is a $(C\cup H)$-prebox.  If $K$ is not just $C$, then there is a $C$-bridge $B$ contained on the side of $D[C]$ disjoint from $M_K$.  Evidently $B$ is a planar $C$-bridge.
\newline\indent
Lemma \ref{lm:cleanBOD} implies $C$ has BOD.  Since $C$ is a $(C\cup H)$-prebox, $C$ is a $\comp B$-prebox.  We conclude that $C$ is a box, contradicting Lemma \ref{lm:noBox}.  This shows that $K=C$.
\end{cproof}}

\major{The second of the following two corollaries is used several times later in this work.
We recall from Definition \ref{df:bridge} that, for a set $W$ of vertices, $\iso W$ is the subgraph with vertex set $W$ and no edges.}

}\major{\begin{corollary}\label{co:closeAtts}  Let $G\in\m2$, let $\hvng$ with $n\ge 4$, let $B$ be an $H$-bridge.
\begin{enumerate} \item If $x,y\in\att( B )$ are such that $\iso{\{x,y\}}$ is $H$-close, then there is a unique $H$-avoiding $xy$-path in $G$.
\item There do not exist vertices $x,y,z\in\att( B )$ so that $\iso{\{x,y,z\}}$ is $H$-close.
\end{enumerate}
 \end{corollary}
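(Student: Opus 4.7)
The plan is to derive both statements from Lemma \ref{lm:closeIsCycle} by exhibiting, under the negation of each, a $2$-connected $H$-close subgraph that is not a cycle.

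For \emph{(1)}, suppose two distinct $H$-avoiding $xy$-paths exist. By passing to the subgraph spanned by all $H$-avoiding $xy$-paths (or by a Menger/block-structure argument on a pair of such paths), we may assume to have two internally disjoint $H$-avoiding $xy$-paths $P_1$ and $P_2$. The $H$-closeness of $\iso{\{x,y\}}$ places $\{x,y\}$ either in a closed $H$-branch $b$ or in an open $H$-claw $Y$; in either case there is a simple $xy$-path $P_3$ lying entirely in $H$ (the $xy$-subpath of $b$, or the two-leg concatenation through the centre of $Y$). Then $P_1\cup P_2\cup P_3$ is a theta graph, hence $2$-connected and not a cycle, and it is $H$-close because $(P_1\cup P_2\cup P_3)\cap H = P_3$ lies in $b$ or $Y$. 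This contradicts Lemma \ref{lm:closeIsCycle}.

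For \emph{(2)}, we assemble a subgraph from two pieces. On the bridge side, let $S = \Nuc(B) \cup \{e_x,e_y,e_z\}$ where, for each $w\in\{x,y,z\}$, $e_w$ is any edge from $w$ into $\Nuc(B)$; then $S$ is connected, $S\cap H = \{x,y,z\}$, and $S$ provides a connection among $x,y,z$ through $\Nuc(B)$. On the $H$ side, from $H$-closeness of $\iso{\{x,y,z\}}$, the set $\{x,y,z\}$ lies in a closed $H$-branch or an open $H$-claw $Y$ with centre $v$, and we attach a subgraph $W\subseteq H$ chosen by cases: if $\{x,y,z\}$ lies on a single closed $H$-branch, or on a single ray of $Y$, then $W$ is the subpath of that branch or ray containing all three; if two of $x,y,z$ share a ray of $Y$ and the third lies on another ray, then $W$ is the path in $Y$ through $v$ passing through all three; if $x,y,z$ lie on three different rays, then $W$ is the inner claw in $Y$ consisting of the three subrays from $v$ to $x$, $y$, $z$. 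In every case $(S\cup W)\cap H = W$ is contained in the $H$-branch or $H$-claw, so $S\cup W$ is $H$-close; and $S\cup W$ has a vertex of degree at least three (the centre of the inner claw, or $v$, or an interior vertex of $W$ through which two of $x,y,z$ pass), so it is not a cycle. A vertex-removal check then shows $S\cup W$ is $2$-connected, contradicting Lemma \ref{lm:closeIsCycle}.

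The only real obstacle is the $2$-connectivity verification for $S\cup W$, since $\Nuc(B)$ itself may have cut vertices: the point is that the connection among $x,y,z$ via $\Nuc(B)$ (provided by $S$) and the connection via $H$ (provided by $W$) are independent, so removal of any vertex of $\Nuc(B)$ leaves $x,y,z$ connected through $W$, removal of any vertex of $W$ leaves them connected through $\Nuc(B)$, and removal of one of $x,y,z$ or of a vertex interior to an $e_w$ is absorbed by either side. Checking this against the three case splits for $W$ completes the argument.
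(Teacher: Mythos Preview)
Your approach to (1) is essentially the paper's, but the reduction to \emph{internally disjoint} $H$-avoiding $xy$-paths is neither justified nor needed. Two distinct $H$-avoiding $xy$-paths need not yield two internally disjoint ones with the same endpoints: a common internal vertex can be a cut vertex of their union separating $x$ from $y$, and a Menger argument would then only produce internally disjoint paths between $x$ and that cut vertex, which lies in $\Nuc(B)$ rather than in $H$. The paper bypasses this: with $P_1,P_2$ merely distinct and $P$ the $xy$-path in $H$, the union $P_1\cup P_2\cup P$ is already $2$-connected (deleting any vertex leaves every remaining piece attached to $x$ or $y$, and at least one full $xy$-path survives), is $H$-close, and contains three distinct $xy$-paths, so cannot be a cycle.

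For (2) there is a genuine gap in the $2$-connectivity check. Your $S\cup W$ need not be $2$-connected: $\Nuc(B)$ may have a cut vertex $v$ for which some component $C$ of $\Nuc(B)-v$ contains none of the $\Nuc(B)$-ends of your three chosen edges $e_x,e_y,e_z$; then $C$ is separated from $x,y,z$ in $(S\cup W)-v$. (Nothing in the $3$-connectivity of $G$ prevents this, since you have deliberately discarded all but three of the edges from $\Nuc(B)$ to $\att(B)$.) Your sketch verifies only that $x,y,z$ remain mutually joined after deleting a nucleus vertex, not that the rest of $\Nuc(B)$ stays attached to them. The paper fixes this by taking, in place of all of $\Nuc(B)$, a minimal connected subgraph of $B$ containing $x,y,z$: this is an $\{x,y,z\}$-claw $Y$ with centre in $\Nuc(B)$. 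Then $Y\cup W$ is evidently $2$-connected and $H$-close, and the centre of $Y$ has degree $3$, contradicting Lemma~\ref{lm:closeIsCycle}.
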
}\printFullDetails{

\major{\begin{cproof}  Suppose $P_1$ and $P_2$ are distinct $H$-avoiding $xy$-paths.    There is either a closed $H$-branch or an open $H$-claw containing an $xy$-path; this subgraph of $H$ contains a unique $xy$-path $P$.  Then $P\cup P_1\cup P_2$ is a 2-connected $H$-close subgraph of $G$ and so, by Lemma \ref{lm:closeIsCycle}, is a cycle.  But it contains three distinct $xy$-paths, a contradiction.
\newline\indent
For the second point, suppose by way of contradiction that such $x,y,z$ exist.  Let $Y$ be an $\{x,y,z\}$-claw in $B$.  There is a minimal connected subgraph $Z$ of $H$ contained either in a closed $H$-branch or in an open $H$-claw and containing $x$, $y$, and $z$.  We note that $Z$ is either a path or an $\{x,y,z\}$-claw.  Thus, $Y\cup Z$ is 2-connected and is $H$-close.  It is a cycle by Lemma \ref{lm:closeIsCycle}, but the centre of $Y$ has degree 3 in $Y\cup Z$, a contradiction. 
\end{cproof}}

}\major{\begin{corollary}\label{co:attsMissBranch}  Let $G\in \m2$, let $\hvfg$, and let $B$ be a $Q$-local $H$-bridge, for some $H$-quad $Q$.  If $s$ is an $H$-spoke and $r$ is an $H$-rim branch, both contained in $Q$, then $|\att( B )\cap s|\le 2$ and $|\att( B )\cap (Q-\cc{r})|\le 2$. \end{corollary}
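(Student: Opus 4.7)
The plan is to deduce both bounds from Corollary \ref{co:closeAtts}(2), which forbids any three attachments of an $H$-bridge from forming an $H$-close triple $\iso{\{x,y,z\}}$. In each case I will identify a closed $H$-branch or open $H$-claw containing the three candidate attachments, whence $\iso{\{x,y,z\}}$ is $H$-close by Definition \ref{df:Hclose} and a contradiction follows.

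For $|\att(B)\cap s|\le 2$ the argument is immediate: three attachments $x,y,z\in\att(B)\cap s$ all lie in the single closed $H$-branch $s$, so $\iso{\{x,y,z\}}$ is $H$-close and Corollary \ref{co:closeAtts}(2) applies.

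For $|\att(B)\cap (Q-\cc{r})|\le 2$, write $Q=r\, s_{j+1}\, r'\, s_j$ with $r,r'$ the two rim branches of $Q$ and $s_j,s_{j+1}$ the two spokes. Then $Q-\cc{r}$ is the path obtained by concatenating $s_j-v_j$, $r'$, and $s_{j+1}-v_{j+1}$, whose internal $H$-nodes are $u=v_{j+5}$ and $u'=v_{j+6}$. Suppose three attachments $x,y,z$ lie in $Q-\cc{r}$. The first inequality, together with its analogue applied to the closed $H$-branch $r'$, leaves at most two attachments on each of the three branches $s_j$, $r'$, $s_{j+1}$, so the three attachments span at least two of them. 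When all three lie on a pair of consecutive such branches and avoid the node of $Q-\cc{r}$ opposite the intermediate $H$-node, they are contained in the open $H$-claw centred at that intermediate node ($u$ or $u'$); Corollary \ref{co:closeAtts}(2) then finishes this case.

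The expected main obstacle is the remaining configurations, notably the case where $x$, $y$, $z$ occupy all three branches of $Q-\cc{r}$ (one per branch), since no single open $H$-claw or closed $H$-branch of $H$ then covers all three. To handle this I would select an $\{x,y,z\}$-claw in $B$ and, exploiting the $Q$-locality of $B$, use one of its $H$-avoiding arms as a rerouting of an appropriate spoke of $H$ through $B$, producing a new $V_{10}$-subdivision $H'\subseteq G$ in which the three attachments lie in a single open $H'$-claw. Corollary \ref{co:closeAtts}(2) applied to $H'$ then yields the desired contradiction.
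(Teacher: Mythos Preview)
Your treatment of the first inequality and the easy sub-case of the second (three attachments on two consecutive branches of $Q-\cc{r}$, avoiding the far corner) is correct and matches the paper.  The gap is in your handling of the ``remaining configurations'': the rerouting to a new $V_{10}$ does not work as you describe.

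Take the representative hard case $x\in\oo{s_j}$, $y\in\oo{r'}$, $z\in\oo{s_{j+1}}$.  If you reroute $s_j$ via an $H$-avoiding $xy$-path in $B$, you do obtain a $V_{10}$-subdivision $H'$, but $y$ becomes an $H'$-node while $x$ sits in one $H'$-spoke and $z$ in a different one; no open $H'$-claw contains all three.  If instead you absorb a full $\{x,y,z\}$-claw of $B$ into $H'$, then part of $B$ becomes part of $H'$, so $B$ is no longer a single $H'$-bridge and you lose the hypothesis that $x,y,z$ are attachments of one bridge.  I do not see any rerouting to a $V_{10}$ that simultaneously keeps a single $H'$-bridge through all three points and places them in one open $H'$-claw.

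The paper's device is to drop to a $V_8$ rather than stay at $V_{10}$: set $H'=H-\oo{s}$ for a spoke $s$ of $Q$ chosen to contain at most one of $x,y,z$ (this is always possible).  Corollary~\ref{co:closeAtts} only needs $n\ge 4$, so it applies to $H'$.  If $\oo{s}\cap\{x,y,z\}=\varnothing$, the deletion demotes the corner $s\cap r'$ to a non-node, so the open $H'$-claw centred at the \emph{other} corner of $r'$ now swallows all of $(s'\cup r')-r$ and hence $x,y,z$.  If $\oo{s}$ contains exactly one of them, say $x$, then the $H'$-bridge $B'\supseteq B\cup\oo{s}$ gains the endpoint $v=s\cap r'$ as an attachment; now $y$, $z$, and $v$ are three attachments of $B'$ lying in a single open $H'$-claw.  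Either way Corollary~\ref{co:closeAtts}(2) applied to $H'$ finishes.
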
}\printFullDetails{

\major{\begin{cproof}  The first claim follows immediately from Corollary \ref{co:closeAtts}.  For the second, suppose there are three such attachments $x$, $y$, and $z$.  Corollary \ref{co:closeAtts} implies they are not all in the other $H$-rim branch $r'$ of $Q$, so at least one of $x$, $y$, and $z$ is in the interior of some $H$-spoke of $Q$.  
\newline\indent
Suppose first that some $H$-spoke $s$ in $Q$ is such that $\oo{s}\cap \{x,y,z\}=\varnothing$.  Then let $H'=H-\oo{s}$, let $B'$ be the $H'$-bridge containing $B$, and let $r'$ and $s'$ be the two $H$-branches in $Q$ other than $r$ and $s$.  Then $x$, $y$, and $z$ are all attachments of $B'$ and they are all in the same open $H'$-claw containing $(r'\cup s')-r$, contradicting Corollary \ref{co:closeAtts}.
\newline\indent
Otherwise, we may suppose both $H$-spokes $s$ and $s'$ in $Q$ have one of $x$, $y$, and $z$ in their interiors.  We may suppose $s$ has no other one of $x$, $y$ and $z$.   Choose the labelling so that $x\in\oo{s}$.   Let $r'$ \dragominor{be the $H$-rim branch in $Q$ other than $r$}  and again let $H'=H-\oo{s}$ and  $B'$ be the $H'$-bridge containing $B$.  \dragominor{Then} $y$ and $z$ are attachments of $B'$, as is the $H$-node in $s\cap r'$.  But now these three attachments of $B'$ contradict Corollary \ref{co:closeAtts}. \end{cproof}}

We want to find cycles having BOD in our $G\in \m2$ that is embedded with representativity 2 in the projective plane.  The following will be helpful.

}\begin{lemma}\label{lm:planeNotOverlap} Let $G$ be a graph embedded  in $\pp$ and let $C$ be a contractible cycle in $G$.  Suppose $B$ is a $C$-bridge so that $C\cup B$ has no non-contractible cycles and let $F$ be the $C$-face containing $B$. If $B'$ is another $C$-bridge \dragominor{embedded} in $F$, then $B$ and $B'$ do not overlap on $C$. \end{lemma}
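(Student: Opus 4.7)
The plan is to argue by contradiction: suppose $B$ and $B'$ overlap on $C$. By Lemma \ref{lm:overlapClaw}, this means either $B$ and $B'$ are skew (yielding a $uv$-path $P\subseteq B$ and a disjoint $u'v'$-path $P'\subseteq B'$ whose endpoints interlace on $C$) or $B$ and $B'$ are 3-equivalent (yielding disjoint claws $Y\subseteq B$ and $Y'\subseteq B'$ sharing the same talons $u,v,w\in C$). In either case, $B$ contains two distinct attachments, so we can fix a $uv$-path $P\subseteq B$ between them and let $Q,Q'$ denote the two $uv$-arcs of $C$.

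Since $C$ is contractible in $\pp$, the closure $\overline F$ is either a closed disc or a closed M\"obius strip, and I split on this dichotomy. In the disc case, $C\cup B\cup B'\subseteq \overline F$ is embedded in a disc and is therefore plane; the conclusion follows from the standard observation that two $C$-bridges in the same face of a plane graph cannot overlap --- the skew case is blocked by a Jordan-type argument with $P$ as separator of $\overline F$, and the 3-equivalent case is blocked because the claw $Y$ partitions $\overline F$ into three regions, and a disjoint claw $Y'$ with the same talons cannot reach all three talons from its centre without meeting $Y$.

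The M\"obius case is the main obstacle. Here I will exploit that both $P\cup Q$ and $P\cup Q'$ are cycles of $C\cup B$, hence contractible in $\pp$ by hypothesis. Classifying boundary-to-boundary arcs in a M\"obius strip yields that $P$ must be \emph{separating} in $\overline F$: if $P$ were non-separating, then cutting along $P$ would leave $\overline F$ a disc and both $P\cup Q$, $P\cup Q'$ would remain non-separating in $\pp$, hence non-contractible; and if $P$ were separating with both sides discs, $\pp$ itself would be a sphere. Thus $P$ splits $\overline F$ into a closed disc $D_P$ bounded by $P\cup Q$ (say) and a closed M\"obius strip $M_P$ bounded by $P\cup Q'$, with $\overline{D_P}\cap \overline{M_P}=P$.

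Once this structure is in place, both subcases close quickly. In the skew subcase, the interlacing of endpoints puts $u'\in \overline{D_P}\setminus P$ and $v'\in \overline{M_P}\setminus P$; but the connected arc $P'\subseteq \overline F$, being disjoint from $P$, must lie entirely in one of the two disjoint closed regions $\overline{D_P}\setminus P$ and $\overline{M_P}\setminus P$, a contradiction. In the 3-equivalent subcase, the third talon $w$ lies on one of $Q,Q'$; by relabelling, we take $w\in Q\subseteq \partial D_P$. The third edge $c_Yw$ of $Y$ starts at $c_Y\in P$, is internally disjoint from $P\cup C$, and ends at $w\in \partial D_P$, so it lies in $\overline{D_P}$; a short propagation argument (using that $Y'\cap Y=\varnothing$, so no edge of $Y'$ crosses $P$) likewise forces $Y'\subseteq \overline{D_P}$. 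But then $c_Yw$ splits $\overline{D_P}$ into two sub-discs separating $u$ from $v$, and the disjoint claw $Y'\subseteq \overline{D_P}$ with talons $u,v,w$ necessarily has an edge crossing $c_Yw$, contradicting $Y\cap Y'=\varnothing$.
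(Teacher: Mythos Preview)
Your argument is correct and follows essentially the same route as the paper, with one small slip: $Y\cap Y'$ equals $\{u,v,w\}$, not $\varnothing$ --- what you actually need, and what does hold, is that $Y'$ meets $P$ only at $\{u,v\}$ and meets the arm $c_Yw$ only at $\{w\}$. The paper's packaging is a bit tidier: instead of separately tracking the disc and M\"obius pieces of $\overline F\setminus P$, it shows directly that one of the three cycles of $C\cup P$ bounds a closed disc in $\pp$ containing all of $C\cup P$ (this disc is precisely the complement of your $M_P$), after which the non-overlap conclusion drops out for any $B'$ embedded in $F$ without further case analysis.
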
\printFullDetails{

\begin{cproof}  Let $x$ and $y$ be any distinct attachments of $B$ and let $P$ be a $C$-avoiding $xy$-path in $B$.  Then $C\cup P$ has three cycles, all contractible by hypothesis.  We claim that one bounds a closed disc $\Delta$ so that $C\cup P\subseteq \Delta$.    If $P$ is contained in the disc $\Delta$ bounded by $C$, then we are done.
In the remaining case, let $C'$ be one of these cycles containing $P$.  If the closed disc $\Delta'$ bounded by $C'$ contains $C$, then we are done.  Otherwise, $\Delta\cap \Delta'$ is a path in $C$ and then $\Delta\cup \Delta'$ is the desired closed disc.  

\dragominor{As} no other $C$-bridge in  $F$ can have attachments in the interiors of  both the two $xy$-subpaths of $C$ and, therefore, there is no $C$-bridge embedded in $F$ that is skew \wording{(see Lemma \ref{lm:overlapClaw} (\ref{it:skew}))} to $B$.

Likewise, if $x,y,z$ are three distinct attachments of $B$, then there is a disc $\Delta'$ containing the union of $C$ with a $C$-avoiding $\{x,y,z\}$-claw in $B$.  This disc shows that no other $C$-bridge embedded in $F$ can have all of $x,y,z$ as attachments and, therefore, no $C$-bridge embedded in $F$ is 3-equivalent \wording{(see Lemma \ref{lm:overlapClaw} (\ref{it:equivalent}))} to $B$. \end{cproof}

\wording{The following is an immediate consequence of Lemma \ref{lm:planeNotOverlap}\dragominor{~and the fact that $C$ has only two faces}.}

}\begin{corollary}\label{co:contractibleBOD}  Let $G$ be a graph embedded in $\pp$ and let $C$ be a cycle of $G$ bounding a closed disc in $\pp$.  If at most one $C$-bridge $B$ is such that $C\cup B$ contains a non-contractible cycle, then $C$ has BOD and, for every other $C$-bridge $B'$, $C\cup B'$ is planar. \hfill{{\rule[0.8ex]{7pt}{7pt}}}\end{corollary}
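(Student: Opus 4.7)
The plan is to exploit the way $C$ splits $\pp$. Since $C$ bounds a closed disc $\Delta\subseteq\pp$, the complementary closed $C$-face is a M\"obius band; every $C$-bridge of $G$ is embedded in exactly one of these two faces. Let $B$ denote the exceptional $C$-bridge (if any) for which $C\cup B$ contains a non-contractible cycle.

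For the BOD claim, I would partition the $C$-bridges according to the face containing each of them and show no two bridges in the same class overlap. For any $C$-bridge $B'\neq B$, the graph $C\cup B'$ contains no non-contractible cycle of $\pp$, so the hypothesis of Lemma \ref{lm:planeNotOverlap} is satisfied by $B'$ in the $C$-face containing it. Given any two $C$-bridges sharing a face, at least one of them differs from $B$ by uniqueness of $B$, and applying Lemma \ref{lm:planeNotOverlap} to that one shows the pair does not overlap. Hence the face-partition witnesses bipartiteness of the overlap diagram.

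For the planarity claim, let $B'\neq B$ be a $C$-bridge. If $B'$ is embedded in $\Delta$, then $C\cup B'$ lies in a disc and is plane. If $B'$ is embedded in the M\"obius band, then since $C\cup B'$ has no non-contractible cycle of $\pp$, there is a non-contractible simple closed curve in the M\"obius band that is disjoint from $C\cup B'$; cutting $\pp$ along this curve converts $\pp$ into a closed disc containing $C\cup B'$, proving planarity.

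The main obstacle I anticipate is the topological step in the M\"obius case, namely producing a non-contractible simple closed curve in the M\"obius band that avoids $C\cup B'$. This is essentially the standard fact that a graph embedded in $\pp$ all of whose cycles are contractible is necessarily planar. It can be made rigorous by cutting the M\"obius band along its core: the absence in $B'$ of any cycle homotopic to the core means $B'$ misses a simple closed curve parallel to (and hence homotopic to) the core, and any such curve is non-contractible in $\pp$.
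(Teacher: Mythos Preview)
Your proposal is correct and follows exactly the paper's approach: the paper's proof is the single line ``immediate consequence of Lemma \ref{lm:planeNotOverlap} and the fact that $C$ has only two faces,'' and your BOD argument via the face partition is precisely the intended elaboration. For the planarity of $C\cup B'$ the paper leaves this implicit as the standard fact that a graph embedded in $\pp$ with no non-contractible cycle is planar; your core-curve sketch is in the right spirit, though the line ``the absence in $B'$ of any cycle homotopic to the core means $B'$ misses a simple closed curve parallel to the core'' is not literally valid (a path in $B'$ could cross every parallel of the core) --- the clean way is to note that some face of $C\cup B'$ in $\pp$ must fail to be a disc and hence contains a non-contractible curve.
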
\printFullDetails{

The following result is surprisingly useful in later sections.

}\begin{lemma}\label{lm:noBdy}  Let $G\in \m2$ and suppose $G$ is embedded with representativity 2 in the projective plane.  Let $\gamma$ be a non-contractible curve in the projective plane so that $|\gamma\cap G|=2$ and let $C$ be a cycle of $G$ so that $\gamma\cap C=\varnothing$.  If there is a non-planar $C$-bridge $B$, then $\gamma\cap G\subseteq B$, $C$ has BOD, and, for every other $C$-bridge $B'$, $C\cup B'$ is planar. \end{lemma}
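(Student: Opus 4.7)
The plan is to handle the three conclusions in two stages: first prove $\gamma\cap G\subseteq B$ by contradiction using Lemma \ref{lm:nucMeetGamma}, then use this to check the hypothesis of Corollary \ref{co:contractibleBOD}, which yields (b) and (c) simultaneously.

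To set up, observe that $C$ is contractible: since any two non-contractible simple closed curves in $\pp$ cross an odd number of times, and $C$ is disjoint from the non-contractible $\gamma$, the cycle $C$ must be contractible and so bounds a closed disc in $\pp$. In particular, Lemma \ref{lm:nucMeetGamma} is now applicable. Because $\gamma \cap C = \varnothing$, each of $a, b$ lies off $C$, so each lies in the nucleus of a unique $C$-bridge; call these bridges $B_a$ and $B_b$. If $B_a \ne B$, then Lemma \ref{lm:nucMeetGamma} gives $\comp{B_a}$ planar; but $B \subseteq \comp{B_a}$, so $C\cup B$ would be planar, contradicting that $B$ is a non-planar $C$-bridge. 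Therefore $B_a = B$, and the same argument gives $B_b = B$, so $\{a, b\} \subseteq \Nuc(B)$, which is (a).

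For (b) and (c), fix any $C$-bridge $B' \ne B$. Since the $C$-bridges partition $V(G)\setminus V(C)$, no vertex of $B'$ is equal to $a$ or $b$; moreover, every edge of $G$ incident to a vertex of $\Nuc(B)$ lies in $B$, so no edge of $B'$ is incident to $a$ or $b$ either. Hence $\gamma$ meets $B'$ nowhere, and combined with $\gamma\cap C=\varnothing$ this gives $(C \cup B') \cap \gamma = \varnothing$. Any cycle $C'$ of $C \cup B'$ is then a simple closed curve in $\pp\setminus\gamma$. Since $\pp$ minus a non-contractible simple closed curve is an open disc, $C'$ is contractible, so $C \cup B'$ contains no non-contractible cycle.

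This verifies the hypothesis of Corollary \ref{co:contractibleBOD}: $C$ bounds a closed disc, and at most one $C$-bridge (namely $B$ itself) has the property that its union with $C$ contains a non-contractible cycle. The corollary therefore delivers both that $C$ has BOD and that $C \cup B'$ is planar for every $B' \ne B$, completing the proof. The one step that required real thought is the first stage; once $\{a,b\}\subseteq \Nuc(B)$ is secured, everything else is a topological consequence of the fact that every other bridge is shielded from $\gamma$ by $B$.
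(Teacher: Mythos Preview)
Your proof is correct and follows essentially the same approach as the paper's: show $a,b\in B$ by contradiction (the paper argues directly that $a\notin B$ would give $C\cup B\subseteq G-a$, which is planar since $G-a$ has representativity~$1$, whereas you route through Lemma~\ref{lm:nucMeetGamma}, which packages that same observation), then observe every other bridge is disjoint from $\gamma$ and invoke Corollary~\ref{co:contractibleBOD}. The only cosmetic difference is that you make the contractibility of $C$ explicit up front, which the paper leaves implicit.
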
\printFullDetails{

\begin{cproof}  Let $a$ and $b$ be the two points in $\gamma\cap G$.  We note that $G-a$ and $G-b$ are planar, as they have representativity 1 embeddings in $\pp$.  Thus, if, for example, $a\notin B$, then $C\cup B\subseteq G-a$ and so $C\cup B$ is planar, a contradiction.  

If $B'$ is any other $C$-bridge, then $a,b\notin C\cup B'$ and, therefore, $C\cup B'$ is disjoint from $\gamma$.  Since any non-contractible cycle must intersect $\gamma$,  $C\cup B'$ has no non-contractible cycles.   The result is now an immediate consequence of Corollary \ref{co:contractibleBOD}. 
\end{cproof}

Here is a simple result that we occasionally use.

}\begin{lemma}\label{lm:threeAtts}  Suppose $G\in\m2$ and $V_{2n}\topol H\subseteq G$, with $n\ge 4$.  Let $B$ be an $H$-bridge.  
\begin{enumerate} \item Then $|\att( B )|\ge 2$.
\item If $|\att( B )|=2$, then $B$ is isomorphic to $K_2$.
\item If $|\att( B )|=3$, then $B$ is isomorphic to $K_{1,3}$.
\end{enumerate}
\end{lemma}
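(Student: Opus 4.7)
The plan is to apply the 3-connectivity of $G$ (hence 2-connectivity), together with the minimum degree 3 hypothesis and the fact that $|V(H)| = 2n \ge 8$, so $V(H)$ always has vertices outside any two- or three-element attachment set.

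For Part 1, since $G$ is connected, $\att(B)\ne\varnothing$. If $|\att(B)|=1$, then $B$ cannot be an edge-type $H$-bridge (those have exactly two attachments), so $B$ is a component-type bridge with $\Nuc(B)\ne\varnothing$; the unique attachment is then a cut-vertex separating $\Nuc(B)$ from $V(H)$ minus that vertex (non-empty since $|V(H)|\ge 8$), contradicting 2-connectivity.

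For Part 2, suppose $|\att(B)|=2$ with $\att(B)=\{x,y\}$. If $\Nuc(B)\ne\varnothing$, then $\{x,y\}$ is a 2-cut of $G$ separating $\Nuc(B)$ from the non-empty set $V(H)\setminus\{x,y\}$, contradicting 3-connectivity. Hence $\Nuc(B)=\varnothing$, which forces $B$ to be an edge-type bridge, namely a single edge with both ends in $H$; so $B\cong K_2$.

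For Part 3, let $\att(B)=\{x,y,z\}$. Edge-type bridges have only two attachments, so $B$ is component-type with $\Nuc(B)\ne\varnothing$. For any $v\in\Nuc(B)$ and any $w\in V(H)\setminus\{x,y,z\}$ (such $w$ exists since $|V(H)|\ge 8$), Menger applied to 3-connectivity produces three internally disjoint $v$-$w$ paths in $G$; each must exit $B$ through a distinct vertex of $\{x,y,z\}$, giving three internally disjoint $v$-$\{x,y,z\}$ paths in $B$ ending at $x$, $y$, and $z$, respectively, and so an $\{x,y,z\}$-claw $Y\subseteq B$ centered at $v$. To conclude $|\Nuc(B)|=1$, suppose otherwise; then there is an edge $e\in E(B)\setminus E(Y)$ (either a second nucleus vertex or an extra edge between $v$ and an attachment). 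By 2-crossing-criticality, $G-e$ has a 1-drawing $D$, and $H\subseteq G-e$. The constraint from Lemma \ref{lm:1drawingsV2n} pins down the crossing in $D[H]$ to opposite rim-branches; the remaining bridge structure $(B-e)\supseteq Y$ adds three $H$-avoiding paths attached at $x,y,z$ that, when combined with the forced $H$-crossing pattern, cannot be embedded without a second crossing, contradicting $\crn(G-e)\le 1$. Hence $\Nuc(B)=\{u\}$ for a single internal vertex adjacent to each of $x,y,z$, and a parallel-edge argument (deleting a duplicate edge and noting that the resulting graph still contains the required claw plus $H$, while adding the duplicate back to any drawing increases crossings by at most a controllable amount) rules out multi-edges, so $B\cong K_{1,3}$.

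The main obstacle is the Part 3 argument ruling out $|\Nuc(B)|\ge 2$: it is easy to produce the claw from 3-connectivity, but the contradiction for extra edges or vertices uses the interaction between 1-drawings of $V_{2n}$ (as constrained by Lemma \ref{lm:1drawingsV2n}) and the bridge structure, and requires careful case analysis on where the forced rim crossing must lie relative to the attachment positions of $Y$.
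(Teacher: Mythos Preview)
Your arguments for Parts 1 and 2 are correct and match the paper's reasoning exactly.

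Part 3 has a genuine gap. You delete an edge $e\in E(B)\setminus E(Y)$, obtain a 1-drawing $D$ of $G-e$ (which exists by criticality), and then try to argue that $(B-e)\supseteq Y$ ``cannot be embedded without a second crossing,'' contradicting $\crn(G-e)\le 1$. But this is backwards: $D$ \emph{is} a 1-drawing of $G-e$, so $B-e$ (including $Y$) certainly fits into it without a second crossing. There is no obstruction to placing a claw in a face of $D[H]$; the crossing in $D[H]$ does not in any way prevent $Y$ from being drawn cleanly. Your case analysis, however carefully done, cannot produce the claimed contradiction because the contradiction you are aiming for is simply false.

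The paper's argument is quite different and worth knowing. It produces a 1-drawing of $G$ itself (the correct contradiction) by a substitution trick using \emph{two} auxiliary 1-drawings. First, since $B\not\cong K_{1,3}$, the graph $\comp B\cup Y$ is a proper subgraph of $G$, hence has a 1-drawing $D_1$; the crossing in $D_1$ lies in $H$ (which is self-crossed), so $Y$ is clean in $D_1$. Second, since $n\ge 4$ and $|\att(B)|=3$, there is an $H$-spoke $s$ disjoint from $B$; a 1-drawing $D_2$ of $G-\oo{s}$ has its crossing in $H-\oo{s}$, so all of $B$ is clean in $D_2$. Now $D_2[B]$ is a planar piece with its three attachments on a common face, and one can substitute $D_2[B]$ for $D_1[Y]$ inside $D_1$ to obtain a 1-drawing of $G$, contradicting $\crn(G)\ge 2$. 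This substitution also handles parallel edges in $B$ simultaneously, so no separate parallel-edge argument is needed.
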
\printFullDetails{

\begin{cproof} Note that $\att( B )=B\cap \comp B$ and $G=B\cup \comp B$.  If $|\att( B )|\le 1$, then $G$ is not 2-connected.  If $|\att( B )|=2$ and $\Nuc( B )$ has a vertex, then $G$ is not 3-connected.  

Now suppose $|\att( B )|=3$ and $B$ is not isomorphic to $K_{1,3}$.  Let $Y$ be an $\att( B )$-claw contained in $B$.  As $\comp B\cup Y$ is a proper subgraph of $G$, it has a 1-drawing $D_1$; $Y$ is clean in $D_1$, as $H$ must be self-crossed.  On the other hand, if $s$ is an $H$-spoke disjoint from $B$, there is a 1-drawing $D_2$ of $G-\oo{s}$.  Again, the crossing in $D_2$ involves two edges of $H-\oo{s}$, so $B$ is clean.  We can substitute $D_2[B]$ for $D_1[Y]$ to convert $D_1$ into a 1-drawing of $G$, a contradiction.\end{cproof}

The following lemma is the last \wording{substantial} one we need before proving that every $H$-quad has BOD.

}\begin{lemma}\label{lm:CdisjointNCcycle}  Let $G$ be a graph that is embedded in $\pp$ and let $C$ be a cycle of $G$.  Let $B$ be a $C$-bridge so that $\Nuc(B )$ contains a non-contractible cycle.  Then $C$ is contractible,
 $C$ has BOD, and
every $C$-bridge other than $B$ is planar.
\end{lemma}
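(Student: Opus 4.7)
The plan is to reduce everything to Corollary \ref{co:contractibleBOD} by showing (i) $C$ is contractible, and (ii) $B$ is the unique $C$-bridge for which $C\cup B$ contains a non-contractible cycle.

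Let $N$ be a non-contractible cycle in $\Nuc(B)$. Since $\att(B)\subseteq V(C)$, we have $V(\Nuc(B))\cap V(C)=\varnothing$, so $N$ and $C$ are vertex-disjoint; hence, in the embedding, they are disjoint simple closed curves in $\pp$. Now I would invoke the elementary topological fact that any two disjoint simple closed curves in $\pp$ contain at most one non-contractible member (cutting $\pp$ along a non-contractible simple closed curve yields an open disc, in which every simple closed curve is contractible). Since $N$ is non-contractible, this forces $C$ to be contractible; in particular $C$ bounds a closed disc in $\pp$.

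Next I would verify uniqueness. Suppose, for contradiction, that some other $C$-bridge $B'$ is such that $C\cup B'$ contains a non-contractible cycle $N'$. Because the nuclei of distinct $C$-bridges are pairwise vertex-disjoint and $V(\Nuc(B))\cap V(C)=\varnothing$, we get
$$V(N)\cap V(C\cup B') \;\subseteq\; V(\Nuc(B))\cap \bigl(V(C)\cup V(B')\bigr)\;=\;\varnothing,$$
so $N$ and $N'$ are vertex-disjoint subgraphs, and therefore embed as disjoint simple closed curves in $\pp$. Applying the same topological fact to $N$ and $N'$ yields a contradiction, since both are non-contractible.

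Having established that $C$ bounds a closed disc and that $B$ is the unique $C$-bridge with $C\cup B$ containing a non-contractible cycle, Corollary \ref{co:contractibleBOD} delivers the conclusion: $C$ has BOD and every $C$-bridge $B'\ne B$ satisfies $C\cup B'$ planar, i.e.\ is a planar $C$-bridge. The only real hurdle is making the vertex-disjointness argument for $N$ and $N'$ precise enough to invoke the topological lemma about disjoint non-contractible curves in $\pp$; everything else is bookkeeping.
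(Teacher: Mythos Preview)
Your proof is correct and follows essentially the same approach as the paper's: both rely on the fact that any two non-contractible simple closed curves in $\pp$ must intersect, applied to the non-contractible cycle $N\subseteq\Nuc(B)$, and then invoke Corollary~\ref{co:contractibleBOD}. The paper is slightly more economical in that it observes directly that all of $C\cup B'$ is disjoint from $N$ (for any $C$-bridge $B'\ne B$), which simultaneously yields that $C$ is contractible and that $C\cup B'$ has no non-contractible cycle, whereas you split these into two separate steps.
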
\printFullDetails{

\begin{cproof} \wording{Let $N$ be a non-contractible cycle in $\Nuc(B)$ and let $B'$ be a $C$-bridge different from $B$.  Then $C\cup B'$ is disjoint from $N$.}  Since any two non-contractible cycles in $\pp$ intersect,  $C\cup B'$ does not contain a non-contractible cycle.  Clearly this implies $C$ is contractible and the remaining items are an immediate consequence of Corollary \ref{co:contractibleBOD}.\end{cproof}

We prove below that every \minor{$H$-}quad has BOD and that at least two hyperquads have BOD.  A standard labelling of the embedded $V_{10}$ will help make the details of the statement comprehensible. We have seen that, up to relabelling,  there are two representativity 2 embeddings of $V_{10}$ in $\pp$.   There is a simple non-contractible curve $\gamma$ in $\pp$ meeting $G$ in two points $a$ and $b$.  These are both in the rim $R$ of $H$ and either none or one of the $H$-spokes is outside the M\"obius band $\Mob$ bounded by $R$.   Let $\alpha$ and $\beta$ be the two $ab$-subarcs of $\gamma$, labelled so that $\beta\subseteq \Mob$. 

}\minor{\begin{definition}\label{df:embedExposed}  Let $G$ be a graph and let $\hvfg$.  If $G$ is embedded in $\pp$ so that one $H$-spoke is not in $\Mob$, then $H$ has an {\em exposed spoke\/}\index{exposed}\index{spoke!exposed} and the exposed spoke is the $H$-spoke not in $\Mob$. 
\newline\indent
In this case, the {\em standard labelling\/}\index{standard labelling} is chosen so that the exposed spoke is $s_0$ and so that $v_0,v_1,v_2,v_3,v_4,v_5$ are all incident with one of the two faces of $H\cup \gamma$ incident with $s_0$.\end{definition}}
\printFullDetails{

   \wordingrem{(Text removed.)}%In the lists below, we write $v_i,r_i,v_{i+1}$ and $v_i,s_i,v_{i+n}$, for example, to indicate that the walk in question traverses, successively, $v_i$, then all of $r_i$, to $v_{i+1}$ and $v_i$, all of $s_i$ to $v_{i+n}$.  The square brackets indicate we include in the walk the vertices at the ends of the list.  In the case of, for example, $v_5,r_5,b$, we traverse $r_5$ from $v_5$ as far as $b$.   We also write $P\,Q$ for the concatenation of two paths $P$ and $Q$ that share a common end.  We remark that we do not distinguish between a path and its inverse.  Thus, we may write $r_5\,s_1$ and $r_5\,s_4$ to indicate a $v_5s_6$-path and a $v_6v_9$-path, respectively, in a subdivision of $V_{10}$. 
The faces of $H\cup \gamma$ are bounded by the cycles:
\begin{enumerate}
\item $\cc{a,r_{9},v_0}s_0\cc{v_5,r_5,b,\alpha,a}$;
\item $r_0\,r_1\,r_3\,r_3\,r_{4}\,s_0$;
\item $\cc{a,r_{9},v_0}r_0\,s_1\cc{v_{6},r_5,b,\beta,a}$;
\item $Q_1$, $Q_2$, $Q_{3}$;
\item $r_{4}\cc{v_5,r_5,b,\beta,a,r_{9},v_{9}}s_{4}$;
and
\item $\cc{b,r_5,v_6} r_6\,r_7\,r_8\cc{v_{9},r_{9},a,\alpha,b}$.
\end{enumerate}

This case is illustrated in the \wording{diagram to the left} in Figure \ref{fig:Rep2Vten}.

In the case all the $H$-spokes are in $\Mob$, the labelling of $H$ may be chosen so that the faces of $H\cup \gamma$ are bounded by:
\begin{enumerate}
\item $\cc{a,r_{9},v_0}\rbsp r_0\,r_1\,r_2\,r_{3}\lbsp\cc{v_{4},r_{4},b,\alpha,a}$;
\item $\cc{a,r_{9},v_0,s_0,v_5,r_{4},b,\beta,a}$;
\item $Q_0$, $Q_1$, $Q_2$, $Q_{3}$;
\item $\cc{v_{4},r_{4},b,\beta,a,r_{9},v_{9},s_{4},v_{4}}$; and
\item $\cc{b,r_{4},v_{5}}r_{5}\, r_6\,r_7\,r_8\cc{v_{9},r_{9},a,\alpha,b}$.\end{enumerate}

This case is illustrated in the \wording{diagram to the right} in Figure \ref{fig:Rep2Vten}.

We need one more technical lemma before the main result of this section.

}\begin{lemma}\label{lm:nonconsecHyper}   Let $G\in \m2$, let $\hvfg$, and let $i,j\in\{0,1,2,3,4\}$ be such that $\bQ_i$ and $\bQ_j$ have precisely one $H$-spoke in common.  If $\bQ_i$ has BOD and $s_i$ is in a planar $\bQ_i$-bridge, then $\comp{(M_{\bQ_j})}$ is planar. \end{lemma}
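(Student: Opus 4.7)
Without loss of generality take $j=i+2$; the case $j=i-2$ is symmetric under hyperquad reflection. The plan is to apply Theorem \ref{th:TutteOne}(\ref{it:TutteOneA}) to $\comp{M_{\bQ_j}}$, using $Q_{i+2}$ as the BOD cycle. A direct verification shows that $Q_{i+2}$ (with edges $r_{i+2},r_{i+7},s_{i+2},s_{i+3}$) is totally disjoint from $\bQ_i$ (which uses $r_{i-1},r_i,r_{i+4},r_{i+5},s_{i-1},s_{i+1}$). Corollary \ref{co:hyperBODquadBOD}, applied to $\bQ_i$, $Q_{i+2}$, and the planar $\bQ_i$-bridge containing $s_i$ that exists by hypothesis, then yields that $Q_{i+2}$ has BOD in $G$ and that $M_{Q_{i+2}}$ is the unique non-planar $Q_{i+2}$-bridge of $G$.

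Next, $Q_{i+2}\subseteq\comp{M_{\bQ_j}}$: the edges $r_{i+2},s_{i+3},r_{i+7}$ of $Q_{i+2}$ lie on $\bQ_j$, and the spoke $s_{i+2}=s_j$ is excluded from $M_{\bQ_j}$ by the definition of the latter. Every $Q_{i+2}$-bridge of $\comp{M_{\bQ_j}}$ is either (a) a $Q_{i+2}$-bridge of $G$ entirely disjoint from $M_{\bQ_j}$, hence planar by the first paragraph; or (b) a sub-bridge of $M_{Q_{i+2}}$ surviving the deletion of $E(M_{\bQ_j})$. In case (b), the $H$-portion of the surviving sub-bridge reduces exactly to the path $r_{i+1}\cup s_{i+1}\cup r_{i+6}$ from $v_{i+2}$ to $v_{i+7}$ (the unique path of $\bQ_j\cap M_{Q_{i+2}}$), and the residual non-$H$ edges of $M_{Q_{i+2}}$ either lie in $M_{\bQ_j}$ (and are deleted) or attach inside the contractible disc bounded by $\bQ_j$ in the projective planar embedding of $G$, producing a planar sub-bridge. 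BOD of $Q_{i+2}$ passes from $G$ to $\comp{M_{\bQ_j}}$, since attachments of residual bridges are subsets of the attachments of their $G$-parents, and the only potential new overlap (between two components of a split $M_{Q_{i+2}}$) can be ruled out by the embedding. Theorem \ref{th:TutteOne}(\ref{it:TutteOneA}) then implies $\comp{M_{\bQ_j}}$ is planar.

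The main obstacle is case (b): rigorously verifying planarity of the surviving sub-bridge of $M_{Q_{i+2}}$ in $\comp{M_{\bQ_j}}$ and excluding new overlaps between possibly multiple surviving components. This will require a careful analysis of the non-$H$ attachments of $M_{Q_{i+2}}$ using the projective planar embedding of $G$, exploiting that $M_{\bQ_j}$ absorbs every part of $M_{Q_{i+2}}$ embedded in the M\"obius-band region complementary to the disc bounded by $\bQ_j$, so that every surviving non-$H$ attachment is forced to lie inside that disc.
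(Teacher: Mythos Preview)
Your approach is quite different from the paper's and substantially more involved. You attempt to establish planarity of $\comp{(M_{\bQ_j})}$ structurally, via Tutte's criterion applied to the cycle $Q_{i+2}$, but as you yourself recognize, case~(b) is a genuine obstacle: when $M_{Q_{i+2}}$ is broken up by the removal of $M_{\bQ_j}$, you must verify that each surviving piece is a planar $Q_{i+2}$-bridge and that no new overlaps arise among them. Your sketch for this (``can be ruled out by the embedding'') is not a proof, and filling it in would require exactly the kind of careful bridge-by-bridge bookkeeping you are hoping to avoid.

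The paper sidesteps all of this with a three-line argument exploiting $2$-criticality directly. Pick any edge $e\in s_i$; since $G$ is $2$-crossing-critical there is a $1$-drawing $D$ of $G-e$. Because $\bQ_i$ has BOD and $s_i$ lies in a planar $\bQ_i$-bridge, Lemma~\ref{lm:BODcrossed} forces $\bQ_i$ to be crossed in $D$. Now $H-\oo{s_i}$ is a subdivision of $V_8$, so the unique crossing of $D$ lies between two of its rim edges (Lemma~\ref{lm:1drawingsV2n}); in particular the crossed edge of $\bQ_i$ lies in $r_{i-1}\cup r_i\cup r_{i+4}\cup r_{i+5}$, which is disjoint from $\bQ_j$ and hence belongs to $M_{\bQ_j}$. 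Since also $e\in s_i\subseteq M_{\bQ_j}$, the restriction of $D$ to $\comp{(M_{\bQ_j})}$ has no crossing at all, and $\comp{(M_{\bQ_j})}$ is planar. The point is that a witnessing planar drawing already exists inside $D$; there is no need to reconstruct one from Tutte's criterion.
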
\printFullDetails{

\begin{cproof}  Let $e$ be any edge of $s_i$ and let $D$ be a 1-drawing of $G-e$.  By Lemma \ref{lm:BODcrossed}, $\bQ_i$ is crossed in $D$.  Thus, the crossing of $D$ involves an edge of $M_{\bQ_j}$,  showing that $\comp{(M_{\bQ_j})}$ is planar.  
\end{cproof}

The following is the main result of this section.

}\begin{theorem}\label{th:BODquads}  Let $G\in \m2$ and $\hvfg$.  Let $G$ be embedded with representativity 2 in the projective plane, with the standard labelling.  Then:
\begin{enumerate}\item\label{it:quadBOD} each $H$-quad $Q$ of $G$ has BOD and exactly one non-planar bridge;
\item\label{it:bQ2} $\bQ_2$ has BOD;
\item\label{it:nearlyBOD} for each $i\in \{0,1,3,4\}$, $\comp{(M_{\bQ_i})}$ is planar; 
\item\label{it:exposedBQ3} if there is an exposed spoke, then $\bQ_3$ has BOD;
\item\label{it:noExposedBQ1bQ3} if there is no exposed spoke, then at least one of $\bQ_1$ and $\bQ_3$ has BOD.  
\item\label{it:cornerInGamma}if there is no exposed spoke and $\bQ_1$ does not have BOD, then there is a $\bQ_1$-bridge $B$ different from $M_{\bQ_1}$ so that \minor{$B\subseteq \disc$ and} either:
\begin{enumerate}\item\label{it:aIsV0} $a=v_0$ and $B$ has an attachment at $a$, an attachment in $r_5\,r_6$, and $\att(B)\subseteq \{a\}\cup r_5\,r_6$;  or 
\item\label{it:bIsV5} $b=v_5$ and  $B$ has an attachment at $b$, an attachment in  $r_0\,r_1$, and $\att(B)\subseteq\{b\}\cup r_0\,r_1$.  (The analogous statement holds for $\bQ_3$ in place of $\bQ_1$.)
\end{enumerate}
\end{enumerate}
\end{theorem}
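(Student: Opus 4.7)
The plan is to push BOD outward from $\bQ_2$ via Lemma~\ref{lm:noBdy} (applied to cycles disjoint from $\gamma$), then propagate using Corollary~\ref{co:hyperBODquadBOD} (from a hyperquad with BOD to a disjoint quad) and Lemma~\ref{lm:nonconsecHyper} (from a hyperquad with BOD to complement-planarity of a spoke-adjacent hyperquad). First I would prove (2): in both standard embeddings $\gamma\cap G$ lies in $r_5\cup r_9$ (exposed) or $r_4\cup r_9$ (non-exposed), so $\bQ_2$ (with branches $r_1,r_2,r_6,r_7$ and spokes $s_1,s_3$) is disjoint from $\gamma$.  Since $V_{10}$ minus one spoke suppresses to $V_8$, which is non-planar, $\bQ_2\cup M_{\bQ_2}\supseteq H-\oo{s_2}$ is non-planar, so $M_{\bQ_2}$ is a non-planar $\bQ_2$-bridge and Lemma~\ref{lm:noBdy} delivers (2).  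The same argument for $\bQ_3$ (branches $r_2,r_3,r_7,r_8$, spokes $s_2,s_4$) works in both cases, yielding (4); for $\bQ_1$ (branches $r_0,r_1,r_5,r_6$, spokes $s_0,s_2$) it works in the non-exposed case provided $a\ne v_0$ and $b\ne v_5$, which together with $\bQ_3$'s BOD proves (5).

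For (1), the same Lemma~\ref{lm:noBdy} argument handles every $H$-quad $Q_i$ disjoint from $\gamma$ (since $Q_i\cup M_{Q_i}\supseteq H$ forces $M_{Q_i}$ non-planar): this covers $Q_1,Q_2,Q_3$ in both embeddings and $Q_0$ in the non-exposed case.  For $Q_4$, it is disjoint from $\bQ_2$ and the single edge $s_2$ is a planar $\bQ_2$-bridge (as $\bQ_2\cup s_2$ is a theta graph), so Corollary~\ref{co:hyperBODquadBOD} yields $Q_4$'s BOD.  In the exposed case $Q_0$ is disjoint from $\bQ_3$, and Corollary~\ref{co:hyperBODquadBOD} applies again.

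For (3), observe that whenever $\bQ_i$ itself has BOD with unique non-planar bridge $M_{\bQ_i}$, the graph $\comp{M_{\bQ_i}}$ consists of $\bQ_i$ together with planar bridges and is therefore planar by Theorem~\ref{th:TutteOne}; this handles $\comp{M_{\bQ_3}}$ in all cases and $\comp{M_{\bQ_1}}$ in the generic non-exposed subcase.  For the remaining cases, hyperquads $\bQ_i$ and $\bQ_j$ with indices differing by $\pm 2\pmod 5$ share exactly one spoke, and the single edge of the removed spoke is always a planar bridge of its hyperquad; applying Lemma~\ref{lm:nonconsecHyper} then gives that $\bQ_2$'s BOD implies $\comp{M_{\bQ_0}}$ and $\comp{M_{\bQ_4}}$ are planar, while $\bQ_3$'s BOD implies $\comp{M_{\bQ_1}}$ is planar.

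Finally, (6) is the main obstacle.  The Lemma~\ref{lm:noBdy} argument for $\bQ_1$ can fail only when $\bQ_1$ meets $\gamma$, and since $\gamma\cap G\subseteq r_4\cup r_9$ while $\bQ_1\cap(r_4\cup r_9)\subseteq\{v_0,v_5\}$, the failure requires $a=v_0$ or $b=v_5$.  Without loss of generality assume $a=v_0$ and $\bQ_1$ fails BOD, so $OD(\bQ_1)$ contains an odd cycle.  A careful analysis of the potential overlaps, using Lemma~\ref{lm:overlapClaw} to classify the skew-or-$3$-equivalent patterns, Lemma~\ref{lm:planeNotOverlap} to exploit the embedding structure, Lemma~\ref{lm:noBox} to rule out box configurations, and Corollary~\ref{co:closeAtts} to restrict $H$-close attachments, shows the odd cycle is witnessed by a $\bQ_1$-bridge $B\ne M_{\bQ_1}$ with $B\subseteq\disc$, an attachment at $a=v_0$, and remaining attachments confined to $r_5\cup r_6$, yielding (6a); the symmetric case $b=v_5$ gives (6b).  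The hard part is this last step of pinning down exactly where $B$'s other attachments lie on $\bQ_1$: one must exclude attachments on $r_1$ or on the spokes $s_0$ and $s_2$, which is delicate because the $\disc$-side of $\bQ_1$ admits many a priori planar bridges, and only the combination of box-freeness with $H$-closeness rules out the spurious configurations.
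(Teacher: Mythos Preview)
Your overall architecture (bootstrap from $\bQ_2$, then propagate with Corollary~\ref{co:hyperBODquadBOD} and Lemma~\ref{lm:nonconsecHyper}) matches the paper's, but there is a genuine gap that propagates through several items.

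The error is in your treatment of $\bQ_3$. You write that the Lemma~\ref{lm:noBdy} argument ``works in both cases'' for $\bQ_3$, and you then use ``$\bQ_3$'s BOD'' as an input to (5) and to (3). But $\bQ_3$ contains the spoke $s_4$, whose endpoints are $v_4$ and $v_9$; since $a\in r_9$ and (in the non-exposed case) $b\in r_4$, one may have $a=v_9$ or $b=v_4$, and then $\bQ_3\cap\gamma\ne\varnothing$, so Lemma~\ref{lm:noBdy} does not apply. In fact $\bQ_3$ can genuinely fail BOD in the non-exposed case: that is precisely the content of the parenthetical ``analogous statement for $\bQ_3$'' in item~(\ref{it:cornerInGamma}). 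You noticed the analogous obstruction for $\bQ_1$ (the proviso $a\ne v_0$, $b\ne v_5$) but did not apply the same scrutiny to $\bQ_3$. The same oversight affects your claim that Lemma~\ref{lm:noBdy} handles $Q_0$ and $Q_3$ directly in the non-exposed case: $Q_0$ contains $v_0,v_5$ and $Q_3$ contains $v_4,v_9$.

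This makes your proof of (5) collapse. If $\bQ_3$ always had BOD then (5) would be vacuous, but it is not: the paper must actually rule out the possibility that \emph{both} $\bQ_1$ and $\bQ_3$ have NBOD. It does so by first proving (6) (and its $\bQ_3$-analogue), which pins down the obstructing bridges to be single edges with endpoints among $\{v_0,v_4,v_5,v_7,v_9\}$, and then analysing a 1-drawing of $G-\oo{s_2}$ to derive a contradiction. Your sketch never engages with this. Similarly, for (3) in the non-exposed case the paper cannot always invoke $\bQ_3$'s BOD; when $\bQ_3$ has NBOD it uses (6) to locate $a$ or $b$ in $\Nuc(M_{\bQ_1})$ and then applies Lemma~\ref{lm:nucMeetGamma}, a tool absent from your outline. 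Finally, even in the exposed case the paper proves (4) via Lemma~\ref{lm:CdisjointNCcycle} (a non-contractible cycle through $s_0$ in $\Nuc(M_{\bQ_3})$), not via Lemma~\ref{lm:noBdy}, precisely to sidestep the possibility $a=v_9$.
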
\printFullDetails{

The following definitions will be useful throughout the remainder of this work.

}\begin{definition}  Let $G$ be a graph embedded in $\pp$ and let $C$ be a cycle of $G$ bounding a closed disc $\Delta$ in $\pp$.  A $C$-bridge $B$ is {\em 
$C$-interior\/}\index{interior}\index{$C$-interior} if $B$ is contained in $\Delta$ and {\em $C$-exterior\/}\index{exterior}\index{$C$-exterior} otherwise.\end{definition}\printFullDetails{

\begin{cproofof}{Theorem \ref{th:BODquads}}  \dragominor{We distinguish two cases.}

\medskip\noindent\dragominor{{\bf Case 1:}  {\em $H$ has an exposed spoke.}}

\medskip We adopt the standard labelling, so $s_0$ is the exposed spoke.   We note that $Q_2$ is disjoint from $G\cap \gamma$ and, therefore, Lemma \ref{lm:noBdy} implies $Q_2$ has BOD and precisely one non-planar bridge\dragominor{, which is part of (\ref{it:quadBOD})}.

The arguments for $Q_1,Q_3,\bQ_2,\bQ_{3}$ are all analogous and so we do $\bQ_2$.  Since $s_0$ is exposed, the cycle $\cc{a,r_{9},v_0}\rbsp s_0\,r_{4}\,s_{4}\lbsp\cc{v_{9},r_{9},a}$ is not contractible and is disjoint from $\bQ_2$.  Lemma \ref{lm:CdisjointNCcycle} shows $\bQ_2$ has BOD and precisely one non-planar bridge, proving (\ref{it:bQ2})\dragominor{\ and (\ref{it:exposedBQ3}).   We have also proved (\ref{it:nearlyBOD}) for $j=3$ and (\ref{it:quadBOD}) for $Q_1$ and $Q_3$}.

To complete the proof of (\ref{it:quadBOD}) \dragominor{in Case 1,} it remains to \dragominor{deal with} $Q_0$ and $Q_{4}$.  These two cases are symmetric and so it suffices to prove $Q_0$ has BOD \dragominor{and only one non-planar bridge}.   We note that $\bQ_3$ is completely disjoint from $Q_0$ and we have shown that $\bQ_3$ has BOD.  Let $B$ be the $\bQ_3$-bridge containing $s_3$.  As $\bQ_3$ is contractible and $B$ is $\bQ_3$-interior, we conclude that $B$ is planar.  Therefore, Corollary \ref{co:hyperBODquadBOD} implies $Q_0$ has BOD, and each $Q_0$-bridge except $M_{Q_0}$ is planar, as required  \dragominor{for (\ref{it:quadBOD})}. 

\dragominor{For (\ref{it:nearlyBOD}), i}t remains to prove that, for $j\in\{0,1,4\}$, $\comp{(M_{\bQ_j})}$ is planar.  We apply Lemma \ref{lm:nonconsecHyper}: for $j=0$ or 4, we take $i=2$; for $j=1$, we take $i=3$.  In all cases, the result follows.

\medskip\noindent{\bf Case 2:}  {\em $H$ has no exposed spoke.}

\medskip Lemma \ref{lm:noBdy} shows $Q_1$, $Q_2$, and $\bQ_2$ all have BOD and just one non-planar bridge.  \dragominor{This proves (\ref{it:bQ2}) and part of (\ref{it:quadBOD})}.  We use this in Corollary \ref{co:hyperBODquadBOD} to see that $Q_{4}$  has BOD and just one non-planar bridge, \dragominor{another part of (\ref{it:quadBOD})}.  Also,  taking $i=2$ and $j\in\{0,4\}$ in Lemma \ref{lm:nonconsecHyper}, we see that $\comp{(M_{\bQ_j})}$ is planar\dragominor{, part of (\ref{it:nearlyBOD})}.

If $\bQ_3$ has BOD, then Corollary \ref{co:hyperBODquadBOD} implies $Q_0$ has BOD, so in order to show $Q_0$ has BOD, we may assume $\bQ_3$ has NBOD.   There is an analogous situation for $Q_3$ and $\bQ_1$.  
We first prove (\ref{it:cornerInGamma}) for $\bQ_3$; we will use this to prove both $Q_0$ has BOD and (\ref{it:noExposedBQ1bQ3}).

If $v_4\ne b$ and $v_9\ne a$, then Lemma \ref{lm:noBdy} shows that  $\bQ_3$ has BOD and exactly one non-planar bridge. So suppose either (or both) $v_4=b$ or $v_9=a$.  If every $\bQ_3$-bridge other than $M_{\bQ_3}$ has only contractible cycles, then $\bQ_3$ has BOD by Corollary \ref{co:contractibleBOD}.  Thus, some $\bQ_3$-bridge $B$ other than $M_{\bQ_3}$ is such that $\bQ_3\cup B$ contains a non-contractible cycle.   Evidently, $B$ is $\bQ_3$-exterior.
If  $B\subseteq\Mob$, then again $\bQ_3\cup B$ has only contractible cycles.  Thus, $B\subseteq \Disc$.   

Any $\bQ_3$-exterior bridge $B$ contained in the face of $H\cup \gamma$ bounded by $$\cc{a,r_{9},v_0}\rbsp r_1\,r_2\,r_3\lbsp\cc{v_4,r_4,b,\alpha,a}$$ has all its attachments in $\{a\}\cup r_2\,r_3$. Note that $B$ is planar; moreover, if $a$ is not an attachment, then $\bQ_3\cup B$ has no non-contractible cycle and, therefore, does not overlap any other $\bQ_3$-exterior bridge.  We have the analogous conclusions if $B$ is contained in the face of $H\cup \gamma$ bounded by $\cc{b,r_5,v_6}\rbsp r_6\,r_7\,r_8\lbsp\cc{v_9,r_9,a,\alpha,b}$.

We conclude that either $B$ has $a$ as an attachment and also has an attachment in $r_2\,r_3$ or, symmetrically, $B$ has $b$ as an attachment and also has an attachment in $r_7\,r_8$.   This proves (\ref{it:cornerInGamma}). 

We now prove (\ref{it:noExposedBQ1bQ3}).  If $\{v_0,v_5\}\cap \{a,b\}=\varnothing$, then $\bQ_1$ has BOD and just one non-planar bridge; likewise if $\{v_4,v_9\}\cap \{a,b\}=\varnothing$, then $\bQ_3$ has BOD and just one non-planar bridge. Up to symmetry,  the only other possibility is that $v_0=a$ and $v_4=b$.

Now suppose that $\bQ_1$ also has NBOD.  Then (\ref{it:cornerInGamma}) implies that there must be, up to symmetry, a $\bQ_1$-bridge $B_1$ different from $M_{\bQ_1}$ having attachments at $a$ and in $r_5\,r_6$.  Likewise, there is an $H$-bridge $B_3$ different from $M_{\bQ_3}$ having attachments at $b$ and in $r_7\,r_8$.  As $B_1$ cannot have an attachment at $b$, $B_1\ne B_3$.  Considering
 the embedding of $G$ in $\pp$, we see that both $B_1$ and $B_3$ must be embedded in the face of $H\cup \gamma$ incident with $\cc{b,r_4,v_5}\rbsp r_5\,r_6\,r_7\,r_8\lbsp\cc{v_9,r_9,a,\alpha,b}$.  If $B_1$, say, has an attachment other than $a$ and $v_7$, then the $H$-avoiding path in $B_3$ from $b$ to any attachment in $r_7\,r_8$ crosses $B_1$, a contradiction.  So $\att(B_1)=\{a,v_7\}$, $\att(B_3)=\{b,v_7\}$, and, by Lemma \ref{lm:threeAtts}, both $B_1$ and $B_2$ are just edges.

Now recall that $\bQ_2$ has BOD and,  letting $B_2$ be the $\bQ_2$-bridge containing $s_2$, Lemma \ref{lm:BODcrossed} implies $\bQ_2$ is crossed in a 1-drawing $D$ of $\comp B_2$.   The crossing must be between the paths $r_0\,r_1\,r_2\,r_3$ and $r_5\,r_6\,r_7\,r_8$.

 There are two maximal uncrossed subpaths of $R$ in $D$ and we know that $v_0$ and $v_9$ are on one uncrossed segment, say $S_1$, of $R$, while  $v_4$ and $v_5$ are on $S_2$.   Suppose first that $v_7$ is on $S_1$.  Then the cycle $\cc{v_0,B_1,v_7}\rbsp r_6\,r_5\,r_4\,s_4\,r_0$ separates $v_8$ from $v_3$ in $D$, yielding the contradiction that $s_3$ is crossed in $D$.   On the other hand, if $v_7$ is on $S_2$, then the same cycle separates $v_6$ from $v_1$, yielding the contradiction that $s_1$ is crossed in $D$.    

 We conclude that not both $\bQ_1$ and $\bQ_3$ can have NBOD\dragominor{\ which is (\ref{it:noExposedBQ1bQ3})}.   By symmetry, we may assume $\bQ_1$ has BOD.  Then Lemma \ref{lm:nonconsecHyper} shows $\comp{(M_{\bQ_3})}$ is planar.  Furthermore, Corollary \ref{co:hyperBODquadBOD} implies $Q_3$ has BOD and precisely one non-planar bridge.  
 
 What remains is to prove that $Q_0$ has BOD and precisely one non-planar bridge and that there is precisely one non-planar $\bQ_1$-bridge.  \dragominor{Recall that symmetry implies this will show the same things for $Q_3$ and $\bQ_3$, completing the proofs of (\ref{it:quadBOD}) and (\ref{it:nearlyBOD}).}
 
From (\ref{it:cornerInGamma}), we may assume that $v_9=a$ and that there is a $\bQ_3$-bridge $B_3$ attaching at $a$ and in $r_2\,r_3$.   Let $w$ be any attachment of $B_3$ in $r_2\,r_3$,  let $P$ be an $H$-avoiding $v_9w$-path in $B_3$, and let $Q$ be the subpath of $r_2\, r_3$ joining $w$ to $v_4$.  Then the cycle $\cc{v_9,P,w,Q,v_4,s_4,v_9}$ is non-contractible in $\pp$ and is disjoint from $Q_0$.  By Lemma \ref{lm:CdisjointNCcycle}, $Q_0$ has BOD and has just one non-planar bridge.

As for $\bQ_1$, we consider two cases.  If $\bQ_3$ has BOD, then Lemma \ref{lm:nonconsecHyper} implies $\comp{(M_{\bQ_1})}$ is planar.  If $\bQ_3$ has NBOD, then (\ref{it:cornerInGamma}) implies either $v_9=a$ or $v_4=b$.  In both cases, $\Nuc(M_{\bQ_1})\cap\{a,b\}\ne\varnothing$, so Lemma \ref{lm:nucMeetGamma} implies $\comp{(M_{\bQ_1})}$ is planar, as required.
\end{cproofof}

The following technical corollary of Theorem \ref{th:BODquads} and Lemmas \ref{lm:cleanBOD} and \ref{lm:BODcrossed}  will be used in a few different places later.

}\begin{corollary}\label{co:QibarBOD}  Let $G\in \m2$ and $\hvfg$.  With indices read modulo $5$, suppose, $i\in\{0,1,2,3,4\}$ is such that $\bQ_i$ has BOD and, where $\{j,k\}=\{i+2,i+3\}$, suppose further that $\bQ_{j}$ has NBOD.   Then $s_i$ is in a planar $\bQ_i$-bridge $B_i$ and $\bQ_k$ has BOD.   Moreover, if $e_i$ is any edge of $B_i$ and $D_i$ is a 1-drawing of $G-e_i$, then either $r_{i-1}\,r_i$ crosses whichever of $r_{i+3}$ and $r_{i+6}$ is in $\bQ_j$ or $r_{i+4}\,r_{i+5}$ crosses whichever of $r_{i-2}$ and $r_{i+1}$ is in $\bQ_j$.   \end{corollary}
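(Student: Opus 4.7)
\begin{iproof}
The plan is to fix, via Theorem~\ref{th:v10rep2}, a representativity $2$ embedding of $G$ in $\pp$ with the standard labelling so that Theorem~\ref{th:BODquads} applies, and break the corollary into three parts.

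For the existence of a planar $\bQ_i$-bridge $B_i$ containing $s_i$: since $s_i\not\subseteq H-\oo{s_i}\subseteq M_{\bQ_i}$, the bridge $B_i$ satisfies $B_i\ne M_{\bQ_i}$. For $i\in\{0,1,3,4\}$, Theorem~\ref{th:BODquads}(\ref{it:nearlyBOD}) gives $\comp{(M_{\bQ_i})}$ planar, so $B_i$ is planar. For $i=2$, $\bQ_2$ is disjoint from $\gamma$ and $M_{\bQ_2}\supseteq H-\oo{s_2}$ is non-planar (as $V_{10}$ minus a spoke is non-planar), so by Lemma~\ref{lm:noBdy}, $M_{\bQ_2}$ is the unique non-planar $\bQ_2$-bridge, forcing $B_2$ planar. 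For $\bQ_k$ having BOD: Theorem~\ref{th:BODquads}(\ref{it:bQ2}) gives $\bQ_2$ BOD always, handling the cases $k=2$; the remaining cases follow from a case analysis on $i$ and the exposed-spoke dichotomy using parts~(\ref{it:exposedBQ3}) and~(\ref{it:noExposedBQ1bQ3}) of Theorem~\ref{th:BODquads}, showing that $\bQ_{i+2}$ and $\bQ_{i+3}$ cannot both have NBOD. This case analysis is the main technical obstacle.

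For the crossing structure, fix $e_i\in B_i$ and a $1$-drawing $D_i$ of $G-e_i$. Since $D_i$ restricts to a $1$-drawing of $\comp{B_i}$, Lemma~\ref{lm:BODcrossed} forces $\bQ_i$ to be crossed in $D_i$. I next show $\bQ_j$ is also crossed. The non-$\bQ_i$ edges of $\bQ_j$ all lie in $H-\oo{s_i}\subseteq M_{\bQ_i}$, so $B_i\cap\bQ_j$ has no edges and thus $B_i\subseteq M_{\bQ_j}$; consequently $M_{\bQ_j}-e_i$ remains connected (even when $e_i\in s_i$, because both pieces of $s_i-e_i$ still connect through the rest of $H-\oo{s_j}$), with the same attachments on $\bQ_j$ as $M_{\bQ_j}$. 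Hence the overlap diagram of $\bQ_j$ in $G-e_i$ coincides with that in $G$, so $\bQ_j$ still has NBOD in $G-e_i$; since $M_{\bQ_j}-e_i$ remains a non-planar $\bQ_j$-bridge (as $V_{10}$ minus two spokes is non-planar), Lemma~\ref{lm:cleanBOD} then forces $\bQ_j$ to be crossed in $D_i$.

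With both $\bQ_i$ and $\bQ_j$ crossed by the unique crossing of $D_i$, and with $D_i$ inducing a $1$-drawing of $H$ (if $e_i\notin H$) or $H-e_i$ (if $e_i\in s_i$), Lemma~\ref{lm:1drawingsV2n} forces that crossing between two $H$-rim branches of circular distance $4$ or $5$. Enumerating the pairs $(r_a,r_b)$ with $r_a\in\{r_{i-1},r_i,r_{i+4},r_{i+5}\}$ and $r_b$ in the rim of $\bQ_j$ at the required distance yields precisely the two alternatives listed in the statement.
\end{iproof}
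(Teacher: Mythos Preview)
Your argument for the crossing structure (second and third paragraphs) is essentially correct and matches the paper's approach closely: both use that $OD_{G-e_i}(\bQ_j)\cong OD_G(\bQ_j)$ together with Lemma~\ref{lm:cleanBOD} to force $\bQ_j$ crossed, then combine with $\bQ_i$ crossed (via Lemma~\ref{lm:BODcrossed}) to pin down the crossing pair.

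However, your first paragraph has a genuine gap. The implication ``$s_i\not\subseteq H-\oo{s_i}\subseteq M_{\bQ_i}$, hence $B_i\ne M_{\bQ_i}$'' is a non sequitur: $M_{\bQ_i}$ is the $\bQ_i$-bridge \emph{containing} $H-\oo{s_i}$, and nothing prevents it from also containing $s_i$. Concretely, if there is an $H$-avoiding path from $\oo{s_i}$ to a vertex of $H$ lying outside the closed disc bounded by $\bQ_i$ (for instance, when $s_0$ is exposed and $i=0$, a path from $\oo{s_0}$ into $\oo{r_1\,r_2\,r_3}$), then $s_i$ and the rest of $H-\oo{s_i}$ lie in the \emph{same} $\bQ_i$-bridge, namely $M_{\bQ_i}$. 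Theorem~\ref{th:BODquads}(\ref{it:nearlyBOD}) and Lemma~\ref{lm:noBdy} then tell you nothing about the planarity of the bridge containing $s_i$. The paper handles exactly this obstruction: it assumes $s_i$ is \emph{not} in a planar $\bQ_i$-bridge, argues that this forces $s_0$ to be exposed and $i\in\{0,1,4\}$, then uses a $1$-drawing of $G$ minus an edge of $s_\ell$ (for suitable $\ell\in\{2,3\}$) and the projective-plane embedding to pin down $i=0$, whence $j\in\{2,3\}$; but Theorem~\ref{th:BODquads} gives both $\bQ_2$ and $\bQ_3$ BOD, contradicting the hypothesis that $\bQ_j$ has NBOD. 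This is where the NBOD hypothesis actually does work in proving $B_i$ is planar; your argument never uses it for that purpose.

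A secondary issue: your proposed case analysis for $\bQ_k$ having BOD, relying only on Theorem~\ref{th:BODquads}, does not close in all cases. For example, with no exposed spoke and $i=1$, you need one of $\bQ_3,\bQ_4$ to have BOD, but Theorem~\ref{th:BODquads}(\ref{it:noExposedBQ1bQ3}) only guarantees one of $\bQ_1,\bQ_3$ does, which is already satisfied by $\bQ_1$. The cleaner route (implicit in the paper as well) is to first establish the crossing structure, observe that $\bQ_k$ is then clean in $D_i$, and apply Lemma~\ref{lm:cleanBOD} together with the fact that $M_{\bQ_k}-e_i$ has the same attachments as $M_{\bQ_k}$ to transfer BOD from $G-e_i$ to $G$.
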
\printFullDetails{

The two possibilities for $D_i$ in the case $j=i+2$ are illustrated in Figure \ref{Di}.

\medskip 
\begin{figure}[h!]
\begin{center}
\scalebox{1.0}{\input{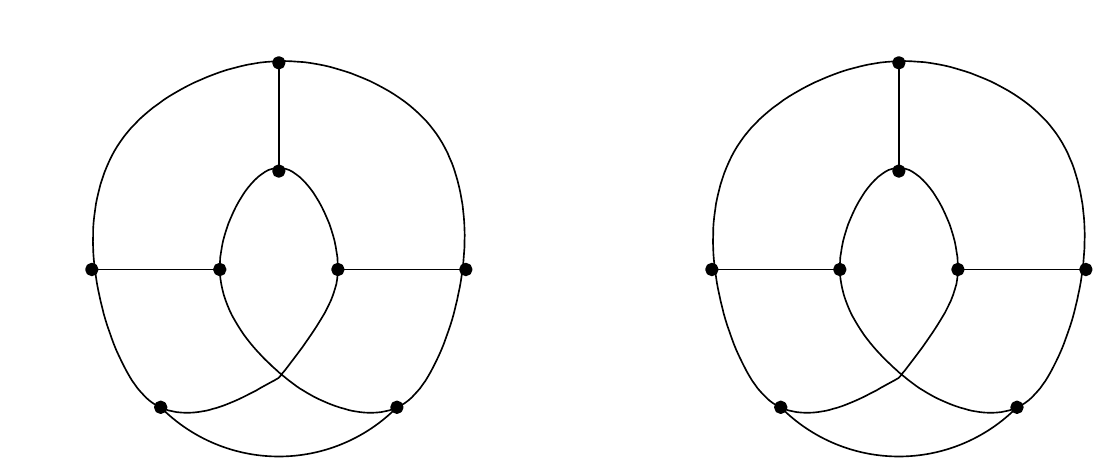_t}}
\end{center}
\caption{The two possibilities for $D_i$ when $j=i+2$.}\label{Di}
\end{figure}

\begin{cproofof}{Corollary \ref{co:QibarBOD}} By way of contradiction, suppose $s_i$ is not in a planar $\bQ_i$-bridge.   \wording{We observe that $s_0$ must be exposed, as otherwise we have the contradiction that,} for every $\ell\in\{0,1,2,3,4\}$, $s_\ell$ is in a planar $\bQ_\ell$-bridge.  \wording{It follows that,} for \dragominor{$\ell\in\{2,3\}$, $s_\ell$} is in a planar \dragominor{$\bQ_\ell$-bridge}.  Thus, $i\notin\{2,3\}$.  

\dragominor{Let $\ell\in\{2,3\}$} be such that $i$ and \dragominor{$\ell$ are} not consecutive in the cyclic order $(0,1,2,3,4)$.  \dragominor{Let $e_\ell$} be the edge \dragominor{of $s_\ell$} incident with \dragominor{$v_\ell$} and let \dragominor{$D_\ell$ be} a 1-drawing \dragominor{of $G-e_\ell$}.  By Lemma \ref{lm:BODcrossed}, \dragominor{$\bQ_\ell$ is} crossed \dragominor{in $D_\ell$}.  

\dragominor{If $\bQ_\ell$} is self-crossed \dragominor{in $D_\ell$}, then \dragominor{$D_\ell$ shows} that the $\bQ_i$-bridge containing $s_i$ is planar.  Thus, we have \dragominor{that $\bQ_\ell$} is not self-crossed \dragominor{in $D_\ell$}.  One of \dragominor{$s_{\ell-1}$ and $s_{\ell+1}$} is exposed \dragominor{in $D_\ell$}.  If this exposed spoke is not also in $\bQ_i$, then again $s_i$ is in a planar $\bQ_i$-bridge; therefore, we must have that the exposed spoke is in $\bQ_i$.  For \wording{the} sake of definiteness, we assume that \dragominor{$s_{\ell-1}$ is} exposed, which implies \dragominor{that $\ell=i+2$}.  

As the only non-planar $\bQ_i$-bridge is $M_{\bQ_i}$, we must have an $H$-avoiding path $P$ from the interior of $s_i$ to the interior of one of \dragominor{$r_{\ell-1}\,r_\ell\,r_{\ell+1}$ and $r_{\ell+4}\,r_{\ell+5}\,r_{\ell+6}$}.    The drawing \dragominor{$D_\ell$ restricts} the possibility to the interior of one of \dragominor{$r_{\ell-1}\,r_\ell$ and 
$r_{\ell+4}\,r_{\ell+5}$}.  But now the embedding in $\pp$ implies $i=0$.  \major{T}his implies $j\in\{2,3\}$; however, neither $\bQ_2$ nor $\bQ_3$ has NBOD.  Therefore, $s_i$ is in a planar $\bQ_i$-bridge.

Because $M_{\bQ_j}-e_i$ and $M_{\bQ_j}$ have the same attachments, $OD_{G-e_i}(\bQ_j)$ and $OD_G(\bQ_j)$ are isomorphic.  As the latter is not bipartite, neither is the former.  By Lemma \ref{lm:cleanBOD}, $\bQ_j$ is not clean in $D_i$.  Thus, either $r_{j-1}\,r_j$ or $r_{j+4}\,r_{j+5}$ is crossed in $D_2$.  These are edge-disjoint from $\bQ_i$.  

Lemma \ref{lm:BODcrossed} implies that $\bQ_i$ is also crossed in $D_i$.  Since $\bQ_i$ is crossed and, from the preceding paragraph, something outside of $\bQ_i$ is crossed, either 
$$r_{i-1}\,r_i\textrm{\quad crosses\quad} r_{i+3}\cup r_{i+6}$$
or 
$$r_{i+4}\,r_{i+5}\textrm{\quad crosses\quad} r_{i-2}\cup r_{i+1}\,,$$as required. \end{cproofof}

Since $\bQ_2$ always has BOD, Corollary \ref{co:QibarBOD} implies at least one of $\bQ_0$ and $\bQ_4$ has BOD.  Together with the fact that, in all cases, at least one of $\bQ_1$ and $\bQ_3$ has BOD, we conclude that at least three of the $H$-hyperquads have BOD.

\minorrem{(Text and definition removed.)}%For the final result of this section, we introduce one more piece of notation; this will be used throughout this work.

%}\begin{definition} If $P$ is a path with ends $u$ and $v$, then $\oo{P}$ denotes the (possibly null) path $P-\{u,v\}$. 
%\end{definition}\printFullDetails{

The last result in this section will be useful early in the next section.  

}\begin{corollary}\label{lm:diagonal}   Let $G\in \m2$ and let $\hvfg$ and suppose $G$ has a representativity 2 embedding in the projective plane, with the standard labelling.   Suppose, for some $i$,  $B$ is an $H$-bridge having an attachment in both $\oo{r_{i-1}\,s_{i-1}}$ and $\oo{r_{n+i}\,s_{i+1}}$.  
\begin{enumerate}\item\label{it:iNe0} if $i\ne 0$, then $B\subseteq \Disc$.
\item\label{it:iEq0} If  $i=0$, then either $\bQ_3$ has NBOD or $B$ consists only of the edge $v_6v_9$.   
\end{enumerate}
\end{corollary}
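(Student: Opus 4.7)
My plan is to carry out a face analysis of the embedded $H$ in $\pp$, identifying the $H$-face in which $B$ must lie by intersecting the sets of $H$-faces whose boundaries meet each of the two prescribed attachment paths. For part~(1) this already yields $B\subseteq\Disc$. For part~(2) the same face analysis narrows the possible locations of $B$, and I then use the BOD hypothesis on $\bQ_3$ together with Theorem~\ref{th:BODquads} to force $B$ to be the edge $v_6v_9$.

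First I would enumerate the $H$-faces in the two exposure cases.  Without an exposed spoke they are $\Disc$ (bounded by $R$) and $Q_0,\ldots,Q_4$ inside $\Mob$.  With $s_0$ exposed they are the two pentagonal $\Disc$-faces $F_1,F_2$ bounded by $s_0$ and the two arcs of $R$, together with the $\Mob$-faces $Q_1,Q_2,Q_3$ and the hexagon bounded by $\bQ_0$.  Since $B$ is an $H$-bridge it lies in the closure of one $H$-face.  For each $i$ and each case I would compute the set $\mathcal F_1$ of faces whose boundary meets $\oo{r_{i-1}\,s_{i-1}}$ and the set $\mathcal F_2$ of faces whose boundary meets $\oo{r_{n+i}\,s_{i+1}}$; the face containing $B$ lies in $\mathcal F_1\cap\mathcal F_2$.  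For $i\neq 0$ the calculation yields $\mathcal F_1\cap\mathcal F_2=\{\Disc\}$ without an exposed spoke; with $s_0$ exposed it is $\{F_2\}$ for $i=1$, $\{F_1\}$ for $i=4$, and empty for $i=2,3$ (so no such $B$ exists and the statement is vacuous).  In every non-vacuous case $B$ lies in $\Disc$, which is part~(1).

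For $i=0$ the same enumeration gives $\mathcal F_1\cap\mathcal F_2=\{\Disc\}$ without an exposed spoke and $\mathcal F_1\cap\mathcal F_2=\{F_2,\bQ_0\}$ with $s_0$ exposed.  Assume henceforth that $\bQ_3$ has BOD.  By Theorem~\ref{th:BODquads}(\ref{it:nearlyBOD}) the graph $\comp{M_{\bQ_3}}$ is planar, so any non-planar $\bQ_3$-bridge different from $M_{\bQ_3}$ would sit, together with $\bQ_3$, inside the planar graph $\comp{M_{\bQ_3}}$, a contradiction; hence $M_{\bQ_3}$ is the unique non-planar $\bQ_3$-bridge.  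Every attachment of $B$ in $\oo{r_5\,s_1}$ lies in $\Nuc(M_{\bQ_3})$, so the $\bQ_3$-bridge containing $B$ is $M_{\bQ_3}$, giving $B\subseteq M_{\bQ_3}$.

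I would then argue by contradiction that $\att(B)=\{v_6,v_9\}$, whence Lemma~\ref{lm:threeAtts}(2) forces $B\cong K_2$, i.e., $B$ is exactly the edge $v_6v_9$.  Suppose instead that some attachment of $B$ lies outside $\{v_6,v_9\}$, either as an extra attachment $z$ on the boundary of the face containing $B$, or as an off-centre attachment in the interior of one of $r_9,s_4,r_5,s_1$.  For each placement I would take an $H$-avoiding path $P$ in $B$ joining the offending attachment to another attachment of $B$ and combine $P$ with a suitable subpath of $H$ to exhibit a cycle $C$ for which the BOD of $\bQ_3$, the planarity of $\comp{M_{\bQ_3}}$, and the projective-planar embedding of $G$ combine to make $C$ a box; this would contradict Lemma~\ref{lm:noBox}.

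The main obstacle is this last step's case analysis: the offending attachment can sit in the interior of several different $H$-branches or at several different $H$-nodes, and the box construction must be verified in each subcase.  The face-intersection computations of parts (1) and (2) and the reduction $B\subseteq M_{\bQ_3}$ are routine enumerations; the delicate part is marshalling Lemma~\ref{lm:noBox} against every placement of the offending attachment.
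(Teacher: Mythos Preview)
Your part~(\ref{it:iNe0}) is correct and matches the paper's approach: both proofs argue by face analysis that if $B\subseteq\Mob$, the only $\Mob$-faces whose boundary meets both attachment paths force $i=0$.

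Your part~(\ref{it:iEq0}), however, has a genuine gap. The reduction $B\subseteq M_{\bQ_3}$ is fine but does very little work; the substance is in pinning down $\att(B)=\{v_6,v_9\}$, and here your box strategy is only a sketch. To invoke Lemma~\ref{lm:noBox} you need a cycle $C$ with BOD and a planar $C$-bridge $B'$ such that $C$ is a $\comp{B'}$-prebox, i.e., deleting any edge of $C$ from $\comp{B'}$ leaves a non-planar graph. You do not say what $C$ is, and it is not clear how an ``offending'' attachment of $B$ in, say, $\oo{r_9}$ or $\oo{s_4}$ produces such a cycle: the natural cycles through the $H$-avoiding path $P$ are not $H$-close, so the prebox machinery (Lemmas~\ref{lm:closeIsPrebox}--\ref{lm:preboxClean}) does not apply directly, and establishing BOD for an ad~hoc $C$ is its own problem. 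You acknowledge this case analysis as ``the main obstacle'', but it is in fact the entire content of the claim.

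The paper's proof of~(\ref{it:iEq0}) takes a completely different route: it works in two auxiliary $1$-drawings rather than in $\pp$. For $j\in\{2,3\}$ it takes $e_j\in s_j$ and a $1$-drawing $D_j$ of $G-e_j$; since $\bQ_2$ and $\bQ_3$ both have BOD (Theorem~\ref{th:BODquads}), Lemma~\ref{lm:BODcrossed} forces $\bQ_j$ to be crossed in $D_j$. One then argues that $\bQ_0$ must also be crossed in each $D_j$ (otherwise no face of $D_j[H-\oo{s_j}]$ is incident with both $\oo{r_9\,s_4}$ and $\oo{s_1\,r_5}$, so $B$ cannot be drawn). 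This leaves exactly two possible drawings for each $D_j[H-\oo{s_j}]$; comparing which faces can contain an $H$-avoiding path from $\oo{r_9\,s_4}$ to $\oo{s_1\,r_5}$ in $D_2$ against the same in $D_3$ forces the endpoints to be $v_9$ and $v_6$, and Lemma~\ref{lm:threeAtts} finishes. This drawing-intersection argument is short and avoids any box construction; I would abandon the box approach and follow this line instead.
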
\printFullDetails{

\begin{cproof}   For (\ref{it:iNe0}), we may assume $B\subseteq\Mob$.  The two representativity 2 embeddings of $V_{10}$ in $\pp$ show that $B$ can only be embedded in a face bounded by either $\cc{a,r_9,v_0}\rbsp r_1\,s_1\lbsp\cc{v_6,b,\beta,a}$ or $\cc{b,\beta,a,r_9,v_9}\rbsp s_4\,r_4\lbsp\cc{v_5,r_5,b}$ and that $s_0$ is necessarily exposed in $\pp$.    Notice that $i=0$ in both cases, proving (\ref{it:iNe0}).  

Now assume $i=0$ and suppose $\bQ_3$ has BOD.   
From Theorem \ref{th:BODquads}, we know that $\bQ_2$ also has BOD.  For $j\in\{2,3\}$, let $e_j$ be the edge of $s_j$ incident with $v_j$ and let $D_j$ be a 1-drawing of $G-e_j$.  Because $s_j$ is in a $\bQ_j$-interior bridge, from Lemma \ref{lm:BODcrossed}, we know that $\bQ_j$ is crossed in $D_j$.  

If $\bQ_0$ is clean in $D_j$, then no face of $D_j$ is incident with vertices in both $\oo{r_9\,s_4}$ and $\oo{s_1\,r_5}$.  Therefore, $D_j[B]$ cannot be crossing-free in $D_j$, a contradiction.  Thus, $\bQ_0$ is crossed in $D_j$.  The two possibilities for $D_2$ are shown in Figure \ref{D2}, while the two possibilities for $D_3$ are shown in Figure \ref{D3}.

\begin{figure}[!ht]
\begin{center}
\scalebox{1.2}{\input{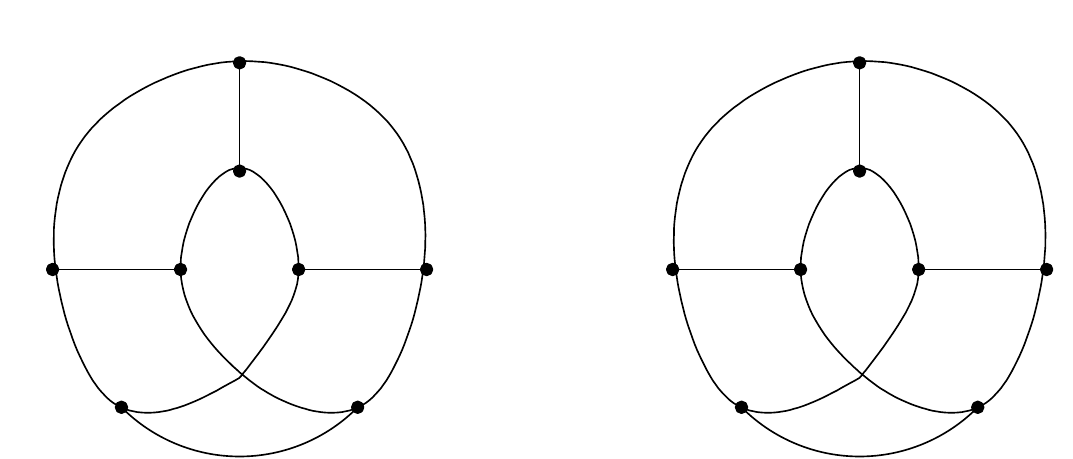_t}}
\end{center}
\caption{The two possibilities for $D_2$.}\label{D2}
\end{figure}

\begin{figure}[!ht]
\begin{center}
\scalebox{1.2}{\input{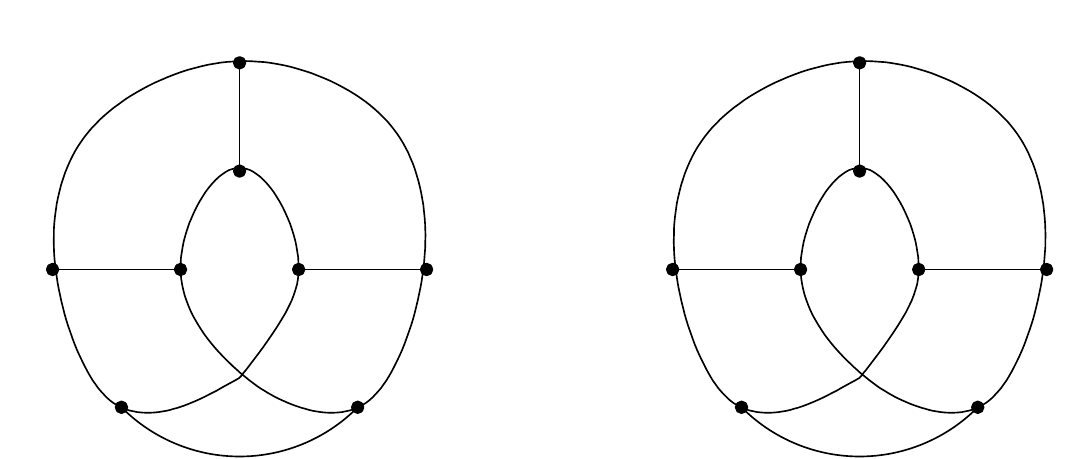_t}}
\end{center}
\caption{The two possibilities for $D_3$.}\label{D3}
\end{figure}

Let $P$ be an $H$-avoiding path in $G$ joining a vertex in each of $\oo{r_9\,s_4}$ and $\oo{s_1\,r_5}$.   The left-hand version of $D_2$ has no face incident with both these paths, and so we must have the right-hand version of $D_2$.  Thus, $D_2$ implies $P$ has one end in $\oc{v_0,r_9,v_9}$ and one end in $\oc{v_1,s_1,v_6}$.   The right-hand version of $D_3$ has no face incident with these paths, so it must be the left-hand version of $D_3$.  The only possibility there for the ends of $P$ are $v_6$ and $v_9$, as claimed.  \end{cproof}
}

\chapter{Green cycles}\printFullDetails{

In this \wording{section,} we begin our study of the rim edges of $H$.  Ultimately, we will partition them into three types:  ``green", ``yellow", and ``red", and it will be the red ones that we focus on to find the desired tile structure.    In this section, however, we begin with the \dragominor{study of} green edges.  We shall show that the cycles $C$ we label green and yellow cannot be crossed in any 1-drawing of $H\cup C$.  

}\begin{definition}\label{df:red}  \dragominor{An edge $e$ of a non-planar graph $G$ is {\em red\/}\index{red} in $G$ if $G-e$ is planar.}\end{definition}\printFullDetails{ 

We will eventually prove that every edge of $R$ is either in a green cycle, or in a yellow cycle, or red.   The main result in this section, one of the three main steps of the entire proof, is that no edge of $R$ is in two green cycles.

}\begin{definition}\label{df:green}  Suppose $G$ is a graph and $\hvfg$.  Suppose further that $G$ is embedded in $\pp$ with  representativity 2 and \wording{that $\Mob$ is} the M\"obius band bounded by \wording{the $H$-rim} $R$.  
\begin{enumerate}\item A cycle $C$ in $G$ is {\em $H$-green\/}\index{green}\index{$H$-green} if $C$ is the composition $P_1P_2P_3P_4$ of four paths, such that:
\begin{enumerate}\item $P_1\subseteq R$ and $P_1$ has length at least 1;
\item $P_2 P_3P_4$ is $R$-avoiding;
\item  $P_2\cup P_4\subseteq H$; 
\item $P_3$ is $H$-avoiding (and, therefore, is either trivial or contained in an $H$-bridge); and
\item\label{it:P1options}  either 
\begin{enumerate} \item $P_1$ contains at most $3$ $H$-nodes or \item\label{it:exceptional} \dragominor{$P_1$ is {\em exceptional\/}}\index{exceptional}, that is, for some $i\in\{0,1,2,\dots,9\}$ and indices read modulo 10, $$P_1=r_i\,r_{i+1}\,r_{i+2}\,.$$
\end{enumerate}
\end{enumerate}
\item An edge of $R$ is {\em $H$-green\/} if it is in an $H$-green cycle.
\item A vertex $v$ of $R$ is {\em $H$-green\/} if both edges of $R$ incident with $v$ are in the same $H$-green cycle.
\end{enumerate}
\end{definition}\printFullDetails{

There is a natural symmetry between $P_2$ and $P_4$:  if $C$ is an $H$-green cycle, consisting of the composition $P_1P_2P_3P_4$ as in Definition \ref{df:green}, then $P_1^{-1}P_4^{-1}P_3^{-1}P_2^{-1}$ is another $H$-green cycle.  Thus $P_4^{-1}$ and $P_2$ can both be considered to be $P_2$.  As the orientations of the individual $P_i$ will not be of any importance (except in as much as they are required to make $C$ a cycle), we may say $P_2$ and $P_4$ are symmetric.

\dragominor{Note that the exceptional case \ref{it:exceptional} is the only one in which $P_1$ has $4$ $H$-nodes.} 

}\begin{lemma}\label{lm:greenBasics}   Suppose $G$ is a graph and $\hvfg$.  \wording{Let $C$ be any $H$-green cycle expressed as the composition $P_1P_2P_3P_4$ as in Definition \ref{df:green}.
\begin{enumerate}
\item\label{it:p2p4spoke} If $i\in\{2,4\}$, then $P_i$ has an end in $R$ and is either trivial or contained in an $H$-spoke.
\item\label{it:p3} The path $P_3$ is not trivial.
\item\label{it:differentSpokes}  If $P_2$ and $P_4$ are both non-trivial, then they are contained in different $H$-spokes.
\end{enumerate}}
\end{lemma}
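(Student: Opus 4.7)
The plan is to exploit the structural decomposition $H=R\cup\bigcup_i s_i$ together with the $R$-avoiding hypothesis on $P_2P_3P_4$.  Orient $C$ so that $P_1$ goes from $u$ to $v$, $P_2$ from $v$ to $w$, $P_3$ from $w$ to $x$, and $P_4$ from $x$ to $u$; then $u,v\in R$ while every interior vertex of $P_2P_3P_4$ lies in $H-R$, which is exactly the (disjoint) union of the spoke interiors.  This ``spoke-localisation" principle is the engine of the whole lemma.

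For (\ref{it:p2p4spoke}), I would first observe that $v$ is automatically an end of $P_2$ in $R$.  To show that a nontrivial $P_2$ lies in a single spoke, I would use connectedness of the interior of $P_2$ together with the fact that $H-R$ is the disjoint union of spoke interiors: this forces the interior of $P_2$ into a single spoke $s_i$, and then $v\in R\cap s_i$ forces $v$ to be an endpoint of $s_i$.  The length-$1$ case is handled directly by noting that the first edge out of $v$ along $P_2$ cannot be a rim edge (as its other end lies off $R$), so it must be an edge of some spoke $s_i$.  The assertion for $P_4$ is symmetric.

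For (\ref{it:p3}), I argue by contradiction: if $P_3$ is trivial then $P_2\cup P_4$ is a single $vu$-path in $H$ whose interior avoids $R$, and the spoke-localisation argument applied to this longer path places it inside a single spoke $s_i$.  This forces both $u$ and $v$ to be endpoints of $s_i$; since $P_1$ has length at least $1$ we have $u\ne v$, so $\{u,v\}=\{v_i,v_{i+5}\}$.  The only bookkeeping required is then to observe that either $v_iv_{i+5}$-subpath of the $10$-cycle $R$ traverses five consecutive rim branches and hence contains six $H$-nodes; this is incompatible with both options in Definition~\ref{df:green}(\ref{it:P1options}), since the exceptional form $r_jr_{j+1}r_{j+2}$ only contains four $H$-nodes.

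For (\ref{it:differentSpokes}), suppose both $P_2$ and $P_4$ are nontrivial and lie in a common spoke $s_i$.  By (\ref{it:p2p4spoke}) applied to each, the $R$-ends $v$ and $u$ of $P_2$ and $P_4$ both belong to $\{v_i,v_{i+5}\}$; the case $u=v$ contradicts the length requirement on $P_1$, and the case $\{u,v\}=\{v_i,v_{i+5}\}$ is ruled out by exactly the same $H$-node counting as in (\ref{it:p3}).  I do not anticipate any serious obstacle here; the one piece of real content is the structural observation that an $H$-subpath whose interior avoids $R$ must live in a single spoke, after which each clause reduces to verifying that the forced endpoints $v_i,v_{i+5}$ of a spoke put too many $H$-nodes on $P_1$.
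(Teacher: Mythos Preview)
Your argument is correct and follows essentially the same route as the paper: localise $P_2$ and $P_4$ to single spokes via the fact that $H\setminus R$ is the disjoint union of spoke interiors, and then for (\ref{it:p3}) and (\ref{it:differentSpokes}) derive the contradiction by counting that $P_1$ would need six $H$-nodes. One small imprecision in your opening paragraph: interior vertices of $P_3$ do not lie in $H-R$ (indeed $P_3$ is $H$-avoiding), so the blanket claim about $P_2P_3P_4$ is false as stated---but you only ever invoke the spoke-localisation for $P_2$, $P_4$, and $P_2P_4$, all of which are contained in $H$, so the actual arguments are unaffected.
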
\printFullDetails{

\begin{cproof} {\bf (\ref{it:p2p4spoke})} For sake of definiteness, we assume $i=2$.  If $P_2$ is not trivial, then there is an edge $e$ in $P_2$.  From the definition, $e$ is in $H$ but not in $R$.  Therefore, there is a spoke $s$ containing $e$.  If $P_2$ has a vertex $u$ not in $s$, then $P_2$ is a path contained in $H$ and containing $e$ and $u$.  This implies that one end of $s$, a vertex of $R$, is internal to $P_2$, contradicting the fact that $P_2P_3P_4$ is $R$-avoiding.  So $P_2\subseteq s$, as required.  Since $P_1\subseteq R$ and $P_2$ has an end in common with $P_1$, $P_2$ has an end in $R$.

{\bf (\ref{it:p3})}\ \  Suppose $P_3$ is trivial.   Then $P_2P_4$ is an $R$-avoiding path joining the ends of $P_1$.  Each of $P_2$ and $P_4$ is either trivial or in a spoke and, since $P_2P_4$ is $R$-avoiding, either both are trivial or $P_2P_4$ is contained in a single spoke.   If both are trivial, then $P_1$ is the cycle $P_1P_2P_3P_4$, which is impossible, since $P_1$ is properly contained in the cycle $R$.   Each of $P_2$ and $P_4$ has an end in $R$ (or is trivial) and $P_2P_4$ has both ends in common with $P_1$, so $P_2P_4$ is the entire spoke. But then $P_1$ contains six $H$-nodes, a contradiction.

{\bf(\ref{it:differentSpokes})}  For $j=2,4$, $P_j$ is  non-trivial by hypothesis.  Therefore,  (\ref{it:p2p4spoke}) shows it is contained in an $H$-spoke $s$.  As it  has a vertex in common with $P_1$, $P_j$ has a vertex in $R$.  This vertex is an $H$-node incident with $s$.  If $P_2$ and $P_4$ are contained in the same spoke $s$, then, as in the proof of (\ref{it:p3}), they contain different $H$-nodes.  But then $P_1$ contains six $H$-nodes,  contradicting Definition \ref{df:green}.
\end{cproof}

There is a small technical point that must be dealt with before we can successfully analyze the relation of an $H$-green cycle to the embedding of $G$ in $\pp$.

}\begin{definition}\label{df:friendly}  Let $\Pi$ be a representativity 2 embedding of a graph $G$ in $\pp$ and let $\hvfg$.    Then $\Pi$ is {\em $H$-friendly\/}\index{friendly}\index{$H$-friendly} if, for each $H$-green cycle $C$ of $G$ and any non-contractible simple closed curve $\gamma$ in $\pp$ meeting $\Pi(G)$ in precisely two points, $\Pi[C]$ is contained in the closure of some face of $\Pi[H]\cup \gamma$.  \end{definition}\printFullDetails{

}\begin{lemma}\label{lm:greenCyclesFriendly}  Suppose $G\in\m2$ and $\hvfg$.  Let $\Pi$ be any representativity 2 embedding of $G$ in $\pp$, let $\gamma$ be a non-contractible simple closed curve  in $\pp$ meeting $\Pi(G)$ in \wording{precisely two points}, and let $C$ be an $H$-green cycle in $G$.  Give $H$ the standard labelling relative to $\gamma$.
\begin{enumerate}
\item\label{it:nearlyFriendly}  Either $\Pi[C]$ is contained in the closure of some face of $\Pi[H]\cup \gamma$ or $v_6v_9$ is an edge of $G$ embedded in $\Mob$ and $C=r_6\,r_7\,r_8\lbsp\cc{v_9,v_6v_9,v_6}$.  In particular, if $\Pi[H]\subseteq \Mob$, then $\Pi$ is $H$-friendly.
\item\label{it:friendlyEmbedding}
If $\Pi$ is not $H$-friendly, then there is an $H$-friendly embedding of $G$ in $\pp$ obtained from $\Pi$ by reembedding only $v_6v_9$.   
\item\label{it:alwaysFriendly} \dragominor{In particular, there is an $H$-friendly embedding of $G$ in $\pp$.}
\end{enumerate}
\end{lemma}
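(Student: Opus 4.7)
The plan is to prove (\ref{it:nearlyFriendly}) by a case analysis on the face $F_B$ of $\Pi[H]$ containing the $H$-bridge $B$ whose nucleus contains the path $P_3$ of the $H$-green cycle $C = P_1 P_2 P_3 P_4$. By Lemma~\ref{lm:greenBasics}, $P_3$ is nontrivial and $H$-avoiding, so $B$ is well defined; and since $\Pi$ is a 2-cell embedding of $G$, $\Pi[B]$ lies in the closure of a single face $F_B$ of $\Pi[H]$, with every attachment of $B$ on $\partial F_B$. By Euler's formula, $\Pi[H]$ has six faces: in the non-exposed case, $F_\Disc = R$ and the five quads $Q_0,\dots,Q_4$; in the exposed case, the two disc faces $F^{up}, F^{dn}$ separated by $s_0$, the quads $Q_1, Q_2, Q_3$, and the hexagonal face $F_{\mathrm{hex}} = r_0 s_1 r_5 r_4 s_4 r_9$ in $\Mob$. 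Then (\ref{it:friendlyEmbedding}) and (\ref{it:alwaysFriendly}) follow from (\ref{it:nearlyFriendly}) by an explicit reembedding.

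The main step is to establish $\Pi[C] \subseteq \overline{F_B}$ unless $C$ is the exception. Lemma~\ref{lm:greenBasics} says each of $P_2, P_4$ is either trivial or a sub-path of a spoke with one end in $R$; an interior point of a spoke lies on the boundary of both abutting faces, so if $P_2$ (say) is nontrivial with its non-$R$ end on $\partial F_B$, the entire spoke containing $P_2$ lies on $\partial F_B$, forcing $P_2 \subseteq \partial F_B$; similarly $P_4 \subseteq \partial F_B$. The endpoints $u, v$ of $P_1$ then lie in $\partial F_B \cap R$, and Definition~\ref{df:green}(1e) restricts $P_1$ enough that the connected $R$-arc $P_1$ itself lies in $\partial F_B \cap R$, except when $F_B = F_{\mathrm{hex}}$. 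Indeed, for each face other than $F_{\mathrm{hex}}$, the rim portion $\partial F_B \cap R$ is either connected or consists of two rim edges belonging to a common quad, so the ``at most three $H$-nodes'' constraint on $P_1$ keeps it in a single component. Only $\partial F_{\mathrm{hex}} \cap R = r_9 r_0 \cup r_4 r_5$ admits a connected $R$-arc whose endpoints lie in different components, and then the exceptional case of Definition~\ref{df:green}(1e) forces $P_1 \in \{r_1 r_2 r_3, r_6 r_7 r_8\}$.

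With $\Pi[C] \subseteq \overline{F_B}$ in hand, the refinement to $\Pi[H] \cup \gamma$ is routine: $\gamma$ avoids $\Pi[G]$, so $\Pi[B]$ lies on one side of $\gamma$ inside $F_B$, and by continuity the portions $P_1, P_2, P_4$ on $\partial F_B$ lie on the same side, placing $\Pi[C]$ in the closure of one of the (at most two) subfaces of $F_B$ in $\Pi[H] \cup \gamma$. The two exceptional cycles in $F_{\mathrm{hex}}$ must be examined carefully: under the standard labelling, $\beta \subseteq F_{\mathrm{hex}}$ runs between the two rim components of $\partial F_{\mathrm{hex}}$, and any $H$-avoiding path in a bridge $B \subseteq F_{\mathrm{hex}}$ joining $v_1$ (or $s_1$) to $v_4$ (or $s_4$) would separate the endpoints of $\beta$, forcing $\beta$ to cross it --- a contradiction. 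Hence $P_1 = r_1 r_2 r_3$ is excluded. A parallel analysis of $P_1 = r_6 r_7 r_8$ shows that here $B$ must be the single edge $v_6 v_9$ with $P_2, P_4$ trivial, so $C = r_6 r_7 r_8 \cdot v_6 v_9$. If $\Pi[H] \subseteq \Mob$ then $F_{\mathrm{hex}}$ does not appear and $\Pi$ is automatically $H$-friendly, completing (\ref{it:nearlyFriendly}).

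For (\ref{it:friendlyEmbedding}), if $\Pi$ is not $H$-friendly, (\ref{it:nearlyFriendly}) places the edge $v_6 v_9$ in $F_{\mathrm{hex}}$. Both $v_6$ and $v_9$ also lie on $\partial F^{dn}$, so deleting and reembedding only the edge $v_6 v_9$ inside $F^{dn}$ gives a new embedding $\Pi'$ of $G$ in $\pp$ with the same representativity. In $\Pi'$ the bridge $\{v_6, v_9, v_6 v_9\}$ sits in $F^{dn}$, so the cycle $r_6 r_7 r_8 \cdot v_6 v_9$ bounds one of the new subfaces and fits in its closure; all other $H$-green cycles are unchanged since only $v_6 v_9$ was moved. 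Part (\ref{it:alwaysFriendly}) is then immediate. The main obstacle will be the detailed case analysis that rules out the $P_1 = r_1 r_2 r_3$ exceptional cycle and verifies that no new $H$-green cycle created by the reembedding fails to lie in a face closure.
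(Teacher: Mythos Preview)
Your face-by-face approach and the $\beta$-separation argument for ruling out $P_1 = r_1 r_2 r_3$ are correct and elegant. However, three genuine gaps remain, each of which the paper closes using 1-drawings $D_2$, $D_3$ of $G-\oo{s_2}$, $G-\oo{s_3}$ together with Theorem~\ref{th:BODquads}; your purely topological argument does not reach them.

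\emph{First}, the passage from faces of $\Pi[H]$ to faces of $\Pi[H]\cup\gamma$ is not routine. The disc face $F^{dn}$ (bounded by $s_0\,r_5\,r_6\,r_7\,r_8\,r_9$) is cut by $\alpha$ into two subfaces. When $a=v_9$ and $b=v_6$, the bridge $B$ can sit in the subface incident with $s_0$ (with attachments $v_6,v_9$ on its boundary), while $P_1=r_6\,r_7\,r_8$ lies on the boundary of the \emph{other} subface; then $C$ is not in the closure of either. Your step (a) correctly gives $C\subseteq\overline{F^{dn}}$, but the refinement fails here. (In the non-exposed case this cannot happen, because each $ab$-arc of $R$ carries at least five $H$-nodes.)

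\emph{Second}, for $F_B=F_{\mathrm{hex}}$ your $\beta$-argument does \emph{not} force $B$ to be the edge $v_6v_9$. It only shows that when the chord $P_3$ strictly separates $a$ from $b$ in $F_{\mathrm{hex}}$ you get a contradiction; once $a=v_9$ or $b=v_6$ the separation is not strict, and the other end of $P_3$ could still lie anywhere in $\oo{s_1}$ or $\oo{s_4}$. The paper instead shows, from the forced crossing structure of $D_2$ and $D_3$, that $D_2[P_3]$ lies in the face bounded by $s_1\,r_6\,r_7\,r_8\,r_9\,r_0$ and $D_3[P_3]$ in the face bounded by $s_4\,r_4\,r_5\,r_6\,r_7\,r_8$, whence $\att(B)\subseteq r_6\,r_7\,r_8$; intersecting with $\partial F_3$ gives $\att(B)=\{v_6,v_9\}$, and Lemma~\ref{lm:threeAtts} then yields that $B$ is a single edge.

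\emph{Third}, for part~(\ref{it:friendlyEmbedding}) you assert that re-embedding $v_6v_9$ in the target face is automatic, but another $H$-bridge already embedded there could have attachments separating $v_6$ from $v_9$ on the face boundary, blocking the move. The paper rules out such an obstructing $H$-avoiding path using $D_3$. Also, the sentence ``$\gamma$ avoids $\Pi[G]$'' is false as written; $\gamma$ meets $\Pi[G]$ exactly at $a$ and $b$.
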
\printFullDetails{

\begin{cproof}  Suppose $\Pi[C]$ is not contained in the closure of any face of $\Pi[H]\cup \gamma$ and let $P_1P_2P_3P_4$ be the decomposition of $C$ as in Definition \ref{df:green}.   As $P_3$ is $(H\cup \gamma)$-avoiding \dragominor{and non-trivial by}  Lemma \ref{lm:greenBasics} \dragominor{(\ref{it:p3}),  there} is an $(H\cup \gamma)$-face $F_3$ containing $P_3$.  Note that, if $P_2$ is not trivial, then Lemma \ref{lm:greenBasics} (\ref{it:p2p4spoke}) \dragominor{asserts} it is contained in an $H$-spoke $s$ and it contains an end of $P_3$, so $P_2$ is contained in the boundary of $F_3$.  Likewise for $P_4$.    We assume by way of contradiction that $P_1\not\subseteq \cl(F_3)$.

\begin{claim}\label{cl:p1IsV6v9} Then:
\begin{enumerate}\item $P_1=r_6\,r_7\,r_8$; \item $s_0$ is exposed; \item either $a=v_9$ or $b=v_6$; and \item if $F_3\subseteq \Disc$, then both $v_6=b$ and $v_9=a$.  \end{enumerate}\end{claim}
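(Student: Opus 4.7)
The plan is to show that the configuration $P_1\not\subseteq\partial F_3$ is very restrictive once we observe that $P_2$, $P_3$, $P_4$ all lie in $\cl(F_3)$, reducing the claim to a combinatorial check against the explicit list of faces of $H\cup\gamma$ given earlier in Chapter~\ref{sec:projPlane}.

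First I would observe that $P_3$, being connected and $(H\cup\gamma)$-avoiding, lies in a single face $F_3$. By Lemma~\ref{lm:greenBasics}(\ref{it:p2p4spoke}), any non-trivial $P_2$ or $P_4$ sits in an $H$-spoke $s$ with one end on $\partial F_3$; following $s$ towards $R$, the sub-path stays on $\partial F_3$ throughout, since at an interior degree-$2$ vertex of $s$ the two adjacent faces do not change. Hence $P_2P_3P_4\subseteq\cl(F_3)$, and the two endpoints of $P_1$ lie in $\partial F_3\cap R$.

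Second, since $P_1$ has at most $3$ $H$-nodes (or else is the exceptional $r_ir_{i+1}r_{i+2}$ with $4$), $P_1$ occupies at most three consecutive rim branches. For each face $F_3$ of $H\cup\gamma$, I would tabulate the maximal arcs of $R$ appearing in $\partial F_3$. If $\partial F_3\cap R$ consists of a single maximal arc $A$, then $P_1\subseteq A$: the complementary arc in $R\setminus A$ always contains at least four full rim branches, which is more than any admissible $P_1$ can supply. So $\partial F_3\cap R$ has at least two maximal arcs, and the two endpoints of $P_1$ lie on distinct ones, separated along $R$ by a ``gap" consisting of the rim branches between them.

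Third I would run this check face by face. In the no-exposed-spoke case, the only faces with multiple maximal $R$-arcs are face~$2$, the quads $Q_0,\dots,Q_3$, and face~$4$; in each of these, the two gaps along $R$ between the two arcs each contain at least four consecutive full rim branches, while the longest admissible $P_1$ covers only three full rim branches. Hence no valid $P_1\not\subseteq\partial F_3$ exists here, so $H$ must have an exposed spoke, namely $s_0$ by the standard labelling convention -- this is conclusion~(2). In the exposed-spoke case, $Q_1,Q_2,Q_3$ are ruled out identically, leaving face~$1\subseteq\Disc$ and faces~$3,5$ in $\Mob$ as the only candidates. For each of these, matching the endpoints $v_i,v_{i+3}$ of an exceptional $P_1$ to the two arcs of $\partial F_3$ forces $\{v_i,v_{i+3}\}=\{v_6,v_9\}$, so $P_1=r_6r_7r_8$, giving conclusion~(1). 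Moreover: in face~$1$ the arcs are $\cc{a,r_9,v_0}$ and $\cc{v_5,r_5,b}$, so $v_9\in\cc{a,r_9,v_0}$ forces $a=v_9$ and $v_6\in\cc{v_5,r_5,b}$ forces $b=v_6$, yielding conclusion~(4) since face~$1\subseteq\Disc$; in face~$3$ one of the arcs already has $v_6$ as its endpoint, so only $a=v_9$ is forced; and symmetrically in face~$5$ only $b=v_6$ is forced. Together these give conclusion~(3).

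The main obstacle is the meticulous bookkeeping in this enumeration: for every face one must identify the maximal $R$-arcs on its boundary (paying attention to whether $a$ or $b$ coincides with an $H$-node, which can extend an arc at its boundary), and then systematically rule out every admissible shape of $P_1$ (sub-arc of a single rim branch, path spanning two consecutive rim branches, or the exceptional three consecutive rim branches) against each candidate $F_3$. The underlying topology and geometry are elementary once the face structure is tabulated; essentially all the labor is in the matching of endpoint parities on the rim.
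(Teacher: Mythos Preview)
Your proof is correct and follows essentially the same approach as the paper's: both pin down the ends of $P_1$ to $\partial F_3\cap R$ and then count $H$-nodes against the explicit list of $(H\cup\gamma)$-faces to force $P_1=r_6r_7r_8$ together with the constraints on $a$ and $b$. The paper organizes the case analysis by $F_3\subseteq\Disc$ versus $F_3\subseteq\Mob$ (and, in the latter, quad versus non-quad face) rather than your face-by-face enumeration, but the underlying arithmetic is identical.
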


\begin{proof}  We first consider the case $F_3\subseteq \Disc$.  Both ends of $P_1$ are contained in one of the $ab$-subpaths of $R$.  If $P_1$ is not contained in the boundary of $F_3$, then it must contain the other complete $ab$-subpath of $R$.  As each of these has at least $4$ $H$-nodes, the only possibility is that it is precisely $4$ $H$-nodes.  In this case, $P_1$ must be exceptional and $s_0$ must be exposed.  In particular, $P_1=r_{6}\,r_{7}\,r_{8}$ and $P_3$ has ends $v_6$ and $v_9$.   The paths $P_2$ and $P_4$ are both trivial.  Moreover, as $P_1$ is not incident with $F_3$, we must have $v_{6}=b$ and $v_{9}=a$.  

In the other case, $F_3\subseteq \Mob$.  If $F_3$ is contained in the interior of an $H$-quad, then $P_1$ joins two vertices in the same quad and is not contained in the quad.  In this case, $P_1$ must have at least $5$ $H$-nodes, which is impossible.  Therefore, $F_3$ is not contained in the interior of an $H$-quad, and so is bounded by one of $\cc{a,r_{9},v_0}\rbsp r_0\,s_1\lbsp \cc{v_{6},r_5,b,\beta,a}$ and $\cc{a,r_9,v_9}\rbsp s_4\,r_{4}\lbsp\cc{v_5,r_5,b,\beta,a}$.    (Recall $\beta=\gamma\cap\Mob$.)  Notice that $s_0$ is exposed.

These cases are symmetric; for sake of definiteness, we presume $F_3$ is bounded by $\cc{a,r_{9},v_0}\rbsp r_0\,s_1\lbsp\cb{v_{6},r_5,}$ $\bc{b,\beta,a}$.  The path $P_1$ has at most $4$ $H$-nodes and joins two vertices on $\bQ_0$.  If $P_1\subseteq \bQ_0$, then $\Pi[C]$ is contained in the closure of one of the two $(\Pi[H]\cup \gamma)$-faces whose boundary is contained in $\Pi[\bQ_0]\cup\gamma$;  thus, $P_1\not\subseteq \bQ_0$.  Therefore, $P_1$ has at least $4$ $H$-nodes; by definition it has at most 4, so $P_1$ has precisely 4 $H$-nodes.  In particular, $P_1$ can only be $r_{6}\,r_7\,r_{8}$ and $v_{9}=a$.  \end{proof}

Because $s_0$ is exposed, Theorem \ref{th:BODquads} implies that both $\bQ_2$ and $\bQ_3$ have  BOD.   Let $e$ be any edge in $s_2$ and let $D_2$ be a 1-drawing of $G-e$.  Since $\bQ_2$ has BOD, Lemma \ref{lm:BODcrossed} shows $\bQ_2$ is crossed in $D_2$, so $r_0\,r_1\,r_2\,r_3$ crosses $r_5\,r_6\,r_7\,r_8$.  This implies that neither $s_0$ nor $s_4$ is exposed in $D_2$ and, therefore, $P_3$ cannot be in the same $(H-\oo{s_0})$-bridge as $s_0$.   

Let $B_0$ and $B$ be the $(H-\oo{s_0})$-bridges containing $s_0$ and $P_3$, respectively.  These evidently overlap on $\bQ_0$ and they both overlap $M_{\bQ_0}-e$ (in $G-e$).  Therefore, $\bQ_0$ has NBOD.  Since $M_{\bQ_0}-e$ is a non-planar $\bQ_0$-bridge in $G-e$, Lemma \ref{lm:cleanBOD} implies that $\bQ_0$ is not clean in $D_2$.  

As $\bQ_0$ and $\bQ_2$ have only $s_1$ in common and both are crossed in $D_2$, $s_1$ must be exposed in $D_2$.   It follows that $D_2[P_3]$ is in the face of $D_2[H-\oo{s_2}]$ bounded by $s_1\,r_{6}\,r_7\,r_{8}\,r_{9}\,r_0$. 

The same arguments apply with $\bQ_3$ in place of $\bQ_2$, showing that $D_3[P_3]$ is in the face of $D_3[H-\oo{s_3}]$ bounded by $s_{4}\,r_{4}\,r_5\,r_6\,r_{7}\,r_{8}$.  These two drawings imply that $\att( B )\subseteq r_{6}\,r_{7}\,r_{8}$.  

If $F_3\subseteq \Disc$, then $F_3$ is bounded by $r_{9}\,s_0\,r_5\lbsp\cc{v_{6},\alpha,v_{9}}$ (recall $\alpha = \gamma\cap \Disc$).  Thus, $\att( B ) = \{v_6,v_9\}$ and Lemma \ref{lm:threeAtts} implies that $P_3$ is just the edge $v_{6}v_{9}$.  In this case, Claim \ref{cl:p1IsV6v9} implies $P_3$ can obviously be embedded in the other face of $H\cup \gamma$ contained in $\Disc$ and incident with both $v_{6}$ and $v_{9}$.

If $F_3\subseteq \Mob$, then $F_3$ is bounded by either $$\cc{a,r_9,v_0}\rbsp r_0\,s_1\lbsp\cc{v_6,r_5,b,\beta,a}\textrm{ or }\cc{a,r_9,v_9}\rbsp s_4\,r_4\cc{v_5,r_5,b,\beta,a}\,.$$  Again, this implies that $\att( B )\subseteq \{v_6,v_9\}$, so $P_3$ is just the edge $v_6v_9$.  In this case, Claim \ref{cl:p1IsV6v9} implies only that either $v_6 = b$ or $v_9 = a$.  Again these cases are symmetric, so we assume $v_9=a$.

\minorrem{(Text and definition removed.)}
%We pause to formally introduce two new notions.

%}\begin{definition}  Let $(v_0,e_1,v_1,\dots,e_k,v_k)$ be a path $P$ in a graph $G$.  Then:
%\begin{enumerate}
%\item for subsets $A$ and $B$ of $V(G)$, $P$ is an {\em $AB$-path\/} if $v_0\in A$, $v_k\in B$, none of $\{v_0,v_1,\dots,v_k\}\setminus\{v_0\}$ is in $A$ and none of $\{v_1,v_2,\dots,v_k\}\setminus\{v_k\}$ is in $B$; and 
%\item $\co{v_0,e_1,v_1,\dots,e_k,v_k}$ is the $v_0v_{k-1}$-subpath of $P$ and $\oc{v_0,e_1,v_1,\dots,e_k,v_k}$ is the $v_1v_k$-subpath of $P$; 
%\end{enumerate}
%\end{definition}\printFullDetails{

We remark that if $v\in A\cap B$, then $(v)$ is an $AB$-path and this is the only path containing $v$ that is an $AB$-path.  We now return to the proof.

We wish to reembed $v_6v_9$ in the $(H\cup \gamma)$-face incident with $v_6$, $v_7$, $v_8$, and $v_9$.  We need only verify that there is no $H$-avoiding $\co{b,r_5,v_6}\oc{v_6,r_6,v_7,r_7,v_8,r_8,v_9}$-path.     But such a path would have to appear in $D_3$, where it can only also be in the face of $D_3[H-\oo{s_3}]$ bounded by $s_{4}\,r_{4}\,r_5\,r_6\,r_{7}\,r_{8}$.  But then it crosses $v_6v_9$ in $D_3$, a contradiction completing the proof. \end{cproof}

We are now prepared for our analysis of $H$-green cycles.

}\begin{lemma}\label{lm:greenCycles}  Let $G\in\m2$, $\hvfg$, and let $\Pi$ be an $H$-friendly embedding of $G$ in $\pp$.  Let $C$ be an $H$-green cycle expressed as the composition $P_1P_2P_3P_4$ as in Definition \ref{df:green}.  Then:
\begin{enumerate} 
\item\label{it:p1} $P_1$ is contained in one of the two $ab$-subpaths of $R$;
\item \label{it:Cprebox1} if $C\subseteq \Mob$ and $s$ is any $H$-spoke contained in $\Mob$ that is totally disjoint from $C$, then $C$ is a $(C\cup (H-\oo{s}))$-prebox;
\item\label{it:Cprebox2} if $C$ is not contained in $\Mob$ and $s$ is any $H$-spoke contained in $\Mob$ having one end in the interior of $P_1$, then $C$ is a $(C\cup (H-\oo{s}))$-prebox;
\item\label{it:HinCbridge} there is a $C$-bridge $M_C$ so that $H\subseteq C\cup M_C$;
%\item\label{it:noncontract} $M_C$ contains a cycle $C'$ totally disjoint from $C$ and so that $C'$ is non-contractible in $\pp$;  
\item\label{it:BODplanar} $C$ is contractible, $C$ has BOD, and all $C$-bridges other than $M_C$ are planar;
\item\label{it:prebox} $C$ is a $(C\cup H)$-prebox;
\item\label{it:oneCbridge} $M_C$ is the unique $C$-bridge (that is, there are no planar $C$-bridges); 
\item\label{it:CboundsFace}  $C$ bounds a face \dragominor{of $\Pi$}; 
\item\label{it:shortJump} there are at most two $H$-nodes in the interior of $P_1$; and
\item\label{it:notCrossed}  in any 1-drawing of $H\cup C$, $C$ is clean.
\end{enumerate}
\end{lemma}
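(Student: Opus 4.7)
My plan is to establish the items in roughly the listed order, leveraging that the $H$-friendliness of $\Pi$ places $\Pi[C]$ inside the closure of a single face $F$ of $\Pi[H]\cup\gamma$. Since $\gamma$ is non-contractible, $F$ is an open disc; the $R$-portion of $\partial F$ lies in one of the two $ab$-subpaths of $R$, yielding (\ref{it:p1}). Because $\Pi[C]$ is a simple closed curve in the closed disc $\cl(F)$, it bounds a disc $\Delta_C\subseteq\cl(F)$, and is therefore contractible in $\pp$, proving the contractibility assertion of (\ref{it:BODplanar}).

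For the prebox claims (\ref{it:Cprebox1}), (\ref{it:Cprebox2}), and (\ref{it:prebox}), I proceed edge by edge. Given $e\in E(C)$, I construct a subdivision of $V_6$ in $(C\cup K)-e$, where $K$ is the relevant $H$ or $H-\oo{s}$, by rerouting through $P_2P_3P_4$: when $e\in P_1$, the path $P_2P_3P_4$ replaces $e$, while an appropriately chosen spoke of $H-\oo{s}$ acts as a spoke of the new $V_6$; when $e\in P_3$, the $V_6$ already survives in $H-\oo{s}$ for any eligible $s$; and when $e$ lies in $P_2$ or $P_4$, the spoke containing $e$ is rerouted via the detour $P_1$ composed with the remaining two paths. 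The combined prebox claim (\ref{it:prebox}) is then obtained by choosing $s$ appropriately depending on whether or not $C\subseteq\Mob$.

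For (\ref{it:HinCbridge}), the subgraph $R-V(P_1)$ is a single connected arc of $R$ (since $P_1\subsetneq R$) to which every $H$-spoke attaches at one endpoint, so $H-V(C)$ lies in a single component of $G-V(C)$; the corresponding $C$-bridge is $M_C$. For (\ref{it:BODplanar}), I apply Corollary~\ref{co:contractibleBOD}: because $P_1$ lies in a single $ab$-subpath, at most one of $a,b$ belongs to $V(P_1)$, so the other lies in $V(R)-V(P_1)\subseteq\Nuc(M_C)$; any $C$-bridge $B\neq M_C$ therefore fails to contain that point of $\gamma$ in $V(C\cup B)$, making $C\cup B$ free of non-contractible cycles, which yields the BOD of $C$ and the planarity of every other $C$-bridge. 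Item (\ref{it:oneCbridge}) is by contradiction: a planar $C$-bridge $B\neq M_C$ combined with (\ref{it:BODplanar}) and (\ref{it:prebox}) (and the fact that $\comp{B}\supseteq C\cup H$) would make $C$ a box, contradicting Lemma~\ref{lm:noBox}. Item (\ref{it:CboundsFace}) is then immediate, since the only $C$-bridge $M_C$ lies on the Möbius side of $\Pi[C]$, leaving $\Delta_C$ empty of edges. Item (\ref{it:notCrossed}) is a direct application of Lemma~\ref{lm:preboxClean} to (\ref{it:prebox}).

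For (\ref{it:shortJump}), the only configuration permitted by Definition~\ref{df:green} with three interior $H$-nodes is a non-exceptional $P_1=\cc{u\,r_{j-1}\,v_j}\,r_j\,r_{j+1}\cc{v_{j+2}\,r_{j+2}\,w}$ with both $u,w$ interior to $R$-branches; Lemma~\ref{lm:greenBasics}(\ref{it:p2p4spoke}) then forces $P_2$ and $P_4$ to be trivial, so $P_3$ is an $H$-avoiding $uw$-path and item (\ref{it:CboundsFace}) places the three spokes $s_j,s_{j+1},s_{j+2}$ on the $M_C$-side of $C$. Combining $P_3$ with the $uv_{j+1}$-piece of $P_1$, the spoke $s_{j+1}$, and the returning piece of $R$ produces a strictly shorter cycle $C'$ for which one can re-run the prebox construction of (\ref{it:prebox}); the resulting box contradicts Lemma~\ref{lm:noBox}. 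The main obstacle will be the rerouting constructions in the prebox claims: each case requires careful bookkeeping of which edges of $C$ are in which of $P_1,P_2,P_3,P_4$ to guarantee a genuine $V_6$-subdivision in $(C\cup K)-e$ after deleting any single edge $e$, and the argument in (\ref{it:shortJump}) is secondary but similarly delicate because the configuration of $s_0$ relative to $\Mob$ and the exact positions of $a,b$ on $R$ must be traced through the box-reduction.
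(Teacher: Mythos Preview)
Your overall plan tracks the paper closely for most items, and your shortcut for (\ref{it:notCrossed}) via (\ref{it:prebox}) and Lemma~\ref{lm:preboxClean} is in fact cleaner than the paper's direct argument there. There are, however, two real gaps.

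The serious one is in (\ref{it:BODplanar}). Your claim that ``at most one of $a,b$ belongs to $V(P_1)$'' can fail: in the exceptional situation with $s_0$ exposed, $P_1=r_6r_7r_8$, $a=v_9$, $b=v_6$, both $a$ and $b$ are endpoints of $P_1$. More importantly, even when only one of $a,b$---say $a$---lies in $C$, your conclusion does not follow. A cycle in $\Pi[C\cup B]$ can pass through $a$, entering on one side of $\gamma$ and leaving on the other, and thus be non-contractible while meeting $\gamma$ at just that single point. So ``$b\notin C\cup B$'' does not force $C\cup B$ to be free of non-contractible cycles, and Corollary~\ref{co:contractibleBOD} does not apply on those grounds. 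The paper avoids this issue by a different mechanism: in all but the exceptional case it exhibits an $H$-spoke $s\subseteq\Mob$ totally disjoint from $C$, so that $R\cup s$ contains a non-contractible cycle disjoint from $C$, and then invokes Lemma~\ref{lm:CdisjointNCcycle}; in the exceptional case ($s_0$ exposed and $P_1\in\{r_1r_2r_3,\,r_6r_7r_8\}$) it argues directly that any $B\neq M_C$ together with $C$ sits inside a closed disc assembled from $\cl(F)$ and the adjacent $H$-quad discs, so $C\cup B$ has no non-contractible cycles, and then applies Corollary~\ref{co:contractibleBOD}.

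Your sketch for (\ref{it:shortJump}) is also too loose to succeed as stated. The ``shorter cycle $C'$'' you build from $P_3$, a piece of $P_1$, the spoke $s_{j+1}$, and ``the returning piece of $R$'' is not a cycle (the endpoints do not match up), and even after repair it is unclear what plays the role of a planar bridge to make it a box. The paper's route is different: assuming $v_{i-1},v_i,v_{i+1}$ all lie in $\oo{P_1}$, it shows the \emph{hyperquad} $\bQ_i$ is a box. Concretely, (\ref{it:CboundsFace}) forces $s_{i-1},s_i,s_{i+1}\subseteq\Mob$ and hence $P_2P_3P_4\subseteq\Disc$; a remaining spoke in $\Mob$ combined with $P_2P_3P_4$ and pieces of $R$ gives a non-contractible cycle disjoint from $\bQ_i$, so $\bQ_i$ has BOD via Lemma~\ref{lm:CdisjointNCcycle}; one then checks, by explicit $V_6$-constructions for edges in $s_{i\pm1}$, in $r_{i-1}\cup r_i$ (using (\ref{it:Cprebox2})), and in $r_{i+4}\cup r_{i+5}$, that $\bQ_i$ is a $(\bQ_i\cup M_{\bQ_i})$-prebox; finally the planar $\bQ_i$-bridge containing $s_i$ supplies the contradiction with Lemma~\ref{lm:noBox}.
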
\printFullDetails{

\begin{cproof}
Because $\Pi$ is $H$-friendly, there is a face $F$ of $(H\cup \gamma)$ whose closure contains $C$.

\medskip
{\bf(\ref{it:p1})}\ \ This is an immediate consequence of Definition \ref{df:friendly}, as the boundary $\partial$ of any face of $H\cup \gamma$ has each component of $\partial\cap R$ contained in one of the $ab$-subpaths of $R$.

{\bf (\ref{it:Cprebox1}) and (\ref{it:Cprebox2})}\ \ Note that $H-\oo{s}$ contains a subdivision of $V_8$.  In particular, if $e$ is an edge of $C$ not in $R$, then $H-\oo{s}$ is a non-planar subgraph of $(C\cup (H-\oo{s}))-e$, as required.  If $e\in C$ is in $R$, then we claim the cycle $R'=(R-\oo{P_1})\cup P_2P_3P_4$ is the rim of a $V_6$.  We see this in the two cases.

\medskip\noindent{\bf Case 1:  (\ref{it:Cprebox1})}\ \   
In this case, there are three $H$-spokes $t_1,t_2,t_3$ other than $s$ contained in $\Mob$.  Each $t_i$ has an end $v_i$ in $R-\oo{P_1}$ and a maximal $R'$-avoiding subpath $t'_i$ containing $v_i$.  It is straightforward to verify that $R'\cup t'_1\cup t'_2\cup t'_3$ is a subdivision of $V_6$, as required.

\medskip\noindent{\bf Case 2: (\ref{it:Cprebox2})}\ \ In the exceptional case $P_1=r_i\,r_{i+1}\,r_{i+2}$, $s$ is different from all of $s_i$, $s_{i+3}$, and $s_{i+4}$, so $R'\cup s_i\cup s_{i+3}\cup s_{i+4}$ is the required $V_6$.  (Note that one of $s_i$ and $s_{i+3}$ can be the exposed spoke and part of that spoke might be in either $P_2$ or $P_4$, but whatever part is not in $P_2\cup P_4$ makes the third spoke.)

In the remaining case, there are two $H$-spokes $s_i$ and $s_{i+1}$ that are completely disjoint from $C$.  Any other $H$-spoke $s'$, different from $s$, $s_i$, and $s_{i+1}$, and contained in $\Mob$, will connect to $R'$ to make a third spoke, either because both its ends are in $R'$ or because one end is in $R'$ and the other end is in $P_1$ and one of the paths in $P_1-e$ joins the other end of $s$ to a vertex in $R'$.

{\bf(\ref{it:HinCbridge})}\ \  Let $M_C$ be the $C$-bridge containing the $ab$-subpath $Q$ of $R$ that is $P_1$-avoiding. We claim $H\subseteq C\cup M_C$.  Observe that the maximal $P_1$-avoiding subpath $Q'$ of $R$ containing $Q$ is contained in $M_C$ and, therefore, $R\subseteq C\cup M_C$.  Note that every $H$-spoke has at least one end in $Q'$ that is not in $P_1$ and, therefore, that end is in $\Nuc(M_C)$.  Thus, if $P_3$ is not contained in $\Mob$, it is obvious that $H\subseteq C\cup M_C$.  So suppose $P_3$ is contained in $\Mob$.  \wording{The $H$-spokes} other than those that contain $P_2$ and $P_4$ are obviously in $M_C$, and the ones containing $P_2$ and $P_4$ are in the union of $M_C$ and $C$.

{\bf (\ref{it:BODplanar})}\ \  If either $P_1$ has at most $3$ $H$-nodes, or $s_0$ is not exposed, or $P_1$ is neither $r_1\,r_2\,r_{3}$ nor $r_{6}\,r_{7}\,r_{8}$, then there is an $H$-spoke $s$ contained in $\Mob$ and totally disjoint from $C$.  
The spoke $s$ combines with the one of the two subpaths of $R$ joining the ends of $s$ that is disjoint from $P_1$ to give a non-contractible cycle disjoint from $C$.  The claim now follows immediately from Lemma \ref{lm:CdisjointNCcycle}.

We now treat the case $s_0$ is exposed and $P_1$ is either $r_1\,r_2\,r_{3}$ or $r_{6}\,r_{7}\,r_{8}$.   In this case, $F$ is a face of $H\cup \gamma$ contained in $\Disc$.  Let 
 $B'$ be a $C$-bridge other than $M_C$.  If $B'\subseteq\cl(F)$, then $C\cup B'\subseteq \cl(F)$ and $\cl(F)$ is a closed disc in $\pp$.  Therefore, $C\cup B'$ has no non-contractible cycles in $\pp$.  Otherwise, $B'$ is contained in the closure of one of the $H$-faces bounded by $Q_1$ or $Q_2$ or $Q_3$. For each $i\in \{1,2,3\}$, let $F_i$ be the $H$-face bounded by $Q_i$.  Then $\cl(F_i)\cap \cl(F)$ is a path and, therefore, $\cl(F_i)\cup \cl(F)$ is a closed disc containing $C\cup B'$ and again $C\cup B'$ has no non-contractible cycles.  The result now follows from Corollary \ref{co:contractibleBOD}.

{\bf(\ref{it:prebox})}\ \ In the case $P_3\subseteq \Mob$, at most the $H$-spokes containing $P_2$ and \wording{$P_4$} meet $C$.  There are at least two others contained in $\Mob$ that are disjoint from $C$; let $s$ be one of these.  By (\ref{it:Cprebox1}), for any edge $e$ of $C$, $(C\cup (H-\oo{s}))-e$ is not planar, so $(C\cup H)-e$ is not planar.

Now suppose $P_3\subseteq\Disc$.  If some $H$-spoke $s$ contained in $\Mob$ has an end in the interior of $P_1$, then (\ref{it:Cprebox2}) implies that, for any edge $e$ of $C$, $(C\cup (H-\oo{s}))-e$ is not planar, so $(C\cup H)-e$ is not planar.  

In the alternative, no $H$-spoke contained in $\Mob$ has an end in the interior of $P_1$.  If $e$ is not in $P_1$, then $H\cap \Mob$, which is a $V_8$ or $V_{10}$, is contained in $(C\cup H)-e$, so we may assume $e\in P_1$.   But then $(R-\oo{P_1})\cup P_2P_3P_4$ and the $H$-spokes contained in $\Mob$ make a $V_8$ or $V_{10}$, showing $(C\cup H)-e$ is not planar.

%We need to show that, for every $e\in C$, $(C-e)\cup H$ contains a $V_6$. If $e\notin R$, then at least four $H$-spokes are disjoint from $e$, so $V_8\subset (C-e)\cup H$; thus we may assume $e\in C\cap R$, that is, $e$ is in $P_1$. 
%We see that, with $Q'$ as in the proof of (\ref{it:HinCbridge}), $Q'\cup P_2P_3P_4$ is a cycle; this will be the rim of the $V_6$.  

%If $P_3\subseteq \Mob$, then at most one $H$-spoke meets the interior of $P_1$ and so $(H-\oo{P_1})\cup P_2P_3P_4$ has a $V_8$-minor.  Now suppose $P_3$ is not contained in $\Mob$.  If $P_1$ has at most $3$ $H$-nodes, then there are two $H$-spokes disjoint from $P_1$.  If there are three such $H$-spokes, then they and $Q'$ yield the desired $V_6$.  Otherwise, any other $H$-spoke contained in $\Mob$ can be extended, using a path in $P_1-e$, to become a third spoke, yielding the required $V_6$. 

%In the exceptional case, $P_1=\cc{v_i,v_{i+1},v_{i+2},v_{i+3}}$.   Here, one of the spokes of the $V_6$ is $s_{i-1}$ (the index being read modulo $5$).  Any of $s_i$ or $s_{i+3}$ that is in $\Mob$ is also a spoke of the $V_6$; there is at least one of these.  If the spoke $s\in\{s_i,s_{i+3}\}$ is not in $\Mob$, then $s$ is exposed, so $s=s_0$ and $i$ is either 0 or 2.  The maximal $C$-avoiding subpath of  $s$ constitutes the third spoke. 

{\bf (\ref{it:oneCbridge})}\ \  Observe that (\ref{it:BODplanar}) shows any other $C$-bridge is planar and that $C$ has BOD.  If $B$ is any other $C$-bridge, then $C$ is a $\comp B$-prebox by (\ref{it:prebox}) and, therefore, is\dragominor{, by definition,} a box, contradicting Lemma \ref{lm:noBox}.

{\bf (\ref{it:CboundsFace})}\ \ This is an immediate consequence of the facts that $C$ is contractible (\ref{it:BODplanar}) and there is only one $C$-bridge (\ref{it:oneCbridge}).

{\bf (\ref{it:shortJump})}\ \  Suppose by way of contradiction that $v_{i-1},v_i,v_{i+1}$ are internal to $P_1$.  Notice that $P_1$ is not exceptional.  We claim that $\bQ_i$ is a box, contradicting Lemma \ref{lm:noBox}.   

For $s\in\{s_{i-1},s_i,s _{i+1}\}$,  $s$ is contained in one of the two faces of $R$ (i.e., the M\"obius band $\Mob$ and the disc $\Disc$).  By (\ref{it:CboundsFace}), $C$ is the boundary of some face $F$ of $G$.  Clearly $F$ and $s$ are in different $R$-faces, so one is in $\Mob$ and the other is in $\Disc$.  Therefore, all of $s_{i-1}$, $s_i$, and $s_{i+1}$ are contained in the same one of $\Mob$ and $\Disc$.  Since $\Disc$ contains at most one $H$-spoke, it must be that all three are contained in $\Mob$.  Clearly, this implies $F\subseteq \Disc$ and, therefore, $P_2P_3P_4\subseteq \Disc$.

There is another $H$-spoke $s$ contained in $\Mob$ that is totally disjoint from $\bQ_i$. As $P_2P_3P_4\subseteq \Disc$,  $R\cup P_2P_3P_4\cup s$ contains a non-contractible cycle including both $P_2P_3P_4$ and $s$ that is totally disjoint from $\bQ_i$.  Thus, Lemma \ref{lm:CdisjointNCcycle} implies $\bQ_i$ has BOD and all $\bQ_i$-bridges except $M_{\bQ_i}$ are planar.

We claim $\bQ_i$ is a $(\bQ_i\cup M_{\bQ_i})$-prebox.  Note that $\bQ_i\cup M_{\bQ_i}$ contains $H-\oo{s_i}$ and so the deletion of any edge in $s_{i-1}\cup s_{i+1}$ leaves a $V_6$.  
By (\ref{it:Cprebox2}), $C$ is a $C\cup (H-\oo{s_i})$-prebox, so the deletion of any edge $e$ in $r_{i-1}\cup r_i$ leaves a non-planar subgraph in $(C-e)\cup (H-\oo{s_i})$, which is contained in  $(\bQ_i-e)\cup M_{\bQ_i}$.  That is, if $e\in r_{i-1}\cup r_i$, then $(\bQ_i-e)\cup M_{\bQ_i}$ is not planar.  

%Also $(H-\oo{r_{i-1}\cup r_i})\cup P_2P_3P_4$ contains a $V_6$ that uses $P_2P_3P_4$ as part of the rim, along with the two spokes disjoint from $\bQ_i$, plus $s_{i-1}$ and a part of the rim to one end of $P_1$.  This takes care of any edge in $r_{i-1}\cup r_i$.

We must also consider an edge in $r_{i+4}\cup r_{i+5}$ (these indices are read modulo 10).  Let $R'$ be the cycle made up of the following four parts:  the two paths in $R-\oo{P_1}-\oo{r_{i+4}\, r_{i+5}}$, $P_2P_3P_4$, and $s_{i-1}\,r_{i-1}\,r_i\,s_{i+1}$.  To get the $V_6$, add to $R'$  both $H$-spokes totally disjoint from $P_1$ and either of the two $R'$-avoiding subpaths of $P_1$ whose ends are in $R'$. Thus, if $e\in r_{i+4}\, r_{i+5}$, then $(\bQ_i-e)\cup M_{\bQ_i}$ is not planar, completing the proof that $\bQ_i$ is a $(\bQ_i\cup M_{\bQ_i})$-prebox.   \wording{(See Figure \ref{fg:qibarPrebox}.)}

\begin{figure}[!ht]
\begin{center}
\scalebox{1.0}{\input{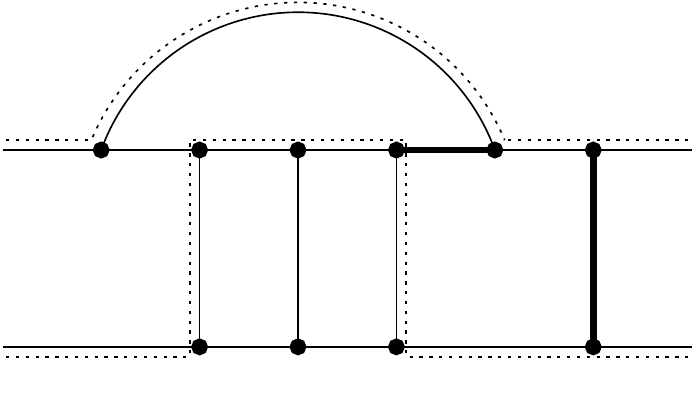_t}}
\end{center}
\caption{\minor{The case $e\in r_{i+4}\,r_{i+5}$ for $\bar Q_i$ being a $(\bar Q_i\cup M_{\bar Q_i})$-prebox. Only two of the three spokes are shown.}}\label{fg:qibarPrebox}
\end{figure}

Since the $\bQ_i$-bridge $B$ containing $s_i$ is contained in the closed disc in $\pp$ bounded by $\bQ_i$, $B$ is planar and, therefore, $\bQ_i$ is a box, the desired contradiction. 

{\bf (\ref{it:notCrossed})} Let $D$ be a 1-drawing of $H\cup C$.  Let $P_1P_2P_3P_4$ be the decomposition of $C$ into paths as in Definition \ref{df:green}, so $P_1\subseteq R$ and $P_3$ is $H$-avoiding.  If $C$ is crossed in $D$, then it is $P_1$ that is crossed, while $P_2P_3P_4$, being $R$-avoiding, is not crossed in $D$.  We claim that  there is an $H$-spoke $v_iv_{i+5}$ disjoint from $C$ that is not exposed in $D$.    The existence of $s$ \dragominor{and the fact that $C$ is crossed in $D$} shows that no face of $R\cup s$ is incident with both ends of $P_1$ and, therefore, $P_2P_3P_4$ must cross $R\cup s$ in $D$, the desired contradiction.

To prove the claim, we consider two cases.  If $P_1$ has at most $3$ $H$-nodes, then this is obvious, since only one $H$-spoke can be exposed.  In the alternative, $P_1$ is exceptional, say $P_1=r_i\,r_{i+1}\,r_{i+2}$.   As \wording{the spoke exposed in $D$} is incident with an end of the $H$-rim branch that is crossed, we see that $s_{i+4}$ is not the exposed spoke and is disjoint from $P_1$, as required.
\end{cproof}
}

\printFullDetails{
The next result is the main result of this section and the first of three main steps along the way to obtaining the classification of 3-connected, 2-crossing-critical graphs having a subdivision of $V_{10}$.   The other two major steps are, for $G\in \m2$ containing a subdivision $H$ of $V_{10}$:  (i) $G$  has a representativity 2 embedding in $\pp$ so that $H\subseteq \Mob$; and (ii) $G$ contains a subdivision of $V_{10}$ with additional properties (that we call ``tidiness").  It is this tidy $V_{10}$ for which the partition of the edges of the rim into the red, yellow, and green edges \wording{that allows us to} find the decomposition into tiles.

}\begin{theorem}\label{th:twoGreenCycles}  If $G\in\m2$ and $\hvfg$, then no two $H$-green cycles have an edge of $R$ in common.  \end{theorem}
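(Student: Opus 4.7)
My plan is proof by contradiction: assume distinct $H$-green cycles $C$ and $C'$ share an edge $e\in E(R)$, and manufacture a box, contradicting Lemma~\ref{lm:noBox}. Apply Lemma~\ref{lm:greenCyclesFriendly}(\ref{it:alwaysFriendly}) to fix an $H$-friendly embedding $\Pi$ of $G$ in $\pp$. By Lemma~\ref{lm:greenCycles}(\ref{it:CboundsFace}), $C$ and $C'$ bound faces $F_C$ and $F_{C'}$ of $\Pi(G)$. Since $e$ lies on both boundaries and exactly two faces meet each edge, $\{F_C,F_{C'}\}$ are the two faces incident with $e$; as $R$ separates $\pp$ into $\Disc$ and $\Mob$ and $e\in R$, I may assume $F_C\subseteq\Disc$ and $F_{C'}\subseteq\Mob$. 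Writing $C=P_1P_2P_3P_4$ and $C'=P_1'P_2'P_3'P_4'$ as in Definition~\ref{df:green}, the $R$-avoiding pieces $P_2P_3P_4\subseteq\overline{F_C}\subseteq\overline{\Disc}$ and $P_2'P_3'P_4'\subseteq\overline{F_{C'}}\subseteq\overline{\Mob}$ are edge-disjoint from each other, so $C\cap C'\subseteq R$; Lemma~\ref{lm:greenCycles}(\ref{it:p1}) places $P_1$ and $P_1'$ in the same $ab$-arc of $R$, and thus $P_0:=C\cap C'=P_1\cap P_1'$ is a single subpath of that arc containing $e$, with at most two internal $H$-nodes by Lemma~\ref{lm:greenCycles}(\ref{it:shortJump}).

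Form the cycle $C^*:=(C-E(P_0))\cup(C'-E(P_0))$, joined at the two endpoints of $P_0$. Topologically $C^*$ bounds the closed disc $F^*=\overline{F_C}\cup P_0\cup\overline{F_{C'}}$ in $\pp$, and every internal vertex of $P_0$ lies in its topological interior. Let $B$ be the $C^*$-bridge containing those internal vertices; then $B$ comprises the edges of $P_0$, the spokes attached to its internal $H$-nodes, and any further vertices reachable from these through $G-V(C^*)$. The plan is to show that $C^*$ together with $B$ witnesses a box: (a) $B$ is planar, since $C^*\cup B$ can be embedded in the closed disc $\overline{F^*}$ once one reroutes the at most two spokes that must pass through it; (b) $C^*$ has BOD, obtained from Corollary~\ref{co:contractibleBOD} after checking that the only $C^*$-bridge carrying a non-contractible cycle is the one containing $R-E(P_0)$ along with the remaining spokes; and (c) $C^*$ is a $\comp{B}$-prebox, that is, for each $f\in E(C^*)$ the graph $\comp{B}-f$ contains a subdivision of $V_6$. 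Together (a)--(c) make $C^*$ a box, contradicting Lemma~\ref{lm:noBox}.

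The hard part is step (c). For $f\in C-E(P_0)$ I would follow the template of the proof of Lemma~\ref{lm:greenCycles}(\ref{it:prebox}): combine $R-E(P_0)$ with the $R$-avoiding return $P_2'P_3'P_4'\subseteq\comp{B}$ to replace the portion of $R$ containing $f$, and then pick three spokes disjoint from $f$ to complete a $V_6$-subdivision; for $f\in C'-E(P_0)$ the argument is symmetric, using $P_2P_3P_4$ in place of $P_2'P_3'P_4'$. The subcases are controlled by the number (0, 1, or 2) of internal $H$-nodes of $P_0$, whether the exposed spoke $s_0$ (when one exists) is attached at an internal vertex of $P_0$, and whether $f$ lies in a rim segment of $C^*$ or in an $R$-avoiding segment inherited from $C$ or $C'$. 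The exceptional case of Definition~\ref{df:green}, in which $P_1=r_ir_{i+1}r_{i+2}$ spans four $H$-nodes, is the most delicate because it leaves the fewest disjoint spokes with which to build a $V_6$; and the mild asymmetry between $\Disc$ (at most one spoke) and $\Mob$ (at least four spokes) means that the $C$-side and $C'$-side of the analysis, though parallel, must be argued separately.
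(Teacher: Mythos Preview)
Your overall architecture---assume a shared rim edge, pass to an $H$-friendly embedding, observe that the two green cycles bound faces on opposite sides of $R$, form $C^*$ from their symmetric difference, and try to exhibit a box---is exactly the paper's. One small correction: $3$-connection forces $P_0=C\cap C'$ to be the single edge $e$ (an internal vertex of $P_0$ would be incident only with the two rim edges, since both adjacent faces are $F_C$ and $F_{C'}$, contradicting degree~$\ge 3$). So your bridge $B$ is just $e$ with its two ends, not a larger object containing spokes.

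The substantive gap is step~(c). You assert that the prebox condition for $C^*$ can be verified ``following the template of Lemma~\ref{lm:greenCycles}(\ref{it:prebox})'', but the paper explicitly records that this fails: ``it is not true that $C$ is necessarily one [a box].'' The obstruction is concrete. Suppose the $\Mob$-side green cycle $C'$ lies in $\cl(Q_i)$ with $P_2'$ a nontrivial subpath of $s_i$, while the internal $H$-nodes of the $\Disc$-side rim path $P_1$ are exactly $v_{i+1},v_{i+2}$. Then $s_i$ carries an edge of $C'$, and each of $s_{i+1},s_{i+2}$ has an end in $\oo{P_1}$; only $s_{i+3}$ and $s_{i+4}$ remain as spokes that avoid $C'$ and keep both ends on the rerouted rim $(R-\oo{P_1})\cup P_2P_3P_4$. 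Two spokes are not enough for a $V_6$, so for an edge $f$ on the $\Mob$-side of $C^*$ your construction of a nonplanar subgraph in $\comp B-f$ does not go through.

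The paper does not repair $C^*$ in this configuration; instead it exhibits a \emph{different} box, namely the cycle
\[
\cc{v_{i+5},s_i,u,P_3'P_4',w,r_i,v_{i+1}}\,r_{i+1}\,s_{i+2}\,r_{i+6}\,r_{i+5},
\]
with the $s_{i+1}$-containing interior bridge as the planar witness. Only after a trichotomy (Claim~\ref{cl:twoCases}) isolating this bad case (and its mirror) does the paper carry out your intended argument and show $C^*$ itself is a box. So your plan is the right skeleton, but it needs this extra case split and alternate box.
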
\printFullDetails{

\def\Disc{\mathfrak D}
\def\disc{\mathfrak D}
\def\mob{\mathfrak M}
\def\Mob{\mathfrak M}

\begin{cproof}  \dragominor{Suppose $e_0\in R$ is in distinct $H$-green cycles.}
By Lemma \ref{lm:greenCyclesFriendly} (\ref{it:alwaysFriendly}), there is an $H$-friendly embedding $\Pi$ of $G$ in $\pp$.   By Lemma \ref{lm:greenCycles} (\ref{it:CboundsFace}), any $H$-green cycle bounds a face of $\Pi[G]$. 
 \dragominor{As $e_0$ is} in $R$ and $R$ is the boundary of both the (closed) M\"obius band $\Mob$ and the (closed) disc $\Disc$, one of these faces, call it $F_{\mob}$, is contained in $\Mob$, while the other, call it $F_{\disc}$, is contained in $\Disc$.  For \dragominor{$\eye\in\{\mob,\disc\}$}, let  \dragominor{$C_\eye$} be the green cycle bounding \dragominor{$F_\eye$} and let \dragominor{$P_1^\eye P_2^\eye P_3^\eye P_4^\eye$} be the path decomposition of \dragominor{$C_\eye$} as in Definition \ref{df:green}; in particular, \dragominor{$P_1^\eye\subseteq R$ and $P_3^\eye$ is} $H$-avoiding.  

Note $P_2^\disc P_3^\disc P_4^\disc$ is disjoint from $\Mob$ (except for its ends) and $P_2^\mob P_3^\mob P_4^\mob$ is contained in $\Mob$.
Thus, $C_\Disc\cap C_\Mob= P_1^\disc\cap P_1^\mob$.  Lemma \ref{lm:greenCycles} (\ref{it:shortJump}) implies that, for \dragominor{$\eye\in\{\mob,\disc\}$, $P_1^\eye$} has at most $4$ $H$-nodes.  We conclude that $P_1^\disc\cup P_1^\mob$ is not all of $R$, and so $C_\Disc\cap C_\Mob$  is a path.  Therefore, there is a unique cycle $C$ in $C_\Disc\cup C_\Mob$ not containing $e_0$ and, furthermore, $C$ bounds a closed disc in $\pp$ having $e_0$ in its interior.

On the other hand, Lemma \ref{lm:greenCycles} (\ref{it:p1}) shows there is an $ab$-subpath $A_1$ of $R$ that contains $P_1^\disc$.  Since $e_0\in P_1^\disc\cap P_1^\mob$, it is also the case that $P_1^\mob\subseteq A_1$.  Let $A$ be the other $ab$-subpath of $R$, so that $A$ is $(C_\disc\cup C_\mob)$-avoiding.  In particular, there is a $C$-bridge $M_C$ containing $A$. By Lemma \ref{lm:greenCycles} (\ref{it:oneCbridge}),  for \dragominor{$\eye\in\{\Mob,\Disc\}$},  $A$ is in the unique \dragominor{$C_\eye$-bridge $M_{C_\eye}$.  Since $M_{C_\eye}$ (and therefore $A$) is not contained in the face of $G$ bounded by $C_\eye$}, we conclude that $A$ is not in the disc bounded by $C$.  Therefore, $M_C$ is different from the $C$-bridge $B_C$ containing $e_0$. 

\begin{claim}\label{cl:spokeEndNotInUnion}  For each $H$-spoke $s$, some $H$-node incident with $s$ is not in $C_\mob\cup C_\disc$.  \end{claim}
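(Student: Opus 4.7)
\begin{claimproof}
(Plan.) Suppose for contradiction that some $H$-spoke $s_j$ has both of its $H$-nodes $v_j$ and $v_{j+5}$ in $C_\mob\cup C_\disc$; the goal is to produce a cycle which is a box in the sense of Lemma \ref{lm:noBox}, contradicting that lemma.

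First I would locate $v_j$ and $v_{j+5}$ on $R$. Each $H$-node of $C_\mob\cup C_\disc$ lies on $R$, and appears there either as a vertex of $P_1^\mob\cup P_1^\disc\subseteq A_1$ or as the non-$R$-end of some $P_k^\eye$ (with $k\in\{2,4\}$ and $\eye\in\{\mob,\disc\}$) that is a complete $H$-spoke. By Lemma \ref{lm:greenBasics}(\ref{it:p2p4spoke}) such a spoke is contained in $\Mob$ or in $\Disc$; its non-$R$-end is an $H$-node on $R$ and, because $A$ is $(C_\mob\cup C_\disc)$-avoiding, cannot lie in the interior of $A$. Hence both $v_j$ and $v_{j+5}$ lie in $A_1$, and since they are antipodal on $R$, $A_1$ contains the length-$5$ arc of $R$ joining them.

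Next I would bound $P_1^\mob\cup P_1^\disc$. By Lemma \ref{lm:greenCycles}(\ref{it:shortJump}) each $P_1^\eye$ has at most two interior $H$-nodes, so is contained in at most three consecutive rim branches; since $P_1^\mob$ and $P_1^\disc$ share the rim branch containing $e_0$, their union spans at most five consecutive rim branches. Three cases arise: (i) both $v_j,v_{j+5}$ lie in a single $P_1^\eye$; (ii) $v_j\in P_1^\mob\setminus P_1^\disc$ and $v_{j+5}\in P_1^\disc\setminus P_1^\mob$ (or vice versa); (iii) at least one of $v_j,v_{j+5}$ is a non-$R$-end of a complete-spoke $P_k^\eye$. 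Case (i) is immediate: that $P_1^\eye$ would contain six $H$-nodes, violating Lemma \ref{lm:greenCycles}(\ref{it:shortJump}). Case (ii) forces both $P_1^\mob$ and $P_1^\disc$ to be exceptional and to share exactly the rim branch containing $e_0$, so after relabelling $P_1^\mob=r_j\,r_{j+1}\,r_{j+2}$ and $P_1^\disc=r_{j+2}\,r_{j+3}\,r_{j+4}$. In case (iii), the only spoke whose two $H$-nodes both lie among $\{v_j,v_{j+5}\}$ is $s_j$ itself, so $s_j$ lies wholly inside some $C_\eye$.

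In each non-trivial case I would then produce a box. Lemma \ref{lm:greenCycles}(\ref{it:CboundsFace}) places $C_\mob$ and $C_\disc$ as the boundaries of faces $F_\mob\subseteq\Mob$ and $F_\disc\subseteq\Disc$, and Lemma \ref{lm:greenBasics}(\ref{it:p3}) says that each $P_3^\eye$ is non-trivial and so comes from a genuine $H$-bridge. The five-rim-branch coverage forced on $A_1$ leaves enough of $H$ on the $A$-side to make the complement of any single edge of a suitably chosen subcycle $C'$ of $C_\mob\cup C_\disc\cup H$ non-planar. I would choose $C'$ so that one of its bridges, namely the $C'$-interior bridge containing some $P_3^\eye$, is planar, verify that $C'$ has BOD via Corollary \ref{co:contractibleBOD}, and check the $\comp{B}$-prebox property edge by edge in the spirit of the proof of Lemma \ref{lm:greenCycles}(\ref{it:prebox}). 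The resulting $C'$ is a box, contradicting Lemma \ref{lm:noBox}.

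The hard part is the last step: pinning down $C'$ and verifying the prebox condition for every edge. This requires careful bookkeeping of which rim branches and spoke segments lie on each side of $C'$, but in each subcase of (ii) and (iii) the exceptional structure of $P_1^\mob\cup P_1^\disc$, together with the spoke $s_j$ itself, supplies the non-planar subgraphs of $H$ needed to close out the argument.
\end{claimproof}
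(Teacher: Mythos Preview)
Your plan is far more elaborate than necessary and is not completed; the paper dispatches this claim in a few lines with a single observation you missed.

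First, a correction: your case (iii) is vacuous. Since $P_2^\eye P_3^\eye P_4^\eye$ is $R$-avoiding, the only vertices of $C_\eye$ lying in $R$ are the two ends of $P_1^\eye$; in particular every $H$-node in $C_\eye$ already lies in $P_1^\eye$. No $P_k^\eye$ with $k\in\{2,4\}$ can be a complete spoke, because its end shared with $P_3^\eye$ is an interior vertex of $P_2^\eye P_3^\eye P_4^\eye$ and hence not in $R$. So the entire question reduces to whether $P_1^\mob\cup P_1^\disc$ can contain both ends of a spoke.

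The key observation you are missing is that $P_1^\mob$ is contained in at most \emph{two} consecutive rim branches, not three. Because $C_\mob$ bounds a face of $\Pi[G]$ (Lemma \ref{lm:greenCycles}(\ref{it:CboundsFace})), it sits in the closure of a single face $F$ of $\Pi[H]$ with $F\subseteq\Mob$; hence $P_1^\mob$ lies in one component of $\cl(F)\cap R$, and any such component is contained in two consecutive rim branches. Combined with your (correct) bound of three rim branches for $P_1^\disc$ and the fact that the two share the branch containing $e_0$, the union $P_1^\mob\cup P_1^\disc$ lies in at most four consecutive rim branches and therefore contains at most five consecutive $H$-nodes. Since the ends of any spoke are five indices apart on a ten-cycle, they cannot both appear among five consecutive nodes. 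That is the whole proof.

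Your five-rim-branch bound is too weak by exactly one, which is why you are forced into an unfinished box-construction programme for case (ii). That programme might be salvageable, but it is neither needed nor carried out here, and the ``hard part'' you flag is precisely the place where the argument would have to be made rigorous.
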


\begin{proof}  By Lemma \ref{lm:greenCycles} (\ref{it:shortJump}), there exists an $i$ so that $P_1^\disc\subseteq r_i\,r_{i+1}\,r_{i+2}$.  In particular, $e_0$ is in $r_i\cup r_{i+1}\cup r_{i+2}$.  Thus, $P_1^\mob$ has an edge in at least one of $r_i$, $r_{i+1}$, and $r_{i+2}$.

Lemma \ref{lm:greenCycles} (\ref{it:CboundsFace}) implies that $C_\mob$ bounds a face of $G$.  Therefore, $C_\mob$ is contained in the closure $\cl(F)$ of a face $F$ of $\Pi[H]$ and $F\subseteq\Mob$.  Thus, $P_1^\mob$ is contained in one of the two components of $\cl(F)\cap R$.  Since such a component is contained in consecutive $H$-rim branches, if $P_1^\mob$ contains an edge in $r_j$, then $P_1^\mob$ is contained in either $r_{j-1}\,r_j$ or $r_j\,r_{j+1}$.    \minor{From the preceding paragraph, $P_1^\mob$ is contained in one of $r_{i-1}r_i$, $r_ir_{i+1}$, $r_{i+1}r_{i+2}$, and $r_{i+2}r_{i+3}$.}

We conclude that $P_1^\disc\cup P_1^\mob$ is contained in either $r_{i-1}\,r_i\,r_{i+1}\,r_{i+2}$ or $r_i\,r_{i+1}\,r_{i+2}$ $r_{i+3}$ showing that no $H$-spoke has both ends in $P_1^\disc\cup P_1^\mob$.
\end{proof}

\begin{claim}\label{cl:HinCMB} (1) $H\subseteq C\cup M_C\cup B_C$. 

(2) If $s$ is an $H$-spoke contained in $\Mob$ disjoint from $C_{\mob}$, then $(C\cup M_C)-\oo{s}$ is not planar.\end{claim}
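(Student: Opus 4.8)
The plan is to analyze the closed disc bounded by $C$ and to read off both parts of the claim from the embedding $\Pi$.

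\textbf{Topological set-up.} Write $P^{*}:=P_1^{\mob}\cap P_1^{\disc}$; by the discussion preceding the claim this is a path containing $e_0$, with $C_{\mob}\cup C_{\disc}=C\cup P^{*}$ and $C\cap P^{*}$ equal to the two ends of $P^{*}$. By Lemma~\ref{lm:greenCycles}(\ref{it:CboundsFace}) the green cycles $C_{\mob}$ and $C_{\disc}$ bound the faces $F_{\mob}$ and $F_{\disc}$ of $\Pi[G]$, and by Lemma~\ref{lm:greenCycles}(\ref{it:BODplanar}) these cycles are contractible, so $F_{\mob}$ and $F_{\disc}$ are open discs. Since $\cl(F_{\mob})$ and $\cl(F_{\disc})$ meet exactly in the arc $P^{*}$ (an arc of each of their boundaries), $\Delta:=\cl(F_{\mob})\cup\cl(F_{\disc})$ is a closed disc with $\partial\Delta=C$, and $G\cap\Delta=C_{\mob}\cup C_{\disc}$. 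Consequently $\operatorname{int}(\Delta)=F_{\mob}\cup F_{\disc}\cup\oo{P^{*}}$, so the only part of $G$ meeting $\operatorname{int}(\Delta)$ is the open arc $\oo{P^{*}}$.

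\textbf{Locating the bridges.} From this I would deduce two facts. First, the $C$-bridge $B_C$ through $e_0$ is drawn in $\Delta$ (its nucleus is connected, disjoint from $C$ and meets $\operatorname{int}(\Delta)$), and since it avoids the faces $F_{\mob},F_{\disc}$ it is drawn on the arc $P^{*}$; hence $B_C$ is exactly the path $P^{*}$, with $\att(B_C)$ the two ends of $P^{*}$ and $\Nuc(B_C)=\oo{P^{*}}$. If an $H$-node $v$ lay in $\oo{P^{*}}$, the incident $H$-spoke edge would belong to a $C$-bridge other than $B_C$ while meeting $v\in\Nuc(B_C)$, which is impossible; thus $\oo{P^{*}}$ contains no $H$-node, so $P^{*}$ lies in a single closed $H$-rim branch $r_j$ and $\oo{P^{*}}\subseteq\oo{r_j}$. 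Second, every $C$-bridge other than $B_C$ is drawn in the closed M\"obius band $\pp\setminus\operatorname{int}(\Delta)$ (its nucleus is disjoint from $\oo{P^{*}}$ and from the faces $F_{\mob},F_{\disc}$, hence from $\operatorname{int}(\Delta)$).

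\textbf{Proof of (1).} Since $G\cap\operatorname{int}(\Delta)=\oo{P^{*}}$, the subgraph $H-\oo{P^{*}}$ is drawn off $\operatorname{int}(\Delta)$, so $J:=(H-\oo{P^{*}})-V(C)$ is a subgraph of $G-V(C)$. I claim $J$ is connected: deleting from $H$ the vertices of $\oo{P^{*}}$ and then of $C\cap H$ removes only a sub-arc of $R$ together with the at most four spoke stubs $P_2^{\mob},P_4^{\mob},P_2^{\disc},P_4^{\disc}$ (Lemma~\ref{lm:greenBasics}(\ref{it:p2p4spoke})), and since $P_1^{\mob}\cup P_1^{\disc}$ meets at most five $H$-nodes and no $H$-spoke joins two $H$-nodes within five consecutive ones (as in the proof of Claim~\ref{cl:spokeEndNotInUnion}), the remainder is connected. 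A connected subgraph of $G-V(C)$ lies in the nucleus of a single $C$-bridge; as $J$ contains $\oo A$ and $\oo A\subseteq\Nuc(M_C)$ (because $A\subseteq M_C$), that bridge is $M_C$ (the degenerate case where $A$ is a single edge is handled directly). Hence $H-\oo{P^{*}}\subseteq C\cup M_C$, and since the remaining part $\oo{P^{*}}$ of $H$ lies in $B_C$, we obtain $H\subseteq C\cup M_C\cup B_C$.

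\textbf{Proof of (2).} For any $H$-spoke $s$, part (1) and $\oo{P^{*}}\subseteq\oo{r_j}$ give
\[(C\cup M_C)-\oo{s}\ \supseteq\ (H-\oo{P^{*}})-\oo{s}\ \supseteq\ H-\oo{r_j}-\oo{s}\,.\]
The last graph is a subdivision of $V_{10}$ with one rim edge and one spoke deleted, and such a graph is non-planar: deleting the spoke leaves the rim plus four spokes, and further deleting a rim edge turns the rim into a $10$-vertex path on which the four remaining spokes become four chords, each joining two vertices five apart along the path, so the four chords pairwise interleave; their conflict graph is $K_4$, which is not bipartite, so no drawing of the path with its chords is planar. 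Hence $(C\cup M_C)-\oo{s}$ is non-planar, which proves (2) (the hypotheses $s\subseteq\Mob$ and $s$ disjoint from $C_{\mob}$ are only needed in the later applications).

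The main obstacle is the first two steps: proving that $\Delta$ is a closed disc whose interior meets $G$ only in $\oo{P^{*}}$, and pinning $B_C$ down as the path $P^{*}$ contained in a single rim branch. Once these are in hand, (1) is a brief connectivity argument and (2) reduces to the elementary planarity fact about $V_{10}$.
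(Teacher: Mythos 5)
Your proof of part (1) is essentially sound and takes a genuinely different route from the paper: you pin down $B_C$ as the arc $P^{*}$ by exhibiting the closed disc $\Delta=\cl(F_{\mob})\cup\cl(F_{\disc})$ bounded by $C$, and then run a single connectivity argument on $(H-\oo{P^{*}})-V(C)$, whereas the paper argues spoke by spoke directly from Claim~\ref{cl:spokeEndNotInUnion}. (You should add a word about edges of $H$ with both ends in $V(C)$, which must be shown to lie on $C$ rather than form singleton bridges, but this is routine given that $C\cap R$ and the spoke stubs are subpaths.)

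Part (2), however, is wrong. The containment $(C\cup M_C)-\oo{s}\supseteq H-\oo{r_j}-\oo{s}$ is correct but useless, because $H-\oo{r_j}-\oo{s}$ is in fact \emph{planar}: $V_{10}$ minus one rim edge minus one spoke is, after suppressing degree-$2$ vertices, the triangular prism. For example, deleting $v_0v_1$ and $s_2=v_2v_7$ and suppressing $v_0,v_1,v_2,v_7$ leaves the two triangles $v_3v_6v_8$ and $v_4v_5v_9$ joined by the matching $\{v_3v_4,\,v_5v_6,\,v_8v_9\}$. Your planarity criterion is the source of the error: a non-bipartite conflict graph of chords certifies non-planarity for chords of a \emph{cycle}, not of a path. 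A path does not separate the sphere, its single face has a boundary walk traversing each internal vertex twice, and interleaving chords need not cross; already $P_4$ together with its two interleaving chords is $K_4$ minus an edge, which is planar. The idea you are missing is that $C\cup M_C$ contains strictly more of $H\cup C_{\mob}$ than $H-\oo{P^{*}}$ does: it contains the detour $P_2^{\mob}P_3^{\mob}P_4^{\mob}\subseteq C$, which, glued to $R-\oo{P_1^{\mob}}$, restores a cycle that serves as the rim of a $V_6$; the three spokes of $\Mob$ other than $s$ (truncated past $P_2^{\mob}\cup P_4^{\mob}$) supply the spokes. This is the paper's argument, and it is precisely where the hypotheses $s\subseteq\Mob$ and $s\cap C_{\mob}=\varnothing$ are used — they are not, as your closing parenthesis asserts, needed only in later applications; without them the statement of (2) is false.
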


\begin{proof} For (1), we note that it is clear that $R\subseteq C\cup M_C\cup B_C$.  Now let $s$ be an $H$-spoke.  Suppose first that $s\subseteq \Mob$.  By Claim \ref{cl:spokeEndNotInUnion}, there is an $H$-node $v$ incident with $s$ and not in $C_\mob\cup C_\disc$.  If $s\cap C_\mob$ is at most an end of $s$, then it is evident that $s\subseteq M_C$.  If $s\cap C_\mob$ is more than just an end of $s$, then $s$ consists of a $C_\mob$-avoiding subpath $s'$ joining $v$ to a vertex in $C_\mob$, together with the path $C_\mob\cap s$ (which is by Lemma \ref{lm:greenBasics} (\ref{it:p2p4spoke})) either $P_2^\mob$ or $P_4^\mob$).   But then it is again evident that $s\subseteq C\cup M_C$.  

Otherwise, $s$ is exposed, in which case we have the same argument, but replacing $C_\mob$ with $C_\disc$, completing the proof of (1).

For (2), a $V_6$ is found whose rim is $(R-\oo{P_1^\mob})\cup P_2^\mob P_3^\mob P_4^\mob$.  The spokes are \dragominor{contained in} the three other spokes in $\Mob$, \dragominor{namely they are the} parts that are not in $P_2^\mob\cup P_4^\mob$.   \end{proof}

\begin{claim}\label{cl:ChasBOD} $C$ has BOD.    \end{claim}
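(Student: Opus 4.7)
\begin{iproof}
The plan is to apply Corollary \ref{co:contractibleBOD}. First, I will verify that $C$ bounds the closed disc $\Delta=\overline{F_\mob}\cup\overline{F_\disc}$, obtained by gluing the closed discs $\overline{F_\mob}$ and $\overline{F_\disc}$ along the common subpath $P=P_1^\disc\cap P_1^\mob$ (whose interior contains $e_0$). Hence $C$ is contractible in $\pp$, and it remains to show that at most one $C$-bridge $B$ is such that $C\cup B$ contains a non-contractible cycle.

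I will then argue that the bridge $B_C$ containing $e_0$ lies entirely within $\Delta$: since $e_0\in\mathrm{int}(\Delta)$ and the nucleus of any $C$-bridge occupies a single face of $C$ in the embedding, $\Nuc(B_C)\subseteq\mathrm{int}(\Delta)$, so $B_C\subseteq\Delta$; therefore $C\cup B_C$ lies in a disc and contains no non-contractible cycle.

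Next, for any $C$-bridge $B'\notin\{M_C,B_C\}$, Claim \ref{cl:HinCMB}(1) gives $\Nuc(B')\cap V(H)=\emptyset$, so $\Nuc(B')$ is embedded in a single $H$-face $F'$, which is an open disc by the $2$-cell property of the representativity-$2$ embedding of $V_{10}$. If $F'$ differs from the $H$-faces $\hat F_\mob\supseteq F_\mob$ and $\hat F_\disc\supseteq F_\disc$, then $F'$ lies entirely outside $\Delta$; otherwise $F'$ is split by $P_3^\mob$ or $P_3^\disc$ (the $H$-avoiding arc of $C_\mob$ or $C_\disc$ inside that $H$-face) into two open subdiscs, exactly one of which lies in $\mathrm{int}(\Delta)$. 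In every case, $B'$ will be contained in the closure of a disc whose boundary arc on $C$ confines the attachments of $B'$ to a single subpath $\alpha$ of $C$. Any cycle in $C\cup B'$ formed by a $B'$-path together with the sub-arc of $\alpha$ joining its endpoints lies in this closed disc and is contractible; the cycle using the other sub-arc of $C$ differs from it modulo~$2$ by $C$, which is contractible, so this cycle is also contractible in $\pp$.

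Consequently, $M_C$ is the only $C$-bridge that could give rise to a non-contractible cycle in $C\cup M_C$, and Corollary \ref{co:contractibleBOD} will yield that $C$ has BOD. The main obstacle I expect is the careful case analysis confirming that, in every scenario for the position of $F'$, the attachments of $B'$ genuinely lie on a single arc $\alpha$ of $C$ bounding the appropriate subdisc, and that the splitting of $\hat F_\mob$ and $\hat F_\disc$ by $P_3^\mob$ and $P_3^\disc$ really produces two open subdiscs with the required correspondence to $\Delta$ and its complement.
\end{iproof}
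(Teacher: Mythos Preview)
Your approach via Corollary~\ref{co:contractibleBOD} is genuinely different from the paper's, and it appears sound, though the case analysis you flag is real work.

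The paper proceeds in two cases. First, it observes that at least two $H$-spokes in $\Mob$ are disjoint from $C_\mob$; if one of them is also disjoint from $C_\disc$, then $R\cup s$ contains a non-contractible cycle inside $\Nuc(M_C)$, and Lemma~\ref{lm:CdisjointNCcycle} finishes immediately. In the remaining case, every such spoke $s$ meets $C_\disc$ at some vertex $v\in P_1^\disc$; the paper then deletes the edge $e$ of $s$ at $v$, argues that $OD_{G-e}(C)\cong OD_G(C)$ (since $C_\disc$ bounds a face and no bridge overlaps $M_C$ solely because of the attachment at $v$), and uses the prebox properties of $C_\mob$ and $C_\disc$ (Lemma~\ref{lm:greenCycles}~(\ref{it:Cprebox1}),(\ref{it:Cprebox2})) together with Claim~\ref{cl:HinCMB}(2) to show $C$ is clean in a $1$-drawing of $G-e$, whence Lemma~\ref{lm:cleanBOD} gives BOD.

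Your route avoids the $1$-drawing argument and the $2$-criticality hypothesis entirely, working purely in the embedding $\Pi$. The heart of your case analysis is the assertion that, for each $H$-face $F'$, the intersection $C\cap\overline{F'}$ (or the relevant sub-region when $F'\in\{\hat F_\mob,\hat F_\disc\}$) is a single arc of $C$. This holds because $P_1^\mob$ lies in a single rim branch of $\hat F_\mob$, $P_1^\disc$ spans at most three consecutive rim branches, and the non-rim portions $P_2^\eye P_3^\eye P_4^\eye$ stay in $\overline{\hat F_\eye}$; tracing the pieces shows that whenever $C$ meets an adjacent quad boundary or the complementary part of $\hat F_\disc$, the pieces are joined at a common $H$-node. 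The argument is more elementary in that it uses no criticality, but the paper's route leverages the prebox machinery already built and is shorter to write out.
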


\begin{proof}  Let $S$ be the set of $H$-spokes contained in $\Mob$ and disjoint from $C_\mob$.  As $C_\mob$ meets at most two $H$-spokes in $\Mob$, $|S|\ge 2$.     If some $s\in S$ is also disjoint from $C_\disc$, then $R\cup s$ contains a non-contractible cycle disjoint from $C$, in which case Lemma \ref{lm:CdisjointNCcycle} shows $C$ has BOD, as claimed.

So we may assume that no element of $S$ is also disjoint from $C_\disc$.  
%Suppose first that, for each $s\in S$, $s\cap C_\disc$ is an end of $P_1^\disc$.  Thus, $2\ge |S|2$,  so $|S|=2$, and there is an exposed spoke.  As $e_0$ is common to $P_1^\disc$ and $P_1^\mob$ and the ends of $P_1^\disc$ are not in $P_1^\mob$, it follows that $P_1^\mob$ is a proper subpath of $P_1^\disc$.   Thus, every $H$-spoke contained in $\Mob$ has its $Q_1$-end in $P_1^\disc$.  It follows that $P_1^\disc$ is exceptional, say $P_1^\disc=\cc{v_i,v_{i+1},v_{i+2},v_{i+3}}$.    We have that $C_\mob$ is contained in the $H$-face bounded by $Q_{i+1}$.  
%
%If $B'$ is any $C$-bridge other than $M$ or $B$, then Claim \ref{cl:HinCMB} implies $B'$ is contained in an $H$-face.  {\bf The intersection of $C$ with the boundary of any $H$-face is either empty or a path} and, therefore, $B\cup C$ is contained in a closed disc.  Thus, for every $C$-bridge $B'$ other than $M$, $B\cup C$ has no non-contractible cycle and, therefore, Corollary \ref{co:contractibleBOD} implies $C$ has BOD.
%
 Let $s$ be any element of $S$; then $s\cap C_\disc$ is a vertex $v$ of $P_1^\disc$.   Let $e$ be the edge of $s$ incident with $v$.  
In order to show that $C$ has BOD, we will show that\dragominor{: (i) the overlap diagrams $OD_{G-e}(C)$ and $OD_G(C)$ are the same; and (ii)} $OD_{G-e}(C)$ is bipartite.   For \dragominor{(i)}, note that $C_\disc$ bounds a face in $\pp$ and that $\oo{s}$ is in the boundary of two $(H\cup \gamma)$-faces.  Thus, there can be no $C$-bridge that overlaps $M_C$ in $G$ because of its attachment at $v$.   That is, $OD_{G-e}(C)$ and $OD_G(C)$ are the same.

\dragominor{For (ii)},  Lemma \ref{lm:greenCycles} (\ref{it:Cprebox1}) applied to $C_\mob$ and (\ref{it:Cprebox2}) applied to $C_\disc$, combined with Lemma \ref{lm:preboxClean}, shows $C_\disc$ and $C_\mob$ are both clean in $D_e$.  Therefore, $C$ is clean in $D_e$.   By Claim \ref{cl:HinCMB} (2), $(C\cup M_C)-e$ is not planar, so Lemma \ref{lm:cleanBOD} shows $C$ has BOD in $G-e$.  Therefore,  $C$ has BOD in $G$.  \end{proof}

\begin{claim}\label{cl:CisHprebox}  $C$ is a $C\cup H$-prebox. \end{claim}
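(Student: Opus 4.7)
The plan is to reduce the claim directly to Lemma~\ref{lm:greenCycles} (\ref{it:prebox}), which already tells us that each individual green cycle $C_\mob$ and $C_\disc$ is a $(C_\mob\cup H)$-prebox and a $(C_\disc\cup H)$-prebox, respectively. All the work happens at the level of edge sets, so once the bookkeeping is set up correctly, the conclusion drops out.

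First, I would pin down the edge structure of $C$. Because $C_\disc$ and $C_\mob$ lie in the closures of the disc face $F_\disc\subseteq\disc$ and the Möbius face $F_\mob\subseteq\mob$, respectively, the ``interior" portions $P_2^\disc P_3^\disc P_4^\disc$ and $P_2^\mob P_3^\mob P_4^\mob$ are disjoint (one sits in $\disc$, the other in $\mob$, meeting $R$ only at their endpoints). Hence
\[
C_\disc\cap C_\mob \;=\; P_1^\disc\cap P_1^\mob,
\]
which is precisely the shared subpath of $R$ containing $e_0$ that the excerpt identifies. Since $C$ is the unique cycle in $C_\disc\cup C_\mob$ avoiding $e_0$, $C$ is the symmetric difference $C_\disc\triangle C_\mob$ (as edge sets), and therefore every edge of $C$ lies in exactly one of $E(C_\disc)$ and $E(C_\mob)$.

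Second, I would observe the inclusion $C_\mob\subseteq C\cup H$ (and symmetrically $C_\disc\subseteq C\cup H$): writing $C_\mob=(C_\mob\cap C)\cup(C_\mob\cap C_\disc)$, the first piece sits inside $C$, and the second piece is the shared rim subpath, which is contained in $R\subseteq H$.

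Third, fix an arbitrary edge $e\in E(C)$. By the dichotomy above, either $e\in E(C_\mob)$ or $e\in E(C_\disc)$; by symmetry assume the former. Lemma~\ref{lm:greenCycles} (\ref{it:prebox}) applied to the $H$-green cycle $C_\mob$ says that $(C_\mob\cup H)-e$ is non-planar. The inclusion $C_\mob\cup H\subseteq C\cup H$ from the previous step then gives
\[
(C_\mob\cup H)-e \;\subseteq\; (C\cup H)-e,
\]
and since a supergraph of a non-planar graph is non-planar, $(C\cup H)-e$ is non-planar. As $e$ was arbitrary, $C$ is a $(C\cup H)$-prebox.

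There is essentially no obstacle: the only point demanding care is the elementary observation that $C_\mob$ and $C_\disc$ intersect only in the shared rim subpath, so that every edge of $C$ can be attributed to (at least) one of the two green cycles, allowing us to quote the already-proved prebox property. This claim will then combine with Claim~\ref{cl:ChasBOD} and the fact that the $C$-bridge $B_C\ne M_C$ is planar to exhibit $C$ as a box, contradicting Lemma~\ref{lm:noBox} and thus finishing the theorem.
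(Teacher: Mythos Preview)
Your proof is correct and follows essentially the same approach as the paper: pick an edge $e\in C$, identify which green cycle $C_\eye$ ($\eye\in\{\mob,\disc\}$) contains it, apply Lemma~\ref{lm:greenCycles}~(\ref{it:prebox}) to get $(C_\eye\cup H)-e$ non-planar, and use the inclusion $C_\eye\cup H\subseteq C\cup H$. The paper states this more tersely, simply asserting $C_\disc\cup C_\mob\subseteq C\cup H$ and that each $e\in C$ lies in some $C_\eye$, whereas you spell out the symmetric-difference structure and why the inclusion holds; the substance is identical. (Your closing remark about immediately exhibiting $C$ as a box oversimplifies what follows---the paper needs several more claims and sometimes finds a \emph{different} box---but that is about downstream use, not about the proof of this claim.)
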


\begin{proof}  Note that $C_\disc\cup C_\mob\subseteq C\cup H$.  If $e\in C$, then let $i\in\{\Mob,\Disc\}$ be such that $e\in C_i$.  Lemma \ref{lm:greenCycles} (\ref{it:prebox})  says that $C_i$ is a $(C_i\cup H)$-prebox and, therefore, $(C_i\cup H)-e$ is not planar.  Since $(C_i\cup H)-e\subseteq (C\cup H)-e$, we conclude that $C$ is a $(C\cup H)$-prebox.  \end{proof}

\begin{claim}\label{cl:GisCMB}  $G=C\cup M_C\cup B_C$.  \end{claim}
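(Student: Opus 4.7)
The plan is to prove Claim \ref{cl:GisCMB} by contradiction. I assume there exists a $C$-bridge $B$ distinct from both $M_C$ and $B_C$, and derive that $C$ is a box, contradicting Lemma \ref{lm:noBox}.

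First, I verify that $C$ is a $\comp{B}$-prebox. Because $B$, $M_C$, and $B_C$ are distinct $C$-bridges and hence pairwise edge-disjoint, no edge of $B$ belongs to $M_C \cup B_C$. Combined with Claim \ref{cl:HinCMB}(1), which asserts $H \subseteq C \cup M_C \cup B_C$, this forces every edge of $H$ to lie outside $B$, so $C \cup H \subseteq \comp{B}$. Claim \ref{cl:CisHprebox} says $C$ is a $(C \cup H)$-prebox, so for each $e \in C$, the graph $\comp{B} - e$ contains the non-planar subgraph $(C \cup H) - e$ and is itself non-planar; thus $C$ is a $\comp{B}$-prebox.

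Second, I show $B$ is planar by appealing to the embedding $\Pi$. Since $C$ bounds a closed disc $\Delta_0$ in $\pp$ containing $e_0$ in its interior, the complement $\pp \setminus \Delta_0$ is a closed M\"obius band $\mathfrak{M}_C$ also bounded by $C$, and, as $\Nuc(B)$ is connected and disjoint from $C$, it lies in exactly one of the two open regions. If $\Nuc(B)$ lies inside $\Delta_0$, then $C \cup B$ embeds in a disc and $B$ is planar. Otherwise $\Nuc(B)$ lies in $\mathfrak{M}_C$, and I will invoke Corollary \ref{co:contractibleBOD}: $B_C \subseteq \Delta_0$ contributes no non-contractible cycle with $C$; $M_C$ is the unique $C$-bridge supplying one, since it contains the $ab$-path $A$ whose interior crosses the M\"obius band; and any additional $C$-bridge on the M\"obius side would have to supply an independent non-contractible loop without crossing $M_C$ in $\Pi$, which is incompatible with the embedding. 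Hence $C \cup B$ has no non-contractible cycle, so by Corollary \ref{co:contractibleBOD}, $C \cup B$ is planar and so is $B$.

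Combining that $C$ has BOD (Claim \ref{cl:ChasBOD}), $C$ is a $\comp{B}$-prebox, and $B$ is planar, the cycle $C$ is by definition a box witnessed by $B$, contradicting Lemma \ref{lm:noBox}. The main obstacle is the M\"obius sub-case of the planarity argument: making precise why the presence of the path $A$ inside $M_C$ on the M\"obius side prevents $B$ from closing an independent non-contractible loop with $C$. This requires a careful geometric analysis of how the attachments of $B$ on $C$ interlace with those of $M_C$ and with the two green subpaths $C_\disc$ and $C_\mob$ that together form $C$.
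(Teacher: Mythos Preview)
Your overall strategy—show that $C$ is a box and invoke Lemma~\ref{lm:noBox}—is equivalent to the paper's, which invokes Corollary~\ref{co:TutteTwo} directly (the engine behind the no-box lemma). Your verification that $C$ is a $\comp{B}$-prebox via Claim~\ref{cl:HinCMB}(1) and Claim~\ref{cl:CisHprebox} is correct.

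The gap is exactly where you flag it: the planarity of $B$ in the M\"obius sub-case. The sentence ``any additional $C$-bridge on the M\"obius side would have to supply an independent non-contractible loop without crossing $M_C$ in $\Pi$, which is incompatible with the embedding'' is not a proof. Nothing in what you have written rules out a second $C$-bridge on the M\"obius side whose union with $C$ contains a non-contractible cycle; two disjoint $C$-bridges can each close a non-contractible cycle with $C$ without any contradiction arising from the embedding alone. Your appeal to Corollary~\ref{co:contractibleBOD} is also circular here: that corollary needs as \emph{input} that $C\cup B$ has no non-contractible cycle, which is precisely what you are trying to establish.

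The clean fix, which is what the paper does, avoids the disc/M\"obius case split entirely. Since $H\subseteq C\cup M_C\cup B_C$ by Claim~\ref{cl:HinCMB}(1) and $\{a,b\}\subseteq R\subseteq H$, the nucleus of any third $C$-bridge $B'$ is disjoint from both $H$ and $\gamma$. Hence $\Nuc(B')$ lies in a single $(H\cup\gamma)$-face $F$, and $C\cup B'\subseteq \cl(F)\cup\Delta_0$. Since each $(H\cup\gamma)$-face is a disc, this union is a closed disc, so $C\cup B'$ is planar. With that in hand, your box argument goes through.
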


\begin{proof} By way of contradiction, suppose there is another $C$-bridge $B'$.  Let $F$ be the $(H\cup \gamma)$-face containing $B'$.  Then $C\cup B'$ is contained in the closed disc that is the union of the closure of $F$ and the disc bounded by $C$, showing $B'$ is planar.  By Claim \ref{cl:CisHprebox} and the fact that $C\cup H\subseteq \comp{B'}$, Lemma \ref{lm:preboxClean} says that $C$ is clean in a 1-drawing of $\comp{B'}$, of which there is at least one, since $G$ is 2-crossing-critical.  This yields a 1-drawing of $C\cup M_C$ with $C$ clean. By Claim \ref{cl:ChasBOD}, $C$ has BOD, $B_C$ is planar because it is contained in the closed disc bounded by $C$, and above we showed that every other $C$-bridge is planar;  Corollary \ref{co:TutteTwo} implies $cr(G)\le 1$, a contradiction.  \end{proof}

We are now on the look-out for a box in $G$; it is not true that $C$ is necessarily one. Our next claim gives a sufficient condition under which we can find some box and the following two claims show that, in all other cases, $C$ is a box.

\begin{claim}\label{cl:otherBox}  Suppose \dragominor{all of the following}:
\begin{enumerate}
\item\label{it:p3mob} there is an $i$ so that $P_3^\mob$ is in a $Q_i$-local $H$-bridge;
\item\label{it:p2mob}  $P_2^\mob$ contains $v_i$ and is a non-trivial subpath of $s_i$; and 
\item\label{it:v2p1}  $v_{i+2}$ is in the interior of $P_1^\disc$.  
\end{enumerate}
Then $G$ has a box.  \end{claim}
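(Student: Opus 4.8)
The plan is to use hypotheses (\ref{it:p3mob})--(\ref{it:v2p1}) to pin down the local picture near $Q_i$ and then exhibit a box, contradicting Lemma~\ref{lm:noBox}. First I would extract the configuration from the $H$-friendly embedding $\Pi$. By Lemma~\ref{lm:greenCycles}(\ref{it:CboundsFace}), $C_\mob$ bounds a face of $\Pi[G]$, so it lies in the closure of a single $H$-face; since $P_3^\mob$ is contained in the $Q_i$-local $H$-bridge (call it $B^\ast$, embedded in a single $H$-face), that face is the one bounded by $Q_i$. Hence $P_1^\mob\subseteq r_i\cup r_{i+5}$, and combining hypothesis~(\ref{it:p2mob}) with Lemma~\ref{lm:greenBasics}(\ref{it:p2p4spoke}) one sees $v_i$ is the end of $P_2^\mob$ shared with $P_1^\mob$ and $P_1^\mob\subseteq r_i$; then, using $e_0\in P_1^\mob\cap P_1^\disc$, hypothesis~(\ref{it:v2p1}), and Lemma~\ref{lm:greenCycles}(\ref{it:shortJump}), we get $e_0\in r_i$ and that $v_{i+1},v_{i+2}$ are exactly the interior $H$-nodes of $P_1^\disc$, so $P_1^\disc$ runs from a vertex of $r_i$ through $v_{i+1},v_{i+2}$ to a vertex of $r_{i+2}$. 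In particular the attachments of $B^\ast$ meeting $C_\mob$ lie only on $s_i\cup s_{i+1}$, and I would read off from the two representativity-$2$ pictures of $V_{10}$ on which sides of $R$ the subpaths $P_2^\mob P_3^\mob P_4^\mob$ and $P_2^\disc P_3^\disc P_4^\disc$ lie (note that $P_2^\disc,P_4^\disc$ are trivial unless $s_0$ is exposed and contains one of them).

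Next I would write down the candidate box $C^\ast$ as a cycle through $v_i,v_{i+1},v_{i+2}$: take the hyperquad $\bQ_{i+1}$ (whose cycle $s_i\,r_i\,r_{i+1}\,s_{i+2}\,r_{i+6}\,r_{i+5}$ already passes through $v_i,v_{i+1},v_{i+2}$), or, if needed, a rerouting of $\bQ_{i+1}$ in which the $s_i$-stub is replaced by a path through $B^\ast$, and argue it is a box by the usual three steps. \emph{BOD:} either exhibit a non-contractible curve of $\pp$ disjoint from $C^\ast$, built from an $H$-spoke of $\Mob$ avoiding $C^\ast$ together with the rim-arc rerouted through the $\Disc$-path $P_2^\disc P_3^\disc P_4^\disc$, exactly as in the proof of Lemma~\ref{lm:greenCycles}(\ref{it:shortJump}), and apply Lemma~\ref{lm:CdisjointNCcycle}; or, when no such curve is available, apply Lemma~\ref{lm:cleanBOD} to a $1$-drawing of $G-e$ for a well-chosen edge $e$ of $s_{i+1}$, keeping $C^\ast$ clean by Lemma~\ref{lm:preboxClean}. \emph{Prebox:} show $C^\ast$ is a $(C^\ast\cup H\cup B^\ast)$-prebox --- for an edge $e$ of $C^\ast$ far from $Q_i$ (on $r_{i+5}\,r_{i+6}$ or on a spoke) a subdivision of $V_6$ survives in $H$ with two interlacing spokes deleted (a $10$-cycle with three pairwise-interlacing chords contains $K_{3,3}$); for an edge $e$ on $r_i\,r_{i+1}$ or on the $s_i$-part of $C^\ast$, reroute through $B^\ast$ --- this is exactly where hypothesis~(\ref{it:p3mob}) is used --- and through $R-\oo{P_1^\mob}$ to rebuild a $V_6$, mirroring Lemma~\ref{lm:greenCycles}(\ref{it:Cprebox1})--(\ref{it:Cprebox2}). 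Since $C^\ast\cup H\cup B^\ast\subseteq\comp{B'}$ for the planar bridge $B'$ below, this gives that $C^\ast$ is a $\comp{B'}$-prebox. \emph{Planar bridge:} the $C^\ast$-bridge $B'$ lying in the closed disc cut off by $C^\ast$ on the side toward $s_{i+1}$ (which exists because $C^\ast$ is contractible, from the BOD step) is planar, being embedded in a disc of $\pp$; alternatively invoke Corollary~\ref{co:contractibleBOD} or Lemma~\ref{lm:nucMeetGamma} to conclude all but one $C^\ast$-bridge is planar.

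Then $C^\ast$ is a box, contradicting Lemma~\ref{lm:noBox}, which proves the claim. I expect the main obstacle to be the prebox verification: one must go edge by edge along $C^\ast$ --- partly on $r_i\,r_{i+1}$, partly on $s_i$, partly through $B^\ast$ --- and in each case exhibit a subdivision of $V_6$, tracking which of the five $H$-spokes remain usable and exactly how $B^\ast$ (equivalently $P_3^\mob$, which attaches only on $s_i\cup s_{i+1}$) reconnects a severed rim branch. The secondary subtlety is choosing $C^\ast$ correctly and establishing BOD: a naive choice of $\bQ_{i+1}$ need not admit a disjoint non-contractible curve, so one has to either reroute $\bQ_{i+1}$ through $B^\ast$ or push the BOD argument through cleanness of a carefully selected $1$-drawing, and one must separately dispose of the exposed-spoke case in which $P_2^\disc$ or $P_4^\disc$ is nontrivial.
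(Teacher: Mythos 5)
Your plan follows the paper's proof in its overall architecture. The box is exactly your ``rerouted $\bQ_{i+1}$'', namely the cycle $C'=\cc{v_{i+5},s_i,u,P_3^\mob P_4^\mob,w,r_i,v_{i+1}}\,r_{i+1}\,s_{i+2}\,r_{i+6}\,r_{i+5}$, where $u\in\oo{s_i}$ and $w\in r_i$ are the ends of $P_3^\mob P_4^\mob$; BOD is obtained by your first alternative, exhibiting a non-contractible cycle in $\Nuc(M_{C'})$ built from $P_2^\disc P_3^\disc P_4^\disc$, $s_{i+4}$ and two arcs of $R$, then invoking Lemma \ref{lm:CdisjointNCcycle} (the fallback to Lemma \ref{lm:cleanBOD} is not needed); and the planar bridge is the $C'$-interior bridge containing $s_{i+1}$.

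The prebox verification, which you correctly single out as the crux, is however carried out with the wrong detours, and both of your cases fail as written. First, for an edge $e$ of $C'$ in $r_{i+5}\,r_{i+6}$ you claim a subdivision of $V_6$ ``survives in $H$''; it does not, because $V_{10}$ minus a rim edge is planar (delete the crossed edge from the $1$-drawing in which that rim branch carries the crossing). The $V_6$ must be rebuilt from the extra paths: the rim uses $P_2^\disc P_3^\disc P_4^\disc$ together with $C'-\oo{r_{i+5}\,r_{i+6}}$ and two arcs of $R$, and, crucially, $P_2^\mob$ must serve as one of the three spokes. Second, for an edge $e$ of $C'$ in $r_i\,r_{i+1}$ you reroute through $R-\oo{P_1^\mob}$; but $P_1^\mob=\cc{v_i,r_i,w}$ (it contains $e_0$ and meets $P_2^\mob\subseteq s_i$ at $v_i$), whereas $C'\cap(r_i\,r_{i+1})=\cc{w,r_i,v_{i+1}}\cup r_{i+1}$ lies entirely on the other side of $w$, so that detour never avoids $e$. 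The detour must instead go around $P_1^\disc$: hypothesis (\ref{it:v2p1}) together with Lemma \ref{lm:greenCycles} (\ref{it:shortJump}) forces $v_{i+1}$ and $v_{i+2}$ to be exactly the interior $H$-nodes of $P_1^\disc$, whence $P_1^\disc\supseteq\cc{w,r_i,v_{i+1}}\cup r_{i+1}$, and the rim $(R-\oo{P_1^\disc})\cup P_2^\disc P_3^\disc P_4^\disc$ with spokes $s_i,s_{i+3},s_{i+4}$ gives the required $V_6$. So the missing idea is that the disc-side green cycle, not the M\"obius-side one, supplies the rim detour for the prebox --- this is precisely the role of hypothesis (\ref{it:v2p1}) --- while the M\"obius-side data enters the prebox only through $P_2^\mob$ acting as a spoke in the far-side case.
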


\begin{proof}  We note that (\ref{it:p2mob}) implies $s_i\subseteq \Mob$.  

\begin{subclaim} \dragominor{Both $s_{i+1}$ and $s_{i+2}$ are contained in $\Mob$. } \end{subclaim}

\begin{proof} Suppose first that $s_{i+2}$ is exposed.  Then (\ref{it:v2p1}) implies $P_2^\disc$ and $P_4^\disc$ are both trivial.  That is, $C_\disc= P_1^\disc P_3^\disc$.  But $P_3^\disc$ is $H$-avoiding and overlaps $s_{i+2}$ on $R$ (because $P_1^\disc$ has at most four $H$-nodes, only two of which can be in the interior of $P_1^\disc$).  Thus, $P_3^\disc$ and $s_{i+2}$ cross in $\pp$, a contradiction.  Therefore, $s_{i+2}\subseteq \Mob$.

Next, suppose $s_{i+1}$ is exposed.  Then, by symmetry, we may assume $i=4$ or $i=9$.  In either case, $P_1^\mob$ and $P_1^\disc$ are in different $ab$-subpaths of $R$ and so do not have an edge in common, a contradiction.  Hence $s_{i+1}$ is also contained in $\Mob$. \end{proof}

Let $u$ \dragominor{be the common end of $P_2^\mob$ and $P_3^\mob$ and let $w$ be the common end  of $P_4^\mob$ and $P_1^\mob$.  By (\ref{it:p2mob}), $u\in \oo{s_i}$ and, by (\ref{it:p3mob}) and (\ref{it:p2mob}), $w\in r_i$.}  Observe that the edge $e_0$ common to $C_\mob$ and $C_\disc$ is in $\cc{v_i,r_i,w}$.

Let $C'$ be the cycle  
$
\cc{v_{i+5},s_{i},u,P_3^\mob P_4^\mob,w,r_i,v_{i+1}}\rbsp r_{i+1}\,s_{i+2}\,r_{i+6}\,r_{i+5}
$.
We note that there are two obvious $C'$-bridges: the $C'$-interior bridge $B_{C'}$ containing the edge of $s_{i+1}$ incident with $v_{i+6}$; and the $C'$-exterior bridge $M_{C'}$ for which $H-\oo{s_{i+1}}\subseteq C'\cup M_{C'}$.  To show $C'$ is a box, it suffices to show that $C'$ has BOD and $C'$ is a $(C'\cup M_{C'})$-prebox.

Notice that $v_{i+2}$ is in the interior of $P_1^\disc$ by hypothesis and $v_{i+1}$ is in the interior of $P_1^\disc$ because $e_0\in r_i$.   Lemma \ref{lm:greenCycles} (\ref{it:shortJump}) implies that the only $H$-nodes in the interior of $P_1^\disc$ are $v_{i+1}$ and $v_{i+2}$.   In particular, $v_i$ and $v_{i+3}$ are in $R-\oo{P_1^\disc}$, as are all the ends of $s_{i+3}$ and $s_{i+4}$.

To see that $C'$ has BOD, we produce a non-contractible cycle in $\Nuc(M_{C'})$. \wording{Lemma \ref{lm:CdisjointNCcycle} then implies $C'$ has BOD and precisely one non-planar bridge.}  We start with the two paths $P_2^\disc P_3^\disc P_4^\disc$ and $s_{i+4}$, and \wording{easily complete the required cycle} using two paths in $R$, one containing $r_{i+3}$ and the other containing $r_{i+9}$. 

It remains to show that $C'$ is a $(C'\cup M_{C'})$-prebox.   Since $V_8\topol H-\oo{s_{i+1}}\subseteq C'\cup M_{C'}$, it is obvious that, if $e\in C'$ and $e\notin R$, then $(C'\cup M_{C'})-e$ contains a $V_6$ and so is not planar.  So suppose $e\in C'$ and $e\in R$.  There are two cases.

If $e\in r_i\, r_{i+1}$, then take $(R-\oo{P_1^\disc})\cup P_2^\disc P_3^\disc P_4^\disc$ as the rim.  We choose as spokes $s_i$, $s_{i+3}$,  and $s_{i+4}$.  

If $e\in r_{i+5}\, r_{i+6}$, then the rim consists of the two paths $P_2^\disc P_3^\disc P_4^\disc$ and $C'-\oo{r_{i+5}\,r_{i+6}}$, together with the two subpaths of $R$ joining them, one containing $v_{i+3}$, $v_{i+4}$, and $v_{i+5}$, and the other containing $v_{i+7}$, $v_{i+8}$, $v_{i+9}$, and $v_i$.  In this case, the spokes are $s_{i+3}$, $s_{i+4}$, and $P_2^\mob$.
\end{proof}

In the remaining case, we show that $C$ is a box.  The following simple observations get us started, the first being the essential ingredient.

\begin{claim}\label{cl:twoCases}  Either:
\begin{enumerate}
\item\label{it:right} there is an $i$ so that \begin{itemize}
\item  $P_3^\mob$ is in a $Q_i$-local $H$-bridge;
\item  $s_i$ contains an edge of $C_\mob$; and
\item  $v_{i+2}$ is in the interior of $P_1^\disc$;  
\end{itemize}
or (symmetrically)
\item\label{it:left} there is an $i$ so that \begin{itemize}
\item $P_3^\mob$ is in a $Q_i$-local $H$-bridge;
\item  $s_{i+1}$ contains an edge of $C_\mob$; and
\item  $v_{i-1}$ is in the interior of $P_1^\disc$;  
\end{itemize}
or
\item\label{it:other}%\newline
there are three $H$-spokes not having an edge in $C_\mob$ and not having an incident vertex in the interior of $P_1^\disc$.
\end{enumerate}
\end{claim}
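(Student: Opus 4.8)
The plan is to assume (\ref{it:other}) fails and deduce (\ref{it:right}) or (\ref{it:left}). Call an $H$-spoke \emph{free} if it contains no edge of $C_\mob$ and meets the interior of $P_1^\disc$ in no $H$-node. If (\ref{it:other}) fails, at least three spokes are not free. By Lemma~\ref{lm:greenBasics}~(\ref{it:p2p4spoke}),~(\ref{it:differentSpokes}), the only spokes carrying an edge of $C_\mob$ are the (at most two, distinct) ones containing the non-trivial members of $\{P_2^\mob,P_4^\mob\}$; by Lemma~\ref{lm:greenCycles}~(\ref{it:shortJump}), the interior of $P_1^\disc$ contains at most two $H$-nodes, hence meets at most two spokes. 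Since these bounds add to at least three, both sets are non-empty: at least one of $P_2^\mob,P_4^\mob$ is non-trivial, and the interior of $P_1^\disc$ contains an $H$-node.

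Next I would locate $P_3^\mob$. By Lemma~\ref{lm:greenCycles}~(\ref{it:CboundsFace}), $C_\mob$ bounds a face of $G$ lying in $\Mob$, so $C_\mob$ lies in the closure of some $H$-face $F\subseteq\Mob$, and the $H$-bridge $B$ containing $P_3^\mob$ has all its attachments on $\partial F$. If there is no exposed spoke, every $H$-face in $\Mob$ is an $H$-quad, so $B$ is $Q_j$-local for some $j$. If $s_0$ is exposed, the only non-quad possibility is $F=\bQ_0$ (in the standard labelling, the $6$-cycle through $v_0,v_1,v_6,v_5,v_4,v_9$); I would dispose of this case by noting that $P_1^\mob$ is then a non-trivial connected subpath of $\partial\bQ_0\cap R$, which is the disjoint union of a rim arc at $v_0$ (through $r_9,r_0$) and a rim arc at $v_5$ (through $r_4,r_5$). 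Using that in the exposed case $a$ and $b$ are interior to $r_9$ and $r_5$ respectively, together with the facts that $e_0\in P_1^\mob\cap P_1^\disc$, that $C_\disc$ bounds a hexagonal $H$-face of $\Disc$, and that the two $r_9$-arcs $\cc{v_0,r_9,a}$ and $\cc{v_9,r_9,a}$ meet only at $a$ (likewise on $r_5$), Lemma~\ref{lm:greenCycles}~(\ref{it:p1}) forces $P_1^\mob$ into a single rim branch of $\partial\bQ_0$, whence $\att(B)$ lies in $\partial Q_0$ or in $\partial Q_4$. (Should this reduction resist, one can delete an edge of $s_2$ and argue with the resulting $1$-drawing exactly as in the proof of Lemma~\ref{lm:greenCyclesFriendly}.) In all cases $B$ is $Q_j$-local, $\att(B)\subseteq Q_j=r_j\,s_{j+1}\,r_{j+5}\,s_j$; since the non-$R$-end of a non-trivial $P_2^\mob$ or $P_4^\mob$ is an attachment of $B$ lying in the interior of a spoke meeting $Q_j$, that spoke must be $s_j$ or $s_{j+1}$.

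To finish in the quad case, observe that $P_1^\mob$ is a non-trivial connected subpath of $\partial Q_j\cap R=r_j\cup r_{j+5}$ containing $e_0$, hence lies entirely in $r_j$ or in $r_{j+5}$; handling the case $P_1^\mob\subseteq r_j$ and arguing analogously (with $v_{j+5},v_{j+7},v_{j+4}$ in place of $v_j,v_{j+2},v_{j-1}$, and the conclusion stated for $i=j+5$) when $P_1^\mob\subseteq r_{j+5}$, I may assume $e_0\in r_j$. Then by Lemma~\ref{lm:greenCycles}~(\ref{it:shortJump}), $P_1^\disc$ lies in one of the blocks $r_{j-2}r_{j-1}r_j$, $r_{j-1}r_jr_{j+1}$, $r_jr_{j+1}r_{j+2}$, so its interior $H$-nodes lie in $\{v_{j-1},v_j\}$, $\{v_j,v_{j+1}\}$, $\{v_{j+1},v_{j+2}\}$ respectively. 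Now $v_j\in s_j$ and $v_{j+1}\in s_{j+1}$, while $v_{j-1}$ and $v_{j+2}$ lie on spokes distinct from both $s_j$ and $s_{j+1}$; and by the first paragraph the interior of $P_1^\disc$ must meet a spoke outside $\{s_j,s_{j+1}\}$ (the only spokes that can carry an edge of $C_\mob$). Hence the middle block is impossible and the required outside node is $v_{j-1}$ or $v_{j+2}$. If it is $v_{j+2}$, then $s_j$ carries an edge of $C_\mob$ whose $R$-end is $v_j$ and $v_{j+2}$ is interior to $P_1^\disc$, which is~(\ref{it:right}) with $i=j$ (and one checks the non-trivial $P_2^\mob$ or $P_4^\mob$ lying in $s_j$ has $R$-end $v_j$, matching the hypotheses of Claim~\ref{cl:otherBox}); if it is $v_{j-1}$, the symmetric bookkeeping gives~(\ref{it:left}) with $i=j$.

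The one step I expect to be a genuine obstacle is the second one in the presence of an exposed spoke, where the $H$-face $F$ containing $P_3^\mob$ need not be a quad: the rest is routine bookkeeping with the modulo-$10$/modulo-$5$ indices and the elementary constraints on $P_1^\mob$ and $P_1^\disc$, but ruling out (or reducing to a quad) the face $\bQ_0$ requires careful use of Lemma~\ref{lm:greenCycles}~(\ref{it:p1}) and of the positions of $a$ and $b$ on the rim.
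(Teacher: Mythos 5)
Your reduction to the quad case is sound and is essentially the paper's argument: the counting in your first paragraph (at most two spokes carry edges of $C_\mob$, at most two spokes meet $\oo{P_1^\disc}$, so if (\ref{it:other}) fails both sets are non-empty), and the block analysis of $\oo{P_1^\disc}$ around $e_0\in r_j$, reproduce what the paper does when $P_3^\mob$ lies in a quad face. The genuine gap is exactly where you suspected: the exposed-spoke case. When $s_0$ is exposed, the face $F$ of $\Pi[H]$ in $\Mob$ containing $P_3^\mob$ may be the hexagonal face bounded by $r_9\,r_0\,s_1\,r_5\,r_4\,s_4$, and your claim that Lemma \ref{lm:greenCycles} (\ref{it:p1}) forces $\att(B)\subseteq Q_0$ or $\att(B)\subseteq Q_4$ does not follow: that lemma only confines $P_1^\mob$ to one $ab$-subpath of $R$, which still allows $P_1^\mob\subseteq \cc{a,r_9,v_0}\,r_0$, and then $B$ can attach in $\oo{r_9}$ (at an end of $P_1^\mob$) and in $\oo{s_1}$ (via a non-trivial $P_2^\mob$), making $B$ global rather than $Q_j$-local for any $j$. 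Even if locality were established, your subsequent bookkeeping assumes $P_1^\mob\subseteq r_j\cup r_{j+5}$ for the quad $Q_j$ hosting $B$, which fails here because $Q_0$ does not bound a face of $\Mob$ and $P_1^\mob$ may contain edges of $r_9$ or $r_4$. The fallback you offer (delete an edge of $s_2$ and argue as in Lemma \ref{lm:greenCyclesFriendly}) is not developed enough to close this.

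The paper resolves this case by a different and shorter route: it does not try to show $B$ is local, but shows directly that outcome (\ref{it:other}) holds there. Since $C_\mob$ can then only have spoke edges in $s_1$ (or, symmetrically, $s_4$), and $e_0$ is confined to $\cc{a,r_9,v_0,r_0,v_1}$ or $\cc{b,r_5,v_6}$, the bound of two interior $H$-nodes on $P_1^\disc$ (Lemma \ref{lm:greenCycles} (\ref{it:shortJump})) keeps those interior nodes away from the ends of $s_0$, $s_3$, and $s_4$, so these three spokes witness (\ref{it:other}). Under your standing assumption that (\ref{it:other}) fails, this means the exposed non-quad configuration must be ruled out by contradiction, and your write-up never derives that contradiction. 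To repair your proof you would need either to reproduce this direct verification of (\ref{it:other}), or to genuinely exclude a global $B$ in this position (for instance via Corollary \ref{lm:diagonal} together with Theorem \ref{th:BODquads} (\ref{it:exposedBQ3})), neither of which is accomplished by the tools you cite.
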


\begin{proof}   Lemma \ref{lm:greenCycles} (\ref{it:shortJump}) implies there are at most two $H$-nodes in the interior of $P_1^\disc$.  Therefore, if no $H$-spoke contains an edge of $C_\mob$, then (\ref{it:other}) holds.  So we may suppose $C_\mob$ has an edge in some $H$-spoke. 

Suppose first that $s_0$ is exposed, $C_\mob$ has an edge in $s_1$ and $e_0$ is in either $\cc{a,r_9,v_0,r_0,v_1}$ or $\cc{b,r_5,v_6}$.   Therefore, $P_1^\disc$ has one end in either $\co{a,r_9,v_0,r_0,v_1}$ or $\co{b,r_5,v_6}$.     Lemma \ref{lm:greenCycles} (\ref{it:shortJump}) implies at most two $H$-nodes can be in the interior of $P_1^\disc$, so no end of $s_3$ can be in the interior of $P_1^\disc$.  We conclude that $s_0$, $s_3$ and $s_4$ are the required three spokes yielding (\ref{it:other}).% not having an edge in $C_\mob$ and not having an incident vertex in the interior of $P_1^\disc$.

\minor{Symmetry treats the same case on the other side.}

In the remaining case, $P_3^\mob$ is contained in a $Q_i$-local $H$-bridge and both $s_i$ and $s_{i+1}$ are contained in $\Mob$.   The edge $e_0$ is in either $r_i$ or $r_{i+5}$.   If the only $H$-nodes in the interior of $P_1^\disc$ are incident with either $s_i$ or $s_{i+1}$, then the other three $H$-spokes suffice for (\ref{it:other}).  

Thus, by symmetry we may assume an end of $s_{i+2}$ is in the interior of $P_1^\disc$.  This implies that an end of $s_{i+1}$ is also in the interior of $P_1^\disc$.  Lemma \ref{lm:greenCycles} (\ref{it:shortJump}) shows these are the only $H$-nodes in the interior of $P_1^\disc$.  If $s_i$ does not contain an edge of $C_\mob$, then the three spokes other than $s_{i+1}$ and $s_{i+2}$ suffice for (\ref{it:other}), while if $s_i$ does contain an edge of $C_\mob$, then we have (\ref{it:right}). \end{proof}

Claims \ref{cl:otherBox} and \ref{cl:twoCases} show we need only consider the third possibility in Claim \ref{cl:twoCases} to find a box.

\begin{claim}\label{cl:CMprebox} If there are three $H$-spokes not having any edge in $C_\mob$ and not having an incident $H$-node in $\oo{P_1^\disc}$,  
then $C$ is a box.
\end{claim}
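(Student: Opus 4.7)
The plan is to take $B = B_C$ as the planar $C$-bridge and show that $C$ is a $(C\cup M_C)$-prebox. First I would observe that $B_C$ is planar: by Claim \ref{cl:GisCMB}, $B_C$ is one of only two $C$-bridges, and it is contained in the closed disc in $\pp$ bounded by $C$, so $C\cup B_C$ is planar. Combined with Claim \ref{cl:ChasBOD}, which gives that $C$ has BOD, this reduces the proof of the claim to showing that for every edge $e$ of $C$, the graph $(C\cup M_C)-e$ is non-planar.

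Let $\sigma_1,\sigma_2,\sigma_3$ denote the three $H$-spokes given by the hypothesis. I would first argue that each $\sigma_i$ is contained in $C\cup M_C$, with all its interior edges in $M_C$. Indeed, since Claim \ref{cl:HinCMB}(1) gives $H\subseteq C\cup M_C\cup B_C$, any edges of $\sigma_i$ outside $M_C$ must lie in $C\cup B_C$; but $\sigma_i$ has no edge in $C_\mob$ by hypothesis, and the only way an $H$-spoke can share edges with $C_\disc$ is via $P_2^\disc$ or $P_4^\disc$, which (by Lemma \ref{lm:greenBasics}(\ref{it:p2p4spoke})) sit inside an $H$-spoke contained in $\Disc$, i.e., the exposed spoke. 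A short argument, using the fact that the $H$-node endpoints of $\sigma_i$ lie in $R\setminus\oo{P_1^\disc}$, handles this remaining case and shows $\sigma_i\cap C$ is contained in the attachment vertices of $M_C$ on $C$.

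Next, for each edge $e\in C$, I would produce a subdivision of $V_6$ in $(C\cup M_C)-e$, using $\sigma_1,\sigma_2,\sigma_3$ as its three spokes. The rim of this $V_6$ will be built by combining the $ab$-subpath $A\subseteq M_C$ with the portions of $P_1^\disc$ and $P_1^\mob$ that lie outside $Q=P_1^\disc\cap P_1^\mob$ (these lie in $C$), plus, whenever $e$ blocks one of these arcs, the appropriate bypass through $D_1=P_2^\disc P_3^\disc P_4^\disc$ or $D_2=P_2^\mob P_3^\mob P_4^\mob$. Concretely, the case analysis splits on whether $e$ lies in $D_1$, in $D_2$, in $P_1^\disc\setminus\oo{Q}$, or in $P_1^\mob\setminus\oo{Q}$; in each instance, since $e$ lies in at most one of $D_1$ and $D_2$, the other $D_i$ together with $A$ and the unaffected rim pieces supplies a six-vertex cycle through the endpoints of $\sigma_1,\sigma_2,\sigma_3$.

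The main obstacle I expect is the verification that, in each case of the analysis, the chosen six branch vertices of the constructed rim actually occur in the correct cyclic order so that $\sigma_1,\sigma_2,\sigma_3$ really form spokes of a $V_6$ rather than nested chords. This is where the endpoint condition (no $\sigma_i$ has an $H$-node in $\oo{P_1^\disc}$) becomes essential, as it forces the six spoke endpoints to lie in $A\cup\{x,y\}\cup(P_1^\disc\setminus\oo{Q})$ in their natural $R$-order, which is preserved after substituting a bypass $D_i$ for $Q$ or for the portion of $R$ hit by $e$. Extra care is required when $P_1^\disc$ is exceptional and when the exposed spoke happens to be one of the $\sigma_i$, but in both of these subcases the restrictions on the endpoints of $P_1^\disc$ leave enough room to complete the cycle through $A$.
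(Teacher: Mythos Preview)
Your overall strategy is correct and matches the paper: reduce to showing $C$ is a $(C\cup M_C)$-prebox and then exhibit a $V_6$ in $(C\cup M_C)-e$ for each $e\in C$. However, there are two points where your plan diverges from the paper in ways that need attention.

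First, your rim description is confused. By 3-connection (as observed in the proof of Claim~\ref{cl:GisCMB}), $C_\disc\cap C_\mob$ is exactly the edge $e_0$, so $B_C$ is just $e_0$ and $C\cup M_C=G-e_0$. In particular, $e_0\notin C\cup M_C$, so any cycle you build for the rim must already bypass $e_0$ via either $D_1$ or $D_2$; your ``default'' of $A$ plus the outer $R$-portions is only the path $R-e_0$, not a cycle. Once this is fixed, your four cases collapse to the paper's two: either $e\in C_\disc-e_0$ (use rim $(R-\oo{P_1^\mob})\cup D_2$) or $e\in C_\mob-e_0$ (use rim $(R-\oo{P_1^\disc})\cup D_1$).

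Second, the paper treats these two cases asymmetrically, and this matters. For $e\in C_\disc-e_0$ it uses \emph{any} three $H$-spokes in $\Mob$ (truncating a spoke that meets $C_\mob$); the hypothesis spokes are not needed here. For $e\in C_\mob-e_0$ it uses precisely the three hypothesised spokes, and proves a dedicated subclaim showing how each such spoke meets $C$: if $s\subseteq\Mob$ then $s\cap C\subseteq\{x,y\}$, while if $s=s_0$ is exposed then $s\cap C$ misses at least one of $v_0,v_5$. Your plan to use $\sigma_1,\sigma_2,\sigma_3$ uniformly runs into trouble in the first case, because the hypothesis says nothing about endpoints in $\oo{P_1^\mob}$; one of your $\sigma_i$ could have an endpoint there and fail to attach to $(R-\oo{P_1^\mob})\cup D_2$. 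The exposed-spoke situation you flag as ``extra care'' is genuinely delicate: the paper resolves it not by an endpoint-order check but by replacing $s_2$ with either $s_1$ or $s_4\,r_4$ depending on whether $s_2$ meets $C_\mob$.
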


\begin{proof}  By Claim \ref{cl:ChasBOD} and the fact that $B_C$ is a planar $C$-bridge, it suffices to show $C$ is a $(C\cup M_C)$-prebox.  For each $e\in C$, we\wordingrem{(text removed)} show that $(C\cup M_C)-e$ contains a $V_6$. 

We note that 3-connection \wording{and the fact that $C_\mob$ and $C_\disc$ both bound faces} implies $C_\mob\cap C_\disc$ is just $e_0$ and its ends.   That is, $B_C$ consists of just $e_0$ and its ends.  Thus, Claim \ref{cl:GisCMB} implies that $G-e_0= C\cup M_C$.  In particular, every spoke is in $C\cup M_C$. 

Let $w$ be any $H$-node that is not in $C$.  There are two $wC$-paths in $R-e_0$;  let them be $R_x$ with end $x\in C$ and $R_y$ with end $y\in C$.  Thus, $R$ consists of the $C$-avoiding path $R_x\cup R_y$, a subpath of $C$, the edge $e_0$, and another subpath of $C$.   The cycle $C$ consists of two $xy$-paths; let them be $N^\disc$ containing $P_2^\disc P_3^\disc P_4^\disc$ and  $N^\mob$  containing $P_2^\mob P_3^\mob P_4^\mob$.   We note that $N^\disc\subseteq \Disc$ and $N^\mob\subseteq \Mob$.

\begin{subclaim}\label{sc:specialSpokes}  Let $s$ be an $H$-spoke with no edge in $C_\mob$ and not having an incident $H$-node in $\oo{P_1^\disc}$.
\begin{enumerate}  \item\label{it:sInMob} If $s\subseteq \Mob$, then $s\cap C$ is either empty, $x$, or $y$.
\item\label{it:sInDisc} If $s\subseteq \Disc$ (that is, $s=s_0$ is exposed), then  $s\cap C$ \major{contains at most one of $v_0$ and $v_5$}. 
\end{enumerate}
\end{subclaim}

\begin{proof} For (\ref{it:sInMob}), the alternative is that $s$ contains a vertex $u$ in $\oo{N^\mob}$.  By hypothesis, $s$ has no edge in $C_\mob$ and, therefore, $s$ has no edge in $C$.  Being in $N^\mob$, the vertex $u$ is either in $R$ or in $P_2^\mob P_3^\mob P_4^\mob$.  

Suppose that $u$ is in $P_2^\mob P_3^\mob P_4^\mob$.  If $u$ is in $P_3^\mob$, then, since $P_3^\mob$ is $H$-avoiding, $u$ is an end of $P_3^\mob$, and so is in $P_2^\mob\cup P_4^\mob$.  Thus, if $u$ is in $P_2^\mob P_3^\mob P_4^\mob$, then $u$ is in $P_2^\mob\cup P_4^\mob$.  Since both $P_2^\mob$ and $P_4^\mob$ are contained in $H$, are $R$-avoiding, and neither has an edge of $s$, the one containing $u$ is trivial and $u$ is in $R$.  

Thus, in every case $u$ is in $R$ and so is an $H$-node.  It follows that one of $\cc{x,N^\mob,u}$ and $\cc{u,N^\mob,y}$ contains $P_2^\mob P_3^\mob P_4^\mob$ and the other is contained in $R$.  We choose the labelling so that $\cc{x,N^\mob,u}\subseteq R$.  

As we follow $R-e_0$ from $w$ to $x$ and continue to $u$ along $N^\mob$, we see there is an edge  of $C$ incident with $x$ and not in $R$.  \wording{That it is in $N^\disc$} implies it is in $P_2^\disc P_3^\disc P_4^\disc$.   All the vertices in $\co{x,N^\mob,u}$ are incident with two rim edges in what we have just traversed.  In particular, $e_0$ is not incident with any of these vertices and, therefore,  $\cc{x,N^\mob,u}$ is contained in $C^\disc$. More precisely, $\cc{x,N^\mob,u}$ is contained in $P_1^\disc$.  As we continue along $R$ past $u$, we either find $e_0$ is incident with $u$ or the other edge of $C$ incident with $u$ is in $R$.  In either case, $u$ is in $\oo{P_1^\disc}$, a contradiction.

For (\ref{it:sInDisc}), suppose $v_0$ and $v_5$ are both in $C$.  Then $P_1^\mob\cup P_1^\disc$ contains both $v_0$ and $v_5$.   By Definition \ref{df:green} (\ref{it:P1options}), $v_0$ and $v_5$ are not both in the same one of $P_1^\mob$ and $P_1^\disc$, so one is in $P_1^\mob$ and the other is in $P_1^\disc$.  By \wording{symmetry,} we may assume $v_0$ is in $P_1^\mob$.  Because $\Pi$ is $H$-friendly, $P_1^\mob$ is contained in either $\cc{a,r_9,v_0,r_0,v_1}$ or, if $a=v_0$, $r_9$ (these being the only two faces of $\Pi[H]\cup \gamma$ in $\Mob$ that can be incident with $v_0$).  

Recall that $e_0$ is in both $P_1^\mob$ and $P_1^\Disc$. If $P_1^\mob\subseteq \cc{a,r_0,v_0,r_0,v_1}$, then $e_0$ is in either $r_9$ or $r_0$ and $P_1^\disc$ is, by Definition \ref{df:green} (\ref{it:P1options}), contained in either $\cc{a,r_9, v_0}\rbsp r_0\,r_1\lbsp\cb{v_2,}$ $\bo{r_2,v_3}$ or \minor{$r_0\,r_1\,r_2\co{v_3,r_4,v_4}$}, and $v_5$ is not in $C$.  If $P_1^\mob\subseteq r_9$, then  $e_0$ is in $r_9$, so $P_1^\disc$ is contained in \minor{$r_9\,r_8\,r_7\co{v_7,r_6,v_6}$}, and again $v_5$ is not in $C$.
\end{proof}

The case $e\in N^\disc$ is easy: the rim of the $V_6$ is $(R-\oo{P_1^\mob})\cup P_2^\mob P_3^\mob P_4^\mob$ and we choose as spokes any three of the $H$-spokes that are contained in $\Mob$.  (If one intersects $C_\mob$, then only the part of the spoke that is $C_\mob$-avoiding will be the actual spoke of the $V_6$.) 

If $e\in N^\mob$, then the rim $R'$ of the $V_6$ is $(R-\oo{P_1^\disc})\cup P_2^\disc P_3^\disc P_4^\disc$ and the spokes are the three $H$-spokes from the hypothesis.  If all three hypothesized $H$-spokes are contained in $\Mob$, then it is evident from Subclaim \ref{sc:specialSpokes} (\ref{it:sInMob}) that we have indeed described a $V_6$ in $(C\cup M_C)-e$.

So suppose that one of the $H$-spokes in the hypothesis is the exposed spoke $s_0$.  From Subclaim \ref{sc:specialSpokes} (\ref{it:sInDisc}), either $s_0$ is disjoint from $C$ or precisely one $H$-node incident with $s_0$ is in $C$.  We may choose the labelling so that $v_0$ is not in $C$.  

If $v_5$ is not in $C$, then $s_0$ is disjoint from $C$.   Subclaim \ref{sc:specialSpokes} (\ref{it:sInMob}) shows the other two hypothesized $H$-spokes meet $C$ in at most $x$ or $y$; it is now obvious that the three hypothesized $H$-spokes combine with $R'$ to make a $V_6$. 

Finally, suppose $v_5$ is in $C$.  \major{Because $C_\disc$ is $H$-green, $P_1^\disc\subseteq r_2\,r_3\,r_4\cc{v_5,r_5,b}$.  In particular, $s_1$ is disjoint from $C_\mob$.  If $s_2$ has no edge in $C_\mob$, then $R'\cup s_1\cup s_2$, together with the portion of $s_0$ from $v_0$ to $C_\disc$ is a $V_6$ avoiding $N_\mob$.  If $s_2$ has an edge in $C_\mob$, then $C_\mob$ is in the $\Pi[H]$-face bounded by $Q_2$.  In this case, we may replace $s_2$ with $s_4\,r_4$ to obtained the desired $V_6$.%
}%
\minorrem{(Text removed.)} %The $ab$-subpath $R_1$ of $R$ containing all of $v_0$, $v_1$, \dots, $v_5$ also contains $x$ and $y$.   We choose the labelling so that $v_0$ and $x$ are in the same component $R'_x$ of $R_1-e_0$.   Any $H$-spoke contained in $\Mob$  has its end in $R_1$ between $v_0$ and $v_5$.  Since $v_5$ is in $C$, if the $H$-spoke meets $C$ in at most $x$ or $y$, then its end in $R_1$ is in $R'_x$.  This applies in particular to the two hypothesized $H$-spokes that are in $\Mob$.  It follows that these two $H$-spokes, plus the $v_0C$-portion of $s_0$, combine with the rim $R'$ to form a $V_6$, as required.
 \end{proof}

%Suppose first that $e\in P_1^\disc -\oo{P_1^\mob}$.  Then $(R-\oo{P_1^\disc})\cup P_2^\disc P_3^\disc P_4^\disc$ is the rim of a $V_6$-minor of $(C\cup M_C)-e$ whose spokes are the three from the hypothesis.

%Suppose next that $e\in P_1^\mob-\oo{P_1^\disc}$.  Then $(R-\oo{P_1^\mob})\cup P_2^\mob P_3^\mob P_4^\mob$ is the rim of a $V_6$-minor of $(C\cup M_C)-e$ whose spokes are the three from the hypothesis.

%In the third case, $e\in P_2^\disc P_3^\disc P_4^disc$.  

%Suppose first that $e\in R$.  For some $\{i,j\}= \{\mob,\disc\}$, $e\in P_1^i\setminus P_1^j$.   We showed in the proof of Lemma \ref{lm:greenCycles} (\ref{it:Cprebox1}) and (\ref{it:Cprebox2})  that $(R-\oo{P_1^i})\cup P_2^iP_3^iP_4^i$ is the rim of a $V_6$ in $C_i\cup M_i$.   This $V_6$ is disjoint from $P_1^i\cap P_1^j$ and so is contained in $C\cup M_C$. 

%Next, suppose $e\in C_\disc$.   Lemma \ref{lm:greenCycles} (\ref{it:Cprebox1}) implies  there is a $V_6$ in $C_\mob\cup H$ that is disjoint from $\oo{P_1^\mob}$. This takes care of the edges in $C_\disc$ that are not in $R$. 

%Finally, suppose $e\in P_1^\mob P_3^\mob P_4^\mob$, $(C\cup M_C)-e$ is not planar.  The $K_{3,3}$ with rim $(R-\oo{P_1^\disc})\cup P_2^\disc P_3^\disc P_4^\disc$ and the three hypothesized $H$-spokes as spokes shows $(C\cup M_C)-e$ is not planar. 

Evidently, Claims \ref{cl:otherBox}, \ref{cl:twoCases}, and \ref{cl:CMprebox} show that $G$ has a box, contradicting Lemma \ref{lm:noBox}.  \end{cproof}
}

\majorrem{(Section 7 on Local $H$-bridges is removed.  Lemma 7.3 is now Lemma 5.5; it is the only thing that seems to have been used.)}

\chapter{Exposed spoke with\\ additional attachment not in $\bQ_0$}\printFullDetails{

The main result of this section is the proof of the following \wording{technical \dragominor{theorem}}, which limits possibilities for the $V_{10}$-bridges.  This will be used in the next section when \wording{we get our} second major step by showing that there is a representativity 2 embedding of $G$ in $\pp$ for which all the $H$-spokes are contained in the M\"obius band.

}\begin{theorem}\label{th:expSpokeNoAtt}
Suppose $G\in\m2$ and $\hvfg$.  Let $\Pi$ be an $H$-friendly embedding of $G$ in $\pp$, with the standard labelling.  Then there is no $H$-bridge having attachments in both $\oo{s_0}$ and $\oo{r_1\,r_2\,r_3}$.  \end{theorem}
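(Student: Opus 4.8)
The plan is to assume, for contradiction, that such an $H$-bridge $B$ exists, say with $p\in\oo{s_0}$ and $q\in\oo{r_j}$ where $j\in\{1,2,3\}$, and to produce a box, contradicting Lemma \ref{lm:noBox}. First I would locate $B$. Under the standard labelling the branch $s_0$ lies on the boundary of exactly two $H$-faces of $\Pi$: the open disc $\Delta$ bounded by the cycle $C:=r_0\,r_1\,r_2\,r_3\,r_4\,s_0$ (face $2$ of $H\cup\gamma$), and the face $\Phi$ obtained by merging faces $1$ and $6$ of $H\cup\gamma$ across $\alpha$, whose boundary is $s_0\cup r_5\cup r_6\,r_7\,r_8\cup r_9$. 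Since $B$ attaches to $\oo{s_0}$ it is embedded in $\overline\Delta$ or in $\overline\Phi$; but $\oo{r_1\,r_2\,r_3}$ meets neither $\overline\Phi$ nor $\partial\Phi$, whereas $B$ attaches to $\oo{r_j}\subseteq\oo{r_1\,r_2\,r_3}$. Hence $B\subseteq\overline\Delta$ and every attachment of $B$ lies on $C$. By the minimum-degree-$3$ hypothesis together with Lemma \ref{lm:threeAtts}, $B$ is either the single edge $pq$ or a subdivision of $K_{1,3}$.

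Next, choose an $H$-avoiding $pq$-path $P\subseteq B$; it splits $\overline\Delta$ into two closed discs meeting along $P$, say $\Delta_1$ (with $v_0$ and $v_j$ on its boundary) and $\Delta_2$ (with $v_5$ and $v_{j+1}$ on its boundary), so that $C_1:=\partial\Delta_1=\cc{v_0, s_0, p}\cup P\cup\cc{q, r_j, v_j}\cup r_0 r_1\cdots r_{j-1}$ and $C_2:=\partial\Delta_2=\cc{v_5, s_0, p}\cup P\cup\cc{q, r_j, v_{j+1}}\cup r_{j+1}\cdots r_4$. For a suitable one of these, call it $\tilde C$ (the choice depending on $j$, so as to avoid the situation where $\tilde C$ is an $H$-green cycle — which by Lemma \ref{lm:greenCycles} has no planar bridge), the $\tilde C$-bridge $M_{\tilde C}$ containing $H-\tilde C$ satisfies $\tilde C\cup M_{\tilde C}\supseteq H$ and hence is non-planar, while every $\tilde C$-bridge embedded in $\overline{\Delta_1}\cup\overline{\Delta_2}$ (in particular the third leg of $B$ when $B$ is a $K_{1,3}$) carries no edge of $H$ and is planar. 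Since $\tilde C\subseteq s_0\cup r_0\cup\cdots\cup r_j\cup B$ and, under the standard labelling, $\gamma\cap G=\{a,b\}$ with $a\in r_9$ and $b\in r_5$, one has $\gamma\cap\tilde C=\varnothing$ in the generic case, so Lemma \ref{lm:noBdy} applied to $\tilde C$ and the non-planar bridge $M_{\tilde C}$ yields that $\tilde C$ has BOD and that every other $\tilde C$-bridge is planar; the exceptional subcases $a=v_0$ or $b=v_5$ are handled just as in Theorem \ref{th:BODquads}(\ref{it:cornerInGamma}), using Corollary \ref{co:contractibleBOD} and Lemma \ref{lm:nucMeetGamma}.

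It remains to show that $\tilde C$ is a $(\tilde C\cup H)$-prebox, which by the previous paragraph makes $\tilde C$ a box. For an edge $e\in E(P)$ we have $(\tilde C\cup H)-e=(H\cup P)-e\supseteq H\supseteq V_6$; for $e$ an edge of $s_0$ on $\tilde C$, $H-e$ is $V_{10}$ with a spoke deleted, which is non-planar. The real content is the case where $e$ is a rim edge on $\tilde C$ (an edge of one of $r_0,\dots,r_{j-1}$ or of the $r_j$-piece on $\tilde C$): here $H-e$ is $V_{10}$ with a single rim edge deleted, which is \emph{planar}, so one must show that adding back the handle $P$ — joining the interior of the former spoke $s_0$ to the interior of $r_j$ — restores non-planarity, i.e.\ $(H-e)\cup P$ still contains a subdivision of $K_{3,3}$. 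This is done by an explicit routing argument, case by case in $j$ and in the position of $e$: contracting the degree-$2$ vertices of $(H-e)\cup P$ yields a cubic graph on ten vertices which one checks is non-planar (for instance, with $j=1$ and $e\in r_0$ one finds a $K_{3,3}$-subdivision with branch classes $\{p,v_4,v_6\}$ and $\{v_9,v_5,q\}$; the other positions are analogous). With $\tilde C$ shown to have BOD, to possess a planar bridge, and to be a $\comp{B'}$-prebox (as $H\cup P\subseteq\comp{B'}$), $\tilde C$ is a box, contradicting Lemma \ref{lm:noBox}.

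The main obstacle is the prebox verification in the last paragraph: it amounts to the slightly surprising fact that $V_{10}$ minus a rim edge, although planar, becomes non-planar again once the extra path $P$ from $\oo{s_0}$ into $\oo{r_1\,r_2\,r_3}$ is added, and this must be checked for every rim edge lying on the chosen cycle $\tilde C$. Secondary difficulties, which I expect to require a small amount of bookkeeping rather than new ideas, are: choosing between $C_1$ and $C_2$ (for $j=2$ both are $H$-green cycles and so neither has a planar bridge, which forces the cycle to be selected carefully, while for $j=1$ and $j=3$ exactly one of $C_1,C_2$ is green); handling the degenerate case in which $B$ is the single edge $pq$ (where one argues that then $G=\tilde C\cup M_{\tilde C}$ and deletes a suitable edge of $M_{\tilde C}$ to reach a $1$-drawing in which $\tilde C$ is forced clean, contradicting $\crn(G)\ge 2$); and the minor exceptional cases $a=v_0$ or $b=v_5$ in the BOD step.
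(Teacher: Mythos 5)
Your approach (manufacture a box from one of the two cycles cut out of the face $r_0r_1r_2r_3r_4s_0$ by the path $P$) breaks down precisely in the case that carries essentially all of the difficulty of this theorem, namely $q\in\oo{r_2}$. For $j=2$ both of your cycles are $H$-green: $C_1$ decomposes with $P_1=r_0\,r_1\lbsp\cc{v_2,r_2,q}$ (three $H$-nodes $v_0,v_1,v_2$) and $C_2$ with $P_1=\cc{q,r_2,v_3}\rbsp r_3\,r_4$ (three $H$-nodes $v_3,v_4,v_5$). By Lemma \ref{lm:greenCycles}, parts (\ref{it:oneCbridge}) and (\ref{it:CboundsFace}), each of these cycles bounds a face of $\Pi[G]$ and has $M_C$ as its \emph{unique} bridge; since a box requires the existence of a planar $C$-bridge $B$ with $C$ a $\comp B$-prebox, neither cycle can be a box, and there is no ``careful selection'' to be made because these are the only two candidates. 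This configuration is genuinely invisible to the box/prebox machinery: the paper's proof first eliminates attachments in $\oc{v_1,r_1,v_2}\cup\co{v_3,r_3,v_4}$ by a direct drawing argument (deleting an edge of $s_3$ and tracking where the forced crossing of $\bQ_3$ can lie), and then must spend the entire remainder of the proof on the $\oo{r_2}$ case — establishing that $J_1$ and $J_2$ (your $C_1$, $C_2$) bound faces, pinning down the structure of $\comp{(M_{\bQ_0})}$ via the drawings $D_2$ and $D_3$, and finally performing a surgery that produces a non-contractible simple closed curve meeting $G$ only in the interiors of two edges, whence $\crn(G)\le 1$. No short contradiction of the kind you propose is available here.

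Two secondary problems. First, your fallback for the degenerate situation (nothing drawn inside $\Delta_2$, so $M_{\tilde C}$ is the only $\tilde C$-bridge) does not close: obtaining a $1$-drawing of $G-e'$ with $\tilde C$ clean for some $e'\in M_{\tilde C}$ contradicts nothing — Lemma \ref{lm:BODcrossed} and Corollary \ref{co:TutteTwo} require a clean $1$-drawing of $\tilde C\cup M_{\tilde C}$ itself (equivalently, the deletion of a \emph{planar} bridge), not of $M_{\tilde C}-e'$. Second, Lemma \ref{lm:threeAtts} gives the structure of $B$ only when $|\att(B)|\le 3$; nothing you cite bounds the number of attachments, so ``$B$ is an edge or a $K_{1,3}$'' is unjustified (this is harmless for extracting the path $P$, but you rely on it when classifying the $\tilde C$-bridges inside $\Delta$). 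By contrast, the prebox verification you identify as the main obstacle is not where the real difficulty lies.
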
\printFullDetails{

At one point in the proof of this theorem, we need the following lemma.  Most of it is used again several times.

}\begin{lemma}\label{lm:technicalV8colour}  Let $G$ be a graph and let $\hveg$. Let $P$ be an $H$-avoiding path in $G$ joining distinct vertices $x$ and $y$ of $R$ and let $P'$ be one of the two $xy$-subpaths of $R$.  Let $D$ be a 1-drawing of $H\cup P$. 
\begin{enumerate}
\item\label{it:atMostTwo}  If $P'$ has at most two $H$-nodes or, for some $i$, $P'=r_i\,r_{i+1}$, then $P'$ is not crossed in $D$.
\item\label{it:V8yellow} \wordingrem{(Order changed.)}\wording{If  there are only the two $H$-nodes  $v_i$, $v_{i+1}$ in the interior of $P'$ and $P'$ has at most one other $H$-node, then $r_{i+4}$} is not crossed in $D$.
\item\label{it:2halfJump} Suppose $r_i\,r_{i+1}\subseteq P'$, $P'\not\subseteq r_i\,r_{i+1}$, but $P'\subseteq r_i\,r_{i+1}\lbsp \co{v_{i+2},r_{i+2},v_{i+3}}$.  
\begin{enumerate}
\item\label{it:2halfJump1} Then $r_i\,r_{i+1}$ is not crossed in $D$.  
\item\label{it:2halfJump3/2} If $P'$ is crossed in $D$, then \minor{$s_{i+3}$} is exposed in $D$ and $P'\cap r_{i+2}$ crosses $r_{i-1}$.
\wordingrem{\item (Old item removed as not used anywhere.)}\ignore{\item\label{it:2halfJump2} 
 If, in addition, there is an $(H\cup P)$-avoiding path $P''$ joining an internal vertex of $P$ to a vertex of $\oo{r_{i+4}\,r_{i+5}\,r_{i+6}}$, then $P'$ is not crossed in $D$.}
\end{enumerate}
\end{enumerate}
\end{lemma}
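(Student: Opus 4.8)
\begin{iproof}
The entire lemma rests on one principle: if $e$ is an edge of $H\cup P$ and $(H\cup P)-e$ is non-planar, then $e$ is not crossed in any $1$-drawing $D$ of $H\cup P$, for otherwise deleting $e$ from $D$ would leave a crossing-free drawing of $(H\cup P)-e$, a planar embedding. Thus each ``not crossed'' assertion reduces to exhibiting, for every edge $e$ of the relevant path, a subdivision of $V_6=K_{3,3}$ inside $(H\cup P)-e$. The path $P$ is what makes this work: deleting an edge $e$ of $R$ cuts a rim branch, and $P$, possibly combined with the part of $R$ on the far side of $e$, restores enough connectivity for a $V_6$ to reappear while avoiding $e$.

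For part (\ref{it:atMostTwo}), take $e\in P'$, let $P''$ be the other $xy$-subpath of $R$, and set $C:=P\cup P''$; then $(H\cup P)-e$ contains $C$ together with the two pendant pieces of $P'-e$, and every $H$-spoke still has both ends in this configuration. Because $P'$ carries at most two $H$-nodes (or $P'=r_i\,r_{i+1}$, carrying only $v_i,v_{i+1},v_{i+2}$, the middle of which can be absorbed into a chord), at least three of the four $H$-spokes are, or extend along the pendant pieces to, chords of $C$; running $P$ through $C$ one sees that three such chords pairwise interlace on $C$, so $C$ with these three forms a $K_{3,3}$-subdivision missing $e$. For part (\ref{it:V8yellow}) we may take $i=0$, so $e\in r_4$; deleting $e$ cuts the rim at $r_4$, whose broken ends $v_4,v_5$ are rejoined by $s_0$, a piece of $P'$, $P$, a piece of $P'$ and $s_1$. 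The resulting graph contains the cycle $C=P\cup P'$ carrying two interlacing $C$-paths — one being $s_1$ followed by $r_5\,r_6$ and the part of $r_7$ between $v_7$ and $x$, the other being $s_0$ followed by $r_3\,r_2$ and the part of $r_1$ between $v_2$ and $y$ — joined by the rungs $s_2$ and $s_3$; a short check (e.g.\ on branch classes $\{x,v_1,v_2\}$ and $\{v_0,y,v_6\}$, with minor variations if $x$ or $y$ is an $H$-node) exhibits a $K_{3,3}$-subdivision avoiding $r_4$. Part (\ref{it:2halfJump1}) is proved like part (\ref{it:atMostTwo}): for $e\in r_i\,r_{i+1}$, the detour $P^{\ast}$ obtained by following $P$ from $v_i$ and then the part of $r_{i+2}$ from $y$ to $v_{i+3}$ runs from $v_i$ to $v_{i+3}$ avoiding $e$, and $P^{\ast}$ together with $r_{i+3}\,r_{i+4}\,r_{i+5}\,r_{i+6}\,r_{i+7}$, the spokes $s_i$ and $s_{i+3}$, the chord-path $r_i\,r_{i+1}\,s_{i+2}$ and the rung $s_{i+1}$ contains a $V_6$ missing $e$.

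For part (\ref{it:2halfJump3/2}), suppose $P'$ is crossed in $D$. By part (\ref{it:2halfJump1}) the crossed edge of $P'$ is an edge $f$ of $P'\cap r_{i+2}$; let $g$ be its partner. If $g\notin H$ then $H\subseteq(H\cup P)-g$, contradicting that $D$ with $g$ removed is a planar embedding; so $g\in H$, the path $P$ is drawn cleanly, and $D[H]$ is a $1$-drawing of $V_8$ whose unique crossing $f\times g$ meets the rim branch $r_{i+2}$. Lemma \ref{lm:1drawingsV2n} (with $n=4$) then forces $g\in r_{i+5}\cup r_{i+6}\cup r_{i+7}$. Since $P$ is clean it lies in the closure of a single face of $D[H]$, with $v_i=x$ and $y$ on the boundary of that face, and $y$ lies on the $v_{i+3}$-side of $r_{i+2}$, beyond the crossing. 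Inspecting the (essentially unique) $1$-drawing of $V_8$ realising a crossing of $r_{i+2}$ with $r_{i+5}$, and likewise with $r_{i+6}$, one finds no face incident with both $v_i$ and that side of $r_{i+2}$ — a contradiction. Hence $g\in r_{i+7}=r_{i-1}$, which is exactly ``$P'\cap r_{i+2}$ crosses $r_{i-1}$'', and in this surviving drawing the spoke $s_{i+3}=v_{i+3}v_{i-1}$ is the one with both ends on the same side of the crossed rim; reading this off shows $s_{i+3}$ is exposed in $D$.

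The routine parts need only that the correct detour be identified (together with a small case split over the position of $e$ relative to $x$ and $y$). The real work is part (\ref{it:2halfJump3/2}): one must first pin down $g\in H$, then use Lemma \ref{lm:1drawingsV2n} to confine $g$ to three rim branches, and finally carry out a careful, picture-by-picture analysis of the few $1$-drawings of $V_8$ — exploiting the position of $y$ relative to the crossing — both to eliminate two of the three candidates and to extract the exposedness of $s_{i+3}$.
\end{iproof}
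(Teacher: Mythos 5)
Your overall strategy — exhibit, for each edge $e$ that is to be shown clean, a subdivision of $V_6$ inside $(H\cup P)-e$ (equivalently, a $V_6$ in $H\cup P$ avoiding the relevant part of $P'$) — is exactly the paper's, and your treatments of parts (\ref{it:atMostTwo}) and (\ref{it:V8yellow}) amount to the paper's constructions. Your part (\ref{it:2halfJump3/2}) is a legitimate variant worth noting: the paper invokes parts (\ref{it:2halfJump1}) and (\ref{it:V8yellow}) to exclude $r_{i+5}$ and then uses the cycle $r_{i+3}\,r_{i+4}\,r_{i+5}\,r_{i+6}\,s_{i+3}$ to separate $x$ from $y$ and exclude $r_{i+6}$, whereas you exclude both, and read off the exposedness of $s_{i+3}$, directly from the face structure of the three candidate $1$-drawings of $V_8$; both routes are sound.

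However, your construction for part (\ref{it:2halfJump1}) fails. The union $U$ of the pieces you list is precisely $H\cup P$ minus the interior of $\cc{v_{i+2},r_{i+2},y}$, and for any edge $e$ of $r_i\,r_{i+1}$ the graph $U-e$ is planar, so it cannot contain a $V_6$ missing $e$. Concretely, for $e\in r_{i+1}$, after pruning the pendant pieces created by deleting $e$, what survives is the cycle $P^{\ast}\cup r_{i+3}\,r_{i+4}\,r_{i+5}\,r_{i+6}\,r_{i+7}$ together with the chords $s_i=\{v_i,v_{i+4}\}$, $s_{i+3}=\{v_{i+3},v_{i+7}\}$ and, via $r_i$ followed by $s_{i+1}$, a chord $\{v_i,v_{i+5}\}$; the two chords at $v_i$ are nested and may be drawn on one side of the cycle with $s_{i+3}$ on the other, a planar configuration. (For $e\in r_i$ the third chord degenerates to a $\{v_{i+5},v_{i+6}\}$-ear and the same collapse occurs.) The root of the problem is that your chord-path $r_i\,r_{i+1}\,s_{i+2}$ both shares the end $v_i$ with the spoke $s_i$ and runs through the very edges whose cleanliness is being proved. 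The repair is the paper's choice of third spoke: prolong $s_{i+2}$ along $\cc{v_{i+2},r_{i+2},y}$ to obtain a chord from $v_{i+6}$ to $y$, which lands on the rerouted rim $(R-\oo{P'})\cup P$ at $y$, pairwise interlaces with $s_i$ and $s_{i+3}$, and avoids $r_i\,r_{i+1}$ altogether. Since your part (\ref{it:2halfJump3/2}) cites part (\ref{it:2halfJump1}), this gap propagates there as well until it is fixed.
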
\printFullDetails{

\begin{cproof}  Let $x$ and $y$ be the ends of $P$ and let $R'=(R-\oo{P'})\cup P$.  For (1) and (2), we find three spokes to add to $R'$ to find a subdivision of  $V_6$ disjoint from $P'$ --- or at least some part of $P'$.   The part of $P'$ disjoint from the $V_6$ cannot be crossed in any 1-drawing of $H$.

For (\ref{it:atMostTwo}), if $P'$ contains at most one $H$-node, then this is easy:  any three $H$-spokes not having an end in $P'$ will suffice.  If $P'=r_i\,r_{i+1}$, then the three $H$-spokes $s_i$, $s_{i+2}$, and $s_{i+3}$ suffice.  

In the remaining case, $P'$ has precisely two $H$-nodes.  We may express $P'$ in the form $$P'=\cc{x,r_{j-1},v_j}\rbsp r_j\lbsp \cc{v_{j+1},r_{j+1},y},$$ where either of $\cc{x,r_{j-1},v_j}$ and $\cc{v_{j+1},r_{j+1},y}$ might be a single vertex.  In this case, the spokes are $s_{j+2}$, $s_{j+3}$ and $s_{j+1}\lbsp\cc{v_{j+1},r_{j+1},y}$, showing that $\cc{x,r_{j-1},v_j}\rbsp r_j$ is not crossed in $D$, while replacing $s_{j+1}\lbsp \cc{v_{j+1},r_{j+1},y}$ with $\cc{x,r_{j-1},v_j}\rbsp s_j$ shows $\cc{v_{j+1},r_{j+1},y}$ is not crossed in $D$.  This completes the proof of (\ref{it:atMostTwo}).

\wordingrem{(Order changed.)}  For (\ref{it:V8yellow}), replace $R'$ with $(R'-\oo{r_{i+4}})\cup (s_i\,r_i\,s_{i+1})$.  We now need three spokes.  \minor{If there is a third $H$-node in $P'$, then symmetry allows us to assume it is $v_{i-1}$.  In either case, we choose $s_{i-1}$, $\cc{v_{i+1},r_{i+1},y}$, and $s_{i+2}$ as the three spokes for the $V_6$.  This $V_6$ avoids $r_{i+4}$, showing it is not crossed in $D$.}

%If the only $H$-nodes in $P'$ are \wording{$v_i$ and $v_{i+1}$,  then we take the spokes to be $\cc{x,r_{i-1},v_i}$, $\cc{v_{i+1},r_{i+1},y}$, and $s_{i+2}$.  This $V_6$} shows \wording{that $r_{i+4}$} is not crossed in $D$.

%\wording{Otherwise, by symmetry, we may suppose $P'=r_{i-1}\,r_{i}\lbsp\cc{v_{i+1},r_{i+1},y}$, with $y\in \oo{r_{i+1}}$.  In this case, we choose $s_{i-1}$, $\cc{v_{i+1},r_{i+1},y}$ and $s_{i+2}$ as the spokes to show that $r_{i+4}$ is not crossed in $D$.}

For (\ref{it:2halfJump}), $x=v_i$ and the hypotheses imply that $y\in \oo{r_{i+2}}$.  For (\ref{it:2halfJump1}), we may use the spokes $s_i$,  $s_{i+2}\lbsp\cc{v_{i+2},r_{i+2},y}$, and $s_{i+3}$ to see that $r_i\,r_{i+1}$ is not crossed in $D$, as required.  

%{For (\ref{it:2halfJump2}), let $H'$ be the $V_6$ consisting of $(R-\oo{P'})-\oo{r_{i+5}}$, together with $P$ and $s_{i+1}\,r_{i+1}\,s_{i+2}$ as the 6-cycle and $r_i$, $r_{i+2}$, and $s_{i+3}$ as the three spokes.  Since no edge of $r_{i+5}$ is in $H'$, $r_{i+5}$ is not crossed in $D$.}

For (\ref{it:2halfJump3/2}), suppose $P'$ is crossed in $D$.  \wording{Part (\ref{it:2halfJump1}) shows} that it must be $P'\cap r_{i+2}$ that is crossed and (\ref{it:V8yellow}) \wordingrem{(phrase removed)}shows that \wording{$r_{i+5}=r_{i-3}$} is not crossed in $D$.  We need only show that $r_{i-2}$ is also not crossed in $D$.  If it were, then \wordingrem{(text removed)}%, letting $x$ be the end of $P$ in $r_{i+2}$, 
\wording{$\cc{v_{i+2},r_{i+2},y}$} crosses $r_{i-2}$.  But then the cycle $r_{i+3}\,r_{i+4}\,r_{i-3}\,r_{i-2}\,s_{i-1}$ separates \wording{$v_i=x$ from $y$} in $D$, showing that $P$ is also crossed in $D$, a contradiction.
\ignore{
For (\ref{it:2halfJump2}), we have the additional path $P''$.  In this case, we \wording{make our $V_6$ from $R'$ and} the spokes $s_i$, $s_{i+3}$ and $P''$ to see that $\cc{v_{i+2},r_{i+2},y}$ is not crossed in $D$.}
 \end{cproof}

\begin{cproofof}{Theorem \ref{th:expSpokeNoAtt}} This is obvious if no spoke is exposed in $\Pi$, so we may suppose $s_0$ is exposed. 

\major{\begin{claim}\label{cl:noPaths} There is no $H$-avoiding $\oo{s_0}\oc{v_1,r_1,v_2}$- or $\oo{s_0}\co{v_3,r_3,v_4}$-path.  \end{claim}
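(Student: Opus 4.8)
The plan is to derive a contradiction by exhibiting a \emph{box} and appealing to Lemma~\ref{lm:noBox}. Suppose such a path existed. The relabelling $v_i\mapsto v_{5-i}$ (indices mod $10$) is an automorphism of $V_{10}$ that fixes $s_0$, preserves the standard labelling of $\Pi$, and carries $\oc{v_1,r_1,v_2}$ onto $\co{v_3,r_3,v_4}$, so it is enough to rule out an $H$-avoiding path $P$ from some $x\in\oo{s_0}$ to some $y\in\oc{v_1,r_1,v_2}$. First I would locate $P$ in $\Pi$: since $x$ is interior to $s_0$ and $y$ lies on $r_1$, the $H$-bridge $B$ containing $P$ must be drawn inside the $H$-face bounded by $r_0\,r_1\,r_2\,r_3\,r_4\,s_0$, as this is the only $H$-face incident with both $\oo{s_0}$ and $r_1$; hence every attachment of $B$ lies on that cycle.

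Next I would form the cycle $C:=\cc{v_5,s_0,x}\,P\,\cc{y,r_1,v_2}\,r_2\,r_3\,r_4$ and let $M$ be the $C$-bridge carrying the remainder of $H$, namely $r_0$, $r_5,\dots,r_9$, the spokes $s_1,\dots,s_4$, and the two remnants $\cc{v_0,s_0,x}$ and $\cc{v_1,r_1,y}$; then $H\cup P\subseteq C\cup M$. The attachments of $M$ are $x,y,v_2,v_3,v_4,v_5$, so $\Nuc(M)$ contains the cycle $r_0\,s_1\,r_6\,r_7\,r_8\,r_9$, which is a rim arc together with a spoke joining its ends and hence non-contractible in $\Pi$. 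By Lemma~\ref{lm:CdisjointNCcycle}, $C$ is then contractible, $C$ has BOD, $M$ is its only non-planar bridge, and every other $C$-bridge is planar.

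What remains, and what I expect to be the main obstacle, is to check that $C$ is a $(C\cup M)$-prebox, i.e.\ that $(H\cup P)-e$ is non-planar for every edge $e$ of $C$. When $e\in P$ all of $H$ survives; when $e$ lies on $\cc{v_5,s_0,x}$ the subgraph $R\cup s_1\cup s_2\cup s_3\cup s_4$, a subdivision of $V_8$, survives. The delicate cases are when $e$ lies on $r_2$, $r_3$, $r_4$, or $\cc{y,r_1,v_2}$: there one must use the chord $P$ together with the two half-spokes $\cc{v_0,s_0,x}$, $\cc{x,s_0,v_5}$ of $s_0$ (and $r_0$) to reroute past $e$ and produce a subdivision of $V_6$ whose rim is $R$ with the branch meeting $e$ replaced. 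I would extract these reroutings from Lemma~\ref{lm:technicalV8colour}, applied to the subdivision of $V_8$ given by $H-\oo{s_0}$ and to the $H$-avoiding paths $\cc{v_0,s_0,x}\,P$ and $\cc{x,s_0,v_5}\,P$ (from $v_0$ and from $v_5$ to $y$).

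With the prebox property in hand, I would finish as follows. If there is any $C$-bridge besides $M$ it is planar, so $C$ is a box, contradicting Lemma~\ref{lm:noBox}. Otherwise $M$ is the only $C$-bridge, forcing $G=H\cup P$; but redrawing the M\"obius-band part of $\Pi$ with a single crossing (leaving $P$, which lies in the disc $\Disc$, clean) yields a $1$-drawing of $G$, so $\crn(G)\le1$, contradicting $G\in\m2$. Either way the assumed path does not exist.
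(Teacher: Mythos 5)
Your strategy (manufacture a box and invoke Lemma~\ref{lm:noBox}) is not the one the paper uses, and the step you yourself flag as ``the main obstacle'' genuinely fails for the cycle you chose. Take $y\in\oo{r_1}$ (the claim allows any $y\in\oc{v_1,r_1,v_2}$) and let $e$ be an edge of $\cc{y,r_1,v_2}$, which lies on your $C$. After deleting $e$ and pruning, $(H\cup P)-e$ is exactly $V_{10}$ with the rim branch $r_1$ removed and a new branch, namely $\cc{v_1,r_1,y}\cup P$, joining $v_1$ to the interior vertex $x$ of $s_0$. This graph is \emph{planar}: in the ladder drawing of $V_{10}$ in which $r_1$ crosses $r_6$, deleting $r_1$ leaves a planar embedding in which $Q_0=r_0\,s_1\,r_5\,s_0$ bounds a face incident with both $v_1$ and all of $s_0$, so the new $v_1x$-branch can be added inside that face without crossings. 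Hence no rerouting extracted from Lemma~\ref{lm:technicalV8colour} can produce a $V_6$ in $(H\cup P)-e$, and the prebox property cannot be verified for those edges of $C$. (The other cycle through $P$, namely $\cc{v_0,s_0,x}\,P\,\cc{y,r_1,v_1}\,r_0$, does satisfy the prebox condition: the suppressed $V_{10}-r_1$ is cubic, $3$-connected and planar, so its embedding is unique, and in it $v_2$ and $\oo{s_0}$ share no face, which makes the analogous deletions non-planar. So this half is repairable, but only with that choice of cycle.)

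Even after switching cycles, your endgame has a second gap: if $M$ is the only $C$-bridge, that forces $G=C\cup M$, not $G=H\cup P$ --- the bridge $M$ may contain arbitrarily much of $G$ --- so you cannot conclude $\crn(G)\le 1$ by redrawing $H\cup P$; and without a planar $C$-bridge the box machinery is vacuous, since Lemma~\ref{lm:noBox} needs a planar bridge whose complement is a proper subgraph. The paper avoids all of this with a short drawing argument: delete an edge of $s_3$ and take a $1$-drawing $D$ of the result; Theorem~\ref{th:BODquads} and Lemma~\ref{lm:BODcrossed} force $\bQ_3$ to be crossed, so $r_1\,r_2\,r_3\,r_4$ crosses $r_6\,r_7\,r_8\,r_9$; since $P$ is $H$-avoiding it is clean, which pushes the crossing onto $r_1$, whence $s_2$ is exposed in $D$ and the clean cycle $r_4\,r_5\,s_1\,r_0\,r_9\,s_4$ separates $x$ from $y$, forcing $P$ to be crossed after all. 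You would do better to adopt that route than to patch the box construction.
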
}

\begin{proof}  \wording{By symmetry, it suffices to prove only one.}  By way of contradiction, we suppose that there is an $H$-avoiding path $P$ from $x\in \oo{s_0}$ to $y\in \oc{v_1,r_1,v_2}$.   

Let $e\in s_3$ and consider a 1-drawing $D$ of $G-e$.  By Lemma \ref{lm:BODcrossed} and Theorem \ref{th:BODquads} (\ref{it:exposedBQ3}), we know that $\bQ_3$ is crossed in $D$.  This implies that $r_1\,r_2\,r_3\,r_4$   crosses $r_6\,r_7\,r_8\,r_9$.   This already implies neither $s_0$ nor $s_1$ is exposed in $D$.   Furthermore, the crossing is of two edges in $R$ and, since $P$ is $H$-avoiding, we conclude that $D[P]$ is not crossed in $D$.  Therefore, the end of $P$ in $\oc{v_1,r_1,v_2}$ must occur in the interval of $r_1\,r_2\,r_3\,r_4$ between the crossing and $v_5$; that is, the crossing must involve an edge of $r_1$. In particular, $r_2\,r_3\,r_4\,r_5$ is not crossed in $D$.  

\minor{Since $\bQ_3$ is crossed in $D$ and $r_1$ is crossed in $D$, the other crossing edge is in $r_7\,r_8$.  Thus it is in $r_6\,r_7\,r_8$.   It follows that $s_2$ is exposed in $D$.  Thus, the cycle $r_4\,r_5\,s_1\,r_0\,r_9\,s_4$  separates $x$ from $y$ in $D$, showing $P$ is crossed in $D$, a contradiction.}  \end{proof}

It follows from Claim \ref{cl:noPaths} that, if there is an $H$-avoiding path $P_0$ joining $x\in\oo{s_0}$ to $y\in \oo{r_1\,r_2\,r_3}$, then $y\in \oo{r_2}$.  Let $K=H\cup P_0$.   See Figure \ref{KinPP}.  

\begin{figure}[!ht]
\begin{center}
\scalebox{1.0}{\input{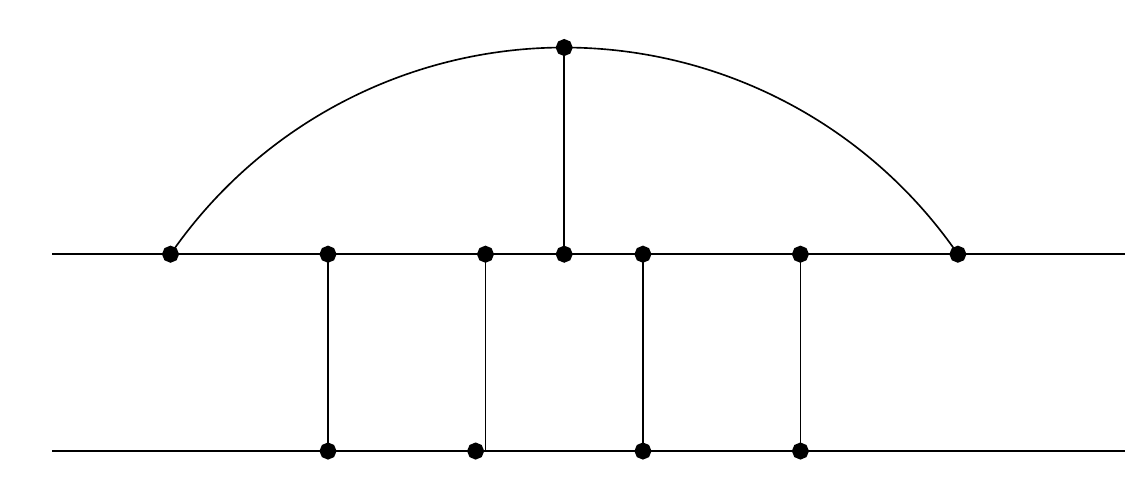_t}}
\end{center}
\caption{The subgraph $K$ of $G$ in $\pp$.}\label{KinPP}
\end{figure}

Let $J_1$ and $J_2$ be the two cycles $r_0\,r_1\,\cc{v_2,r_2,y,P_0,x,s_0,v_0}
$  and $r_4\,r_3\,\cb{v_3,r_2,y,P_0,}$ $\bc{x,s_0,v_5}$, respectively.

\begin{claim}\label{cl:twoFaces} The cycles $J_1$ and $J_2$ 
both bound faces of $G$ in $\pp$. \end{claim}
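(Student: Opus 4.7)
The plan is to reduce Claim \ref{cl:twoFaces} directly to Lemma \ref{lm:greenCycles} (\ref{it:CboundsFace}) by verifying that both $J_1$ and $J_2$ are $H$-green cycles of $G$. Since $\Pi$ is $H$-friendly by hypothesis, that lemma then immediately yields that each bounds a face of $\Pi$. Notice that Claim \ref{cl:noPaths} has already done the real work: the fact that $y\in\oo{r_2}$ (rather than anywhere in $\oo{r_1r_2r_3}$) is exactly what will keep the $H$-node count on the rim portion of each cycle down to $3$, so that option (i) of Definition \ref{df:green} (\ref{it:P1options}) applies.

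For $J_1$, I would use the decomposition $P_1^{J_1}P_2^{J_1}P_3^{J_1}P_4^{J_1}$ with
$P_1^{J_1}=\cc{v_0,r_0,v_1,r_1,v_2,r_2,y}$,
$P_2^{J_1}$ the trivial path at $y$,
$P_3^{J_1}=P_0$, and
$P_4^{J_1}=\cc{x,s_0,v_0}$.
Then $P_1^{J_1}\subseteq R$ has positive length and contains exactly the three $H$-nodes $v_0,v_1,v_2$; $P_2^{J_1}\cup P_4^{J_1}\subseteq s_0\subseteq H$; $P_3^{J_1}$ is $H$-avoiding by hypothesis on $P_0$; and $P_2^{J_1}P_3^{J_1}P_4^{J_1}$ is $R$-avoiding because $P_0$ is $H$-avoiding and the interior of $\cc{x,s_0,v_0}$ lies in $\oo{s_0}$, which meets $R$ only at the $H$-nodes $v_0,v_5$.

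The verification for $J_2$ is entirely symmetric. I would use
$P_1^{J_2}=\cc{v_5,r_4,v_4,r_3,v_3,r_2,y}$ (containing the three $H$-nodes $v_3,v_4,v_5$),
$P_2^{J_2}$ trivial at $y$,
$P_3^{J_2}=P_0$, and
$P_4^{J_2}=\cc{x,s_0,v_5}$, and all four conditions of Definition \ref{df:green} are again immediate.

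I do not expect any genuine obstacle: everything follows by inspection of the definition of $H$-green cycle. The only subtle point worth flagging is that the argument silently uses Claim \ref{cl:noPaths} via the restriction $y\in\oo{r_2}$; had $y$ been permitted to lie in $\oo{r_1}$ or $\oo{r_3}$, the corresponding rim portion of one of $J_1,J_2$ would include a fourth $H$-node, forcing us into the exceptional case of Definition \ref{df:green} (\ref{it:P1options})(b), which would not be available here.
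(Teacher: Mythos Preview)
Your proposal is correct and takes essentially the same approach as the paper: the paper's proof is simply ``These cycles are both $H$-green, so this is just Lemma \ref{lm:greenCycles} (\ref{it:CboundsFace}).'' You have just unpacked the verification that $J_1$ and $J_2$ satisfy Definition \ref{df:green}, which the paper leaves to the reader, and your observation about the role of Claim \ref{cl:noPaths} in keeping the $H$-node count at three is a correct and useful piece of context.
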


\begin{proof}  These cycles are both $H$-green, so this is just Lemma \ref{lm:greenCycles} (\ref{it:CboundsFace}).\end{proof}

The following claim completes the determination of the $(H\cap \Mob)$-bridge containing $s_0$.

\begin{claim}\label{cl:s0inK13}  The \wording{$(H-\oo{s_0})$-bridge} containing $s_0$ is $s_0\cup P_0$. \end{claim}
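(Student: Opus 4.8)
The plan is the following. By Claim~\ref{cl:noPaths} we know $y\in\oo{r_2}$, so $s_0\cup P_0$ is a subdivision of $K_{1,3}$ with centre $x\in\oo{s_0}$ and talons $v_0$, $v_5$, $y$. Since $H-\oo{s_0}$ is a subdivision of $V_8=V_{2\cdot4}$, Lemma~\ref{lm:threeAtts} applies to its bridges; thus it is enough to prove that the $(H-\oo{s_0})$-bridge $B$ containing $s_0$ has attachment set exactly $\{v_0,v_5,y\}$, for then $B\cong K_{1,3}$ by Lemma~\ref{lm:threeAtts}(3), and as $s_0\cup P_0\subseteq B$ is already such a subdivided claw, $B=s_0\cup P_0$.

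Next I would confine $B$ using the geometry. By Claim~\ref{cl:twoFaces}, $J_1$ and $J_2$ bound faces of $\Pi$; writing $\Disc_1$ for the closed sub-disc of $\Disc$ cut off by $s_0$ on the side carrying $P_0$ and $\Disc_2$ for the other sub-disc (so $\partial\Disc_1=s_0\cup r_0r_1r_2r_3r_4$ and $\partial\Disc_2=s_0\cup r_5r_6r_7r_8r_9$), those two faces together with the arc $P_0$ exhaust $\Disc_1$, whence $G\cap\Disc_1=s_0\cup r_0r_1r_2r_3r_4\cup P_0$. Since $\Nuc(B)$ is connected, disjoint from $H-\oo{s_0}$ (hence from $R$ and from $s_1,\dots,s_4$), and meets $\oo{s_0}$, its portion in $\Disc^\circ$ is $\oo{P_0}$ together with a subgraph of $\Disc_2^\circ$; so $B=(s_0\cup P_0)\cup(B\cap\Disc_2^\circ)$ and any attachment of $B$ other than $v_0,v_5,y$ lies on $r_5r_6r_7r_8r_9$.

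Now suppose for contradiction that $B\cap\Disc_2^\circ\neq\varnothing$. Then $B$ contains an $H$-bridge $A$ embedded in $\Disc_2^\circ$ and attaching to $\oo{s_0}$; because $s_0$ is a branch of the $V_{10}$-subdivision $H$ (its interior vertices have degree $2$ in $G$), $A$ cannot attach only to $\oo{s_0}$, so $A$ has an attachment $z$ on $r_5r_6r_7r_8r_9$ with $z\notin\{v_0,v_5\}$. From here I would produce a box and invoke Lemma~\ref{lm:noBox}: take an $H$-avoiding path $Q\subseteq\overline{\Disc_2}$ inside $A$ from a vertex of $\oo{s_0}$ to $z$, and form the cycle $C$ consisting of $Q$, a subpath of $s_0$ and a subpath of $r_5r_6r_7r_8r_9$ (chosen to run to whichever of $v_0$, $v_5$ keeps any possible edge $v_6v_9$ off $C$). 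A piece of $B\cap\Disc_2^\circ$ disjoint from $C$ gives a planar $C$-bridge $B_0$ (it lies in the disc $\overline{\Disc_2}$), and one shows $C$ has BOD from the fact that $C$ is contractible together with Corollary~\ref{co:contractibleBOD}: the only $C$-bridge whose union with $C$ is non-planar is the one carrying $H-\oo{s_0}$, once stray bridges embedded in the $\Pi[H]$-face bounded by $\bQ_0$ (the possible edge $v_6v_9$) are excluded using Corollary~\ref{lm:diagonal} and the fact that $\bQ_2,\bQ_3$ have BOD (Theorem~\ref{th:BODquads}). For the prebox condition: when $z\in\oo{r_5}$ or $z\in\oo{r_9}$ the cycle $C$ is $H$-close and Lemma~\ref{lm:closeIsPrebox} applies; when $z\in\oo{r_6r_7r_8}$ one must exhibit, for each edge $e$ of $C$, a subdivision of $V_6$ avoiding $e$ inside $\comp{B_0}$, routing through $P_0$ and through $B\cap\Disc_2^\circ$ when $e$ is a rim edge.

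The hard part is this last step: once the forbidden configuration $A$ is present, one must exhibit the right box. Establishing BOD of $C$ needs the stray-bridge analysis in the $\bQ_0$-face, and verifying the prebox condition for the rim edges of $r_5r_6r_7r_8r_9$ is genuinely delicate, since there $H-e$ alone is planar ($V_{10}$ with one rim branch removed is planar) and the required $V_6$ has to be routed through $P_0$ and through the extra structure $A$.
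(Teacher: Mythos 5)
Your opening move --- reducing via Lemma \ref{lm:threeAtts} to the existence of a fourth attachment $z$ of $B$ beyond $v_0$, $v_5$, $y$ --- is exactly the paper's first step, but from there the argument has genuine gaps. First, you allow $z$ to sit anywhere on $r_5\,r_6\,r_7\,r_8\,r_9$ and then concede you cannot complete the case $z\in\oo{r_6\,r_7\,r_8}$; that case should never arise. Since $a,b\in R\subseteq H-\oo{s_0}$, the nucleus of $B$ misses $\{a,b\}$ and hence misses $\gamma$, so it lies in a single face of $\Pi[H-\oo{s_0}]\cup\gamma$, namely the one containing $\oo{s_0}$, which is bounded by $\cc{a,r_9,v_0}\rbsp r_0\,r_1\,r_2\,r_3\,r_4\lbsp\cc{v_5,r_5,b}\rbsp\alpha$; together with Claim \ref{cl:twoFaces} this forces $z\in\co{a,r_9,v_0}\cup\oc{v_5,r_5,b}$, and the delicate case evaporates. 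Second, your box construction presupposes a planar $C$-bridge $B_0$ obtained from ``a piece of $B\cap\Disc_2^\circ$ disjoint from $C$''; in the minimal configuration where the offending attachment is realized by a single edge from $\oo{s_0}$ to $z$, no such piece exists, $C$ may well bound a face of $\Pi[G]$, and then $C$ has no planar bridge at all, so Lemma \ref{lm:noBox} cannot be invoked. Third, the prebox verification for the rim edges --- which you yourself identify as the crux --- is not carried out, and it is the genuinely hard part, since $H$ minus a rim edge is planar and the needed $V_6$'s must be manufactured from $P_0$ and the extra structure in every case.

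The paper closes the argument quite differently, splitting on whether $z=v_9$. If $z\ne v_9$, it replaces $s_0$ by the $zv_5$-path in $P\cup s_0$ and $P_0$ by the corresponding path to $y$, obtaining a new subdivision $H'$ of $V_{10}$ that is still $\Pi$-friendly by Lemma \ref{lm:greenCyclesFriendly} (\ref{it:nearlyFriendly}); the analogue of the green cycle $J_1$ for $H'$ then fails to bound a face, contradicting Lemma \ref{lm:greenCycles} (\ref{it:CboundsFace}). If $z=v_9$, it extracts from $P\cup(s_0-e)\cup P_0$ a claw $Y$ with talons $v_9$, $y$, $v_5$ and shows via Lemma \ref{lm:technicalV8colour}, applied to the $V_8$ given by $H-\oo{s_0}$, that $(H-\oo{s_0})\cup Y$ admits no 1-drawing, contradicting 2-criticality of $G$ on a proper subgraph. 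You would need arguments of this kind --- not the box machinery --- to finish the proof.
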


\begin{proof} Suppose not and let $B$ be the \wording{$(H-\oo{s_0})$-bridge} containing $s_0$.  Then Lemma \ref{lm:threeAtts} implies that \wording{$B$ has an attachment $z$ other than $v_0$, $y$, and $v_5$}.  By Claim \ref{cl:twoFaces}, \wording{$z\in\co{a,r_9,v_0}\cup \oc{v_5,r_5,b}$}; by symmetry we may assume the former.  Let $P$ be a $K$-avoiding \minor{$z\oo{s_0}$-path}.  

Suppose $z=v_9$.
 Let $e$ be the edge of $s_0$ incident with $v_0$.  We show that $\crn((K\cup P) -e)\ge 2$.  As this is a proper subgraph of $G$, we contradict the fact that $G$ is \2cc.  In $P\cup (s_0-e)\cup P_0$, there is a \wording{claw $Y$} with talons $z=v_9$, $y$ and $v_5$.  We show \wording{$\crn((H-\oo{s_0})\cup Y)\ge 2$}. 
 
 By way of contradiction, we suppose $D$ is a 1-drawing of \wording{$(H-\oo{s_0})\cup Y$}.  As $H-\oo{s_0}\topol V_8$, Lemma \ref{lm:technicalV8colour} (\ref{it:atMostTwo}) \minor{implies that (using the labelling from $H$) $\cc{y,r_2,v_3}r_3\,r_4$ is not crossed in $D$, while (\ref{it:V8yellow}) of the same lemma implies neither $r_6$ nor $r_8$ is crossed in $D$.  Part  (\ref{it:2halfJump1}) implies $r_9\,r_0\,r_1$ is not crossed, while (\ref{it:2halfJump3/2}) implies (since $r_9$ is not crossed) that $\cc{v_2,r_2,y}$ is not crossed.}  The only remaining possibilities for crossed $(H-\oo{s_0})$-rim branches are $r_5$ and $r_7$.  But no 1-drawing of $H-\oo{s_0}$ has these two rim-branches crossed, the desired contradiction.

So $z\ne v_9$.  But then we may replace $s_0$ with the $zv_5$-path $s'_0$ in \wording{$P\cup s_0$} and replace $P_0$ with the $ys'_0$-path in $P_0\cup s_0$ to get a new subdivision $H'$ of $V_{10}$.  We notice that Lemma \ref{lm:greenCyclesFriendly} (\ref{it:nearlyFriendly}) implies that $\Pi$ is $H'$-friendly.  However, the analogue $J'_1$ of $J_1$ does not bound a face, contradicting Claim \ref{cl:twoFaces}. \end{proof}

\begin{claim}\label{cl:oneK}  There is a unique 1-drawing of $K$.  In this 1-drawing, $s_0$ is exposed. \end{claim}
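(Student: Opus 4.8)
The plan is to carry out three steps: first pin down the unique crossing of any $1$-drawing of $K$; then deduce from the resulting planar structure that the $1$-drawing is unique; and finally read off that $s_0$ is the exposed spoke.

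First I would note that $K=H\cup P_0$ is a proper subgraph of $G$: since $x\in\oo{s_0}$, the branch $s_0$ has an interior vertex, so $K$ has a vertex of degree $2$ and hence $K\ne G$. Therefore $\crn(K)\le1$, and since $H\cong V_{10}$ is non-planar, $\crn(K)=1$. Let $D$ be a $1$-drawing of $K$, with unique crossing $\times$. Both $J_1$ and $J_2$ are $H$-green cycles, and $J_i\cup H=K$ (every edge of $J_i$ lies in $H\cup P_0$ and $P_0\subseteq J_i$), so by Lemma \ref{lm:greenCycles}(\ref{it:prebox}) each $J_i$ is a $K$-prebox; hence, by Lemma \ref{lm:preboxClean}, $J_1$ and $J_2$ are both clean in $D$. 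As $J_1\cup J_2=r_0\cup r_1\cup r_2\cup r_3\cup r_4\cup s_0\cup P_0$, all of $r_0,\dots,r_4$, $s_0$ and $P_0$ are clean in $D$. Since $H$ is non-planar, $\times$ lies in $H$, and therefore in $r_5\cup r_6\cup r_7\cup r_8\cup r_9\cup s_1\cup s_2\cup s_3\cup s_4$. Now $D[H]$ is a $1$-drawing of a subdivision of $V_{10}$, so by Lemma \ref{lm:1drawingsV2n} the crossing $\times$ is a crossing of $r_i$ with one of $r_{i+4},r_{i+5},r_{i+6}$; the only such pair lying in $\{r_5,r_6,r_7,r_8,r_9\}$ is $\{r_5,r_9\}$, so $\times$ is a crossing of $r_5$ with $r_9$.

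Next I would establish uniqueness. Replacing $\times$ by a new degree-$4$ vertex $z$ turns $D[H]$ into a planar embedding of a graph $H^\times$; suppressing its degree-$2$ vertices yields the graph $V_{10}^\times$ obtained from $V_{10}$ by replacing the edges $v_5v_6$ and $v_9v_0$ with paths $v_5zv_6$ and $v_9zv_0$. A routine check shows $V_{10}^\times$ is a simple $3$-connected graph, so by Whitney's theorem (uniqueness of embeddings of $3$-connected planar graphs) it has an essentially unique embedding in the sphere; consequently $D[H]$ is, up to homeomorphism, the unique $1$-drawing of $H$ whose crossing is $r_5\times r_9$. In this embedding the cycle $C_0=r_0\,r_1\,r_2\,r_3\,r_4\,s_0$ is an induced, non-separating cycle (deleting its vertices leaves the $5$-cycle $zv_6v_7v_8v_9$), hence it bounds a face of $D[H]$; the only other face incident with the edge $s_0$ is the triangular face on $v_0$, $v_5$ and $z$, which is not incident with $r_2$. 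Since $P_0$ is clean and $H$-avoiding, $D[P_0]$ is an arc lying in a single face of $D[H]$ and having its ends in the interiors of $s_0$ and $r_2$; that face must therefore be the one bounded by $C_0$, and within this closed disc $D[P_0]$ is determined up to homeomorphism. Hence $D$ is unique up to homeomorphism.

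Finally, the fact that $C_0=r_0\,r_1\,r_2\,r_3\,r_4\,s_0$ bounds a face of $D[H]$ is precisely what it means for $s_0$ to be exposed in $D$; equivalently, placing a crosscap at $\times$ turns $D$ into a projective-planar embedding of $K$ in which $R$ is contractible and the disc bounded by $R$ contains no spoke other than $s_0$. The step I expect to be the main obstacle is the uniqueness argument: verifying that $V_{10}^\times$ is $3$-connected, and, more delicately, correctly listing the faces of $D[H]$ incident with $s_0$ and with $r_2$ so as to see that exactly one face is available for $P_0$. Locating the crossing, by contrast, is short once one knows $J_1$ and $J_2$ are $H$-green: the cleanness of $J_1\cup J_2$ combined with Lemma \ref{lm:1drawingsV2n} forces $\times$ to be the crossing of $r_5$ with $r_9$.
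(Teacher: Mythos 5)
Your proof is correct and follows essentially the same route as the paper's: the cleanness of the $H$-green cycles $J_1$ and $J_2$ (Claim \ref{cl:twoFaces} together with Lemma \ref{lm:greenCycles}) confines the crossing, Lemma \ref{lm:1drawingsV2n} then pins it down, and the position of the clean path $P_0$ forces $s_0$ to be exposed, your explicit Whitney-style uniqueness step being just a fuller version of the paper's terse ``which determines $D$.'' One cosmetic slip: $x$ itself has degree $3$ in $K$, so the interior vertex of $s_0$ does not by itself produce a degree-$2$ vertex of $K$; but existence of a $1$-drawing is immediate anyway (the drawing you end up describing has exactly one crossing), so nothing is lost.
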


The 1-drawing of $K$ is illustrated in Figure \ref{onlyK}.

\bigskip

\begin{figure}[!ht]
\begin{center}
\scalebox{1.2}{\input{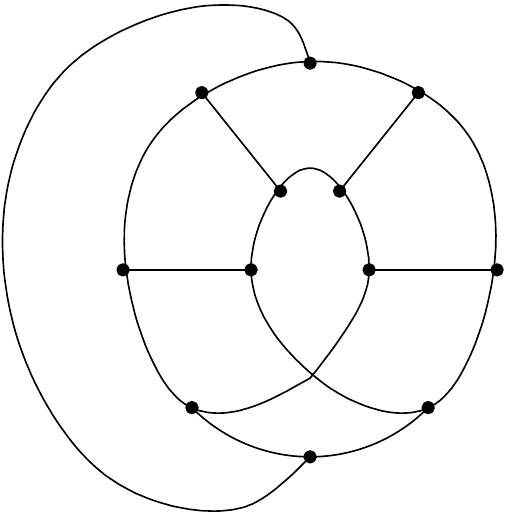_t}}
\end{center}
\caption{The 1-drawing of $K$.}\label{onlyK}
\end{figure}

\begin{proof}  If $D$ is a 1-drawing of $K$, then Claim \ref{cl:twoFaces} and Lemma \ref{lm:greenCycles} (\ref{it:notCrossed}) imply neither $J_1$ nor $J_2$ is crossed in $D$.  It follows that none of $r_0$, $r_1$, $r_2$, $r_3$, and $r_4$ is crossed in $D$.  Lemma \ref{lm:1drawingsV2n} implies $r_7$ cannot be crossed in $D$, so $Q_2$ is clean in $D$.  Therefore, $s_0$ must be in a face of $D[R\cup Q_2]$ incident with $r_2$.  This is only possible if $s_0$ is exposed, which determines $D$.  \end{proof}

\wording{For $j\in\{2,3\}$, let $D_j$ be a 1-drawing of $G-\oo{s_j}$.}

\begin{claim}\label{cl:D2}    The crossing in $D_2[(H-s_2)\cup P_0]$ is of $r_5$ with $\cc{y,r_2,v_3}$.  Likewise, the crossing in $D_3[(H-s_3)\cup P_0]$ is of $r_9$ with $\cc{v_2,r_2,y}$.\end{claim}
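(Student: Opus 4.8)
I will prove the statement about $D_2$ in detail; the statement about $D_3$ follows by the same argument with $s_2,\bQ_2,r_5,v_3,J_1$ replaced by $s_3,\bQ_3,r_9,v_2,J_2$, using that $J_2$ is $H$-green with $v_3$ interior to its $R$-part and hence (Lemma \ref{lm:greenCycles}(\ref{it:Cprebox2}), applied with $s=s_3$) a $(J_2\cup(H-\oo{s_3}))$-prebox. Throughout I regard $H-\oo{s_2}$ as a subdivision of $V_8$ whose eight branches, read cyclically, are $b_0=r_0,\ b_1=r_1r_2,\ b_2=r_3,\ b_3=r_4,\ b_4=r_5,\ b_5=r_6r_7,\ b_6=r_8,\ b_7=r_9$, and whose four spokes are $s_0,s_1,s_3,s_4$. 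Note $(H-\oo{s_2})\cup P_0$ contains both edges of whatever crossing $D_2$ has inside $H-\oo{s_2}$.

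First I would fix the shape of the crossing of $D_2$. As $H-\oo{s_2}$ is nonplanar and is a subgraph of $G-\oo{s_2}$, the unique crossing of $D_2$ lies inside $H-\oo{s_2}$; as $V_8$ minus a spoke contains a $V_6$, no spoke is crossed; hence the crossing is a crossing of two branches $b_i,b_j$. By Theorem \ref{th:BODquads}(\ref{it:bQ2}), $\bQ_2$ has BOD, and the $\bQ_2$-bridge containing $s_2$ lies in the disc of $\pp$ bounded by $\bQ_2$ (the union of the two $H$-faces bounded by $Q_1$ and $Q_2$), hence is planar; so Lemma \ref{lm:BODcrossed} forces an edge of $\bQ_2$ to be crossed in $D_2$, and since the only branches of $\bQ_2$ are $r_1,r_2,r_6,r_7$, the pair $\{b_i,b_j\}$ meets $\{b_1,b_5\}$. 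Finally, $J_1$ is $H$-green with $v_2$ interior to its $R$-part, so it is a $(J_1\cup(H-\oo{s_2}))$-prebox by Lemma \ref{lm:greenCycles}(\ref{it:Cprebox2}) (applied with $s=s_2$); as $D_2$ restricts to a $1$-drawing of that subgraph, $J_1$ is clean in $D_2$ by Lemma \ref{lm:preboxClean}. Thus $r_0,r_1,\cc{v_2,r_2,y},P_0$ and $\cc{x,s_0,v_0}$ are uncrossed in $D_2$, and the only part of $b_1$ that can be crossed is $\cc{y,r_2,v_3}$.

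Now I would apply Lemma \ref{lm:1drawingsV2n} to the $1$-drawing $D_2|(H-\oo{s_2})$ of $V_8$: the crossing is $b_i\times b_j$ with $j-i\in\{3,4,5\}\pmod 8$. Combined with the previous paragraph this leaves exactly four possibilities for the crossing of $D_2$: $\cc{y,r_2,v_3}\times r_5$, $\cc{y,r_2,v_3}\times b_5$, $\cc{y,r_2,v_3}\times r_8$, and $r_3\times b_5$. In all four of them $r_4$ and $r_9$, the spokes $s_0,s_1,s_3,s_4$, and the two arcs of $s_0$ are uncrossed, so $Q_4=r_4\,s_0\,r_9\,s_4$ is clean in $D_2$ and $D_2[Q_4]$ is a Jordan curve. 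The plan is to eliminate the last three possibilities one by one: in each, the uncrossed edges $r_0,r_1,\cc{v_2,r_2,y},P_0,s_1,s_3$ together with the case-specific uncrossed branches among $r_3,r_5,r_6,r_7,r_8$ (and $\cc{y,r_2,v_3}$, which is uncrossed in the case $r_3\times b_5$) localise each of $v_2,v_3,y,v_6,v_7,v_8$ and the attachment $x$ to a definite side of $D_2[Q_4]$; the resulting picture then contains a cycle of $D_2$-uncrossed edges that separates two vertices joined by an uncrossed path or lying on a common uncrossed cycle, forcing a second crossing --- the device used in the proofs of Claim \ref{cl:oneK} and Corollary \ref{co:QibarBOD}. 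The only surviving possibility is $\cc{y,r_2,v_3}\times r_5$, which is the assertion for $D_2$.

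The hard part is this final elimination step. For each of the three rejected crossing positions one must produce the correct separating cycle among the uncrossed edges of $D_2$ and must correctly place every remaining vertex and every $s_0$- and $P_0$-arc on one side or the other of it; a different placement makes a different (or no) second crossing available. This is the same kind of planarity bookkeeping used repeatedly in this chapter, but it has to be carried out with care.
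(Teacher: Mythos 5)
Your reduction to four candidate crossings is sound: the prebox/clean argument for $J_1$ (via Lemma \ref{lm:greenCycles}~(\ref{it:Cprebox2}) and Lemma \ref{lm:preboxClean}), the BOD argument forcing $\bQ_2$ to be crossed, and Lemma \ref{lm:1drawingsV2n} are all correctly applied, and the $V_8$ relabelling of $H-\oo{s_2}$ is correct. But the proof is not complete: the elimination of $\cc{y,r_2,v_3}\times(r_6\,r_7)$, $\cc{y,r_2,v_3}\times r_8$, and $r_3\times(r_6\,r_7)$ is exactly the content of the claim, and you explicitly defer it (``the hard part \dots has to be carried out with care'') rather than carry it out. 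As stated, the plan is also under-specified: you gesture at ``a cycle of uncrossed edges separating two vertices joined by an uncrossed path'' without identifying which cycle, which vertices, or which path in each case, and a reader cannot reconstruct the argument from this. That is a genuine gap, not a routine omission.

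The missing step has a uniform one-line resolution, which is how the paper argues. Since the unique crossing of $D_2$ is a self-crossing of $H-\oo{s_2}$, the $H$-avoiding path $P_0$ is uncrossed, so its end $y\in\oo{r_2}$ must lie on the boundary of a face of $D_2[H-\oo{s_2}]$ incident with $x\in\oo{s_0}$; if $Q_0=r_0\,s_1\,r_5\,s_0$ were clean, the only faces incident with $\oo{s_0}$ would be those bounded by $Q_0$ and $Q_4$, neither of which meets $\oo{r_2}$. Hence $Q_0$ is crossed. In each of your three rejected cases the crossing avoids $r_0\cup r_5$, so $Q_0$ is clean there and $P_0$ cannot be drawn; only $\cc{y,r_2,v_3}\times r_5$ survives. (The paper runs this slightly differently --- it first deduces $Q_0$ crossed, leaving only $r_5\times(r_1\,r_2)$ or $r_0\times(r_6\,r_7)$, and kills the latter again via $P_0$; it then locates the crossing past $y$ using $P_0$ rather than your $J_1$-prebox argument, which is a fine alternative for that last step.) If you replace your deferred bookkeeping with this $Q_0$ observation, your write-up becomes a correct proof.
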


The 1-drawings of Claim \ref{cl:D2} are illustrated in Figure \ref{D2plusP}.

\begin{figure}[!ht]
\begin{center}
\scalebox{1.2}{\input{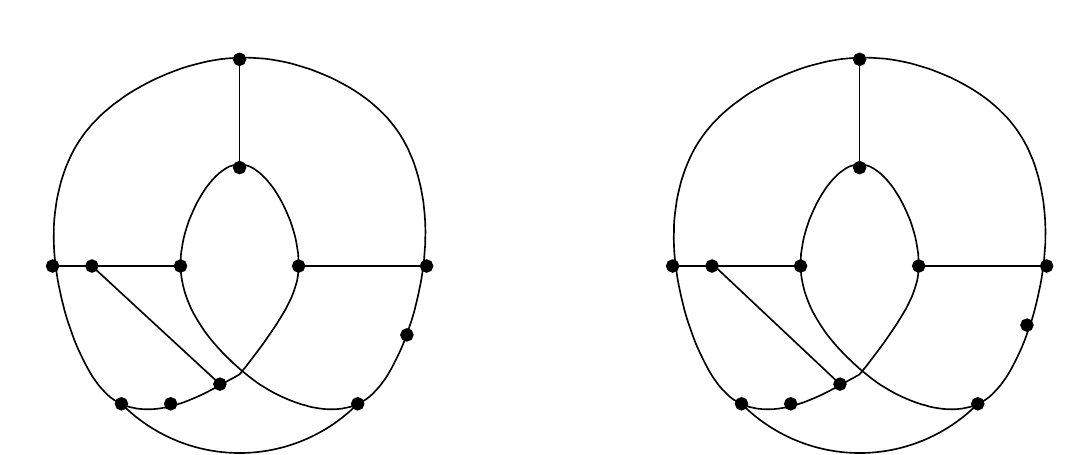_t}}
\end{center}
\caption{The 1-drawings $D_2[(K-\oo{s_2})\cup P_0]$ and $D_3[(K-\oo{s_3})\cup P_0]$.}\label{D2plusP}
\end{figure}

\bigskip
\begin{proof}  We treat the \wording{case $j=2$; the} case $j=3$ is very similar.  By Theorem \ref{th:BODquads} (\ref{it:bQ2}), $\bQ_2$ has BOD, so Lemma \ref{lm:BODcrossed} implies $\bQ_2$ is crossed in $D_2$.  This implies that $s_0$ is not exposed in $D_2$.  The $H$-avoiding path $P_0$ joins $x\in \oo{s_0}$ to $y\in\oo{r_2}$, so $y$ must be on a face incident with $s_0$.  It follows that $Q_0$ must be crossed in $D_2$.  This implies that $s_1$ is exposed.  We deduce that either $r_5$ crosses $r_1\cup r_2$ or $r_0$ crosses $r_6\cup r_7$.  In the latter case, $D_2[P_0]$ must cross $D_2[H-s_2]$, a contradiction, so it must be the former.

As $D_2[P_0]$ is not crossed, $y$ occurs between $v_1$ and the crossing in $r_1\cup r_2$, as required.  \end{proof}

The following claims help us obtain the structure of $\comp{(M_{\bQ_0})}$; we will use this to find a 1-drawing of $G$, which is the final contradiction.

%\begin{claim}\label{cl:emptyQuads} For $i=1,2,3$, there is no $Q_i$-local $H$-bridge. \end{claim}

%\begin{proof}  Suppose by way of contradiction that $B$ is a $Q_i$-local $H$-bridge.   Theorem \ref{th:BODquads} implies $Q_i$ has BOD, so Lemma \ref{lm:BODcrossed} implies that, for any 1-drawing $D$ of $\comp{B}$, $Q_i$ is crossed in $D$.  But $K\subseteq \comp{B}$, implying $D$ extends the unique 1-drawing $D'$ of $K$, illustrated in Figure \ref{onlyK}.  However, $Q_i$ is not crossed in $D'$, a contradiction. \end{proof}

\begin{claim}\label{cl:v0v6}  Suppose $B$ is a $\bQ_0$-bridge having an attachment in each of $r_9$ and $r_5$.  Then $B$ is one of $M_{\bQ_0}$,   $v_6v_9$, $v_0v_6$, and $v_5v_9$. \end{claim}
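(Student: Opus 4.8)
The plan is to assume $B\ne M_{\bQ_0}$ and to show that $B$ must then be one of the three edges $v_6v_9$, $v_0v_6$, $v_5v_9$. First I would observe that, since $\comp{(M_{\bQ_0})}$ is planar by Theorem~\ref{th:BODquads}(\ref{it:nearlyBOD}), the graph $\bQ_0\cup B\subseteq\comp{(M_{\bQ_0})}$ is planar, so $B$ is a planar $\bQ_0$-bridge; moreover $\Nuc(B)$ is disjoint from the distinct bridge $M_{\bQ_0}$, and since $H-\oo{s_0}\subseteq\bQ_0\cup M_{\bQ_0}$ while $\oo{s_0}\subseteq M_{\bQ_0}$ by Claim~\ref{cl:s0inK13}, it follows that $\Nuc(B)\cap H=\varnothing$. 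Hence $\Nuc(B)$, being connected and disjoint from $H$, lies inside a single face $f$ of $\Pi[H]$, and every attachment of $B$ lies on $\bQ_0\cap\partial f$.

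Next I would locate $f$. Because $s_0$ is exposed, suppressing $\gamma$ in the list of faces of $H\cup\gamma$ displayed after Definition~\ref{df:embedExposed} merges it into exactly six faces of $\Pi[H]$: a face bounded by $r_0\,r_1\,r_2\,r_3\,r_4\,s_0$, a face bounded by $r_5\,r_6\,r_7\,r_8\,r_9\,s_0$, the quad faces $Q_1$, $Q_2$, $Q_3$, and the face bounded by $\bQ_0$ itself. Since $B$ has an attachment in $r_9$ and one in $r_5$, the boundary of $f$ must meet both; a direct check shows none of $\partial Q_1$, $\partial Q_2$, $\partial Q_3$ meets both $r_9$ and $r_5$, so $f$ is one of the other three faces. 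If $f$ were the face bounded by $r_0\,r_1\,r_2\,r_3\,r_4\,s_0$, then $\bQ_0\cap\partial f=r_0\cup r_4$, forcing the two attachments to be $v_0$ and $v_5$; but $P_0$ runs inside the closure of this face from a point of $\oo{s_0}$ to a point of $\oo{r_2}$, splitting it into two faces, one meeting $\bQ_0$ only along $r_0$ and the other only along $r_4$, so neither could contain $\Nuc(B)$ --- a contradiction. Thus $f$ is the face bounded by $r_5\,r_6\,r_7\,r_8\,r_9\,s_0$ or the face bounded by $\bQ_0$.

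In these two remaining cases I would pin $B$ down using (i) that the arc of $\gamma$ lying in the closure of $f$ joins $a\in r_9$ to $b\in r_5$ and meets $G$ only at $a,b$, so any $H$-avoiding $r_9r_5$-path of $B$ drawn there has its two ends on the same one of the two boundary arcs this $\gamma$-arc cuts off; and (ii) that, as $B\ne M_{\bQ_0}$, the bridge $B$ appears in the $1$-drawings $D_2$ of $G-\oo{s_2}$ and $D_3$ of $G-\oo{s_3}$ of Claim~\ref{cl:D2}, in which $\bQ_0$ is crossed only as $r_5\times\cc{y,r_2,v_3}$, respectively $r_9\times\cc{v_2,r_2,y}$ (Figure~\ref{D2plusP}), which forces the face of $D_j[H-\oo{s_j}]$ containing $\Nuc(B)$ to meet $\bQ_0$ only at $\{v_0,v_5,v_6,v_9\}$. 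Together these should show that $\att(B)$ consists of exactly one vertex of $\{v_0,v_9\}$ and one of $\{v_5,v_6\}$; by Lemma~\ref{lm:threeAtts}, $B\cong K_2$, so $B$ is one of $v_0v_5$, $v_0v_6$, $v_5v_9$, $v_6v_9$. The remaining possibility $B=v_0v_5$ would be excluded separately, since an edge joining $v_0$ to $v_5$ could only be drawn (consistently with $\Pi$, $\gamma$ and $P_0$) in the sub-region of the face bounded by $r_5\,r_6\,r_7\,r_8\,r_9\,s_0$ that is cut off by $s_0$ and the $\gamma$-arc, and then $\Nuc(B)$ would sit on the wrong side of the $r_5$- or $r_9$-crossing in $D_2$ or $D_3$.

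The hard part will be this last step: correctly extracting from the two concrete $1$-drawings $D_2$, $D_3$ (together with the fixed embedding $\Pi$ and the path $P_0$) the precise set of boundary vertices available as attachments of $B$, and cleanly ruling out the "spoke-parallel" edge $v_0v_5$.
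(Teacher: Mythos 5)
There is a genuine gap at the decisive step. Your reduction via the embedding and the drawings $D_2$, $D_3$ does \emph{not} force $\att(B)$ to consist of exactly one vertex of $\{v_0,v_9\}$ and one of $\{v_5,v_6\}$. Intersecting the constraints from $D_2$ and $D_3$ yields four cases, and in two of them the attachment set is only confined to a path, not to a pair of vertices: namely $v_0,v_6\in\att(B)$ with $\att(B)\subseteq r_0\cup s_1$, and symmetrically $v_5,v_9\in\att(B)$ with $\att(B)\subseteq r_4\cup s_4$. Nothing in your argument excludes a third attachment $z\in\oo{v_0,r_0,v_1,s_1,v_6}$, in which case Lemma~\ref{lm:threeAtts} makes $B$ a claw with talons $v_0$, $v_6$, $z$ rather than the edge $v_0v_6$; such a claw is planar, has nucleus disjoint from $H$, sits in the face $F'$ of $D_2$ and the corresponding face of $D_3$, and is consistent with every constraint you list. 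Ruling it out is the hardest part of the paper's proof: one deletes the edge of $B$ at $z$, studies the bipartition of $OD(Q_0)$, shows that the $Q_0$-bridges overlapping $B$ inside $D[Q_0]$ have all attachments in a single one of $r_0$ and $s_1$, and then flips whole components of $OD(Q_0)-\{M_{Q_0},B\}$ across $D[Q_0]$ until the deleted edge can be reinserted without a new crossing, contradicting $\crn(G)\ge 2$. This roughly page-long argument is entirely absent from your proposal, and your closing remark treats the remaining work as a bookkeeping computation from $D_2$, $D_3$, which it is not.

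Your exclusion of $v_0v_5$ is also shakier than needed. Since $v_0v_5$ would be a single edge, it has empty nucleus, so "$\Nuc(B)$ would sit on the wrong side of the crossing" does not apply. The paper's argument is cleaner: if $v_0v_5\in E(G)$, take a 1-drawing $D$ of $G-v_0v_5$; by Claim~\ref{cl:s0inK13} the $(H-\oo{s_0})$-bridge containing $s_0$ is exactly $s_0\cup P_0$, so $x$ has degree $3$ and $s_0$ is the two-edge path $v_0 x v_5$, which is clean in $D$; one then draws $v_0v_5$ alongside $s_0$ to get a 1-drawing of $G$, a contradiction. Up to these two points your overall strategy (localize $B$ in a face, then intersect the constraints from $D_2$ and $D_3$) matches the paper's first half, but the proposal as written does not prove the claim.
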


\begin{proof}  We note that $s_0\cup P_0\subseteq M_{\bQ_0}$.  Either $B=M_{\bQ_0}$, or, in the drawing $D_2$, $B$ is in a face of $D_2[(H-s_2)\cup P_0]$ incident with both $r_9$ and $r_5$.  There are only two such faces, namely $F$, bounded by $Q_4$, and $F'$, the other face incident with $r_9$.  Whichever face $B$ is in, its attachments are in the intersection of $\bQ_0$ with the boundary of the containing face.  Thus, if $B$ is in $F$, then $\att(B)\subseteq  r_4\,s_4\, r_9$.  In this case, the only possibility for an attachment in $r_5$ is $v_5$, so $v_5\in\att(B)$.  If, on the other hand, $B$ is in $F'$, then $\att(B)\subseteq r_9\, r_0\, s_1$.  In this case, $v_6\in\att(B)$.  Similarly,  $D_3$ shows either $B=M_{\bQ_0}$, or  $\att(B)\subseteq r_0\, s_1\, r_5$ and $v_0\in \att(B)$, or $\att(B)\subseteq  s_4\, r_4\, r_5$ and $v_9\in\att(B)$.  Comparing these possibilities, we conclude that one of the following four cases holds for $\att(B)$:  $\att(B)=\{v_0,v_5\}$; $\att(B)=\{v_6,v_9\}$; $v_5,v_9\in\att(B)$ and $\att(B)\subseteq r_4\cup s_4$; and $v_0,v_6\in\att(B)$ and $\att(B)\subseteq r_0\cup s_1$. 

We claim $v_0v_5$ is not an $H$-bridge.  For if it were, let $D$ be a 1-drawing of $G-v_0v_5$.  Then $s_0\cup P_0$ is not crossed in $D$ and Claim \ref{cl:s0inK13} says the $(H-\oo{s_0})$-bridge containing $s_0$ is $s_0\cup P_0$.  In particular, $s_0$ consists of the two edges $v_0x$ and $xv_5$, and $x$ has degree 3 in $G$.  Thus, we can draw $v_0v_5$ alongside $s_0$, yielding a 1-drawing of $G$, a contradiction.

We must show that, if $v_0,v_6\in \att(B)$ and $\att(B)\subseteq r_0\cup s_1$, then $B=v_0v_6$.  Likewise, if $v_5,v_9\in \att(B)$ and $\att(B)\subseteq r_4\cup s_4$, then $B=v_5v_9$.
We consider the former case, the latter being completely analogous.  Corollary \ref{co:attsMissBranch} shows that $B$ can have at most one other attachment.  Lemma \ref{lm:threeAtts} shows that either $B=v_0v_6$ or $B$ is a claw with talons $v_0$, $v_6$, and $z\in \oo{v_0,r_0,v_1,s_1,v_6}$.   Since we are trying to show $B=v_0v_6$, we assume the latter.  Let $e$ be the edge of $B$ incident with $z$ and let $D$ be a 1-drawing of $G-e$.  Since $K\subseteq G-e$, $D$ extends the 1-drawing illustrated in Figure \ref{onlyK}.  We modify $D$ to obtain a 1-drawing of $G$, which is impossible.

Observe that $B-z$ is an $H$-avoiding $v_0v_6$-path $P$ (having length 2); there is only one place $D[P]$ can occur in Figure \ref{onlyK}.    Notice that $B$ is a $Q_0$-local $H$-bridge and, furthermore,  $P$ overlaps $M_{Q_0}$.

Theorem \ref{th:BODquads} shows $Q_0$ has BOD in $G$; let $(\mathcal B,\mathcal M)$ be the bipartition of $OD(Q_0)$, with $B\in \mathcal B$.  Then $M_{Q_0}\in \mathcal M$.  Every $Q_0$-bridge is drawn in $D$, with the exception that we have $B-e$ in place of $B$.  

Because we cannot add $e$ back into $D$ to get a 1-drawing of $G$, there must be an $H$-avoiding path $P'$ in $G-e$ joining the two components of $[v_0,r_0,v_1,s_1,v_6]-z$ so that $D[P']$ is on the same side --- henceforth, the {\em inside\/} --- of $D[Q_0]$ as $P$.   Let $B'$ be the $Q_0$-bridge containing $P'$.  If $B'$ has just $v_0$ and $v_6$ as attachments, then let $D$ be a 1-drawing of $G-v_0v_6$.  As we did above for $v_0v_5$, we can add $v_0v_6$ alongside $P$ to recover a 1-drawing of $G$.  Therefore, $B'$ does not have just $v_0$ and $v_6$ as attachments. 

It follows that $B'$ overlaps $B$, so it is in $\mathcal M$.  Therefore, it does not overlap $M_{Q_0}$; in particular, it cannot have an attachment in both $\co{v_6,s_1,v_1}$ and $\co{v_0,r_0,v_1}$.  We conclude that, for \dragominor{some $q\in \{r_0,s_1\}$} \dragominor{; and (ii) $\att(B')\subseteq q$.  Let $q'$ be such that $\{q,q'\}=\{r_0,s_1\}$}.

Let $B_1,B_2,\dots,B_k$ be a path in $OD(Q_0)-\{M_{Q_0},B\}$ so that $B'=B_1$.

\startSubclaims
\begin{subclaim} For $i=1,2,\dots,k$, \dragominor{$\att(B_i)\subseteq q$}.  \end{subclaim}

\begin{proof} Above, we \dragominor{chose $q$} to contain $\att(B')$, which is the case $i=1$.   Notice that $B_1$, $B_3$, \dots\ are all on the same side of $D[Q_0]$ as $B'$ and $P$, while $B_2$, $B_4$, \dots are all on the other side of $D[Q_0]$.  The former are all in $\mathcal M$, while the latter are in $\mathcal B$.   Let $i$ be least so that $B_i$ has an attachment \dragominor{outside $q$}.  Then it also has an \dragominor{attachment in $\oo{q}$} (in order to overlap $B_{i-1}$).  

If $B_i$ is inside $D[Q_0]$, then $B_i$ does not overlap $M_{Q_0}$, so it has no \dragominor{attachment in $q'-q$}.  As $B_i$ cannot cross $P$ in $D$, \dragominor{$\att(B_i)\subseteq q$, a contradiction}.  

If $B_i$ is outside $D[Q_0]$, then either $\att(B_i)\subseteq s_1$, \dragominor{so $q=s_1$} and we are done, or $\att(B_i)\subseteq r_0\cup\cc{v_0, s_0,x}$, so, \dragominor{in particular, $q=r_0$}. Furthermore, $B_i$ does not overlap $B$.   Therefore, $B_i$ \wording{has} no attachment in $\oc{v_0,s_0,x}$, so $\att(B_i)\subseteq r_0$. \end{proof}

Let $L$ be the component of $OD(Q_0)-\{M_{Q_0},B\}$ containing $B'$.  We can flip the $Q_0$-bridges in $L$ so that they exchange sides of  $D[Q_0]$, yielding a new 1-drawing of $G-e$ with fewer $Q_0$-bridges in $\mathcal M$ on the same side of $D[Q_0]$ as $P$.  Inductively, this shows there is a 1-drawing $D'$ of $G-e$ in which all $Q_0$-bridges in the face of $D'[K\cup P]$ bounded by $r_0\,s_1\,P$ are in $\mathcal B$.  As none of these overlaps $B$, we may add $e$ into $D'$ to obtain a 1-drawing of $G$, a contradiction.
\end{proof} 

Let $e_5$ be the edge in $r_5$ that is crossed in $D_2$ and let $e_9$ be the edge in $r_9$ that is crossed in $D_3$.  For $i=5,9$, let $u_i$ be the end of $e_i$ nearer to $v_i$ in $r_i$ and let $w_i$ be the other end of $e_i$.  See Figure \ref{D2plusPlabelled}. 
We highlight some relevant ``cut" properties of these edges in the next three claims.

\begin{figure}[!ht]
\begin{center}
\scalebox{1.2}{\input{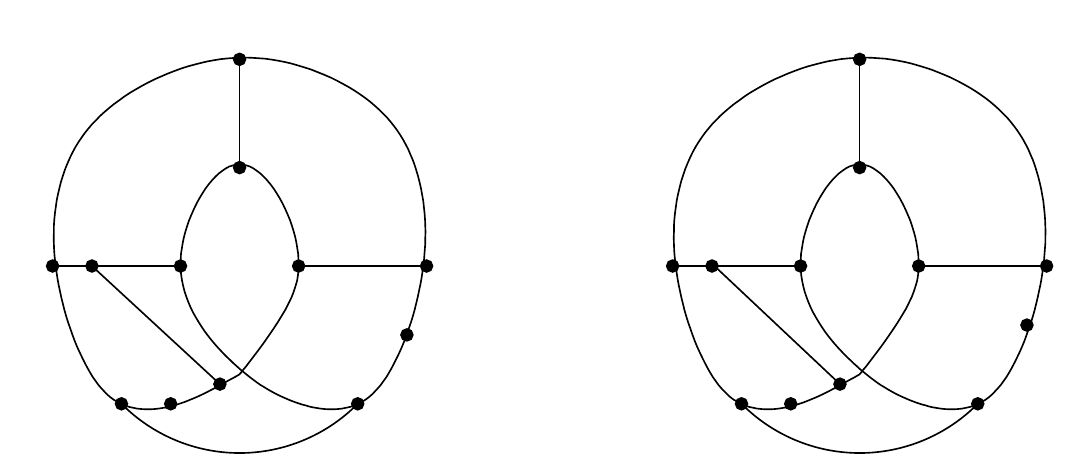_t}}
\end{center}
\caption{The 1-drawings $D_2[(K-\oo{s_2})\cup P_0]$ and $D_3[(K-\oo{s_3})\cup P_0]$.}\label{D2plusPlabelled}
\end{figure}

\begin{claim}\label{cl:no<nine,five]<zero,six]}  Any $r_9$-avoiding $\oc{s_4\,r_4}\lbsp\oc{r_0\,s_1}$-path in $\comp{(M_{\bQ_0})}$ contains $e_5$.  In particular, there are not two edge-disjoint $r_9$-avoiding  $\oc{s_4\,r_4}\lbsp\oc{r_0\,s_1}$-paths in $\comp{(M_{\bQ_0})}$.  \end{claim}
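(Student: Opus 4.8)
The plan is a proof by contradiction. Suppose $W$ is an $r_9$-avoiding $\oc{s_4\,r_4}\lbsp\oc{r_0\,s_1}$-path in $\comp{(M_{\bQ_0})}$ with $e_5\notin W$. Since $s_2\subseteq M_{\bQ_0}$, we have $\comp{(M_{\bQ_0})}\subseteq G-\oo{s_2}$, so $W\subseteq G-\oo{s_2}$, and by Claim \ref{cl:D2} the unique crossing of $D_2$ is of $e_5$ (an edge of $r_5\subseteq\bQ_0$) with an edge $f\in\cc{y,r_2,v_3}\subseteq r_2\subseteq M_{\bQ_0}$. Introduce a vertex $\times$ at this crossing to obtain a planar embedding of a graph $G^{\times}$: the cycle $\bQ_0$ becomes a cycle $\bQ_0^{\times}$ with $e_5$ subdivided at $\times$, and $M_{\bQ_0}$ becomes $M_{\bQ_0}^{\times}$ with $f$ subdivided at $\times$. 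As $e_5,f\notin W$, the path $W$ survives in $G^{\times}$; and since $W$ is internally disjoint from $\oc{s_4\,r_4}\cup\oc{r_0\,s_1}\cup r_9$, its only possible interior meetings with $\bQ_0$ are in $\oo{r_5}$. Replacing $W$ if necessary by an initial subpath completed along $r_5$ away from $e_5$, and discarding via Lemma \ref{lm:threeAtts} the trivial parallel-edge situations, we may assume $W$ is a chord of the Jordan curve $\bQ_0^{\times}$ with ends $p\in\oc{s_4\,r_4}$ and $q\in\oc{r_0\,s_1}$.

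\textbf{Core of the argument.} The chord $W$ separates the side of $\bQ_0^{\times}$ on which it is drawn into a region bounded by $W$ together with the $p$--$q$ arc $\gamma_A$ of $\bQ_0^{\times}$ through $v_9,v_0$ (the one containing $r_9$), and a region bounded by $W$ together with the $p$--$q$ arc $\gamma_B$ through $v_5,\times,v_6$. Now use the skeleton of $M_{\bQ_0}$: it contains the $v_0v_5$-path $S=s_0$, and $M_{\bQ_0}^{\times}$ contains a $v_9\times$-path $T=r_8\cup s_3\cup r_2'\cup\{\text{half of }f\}$, where $r_2'$ is the piece of $r_2$ from $v_3$ to the $v_3$-end of $f$; moreover $S$ and $T$ are vertex-disjoint. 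Here $S$ joins $v_0\in\gamma_A$ to $v_5\in\gamma_B$ and $T$ joins $v_9\in\gamma_A$ to $\times\in\gamma_B$. Since $\times\notin\{p,q\}$ and (generically) $v_5\notin\{p,q\}$, neither of the pairs $\{v_0,v_5\}$, $\{v_9,\times\}$ can lie in the boundary of either of the two regions cut off by $W$; being drawn without crossings, $S$ and $T$ therefore both lie on the side of $\bQ_0^{\times}$ opposite to $W$. But the $H$-nodes of $\bQ_0$ occur cyclically as $v_9,v_0,v_1,v_6,v_5,v_4$ with $\times$ between $v_6$ and $v_5$, so the ends of $S$ and of $T$ interleave along $\bQ_0^{\times}$; two disjoint arcs with interleaving ends drawn in the same face of $G^{\times}$ must cross (equivalently, the $\bQ_0^{\times}$-bridges carrying them are skew by Lemma \ref{lm:overlapClaw} and cannot be co-embedded by Lemma \ref{lm:planeNotOverlap}). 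This contradiction shows $e_5\in W$, and the ``in particular'' is then immediate, since two edge-disjoint such paths would each have to contain $e_5$.

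\textbf{Main obstacle.} The only genuine difficulty is the bookkeeping behind the reduction to ``$W$ is a chord of $\bQ_0^{\times}$'': when $W$ meets $\oo{r_5}$, or when $p$ or $q$ coincides with $v_5$, $v_6$, or an end of $e_5$, one must choose the right subpath of $W$ and the right continuation inside $r_5$ (staying off $e_5$), and confirm that an interleaving pair of paths inside $M_{\bQ_0}^{\times}$ remains available; in several of these configurations it is cleaner to rerun the identical argument inside $D_3$, using $e_9$ and the crossing ``$r_9$ with $\cc{v_2,r_2,y}$'' of Claim \ref{cl:D2} in place of $e_5$ and $\cc{y,r_2,v_3}$. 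None of this is deep, but it is where the care lies.
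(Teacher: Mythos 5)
Your overall strategy — work in the drawing $D_2$, exploit the fact that its unique crossing is $e_5$ with an edge of $\cc{y,r_2,v_3}$, and derive a planarity obstruction if $e_5\notin W$ — is the same as the paper's, and your generic case is fine (indeed your three chords $W$, $S=s_0$, $T$ pairwise interleave on $\bQ_0^{\times}$, so a pigeonhole on the two sides finishes it). But there is a genuine gap exactly where you flag it, and your proposed repair does not close it. When $p=v_5$, the chords $W$ and $S$ share an endpoint and do not interleave; the interleaving of each with $T$ only forces $W$ and $S$ onto the common side opposite $T$, which is not a contradiction. Rerunning in $D_3$ does not help: the degeneracy is that $W$ ends at $v_5\in\oc{s_4\,r_4}$, which is a property of $W$, not of the drawing, and the analogous chords in $D_3$ (again $s_0$, now paired with a $v_6\times'$-path through $r_6$, $s_2$, $r_2$) degenerate in exactly the same way. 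A second, smaller defect is the reduction to ``$W$ is a chord'': if $W$ first meets $\oo{r_5}$ at a vertex on the $v_5$-side of $e_5$, then completing ``along $r_5$ away from $e_5$'' lands you back in $\oc{s_4\,r_4}$ rather than in $\oc{r_0\,s_1}$, so the recipe as stated does not produce the required path.

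The ingredient you are missing is the path $P_0$. The paper does not treat $s_0$ and the $r_8\,s_3\,r_2$ skeleton as separate chords; it glues them, via $P_0$ and $r_9$, into the single cycle $C=\cc{v_0,s_0,x}\rbsp P_0\rbsp\cc{y,r_2,v_3}\lbsp s_3\,r_8\,r_9$. Since $C-r_9$ lies in $M_{\bQ_0}$ with only $v_0,v_9$ on $\bQ_0$, and $W\subseteq\comp{(M_{\bQ_0})}$ is $r_9$-avoiding, $W$ is graph-theoretically disjoint from $C$ regardless of where its ends sit in $\oc{s_4\,r_4}$ and $\oc{r_0\,s_1}$ and regardless of whether $W$ touches $\oo{r_5}$. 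The drawing $D_2$ (Figure \ref{D2plusP}) shows $D_2[C]$ separates $\oc{s_4\,r_4}$ from $\oc{r_0\,s_1}$, so $D_2[W]$ must cross $D_2[C]$; the only crossing available is $e_5$ with the edge of $\cc{y,r_2,v_3}$, and the latter lies in $M_{\bQ_0}$, so $e_5\in W$. No planarization, no chord reduction, and no case analysis is needed. I recommend you replace the three-chord argument with this single separating cycle; your interleaving computation then becomes a correct but unnecessary special case.
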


\begin{proof}  Suppose $P$ is a $r_9$-avoiding $\oc{s_4\,r_4}\lbsp\oc{r_0\,s_1}$-path.   Let $e$ be any edge of $s_2$ and let $D$ be any 1-drawing of $G-e$.  By Claim \ref{cl:D2}, \wording{$D_2[(H-\oo{s_2})\cup P_0]$} is illustrated in Figure \ref{D2plusP}.  But here we see that the cycle $C=\cc{v_0,s_0,x}\rbsp P_0\rbsp\cc{y,r_2,v_3}\lbsp s_3\,r_8\,r_9$ separates $\oc{s_4\,r_4}$  and $\oc{r_0\,s_1}$.  Note that $C$ consists of $r_9$ and a $\bQ_0$-avoiding $v_0v_9$-path in $M_{\bQ_0}$.  Therefore,  $P$ is disjoint from $C$, and so it must cross $C$ in $D_2$.  As this can only happen at the crossing in $D_2$, it must be that the edge of $r_5$ crossed in $D_2$ is in $P$.  \end{proof}

\dragominor{Analogously, d}eleting $e\in s_3$  provides a proof of the \dragominor{following} claim.

\begin{claim}\label{cl:no[nine,five>[zero,six>}  Any $r_5$-avoiding  $\co{s_4\,r_4}\,\co{r_0\,s_1}$-path in $\comp{(M_{\bQ_0})}$ contains $e_9$.  In particular, there are not two edge-disjoint $r_5$-avoiding  $\co{s_4\,r_4}\,\co{r_0\,s_1}$-paths in $\comp{(M_{\bQ_0})}$. \hfill$\Box$ \end{claim}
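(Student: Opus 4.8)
The plan is to repeat, \emph{mutatis mutandis}, the proof of Claim~\ref{cl:no<nine,five]<zero,six]}, interchanging the roles of $s_2$ and $s_3$ and using the second assertion of Claim~\ref{cl:D2} in place of the first. So I would start by fixing an edge $e$ of $s_3$ and a $1$-drawing $D$ of $G-e$; since every edge of $s_3$ lies in $M_{\bQ_0}$, the graph $\comp{(M_{\bQ_0})}$ is a subgraph of $G-e$, and hence any $r_5$-avoiding $\co{s_4\,r_4}\,\co{r_0\,s_1}$-path $P$ in $\comp{(M_{\bQ_0})}$ is drawn in $D$. By Claim~\ref{cl:D2}, the unique crossing of $D$ is between the edge $e_9$ of $r_9$ and an edge of $\cc{v_2,r_2,y}$; equivalently, $D\big[(K-\oo{s_3})\cup P_0\big]$ is the right-hand drawing in Figure~\ref{D2plusP}.

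The next step is to produce the separating cycle playing the role that $C$ did in the earlier proof, namely
$$C'\;=\;\cc{v_5,s_0,x}\,P_0\,\cc{y,r_2,v_2}\,s_2\,r_6\,r_5\,,$$
obtained by closing the $v_5v_6$-path $\cc{v_5,s_0,x,P_0,y,r_2,v_2,s_2,v_7,r_6,v_6}$ with the rim branch $r_5$. This $v_5v_6$-path is $\bQ_0$-avoiding and contained in $M_{\bQ_0}$; it is exactly the image of the $\bQ_0$-avoiding $v_0v_9$-path $\cc{v_0,s_0,x,P_0,y,r_2,v_3,s_3,v_8,r_8,v_9}$ used in the proof of Claim~\ref{cl:no<nine,five]<zero,six]} under the substitution $s_2\!\leftrightarrow\!s_3$, $v_i\!\leftrightarrow\!v_{5-i}$ (which sends $r_8\!\to\!r_6$, $r_9\!\to\!r_5$, etc.). Reading off Figure~\ref{D2plusP}, I would verify that $C'$ separates $\co{s_4\,r_4}$ from $\co{r_0\,s_1}$ in $D$; note also that $r_9\not\subseteq C'$, so the single crossing of $D$ does not make $C'$ self-crossed.

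The argument then closes as before: $P$ is disjoint from $C'$, because it avoids $r_5$ by hypothesis, because its two ends lie in $\co{s_4\,r_4}\cup\co{r_0\,s_1}$ and so are neither $v_5$ nor $v_6$, and because it cannot meet the interior of the $v_5v_6$-path, all of whose vertices lie in $\Nuc(M_{\bQ_0})$ and hence outside $\comp{(M_{\bQ_0})}$. Since $C'$ separates the two intervals in $D$, $D[P]$ must cross $D[C']$; but the only crossing of $D$ is the one between $e_9$ and an edge of $\cc{v_2,r_2,y}\subseteq C'$, and that $r_2$-edge lies on $C'$ and not on $P$, so $e_9$ must lie on $P$. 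This gives the first assertion, and the ``in particular'' follows instantly, since two edge-disjoint such paths would each have to contain $e_9$.

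The one place where I expect to have to be careful — rather than merely transcribe the previous proof — is the verification that $C'$ separates $\co{s_4\,r_4}$ from $\co{r_0\,s_1}$ in $D$. This is a statement about the combinatorics of the particular $1$-drawing $D$, whose shape is, however, essentially dictated by Claim~\ref{cl:D2} and is displayed in Figure~\ref{D2plusP}, exactly as the corresponding separation property of $C$ in $D_2$ was extracted in the proof of Claim~\ref{cl:no<nine,five]<zero,six]}.
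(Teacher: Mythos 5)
Your proposal is correct and is exactly the argument the paper intends: the paper proves this claim by the single sentence "Analogously, deleting $e\in s_3$ provides a proof of the following claim," and your write-up is precisely that analogous argument spelled out, with the separating cycle $\cc{v_5,s_0,x}\,P_0\,\cc{y,r_2,v_2}\,s_2\,r_6\,r_5$ playing the role of $C$ and the second assertion of Claim \ref{cl:D2} pinning down the crossing. The one point you flag for care (that this cycle separates $\co{s_4\,r_4}$ from $\co{r_0\,s_1}$ in the drawing) is indeed the only thing to check against Figure \ref{D2plusP}, just as in the proof of Claim \ref{cl:no<nine,five]<zero,six]}.
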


The final claim is a central point about $M_{\bQ_0}$.  \wordingrem{(Text removed.)}%Recall that an $AB$-path is a path with one end in each of $A$ and $B$ and is otherwise disjoint from $A\cup B$.

\begin{claim}\label{cl:bQ0cut} Let $P_1$ and $P_2$ be the two paths  \dragominor{of}~$\bQ_0-\{e_5,e_9\}$.  Then there is no $P_1P_2$-path in \wording{$$\comp{(M_{\bQ_0})}-\{e_5,e_9,v_6v_9\}\,.$$} \end{claim}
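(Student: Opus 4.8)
The plan is to argue by contradiction. Suppose $L$ is a $P_1P_2$-path in $\comp{(M_{\bQ_0})}-\{e_5,e_9,v_6v_9\}$ with ends $p_1\in P_1$ and $p_2\in P_2$; since $V(P_1)\cap V(P_2)=\varnothing$, $L$ is nontrivial and $p_1\neq p_2$. I would fix the labelling of the two paths of $\bQ_0-\{e_5,e_9\}$ so that $P_1$ carries the arc of $\bQ_0$ from $v_0$ to $v_6$ through $r_0$ and $s_1$, flanked by the rim pieces $\cc{w_9,r_9,v_0}$ and $\cc{v_6,r_5,w_5}$, while $P_2$ carries the arc $s_4\,r_4$ from $v_9$ to $v_5$, flanked by $\cc{u_9,r_9,v_9}$ and $\cc{v_5,r_5,u_5}$. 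Since $\bQ_0-\{e_5,e_9\}=P_1\cup P_2$ and $L$ is internally disjoint from $P_1\cup P_2$ and avoids $e_5,e_9$, the interior of $L$ is disjoint from $\bQ_0$; hence $L$ is contained in $\bQ_0\cup B$ for a single $\bQ_0$-bridge $B$ with $p_1,p_2\in\att(B)$, and $B\neq M_{\bQ_0}$ because $L\subseteq\comp{(M_{\bQ_0})}$.

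First I would dispose of the case in which one of $p_1,p_2$ lies on $r_9$ and the other on $r_5$: then $B$ has an attachment in each of $r_9$ and $r_5$, so Claim~\ref{cl:v0v6} forces $B\in\{M_{\bQ_0},v_6v_9,v_0v_6,v_5v_9\}$. Each of the last three is a single edge with empty nucleus, so (as $L$ is a nontrivial path whose interior avoids $\bQ_0$) $L$ must be precisely that edge; $v_6v_9$ is excluded by hypothesis, while $v_0v_6$ and $v_5v_9$ are impossible because $v_0,v_6\in V(P_1)$, $v_5,v_9\in V(P_2)$, and these vertex sets are disjoint.

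In every remaining case, since $r_9\cap r_5=\varnothing$, either $p_1,p_2\notin r_9$ or $p_1,p_2\notin r_5$. If $p_1,p_2\notin r_9$, I would extend $L$ from $p_1$ along the subpath $\cc{p_1,r_5,v_6}$ of $P_1$ when $p_1\in r_5$ (and not at all when $p_1\notin r_5$), and extend it from $p_2$ along $\cc{p_2,r_5,v_5}$ when $p_2\in r_5$. The resulting path $L'$ lies in $\comp{(M_{\bQ_0})}$, has one end in $(r_0\cup s_1)\setminus\{v_0\}$ and one in $(s_4\cup r_4)\setminus\{v_9\}$, avoids $r_9$ (none of its three pieces meets $r_9$), and avoids $e_5$ (each extension is a subpath of one of $\cc{v_6,r_5,w_5}$, $\cc{v_5,r_5,u_5}$, neither of which contains $e_5$, and $L$ avoids $e_5$). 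Thus $L'$ is an $r_9$-avoiding $\oc{s_4\,r_4}\,\oc{r_0\,s_1}$-path in $\comp{(M_{\bQ_0})}$ missing $e_5$, contradicting Claim~\ref{cl:no<nine,five]<zero,six]}. The case $p_1,p_2\notin r_5$ is symmetric: extending $L$ along the pieces of $r_9-e_9$ contained in $P_1$ and $P_2$ to reach $v_0\in r_0$ and $v_9\in s_4$ produces an $r_5$-avoiding $\co{s_4\,r_4}\,\co{r_0\,s_1}$-path in $\comp{(M_{\bQ_0})}$ missing $e_9$, contradicting Claim~\ref{cl:no[nine,five>[zero,six>}.

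The only delicate point I anticipate is the bookkeeping of which rim branch each of $p_1,p_2$ lies on — in particular the shared nodes $v_0,v_5,v_6,v_9$, each of which lies on two branches of $\bQ_0$ at once — and verifying in every configuration that the extended path $L'$ really is internally disjoint from the relevant pair of arcs and still misses the relevant crossed edge $e_5$ or $e_9$. Once that routine verification is done, Claims~\ref{cl:v0v6}, \ref{cl:no<nine,five]<zero,six]} and \ref{cl:no[nine,five>[zero,six>} do all the work; no fresh appeal to the embedding or to the drawings $D_2,D_3$ is needed beyond what those three claims already encapsulate.
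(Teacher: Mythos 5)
Your proof is correct and rests on exactly the same ingredients as the paper's: Claims \ref{cl:v0v6}, \ref{cl:no<nine,five]<zero,six]} and \ref{cl:no[nine,five>[zero,six>}, combined with the trick of extending the putative $P_1P_2$-path along the uncrossed portions of $r_5$ or $r_9$ to land in the sets those claims address. Your case split (ends on opposite rim branches versus both ends missing one of $r_9$, $r_5$) is organized slightly differently from the paper's, which first excludes ends interior to $s_4\,r_4$ and $r_0\,s_1$ and then concludes the path could only be $v_6v_9$, but it is essentially the same argument.
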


\begin{proof} \dragominor{Assume that there is a $P_1P_2$-path $P$ in $\comp{(M_{\bQ_0})}-\{e_5,e_9\}$.}  For $i=1,2$, let $z_i$ be the end of $P$ in $P_i$.

\dragominor{Suppose first that} $z_1$ is in $\oo{s_4\,r_4}$.    If $z_2$ is in $\cc{v_6,r_5,w_5}$, then  $P\rbsp \cc{z_2,r_5,v_6}$ is an $r_9$-avoiding $\oc{s_4\,r_4}\lbsp\oc{r_0\,s_1}$-path in $\comp{(M_{\bQ_0})}$ that also avoids $e_5$, contradicting Claim \ref{cl:no<nine,five]<zero,six]}.  If $z_2$ is not in $\cc{v_6,r_5,w_5}$, then there is an $r_5$-avoiding $\co{s_4\,r_4}\,\co{r_0\,s_1}$-path in $\comp{(M_{\bQ_0})}$ that also avoids $e_9$, contradicting Claim \ref{cl:no[nine,five>[zero,six>}.   Therefore, $z_1$ is in $P_1-\oo{s_4\,r_4}$; that is $z_1$ is in $\cc{v_9,r_9,u_9}\cup \cc{v_5,r_5,u_5}$.  Symmetrically, $z_2$ is in $\cc{w_9,r_9,v_0}\cup \cc{w_5,r_5,v_6}$.

\dragominor{If $z_1$ is in $\cc{v_5,r_5,u_5}$, then}   Claim \ref{cl:no<nine,five]<zero,six]} implies $z_2$ is not in $\cc{w_5,r_5,v_5}$.  Therefore, $z_2$ is in $\cc{w_9,r_9,v_0}$.  By Claim \ref{cl:v0v6}, $P$ is one of  $v_6v_9$, $v_0v_6$, and $v_5v_9$.  Clearly, neither $z_1$ nor $z_2$ is $v_6$ and neither is $v_9$, so none of these outcomes is possible.

Therefore, $z_1$ is in $\cc{v_9,r_9,u_9}$.  \wording{ Claim \ref{cl:no[nine,five>[zero,six>} implies $z_2$} is not in $\cc{w_9,r_9,v_0}$.   By Claim \ref{cl:v0v6}, the only possibility is that $z_1=v_9$ and $z_2=v_6$ and $P$ is just the edge $v_6v_9$\dragominor{, as required}. 
\end{proof} 

\dragominorrem{(Text removed.)}We will show that there is an embedding $\Pi'$ of $G$ in $\pp$ and a non-contractible simple closed curve $\gamma'$ in $\pp$ so that $\gamma'\cap G$ consists of one point in each of the interiors of $\Pi'[e_5]$ and $\Pi'[e_9]$.   Standard surgery then implies that $\crn(G)\le 1$ (see, for example, \cite{rbr}).

Consider the two faces of $\Pi[K]$ incident with both $e_5$ and $e_9$.  Let $F_{\bQ_0}$ be the one bounded by $\bQ_0$.  Let $F'$ be the other; it is bounded by the cycle  $s_0\,r_5\,r_6\,r_7\,r_8\,r_9$, which we call $C'$.  Both $\bQ_0$ and $C'$ contain both $e_5$ and $e_9$.  What we would like to prove\dragominorrem{(text removed)} is that, for \dragominor{each such face $F$} with boundary $C$, there is no $K$-avoiding path contained in $F$ and having an end in each of the two components of $C-\{e_5,e_9\}$.   \dragominor{Although not necessarily true for $\Pi$, it is} true for an embedding obtained from $\Pi$ by possibly re-embedding the edges $v_0v_6$ and $v_5v_9$.

Let us begin with the possible re-embeddings.  We deal with $v_0v_6$; the argument for $v_5v_9$ is completely analogous.  If $v_0v_6$ is not embedded in $F'$, then do nothing with it.  Otherwise, it is embedded in $F'$ and we claim we can re-embed it in $F_{\bQ_0}$.  

The embedding $\Pi$ shows that $v_0v_6$ is contained in one of the two faces of $K\cup \gamma$ into which $F'$ is split.  Therefore, $v_0$ and $v_6$ must be on the same $ab$-subpath of $R$.  This implies that either $v_0=a$ or $v_6=b$, or both.  In order not to be able to embed $v_0v_6$ in $F_{\bQ_0}$, there must be a $\bQ_0$-avoiding path $P$ contained in $F_{\bQ_0}$ joining $\oo{r_0\,s_1}$ to $\oo{r_5\,r_4\,s_4\,r_9}$.  

We first consider where $D_2[P]$ can \wording{be.   There are only two possibilities:  it is} either in the face of $D_2[K-\oo{s_2}]$ bounded by $\cc{v_2,r_2,\times,r_5,v_6}\rbsp s_1\,r_1$\wording{; or in} the face incident with both $r_0$ and $s_1$.  The latter cannot occur, as $v_0v_6$ is also in that face and they \wording{overlap on the} boundary of this face.  So it must be the former.

However, in this case, both $v_0v_6$ and $P$ are in the face of $D_3[K-\oo{s_3}]$ bounded by $Q_0$, and they overlap on $Q_0$, the final contradiction that shows that $P$ does not exist, so we can re-embed $v_0v_6$ in $F_{\bQ_0}$.  Let $\Pi'$ be the embedding of $G$ obtained by any such re-embeddings of $v_0v_6$ and $v_5v_9$.

The faces $F_{\bQ_0}$ and $F'$ of $\Pi[K]$ are also faces of $\Pi'[K]$ with the same boundaries; we will continue to use these names for them, while $\bQ_0$ and $C'$ are still their boundaries.  

We now show that there is no $K$-avoiding path in $F_{\bQ_0}$ joining the two paths $P_1$ and $P_2$ of $\bQ_0-\{e_5,e_9\}$.  Such a path is necessarily in $\comp{(M_{\bQ_0})}$.  By Claim \ref{cl:bQ0cut}, such a path is necessarily $v_6v_9$.  But $\Pi$ is $H$-friendly, so $v_6v_9$ is not embedded in $\Mob$ and so, in particular, is not embedded in $F_{\bQ_0}$.  Thus, $v_6v_9$ is also not in this face of $\Pi'$, whence there is no $P_1P_2$-path in $F_{\bQ_0}$, as required.

Now consider the possibility of a $K$-avoiding path in $F'$ having its ends in each of the two paths in $C'-\{e_5,e_9\}$.  Such a path is in a $C'$-bridge $B$ embedded in $F'$.  By Claim \ref{cl:s0inK13}, $B$ has no attachment in $\oo{s_0}$.  Thus, $B$ has an attachment 
either in $\cc{v_0,r_9,w_9}$ or in $\cc{v_5,r_5,u_5}$.  

We claim it must also have an attachment in $\oo{r_6\,r_7\,r_8}$.  If not, then all its attachments are in $$\cc{v_0,r_9,w_9}\cup \cc{v_5,r_5,u_5} \cup \cc{w_5,r_5,v_6}\cup \cc{v_9,r_9,u_9}.$$  But then $B$ is a $\bQ_0$-bridge.  If it has an attachment in both $r_5$ and $r_9$, then Claim \ref{cl:v0v6} implies $B$ is one of $v_0v_6$, $v_5v_9$, and $v_6v_9$.  The first two are not embedded in the $\Pi'$-face $F'$ and the last does not have attachments in both components of $C'-\{e_5,e_9\}$.  In the alternative, either $\att(B)\subseteq r_5$ or $\att(B)\subseteq r_9$, and then we contradict either Claim \ref{cl:no<nine,five]<zero,six]} or Claim \ref{cl:no[nine,five>[zero,six>}.

So $B$ has an attachment in $\oo{r_6\,r_7\,r_8}$.    If $B$ has an attachment in $\cc{v_0,r_9,w_9}$, then $D_3[B]$ must have a crossing, which is not possible.  If $B$ has an attachment in $\cc{v_5,r_5,u_5}$, then $D_2[B]$ must have a crossing, which is not possible.  Therefore, there is no such $B$, as claimed.

For each of the faces $F_{\bQ_0}$ and $F'$ of $\Pi'$ and any points $x$ and $y$ in the interiors of $\Pi'[e_5]$ and $\Pi'[e_9]$, the preceding paragraphs show that there is a $G$-avoiding simple $xy$-arc in the face.  The union of these two arcs is a simple closed curve $\gamma'$ in $G$ that meets $\Pi'[G]$ in just the two points $x$ and $y$.

\dragominor{I}n a neighbourhood of $x$, there are points of $e_5$ on both sides of $\gamma'$.  If $\gamma'$ were contractible in $\pp$, then $\{e_5,e_9\}$ would be an edge-cut of size 2 in the 3-connected graph $G$, which is impossible.  So $\gamma'$ is non-contractible.  But this is also impossible, as it meets $G$ precisely in $x$ and $y$, showing that $G$ has a 1-drawing, the final contradiction.
\end{cproofof}

}

\chapter{$G$ embeds with all spokes in $\Mob$}\printFullDetails{

In this section, we prove that if $G\in\m2$ and $\hvfg$, then $G$ has a representativity 2 embedding in $\pp$ with $H\subseteq \Mob$.  This is an important step as it provides the embedding structure we need to find the tiles.

It turns out that we need something stronger than $H\subseteq \Mob$.  We must also show that, in addition to $H\subseteq \Mob$, the representativity 2 embedding of $G$ is such that  $M_{Q_4}$ is the only $Q_4$-local $H$-bridge $B$ for which $Q_4\cup B$ contains a non-contractible cycle.    (We remind the reader that $Q_4$ is special.  Each $H$-quad bounds a face of $\Pi[H]$. In the standard labelling, the only one of these five faces that contains an arc of $\gamma$ is the one bounded by $Q_4$.)

}\begin{theorem}\label{th:allSpokesMob} Suppose $G\in\m2$ and $\hvfg$.   Then $G$ has a representativity 2 embedding $\Pi$ in $\pp$ so \wording{that, with the standard labelling}: 
\begin{enumerate} 
\item\label{it:HinMob} $s_0$ is not exposed in $\Pi$, that is, $\Pi[H]\subseteq \Mob$; and,
\item\label{it:niceQ4}  \wordingrem{(text moved)}if $B$ is a $Q_4$-local $H$-bridge other than $M_{Q_4}$, then $\Pi[Q_4\cup B]$ has no non-contractible cycle.
\end{enumerate}
\end{theorem}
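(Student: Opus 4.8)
\textbf{Proposal for the proof of Theorem~\ref{th:allSpokesMob}.}
The plan is to start from an $H$-friendly embedding $\Pi$ of $G$ in $\pp$, which exists by Lemma~\ref{lm:greenCyclesFriendly}~(\ref{it:alwaysFriendly}), and argue that if $s_0$ is exposed then we can produce either a re-embedding in which $H\subseteq\Mob$, or a subdivision of $V_{10}$ for which the corresponding embedding has all spokes inside the M\"obius band. The starting point is that, with $s_0$ exposed, the only $H$-bridge containing $s_0$ is tightly constrained: by Theorem~\ref{th:expSpokeNoAtt} there is no $H$-bridge with attachments in both $\oo{s_0}$ and $\oo{r_1\,r_2\,r_3}$, so any $H$-bridge meeting $\oo{s_0}$ has all its other attachments inside $r_9\,r_0$, inside $r_4\,r_5$, or at the nodes $v_0$, $v_5$. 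Combining this with Corollary~\ref{lm:diagonal}, Lemma~\ref{lm:threeAtts}, and the face structure of $H\cup\gamma$ (with $F$ bounded by $\bQ_0$ carrying the arc $\alpha$ of $\gamma$), I would first pin down exactly which $H$-bridges can be $\Disc$-side, i.e. ``exposed with'' $s_0$: up to the $P_2/P_4$ symmetry these are forced to be short paths between $\{v_0,v_5\}$ and nearby rim points, very much as in the analysis already done for $K=H\cup P_0$ inside Theorem~\ref{th:expSpokeNoAtt}.

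The second step is to locate a non-contractible curve $\gamma'$ witnessing a representativity~2 embedding with the spokes on the M\"obius side. The natural candidate is obtained by ``rotating'' $\gamma$ off of $\alpha\subseteq\Disc$: since $s_0$ is the only spoke in $\Disc$, deleting (conceptually) the two edges of $s_0$ and pushing $\gamma$ across $\Disc$ exhibits a curve meeting $G$ in two points on $R$ with no spoke exterior to the resulting M\"obius band. The content is to check that the few $\Disc$-side bridges identified in step one (paths between $v_0,v_5$ and points of $r_9\,r_0$ or $r_4\,r_5$) can all be re-routed into the new disc; here one uses the $1$-drawings $D_2$, $D_3$ of $G-\oo{s_2}$, $G-\oo{s_3}$ exactly as in the proof of Theorem~\ref{th:expSpokeNoAtt}, together with the fact, from Theorem~\ref{th:BODquads}~(\ref{it:exposedBQ3}) and~(\ref{it:bQ2}), that $\bQ_2$ and $\bQ_3$ have BOD so these $H$-branches are clean there. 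If a bridge genuinely obstructs re-embedding, then it, together with $s_0$, gives a new subdivision $H'$ of $V_{10}$ whose rim is $(R-\oo{r_9\,r_0})\cup$ (that bridge)$\,\cup\,$(part of $s_0$), and one verifies via Lemma~\ref{lm:greenCyclesFriendly}~(\ref{it:nearlyFriendly}) that $\Pi$ is $H'$-friendly and that $H'$ has no exposed spoke, giving~(\ref{it:HinMob}) for $H'$ in place of $H$ --- which is what we want, since the conclusion quantifies over subdivisions.

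For~(\ref{it:niceQ4}), once $\Pi[H]\subseteq\Mob$ we must rule out, for every $Q_4$-local $H$-bridge $B\ne M_{Q_4}$, that $Q_4\cup B$ carries a non-contractible cycle. A $Q_4$-local bridge on the $\Mob$-side that spans a non-contractible cycle would have attachments ``interlacing'' $\gamma$ on $Q_4$; pick an edge $e$ of $B$, take a $1$-drawing $D$ of $G-e$, and use Theorem~\ref{th:BODquads}~(\ref{it:quadBOD}) (every $H$-quad has BOD, exactly one non-planar bridge) together with Lemma~\ref{lm:BODcrossed} to conclude $Q_4$ is crossed in $D$; then the crossing involves two $R$-edges in $r_3\cup r_4\cup r_8\cup r_9$, which combined with the non-contractible cycle in $Q_4\cup B$ exhibits a second crossing, a contradiction. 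The alternative is that such a $B$ is better embedded elsewhere, in which case one re-embeds $B$ to the $\Disc$-side of $Q_4$ --- but this would make a spoke exposed relative to the curve $\gamma$ shifted into $Q_4$'s face, contradicting the minimality implicit in our choice (we can and should choose $\Pi$, among representativity~2 embeddings with $H\subseteq\Mob$, to minimise the number of $Q_4$-local bridges spanning a non-contractible cycle).

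\textbf{Main obstacle.} The delicate part is the bookkeeping in step two: systematically showing that \emph{every} $\Disc$-side configuration compatible with Theorem~\ref{th:expSpokeNoAtt} either re-embeds into the M\"obius side or yields a better $V_{10}$-subdivision, without circularity. One must be careful that the re-embedding of one bridge does not force another out, and that when we pass to $H'$ we have genuinely decreased some complexity measure (e.g. the number of $\Disc$-side bridges, or $|V(\Nuc(M_{\bQ_0}))|$ on the disc side) so the process terminates. I expect this to mirror, but extend, the case analysis already carried out in Claims~\ref{cl:noPaths}--\ref{cl:bQ0cut} of Theorem~\ref{th:expSpokeNoAtt}, with the curve-surgery argument at the end of that proof serving as the template for producing the final $\gamma'$.
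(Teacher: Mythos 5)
Your outline has the right end-game (a new non-contractible curve through two points of $R$, with the disc-side bridges flipped into the M\"obius band), and you correctly identify Theorem~\ref{th:expSpokeNoAtt} as the essential input. But what you call ``bookkeeping'' in step two is in fact the entire content of the proof, and the mechanism is missing. The paper works with the set $\mathcal N$ of $\bQ_0$-bridges $B$ for which $\bQ_0\cup B$ carries a non-contractible cycle (these need not touch $s_0$ at all, so your step-one classification of bridges meeting $\oo{s_0}$ does not capture them), and proves three things you do not supply: first, Lemma~\ref{lm:bQ0bipartite}, that $OD(\bQ_0)-\{v_1v_4,v_6v_9\}$ is bipartite (this consumes Lemmas~\ref{lm:bQ0limitA}--\ref{lm:notConnected} and is what allows all of $\mathcal N$ to be placed on one side of a planar embedding of $\comp{(M_{\bQ_0})}$); second, via Lemma~\ref{lm:noncont}, that $\mathcal N$ contains no two disjoint $(r_9\,r_0)(r_4\,r_5)$-paths, so that the bridges reachable from $\mathcal N$ in the overlap diagram avoiding $M_{\bQ_0}$, $v_1v_4$, $v_6v_9$ attach only along $r_9\,r_0\cup r_4\,r_5$; and third, that the extreme such attachments $a_9\in r_9\,r_0$ and $a_4\in r_4\,r_5$ form a pair to which no other bridge is skew. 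It is this last fact that yields a simple closed curve meeting the re-embedded graph only at $a_4$ and $a_9$; without it, ``rotating $\gamma$ across $\Disc$'' has no reason to avoid the relocated bridges. Your fallback of passing to a new subdivision $H'$ with a decreasing complexity measure is not the paper's route and is doubly problematic: the theorem is asserted (and later applied, e.g.\ in Lemma~\ref{lm:existsPretidy}) for the given $H$, and you give no argument that $H'$ has fewer disc-side obstructions rather than merely different ones.

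Part~(\ref{it:niceQ4}) has a more basic flaw. The existence of a $Q_4$-local bridge $B\ne M_{Q_4}$ with $\Pi[Q_4\cup B]$ non-contractible in a \emph{particular} embedding is not contradictory, so your ``exhibit a second crossing'' step cannot succeed; the content of the statement is that such a $B$ can be re-embedded, and the paper proves this by running the same $\mathcal N$/cut-pair machinery with $Q^*=Q_4$, using Theorem~\ref{th:BODquads}~(\ref{it:quadBOD}) for bipartiteness of $OD(Q_4)$ and first transferring any of $v_4v_9$, $v_0v_5$ into $\Mob$ by a relabelling trick. Your minimality-of-$\Pi$ device could in principle replace this, but only once the single-bridge re-embedding step is actually proved, which again requires the cut pair.
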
\printFullDetails{

In principle, these two arguments are consecutive:  we first show we can arrange $H\subseteq \Mob$, and then deal with the $Q_4$-bridges.  However, the arguments are essentially the same.
Therefore, we shall have parallel statements and arguments, one for getting the five $H$-spokes in $\Mob$ and one for getting such an embedding with $Q_4$ nicely behaved.   (If we knew that $G$ had an embedding with $H$ not contained in $\Mob$, then we could do both simultaneously.)

}\begin{definition}\label{df:fsq}  A {\em friendly, standard quadruple\/}\index{friendly, standard quadruple}\index{fsq}\index{\fsq}, denoted \fsq, consists of $G\in \m2$, $\hvfg$, an $H$-friendly embedding $\Pi$ of $G$, and a non-contractible, simple closed curve $\gamma$ meeting $\Pi[G]$ in precisely two points, used as the reference for giving $H$ the standard labelling relative to $\Pi$.  We abbreviate friendly, standard quadruple as {\em fsq\/}.
\end{definition}\printFullDetails{

Observe that Theorem \ref{th:v10rep2} implies $G$ has a representativity 2 embedding in $\pp$.  Lemma \ref{lm:greenCyclesFriendly} (\ref{it:alwaysFriendly}) implies $G$ has an $H$-friendly embedding $\Pi$.  Any non-contractible simple closed curve $\gamma$ in $\pp$ meeting $G$ in precisely two points yields a standard labelling of $H$ relative to $\Pi$ and $\gamma$.  Summarizing, we have the following observation.

}\begin{lemma}\label{lm:fsq}  If $G\in \m2$ and $\hvfg$, then there is an fsq \fsq. \hfill\eop\end{lemma}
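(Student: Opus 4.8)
The plan is to assemble Lemma~\ref{lm:fsq} directly from three facts already established in the excerpt, so the "proof" is essentially a bookkeeping argument stringing together Theorem~\ref{th:v10rep2}, Lemma~\ref{lm:greenCyclesFriendly}(\ref{it:alwaysFriendly}), and the definition of the standard labelling. Concretely: first I would invoke Theorem~\ref{th:v10rep2} to produce \emph{some} representativity~$2$ embedding of $G$ in $\pp$; this is where the hypothesis $\hvfg$ (a subdivision of $V_{10}$) is used, since that theorem is exactly the statement that a $3$-connected $2$-crossing-critical graph with a $V_{10}$-subdivision has such an embedding. Second, I would apply Lemma~\ref{lm:greenCyclesFriendly}(\ref{it:alwaysFriendly}), which asserts that there is an $H$-friendly embedding $\Pi$ of $G$ in $\pp$ — note this lemma already does the real work of possibly re-embedding the edge $v_6v_9$ to repair friendliness, so I get friendliness for free. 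Third, I would choose a non-contractible simple closed curve $\gamma$ in $\pp$ meeting $\Pi[G]$ in precisely two points; such a curve exists because $\Pi$ has representativity exactly $2$ (an $H$-friendly representativity~$2$ embedding is still a representativity~$2$ embedding, so the minimum over non-contractible curves of $|\gamma\cap \Pi[G]|$ is $2$, and a minimizing curve is the desired $\gamma$).

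The remaining point is that $\gamma$, together with $\Pi$, determines a standard labelling of $H$: this is precisely Definition~\ref{df:embedExposed} and the surrounding discussion in Chapter~\ref{sec:prebox}, where $a,b$ are the two points of $\gamma\cap\Pi[G]$ (both in $R$, both crossing $R$, as argued there for $n\ge 4$ and hence for $V_{10}$), $R$ bounds a disc $\Disc$ and a M\"obius band $\Mob$, and the labelling is pinned down by the rule about the exposed spoke (if any) and the incidences of $v_0,\dots,v_5$. So once $G$, $H$, $\Pi$, $\gamma$ are in hand, the standard labelling is a legitimate choice and the quadruple $(\hskip-2pt(G,H,\Pi,\gamma)\hskip-2pt)$ satisfies every clause of Definition~\ref{df:fsq}: $G\in\m2$ and $\hvfg$ by hypothesis, $\Pi$ is an $H$-friendly embedding of $G$ by the second step, and $\gamma$ is a non-contractible simple closed curve meeting $\Pi[G]$ in exactly two points by the third step, used as the reference for the standard labelling. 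That is an fsq, which is what we had to produce.

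I do not anticipate a genuine obstacle here — this is a packaging lemma whose whole purpose is to name the object "fsq" and certify its existence before Theorem~\ref{th:allSpokesMob} starts manipulating it. The only thing to be a little careful about is the order of quantifiers: friendliness must be secured (via Lemma~\ref{lm:greenCyclesFriendly}(\ref{it:alwaysFriendly})) \emph{before} $\gamma$ is chosen, since the definition of $H$-friendly quantifies over all admissible curves $\gamma$, and then the particular $\gamma$ used for the labelling is chosen afterward from among those curves; picking $\gamma$ first and hoping $\Pi$ is friendly would be circular. With that ordering respected, the proof is just: apply Theorem~\ref{th:v10rep2}; apply Lemma~\ref{lm:greenCyclesFriendly}(\ref{it:alwaysFriendly}); pick a two-point non-contractible curve $\gamma$; read off the standard labelling; observe the quadruple meets Definition~\ref{df:fsq}. $\hfill\eop$
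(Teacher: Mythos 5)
Your proposal is correct and follows exactly the argument the paper gives (in the paragraph immediately preceding the lemma): Theorem \ref{th:v10rep2} for the representativity~2 embedding, Lemma \ref{lm:greenCyclesFriendly}~(\ref{it:alwaysFriendly}) for friendliness, then choosing a two-point non-contractible curve to fix the standard labelling. Your extra remark about securing friendliness before choosing $\gamma$ is a sensible clarification but does not change the substance.
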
\printFullDetails{

%\begin{figure}[!ht]
%\begin{center}
%\includegraphics[scale=.5]{ProjPlaneNoAtt}
%\caption{$G$ in $\pp$}\label{ProjPlaneNoAtt}
%\end{center}
%\end{figure}

Let $Q^*$ be $\bQ_0$ if $s_0$ is exposed in $\Pi$ and let $Q^*$ be $Q_4$ if $s_0$ is not exposed in $\Pi$, that is, if $\Pi[H]\subseteq \Mob$.     Our first step is to show that $OD(Q^*)$ is (nearly) bipartite. Theorem \ref{th:BODquads} (\ref{it:quadBOD}) implies $OD(Q_4)$ is bipartite.  For $Q^*=\bQ_0$, this is more involved.  In the following statement, $v_1v_4$ and $v_6v_9$ are meant to be possible $\bQ_0$-bridges consisting of a single edge joining the two indicated vertices.  They need not exist in $G$.

}\begin{lemma}\label{lm:bQ0bipartite} Let \fsq\ be an fsq.  
 If $s_0$ is exposed in $\Pi$, then $OD(\bQ_0)-\{v_1v_4,v_6v_9\}$ is bipartite.
\end{lemma}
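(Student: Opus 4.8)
The plan is to reduce bipartiteness of $OD(\bQ_0)-\{v_1v_4,v_6v_9\}$ to the (already established) facts about clean drawings and to Theorem~\ref{th:expSpokeNoAtt}. First I would recall the setup: $s_0$ is the exposed spoke, so $\Pi$ places $H\cap\Disc$ as just $s_0$ and the two faces of $\Pi[H]\cup\gamma$ inside $\Disc$ are incident with $s_0$; in particular $\bQ_0 = r_9\,s_1\,r_5\,r_4\,s_4$ (the hyperquad missing $s_0$) is contractible, being the boundary of the union of the two disc-faces with the $H$-quad faces $Q_1,Q_2,Q_3$ — so $\bQ_0$ bounds a closed disc $\Delta_{\bQ_0}$ in $\pp$, and $M_{\bQ_0}$ is the unique $\bQ_0$-bridge containing the exterior of $\Delta_{\bQ_0}$, hence $\bQ_0\cup M_{\bQ_0} \supseteq H-\oo{s_0}\supseteq V_8$ is the only non-planar $\bQ_0$-bridge. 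Every other $\bQ_0$-bridge $B$ is $\bQ_0$-interior, i.e. embedded in $\Delta_{\bQ_0}$; by Lemma~\ref{lm:planeNotOverlap} applied to $C=\bQ_0$ with $B=M_{\bQ_0}$ (which is $\bQ_0$-exterior, but the complementary face argument works), no two $\bQ_0$-interior bridges overlap each other. So $OD(\bQ_0)$ is a star centred at $M_{\bQ_0}$ together with the interior bridges as leaves, and in a star the only obstruction to being bipartite would be... nothing — a star is always bipartite. Wait: the subtlety is that interior bridges \emph{can} overlap $M_{\bQ_0}$ or not, and the claim $OD(\bQ_0)-\{v_1v_4,v_6v_9\}$ bipartite is trivially true if $OD(\bQ_0)$ itself is a forest. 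Let me reconsider.

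The real content must be that $OD(\bQ_0)$ is \emph{not} necessarily a star: two $\bQ_0$-interior bridges $B,B'$ embedded in $\Delta_{\bQ_0}$ can overlap only if $\bQ_0\cup B$ or $\bQ_0\cup B'$ contains a non-contractible cycle, which is impossible inside a disc — so actually they cannot overlap, confirming $OD(\bQ_0)$ is a star and hence bipartite, making the statement vacuous unless I am missing bridges that are \emph{not} $\bQ_0$-interior. The point is that $\bQ_0$ does not bound a face; it bounds $\Delta_{\bQ_0}$ which \emph{contains} the faces $Q_1,Q_2,Q_3$ and two disc faces, but also $M_{\bQ_0}$ is on the outside. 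However $M_{\bQ_0}$ contains $s_0$ which runs through $\Disc\setminus\Delta_{\bQ_0}$... no, $\Delta_{\bQ_0}$ is precisely $\Disc\cup(\text{interior quad regions})$ minus the M\"obius part? I must be careful. The honest plan: $\bQ_0$ is contractible (shown above), so it bounds a closed disc $\Delta$ and a closed M\"obius band $M'$. The spoke $s_0$ and hence $\Nuc(M_{\bQ_0})$ meets both, or lies in one; since $s_0\subseteq\Disc$ and $\bQ_0$ passes through $\Disc$ via the two $ab$-subarcs of $\gamma$... Actually $\bQ_0$ contains no vertex of $\gamma$ in its interior region only if none of $v_0,v_5$ equals $a$ or $b$; in general $\bQ_0$ may run along $\gamma$. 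So which side is the disc depends on the embedding. In any case: set $\Delta$ to be the closed disc bounded by $\bQ_0$ that contains $s_0$ (this exists because $s_0\cup(\text{a }\bQ_0\text{-path})$ is contractible, lying in $\Disc$). Then $M_{\bQ_0}\supseteq s_0\subseteq\Delta$, so $M_{\bQ_0}\subseteq\Delta$? No — $M_{\bQ_0}$ also contains $R-\oo{\text{stuff}}$ which is non-contractible, so $M_{\bQ_0}$ is NOT in a disc. Contradiction; so $\Nuc(M_{\bQ_0})$ does meet the M\"obius side. Hence $s_0$ straddles, i.e. $\Delta$ is bounded by $\bQ_0$ and lies strictly inside $\Disc$, bounded also by part of $\gamma$.

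\textbf{Revised plan.} I would argue as follows. Let $\Delta$ be the closed disc bounded by $\bQ_0$ lying on the side containing the $H$-quad faces $Q_1,Q_2,Q_3$ (and the two $\gamma$-containing disc faces). Every $\bQ_0$-bridge $B\neq M_{\bQ_0}$ is embedded in $\Delta$ or in the closure of exactly one of these faces; by Corollary~\ref{co:contractibleBOD} (since $M_{\bQ_0}$ is the unique $\bQ_0$-bridge whose union with $\bQ_0$ has a non-contractible cycle), $\bQ_0$ \emph{already has BOD} — wait, but then the lemma is again vacuous. So Corollary~\ref{co:contractibleBOD} must \emph{fail} to apply, meaning some $\bQ_0$-bridge $B\neq M_{\bQ_0}$ is such that $\bQ_0\cup B$ contains a non-contractible cycle. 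Such $B$ is $\bQ_0$-exterior, embedded on the M\"obius side $M'$. By the two pictures of representativity-$2$ $V_{10}$ embeddings with $s_0$ exposed, the only faces of $\Pi[H]\cup\gamma$ on the M\"obius side adjacent to $\bQ_0$ are the two bounded by $\cc{a,r_9,v_0}r_0\,s_1\cc{v_6,r_5,b,\beta,a}$ and $\cc{a,r_9,v_9}s_4\,r_4\cc{v_5,r_5,b,\beta,a}$; a $\bQ_0$-exterior bridge in the first has attachments in $\{v_0\}\cup r_0\cup s_1\cup\{v_6\}$ with a non-contractible cycle forcing attachments near both $\oo{r_0\,s_1}$ and near $a=v_9$ or... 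By Theorem~\ref{th:expSpokeNoAtt} there is no $H$-bridge with attachments in $\oo{s_0}$ and $\oo{r_1r_2r_3}$, which rules out most configurations; what survives is exactly the single-edge bridges $v_1v_4$ and $v_6v_9$ (the "diagonals" allowed by Corollary~\ref{lm:diagonal}). These are the two bridges removed in the statement. After deleting them, no $\bQ_0$-bridge other than $M_{\bQ_0}$ yields a non-contractible cycle, so Corollary~\ref{co:contractibleBOD} gives that $\bQ_0$ has BOD in $G-\{v_1v_4,v_6v_9\}$; since deleting edges cannot create overlaps among the remaining bridges and changes $OD$ only by removing two vertices, $OD(\bQ_0)-\{v_1v_4,v_6v_9\}$ is the overlap diagram of $\bQ_0$ in $G-\{v_1v_4,v_6v_9\}$, hence bipartite.

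\textbf{Main obstacle.} The crux is the case analysis showing that \emph{every} $\bQ_0$-exterior bridge $B\neq M_{\bQ_0}$ whose union with $\bQ_0$ contains a non-contractible cycle must be one of the single edges $v_1v_4$, $v_6v_9$. This is where Theorem~\ref{th:expSpokeNoAtt} and Corollary~\ref{lm:diagonal}(\ref{it:iEq0}) do the heavy lifting: a non-contractible cycle through $B$ and $\bQ_0$ forces $B$ to "reach across" $\gamma$ in the M\"obius band, which (given the two possible embeddings of $V_{10}$ and the location of $s_0$) forces attachments in positions of the form $\oo{r_{i-1}s_{i-1}}$ and $\oo{r_{n+i}s_{i+1}}$ with $i=0$; Corollary~\ref{lm:diagonal} then says such $B$ is just the edge $v_6v_9$ (or symmetrically $v_1v_4$), unless $\bQ_3$ has NBOD, in which case one uses Theorem~\ref{th:BODquads}(\ref{it:exposedBQ3}) — which guarantees $\bQ_3$ has BOD because $s_0$ is exposed — to exclude that branch. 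Once the offending bridges are pinned down to those two edges, the rest is the routine "deleting edges preserves and only shrinks the overlap diagram" observation together with Corollary~\ref{co:contractibleBOD}. I would also need a brief check that removing $v_1v_4$ and $v_6v_9$ does not disconnect $G$ or otherwise interfere — but since we only use $G-\{v_1v_4,v_6v_9\}$ to compute an overlap diagram, not to invoke $2$-crossing-criticality, no such check is needed.
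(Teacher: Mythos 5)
There is a genuine gap, and it sits exactly where you located the ``main obstacle.'' Your plan rests on the claim that every $\bQ_0$-bridge $B\ne M_{\bQ_0}$ for which $\Pi[\bQ_0\cup B]$ contains a non-contractible cycle is one of the single edges $v_1v_4$, $v_6v_9$. That claim is false: the $\bQ_0$-bridge $B^0$ containing the exposed spoke $s_0$ is such a bridge. Indeed, $s_0$ joins $v_0$ and $v_5$, both on $\bQ_0$, and the cycle $s_0\,r_5\,s_1\,r_0$ crosses $\gamma$ exactly once (at $b$), so it is non-contractible; moreover Theorem~\ref{th:expSpokeNoAtt} guarantees $B^0\ne M_{\bQ_0}$. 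This is precisely why the paper introduces the set $\mathcal N$ (Definition~\ref{df:scriptN}), notes explicitly that $B^0\in\mathcal N$, and excludes $v_1v_4$ and $v_6v_9$ from $\mathcal N$. Theorem~\ref{th:expSpokeNoAtt} only forbids attachments in $\oo{s_0}$ together with $\oo{r_1\,r_2\,r_3}$; it does not reduce the surviving exterior bridges to the two diagonal edges, and Corollary~\ref{lm:diagonal} applies only to bridges with attachments in both $\oo{r_{i-1}\,s_{i-1}}$ and $\oo{r_{n+i}\,s_{i+1}}$, which $B^0$ need not have. Consequently your final step collapses: even after deleting $v_1v_4$ and $v_6v_9$, the hypothesis of Corollary~\ref{co:contractibleBOD} (at most one bridge producing a non-contractible cycle) still fails because of $B^0$ and $M_{\bQ_0}$, so you cannot conclude BOD that way.

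The part of your argument that does survive is the parity argument for odd cycles of $OD(\bQ_0)$ that avoid $\mathcal N\cup\{v_1v_4,v_6v_9\}$: there, every $B_i\cup\bQ_0$ is free of non-contractible cycles, Lemma~\ref{lm:planeNotOverlap} forces consecutive bridges onto opposite sides of $\Pi[\bQ_0]$, and parity gives a contradiction; this is exactly the second half of the paper's proof. What is missing is the other half: an odd cycle may pass through $M_{\bQ_0}$ (it must, since Theorem~\ref{th:BODquads}~(\ref{it:nearlyBOD}) makes $OD(\bQ_0)-M_{\bQ_0}$ bipartite) \emph{and} through an element of $\mathcal N$ such as $B^0$. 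Handling that case is the real content of the lemma; the paper does it with Lemma~\ref{lm:notConnected}, a lengthy analysis of the $1$-drawings of $G-\oo{s_2}$ and $G-\oo{s_3}$ (via Lemma~\ref{lm:bQ0limitA} and Lemma~\ref{lm:attB}) showing that any induced odd cycle through $M_{\bQ_0}$ and a member of $\mathcal N$ has length $3$ and contains one of $v_1v_4$, $v_6v_9$. Your proposal has no mechanism for this case, so it does not close.
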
\printFullDetails{

The following observations will be needed throughout the proof of Theorem \ref{th:allSpokesMob} and, in particular, the proof of Lemma \ref{lm:bQ0bipartite}.

}\begin{definition}\label{df:scriptN} Let \fsq\ be an fsq and let $Q^*$ be either $\bQ_0$ (if $s_0$ is exposed) or $Q_4$ (otherwise).  Then $\mathcal N$\index{$\mathcal N$} --- a function of \fsq\ --- denotes the set of $Q^*$-bridges $B$ other than $M_{Q^*}$ for which $\Pi[Q^*\cup B]$ has a non-contractible cycle.   In the case $Q^*=\bQ_0$, any of $v_1v_4$ and $v_6v_9$ that occurs in $G$ is a $\bQ_0$-bridge\dragominorrem{(text removed)} $B$ for which $\Pi[\bQ_0\cup B]$ has a non-contractible cycle, and \wording{we do not} include these in $\mathcal N$. \end{definition}\printFullDetails{

 We remark that, if $s_0$ is exposed in $\Pi$, then Theorem \ref{th:expSpokeNoAtt} implies the $(H\cap \Mob)$-bridge $B^0$ containing $s_0$ is distinct from $M_{\bQ_0}$.  In this case, $B^0\in \mathcal N$.  If $s_0$ is not exposed in $\Pi$, then $Q^*=Q_4$.  If $\mathcal N=\varnothing$, then $\Pi$ satisfies the conclusions of Theorem \ref{th:allSpokesMob}.  Therefore, in this case, we may assume $\mathcal N\ne \varnothing$.

 Before we can prove Lemma \ref{lm:bQ0bipartite}, we need some results common to both cases.

An easy corollary of the following lemma will be used to deal with the main case in the proof of Lemma \ref{lm:bQ0bipartite}.

}\begin{lemma}\label{lm:D2D3}  Let $D$ be a 1-drawing of $V_8$ (with the usual labelling) in which $Q_1$ is crossed.  Then:
\begin{enumerate} 
\item $Q_3$ bounds a face of $D$; and
\item if $\bQ_0$ is crossed in $D$, then 
 either $r_1\textrm{ crosses }r_4$  or  $r_5 \textrm{ crosses } r_0$.
\end{enumerate}
\end{lemma}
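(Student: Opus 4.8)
The statement is a purely combinatorial fact about $1$-drawings of $V_8$, so the plan is to argue directly by considering the few ways a $1$-drawing of $V_8$ with $Q_1$ crossed can look. Throughout I use the usual labelling of $V_8$: rim $R=(v_0,v_1,\dots,v_7,v_0)$, rim branches $r_0,\dots,r_7$, spokes $s_i=v_iv_{i+4}$ for $i=0,1,2,3$ (indices mod $8$ on the rim, mod $4$ on the spokes), and $H$-quads $Q_i=r_i\,s_{i+1}\,r_{i+4}\,s_i$, $H$-hyperquads $\bQ_i=(Q_{i-1}\cup Q_i)-\langle s_i\rangle$. Since $D$ has at most one crossing and $Q_1$ is crossed in $D$, that single crossing is the crossing of two edges of $Q_1$; since $V_8$ is nonplanar, there is exactly one crossing, and every subgraph of $V_8$ avoiding at least one of the two crossed edges is drawn planarly in $D$.

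\textbf{Step 1: locate the crossing inside $Q_1$.} The cycle $Q_1=r_1\,s_2\,r_5\,s_1$ is crossed, and the crossing is between two of its edges. Two edges of a $4$-cycle can cross in $D$ only if they are the two ``opposite'' parts, so (after contracting the degree-$2$ structure) the crossing pairs a part of $r_1\cup s_1$ with a part of $r_5\cup s_2$, or — more precisely — the crossing is of an edge of one of $r_1,s_1$ with an edge of one of $r_5,s_2$ that is ``opposite'' to it along $Q_1$. I would enumerate the (essentially two, up to the symmetry swapping $r_1\leftrightarrow s_1$ and $r_5\leftrightarrow s_2$) possibilities: either a rim branch crosses a rim branch ($r_1$ crosses $r_5$) or a rim branch crosses a spoke ($r_1$ crosses $s_2$, equivalently $s_1$ crosses $r_5$), or a spoke crosses a spoke ($s_1$ crosses $s_2$). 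In each case, since the crossing uses two edges of $Q_1$, everything outside $Q_1$ — in particular $Q_3=r_3\,s_0\,r_7\,s_3$, which is edge-disjoint from $Q_1$ — is drawn planarly and cleanly.

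\textbf{Step 2: prove $Q_3$ bounds a face.} Insert a vertex $x$ at the unique crossing to get a planar embedding of the planar graph $(V_8)^\times$. In this embedding $Q_3$ is a cycle whose two sides I claim each contain a $V_8$-branch that is not reachable across $Q_3$ without crossing it; concretely, the subgraph $(V_8)^\times - \langle r_3\rangle$ on one side and $(V_8)^\times - \langle r_7\rangle$ on the other are joined to $Q_3$ only at its vertices, so in the planar embedding of $(V_8)^\times$ no edge lies strictly inside the disc bounded by $Q_3$ — that is, $Q_3$ bounds a face of the planar embedding of $(V_8)^\times$, hence of $D$. (This is really the statement that $Q_3$ is a $V_8$-prebox, via Lemma \ref{lm:preboxClean} and Lemma \ref{lm:closeIsPrebox}: $Q_3$ is $V_8$-close, so it is a prebox, so it is clean, and being a clean $4$-cycle with bridges on both sides through which the nonplanarity is threaded, it bounds a face.) I would phrase this directly, checking that $V_8-e$ is nonplanar for every edge $e$ of $Q_3$ — which is clear since $V_8-e\supseteq V_6$ when $e\in R$, and $V_8-e$ still contains a $V_6$ when $e$ is interior to a spoke of $Q_3$ — so $Q_3$ is a $(V_8)$-prebox, hence clean in $D$, and then the argument of Lemma \ref{lm:closeIsCycle}/\ref{lm:cleanBOD} gives that $Q_3$ bounds a face.

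\textbf{Step 3: the $\bQ_0$-crossing dichotomy.} Now additionally suppose $\bQ_0=(Q_3\cup Q_0)-\langle s_0\rangle$ is crossed in $D$. But $\bQ_0$ and $Q_1$ are edge-disjoint, so if $\bQ_0$ were crossed we would need a second crossing — unless the single crossing of $D$ simultaneously lies on $Q_1$ \emph{and} on $\bQ_0$, which is impossible since they share no edge. Hence, in fact, the hypothesis ``$\bQ_0$ crossed'' forces the single crossing of $D$ to be an edge of $Q_1\cap\bQ_0=\varnothing$ — so this never happens, and the implication ``if $\bQ_0$ crossed then $\dots$'' is vacuously... no: I must be careful, $\bQ_0$ and $Q_1$ do share edges, namely $\bQ_0$ contains $r_4$ and $Q_1$ contains $r_5$, and they share no edge — let me instead recompute: $\bQ_0 = (Q_7\cup Q_0)-\langle s_0\rangle$... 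Here with indices mod $4$ for hyperquads I should take $\bQ_0=(Q_3\cup Q_0)-\langle s_0\rangle = r_3\,s_1\,r_5\cdots$? This is exactly the delicate bookkeeping point, and it is \textbf{the main obstacle}: pinning down which rim branches belong to $\bQ_0$ and which to $Q_1$ in the $V_8$ indexing, and verifying they genuinely overlap so that the two crossings must coincide or else a second crossing appears. Once the index arithmetic is nailed down, the argument is: $Q_1$ is crossed and $\bQ_0$ is crossed, these use the single available crossing, so the crossed edges must lie in $Q_1\cap\bQ_0$; computing $Q_1\cap\bQ_0$ shows it consists of (a subpath of) $r_5$ together possibly with part of a spoke, and the only way a crossing can be simultaneously ``inside $Q_1$'' (a crossing of an $r_1\cup s_1$-edge with an $r_5\cup s_2$-edge) and ``inside $\bQ_0$'' (a crossing of an $r_0\cup r_1$-edge with an $r_4\cup r_5$-edge, by the structure of a crossed hyperquad as in the proof of Corollary \ref{co:hyperBODquadBOD}) is that $r_1$ crosses $r_5$ — directly giving the first alternative — or, after applying the reflection symmetry of $V_8$ that fixes $Q_1$ setwise and swaps the two hyperquads adjacent to it, that $r_5$ crosses $r_0$, giving the second alternative. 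I would finish by listing the two symmetric sub-cases explicitly and checking each yields one of $r_1$ crosses $r_4$ or $r_5$ crosses $r_0$ (note the statement says $r_1$ crosses $r_4$, not $r_5$; so the correct reading is that the crossing of $\bQ_0$ combined with the crossing of $Q_1$ forces, after re-examining which branch of $\bQ_0$ is involved, exactly the branch $r_4$), and conclude.

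\textbf{Remark on difficulty.} The genuinely routine part is Steps 1–2 (enumerate how $Q_1$ can be crossed; observe $Q_3$ is a prebox and is therefore clean and face-bounding). The only real care needed is in Step 3: getting the index conventions for $\bQ_0$ in $V_8$ exactly right, and then arguing that two prescribed crossings competing for the single available crossing of $D$ forces them to be ``the same crossing'', which pins it down to one of the two listed options. I expect the write-up to be a short case analysis — perhaps half a page — once the labelling is fixed, using only Lemmas \ref{lm:preboxClean}, \ref{lm:closeIsPrebox}, and \ref{lm:cleanBOD} and the elementary observation that a $1$-drawing has a unique crossing.
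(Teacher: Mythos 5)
There is a genuine gap, and it starts at Step 1. You assert that since $D$ has only one crossing and $Q_1$ is crossed, ``that single crossing is the crossing of two edges of $Q_1$.'' That is false: a cycle being crossed means only that \emph{some} edge of it is crossed, and indeed the conclusion of part (2) that you are trying to prove is precisely a crossing of the form $r_1\times r_4$ or $r_5\times r_0$, in each of which exactly one of the two crossed edges lies in $Q_1=r_1\,s_2\,r_5\,s_1$ (note $r_4,r_0\notin Q_1$). Your enumeration of ``how $Q_1$ can be self-crossed'' (including spoke--spoke and spoke--rim crossings) is therefore both incomplete as a list of the real possibilities and populated with cases that cannot occur. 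The missing ingredient is Lemma \ref{lm:1drawingsV2n}: in any $1$-drawing of $V_8$ the unique crossing is of $r_i$ with one of $r_{i+3},r_{i+4},r_{i+5}$, i.e.\ it is always between two nearly-antipodal \emph{rim} branches. That is what the paper uses, and with it the whole lemma is a two-line check: $Q_1$ crossed forces the crossing to be one of $r_1r_4$, $r_1r_5$, $r_1r_6$, $r_5r_0$, $r_5r_2$ (none of which touches $Q_3$, giving part (1)); and $\bQ_0=r_7\,r_0\,s_1\,r_4\,r_3\,s_3$ crossed additionally forces the crossing to involve one of $r_7,r_0,r_3,r_4$, leaving only $r_1r_4$ and $r_5r_0$.

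Your Step 3 inherits the damage and never closes: you frame the problem as two self-crossings ``competing for the single available crossing,'' first declare the implication vacuous, then retract, and explicitly leave the index bookkeeping as ``the main obstacle.'' That framework cannot be repaired, because in the two surviving cases neither $Q_1$ nor $\bQ_0$ is self-crossed --- each merely contains one of the two crossed rim branches. (Your Step 2, by contrast, can be salvaged independently of Step 1: $Q_3$ is a $V_8$-prebox since $V_8-e$ is nonplanar for every edge $e$ of $Q_3$, hence $Q_3$ is clean by Lemma \ref{lm:preboxClean}, and a clean cycle with a single bridge bounds a face.)
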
\printFullDetails{

\begin{cproof}  As $Q_1$ is crossed in $D$, either $r_1$ crosses $r_4\,r_5\,r_6$ in $D$ or $r_5$ crosses $r_0\,r_1\,r_2$ in $D$.   This already shows that $Q_3$ bounds a face of $D$.

As $\bQ_0$ is crossed in $D$,  either $r_7\,r_0$ or $r_3\,r_4$ is crossed in $D$.
Compare each of these with the possible crossing of $Q_1$.  In the former case, $r_0$ crosses $r_5$, while in the latter case $r_4$ crosses $r_1$.  
\end{cproof}

The following is the simple corollary that we will use.

}\begin{corollary}\label{co:D2D3}   Let $G\in\m2$ and $\hvfg$.   Let $D_2$ be a 1-drawing of $G-\oo{s_2}$\wordingrem{(text removed)}% in which $\bQ_2$ is crossed%
.  Then:
\begin{enumerate} 
\item $Q_4$ bounds a face of $D_2[H-s_2]$; and
\item if $\bQ_0$ is crossed in $D_2$, then 
 either $r_6\,r_7$ crosses $r_1$ or  $r_1\,r_2$ crosses $r_5$ (see Figure \ref{D2noAtt} for the possibilities for $D_2[H-\oo{s_2}])$.

Likewise, if $D_3$ is a 1-drawing of $G-\oo{s_3}$ in which\wordingrem{(text removed)} %both 
$\bQ_0$\wordingrem{(text removed)}%and $\bQ_3$ are
\wording{\ is crossed,} then the two possibilities for $D_3[H-\oo{s_3}]$ are illustrated in Figure \ref{D3noAtt}.
\end{enumerate}
\end{corollary}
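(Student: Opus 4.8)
The plan is to derive Corollary \ref{co:D2D3} as a direct translation of Lemma \ref{lm:D2D3}, applied to the subdivision $H$ of $V_{10}$ rather than to $V_8$ itself. The key observation is that $H - \oo{s_2}$ is a subdivision of $V_8$: deleting the interior of one spoke of $V_{10}$ and suppressing the resulting degree-2 vertices yields $V_8$, with the remaining four spokes $s_0, s_1, s_3, s_4$ becoming the four spokes of $V_8$, and the rim $R$ of $H$ becoming the rim of $V_8$ (with $r_2$ and $r_7$ absorbing the stubs of the deleted spoke). Under this identification, $Q_1 = r_1 s_1 r_6 s_0$ of $H - \oo{s_2}$ corresponds to an $H$-quad of the $V_8$, $\bQ_0$ of $H$ corresponds to a hyperquad-type cycle $r_0\,r_1\cup r_5\,r_6$ of the $V_8$ (reading indices appropriately), and $Q_4 = r_4 s_0 r_9 s_4$ corresponds to the $V_8$-quad $Q_3$ in the labelling of Lemma \ref{lm:D2D3}.

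First I would set up the relabelling carefully, recording which $H$-rim branches and $H$-spokes play the roles of the $r_i$ and $s_i$ in the $V_8$ statement of Lemma \ref{lm:D2D3}. Then I would invoke Lemma \ref{lm:BODcrossed} together with Theorem \ref{th:BODquads} (\ref{it:quadBOD}) to guarantee that $Q_1$ is crossed in $D_2$: since $Q_1$ is an $H$-quad it has BOD and exactly one non-planar bridge, and because $G$ is 2-crossing-critical with $\crn(G)\ge 2$, deleting any edge of $s_2$ (which lies in a planar $Q_1$-bridge, as $s_2$ is edge-disjoint from $Q_1$) forces $Q_1$ to be crossed in the resulting 1-drawing $D_2$. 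This is exactly the hypothesis of Lemma \ref{lm:D2D3}. Applying part (1) of that lemma to $D_2[H - \oo{s_2}]$ gives that $Q_4$ bounds a face of $D_2[H-s_2]$, which is conclusion (1). Applying part (2), under the added hypothesis that $\bQ_0$ is crossed in $D_2$, gives that either $r_6\,r_7$ crosses $r_1$ or $r_1\,r_2$ crosses $r_5$ — here the $r_1$ of $V_8$ is the path $r_1$ of $H$, while the $r_4\,r_5\,r_6$ and $r_0\,r_1\,r_2$ arcs of the $V_8$ statement expand to the $H$-paths $r_5\,r_6\,r_7$ and $r_9\,r_0\,r_1\,r_2$ respectively, so that "crosses $r_4$" becomes "crosses whichever of $r_5$ and $r_6\,r_7$ is forced by the position," yielding the stated alternative. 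The figures \ref{D2noAtt} and \ref{D3noAtt} simply enumerate the two resulting drawings up to the reflection symmetry of $V_8$.

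For the "likewise" clause about $D_3$, the argument is symmetric: $H - \oo{s_3}$ is again a subdivision of $V_8$, and deleting an edge of $s_3$ forces $Q_1$ of the corresponding $V_8$ (which here is $r_1 s_1 r_6 s_0$ again, since $s_1$ and $s_0$ survive) to be crossed; the same case analysis from Lemma \ref{lm:D2D3} produces the two possibilities for $D_3[H - \oo{s_3}]$ recorded in Figure \ref{D3noAtt}. I would note explicitly that the mirror symmetry of $V_8$ (reflecting so that $s_2 \leftrightarrow s_3$, equivalently the symmetry of the standard labelling used throughout) makes the $D_3$ case a relabelled instance of the $D_2$ case, so no new work is needed.

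I do not expect any serious obstacle here: the corollary is purely a bookkeeping specialization of Lemma \ref{lm:D2D3}. The only point requiring care is the index translation between the $V_8$ labelling used in Lemma \ref{lm:D2D3} and the $V_{10}$ standard labelling — in particular verifying that the quad called "$Q_3$" in Lemma \ref{lm:D2D3} is exactly $Q_4$ of $H$, and that the "$r_4$" and "$r_5\,r_0$" appearing there expand to the correct unions of $H$-rim branches. This is routine but must be checked against the standard labelling diagram so that the statement of the corollary (with $r_6\,r_7$, $r_1$, $r_1\,r_2$, $r_5$) comes out verbatim. Drawing the two 1-drawings explicitly — as the referenced figures do — is the cleanest way to confirm the translation.
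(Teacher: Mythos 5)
Your overall skeleton is the paper's: force a quad of the $V_8$ obtained from $H$ by deleting $\oo{s_2}$ to be crossed, then translate Lemma \ref{lm:D2D3}. But the execution has a concrete error in identifying which cycle does the work. The cycle that becomes the "$Q_1$" of Lemma \ref{lm:D2D3} is the one obtained by merging the two $H$-quads incident with $s_2$, i.e.\ the \emph{hyperquad} $\bQ_2=(Q_1\cup Q_2)-\oo{s_2}$; it is the unique quad of the $V_8$ disjoint from $Q_4$ of $H$, which is exactly why conclusion (1) reads "$Q_4$ bounds a face." This cycle is not an $H$-quad, so Theorem \ref{th:BODquads} (\ref{it:quadBOD}) does not apply to it; its BOD is the content of the separately stated part (\ref{it:bQ2}). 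Your justification — "$Q_1$ is an $H$-quad \dots $s_2$ is edge-disjoint from $Q_1$" — is false for the actual $H$-quad $Q_1=r_1\,s_2\,r_6\,s_1$, which contains $s_2$ as one of its four branches (and the cycle you write, $r_1\,s_1\,r_6\,s_0$, is not a cycle of $H$ at all). If instead you meant an $H$-quad genuinely edge-disjoint from $s_2$, such as $Q_0$, two things go wrong: $s_2$ lies in the non-planar M\"obius bridge $M_{Q_0}$, so Lemma \ref{lm:BODcrossed} does not apply to the drawing $D_2$ of $G-\oo{s_2}$; and even if $Q_0$ were crossed, Lemma \ref{lm:D2D3} would then say that the $V_8$-quad disjoint from $Q_0$ — which is $Q_3$ of $H$, not $Q_4$ — bounds a face, so you would not recover statement (1).

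The repair is exactly the paper's one-line proof: by Theorem \ref{th:BODquads} (\ref{it:bQ2}), $\bQ_2$ has BOD; since $s_2$ lies in a planar $\bQ_2$-bridge, Lemma \ref{lm:BODcrossed} forces $\bQ_2$ to be crossed in $D_2$; now $\bQ_2$ is the "$Q_1$" of the $V_8$ $H-\oo{s_2}$ and Lemma \ref{lm:D2D3} gives both conclusions after the index translation you describe. The $D_3$ clause is handled the same way. So the missing idea is not the reduction to Lemma \ref{lm:D2D3} — that part of your plan is right — but the recognition that the crossed cycle must be the hyperquad $\bQ_2$, together with the correct citations that make that crossing unavoidable.
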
\printFullDetails{

\begin{cproof}  Theorem \ref{th:BODquads} implies $\bQ_2$ has BOD.  Lemma \ref{lm:BODcrossed} implies $\bQ_2$ is crossed in $D_2$.  The results now follow immediately from Lemma \ref{lm:D2D3}.\end{cproof}

\begin{figure}[!ht]
\begin{center}
\scalebox{1.0}{\input{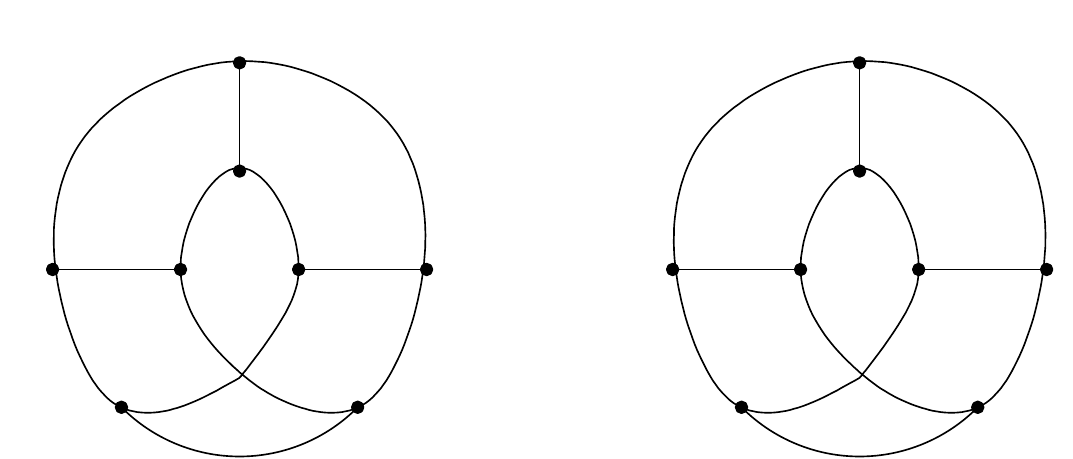_t}}
\end{center}
\caption{The two possibilities for $D_2$.}\label{D2noAtt}
\end{figure}

\begin{figure}[!ht]
\begin{center}
\scalebox{1.0}{\input{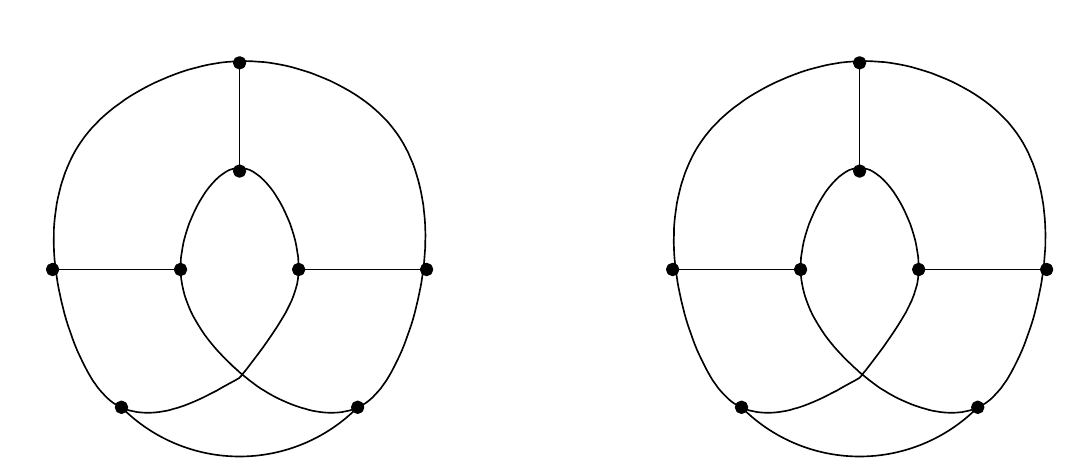_t}}
\end{center}
\caption{The two possibilities for $D_3$.}\label{D3noAtt}
\end{figure}

\def\r5{r^*_{+5}}

Let $r^*$ denote $r_9\cup r_0$ in the case $Q^*=\bQ_0$ and $r_9$ in the case $Q^*=Q_4$.  We also let $\r5$ denote the other component of $Q^*\cap R$.

}\begin{lemma}\label{lm:attB}  Let \fsq\ be an fsq.  If $B\in\mathcal N$, then $\Pi[B]\subseteq \Disc$, $\att(B)\subseteq r^*\cup \r5$, and $B$ has an attachment in each of $r^*$ and $\r5$.  \end{lemma}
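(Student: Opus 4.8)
\textbf{Proof proposal for Lemma \ref{lm:attB}.}

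The plan is to analyze a $B\in\mathcal N$ using three 1-drawings of proper subgraphs of $G$: the drawings $D_2$ and $D_3$ of $G-\oo{s_2}$ and $G-\oo{s_3}$ respectively, together with a drawing obtained by deleting an edge $e$ of $s_2$ or $s_3$. First I would record the basic dichotomy: since $B\in\mathcal N$, the graph $\Pi[Q^*\cup B]$ contains a non-contractible cycle $N$, and $N$ meets $\gamma$; in the case $Q^*=\bQ_0$ this forces $s_0$ to be exposed, while for $Q^*=Q_4$ we are in the case $H\subseteq\Mob$ and the only arc of $\gamma$ inside an $H$-quad lies in the face bounded by $Q_4$. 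In either case, since $Q^*$ bounds a face of $\Pi[H]$ contained in $\Disc$ (the $Q_4$ face containing $\alpha$; the $\bQ_0$ face likewise), and $\Pi$ embeds $Q^*\cup B$ so that the non-contractible cycle runs through $\gamma$, the bridge $B$ must be embedded on the disc side — i.e. $\Pi[B]\subseteq\Disc$. (Concretely: an $R$-avoiding, hence $Q^*$-avoiding, path producing a non-contractible cycle through the $\gamma$-arc inside the $Q^*$-face can only be drawn in $\Disc$, since the $\Mob$ side is cut off by $R$.) This also pins down $\att(B)$ to lie on the part of $Q^*$ bounding that $\Disc$-face, which is exactly $r^*\cup\r5$; the spoke(s) of $Q^*$ are not on the $\Disc$-face boundary. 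This handles $\att(B)\subseteq r^*\cup\r5$.

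The remaining, and I expect the main, obstacle is showing $B$ has an attachment in \emph{each} of $r^*$ and $\r5$ — i.e. ruling out $\att(B)\subseteq r^*$ or $\att(B)\subseteq\r5$. Here the point is that if all attachments of $B$ lay in a single one of the two arcs, then $\bQ^*\cup B$ would already be planar (a bridge attaching to one side of a quad contributes no non-contractible cycle when its attachments are all on a contractible subpath), contradicting $B\in\mathcal N$. To make this rigorous I would argue: suppose $\att(B)\subseteq r^*$ (the case $\att(B)\subseteq\r5$ is symmetric). Then $Q^*\cup B$ is contained in a closed disc — glue the $\Disc$-face of $\Pi[H]$ bounded (in part) by $r^*$ to the disc bounded by $Q^*$ along $r^*$ — so every cycle of $Q^*\cup B$ is contractible, contradicting the defining property of $\mathcal N$. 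This uses exactly the same disc-gluing idea as in the proof of Lemma \ref{lm:greenCycles}(\ref{it:BODplanar}), where two faces whose closures meet in a path combine to a closed disc.

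For the write-up I would first fix the standard labelling and recall which face of $\Pi[H]$ is bounded by $Q^*$ and sits in $\Disc$, and which arc of $\gamma$ (namely $\alpha$) lies in it; then derive $\Pi[B]\subseteq\Disc$ by observing that $B$ is embedded in some $(H\cup\gamma)$-face, and the only faces whose closure can contribute a non-contractible cycle together with $Q^*$ are the two $\Disc$-faces incident with $r^*$ and $\r5$ — whence both $\Pi[B]\subseteq\Disc$ and $\att(B)\subseteq r^*\cup\r5$ follow at once. Finally, the disc-gluing argument of the previous paragraph forces attachments on both arcs. One subtlety to watch: in the $Q^*=\bQ_0$ case the bridges $v_1v_4$ and $v_6v_9$ are deliberately excluded from $\mathcal N$, precisely because they \emph{can} be re-embedded in $\Mob$ (by $H$-friendliness, as in Lemma \ref{lm:greenCyclesFriendly}), so I must make sure the argument "$B\in\mathcal N\Rightarrow\Pi[B]\subseteq\Disc$" is applied only to genuine members of $\mathcal N$ and does not accidentally claim something false about those two single-edge bridges; since the statement of the lemma is about $B\in\mathcal N$, this is automatic, but the proof text should flag it.
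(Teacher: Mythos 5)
Your overall route is the same as the paper's: first place $B$ in $\Disc$ (a bridge embedded in $\Mob$ together with $Q^*$ lies in a closed disc and so cannot supply a non-contractible cycle), read off $\att(B)\subseteq r^*\cup \r5$ from the boundary of $\Disc$, and then rule out $\att(B)\subseteq r^*$ by a contractibility argument. Your first two steps are fine and match the paper.

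The third step, as written, does not work. You propose to conclude that $Q^*\cup B$ lies in a closed disc by gluing the $\Disc$-face containing $B$ to the closed disc bounded by $Q^*$ ``along $r^*$.'' But those two closed discs do not meet only along (part of) $r^*$: each also has (part of) $\r5$ on its boundary, so their intersection is a pair of disjoint arcs, one in $r^*$ and one in $\r5$, and the resulting union is not a disc --- it contains $\gamma=\alpha\cup\beta$ and is a M\"obius band. Indeed, if your gluing produced a disc, it would show that \emph{every} $B\subseteq\Disc$ with $\att(B)\subseteq r^*\cup\r5$ gives only contractible cycles with $Q^*$, i.e.\ that $\mathcal N=\varnothing$, which is false (consider $B^0$ when $s_0$ is exposed). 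The hypothesis $\att(B)\subseteq r^*$ must enter the topology, not just the bookkeeping. The paper's fix: let $\bar r^*$ be the minimal subpath of $r^*$ containing $\att(B)$; then $B\cup\bar r^*$ lies in the closure of the single $(\Pi[H]\cup\gamma)$-face containing $B$, which is a closed disc, and since $Q^*$ is itself contractible and meets $B\cup\bar r^*$ only in the arc $\bar r^*$, any non-contractible cycle of $Q^*\cup B$ yields a non-contractible cycle inside $B\cup\bar r^*$ --- a contradiction. So the idea is right, but the two discs you glue must actually meet in a single arc; shrinking from $r^*$ to $\bar r^*$ and from $\Disc$ to the one face containing $B$ is what makes that true.
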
\printFullDetails{

\begin{cproof} If $\Pi[B]\subseteq \Mob$, then $\Pi[Q^*\cup B]$ is contained in a closed disc and, therefore, has only contractible cycles, a contradiction.  Thus, $\Pi[B]\subseteq\Disc$.   It now follows that $\att(B)$ is contained in the intersection of $\bQ_0$ with the boundary of $\Disc$; that is, $\att(B)\subseteq r^*\cup \r5$. 

Suppose by way of contradiction that $\att(B)\subseteq r^*$.  Let $\bar r^*$ be a minimal subpath of $r^*$ containing $\att(B)$.    Then there is a non-contractible cycle $C$ contained in $B\cup\bar r^*$.  

Let $F$ be the closed $(\Pi[H]\cup \gamma)$-face containing $\Pi[B]$.  Then $F$ contains $\Pi[B\cup \bar r^*]$, so the non-contractible cycle $\Pi[C]$ is contained in the closed disc $F$, a contradiction.  So $\att(B)$ is not contained in $r^*$ and, likewise, it is not contained in $r^*_{+5}$.  \end{cproof} 

%Recall that we are aiming for Theorem \ref{th:allSpokesMob}.  To get $H\subseteq \Mob$, we may assume $s_0$ exposed in $\Pi$, as otherwise we are done.  By Theorem \ref{th:expSpokeNoAtt}, $s_0$ is contained in a $\bQ_0$-bridge $B^0$ that is different from $M_{\bQ_0}$.    By Theorem \ref{th:BODquads}, $\bQ_2$ and $\bQ_3$ both have BOD.  

%To get Theorem \ref{th:allSpokesMob} (\ref{it:niceQ4}), that is, the case $Q^*=Q_4$, we %assume $H\subseteq\Mob$.  In this case,  Theorem \ref{th:BODquads} tells us that 
%$\bQ_2$ has BOD and that either $\bQ_1$ or $\bQ_3$ has BOD.  These are symmetric %with respect to $Q_4$, and so we may choose one without loss of generality.  In order to %minimize the notational confusion between the two cases, we assume in the case that %$Q^*=Q_4$ that the standard labelling is chosen for \fsq\ so that $\bQ_3$ has BOD.

Let \fsq\ be an fsq, with $s_0$ exposed in $\Pi$.  Suppose $D_2$ is a 1-drawing of $G-\oo{s_2}$ in which $\bQ_0$ is crossed.\wordingrem{(Text removed.)} %Theorem \ref{th:BODquads} implies $\bQ_2$ has BOD and therefore Lemma \ref{lm:BODcrossed} implies $\bQ_2$ is crossed in $D_2$. 
Corollary \ref{co:D2D3} implies that $D_2[H-\oo{s_2}]$ is one of the \wording{two drawings illustrated} in Figure \ref{D2noAtt}.  The {\em outside of $D_2[\bQ_0]$\/}\index{outside} is the face of $D_2[\bQ_0]$ containing $D_2[s_3]$.  The {\em inside\/}\index{inside} is the other face of $D_2[\bQ_0]$.  Likewise, if $D_3$ is a 1-drawing of $G-\oo{s_3}$ in which $\bQ_0$ is crossed, then the {\em outside of $D_3[\bQ_0]$\/} is the face of $D_3[\bQ_0]$ containing $D_3[s_2]$.

}\begin{lemma}\label{lm:bQ0limitA} Let \fsq\ be an fsq, with $s_0$ exposed in $\Pi$.  For $i=2,3$, let $D_i$ be a 1-drawing of $G-\oo{s_i}$ in which $\bQ_0$ is crossed.   Suppose  $B$ is a $\bQ_0$-bridge in $\mathcal N$.  
\begin{enumerate}\item\label{it:d2b} If $D_2[B]$ is outside of $D_2[\bQ_0]$, then $B\in \{v_1v_5,v_0v_6\}$.  
\item\label{it:d3b} If $D_3[B]$  is outside of $D_3[\bQ_0]$, then $B\in \{v_0v_4,v_5v_9\}$.  
\end{enumerate}
\end{lemma}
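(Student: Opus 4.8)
The plan is to exploit the rigidity of $D_2$ (respectively $D_3$) provided by Corollary~\ref{co:D2D3}, together with the structural restrictions on bridges in $\mathcal N$ coming from Lemma~\ref{lm:attB}, Claim/Lemma~\ref{lm:threeAtts}, and Corollary~\ref{co:attsMissBranch}. I will prove (\ref{it:d2b}); part (\ref{it:d3b}) follows by the mirror symmetry that swaps the roles of $s_2$ and $s_3$ (equivalently, reflects the standard picture), so it suffices to treat one case carefully.

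First I would set up the geometry. By Corollary~\ref{co:D2D3}, since $\bQ_0$ is crossed in $D_2$, the drawing $D_2[H-\oo{s_2}]$ is one of the two pictures in Figure~\ref{D2noAtt}: either $r_6\,r_7$ crosses $r_1$, or $r_1\,r_2$ crosses $r_5$. In either case $D_2[\bQ_0]$ is a closed curve with a self-crossing, and its two faces are the ``inside'' (not containing $D_2[s_3]$) and the ``outside'' (containing $D_2[s_3]$). Now let $B\in\mathcal N$ with $D_2[B]$ drawn in the outside face. Since $D_2$ has only one crossing, and that crossing already lies on $\bQ_0$, the bridge $B$ is drawn cleanly inside the outside face of $D_2[\bQ_0]$; hence $\att(B)$ is contained in the portion of $\bQ_0$ that bounds that outside face. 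Reading off the Figure~\ref{D2noAtt} drawings, the outside face of $D_2[\bQ_0]$ is bounded by an arc of $\bQ_0$ whose $\bQ_0$-intersection with the ``rim part'' of $H$ lies in a short segment; combined with Lemma~\ref{lm:attB} (which forces $\att(B)\subseteq r^*\cup r^*_{+5}=r_9\,r_0\cup r_4\,r_5$ and forces an attachment in each of $r_9\,r_0$ and $r_4\,r_5$), this pins $\att(B)$ down to a two-element set among $\{v_0,v_1\}$ on one side and $\{v_4,v_5,v_9\}$ on the other — more precisely, I expect to conclude $\att(B)\subseteq\{v_1,v_5\}$ or $\att(B)\subseteq\{v_0,v_6\}$ (the two endpoints of the uncrossed arc of $\bQ_0$ bounding the outside face, which is exactly $s_1$ in one Figure~\ref{D2noAtt} case and the spoke path $r_0$ together with its neighbours in the other). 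Here I will need to use that $B\in\mathcal N$ means $\bQ_0\cup B$ has a non-contractible cycle, which (via the $H$-friendliness of $\Pi$ and Lemma~\ref{lm:attB}) forces $B\subseteq\Disc$ and its two attachments to be ``on opposite sides,'' ruling out the degenerate possibilities where both attachments lie in a single $H$-branch.

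Once the attachment set is reduced to a fixed pair $\{v_1,v_5\}$ or $\{v_0,v_6\}$, I would invoke Lemma~\ref{lm:threeAtts}: an $H$-bridge with exactly two attachments is a single edge $K_2$. So $B=v_1v_5$ or $B=v_0v_6$, as claimed. The only subtlety is ruling out a third attachment: Corollary~\ref{co:attsMissBranch}, applied with an appropriate $H$-quad containing $\bQ_0$'s relevant branches, bounds $|\att(B)\cap s|\le 2$ on any spoke and $|\att(B)\cap(Q-\cc{r})|\le2$, and the geometry of the outside face of $D_2[\bQ_0]$ (whose bounding $\bQ_0$-arc is a single spoke-path plus at most one rim edge at each end) leaves no room for a third attachment once two of the right type are present. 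For part (\ref{it:d3b}) I apply the reflection of the standard labelling that interchanges the indices so that $s_2\leftrightarrow s_3$ and $v_i\mapsto v_{-i}$ (mod $10$) up to the relabelling consistent with Figure~\ref{D3noAtt}; under this symmetry $\{v_1,v_5\}\mapsto\{v_0,v_4\}$-type sets appear, giving $B\in\{v_0v_4,v_5v_9\}$.

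The main obstacle I anticipate is bookkeeping: correctly reading off which arc of $\bQ_0$ bounds the ``outside'' face in each of the two Figure~\ref{D2noAtt} configurations, and verifying that in both configurations the candidate attachment pair is exactly $\{v_1,v_5\}$ or $\{v_0,v_6\}$ (and not, say, a pair forcing a non-contractible cycle through a wrong branch). This requires a careful but elementary case check on the two drawings, using that $B$ must attach to both ``ends'' of $\bQ_0$ (one attachment in $r_9\,r_0$, one in $r_4\,r_5$) to create a non-contractible cycle, while simultaneously being confined to the outside face. I do not expect any genuinely hard step beyond this diagram-chasing, since all the heavy lifting (BOD of $\bQ_2$, the shape of $D_2[H-\oo{s_2}]$, the two-attachment-implies-edge principle) is already available.
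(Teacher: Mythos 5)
Your overall architecture is right for the first half of the argument, but there is a genuine gap: you never use $D_3$ in proving part (\ref{it:d2b}), and the conclusion cannot be reached from $D_2$ and the embedding alone. In the left-hand configuration of Figure \ref{D2noAtt}, the face of $D_2[H-\oo{s_2}]$ outside $D_2[\bQ_0]$ that is incident with both $r_9\,r_0$ and $r_4\,r_5$ meets $r_9\,r_0$ only at $v_1$, but it meets a whole segment of $r_4\,r_5$, not just $\{v_5\}$; so $D_2$ only yields $v_1\in\att(B)$ and $\att(B)\setminus\{v_1\}\subseteq r_4\,r_5$. Your claim that the $D_2$ geometry ``pins $\att(B)$ down to $\{v_1,v_5\}$ or $\{v_0,v_6\}$'' is therefore unjustified, and the two tools you offer to close the gap do not suffice: the $\Disc$-face of $H\cup\gamma$ incident with $v_1$ (bounded by $r_0\,r_1\,r_2\,r_3\,r_4\,s_0$) contains all of $r_4$, so the embedding does not exclude a bridge $v_1x$ with $x\in\oo{r_4}$ or $x=v_4$; and such a bridge does lie in $\mathcal N$, since together with the $\bQ_0$-path through $v_6$, $v_5$ it forms a cycle crossing $\gamma$ exactly once at $b\in r_5$ (only $v_1v_4$ is excluded from $\mathcal N$, and only by fiat). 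Corollary \ref{co:attsMissBranch} does not help either, as it applies to $Q$-local $H$-bridges and $B$ need not be one.

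The paper closes exactly this hole with the second drawing: it shows that if $D_3[B]$ were outside $D_3[\bQ_0]$ then necessarily $\att(B)=\{v_1,v_4\}$, i.e.\ $B=v_1v_4\notin\mathcal N$; hence $D_3[B]$ is inside $D_3[\bQ_0]$, which forces $\att(B)\subseteq\{v_1\}\cup r_5$, and only then does the projective-plane embedding (whose $\Disc$-face at $v_1$ meets $r_5$ only at $v_5$) give $B=v_1v_5$. This is why the lemma's hypotheses supply both $D_2$ and $D_3$. Your symmetry reduction of (\ref{it:d3b}) to (\ref{it:d2b}) via $v_j\mapsto v_{5-j}$ is fine and matches the paper, and the final ``two attachments implies a single edge'' step via Lemma \ref{lm:threeAtts} is also correct; the missing ingredient is the interplay between the two drawings.
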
\printFullDetails{

\begin{cproof}  We prove (\ref{it:d2b}); (\ref{it:d3b})	 is completely analogous.   We remark that $B\ne B^0$ as $D_2[s_0]$ is inside $D_2[\bQ_0]$.   Lemma \ref{lm:attB} shows that either: (i)  $\att(B)\subseteq \cc{b,r_5,v_6}\cup \cc{v_9,r_9,a}$ and $B$ has attachments in both $\cc{b,r_5,v_6}$ and $\cc{v_9,r_9,a}$; or  (ii) $\att(B)\subseteq \cc{a,r_9,v_0}\rbsp r_1\cup r_4\lbsp\cc{v_5,r_5,b}$ and $B$ has attachments in both $\cc{a,r_9,v_0}\rbsp r_1$ and $r_4\lbsp\cc{v_5,r_5,b}$. 

Suppose first that $D_2$ is the left-hand possibility illustrated in Figure \ref{D2noAtt}.  Considering $D_2$, we see that $v_1$ is one attachment of $B$ and the others are in $r_4\,r_5$.  

Now consider the possibilities for $D_3[B]$.  We see that $D_3[B]$ can be outside $D_3[\bQ_0]$ in only one of the two possible $D_3$'s, namely the right-hand one, and then only if $\att(B)=\{v_1,v_4\}$.  But in this case $B$ is just the edge $v_1v_4$, which is not in $\mathcal N$.  So $D_3[B]$ is inside $D_3[\bQ_0]$.   It now follows from this and the previous paragraphs that $\att(B)\subseteq \{v_1\}\cup r_5$.   

Putting this information into $\Pi$, we see that the only possibility for $B$, which is embedded in $\Disc$ and not in $\Mob$, is that $B=v_1v_5$. 

In the case $D_2$ is the right-hand possibility in Figure \ref{D2noAtt},  $D_2$ shows that $\att(B)\subseteq \{v_6\}\cup r_9\,r_0$.   Since $v_6v_9\notin \mathcal N$,  $B\ne v_6v_9$, so $D_3[B]$ is not outside $D_3[\bQ_0]$.  Therefore, $D_3$ shows $\att(B)\subseteq \{v_6\}\cup r_0$.    

Again we recall that $B$ is embedded in $\Disc$ in $\pp$.  If $B$ is embedded in the face bounded by $\cc{a,r_9,v_0,s_0,v_5,r_5,b,\alpha,a}$, then $b=v_6$ and the only other possible attachment for $B$ is $v_0$, as required.  If $B$ is embedded in the face bounded by $\cc{b,r_5,v_6}\rbsp r_6\,r_7\,r_8\lbsp\cc{v_9,r_9,a,\alpha,b}$, then $a=v_0$ and again this is the only possible attachment other than $v_6$, as required.
\end{cproof}

Let $N$ be the graph
$\displaystyle{
\bigcup_{B\in \mathcal N}B\,.}
$

}\begin{lemma}\label{lm:noncont} Let \fsq\ be an fsq. Then \dragominor{there are not disjoint $(N\cap r^*)(N\cap r^*_{+5})$-paths in $N$}.  In particular, if $Q^*=\bQ_0$ and $|\mathcal N|\ge 2$, then either every $B\in \mathcal N$ has only $v_0$ as an attachment in $r_9\,r_0$ or every $B\in \mathcal N$ has only $v_5$ as an attachment in $r_4\,r_5$. \end{lemma}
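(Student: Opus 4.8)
I would argue by contradiction: assuming $P_1,P_2$ are vertex-disjoint $(N\cap r^*)(N\cap r^*_{+5})$-paths in $N$, I will produce a box, contradicting Lemma~\ref{lm:noBox}. First record the picture. By Lemma~\ref{lm:attB} every $B\in\mathcal N$ has $\Pi[B]\subseteq\Disc$ and attachments in both $r^*$ and $r^*_{+5}$; in particular $\Pi[P_1],\Pi[P_2]\subseteq\Disc$, and each joins the disjoint boundary arcs $r^*$ and $r^*_{+5}$ of $\Disc$. Two disjoint arcs in a disc with endpoints on the boundary are non-crossing, so (after relabelling so that $P_i$ runs from $x_i\in r^*$ to $y_i\in r^*_{+5}$ with $P_1$ on the side of the arc $r_1\,r_2\,r_3$) they separate $\Disc$ into three closed regions: $\Delta_1$ with $r_1\,r_2\,r_3$ in its boundary, $\Delta_2$ with $r_6\,r_7\,r_8$ in its boundary, and a middle region $\Delta_0$ bounded by the cycle $C=P_1\cup A'\cup P_2\cup A$, where $A\subseteq r^*$ is the $x_1x_2$-arc and $A'\subseteq r^*_{+5}$ is the $y_1y_2$-arc. (For $Q^*=Q_4$ one uses $r_0\,r_1\,r_2\,r_3$ and $r_5\,r_6\,r_7\,r_8$ in place of $r_1\,r_2\,r_3$ and $r_6\,r_7\,r_8$; the argument is identical.)

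Next I would show $C$ is a box. Since $\Delta_0$ is a closed disc in $\pp$, $C$ is contractible and any $C$-bridge embedded inside $\Delta_0$ has only contractible cycles. Every $C$-bridge not inside $\Delta_0$ lies in $\Delta_1\cup\Delta_2$ together with $\Mob$; because the rim vertices $v_1,v_2,\dots$ of $r_1\,r_2\,r_3$ and $r_6\,r_7\,r_8$, and the spokes $s_1,\dots,s_4$, tie this material together across $R$, there is exactly one such $C$-bridge, $B^{\mathrm{out}}$, and it contains $M_{Q^*}$. Thus $B^{\mathrm{out}}$ is the only $C$-bridge whose union with $C$ contains a non-contractible cycle, so Corollary~\ref{co:contractibleBOD} gives that $C$ has BOD and every other $C$-bridge is planar. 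Pick a planar $C$-bridge $B$: one embedded in the interior of $\Delta_0$, or, if that interior carries no $C$-bridge, one in $\Delta_1$ or $\Delta_2$ (which in that case is not merged into $B^{\mathrm{out}}$). It remains to check that $C$ is a $\comp{B}$-prebox. For an edge $e$ of $C$ with $e\notin R$ we have $e\in P_1\cup P_2\subseteq N$, hence $e\notin H-\oo{s_0}$; since $\comp{B}\supseteq H-\oo{s_0}$ (as $B$ meets $H$ at most in the interior of $s_0$), and $H-\oo{s_0}\topol$ a subdivision of $V_8$, the graph $\comp{B}-e$ is non-planar. For an edge $e$ of $C$ lying in $R$, i.e.\ $e\in A\cup A'$, one reroutes the rim through $P_1$, $P_2$, $A$, $A'$ or a spoke of $Q^*$ to obtain a cycle which, with three of the spokes $s_1,\dots,s_4$, is a subdivision of $V_6$ inside $\comp{B}-e$ (using that $M_{Q^*}$ carries $r_1\,r_2\,r_3$, $r_6\,r_7\,r_8$ and two further spokes). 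Hence $C$ is a box, the desired contradiction, proving the first assertion.

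For the second assertion take $Q^*=\bQ_0$ and $|\mathcal N|\ge2$. Recall (from the remark preceding the lemma, using Theorem~\ref{th:expSpokeNoAtt}) that when $s_0$ is exposed the $(H\cap\Mob)$-bridge $B^0\ni s_0$ lies in $\mathcal N$. The spoke $s_0$ is itself a $(N\cap r^*)(N\cap r^*_{+5})$-path in $N$ with ends $v_0\in r^*=r_9\,r_0$ and $v_5\in r^*_{+5}=r_4\,r_5$, and no $\mathcal N$-bridge attaches to the interior of $s_0$. Now suppose some $B\in\mathcal N$ had an attachment $u\neq v_0$ in $r_9\,r_0$ and some $B'\in\mathcal N$ had an attachment $w\neq v_5$ in $r_4\,r_5$. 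If $B$ already has an attachment other than $v_5$ in $r_4\,r_5$, or $B'$ has one other than $v_0$ in $r_9\,r_0$, then that bridge alone contains a $(N\cap r^*)(N\cap r^*_{+5})$-path disjoint from $s_0$, and together with $s_0$ we get two disjoint such paths; otherwise $B$ contains a $u$-to-$v_5$ path and $B'$ a $v_0$-to-$w$ path, and these are disjoint (their $\bQ_0$-vertices $\{u,v_5\}$ and $\{v_0,w\}$ are disjoint), again two disjoint such paths — contradicting the first assertion in every case. Hence either every $B\in\mathcal N$ attaches to $r_9\,r_0$ only at $v_0$, or every $B\in\mathcal N$ attaches to $r_4\,r_5$ only at $v_5$.

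\textbf{Main obstacle.} The delicate point is the bookkeeping in the box argument: one must handle the degenerate configurations in which $\Delta_0$ (or $\Delta_1$, $\Delta_2$) carries no $C$-bridge, in which some $x_i$ or $y_i$ coincides with a ``corner'' $v_1,v_4,v_6,v_9$, and in which $s_0$ happens to lie inside one of the three regions; in each case one must still exhibit a planar $C$-bridge $B$ and verify the prebox condition edge by edge. The inputs that make this go through are the non-contractibility built into the definition of $\mathcal N$ and the planarity of $\comp{(M_{Q^*})}$ coming from Theorem~\ref{th:BODquads}(\ref{it:nearlyBOD}) (and Theorem~\ref{th:BODquads}(\ref{it:quadBOD}) for $Q_4$), which together force the single exceptional bridge $B^{\mathrm{out}}$ and fuel the $V_6$-hunting.
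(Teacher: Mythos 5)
Your strategy (manufacture a box from the two disjoint paths) is genuinely different from the paper's, but it has a gap that is not the ``bookkeeping'' you flag at the end: the prebox condition for the rim edges of $C$ can fail outright. The two paths $P_1,P_2$ can coexist in $\Disc$ only because their ends do \emph{not} interleave around $R$ (equivalently, they \emph{do} interleave around $Q^*$ --- this is the paper's ``Fact 1''). Consequently the only cycles of $R\cup P_1\cup P_2$ avoiding a given $e\in\oo{A}$ are $\partial\Delta_1$ and $\partial\Delta_2$, whose $H$-nodes are $\{v_6,\dots,v_9\}$ and $\{v_1,\dots,v_4\}$ respectively, so no rerouted rim carries both ends of even one spoke: your recipe produces no $V_6$ in $\comp{B}-e$. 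Indeed $\comp{B}-e$ can be planar: take $Q^*=\bQ_0$, $\mathcal N=\{B^0\}$ with $B^0=s_0\cup (p\,x\,q)\cup wx$, $p\in\oo{r_9}$, $q\in\oo{r_5}$, $w\in\oo{s_0}$; then $C=s_0\cup\cc{v_5,r_5,q}\cup qxp\cup\cc{p,r_9,v_0}$, the only planar $C$-bridge is $wx$, and $(H-e)\cup pxq$ re-embeds in the plane by placing $pxq$ in the face of $H-e$ obtained by deleting $e$ (that face is incident with both the $v_9$-stub of $r_9$ and with $r_5$). Ruling such configurations out needs global input, not a local prebox. A second problem is the BOD step: Corollary~\ref{co:contractibleBOD} requires \emph{at most one} $C$-bridge $B$ with $C\cup B$ containing a non-contractible cycle, but a $Q^*$-bridge embedded on the $\Mob$ side with attachments in both $A$ and $A'$ is a $C$-bridge distinct from your $B^{\mathrm{out}}$ whose union with $C$ is also non-contractible (compose one of its paths with $P_1$ and the arc of $Q^*$ through $s_1$).

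The paper's proof avoids all of this: after recording Fact 1 it passes to the $1$-drawings $D_2,D_3$ of $G-\oo{s_2}$ and $G-\oo{s_3}$, in which $Q^*$ is forced to be crossed; since $P_1$ and $P_2$ overlap on $Q^*$ they must lie on opposite sides of $D_i[Q^*]$, and Lemma~\ref{lm:bQ0limitA} pins the ``outside'' one down to one of the four edges $v_0v_4$, $v_0v_6$, $v_1v_5$, $v_5v_9$, which is then shown to be inconsistent between $D_2$ and $D_3$. Your ``in particular'' clause is also slightly short in the sub-case $B=B'=B^0$: the $u$-to-$w$ path inside $B^0$ may pass through $\oo{s_0}$, so it need not be disjoint from $s_0$; the paper instead extracts a cut vertex of $N$, forces it into $s_0$, and then into $\{v_0,v_5\}$ because it must be a common attachment with a second member of $\mathcal N$.
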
\printFullDetails{  

\begin{cproof} Suppose by way of contradiction that $P_1$ and $P_2$ are disjoint $r^*\r5$-paths in $N$, with, for $j=1,2$, $P_j$ having the end $p_j$ in $r^*$ and the end $q_j$ in $\r5$.  Choose the labelling so that, in $r^*$, $p_1$ is closer to $v_9$ than $p_2$ is.  There are three possibilities for how $P_1$ and $P_2$ are embedded by $\Pi$:  both in the (closed) disc contained in $\Disc$ bounded by  $\cc{a,r_9,v_0}\rbsp r_0\,r_1\,r_2\,r_3\,r_4\rbsp\cc{v_5,r_5,b}\rbsp\alpha$ (recall that $\alpha=\gamma\cap \Disc$);  both in the disc in $\Disc$ bounded by $\cc{b,r_5,v_6}\rbsp r_6\,r_7\,r_8\lbsp\cc{v_9,r_9,a}\rbsp\alpha$; or one in each of these discs.  In all cases, we conclude that $q_1$ is closer in $\r5$ to $v_6$ than $q_2$ is.  Summarizing, we have the following.

\medskip\noindent{\bf Fact 1} {\em Any two disjoint $r^*\r5$-paths in $N$ overlap on $Q^*$.}

\medskip
For $Q^*=Q_4$ we are done:  Corollary \ref{co:D2D3} implies $D_2[Q_4]$ bounds a face of $D_2[H-\oo{s_2}]$.  Both $P_1$ and $P_2$ have ends in both $r^*$ and $\r5$, so both must be inside $D_2[Q_4]$, yielding the contradiction that they cross in $D_2[Q_4]$.  

Now suppose $Q^*=\bQ_0$.  For $i=2,3$, $D_i[\bQ_0]$ is not self-crossing; thus Fact 1 implies that $D_i[P_1]$ and $D_i[P_2]$ are on different sides of $D_i[\bQ_0]$.  If $\bQ_0$ is clean in $D_i$, then we have a contradiction, as no face of $D_i[H-\oo{s_i}]$ is incident with both $r^*$ and $\r5$ except the ones bounded by $Q_4$ and $Q_0$.  

Thus,  $\bQ_0$ is crossed in $D_i$.  By Lemma \ref{lm:bQ0limitA}, the one that is outside is one of $v_0v_4$, $v_0v_6$, $v_1v_5$, and $v_5v_9$.  We treat in detail that this one is $v_0v_4$, as the other cases are completely analogous.  It is in $D_3$ that $v_0v_4$ is outside $D_3[\bQ_0]$.

Because $q_1$ is closer to $v_6$ than $q_2$ is, $q_1$ cannot be $v_4$; it follows that it is $P_2$ that is $v_0v_4$.
Lemma \ref{lm:bQ0limitA} also implies that $P_2$, that is $v_0v_4$,  is not outside $D_2[\bQ_0]$ and, therefore, it is inside $D_2[\bQ_0]$.   Thus, $P_1$ is outside $D_2[\bQ_0]$.  By Lemma \ref{lm:bQ0limitA},  $P_1$ is one of $v_0v_6$ and $v_1v_5$.  By choice of the labelling, it cannot be that $v_1$ is an end of $P_1$,  so $P_1=v_0v_6$, which is not disjoint from $P_2=v_0v_4$, a contradiction.
We conclude that there are not such disjoint paths.  

For the ``in particular",  there is a cut vertex  $u$ of $N$ separating $N\cap (r_9\,r_0)$ and $N\cap (r_4\,r_5)$ in $N$, as claimed.  \wording{As $s_0$ is a $(\cc{r_9\,r_0})\,(\cc{r_4\,r_5})$-path in $N$, we deduce $u\in s_0$}.  If $B^0$ is not the only member of $\mathcal N$, then any other element $B$ of $\mathcal N$ shares the vertex $u$ with $B^0$, so $u$ is an attachment of both.  But $u\in s_0$ implies $u\in \{v_0,v_5\}$.
\end{cproof}

As a final preparatory remark, we have the following.

}\begin{lemma}\label{lm:Noverlap} Let \fsq\ be an fsq.   Let $B$ and $B'$ be distinct elements of $\mathcal N$.  Then: \begin{enumerate}  \item\label{it:noOverlap} $B$ and $B'$ do not overlap on $Q^*$;  and \item\label{it:overlapM} either $B$ overlaps $M_{Q^*}$ on $Q^*$ or $Q^*=Q_4$ and $B$ is either $v_4v_9$ or $v_0v_5$. \end{enumerate}\end{lemma}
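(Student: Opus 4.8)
\textbf{Proof plan for Lemma~\ref{lm:Noverlap}.}
The plan is to exploit the embedding $\Pi$ together with Lemma~\ref{lm:attB}, which pins all members of $\mathcal N$ into $\Disc$ with attachments in $r^*\cup r^*_{+5}$ and one attachment in each. First I would prove (\ref{it:noOverlap}). Let $B,B'\in\mathcal N$ be distinct; by Lemma~\ref{lm:attB} each of $\Pi[B],\Pi[B']$ lies in $\Disc$, so each lies in the closure of one of the finitely many $(\Pi[H]\cup\gamma)$-faces inside $\Disc$ (these faces are listed explicitly in the two standard-labelling diagrams). If $B$ and $B'$ are in the closure of the same face $F$, then since $F\subseteq\Disc$ is a closed disc and $Q^*\cup B\cup B'$ is contained in the union of $\cl(F)$ with the closed disc bounded by $Q^*$, which is again a closed disc in $\pp$ once one checks the two discs meet in a path (this is the standard surgery used repeatedly, e.g.\ in Lemma~\ref{lm:planeNotOverlap}), Lemma~\ref{lm:planeNotOverlap} applies directly: $B$ and $B'$ do not overlap on $Q^*$. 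If instead they lie in closures of distinct faces $F\ne F'$ of $\Pi[H]\cup\gamma$, then $\att(B)\subseteq\partial F\cap Q^*$ and $\att(B')\subseteq\partial F'\cap Q^*$; inspecting the face boundaries one sees that $\partial F\cap Q^*$ and $\partial F'\cap Q^*$ are each contained in a single closed subpath of $Q^*$ and these two subpaths can share at most one vertex (an $H$-node incident with both faces). Since $\att(B)$ and $\att(B')$ then lie in disjoint-except-possibly-at-an-endpoint subpaths of $Q^*$, no skew pair and no $3$-equivalent pair can occur, so by Lemma~\ref{lm:overlapClaw} they do not overlap.

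For (\ref{it:overlapM}) I would argue contrapositively. Suppose $B\in\mathcal N$ does not overlap $M_{Q^*}$ on $Q^*$. Since $B\in\mathcal N$, the graph $\Pi[Q^*\cup B]$ contains a non-contractible cycle $C$; by Lemma~\ref{lm:attB}, $B$ has an attachment $p\in r^*$ and an attachment $q\in r^*_{+5}$, and a $Q^*$-avoiding $pq$-path $P$ in $B$ together with the appropriate subpath of $Q^*$ forms such a $C$. Because $B$ does not overlap $M_{Q^*}$, all attachments of $M_{Q^*}$ lie in one residual arc of $B$ on $Q^*$; since $M_{Q^*}$ has attachments all along $R$ (for instance it contains the entire other $ab$-subpath of $R$, as in the arguments for the M\"obius bridge), this forces the two attachments $p,q$ of $B$ in $r^*$ and $r^*_{+5}$ to be as close to the ``ends'' of $Q^*$ as possible --- concretely, $p$ and $q$ must be $H$-nodes at the corners of $Q^*$. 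When $Q^*=Q_4$ the corners of $Q^*\cap R$ that lie on $r^*=r_9$ and $r^*_{+5}$ are exactly $v_9,v_4$ on one spoke-end side and $v_0,v_5$ on the other, and one checks that $\att(B)\subseteq\{v_4,v_9\}$ gives $B=v_4v_9$ (a single edge, by Lemma~\ref{lm:threeAtts}), while $\att(B)\subseteq\{v_0,v_5\}$ gives $B=v_0v_5$. When $Q^*=\bQ_0$, the ``corner'' possibilities put all attachments of $B$ in $r_9\,r_0$ or all in $r_4\,r_5$, contradicting the conclusion of Lemma~\ref{lm:attB} that $B$ has an attachment in each of $r^*$ and $r^*_{+5}$ unless the single attachment in the ``short'' branch is itself the corner vertex $v_0$ (resp.\ $v_5$); following this through and invoking Lemma~\ref{lm:noncont} (which in the $\bQ_0$ case already forces all members of $\mathcal N$ to share $v_0$ or to share $v_5$) shows $B$ must overlap $M_{\bQ_0}$ after all, so no exceptional bridge arises when $Q^*=\bQ_0$.

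The main obstacle I expect is the bookkeeping in (\ref{it:overlapM}): translating ``does not overlap $M_{Q^*}$'' into an explicit restriction on $\att(B)$ requires knowing precisely which subpath of $Q^*$ the attachments of $M_{Q^*}$ occupy, and this in turn relies on the concrete description of $M_{Q^*}$ and of the faces of $\Pi[H]\cup\gamma$ in the two standard-labelling pictures. I would handle this by working face-by-face: for each $\Disc$-face $F$ of $\Pi[H]\cup\gamma$, list $\partial F\cap Q^*$, note which vertices of $\partial F$ are attachments of $M_{Q^*}$, and observe that if $B\subseteq\cl(F)$ has attachments straddling those of $M_{Q^*}$ then $B$ overlaps $M_{Q^*}$; the only way to avoid this is the corner configuration, yielding the two listed exceptions in the $Q_4$ case and none in the $\bQ_0$ case. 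Part (\ref{it:noOverlap}) is comparatively routine once one has Lemma~\ref{lm:planeNotOverlap} and the face list in hand.
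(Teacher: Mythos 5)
Your part (\ref{it:overlapM}) is essentially the paper's argument in contrapositive form: an attachment in the interior of $r^*$ or $r^*_{+5}$, paired with any attachment on the other side, is skew to one of the two diagonal corner pairs of $Q^*$ (all of which are attachments of $M_{Q^*}$), and the residual case $\att(B)\subseteq\{v_9,v_0,v_5,v_4\}$ with $Q^*=Q_4$ yields exactly $v_4v_9$ and $v_0v_5$; your appeal to Lemma~\ref{lm:noncont} to kill the $\bQ_0$ exception matches the paper. That part is fine.

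Part (\ref{it:noOverlap}) has a genuine gap. Your argument in the ``distinct faces'' case rests on the claim that for each $\Disc$-face $F$ of $\Pi[H]\cup\gamma$, the set $\partial F\cap Q^*$ is a single closed subpath of $Q^*$, and that two such subpaths from different faces meet in at most a vertex. This is false: for every relevant face, $\partial F\cap Q^*$ is the union of \emph{two} disjoint subpaths, one in $r^*$ and one in $r^*_{+5}$ (e.g.\ with no exposed spoke, the face bounded by $\cc{a,r_9,v_0}r_0r_1r_2r_3\cc{v_4,r_4,b,\alpha,a}$ meets $Q_4$ in $\cc{a,r_9,v_0}\cup\cc{v_4,r_4,b}$). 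The deeper problem is that $Q^*$ is not the boundary of a face of $\Pi$ --- its spoke portions lie in $\Mob$ while the bridges lie in $\Disc$ --- so the cyclic order of attachments around $Q^*$ is \emph{not} the cyclic order around $R=\partial\Disc$. Concretely, take $B$ in the face above with attachments $p_1\in\cc{a,r_9,v_0}$ and $q_1\in\cc{v_4,r_4,b}$, and $B'$ in the other $\Disc$-face with attachments $p_2\in\cc{v_9,r_9,a}$ and $q_2\in\cc{b,r_4,v_5}$. These nest on $R$ (so the embedding happily accommodates both without crossing), but on $Q_4$ the cyclic order is $p_2,p_1,q_2,q_1$: the pairs interleave and the bridges are skew. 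The embedding $\Pi$ alone therefore cannot deliver (\ref{it:noOverlap}); you need the 1-drawings. The paper's route is: for $Q^*=Q_4$, Corollary~\ref{co:D2D3} shows $Q_4$ bounds a face of $D_2[H-\oo{s_2}]$, both bridges are drawn in that face, and overlapping bridges in a common face of a clean cycle would have to cross; for $Q^*=\bQ_0$, if $B$ and $B'$ are not in a common face of $D_i[\bQ_0]$ for $i=2,3$, then Lemma~\ref{lm:bQ0limitA} forces the ``outside'' bridge to be one of $v_1v_5$, $v_0v_6$ (for $D_2$) or $v_0v_4$, $v_5v_9$ (for $D_3$), and no resulting pair overlaps. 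Your proof needs to be rebuilt on that foundation.
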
\printFullDetails{

\begin{cproof} In the case $Q^*=Q_4$,  Corollary \ref{co:D2D3} and Lemma \ref{lm:attB} imply $B$ and $B'$ are both drawn inside the face of $D_2[H-\oo{s_2}]$ bounded by $Q_4$ and, therefore, they do not overlap, yielding (\ref{it:noOverlap}) for $Q_4$.

For $Q^*=\bQ_0$, if both $B$ and $B'$ are \dragominor{in the same face} of either $D_2[\bQ_0]$ or $D_3[\bQ_0]$, then they obviously do not overlap on $\bQ_0$.  Thus, we may assume one is outside $D_2[\bQ_0]$ and the other is inside $D_2[\bQ_0]$ and that one is outside $D_3[\bQ_0]$ and the other is inside $D_3[\bQ_0]$.

By Lemma \ref{lm:bQ0limitA}, the one outside $D_2[\bQ_0]$ is either $v_1v_5$ or $v_0v_6$, while the one outside $D_3[\bQ_0]$ is either $v_0v_4$ or $v_5v_9$.  Thus, we may assume $B\in\{v_1v_5,v_0v_6\}$ and $B'\in\{v_0v_4,v_5v_9\}$.  But none of the four possibilities is an overlapping pair, which is (\ref{it:noOverlap}) for $\bQ_0$.

As for overlapping $M_{Q^*}$, we suppose first that $B$ has an attachment $x$ in the interior of one of $r^*$ and $\r5$.  (The ``in particular" part of Lemma \ref{lm:noncont} implies this is always the case when $Q^*=\bQ_0$.)  In this case, it is a simple exercise to see that $x$, together with any attachment of $B$ in the other one of $r^*$ and $\r5$, are skew \wording{to at least one} of the pairs of diagonally opposite corners of $Q^*$ (in the case of $Q_4$ these pairs are $\{v_9,v_5\}$ and $\{v_4,v_0\}$; for $\bQ_0$, they are $\{v_9,v_6\}$ and $\{v_4,v_1\}$).  \wording{Thus, $B$ overlaps $M_{Q^*}$.}

In the remaining case, $Q^*=Q_4$ and $\att(B)\subseteq \{v_9,v_0,v_5,v_4\}$.  If both $v_9$ and $v_5$ are attachments, then $B$ is again skew to $M_{Q^*}$; the same happens if both $v_0$ and $v_4$ are attachments.  The only remaining cases are:  $\att(B)=\{v_4,v_9\}$ and $\{v_0,v_5\}$\wording{, as claimed}.  \end{cproof}

The next result contains the essence of the proof of Lemma \ref{lm:bQ0bipartite}.

}\begin{lemma}\label{lm:notConnected}  Let \fsq\ be an fsq.  Suppose $B_1\in \mathcal N$, $B_k=M_{Q^*}$, and $B_1,B_2,\dots,B_k$ is an induced cycle in $OD(Q^*)$.  Then either 
\begin{enumerate} \item\label{it:kIs3} $Q^*=\bQ_0$, $k=3$, and $B_2\in\{v_1v_4,v_6v_9\}$ or 
\item\label{it:kIsEven} $k$ is even and $B_{k-1}\in \mathcal N\cup\{v_1v_4,v_6v_9\}$. 
\end{enumerate}
\end{lemma}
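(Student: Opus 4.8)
The plan is to analyze an induced cycle $B_1,B_2,\dots,B_k$ in $OD(Q^*)$ with $B_1\in\mathcal N$ and $B_k=M_{Q^*}$, using the structural facts collected above about how elements of $\mathcal N$ sit in the embedding. First I would apply Lemma~\ref{lm:Noverlap}(\ref{it:noOverlap}): since $B_1\in\mathcal N$ and the only bridges adjacent to $B_1$ in the cycle are $B_2$ and $B_k=M_{Q^*}$, neither $B_2$ nor $B_{k}$ can itself lie in $\mathcal N$ unless forced (and $B_k=M_{Q^*}\notin\mathcal N$ by definition). So $B_2\notin\mathcal N$. The key point is that $B_2$ overlaps $B_1\in\mathcal N$, and by Lemma~\ref{lm:attB} every element of $\mathcal N$ has $\att(B_1)\subseteq r^*\cup\r5$ with attachments in both pieces, and is embedded in $\Disc$; so $B_2$, overlapping $B_1$ on $Q^*$, must have an attachment ``interleaving'' those of $B_1$. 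I would use the drawings $D_2$ and $D_3$ (Corollary~\ref{co:D2D3}, Lemma~\ref{lm:bQ0limitA}) to pin down where such a $B_2$ can be drawn: being on the opposite side of $D_i[\bQ_0]$ (or $D_i[Q_4]$) from $B_1$, it is forced, by Lemma~\ref{lm:bQ0limitA}, to be one of the short edges $v_1v_4$, $v_6v_9$ (in the $\bQ_0$ case), or to overlap $M_{Q^*}$ directly.

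Second, I would split into the two cases $Q^*=Q_4$ and $Q^*=\bQ_0$. In the $Q_4$ case there are no exceptional short bridges permitted outside (Lemma~\ref{lm:bQ0limitA} has no analogue producing a non-$M_{Q_4}$ overlap-partner of an $\mathcal N$-bridge that is not $M_{Q_4}$ itself), so I expect one concludes directly that $B_2$ and $B_{k-1}$ both overlap $M_{Q^*}$, and then the induced-cycle/bipartiteness machinery forces $k$ even. Concretely: consider the bipartition that $OD(Q_4)$ admits because $Q_4$ has BOD (Theorem~\ref{th:BODquads}(\ref{it:quadBOD})). An induced cycle in a bipartite graph has even length, so $k$ is even. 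For $B_{k-1}$: it is adjacent to $M_{Q^*}$ in the cycle, so it overlaps $M_{Q^*}$; I would argue via the same $D_2,D_3$ analysis that the only bridges overlapping $M_{Q^*}$ that can occur in such a position are elements of $\mathcal N$ together with the exceptional edges $v_1v_4$, $v_6v_9$, giving $B_{k-1}\in\mathcal N\cup\{v_1v_4,v_6v_9\}$, which is (\ref{it:kIsEven}).

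Third, the genuinely special case is $Q^*=\bQ_0$ with $k=3$. Here $B_1\in\mathcal N$, $B_3=M_{\bQ_0}$, and $B_2$ overlaps both. By Lemma~\ref{lm:Noverlap}(\ref{it:noOverlap}), $B_2\notin\mathcal N$, but $B_2$ overlaps $B_1\in\mathcal N$. Using that $\mathcal N$-bridges live in $\Disc$ and that $OD(\bQ_0)-\{v_1v_4,v_6v_9\}$ is bipartite (Lemma~\ref{lm:bQ0bipartite}), a $3$-cycle in $OD(\bQ_0)$ through $B_1$ and $M_{\bQ_0}$ cannot have its third vertex in $OD(\bQ_0)-\{v_1v_4,v_6v_9\}$ (that would be an odd cycle in a bipartite graph); hence $B_2\in\{v_1v_4,v_6v_9\}$, which is exactly (\ref{it:kIs3}). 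I would also verify that when $k\geq4$ in the $\bQ_0$ case we genuinely land in (\ref{it:kIsEven}) — the cycle minus the two exceptional edges is bipartite, so if none of $B_2,\dots,B_{k-1}$ is exceptional the cycle has even length; if one of them is exceptional, it can only be $B_2$ or $B_{k-1}$ (interior vertices of the cycle are non-adjacent to both $B_1\in\mathcal N$ via Lemma~\ref{lm:Noverlap}(\ref{it:noOverlap}) and... ), and handling the parity there needs a short argument that a single exceptional chord still forces $k$ even while putting $B_{k-1}$ in the allowed set.

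\textbf{Main obstacle.} The hard part will be the careful bookkeeping in the $\bQ_0$ case: showing that in an induced cycle through $B_1\in\mathcal N$ and $M_{\bQ_0}$, at most one vertex is one of the exceptional edges $v_1v_4,v_6v_9$, that such a vertex must be adjacent to $M_{\bQ_0}$ (i.e. is $B_{k-1}$, or $B_2=B_{k-1}$ when $k=3$), and that this is compatible with even length exactly when $k\neq3$. This requires combining Lemma~\ref{lm:bQ0bipartite} (near-bipartiteness), Lemma~\ref{lm:Noverlap} (non-overlap among $\mathcal N$-bridges and the overlap-with-$M_{Q^*}$ dichotomy), and the explicit attachment locations from Lemmas~\ref{lm:attB} and \ref{lm:bQ0limitA}, and I expect the write-up to proceed by examining the position of $B_2$ and $B_{k-1}$ in $D_2$ and $D_3$ and eliminating all configurations except those listed.
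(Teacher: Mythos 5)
Your outline for the even case and for $Q^*=Q_4$ points in roughly the right direction, but the proposal has a genuine circularity at its core. You invoke Lemma~\ref{lm:bQ0bipartite} (near-bipartiteness of $OD(\bQ_0)$) both to force $B_2\in\{v_1v_4,v_6v_9\}$ when $k=3$ and to rule out longer odd cycles. But the proof of Lemma~\ref{lm:bQ0bipartite} is deferred until after the present lemma and uses it explicitly; the only bipartiteness available at this point is that $OD(\bQ_0)-M_{\bQ_0}$ is bipartite (from Theorem~\ref{th:BODquads}~(\ref{it:nearlyBOD})), and the entire content of the present lemma is to control the odd cycles that pass through $M_{\bQ_0}$. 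So the case you defer as ``bookkeeping'' --- $k$ odd, $Q^*=\bQ_0$ --- is exactly the part that cannot be waved away: the paper spends most of the proof there, tracking where $B_1$ and $B_{k-1}$ can sit in 1-drawings $D_2$, $D_3$ of $G-e_2$, $G-e_3$, decomposing $M_{\bQ_0}$ at the crossed edge into an inside part $I$ and an outside part $O$, and eliminating face by face the positions from which $B_{k-1}$ could overlap $M_{\bQ_0}$. Nothing in your proposal substitutes for that analysis. A smaller misstep: Lemma~\ref{lm:bQ0limitA} constrains only bridges that are themselves in $\mathcal N$ (and its exceptional bridges are $v_1v_5$, $v_0v_6$, $v_0v_4$, $v_5v_9$, not $v_1v_4$, $v_6v_9$), so it cannot be used to pin down $B_2$, which you have just argued is not in $\mathcal N$.

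In the even case, your identification of $B_{k-1}$ also rests on an unsupported claim (``the only bridges overlapping $M_{Q^*}$ that can occur in such a position are \dots''); many bridges overlap $M_{Q^*}$ without lying in $\mathcal N\cup\{v_1v_4,v_6v_9\}$. The mechanism that works is this: each of $B_2,\dots,B_{k-2}$ has $\Pi[B_i\cup Q^*]$ free of non-contractible cycles, so Lemma~\ref{lm:planeNotOverlap} forces consecutive bridges onto opposite sides of $\Pi[Q^*]$; with $k$ even this places $B_{k-1}$ on the same ($Q^*$-exterior) side as $B_1$; and an exterior bridge that overlaps the exterior bridge $M_{Q^*}$ must, again by Lemma~\ref{lm:planeNotOverlap}, have a non-contractible cycle in $Q^*\cup B_{k-1}$, which by the definition of $\mathcal N$ puts $B_{k-1}$ in $\mathcal N$ (or in $\{v_1v_4,v_6v_9\}$ when $Q^*=\bQ_0$). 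As written, neither the odd case nor the location of $B_{k-1}$ follows from what you have assembled.
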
\printFullDetails{

\begin{cproof}  {\bf Case 1.} {\em  $k$ is odd.\/}

\medskip  Theorem \ref{th:BODquads} implies $OD(Q_4)$ is bipartite.  Therefore, $Q^*=\bQ_0$ and $s_0$ is exposed in $\Pi$.

For $i=2,3$, let $e_i$ be the edge of $s_i$ incident with $v_i$ and let $D_i$ be a 1-drawing of $G-e_i$.   Theorem \ref{th:BODquads} implies $\bQ_i$ has BOD; Lemma \ref{lm:BODcrossed} implies $\bQ_i$ is crossed in $D_i$.

If, for some $i\in \{2,3\}$, $\bQ_0$ is clean in $D_i$, then Lemma \ref{lm:cleanBOD} implies $\bQ_0$ has BOD, yielding the contradiction that $k$ is even.   Therefore,  $\bQ_0$ is crossed in both $D_2$ and $D_3$.

\begin{claim}\label{cl:v1v4v6v9Bi}  If some $B_i$ is either $v_1v_4$ or $v_6v_9$, then $i=2$ and $k=3$.  \end{claim}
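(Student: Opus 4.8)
\textbf{Proof proposal for Claim \ref{cl:v1v4v6v9Bi}.}

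The plan is to suppose, for contradiction, that some $B_i$ is one of $v_1v_4$ and $v_6v_9$, say (up to the obvious symmetry swapping $\bQ_0$'s two diagonals) that $B_i=v_6v_9$. The first thing to pin down is the structure of the overlap cycle around $B_i$. Since $B_1,\dots,B_k$ is an \emph{induced} cycle in $OD(\bQ_0)$, the only $\bQ_0$-bridges overlapping $B_i$ that appear in the cycle are its two cyclic neighbours. Now $v_6v_9$ has attachments exactly at $v_6$ and $v_9$, so another $\bQ_0$-bridge overlaps it only if it is skew to the pair $\{v_6,v_9\}$ on $\bQ_0$, i.e.\ has an attachment strictly inside each of the two $v_6v_9$-arcs of $\bQ_0$. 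One of those arcs is $\oo{v_6,r_5,b,r_4,v_4,r_3,v_3,r_2,v_2,r_1,v_1,r_0,v_0,r_9,v_9}$ --- wait, more simply: $\bQ_0 = r_9\,r_0\,r_1\,s_2\,r_6\,r_7\,r_8$ read appropriately, and the two $v_6 v_9$-subpaths are, on one side, the long path through $r_9\,r_0\,r_1\,s_2$-side and, on the other, $r_6\,r_7\,r_8$ through $v_7,v_8$. So any $\bQ_0$-bridge $B'$ overlapping $v_6v_9$ has an attachment in $\oo{r_6\,r_7\,r_8}$ and an attachment on the complementary arc. In particular $M_{\bQ_0}$ itself overlaps $v_6v_9$: it contains $H-\oo{s_2}$, hence has attachments at $v_7,v_8$ and throughout $r^*=r_9\,r_0$ and $\r5 = r_4\,r_5$. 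So $M_{\bQ_0}$ is one neighbour of $B_i$ in the induced cycle; since $B_k=M_{\bQ_0}$, this forces $B_i$ to be adjacent to $B_k$ in the cyclic order, i.e.\ $i\in\{1,k-1\}$.

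Next I would rule out both of these in turn, aiming to land at $i=2$ (equivalently $B_{k-1}$ when $k=3$, but I will get $k=3$ as part of the argument). First, $B_1\in\mathcal N$ by hypothesis of the Lemma, and $v_6v_9\notin\mathcal N$ by Definition \ref{df:scriptN}; hence $i\neq 1$. So $i=k-1$, and $B_{k-1}=v_6v_9$ is adjacent to $B_1\in\mathcal N$ in the induced cycle. But then $B_1$ and $v_6v_9$ overlap on $\bQ_0$, so by the skew description above $B_1$ has an attachment in $\oo{r_6\,r_7\,r_8}$. Now I invoke Lemma \ref{lm:attB}: since $B_1\in\mathcal N$ we have $\att(B_1)\subseteq r^*\cup\r5 = r_9\,r_0\cup r_4\,r_5$, which is disjoint from $\oo{r_6\,r_7\,r_8}$. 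This contradiction eliminates $i=k-1$ unless the only other $\bQ_0$-bridge in the induced cycle adjacent to $B_{k-1}=v_6v_9$ is forced to be $M_{\bQ_0}$ on \emph{both} sides --- which can only happen when $k=3$, so that $B_1$ and $B_k=M_{\bQ_0}$ are the two neighbours of $B_2=v_6v_9$ and the ``other'' neighbour is the bridge $M_{\bQ_0}$ again... Let me re-plan this last step: when $k=3$ the induced 3-cycle is $B_1,B_2,B_3=M_{\bQ_0}$, and $B_2=v_6v_9$ must overlap both $B_1\in\mathcal N$ and $M_{\bQ_0}$. The overlap with $B_1$ still forces (via Lemma \ref{lm:attB}) a contradiction \emph{unless} I have mislabelled which diagonal; so in fact the surviving possibility is precisely that $B_2$ is the \emph{other} diagonal $v_1v_4$, whose overlapping partners must have an attachment in $\oo{r_1}$-side rather than the $r_6\,r_7\,r_8$-side --- and $\att(B_1)\subseteq r_9\,r_0\cup r_4\,r_5$ \emph{can} meet the relevant $\bQ_0$-arc through $v_1$... no: $v_1$ is not interior to $r_9\,r_0$ or $r_4\,r_5$.

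Let me restate the intended clean route, since the above shows where the care is needed. The claim to prove is: if $B_i\in\{v_1v_4,v_6v_9\}$, then $i=2$ and $k=3$. I would prove it as: (i) $B_i\notin\mathcal N$, so $i\neq 1$; (ii) as shown, $M_{\bQ_0}$ overlaps each of $v_1v_4$ and $v_6v_9$, so $B_i$ is a neighbour of $B_k=M_{\bQ_0}$ in the induced cycle, forcing $i=k-1$; (iii) the \emph{other} neighbour of $B_i$ in the cycle is $B_{i-1}=B_{k-2}$, and since the cycle is induced and $M_{\bQ_0}$ is already a neighbour, $B_{k-2}\neq M_{\bQ_0}$, so either $k-2=1$ (giving $k=3$) or $k-2\geq 2$; (iv) if $k\geq 4$, then $B_{k-2}$ is a $\bQ_0$-bridge overlapping $v_6v_9$ (resp.\ $v_1v_4$) that is not $M_{\bQ_0}$, hence by the skew description it has an attachment in $\oo{r_6\,r_7\,r_8}$ (resp.\ $\oo{r_0\,r_1}$) and one in the complementary $\bQ_0$-arc, but such a bridge is then skew to the \emph{other} diagonal's corner pair too, forcing it to overlap the companion bridge $v_1v_4$ (resp.\ $v_6v_9$) or $B_i$'s non-$M$ neighbour to itself be pinned down using the fact (Definition \ref{df:scriptN}, Lemma \ref{lm:attB}) that the only $\bQ_0$-bridges other than $M_{\bQ_0}$, $v_1v_4$, $v_6v_9$ with attachments straddling a diagonal are members of $\mathcal N$, contradicting $\att(B)\subseteq r_9\,r_0\cup r_4\,r_5$. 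This contradiction gives $k=3$, hence $i=k-1=2$, as claimed.

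The main obstacle is step (iv): cleanly showing no $\bQ_0$-bridge other than $M_{\bQ_0}$ and the two diagonals can sit as the second cyclic neighbour of a diagonal bridge in an induced cycle. The leverage is entirely Lemma \ref{lm:attB} (attachments of $\mathcal N$-bridges confined to $r^*\cup\r5$) together with the elementary skew-overlap criterion of Lemma \ref{lm:overlapClaw}(\ref{it:skew}) applied to the diagonal $\{v_6,v_9\}$ (resp.\ $\{v_1,v_4\}$) of $\bQ_0$; I expect the bookkeeping of which $\bQ_0$-subpaths count as ``interior to $r^*$'' versus ``containing $v_6,v_7,v_8$'' to be the only real subtlety, and a picture of $\bQ_0$ with its two diagonals drawn in will make it routine.
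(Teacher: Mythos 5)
Your opening move is the right one and matches the paper: since $v_1v_4$ and $v_6v_9$ are skew to $M_{\bQ_0}$ but excluded from $\mathcal N$, an induced cycle forces $i=k-1$. After that the argument breaks down, and the break has a concrete cause: you have misidentified the cycle $\bQ_0$. By definition $\bQ_0=(Q_4\cup Q_0)-\oo{s_0}$, i.e.\ the cycle $r_9\,r_0\,s_1\,r_5\,r_4\,s_4$ with corners $v_9,v_0,v_1,v_6,v_5,v_4$; the branches $r_1,r_6,r_7,r_8$ and the vertices $v_7,v_8$ do not lie on it. The two $v_6v_9$-arcs of $\bQ_0$ are therefore $\cc{v_6,s_1,v_1,r_0,v_0,r_9,v_9}$ and $\cc{v_6,r_5,v_5,r_4,v_4,s_4,v_9}$, and a bridge $B\in\mathcal N$ --- which by Lemma \ref{lm:attB} has an attachment in each of $r_9\,r_0$ and $r_4\,r_5$ --- typically \emph{is} skew to $\{v_6,v_9\}$. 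Your claimed contradiction (``$\att(B_1)$ is disjoint from $\oo{r_6\,r_7\,r_8}$, so $B_1$ cannot overlap $v_6v_9$'') is spurious, and you can see it derailing the rest of the write-up.

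The second, independent gap is step (iv) of your restated route. When $k\ge 4$, the second neighbour $B_{k-2}$ of the diagonal need not belong to $\mathcal N$, and nothing you cite shows that a $\bQ_0$-bridge other than $M_{\bQ_0}$ and the two diagonals that overlaps a diagonal must lie in $\mathcal N$; so Lemma \ref{lm:attB} yields no contradiction there. The genuinely hard case is the opposite one: if $k>3$ then $B_1$ and $B_{k-1}$ do \emph{not} overlap, and combining Lemma \ref{lm:noncont} (which pins the $r_9\,r_0$- or $r_4\,r_5$-attachments of $B_1$ to $v_0$ or $v_5$) with the non-overlap of $v_1v_4$ and Lemma \ref{lm:threeAtts} forces $B_1$ to be the single edge $v_0v_4$ (or a symmetric counterpart). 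The paper disposes of this case using the explicit 1-drawings $D_2$ and $D_3$ of $G-e_2$ and $G-e_3$: the presence of the diagonal determines which drawings in Figures \ref{D2noAtt} and \ref{D3noAtt} occur, one reads off on which side of $\bQ_0$ each of $v_0v_4$ and $v_1v_4$ lies in each, and the fact that consecutive bridges of the induced path $B_1,\dots,B_{k-1}$ alternate sides of $\bQ_0$ in any such drawing gives the contradiction. None of this drawing/parity machinery appears in your proposal, so the case $B_1=v_0v_4$ is left entirely open.
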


\begin{proof}  Since both $v_1v_4$ and $v_6v_9$ overlap $M_{\bQ_0}$, neither is in $\mathcal N$, $B_1$ is in $\mathcal N$,  and the cycle is induced, it must be that $i=k-1$.  For sake of definiteness, we suppose $B_{k-1} = v_1v_4$; the alternative is treated completely analogously.   

Because $B_{k-1}=v_1v_4$, we deduce that $D_2$ is the left-hand one of the two drawings in Figure \ref{D2noAtt}, while $D_3$ is the right-hand drawing in Figure \ref{D3noAtt}; in both drawings, $B_{k-1}$ is outside $\bQ_0$.  

We note that $B^0$ overlaps $v_1v_4$, so if $B_1$ is $B^0$, then $k=3$, as claimed.  Otherwise, $B_1\in\mathcal N\setminus\{B^0\}$.  By Lemma \ref{lm:noncont}, either the only attachment of $B_1$ in $r_9\,r_0$ is $v_0$ or the only attachment of $B_1$ in $r_4\,r_5$ is $v_5$.  For sake of definiteness, we assume the former; the latter is completely analogous.  In order not to overlap $v_1v_4$, the only attachment for $B_1$ in $r_4\,r_5$ is $v_4$.  Therefore, either $k=3$ and we are done, or $B_1$ is just the edge $v_0v_4$.   We show that $B_1=v_0v_4$ is not possible.

Suppose that $B_1=v_0v_4$.  Because we know $D_2$, we see that $D_2[B_1]=D_2[v_0v_4]$ is inside $D_2[\bQ_0]$, while $D_2[B_{k-1}]=D_2[v_1v_4]$ is outside.  In $D_3$, both are outside.  But this is impossible, as $B_1,B_2,B_3,\dots,B_{k-2},$ $B_{k-1}$ alternate sides of $\bQ_0$ in both $D_2$ and $D_3$. 

We conclude that $B_1=v_0v_4$ is impossible and therefore $k=3$, as claimed. \end{proof}

It remains to show that no other possibility can occur with $k$ odd.  So suppose no $B_i$ is either $v_1v_4$ or $v_6v_9$. Suppose some $B_i$ other than $B_1$ is in $\mathcal N$.  As $B_i$ overlaps $M_{\bQ_0}$ and the cycle $B_1,B_2,\dots,B_k$ is induced, Lemma \ref{lm:Noverlap}  implies $i=k-1$.  The same lemma implies $k\ge 5$.   Therefore, Lemma \ref{lm:planeNotOverlap} implies $B_1,B_2,\dots,B_{k-2},B_{k-1}$ alternate sides of $\Pi[\bQ_0]$.   Since $k$ is odd, $B_1$ and $B_{k-1}$ are on different sides of $\Pi[\bQ_0]$, contradicting the fact that both are in $\mathcal N$.  Hence no other $B_i$ is in $\mathcal N$.  

By Lemma \ref{lm:bQ0limitA}, for at least one $i\in\{2,3\}$, $D_i[B_1]$ is inside $D_i[\bQ_0]$.  For the sake of definiteness, we consider the case $i=2$ and $D_2$ is the left-hand drawing of $H-\oo{s_2}$ in Figure \ref{D2noAtt}; the remaining cases are completely analogous.  Thus, either $B_1$ is $B^0$ or $B_1$ is either a $Q_0$- or a $Q_1$-bridge.

Since $k$ is odd, $B_{k-1}$ is on the other side of $D_2[\bQ_0]$ from $B_1$.   Therefore, $B_{k-1}$ is outside $D_2[\bQ_0]$.  In order to understand how $B_{k-1}$ can overlap $M_{\bQ_0}$ in $D_2$, we analyze $D_2[M_{\bQ_0}]$.

Let $e$ be the edge of $M_{\bQ_0}$ that is crossed in $D_2$.   The end $w$ of $e$ outside $D_2[\bQ_0]$ is in $\Nuc(M_{\bQ_0})$.  If the other end $u$ of $e$ is not in $\Nuc(M_{\bQ_0})$, then $u=v_6$ and $\cc{\times,r_6,v_6}$ is the only part of $M_{\bQ_0}$ inside $D_2[\bQ_0]$.  Otherwise, $\Nuc(M_{\bQ_0})-\{e_2,e\}$ is not connected.  Since $\Nuc(M_{\bQ_0})-e_2$ is connected, $\Nuc(M_{\bQ_0})-\{e_2,e\}$ consists of the component  inside $D_2[\bQ_0]$ and the component $O$ outside.   In particular, $M_{\bQ_0}-\{e_2,e\}$ consists of two $\bQ_0$-bridges in $G-\{e_2,e\}$.  Let $I$ be the one contained inside $D_2[\bQ_0]$ and let $O$ be the one outside.  All attachments of $M_{\bQ_0}$ are attachments of either $I$ or $O$, and possibly both.  In the case $u=v_6$, we take $I$ to be the portion of $e$ from $\times$ to $v_6$.

We observe that $D_2$ shows that, except for one end of $e$, all the attachments of $I$ are in $Q_0$.   On the other hand, Theorem \ref{th:expSpokeNoAtt} implies that $M_{\bQ_0}$, and, therefore $I$, has no attachment in $\oo{s_0}$.  The embedding $\Pi$ shows that $I$ has no attachment in $\oo{r_0}$: otherwise, $I$ is not just $\cc{\times,e_6,v_6}$ and $u\ne v_6$.  Thus, the simple closed curve $s_1\,r_1\,r_2\,r_3\,s_4\lbsp\cc{v_9,r_9,a}\rbsp \alpha\lbsp\cc{b,r_5,v_6}$ bounds a closed disc in $\pp$ separating $u$ from $\oo{r_0}$ and is disjoint from $\Nuc(I)\cup \oo{r_0}$.  Unless $v_0=a$, the same simple closed curve separates $u$ from $v_0$; thus, if $v_0$ is an attachment of $I$, then $a=v_0$.

Because $B_{k-1}$ is outside $D_2[\bQ_0]$ and $\att(B_{k-1})\subseteq \bQ_0$, there are four candidates for the face of $D_2[H-\oo{s_2}]$ that contains $B_{k-1}$.  The one bounded by $Q_3$ is not possible: if $B_{k-1}$ were in that face, it would not overlap $M_{\bQ_0}$, as all the $M_{\bQ_0}$ attachments there would be in $s_4$ and, therefore, all in $O$ and not in $I$; both $B_{k-1}$ and $O$ being outside $D_2[\bQ_0]$ shows they do not overlap.

The face of $D_2[H-\oo{s_2}]$ incident with $\cc{\times,r_0,v_1}$ is not a possibility for $B_{k-1}$ for exactly the same reason:  the only attachment of $I$ there can be $v_1$ and $v_1$ is not part of a pair of attachments of $M_{\bQ_0}$ that are skew to two attachments of $B_{i-1}$, which are all contained in $\cc{\times,r_0,v_1}$.

The face of $D_2[H-\oo{s_2}]$ incident with  $r_8\,r_9$ is also not a possibility for $B_{k-1}$.  To see this, $v_0$ is the only possible attachment of $I$ in the boundary of this face.  Thus, $v_0$ is an attachment of $I$ and $B_{k-1}$ must have attachments in each of $\co{v_9,r_9,v_0}$ and $\oc{v_0,r_0,\times}$.  However, in $\Pi$ we must have $a=v_0$ and then there is no way to embed $B_{k-1}$.  

Therefore, $B_{k-1}$ is in the face of $D_2[H-\oo{s_2}]$ incident with $r_5\,s_1$.

By way of contradiction, suppose $B_{k-1}$ is outside $D_3[\bQ_0]$. Identical arguments as those just above show that $B_{k-1}$ is in the face of $D_3[H-\oo{s_2}]$ incident with $r_9\,s_4$.  Because the previous paragraph shows $\att(B_{k-1})\subseteq r_4\,r_5\,s_1$, it cannot overlap $M_{\bQ_0}$ using an attachment of the portion of $M_{\bQ_0}$ that is inside $D_3[\bQ_0]$ and, therefore, it cannot overlap $M_{\bQ_0}$ at all, a contradiction.  Therefore, $B_{k-1}$ is inside $D_3[\bQ_0]$.  This implies $B_{k-1}$ is either a $Q_0$- or $Q_4$-bridge.

If $B_{k-1}$ is a $Q_4$-bridge, then $\att(B_{k-1})\subseteq r_4$ (because of $D_2$). Letting $\bar r$ denote the minimal subpath of $r_4$ containing $\att(B_{k-1})$, $D_2$ shows that no attachment of $I$ is in $\oo{\bar r}$ and, because $O$ and $B_{k-1}$ do not overlap (in $D_2$), $O$ also has no attachment in $\oo{\bar r}$.  Consequently, $B_{k-1}$ does not overlap $M_{\bQ_0}$, a contradiction.  Therefore, $B_{k-1}$ is a $Q_0$-bridge.

Because $B_{k-2}$ is inside $D_2[\bQ_0]$, has no attachments in $s_0$, and overlaps $B_{k-1}$ as $\bQ_0$-bridges, we see that $B_{k-2}$ is also a $Q_0$-bridge.  Continuing back, we see that each of $B_{k-3}$, \dots, $B_2$ is a $Q_0$-bridge and that $B_1$ is outside $D_3[\bQ_0]$.  By Lemma \ref{lm:bQ0limitA}, $B_1$ is either $v_0v_4$ or $v_5v_9$.  But neither of these overlaps $B_2$.  This contradiction 
shows that, except for the case described in Claim \ref{cl:v1v4v6v9Bi}, $k$ is even.

\medskip\noindent{\bf Case 2.}  {\em $k$ is even.}

\medskip
For each $i=2,3,\dots,k-2$, $B_i\cup Q^*$ has no non-contractible cycle in $\pp$.  Thus, Lemma \ref{lm:planeNotOverlap} implies $B_1$ and $B_{k-1}$ are on the same side  of $Q^*$ in $\pp$; since $B_1$ is $Q^*$-exterior, we have that $B_{k-1}$ is $Q^*$-exterior.  If $\Pi[Q^*\cup B_{k-1}]$ has no non-contractible cycle, then Lemma 
\ref{lm:planeNotOverlap} shows that it cannot overlap $M_{Q^*}$, a contradiction.  In the case $Q^*=Q_4$, this implies that $B_{k-1}$ is in $\mathcal N$, while if $Q^*=\bQ_0$, then $B_{k-1}$ is in $\mathcal N\cup \{v_1v_4,v_6v_9\}$.  \end{cproof}

\bigskip
\begin{cproofof}{Lemma \ref{lm:bQ0bipartite}} We show that any odd cycle $C$ in $OD(\bQ_0)$ contains either $v_1v_4$ or $v_6v_9$.   Theorem \ref{th:BODquads} (\ref{it:nearlyBOD}) implies that $OD(\bQ_0)-M_{\bQ_0}$ is bipartite.  Therefore, $C$ contains $M_{\bQ_0}$.  Lemma \ref{lm:notConnected} shows that any odd cycle in $OD(\bQ_0)$ containing $M_{\bQ_0}$ and an element of $\mathcal N$ has length 3 and contains one of $v_1v_4$ and $v_6v_9$, as required.  

Thus, we may suppose $C$ avoids $\mathcal N\cup\{v_1v_4,v_6v_9\}$; let $C=(B_1,B_2,\dots,B_{2k},M_{\bQ_0})$.  For \wording{each $i=1,2,\dots,2k$}, $\Pi[B_i\cup \bQ_0]$ has no non-contractible cycles in $\pp$.  Lemma \ref{lm:planeNotOverlap} implies \wording{$B_i$ and $B_{i+1}$} are on different sides of $\Pi[\bQ_0]$.  From this, parity implies that $B_1$ and $B_{2k}$ are on opposite sides of $\Pi[\bQ_0]$.  On the other hand, they are both on the side of $\Pi[\bQ_0]$ not containing $M_{\bQ_0}$, a contradiction.
\end{cproofof}

We are now prepared for the proof of Theorem \ref{th:allSpokesMob}.

\bigskip

\begin{cproofof}{Theorem \ref{th:allSpokesMob}} By Theorem \ref{th:v10rep2}, $G$ has a representativity 2 embedding $\Pi$ in $\pp$.   For (\ref{it:HinMob}), if no spoke is exposed in $\Pi$, then we are \wording{done; thus, with} the standard labelling, we may suppose that $s_0$ is exposed in $\Pi$.  From Theorem \ref{th:expSpokeNoAtt}, we know that the $\bQ_0$-bridge $B^0$ containing $s_0$ is different from $M_{\bQ_0}$.    From Lemma \ref{lm:bQ0bipartite}, we know that $OD(\bQ_0)-\{v_1v_4,v_6v_9\}$ is bipartite and from Theorem \ref{th:BODquads} (\ref{it:nearlyBOD}), we know that $\comp{(M_{\bQ_0})}$ is planar.

We need to modify $\Pi$ so that the set $\mathcal N$ \wording{(Definition \ref{df:scriptN})} becomes empty.  \wording{We start with terminology that} will be useful for the next claims.

}\begin{definition}  Let $L$ be a graph.  A path $(v_1,v_2,\dots,v_k)$ in $L$ is {\em chordless in $L$\/}\index{chordless} if there is no edge $v_iv_j$ of $L$ that is not in $P$ except possibly $v_1v_k$.  \end{definition}\printFullDetails{
  
The following is a simple consequence of Lemma \ref{lm:notConnected}.

\begin{claim}\label{cl:noNtoMbQ0path}  \begin{enumerate}\item \dragominor{If $Q^*=\bQ_0$, then e}very $\mathcal N M_{\bQ_0}$-path in $OD(\bQ_0)$ of length at least two contains one of $v_1v_4$ and $v_6v_9$.
\item \dragominor{If $Q^*=Q_4$, then e}very chordless $\mathcal NM_{Q_4}$-path in $OD(Q_4)$ of length at least two has length exactly two, one end is either $v_4v_9$ or $v_0v_5$, and that end does not overlap $M_{Q_4}$.
\end{enumerate}
 \end{claim}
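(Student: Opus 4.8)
\textbf{Proof proposal for Claim \ref{cl:noNtoMbQ0path}.}

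The plan is to deduce both parts as essentially immediate consequences of Lemma \ref{lm:notConnected}, after first passing to a chordless (induced) path and then closing it up to an induced cycle through $M_{Q^*}$. Suppose $P = (B_1, B_2, \dots, B_\ell)$ is an $\mathcal{N} M_{Q^*}$-path in $OD(Q^*)$ with $B_1 \in \mathcal{N}$, $B_\ell = M_{Q^*}$, and $\ell \ge 3$ (length at least two). First I would replace $P$ by a chordless $B_1 M_{Q^*}$-path (a shortest such path, or one obtained by repeatedly taking shortcuts along chords); this keeps the endpoints $B_1 \in \mathcal{N}$ and $M_{Q^*}$ fixed and only shortens the path, so it suffices to prove the conclusion under the extra hypothesis that $P$ itself is chordless in $OD(Q^*)$. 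In part (1) we may moreover assume $P$ avoids $v_1v_4$ and $v_6v_9$ (otherwise the conclusion holds) and aim for a contradiction; in part (2) we simply want to pin down the structure.

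The key step is to turn the chordless path $P$ into an induced cycle through $M_{Q^*}$ and $B_1$, so that Lemma \ref{lm:notConnected} applies. By Lemma \ref{lm:Noverlap}(\ref{it:overlapM}), $B_1$ overlaps $M_{Q^*}$ unless $Q^* = Q_4$ and $B_1 \in \{v_4v_9, v_0v_5\}$. In the former case, $B_1 B_2 \cdots B_\ell B_1$ is a cycle in $OD(Q^*)$; since $P$ is chordless and the only possible chord of this cycle is $B_1 B_\ell$ (which is present by assumption), this cycle is induced, it passes through $M_{Q^*} = B_\ell$, and $B_1 \in \mathcal{N}$, so Lemma \ref{lm:notConnected} governs its length and structure. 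Reading off the two outcomes of that lemma: either $Q^* = \bQ_0$, $\ell = 3$, and $B_2 \in \{v_1v_4, v_6v_9\}$ --- so $P$ contains one of $v_1v_4, v_6v_9$, which in part (1) contradicts our reduction, while in part (2) cannot occur since $Q^* = Q_4$; or $\ell$ is even and $B_{\ell - 1} \in \mathcal{N} \cup \{v_1v_4, v_6v_9\}$ --- but $P$ is chordless and $B_{\ell-1}$ is adjacent to $M_{Q^*} = B_\ell$, while no element of $\mathcal{N}$ overlaps $M_{\bQ_0}$ only via... here I would invoke the precise overlap facts: if $Q^* = \bQ_0$ then $B_{\ell-1} \in \{v_1v_4, v_6v_9\}$ (as $\mathcal{N}$-bridges overlap $M_{\bQ_0}$, consistent with both being valid), and again $P$ contains one of these two edges, contradicting the reduction in part (1). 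This disposes of part (1).

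For part (2), $Q^* = Q_4$ and $OD(Q_4)$ is bipartite by Theorem \ref{th:BODquads}(\ref{it:quadBOD}), so it has no odd cycle; hence the only surviving outcome of Lemma \ref{lm:notConnected} for an induced cycle through $M_{Q_4}$ and an $\mathcal{N}$-element is that the cycle has even length $\ell$ and $B_{\ell-1} \in \mathcal{N}$ (since $v_1v_4, v_6v_9$ are not $\bQ_0$-relevant here). But a chordless $\mathcal{N} M_{Q_4}$-path with $B_{\ell-1} \in \mathcal{N}$ would have $B_{\ell-1}$ adjacent to $M_{Q_4}$ in $OD(Q_4)$, i.e. $B_{\ell-1}$ overlaps $M_{Q_4}$ --- and then running the argument from the other end ($B_{\ell-1} \in \mathcal{N}$ playing the role of $B_1$) forces, via Lemma \ref{lm:Noverlap}(\ref{it:overlapM}), that the only way to have a chordless path of length $\ge 2$ is the exceptional configuration: one end is $v_4v_9$ or $v_0v_5$ and does not overlap $M_{Q_4}$. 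So the chordless path has length exactly two, with middle vertex adjacent to both ends, one end equal to $v_4v_9$ or $v_0v_5$ by Lemma \ref{lm:Noverlap}(\ref{it:overlapM}), and that end failing to overlap $M_{Q_4}$ (else the length-two path would have a chord making it length one). The main obstacle I anticipate is bookkeeping: carefully checking that ``chordless'' really does let us close up to an \emph{induced} cycle (the chord $B_1 B_\ell$ is allowed and indeed present), and matching the two alternatives of Lemma \ref{lm:notConnected} against the adjacency $B_{\ell-1} \sim M_{Q^*}$ forced by the path structure, together with the overlap dichotomy of Lemma \ref{lm:Noverlap}. Once those adjacencies are tracked, both statements fall out mechanically.
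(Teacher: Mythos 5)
Your overall strategy is exactly the paper's: pass to a chordless path, use Lemma \ref{lm:Noverlap} (\ref{it:overlapM}) to close it into an induced cycle through $M_{Q^*}$, and read off Lemma \ref{lm:notConnected}. Part (1) goes through, but the spot where you say you ``would invoke the precise overlap facts'' to handle $B_{\ell-1}$ needs no overlap facts at all: by the definition of an $AB$-path (Definition \ref{df:bridge}), an $\mathcal N M_{\bQ_0}$-path meets $\mathcal N$ only in its end $B_1$, so for $\ell\ge 3$ the penultimate vertex is not in $\mathcal N$, and outcome (\ref{it:kIsEven}) of Lemma \ref{lm:notConnected} then forces $B_{\ell-1}\in\{v_1v_4,v_6v_9\}$. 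Also, ``a shortest $B_1M_{Q^*}$-path'' is not a valid way to achieve chordlessness here, since $B_1$ overlaps $M_{\bQ_0}$ and the shortest such path has length $1$; only the repeated-shortcut version, which never uses the permitted end-to-end chord, preserves ``length at least two.''

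The genuine gap is in part (2): you assert that the chordless path has length exactly two but never prove it. Once you have established that $B_1$ does not overlap $M_{Q_4}$, there is no cycle to feed into Lemma \ref{lm:notConnected}, so nothing in your argument bounds the length of the path. The missing step is the paper's skewness transfer: $B_1\in\{v_4v_9,v_0v_5\}$ has exactly two attachments, both corners of $Q_4$ and hence attachments of $M_{Q_4}$; since $B_2$ overlaps $B_1$ and $|\att(B_1)|=2$, Lemma \ref{lm:overlapClaw} forces $B_2$ to be skew to $B_1$, and the pair of attachments of $B_2$ witnessing skewness to $\att(B_1)\subseteq \att(M_{Q_4})$ also witnesses skewness to $M_{Q_4}$. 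Thus $B_2$ is adjacent to $M_{Q_4}$ in $OD(Q_4)$, and chordlessness of $P$ then forces $B_2$ to be the neighbour of $M_{Q_4}$ on $P$, i.e., the path has length exactly two. Without this observation, ``length exactly two'' is unsupported and the path could a priori be longer.
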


\begin{proof}  Suppose first that $Q^*=\bQ_0$.  Let  $P$ be any $\mathcal N M_{\bQ_0}$-path in $OD(\bQ_0)$ that has length at least 2.    We may assume $P$ is chordless: otherwise there is a shorter $\mathcal N M_{\bQ_0}$-path $P'$ of length at least 2 and $V(P')\subseteq V( P)$; if $P'$ contains either $v_1v_4$ or $v_6v_9$, then so does $P$.  By Lemma \ref{lm:Noverlap} (\ref{it:overlapM}), the ends of $P$ are adjacent in $OD(\bQ_0)$.  Thus, $P$ together with this edge of $OD(\bQ_0)$ makes an induced cycle.  As this cycle has only one vertex in $\mathcal N$, Lemma \ref{lm:notConnected} implies the cycle has length 3 and contains one of $v_1v_4$ and $v_6v_9$.   

Now suppose that $Q^*=Q_4$ and $P=(B_1,B_2,\dots,B_k,M_{Q_4})$ is a chordless $\mathcal NM_{Q_4}$-path in $OD(Q_4)$ of length at least 2.  Then $B_1\in \mathcal N$.  Since $P$ is chordless and $B_k\notin \mathcal N$, Lemma \ref{lm:notConnected} (\ref{it:kIsEven}) implies $B_1$ does not overlap $M_{Q_4}$.  Now Lemma \ref{lm:Noverlap} (\ref{it:overlapM}) implies $B_1$ is either $v_4v_9$ or $v_0v_5$.
Thus, $B_2$ is skew to \wording{$B_1$. Since $\att(B_1)\subseteq \att(M_{Q_4})$, $B_2$ is also} skew to $M_{Q_4}$. Since $P$ is chordless, $k=2$, as required. 
\end{proof}

If $Q^*=\bQ_0$, then set $\mathcal M$ to be the set $\{M_{\bQ_0},v_1v_4,v_6v_9\}$, while if $Q^*=Q_4$, then set $\mathcal M$ to be the set $\{M_{Q_4},v_4v_9,v_0v_5\}$.   In either case, let $\mathcal M^-=\mathcal M\setminus\{M_{Q^*}\}$.  

Let $\mathcal N^+$ be  the set of $Q^*$-bridges $B$ so that there is an $\mathcal N B$-path in $OD(Q^*)$ that is disjoint from $\mathcal M$.   The next lemma shows that $\mathcal N^+$ consists of the members of $\mathcal N$, which have attachments in both $r^*$ and $\r5$, and other $Q^*$-bridges $B$ that simply extend out along either $r^*$ or $\r5$.  This structure is what will allow us to find natural ``breaking points" $a'$ and $b'$ in $r^*$ and $\r5$, respectively, to allow us to ``flip" the members of $\mathcal N$ into $\Mob$, yielding the embedding with $H\subseteq \Mob$ and $\mathcal N=\varnothing$.

\begin{claim}\label{cl:attNplus}  If $B\in \mathcal N^+$, then $\att(B)\subseteq r^*\cup \r5$.   Furthermore, if $B\in\mathcal N^+\setminus \mathcal N$, then either $\att(B)\subseteq r^*$ or $\att(B)\subseteq \r5$. \end{claim}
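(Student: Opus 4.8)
The plan is to argue by induction on the length $\ell$ of a shortest $\mathcal N B$-path in $OD(Q^*)$ that is disjoint from $\mathcal M$. When $\ell=0$ we have $B\in\mathcal N$, and Lemma \ref{lm:attB} already gives $\att(B)\subseteq r^*\cup\r5$ (the second assertion being vacuous). So assume $\ell\ge 1$ and fix such a shortest path $(B_1,\dots,B_{\ell},B_{\ell+1})$ with $B_1\in\mathcal N$ and $B_{\ell+1}=B$; by minimality $B_1$ is its only vertex in $\mathcal N$. Then $(B_1,\dots,B_\ell)$ is a shorter $\mathcal N B_\ell$-path disjoint from $\mathcal M$, so the inductive hypothesis applies to $B_\ell$ and gives $\att(B_\ell)\subseteq r^*\cup\r5$. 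Write $\sigma,\sigma'$ for the two $H$-spokes contained in $Q^*$, so that $Q^*$ is the union of the four internally disjoint arcs $r^*$, $\sigma$, $\r5$, $\sigma'$, which meet only at their four common endpoints; in particular $\att(B_\ell)$ misses the interiors of $\sigma$ and $\sigma'$.

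I would finish the inductive step in two moves. First, show that $B$ does not overlap $M_{Q^*}$ on $Q^*$; granting this, $\att(B)$ lies in a single residual arc $A$ of $M_{Q^*}$. Now each of the four common endpoints of $r^*,\sigma,\r5,\sigma'$ is an endpoint of a branch of $H$ (of $H-\oo{s_0}$ when $Q^*=\bQ_0$) not contained in $Q^*$, hence is an attachment of $M_{Q^*}$; therefore every residual arc of $M_{Q^*}$ is contained in one of $r^*$, $\r5$, $\sigma$, $\sigma'$. If $A\subseteq r^*$ or $A\subseteq\r5$, both assertions of the claim hold. If instead $A$ lies in a spoke, say $A\subseteq\sigma$, then $\iso{\att(B)}$ is $H$-close (it sits inside the closed $H$-branch $\sigma$), so the second part of Corollary \ref{co:closeAtts} forces $|\att(B)|<3$ and Lemma \ref{lm:threeAtts} makes $B$ a single edge whose two ends lie on $\sigma$; but then one residual arc of $B$ is the subpath of $\sigma$ joining those ends, whose interior misses $\att(B_\ell)$, so $\att(B_\ell)$ lies in the other residual arc of $B$ and $B,B_\ell$ do not overlap --- contradicting that they are consecutive on our path. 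This rules out $A\subseteq\sigma$ (and symmetrically $A\subseteq\sigma'$), completing the induction.

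The heart of the argument --- and the step I expect to be the main obstacle --- is thus showing that a bridge $B\in\mathcal N^+\setminus\mathcal N$ cannot overlap $M_{Q^*}$. When $Q^*=\bQ_0$ this is immediate: appending the edge $BM_{\bQ_0}$ to our path yields an $\mathcal N M_{\bQ_0}$-path of length $\ell+1\ge 2$ whose vertices other than $M_{\bQ_0}$ avoid $\mathcal M$, contradicting the first part of Claim \ref{cl:noNtoMbQ0path}, which forces such a path to contain $v_1v_4$ or $v_6v_9$, i.e.\ an element of $\mathcal M^-=\{v_1v_4,v_6v_9\}$. When $Q^*=Q_4$ the situation is more delicate, because by Lemma \ref{lm:Noverlap} (\ref{it:overlapM}) the $\mathcal N$-bridge $B_1$ itself typically does overlap $M_{Q_4}$, so one cannot simply extend the path and invoke the second part of Claim \ref{cl:noNtoMbQ0path}. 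Here I would use that $OD(Q_4)$ is bipartite (Theorem \ref{th:BODquads} (\ref{it:quadBOD})) together with the explicit shape of the $1$-drawings $D_2$ and $D_3$ of $G-\oo{s_2}$ and $G-\oo{s_3}$ supplied by Corollary \ref{co:D2D3} (in which $Q_4$ bounds a face and, as in the proof of Lemma \ref{lm:Noverlap}, every bridge of $\mathcal N$ is drawn inside that face): a bridge $B\in\mathcal N^+$ forced, via these drawings and the colour classes of $OD(Q_4)$, onto the $M_{Q_4}$-side of $Q_4$ would, after extracting a chordless $\mathcal N M_{Q_4}$-path of length $\ge 2$, contradict the second part of Claim \ref{cl:noNtoMbQ0path}, since its $\mathcal N$-end avoids $\mathcal M^-=\{v_4v_9,v_0v_5\}$. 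Getting this last case watertight --- in particular controlling the interaction with the exceptional bridges of $\mathcal M^-$ --- is where the real care is needed; once it is in place, the residual-arc bookkeeping and the $H$-closeness argument above are routine.
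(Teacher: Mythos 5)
Your overall architecture is the same as the paper's: induction on the length of a shortest $\mathcal N B$-path in $OD(Q^*)$ disjoint from $\mathcal M$, reduction of everything to the single assertion that $B$ does not overlap $M_{Q^*}$, and then residual-arc bookkeeping. (The paper rules out spoke attachments slightly more directly — an attachment in the interior of a spoke $s$ together with any attachment off $\cc{s}$ is skew to the two ends of $s$, hence to $M_{Q^*}$ — but your route through Corollary \ref{co:closeAtts} and Lemma \ref{lm:threeAtts} also works, and your observation that every residual arc of $M_{Q^*}$ lies in a single side of $Q^*$ delivers the ``furthermore'' part cleanly.) The $\bQ_0$ half of the overlap step is handled exactly as in the paper.

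The gap is the $Q_4$ half of that step, which you leave as an unexecuted sketch (``getting this last case watertight \dots is where the real care is needed''). The alternative you outline — bipartiteness of $OD(Q_4)$ plus the drawings $D_2$, $D_3$ — is never turned into an argument, so as written the claim is unproved when $Q^*=Q_4$. Moreover, the obstacle that drove you to that detour is illusory. You worry that because $B_1\in\mathcal N$ typically overlaps $M_{Q_4}$, the extended walk $(B_1,\dots,B_{\ell+1},M_{Q_4})$ cannot be made chordless without collapsing to length $1$. But the paper's definition of chordless explicitly exempts the edge $v_1v_k$ joining the two ends of the path, so the edge $B_1M_{Q_4}$ is harmless. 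Since your path $P$ is a shortest $\mathcal N B$-path it is induced, so the only possible chords of the extension are edges $B_iM_{Q_4}$; truncating at the first $B_i$ with $i\ge 2$ adjacent to $M_{Q_4}$ (which is $B_{\ell+1}=B$ itself if there is no earlier one) yields a chordless $\mathcal N M_{Q_4}$-path of length at least $2$ all of whose vertices other than $M_{Q_4}$ avoid $\mathcal M$. The second part of Claim \ref{cl:noNtoMbQ0path} then forces its $\mathcal N$-end to be $v_4v_9$ or $v_0v_5$; both lie in $\mathcal M$, contradicting the fact that $P$ is disjoint from $\mathcal M$. With this, the $Q_4$ case closes exactly as the $\bQ_0$ case does, and the rest of your argument is sound.
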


\begin{proof} Let $P$ be a shortest $\mathcal N B$-path in $OD(Q^*)$ that is disjoint from $\mathcal M$.   We proceed by induction on the length of $P$.  

If $B\in \mathcal N$, then the result follows from Lemma \ref{lm:attB}.  Otherwise,  $B\notin \mathcal N$.  The neighbour $B'$ of $B$ in $P$ is closer to $\mathcal N$ than $B$ is, so $\att(B')\subseteq r^*\cup \r5$.  

   If $B$ overlaps $M_{Q^*}$, then $P$ extends to a chordless $\mathcal N M_{Q^*}$-path in $OD(Q^*)-\mathcal M^-$ of length at least 2.  This contradicts Claim \ref{cl:noNtoMbQ0path}, showing $B$ does not overlap $M_{Q^*}$.  

Suppose by way of contradiction that $B$ has an attachment $x$ in the interior of some $H$-spoke $s$ contained in $Q^*$. As $B$ overlaps $B'$ and $\att(B')\subseteq r^*\cup \r5$, not all attachments of $B$ can be in $\cc{s}$.  But any attachment $y$ of $B$ in $Q^*-\cc{s}$ combines with $x$ to show that $B$ is skew to the ends of $s$ and, therefore, overlaps $M_{Q^*}$.  Therefore, $\att(B)\subseteq r^*\cup \r5$.  

Next suppose that $B$ has an attachment in $\oo{r^*}$. If $B$ also has an attachment in $Q^*-\cc{r^*}$, then $B$ overlaps $M_{Q^*}$ (the two identified attachments of $B$ are skew to the two ends of $r^*$).  Thus, if $B$ has an attachment in $\oo{r^*}$, then $\att(B)\subseteq r^*$.  Likewise, if $B$ has an attachment in $\oo{\r5}$, then $\att(B)\subseteq \r5$.

If $B$ has an attachment in each of $r^*$ and $\r5$, then the preceding paragraph shows that $\att(B)$ consists of some of the four $H$-nodes that comprise the ends of $r^*$ and $\r5$.  Because $B$ overlaps $B'$, $\att(B)$  cannot be just the two ends of one of the two $H$-spokes in $Q^*$.  In the remaining case, $B$ is skew to $M_{Q^*}$, a contradiction.  Thus, either $\att(B)\subseteq r^*$ or $\att(B)\subseteq \r5$.
\end{proof}

Let $OD^-(\bQ_0)=OD(\bQ_0)-\{v_1v_4,v_6v_9\}$ and let $OD^-(Q_4)=OD(Q_4)$.  By Lemma \ref{lm:bQ0bipartite} or Theorem \ref{th:BODquads} (\ref{it:quadBOD}), $OD^-(Q^*)$ is bipartite; let $(S,T)$ be a bipartition of $OD^-(Q^*)$, with $M_{Q^*}\in T$.  We briefly treat separately the cases $Q^*=\bQ_0$ and $Q^*=Q_4$.

For the former,  every element of $\mathcal N$ overlaps $M_{\bQ_0}$ and so $\mathcal N\subseteq S$.      There is an embedding $\Phi$ of $(G-\{v_1v_4,v_6v_9\})-\Nuc(M_{\bQ_0})$ in the plane so that all the $\bQ_0$-bridges in $\mathcal N$ are on the same side of $\Phi[\bQ_0$].  

In the case of $Q^*=Q_4$, $\mathcal N\setminus\{v_4v_9,v_0v_5\}\subseteq S$.  There is an embedding $\Phi$ of $G-\Nuc(M_{Q_4})$ in the plane so that all the $Q_4$-bridges in $\mathcal N\setminus\{v_4v_9,v_0v_5\}$ are on the same side of $\Phi[Q_4]$.  Any of $v_4v_9$ and $v_0v_5$ that is also in $S$ can also be embedded on that same side of $\Phi[Q_4]$.

\dragominorrem{(Text moved.)}Among the attachments of the elements of $\mathcal N^+$, let $a_9$  be the one in $r^*$ nearest $v_9$ and let $a_4$ be the one in $\r5$ nearest $v_4$.   

\begin{claim}\label{cl:cutPair} No $Q^*$-bridge not in $\mathcal M$ is skew to $\{a_4,a_9\}$.  \end{claim}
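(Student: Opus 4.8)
The claim asserts that the pair $\{a_4,a_9\}$ is, up to the special bridges in $\mathcal M$, a ``cut pair'' of the $Q^*$-bridge structure, i.e.\ no $Q^*$-bridge $B\notin\mathcal M$ has attachments $u,v$ with $u\in\oo{v_9 Q^* a_9}$ (the open segment of $r^*$ strictly between $v_9$ and $a_9$) and $v\in\oo{v_4 Q^* a_4}$ interleaving with $a_9,a_4$ around $Q^*$. The natural approach is by contradiction: suppose such a $B$ exists. The first step is to observe that $B$ has attachments in both open arcs cut off by $a_4$ and $a_9$, and in particular $B$ has attachments near the ``$v_9$--$v_4$'' corner of $Q^*$, on the side of $a_9$ and $a_4$ away from the bulk of $\mathcal N^+$. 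The point is that such a $B$ must overlap every element of $\mathcal N^+$ that witnesses the extremality of $a_9$ or $a_4$: by definition of $a_9$, some element $B_9\in\mathcal N^+$ has an attachment at $a_9$ (or between $a_9$ and the far end of $r^*$), and similarly for $a_4$; and $B$ being skew to $\{a_4,a_9\}$ forces $B$ skew to $B_9$ and to $B_4$ (or, if $B_9=B_4$, skew to that single bridge with attachments in both $r^*$ and $\r5$).

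\textbf{Key steps, in order.} First, I would use Claim \ref{cl:attNplus} to pin down $\att(B_9)$ and $\att(B_4)$: either $B_9\in\mathcal N$ (so it has attachments in both $r^*$ and $\r5$, by Lemma \ref{lm:attB}) or $B_9\in\mathcal N^+\setminus\mathcal N$ with $\att(B_9)\subseteq r^*$; similarly for $B_4$. Second, I would check that in every one of these cases, $B$ being skew to $\{a_4,a_9\}$ implies $B$ overlaps $B_9$ (respectively $B_4$): the attachment of $B$ in $\oo{v_9 Q^* a_9}$ together with an attachment of $B$ past $a_4$ on the $\r5$ side interleaves with $a_9\in\att(B_9)$ and whatever attachment $B_9$ has in $\r5$ or past $a_9$, so they are skew in the sense of Lemma \ref{lm:overlapClaw}(\ref{it:skew}). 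Third — and this is where the flip-embedding $\Phi$ enters — I would derive a contradiction with the bipartition $(S,T)$ of $OD^-(Q^*)$. Since $B\notin\mathcal M=\{M_{Q^*},v_1v_4,v_6v_9\}$ (resp.\ $\{M_{Q^*},v_4v_9,v_0v_5\}$), $B$ is a vertex of $OD^-(Q^*)$. The bridges in $\mathcal N$ all lie in $S$ (in the $\bQ_0$ case by Lemma \ref{lm:Noverlap}(\ref{it:overlapM}), since every element of $\mathcal N$ overlaps $M_{\bQ_0}$; in the $Q_4$ case $\mathcal N\setminus\{v_4v_9,v_0v_5\}\subseteq S$), and a short induction via Claim \ref{cl:attNplus} shows that every element of $\mathcal N^+\setminus\mathcal M$ that has an attachment at $a_9$ or $a_4$ is at even distance from $\mathcal N$ in $OD^-(Q^*)$, hence in $S$. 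But $B$ overlaps such an element and also, being skew to the corner pair $\{a_4,a_9\}$, overlaps $M_{Q^*}\in T$; so $B\in T$. Then $B$ overlapping an element of $S$ (namely $B_9$ or $B_4$) is fine for bipartiteness — so the real contradiction must come geometrically: in the embedding $\Phi$, all of $\mathcal N^+$ (hence $B_9,B_4$) lies on one fixed side of $\Phi[Q^*]$, and since $B$ is skew to $M_{Q^*}$ and $Q^*$ separates the two sides of $\Phi$, $B$ must lie on the other side; but then $B$ cannot cross $B_9$ there, contradicting that they overlap. That is the resolution: $B$ and $B_9$ are forced to opposite sides of $\Phi[Q^*]$ yet must overlap.

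\textbf{Main obstacle.} The delicate point is the bookkeeping of \emph{which} open arcs $a_9$ and $a_4$ actually cut off and the verification that ``skew to $\{a_4,a_9\}$'' genuinely forces overlap with the extremal bridges $B_9,B_4$ — this requires being careful about the degenerate cases $a_9=v_9$ or $a_4=v_4$, about $B_9=B_4$ being a single bridge spanning both $r^*$ and $\r5$, and about the interaction with the special bridges $v_1v_4,v_6v_9$ (resp.\ $v_4v_9,v_0v_5$), which are allowed to be skew to $\{a_4,a_9\}$ and whose presence is precisely why $OD^-(Q^*)$ rather than $OD(Q^*)$ is used. Once those cases are laid out, each is dispatched by the same two-sided-embedding argument against $\Phi$, exactly as in the proofs of Lemmas \ref{lm:notConnected} and \ref{lm:bQ0bipartite}. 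I expect the proof to be short — a paragraph of case analysis plus one invocation of the planarity of $\comp{(M_{Q^*})}$ and the fixed-side property of $\Phi$.
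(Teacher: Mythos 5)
Your overall strategy — show that a bridge skew to $\{a_4,a_9\}$ must overlap something in $\mathcal N^+$ and derive a contradiction — is the same as the paper's, but both of your key steps have genuine gaps. First, it is not true that $B$ skew to $\{a_4,a_9\}$ must directly overlap a bridge $B_9$ (resp.\ $B_4$) witnessing the extremality of $a_9$ (resp.\ $a_4$). If $B_9\in\mathcal N^+\setminus\mathcal N$, then $\att(B_9)\subseteq r^*$ with every attachment on the far side of $a_9$ from $v_9$; if the attachment of $B$ in the component of $Q^*-\{a_4,a_9\}$ not containing $s_4$ lies beyond all attachments of $B_9$, then $\att(B_9)$ sits inside a single residual arc of $B$ and the two bridges do not overlap. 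This "trapped" configuration is exactly why the paper first extracts, via the Ordering Lemma, the extremal bridge $B'\in\mathcal N\cap S$ with attachments $x',y'$, handles separately the case where $B$ reaches past $\{x',y'\}$ (there $B$ overlaps $B'$ directly), and in the remaining case runs a shortest-path argument in $OD^-(Q^*)-M_{Q^*}$ to produce a bridge $B_n\in\mathcal N^+$ whose attachments provably straddle an attachment of $B$. You correctly flag this as the delicate point, but the direct claim you rely on is false and no substitute argument is given.

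Second, and more seriously, your closing contradiction does not exist. Placing $B$ and $B_9$ on opposite sides of $\Phi[Q^*]$ while they overlap is not inconsistent — overlapping bridges are precisely the ones a planar embedding must put on opposite sides of the cycle; that is the whole content of bipartite overlap diagrams. (Moreover, only $\mathcal N$, not all of $\mathcal N^+$, is guaranteed to lie on one side of $\Phi[Q^*]$; elements of $\mathcal N^+\setminus\mathcal N$ at odd distance from $\mathcal N$ in the overlap diagram lie on the other side. And $\Nuc(M_{Q^*})$ is deleted before $\Phi$ is formed, so "$B$ is skew to $M_{Q^*}$" — itself unjustified — places $B$ nowhere in $\Phi$.) The contradiction the paper actually uses is purely combinatorial and you never invoke it: once $B\notin\mathcal M$ overlaps some member of $\mathcal N^+$, the defining overlap-path shows $B\in\mathcal N^+$ itself; but $B$ has an attachment strictly inside the $s_4$-side component of $Q^*-\{a_4,a_9\}$, which either lies in $\oo{s_4}$ (contradicting Claim \ref{cl:attNplus}) or is closer to $v_9$ than $a_9$, or closer to $v_4$ than $a_4$ (contradicting the very choice of $a_9$ and $a_4$ as the extremal attachments of $\mathcal N^+$). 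Without this closure property of $\mathcal N^+$ as the endgame, the proof does not close.
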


\begin{proof}  It is clear that, in the case $Q^*=Q_4$, neither $v_4v_9$ nor $v_0v_5$ is skew to $\{a_4,a_9\}$.  We show that a $Q^*$-bridge not in $\mathcal M$ that is skew to $\{a_4,a_9\}$ must overlap some $Q^*$-bridge in $\mathcal N^+$; this implies the contradiction that it is in $\mathcal N^+$.

By the Ordering Lemma \ref{lm:orderingLemma}, the elements of $\mathcal N\cap S$ occur in order on $Q^*$ in $\Phi$. Thus, there is one element $B'$ of $\mathcal N\cap S$ that has both an attachment nearest to $v_4$ (relative to $r^*$) and an attachment nearest to $v_9$ (relative to $\r5$).  Let $x'$ and $y'$ be the attachments of $B'$ nearest $v_4$ in $r^*$ and $v_9$ in $\r5$, respectively.  In the case $Q^*=\bQ_0$, $B^0$ is a candidate for $B'$, so, even in this case, we have that  $x'\in \cc{v_4,r_4,v_5}$ and $y'\in \cc{v_9,r_9,v_0}$.

Suppose by way of contradiction that some $Q^*$-bridge $B''$ not in $\mathcal M$ has attachments $x''$ and $y''$ in the two components of $Q^*-\{a_4,a_9\}$.  We note that, when $Q^*=Q_4$, $B''\ne v_4v_9$ and $B''\ne v_0v_5$.  

If one of $x''$ and $y''$ is in the component of $Q^*-\{x',y'\}$ that is disjoint from $s_4-\{x',y'\}$, then $B''$ overlaps $B'$.  Since $B'\in\mathcal N$, Lemma \ref{lm:Noverlap} implies $B''\notin\mathcal N$ and, therefore, $B''\in\mathcal N^+$.  But this contradicts the definition of either $a_4$ or $a_9$ and, therefore, both $x''$ and $y''$ are contained in the component of $Q^*-\{x',y'\}$ that contains $s_4-\{x',y'\}$.  In particular, we may assume $y''\in \oc{a_4,r_4,x'}\cup \oc{a_9,r_9,y'}$.  For the sake of definiteness, we assume \wording{$y''\in \oc{a_9,r_9,y'}$}.   

Some $Q^*$-bridge \wording{$B^+$}  in $\mathcal N^+$ has $a_9$ as an attachment; since $y''$ is in $\oc{a_9,r_9,y'}$, $y'\ne a_9$ and, therefore, \wording{$B^+$} is not in $\mathcal N$.  There is a shortest path $P=(B',B_1,\dots,B_n)$ in $OD^-(Q^*)-M_{Q^*}$ from $B'$ to some element $B_n$ of $\mathcal N^+$ so that $B_n$ has an attachment $y_n$ in $\co{a_9,r_9,y''}$; choose $y_n$ so that it is as close to $a_9$ in $\co{a_9,r_9,y''}$ as possible.  

The $Q^*$-bridge $B_{n-1}$ is in $\mathcal N^+$ and so, by minimality of $n$, does not have an attachment in $\co{a_9,r_9,y''}$.  Since $B_n$ overlaps $B_{n-1}$, there is an attachment $z_n$ of $B_n$ in $\oc{y'',r_9,x'}$.  Since $B''$ is skew to $\{a_4,a_9\}$, there is an attachment $z''$ of $B''$ in \wording{$\oc{a_9,r_9,v_9}\rbsp s_4\lbsp \co{v_4,r_4,a_4}$.}  But now $z_n$, $y''$, $y_n$, and $z''$ show $B''$ overlaps $B_n$.  Since $B''\notin \mathcal M$, $B''$ is in $\mathcal N^+$.  But this contradicts the definition of $a_4$ or $a_9$.   \end{proof}

The following is immediate from Claim \ref{cl:cutPair}.

\begin{claim}\label{cl:attsNotSkew}  Each $Q^*$-bridge not in $\mathcal M$ has all its attachments in one of the two $a_4a_9$-subpaths of $Q^*$.  $\Box$ \end{claim}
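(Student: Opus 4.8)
\textbf{Proof plan for Claim \ref{cl:attsNotSkew}.}

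The statement to be proved is that, given Claim \ref{cl:cutPair} — no $Q^*$-bridge outside $\mathcal M$ is skew to the pair $\{a_4,a_9\}$ — every $Q^*$-bridge not in $\mathcal M$ has all its attachments in one of the two $a_4a_9$-subpaths of $Q^*$. This is an immediate consequence, and the plan is simply to make the (brief) deduction explicit. First I would recall the relevant piece of the structure: $a_4$ and $a_9$ are two distinct points on the cycle $Q^*$ (one in $\r5$, one in $r^*$), and they divide $Q^*$ into exactly two internally-disjoint arcs, say $A$ and $A'$, with $A\cup A'=Q^*$ and $A\cap A'=\{a_4,a_9\}$. So any vertex of $Q^*$ lies in at least one of $A$, $A'$.

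Now let $B$ be any $Q^*$-bridge with $B\notin\mathcal M$. Suppose, for contradiction, that $\att(B)$ is not contained in $A$ and also not contained in $A'$. Then $B$ has an attachment $x\notin A$ and an attachment $y\notin A'$; since every vertex of $Q^*$ lies in $A\cup A'$, we get $x\in A'\setminus\{a_4,a_9\}$ (i.e.\ $x$ in the interior of $A'$) and $y\in A\setminus\{a_4,a_9\}$ (in the interior of $A$). Thus $x$ and $y$ lie in the two distinct components of $Q^*-\{a_4,a_9\}$, so the four vertices $a_4,x,a_9,y$ are distinct and occur in this cyclic order on $Q^*$; that is, $B$ is skew to $\{a_4,a_9\}$ in the sense of Lemma \ref{lm:overlapClaw} (\ref{it:skew}) (taking $u=a_4$, $v=a_9$ as ``attachments'' of an imaginary bridge through $a_4$ and $a_9$, and $u'=x$, $v'=y$ the attachments of $B$). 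This contradicts Claim \ref{cl:cutPair}. Hence $\att(B)\subseteq A$ or $\att(B)\subseteq A'$, which is exactly the assertion, since $A$ and $A'$ are precisely the two $a_4a_9$-subpaths of $Q^*$. There is no real obstacle here — the only thing to be careful about is the bookkeeping that $\{a_4,a_9\}$ genuinely separates $Q^*$ into two arcs (which holds because $a_4$ and $a_9$ are distinct vertices of the cycle $Q^*$, one lying on $\r5$ and the other on $r^*$, hence certainly distinct), and that ``skew to $\{a_4,a_9\}$'' in Claim \ref{cl:cutPair} means exactly having attachments in both components of $Q^*-\{a_4,a_9\}$.

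\begin{cproof} Since $a_4\in \r5$ and $a_9\in r^*$ are distinct vertices of the cycle $Q^*$, the set $\{a_4,a_9\}$ separates $Q^*$ into exactly two internally-disjoint $a_4a_9$-subpaths $A$ and $A'$. Let $B$ be a $Q^*$-bridge with $B\notin\mathcal M$. If $\att(B)$ is contained in neither $A$ nor $A'$, then $B$ has an attachment $x$ in the interior of $A'$ and an attachment $y$ in the interior of $A$; then $x$ and $y$ lie in the two distinct components of $Q^*-\{a_4,a_9\}$, so that $B$ is skew to $\{a_4,a_9\}$, contradicting Claim \ref{cl:cutPair}. Hence $\att(B)$ is contained in one of $A$ and $A'$. \end{cproof}
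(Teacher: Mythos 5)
Your proof is correct and matches the paper, which simply declares the claim "immediate from Claim \ref{cl:cutPair}"; you have just written out the one-line deduction (attachments in the interiors of both arcs would make the bridge skew to $\{a_4,a_9\}$) that the paper leaves implicit. The supporting observation that $a_4\in\r5$ and $a_9\in r^*$ are distinct, so that $Q^*$ genuinely splits into two arcs, is also right.
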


The proof now bifurcates into the two cases.  We consider first the case $Q^*=\bQ_0$ and that $s_0$ is exposed in $\Pi$.  The following is immediate from Claim \ref{cl:attsNotSkew}.

\begin{claim}\label{cl:a4a9curve} The  planar embedding $\Phi$ of $(G-\{v_1v_4,v_6v_9\})-\Nuc(M_{\bQ_0})$ has the property that there is a simple closed curve in the plane that meets $\Phi[(G-\{v_1v_4,v_6v_9\})-\Nuc(M_{Q^*})]$ precisely at $a_4$ and $a_9$.\hfill $\Box$ \end{claim}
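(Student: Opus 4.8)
\textbf{Proof proposal for Claim \ref{cl:a4a9curve}.}

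The plan is to build the required curve directly from the planar embedding $\Phi$ together with Claim \ref{cl:attsNotSkew}. Recall that $\Phi$ is a planar embedding of $(G-\{v_1v_4,v_6v_9\})-\Nuc(M_{\bQ_0})$ in which all the $\bQ_0$-bridges lying in $\mathcal N\cap S$ (equivalently, all of $\mathcal N$, since $Q^*=\bQ_0$ forces $\mathcal N\subseteq S$) are drawn on one side of $\Phi[\bQ_0]$ --- call it the $S$-side --- while the bridges in $T\setminus\{M_{\bQ_0}\}$ are drawn on the other side, the $T$-side. First I would observe that $\Phi[\bQ_0]$ is a simple closed curve in the plane (it is a cycle), hence bounds two closed discs, $\Delta_S$ containing the $S$-side drawings and $\Delta_T$ containing the $T$-side drawings. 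The two vertices $a_4\in \r5$ and $a_9\in r^*$ both lie on $\Phi[\bQ_0]$ and so divide it into the two $a_4a_9$-subpaths of $\bQ_0$; write these as $A$ and $A'$, with $A$ the one meeting $\oc{a_9,r_9,v_9}\rbsp s_4\lbsp\co{v_4,r_4,a_4}$ and $A'$ the complementary one (this is exactly the partition appearing in Claim \ref{cl:attsNotSkew}).

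Next I would choose the curve to run through $\Delta_S$ from a point in the interior of an edge at $a_9$ to a point in the interior of an edge at $a_4$, and close up through $\Delta_T$ along essentially the same route but on the $T$-side. The key step is verifying that each of these two arcs can be drawn $\Phi[G']$-free, where $G'=(G-\{v_1v_4,v_6v_9\})-\Nuc(M_{\bQ_0})$. On the $T$-side: every $\bQ_0$-bridge drawn in $\Delta_T$ is, by construction, in $T\setminus\{M_{\bQ_0}\}$, and by Claim \ref{cl:attsNotSkew} each such bridge has all its attachments in one of $A$, $A'$; since $\bQ_0$-bridges on the same side of $\Phi[\bQ_0]$ pairwise do not overlap (this is the planarity of $\Phi$, i.e.\ Lemma \ref{lm:planeNotOverlap} applied in the sphere, or simply the fact that $\Phi$ is an embedding), the regions of $\Delta_T$ occupied by the $A$-bridges and the $A'$-bridges are separated, and one can route an arc from $a_9$ to $a_4$ inside $\Delta_T$ through the ``gap'' between them. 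The same argument, with $T$-bridges replaced by $S$-bridges (the members of $\mathcal N$), handles the $\Delta_S$-side arc: by Claim \ref{cl:attsNotSkew} every bridge in $\Delta_S$ has attachments confined to one of $A,A'$, so again there is a gap through which to route the arc. Concatenating the two arcs at the chosen points near $a_9$ and $a_4$ yields a simple closed curve meeting $\Phi[G']$ in precisely $a_4$ and $a_9$ (one chooses the endpoints of the two arcs to be those two vertices, noting each has edges on both sides of $\Phi[\bQ_0]$ so the concatenation passes through the vertex rather than merely touching an edge).

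The main obstacle, and the point deserving the most care, is the claim that a ``gap'' actually exists inside each disc --- that is, that the $A$-bridges and $A'$-bridges in $\Delta_S$ (and likewise in $\Delta_T$) do not interleave around $\bQ_0$ in a way that would block every arc from $a_9$ to $a_4$. This is where Claim \ref{cl:attsNotSkew} does the real work: since no $\bQ_0$-bridge is skew to $\{a_4,a_9\}$, no bridge has attachments in the interiors of both $A$ and $A'$, so in the rotation around $\Phi[\bQ_0]$ one encounters first all of $A$ (with its incident bridges hanging into $\Delta_S$ on arcs attached only within $A$) and then all of $A'$; the closed region $\Delta_S$ minus the closures of all these bridge-drawings is still connected and contains both $a_9$ and $a_4$ on its boundary (an Euler/Jordan-curve argument, or an induction peeling off one bridge at a time as in the proof of Lemma \ref{lm:planeNotOverlap}). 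I would carry this out by the peeling induction: remove the bridges one at a time, each removal keeping the ambient region a disc with $a_4,a_9$ on its boundary, since each bridge is attached along a subarc of one of $A,A'$ avoiding at least one of $a_4,a_9$. Once both arcs are in hand the concatenation and the conclusion are immediate.
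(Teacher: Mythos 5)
Your proof is correct and is essentially the paper's argument: the paper simply declares this claim ``immediate from Claim \ref{cl:attsNotSkew}'', and you have supplied the routine topological details showing that, since no bridge drawn on either side of $\Phi[\bQ_0]$ is skew to $\{a_4,a_9\}$, each of the two closed discs bounded by $\Phi[\bQ_0]$ contains an $a_4a_9$-arc internally disjoint from the embedding, and the two arcs concatenate to the desired simple closed curve. The one place your peeling induction needs an extra word is a bridge whose attachment set contains both $a_4$ and $a_9$ (not excluded by Claim \ref{cl:attsNotSkew}, and not covered by ``avoiding at least one of $a_4,a_9$''); such a bridge splits its ambient region into pieces each of which still has both $a_4$ and $a_9$ on its boundary, so the induction goes through unchanged.
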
 

We are now prepared to describe a representativity 2 embedding of $G$ in $\pp$ so that all $H$-spokes are in $\Mob$. 

Let $\Psi$ be an embedding of $H$ in $\pp$ so that all $H$-spokes are contained in the M\"obius band $\Mob_\Psi$ bounded by $\Psi[R]$ and let $\gamma_\Psi$ be a non-contractible, simple, closed curve that meets $H$ in precisely the points $a_4$ and $a_9$.  The claim is that this embedding extends to an embedding of $G$ so that $\gamma_\Psi$ meets $G$ only at $a_4$ and $a_9$.  

Claim \ref{cl:attsNotSkew} implies that we can add all the $\bQ_0$-bridges other than $v_1v_4$, $v_6v_9$, and $M_{\bQ_0}$  to $\Psi$ so that there is no additional intersection with $\gamma_\Psi$.  It remains to show that we may also add the \dragominor{at most three} remaining $\bQ_0$-bridges.

\begin{claim}\label{cl:v1v4v6v9}  At most one of $v_1v_4$ and $v_6v_9$ is in $G$. \end{claim}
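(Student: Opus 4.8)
\textbf{Plan for the proof of Claim \ref{cl:v1v4v6v9}.}
The plan is to derive a contradiction from assuming that both $v_1v_4$ and $v_6v_9$ are edges of $G$. The key structural fact to exploit is that $s_0$ is exposed in $\Pi$, so by Theorem \ref{th:expSpokeNoAtt} the $(H\cap\Mob)$-bridge $B^0$ containing $s_0$ is distinct from $M_{\bQ_0}$, and $B^0\in\mathcal N$. First I would record the overlap relations among the three edges $v_1v_4$, $v_6v_9$, and the bridge $B^0$: in $\bQ_0=r_0\,s_1\,r_5\,s_4$ (indices read appropriately), the corners are $v_9,v_0,v_1,v_4,v_5,v_6$ in cyclic order, so $v_1v_4$ and $v_6v_9$ are skew to each other, and $B^0$, having an attachment in $\oo{s_0}$ and attachments near both $v_0$ and $v_5$ (Lemma \ref{lm:attB} and the ``in particular'' of Lemma \ref{lm:noncont}), overlaps both $v_1v_4$ and $v_6v_9$.

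Next I would bring in the 1-drawings $D_2$ and $D_3$ of $G-\oo{s_2}$ and $G-\oo{s_3}$. By Corollary \ref{co:D2D3}, if $\bQ_0$ is crossed in $D_i$ then $D_i[H-\oo{s_i}]$ is one of the two configurations in Figures \ref{D2noAtt} and \ref{D3noAtt}; and if $\bQ_0$ were clean in some $D_i$ then Lemma \ref{lm:cleanBOD} would force $\bQ_0$ to have BOD, contradicting that $OD(\bQ_0)$ contains the odd cycle through $M_{\bQ_0}$, $B^0$, and one of $v_1v_4,v_6v_9$ (Lemma \ref{lm:notConnected}\eqref{it:kIs3}). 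So $\bQ_0$ is crossed in both $D_2$ and $D_3$. Now I would use Lemma \ref{lm:bQ0limitA}: any $\bQ_0$-bridge in $\mathcal N$ drawn outside $D_2[\bQ_0]$ is $v_1v_5$ or $v_0v_6$, and any one outside $D_3[\bQ_0]$ is $v_0v_4$ or $v_5v_9$; in particular $B^0$, whose only possible such configurations are excluded since $B^0$ has an attachment in $\oo{s_0}$, must be drawn \emph{inside} both $D_2[\bQ_0]$ and $D_3[\bQ_0]$. Meanwhile $v_1v_4$ and $v_6v_9$, being single edges joining opposite corners of $\bQ_0$, are drawn outside whichever of $D_2[\bQ_0]$, $D_3[\bQ_0]$ they can be, and each is outside at least one of them (as one reads off Figures \ref{D2noAtt}, \ref{D3noAtt}). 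Since $v_1v_4$ and $v_6v_9$ are skew, they cannot both be drawn outside the \emph{same} $D_i[\bQ_0]$; combining, one of them is outside $D_2[\bQ_0]$ and inside $D_3[\bQ_0]$ while the other is inside $D_2[\bQ_0]$ and outside $D_3[\bQ_0]$.

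Finally I would locate the crossings. In, say, $D_2$, with $v_1v_4$ (or $v_6v_9$) drawn outside, Corollary \ref{co:D2D3} pins the crossing of $\bQ_0$ to be $r_6\,r_7$ crossing $r_1$ or $r_1\,r_2$ crossing $r_5$, and the presence of $v_1v_4$ outside selects the left-hand figure; similarly $D_3$ with $v_6v_9$ outside selects the right-hand figure. But then $B^0$, which must sit inside both $D_2[\bQ_0]$ and $D_3[\bQ_0]$, is forced to have all its attachments simultaneously in the boundary of the inside face of $D_2[\bQ_0]$ and of $D_3[\bQ_0]$; tracing these two boundaries shows the common part does not contain any vertex of $\oo{s_0}$, contradicting $B^0$'s attachment there. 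This contradiction shows at most one of $v_1v_4$ and $v_6v_9$ lies in $G$.

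The main obstacle I expect is the last step: carefully matching the two 1-drawings (the four combinations of Figures \ref{D2noAtt} and \ref{D3noAtt}) against the requirement that $B^0$ be interior in both, and checking in each case that the intersection of the relevant face boundaries misses $\oo{s_0}$ — this is a finite but fiddly case analysis that leans on exactly where $v_1v_4$ and $v_6v_9$ can be drawn relative to the crossing in each $D_i$, and on Theorem \ref{th:expSpokeNoAtt} to exclude the attachment of $B^0$ on $s_0$ from all candidate faces.
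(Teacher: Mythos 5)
Your route is genuinely different from the paper's, which settles this claim in three sentences using $\bQ_2$ rather than $\bQ_0$: since $\bQ_2$ has BOD, it is crossed in a 1-drawing $D_2$ of $G-\oo{s_2}$, so $r_0\,r_1\,r_2\,r_3$ crosses $r_5\,r_6\,r_7\,r_8$; the presence of $v_1v_4$ forces that crossing into $r_0$, the presence of $v_6v_9$ forces it into $r_5$, and then neither $r_1\,r_2$ nor $r_6\,r_7$ is crossed, so $\bQ_2$ is not crossed after all. Your setup is sound as far as it goes --- $\bQ_0$ has NBOD because $M_{\bQ_0}$, $B^0$ and either jump are pairwise skew (check the cyclic order $v_4,v_5,v_6,v_1,v_0,v_9$ on $\bQ_0$), hence $\bQ_0$ is crossed in $D_2$ and $B^0$ lies inside $D_2[\bQ_0]$ --- but it closes out far more cheaply than you propose: $B^0$, $v_1v_4$ and $v_6v_9$ are three pairwise skew $\bQ_0$-bridges, none is crossed in $D_2$ (the unique crossing is between two rim edges, and $\bQ_0$ is not self-crossed there), so each lies in one face of the simple closed curve $D_2[\bQ_0]$, and two skew bridges cannot share a face. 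Three bridges, two faces, contradiction; you never need $D_3$. Indeed, your own intermediate conclusion that one of the two jumps is \emph{inside} $D_2[\bQ_0]$ already contradicts $B^0$ sitting there and being skew to it, which you do not notice.

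The endgame you actually wrote down has a genuine gap. The contradiction is announced as ``$B^0$'s attachment in $\oo{s_0}$'', but $B^0$ has no attachment there: $\oo{s_0}$ lies in $\Nuc(B^0)$, and $\att(B^0)\subseteq r_9\,r_0\cup r_4\,r_5$ by Lemma \ref{lm:attB}, so the purported obstruction is vacuous. Moreover, ``the intersection of the boundaries of the inside faces of $D_2[\bQ_0]$ and $D_3[\bQ_0]$'' is not a well-defined object --- these faces live in two different drawings, and in each drawing the boundary in question is all of $\bQ_0$, so requiring the attachments of $B^0$ to lie on it says nothing. To make that style of argument work you would have to compare the faces of $D_2[H-\oo{s_2}]$ and $D_3[H-\oo{s_3}]$ that can contain $B^0$ and show the resulting attachment constraints are incompatible (as is done for the $\bQ_0$-bridges in the proof of Theorem \ref{th:expSpokeNoAtt}); as written, the step does not establish a contradiction. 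Fix the finish as above, or switch to the paper's $\bQ_2$ argument.
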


\begin{proof} Suppose both are in $G$.  We consider a 1-drawing $D_2$ of $G-\oo{s_2}$.  As $\bQ_2$ must be crossed in $D_2$ (it has BOD and $s_2$ is contained in a planar $\bQ_2$-bridge\wording{; apply Lemma \ref{lm:BODcrossed}}), we conclude that $r_0\,r_1\,r_2\,r_3$ crosses $r_5\,r_6\,r_7\,r_8$ in $D_2$.  In particular, $s_0$ and $s_4$ cannot be exposed. 

In order for $v_1v_4$ to be not crossed in $D_2$, we must have the crossing in $r_0$.  Likewise, $v_6v_9$ implies the crossing is in $r_5$.  But then neither $r_1\,r_2$ nor $r_6\,r_7$ is crossed, so $\bQ_2$ is not crossed in $D_2$, a contradiction.
 \end{proof}

We note that $v_1v_4 $ and $v_6v_9$ are not symmetric:  the embedding $\Pi$ of $G$ in $\pp$ distinguishes these two cases.    However, it is easy to add either of these to $\Psi$ so that the newly added edge is  in the closed disc $\Disc_\Psi$ bounded by $\Psi[R]$ in $\Psi$.

Finally, it remains to show that we may also add $M_{\bQ_0}$ to $\Psi$.   Here the argument depends slightly on which of $v_1v_4$ and $v_6v_9$ occurs in $G$.  We will assume, for the sake of definiteness, that it is $v_1v_4$ that occurs; the argument in the other case is completely analogous.    We \dragominor{shall simply import} $\Pi[M_{\bQ_0}]$ in $\pp$ as its embedding in $\Psi$.  

\dragominor{To this end, let $B$ be any $H$-bridge contained in $M_{\bQ_0}$ so that $\Pi[B]\subseteq \Disc$.  We show that either $\att(B)\subseteq r_0\,r_1\,r_2\,r_3\cc{v_4,r_4,a_4}$ or $\att(B)\subseteq r_5\,r_6\,r_7\,r_8\cc{v_9,r_9,a_9}$.}

We begin by observing that such a $B$ cannot overlap $v_1v_4$ (as $R$-bridges), as both are are embedded in $\Disc$ by $\Pi$.  \dragominor{An analogous discussion applies if $v_1v_4$ is replaced by $v_6v_9$.}

\dragominor{The embedding $\Pi$ shows $B$ cannot have an attachment in each of $\oo{r_1\,r_2\,r_3}$ and $\oo{r_5\,r_6\,r_7\,r_8\,r_9}$.  Likewise, $B$ cannot have an attachment in each of $\oo{r_6\,r_7\,r_8}$ and $r_0\,r_1\,r_2\,r_3\,r_4$}.
The next  claim treats the remaining possibilities.

\begin{claim} \dragominor{The $H$-bridge $B$ does not have an attachment in each of $\oo{r_1\,r_2\,r_3}$ and $\oc{a_4,r_4,v_5}$.  Likewise, $B$ does not have an attachment in each of $\oo{r_6\,r_7\,r_8}$ and either $r_5$ or $\oc{a_9,r_9,v_0}$.} \end{claim}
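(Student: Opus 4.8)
The plan is to argue by contradiction, exactly parallel to the structure of Claim~\ref{cl:cutPair} and the surrounding claims in the proof of Theorem~\ref{th:allSpokesMob}. Suppose $B$ has an attachment $x$ in $\oo{r_1\,r_2\,r_3}$ and an attachment $y$ in $\oc{a_4,r_4,v_5}$; the other asserted non-existence statements are completely symmetric (using the interchange of $r^*$ and $\r5$ coming from the $H$-green-cycle symmetry, together with the reflection of $\pp$ fixing $\gamma$). First I would recall that by hypothesis $\Pi[B]\subseteq\Disc$ and, since $B\subseteq M_{\bQ_0}$, that $B$ cannot overlap $v_1v_4$ (nor $v_6v_9$) as $R$-bridges, because both are drawn in $\Disc$ by $\Pi$.

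Next I would locate $x$ and $y$ more precisely. Because $\Pi[B]\subseteq\Disc$ and $v_1v_4$ is also in $\Disc$, the embedding $\Pi$ forces $B$ to lie in a face of $\Pi[H\cup\gamma\cup v_1v_4]$ whose boundary meets the rim; an attachment in $\oo{r_1\,r_2\,r_3}$ together with non-overlap with $v_1v_4$ restricts the candidate faces, and similarly an attachment in $r_4$. The key consequence I would extract is that $x,y$ together are skew on $\bQ_0$ to the pair $\{a_4,a_9\}$: indeed $x\in\oo{r_1r_2r_3}\subseteq$ one $a_4a_9$-arc of $\bQ_0$ (the one through $r_1r_2r_3$), while $y\in\oc{a_4,r_4,v_5}$ lies strictly on the other $a_4a_9$-arc (between $a_4$ and the far end), since $a_4$ is by definition the $\mathcal N^+$-attachment in $\r5$ nearest $v_4$. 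This is precisely the configuration forbidden by Claim~\ref{cl:cutPair}: $B$ is skew to $\{a_4,a_9\}$ and $B\notin\mathcal M$ (it is a proper $H$-bridge inside $M_{\bQ_0}$, not $M_{\bQ_0}$ itself nor one of the single edges $v_1v_4,v_6v_9$). This contradiction establishes the claim.

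For the second sentence of the claim --- $B$ not having an attachment in each of $\oo{r_6\,r_7\,r_8}$ and either $r_5$ or $\oc{a_9,r_9,v_0}$ --- I would run the identical argument with the roles of the two arms reversed: an attachment in $\oo{r_6r_7r_8}$ lies on the $r_6r_7r_8$-arc of $\bQ_0$, while an attachment in $r_5$ or in $\oc{a_9,r_9,v_0}$ lies strictly between $a_9$ and $v_0$ (respectively between the two relevant ends), again on the opposite $a_4a_9$-arc; so again $B$ is skew to $\{a_4,a_9\}$ and $B\notin\mathcal M$, contradicting Claim~\ref{cl:cutPair}. Here one must be slightly careful that an attachment at $a_9$ itself is not skew, which is why the statement quarantines the endpoint by using the half-open intervals $\oo{r_6r_7r_8}$ and $\oc{a_9,r_9,v_0}$; the ``either $r_5$'' case is handled by noting $r_5\subseteq\r5$ lies on the far side of $a_4$, i.e.\ again on the arc disjoint from $a_9$ except possibly at a shared corner.

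The main obstacle I anticipate is the bookkeeping needed to verify that $x$ and $y$ really are skew to $\{a_4,a_9\}$ on the cycle $\bQ_0$ --- i.e.\ that they land on \emph{different} $a_4a_9$-arcs and that neither coincides with $a_4$ or $a_9$ in a way that would spoil skewness. This requires keeping straight which of $r^*=r_9\,r_0$ and $\r5=r_4\,r_5$ contains which of $a_9$ (nearest $v_9$ in $r^*$) and $a_4$ (nearest $v_4$ in $\r5$), and confirming, via $\Pi[B]\subseteq\Disc$ and non-overlap with $v_1v_4$ (or $v_6v_9$), that $B$ cannot also reach around to an attachment that would place it on the same arc as the others. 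Once the skewness is pinned down, the contradiction with Claim~\ref{cl:cutPair} is immediate, so essentially all the work is in this geometric verification, which is routine given the two explicit representativity-2 embeddings of $V_{10}$ and the already-established restrictions on $M_{\bQ_0}$.
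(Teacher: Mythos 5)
Your proposal rests on applying Claim~\ref{cl:cutPair} to $B$, and that is where it breaks down. Claim~\ref{cl:cutPair} is a statement about $\bQ_0$-\emph{bridges} not in $\mathcal M=\{M_{\bQ_0},v_1v_4,v_6v_9\}$, whereas the $B$ in the present claim is an $H$-bridge \emph{contained in} $M_{\bQ_0}$. Since $M_{\bQ_0}\in\mathcal M$, Claim~\ref{cl:cutPair} says nothing about $B$; indeed the whole purpose of this final step of the proof of Theorem~\ref{th:allSpokesMob} is to control the pieces of $M_{\bQ_0}$ itself, precisely the object that Claim~\ref{cl:cutPair} excludes. A second, independent problem is that the attachment $x\in\oo{r_1\,r_2\,r_3}$ does not lie on the cycle $\bQ_0$ at all (recall $\bQ_0=r_9\,r_0\,s_1\,r_5\,r_4\,s_4$), so the notion of $x$ and $y$ being ``skew to $\{a_4,a_9\}$ on $\bQ_0$'' is not even well-formed for this pair of attachments. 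The geometric verification you flag as the ``main obstacle'' cannot be completed, because the framework you are trying to verify it in does not apply.

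The paper's argument is of a different character. It exploits the definition of $a_4$ (an attachment of some member of $\mathcal N^+$ nearest $v_4$ in $r^*_{+5}$) to produce, via a shortest $\mathcal N B_{\mathcal N^+}$-path in the overlap diagram, a $\bQ_0$-bridge $B_{\mathcal N^+}\in\mathcal N^+\setminus\mathcal N$ with attachments straddling $x$ in $r_4$; this makes the edge $e$ of $\cc{v_4,r_4,x}$ incident with $x$ an edge of an $H$-green cycle. On the other hand, an $H$-avoiding $xy$-path in $B$ together with the $xy$-subpath of $r_1\,r_2\,r_3\,\cc{v_4,r_4,x}$ is a second $H$-green cycle containing $e$ (unless $x=v_5$ and $y\in\oo{r_1}$), contradicting Theorem~\ref{th:twoGreenCycles}; and in the residual case $x=v_5$, $y\in\oo{r_1}$ one gets $\att(B)\subseteq\bQ_0$, contradicting $B\subseteq M_{\bQ_0}$. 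If you want to salvage your write-up, you would need to replace the appeal to Claim~\ref{cl:cutPair} with an argument of this kind.
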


\begin{proof} \dragominor{Suppose by way of contradiction that $B$ has an attachment $x$  in $\oc{a_4,r_4,v_5}$ and an attachment $y\in \oo{r_1\,r_2\,r_3}$.  Let $P$ be an $H$-avoiding $xy$-path in $B$.}  Since $a_4$ is an attachment of some element of $\mathcal N^+$, there is a shortest \dragominor{path $S$ in} $OD(\bQ_0)-\{v_1v_4,v_6v_9,M_{\bQ_0}\}$ joining some $B_{\mathcal N}$ in $\mathcal N$ to a $\bQ_0$-bridge $B_{\mathcal N^+}$ so that $B_{\mathcal N^+}$ has an attachment in $\co{v_4,r_4,x}$.    

\dragominor{If $B_{\mathcal N^+}\in \mathcal N$, then $B_{\mathcal N^+}\subseteq \Disc$.  Lemma \ref{lm:attB} shows $B_{\mathcal N^+}$ has an attachment in each of $r^*$ and $r^*_{+5}$; therefore, $B_{\mathcal N^+}$ is not contained in the closed disc bounded by $P$ and a subpath of $r_1\,r_2\,r_3\,r_4$, $B_{\mathcal N^+}$ and $P$ must cross in $\Pi$.   Therefore, $B_{\mathcal N^+}\in \mathcal N^+\setminus \mathcal N$.  }

The neighbour $B'_{\mathcal N^+}$ of $B_{\mathcal N^+}$ \dragominor{in $S$} does not have an attachment in $\co{v_4,r_4,x}$.  Since $B_{\mathcal N^+}$ overlaps $B'_{\mathcal N^+}$, it follows that $B_{\mathcal N^+}$ has another attachment in $\ob{x,r_4,v_5,}$ $\bc{r_5,b}$.    In particular, \dragominor{the edge $e$} of $\cc{v_4,r_4,x}$ incident with $x$ is $H$-green because of $B_{\mathcal N^+}$.

\dragominor{On the other hand, if either $x\ne v_5$ or $y\notin\oo{r_1}$, then $P$ combines with the $xy$-subpath of $r_1\,r_2\,r_3\lbsp\cc{v_4,r_4,x}$ to make another $H$-green cycle containing $e$, contradicting Theorem \ref{th:twoGreenCycles}. Therefore,  $x=v_5$ and  $y\in\oo{r_1}$.  But then $\att(B)\subseteq \bQ_0$, contradicting the fact that $B\subseteq M_{\bQ_0}$.}

\dragominor{The ``likewise" statement has an analogous proof.}  \end{proof}

\dragominorrem{(Text removed.)}

\dragominor{We now see that $\Psi$ may be extended to include $\Pi[M_{\bQ_0}]$}, completing the proof when $Q^*=\bQ_0$.

The proof will be completed by now considering the case $Q^*=Q_4$.  The only difference in how we proceed is to note that the $H$-bridges $v_4v_9$ and $v_0v_5$, if they exist, may be transferred to $\Mob$ at the start.  
To see this, first observe that $v_4v_9$ and $v_0v_5$ overlap on $R$ and so cannot both be embedded in $\Disc$.  
If  $v_4v_9$ is not contained in $\Mob$, then we may consider $H'$ to be $(H-\oo{s_4})+v_4v_9$, relabel $H'$ so that $v_4v_9$ --- the exposed spoke --- is $s_0$ and proceed as above to move $v_4v_9$ into $\Mob$.       \end{cproofof}

The following notions will be helpful for the duration of the work.

}\begin{definition}\label{df:local} Let $G$ be a graph, $\hvfg$ and let $B$ be an $H$-bridge in $G$.
 \begin{enumerate}
\item If there is an $i\in\{0,1,2,3,4\}$ so that $\att( B )\subseteq Q_i$, then $B$ is both a {\em local $H$-bridge\/} and a {\em $Q_i$-local $H$-bridge\/}\index{local $H$-bridge}\index{$Q_i$-local $H$-bridge}\index{bridge!local}.
\item Otherwise, $B$ is a {\em global $H$-bridge\/}\index{global $H$-bridge}\index{bridge!global}.
\end{enumerate}
\end{definition}\printFullDetails{

}\begin{corollary}\label{co:greenCover}  Let $G\in\m2$ and $\hvfg$.   Then there is no $i$ so that $\bQ_i$ has BOD and each edge of $r_{i-2}\, r_{i-1}\, r_i\, r_{i+1}$ is in an $H$-green cycle consisting of a global $H$-bridge and a path in $R$ having at most two $H$-nodes other than $v_i$. \end{corollary}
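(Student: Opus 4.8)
The plan is to argue by contradiction: suppose there is an index $i$ for which $\bQ_i$ has BOD and every edge of $r_{i-2}\,r_{i-1}\,r_i\,r_{i+1}$ lies in an $H$-green cycle of the special form described (a global $H$-bridge together with a path in $R$ containing at most two $H$-nodes besides $v_i$). By Theorem \ref{th:twoGreenCycles}, no edge of $R$ lies in two $H$-green cycles, so the green cycles covering the four consecutive rim branches $r_{i-2},r_{i-1},r_i,r_{i+1}$ are forced to be essentially unique on each edge, and consecutive such cycles can overlap only at $H$-nodes. Fix an $H$-friendly embedding $\Pi$ of $G$ (Lemma \ref{lm:greenCyclesFriendly}(\ref{it:alwaysFriendly})); by Lemma \ref{lm:greenCycles}(\ref{it:CboundsFace}), each of these green cycles bounds a face of $\Pi$, and by Lemma \ref{lm:greenCycles}(\ref{it:shortJump}) the rim portion $P_1$ of each green cycle has at most two $H$-nodes in its interior — but here we are additionally told $P_1$ has at most two $H$-nodes besides $v_i$, which pins down that each green cycle's rim part is short, so together these cycles cannot collectively ``jump over'' the nodes $v_{i-1},v_i,v_{i+1}$.

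The key step is to exploit that $\bQ_i$ has BOD to produce a $1$-drawing in which $\bQ_i$ is clean, and then derive a contradiction from the green cycles. Concretely: let $B_i$ be the $\bQ_i$-bridge containing $s_i$; since some green cycle covers $r_i$ and is a global bridge plus a short rim path through $v_i$, the $H$-avoiding $P_3$-part of that green cycle is a global $H$-bridge, which forces $s_i$ to lie in a planar $\bQ_i$-bridge (because a global bridge meeting the interior of $r_{i-1}\cup r_i$ must be separated from $s_i$ in $\Pi$). Then by Lemma \ref{lm:BODcrossed}, deleting an edge $e_i$ of $B_i$ gives a $1$-drawing $D$ of $G-e_i$ in which $\bQ_i$ is crossed, so the crossing is between $r_{i-2}\,r_{i-1}\,r_i\,r_{i+1}$ and $r_{n+i-2}\,r_{n+i-1}\,r_{n+i}\,r_{n+i+1}$. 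On the other hand, for each edge $f$ of $r_{i-2}\,r_{i-1}\,r_i\,r_{i+1}$, the green cycle $C_f$ containing $f$ satisfies, by Lemma \ref{lm:greenCycles}(\ref{it:notCrossed}), that $C_f$ is clean in any $1$-drawing of $H\cup C_f$, hence in $D$; since $f\in C_f$, $f$ is not crossed in $D$. This contradicts the conclusion that some edge of $r_{i-2}\,r_{i-1}\,r_i\,r_{i+1}$ is crossed in $D$.

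The main obstacle I expect is the justification that $s_i$ lies in a planar $\bQ_i$-bridge, i.e., that a global green $C$-bridge covering $r_i$ is really on the opposite side of $\bQ_i$ from $s_i$ in the $H$-friendly embedding, and handling the case where the covering green cycle for $r_i$ has $P_3$ trivial or where its rim part $P_1$ reaches into a spoke — but Lemma \ref{lm:greenBasics}(\ref{it:p3}) guarantees $P_3$ is nontrivial, and globality forces its attachments to straddle a quad boundary, so the embedding $\Pi$ (together with the standard description of faces of $H\cup\gamma$ given just before Definition \ref{df:embedExposed}) confines $C$ to a face disjoint from $s_i$. A secondary subtlety is the exceptional green case $P_1 = r_j\,r_{j+1}\,r_{j+2}$, which can have four $H$-nodes; but the hypothesis restricts $P_1$ to at most two $H$-nodes besides $v_i$, and an exceptional $P_1$ containing an edge of $r_{i-2}\,r_{i-1}\,r_i\,r_{i+1}$ would then force $v_i$ to be one of its three internal nodes, leaving it with three $H$-nodes other than $v_i$ only if it reaches outside the window — a case excluded by the global/short hypothesis after a short check. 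Once these embedding-location points are settled, the contradiction between ``$\bQ_i$ crossed'' and ``every rim edge of the window is green, hence clean'' closes the argument.
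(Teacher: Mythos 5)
Your proposal is correct and follows essentially the same route as the paper: delete an edge of the planar $\bQ_i$-bridge containing $s_i$, use BOD together with Lemma \ref{lm:BODcrossed} to force $\bQ_i$ to be crossed in the resulting 1-drawing, and contradict this with the cleanliness of the hypothesized green cycles via Lemma \ref{lm:greenCycles} (\ref{it:notCrossed}). The only place you labour is the step you yourself flag as the main obstacle --- that $s_i$ lies in a planar $\bQ_i$-bridge --- which the paper dispatches in one line by invoking Theorem \ref{th:allSpokesMob} to obtain a representativity 2 embedding with $H\subseteq\Mob$, so that the $\bQ_i$-bridge containing $s_i$ is $\bQ_i$-interior and hence planar; your attempted derivation of this from the location of the global green bridges is unnecessary and not fully watertight as written.
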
\printFullDetails{

\begin{cproof}  \wording{By way of contradiction, suppose} there is such an $i$.  By Theorem \ref{th:allSpokesMob}, $G$ has a representativity 2 embedding in $\pp$ so that $H\subseteq\Mob$.   Thus, $s_i$ is in a $\bQ_i$-bridge other than $M_{\bQ_i}$.

By Lemma \ref{lm:greenCycles} (\ref{it:notCrossed}), no edge of  $r_{i-2}\, r_{i-1}\, r_i\, r_{i+1}$ can be crossed in any 1-drawing $D$ of $G-\oo{s_i}$.  By hypothesis, $\bQ_i$ has BOD, so Lemma \ref{lm:BODcrossed} implies $\bQ_i$ is crossed in $D$, which further implies that some edge of $r_{i-2}\, r_{i-1}\, r_i\, r_{i+1}$ is crossed in $D$, a contradiction. 
 \end{cproof}

}

\chapter{\major{Parallel edges}}\printFullDetails{

\majorrem{(This section is moved to this much earlier position.)}In this very short chapter\dragominor{,} we present some observations on how parallel edges can occur in 2-crossing-critical graphs.  This will be used in \wording{later sections, especially Section \ref{sec:3conNotI4c}, where we} determine all the \wording{3-connected, 2-crossing-critical} graphs that do not have a subdivision of $V_8$.   There are easy generalizations to $k$-crossing-critical graphs.  

}\begin{definition}  For an edge $e$ of a graph $G$, $\mu(e)$\index{$\mu(e)$} denotes the number of edges parallel to $e$ (including $e$ itself).  \end{definition}\printFullDetails{

}\begin{observation}\label{obs:parallel}  Let $G$ be a 2-crossing-critical graph and let $e$ and $e'$ be parallel edges of $G$.  Then:
\begin{enumerate}
\item\label{notplanar} if $\overline G$ is the underlying simple graph, then $\overline G$ is not planar;
\item\label{e'crossed} the edge $e'$ is crossed in any 1-drawing of $G-e$;
\item\label{multtwo}  $\mu(e)\le 2$; 
\item\label{classplanar}  if $e'$ is an edge parallel to $e$, then $G-\{e,e'\}$ is planar;
\item\label{c3xc3}  if $\crn(G)>2$, then $G$ is simple; and 
\item\label{rim} if $n\ge 4$ and $\hvng$, then one of $e$ and $e'$ is in the $H$-rim.
\end{enumerate}
\end{observation}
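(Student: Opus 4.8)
The plan is to prove each item mostly independently, building on the earlier lemmas. For (\ref{notplanar}): if $\overline{G}$ were planar, embed it and then draw each edge parallel to another edge right alongside its parallel partner; since parallel classes have bounded size this would produce a drawing of $G$ with $0$ crossings (each pair of parallel edges bounds a face, so no crossings are forced), contradicting $\crn(G)\ge 2$. For (\ref{e'crossed}): since $G$ is $2$-crossing-critical, $G-e$ has a $1$-drawing $D$. If $e'$ were not crossed in $D$, then $e'$ together with a tiny parallel arc representing $e$ gives a $1$-drawing of $G$, again contradicting $\crn(G)\ge 2$. (Formally, $e$ can be drawn following $e'$ closely, crossing exactly what $e'$ crosses, so $D$ extends to a drawing of $G$ with at most one crossing.)

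For (\ref{multtwo}): suppose $e,e',e''$ are three mutually parallel edges joining $u$ and $v$. Consider a $1$-drawing $D$ of $G-e$. The two edges $e',e''$ of $D$ together bound a closed disc whose boundary is the cycle $e'\cup e''$. By (\ref{e'crossed}) applied appropriately, $e'$ is crossed in any $1$-drawing of $G-e$ — but I want a cleaner contradiction: a $2$-crossing-critical graph has minimum degree $\ge 3$ and is "essentially simple" in the sense that a triple edge forces $u$ and $v$ to have degree close to $3$; more directly, in $D$ the cycle $e'\cup e''$ separates the plane, $u$ and $v$ have their remaining incident edges distributed on the two sides, and one checks that $e$ can be routed along $e'$ with no new crossing beyond those of $e'$, and since $e'$ is crossed at most once this yields a $1$-drawing of $G$ unless $e'$ is crossed — so $e'$ is crossed, and then $e''$ plays the symmetric role and $e''$ is crossed, but $D$ has only one crossing, a contradiction. (The routing argument is the place to be careful, but it is routine planarity-of-the-lens reasoning.) Item (\ref{classplanar}) follows from (\ref{multtwo}): $\{e,e'\}$ is then the entire parallel class containing $e$, so $G-\{e,e'\}\subseteq \overline{G}$ up to the other parallel classes; deleting $e$ and $e'$ we may then draw $\overline{G}-(e\text{'s class})$... actually more simply: $G-e$ has a $1$-drawing in which, by (\ref{e'crossed}), $e'$ is crossed; deleting $e'$ too removes the unique crossing, giving a planar embedding of $G-\{e,e'\}$.

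For (\ref{c3xc3}): if $G$ is not simple it has parallel edges $e,e'$, and by (\ref{classplanar}) $G-\{e,e'\}$ is planar, so $\crn(G)\le 2$ (draw $G-\{e,e'\}$ planar, add $e$ and $e'$ each with at most one crossing against the rest — in fact one can do it with two crossings total), giving $\crn(G)\le 2$, contradicting $\crn(G)>2$. This recovers Vitray's observation that $C_3\Box C_3$ is the only $2$-crossing-critical graph of crossing number $3$ only after the projective-plane analysis, but here we just need the implication. Finally (\ref{rim}): suppose $\hvng$ with $n\ge 4$ and neither $e$ nor $e'$ lies in the $H$-rim $R$. Then both $e$ and $e'$ lie in $H$-spokes or $H$-bridges; in any case $R\subseteq G-\{e,e'\}$ and in fact a full subdivision of $V_{2(n-1)}$ or at least of $V_6$ survives deleting $e$ and $e'$ — more carefully, if $e$ lies on spoke $s_i$ then $H-\oo{s_i}$ is still a subdivision of $V_{2(n-1)}\supseteq V_6$ and it avoids $e$; one checks it also avoids $e'$ or, if $e'$ lies on another spoke $s_j$, that $H-\oo{s_i}-\oo{s_j}$ still contains a $V_6$ (true for $n\ge 4$). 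Hence $G-\{e,e'\}$ is non-planar, contradicting (\ref{classplanar}). The only genuine obstacle is making the lens/routing arguments in (\ref{e'crossed}) and (\ref{multtwo}) airtight — that every $1$-drawing of $G-e$ extends along $e'$ without creating a second crossing — but this is a standard fact: an edge drawn in a thin neighbourhood of a parallel edge crosses exactly what that edge crosses.
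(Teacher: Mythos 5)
Your items (1), (2), (4), and the core of (3) follow the paper's own argument (route $e$ in a thin strip alongside $D[e']$). Two points need attention. In (3), after concluding that both $e'$ and $e''$ must be crossed in the 1-drawing $D$ of $G-e$, the single crossing of $D$ could a priori be $e'$ crossing $e''$; you must rule this out (two edges with the same ends that cross can be uncrossed by swapping arcs at the crossing, or one simply takes $D$ to be a good drawing), which is exactly the observation the paper makes ("$e''$ does not cross $e'$ in $D_e$"). More seriously, your justification of (5) is not valid as written: from a planar embedding of $G-\{e,e'\}$ one cannot in general re-insert an edge with only one crossing --- near-planar graphs (planar plus one edge) can have arbitrarily large crossing number, so "add $e$ and $e'$ each with at most one crossing against the rest" is false as a general principle. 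The correct argument, and the paper's, is the one you already set up for (2): in the 1-drawing $D$ of $G-e$ the edge $e'$ carries at most one crossing, so drawing $e$ alongside $e'$ yields a drawing of $G$ with at most two crossings, whence $\crn(G)\le 2$ and (5) follows. The conclusion is right and the fix is immediate from your own machinery, but the step as stated would fail.

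Your (6) takes a genuinely different, and valid, route. The paper deduces (6) from item (2) together with Lemma \ref{lm:1drawingsV2n}: in any 1-drawing of $G-e$ the crossing of the contained $V_{2n}$ is between two rim branches, and since $e'$ must be crossed, $e'$ lies in the rim. You instead contradict item (4) by exhibiting a non-planar subgraph of $G-\{e,e'\}$: if neither $e$ nor $e'$ is in the rim, then at most one of them lies in $H$ (they share both ends, so they cannot lie on distinct spokes), and either $H$ itself or $H-\oo{s_i}\topol V_{2(n-1)}$ avoids both, which is non-planar for $n\ge 4$. This is slightly more elementary, needing only non-planarity of $V_{2(n-1)}$ rather than the structure of 1-drawings of $V_{2n}$; your case analysis should just be tightened to note that $e$ and $e'$ cannot lie on different spokes.
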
\printFullDetails{

\begin{proof}  For (\ref{notplanar}), a planar embedding of $\overline G$ allows us to introduce all the parallel edges of $G$ with no crossings, showing $G$ is planar, a contradiction.

For (\ref{e'crossed})--\dragominor{(\ref{c3xc3})}, let $D$ be a 1-drawing of $G-e$ and suppose $e'$ is not crossed in $D$  Then we may add $e$ alongside $D[e']$ to obtain a 1-drawing of $G$\dragominor{, a contradiction}.  Since $D$ has at most one crossing, it must be of $e'$, which is (\ref{e'crossed}).   Adding $e$ alongside $D[e']$ yields a 2-drawing of $G$.  Thus we have (\ref{classplanar}) and (\ref{c3xc3}).  Also, (\ref{multtwo}) follows, since any other edge $e''$ parallel to $e$ does not cross $e'$ in $D_e$.  Thus, $e''$ is not crossed in $D_e$, which contradicts the second sentence, with $e''$ in place of $e'$.

Finally, \dragominor{for (\ref{rim}), we may suppose $e$ is not in $H$.  Lemma \ref{lm:1drawingsV2n} shows that the only} edges  that are in every non-planar subgraph of \dragominor{$G-e$} are those in the $H$-rim.  \dragominor{Therefore, $e'$ is in the $H$-rim.}
 \end{proof}

}
\chapter{Tidiness and global $H$-bridges}\printFullDetails{

In this section, we show that, if $G\in\m2$ and $\hvfg$, then there is a $V_{10}\topol H'\subseteq G$ with many useful additional characteristics that we call ``tidiness".  The main result is that a tidy subdivision of $V_{10}$ has only very particular global bridges, each of which is an edge.  We start with a slightly milder version of tidiness.

}\begin{definition}\label{df:pre-tidy}  Let $\Pi$ be a representativity 2 embedding of $G$ in $\pp$ and let $\hvfg$.  Then $H$ is {\em $\Pi$-pretidy\/}\index{pretidy}\index{$\Pi$-pretidy} if:
\begin{enumerate}
\item all $H$-spokes are embedded in $\Mob$; and
\item for every $H$-quad $Q$ and for every $Q$-bridge $B$ other than $M_Q$, $Q\cup B$ has no non-contractible cycle in $\Pi$.
\end{enumerate}
\end{definition}\printFullDetails{

The first step in this section is to find an embedding with a pretidy subdivision of $V_{10}$.

}\begin{lemma}\label{lm:existsPretidy}   Let $G\in\m2$ and $\hvfg$.  Then $G$ has a representativity 2 embedding $\Pi$ in $\pp$ so that $H$ is $\Pi$-pretidy.  \end{lemma}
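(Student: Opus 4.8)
The plan is to start from the embedding $\Pi$ produced by Theorem~\ref{th:allSpokesMob}: with the standard labelling, this embedding has $\Pi[H]\subseteq\Mob$ (so condition~(1) of pretidiness holds immediately) and has the property that the only $Q_4$-local $H$-bridge $B$ for which $Q_4\cup B$ contains a non-contractible cycle is $M_{Q_4}$. So the first condition of Definition~\ref{df:pre-tidy} is already met, and the second condition is met for the quad $Q_4$. What remains is to handle the other four quads $Q_0,Q_1,Q_2,Q_3$. Since $\Pi[H]\subseteq\Mob$, each $H$-quad $Q_i$ bounds a face of $\Pi[H]$, and for $i\in\{0,1,2,3\}$ that face is a disc whose closure is a disc in $\pp$ (the face bounded by $Q_4$ is the one carrying an arc of $\gamma$, so it behaves differently). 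A $Q_i$-bridge $B$ that is $Q_i$-interior lies in that closed disc, so $Q_i\cup B$ has only contractible cycles; the only way $Q_i\cup B$ can contain a non-contractible cycle is if $B$ is $Q_i$-exterior. Thus I need to show that, after possibly modifying $\Pi$, for $i\in\{0,1,2,3\}$ the only $Q_i$-exterior $Q_i$-bridge $B$ with $Q_i\cup B$ non-contractible is $M_{Q_i}$ itself.

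The key tool is that $Q_i$ has BOD for every $i$ (Theorem~\ref{th:BODquads}~(\ref{it:quadBOD})), with exactly one non-planar bridge, namely $M_{Q_i}$. So the overlap diagram $OD(Q_i)$ is bipartite with $M_{Q_i}$ on one side. Any $Q_i$-bridge $B\ne M_{Q_i}$ is planar, and if $Q_i\cup B$ contains a non-contractible cycle then (as in the arguments of Lemma~\ref{lm:attB}) $\att(B)$ is forced to lie in the two components $r^*$ and $r^*_{+5}$ of $Q_i\cap R$, with an attachment in each. The strategy, exactly parallel to the proof of Theorem~\ref{th:allSpokesMob}, is: using bipartiteness of $OD(Q_i)$ together with Lemma~\ref{lm:noncont} (no two disjoint $(N\cap r^*)(N\cap r^*_{+5})$-paths) and Lemma~\ref{lm:Noverlap}, the set $\mathcal N_i$ of such ``bad'' bridges can be pushed to one side of a planar embedding of $(C\cup M_{Q_i})^\#$-type graph, so that there is a simple closed curve meeting the bridge-free part of $G$ at just two points near suitable ``breaking points'' on $r^*$ and $r^*_{+5}$; re-embedding then drops all the bridges of $\mathcal N_i$ into $\Mob$, making $Q_i\cup B$ contractible for those $B$, while keeping all $H$-spokes in $\Mob$ and not disturbing the quads already dealt with. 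This is cleaner here than in Theorem~\ref{th:allSpokesMob} because $Q_i$ (for $i\le 3$) is an honest quad, not a hyperquad, so there are no exceptional single-edge bridges ($v_1v_4$, $v_6v_9$, $v_4v_9$, $v_0v_5$) to worry about in the overlap-diagram parity argument — the relevant overlap diagram is genuinely bipartite with $M_{Q_i}$ on one side.

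I would carry out the quads in a careful order and check that each re-embedding does not re-introduce a non-contractible cycle through some earlier $Q_j\cup B$. The main obstacle is precisely this interaction: a re-embedding that fixes $Q_i$ must not spoil what was arranged for $Q_4$ or for the previously-processed quads, and it must keep $\Pi[H]\subseteq\Mob$. The way around this is to observe that the ``flip'' that fixes $Q_i$ only moves $Q_i$-exterior bridges across the curve used for $Q_i$; since $H\subseteq\Mob$ the cycle $Q_i$ still bounds a face after the flip, the spokes are untouched (they are $Q_i$-interior, being in $\Mob$), and a $Q_j$-bridge for $j\ne i$ that was interior stays interior, so no new non-contractible cycle through $Q_j\cup B'$ is created. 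Finally, since each bridge moved into $\Mob$ at step $i$ becomes $Q_i$-interior and hence contained in a closed disc, $Q_i\cup B$ has no non-contractible cycle — and this property, once achieved for $Q_i$, is preserved by all subsequent flips for the same reason. Iterating over $i\in\{0,1,2,3\}$ yields the required $\Pi$-pretidy $H$.
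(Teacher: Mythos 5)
Your set-up is right and matches the paper's starting point (invoke Theorem~\ref{th:allSpokesMob}, note that only $Q_i$-exterior bridges in $\Disc$ with attachments on both of $r_i$ and $r_{i+5}$ can make $Q_i\cup B$ non-contractible), but the mechanism you propose has a concrete gap. Your parenthetical claim that for an honest quad $Q_i$, $i\in\{0,1,2,3\}$, ``there are no exceptional single-edge bridges to worry about'' is not correct: the exceptions $v_4v_9$ and $v_0v_5$ in Lemma~\ref{lm:Noverlap} are not hyperquad artifacts --- they are the bridges whose attachments are exactly the two ends of a spoke of the quad $Q_4$, and every quad $Q_i$ has the analogous candidates $v_iv_{i+5}$ and $v_{i+1}v_{i+6}$. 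Such a bridge has its two attachments at \emph{consecutive} corners of $Q_i$, so it need not overlap $M_{Q_i}$ (both attachments lie in one residual arc of $M_{Q_i}$ unless $M_{Q_i}$ happens to attach in the open spoke), yet if it sits in $\Disc$ it forms a non-contractible cycle with that spoke. Nothing available at this stage rules these bridges out (Lemma~\ref{lm:noSpokeOnlyBridge} requires tidiness, which presupposes the present lemma). Your bipartiteness/flip argument therefore does not move them, and they are also exactly the bridges that are local to two adjacent quads, which is where your ``each flip does not disturb the others'' claim breaks down. A second, related problem is that Lemmas~\ref{lm:attB}, \ref{lm:noncont} and \ref{lm:Noverlap} are stated and proved only for $Q^*\in\{\bQ_0,Q_4\}$; their proofs lean on the drawings $D_2$, $D_3$ of $G-\oo{s_2}$, $G-\oo{s_3}$ and on $Q_4$ bounding a face there (Corollary~\ref{co:D2D3}), and at this point only $\bQ_2$ and some of the other hyperquads are known to have BOD, so ``exactly parallel'' is not automatic for an arbitrary $Q_i$.

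It is worth noting that the paper's proof avoids the flip machinery entirely, and the reason is instructive: once Theorem~\ref{th:allSpokesMob} puts all spokes in $\Mob$, each quad face is an available disc, so one simply re-embeds the $Q$-local bridges from scratch using the bipartition $(S,T)$ of $OD(Q)$ from Theorem~\ref{th:BODquads}. Every bridge in $S$ (in particular everything that overlaps $M_Q$) is placed inside the quad's face; every bridge in $T\setminus\{M_Q\}$ does not overlap $M_Q$ and hence has all its attachments on a single closed $H$-branch of $Q$ --- this is exactly how the spoke-end bridges above get absorbed --- so it can be nestled against that branch on the other side, making $Q\cup B$ contractible in all cases. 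The only delicate point is a bridge attached entirely to a spoke shared by two quads, which the paper resolves by a connectivity argument in the overlap diagram showing it lands in $S$ for exactly one of the two quads. If you want to salvage your approach, you would need to (i) prove the analogues of Lemmas~\ref{lm:attB}--\ref{lm:Noverlap} for each $Q_i$ and (ii) add a separate treatment of the spoke-end bridges; at that point you will essentially have reconstructed the paper's argument with extra overhead.
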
\printFullDetails{

\begin{cproof}  By Theorem \ref{th:allSpokesMob}, $G$ has a representativity 2 embedding $\Pi$ in $\pp$ so that all the $H$-spokes are contained in $\Mob$ and so that, for any $Q_4$-bridge $B$ other than $M_{Q_4}$, $\Pi[Q_4\cup B]$ has no non-contractible cycle.   We note that every global $H$-bridge is contained in $\Disc$.  We describe a particular representativity 2 embedding $\Pi^*$ of $G$ in $\pp$ for which $H$ is $\Pi^*$-pretidy.  Let $\gamma$ be the non-contractible simple closed curve that meets $\Pi(G)$ at just the two points $a$ and $b$. 

The embedding $\Pi^*$ is obtained by adjusting the local $H$-bridges\dragominor{; we do not adjust those that are $Q_4$-local}.  We start with $\Pi^*$ being the same as $\Pi$ on $H$ and all the $Q_4$-bridges other than $M_{Q_4}$.
Let $Q$ be an $H$-quad other than $Q_4$.  By Theorem \ref{th:BODquads}, $Q$ has BOD and all $Q$-bridges other than $M_Q$ are planar.   Let \dragominor{$(S,T)$} be a bipartition of $OD(Q)$ labelled so that \dragominor{$M_Q\in T$}.  Let $\Pi_Q$ be a planar embedding of $Q$ and all the $Q$-bridges other than $M_Q$ so that all the $Q$-bridges in \dragominor{$T\setminus\{M_Q\}$} are on one side of $\Pi_Q[Q]$ and all the $Q$-bridges \dragominor{in $S$} are on the other side of $\Pi_Q[Q]$.  

Extend $\Pi^*$ to include all the $Q$-bridges other than $M_Q$ by placing the $Q$-bridges \dragominor{in $S$} into the $H$-face in $\Pi^*$ bounded by $\Pi^*[Q]$, using $\Pi_Q$.  As every $Q$-bridge in \dragominor{$T\setminus\{M_Q\}$} does not overlap $M_Q$, each of these has all its attachments on one of the four $H$-branches in $Q$ and these may be embedded in $\Pi^*$ on the other side of $\Pi^*[Q]$, and without crossing $M_Q\cup \gamma$. 

The only concern here is that a local $H$-bridge can be local for distinct $H$-quads.  Such an $H$-bridge $B$ must have all its attachments on the same $H$-spoke $s_i$.  We claim it \dragominor{is in $T$} for one of $Q_{i-1}$ and $Q_i$ and \dragominor{in $S$} for the other \dragominor{one of $Q_{i-1}$ and $Q_i$}.   

As $G$ is 3-connected, $OD(Q_i)$ is connected (see \cite[Thm.\ 1]{bc}, where this is proved for binary matroids).   There is a shortest $M_{Q_i}B$-path $P=(B_0,B_1,\dots,B_n)$ in $OD(Q_i)$ (thus, $B_0=M_{Q_i}$ and $B_n=B$).  Let $k$ be least so that $B_k$ has an attachment in $\oo{s_i}$.  

\begin{claim}  For $j>k$, $\att(B_j)\subseteq s_i$, and $k\le 1$.  \end{claim}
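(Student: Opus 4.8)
The setup is a shortest path $P=(B_0,B_1,\dots,B_n)$ in $OD(Q_i)$ with $B_0=M_{Q_i}$ and $B_n=B$, where $B$ is a $Q_i$-local $H$-bridge all of whose attachments lie on the spoke $s_i$, and $k$ is the least index with $\att(B_k)\cap\oo{s_i}\neq\varnothing$. I want to prove two things: first, that for every $j>k$ we have $\att(B_j)\subseteq s_i$; second, that $k\le 1$.

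For the first part, the plan is to argue by downward/forward induction using the structure of overlaps on a quad. The key observation is that $Q_i=r_i\,s_{i+1}\,r_{i+5}\,s_i$ is a cycle made of four $H$-branches, and $B_k$ has an attachment interior to $s_i$. Since consecutive bridges on $P$ overlap, and since a bridge with an interior attachment on $s_i$ that also has an attachment off $\cc{s_i}$ would be skew to the two ends of $s_i$ — hence overlap $M_{Q_i}$ — I can use Corollary~\ref{co:attsMissBranch} (which bounds attachments of a $Q$-local bridge on a spoke and on $Q-\cc{r}$) together with minimality of $P$ to force the behaviour. Concretely: $B_k$ cannot overlap $M_{Q_i}$ by minimality of $k$ being chosen on a shortest path to $M_{Q_i}$ (if $B_k$ overlapped $M_{Q_i}$ then $(B_0,B_k,\dots,B_n)$ would be a shorter $M_{Q_i}B$-path), so by Lemma~\ref{lm:overlapClaw} the attachments of $B_k$ are not skew to $\{v_i,v_{i+5}\}$; since $B_k$ has an interior attachment on $s_i$, this forces $\att(B_k)\subseteq s_i$. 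Now I proceed inductively: assuming $\att(B_{j-1})\subseteq s_i$ with $B_{j-1}$ not overlapping $M_{Q_i}$, the bridge $B_j$ overlaps $B_{j-1}$, so $B_j$ has attachments interlacing those of $B_{j-1}$ on $s_i$; in particular $B_j$ has an interior attachment on $s_i$, and the same no-overlap-with-$M_{Q_i}$ argument (again from minimality of the shortest path) gives $\att(B_j)\subseteq s_i$. This is the inductive step, and it runs cleanly for all $j$ with $k\le j\le n$.

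For the second part, $k\le 1$: suppose $k\ge 2$. Then $B_1$ has all its attachments in $Q_i$ with none interior to $s_i$, i.e. $\att(B_1)\subseteq (r_i\cup s_{i+1}\cup r_{i+5})\cup\{v_i,v_{i+5}\}$, and $B_1$ overlaps $M_{Q_i}$. Meanwhile $B_0=M_{Q_i}$ and $B_2$ overlaps $B_1$. The idea is that a bridge overlapping $M_{Q_i}$ on $Q_i$ must have attachments skew to one of the two pairs of opposite corners, i.e. it ``straddles'' across $Q_i$ through the part that is away from $s_i$; two such bridges $B_1,B_2$ both straddling that way and overlapping each other, combined with $B_2$ being the first on $P$ forced to touch $\oo{s_i}$ via $B_3,\dots$, leads to a path $(M_{Q_i},B_1,B_2)$ all of whose bridges live ``off'' $s_i$ and overlap $M_{Q_i}$, and I want to derive a contradiction with the minimality of the path length $n$ — or, more directly, observe that if no $B_j$ for $j<k$ touches $\oo{s_i}$ then $B_k$ is reached from $M_{Q_i}$ through a chain of bridges each of which overlaps $M_{Q_i}$ or overlaps its predecessor using attachments confined to $(r_i\cup s_{i+1}\cup r_{i+5})$, and one can shortcut: $B_1$ overlapping $M_{Q_i}$ and $B_k$ having an attachment in $\oo{s_i}$ means $B_1$ and $B_k$ overlap directly (attachments of $B_k$ in the two components of $Q_i$ separated by $\att(B_1)$), contradicting $P$ being shortest unless $k\le 1$. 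The precise bookkeeping of which corner-pair each $B_j$ is skew to is the delicate point.

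The main obstacle I anticipate is making the ``shortcut'' / minimality argument for $k\le 1$ fully rigorous: one must rule out the configuration where $B_1$'s attachments, although confined to $Q_i-\oo{s_i}$, are positioned so that $B_k$'s attachment in $\oo{s_i}$ does \emph{not} interlace with $\att(B_1)$ — this can only happen if $B_1$'s attachments all lie on a single branch among $r_i,s_{i+1},r_{i+5}$, but then $B_1$ does not overlap $M_{Q_i}$ (its attachments lie in one residual arc of $M_{Q_i}$), contradicting that $B_1$ is on a shortest $M_{Q_i}B$-path with $B_1\neq B$. So the argument closes, but the case analysis of ``$B_1$ overlaps $M_{Q_i}$ hence $B_1$ has attachments in both an end-region near $v_i$ and near $v_{i+5}$'' needs Lemma~\ref{lm:overlapClaw} applied carefully, and the interlacing conclusion ``$B_1$ separates $v_i$ from $v_{i+5}$ along $Q_i$, so any interior attachment of $B_k$ on $s_i$ is in one residual arc while $B_k$ also attaches elsewhere'' is where I expect to spend the most care. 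I would organize the write-up so that both parts of the Claim fall out of a single structural lemma about chains of overlapping bridges emanating from $M_{Q_i}$ on a quad.
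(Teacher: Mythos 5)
You have correctly isolated the key tool --- a $Q_i$-bridge with an attachment in $\oo{s_i}$ and an attachment not in $\cc{s_i}$ is skew to $\{v_i,v_{i+5}\}\subseteq\att(M_{Q_i})$, hence adjacent to $B_0$ in $OD(Q_i)$ --- but both halves of your argument misapply it. In the first half, your induction rests on the base case $\att(B_k)\subseteq s_i$, justified by ``$B_k$ cannot overlap $M_{Q_i}$, else $(B_0,B_k,\dots,B_n)$ would be shorter.'' That shortcut is only strictly shorter when $k\ge 2$. For $k=1$ (which, by the second half of the claim, is one of the only two cases that can occur), $B_1$ is already adjacent to $B_0=M_{Q_i}$ on $P$, so it certainly overlaps $M_{Q_i}$; worse, since $B_0$ then has no attachment in $\oo{s_i}$, a bridge whose attachments all lie in $\cc{s_i}$ sits inside a single residual arc of $B_0$ and cannot overlap it, so $B_k=B_1$ is \emph{forced} to have an attachment outside $s_i$. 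Your base case is therefore false exactly in the cases that matter, and the induction never starts. The statement only concerns $j>k$, and the working argument does not induct from $B_k$: one supposes some $B_j$ with $j>k$ attaches off $s_i$ and shows that either $B_j$ itself, or the least $j'>j$ with $\att(B_{j'})\cap\oo{s_i}\ne\varnothing$ (which exists since $B_n=B$ has such an attachment, and whose predecessor has none, forcing $B_{j'}$ to also attach off $s_i$), is skew to $M_{Q_i}$ and shortcuts $P$.

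In the second half your shortcut runs through $B_1$ rather than $B_0$, and both steps fail. First, the dichotomy ``if $B_k$'s attachment in $\oo{s_i}$ does not interlace with $\att(B_1)$, then $\att(B_1)$ lies on a single branch and $B_1$ does not overlap $M_{Q_i}$'' is false: take $B_1$ with one attachment $x\in\oo{r_i}$ and one attachment $y\in\oo{s_{i+1}}$. Then $x,v_{i+1},y,v_i$ occur in this cyclic order on $Q_i$, so $B_1$ is skew to $M_{Q_i}$, yet the whole of $\cc{s_i}$ lies in a single residual arc of $B_1$, so nothing forces $B_k$ to overlap $B_1$. Second, even if $B_1$ and $B_k$ did overlap, the path $(B_0,B_1,B_k,\dots,B_n)$ has length $n-k+2$, which contradicts minimality only for $k\ge 3$; at best you would obtain $k\le 2$, not $k\le 1$. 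The repair is to shortcut from $B_0$ directly: $B_k$ overlaps $B_{k-1}$, which by minimality of $k$ has no attachment in $\oo{s_i}$, so by the residual-arc observation $B_k$ must attach outside $s_i$ as well as inside $\oo{s_i}$; hence $B_k$ is skew to $M_{Q_i}$ and $(B_0,B_k,\dots,B_n)$ has length $n-k+1<n$ whenever $k\ge 2$.
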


\begin{proof} If, for some $j> k$, $B_j$ has an attachment not in $s_i$, then $j<n$.  If $B_j$ has an attachment in $\oo{s_i}$, then $B_j$ is skew to $M_{Q_i}$ and $P$ is not a shortest $M_{Q_i}B$-path, a contradiction.  Thus, there is a least $j'> j$ so that $B_{j'}$ has an attachment in $\oo{s_j}$.  Since $B_{j'}$ overlaps $B_{j'-1}$ and $B_{j'-1}$ has no attachment in $\oo{s_i}$, $B_{j'}$ has an attachment not in $s_i$.  Again, $B_{j'}$ is skew to $M_{Q_i}$, so $P$ is not a shortest $M_{Q_i}B$-path, a contradiction.  Thus, for all $j>k$, $\att(B_j)\subseteq s_i$.

If $k=0$, then obviously $k\le 1$, so we may assume $k\ge 1$.  As $B_k$ has an attachment in $\oo{s_i}$ and $B_{k-1}$ does not, it follows that $B_k$ has an attachment not in $s_i$.  But then $B_k$ is skew to $M_{Q_i}$.  Because $P$ is a shortest $M_{Q_i}B$-path, we deduce that $k\le 1$.  \end{proof}

The claim shows that the $Q_i$-bridges $B_{k+1}$, $B_{k+2}$, \dots, $B_n$ are also $Q_{i-1}$-bridges and, therefore, $(B_{k+1},B_{k+2},\dots,B_n)$ is a path in $OD(Q_{i-1})$.    Suppose first that $k=0$.  Then $M_{Q_i}$ contains a vertex $x$ in $\oo{s_i}$ so that $x$ and $v_{i+1}$ are skew to $B_1$.  There is a shortest $Q_i$-avoiding path $P$ in $M_{Q_i}$ joining $x$ to a vertex in $\Nuc(M_{Q_i})\cap H$.  Since $P$ is not in the face of $\Pi[Q_i]$ contained in $\Mob$, we deduce that $P$ is contained in the face of $\Pi[Q_{i-1}]$ contained in $\Mob$.  But then we conclude that $P$ is contained in a $Q_{i-1}$-local $H$-bridge $B'$, showing that $B'$ is skew to both $M_{Q_{i-1}}$ and to $B_1$.  We deduce that, in $OD(Q_i)$, $M_{Q_i}$ and $B_1$ are on opposite sides of the bipartition of $OD(Q_i)$, while $M_{Q_{i-1}}$ and $B_1$ are on the same side of the bipartition of $OD(Q_{i-1})$.  Since $B_1$ and $B=B_n$ have not changed their relative positions, we see that in one of $OD(Q_i)$ and $OD(Q_{i-1})$, $B$ is on the same side of the bipartition as the corresponding M\"obius bridge, while in the other $B$ and the other corresponding M\"obius bridge are on opposite sides of the bipartition.

The argument works exactly in reverse when $k=1$.  In this case, $B_1$ is skew to $M_{Q_i}$ and $B_2$.  Since $B_1\subseteq M_{Q_{i-1}}$, we conclude that $B_2$ is skew to $M_{Q_{i-1}}$, and the result follows analogously to the argument in the preceding paragraph.

Finally, suppose $B$ is a global $H$-bridge.  Then, for each $H$-quad $Q$, $B\subseteq M_Q$, so $B$ does not overlap any of the $Q$-local $H$-bridges already embedded in $\Disc_{\Pi^*}$ and, since $\Pi[B]\subseteq \Disc$,  $B$ can also be added to $\Pi^*$.
 \end{cproof}

We are now ready to move to tidiness. 

}\begin{definition}\label{df:tidy}  Let \dragominor{$\hvfg$} and let $\Pi$ be a representativity 2 embedding of $G$.
Then $H$ is {\em $\Pi$-tidy\/}\index{tidy}\index{$\Pi$-tidy} if:
\begin{enumerate}\item\label{it:spokeMob}  $H\subseteq\Mob$; 
\item\label{it:bridgeMob} every local $H$-bridge is contained in $\Mob$; 
\item\label{it:QnoOverlap} for each $H$-quad $Q$, no two $Q$-local $H$-bridges overlap; and 
\item\label{it:Hjump} there is no $H$-avoiding path $P$ in $\Disc$ and an index $i\in\{0,1,2,\dots,9\}$ so that $P$ has both its ends in  $\oo{v_i,r_i,v_{i+1},r_{i+1},v_{i+2},r_{i+2},v_{i+3}}$.  
\end{enumerate}
If \dragominor{$\hvfg$}, then $H$ is {\em tidy\/} if there is a representativity 2 embedding $\Pi$ of $G$ so that $H$ is $\Pi$-tidy.
\end{definition}\printFullDetails{

Our aim is the following result.

}\begin{theorem}\label{th:existTidy}  Let $G\in \m2$ have a subdivision of $V_{10}$.  Then there exists a representativity 2 embedding $\Pi$ in $\pp$ of $G$ with a $\Pi$-tidy subdivision of $V_{10}$.  \end{theorem}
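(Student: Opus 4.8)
\textbf{Proof proposal for Theorem~\ref{th:existTidy}.}

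The plan is to start from a $\Pi$-pretidy subdivision $H$ of $V_{10}$, which exists by Lemma~\ref{lm:existsPretidy}, and then repair the three remaining defects (failure of \ref{it:bridgeMob}, \ref{it:QnoOverlap}, \ref{it:Hjump} in Definition~\ref{df:tidy}) one at a time, possibly changing both the embedding $\Pi$ and the subdivision $H$ as we go, always maintaining pretidiness. First I would observe that conditions \ref{it:spokeMob} and \ref{it:bridgeMob} of tidiness are almost immediate from pretidiness together with Lemma~\ref{lm:closeIsCycle} and Corollary~\ref{co:closeAtts}: a local $H$-bridge $B$ is $Q$-local for some $H$-quad $Q$, and by Corollary~\ref{co:attsMissBranch} its attachments are confined to at most two of the four $H$-branches of $Q$ and at most two on any spoke; pretidiness says $Q\cup B$ has no non-contractible cycle in $\Pi$, which forces $\Pi[B]\subseteq\Mob$ unless $B$ reaches across an arc of $\gamma$ — but $\gamma$ only meets the $H$-quad bounded by $Q_4$, and for $Q=Q_4$ the pretidiness clause about $Q_4$-bridges was exactly arranged in Theorem~\ref{th:allSpokesMob}\,(\ref{it:niceQ4}). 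So \ref{it:bridgeMob} comes for free once \ref{it:spokeMob} holds, and \ref{it:spokeMob} is the first pretidiness clause.

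Next I would attack the overlap condition \ref{it:QnoOverlap}. Fix an $H$-quad $Q$. By Theorem~\ref{th:BODquads}\,(\ref{it:quadBOD}), $Q$ has BOD, so $OD(Q)$ is bipartite; all $Q$-bridges other than $M_Q$ are planar and (after pretidiness) $Q$-interior. If two $Q$-local $H$-bridges $B,B'$ overlap, then in the bipartition one of them — say $B'$ — lies on the same side of the bipartition as $M_Q$, hence does not overlap $M_Q$, hence by Lemma~\ref{lm:overlapClaw} all of $\att(B')$ lies in a single $H$-branch of $Q$, i.e.\ $B'$ is $H$-close in the sense of Definition~\ref{df:Hclose}. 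The idea is then to \emph{absorb} such a bridge into $H$: if $\att(B')$ lies in an $H$-spoke $s$, reroute $s$ through $B'$ (choosing a suitable $H$-avoiding path in $B'$ between the two extreme attachments) to get a new subdivision $H'$ of $V_{10}$ in which $B'$ has become part of a spoke or been eliminated, and verify by Lemma~\ref{lm:greenCyclesFriendly}\,(\ref{it:nearlyFriendly}) and the pretidiness proof that $H'$ can be taken $\Pi'$-pretidy for a (possibly re-embedded) $\Pi'$. Each such move strictly decreases a suitable complexity measure (number of edges of $G$ outside $H$, say), so the process terminates, and on termination no two $Q$-local bridges overlap for any $Q$. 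The delicate point — and I expect this to be the main obstacle — is checking that rerouting to fix overlaps at one quad does not reintroduce overlaps, non-contractible bridge-cycles, or exposed spokes at a neighbouring quad, since a local bridge can be local for two consecutive quads (exactly the phenomenon handled in the proof of Lemma~\ref{lm:existsPretidy}); one has to run the same bipartition/skewness bookkeeping across $Q_{i-1}$ and $Q_i$ simultaneously and confirm the move is globally consistent.

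Finally, for the ``no double jump'' condition \ref{it:Hjump}: suppose there is an $H$-avoiding path $P$ in $\Disc$ with both ends in $\oo{v_i,r_i,v_{i+1},r_{i+1},v_{i+2},r_{i+2},v_{i+3}}$. Then the cycle $C$ formed by $P$ together with the appropriate $\le\!3$-rim-branch subpath of $R$ is an exceptional $H$-green cycle (case \ref{it:exceptional} of Definition~\ref{df:green}, or a shorter green cycle), so Theorem~\ref{th:twoGreenCycles} and Lemma~\ref{lm:greenCycles} apply; in particular Lemma~\ref{lm:greenCycles}\,(\ref{it:shortJump}) bounds the interior of the rim piece, Lemma~\ref{lm:greenCycles}\,(\ref{it:CboundsFace}) says $C$ bounds a face, and Lemma~\ref{lm:greenCycles}\,(\ref{it:oneCbridge}) says $C$ has a unique bridge $M_C$ with $H\subseteq C\cup M_C$. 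The plan is to use $C$ to build a better subdivision: replace the rim stretch $r_i\,r_{i+1}\,r_{i+2}$ (or the shorter stretch) by $P$, obtaining a subdivision $H'$ of $V_{10}$ with the same rim length but in which the path $P$ has been incorporated into the rim; Theorem~\ref{th:twoGreenCycles} prevents this from creating a new double jump involving the edges just moved, and one again checks $H'$ can be kept pretidy. Since each such operation reduces the number of $H$-avoiding paths that witness a violation (or again a global edge count), the procedure halts at a $\Pi$-tidy subdivision. Combining the three repair loops — spokes in $\Mob$ (inherited), then overlaps removed, then double jumps removed — and noting that the later repairs preserve the earlier properties, gives the desired $\Pi$-tidy $V_{10}\topol H'\subseteq G$, completing the proof. \hfill\eop
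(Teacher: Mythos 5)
Your overall strategy — start from a $\Pi$-pretidy $H$ and absorb each witness of non-tidiness into a better subdivision — is the same as the paper's, and your two repair moves (reroute the rim through the offending $\Disc$-path $P$, reroute a spoke through the offending overlapping edge) are exactly the moves the paper makes. But there are three concrete problems. First, your claim that condition (\ref{it:bridgeMob}) ``comes for free'' from pretidiness is false: a $Q$-local bridge embedded in $\Disc$ with all attachments in a single $H$-rim branch of $Q$ satisfies the pretidiness clause ($Q\cup B$ has no non-contractible cycle) while violating (\ref{it:bridgeMob}). The paper handles this by observing that such a bridge is precisely a special case of a violation of (\ref{it:Hjump}), so (\ref{it:bridgeMob}) and (\ref{it:Hjump}) must be treated together; you cannot dispose of (\ref{it:bridgeMob}) before the double-jump repair. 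Second, your proposed termination measure for the overlap repair, ``number of edges of $G$ outside $H$,'' moves in the \emph{wrong} direction: replacing the subpath $\oo{u,s,w}$ of a spoke (which has at least two edges, since some bridge attaches in its interior) by the single edge $uw$ strictly \emph{decreases} $|E(H)|$, hence increases the number of edges outside $H$. The measure that works is the total number of edges in $H$-spokes (which drops by at least one); similarly, for the double-jump repair the working quantity is $|E(G)\cap\Mob_H|$ (which the rim-rerouting strictly increases), not a global edge count.

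Third, and most importantly, the ``delicate point'' you flag — that a repair at one quad might reintroduce violations elsewhere, and that the three repair loops must be shown to preserve one another's conclusions — is exactly where the content of the proof lies, and you leave it unresolved. The paper avoids the iterative bookkeeping entirely by making a single lexicographic extremal choice up front: among all pretidy pairs $(H,\Pi)$, first maximize $\Loc(H)$, then maximize $G\cap\Mob_H$, then minimize the number of edges in $H$-spokes. Each would-be violation of tidiness then produces a new pretidy $H'$ that beats $H$ in this order ($\Loc(H)\subseteq\Loc(H')$ with $G\cap\Mob_{H'}\supsetneq G\cap\Mob_H$ for a (\ref{it:bridgeMob})/(\ref{it:Hjump}) violation; equal $\Loc$ and $\Mob$ but strictly fewer spoke edges for a (\ref{it:QnoOverlap}) violation), an immediate contradiction with no need to verify termination, order of repairs, or non-interference. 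Without this (or an equivalent monotone potential covering all three conditions simultaneously), your argument is not complete.
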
\printFullDetails{

The following concept is central to the proof.

}\begin{definition}\label{df:Loc}  Let $\hvfg$.  Then $\Loc(H)$\index{$\Loc(H)$} denotes the union of $H$ and all the local $H$-bridges in $G$.    \end{definition}\printFullDetails{

\begin{cproofof}{Theorem \ref{th:existTidy}} For any $\hvfg$, Lemma \ref{lm:existsPretidy} implies there is a representativity 2 embedding $\Pi$ of $G$ in $\pp$ so that $H$ is $\Pi$-pretidy.  Among all $H$ for which $\Loc(H)$ is maximal and all $\Pi$ so that $H$ is $\Pi$-pretidy, we consider the pairs $(H,\Pi)$ so that $G\cap \Mob_{\Pi(H)}$ is maximal.  Among all these pairs $(H,\Pi)$, we choose one for which the number of edges of $G$ in $H$-spokes in minimized. We claim that this $H$ is $\Pi$-tidy.  We note that (\ref{it:spokeMob}) is satisfied by the fact that $H$ is $\Pi$-pretidy.

If $H$ and $\Pi$ fail to satisfy either (\ref{it:bridgeMob}) or (\ref{it:Hjump}), then either there is an $H$-quad $Q$ so that some $Q$-local $H$-bridge $B$ is not embedded in $\Mob_H$, or there is an $H$-avoiding path $P$ contained in $\Disc_H$ and an index $i\in\{0,1,2,\dots,9\}$ so that $P$ has both ends in $\oo{r_i\,r_{i+1}\,r_{i+2}}$.    In the first case, as $Q\cup B$ has no non-contractible cycles, the only possibility is that $B$ has all its attachments in one of the $H$-rim branches of $Q$.  Thus, \dragominor{the first case is a special case of the second;  we now consider the second case.}

 Let $P'$ be the subpath of  $\oo{r_i\,r_{i+1}\,r_{i+2}}$ joining the ends $u$ and $w$ of $P$, with the labelling chosen so that $u$ is nearer to $v_i$ in $P'$ than $w$ is.    Note that the cycle $P\cup P'$ is an $H$-green cycle and, therefore, bounds a face of $G$.  

We construct a new subdivision $H'$ of $V_{10}$ in $G$.  The $H'$-rim is obtained from the $H$-rim by replacing $P'$ with $P$.  The spokes $s_i$, $s_{i+3}$, and $s_{i+4}$ of $H'$ are also spokes of $H'$.  The $H$-spokes $s_{i+1}$ and $s_{i+2}$ might need extension, using the subpaths of $r_i\,r_{i+1}\,r_{i+2}$ joining $u$ and/or $w$ to either $v_{i+1}$ or $v_{i+2}$ as necessary, to become spokes of $H'$.   Evidently all $H'$-spokes are contained in $\Mob_{H'}$, so $H'\subseteq G\cap \Mob_{H'}\subseteq \Loc(H')$.  Furthermore, if $F$ is the (closed) face of $G$ bounded by $P\cup P'$, then $\Mob_{H'}=\Mob_H\cup F$.

\begin{claim}\label{cl:growLoc}  $\Loc(H)\subseteq \Loc(H')$. \end{claim}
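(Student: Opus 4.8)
\textbf{Proof proposal for Claim \ref{cl:growLoc}.}

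The plan is to show that every local $H$-bridge $B$ is a local $H'$-bridge, which gives $\Loc(H)\subseteq\Loc(H')$ since $H\subseteq\Mob_{H'}\subseteq\Loc(H')$ already. The only subtlety is that the $H'$-rim differs from the $H$-rim only on the segment $r_i\,r_{i+1}\,r_{i+2}$, which is replaced by the path $P$ running through the face $F$ bounded by the $H$-green cycle $P\cup P'$; the spokes change only by possible extension along $P'$. So the symmetric difference of $H$ and $H'$ lives entirely inside $\cl(F)\cup P'\cup(r_i\,r_{i+1}\,r_{i+2})$, i.e., inside the closed region of $\Disc_H$ that is either $F$ or the part of $R$ being rerouted.

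First I would fix an arbitrary local $H$-bridge $B$, say a $Q_k$-local $H$-bridge, so $\att(B)\subseteq Q_k$. Since $H$ is $\Pi$-pretidy, $Q_k\cup B$ has no non-contractible cycle, hence $B$ is embedded in $\Mob_H$ (a non-contractible cycle would otherwise arise, or $B$ would be $M_{Q_k}$, contradiction); thus $B\cap \cl(F)$ is at most a subpath of $P\cup P'$ on the boundary, and in fact $B$ meets the rerouted region only in vertices of $P'\subseteq R$. The key point is then to check that $\att(B)$, a subset of $Q_k$, is contained in $Q_k'$ (the corresponding $H'$-quad or, more precisely, in $H'$) after the reroute. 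If $Q_k$ is disjoint from $r_i\,r_{i+1}\,r_{i+2}$ this is immediate because those branches of $H$ are unchanged in $H'$. If $Q_k$ contains some of $r_i$, $r_{i+1}$, $r_{i+2}$, I would use that any attachment of $B$ in $\oo{r_i\,r_{i+1}\,r_{i+2}}$ must lie on $P'$ or outside the ends $u,w$: an attachment strictly between $u$ and $w$ on $P'$ together with $P$ would create, via an $H$-avoiding path in $B$, a configuration contradicting tidiness-style arguments (or it would make $B$ overlap the green cycle $P\cup P'$, which bounds a face and hence has no bridge in its interior). In all cases the attachments of $B$ end up on $H'$-branches: those in $r_i\,r_{i+1}\,r_{i+2}$ that survive are on the extended $H'$-spokes $s_{i+1}$ or $s_{i+2}$ or on $s_i,s_{i+3},s_{i+4}$, and those on $P'$ become interior vertices of the $H'$-rim branch replacing $P'$ only if they lie between $u$ and $w$ — which we have excluded — so they remain endpoints ($v_{i+1},v_{i+2}$) or lie on the surviving pieces of $r_i,r_{i+2}$. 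Hence $B$ is a local $H'$-bridge.

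I would also need the reverse containment is \emph{not} claimed here, only $\Loc(H)\subseteq\Loc(H')$, so the argument stops once every local $H$-bridge is recognized as a local (indeed $\Mob_{H'}$-) $H'$-bridge. The main obstacle I anticipate is the bookkeeping around an $H$-bridge $B$ whose attachments straddle the rerouted interval: one must rule out that rerouting $P'$ to $P$ turns $B$ into a bridge straddling a single $H'$-rim branch in a way that makes it global or overlap $M_{Q_k'}$. This is handled precisely by Theorem \ref{th:twoGreenCycles} (no edge of $R$ lies in two $H$-green cycles, so $B$ cannot supply a second green cycle through an edge of $P'$) together with the fact from Lemma \ref{lm:greenCycles} (\ref{it:CboundsFace}) that $P\cup P'$ bounds a face, so nothing nontrivial is embedded in $F$; these force all attachments of $B$ into the unchanged part of $Q_k$ or onto the extended spokes, giving the claim.
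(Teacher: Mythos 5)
There is a genuine gap, and it sits at the very first step of your argument. You assert that, because $H$ is $\Pi$-pretidy, a $Q_k$-local $H$-bridge $B$ must be embedded in $\Mob_H$, "since a non-contractible cycle would otherwise arise." That is false: pretidiness (Definition \ref{df:pre-tidy}) only demands that $Q_k\cup B$ contain no non-contractible cycle, and a bridge embedded in $\Disc_H$ with all attachments in a single $H$-rim branch satisfies this automatically (any cycle through $B$ is homotopic rel endpoints, inside the closed disc $\Disc_H$, to a subpath of the rim, hence contractible). Indeed, "every local $H$-bridge is contained in $\Mob$" is condition (\ref{it:bridgeMob}) of \emph{tidiness}, i.e.\ precisely the property that this portion of the proof of Theorem \ref{th:existTidy} is in the middle of establishing; assuming it here is circular. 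The case you discard — local $H$-bridges embedded in $\Disc_H$ — is exactly the case that requires work, and your subsequent bookkeeping about attachments on $P'$ never touches it.

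For comparison, the paper's proof splits on whether an edge $e$ of $\Loc(H)$ lies in $\Mob_{H'}$. If it does, it is in $\Loc(H')$ at once, because every $H'$-bridge contained in $\Mob_{H'}$ sits inside the region bounded by an $H'$-quad and is therefore local; this single observation absorbs all of your discussion of Möbius-side bridges, including attachments landing on the rerouted segment. If $e\notin\Mob_{H'}$, then the local $H$-bridge $B$ containing $e$ lies in $\Disc_H$, so (by pretidiness) $\att(B)$ is contained in one $H$-rim branch; Corollary \ref{co:attsMissBranch} then gives $|\att(B)|=2$, so $B$ is a single edge, which is disjoint from $P$ and hence persists as a local $H'$-bridge. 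Your proposal would need to be repaired by restoring and handling this $\Disc_H$ case rather than arguing it away.
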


\begin{proof}  Let $e$ be an edge of $\Loc(H)$.    If $e\in \Mob_{H'}$, then $e\in \Loc(H')$, so we may assume $e\notin\Mob_{H'}$.  Let $B$ be the local $H$-bridge containing $e$.  Since $e\notin \Mob_{H'}$ and $\Mob_H\subseteq \Mob_{H'}$, we deduce that $B\subseteq \Disc_H$, and so all attachments of $B$ are in some $H$-rim branch (recall $H$ is $\Pi$-pretidy).  Thus, Corollary \ref{co:attsMissBranch} implies $B$ has precisely two attachments and therefore is just the edge $e$.  Consequently, $B$ is disjoint from $P$ (it is not in $\Mob_{H'}$), and so $B$ is an $H'$-bridge, whence $e\in \Loc(H')$.  \end{proof}

If $P$ is not contained in a local $H$-bridge, then, since $P\subseteq \Loc(H')$, we contradict maximality of $\Loc(H)$.  Therefore, $P$ is contained in, and therefore is, a local $H$-bridge $B$.  But this implies that $H'$ is $\Pi$-pretidy and that $G$ has one more edge in $\Mob_{H'}$ than it has in $\Mob_H$, contradicting the maximality of $G\cap \Mob_H$.  Therefore, (\ref{it:bridgeMob}) and (\ref{it:Hjump}) hold for $(H,\Pi)$.  

It follows that, if $H$ is not $\Pi$-tidy, \dragominor{then (\ref{it:QnoOverlap}) is violated:  there exists an $H$-quad $Q$ and two $Q$-bridges $B$ and $B'$ in $\comp{(M_Q)}$ that overlap}.  As both $B$ and $B'$ are contained in $\Mob$, one, say $B$, is $Q$-interior in $\Pi$, while $B'$ is $Q$-exterior.  This implies that $\att(B')\subseteq s$, for some $H$-spoke $s\subseteq Q$.  Corollary \ref{co:attsMissBranch} implies that $B'$ is just an edge $uw$.  We note that $B$ has an attachment $x$ in $\oo{u,s,w}$ and an attachment $y$ not in $\cc{u,s,w}$.  

Let $H''$ be the subdivision of $V_{10}$ obtained from $H$ by replacing $s$ with $(s-\oo{u,s,w})\cup B'$.  We note that $H''$ is $\Pi$-pretidy, $\Loc(H')=\Loc(H)$, and $\Mob_{H''}=\Mob_H$, so $G\cap \Mob_{H''}$ is maximal.  However, the $H''$-spokes have in total at least one fewer edge than the $H$-spokes, contradicting the choice of $H$.
\end{cproofof}

We now turn our attention to the global $H$-bridges of a tidy $H$.  

}\begin{theorem}\label{th:globalBridges}  Let $G\in\m2$ and $\hvfg$.  If $H$ is tidy, then any global $H$-bridge is just an edge, and, in particular, has one of the forms $v_iv_{i+2}$, $v_iv_{i+3}$, or has $v_i$ as one end and the other end is in $\oo{r_{i-3}}\cup \oo{r_{i+2}}$.  \end{theorem}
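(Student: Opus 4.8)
Fix a $\Pi$-tidy subdivision $H\cong V_{10}$ of $G$, embedded in $\pp$ with $H\subseteq\Mob$, and let $B$ be a global $H$-bridge. Since $B$ is global, $\att(B)$ is not contained in any single $H$-quad $Q_i$; in particular $\att(B)$ is not contained in any open $H$-claw and not in any closed $H$-branch, so by Corollary \ref{co:closeAtts}(2) (combined with the observation that an $\{x,y,z\}$-close configuration is forbidden) we may extract strong restrictions on $\att(B)$. The plan is: (i) locate $B$ in the embedding $\Pi$, (ii) show $|\att(B)|=2$, and (iii) show the two attachments are at the claimed ``short-chord'' distance along $R$.

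First I would locate $B$ in $\pp$. Every local $H$-bridge lies in $\Mob$ by tidiness (Definition \ref{df:tidy}(\ref{it:bridgeMob})), and the faces of $\Pi[H]$ inside $\Mob$ are bounded by the five $H$-quads $Q_0,\dots,Q_4$; a bridge inside $\Mob$ is therefore contained in some face bounded by $Q_i$, hence is $Q_i$-local, hence not global. So $\Pi[B]\subseteq\Disc$. The faces of $\Pi[H]$ inside $\Disc$: since all five spokes are in $\Mob$, $\Disc$ has a single face bounded by the rim $R$ itself. Thus $B$ is embedded in the disc bounded by $R$, and $\att(B)\subseteq R$. Now apply tidiness part (\ref{it:Hjump}): no $H$-avoiding path in $\Disc$ has both ends in $\oo{v_i,r_i,v_{i+1},r_{i+1},v_{i+2},r_{i+2},v_{i+3}}$ for any $i$; equivalently, any two attachments of $B$ must be ``far apart'' on $R$ — they cannot both lie within a window spanning three consecutive rim branches. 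This already forces: between any two attachments of $B$ there is at least one $H$-node in the ``long'' direction is not the statement; rather, any two attachments span at least three rim branches going one way, so they are separated by a span of $\ge 3$ branches. Combined with $|R|=10$ rim branches, a set of three or more attachments pairwise separated by $\ge 3$ branches is impossible ($3\cdot3=9<10$ is close but the cyclic packing of three arcs each of length $\ge 3$ fails — actually three arcs of length $3$ sum to $9\le 10$, so I must be more careful here). This is the step I expect to be the main obstacle.

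To push past the counting subtlety I would argue as follows. Suppose $|\att(B)|\ge 3$; pick three attachments $x,y,z$ in cyclic order on $R$. Since $B$ is a $C$-bridge for $C=R$ and $R$ bounds a disc in $\Pi$, $B$ together with an $R$-avoiding $\{x,y,z\}$-claw $Y$ in $B$ (exists by Lemma \ref{lm:threeAtts}) lies in that disc; the three residual arcs of $\{x,y,z\}$ are subpaths of $R$. Using tidiness (\ref{it:Hjump}) on each consecutive pair, each of the three arcs between consecutive attachments contains strictly more than the window permits, which in the end (after checking the few boundary cases where an arc has exactly the critical length, using that then the third attachment would itself create a forbidden configuration or a box via Lemma \ref{lm:noBox}) contradicts $|R|=10$. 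I would also invoke Corollary \ref{co:closeAtts}(2) in the cases where the three attachments happen to lie in an $H$-close configuration. Hence $|\att(B)|=2$, and by Lemma \ref{lm:threeAtts}(2), $B\cong K_2$, i.e.\ $B$ is a single edge $e=xy$.

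Finally I would pin down the endpoints. The edge $e$ lies in the disc bounded by $R$, joining $x$ and $y$; by tidiness (\ref{it:Hjump}) the two arcs of $R$ between $x$ and $y$ cannot be ``short'' in the forbidden sense. Writing one endpoint as lying in a rim branch $r_{i-1}$ or at a node $v_i$, the constraint that $\{x,y\}$ not lie within three consecutive rim branches forces, up to the symmetry $i\mapsto -i$ of $V_{10}$, exactly the listed possibilities: $e=v_iv_{i+2}$ (endpoints two nodes apart), $e=v_iv_{i+3}$ (three apart), or $e$ has one end $v_i$ and the other end in the interior $\oo{r_{i-3}}$ or $\oo{r_{i+2}}$ of a rim branch at distance just over the threshold. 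Cases where $x$ and $y$ are both interior to rim branches, or are further apart than $r_{i+3}$, get ruled out by a parity/symmetry check together with the fact that a longer chord would either be non-global after a different choice of $H$ (contradicting tidiness maximality as in the proof of Theorem \ref{th:existTidy}) or again produce a forbidden short $H$-avoiding path on the complementary side. Assembling these cases completes the proof. I will lay out the tidiness-(\ref{it:Hjump}) window bookkeeping carefully, as that is where the real work sits; the rest is routine application of Lemmas \ref{lm:threeAtts} and \ref{lm:noBox} and Corollary \ref{co:closeAtts}.
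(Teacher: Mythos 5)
There is a genuine gap, and it sits exactly where you flagged the ``counting subtlety.'' Your plan is to deduce $|\att(B)|=2$ from the tidiness window condition (Definition \ref{df:tidy}~(\ref{it:Hjump})) plus Corollary \ref{co:closeAtts} and Lemma \ref{lm:noBox}, but the failure of the cyclic packing bound is not a bookkeeping artifact: there are three-attachment configurations that satisfy every pairwise window constraint, are not $H$-close, and do not produce a box. The paper's proof identifies them explicitly in Claim \ref{cl:no3rimPath}: if $\att(B)$ is not contained in some $r_i\,r_{i+1}\,r_{i+2}$, then (after a long elimination) $\att(B)$ must be $\{v_0,v_5,z\}$ with $z\in\oo{r_2}\cup\oo{r_7}$, or $\{v_4,v_9,z\}$ with $z\in\oo{r_1}\cup\oo{r_6}$. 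You can check that, say, $\{v_0,v_5,z\}$ with $z\in\oo{r_2}$ violates none of the tools you cite. Ruling it out requires building a new subdivision $H'$ of $V_{10}$ in which the $v_0v_5$-path of $B$ becomes an exposed spoke with an extra attachment at $z$, and then invoking Theorem \ref{th:expSpokeNoAtt} — a deep result whose proof itself rests on the hyperquad BOD machinery. Nothing in your proposal supplies a substitute for this step.

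Moreover, even reaching the short list of exceptional attachment sets is not a local argument: the paper's Subclaims \ref{sc:x9orY4}--\ref{sc:r3not} repeatedly delete spoke edges, use Theorem \ref{th:BODquads} and Lemma \ref{lm:BODcrossed} to force $\bQ_i$ to be crossed in the resulting $1$-drawings, and then use Lemma \ref{lm:technicalV8colour}, green-cycle cleanliness (Lemma \ref{lm:greenCycles}~(\ref{it:notCrossed})), and Theorem \ref{th:twoGreenCycles} to derive contradictions. Your endpoint analysis in the case $\att(B)\subseteq r_i\,r_{i+1}\,r_{i+2}$ (essentially the paper's Claim \ref{cl:3rimPath}) is fine and genuinely does follow from tidiness alone, but the reduction to that case is the real content of the theorem and is missing. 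As a minor point, your appeal to ``tidiness maximality'' is also off: tidiness is a property of the pair $(H,\Pi)$, not a maximality condition you can contradict; maximality appears only in the existence proof (Theorem \ref{th:existTidy}).
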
\printFullDetails{

\begin{cproof} \wordingrem{(Useless text removed.)}Let $\Pi$ be a representativity 2 embedding of $G$ for which $H$ is $\Pi$-tidy.  In particular, all $H$-spokes and all local $H$-bridges are in $\Mob$, and, for each $i=0,1,2\dots,9$, no global $H$-bridge has two attachments in $\oo{r_i\,r_{i+1}\,r_{i+2}}$.  

Let $B$ be a global $H$-bridge.  We note that $B\subseteq \Disc$.

\begin{claim}\label{cl:3rimPath} If there is an $i$ so that $\att(B)\subseteq r_i\,r_{i+1}\,r_{i+2}$, then either  $B=v_iv_{i+2}$ or $B=v_{i+1}v_{i+3}$ or $B=v_iv_{i+3}$ or $B$ has $v_i$ as one end and the other end is in $ \oo{r_{i+2}}$ or $B$ has $v_{i+3}$ as one end and the other end is in $\oo{r_{i}}$. \end{claim}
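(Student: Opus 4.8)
\textbf{Proof proposal for Claim~\ref{cl:3rimPath}.}

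The plan is to exploit the fact that $B\subseteq\Disc$ together with tidiness condition (\ref{it:Hjump}) of Definition~\ref{df:tidy}, which forbids an $H$-avoiding path in $\Disc$ with both ends in the interior of $r_i\,r_{i+1}\,r_{i+2}$. First I would observe that, since $B$ is global and $\att(B)\subseteq r_i\,r_{i+1}\,r_{i+2}$, the index $i$ is essentially forced: if $B$ were local we would be done by a different argument, so $B$ must have attachments that prevent it being a $Q_j$-local bridge, which (given that all attachments lie on three consecutive rim branches) means $B$ must attach at both $v_i$ and $v_{i+3}$, or reach into the interiors of the extreme branches $r_i$ and $r_{i+2}$ in a way that straddles a spoke foot. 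The key structural input is Lemma~\ref{lm:threeAtts}: if $|\att(B)|=2$ then $B\cong K_2$ (so $B$ is a single edge), and if $|\att(B)|=3$ then $B\cong K_{1,3}$.

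Next I would rule out $|\att(B)|\ge 3$. Suppose $|\att(B)|=3$; by Lemma~\ref{lm:threeAtts}, $B$ is a claw with some centre $c$ and talons $x_1,x_2,x_3\in r_i\,r_{i+1}\,r_{i+2}$. At least two of the talons, say $x_1,x_2$, lie in the interior of $r_i\,r_{i+1}\,r_{i+2}$ unless they are among $\{v_i,v_{i+1},v_{i+2},v_{i+3}\}$; and in any case the two legs of the claw from $c$ to $x_1$ and from $c$ to $x_2$ concatenate to an $H$-avoiding path $P$ in $B\subseteq\Disc$ with both ends in $\oo{v_i,r_i,v_{i+1},r_{i+1},v_{i+2},r_{i+2},v_{i+3}}$, provided $x_1,x_2$ are not both $H$-nodes at the extreme ends $v_i,v_{i+3}$. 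When $x_1=v_i$ and $x_2=v_{i+3}$, the third talon $x_3$ lies strictly inside, and then the leg to $x_1$ concatenated with the leg to $x_3$ (together, if necessary, with the subpath of $r_i\,r_{i+1}\,r_{i+2}$ from $v_i$ to just past the relevant point) still produces such a forbidden $H$-avoiding path in $\Disc$, contradicting tidiness (\ref{it:Hjump}). With $|\att(B)|\ge 4$, apply Lemma~\ref{lm:threeAtts}(1) to see $B$ still has $\ge 2$ attachments and pick two whose $B$-internal path violates (\ref{it:Hjump}) as before; alternatively note that having $\ge 4$ attachments on three branches forces two of them into a single branch's interior, immediately giving the forbidden path. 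Hence $|\att(B)|=2$ and $B\cong K_2$, i.e. $B$ is a single edge $xy$.

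Finally I would enumerate the possibilities for the ends $x,y$ of this single edge. Both $x,y$ lie on $r_i\cup r_{i+1}\cup r_{i+2}$. If both lie in the interior of this path, tidiness (\ref{it:Hjump}) is again violated, so at least one end, say $x$, is one of the four $H$-nodes $v_i,v_{i+1},v_{i+2},v_{i+3}$. If $x=v_{i+1}$, then $y$ cannot be interior to $r_i\,r_{i+1}$ nor to $r_{i+1}\,r_{i+2}$ (that would give a short $H$-avoiding path inside a single pair of consecutive rim branches, which is forbidden by (\ref{it:Hjump}) and also would make $B$ a $Q$-local bridge forced into $\Mob$ by tidiness, contradicting $B\subseteq\Disc$ unless it is trivial on $R$), so $y$ must also be an $H$-node, giving $B=v_{i+1}v_{i+3}$ (the case $y=v_i$ being symmetric to this after relabelling, and $y=v_{i+1}$ impossible). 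If $x=v_i$, then either $y$ is an $H$-node, giving $B=v_iv_{i+2}$ or $B=v_iv_{i+3}$, or $y$ lies in the interior of one of $r_i,r_{i+1},r_{i+2}$; tidiness (\ref{it:Hjump}) together with $B$ not being local forces $y\in\oo{r_{i+2}}$ (an end in $\oo{r_i}$ or $\oo{r_{i+1}}$ would make $B$ local, hence in $\Mob$, contradicting $B\subseteq\Disc$). Symmetrically $x=v_{i+3}$ yields $B=v_{i+1}v_{i+3}$, $B=v_iv_{i+3}$, or $B$ has $v_{i+3}$ as one end with the other in $\oo{r_i}$. This exhausts the list in the claim. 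The main obstacle I anticipate is the careful bookkeeping in the second paragraph: showing that \emph{every} claw configuration, including the extremal one where two talons are the end $H$-nodes $v_i,v_{i+3}$, yields an $H$-avoiding path in $\Disc$ violating (\ref{it:Hjump}); this requires noting that the $\Disc$-side face structure and $3$-connectivity (via Lemma~\ref{lm:threeAtts}) leave no room for the claw centre to escape.
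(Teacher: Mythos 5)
Your overall strategy is the paper's: everything follows from tidiness condition (\ref{it:Hjump}) of Definition \ref{df:tidy}, applied to pairs of attachments of $B$ (which are joined by $H$-avoiding paths inside $\Disc$), together with Lemma \ref{lm:threeAtts} to conclude that $B$ is a single edge. The paper just runs it more economically: no two attachments can lie in $\oo{r_i\,r_{i+1}\,r_{i+2}}$, so at least one of $v_i,v_{i+3}$ is an attachment, say $v_i$; shifting the window to $r_{i-1}\,r_i\,r_{i+1}$ then forbids any attachment in $\oo{r_i\,r_{i+1}}$, and a further shift leaves at most one attachment in $r_{i+2}$, whence $|\att(B)|=2$ and the list follows.

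One step of yours does not work as written. In the claw case with talons $v_i$, $v_{i+3}$ and an interior $x_3$, you propose to extend the $H$-avoiding $v_ix_3$-path ``with the subpath of $r_i\,r_{i+1}\,r_{i+2}$ from $v_i$ to just past the relevant point''; a path containing rim edges is not $H$-avoiding, so it cannot be used to violate (\ref{it:Hjump}). The correct repair is the window shift just described: if $x_3\in\oo{r_i}\cup\{v_{i+1}\}\cup\oo{r_{i+1}}$, then $v_i$ and $x_3$ both lie in $\oo{r_{i-1}\,r_i\,r_{i+1}}$, while otherwise $v_{i+3}$ and $x_3$ both lie in $\oo{r_{i+1}\,r_{i+2}\,r_{i+3}}$; either way the claw violates tidiness. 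A similar remark applies in your final enumeration: when $x=v_i$ and $y\in\oo{r_{i+1}}$, the bridge is not in fact local (its attachments lie in no single quad), so ``local, hence in $\Mob$'' is not the right exclusion — but the same shifted window $\oo{r_{i-1}\,r_i\,r_{i+1}}$ rules it out. With these repairs your argument is sound and coincides with the paper's.
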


\begin{proof}  Because $H$ is tidy, no two attachments of $B$ are in $\oo{r_i\,r_{i+1}\,r_{i+2}}$.  Thus, at least one of $v_i$ and $v_{i+3}$ is an attachment of $B$\wording{; for the sake of definiteness, let it be} $v_i$.  Then tidiness implies no attachment of $B$ can be in $\oo{r_i\,r_{i+1}}$.  As tidiness also implies $r_{i+2}$ has at most one, and therefore exactly one, attachment of $B$, the result follows. \end{proof}

\begin{claim}\label{cl:no3rimPath}  If there is no $i$ so that $\att(B)\subseteq r_i\,r_{i+1}\,r_{i+2}$, then either $\att(B)=\{v_0,v_{5},z\}$, with $z\in \oo{r_{2}}\cup\oo{r_7}$ or $\att(B)=\{v_4,v_9,z\}$, with $z\in \oo{r_1}\cup\oo{r_6}$. \end{claim}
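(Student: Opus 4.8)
\textbf{Proof proposal for Claim \ref{cl:no3rimPath}.}

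The plan is to analyze the embedding $\Pi$ of $G$ in $\pp$ together with the standard labelling, using the fact that $B\subseteq \Disc$ (since $B$ is a global $H$-bridge and $H$ is $\Pi$-tidy, so every local $H$-bridge lies in $\Mob$ and hence a global bridge cannot lie in $\Mob$). Recall from Section \ref{sec:projPlane} that, with $H\subseteq \Mob$, the faces of $\Pi[H]$ in $\Disc$ are few: one face whose boundary is $R$ itself (the disc side) is subdivided by the $H$-quad faces $Q_0,\dots,Q_4$ that lie in $\Mob$; the only $H$-faces in $\Disc$ are the single face bounded by $R$ and the one face of $H\cup \gamma$ carrying the arc $\alpha$. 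Concretely, a global $H$-bridge embedded in $\Disc$ must lie in the face of $\Pi[H]$ bounded by the rim $R$, and its attachments therefore lie on $R$ in an order consistent with being drawn inside a disc whose boundary is $R$. First I would record that $B$, being global, has attachments not all in any single $r_i\,r_{i+1}\,r_{i+2}$ (that is the hypothesis of this claim), so by Lemma \ref{lm:threeAtts} $B$ is either a $K_2$ or a $K_{1,3}$; I expect to rule out $K_2$ quickly since two attachments of a $K_2$ always lie within some $r_i\,r_{i+1}\,r_{i+2}$ (indeed within two consecutive rim branches) --- unless they are diametrically placed, which is exactly where the $v_0v_5$/$v_4v_9$ and the three-attachment cases will come from.

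Next I would pin down the three attachments of $B=K_{1,3}$, say they are $x,y,z\in R$. Tidiness, part (\ref{it:Hjump}) of Definition \ref{df:tidy}, forbids two of $x,y,z$ from lying in $\oo{v_i,r_i,v_{i+1},r_{i+1},v_{i+2},r_{i+2},v_{i+3}}$ for any $i$; so among any three consecutive rim branches, $B$ has at most one attachment in their interior, and an $H$-node attachment can be shared. Reading the cyclic order of $x,y,z$ around $R$ and applying this "no two in three consecutive branches" constraint, the only possibilities are that two of the attachments are antipodal $H$-nodes --- $\{v_0,v_5\}$ or (by a rotation) $\{v_4,v_9\}$ (these are the only antipodal pairs compatible with the standard labelling and with $B$ being drawable in $\Disc$, since $\gamma$ sits in the $Q_4$-face and $a,b\in R$) --- while the third attachment $z$ is forced into the interior of the rim branch "two away", i.e. $z\in\oo{r_2}\cup\oo{r_7}$ in the first case and $z\in\oo{r_1}\cup\oo{r_6}$ in the second. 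I would verify that no other arrangement of three attachments around $R$ avoids putting two of them in some window of three consecutive branches while also admitting a planar drawing inside the disc bounded by $R$ that is compatible with $\Pi$; this is a short finite check using the picture of the two representativity-2 embeddings of $V_{10}$ in Figure \ref{fig:Rep2Vten} restricted to the case $H\subseteq\Mob$.

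The main obstacle I anticipate is making the "finite check" genuinely airtight: one must be careful that $x,y,z$ could in principle include one or both of $a$ and $b$ (which may be $H$-nodes or interior points of rim branches), and that $\Disc$ contains the arc $\alpha$ of $\gamma$, which splits the rim face of $\Pi[H]$ into two subfaces, so $B$ actually lives in one of those two subfaces and its attachments must all lie on the boundary of that single subface. So the real argument is: list the two $(H\cup\gamma)$-faces contained in $\Disc$, read off the boundary walk of each (these are exactly the boundary cycles enumerated in Section \ref{sec:projPlane} for the all-spokes-in-$\Mob$ case, items (1) and (5) there), and observe that the only triples of rim points on such a boundary that respect tidiness (\ref{it:Hjump}) are the stated ones. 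Once $\att(B)$ is determined, Corollary \ref{co:attsMissBranch} (or a direct argument as in Claim \ref{cl:v0v6}'s proof, deleting an edge of $B$ incident with $z$ and extending the resulting 1-drawing) shows $B$ has no fourth attachment and, since $B$ is a global bridge with exactly three attachments, Lemma \ref{lm:threeAtts} gives $B\cong K_{1,3}$ --- but then combining Claim \ref{cl:3rimPath} with this would actually force $B$ to be just an edge after all, contradicting the claim's hypothesis unless the attachments are antipodal; so in fact the conclusion is that the three-attachment global bridges are precisely those with antipodal-node plus interior-branch attachment pattern, which is what the claim asserts, and the statement of Theorem \ref{th:globalBridges} then follows by noting these too are forced to be single edges by a final deletion argument.
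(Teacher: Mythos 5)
There is a genuine gap: your central step --- that tidiness (Definition \ref{df:tidy}~(\ref{it:Hjump})) plus the planarity of the embedding inside $\Disc$ reduces the claim to a ``short finite check'' --- is not sufficient, because those constraints admit attachment patterns that the claim excludes. Concretely, take $B$ with an attachment $x\in\oo{r_9}$ (between $a$ and $v_0$) and an attachment $y\in\oo{r_2}$. Both lie on the boundary of the single $(H\cup\gamma)$-face of $\Disc$ incident with $v_0,\dots,v_4$, so $B$ is drawable there; no window $\oo{v_i,r_i,\dots,v_{i+3}}$ contains both $x$ and $y$ (the windows covering $\oo{r_9}$ stop at $r_1$), so tidiness is not violated; and no closed $3$-rim path $r_i\,r_{i+1}\,r_{i+2}$ contains both, so the claim's hypothesis holds. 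Yet $\att(B)=\{x,y\}$ is not of the form the claim asserts. The same goes for a $K_2$ bridge $v_0v_4$, which your parenthetical dismissal of the two-attachment case overlooks. These configurations are only killed by crossing-number arguments: the paper's proof spends Subclaims \ref{sc:x9orY4}--\ref{sc:r3not} deleting spoke edges, invoking Theorem \ref{th:BODquads} and Lemma \ref{lm:BODcrossed} to force $\bQ_j$ to be crossed in the resulting $1$-drawings, and then using Lemma \ref{lm:technicalV8colour} and Theorem \ref{th:twoGreenCycles} to derive contradictions. None of that is recoverable from the embedding picture alone.

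A secondary issue: your closing paragraph conflates the two cases of Theorem \ref{th:globalBridges}. Claim \ref{cl:3rimPath} applies precisely when there \emph{is} an $i$ with $\att(B)\subseteq r_i\,r_{i+1}\,r_{i+2}$, so it cannot be ``combined'' with the present claim to force $B$ to be an edge; and Lemma \ref{lm:threeAtts} gives $B\cong K_{1,3}$ only \emph{after} one knows $|\att(B)|=3$, which is part of what must be proved here (a priori $B$ could have four or more attachments spread so that no three-branch window contains two of them). The structure you want is: first bound where the attachments can sit via the drawing arguments, and only then read off $|\att(B)|$ and the isomorphism type.
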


\begin{proof}  We may assume that $B$ is embedded in the $(H\cup \gamma)$-face contained in $\Disc$ and incident with $v_0$, $v_1$, \dots, $v_4$. \dragominor{As $H$ is tidy and $B$ is $H$-global,} there exist $i,j\in\{9,0,1,2,3,4\}$ so that (taking $9$ to be equal to $-1$)\wordingrem{(comma removed)} $i<j$, $B$ has attachments $x$ in $r_i-v_{i+1}$ and $y$ in $r_j-v_j$, and $j-i\ge 3$;  choose such $i,j$ so that $j-i$ is as small as possible.   By tidiness, there is no other attachment of $B$ in $$\co{r_{i-1}\, r_i\, r_{i+1}}\cup \oc{r_{j-1}\, r_j\, r_{j+1}}\,.$$  

\startSubclaims

\begin{subclaim}\label{sc:x9orY4}  Either $i=-1$ or $j=4$.  \end{subclaim}

\begin{proof}  In the alternative, $i\ge 0$ and $j\le 3$.  As $j-i\ge 3$, we conclude that $i=0$ and $j=3$, so the six $H$-rim branches \dragominor{$r_{i-1}$, $r_i$, $r_{i+1}$, $r_{j-1}$, $r_j$, and $r_{j+1}$} are all distinct and cover the entire $ab$-subpath in the boundary of $(H\cup \gamma)$-face containing $B$, with the possible exception of $v_2$, in which case both $x=v_0$ and $y=v_4$. 

\wording{Let $e$ be an edge in $s_2$ and let $D$ be a 1-drawing of $G-e$.  Theorem \ref{th:BODquads} implies $\bQ_2$ has BOD; now   Lemma \ref{lm:BODcrossed} implies $\bQ_2$ is crossed in $D$. In particular, $r_0\,r_1\,r_2\,r_3$ crosses $r_5\,r_6\,r_7\,r_8$ in $D$.}

\wording{In the case $v_2$ is an attachment of $B$, let $P$ and $P'$ be $H$-avoiding $v_0v_2$- and $v_2v_4$-paths in $B$, respectively.  Then the cycles $r_0\,r_1\lbsp\cc{v_2,P,v_0}$  and $r_2\, r_3\lbsp\cc{v_4,P',v_2}$ are both $H$-green.    Lemma \ref{lm:technicalV8colour} (\ref{it:atMostTwo}) implies neither is crossed in $D$, yielding the contradiction that $r_0\,r_1\,r_2\,r_3$ is not crossed in $D$.}

\wording{Thus, $B$ is the edge $xy$}.  \wordingrem{(Now-redundant text removed.)}Note that $B$ is not a local $H$-bridge and, therefore, not both $v_0$ and $v_4$ are attachments of $B$.  As $B$ is not crossed in $D$, we deduce that the $xy$-subpath of $r_0\,r_1\,r_2\,r_3$ is also not crossed in $D$.  Therefore, either $r_0$ or $r_3$ is crossed in $D$.  From this, we conclude that, since $\bQ_2$ is crossed in $D$, $r_6\,r_7$ is crossed in $D$.    Moreover, either $s_1$ or $s_3$ is exposed in $D$.  By symmetry, we may assume $s_1$ is exposed in $D$.

 If $x\ne v_0$, then the cycle $r_1\,r_2\,r_3\,s_3\,r_8\,r_9\,s_0\,r_5\,s_1$ is clean in $D$ and separates $x\in\oo{r_1}$ from $y\in \oc{v_3,r_3,v_4}$, so $B$ must be crossed in $D$, a contradiction.   If $y\ne v_4$, then the cycle $r_1\,r_2\,s_3\,r_8\,s_4\,r_4\,r_5\,s_1$ is clean in $D$ and separates $x\in \co{v_0,r_1,v_1}$ from $y\in \oo{r_3}$, and again $B$ is crossed in $D$, a contradiction. \end{proof}

Recall that $-1$ is equal to $9$.  The following is immediate from tidiness.

\begin{subclaim}\label{sc:r0not}  \begin{enumerate}\item\label{it:x[0,1]} If $x\in\co{a,r_9,v_0}$, then there is no attachment in $\co{v_0,r_0,v_1}$.  
\item\label{it:y[3,4]} If $y\in \oc{v_4,r_4,b}$, then there is no attachment in $\oc{v_3,r_3,v_4}$.  \hfill
\eopf \end{enumerate}\end{subclaim}

The next two subclaims are rather less trivial.

\begin{subclaim}\label{sc:r2not}\begin{enumerate}\item\label{it:x[2,3]}  If $x\in\co{a,r_9,v_0}$, then there is no attachment in $\co{v_2,r_2,v_3}$.
\item\label{it:y[1,2]}  If $y\in \oc{v_4,r_4,b}$, then there is no attachment in $\oc{v_1,r_1,v_2}$. \end{enumerate} \end{subclaim}

\begin{proof} We prove (\ref{it:x[2,3]})\wording{;}\wordingrem{(text removed)} (\ref{it:y[1,2]}) is symmetric.  For (\ref{it:x[2,3]}), suppose there is an attachment $y'$ in $\co{v_2,r_2,v_3}$.  By tidiness, there is no attachment other than $y'$  in $\oo{r_0\,r_1\,r_2\,r_3}$, and so minimality of $j-i$ implies $y'=y$.

The only other possible attachment is in $\cc{v_4,r_4,b}$.  If there is an attachment $z$  in $\cc{v_4,r_4,b}$, then either $y=v_2$ or $z=b=v_5$.  Thus, either $z$ does not exist and $B$ is the edge  $xy$, or $z$ exists, $B$ has exactly three attachments, namely $x$, $y$, and $z$,  and Lemma \ref{lm:threeAtts} shows $B$ is a $K_{1,3}$.  Let $P$ and $P'$ be the $xy$- and $yz$-paths (the latter only if $z$ exists) in $B$. 

Suppose first that $y\ne v_2$.  Then $x=v_9$, as otherwise $\cc{y,P,x,r_9,v_0}\rbsp r_0\,r_1\lbsp\cc{v_2,r_2,y}$ is an $H$-green cycle with the three $H$-nodes $v_0,v_1,v_2$ in its interior, contradicting Lemma \ref{lm:greenCycles} (\ref{it:shortJump}). 

Theorem \ref{th:BODquads} (\ref{it:aIsV0}) \dragominor{does not apply}, as $x=v_9=a$ implies $v_0\ne a$. 
 If Theorem \ref{th:BODquads} (\ref{it:bIsV5}) \dragominor{applies}, then there is a second $H$-bridge $B'$ attaching at $b=v_5$ and in $r_0\,r_1$.  But \wording{then $B$ and $B'$ must cross in $\Pi$}, a contradiction.  Therefore, Theorem \ref{th:BODquads} (\ref{it:cornerInGamma}) shows $\bQ_1$ has BOD.  

Let $e$ be an edge of $s_1$ and let $D$ be a 1-drawing of $G-e$.  Lemma \ref{lm:BODcrossed} implies $\bQ_1$ is crossed in $D$.  On the other hand, the presence of $P$ and Lemma \ref{lm:technicalV8colour} (\ref{it:2halfJump1})  and (\ref{it:V8yellow}) imply $\bQ_1$ cannot be crossed in $D$, the desired contradiction.  

Therefore, $y=v_2$.  Since $x,y\in r_9\,r_0\,r_1$, the hypothesis of the 
claim implies $z$ must exist.
 The cycles \wordingrem{(text removed)}$\cc{x,P,v_2}\rbsp r_1\,r_0\lbsp \cc{v_0,r_9,x}$ and $\cc{z,P',v_2}\rbsp r_2\,r_3\lbsp\cc{v_4,r_4,z}$ are $H$-green.   Let $e$ be an edge in $s_2$ and let $D$ be a 1-drawing of $G-e$. Theorem \ref{th:BODquads} implies $\bQ_2$ has BOD, so Lemma \ref{lm:BODcrossed} implies $\bQ_2$ is crossed in $D$.  \major{However, Lemma 7.2 (\ref{it:atMostTwo})\dragominorrem{(text removed)} shows that $r_0$ and $r_3$ are not crossed.  If $x\ne v_9$, then the same result shows $r_1$ is not crossed and likewise if $z\ne v_5$, then $r_2$ is not crossed.  If, say, $x=v_9$, then Lemma \ref{lm:technicalV8colour} (\ref{it:2halfJump3/2}) implies $r_1$ can only cross $r_8$.  However, if $z\ne v_4$, then (\ref{it:V8yellow}) shows $r_8$ cannot be crossed.}  
 
\major{ In the remaining case, $x=v_9$ and $z=v_4$.  In this case, $a=x=v_9$.  If $\bQ_1$ does not have BOD, then Theorem \ref{th:BODquads} (\ref{it:cornerInGamma}) implies $b=v_5$ and there is a $\bQ_1$-bridge $B'$ different from $M_{\bQ_1}$, having attachments at $b$ and in $r_0\,r_1$, and embedded in $\disc$.  But then $B'$ is an $H$-bridge different from $B$ that overlaps $B$ on $R$, while both are embedded in $\disc$, a contradiction.
} 
 \end{proof}

\begin{subclaim}\label{sc:r3not} \begin{enumerate}\item\label{it:x[3,4]}  If $x\in\co{a,r_9,v_0}$, then there is no attachment in $\co{v_3,r_3,v_4}$.
\item\label{it:y[0,1]}  If $y\in \oc{v_4,r_4,b}$, then there is no attachment in $\oc{v_0,r_0,v_1}$. \end{enumerate}  \end{subclaim}

\begin{proof} We prove (\ref{it:x[3,4]})\wording{;}\wordingrem{(text removed)} (\ref{it:y[0,1]}) is symmetric. For (\ref{it:x[3,4]}), suppose there is an attachment in $\co{v_3,r_3,v_4}$.   By minimality of $j-i$, Subclaim \ref{sc:r2not} and tidiness, this attachment is $y$.  Also by tidiness, there is no other attachment in $\oo{r_1\,r_2\,r_3\,r_4}$.  

Suppose there is also an attachment $z$ in $\co{v_1,r_1,v_2}$.  The preceding paragraph shows $z=v_1$.  Tidiness now implies that $x$  \dragominor{is} $v_9$ and, \dragominor{since $a\in r_9$ and $x\in \co{a,r_9,v_0}$,  $a=v_9$.}  Let $P$ and $P'$ be $H$-avoiding $xz$- and $yz$-paths in $B$, respectively.    

\dragominorrem{(Text removed.)}Theorem \ref{th:BODquads} (\ref{it:cornerInGamma}) implies $\bQ_1$ has BOD.  If $D_1$ is any 1-drawing of $G-\oo{s_1}$, then Lemma \ref{lm:BODcrossed} implies $\bQ_1$ is crossed in $D_1$.  But Lemma \ref{lm:technicalV8colour} implies (recall $z=v_1$) the two $H$-green cycles $\cc{z,P,x,r_9,v_0,r_0,z}$ and $\cc{y,P',z,r_1,v_2,r_2,v_3,r_3,y}$ are not crossed in $D_1$.  Thus, $r_9\,r_0\,r_1\,r_2$ is not crossed in $D_1$ (since $x=v_9$), so $\bQ_1$ is not crossed in $D_1$, a contradiction.

Therefore, there is no attachment in $\co{v_1,r_1,v_2}$.  Thus, we may assume that the only attachments in $\cc{a,r_9,v_0}\rbsp r_0\,r_1\,r_2\,r_3$ are $x\in \co{a,r_9,v_0}$ and $y\in \co{v_3,r_3,v_4}$.  Tidiness further shows there is no attachment in $\co{v_4,r_4,v_5}$, so the only other possible attachment of $B$ is $v_5$, in which case $y=v_3$.  

In each of the two cases $x\ne v_9$ and $x=v_9$, we show that $\bQ_4$ has NBOD by showing that $B$, $M_{\bQ_4}$, and the $\bQ_4$-bridge $B_4$ containing $s_4$ are mutually overlapping.  We remark that $B$ and $B_4$ are in different faces of $\Pi[H]$, so $B\ne B_4$.  Obviously, $B_4$ is skew to $M_{\bQ_4}$.

\medskip{\bf Case 1.}  $x\ne v_9$.

\medskip The attachments $x$ and $y$ of $B$ are skew to $v_4$ and $v_9$, so $B$ and $B_4$ overlap.  Also, $x$ and $y$ are skew to $v_8$ and $v_0$, so $B$ and $M_{\bQ_4}$ overlap, as required.    

\medskip{\bf Case 2.}  $x=v_9$.

\medskip As $x,y\in Q_3$ and $B$ is not $Q_3$-local, there is another attachment $z$ of $B$.  Our earlier remarks imply $z=v_5$ and $y=v_3$.  Now $y$ and $z$ show $B$ and $B_4$ are skew, while $x$ and $y$ show $B$ and $M_{\bQ_4}$ are skew.

\medskip We now resume our general discussion. Let $P_{xy}$ be \wording{the}  $xy$-path in $B$.  Since $x\in\co{a,r_9,v_0}$, $v_0\ne a$.  Suppose some $\bQ_1$-bridge $B'$ has an attachment at $b=v_5$ and an attachment in $r_0\,r_1$.  Since $B$ is not a $\bQ_1$-bridge and both $B$ and $B'$ are $H$-bridges, $B\ne B'$.  Then $P_{xy}$ and a $v_5\cc{r_0\,r_1}$-path in $B'$ would cross in $\Pi$, which is impossible.  Therefore, Theorem \ref{th:BODquads} shows $\bQ_1$ has BOD.  

Let $D_1$ be a 1-drawing of $G-\oo{s_1}$.  Because $\bQ_4$ has NBOD, Lemma \ref{lm:cleanBOD} implies $D_1[\bQ_4]$ is not clean in $D_1$.  Since $\bQ_1$ has BOD and $s_1$ is contained in a planar $\bQ_1$-bridge, Lemma \ref{lm:BODcrossed} implies $\bQ_1$ is crossed in $D_1$.  Therefore, $s_0$ is exposed in $D_1$.  Thus $D_1[H-\oo{s_1}]$ is one of two possible 1-drawings, depending on whether $r_9$ crosses $r_5\,r_6$ or $r_4$ crosses $r_0\,r_1$.  

If $x\ne v_9$, then $P_{xy}$ cannot be added to $D_1[H-\oo{s_1}]$ without introducing a second crossing, which is impossible.  If $x=v_9$, then the three attachments of $B$ are not all on the same face of $D_1[H-\oo{s_1}]$, so $B$ cannot be added to $D_1[H-\oo{s_1}]$ without introducing a second crossing, the final contradiction. \end{proof}

We can now complete the proof of Claim \ref{cl:no3rimPath}.  Subclaim \ref{sc:x9orY4} implies either $x\in \co{a,r_9,v_0}$ or $y\in \oc{v_4,r_4,b}$.  By symmetry, we may assume the former.  Subclaims \ref{sc:r2not} and \ref{sc:r3not} imply $y\in \cc{v_4,r_4,b}$.  If $y\ne v_4$, then Subclaims \ref{sc:r0not}, \ref{sc:r2not} and \ref{sc:r3not} (all six statements) show that there is no other attachment of $B$.  But then $B$ is $Q_4$-local, a contradiction.  Therefore, $y=v_4$, and, furthermore, there is an attachment $z$ of $B$ in $\co{v_1,r_1,v_2}$.    

If $x\ne v_9$, then both $x$ and $z$ are in $\oo{r_9\,r_0\,r_1}$, contradicting tidiness.  Thus, $x=v_9$.  

The claim will be proved once we know $z\ne v_1$.  By way of contradiction, suppose $z=v_1$.  Consider any 1-drawing $D_2$  of $G-\oo{s_2}$.  By Theorem \ref{th:BODquads},  $\bQ_2$ has BOD.  Thus, Lemma \ref{lm:BODcrossed} implies $\bQ_2$ is crossed in $D_2$.  That is, $r_0\,r_1\,r_2\,r_3$ crosses $r_5\,r_6\,r_7\,r_8$ in $D_2$.  In particular, neither $s_0$ nor $s_4$ is exposed in $D_2$.  

Since $B$ is global and has attachments at $v_4$ and $v_9$, it must be that $D_2[B]$ is in the face of $D_2[R\cup s_0\cup s_4]$ incident with $s_4$ and the crossing.   Since $v_1$ is an attachment of $B$,  $v_1$ must be in the subpath of $r_0\,r_1\,r_2\,r_3$ between the crossing and $v_4$.  But then $s_3$ is not exposed in $D_2$, implying $B$ must cross $s_3$ in $D_2$, a \dragominor{contradiction  that shows} $v_1$ is not an attachment of \dragominor{$B$,} completing the proof of the claim.
\end{proof} 

To complete the proof of the theorem,  by way of contradiction assume there is no $i$ so that $\att(B)\subseteq r_i\,r_{i+1}\,r_{i+2}$.   Claim \ref{cl:no3rimPath} shows either $\att(B)=\{v_0,v_5,z\}$, with $z\in \oo{r_2}\cup\oo{r_7}$ or $\att(B)=\{v_4,v_9,z\}$, with $z\in \oo{r_1}\cup\oo{r_6}$.  These are all the same up to the labelling of $H$, $a$, and $b$, so we may assume $\att(B)=\{v_0,v_5,z\}$, with $z\in \oo{r_2}$.  Let $H'$ be the subdivision of $V_{10}$ consisting of $H-\oo{s_0}$, together with the $v_0v_5$-path in $B$.  

In order to apply Theorem \ref{th:expSpokeNoAtt}, we show that $\Pi$ is $H'$-friendly.  If $\Pi$ is not $H'$-friendly, then Lemma \ref{lm:greenCyclesFriendly} (\ref{it:nearlyFriendly}) implies (since $H$ and $H'$ have the same nodes) $v_6v_9$ is an edge and $\Pi[v_6v_9]$ is contained in $\Mob_{H'}$, which is the same as $\Mob_{H}$.  But $v_6$ and $v_9$ are not incident with the same $H$-face in $\Mob_{H}$ and, therefore, this is impossible.  Thus, $\Pi$ is $H'$-friendly.  However, $H'$ violates Theorem \ref{th:expSpokeNoAtt}, a contradiction.  

Therefore, there is an $i$ so that $\att(B)\subseteq r_i\,r_{i+1}\,r_{i+2}$.  Claim  \ref{cl:3rimPath} implies $B$ has one of the three desired forms.\end{cproof}

We can go somewhat further in our analysis of the global $H$-bridges of a tidy $\hvfg$.  

}\begin{definition}\label{df:span}  Let $G\in \m2$ and  $\hvfg$, with $H$ tidy.  Let $B$ be a global $H$-bridge with attachments $x$ and $y$.  
\begin{enumerate}
\item The {\em span of $B$\/}\index{span} is the $xy$-subpath $R$ with the fewest $H$-nodes.
\item An edge or subpath of $R$ is {\em spanned by $B$\/}\index{spanned by} if it is in the span of $B$.  
\item \wording{\dragominor{$B$ is}: a {\em 2-jump\/}\index{2-jump}\index{jump} if, for some $i$, its attachments are $v_i$ and $v_{i+2}$; a {\em 3-jump\/}\index{3-jump} if, for some $i$, its attachments are $v_i$ and $v_{i+3}$; or else is a {\em 2.5-jump\/}\index{2.5-jump}}.
\end{enumerate}
\end{definition}\printFullDetails{

\wording{We remark that Theorem \ref{th:globalBridges} implies that, in the case of a 2.5-jump, there is an $i$ so that $v_i$ is one attachment and the other attachment is in $\oo{r_{i-3}}\cup \oo{r_{i+2}}$.}
Theorem \ref{th:globalBridges} \wording{further} implies a global $H$-bridge has precisely two attachments and its span has at most four $H$-nodes.  \minor{It follows from Definition \ref{df:green} that every global $H$-bridge combines with its span to form an $H$-green cycle.}

}\begin{lemma}\label{lm:tidyBODhyper}  Let $G\in\m2$ and $\hvfg$, with $H$ tidy.  For each $i\in\{0,1,2,3,4\}$, either $\bQ_i$ has BOD or one of $v_{i-1}v_{i-4}$ and $v_{i+1}v_{i+4}$ is a global $H$-bridge.  \end{lemma}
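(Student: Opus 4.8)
The goal is to show that, for a tidy $\hvfg$, failure of $\bQ_i$ to have BOD forces one of the two ``long'' global bridges $v_{i-1}v_{i-4}$ and $v_{i+1}v_{i+4}$ (which is a 3-jump by Theorem~\ref{th:globalBridges}) to be present. The natural route is to combine Theorem~\ref{th:BODquads}(\ref{it:noExposedBQ1bQ3})--(\ref{it:cornerInGamma}) with tidiness. By Theorem~\ref{th:existTidy} (used implicitly since $H$ is tidy) we may fix a representativity~2 embedding $\Pi$ with $H\subseteq\Mob$; in particular there is no exposed spoke, so we are in Case~2 of the discussion surrounding Theorem~\ref{th:BODquads}. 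Since $\bQ_2$ always has BOD, while $\bQ_0,\bQ_4$ are governed by Corollary~\ref{co:QibarBOD} and $\bQ_1,\bQ_3$ by Theorem~\ref{th:BODquads}(\ref{it:noExposedBQ1bQ3}), I would first reduce to the cases $i\in\{1,3\}$ where $\bQ_i$ has NBOD and then push the remaining cases ($i\in\{0,4\}$) through Corollary~\ref{co:QibarBOD}.

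\textbf{Key steps.} First, suppose $i\in\{1,3\}$; by symmetry take $i=1$. If $\bQ_1$ has NBOD, then Theorem~\ref{th:BODquads}(\ref{it:cornerInGamma}) (with $\bQ_1$ in place of $\bQ_3$) produces a $\bQ_1$-bridge $B\ne M_{\bQ_1}$ with $B\subseteq\disc$ and, up to the symmetry interchanging $a\leftrightarrow b$ and the two $ab$-subpaths, $a=v_0$, $B$ has an attachment at $a=v_0$, an attachment in $r_5\,r_6$, and $\att(B)\subseteq\{v_0\}\cup r_5\,r_6$. But $B$ is then a global $H$-bridge: its attachments are not all contained in any single $r_j\,r_{j+1}\,r_{j+2}$ (one is at $v_0$ and the others are in $r_5\cup r_6$, which is disjoint from the three consecutive rim branches containing $v_0$). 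So by Theorem~\ref{th:globalBridges}, $B$ is a single edge and its attachments have one of the forms $v_jv_{j+2}$, $v_jv_{j+3}$, or $v_j$ together with a vertex in $\oo{r_{j-3}}\cup\oo{r_{j+2}}$. With one end at $v_0$ and the other in $r_5\cup r_6$, the only possibility compatible with Theorem~\ref{th:globalBridges} is that the other end is $v_6$ (giving $v_0v_6$, a $2.5$-jump? no: $v_0$ to $v_6$ is distance~$4$ on the rim) — more carefully, writing $v_0=v_j$, the allowed ``$v_j$ plus vertex in $\oo{r_{j-3}}\cup\oo{r_{j+2}}$'' form gives the other end in $\oo{r_7}\cup\oo{r_2}$, which is disjoint from $r_5\,r_6$; the $v_jv_{j+2}$ form gives $v_2$ or $v_8$; the $v_jv_{j+3}$ form gives $v_3$ or $v_7$. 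None of these lies in $\{v_0\}\cup r_5\,r_6$ unless the attachment in $r_5\,r_6$ is an \emph{endpoint}, i.e. $v_5$ or $v_6$. Matching against the jump forms, $v_0v_5$ is not of any allowed form while $v_0v_6$ \emph{is} the $3$-jump $v_jv_{j+3}$ with $j=3$ read backwards ($v_6=v_{0+6}=v_{3+3}$, i.e. $v_6v_3$... ) — I would instead observe directly: the $3$-jumps incident with $v_0$ are $v_0v_3$ and $v_0v_7$, and the hyperquad $\bQ_1$ is the cycle $(Q_0\cup Q_1)-\oo{s_1}$ through $v_9,v_0,v_1,v_2,v_3$ on top and $v_4,v_5,v_6,v_7,v_8$ on bottom, so $v_6$ lies on $\bQ_1$ only via $r_5\,r_6$; thus $B=v_0v_6$. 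Since $\{v_0,v_6\}=\{v_0,v_{0+6}\}$ and $6\equiv -4\pmod{10}$, this is the global bridge $v_0v_{-4}=v_{i-1}v_{i-4}$ for $i=1$. (The $a\leftrightarrow b$ symmetric case yields $v_5$ as one attachment and an attachment in $r_0\,r_1$, producing $v_5v_1=v_{i+1}v_{i+4}$, using $5=1+4$.) This handles $i\in\{1,3\}$.

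\textbf{The cases $i\in\{0,2,4\}$ and the main obstacle.} For $i=2$, $\bQ_2$ always has BOD by Theorem~\ref{th:BODquads}(\ref{it:bQ2}), so there is nothing to prove. For $i\in\{0,4\}$, I would invoke Corollary~\ref{co:QibarBOD}: applied with the index $i$ there replaced by $1$ or $3$, it says that if $\bQ_1$ (resp.\ $\bQ_3$) has BOD and the ``opposite'' hyperquad among $\{\bQ_3,\bQ_4\}$ (resp.\ $\{\bQ_0,\bQ_1\}$) has NBOD, then the remaining one of $\{\bQ_0,\bQ_4\}$ still has BOD; combined with Theorem~\ref{th:BODquads}(\ref{it:noExposedBQ1bQ3}) (at least one of $\bQ_1,\bQ_3$ has BOD) and Corollary~\ref{co:hyperBODquadBOD}-type reasoning, one deduces that $\bQ_0$ (resp.\ $\bQ_4$) fails to have BOD only if the adjacent $\bQ_1$ or $\bQ_3$ already fails, whence by the previous paragraph a global $3$-jump incident with the appropriate node exists; I would then check that this jump is literally $v_{i\pm1}v_{i\pm4}$ by tracking indices modulo~$10$ (e.g.\ for $i=0$, the bridge $v_6v_0$ obtained from the $\bQ_1$ analysis is $v_{0-4}v_{0-?}$... here the bookkeeping is that $\{v_{i-1},v_{i-4}\}$ for $i=0$ is $\{v_9,v_6\}$ and for $i=4$ is $\{v_3,v_0\}$, matching the bridges the symmetry produces). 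The main obstacle is precisely this index bookkeeping: verifying that the specific bridge handed to us by Theorem~\ref{th:BODquads}(\ref{it:cornerInGamma}) — whose description is given in terms of the standard labelling relative to $a,b$ — coincides, under the rotation $i\mapsto i+2$ used to move between hyperquads, with exactly one of $v_{i-1}v_{i-4}$ and $v_{i+1}v_{i+4}$, and not with some other $3$-jump. I expect this to be routine but tedious, requiring a careful case split on which of $v_0=a$, $v_5=b$ (and their rotates) holds, and using the $a\leftrightarrow b$ symmetry to halve the work.
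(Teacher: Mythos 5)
There is a genuine gap, and it comes in two pieces. First, your argument only reaches the hyperquads that happen to be $\bQ_1$ or $\bQ_3$ in the \emph{standard labelling} fixed by $\gamma$; Theorem \ref{th:BODquads} gives no NBOD characterization for $\bQ_0$ or $\bQ_4$ beyond the planarity of $\comp{(M_{\bQ_i})}$, and the implication you lean on for those cases --- ``$\bQ_0$ fails to have BOD only if $\bQ_1$ or $\bQ_3$ already fails'' --- does not follow from Corollary \ref{co:QibarBOD} (with $i=2$ it yields only that at most one of $\bQ_0,\bQ_4$ has NBOD). Even if it did follow, the bridge that the $\bQ_1$ analysis hands you ($v_0v_7$ or $v_2v_5$) is not one of the two bridges $v_9v_6$, $v_1v_4$ that the lemma demands for $i=0$, so the indices cannot be made to match. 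Second, your bookkeeping in the case you do treat is wrong: the path $r_5\,r_6$ runs from $v_5$ through $v_6$ to $v_7$, so combining Theorem \ref{th:BODquads}(\ref{it:aIsV0}) with Theorem \ref{th:globalBridges} forces $B=v_0v_7$, which is $v_{i-1}v_{i-4}$ for $i=1$ (since $v_{1-4}=v_{-3}=v_7$); your candidate $v_0v_6$ is not even a permissible global bridge of a tidy $H$, as $v_jv_{j+4}$ is none of the forms allowed by Theorem \ref{th:globalBridges}.

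The paper's proof avoids all of this by arguing directly and uniformly in $i$. Fix a tidy embedding and suppose neither $v_{i-1}v_{i-4}$ nor $v_{i+1}v_{i+4}$ is present. The $\bQ_i$-exterior bridges other than $M_{\bQ_i}$ either lie in $\Mob$ (and then, by tidiness, attach along a single spoke $s_{i-1}$ or $s_{i+1}$) or lie in $\Disc$ and are global, hence by Theorems \ref{th:globalBridges} are 2-, 2.5-, or 3-jumps; a 2.5-jump cannot be a $\bQ_i$-bridge, the only 3-jumps that could be are the two excluded ones, and the admissible 2-jumps $v_{i-1}v_{i+1}$, $v_{i+4}v_{i+6}$ each bound a face with their span. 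Consequently every exterior bridge $B\ne M_{\bQ_i}$ has $\bQ_i\cup B$ free of non-contractible cycles, so by Lemma \ref{lm:planeNotOverlap} no two exterior bridges overlap, and $OD(\bQ_i)$ is bipartite. I would recommend abandoning the route through Theorem \ref{th:BODquads}(\ref{it:cornerInGamma}) and adopting this direct classification of the $\bQ_i$-exterior bridges instead.
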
\printFullDetails{

\begin{cproof} Let $\Pi$ be an embedding of $G$ in $\pp$ so that $H$ is $\Pi$-tidy.  Suppose neither of the edges $v_{i-1}v_{i-4}$ and $v_{i+1}v_{i+4}$ occurs in $G$.  The $\bQ_i$-bridges that are $\bQ_i$-exterior consist of $M_{\bQ_i}$, those that are contained in $\Mob$ and, therefore, attach along either $s_{i-1}$ or $s_{i+1}$, and those that are contained in $\Disc$.   Since $H$ is $\Pi$-tidy, these latter must be global.  By Theorem \ref{th:globalBridges} they are 2-, 2.5-, and 3-jumps.  

Consider any global $H$-bridge.  It is embedded in $\Disc$ so that it, together with its spanned path in $R$, bounds a face of $G$.  In particular, if we are considering a 2-jump $B$ that is a $\bQ_i$-bridge, the 2-jump is either $v_{i-1}v_{i+1}$ or $v_{i+4}v_{i+6}$.  In this case, $\bQ_i\cup B$ has no non-contractible cycle in $\pp$ and so, by Lemma \ref{lm:planeNotOverlap}, $B$ does not overlap any other $\bQ_i$-exterior $\bQ_i$-bridge.  

It is not possible for a 2.5-jump to be a $\bQ_i$-bridge.  The only 3-jumps that can be a $\bQ_i$-bridge are $v_{i+1}v_{i+4}$ and $v_{i-4}v_{i-1}$, and these are assumed not to be in $G$.  We conclude that the $\bQ_i$-exterior $\bQ_i$-bridges do not overlap and, therefore, $\bQ_i$ has BOD.
\end{cproof}

}\begin{lemma}\label{lm:globalJumps}  Let $G\in \m2$ and  $\hvfg$, with $H$ tidy.  Then:
\begin{enumerate}
\item\label{it:globalNoHnode}  no two global $H$-bridges have an $H$-node in common; 
\item\label{it:atMostOne3jump} at most one global 
$H$-bridge is a $3$-jump; 
\item\label{it:2and3jumps}  there is no $i$ so that $v_iv_{i+3}$ is a 3-jump and some 2.5-jump has an end in $\oc{v_{i-1},r_{i-1},v_i}$;
\item\label{it:spanDiffHyper} if $B_1$ and $B_2$ are global $H$-bridges, then, for every $i\in\{0,1,2,3,4\}$, there is some edge of $\bQ_i\cap R$ that is not spanned by either $B_1$ or $B_2$; and
\item\label{it:oppositeRims}  for each $i\in\{0,1,2,3,4\}$, at most one of $\oo{r_i}$ and $\oo{r_{i+5}}$ can contain an end of a 2.5-jump.  \end{enumerate} \end{lemma}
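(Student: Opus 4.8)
The plan is to prove each of the five parts of Lemma \ref{lm:globalJumps} by combining the structural restrictions from Theorem \ref{th:globalBridges} (every global $H$-bridge is a single edge which is a 2-jump, 3-jump, or 2.5-jump, embedded in $\Disc$ so that it bounds a face together with its span) with the ``no box'' principle (Lemma \ref{lm:noBox}), the ``no two green cycles share a rim edge'' result (Theorem \ref{th:twoGreenCycles}), and the BOD/cleanliness machinery (Lemmas \ref{lm:BODcrossed}, \ref{lm:cleanBOD}, \ref{lm:preboxClean} and Theorem \ref{th:BODquads}). Throughout, fix a representativity 2 embedding $\Pi$ with $H$ being $\Pi$-tidy; all global $H$-bridges live in $\Disc$, so their spans interleave in $\Disc$ in the obvious way, and this planar picture is what drives the combinatorial arguments.

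For (\ref{it:globalNoHnode}): suppose global $H$-bridges $B_1$ and $B_2$ share an $H$-node $v$. Since each is a single edge and $G$ is 3-connected, $v$ has degree at least $3$, so $B_1\cup B_2$ together with the appropriate rim arcs forms a subgraph in $\Disc$. Each $B_j$ combines with its span to form an $H$-green cycle; if the two spans share a rim edge we contradict Theorem \ref{th:twoGreenCycles} directly, so I would argue the spans meet only at $v$ and then exhibit a box: the cycle $C$ bounded by $B_1\cup B_2$ plus the short rim arc between their far endpoints is green-like, has BOD by Lemma \ref{lm:CdisjointNCcycle} or Corollary \ref{co:contractibleBOD} (using a non-contractible cycle in the complementary M\"obius bridge), has a planar $C$-bridge on the $\Disc$ side, and is a $\comp{B}$-prebox because deleting any rim edge of $C$ leaves a $V_6$ (via the undisturbed $H$-spokes in $\Mob$). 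For (\ref{it:atMostOne3jump}) and (\ref{it:2and3jumps}), the key point is that a 3-jump $v_iv_{i+3}$ forces $b$ or $a$ near one of $v_i,v_{i+3}$ by the face-bounding property and Theorem \ref{th:BODquads}(\ref{it:cornerInGamma}); two 3-jumps, or a 3-jump plus a 2.5-jump ending adjacent to it, would produce a second green cycle sharing a rim edge, or crossing 2.5-jumps in $\Disc$, or a box — I would delete an edge of the ``spanned'' $H$-spoke, invoke Lemma \ref{lm:BODcrossed} to force a crossing of the relevant hyperquad, and check the two explicit 1-drawings (as in the $D_i$ figures) to see the forbidden configuration cannot be drawn.

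For (\ref{it:spanDiffHyper}): each $\bQ_i\cap R$ consists of four rim branches $r_{i-2}r_{i-1}r_ir_{i+1}$ on one side and four on the other; since each global $H$-bridge has span with at most four $H$-nodes, hence spans at most three consecutive rim branches, two global bridges span at most six rim branches in total, but $\bQ_i\cap R$ contains eight rim branches split into two arcs of four — a direct counting/adjacency argument (using (\ref{it:globalNoHnode}) to keep the spans from telescoping and using that global bridges live in $\Disc$ while $\bQ_i$'s two rim-arcs lie in opposite $ab$-subpaths after noting Lemma \ref{lm:greenCycles}(\ref{it:shortJump}) on jump lengths) shows some rim edge of $\bQ_i$ is free. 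I expect the main obstacle to be part (\ref{it:spanDiffHyper}) together with the delicate case analysis in (\ref{it:2and3jumps}): the naive count is tight, so one must carefully exploit that a 3-jump's span (four $H$-nodes) can only occur in a specific position relative to $\gamma$, and rule out two global bridges whose spans together cover an entire rim-arc of $\bQ_i$ by producing either a box or a second green cycle on a shared edge. Finally (\ref{it:oppositeRims}) follows because a 2.5-jump with an end in $\oo{r_i}$ and another 2.5-jump with an end in $\oo{r_{i+5}}$ would, together with the two $H$-spokes bounding $Q_i$ (or via $\bQ_i$), give an $H$-avoiding path configuration contradicting tidiness condition (\ref{it:Hjump}) or forcing the two jumps to cross in $\Disc$; I would phrase this by extracting from the two 2.5-jumps a single global bridge or path violating Theorem \ref{th:globalBridges} or Definition \ref{df:tidy}(\ref{it:Hjump}).

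A clean write-up would state the five parts, then prove (\ref{it:globalNoHnode}) first, use it freely in (\ref{it:atMostOne3jump})--(\ref{it:oppositeRims}), and in each part reduce to one of three contradictions: a box (contradicting Lemma \ref{lm:noBox}), two $H$-green cycles sharing a rim edge (contradicting Theorem \ref{th:twoGreenCycles}), or an impossible 1-drawing after deleting an edge of some $H$-spoke (contradicting $\crn(G)\ge 2$ via Lemma \ref{lm:BODcrossed}).
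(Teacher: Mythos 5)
Your overall framework is the right one in spirit---the paper's proof of every part is your third contradiction type (delete a spoke edge, use Lemma \ref{lm:tidyBODhyper} and Lemma \ref{lm:BODcrossed} to force the relevant hyperquad to be crossed, then use Lemma \ref{lm:technicalV8colour} to show that the rim branches protected by the jumps cannot be crossed), and your sketches of (\ref{it:atMostOne3jump}) and (\ref{it:2and3jumps}) are roughly on track. But the two places where you substitute a different mechanism both fail. For part (\ref{it:spanDiffHyper}), your counting argument rests on a miscount: $\bQ_i\cap R$ is not eight rim branches in two arcs of four but \emph{four} rim branches in two arcs of two, namely $r_{i-1}\,r_i$ and $r_{i+4}\,r_{i+5}$ (recall $\bQ_i=(Q_{i-1}\cup Q_i)-\oo{s_i}$ and each $Q_j\cap R=r_j\cup r_{j+5}$). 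Since a single global bridge can span two full rim branches, two of them can perfectly well cover all of $\bQ_i\cap R$ combinatorially, so ``$6<8$'' proves nothing; ruling out the configuration in which $B_1$ spans one arc and $B_2$ spans the other \emph{is} the content of (\ref{it:spanDiffHyper}), and it is done by the drawing argument: $\bQ_i$ has BOD (using part (\ref{it:globalNoHnode}) to exclude the two 3-jumps that Lemma \ref{lm:tidyBODhyper} worries about), hence is crossed in a 1-drawing of $G-\oo{s_i}$, yet Lemma \ref{lm:technicalV8colour} (\ref{it:atMostTwo}) shows neither spanned arc can be crossed.

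For part (\ref{it:globalNoHnode}) your box construction is not well formed: if $B_1=v x_1$ and $B_2=v x_2$ share the node $v$, the ``short rim arc between their far endpoints'' is $P_1\cup P_2$, which passes through $v$, so $B_1\cup B_2$ together with that arc is a theta graph, not a cycle; taking the long arc instead gives a cycle whose prebox and BOD properties you have not established and which does not obviously yield a contradiction. The paper handles (\ref{it:globalNoHnode}) with the same spoke-deletion argument as the other parts (first disposing of the subcases where one or both of $B_1,B_2$ is a 3-jump), and you will need something of that form rather than a box. Finally, your argument for (\ref{it:oppositeRims}) via tidiness or a forced crossing in $\Disc$ does not go through---two 2.5-jumps attached near $\oo{r_i}$ and $\oo{r_{i+5}}$ need not cross and do not concatenate into a path violating Definition \ref{df:tidy} (\ref{it:Hjump}); the correct route is purely combinatorial: Theorem \ref{th:globalBridges} pins down the node-end of each 2.5-jump, and the two possibilities contradict either (\ref{it:globalNoHnode}) (shared node) or (\ref{it:spanDiffHyper}) (opposite sides of a hyperquad both spanned).
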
\printFullDetails{

\begin{cproof}  \dragominor{We start with (\ref{it:globalNoHnode}).}

\begin{claim}\label{cl:globalNoHnode}  \dragominor{No two global $H$-bridges have an $H$-node in common.}\end{claim}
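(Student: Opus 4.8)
\textbf{Proof plan for Claim \ref{cl:globalNoHnode}.}
The plan is to argue by contradiction, using the structural description of global $H$-bridges from Theorem~\ref{th:globalBridges} together with the ``no two $H$-green cycles share a rim edge'' result (Theorem~\ref{th:twoGreenCycles}). First I would fix a representativity~$2$ embedding $\Pi$ of $G$ for which $H$ is $\Pi$-tidy, and recall from Theorem~\ref{th:globalBridges} that every global $H$-bridge is a single edge of one of the forms $v_iv_{i+2}$, $v_iv_{i+3}$, or a $2.5$-jump with one end an $H$-node $v_i$ and the other end in $\oo{r_{i-3}}\cup\oo{r_{i+2}}$. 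In particular, every global $H$-bridge has at least one $H$-node as an endpoint, and combines with its span (a rim path with at most four $H$-nodes) to form an $H$-green cycle, as noted just before the statement.

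Now suppose $B_1$ and $B_2$ are distinct global $H$-bridges sharing an $H$-node $v$. Since each $B_i$ is an edge, $v$ is an endpoint of both. Let $C_1 = B_1 \cup (\text{span of }B_1)$ and $C_2 = B_2 \cup (\text{span of }B_2)$ be the corresponding $H$-green cycles. The key observation is that the two spans both emanate from $v$ along the rim $R$; I would first handle the case where the spans go ``in the same direction'' from $v$ (i.e. one span is contained in the other, or they overlap in a nontrivial rim path), in which case $C_1$ and $C_2$ share a rim edge incident with $v$, directly contradicting Theorem~\ref{th:twoGreenCycles}. The remaining case is that the spans leave $v$ in opposite directions along $R$; then I would need a separate argument, and this is where I expect the main obstacle to lie. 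Here I would examine the possible types of $B_1, B_2$ (using that each is a $2$-, $2.5$-, or $3$-jump and the span bounds a face of $\Pi$): the two bridges, drawn in $\Disc$ by tidiness, each bound a face with their span, and the union $B_1 \cup B_2$ is a path in $\Disc$ with both ends $H$-nodes at ``distance'' at most $4+4$ apart along $R$. Consider the cycle $C = B_1 \cup B_2 \cup (\text{rim path between the non-}v\text{ ends, avoiding }v)$; one checks this is again an $H$-green cycle (its rim portion has few $H$-nodes, and the $B_1 B_2$ portion is $H$-avoiding except possibly at one interior $H$-node if a $2.5$-jump is involved, which one splits into the two pieces as in the proof of Theorem~\ref{th:globalBridges}), and it shares a rim edge with, say, $C_1$ unless $C_1$'s span is trivial — but the span of a global $H$-bridge always has length at least $2$, so we again contradict Theorem~\ref{th:twoGreenCycles}.

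If that direct green-cycle argument runs into degenerate configurations (for instance when the two spans together cover an $ab$-subpath and the combined cycle is forced to be non-contractible, so is not green), I would fall back on the embedding: $B_1$ and $B_2$ are both in $\Disc$ with a common $H$-node $v$, so one of them is ``inside'' the face bounded by the other together with its span; this nesting forces the inner bridge's only non-$v$ attachment to lie in the span of the outer bridge, and then Theorem~\ref{th:globalBridges}'s list of permissible attachment positions (combined with tidiness forbidding two attachments of one bridge in any $\oo{r_i\,r_{i+1}\,r_{i+2}}$) yields a contradiction after a short finite case check on $i$ and the jump-types. The main obstacle is therefore bookkeeping: enumerating the finitely many ways two edges of the listed forms can share an $H$-node and checking each produces either a repeated green-cycle rim edge (violating Theorem~\ref{th:twoGreenCycles}) or an embedding obstruction. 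I would present the ``same direction'' case cleanly via Theorem~\ref{th:twoGreenCycles} and dispatch the ``opposite direction'' case by the green-cycle $C$ constructed above, reserving the embedding fallback only for the two or three genuinely degenerate subcases.
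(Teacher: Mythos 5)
Your first step — that the two spans must be edge-disjoint, hence leave the common node $v_i$ in opposite directions along $R$ — is exactly the paper's opening move (via Theorem \ref{th:twoGreenCycles}). But your treatment of that main, opposite-direction case has a genuine gap. The cycle $C=B_1\cup B_2\cup(\hbox{rim path avoiding }v)$ is not an $H$-green cycle, for two separate reasons. First, the path $B_1\cup B_2$ has the rim vertex $v=v_i$ in its interior, so it cannot serve as the $R$-avoiding portion $P_2P_3P_4$ required by Definition \ref{df:green}. Second, the rim path joining the non-$v$ ends while avoiding $v$ is the \emph{complement} of the two spans in $R$: since the spans together cover at most six consecutive rim branches of the ten in $V_{10}$, that complementary path contains at least four rim branches and hence at least five $H$-nodes, far exceeding the bound in Definition \ref{df:green} (\ref{it:P1options}). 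Even were $C$ green, its rim portion is edge-disjoint from both spans, so it would not share a rim edge with $C_1$ and Theorem \ref{th:twoGreenCycles} would give no contradiction. Your embedding fallback (one bridge nested inside the face bounded by the other and its span) only applies when the spans overlap, i.e.\ the case you already dispatched; in the opposite-direction case the two faces are internally disjoint and neither bridge is nested in the other.

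The missing idea is a drawing argument rather than a green-cycle argument. In the paper one deletes an edge of a suitable $H$-spoke (roughly $s_i$, or $s_{i+1}$ when a $3$-jump is present), uses Lemma \ref{lm:tidyBODhyper} to see that the corresponding hyperquad $\bQ_i$ (resp.\ $\bQ_{i+1}$) has BOD, and then Lemma \ref{lm:BODcrossed} forces that hyperquad to be crossed in the resulting $1$-drawing. On the other hand, the two spans together cover $r_{i-2}\,r_{i-1}\,r_i\,r_{i+1}$, one entire side of that hyperquad, and Lemma \ref{lm:technicalV8colour} (applied to each global bridge as an $H$-avoiding path) shows none of those rim edges can be crossed in the drawing — a contradiction. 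The $3$-jump configurations need to be peeled off first in separate subcases (to secure the BOD hypothesis of Lemma \ref{lm:tidyBODhyper} and to choose the right spoke to delete), but the engine is the same in every case. Without some version of this crossing-forcing step, the opposite-direction case does not close.
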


\begin{proof}\startSubclaims suppose by way of contradiction that the two global $H$-bridges $B_1$ and $B_2$ have the $H$-node $v_i$ in common.  For $j=1,2$, let $P_j$ be the subpath of $R$ spanned by $B_j$.   Then each of $B_j\cup P_j$ is a green cycle; therefore, Theorem \ref{th:twoGreenCycles} implies $P_1$ and $P_2$ are edge disjoint.   We choose the labelling so that $r_{i}\cup r_{i+1}\subseteq P_1$ and $r_{i-2}\cup r_{i-1}\subseteq P_2$. 
We treat various cases.

\begin{subclaim}\label{cl:twoThrees}  \dragominor{At least one of $B_1$ and $B_2$ is not a 3-jump.}\end{subclaim}

\begin{proof} Suppose to the contrary that $B_1$ and $B_2$ are both 3-jumps, so $B_1=v_{i}v_{i+3}$ and $B_2=v_{i-3}v_i$, respectively.  Then there is a 1-drawing $D_i$ of $(H-s_{i})\cup B_1\cup B_2$; Lemma \ref{lm:tidyBODhyper} implies $\bQ_i$ has BOD, so Lemma \ref{lm:BODcrossed} implies $\bQ_i$ is crossed in $D_i$.

Because of $B_1$, Lemma \ref{lm:technicalV8colour} (\ref{it:2halfJump1}) implies $r_{i+1}$ and $r_{i+2}$ are not crossed in $D_i$, while (\ref{it:2halfJump3/2}) of the same lemma implies that if $r_i$ were crossed, it would cross $r_{i+3}$.  However, (\ref{it:V8yellow}) shows $r_{i+3}$ is not crossed.  Therefore, no edge of $r_i\,r_{i+1}\,r_{i+2}$ is crossed in $D_i$.  Analogously, no edge of $r_{i-3}\,r_{i-2}\,r_{i-1}$ is crossed in $D_i$.  These two assertions show $\bQ_i$ cannot be crossed in $D_i$, a contradiction. \end{proof}

\begin{subclaim}\label{cl:noThree}  \dragominor{Neither $B_1$ nor $B_2$ is a 3-jump. }\end{subclaim}

\begin{proof}  By Claim \ref{cl:twoThrees}, not both $B_1$ and $B_2$ are 3-jumps.  So suppose for sake of definiteness that $B_1$ is the 3-jump $v_iv_{i+3}$ and $B_2$ is a global $H$-bridge with one end at $v_i$ and one end in $\oc{v_{i-3},r_{i-3},v_{i-2}}$.

The embedding in $\pp$ shows that $v_{i+2}v_{i+5}$ is not an edge of $G$ (it would cross $B_1$) and Claim \ref{cl:twoThrees} shows $v_{i-3}v_i$ is not an edge of $G$.  Therefore, Lemma \ref{lm:tidyBODhyper} implies $\bQ_{i+1}$ has BOD.  Thus, in any 1-drawing $D_{i+1}$ of $G-\oo{s_{i+1}}$, Lemma \ref{lm:BODcrossed} implies $\bQ_{i+1}$ is crossed in $D_{i+1}$.    

By Lemma \ref{lm:technicalV8colour} (\ref{it:2halfJump1}) (when $B_2$ is a 2.5-jump) or (\ref{it:atMostTwo}) (when $B_2$ is a 2-jump), $r_{i-1}$ is not crossed in $D_{i+1}$.  Likewise,  (\ref{it:atMostTwo}) shows that none of $r_{i}$, $r_{i+1}$, and $r_{i+2}$ is crossed in $D$.  But then $\bQ_{i+1}$ is not crossed in $D_{i+1}$, a contradiction.
\end{proof}  

By Claim \ref{cl:noThree}, we know that neither $B_1$ nor $B_2$ is a 3-jump.  By Theorem \ref{th:twoGreenCycles}, neither $v_{i-1}v_{i-4}$ nor $v_{i+1}v_{i+4}$ can occur in $G$; Lemma \ref{lm:tidyBODhyper} implies $\bQ_i$ has BOD.  Let $D_i$ be a 1-drawing of $G-\oo{s_i}$.  By Lemma \ref{lm:BODcrossed}, $\bQ_i$ is crossed in $D_i$.

Lemma \ref{lm:technicalV8colour} (\ref{it:atMostTwo}) shows that $P_1$ and $P_2$ are both not crossed in $D_i$.  This implies that $r_{i-2}\,r_{i-1}\,r_i\,r_{i+1}$ is not crossed in $D$ and, therefore, $\bQ_i$ is not crossed in $D_i$, a contradiction that completes the proof of the claim. \end{proof}

\dragominor{We move on to (\ref{it:atMostOne3jump}).}

\begin{claim}\label{cl:atMostOne3jump}  \dragominor{There is at most one global $H$-bridge that is a 3-jump.} \end{claim}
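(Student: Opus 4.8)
The claim is that a tidy $\hvfg$ in $G\in\m2$ has at most one global $H$-bridge that is a $3$-jump.  Suppose, for contradiction, that there are two $3$-jumps $B$ and $B'$, say $B=v_iv_{i+3}$ and $B'=v_jv_{j+3}$ with $i\le j$ (indices mod $10$).  By Claim \ref{cl:globalNoHnode}, $B$ and $B'$ share no $H$-node, and by Theorem \ref{th:twoGreenCycles} their spans $r_i\,r_{i+1}\,r_{i+2}$ and $r_j\,r_{j+1}\,r_{j+2}$ are edge-disjoint (each $3$-jump together with its span is an $H$-green cycle).  The plan is to show that, whatever the relative position of the two spans along $R$, we can find an index $k$ so that $\bQ_k$ has BOD (using Lemma \ref{lm:tidyBODhyper}), yet in any $1$-drawing $D_k$ of $G-\oo{s_k}$ the presence of $B$ and $B'$ forces $\bQ_k$ to be clean, contradicting Lemma \ref{lm:BODcrossed}.

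First I would fix the relative position of the spans.  Since the spans are disjoint subpaths of $R$, there are a handful of cases according to how far apart $i$ and $j$ are; by the symmetry of $V_{10}$ (relabelling the rim, $a$, $b$) and the fact that $B$, $B'$ share no $H$-node, the essentially different cases are those where the spans are ``adjacent'' (say $j=i+3$, i.e.\ $B=v_iv_{i+3}$, $B'=v_{i+3}v_{i+6}$ — ruled out by Claim \ref{cl:globalNoHnode} sharing $v_{i+3}$, so actually $j=i+4$ or more) and those where they are ``far apart''.  I would pick $k$ to be an index so that $s_k$ is disjoint from both spans and so that the two edges $v_{k-1}v_{k-4}$ and $v_{k+1}v_{k+4}$ are not present in $G$ — the latter because, by Theorem \ref{th:twoGreenCycles}, a $3$-jump $v_kv_{k+3}$ and its span form a green cycle, so no edge of $R$ lies in two green cycles, which restricts which $3$-jumps can coexist; combined with the explicit list in Theorem \ref{th:globalBridges} this lets me invoke Lemma \ref{lm:tidyBODhyper} to conclude $\bQ_k$ has BOD.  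In the worst case I would need to verify that such a $k$ always exists; this is a short combinatorial check over the (few) positions of the two disjoint $3$-span triples around a $10$-cycle.

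Next, with $\bQ_k$ having BOD, let $D_k$ be a $1$-drawing of $G-\oo{s_k}$.  Since $s_k$ lies in a planar $\bQ_k$-bridge (it is $\bQ_k$-interior in the $\Pi$-embedding, and $H$ is tidy), Lemma \ref{lm:BODcrossed} says $\bQ_k$ is crossed in $D_k$, so one of the two rim-paths of $\bQ_k$, each a concatenation of four $H$-rim branches, must be crossed.  Now I invoke Lemma \ref{lm:technicalV8colour} applied to $H-\oo{s_k}\topol V_8$ together with the $H$-avoiding paths $B$ and $B'$: the $3$-jump $B=v_iv_{i+3}$, via part (\ref{it:2halfJump1}) and (\ref{it:V8yellow}) of that lemma, forbids any edge of $r_i\,r_{i+1}\,r_{i+2}$ from being crossed in $D_k$ (exactly the argument used in the proof of Subclaim \ref{cl:twoThrees} in the proof of Claim \ref{cl:globalNoHnode}), and likewise $B'$ forbids every edge of $r_j\,r_{j+1}\,r_{j+2}$ from being crossed.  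Because the two forbidden triples of rim branches $r_i r_{i+1} r_{i+2}$ and $r_j r_{j+1} r_{j+2}$ together cover both of the four-branch rim-paths comprising $\bQ_k$ — this is the combinatorial heart and is ensured by the choice of $k$ made above — no edge of $\bQ_k$ can be crossed in $D_k$, i.e.\ $\bQ_k$ is clean in $D_k$, contradicting the previous sentence.  This contradiction proves the claim.

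\textbf{Main obstacle.}  The only real work is bookkeeping: showing that the two disjoint $3$-span triples $\{r_i,r_{i+1},r_{i+2}\}$ and $\{r_j,r_{j+1},r_{j+2}\}$ always cover both four-branch halves of some hyperquad $\bQ_k$ whose exterior bridges cannot overlap (so that Lemma \ref{lm:tidyBODhyper} applies), i.e.\ choosing the right $k$ and checking the handful of position cases around the $10$-cycle.  Once $k$ is chosen, the rest is a direct reuse of the Lemma \ref{lm:technicalV8colour}/Lemma \ref{lm:BODcrossed} machinery already deployed in the proof of Claim \ref{cl:globalNoHnode}.  I expect the cleanest writeup to handle the position of $B'$ relative to $B$ in two or three cases (spans ``close'' vs.\ ``far''), in each case exhibiting the witnessing $k$ explicitly.
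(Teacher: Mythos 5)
Your high-level strategy is the paper's: locate a hyperquad $\bQ_k$ that has BOD by Lemma \ref{lm:tidyBODhyper}, and use Lemma \ref{lm:technicalV8colour} together with Lemma \ref{lm:BODcrossed} to force the contradiction that $\bQ_k$ is both crossed and clean in a 1-drawing of $G-\oo{s_k}$. But the ``combinatorial heart'' that you defer to a later check is wrong as you have set it up, in three ways. First, the rim portion of $\bQ_k$ is $r_{k-1}\,r_k$ and $r_{k+4}\,r_{k+5}$ --- two \emph{two}-branch paths, not two four-branch paths --- so your stated covering condition (two disjoint three-branch spans covering eight rim branches) is arithmetically impossible; what must actually happen is that each span contains one of the two-branch rim portions. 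Second, your selection rule ``$s_k$ disjoint from both spans'' is the opposite of what is needed: in the configuration $v_iv_{i+3}$, $v_{i+4}v_{i+7}$ no spoke is disjoint from both spans at all, and in the configuration $v_iv_{i+3}$, $v_{i+5}v_{i+8}$ the unique such spoke is $s_{i+4}$, whose hyperquad $\bQ_{i+4}$ has rim $r_{i+3}\,r_{i+4}\,r_{i+8}\,r_{i+9}$ --- entirely uncovered by the spans. Third, Lemma \ref{lm:technicalV8colour}(\ref{it:atMostTwo}) applies to a full three-branch span only because the deleted spoke has an end in the span's \emph{interior}, so that in $H-\oo{s_k}\topol V_8$ the span collapses to a path of the form $r'_l\,r'_{l+1}$; if $s_k$ avoids both spans, all four nodes of each span survive and neither part (\ref{it:atMostTwo}) nor (\ref{it:V8yellow}) applies to it, so you cannot conclude the spans are uncrossed.

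The correct choice, as in the paper, is $k=i+1$: by Claim \ref{cl:globalNoHnode} the two 3-jumps are, up to relabelling, $v_iv_{i+3}$ with either $v_{i+4}v_{i+7}$ or $v_{i+5}v_{i+8}$, and in both cases the ends $v_{i+1}$ and $v_{i+6}$ of $s_{i+1}$ lie in the interiors of the two spans, while $r_i\,r_{i+1}$ and $r_{i+5}\,r_{i+6}$ --- exactly the rim of $\bQ_{i+1}$ --- lie inside them. (One also needs to rule out the 3-jumps $v_iv_{i-3}$ and $v_{i+2}v_{i+5}$, via Claim \ref{cl:globalNoHnode} and Theorem \ref{th:twoGreenCycles} respectively, before invoking Lemma \ref{lm:tidyBODhyper} for $\bQ_{i+1}$.) With that correction the rest of your argument goes through, but as written the case check you postpone would not succeed.
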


\begin{proof} Suppose  
there are distinct $3$-jumps.  Claim \ref{cl:globalNoHnode} implies that, up to relabelling, they are either $v_iv_{i+3}$ and $v_{i+4}v_{i+7}$ or $v_iv_{i+3}$ and $v_{i+5}v_{i+8}$.  Theorem \ref{th:twoGreenCycles} and  Claim \ref{cl:globalNoHnode} imply that there cannot be a third 3-jump.  Thus, Lemma \ref{lm:tidyBODhyper} implies $\bQ_{i+1}$ has BOD.    

Let $C_1$ and $C_2$ be the two $H$-green cycles containing these 3-jumps.
Lemma \ref{lm:BODcrossed} implies $\bQ_{i+1}$ is crossed in a 1-drawing $D_{i+1}$ of $G-\oo{s_{i+1}}$.  \wordingrem{(Text removed.)}%This crossing must involve at least one of the $H$-rim edges in $C_1\cup C_2$.    
But Lemma \ref{lm:technicalV8colour} (\ref{it:atMostTwo}) implies that neither $r_i\,r_{i+1}$ nor $r_{i+5}\,r_{i+6}$ is crossed in $D_{i+1}$, a contradiction proving the claim. \end{proof}  

\dragominor{We next turn to (\ref{it:2and3jumps}).  }

\begin{claim}\label{cl:2and3jumps}  \dragominor{There is no $i$ so that $v_iv_{i+3}$ is a 3-jump and some 2.5-jump has an end in $\oc{v_{i-1},r_{i-1},v_i}$.}\end{claim}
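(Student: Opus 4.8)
\textbf{Plan of proof for Claim~\ref{cl:2and3jumps}.}
Suppose for contradiction that $v_iv_{i+3}$ is a $3$-jump (call this bridge $B$) and some $2.5$-jump $B'$ has an end in $\oc{v_{i-1},r_{i-1},v_i}$; by Theorem~\ref{th:globalBridges} the other end of $B'$ is forced, and by Claim~\ref{cl:globalNoHnode} we cannot have $v_i$ itself as the end of $B'$ in $\oc{v_{i-1},r_{i-1},v_i}$, so that end is in $\oo{v_{i-1},r_{i-1},v_i}$ and the other end of $B'$ is some $v_j$ with $j\in\{i-3,i+2\}$ (the two options Theorem~\ref{th:globalBridges} allows for a $2.5$-jump spanning three rim branches ending inside $r_{i-1}$). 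The plan is to pick a well-chosen spoke, delete it, and use Lemma~\ref{lm:tidyBODhyper} together with Lemma~\ref{lm:BODcrossed} to force a crossing on a hyperquad, then use the colouring results of Lemma~\ref{lm:technicalV8colour} applied to the two $H$-green cycles carrying $B$ and $B'$ to show that hyperquad is in fact clean, a contradiction.

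Concretely, first I would check that the relevant hyperquad has BOD. Theorem~\ref{th:twoGreenCycles} forbids certain edges from being global $H$-bridges once $B$ and $B'$ are present (their spans being edge-disjoint by that theorem, and Claim~\ref{cl:globalNoHnode} ruling out shared $H$-nodes), so Lemma~\ref{lm:tidyBODhyper} applies to give BOD for the hyperquad $\bQ_{i}$ whose rim-branch portion $r_{i-2}\,r_{i-1}\,r_i\,r_{i+1}$ is covered by the spans of $B$ and $B'$: the span of $B$ contains $r_i\,r_{i+1}\,r_{i+2}$ and the span of $B'$ contains $r_{i-1}$ (and either $r_{i-3}\,r_{i-2}$ or $r_{i+2}\,r_{i+3}$), so between them they cover $r_{i-1}\,r_i\,r_{i+1}\,r_{i+2}$. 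I would then delete an edge of the spoke $s_{i+1}$ (interior to $\bQ_i$, hence in a planar $\bQ_i$-bridge), obtaining a $1$-drawing $D$ of $G-\oo{s_{i+1}}$ in which, by Lemma~\ref{lm:BODcrossed}, $\bQ_i$ is crossed. The crossing of $\bQ_i$ must therefore involve an edge of $r_{i-2}\,r_{i-1}\,r_i\,r_{i+1}$ (crossing something in the opposite rim segment).

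Now the contradiction: using that $B$ is the $3$-jump $v_iv_{i+3}$, Lemma~\ref{lm:technicalV8colour}(\ref{it:2halfJump1}) applied in $H-\oo{s_{i+1}}\topol V_8$ (with $P=B$, $P'$ the rim path $r_i\,r_{i+1}\,r_{i+2}$) shows $r_{i+1}\,r_{i+2}$ is not crossed in $D$, and part~(\ref{it:2halfJump3/2}) together with part~(\ref{it:V8yellow}) rules out $r_i$ being crossed (if $r_i$ were crossed it would have to cross $r_{i+3}$, but $r_{i+3}$ is not crossed). Symmetrically, using $B'$ and its $H$-green cycle, Lemma~\ref{lm:technicalV8colour}(\ref{it:2halfJump1}) and~(\ref{it:atMostTwo}) applied to the rim path spanned by $B'$ shows $r_{i-1}$ (and, in whichever of the two subcases for $B'$ we are in, also $r_{i-2}$) is not crossed in $D$. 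Hence no edge of $r_{i-2}\,r_{i-1}\,r_i\,r_{i+1}$ is crossed, so $\bQ_i$ is clean in $D$, contradicting the previous paragraph. Therefore no such configuration exists, proving the claim.

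\textbf{Main obstacle.} The delicate point is getting the indices and the choice of deleted spoke right so that Lemma~\ref{lm:technicalV8colour} genuinely applies on both sides simultaneously: the $2.5$-jump $B'$ has two possible shapes (other end $v_{i-3}$ or $v_{i+2}$), and in the $v_{i+2}$ case the spans of $B$ and $B'$ nearly abut at $r_{i+2}$, so one must verify that $\bQ_i$ is still the hyperquad whose BOD we can invoke via Lemma~\ref{lm:tidyBODhyper} (i.e.\ that the forbidden $3$-jump edges $v_{i-1}v_{i-4}$, $v_{i+1}v_{i+4}$ really are excluded by Theorem~\ref{th:twoGreenCycles} and Claim~\ref{cl:globalNoHnode} in both subcases), and that the hypotheses ``$P'\subseteq r_j\,r_{j+1}\,\co{v_{j+2},r_{j+2},v_{j+3}}$'' of Lemma~\ref{lm:technicalV8colour}(\ref{it:2halfJump}) are met with the correct orientation. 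Handling both subcases for $B'$ cleanly, rather than by brute symmetry, is where the real care is needed.
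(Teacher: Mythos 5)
Your overall strategy is the paper's: force a crossing on a hyperquad that has BOD (via Lemma~\ref{lm:tidyBODhyper} together with the uniqueness of the $3$-jump), then use Lemma~\ref{lm:technicalV8colour} on the two $H$-green cycles to show that hyperquad is clean. But the execution has concrete problems. A minor one first: the subcase where the other end of $B'$ is $v_{i+2}$ should be killed immediately rather than carried along — its span would contain all of $r_i$ and $r_{i+1}$, which already lie in the span of $v_iv_{i+3}$, contradicting Theorem~\ref{th:twoGreenCycles}. So the other end is forced to be $v_{i-3}$, which is how the paper begins.

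The serious gap is the choice of hyperquad. The spoke $s_{i+1}$ is an edge of the cycle $\bQ_i$ itself (the interior spoke of $\bQ_i$ is $s_i$), so deleting an edge of $s_{i+1}$ destroys $\bQ_i$ and Lemma~\ref{lm:BODcrossed} cannot be applied to it. Worse, your coverage claim is false: with $B'=v_{i-3}w$, $w\in\oo{r_{i-1}}$, the two spans are $r_i\,r_{i+1}\,r_{i+2}$ and $r_{i-3}\,r_{i-2}\cc{v_{i-1},r_{i-1},w}$, so the segment $\cc{w,r_{i-1},v_i}$ lies in neither span and Lemma~\ref{lm:technicalV8colour} gives no control over it. For $\bQ_i$ (or $\bQ_{i+1}$) this is fatal: after deleting $\oo{s_i}$, say, the $V_8$-branch $r_{i-1}\,r_i$ may still cross $r_{i+4}\,r_{i+5}$ through an edge of $\cc{w,r_{i-1},v_i}$, and both of those branches lie on $\bQ_i$, so such a crossing is exactly what Lemma~\ref{lm:BODcrossed} predicts and no contradiction arises. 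The paper instead takes $\bQ_{i+2}$ and deletes $\oo{s_{i+2}}$: the rim branches of that hyperquad are $r_{i+1}\,r_{i+2}$ and $r_{i+6}\,r_{i+7}$, and by Lemma~\ref{lm:1drawingsV2n} any crossing of the latter must be with one of $r_i$, $r_{i+1}\,r_{i+2}$, $r_{i+3}$. All of these are uncrossed: $r_i\,r_{i+1}\,r_{i+2}$ by Lemma~\ref{lm:technicalV8colour}~(\ref{it:atMostTwo}) applied to the span of the $3$-jump (which is a union of two consecutive branches of $H-\oo{s_{i+2}}$), and $r_{i+3}$ by~(\ref{it:V8yellow}) applied to $B'$, which controls the \emph{antipodal} branch. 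Choosing a hyperquad far enough from the uncovered piece of $r_{i-1}$ is the step your plan is missing.
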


\begin{proof} \dragominor{Suppose to the contrary that there is such an $i$.}  From Claim \ref{cl:globalNoHnode}, the 2.5-jump has an end $w\in \oo{v_{i-1},r_{i-1},v_i}$.  Its other end is $v_{i-3}$.   Lemma \ref{lm:tidyBODhyper} and  Claim \ref{cl:atMostOne3jump} imply that $\bQ_{i+2}$ has BOD. Let $D_{i+2}$ be a 1-drawing of $G-\oo{s_{i+2}}$.  
Lemma \ref{lm:BODcrossed} implies $\bQ_{i+2}$ is crossed in $D_{i+2}$.

By Lemma \ref{lm:technicalV8colour} (\ref{it:V8yellow}), $r_{i+3}$ is not crossed in $D_{i+2}$.  The same lemma (\ref{it:atMostTwo}) implies $r_i\,r_{i+1}\,r_{i+2}$ is not crossed in $D_{i+2}$.  Consequently, $\bQ_{i+2}$ is not crossed in $D_{i+2}$, contradicting the preceding paragraph and proving the claim.  \end{proof}

\dragominor{Now we prove (\ref{it:spanDiffHyper}).}

\begin{claim}\label{cl:spanDiffHyper}  \dragominor{If $B_1$ and $B_2$ are global $H$-bridges, then, for every $i\in \{0,1,2,3,4\}$, some edge of $\bQ_i\cap R$ is not spanned by either $B_1$ or $B_2$. }\end{claim}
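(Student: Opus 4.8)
The plan is to prove Claim \ref{cl:spanDiffHyper} by a case analysis on the types of the global $H$-bridges $B_1$ and $B_2$, using the structural restrictions already established in Claims \ref{cl:globalNoHnode}, \ref{cl:atMostOne3jump}, and \ref{cl:2and3jumps}, together with the fact (Theorem \ref{th:globalBridges}) that every global $H$-bridge is a single edge with span having at most four $H$-nodes (i.e.\ is a 2-jump, a 2.5-jump, or a 3-jump). The key quantitative observation is that each $\bQ_i\cap R$ consists of eight $H$-rim branches, namely $r_{i-2}\,r_{i-1}\,r_i\,r_{i+1}$ and $r_{i+3}\,r_{i+4}\,r_{i+5}\,r_{i+6}$ (indices mod $10$), while a global $H$-bridge spans at most three consecutive rim branches. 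So two global $H$-bridges together span at most six rim branches; the danger is only that those six are distributed so as to cover one of the two four-branch arcs of $\bQ_i\cap R$ entirely. I would first dispose of the case where $B_1$ and $B_2$ lie on ``opposite sides" — that is, the spanned arcs lie in different four-branch arcs of $\bQ_i$ — which is immediate since then at least one arc contains at most three spanned branches (in fact at most one bridge's span) and hence an unspanned edge.

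The substantive case is when both spanned arcs lie within the same four-branch arc of $\bQ_i$, say within $r_{i-2}\,r_{i-1}\,r_i\,r_{i+1}$. Here I would argue that $B_1$ and $B_2$ cannot jointly span all four of these rim branches. By Claim \ref{cl:globalNoHnode} the spans of $B_1$ and $B_2$ are vertex-disjoint on $R$ except possibly sharing no $H$-node at all, and each span is an arc on at most three rim branches; for the union of two such disjoint arcs to cover $r_{i-2}\,r_{i-1}\,r_i\,r_{i+1}$ one would need, up to relabelling, $B_1$ to span $r_{i-2}\,r_{i-1}$ (or part of it reaching into $r_i$) and $B_2$ to span the complementary stretch. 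Examining the few combinatorial possibilities: two 2-jumps would need attachments at $\{v_{i-2},v_i\}$ and $\{v_i,v_{i+2}\}$ — but these share the $H$-node $v_i$, contradicting Claim \ref{cl:globalNoHnode}; a 3-jump $v_{i-2}v_{i+1}$ alone already covers all four branches, but then it IS a single bridge spanning $\bQ_i\cap R$'s arc, and I would need to rule out that configuration directly. For this I invoke the same technique used in Claims \ref{cl:atMostOne3jump} and \ref{cl:2and3jumps}: Lemma \ref{lm:tidyBODhyper} (using Claims \ref{cl:globalNoHnode}, \ref{cl:atMostOne3jump} to verify the edges $v_{i-1}v_{i-4},v_{i+1}v_{i+4}$ do not occur) gives that a suitable $\bQ_j$ (with $j$ chosen so that $\bQ_j$ is edge-disjoint from the spanned branches) has BOD, hence is crossed in a 1-drawing of $G-\oo{s_j}$ by Lemma \ref{lm:BODcrossed}, while Lemma \ref{lm:technicalV8colour}(\ref{it:atMostTwo}) applied to the spans of $B_1$ and $B_2$ (and (\ref{it:2halfJump1}), (\ref{it:V8yellow}) where a 2.5- or 3-jump is involved) shows the relevant rim branches of $\bQ_j$ are not crossed, a contradiction.

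Concretely, I would organize the proof as: (i) fix $i$ and let $A_1 = r_{i-2}\,r_{i-1}\,r_i\,r_{i+1}$, $A_2 = r_{i+3}\,r_{i+4}\,r_{i+5}\,r_{i+6}$ be the two arcs of $\bQ_i\cap R$; (ii) if the spans of $B_1,B_2$ do not both meet the same $A_k$, we are done by counting; (iii) assume both spans lie in, say, $A_1$; (iv) since $B_1$ and $B_2$ have vertex-disjoint spans by Claim \ref{cl:globalNoHnode} and each spans $\le 3$ rim branches, enumerate the ways their union could cover all of $A_1$ and use Claim \ref{cl:globalNoHnode} to eliminate the ones with a shared $H$-node, Claim \ref{cl:atMostOne3jump} to eliminate two-3-jump configurations, and Claim \ref{cl:2and3jumps} to eliminate a 3-jump-plus-adjacent-2.5-jump configuration; (v) in the one or two surviving configurations (essentially a lone 3-jump, or a 3-jump together with a bridge extending the coverage), pick $j\in\{0,1,2,3,4\}$ with $\bQ_j$ edge-disjoint from $A_1$'s covered branches — this will be one of $\bQ_{i+2}$ or $\bQ_{i+3}$ — verify $\bQ_j$ has BOD via Lemma \ref{lm:tidyBODhyper}, and derive the contradiction from Lemmas \ref{lm:BODcrossed} and \ref{lm:technicalV8colour} exactly as above.

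The main obstacle I anticipate is the bookkeeping in step (iv): carefully listing all combinatorial ways two vertex-disjoint arcs on at most three consecutive rim branches each could cover a four-branch arc, and confirming that every one of them either is impossible by the earlier claims or collapses to the lone-3-jump case. The drawing-theoretic contradiction in step (v) is a routine repetition of arguments already made several times in this proof, so it should not present difficulty once the right index $j$ is identified; the care is entirely in ensuring the chosen $\bQ_j$ is genuinely edge-disjoint from the rim branches that the arguments need to be uncrossed, which is why I would phrase step (v) uniformly in terms of ``choose $j$ so that $\bQ_j\cap R$ avoids the spans of $B_1$ and $B_2$," and note that such a $j$ always exists because the two spans together occupy at most six of the ten rim branches and $\bQ_j\cap R$ for varying $j$ ranges over all eight-branch complements of the four two-branch ``opposite pairs.''
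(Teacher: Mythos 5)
There is a genuine gap, and it originates in a mis-reading of the object in the statement. The hyperquad $\bQ_i$ is the cycle $(Q_{i-1}\cup Q_i)-\oo{s_i}$, so $\bQ_i\cap R$ consists of just \emph{four} rim branches arranged in two arcs of \emph{two} branches each, namely $r_{i-1}\,r_i$ and $r_{i+4}\,r_{i+5}$ --- not eight branches in two arcs of four, as your proof assumes. (You may have conflated $\bQ_i\cap R$ with the larger set of rim branches that can be involved when $\bQ_i$ is crossed in a 1-drawing of $G-\oo{s_i}$.) This matters because a single global $H$-bridge --- even a 2-jump $v_{i-1}v_{i+1}$ --- can span an entire two-branch arc. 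Consequently your step (ii), ``if the spans do not both meet the same arc, we are done by counting,'' dismisses precisely the only dangerous configuration: $B_1$ spanning all of $r_{i-1}\,r_i$ and $B_2$ spanning all of $r_{i+4}\,r_{i+5}$ (for instance two 2-jumps $v_{i-1}v_{i+1}$ and $v_{i+4}v_{i+6}$), in which every edge of $\bQ_i\cap R$ is spanned and no counting argument helps. Conversely, your ``substantive case'' (both spans inside one arc) is vacuous under the correct structure, since then the other arc is untouched.

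The paper's proof consists entirely of killing the configuration you discarded: assuming both sides of $\bQ_{i+1}$ are fully spanned, it shows $\bQ_{i+1}$ has BOD via Lemma \ref{lm:tidyBODhyper} (the exceptional 3-jumps $v_iv_{i-3}$ and $v_{i+2}v_{i+5}$ are excluded using Theorem \ref{th:twoGreenCycles} together with part (\ref{it:globalNoHnode}) of this lemma), so Lemma \ref{lm:BODcrossed} forces $\bQ_{i+1}$ to be crossed in a 1-drawing of $G-\oo{s_{i+1}}$, while Lemma \ref{lm:technicalV8colour} (\ref{it:atMostTwo}) shows neither spanned arc $r_i\,r_{i+1}$ nor $r_{i+5}\,r_{i+6}$ can be crossed there. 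Note also that your step (v) points the drawing argument the wrong way: the contradiction requires $\bQ_j\cap R$ to be \emph{contained in} the spans (so that the ``not crossed'' conclusion of Lemma \ref{lm:technicalV8colour} applies to the rim of the hyperquad that must be crossed), whereas you choose $\bQ_j$ edge-disjoint from the spans, for which that lemma yields no information and no contradiction follows.
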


\begin{proof} \wording{Suppose by way of contradiction that the global $H$-bridge}  $B_1$ spans the side $r_i\cup r_{i+1}$ of $\bQ_{i+1}$ and \wording{a second global $H$-bridge} $B_2$ spans $r_{i+5}\cup r_{i+6}$.   To see that $\bQ_{i+1}$ has BOD, by Lemma \ref{lm:tidyBODhyper}\wordingrem{(comma deleted)} it suffices to show that neither of the 3-jumps $v_iv_{i-3}$ and $v_{i+2}v_{i+5}$ is in $G$.   For the former, Theorem \ref{th:twoGreenCycles} implies $v_i$ is an attachment of $B_1$, contradicting  Claim \ref{cl:globalNoHnode}.  For the latter, $v_{i+2}$ is an attachment of $B_2$, with the same contradiction.  
Therefore $\bQ_{i+1}$ has BOD.

Lemma \ref{lm:BODcrossed} implies that, for any 1-drawing $D_{i+1}$ of $G-\oo{s_{i+1}}$, $\bQ_{i+1}$ is crossed in $D_{i+1}$.  However, Lemma \ref{lm:technicalV8colour} (\ref{it:atMostTwo}) implies that neither $r_i\,r_{i+1}$ nor $r_{i+5}\,r_{i+6}$ is crossed in $D_{i+1}$, showing $\bQ_{i+1}$ is not crossed in $D_{i+1}$, a contradiction proving the claim.  \end{proof}

Finally, we prove (\ref{it:oppositeRims}).  
Suppose, for $j\in \{i,i+5\}$, $\oo{r_j}$ contains an end of the 2.5-jump $B_j$.  We may use the symmetry to assume  that $B_i=wv_{i-2}$.  If $B_{i+5}$ has $v_{i+3}$ as an end, then we contradict  Claim \ref{cl:spanDiffHyper}.  Therefore, $B_{i+5}$ has $v_{i+8}=v_{i-2}$ as an end, contradicting  Claim \ref{cl:globalNoHnode}.  
\end{cproof}

We conclude this section with two observations about local bridges of a tidy subdivision of $V_{10}$.

}\begin{lemma}\label{lm:noSpokeOnlyBridge}  Let $G\in \m2$ and $\hvfg$, with $H$ tidy.  Then no $H$-bridge has all its attachments in one $H$-spoke.\end{lemma}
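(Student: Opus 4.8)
The plan is to suppose, for contradiction, that some $H$-bridge $B$ has $\att(B)\subseteq s$ for a single $H$-spoke $s=s_i$, and to produce a box (contradicting Lemma \ref{lm:noBox}) or to directly contradict 3-connectedness or the tidiness of $H$. First I would record the basic structure of $B$: by Lemma \ref{lm:threeAtts}, $|\att(B)|\ge 2$, and since $G$ is 3-connected we cannot have $|\att(B)|=2$ with $\Nuc(B)$ nonempty unless $B=K_2$; moreover Corollary \ref{co:attsMissBranch} (the first clause, $|\att(B)\cap s|\le 2$) already limits us to $|\att(B)|=2$, so $B$ is just an edge $uw$ with $u,w\in s_i$. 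Since $\iso{\{u,w\}}$ is contained in the closed $H$-branch $s_i$, hence is $H$-close, Corollary \ref{co:closeAtts}(1) says there is a \emph{unique} $H$-avoiding $uw$-path in $G$; but the subpath of $s_i$ from $u$ to $w$ is already one such $H$-avoiding path (its interior vertices have degree 2 in $H$, but in $G$ they could have higher degree — so I need to be slightly careful here). The cleanest route is: $B=uw$ together with the $uw$-subpath $\sigma$ of $s_i$ forms a digon-like cycle $C=uw\cup\sigma$, which is $H$-close (contained in the closed branch $s_i$), so by Lemma \ref{lm:closeIsPrebox}, $C$ is a $(C\cup H)$-prebox.

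Next I would exhibit the planar $C$-bridge and the BOD needed to invoke Lemma \ref{lm:noBox}. Let $e$ be an edge of some $H$-spoke totally disjoint from $s_i$ (available since $n\ge 4$, indeed $n=5$ here). For any $1$-drawing $D$ of $G-e$ — which exists since $G$ is $2$-crossing-critical — Lemma \ref{lm:preboxClean} applied to the $(C\cup H)$-prebox $C$ (noting $C\cup H\subseteq G-e$, as $e\notin s_i$ and $e\notin B$) shows $C$ is clean in $D$. Now $C\cup M_C$, where $M_C$ is the $C$-bridge containing $H-\oo{\sigma}$, contains $H-\oo{s_i}$ after restricting, and $H-\oo{s_i}\topol V_8$ is non-planar; hence $M_C-e$ is a non-planar $C$-bridge in $G-e$. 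By Lemma \ref{lm:cleanBOD}, $C$ has BOD in $G-e$, and since $M_C-e$ and $M_C$ have the same attachments, $OD_G(C)$ is isomorphic to $OD_{G-e}(C)$, so $C$ has BOD in $G$. Finally, since $\sigma$ is a segment of a single branch with no interior chords forced, and $B=uw$ is drawn on one side of $\sigma$, there must be a planar $C$-bridge: the two edges $uw$ and $\sigma$ bound a $2$-sided region, and one side — the one not containing $\Nuc(M_C)$ — is occupied either by nothing (in which case $u$ and $w$ have degree $2$, contradicting minimum degree $3$ unless more structure is present) or by a $C$-bridge $B'$ contained in the closed disc on that side, hence planar.

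The main obstacle I anticipate is the degenerate possibility that there is \emph{no} planar $C$-bridge at all: namely that $B=uw$ is the only $C$-bridge besides $M_C$, $u$ and $w$ have degree $3$ with their third incident edge going into $M_C$ along $s_i$, and the cycle $C$ has exactly two bridges. In that scenario $C$ need not be a box and one must argue differently. The fix is to observe that then $u$ and $w$ are a $2$-cut of $G$ unless $\Nuc(B)=\varnothing$; but $B=uw$ has $\Nuc(B)=\varnothing$, so $\{u,w\}$ together with the two edges $uw$ and the relevant edge of $\sigma$ — wait, more precisely: deleting $u$ and $w$ leaves $\Nuc(M_C)$ on one side and nothing on the other, so actually $G-\{u,w\}$ is connected and this is fine; the real point is that $u$ and $w$ each have a third edge, which must lie in $\sigma$ (as interior vertices of $s_i$) — impossible if $u,w$ are interior to $\sigma$ and have degree $3$ with the third edge not in $C$, since that third edge starts another $H$-avoiding $uw$- or $u$-to-$R$ path, putting us back in a non-degenerate case. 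I would handle this by a short case split on which of $u,w$ are $H$-nodes versus interior spoke vertices, using Corollary \ref{co:closeAtts}(2) to rule out three close attachments whenever a third branch emanates, and Lemma \ref{lm:noSpokeOnlyBridge}'s own hypothesis (tidiness, hence all local $H$-bridges in $\Mob$ and condition \ref{it:QnoOverlap} of Definition \ref{df:tidy}) to pin down the embedding and force the required planar side-bridge to exist, yielding the box and the final contradiction with Lemma \ref{lm:noBox}.

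\hfill\eopf
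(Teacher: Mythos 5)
Your reduction of $B$ to a single edge $uw$ with both ends on the spoke $s$ is fine (Corollary \ref{co:attsMissBranch} together with Lemma \ref{lm:threeAtts}), but the box argument you build on top of it has a genuine gap, and it is exactly the one you flag. Writing $\sigma=\cc{u,s,w}$ and $C=uw\cup\sigma$, the cycle $C$ is a box only if it has a planar $C$-bridge, and in general it need not: if every $H$-bridge attaching at an interior vertex of $\sigma$ also attaches outside $\cc{u,s,w}$, then $M_C$ is the \emph{only} $C$-bridge and Lemma \ref{lm:noBox} gives nothing. Your proposed repair --- ``force the required planar side-bridge to exist'' --- aims at the wrong target: in that configuration no planar $C$-bridge exists, and the contradiction must come from elsewhere. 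The right observation is that such a bridge $B'$, having an attachment $x\in\oo{u,s,w}$ and an attachment $y\notin\cc{u,s,w}$, is a local $H$-bridge (tidiness puts $H$ and all local bridges in $\Mob$, and a bridge attaching inside a spoke cannot be global), and on whichever $H$-quad $Q$ containing $s$ also contains $y$, the attachment sets $\{u,w\}$ and $\{x,y\}$ interleave on $Q$; so $B$ and $B'$ are overlapping $Q$-local bridges, contradicting Definition \ref{df:tidy} (\ref{it:QnoOverlap}) directly. No box, prebox, or BOD computation is needed, and your concluding case split never actually arrives at this contradiction.

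There is a second unhandled case: $\sigma$ a single edge, i.e., $uw$ parallel to an edge of $s$. Then $C$ is a digon with no other bridges and the box machinery is inapplicable; the paper disposes of this with Observation \ref{obs:parallel} (\ref{rim}) (one edge of any parallel pair in a 2-crossing-critical graph with a $V_{2n}$ must lie in the $H$-rim, and neither of these two does), which you never invoke. The paper's proof is much shorter than your proposal: choose $B$ so that no other $H$-bridge has all its attachments in a proper subpath of $\cc{u,s,w}$; kill the parallel case with Observation \ref{obs:parallel}; use the minimum-degree condition to get an $H$-bridge $B'$ attaching at an interior vertex of $\sigma$; use minimality plus Corollary \ref{co:attsMissBranch} to force $B'$ to attach outside $\cc{u,s,w}$; and conclude with the overlap contradiction above. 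I would abandon the prebox/BOD scaffolding entirely --- the tidiness hypothesis already carries the whole weight.
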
\printFullDetails{

\begin{cproof}  By way of contradiction, suppose $B$ is an $H$-bridge and $s$ is an $H$-spoke so that $\att(B)\subseteq s$.  By Corollary \ref{co:attsMissBranch}, $B$ has precisely two attachments, so $B$ is just an edge $uw$.  Choose $B$ so that no other $H$-bridge has all its attachments in a proper subpath of $\cc{u,s,w}$.  
If $\cc{u,s,w}$ has no interior vertex, then $B$ and $\cc{u,s,w}$ are parallel edges not in the $H$-rim\wording{, contradicting Observation \ref{obs:parallel} (\ref{rim}).}\majorrem{(The preceding phrase was a principle reason for moving the parallel edges section.)}  Thus, some $H$-bridge $B'$ has an attachment $x$ in $\oo{u,s,w}$.   

Let $\Pi$ be an embedding of $G$ in $\pp$ for which $H$ is $\Pi$-tidy. Since $H\subseteq \Mob$, $B'$ is a local $H$-bridge.  Moreover, Corollary \ref{co:attsMissBranch} and the choice of $B$ show that not all  attachments of $B'$ can be in $\cc{u,s,w}$, so $B$ has an attachment $y$ not in $\cc{u,s,w}$.  But then, for at least one of the two $H$-quads $Q$ containing $s$, $B$ and $B'$ are overlapping $Q$-bridges, contradicting the definition of tidiness.
\end{cproof}

}\begin{lemma}\label{lm:noBridgeTwoInSpoke}  Let $G\in \m2$, $\hvfg$, with $H$ tidy.  For any $H$-spoke $s$, if $B$ is an $H$-bridge having an attachment in $\oo{s}$, then $B$ has no other attachment in $\cc{s}$.\end{lemma}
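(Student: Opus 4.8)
The statement to prove is Lemma~\ref{lm:noBridgeTwoInSpoke}: for a tidy $\hvfg$ and an $H$-spoke $s$, no $H$-bridge $B$ with an attachment in $\oo{s}$ has a second attachment in $\cc{s}$. The plan is to argue by contradiction. Suppose such a $B$ exists, with attachments $x\in\oo{s}$ and a second attachment $y\in\cc{s}$. By Lemma~\ref{lm:noSpokeOnlyBridge}, $B$ cannot have \emph{all} its attachments in $s$, so $B$ has a third attachment $z\notin\cc{s}$ as well; by Lemma~\ref{lm:threeAtts} (since $|\att(B)|\ge 3$ here) we may in fact reduce to the case $|\att(B)|=3$ and $B$ a $K_{1,3}$ with talons $x,y,z$, or---after contracting the $xy$-subpath of $s$ into the picture---reroute $s$ through part of $B$ to obtain an honest contradiction with tidiness. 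The cleanest route: since $x$ and the end of $s$ nearer $x$ bound a subpath of $s$ containing no other relevant attachment, and $y$ lies in $\cc{s}$, the pair $x,y$ together with the third attachment $z$ shows that $B$ and the $Q$-local structure interact badly.

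First I would fix an embedding $\Pi$ of $G$ in $\pp$ for which $H$ is $\Pi$-tidy (Theorem~\ref{th:existTidy}); then $H\subseteq\Mob$ and every local $H$-bridge lies in $\Mob$. The key observation is that $\iso{\{x,y\}}$ is $H$-close: both $x$ and $y$ lie in the closed $H$-branch $\cc{s}$ (taking $y\in\cc{s}$ to include the endpoints $v_i,v_{i+5}$ is harmless, as those endpoints lie in an open $H$-claw together with part of $s$). So if $x\ne y$, Corollary~\ref{co:closeAtts}(1) gives a \emph{unique} $H$-avoiding $xy$-path in $G$. But the interior of $s$ between $x$ and $y$ is itself an $H$-avoiding $xy$-path unless $x$ and $y$ are consecutive on $s$; and the sub-bridge of $B$ joining $x$ to $y$ (which exists because both are attachments of the connected bridge $B$) is a \emph{second} $H$-avoiding $xy$-path, provided it is genuinely internally disjoint from $s$. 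If $x$ and $y$ are adjacent on $s$ then the relevant edge of $s$ and the $xy$-path through $B$ would be parallel edges, neither on the $H$-rim, contradicting Observation~\ref{obs:parallel}(\ref{rim}). In all remaining cases we get two distinct $H$-avoiding $xy$-paths, contradicting Corollary~\ref{co:closeAtts}(1). This handles the heart of the argument; the case $y\in\{v_i,v_{i+5}\}$ an endpoint of $s$ is the same, using that $\{x,v_i\}$ (resp. $\{x,v_{i+5}\}$) is $H$-close because $x\in\oo{s}$ and $v_i$ (resp.\ $v_{i+5}$) lies on $s$, so $\iso{\{x,v_i\}}$ sits inside the closed branch $\cc{s}$.

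The subtlety I expect to need care with is the degenerate sub-case where the ``second $H$-avoiding $xy$-path through $B$'' is not actually internally disjoint from $H$ in the obvious way---for instance if $B$ is a single edge $xy$ (then $y$ consecutive with $x$ on $s$ only if $B$ parallels an edge of $s$, already dispatched; otherwise $B=xy$ directly gives a second $xy$-path since a single edge joining two interior/branch vertices of $s$ is automatically $H$-avoiding), or if $B$'s attachments on $s$ force $B$ to be $Q$-local for some quad $Q$ and we must invoke Corollary~\ref{co:attsMissBranch} to bound $|\att(B)\cap s|\le 2$ and then Lemma~\ref{lm:noSpokeOnlyBridge} to force the third attachment off $s$. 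So the argument structure is: (i) reduce, via Corollary~\ref{co:attsMissBranch} and Lemma~\ref{lm:noSpokeOnlyBridge}, to $B$ having exactly the attachments $x\in\oo{s}$, $y\in\cc{s}$, and at most one more $z\notin\cc{s}$; (ii) observe $\iso{\{x,y\}}$ is $H$-close; (iii) exhibit two distinct $H$-avoiding $xy$-paths (one inside $s$, one inside $B$), handling the parallel-edge corner case via Observation~\ref{obs:parallel}(\ref{rim}); (iv) conclude by Corollary~\ref{co:closeAtts}(1). The main obstacle is simply making the ``two paths'' argument airtight across all placements of $x$ and $y$ on $s$ (interior vs.\ endpoint, adjacent vs.\ non-adjacent), but each subcase is short once the $H$-closeness is in hand.
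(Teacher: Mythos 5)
There is a genuine gap at the heart of your argument, in step (iii). You propose to contradict the uniqueness in Corollary~\ref{co:closeAtts}(1) by exhibiting two distinct $H$-avoiding $xy$-paths: one through $B$ and one being the $xy$-subpath of $s$. But the $xy$-subpath of $s$ is \emph{not} $H$-avoiding: by Definition~\ref{df:bridge}, an $H$-avoiding path $P$ must satisfy $P\cap H\subseteq\iso{\{x,y\}}$, i.e.\ it may meet $H$ only in its two endpoints and in no edges, whereas every edge of a subpath of the spoke $s$ lies in $H$. (Indeed, in the proof of Corollary~\ref{co:closeAtts} the path inside the closed $H$-branch plays the role of a \emph{third} path, adjoined to two genuinely $H$-avoiding ones to build the $2$-connected $H$-close subgraph fed to Lemma~\ref{lm:closeIsCycle}.) Since Lemma~\ref{lm:noSpokeOnlyBridge} forces a third attachment $z\notin\cc{s}$ and Lemma~\ref{lm:threeAtts} then makes $B$ a $K_{1,3}$, the bridge $B$ contains exactly one $xy$-path, so you have only one $H$-avoiding $xy$-path and Corollary~\ref{co:closeAtts}(1) yields no contradiction. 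Your parallel-edge corner case has the same defect: when $B\cong K_{1,3}$ the $xy$-path through $B$ has length two and is not an edge parallel to an edge of $s$, so Observation~\ref{obs:parallel} does not apply either.

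The failure is not cosmetic: a bridge with attachments $x\in\oo{s}$, $y$ an end of $s$, and $z$ on the opposite rim branch is a perfectly consistent local configuration, and only $2$-crossing-criticality excludes it. Accordingly, the paper's proof is substantially longer. It first uses Corollary~\ref{co:attsMissBranch} to show $y$ must be an $H$-node $v_i$ and $z$ must lie in $r_{i+5}$, and that $B$ has no further attachments, so $B\cong K_{1,3}$ with centre $w$. It then shows that the two cycles $\cc{y,B,w,B,x,s,y}$ and $\cc{z,B,w,B,x,Q-y,z}$ bound faces of the tidy embedding, and that no $Q_{i-1}$-local $H$-bridge attaches in $\oo{s}$; together these force $s$ to have length $2$ with $B$ the only bridge attaching in $\oo{s}$. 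The contradiction finally comes from criticality: a $1$-drawing of $G-wy$ leaves $s\cup(B-wy)$ clean, so $wy$ can be reinserted alongside $\cc{w,B,x,s,y}$ to produce a $1$-drawing of $G$. Some argument of this drawing-theoretic kind is unavoidable; the purely combinatorial uniqueness argument you sketch cannot close the proof.
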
\printFullDetails{

\begin{cproof}  Suppose $B$ is an $H$-bridge and $s$ an $H$-spoke so that $B$ has attachments $x,y$ in $s$, with $x\in\oo{s}$.    Let $\Pi$ be an embedding of $G$ in $\pp$ for which $H$ is $\Pi$-tidy.  Then $\Pi$ shows $B$ is not a global $H$-bridge.  By Lemma \ref{lm:noSpokeOnlyBridge}, $B$ has a third attachment $z$ not in $\cc{s}$.   Let $Q$ be the unique $H$-quad containing all of $x$, $y$, and $z$.  

If $y$ is not an $H$-node, then let $r$ be an $H$-rim branch of $Q$ not containing $z$.  Then $x$, $y$, and $z$ are all contained in $Q-\cc{r}$, contradicting Corollary \ref{co:attsMissBranch}.  Thus, $y$ is an $H$-node $v_i$.  We choose the labelling so that $r_i\subseteq Q$.  Corollary \ref{co:attsMissBranch} shows that $z$ is not in $Q-\cc{r_{i+5}}$ and, therefore, $z$ is in $r_{i+5}$.  Furthermore, Corollary \ref{co:attsMissBranch} now shows that $B$ can have no other attachment, so Theorem 8.2 implies $B$ is isomorphic to $K_{1,3}$.  Let $w$ be the vertex in $\Nuc(B)$.  

\begin{claim}  The cycles $\cc{y,B,w,B,x,s,y}$ and $\cc{z,B,w,B,x,Q-y,z}$ bound faces of $\Pi[G]$.\end{claim}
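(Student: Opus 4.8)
The plan is to show that these two cycles bound faces of $\Pi[G]$ by establishing that each is a green cycle and then invoking Lemma~\ref{lm:greenCycles}~(\ref{it:CboundsFace}). Recall the set-up: $B$ is an $H$-bridge isomorphic to $K_{1,3}$ with nucleus vertex $w$ and talons $x\in\oo{s}$, $y=v_i$ an $H$-node with $r_i\subseteq Q$, and $z\in r_{i+5}$. Write $B$ as the union of three $w$-paths to $x$, $y$, and $z$ respectively.

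First I would verify that $C_1:=\cc{y,B,w,B,x,s,y}$ is an $H$-green cycle. Decompose it as $P_1P_2P_3P_4$ in the sense of Definition~\ref{df:green}: take $P_1$ to be the trivial path at $v_i$ (so $P_1$ is a single vertex---but we need $P_1$ to have length at least $1$, so instead take $P_1$ to be a single edge of $R$ incident with $v_i$ and absorb the rest appropriately), or more cleanly, observe $C_1$ is the union of the $xy$-subpath $\cc{x,s,y}$ of the spoke $s$ together with the $yw$- and $wx$-paths of $B$, and recognize this directly as $H$-close with its $H$-intersection contained in the closed $H$-branch $s$ together with the single $H$-node $v_i$. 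Actually the cleanest route: $C_1\cap H$ is contained in $\cc{x,s,v_i}$, a subpath of the closed $H$-spoke $s$, so $C_1$ is $H$-close, hence by Lemma~\ref{lm:closeIsPrebox} it is a $(C_1\cup H)$-prebox, and by Lemma~\ref{lm:greenCycles}---no, wait, that lemma is about green cycles. The right tool here is that an $H$-close cycle in a tidy embedding, being contained in $\Mob$ and a prebox, can be shown to bound a face via the box-free property: if $C_1$ did not bound a face, there would be a planar $C_1$-bridge $B'$ with $C_1$ a $\comp{B'}$-prebox, making $C_1$ a box and contradicting Lemma~\ref{lm:noBox}. So the key step is: (i) show $C_1$ has a non-planar $C_1$-bridge (namely the one containing $H-\oo{s}$), hence by Lemma~\ref{lm:cleanBOD} has BOD; (ii) show $C_1$ is a $(C_1\cup H)$-prebox via Lemma~\ref{lm:closeIsPrebox}, using that $C_1$ is $H$-close; (iii) conclude via Lemma~\ref{lm:noBox} that there is no other $C_1$-bridge, so $C_1$ bounds a face.

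For $C_2:=\cc{z,B,w,B,x,Q-y,z}$, I would argue analogously. Here $C_2$ consists of the $zw$- and $wx$-paths of $B$ together with the path $Q-y$ from $x$ to $z$ within $Q$ (the portion of the quad $Q$ avoiding the node $v_i$). Since $Q=r_i\,s_{i+1}\,r_{i+5}\,s_i$ is a $4$-cycle of $H$-branches and we remove the node $y=v_i$, the path $Q-\cc{r_i}$ (or the appropriate arc) from $x$ to $z$ runs along at most the two $H$-branches $s_i$ (partially) and $r_{i+5}$ (partially), plus possibly $s_{i+1}$; in any case $C_2\cap H$ is contained in an open $H$-claw centered at an $H$-node of $Q$ (or in a union of $H$-branches meeting at a single node), so $C_2$ is again $H$-close. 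Then the same three-step argument applies: $C_2$ has the non-planar bridge containing $H-\oo{s_{i+1}}$ (or the relevant $V_6$ living in $\comp{C_2}$), so has BOD; $C_2$ is a $(C_2\cup H)$-prebox by $H$-closeness and Lemma~\ref{lm:closeIsPrebox}; hence $C_2$ is not a box, so it bounds a face.

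The main obstacle I anticipate is verifying carefully that $C_2$ is genuinely $H$-close---that its intersection with $H$ really does fit inside a single open $H$-claw or closed $H$-branch. This requires pinning down exactly which arc of $Q$ the path ``$Q-y$'' denotes and checking that, after including the necessary minimal connected subgraph of $H$, the resulting configuration is still centered at one node; the interior attachment $x\in\oo{s_i}$ and the fact that $z\in r_{i+5}$ rather than at a node make this slightly delicate. A secondary point to check is that in each case there genuinely is a non-planar $C_j$-bridge, i.e.\ that deleting $\Nuc$ of that bridge still leaves $H$ essentially intact---this follows because $H-\oo{s}$ (resp.\ $H-\oo{s_{i+1}}$) contains a subdivision of $V_8$ and hence is non-planar, and that subgraph lies in a single $C_j$-bridge. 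Once those two verifications are done, the conclusion that both cycles bound faces is immediate from the accumulated machinery (Lemmas~\ref{lm:closeIsPrebox}, \ref{lm:cleanBOD}, \ref{lm:noBox}).
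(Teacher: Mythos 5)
Your treatment of the first cycle has the right architecture ($H$-close, hence a $(C\cup H)$-prebox, hence clean in the relevant 1-drawings, hence only one bridge), but the step where you establish BOD does not work as written. You assert that the existence of a non-planar $C_1$-bridge gives BOD ``by Lemma~\ref{lm:cleanBOD}''; that lemma presupposes a 1-drawing of the ambient graph in which $C_1$ is clean, and $G\in\m2$ has no 1-drawing at all, so there is nothing to apply it to (and even applying it to some $G-e$ would require you to transfer BOD back to $G$ by comparing overlap diagrams, which you do not do). The paper instead gets everything at once from Lemma~\ref{lm:CdisjointNCcycle}: since $C_1$ meets $R$ only in the single vertex $y=v_i$ and meets $H$ only inside the closed branch $s$, the nucleus of the $C_1$-bridge containing $H-\oo{s}$ contains a non-contractible cycle (e.g.\ a spoke $s_{i+2}$ together with the $v_{i+2}v_{i+7}$-arc of $R$ avoiding $v_i$), and that lemma delivers contractibility, BOD, and planarity of every other $C_1$-bridge simultaneously. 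Contractibility is not cosmetic: ``exactly one bridge'' implies ``bounds a face'' only for a contractible cycle, and you never address it. With BOD in hand the finish is as you intend --- the paper phrases it as: a second bridge $B'$ would yield a 1-drawing of $\comp{B'}$ in which the prebox $C_1$ is clean, contradicting Lemma~\ref{lm:BODcrossed} --- which is interchangeable with your box formulation. The same BOD defect infects your argument for the second cycle.

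For the second cycle the paper's argument is one line, and you walked past it: $\cc{z,B,w,B,x,Q-y,z}$ is an $H$-green cycle (take $P_1=\cc{v_{i+5},r_{i+5},z}$, which has positive length since $z\notin\cc{s}$; $P_2=\cc{x,s,v_{i+5}}$; $P_3$ the $H$-avoiding $xz$-path through $w$; $P_4$ trivial), so Lemma~\ref{lm:greenCycles}~(\ref{it:CboundsFace}) gives the face immediately. You correctly noticed that the green-cycle route fails for the \emph{first} cycle because its $P_1$ would be trivial, but then abandoned it for the second cycle, where it succeeds; this also disposes of the $H$-closeness verification you flagged as the main obstacle, which is indeed delicate there (the intersection with $H$ straddles two branches at $v_{i+5}$ and leaves the open claw if $z=v_{i+6}$).
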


\begin{proof}  For the latter, $\cc{z,B,w,B,x,Q-y,z}$ is an $H$-green cycle, so the result follows from Lemma \ref{lm:greenCycles}.  The former, call it $C$, has just one vertex in $R$, so Lemma \ref{lm:CdisjointNCcycle} implies it has BOD and every one of its bridges other than the one containing $H-\oo{s}$ is planar.  If it has a second bridge $B'$, then $C$ is clean in any 1-drawing of $\comp{B'}$, contradicting Lemma \ref{lm:BODcrossed}. \end{proof}

The chosen labelling shows that $Q_{i-1}$ is the other $H$-quad containing $s$.  

\begin{claim}  There is no $Q_{i-1}$-local $H$-bridge that has an attachment in $\oo{s}$.    \end{claim}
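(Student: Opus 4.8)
The plan is to argue by contradiction: suppose $B'$ is a $Q_{i-1}$-local $H$-bridge with an attachment in $\oo{s}$. The key structural facts already in hand are that $H$ is $\Pi$-tidy, that $B$ is a $K_{1,3}$ with talons $x\in\oo{s}$, $y=v_i$, and $z\in r_{i+5}$, center $w$, and that the two triangles-like cycles $C_1=\cc{y,B,w,B,x,s,y}$ and $C_2=\cc{z,B,w,B,x,Q-y,z}$ bound faces of $\Pi[G]$. In particular $C_1$ bounds a face, and the portion $\cc{x,s,y}$ of $s$ is one side of this face; nothing else of $G$ lies in that face except the two edges of $B$ incident with $x$ and with $w$ (on the boundary).

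First I would observe that since $B'$ is $Q_{i-1}$-local, $\att(B')\subseteq Q_{i-1}$, and $s\subseteq Q_{i-1}$; as $B'$ has an attachment in $\oo{s}$, Lemma \ref{lm:noBridgeTwoInSpoke} applies to $B'$ (with the same spoke $s$), so the attachment of $B'$ in $\oo{s}$ is its \emph{only} attachment in $\cc{s}$. Thus $B'$ has an attachment $x'\in\oo{s}$ and, since $B'$ is not all-in-one-spoke by Lemma \ref{lm:noSpokeOnlyBridge}, another attachment $y'\in Q_{i-1}-\cc{s}$. Now $x'$ lies in the open spoke $\oo{s}$, and the face bounded by $C_1$ has $\cc{x,s,y}$ on its boundary. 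If $x'\in\oo{x,s,y}$, then the $H$-avoiding $x'y'$-path in $B'$ must leave the face bounded by $C_1$ without crossing $C_1$ — impossible, since all of $B'$ away from $\oo{s}$ lies outside that face while $x'$ sits strictly interior to the boundary arc $\oo{x,s,y}$; more precisely $x'$ and $y'$ are separated in $\pp$ by $C_1$, contradiction. So $x'\in\oo{x,s,v_{i+5}}$ (the other sub-arc of $\oo{s}$, between $x$ and the far end $v_{i+5}$ of $s$ that is not $y$).

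The heart of the argument is then to use the second face-bounding cycle $C_2$ and tidiness on the quad $Q_{i-1}$. Since $x'\in\oo{x,s,v_{i+5}}$ and $x\in\oo{s}$, I would exhibit two $Q_{i-1}$-local bridges whose attachments interlace on $Q_{i-1}$: namely $B'$, with attachments $x'$ and $y'$, versus the $Q_{i-1}$-local bridge $B''$ containing the edge of $B$ incident with $x$ (or $B$ itself viewed relative to $Q_{i-1}$ if $B$ is local to $Q_{i-1}$; if $B$ is global this needs a small separate check, but $z\in r_{i+5}\subseteq Q_{i-1}$ and $x,y\in s\subseteq Q_{i-1}$, so in fact $B$ \emph{is} $Q_{i-1}$-local). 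Then $B$ and $B'$ are both $Q_{i-1}$-local; their attachments are $\{x,y,z\}$ and $\{x',y'\}$ respectively, with $x,x'$ both in $\oo{s}$ in the order $v_i, \dots, x, \dots, x', \dots, v_{i+5}$ along $s$, and with $y=v_i$ on one side of $\{x,x'\}$ in $Q_{i-1}$ and $y'\in Q_{i-1}-\cc{s}$ also on the $v_i$-side (by the face argument above $y'$ cannot be strictly between $x$ and $v_{i+5}$ off the spoke in a way that stays in $C_2$'s complement without forcing a crossing) — I would pin down the cyclic order on $Q_{i-1}$ precisely and conclude $B$ and $B'$ are skew on $Q_{i-1}$, hence overlap. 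This contradicts clause (\ref{it:QnoOverlap}) of tidiness (Definition \ref{df:tidy}).

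The main obstacle I anticipate is the case analysis for the location of $y'$ relative to the two faces bounded by $C_1$ and $C_2$: one must be careful, using the planarity of $\Pi$ restricted to these faces and the fact that $B'\subseteq\Mob$ (tidiness clause (\ref{it:bridgeMob}), since $B'$ is local), to rule out every placement of $x'$ and $y'$ except those that make $B$ and $B'$ overlapping $Q_{i-1}$-bridges. A secondary subtlety is confirming $B$ is $Q_{i-1}$-local and not global; this follows because all of $\att(B)=\{x,y,z\}$ lies in $s\cup r_{i+5}\subseteq Q_{i-1}$, so it is immediate, but it should be stated. Once the interlacing is established, the contradiction with Definition \ref{df:tidy}(\ref{it:QnoOverlap}) is automatic, completing the proof.
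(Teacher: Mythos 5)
Your central step fails: $B$ is not a $Q_{i-1}$-local $H$-bridge, so the appeal to tidiness clause (\ref{it:QnoOverlap}) is unavailable. In the paper's setup the labelling is chosen so that $r_i\subseteq Q$, i.e.\ $Q=Q_i=r_i\,s_{i+1}\,r_{i+5}\,s_i$, and the third attachment $z$ of $B$ lies in $r_{i+5}$. But $Q_{i-1}=r_{i-1}\,s_i\,r_{i+4}\,s_{i-1}$ does not contain $r_{i+5}$; your assertion that $z\in r_{i+5}\subseteq Q_{i-1}$ is false, and $B$ is $Q_i$-local only. Nor can you switch to $Q_i$: the hypothesized bridge $B'$ has an attachment in $Q_{i-1}-\cc{s}$, hence outside $Q_i$, so as a subgraph it sits inside the M\"obius bridge $M_{Q_i}$ rather than being a $Q_i$-local $H$-bridge, and tidiness says nothing about $B$ overlapping $M_{Q_i}$. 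Topologically there is also no obstruction to $B$ and $B'$ both attaching to $\oo{s}$ from opposite sides of the spoke, which is why your preliminary face argument (that $x'\in\oo{x,s,y}$ separates $x'$ from $y'$ via the face bounded by $\cc{y,B,w,B,x,s,y}$) does not go through either: that face lies on the $Q_i$ side of $s$, while $B'$ is embedded on the $Q_{i-1}$ side, so no crossing is forced. A minor additional point: you invoke Lemma \ref{lm:noBridgeTwoInSpoke} for $B'$, but that is the lemma whose proof this claim is part of, so the appeal is circular (and also unnecessary for what you use).

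The paper's actual argument is of a different flavour and is what is needed here: it concatenates an $H$-avoiding path through $B'$ (from its attachment $z'$ off the spoke to $x'\in\oo{s}$), a segment of $s$, and a path through $B$ to produce an $H$-green cycle, and then observes that this cycle cannot bound a face of $\Pi[G]$ (the edge of $s$ at $y$, respectively part of $\bQ_i$, lies strictly inside it), contradicting Lemma \ref{lm:greenCycles} (\ref{it:CboundsFace}). If you want to repair your proof, that green-cycle mechanism — not bridge overlap on a quad — is the tool to use.
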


\begin{proof} Suppose $B''$ is a $Q_{i-1}$-local $H$-bridge having an attachment $x'$ in $\oo{s}$.  Lemma \ref{lm:noSpokeOnlyBridge} implies $B''$ has an attachment $z'$ not in $\cc{s}$.  If $z'$ is in the same $H$-rim branch $r_{i-1}$ contained in $Q_{i-1}$ as $y$, then $\cc{x',B'',z',r,y,B,w,x,s,x'}$ is an $H$-green cycle $C$.  As the edge of $s$ incident with $y$ is $C$-interior, $C$ does not bound a face of $\Pi[G]$.  If $z'$ is not in $r_{i-1}$, then $\cc{z',B'',x',s,x,B,w,B,z,\bQ_i-y,z'}$ is a non-facial $H$-green cycle.  Both conclusions contradict Lemma \ref{lm:greenCycles} (\ref{it:CboundsFace}).   \end{proof}

We conclude that $s$ has length 2 and that $B$ is the only $H$-bridge attaching in $\oo{s}$.  Let $D$ be a 1-drawing of $G-wy$.  Then $D[s\cup (B-wy)]$ is clean in $D$ and we may extend $D$ to a 1-drawing of $G$ by adding in $wy$ alongside $\cc{w,B,x,s,y}$.
\end{cproof}
}

\chapter{Every rim edge has a colour}\printFullDetails{

In this section we introduce\dragominorrem{(text removed)}, for a tidy subdivision $H$ of $V_{10}$ in $G$, $H$-yellow edges.  The main result is that every $H$-rim edge has a \dragominor{colour:  $H$-green, $H$-yellow, or red}.  This is a major step on the route.  In the next section, we will analyze red edges, with the main result being that there are red edges.

}\begin{definition}\label{df:yellow}  Let $H$ be a subdivision of $V_{10}$ in a graph $G$.
\begin{enumerate}
\item\label{it:3rimPath} A {\em $3$-rim path\/}\index{3-rim path}\index{rim path} is a path contained in the union of three consecutive $H$-rim branches.
\item\label{it:closure}   The {\em closure\/}\index{closure}\index{$\cl(Q)$} $\cl(Q)$ of an $H$-quad $Q$ is the union of $Q$ and all $Q$-local $H$-bridges. 
\item  Let $H$ be\wordingrem{(text removed)} tidy in $G$.    A cycle $C$ in $G$ is {\em $H$-yellow\/}\index{yellow}\index{$H$-yellow} if $C$ may be expressed as the composition $P_1P_2P_3P_4$ of four paths so that:
\begin{enumerate}
\item $P_2$ and $P_4$ are \wording{$R$-avoiding (recall $R$ is the $H$-rim)} \dragominor{and have length at least 1};
\item\label{it:yellowP1P3} $P_1$ and $P_3$ are 3-rim paths and $P_1\cup P_3$ is not contained in a 3-rim path; and 
\item there is an $H$-green cycle $C'$ so that $P_1\subseteq \oo{C'\cap R}$.
\end{enumerate}
\item An $H$-rim edge $e$ is {\em $H$-yellow\/} if it is not $H$-green and is in an $H$-yellow cycle.
\end{enumerate}
\end{definition}\printFullDetails{ 

We remark that the $H$-rim edges that are $H$-yellow are those in $P_3$.
The next result elucidates the nature of an $H$-yellow cycle.

}\begin{lemma}\label{lm:yellowCycles}  Let $G\in \m2$, $\hvfg$, with $H$ tidy.  Let $C$ be an $H$-yellow cycle, with decomposition $P_1P_2P_3P_4$ into paths as in Definition \ref{df:yellow}, 
and let $C'$ be the witnessing $H$-green cycle.  Then:
\begin{enumerate}
\item\label{it:greenGlobal} $C'-\oo{C'\cap R}$ is a global $H$-bridge;
\item\label{it:yellowP2P4} for $i\in\{2,4\}$, $P_i$ is either $H$-avoiding or decomposes as $P_i^1P_i^2$, where $P_i^1$  is contained in some $H$-spoke, including an incident $H$-node, and $P_i^2$ is $H$-avoiding;
%\item\label{it:yellowLocal} there is an $H$-quad $Q$ so that $C\subseteq \comp{M_Q}$;
%\item\label{it:oppositeSides} $P_3$ and $Q\cap C'$ are contained in distinct $H$-rim branches of $Q$;
%\item\label{it:shortP2P4} $P_2$ and $P_4$ have length one;
%\item\label{it:shortP1} $P_1$ has length at most one; and
\item\label{it:oneBridge} there is only one $C$-bridge in $G$; and
\item\label{it:quadClosure} there is an $i\in\{0,1,2,3,4\}$ so that $C\subseteq \cl(Q_i)$.  
\end{enumerate}
\end{lemma}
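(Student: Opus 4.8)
The plan is to analyze an $H$-yellow cycle $C = P_1 P_2 P_3 P_4$ together with its witnessing $H$-green cycle $C'$ by comparing them, both combinatorially and via the $H$-tidy embedding $\Pi$ in $\pp$. First I would establish (\ref{it:greenGlobal}): by Definition \ref{df:yellow}, $P_1 \subseteq \oo{C' \cap R}$, so $C' \cap R$ has an interior vertex, hence $C'$ is not contained in any single closed $H$-branch. Looking at the path decomposition $P_1' P_2' P_3' P_4'$ of $C'$ from Definition \ref{df:green}, the ``spoke-part'' $P_3'$ (the $H$-avoiding part) together with $P_2' \cup P_4'$ forms the $C'$-bridge $M_{C'}$ with $H \subseteq C' \cup M_{C'}$ by Lemma \ref{lm:greenCycles} (\ref{it:HinCbridge}), and $M_{C'} - \oo{C' \cap R} = C' - \oo{C' \cap R}$ is precisely the $H$-bridge attaching at the two ends of $C' \cap R$. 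Since $\oo{C' \cap R}$ contains an $H$-rim branch edge (as $P_1$ lies inside it and is a nontrivial path by Definition \ref{df:yellow}), this $H$-bridge has its two attachments far apart on $R$; by Theorem \ref{th:globalBridges} applied through the classification of global vs.\ local bridges, I would argue this bridge must be global — a local bridge has all attachments in a single $H$-quad and (by Lemma \ref{lm:closeIsCycle} / Corollary \ref{co:closeAtts} and tidiness) could not span an interior rim-branch edge the way $C'$ does.

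Next, (\ref{it:yellowP2P4}): this is the exact analogue of Lemma \ref{lm:greenBasics} (\ref{it:p2p4spoke}) for $H$-yellow cycles. For $i \in \{2,4\}$, $P_i$ is $R$-avoiding, so any edge of $P_i$ lying in $H$ must lie in an $H$-spoke; following such an edge, the maximal subpath of $P_i$ contained in $H$ is contained in one $H$-spoke (else an $H$-node of $R$ would be internal to the $R$-avoiding path $P_i$, a contradiction), and it must contain an end $H$-node since $P_i$ meets a 3-rim path at that end. So $P_i$ decomposes as claimed. I would also note the edge case where $P_i$ is entirely $H$-avoiding ($P_i^1$ trivial).

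The substantive parts are (\ref{it:oneBridge}) and (\ref{it:quadClosure}), and I expect (\ref{it:oneBridge}) to be the main obstacle. The strategy is to show $C$ is a ``box-like'' cycle: one shows $C$ has BOD and is a prebox relative to a suitable subgraph, then invokes Lemma \ref{lm:noBox}. To get BOD, I would use the $\Pi$-tidy embedding: $C$ is built from $P_1, P_3$ (rim pieces) and $P_2, P_4$ which run through $\Mob$ via spokes and local bridges, while the global bridge $C' - \oo{C' \cap R}$ lies in $\Disc$; I would locate a non-contractible cycle disjoint from $C$ (using an $H$-spoke not meeting $C$ and a rim arc, exactly as in the proof of Lemma \ref{lm:greenCycles} (\ref{it:BODplanar})) so that Lemma \ref{lm:CdisjointNCcycle} gives BOD. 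Then, as in Lemma \ref{lm:greenCycles} (\ref{it:prebox}) and (\ref{it:oneCbridge}), I would show that for every edge $e$ of $C$, deleting $e$ from $C \cup H$ (or $C \cup (H - \oo{s})$ for an appropriate spoke $s$) still contains a subdivision of $V_6$: for edges in $P_1 \cup P_3$ one reroutes along the remaining part of the relevant rim branches plus $P_2 P_3 P_4$ or $P_1 P_2 P_4$ and the $H$-spokes; for edges not in $R$, the graph $H$ itself survives. This makes $C$ a $(C \cup H)$-prebox, so if there were a second $C$-bridge $B$, then $C$ would be a $\comp{B}$-prebox with BOD, hence a box — contradicting Lemma \ref{lm:noBox}. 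Finally, (\ref{it:quadClosure}): since $C$ has a unique bridge containing $H$ minus a spoke, $C$ itself together with all the structure ``between'' the two rim-pieces $P_1$ and $P_3$ must fit into the closure of a single $H$-quad; concretely, $P_1$ and $P_3$ are 3-rim paths whose union is \emph{not} a 3-rim path (Definition \ref{df:yellow} (\ref{it:yellowP1P3})), and by examining which pairs of 3-rim paths can be joined by $R$-avoiding paths through $\Mob$ (the $P_2, P_4$ pieces, each in a spoke plus an $H$-avoiding segment), tidiness condition (\ref{it:Hjump}) and Corollary \ref{co:attsMissBranch} force $P_1, P_3, P_2, P_4$ all to lie within the closure $\cl(Q_i)$ of one quad. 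The bookkeeping of which rim branches and spokes are involved — and ruling out the ``wrap-around'' configurations using Lemma \ref{lm:globalJumps} and Theorem \ref{th:twoGreenCycles} — is where the real work lies.
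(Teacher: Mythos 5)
Your treatment of parts (\ref{it:oneBridge}) and (\ref{it:quadClosure}) is essentially the paper's argument in different packaging (the paper shows directly that no edge of $C$ can be crossed in a $1$-drawing of $\comp{B}$ by exhibiting three subdivisions of $V_6$, rather than invoking the box machinery, but the content is the same). The genuine gap is in part (\ref{it:greenGlobal}). Your argument there never uses the yellow cycle $C$: you claim a local $H$-bridge ``could not span an interior rim-branch edge the way $C'$ does,'' but that is false --- local $H$-green cycles routinely have $C'\cap R$ equal to a subpath of a rim branch with nonempty interior (this is exactly how most $H$-green rim edges arise, cf.\ the proof of Theorem~\ref{th:rimColoured}). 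Without invoking $C$, the conclusion simply does not hold. The correct argument is: if $C'$ were contained in some $\cl(Q)$, then by Lemma~\ref{lm:greenCycles}~(\ref{it:CboundsFace}) it bounds a face of $\Pi[G]$ lying in $\Mob$; since $P_2$ and $P_4$ are $R$-avoiding and attach to $P_1\subseteq\oo{C'\cap R}$, their initial edges cannot enter that face, so $P_2$ and $P_4$ lie in global $H$-bridges, and the $H$-green cycle spanned by such a bridge shares a rim edge of $C'\cap R$ with $C'$, contradicting Theorem~\ref{th:twoGreenCycles}. Note also that $C'-\oo{C'\cap R}$ is not a priori ``the $H$-bridge attaching at the two ends of $C'\cap R$'': it is $P_2'P_3'P_4'$, and part of what (\ref{it:greenGlobal}) asserts is that the spoke portions $P_2',P_4'$ are trivial.

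There is a smaller gap in part (\ref{it:yellowP2P4}): your observation only shows that each \emph{maximal} subpath of $P_i$ lying in $H$ is contained in a single spoke. It does not rule out $P_i$ leaving $H$, re-entering the interior of a (possibly different) spoke, and leaving again, which would violate the claimed decomposition $P_i^1P_i^2$ with the spoke part adjacent to $P_1$ and everything after it $H$-avoiding. Excluding this requires Lemma~\ref{lm:noBridgeTwoInSpoke} (the re-entry vertex is the unique attachment of its local bridge on that spoke) together with Theorem~\ref{th:twoGreenCycles} (the resulting $H$-green cycle would put a rim edge incident with $u_i\in\oo{C'\cap R}$ into two green cycles). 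You should also be aware that the paper uses part (\ref{it:yellowP2P4}) to locate a spoke disjoint from $C$ before it can run the Lemma~\ref{lm:CdisjointNCcycle} argument for (\ref{it:oneBridge}), so the logical order of the parts matters.
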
\printFullDetails{

\begin{cproof}   Let $\Pi$ be an embedding of $G$ in $\pp$ for which $H$ is $\Pi$-tidy; in particular, every $H$-green cycle bounds a face of $\Pi[G]$.  

 For (\ref{it:greenGlobal}), the alternative is that $C'$ is contained in $\cl(Q)$, for some $H$-quad $Q$. \dragominor{Lemma \ref{lm:greenCycles} (\ref{it:CboundsFace}) shows that} $C'$ bounds a face of $G$ in $\pp$, so $P_2$ and $P_4$ are contained in global $H$-bridges.   Each of $P_2$ and $P_4$ is in an $H$-green cycle (as is every global $H$-bridge) and, since $P_2$ has an end in $\oo{C'\cap R}$, some edge of $C'\cap R$ is in two $H$-green cycles, contradicting Theorem \ref{th:twoGreenCycles}.

For (\ref{it:yellowP2P4}), let $i\in \{2,4\}$. \dragominor{Since $P_i$ has positive length, the end $u_i$ of $P_i$ in  $P_1$ is distinct from the end $w_i$ of $P_i$ in $P_3$.}    Because $C'$ bounds a face of $G$ and is contained in $\Disc$, we see that the edges of $P_i$ incident with $u_i$ is in $\Mob$.  Since $P_i$ is $R$-avoiding, $P_i$ is contained in $\Mob$, with only its ends in $R$.

Now suppose $P_i$ has a\wording{n edge} $e$ not in $H$.    Choose $e$ to be as close to $u_i$ in $P_i$ as possible.  As $w_i$ is in $H$, there is a first vertex $y$ of $P_i$ after $e$ that is in $H$.  If $y=w_i$, then we are done, so we may assume $y\ne w_i$.  Since $P_i$ is $R$-avoiding, we see that $y$ must be in the interior of some spoke $s$.   Let $z$ be the vertex of $P_i$ incident to $e$ so that $e$ is in $\cc{z,P_i,y}$.     

As $P_i$ is contained in $\Mob$, we see that $\cc{u_i,P_i,y}$ is contained in a closed $\Pi[H]$-face bounded by some $H$-quad $Q$.  Also, $\cc{z,P_i,y}$ is $H$-avoiding and so \wording{is} contained in some $Q$-local $H$-bridge $B_i$.  By Lemma \ref{lm:noBridgeTwoInSpoke}, $y$ is the only attachment of $B_i$ in $\cc{s}$.  Since $z\ne y$ and both are attachments of $B_i$, we have that $z\notin\cc{s}$.  

The path $\cc{u_i,P_i,z}$ is $R$-avoiding and contained in $H$.  Therefore, either it is trivial or it is contained in some $H$-spoke $s'$.  In the latter case, $z\ne y$ implies $s'\ne s$.  In the former case, $u_i=z$, so $u_i\notin s$.  In both cases,  $\cc{u_i,P_i,y}\cup Q$ contains an $H$-green cycle that contains an $H$-rim edge incident with $u_i$, contradicting Theorem \ref{th:twoGreenCycles} and completing the proof of (\ref{it:yellowP2P4}).

\dragominor{For (\ref{it:oneBridge}), we start by noting that there exist $i$ and $j$ so that $P_1\subseteq r_i\,r_{i+1},\dots,r_j$ and $i-1\le j\le i+2$; we assume $P_1$ has one end in $\co{v_i,r_i,v_{i+1}}$, one end in $\oc{v_j,r_j,v_{j+1}}$, and that $j=i-1$ only if $P_1$ is just the single $H$-node $v_i$.  Item \ref{it:yellowP2P4} implies $P_2$ is contained in $\cl(Q_{i-1})\cup \cl(Q_i)$ and that $P_4$ is contained in $\cl(Q_j)\cup \cl(Q_{j+1})$.  It follows that $P_3$ has its ends in $r_{i+4}\,r_{i+5}$ and $r_{j+5}\,r_{j+6}$.  There are at most $(j+6)-(i+3)\le 5$ $H$-rim branches $r_{i+4}\,r_{i+5}\, \dots\, r_{j+6}$, so $P_3$, being a 3-rim path, must be contained in this path.  It follows that $C$ is disjoint from either $s_{i-2}$ or $s_{i+2}$.}

Let $s$ be an $H$-spoke disjoint from $C$ and let $M_C$ denote the $C$-bridge containing $s$.  

Set $R'=(R-\oo{C'\cap R})\,\cup\, (C'-\oo{C'\cap R})$.  Then $R'\cup s$ contains a non-contractible cycle $C''$ disjoint from $C$.  \dragominor{Lemma \ref{lm:CdisjointNCcycle} shows $C$ is contractible, has BOD, and every $C$-bridge other than $M_C$ is planar. } 

Suppose there is a $C$-bridge $B$ other than $M_C$; let $D$ be a 1-drawing of $\comp{B}$.
Lemma \ref{lm:BODcrossed} implies $D[C]$ is crossed.  Let $s$, $s'$, and $s''$ be the three $H$-spokes disjoint from $\oo{C'\cap R}$.   Then $R\cup s\cup s'\cup s''$ is a subdivision of $V_6$ in $\comp{B}$ that is edge-disjoint from both $P_2$ and $P_4$; this shows that some edge of $P_1\cup P_3$ is crossed in $D$. 

But now  $R'\cup s\cup s'\cup s''$ is another subdivision of $V_6$ in $\comp{B}$.  Therefore, the crossing in $D$ must involve two edges of $R'\cup s\cup s'\cup s''$.  In particular it does not involve an edge of $C'\cap R$, and, since $P_1\subseteq C'\cap R$, no edge of $P_1$ is crossed in $D$. 

Likewise, let $R''$ be obtained from $R'$ by replacing $P_3$  with $P_2P_1P_4$.  Now $R''\cup s\cup s'\cup s''$ is a third subdivision of $V_6$ in $\comp{B}$ that is disjoint from $P_3$.  Thus, the crossing in $D$ does not involve an edge of $P_3$.  Thus, none of $P_1$, $P_2$, $P_3$, and $P_4$ is crossed in $D$, contradicting the fact that $C$ is crossed in $D$.  We conclude that there is no $C$-bridge other than $M_C$, as claimed.

Finally, for (\ref{it:quadClosure}), suppose first that $P_1$ is not contained in a single $H$-rim branch.  Then there is an $H$-node $v_i$ in the interior of $P_1$.  \dragominor{However, $P_1$ is incident on one side with the face bounded by $C'$, so  the edge of $s_i$ incident with $v_i$ is on the other side of $P_1$.  Since $C$ is contractible, we conclude that there are at least two $C$-bridges, contradicting (\ref{it:oneBridge}).  Therefore, there is an $i\in \{0,1,2,3,4\}$ so that $P_1\subseteq r_i$.}

If both $P_2$ and $P_4$ are contained in $\cl(Q_i)$, then so is $P_3$, as it is a 3-rim path.   Therefore, by symmetry, we may assume that $P_2$ has some edge not in $\cl(Q_i)$.  As we traverse $P_2$ from its end in $P_1$, we come to a first edge $e$ that is not in $\cl(Q_i)$.  One end of $e$ is the vertex $u$ that is in either $s_i$ or $s_{i+1}$; for the sake of definiteness, we assume the former.  Then (\ref{it:yellowP2P4}) implies $\cc{v_i,s_i,u}\subseteq P_2$ and that the remainder of $P_2$ consists of an $H$-avoiding $uw$-path, with $w$ an end of $P_3$.
It follows that $w\in r_{i+4}$.  Let $\hat e$ be the edge of $s_i$ incident with $u$ and not in $P_2$.

\dragominor{Switching paths, we know that} $P_{4}$ has an end $x$ in $r_i$.  If $x\ne v_{i+1}$, then (\ref{it:yellowP2P4}) implies $P_{4}\subseteq \cl(Q_i)$.  In this case, $\hat e$ is in a $C$-bridge other than $M_C$, contradicting (\ref{it:oneBridge}).  Otherwise $x=v_{i+1}$, in which case $P_1P_2\lbsp\cb{w,r_{i+4},v_{i+5},r_{i+5},}$ $\bc{v_{i+6},s_{i+1},v_{i+1}}$ is an $H$-yellow cycle $\widehat C$.  There is a $\widehat C$-bridge other than $M_{\widehat C}$ containing $\hat e$, also contradicting (\ref{it:oneBridge}) for $\widehat C$.  \end{cproof}

We now turn our attention to the all-important red edges. \dragominorrem{(text omitted.)}We comment that, if $n\ge 4$ and $\hvng$, then any red edge of $G$ is in the $H$-rim.

The remainder of this section is devoted to proving the following.

}\begin{theorem}\label{th:rimColoured}  Let $G\in \m2$ and let $\hvfg$.  If $H$ is  tidy, then every $H$-rim edge is one of $H$-green, $H$-yellow, and red. \end{theorem}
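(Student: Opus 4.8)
The plan is to proceed by contradiction: suppose $e_0$ is an $H$-rim edge of $G$ that is neither $H$-green nor red, and derive a contradiction by exhibiting a box, contradicting Lemma \ref{lm:noBox}. Since $e_0$ is not red, $G-e_0$ is non-planar; as $G$ is $2$-crossing-critical, $G-e_0$ has crossing number exactly $1$, so fix a $1$-drawing $D$ of $G-e_0$. First I would locate $e_0$ relative to the standard tidy embedding $\Pi$ (which exists by Theorem \ref{th:existTidy}): say $e_0\in r_i$. I expect to argue that, because $e_0$ is not $H$-green, the rim branch $r_i$ cannot be ``covered'' by a green cycle through $e_0$, and hence (using Lemma \ref{lm:greenCyclesFriendly}, Lemma \ref{lm:greenCycles}, and especially the global-bridge analysis of Theorem \ref{th:globalBridges} and Lemma \ref{lm:globalJumps}) there is no global $H$-bridge spanning $e_0$. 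I would then split into cases according to what happens near $e_0$ in $D$ and in $\Pi$.

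The core idea is to manufacture a cycle $C$ through $e_0$ that witnesses non-redness in a structured way. Using the tidy embedding, consider the face $F$ of $\Pi[H]$ whose boundary contains $e_0$ — this is bounded by an $H$-quad $Q_j$ with $e_0\in r_j$ or $e_0\in r_{j+5}$ — together with the local $H$-bridges in $\cl(Q_j)$. The key step is to show that, since $e_0$ is not red, there must be some $H$-avoiding path in $\Loc(Q_j)$ or from a nearby spoke that, combined with a subpath of $R$ through $e_0$ and spoke segments, forms either an $H$-green cycle through $e_0$ (contradicting non-greenness), an $H$-yellow cycle through $e_0$ (contradicting our assumption), or else a cycle $C$ that is a $(C\cup H)$-prebox, has BOD by Lemma \ref{lm:CdisjointNCcycle} or Corollary \ref{co:contractibleBOD} (producing a non-contractible cycle in the M\"obius bridge disjoint from $C$, using that some spoke avoids $C$), and has a planar $C$-bridge on the disc side — i.e., a box. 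The dichotomy ``green/yellow/red'' should emerge precisely from whether the auxiliary attachments reach far enough along the rim: short reach gives greenness (Definition \ref{df:green}), moderate reach through the ``$P_1\subseteq\oo{C'\cap R}$'' pattern gives yellowness (Definition \ref{df:yellow}), and any remaining configuration forces the prebox structure needed for a box.

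To make the box argument work I would use the $1$-drawing $D$ of $G-e_0$ as the source of cleanliness: since $G-e_0$ is non-planar, $D$ has a unique crossing, and I would argue that with $H$ tidy the crossing is forced to involve two rim edges of $H$ at bounded distance (Lemma \ref{lm:1drawingsV2n}), so that the candidate cycle $C$ through $e_0$ is clean in $D$. Combining cleanliness with Lemma \ref{lm:cleanBOD} gives BOD, and combining with Lemma \ref{lm:preboxClean} (applied to the prebox $C$) then contradicts Lemma \ref{lm:BODcrossed}, exactly the ``box $\Rightarrow$ contradiction'' mechanism of Lemma \ref{lm:noBox}. Throughout, Lemmas \ref{lm:noSpokeOnlyBridge}, \ref{lm:noBridgeTwoInSpoke}, and Corollary \ref{co:closeAtts} control how the relevant $H$-bridges can attach, ruling out degenerate attachment patterns.

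The main obstacle I anticipate is the bookkeeping in the case split near $e_0$: there are genuinely several configurations — $e_0$ near a spoke foot versus in the middle of a rim branch; local bridges in $\cl(Q_j)$ reaching toward $e_0$ from one side or both; interaction with the distinguished quad $Q_4$ and the curve $\gamma$ — and in each one I must verify that the path I build really yields a green cycle, a yellow cycle, or a box, with no leftover case. In particular, the yellow case is delicate because Definition \ref{df:yellow} requires the auxiliary piece $P_1$ to lie in the \emph{open} rim-arc of a green cycle $C'$, so I will need to first produce the witnessing green cycle $C'$ (from a global $H$-bridge, via Theorem \ref{th:globalBridges} and Lemma \ref{lm:globalJumps}) and check its rim-arc contains the part of $R$ adjacent to $e_0$; when no such $C'$ exists, I must instead be able to push the configuration into the box case. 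Getting this trichotomy to be exhaustive — i.e., that ``not green and not a box'' forces either redness or the precise yellow pattern — is the heart of the argument, and I expect it to occupy most of the proof.
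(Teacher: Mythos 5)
There is a genuine gap, and it sits exactly where you predicted it would: in converting ``$e_0$ is not red'' into usable local structure near $e_0$. Non-redness only tells you that $G-e_0$ is non-planar, a global fact. Your proposal asserts that ``there must be some $H$-avoiding path in $\Loc(Q_j)$ or from a nearby spoke'' witnessing this, but never explains why the witness can be localized to the closure of the quad containing $e_0$ (together with at most one global $H$-bridge). This localization is the actual content of the paper's argument: it introduces the \emph{scope} $K_i$ of $Q_i$ (Definition \ref{df:scope}), consisting of $\cl(Q_i)$, the two spines $\leftspine_i$ and $\rightspine i$, and the global $H$-bridges attaching along them, and proves (Lemma \ref{lm:Qspans}) that $e\in r_i$ is red if and only if $K_i-e$ has no two disjoint non-trivial $\leftspine_i\,\rightspine i$-paths, then normalizes those paths (Lemma \ref{lm:oneInClosure}) so that one lies in $\cl(Q_i)$ and the other carries at most one global $H$-bridge. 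Without some equivalent of this reduction you cannot even begin the case analysis you describe; the non-planarity of $G-e_0$ could a priori be witnessed far from $r_i$.

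The second problem is your terminal ``box case.'' Once the disjoint paths are normalized, the paper shows \emph{directly} that they force $e$ to be $H$-green (when the relevant path avoids $r_{i+5}$, or when the global bridge $B_2$ spans $e$) or $H$-yellow (when $B_2$ attaches along $\botleft{P}_i\cup r_{i+5}\cup\botright{P}_i$, which produces exactly the $P_1\subseteq\oo{C'\cap R}$ pattern of Definition \ref{df:yellow}); there is no residual configuration, hence no box is ever needed in this proof. Your claim that ``any remaining configuration forces the prebox structure needed for a box'' is unsupported: to build a box you must exhibit a specific cycle $C$, verify BOD, produce a planar $C$-bridge $B$, and check that $C$ is a $\comp B$-prebox, and nothing in the hypothesis ``$e_0$ is neither green nor yellow nor red'' hands you such a $C$. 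So while the contradiction-via-box framing is a legitimately different strategy in principle (and boxes do drive other results in the paper, e.g.\ Theorem \ref{th:twoGreenCycles}), as written the proposal is missing both the localization lemma that makes the trichotomy analyzable and the construction that would close the final case.
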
\printFullDetails{

We start with an easy observation.

}\begin{lemma}\label{lm:greenNotRed}  Let $G\in \m2$ and let $\hvfg$.  If $H$ is  tidy and the $H$-rim edge $e$ is either $H$-green or $H$-yellow, then $e$ is not red. \end{lemma}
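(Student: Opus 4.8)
The plan is to show that if an $H$-rim edge $e$ is $H$-green or $H$-yellow, then $G-e$ is non-planar, which is exactly what is needed since $e$ is red precisely when $G-e$ is planar. The idea in both cases is to exhibit, inside $G-e$, a subdivision of $V_6$ (or some other known non-planar graph), using the structural information obtained in Lemmas \ref{lm:greenCycles} and \ref{lm:yellowCycles}.

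First I would treat the $H$-green case. Suppose $e$ lies on an $H$-green cycle $C$, written $C = P_1P_2P_3P_4$ as in Definition \ref{df:green}, with $e\in P_1\subseteq R$. By Lemma \ref{lm:greenCycles} (\ref{it:prebox}), $C$ is a $(C\cup H)$-prebox, which means precisely that $(C\cup H)-e$ is not planar; since $(C\cup H)-e\subseteq G-e$, we conclude $G-e$ is non-planar, so $e$ is not red. This case is essentially immediate from the prebox property already established.

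Next I would treat the $H$-yellow case. Suppose $e$ lies on an $H$-yellow cycle $C = P_1P_2P_3P_4$ as in Definition \ref{df:yellow}, with $e$ an $H$-rim edge that is not $H$-green; then $e\in P_3$ (the $H$-rim edges of an $H$-yellow cycle lie in $P_3$). By Lemma \ref{lm:yellowCycles} (\ref{it:quadClosure}), there is an $i$ with $C\subseteq \cl(Q_i)$, and by (\ref{it:greenGlobal}) the witnessing green cycle $C'$ contributes a global $H$-bridge whose span contains $P_1$. The key point is to build a subdivision of $V_6$ inside $G-e$ avoiding $e$: take the modified rim $R'$ obtained from $R$ by replacing $P_3$ with the detour $P_2^{-1}P_1^{-1}P_4^{-1}$ (equivalently, routing around through $C$ minus $P_3$), together with three $H$-spokes disjoint from $P_1\cup P_3$ — these exist because $P_1$ lies in $\oo{C'\cap R}$, which, by the green-cycle analysis (Lemma \ref{lm:greenCycles} (\ref{it:shortJump})), occupies at most three consecutive rim branches, and $P_3$ is a $3$-rim path, so together they meet at most a bounded stretch of $R$, leaving three spokes untouched. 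This $V_6$ lies in $G-e$ since it avoids all of $P_3$ and hence $e$; therefore $G-e$ is non-planar and $e$ is not red.

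The main obstacle I anticipate is the bookkeeping in the yellow case: I must verify carefully that $P_1\cup P_3$, together with the two $R$-avoiding paths $P_2,P_4$ (which may dip into $H$-spokes, by Lemma \ref{lm:yellowCycles} (\ref{it:yellowP2P4})), leaves enough of the rim and enough spokes intact to actually realize a $V_6$ in $G-e$, and that the replacement path $R'$ really is a cycle avoiding $e$. In fact the cleanest route is probably to invoke the already-proved structural facts directly: since $C\subseteq\cl(Q_i)$ and there is only one $C$-bridge (Lemma \ref{lm:yellowCycles} (\ref{it:oneBridge})), the argument inside the proof of Lemma \ref{lm:yellowCycles} (\ref{it:oneBridge}) already produced three such subdivisions of $V_6$ in $\comp{B}$, one of which ($R''\cup s\cup s'\cup s''$) is disjoint from $P_3$; reusing that construction, but now inside $G-e$ rather than $\comp B$, immediately shows $(G-e)$ contains a $V_6$ and hence is non-planar. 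This keeps the proof short and avoids redoing the spoke-counting from scratch.
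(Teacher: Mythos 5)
Your proposal is essentially correct and ends up at the paper's own argument, which in both cases exhibits a subdivision of $V_6$ in $G-e$. For the green case your shortcut via Lemma \ref{lm:greenCycles} (\ref{it:prebox}) is legitimate (the tidy embedding is $H$-friendly by Lemma \ref{lm:greenCyclesFriendly}, so that lemma applies, and the prebox property says exactly that $(C\cup H)-e$ is non-planar); the paper instead re-runs the underlying $V_6$ construction directly, but the content is the same. One caution on the yellow case: your first-stated construction, ``replace $P_3$ in $R$ by $P_2^{-1}P_1^{-1}P_4^{-1}$,'' does not produce a cycle, because $P_1\subseteq R$ and $P_1$ is disjoint from $P_3$, so $P_1$ survives in $R-\oo{P_3}$ and would be traversed twice; the ends of $P_1$ get degree $3$ in the union. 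This is why the span $\oo{C'\cap R}$ (which contains $P_1$) must also be excised from the rim and replaced by the green jump $C'-\oo{C'\cap R}$ --- i.e., the cycle $R''$ from the proof of Lemma \ref{lm:yellowCycles} (\ref{it:oneBridge}). So your ``cleanest route'' fallback is not merely cleaner but necessary, and it is precisely what the paper does: $R''\cup s\cup s'\cup s''$ avoids $\oo{P_3}$, hence avoids $e$, and gives the required $V_6$ in $G-e$.
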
\printFullDetails{

\begin{cproof}  Suppose first that $e$ is $H$-green and let $C$ be the $H$-green cycle containing $e$.  There are three $H$-spokes $s$, $s'$, and $s''$ disjoint from $\oo{C\cap R}$.  Thus, $(R-\oo{C\cap R})\cup (C-\oo{C\cap R})$ together with $s$, $s'$, and $s''$ is a subdivision of $V_6$ contained in $G-e$, showing $e$ is not red.

Now suppose $e$ is $H$-yellow and let $C$ be the $H$-yellow cycle containing $e$.  Let $C'$ be the $H$-green cycle and $P_1P_2P_3P_4$ the decomposition of $C$ as in Definition \ref{df:yellow}.  Then $e$ is in $P_3$ and there are three $H$-spokes $s$, $s'$, and $s''$ disjoint from $C\cup \oo{C'\cap R}$.  In this case,  $(R-(\oo{C'\cap R}\cup \oo{P_3}))\cup (C'-\oo{C'\cap R})\cup P_2P_1P_4$, together with $s$, $s'$, and $s''$ is a subdivision of $V_6$ contained in $G-e$, showing $e$ is not red. \end{cproof}

The following concepts and lemma play a central role in the proof of Theorem \ref{th:rimColoured}.

}\begin{definition}\label{df:separated} Let $\hvfg$. 
 Let $e$ and $f$ be two edges of the $H$-rim $R$.  Then $e$ and $f$ are {\em $R$-separated in $G$\/}\index{$R$-separated}\index{separated} if $G$ has a subdivision $H'$ of $V_8$ so that the $H'$-rim is $R$ and $e$ and $f$ are in disjoint $H'$-quads.
\end{definition}\printFullDetails{

The following two observations are immediate from the definition.

}\begin{observation}\label{obs:separated}  Let $\hvfg$ and suppose $e$ and $f$ are two edges of the $H$-rim $R$ that are $R$-separated in $G$.  
\begin{enumerate}\item\label{it:separatedNoCross}  If $D$ is a 1-drawing of $G$, then $e$ and $f$ do not cross each other in $D$.
\item\label{it:twoSpokes} If $H'$ is a $V_8$ in $G$ witnessing the $R$-separation of $e$ and $f$, then there are two $H'$-spokes that have all their ends in the same component of $R-\{e,f\}$.  \hfill\eop
\end{enumerate}
\end{observation}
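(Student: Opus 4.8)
The plan is to reduce both parts to the combinatorial structure of $V_8$ and a single fact already recorded for its $1$-drawings, Lemma \ref{lm:1drawingsV2n}.

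First I would fix notation for a subdivision $H'$ of $V_8$ witnessing the $R$-separation. By definition the $H'$-rim is $R$ and $e,f$ lie in disjoint $H'$-quads. Give $H'$ the standard $V_8$-labelling: its $H'$-nodes appear on $R$ in cyclic order as $w_0,w_1,\dots,w_7$, its rim branches are the $w_iw_{i+1}$-subpaths $r'_i$ of $R$ (indices mod $8$), and its spokes are $s'_i=w_iw_{i+4}$ (indices mod $4$), so that the $H'$-quads are $Q'_i=r'_i\cup s'_{i+1}\cup r'_{i+4}\cup s'_i$ and two of them are disjoint exactly when their indices differ by $2$. Since $e$ and $f$ are edges of $R$ and each $H'$-spoke meets $R$ only in its two ends, neither $e$ nor $f$ lies on an $H'$-spoke, so each lies on one of the two rim branches of its quad. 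The key bookkeeping step is then to check that, for a disjoint pair $\{Q'_i,Q'_{i+2}\}$, whose rim branches are $\{r'_i,r'_{i+4}\}$ and $\{r'_{i+2},r'_{i+6}\}$, the rim branch carrying $e$ and the one carrying $f$ are always at cyclic distance $2$ on the octagon $(w_0,\dots,w_7)$. As both conclusions are symmetric in $e$ and $f$, and the automorphism group of $V_8$ acts transitively on pairs of rim branches at cyclic distance $2$, I may assume from now on that $e\in r'_0$ and $f\in r'_2$.

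For part (\ref{it:separatedNoCross}) I would argue by contradiction. Suppose $D$ is a $1$-drawing of $G$ in which $e$ crosses $f$. Then $e,f\in E(H')$ and, since $D$ has only one crossing, the restriction $D[H']$ is a $1$-drawing of $H'$ whose sole crossing is $e\times f$; suppressing subdivision vertices turns this into a $1$-drawing of $V_8$ whose crossing is between the rim edges coming from the branches $r'_0$ and $r'_2$, that is, between two rim edges at cyclic distance $2$. But Lemma \ref{lm:1drawingsV2n} with $n=4$ says that in any $1$-drawing of $V_8$ the crossing is between some $r_i$ and one of $r_{i+3},r_{i+4},r_{i+5}$, i.e.\ between rim edges at cyclic distance $3$ or $4$ — a contradiction. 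Hence $e$ and $f$ do not cross in $D$. For part (\ref{it:twoSpokes}), with $e\in r'_0$ and $f\in r'_2$, I would simply observe that $R-\{e,f\}$ has one component containing exactly the $H'$-nodes $w_1,w_2$ and one component containing $w_3,w_4,w_5,w_6,w_7,w_0$; the spokes $s'_0=w_0w_4$ and $s'_3=w_3w_7$ have both of their ends in the second component, which gives the two required $H'$-spokes.

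I do not expect a real obstacle: the statement is genuinely immediate once the $V_8$-labelling is pinned down, and the only thing that needs care is the index arithmetic in the reduction step — that disjointness of the two quads forces the two affected rim branches to be two apart on the octagon, and that this configuration is unique up to an automorphism of $V_8$ — after which Part (\ref{it:separatedNoCross}) is one invocation of Lemma \ref{lm:1drawingsV2n} and Part (\ref{it:twoSpokes}) is a one-line inspection of which spokes survive inside a six-node arc of $R$.
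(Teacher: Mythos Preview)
Your proposal is correct and is exactly the natural verification the paper has in mind; the paper gives no proof at all here (it declares both parts ``immediate from the definition'' and ends with a \eop), and your argument---pinning down the $V_8$-labelling so that $e\in r'_0$, $f\in r'_2$, then invoking Lemma~\ref{lm:1drawingsV2n} for part~(\ref{it:separatedNoCross}) and inspecting which $H'$-nodes lie in each arc of $R-\{e,f\}$ for part~(\ref{it:twoSpokes})---is precisely how one checks this.
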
\printFullDetails{

The following is a kind of converse of Observation \ref{obs:separated} (\ref{it:separatedNoCross}).

}\begin{lemma}\label{lm:notSeparated}  Let $G_0\in \m2$ be a graph and let $\hvfg_0$, with $H$ tidy.  Suppose $G\subseteq G_0$ with $H\subseteq G$.    Let $e\in r_i$ and $f\in r_{i+4}\cup r_{i+5}\cup r_{i+6}$ be edges that are both neither $H$-green nor $H$-yellow.  If $e$ and $f$ are not $R$-separated in $G$, then there is a 1-drawing of $G$ in which $e$ crosses $f$.\end{lemma}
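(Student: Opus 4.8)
The strategy is a structural induction on $|E(G)|$, using the failure of $R$-separation to produce the required crossing. First I would set up the base and reduction: if $G$ is planar there is nothing to prove, so we may assume $G$ is non-planar; and since $H\subseteq G$ and $H\cong V_{10}$, the graph $G$ is in fact non-planar with $H$ its natural $V_{10}$-structure. The plan is to argue that it suffices to handle the case where $G$ has no $H$-bridge at all except those forced, i.e. we may repeatedly delete an edge of an $H$-bridge: deleting a non-rim edge $g$ preserves the hypotheses (the tidy $H$ is still present, $e,f$ are still not $H$-green, not $H$-yellow --- these are properties of $H$ in $G_0$, not of $G$ --- and $e,f$ are still not $R$-separated in the smaller graph, since a $V_8$ with rim $R$ inside the smaller graph would also sit inside $G$), and a $1$-drawing of the smaller graph with $e$ crossing $f$ extends to one of $G$ if $g$ was the only deleted edge... but of course that last step fails in general, so the real induction must delete one edge at a time and re-insert it, which is where the work lies. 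So more honestly: I would take $G$ minimal (fewest edges) among counterexamples with the given $e,f$, and derive a contradiction.

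For the minimal $G$, every edge of $G$ not in $H$ lies in some $H$-bridge $B$; by minimality, deleting any such edge either $R$-separates $e$ and $f$ or makes one of $e,f$ become $H$-green or $H$-yellow or makes $G$ planar (else we could apply induction and re-insert, which we cannot do freely --- so instead I'd phrase minimality as: $G$ is a smallest graph with $H\subseteq G\subseteq G_0$ such that $e,f$ are not $R$-separated in $G$ and $G$ has no $1$-drawing with $e$ crossing $f$). The heart of the argument is then: with $e\in r_i$ and $f\in r_{i+4}\cup r_{i+5}\cup r_{i+6}$, I want to build an explicit $1$-drawing of $G$ realizing the crossing of $e$ and $f$. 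The natural candidate comes from the embedding of $G$ in $\pp$ for which $H$ is tidy (Theorem~\ref{th:existTidy}): in $\Pi$, $H\subseteq\Mob$, and $e,f$ sit on the rim of $\Mob$; cutting $\Mob$ open along a suitable non-contractible curve crossing $R$ precisely in the interiors of $e$ and $f$ turns $\Pi$ into a $1$-drawing of $H\cup(\text{everything drawn in }\Disc)$ with $e$ crossing $f$, provided every $H$-bridge of $G$ can be accommodated. The local $H$-bridges of the tidy $H$ all live in $\Mob$ (Definition~\ref{df:tidy}), so they get drawn on the "planar side" after the cut; the global $H$-bridges live in $\Disc$ and are, by Theorem~\ref{th:globalBridges}, single edges that are $2$-, $2.5$-, or $3$-jumps, with spans of at most four $H$-nodes, and Lemma~\ref{lm:globalJumps} controls how they interact.

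The main obstacle is exactly these global $H$-bridges: a jump whose span straddles $e$ or $f$ obstructs the surgery. Here is where the hypothesis that $e$ and $f$ are \emph{not} $R$-separated gets used in its contrapositive form: I would show that if no $1$-drawing of $G$ has $e$ crossing $f$, then some $V_8$ with rim $R$ separates $e$ from $f$. Concretely, for each global $H$-bridge $B=$ a jump, $B$ together with its span is an $H$-green cycle (as noted after Definition~\ref{df:span}); since $e$ and $f$ are not $H$-green, $e$ and $f$ are not \emph{in} any span, but a span could still contain $H$-nodes strictly between the ends of $e$ or $f$ --- actually, since a jump has span with at most four $H$-nodes and $e,f$ are edges of the rim not in the span, the span of any jump meets at most one of the two arcs of $R$ cut out by $\{e,f\}$, or else lies in a short window; Lemma~\ref{lm:globalJumps}(\ref{it:spanDiffHyper}) and (\ref{it:globalNoHnode}) then force the jumps to leave three consecutive $H$-rim branches free on one side, which is precisely enough to pick three $H$-spokes of an auxiliary $V_8$-subdivision with rim $R$ whose two quads separate $e$ from $f$ --- contradicting "not $R$-separated." Thus no obstructing jump exists, the surgery on $\Pi$ goes through after rerouting each global jump along the appropriate rim-arc, and we obtain the desired $1$-drawing of $G$, completing the contradiction. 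The bookkeeping --- choosing which $i'$-window each jump occupies, verifying the rerouted drawing has exactly one crossing and that it is of $e$ with $f$, and checking the boundary cases where $f\in r_i$ adjacent branches or where an end of a $2.5$-jump sits in $r_i$ or $r_{i+5}$ --- is the routine-but-lengthy part I'd expect to dominate the write-up.
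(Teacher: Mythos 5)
Your overall target is the right one — produce a non-contractible simple closed curve in $\pp$ meeting $G$ only in the interiors of $e$ and $f$, then do the standard representativity-2 surgery — and this is indeed the strategy of the paper's proof. But you have misidentified where the difficulty lies, and the step you dismiss as unproblematic is the heart of the argument. You claim the local $H$-bridges ``all live in $\Mob$ \dots\ so they get drawn on the planar side after the cut,'' and you treat the global jumps in $\Disc$ as the main obstacle. In fact the curve must consist of one arc in $\Disc$ and one arc in $\Mob$, and the $\Mob$-arc has to travel from $e\in r_i$ across the quad $Q_i$ (or $\bQ_{i+1}$) to $f$; the spokes $s_i$, $s_{i+1}$ together with the $Q_i$-local $H$-bridges sit exactly in its way. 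Nothing in your proposal clears that path. The paper does so in three steps you are missing: (i) because $e$ is not $H$-green, there are no two disjoint $s_is_{i+1}$-paths in $\cl(Q_i)-e$, yielding a cut-vertex $w_e\in r_{i+5}\,r_{i+6}$, and symmetrically $w_f$; (ii) because $e$ and $f$ are \emph{not} $R$-separated, the middle piece $N$ between $w_e$ and $w_f$ has no two disjoint paths joining its two rim components (two such paths plus $R$, $s_{i-1}$, $s_{i+3}$ would witness $R$-separation), yielding a cut-vertex $w$ of $N$; (iii) the two halves of $N$ are then \emph{re-embedded} in $\Disc$ (this is legitimate because, $e$ and $f$ being neither $H$-green nor $H$-yellow, no global $H$-bridge spans the relevant rim segments), after which the $\Mob$-arc can pass through $w$. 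So the non-$R$-separation hypothesis is consumed inside $\Mob$, not in policing the jumps in $\Disc$ as your sketch has it; the jumps are handled by the ``not $H$-green/$H$-yellow'' hypotheses.

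Two smaller points. Your opening induction/minimal-counterexample framework is never actually used — you concede mid-paragraph that the re-insertion step fails and then switch to a direct construction, so it should be cut. And your proposed derivation of $R$-separation from ``obstructing jumps'' via Lemma~\ref{lm:globalJumps} does not produce a witnessing $V_8$: a $V_8$ with rim $R$ needs four spokes arranged so that $e$ and $f$ lie in disjoint quads, and the only source of the two extra $RR$-paths near $e$ and $f$ is precisely the pair of disjoint paths inside $N$ whose nonexistence is the whole point.
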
\printFullDetails{

\begin{cproof}  \wordingrem{(Text removed.)}%We suppose $e$ and $f$ are not $R$-separated in $G$.  We shall construct the desired 1-drawing of $G$.  
Let $\Pi$ be an embedding of $G$ in $\pp$ so that $H$ is $\Pi$-tidy.     

We may write $r_i=\cc{v_i,\dots,x_e,e,y_e,\dots,v_{i+1}}$ and\wording{, by symmetry,} we may assume $f$ is in  $$r_{i+5}\cup r_{i+6}=\cc{v_{i+5},r_{i+5},\dots,x_f,f,y_f,\dots,r_{i+6},v_{i+7}}\,.$$   If $f\in r_{i+5}$, then let $J_{e,f}=\cl(Q_i)$ and $Q=Q_i$, while if $f\in r_{i+6}$, then let $J_{e,f}=\cl(Q_i)\cup \cl(Q_{i+1})$ and $Q=\bQ_{i+1}$.  The two $H$-spokes contained in $Q$ are $s_i$ and $s_{e,f}$, which is either $s_{i+1}$ or $s_{i+2}$.

\begin{claim} There are not totally disjoint $s_i s_{e,f}$-paths in $J_{e,f}-e$.\end{claim}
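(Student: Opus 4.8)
The plan is to derive a contradiction from the existence of totally disjoint $s_is_{e,f}$-paths $T_1,T_2$ in $J_{e,f}-e$ by producing a subdivision of $V_8$ with rim $R$ in which $e$ and $f$ lie in disjoint quads, thereby contradicting the hypothesis that $e$ and $f$ are not $R$-separated in $G$.  The rough idea is that $T_1$ and $T_2$, together with the appropriate arcs of $r_i$ on either side of $e$, should let us ``re-route'' the two spokes of $Q$ around $e$ so that $e$ falls into a face of a smaller $V_8$-subdivision; the remaining three spokes of the new $V_8$ can be taken to be the three $H$-spokes disjoint from $Q$, and these will keep $f$ on a disjoint quad.

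First I would set up the combinatorics carefully.  Each of $T_1,T_2$ runs from $s_i$ to $s_{e,f}$ inside $J_{e,f}-e=(\cl(Q_i)-e)$ or $(\cl(Q_i)\cup\cl(Q_{i+1}))-e$.  By Lemma~\ref{lm:noSpokeOnlyBridge} and Lemma~\ref{lm:noBridgeTwoInSpoke} (applied to the $Q_i$- and $Q_{i+1}$-local bridges constituting $J_{e,f}$), no $H$-bridge inside $J_{e,f}$ has all attachments in one spoke, and no bridge meets the interior of a spoke plus another point of that spoke, so the attachments of the pieces of $T_1,T_2$ on $s_i$ and $s_{e,f}$ are controlled.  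Since $T_1$ and $T_2$ are totally disjoint, their ends on $s_i$ are distinct, say $p_1,p_2\in s_i$, and their ends on $s_{e,f}$ are distinct, say $q_1,q_2$; moreover since the $T_j$ live in the closure of $Q$, which bounds a face of $\Pi[H]$, they are drawn on the $Q$-interior side and so do not cross each other in $\Pi$.  This gives an ordering of $p_1,p_2$ along $s_i$ and of $q_1,q_2$ along $s_{e,f}$.  Using that $e\in r_i$ is not $H$-green (so no $H$-avoiding $R$-path ``jumps over'' $e$ in the way forbidden by tidiness, Definition~\ref{df:tidy}(4), and by Corollary~\ref{co:closeAtts}), I would argue that the portion of $r_i$ between $e$ and one endpoint $v_i$ or $v_{i+1}$ is ``clean'' enough that we can attach one of $T_1,T_2$ to it to build a spoke of the new $V_8$ that avoids $e$.

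The heart of the argument is then the following construction.  Let $s',s'',s'''$ be the three $H$-spokes not contained in $Q$.  Take the new rim to be $R$ itself.  Using $T_1,T_2$ and suitable subpaths of $s_i$, $s_{e,f}$, and of $r_i-e$, I would exhibit two internally-disjoint $R$-avoiding paths $U_1,U_2$ between the four nodes of $Q$ so that: $U_1$ joins the node-side of $r_i$ containing $x_e$ to a node on the far side $r_{i+5}\cup r_{i+6}$, $U_2$ does the same on the other side, and neither $U_1$ nor $U_2$ uses $e$.  Then $R\cup U_1\cup U_2\cup s'\cup s''\cup s'''$ (possibly trimming $s',s'',s'''$ to their $R$-avoiding cores, exactly as in the proof of Lemma~\ref{lm:greenCycles}(\ref{it:Cprebox1})) is a subdivision $H'$ of $V_8$ with rim $R$.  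One checks that $e$ lies in a single $H'$-quad (the one ``cut off'' by $U_1,U_2$) while $f$, lying in $r_{i+5}\cup r_{i+6}$, lies in an $H'$-quad disjoint from that one, since the three extra spokes $s',s'',s'''$ separate the $r_i$-side from the $r_{i+5}\cup r_{i+6}$-side.  This contradicts the assumption that $e$ and $f$ are not $R$-separated in $G$.

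\textbf{Main obstacle.}  The hard part will be the case analysis in the construction of $U_1$ and $U_2$: the two disjoint paths $T_1,T_2$ may attach to $s_i$ and $s_{e,f}$ in several relative orders, one of $T_1,T_2$ may or may not already pass ``on the $e$-side'', and when $f\in r_{i+6}$ the extra quad $Q_{i+1}$ enlarges the bridge structure one must traverse.  Controlling these requires repeated use of the tidiness hypotheses (no local-bridge overlaps, no short $H$-avoiding jumps across three rim branches) together with the facts that $e$ is neither $H$-green nor $H$-yellow and that $J_{e,f}-e$ still contains the totally disjoint $T_1,T_2$; in particular one must rule out that an attempted $U_j$ is forced to reuse $e$.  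Once the orders of $\{p_1,p_2\}$ and $\{q_1,q_2\}$ are pinned down and the coherence of the embedding $\Pi$ is used to see that $T_1,T_2$ sit on the $Q$-interior side, each sub-case becomes a routine re-routing, but organizing all of them cleanly is where the real work lies.
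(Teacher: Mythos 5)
Your proposal takes a genuinely different route from the paper, but the route does not work: the existence of two totally disjoint $s_is_{e,f}$-paths $T_1,T_2$ in $J_{e,f}-e$ cannot be converted into an $R$-separation of $e$ and $f$, so the contradiction you are aiming for is unreachable. Concretely: by Observation~\ref{obs:separated}(2) and the interleaving of spoke-ends on the rim, a subdivision of $V_8$ with rim $R$ that puts $e\in r_i$ and $f\in r_{i+5}\cup r_{i+6}$ into disjoint quads must contain two spokes $u_1,u_2$ with all four ends strictly inside the component $A_1$ of $R-\{e,f\}$ containing $v_{i+1},\dots,v_{i+5}$, and these must interleave with two of the ``crossing'' spokes; this forces at least one end of $u_2$ to lie in the interior of a rim branch strictly between consecutive $H$-nodes of $A_1$ (e.g.\ in $\oo{r_{i+1}}$, or in $r_i$ strictly between $e$ and $v_{i+1}$). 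But $T_1$ and $T_2$ are $s_is_{e,f}$-paths: their ends lie on the two spokes of $Q$, so any rim-to-rim path you manufacture from a $T_j$ by extending along $s_i$ and $s_{e,f}$ terminates at the four corners $v_i,v_{i+1},v_{i+5},v_{i+6}$ (or $v_{i+7}$) of $Q$ -- never at an interior point of $A_1$'s rim branches. Moreover the three remaining $H$-spokes each have one end in $A_1$ and one in $A_2$, so they never separate $r_i$ from $r_{i+5}\cup r_{i+6}$: in any M\"obius-ladder subdivision you assemble this way, the quad bounded by the two new spokes contains both $r_i$ and $r_{i+5}$, hence both $e$ and $f$. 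Your ``one checks that $e$ and $f$ lie in disjoint quads'' step is exactly where the argument collapses, and no amount of case analysis on the orders of $p_1,p_2,q_1,q_2$ repairs it. (Indeed $r_{i+5}$ itself is always one $s_is_{e,f}$-path in $J_{e,f}-e$, and a second disjoint one is a purely local phenomenon around $Q$; it carries no information about separating $e$ from $f$.)

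The hypothesis you should be contradicting is not ``$e$ and $f$ are not $R$-separated'' but ``$e$ is not $H$-green'', and tidiness makes this immediate: since $H$ is $\Pi$-tidy, $\Pi[J_{e,f}]$ lies in the closed disc bounded by $\Pi[Q]$, so of two totally disjoint $s_is_{e,f}$-paths one is ``nested above'' the other and is disjoint from $r_{i+5}\,r_{i+6}$; that path, concatenated with the subpath of $Q-r_{i+5}\,r_{i+6}$ joining its ends (which runs through all of $r_i$ and hence through $e$), is an $H$-green cycle containing $e$. This one-step argument replaces your entire $V_8$ construction and is the paper's proof.
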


\begin{proof}  Because $H$ is $\Pi$-tidy, $\Pi[J_{e,f}]$ is contained in the closed disc bounded by $\Pi[Q]$.  Therefore, one of a pair of totally disjoint $s_is_{e,f}$-paths in $J_{e,f}$ would be disjoint from $r_{i+5}\,r_{i+6}$ and it, together with a subpath of $Q-r_{i+5}\,r_{i+6}$ yields the contradiction that $e$ is $H$-green.  \end{proof}

Let $w_e$ be a cut-vertex in $J_{e,f}-e$ separating $s_i$ from \wording{$s_{e,f}$}.  Then $J_{e,f}-e$ has a separation $(H_e,K_e)$ with $s_i\subseteq H_e$, the other $H$-spoke $s_{e,f}$ contained in $Q$ is contained in $K_e$, and $H_e\cap K_e=\iso{w_e}$.   Clearly, $w_e\in r_{i+5}\,r_{i+6}$.  

There is also a separation $(H_f,K_f)$ of  $J_{e,f}-f$, so that $H_f\cap K_f$ is a single vertex $w_f$, $s_i\subseteq H_f$, and $s_{e,f}\subseteq K_f$.  For $x\in \{e,f\}$, there is a face $F_x$ of $\Pi[J_{e,f}]$ incident with both $x$ and $w_x$.  If $F_e=F_f$, then any vertex of $r_i\,r_{i+1}$ in the boundary cycle $C$ of $F_e$ may be selected as $w_f$.  Similarly, $w_e$ may be any vertex of $r_{i+5}\,r_{i+6}$ that is in $C$.  We choose $w_e$ and $w_f$ so that they are in different components of $C-\{e,f\}$.  Thus, whether $F_e=F_f$ or not, the cycle $Q$ has the form $\cc{w_e,\dots,e,\dots,w_f,\dots,f,\dots}$.  In particular, $e$ and $w_e$ are in the same component of $Q-\{w_f,f\}$, while $f$ and $w_f$ are in the same component of $Q-\{w_e,e\}$.  By interchanging the roles of $e$ and $f$ and exchanging the labels of $v_j$ and $v_{j+5}$, for $j=0,1,2,3,4$, we may assume $Q$ has the form 
$$
\cc{w_e,\dots,v_{i+5},s_i,v_i\dots,e,\dots,w_f,\dots,s_{e,f},\dots,f,\dots}\,.
$$
For technical reasons, we choose $w_e$ as close as possible to $f$ in $r_{i+5}\,r_{i+6}$ and $w_f$ as close as possible to $e$ in $r_{i+5}\,r_{i+6}$, while respecting the ordering that was just described of these four elements of $Q$.

Set $N=K_e\cap H_f$.  Then $J_{e,f}-\{e,f\}=H_e\cup N\cup K_f$, $H_e\cap N=\iso{w_e}$, and $K_f\cap N=\iso{w_f}$. See Figure \ref{fg:separating}.

\begin{figure}[!ht]
\begin{center}
\scalebox{.9}{\input{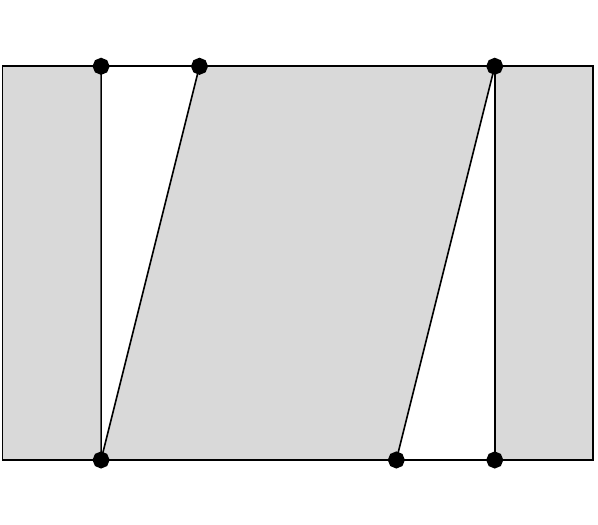_t}}
\end{center}
\caption{The locations of $e$, $f$, $w_e$, $w_f$, $H_e$, $N$, and $K_f$.}\label{fg:separating}
\end{figure}

\begin{claim}  $N$ does not have disjoint paths both with ends in the two components of $N\cap R$.  \end{claim}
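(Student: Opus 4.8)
The goal is to show that $N$ (which is the ``middle'' piece $K_e\cap H_f$ sitting between $w_e$ and $w_f$) admits no two totally disjoint paths joining its two contact vertices with $R$, namely $w_e$ and $w_f$. I would argue by contradiction: suppose $P$ and $P'$ are two totally disjoint $w_ew_f$-paths in $N$. The plan is to combine such a pair of disjoint paths with the rest of $J_{e,f}$ and with an $H$-spoke to exhibit a subdivision of $V_8$ witnessing that $e$ and $f$ are $R$-separated in $G$ — contradicting the hypothesis of the lemma. The key structural fact I would exploit is that $w_e\in r_{i+5}\,r_{i+6}$ and $w_f\in r_{i+5}\,r_{i+6}$, with $w_e$ and $w_f$ in opposite components of $Q-\{e,f\}$, so $P\cup P'$ together with the $w_ew_f$-subpaths of $r_{i+5}\,r_{i+6}$ forms a theta-like configuration sitting on the rim between $e$ and $f$.

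First I would recall the decomposition $J_{e,f}-\{e,f\}=H_e\cup N\cup K_f$ with $H_e\cap N=\iso{w_e}$, $K_f\cap N=\iso{w_f}$, and that $s_i\subseteq H_e$, $s_{e,f}\subseteq K_f$. Given $P$ and $P'$, exactly one of the two $w_ew_f$-subpaths of $r_{i+5}\,r_{i+6}$ contains neither $e$ nor $f$ in it — call it $W$; note $W$ is a subpath of the rim $R$. Then $P\cup W$ and $P'\cup W$ (or $P\cup P'$ with $W$) give us flexibility: using $P$ as a new ``rim arc'' between $w_e$ and $w_f$ while keeping $W$ as another arc, together with the two segments $\cc{w_e,r_{i+5},v_{i+5}}\cup s_i\cup\cc{v_i,r_i,\dots}$ and the analogous piece at $w_f$, one builds a new cycle that can serve as the rim of a subdivided $V_8$. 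The point is that the new rim uses $P$ (or $P'$) to route ``around'' $e$ on one side and around $f$ on the other, so that the two rim arcs between $e$ and $f$ are now edge-disjoint from both $e$ and $f$; meanwhile $s_i$, $s_{e,f}$, and the remaining two $H$-spokes $s_{i+3}$, $s_{i+4}$ (which are totally disjoint from $J_{e,f}$) can be attached as the four spokes of a $V_8$-subdivision. By construction $e$ and $f$ lie in disjoint quads of this $V_8$, and its rim is $R$, so $e$ and $f$ are $R$-separated — the desired contradiction.

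The main obstacle — and where I expect to spend most of the care — is verifying that the configuration one assembles is genuinely a subdivision of $V_8$ with rim exactly $R$: one must check that $P$, $P'$, $W$, the four spokes, and the rim segments fit together with the correct adjacencies, that no two of the intended branches share an internal vertex (here the disjointness of $P$ and $P'$, Lemma \ref{lm:noBridgeTwoInSpoke}, and the choice of $w_e,w_f$ as close as possible to $f$ and $e$ respectively are all used), and that $e$ and $f$ land in disjoint quads. A secondary subtlety is handling the degenerate possibilities where $P$ or $P'$ is a single vertex, or where $w_e$ or $w_f$ coincides with an $H$-node; in each such case the argument simplifies but should be stated. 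If a direct $V_8$ cannot be produced in some sub-case, the fallback is to derive instead that $e$ (or $f$) is $H$-green or $H$-yellow — using $P$ as the path $P_2$ of a yellow or green cycle together with the green witness cycle already in hand — which again contradicts the hypothesis that $e$ and $f$ are neither green nor yellow.
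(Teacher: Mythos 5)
Your top-level strategy --- exhibit a subdivision of $V_8$ with rim $R$ placing $e$ and $f$ in disjoint quads, contradicting the hypothesis that they are not $R$-separated --- is exactly what the paper does (its proof is one sentence). But your execution has two concrete problems. First, you have misread the configuration: the two components of $N\cap R$ are \emph{arcs}, one in $r_i\,r_{i+1}$ running from an end of $e$ to $w_f$ and one in $r_{i+5}\,r_{i+6}$ running from an end of $f$ to $w_e$; in particular $w_f$ lies in $r_i\,r_{i+1}$, not in $r_{i+5}\,r_{i+6}$, and ``two totally disjoint $w_ew_f$-paths'' cannot exist, since totally disjoint paths cannot share their ends. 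The hypothesized paths $P_1,P_2$ have four distinct ends, two in each arc. Second, and more seriously, your $V_8$ is assembled the wrong way around. By definition, $R$-separation requires the witnessing $V_8$ to have rim \emph{equal to} $R$, so you may not reroute the rim through $P$ ``around $e$ and $f$''; and if you instead keep $R$ as the rim and take $s_i$, $s_{e,f}$, $s_{i+3}$, $s_{i+4}$ as the four spokes, then $e\in r_i$ and $f\in r_{i+5}\,r_{i+6}$ lie on the two opposite rim branches of the single quad bounded by $s_i$ and $s_{e,f}$ --- the same quad, not disjoint ones.

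The missing idea is that the two disjoint paths are themselves two of the four spokes. Trim $P_1$ and $P_2$ to $R$-avoiding paths, each joining the two arcs of $N\cap R$, with ends $a_1,a_2$ in $r_i\,r_{i+1}$ and $b_1,b_2$ in $r_{i+5}\,r_{i+6}$. Take the rim to be $R$ and the spokes to be $P_1$, $P_2$, $s_{i+3}$, and $s_{i-1}$; the latter two are disjoint from $J_{e,f}$, hence from $N$. Since $\Pi[J_{e,f}]$ lies in the closed disc bounded by $\Pi[Q]$, the disjoint paths cannot interleave on $Q$, and translating to $R$ one finds the eight spoke-ends in the cyclic order $a_1,a_2,v_{i+3},v_{i+4},b_1,b_2,v_{i+8},v_{i+9}$, which is the Möbius pattern of $V_8$. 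Then $e$ lies in the quad determined by $s_{i-1}$ and $P_1$, while $f$ lies in the (antipodal, hence disjoint) quad determined by $P_2$ and $s_{i+3}$, so $e$ and $f$ are $R$-separated in $G$ --- the desired contradiction. Your fallback via $H$-green or $H$-yellow cycles is not needed once the spokes are chosen this way.
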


\begin{proof}  Such paths, together with the $H$-rim and the $H$-spokes $s_{i-1}$ and $s_{i+3}$, would show $e$ and $f$ are $R$-separated.  \end{proof}

Let $w$ be a cut-vertex in $N$ separating the two components of $N\cap R$, and let $(N_i,N_{i+5})$ be a separation of $N$ so that, for $j\in\{i,i+5\}$, $N_j\cap R$ is contained in $r_j\cup r_{j+1}$ and $N_i\cap N_{i+5}=\iso{w}$.   We proceed to describe a new 2-representative embedding of $G$ in $\pp$ that shows that $G$ has a 1-drawing.

Let $G'$ be the subgraph of $G$ obtained by deleting all the vertices and edges of $N$ that are not in $N\cap R$.    There is a face of $\Pi[G']$ contained in $\Mob$ and incident with both $e$ and $f$.  

\begin{claim}\label{cl:noSpan} No global $H$-bridge has a vertex in \wording{$\oo{N_i\cap R}\cup \oo{N_{i+5}\cap R}$} in its span.    \end{claim}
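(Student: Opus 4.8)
\textbf{Proof plan for Claim \ref{cl:noSpan}.}

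The plan is to argue by contradiction. Suppose some global $H$-bridge $B$ has in its span a vertex of $\oo{N_i\cap R}$; the case of $\oo{N_{i+5}\cap R}$ is symmetric, and I would handle it with the symmetry exchanging $v_j\leftrightarrow v_{j+5}$. Since $H$ is tidy, Theorem \ref{th:globalBridges} tells us that $B$ is a single edge and its span is one of the short patterns $v_jv_{j+2}$, $v_jv_{j+3}$, or $v_j$ together with a point in $\oo{r_{j-3}}\cup\oo{r_{j+2}}$; in all cases the span is a subpath of $R$ with at most four $H$-nodes. The first step is to use this to locate $B$ precisely. Because $N_i\cap R\subseteq r_i\cup r_{i+1}$ and because $e\in r_i$ was chosen not to be $H$-green and not $H$-yellow, $B$ together with its span forms an $H$-green cycle $C'$ (this is the remark following Definition \ref{df:span}), so every edge of the span is $H$-green. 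In particular, if $e$ itself were spanned, $e$ would be $H$-green, contradicting the hypothesis; so $B$'s span is contained in $r_i\,r_{i+1}$ but avoids $e$, hence lies entirely in one of the two components $\cc{v_i,r_i,x_e}$ and $\cc{y_e,r_i,v_{i+1}}\cup r_{i+1}$ of $r_i\,r_{i+1}-e$. Also, $\Pi[B]\subseteq\Disc$ since $B$ is global and $H$ is tidy.

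The heart of the argument is that such a $B$ contradicts the way $w_f$ was chosen. Recall that $w_f$ is the cut vertex of $J_{e,f}-f$ separating $s_i$ from $s_{e,f}$, chosen as close as possible to $e$ in $r_{i+5}\,r_{i+6}$ while keeping $w_f$ on the far side of $e$ along $Q$; and $w$ is the cut vertex of $N$ separating $N_i\cap R$ from $N_{i+5}\cap R$. The key observation is that $w_f\in r_{i+5}\,r_{i+6}$ but $N_i\cap R\subseteq r_i\cup r_{i+1}$, so a vertex of $\oo{N_i\cap R}$ lies on the ``$e$ side'' of $Q$, which is where $K_f$ sits. If $B$ has a vertex of $\oo{N_i\cap R}$ in its span, then the edge of $N_i$ (or more precisely of $N$) incident with that vertex is trapped, in the embedding $\Pi$, in the closed disc bounded by $\Pi[Q]$ and cut off from $s_i$ by the $H$-green face bounded by $C'$. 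I would make this precise by producing, from $B$ and a subpath of $r_i\,r_{i+1}$, an alternative cut vertex of $J_{e,f}-e$ (or $J_{e,f}-f$) that separates $s_i$ from $s_{e,f}$ and sits strictly on the wrong side of the chosen ordering $w_e,\dots,e,\dots,w_f,\dots,f$, contradicting the minimality in the choice of $w_e$ or $w_f$. Concretely: the $H$-green cycle $C'$ bounds a face, so it cannot be crossed in any relevant $1$-drawing (Lemma \ref{lm:greenCycles} (\ref{it:notCrossed})); combining $C'$ with $Q$ and the relevant subpath of $R$ gives a new separation of $J_{e,f}-e$ whose cut vertex is an endpoint of $B$'s span, which lies in $r_i$ between $v_i$ and $e$ — on the $s_i$ side of $e$, hence inside $H_e$ rather than being a legitimate choice of $w_e$, and the piece of $N$ attached past $B$ is then disconnected from the rest of $N$ in $J_{e,f}-\{e,f\}$, contradicting that $N$ is connected as the intersection $K_e\cap H_f$ of connected subgraphs.

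The main obstacle I anticipate is the bookkeeping around exactly which of the two deleted edges ($e$ or $f$) the contradiction should be phrased against, and keeping straight the three nested separations $(H_e,K_e)$, $(H_f,K_f)$, $(N_i,N_{i+5})$ together with the cyclic ordering of $w_e,e,w_f,f$ along $Q$. A clean way to organise it is: first show $B$'s span lies in $r_i\,r_{i+1}$ and avoids $e$ (so one of the two arcs of $r_i\,r_{i+1}-e$); then observe that the endpoint of the span nearer $v_i$, call it $w'$, is a cut vertex of $J_{e,f}-e$ separating $s_i$ from $s_{e,f}$ because $C'$ bounds a face of $\Pi$; then note $w'$ lies strictly between $v_i$ and $e$ on $r_i$, so in the cyclic order along $Q$ it comes before $e$, whereas any vertex of $\oo{N_i\cap R}$ reachable from $w'$ through $B$ would have to lie after $e$ and before $w_f$ — but the $H$-green face forces it onto the wrong side, so $N$ is disconnected. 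Since tidiness (via Corollary \ref{co:attsMissBranch} and Lemma \ref{lm:noBridgeTwoInSpoke}) already pins down that global bridges are edges and that no bridge has two attachments on a spoke, there should be no further case analysis beyond the two symmetric positions of the stray vertex; the argument terminates with the contradiction to the connectivity of $N=K_e\cap H_f$.
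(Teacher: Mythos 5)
There is a genuine gap, and it sits at the heart of your argument. You correctly note that if $e$ were in the span of $B$ then $e$ would be $H$-green, and you then try to refute the remaining case, in which the span avoids $e$. But the mechanism you propose for that case cannot work: by tidiness a global $H$-bridge is embedded in $\Disc$, whereas $J_{e,f}$, $N$, and the separations $(H_e,K_e)$, $(H_f,K_f)$ live entirely in $\Mob$. The $H$-green face bounded by $B$ and its span therefore lies on the opposite side of $R$ from everything in $J_{e,f}$; it cannot ``trap'' or ``cut off'' any edge of $N$, the endpoint of the span is not a cut vertex of $J_{e,f}-e$, and neither the connectivity of $N$ nor the extremal choice of $w_e$ and $w_f$ is disturbed. (You have also misplaced $w_f$, which lies in $r_i\,r_{i+1}$ rather than $r_{i+5}\,r_{i+6}$ --- the paper's phrase ``as close as possible to $e$ in $r_{i+5}\,r_{i+6}$'' is a slip --- and your assertion that the span must be contained in $r_i\,r_{i+1}$ is unjustified, since a 2.5- or 3-jump whose span meets $\oo{N_i\cap R}$ may reach into $r_{i-1}$ or $r_{i+2}$.)

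The missing idea is that the case ``the span avoids $e$'' is impossible for reasons that have nothing to do with $N$, and that the hypothesis that $f$ is not $H$-yellow --- which your argument never uses --- is essential. If the $H$-node $v_{e,f}$ of $r_i\,r_{i+1}$ incident with $s_{e,f}$ were interior to the span, the cycle bounding $F_f$ would be $H$-yellow and hence so would $f$, a contradiction; therefore $B$ has an attachment in $\oc{z,r_i\,r_{i+1},v_{e,f}}$ and spans no edge of $r_{i+2}$. Since by Theorem \ref{th:globalBridges} every global $H$-bridge is a 2-, 2.5-, or 3-jump, its span contains at least two full rim branches and so cannot fit inside $\cc{z,r_i\,r_{i+1},v_{e,f}}$; it must therefore extend in the other direction through $e$, so $e$ is $H$-green --- the contradiction you already identified, now shown to be unavoidable.
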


\begin{proof}  For sake of definiteness, suppose some vertex of \wording{$\oo{N_i\cap R}$} is in the span of the global $H$-bridge $B$.  If the $H$-node $v_{e,f}$ in $r_i\,r_{i+1}$ incident with $s_{e,f}$ is in the interior of the span of $B$, then the cycle bounding $F_f$ is $H$-yellow, contradicting the fact that $f$ is not $H$-yellow.  Letting $z$ be the vertex of $N_i$ nearest $e$ in $r_i\,r_{i+1}$, we conclude that $B$ has an attachment in $\oc{z,r_i\,r_{i+1},v_{e,f}}$, and $B$ does not span any edge of $r_{i+2}$. 

By Theorem \ref{th:globalBridges}, $B$ is either a 2-, 2.5-, or 3-jump.  It follows from the preceding paragraph that $e$ is contained in the span of $B$, yielding the contradiction that $e$ is $H$-green. \end{proof}

We can now easily complete the proof of the lemma.  By Claim \ref{cl:noSpan}, we can separately embed $N_i$ and $N_{i+5}$ in the face outside of $\Mob$.  As no global $H$-bridge can attach on both paths in $R-\{e,f\}$ without making at least one of $e$ and $f$ $H$-green, we can join the two copies of $w$ together to obtain a representativity 2 embedding $\Pi'$ of $G$ in $\pp$ having a non-contractible simple closed curve meeting $\Pi'[G]$ only in the interiors of $e$ and $f$.  This implies that $G$ has a 1-drawing, as required.  
\end{cproof}

We further investigate the detailed structure of $H$-rim edges.

}\begin{lemma}\label{lm:3jumpRed}  Let $G\in\m2$ and $\hvfg$, with $H$ tidy.  If $v_iv_{i+3}$ is a global $H$-bridge, then, for $j\in\{i-1,i+3\}$ there is an edge $e_j\in r_j$ that is neither $H$-yellow nor $H$-green.  \end{lemma}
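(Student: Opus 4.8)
The plan is to argue by contradiction: suppose, say, that every edge of $r_{i+3}$ is $H$-green or $H$-yellow (the case $j=i-1$ being symmetric under relabelling). Since $v_iv_{i+3}$ is a global $H$-bridge, Lemma~\ref{lm:globalJumps}~(\ref{it:globalNoHnode}) says no other global $H$-bridge shares the $H$-node $v_{i+3}$ with it, and Lemma~\ref{lm:globalJumps}~(\ref{it:2and3jumps}) rules out a $2.5$-jump ending in $\oc{v_{i-1},r_{i-1},v_i}$; applying these to the symmetric situation at $v_{i+3}$ limits which global $H$-bridges can span edges of $r_{i+3}$. First I would use Theorem~\ref{th:twoGreenCycles} together with the fact (following Definition~\ref{df:green}) that $v_iv_{i+3}$ forms an $H$-green cycle with its span: the span of $v_iv_{i+3}$ is a subpath of $r_i\,r_{i+1}\,r_{i+2}$, so no edge of that span is in any \emph{other} $H$-green cycle. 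This will restrict the $H$-green cycles that can cover edges of $r_{i+3}$ to those whose path-in-$R$ lies essentially in $r_{i+3}\,r_{i+4}\,r_{i+5}$ or wraps around the other way, and by Lemma~\ref{lm:greenCycles}~(\ref{it:CboundsFace}) and the tidy embedding each such cycle bounds a face; their rim-parts, being $H$-green, come from global $H$-bridges (Lemma~\ref{lm:yellowCycles}~(\ref{it:greenGlobal}) for the yellow witnesses).

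The heart of the argument is a counting/crossing contradiction using a hyperquad. I would show that, with every edge of $r_{i+3}$ coloured, there would be ``too much'' coverage near $r_{i+3}$, contradicting one of Corollary~\ref{co:greenCover}, Lemma~\ref{lm:globalJumps}~(\ref{it:spanDiffHyper}), or Lemma~\ref{lm:3jumpRed}-style bridge analysis. Concretely: by Lemma~\ref{lm:tidyBODhyper}, since $v_iv_{i+3}$ is a $3$-jump and (by Lemma~\ref{lm:globalJumps}~(\ref{it:atMostOne3jump})) it is the only $3$-jump, the hyperquad $\bQ_{i+4}$ (the one that avoids the relevant neighbourhood of $r_{i+3}$, with indices chosen appropriately) has BOD. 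Take $e\in s_{i+4}$ and a $1$-drawing $D$ of $G-\oo{s_{i+4}}$; Lemma~\ref{lm:BODcrossed} forces $\bQ_{i+4}$ to be crossed in $D$. Now I want to show every edge of $r_{i+3}$ is clean in $D$: the $H$-green edges of $r_{i+3}$ are clean by Lemma~\ref{lm:greenCycles}~(\ref{it:notCrossed}) applied to their green cycles (whose spoke-free parts are disjoint from $s_{i+4}$, which one checks using the span restrictions above), and the $H$-yellow edges of $r_{i+3}$ are clean by the $V_6$-construction in the proof of Lemma~\ref{lm:greenNotRed} together with Lemma~\ref{lm:yellowCycles}~(\ref{it:oneBridge}) (the yellow cycle has a unique bridge, which must be crossed if the drawing is $1$, so the rim part is clean); alternatively, Lemma~\ref{lm:technicalV8colour} directly shows the relevant short rim segments are uncrossed in any $1$-drawing of $H-\oo{s_{i+4}}$ plus the spanning paths. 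With all of $r_{i+3}$ clean, the crossing of $\bQ_{i+4}$ must occur between its \emph{other} rim-arc and the segment $r_{i-2}\,r_{i-1}\,r_i\,r_{i+1}$; but Lemma~\ref{lm:technicalV8colour}~(\ref{it:atMostTwo}) and the structure of the remaining green/yellow coverage would then force that arc to be clean as well, so $\bQ_{i+4}$ is not crossed — a contradiction.

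The main obstacle, I expect, is bookkeeping the precise index ranges: one must correctly identify which hyperquad is disjoint from the problematic rim neighbourhood, verify that the spoke $s_{i+4}$ (or whichever spoke is deleted) is genuinely disjoint from the spoke-parts of all the green cycles covering $r_{i+3}$ — this uses Lemma~\ref{lm:greenCycles}~(\ref{it:shortJump}) (at most two interior $H$-nodes on the rim part of a green cycle) and Lemma~\ref{lm:yellowCycles}~(\ref{it:quadClosure}) (a yellow cycle lives in the closure of a single quad) to pin down exactly which spokes a cleanliness argument needs to avoid — and handle the boundary case where the span of $v_iv_{i+3}$ shares an $H$-node with the span of a green/yellow witness near $r_{i+3}$, which is where Theorem~\ref{th:twoGreenCycles} and Lemma~\ref{lm:globalJumps}~(\ref{it:globalNoHnode}),~(\ref{it:2and3jumps}) do the real work. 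Once the index alignment is fixed, each individual cleanliness claim is a direct citation of Lemma~\ref{lm:greenCycles}~(\ref{it:notCrossed}), Lemma~\ref{lm:technicalV8colour}, or Lemma~\ref{lm:yellowCycles}, and the final contradiction is the standard ``$\bQ$ has BOD but cannot be crossed'' pattern used repeatedly above.
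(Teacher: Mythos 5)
Your overall template --- force a BOD hyperquad to be crossed in a $1$-drawing of $G$ minus an open spoke, then show that coloured edges are clean --- is the right one, but your instantiation fails at its very first step. For the hypothesized $3$-jump $v_iv_{i+3}$, Lemma~\ref{lm:tidyBODhyper} guarantees BOD for $\bQ_j$ only when neither $v_{j-1}v_{j-4}$ nor $v_{j+1}v_{j+4}$ is a global $H$-bridge, and for $j=i+4$ the first of these is exactly $v_{i+3}v_{i}$, i.e.\ your given $3$-jump. So $\bQ_{i+4}$ (which is $\bQ_{i-1}$, hyperquad indices being read mod $5$) is precisely the one hyperquad for which BOD is \emph{not} available, and Lemma~\ref{lm:BODcrossed} cannot be invoked for it. The paper uses $\bQ_{i+1}$ and a $1$-drawing $D$ of $G-\oo{s_{i+1}}$ to handle the $r_{i-1}$ side (BOD holds there because, by Lemma~\ref{lm:globalJumps}~(\ref{it:atMostOne3jump}), the only $3$-jump is $v_iv_{i+3}$, which is not one of the two exceptional bridges for $\bQ_{i+1}$); the mirror choice for your $r_{i+3}$ side is $\bQ_{i+2}$ with $\oo{s_{i+2}}$ deleted.

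Second, the shape of your contradiction --- "$r_{i+3}$ is clean and the other rim arc is clean, so the hyperquad is not crossed" --- cannot close, because nothing makes the other rim arc clean: the lemma's own conclusion is that $r_{i-1}$ (resp.\ $r_{i+3}$) contains an uncoloured edge, and in the paper's argument the crossing genuinely occurs, with that edge as one of the two crossing edges. The working argument is sharper. The cycle formed by $v_iv_{i+3}$ and its span is $H_1$-close for $H_1=R\cup s_{i-1}\cup s_i\cup s_{i+3}$, hence clean by Lemmas~\ref{lm:closeIsPrebox} and~\ref{lm:preboxClean}; this pins the crossed edge $e$ on the near side of $\bQ_{i+1}$ into $r_{i-1}$. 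One then shows this particular $e$ is not spanned by any global $H$-bridge (Theorem~\ref{th:twoGreenCycles} plus Lemma~\ref{lm:globalJumps}~(\ref{it:globalNoHnode}) and~(\ref{it:2and3jumps})), so a green witness for $e$ would be local, hence $H_2$-close for a second $V_6$ and clean --- contradicting that $e$ is crossed; and a yellow witness would force its green witness to contain a global bridge $B=v_{i+4}w$ with $w\in\co{v_{i+6},r_{i+6},v_{i+7}}$, whereupon a third $V_6$ built from $(R-\oo{C''\cap R})\cup B$ and the spokes $s_{i-1}$, $s_{i+2}$, $s_{i+3}$ again makes the yellow cycle close and clean. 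These last two steps, especially the yellow case with its bespoke $V_6$, are the real content of the proof and are only gestured at in your sketch.
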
\printFullDetails{

\begin{cproof}  The two sides are symmetric, so it suffices to prove the existence of $e_{i-1}$.  Lemmas \ref{lm:tidyBODhyper} and \ref{lm:globalJumps} (\ref{it:atMostOne3jump}) imply that $\bQ_{i+1}$ has BOD.  Let $D$ be a 1-drawing of $G-\oo{s_{i+1}}$.    Lemma \ref{lm:BODcrossed} implies $\bQ_{i+1}$ is crossed in $D$.

However, the cycle $C$ consisting of $v_iv_{i+3}$ and the path it spans is \wording{$H_1$-close}, for \wording{$H_1=R\cup s_{i-1}\cup s_i\cup s_{i+3}$}.  Therefore, Lemmas \ref{lm:closeIsPrebox} and \ref{lm:preboxClean} imply that $C$ is not crossed in $D$.  We conclude from the nature of 1-drawings of $V_8$ that $r_{i-1}$ crosses $r_{i+5}\cup r_{i+6}$; let $e$ be the edge in $r_{i-1}$ that is crossed in $D$.  

Suppose, by way of contradiction, that there is a global $H$-bridge $B$ spanning $e$.  Theorem \ref{th:globalBridges} implies $B$ is either a 2-, 2.5- or 3-jump, while Theorem \ref{th:twoGreenCycles} implies $B$ does not span any edge of $r_i$ (such an edge is already spanned by $v_iv_{i+3}$).  Lemma \ref{lm:globalJumps} (\ref{it:globalNoHnode}) implies $v_i$ is not an attachment of $B$, so $B$ must be a 2.5-jump with one end in $\oo{r_{i-1}}$, contradicting Lemma \ref{lm:globalJumps} (\ref{it:2and3jumps}).  Thus, $e$ is not spanned by a global $H$-bridge.

It follows that, if $e$ is in an $H$-green cycle $C'$, then $C'\subseteq \cl(Q_{i-1})$.  But such a $C'$ is \wording{$H_2$-close}, for \wording{$H_2=R\cup s_i\cup s_{i+2}\cup s_{i+3}$}\wordingrem{(${}\topol V_6$ removed)}.  By Lemmas \ref{lm:closeIsPrebox} and \ref{lm:preboxClean}, $C'$ is not crossed in any 1-drawing of $G-\oo{s_{i+1}}$.  This contradicts the fact that $e$ is crossed in $D$.  We conclude that $e $ is not $H$-green.

So now we suppose $e$ is in the $H$-yellow cycle $C'$ and that $C''$ is a witnessing $H$-green cycle.  Then $C'\subseteq \cl(Q_{i-1})$ and $C''$ contains a global $H$-bridge $B$ that spans an edge in $r_{i+4}$.  This implies $B\ne v_iv_{i+3}$, so Lemma \ref{lm:globalJumps} (\ref{it:atMostOne3jump}) shows that $B$ is not a 3-jump.  

Moreover, (\ref{it:2and3jumps}) of the same lemma shows $B$ cannot have an attachment in $\co{v_{i+3},r_{i+3},v_{i+4}}$, while (\ref{it:spanDiffHyper}) shows $B$ cannot have $v_{i+7}$ as an attachment.  Therefore, $B$ is a 2- or 2.5-jump $v_{i+4}w$, with $w\in \co{v_{i+6},r_{i+6},v_{i+7}}$.  

The cycle $(R-\oo{C''\cap R})\cup B$, together with the $H$-spokes  $s_{i-1}$, $s_{i+2}$, and $s_{i+3}$ is a \wording{subdivision $H_3$ of} $V_6$ for which $C'$ is \wording{$H_3$-close}, showing that $e$ is not crossed in any 1-drawing of $G-\oo{s_{i+1}}$.  This contradicts the fact that $e$ is crossed in $D$ and, therefore, $e$ is not $H$-yellow.
\end{cproof}

The proof of Theorem \ref{th:rimColoured} will also depend on the following new concepts.

}\begin{definition}\label{df:scope}  Let $G$ be a graph and let $\hvfg$, with $H$ tidy.  Let $\Pi$ be an embedding of $G$ in $\pp$ so that $H$ is $\Pi$-tidy and has the standard labelling relative to $\gamma$. For $i\in \{0,1,2,\dots,9\}$:
\begin{enumerate} 
\item  $\topleft{P}_i=r_{i-2}\,r_{i-1}$\index{$\topleft{P}_i$}, $\botleft{P}{}_i=r_{i+3}\,r_{i+4}$\index{$\botleft{P}_i$}, $\topright{P}_i=r_{i+1}\,r_{i+2}$\index{$\topright{P}_i$}, and $\botright{P}{}_i=r_{i+6}\,r_{i+7}$\index{$\botright{P}_i$}.
\item the {\em spines\/}\index{spine} $\leftspine_i $\index{$\leftspine_i$} and $\rightspine i$\index{$\rightspine i$} of $Q_i$ consist of the paths $\topleft{P}_i\cup s_i\cup \botleft{P}{}_i$ and $\topright{P}_i\cup s_{i+1}\cup \botright{P}{}_i$, respectively  (see Figure \ref{fg:spines}); 
\item the {\em scope\/}\index{scope} $K_i$ of $Q_i$ consists of ${\cl({Q_i})}\cupspace\cup\cupspace {\leftspine_i }\cupspace\cup\cupspace{\rightspine i}\cupspace\cup\cupspace{\mathcal B_i}$, where $\mathcal B_i$ consists of all global $H$-bridges having both attachments either in $\topleft{P}_i \cup \topright{P}_i$ or in $\botleft{P}{} _i\cup \botright{P}{}_i$; and
\item the {\em complement\/}\index{complement} $K^\natural_i$\index{$K^\natural$} of $K_i$ is obtained from $M_{Q_i}$ by deleting the edges (but not their incident vertices) that comprise the $H$-bridges in $\mathcal B_i$.
\item  The two vertices $v_{i-2}$ and $v_{i+3}$ are the {\em  trivial\/} $\leftspine_i\,\rightspine i$-paths\index{trivial $\leftspine_i\,\rightspine i$-path} in $K_i$.  Any other $\leftspine_i\,\rightspine i$-path in $K_i$ is {\em non-trivial\/}\index{non-trivial $\leftspine_i\,\rightspine i$-path}. 
\end{enumerate}
\end{definition}\printFullDetails{

 We note that $\leftspine_i \cap \rightspine i$ is equal to $\iso{\{v_{i-2},v_{i+3}\}}$.  For our purposes, these are not ``useful"   $\leftspine_i\,\rightspine i$-paths.
\begin{figure}[!ht]
\begin{center}
\scalebox{.9}{\input{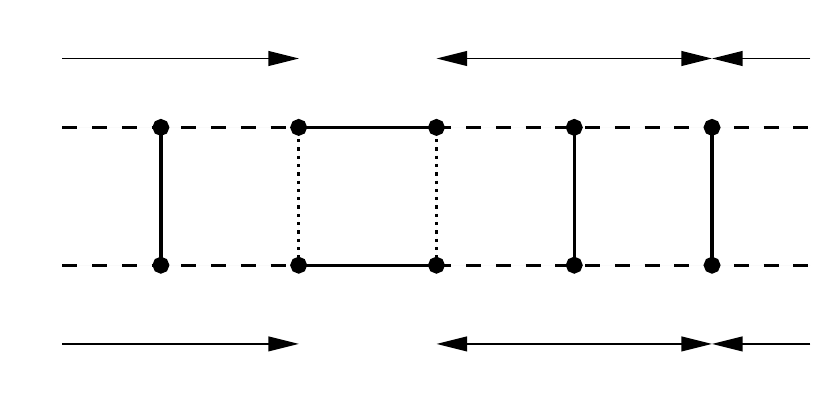_t}}
\end{center}
\caption[The spine and its constituent paths.]{The paths with small dashes are $\topleft{P}_1$, $\botleft{P}_1$, $\topright{P}_1$, and $\botright{P}_1$.  The spine $\leftspine_1$ is the path  $r_9\,r_0\,s_1\,r_5\,r_4$, while $\rightspine 1$  is $r_3\,r_2\,s_2\,r_7\,r_8$.}\label{fg:spines}
\end{figure}

We observe that, for each $i\in\{0,1,2,3,4\}$, $G=K_i\cup K^\natural_i$.  

 The following lemma plays an important role in \wording{the rest of this section}%everything that follows
 .  

}\begin{lemma}\label{lm:Qspans}  Let $G\in\m2$ and $\hvfg$, with $H$ tidy.  Let $e$ be an edge of $R$ and let $i$ be such that $e\in r_i$.  Then $G-e$ has a subdivision of $V_6$ if and only if there are disjoint non-trivial $\leftspine_i\,  \rightspine i$-paths in $K_i-e$.
\end{lemma}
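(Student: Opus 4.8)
The plan is to prove both directions by relating $V_6$-subdivisions in $G-e$ to the structure inside the scope $K_i$. For the forward direction, suppose $G-e$ has a subdivision $L$ of $V_6$. Since $e\in r_i$ and $H$ is tidy, I first argue that $L$ can be taken in a controlled position: the point is that the rim $R$ together with three spokes $s_{i-1}$, $s_i$, $s_{i+1}$ (etc.) gives subdivisions of $V_6$ whenever we can route ``around'' $e$, so the existence of a $V_6$ in $G-e$ is equivalent to the existence of a cycle in $G-e$ separating the two sides appropriately. More precisely, I would show that $G-e$ has a $V_6$-subdivision iff there is no planar embedding of $G-e$, and then apply the Tutte-type machinery: by Corollary~\ref{co:t(s)2degen}-style reasoning and the fact that $G$ has $\crn(G)\ge 2$, $G-e$ is either planar or contains a $V_6$. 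The real content is that $G-e$ fails to be planar exactly when one cannot ``cut'' $G$ along $e$ and a matching cut on the opposite side of the M\"obius structure — and that opposite-side cut lives entirely in $K_i$. So the first main step is: reduce ``$G-e$ has a $V_6$'' to ``$G-e$ has a non-contractible structure that forces non-planarity'', using that $G=K_i\cup K_i^\natural$ and that $K_i^\natural$ contributes a fixed skeleton ($H-\oo{s_i}$ minus some global bridges, which is planar and embeds ``outside'').

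For the backward direction, suppose there are disjoint non-trivial $\leftspine_i\,\rightspine i$-paths $P$ and $P'$ in $K_i-e$. I would explicitly build a $V_6$-subdivision in $G-e$: use $R$ (which does not contain $e$ on the ``far'' side, and on the near side we route through $P\cup P'$) together with the two spokes $s_{i-1}$ and $s_{i+1}$ and a third spoke obtained from $P$ or $P'$. The non-triviality ensures these paths are not just the vertices $v_{i-2},v_{i+3}$, so they genuinely cross the quad $Q_i$ and, combined with the rim and the two flanking spokes, yield a $V_6$ avoiding $e$. The disjointness of $P$ and $P'$ is what allows both of them to be used simultaneously without sharing internal vertices; one becomes part of a replacement rim segment, the other becomes a spoke. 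I would draw the picture: $(R-\oo{r_i})$ is a path from $v_i$ to $v_{i+1}$ avoiding $e$ is false, so instead $R-e$ is itself a path, and $P$, $P'$ supply the extra connectivity across $Q_i$; checking that the resulting graph is genuinely a subdivision of $V_6$ (six branch vertices, nine branches, correct incidences) is a routine but slightly fiddly verification using the definitions of $\leftspine_i$, $\rightspine i$, and $K_i$ from Definition~\ref{df:scope}.

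The harder direction is the forward one, and within it the main obstacle is controlling \emph{global} $H$-bridges: a priori a $V_6$-subdivision in $G-e$ could use edges of global bridges in $\mathcal B_i$ (which are in $K_i$) or edges of bridges that were \emph{removed} to form $K_i^\natural$, and I must show that any $V_6$ in $G-e$ can be ``pushed'' into one lying across the spines of $Q_i$. Here I would lean on Theorem~\ref{th:globalBridges} (global $H$-bridges of a tidy $H$ are single edges of very restricted form — $2$-jumps, $2.5$-jumps, $3$-jumps) and on Lemma~\ref{lm:globalJumps} (no two global bridges share an $H$-node, at most one $3$-jump, span constraints), so that the global bridges contributing to any $V_6$ have a transparent combinatorial description; then a case analysis on how such a bridge can interact with $e\in r_i$ reduces everything to the scope $K_i$. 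I also expect to use Lemma~\ref{lm:noBdy} / Corollary~\ref{co:contractibleBOD} to say that any $V_6$-avoiding-$e$ cannot ``wrap around'' in a way that escapes $K_i$, and the tidiness conditions (especially Definition~\ref{df:tidy}(\ref{it:Hjump}) and (\ref{it:QnoOverlap})) to rule out stray $H$-avoiding paths that would otherwise complicate the picture. Once the global bridges are tamed, the local structure is exactly $\cl(Q_i)$ plus the two spines, so the two disjoint non-trivial $\leftspine_i\,\rightspine i$-paths are forced, completing the equivalence.
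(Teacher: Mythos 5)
There is a genuine gap, and it sits in the direction you dismiss as "routine": showing that two disjoint non-trivial $\leftspine_i\,\rightspine i$-paths in $K_i-e$ force a subdivision of $V_6$ in $G-e$. Your argument rests on the claim that non-triviality ensures the paths "genuinely cross the quad $Q_i$". That is false: a non-trivial $\leftspine_i\,\rightspine i$-path may consist of a single global $H$-bridge --- for instance a 2-jump $v_{i-2}v_{i+1}$ --- which lies in $\Disc$ and never meets $r_{i+5}$, $s_i$, or $s_{i+1}$, so it can serve neither as a "replacement rim segment" nor as a "spoke" in your construction. The paper flags exactly this danger in the remark following the lemma: adding the 2.5-jump $av_2$ and the 3-jump $v_6v_9$ to $(H-\oo{r_1})-\oo{r_{6}}$ yields disjoint non-trivial $\leftspine_1\,\rightspine 1$-paths in a \emph{planar} graph. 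So this implication cannot be proved by a generic picture; it needs the structure of global bridges of a tidy $H$ (Theorem \ref{th:globalBridges}, Lemma \ref{lm:globalJumps}) funneled through Lemma \ref{lm:oneInClosure}, which lets one choose the two paths so that one lies in $\cl(Q_i)$ (hence is an honest $s_is_{i+1}$-path) and the other contains at most one global bridge, followed by a case analysis on that bridge to select the correct three spokes. None of this appears in your proposal.

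The other direction you make far harder than necessary. Rather than taking an arbitrary $V_6$ in $G-e$ and "pushing" it into the scope while "taming" global bridges --- a program you only gesture at --- one argues the contrapositive directly: if the disjoint paths do not exist, Menger's theorem (with $v_{i-2}$ and $v_{i+3}$ split into their two spine copies) gives a single cut vertex $u$ of $K_i-e$ separating $\leftspine_i$ from $\rightspine i$; since $r_{i+5}\subseteq K_i-e$, necessarily $u\in r_{i+5}$, and then a non-contractible simple closed curve passing through the gap left by $e$ and through $u$ meets $\Pi[G-e]$ in at most one point, so $G-e$ embeds in $\pp$ with representativity at most 1 and is therefore planar --- in particular it has no subdivision of $V_6$. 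Relatedly, your intermediate assertion that "$G-e$ is either planar or contains a $V_6$" is not a general fact (a non-planar graph may contain only a $K_5$-subdivision) and is not justified in the proposal; in the paper it emerges as a byproduct of the two directions rather than serving as an input to them.
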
\printFullDetails{

There is some subtlety here\dragominor{; 2-criticality is important}.  Suppose we have a subdivision $H$ of $V_{10}$ embedded in $\pp$ with representativity 2 so that all the $H$-spokes are in $\Mob$.  Give $H$ the usual labelling relative to $\gamma$.  Now delete $\oo{r_1}$ and $\oo{r_6}$, and then add the 2.5-jump $av_2$ and the 3-jump $v_6v_9$.  Then there are disjoint non-trivial $\leftspine_1\rightspine 1$-paths in the union $H'$ of $(H-\oo{r_1})-\oo{r_{6}}$ and the two jumps, but $H'$ is planar.  

We shall need the following.

}\begin{lemma}\label{lm:oneInClosure}  Let $G\in\m2$ and $\hvfg$, with $H$ tidy.  Let $e$ be an edge of $R$ and let $i$ be such that $e\in r_i$.   If there are disjoint non-trivial $\leftspine_i\, \rightspine i$-paths in $K_i-e$, then there are two such paths so that at least one of them is contained in $\cl(Q_i)$ and the other contains at most one global $H$-bridge.  \end{lemma}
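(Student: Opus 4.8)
\textbf{Proposal for the proof of Lemma \ref{lm:oneInClosure}.}

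The plan is to start with an arbitrary pair of disjoint non-trivial $\leftspine_i\,\rightspine i$-paths $P,P'$ in $K_i-e$ and successively simplify them until one lies in $\cl(Q_i)$ and the other uses at most one global $H$-bridge. First I would analyze what a non-trivial $\leftspine_i\,\rightspine i$-path can look like. Each such path starts on $\leftspine_i=\topleft{P}_i\cup s_i\cup\botleft{P}{}_i$ and ends on $\rightspine i=\topright{P}_i\cup s_{i+1}\cup\botright{P}{}_i$, and its interior is made up of alternating pieces of $\cl(Q_i)$ (including $Q_i$-local $H$-bridges) and of the global $H$-bridges in $\mathcal B_i$. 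By Theorem \ref{th:globalBridges} every global $H$-bridge is a single edge of the form $v_jv_{j+2}$, $v_jv_{j+3}$, or a $2.5$-jump with one end an $H$-node and the other in $\oo{r_{j-3}}\cup\oo{r_{j+2}}$; and by definition of $\mathcal B_i$ the bridges that can be used by $P$ or $P'$ have both attachments in $\topleft{P}_i\cup\topright{P}_i$ or both in $\botleft{P}{}_i\cup\botright{P}{}_i$. Combined with Lemma \ref{lm:globalJumps}\,(\ref{it:atMostOne3jump})--(\ref{it:oppositeRims}), this already forces the members of $\mathcal B_i$ to be very restricted: at most one $3$-jump overall, $2.5$-jumps cannot appear on opposite rim branches, and no two global $H$-bridges share an $H$-node. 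The upshot I would extract is that any single one of $P,P'$ can contain at most one global $H$-bridge to begin with, and that both together use at most two, one ``in the top'' (in $\topleft{P}_i\cup\topright{P}_i$) and one ``in the bottom''.

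Next I would do the reduction itself. The key observation is that a global $H$-bridge used by $P$ (say a top one, with attachments $x\in\topleft{P}_i$ and $y\in\topright{P}_i$) together with the subpath of $R$ it spans forms an $H$-green cycle (this is recorded just after Definition \ref{df:span}: every global $H$-bridge combines with its span to form an $H$-green cycle). By Lemma \ref{lm:greenCycles}\,(\ref{it:oneCbridge}) that green cycle $C'$ bounds a face and has a unique $C'$-bridge $M_{C'}\supseteq H$; in particular there is no other $H$-avoiding path ``inside'' it. So if both $P$ and $P'$ tried to route through a top global $H$-bridge and a bottom one respectively, I would argue the two green-cycle faces, together with the constraint that $P$ and $P'$ are disjoint and the embedding $\Pi$ witnessing tidiness, force one of $P,P'$ to actually be re-routable entirely through $\cl(Q_i)$: along the span of the green cycle, which is a rim path, augmented by the spoke pieces at its ends. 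Concretely, I would take the path --- say $P$ --- that uses a global $H$-bridge and reroute it by replacing the global edge by its spanned rim path, checking that the new walk (a) stays inside $K_i-e$, (b) is still a non-trivial $\leftspine_i\,\rightspine i$-path, and (c) is now contained in $\cl(Q_i)$ because the span of a $2$-, $2.5$-, or $3$-jump in $\mathcal B_i$ lies within $\topleft{P}_i\cup\topright{P}_i$ or $\botleft{P}{}_i\cup\botright{P}{}_i$, each of which is inside $\cl(Q_i)$. The disjointness with $P'$ is preserved because the span lies in $R$ while $P'$ is confined to the complementary side; here is where the tidiness hypotheses on overlap (Definition \ref{df:tidy}\,(\ref{it:QnoOverlap}) and (\ref{it:Hjump})) and Theorem \ref{th:twoGreenCycles} (no rim edge in two green cycles) do the real bookkeeping. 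After this step $P\subseteq\cl(Q_i)$; then $P'$ can use at most one global $H$-bridge, since the two it could a priori use were the top one and the bottom one, and $P$ has now vacated one of them --- and by the ``no two global $H$-bridges share an $H$-node / cannot lie on opposite rim branches'' constraints the remaining options for $P'$ collapse to at most one.

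\textbf{Main obstacle.} The delicate point is the rerouting step: I must be sure that when I swap a global $H$-bridge for its spanned rim path, the spine endpoints of the path don't get disturbed and I don't accidentally reintroduce $e$. Since $e\in r_i$ and the span of a bridge in $\mathcal B_i$ lives in $\topleft{P}_i\cup\topright{P}_i\subseteq r_{i-2}\,r_{i-1}\,r_{i+1}\,r_{i+2}$ or in $\botleft{P}{}_i\cup\botright{P}{}_i$, it never meets $r_i$, so $e$ is safe --- but this needs to be stated carefully, and the case where a $2.5$-jump has its non-node end in the interior of a rim branch that also carries a spine endpoint requires a short separate check using Lemma \ref{lm:globalJumps}\,(\ref{it:2and3jumps}) and (\ref{it:spanDiffHyper}). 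The other subtlety is ensuring the rerouted $P$ remains \emph{non-trivial}: a trivial $\leftspine_i\,\rightspine i$-path is just $v_{i-2}$ or $v_{i+3}$, and the swap keeps the path touching both a genuine piece of $\leftspine_i$ and of $\rightspine i$, so non-triviality is inherited; I would spell this out. Everything else is routine given Theorems \ref{th:globalBridges} and \ref{th:twoGreenCycles}, Lemma \ref{lm:globalJumps}, and Lemma \ref{lm:greenCycles}.
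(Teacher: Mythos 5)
Your plan rests on a structural claim that is false in this setting: that each of $P$, $P'$ contains at most one global $H$-bridge to begin with. You derive this from the literal wording of $\mathcal B_i$ in Definition \ref{df:scope}, but the scope as actually used (see the case list in the proof of Lemma \ref{lm:Qspans}, e.g.\ ``$v_{i-2}w$ and $w$ is in $\oo{r_i}$'') contains global $H$-bridges with one end in $\oo{r_i}$ or $\oo{r_{i+5}}$. A single $\leftspine_i\,\rightspine i$-path can then chain two such bridges --- for instance $v_{i-2}w_1$, a $w_1w_2$-subpath inside $\cl(Q_i)$, then $w_2v_{i+3}$ with $w_1,w_2\in\oo{r_i}$ --- and the paper must (and does) treat this two-bridge case separately. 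Your ``upshot'' that both paths together use at most one top and one bottom bridge is therefore unjustified, and the case analysis you would need to do is substantially larger than you anticipate.

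The rerouting step is also broken at exactly the point that matters. The global bridges most relevant to $K_i-e$ are those whose span crosses $e$ --- the $2$-jump $v_{i-1}v_{i+1}$ and the $2.5$- and $3$-jumps such as $v_{i-2}v_{i+1}$, $v_{i-1}v_{i+2}$, whose spans are $r_{i-1}\,r_i$, $r_{i-2}\,r_{i-1}\,r_i$, $r_{i-1}\,r_i\,r_{i+1}$ --- because they are precisely the edges that let a path bypass the deleted edge $e\in r_i$. Replacing such a bridge by its span reintroduces $e$ and leaves $K_i-e$; your assertion that the span ``never meets $r_i$'' is false for these bridges (they lie in $\mathcal B_i$ even on the literal reading, since both attachments are $H$-nodes in $\topleft P_i\cup\topright P_i$). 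Even when the span avoids $e$, it typically runs through the interior of a spine, so the rerouted walk is no longer a $\leftspine_i\,\rightspine i$-path (recall an $AB$-path is internally disjoint from $A\cup B$) without additional truncation, and disjointness from the other path is not free once both paths are forced to meet $r_{i+5}$.

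Finally, the case that actually closes the argument --- each path containing exactly one global bridge --- is settled in the paper by a planarity argument you do not supply: if both bridges had an end in $\oo{r_i}\cup\oo{r_{i+5}}$, Lemma \ref{lm:globalJumps} (\ref{it:oppositeRims}) forces those ends into the same one of $\oo{r_i}$, $\oo{r_{i+5}}$, and then the two residual subpaths of $P_1,P_2$ inside $\cl(Q_i)$ have interleaved endpoints on $Q_i$ and must cross in the planar embedding $\Pi[\cl(Q_i)]$, a contradiction. Appealing to green cycles bounding faces does not substitute for this step. The proposal as written has genuine gaps and would not yield the lemma.
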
\printFullDetails{

\def\pathone{P_1}
\def\pathtwo{P_2}

In the proof, we consider many possibilities for the two disjoint $\leftspine_i\,\rightspine i$-paths.  For a given $i$, some possibilities might not occur because of limitations imposed by $\Pi$.  In principle, for $i=2$, all of the considered possibilities can occur, while for $i=4$, several of the considered possibilities cannot occur.

\bigskip
\begin{cproof}
Let ${\pathone}$ and ${\pathtwo}$ be the hypothesized disjoint paths.   

\begin{claim}\label{sc:disjt-ri+5}  If there is a $\leftspine_i\, \rightspine i$-path in $K_i-e$ disjoint from $r_{i+5}$, then there are disjoint $\leftspine_i\,  \rightspine i$-paths so that one of them is contained in $\cl(Q_i)$ and the other contains at most one global $H$-bridge. \end{claim}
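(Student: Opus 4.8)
\textbf{Proof proposal for Claim \ref{sc:disjt-ri+5}.}

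The plan is to start from the given disjoint $\leftspine_i\,\rightspine i$-paths $\pathone$ and $\pathtwo$, assume without loss of generality that $\pathone$ is the one that avoids $r_{i+5}$ (if the hypothesized disjoint-from-$r_{i+5}$ path happens to be, say, $\pathone$, fine; otherwise relabel), and then argue that we may locally modify the pair so that one path lies entirely in $\cl(Q_i)$ and the other uses at most one global $H$-bridge. First I would record what $\pathone$ can look like: a $\leftspine_i\,\rightspine i$-path that is disjoint from $r_{i+5}$ must either be a subpath of $\cl(Q_i)$ already, or must escape $\cl(Q_i)$ through the spines $\leftspine_i$, $\rightspine i$ or via some global $H$-bridge in $\mathcal B_i$. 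Using the structure of the scope $K_i$ — namely that $K_i = \cl(Q_i)\cup\leftspine_i\cup\rightspine i\cup\mathcal B_i$, that $\leftspine_i$, $\rightspine i$ consist of the rim pieces $\topleft P_i$, $\botleft P_i$, $\topright P_i$, $\botright P_i$ glued to $s_i$, $s_{i+1}$, and that the bridges in $\mathcal B_i$ attach only in $\topleft P_i\cup\topright P_i$ or in $\botleft P_i\cup\botright P_i$ (Definition \ref{df:scope}) — one sees that a $\leftspine_i\rightspine i$-path avoiding $r_{i+5}$ can use the ``top'' rim detour $\topleft P_i$–(bridge)–$\topright P_i$ and/or the spoke $s_i$ and/or the ``bottom'' detour $\botleft P_i$–$\botright P_i$, but cannot cross over into $r_{i+5}$. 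Since $r_{i+5}\subseteq\cl(Q_i)$ is precisely the ``far'' rim branch of $Q_i$, a path avoiding it is confined to one side.

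The key step is then a rerouting argument. If $\pathone$ already lies in $\cl(Q_i)$, I take it as the first of the output pair; otherwise $\pathone$ leaves $\cl(Q_i)$, and since it avoids $r_{i+5}$ the only way out is along a top detour through $\mathcal B_i$ (a subpath through $\topleft P_i$, a global $H$-bridge $B$, and $\topright P_i$). In that case I would replace that excursion by the corresponding sub-arc of $r_{i-1}\cup r_i\cup r_{i+1}\cup r_{i+2}$ inside $Q_i$, using Theorem \ref{th:globalBridges} (a global $H$-bridge is a single edge — a $2$-, $2.5$- or $3$-jump — so its span is a short rim path) and the tidiness hypothesis, together with the fact that $e\in r_i$, to check that this substitution still produces a $\leftspine_i\rightspine i$-path avoiding $e$; the new path lies in $\cl(Q_i)$. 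The disjointness from $\pathtwo$ must be preserved: here the point is that $\pathtwo$, being disjoint from $\pathone$, cannot have used the interior of $B$ or of the rim span of $B$ in a way that now conflicts — one invokes Lemma \ref{lm:globalJumps}(\ref{it:globalNoHnode}) (distinct global $H$-bridges share no $H$-node) and Lemma \ref{lm:globalJumps}(\ref{it:spanDiffHyper}) to control how many global bridges can be relevant at $Q_i$ at once, so that after the rerouting $\pathtwo$ still survives, possibly also rerouted to use at most one global $H$-bridge. I expect $\pathtwo$ to need at most one global bridge because, with $\pathone$ now in $\cl(Q_i)$ and disjoint from $\pathtwo$, the path $\pathtwo$ is forced to the opposite ``side'' of $Q_i$ (it must meet $r_{i+5}$, or at least cannot re-enter the region occupied by $\pathone$), and by Lemma \ref{lm:globalJumps}(\ref{it:spanDiffHyper}) together with Lemma \ref{lm:globalJumps}(\ref{it:oppositeRims}) there is not enough room for two distinct global bridges both to be used there.

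The main obstacle I anticipate is the bookkeeping of how $\pathone$ and $\pathtwo$ interact through $\mathcal B_i$ and the spines: one has to rule out the scenario in which both paths genuinely need a global bridge on the same side, and the scenario in which rerouting $\pathone$ into $\cl(Q_i)$ forces it to collide with $\pathtwo$. This is precisely where the tidiness consequences for global bridges (Theorem \ref{th:globalBridges}, Lemma \ref{lm:globalJumps}) and the no-overlap property of $Q_i$-local bridges (Definition \ref{df:tidy}(\ref{it:QnoOverlap})) do the work, and I would expect the proof to proceed by a short case analysis on which of $s_i$, the top detour, and the bottom detour each of $\pathone$, $\pathtwo$ uses, discharging each case by an explicit reroute. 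The conclusion of the claim — one path in $\cl(Q_i)$, the other with at most one global $H$-bridge — is then exactly what the subsequent proof of Lemma \ref{lm:oneInClosure} wants to feed into the remaining cases (where no $\leftspine_i\rightspine i$-path avoids $r_{i+5}$).
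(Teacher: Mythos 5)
There is a genuine gap, and it stems from missing the one idea the paper's proof turns on: $r_{i+5}$ is itself a non-trivial $\leftspine_i\,\rightspine i$-path (it joins $v_{i+5}\in\botleft{P}{}_i$ to $v_{i+6}\in\botright{P}{}_i$), it lies in $\cl(Q_i)$, and it is disjoint from the hypothesized path $P$ by assumption. So the pair $(P,\,r_{i+5})$ is already the required pair unless $P$ contains two or more global $H$-bridges; there is no need to touch the original pair $\pathone,\pathtwo$ at all, and no disjointness bookkeeping against $\pathtwo$ is needed. (Note also that the claim's hypothesis is only that \emph{some} path avoiding $r_{i+5}$ exists --- in the applications it is typically a freshly assembled path, not one of $\pathone,\pathtwo$ --- so your opening relabelling step already misreads the statement.)

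Your fallback for the remaining case is also not viable as described. You propose to reroute the excursion of $P$ through a global bridge onto ``the corresponding sub-arc of $r_{i-1}\cup r_i\cup r_{i+1}\cup r_{i+2}$ inside $Q_i$'' and assert the result lies in $\cl(Q_i)$ and avoids $e$. But $r_{i-1},r_{i+1},r_{i+2}$ are in the spines, not in $Q_i$, so the rerouted path does not land in $\cl(Q_i)$; and the sub-arc of $r_i$ you would use contains $e$ in general, which is precisely the obstruction the claim is fighting. The paper's handling of this case is sharper: if $P$ has two global bridges they are forced (via Theorem \ref{th:twoGreenCycles}) to be $2.5$-jumps $v_{i-2}w_1$ and $w_2v_{i+3}$ with $w_1,w_2\in\oo{r_i}$ and $w_1$ no farther from $v_i$ than $w_2$; since $e$ is a single edge it misses at least one of $\cc{v_i,r_i,w_1}$ and $\cc{w_2,r_i,v_{i+1}}$, say the former, and then $\cc{v_i,r_i,w_1}(P-v_{i-2})$ is a $\leftspine_i\,\rightspine i$-path in $K_i-e$, still disjoint from $r_{i+5}$, containing only one global bridge. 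Pairing it with $r_{i+5}$ finishes the proof. Your appeal to Lemma \ref{lm:globalJumps} to limit ``room'' for global bridges is the right instinct but is doing the wrong job here.
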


\begin{proof}  Suppose that $P$ and $r_{i+5}$ are disjoint paths.  If $P$ contains two (or more) global $H$-bridges, then they must be 2.5-jumps having an end in $\oo{r_i}$.  By Theorem \ref{th:twoGreenCycles}, they must be of the form $v_{i-2}w_1$ and $w_2v_{i+3}$, with $w_1$ being no further from $v_i$ in $r_i$ than $w_2$ is.   By symmetry, we may assume $e$ is not in $\cc{v_i,r_i,w_1}$.  Now $\cc{v_i,r_i,w_1}\rbsp (P-v_{i-2})$ and $r_{i+5}$ are the desired disjoint $\leftspine_i\,  \rightspine i$-paths in $K_i-e$.  \end{proof}

Thus, we may assume both ${\pathone}$ and ${\pathtwo}$ intersect $r_{i+5}$.   

\begin{claim}\label{sc:notTwoGlobal}   If either of ${\pathone}$ and ${\pathtwo}$ contains two global $H$-bridges, then there are disjoint $\leftspine_i\,  \rightspine i$-paths in $K_i-e$ so that one of them is contained in $\cl(Q_i)$ and the other contains at most one global $H$-bridge.   \end{claim}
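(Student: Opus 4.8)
\textbf{Proof plan for Claim \ref{sc:notTwoGlobal}.}

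The plan is to show that whenever a path $P_j$ (for $j\in\{1,2\}$) uses two distinct global $H$-bridges, we can reroute to reduce the total number of global $H$-bridges used by the pair, so by induction we may assume neither path uses two. First I would recall the restrictions imposed by tidiness: by Theorem \ref{th:globalBridges} every global $H$-bridge is a $2$-, $2.5$-, or $3$-jump; by Lemma \ref{lm:globalJumps} (\ref{it:globalNoHnode}) no two global $H$-bridges share an $H$-node, by (\ref{it:atMostOne3jump}) at most one is a $3$-jump, and by (\ref{it:spanDiffHyper}) two global $H$-bridges never together span all of $\bQ_j\cap R$ for any $j$. Since a $\leftspine_i\,\rightspine i$-path in $K_i$ has its ends on $\topleft{P}_i\cup\botleft{P}_i\cup s_i$ and $\topright{P}_i\cup\botright{P}_i\cup s_{i+1}$, and the only global $H$-bridges belonging to $K_i$ (those in $\mathcal B_i$) have both attachments in $\topleft{P}_i\cup\topright{P}_i$ or both in $\botleft{P}_i\cup\botright{P}_i$, a path containing two global $H$-bridges $B$ and $B'$ must use $B$ and $B'$ as ``relay'' segments sitting in, respectively, the top ($r_{i-2}\,r_{i-1}\,r_{i+1}\,r_{i+2}$) and bottom ($r_{i+3}\,r_{i+4}\,r_{i+6}\,r_{i+7}$) portions of the spines, because a single such path cannot traverse two bridges both spanning the top portion without their spans overlapping on $\topleft{P}_i\cup\topright{P}_i$, contradicting Theorem \ref{th:twoGreenCycles}.

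The key steps are then: (i) argue, using Theorem \ref{th:twoGreenCycles} and Lemma \ref{lm:globalJumps} (\ref{it:globalNoHnode}), (\ref{it:atMostOne3jump}), that a path carrying two global bridges forces one of them (say $B$) to span a subpath of $\topleft{P}_i$ or $\topright{P}_i$ with an end at $v_{i-2}$ or $v_{i+3}$; (ii) observe that the segment of $P_j$ through $B$ can be replaced by the corresponding subpath of $R$ ($\topleft{P}_i$ or $\topright{P}_i$) together with the $H$-spoke end, since $B$'s span is a subpath of the rim and $B$ has at most one attachment interior to any single $H$-rim branch (Theorem \ref{th:globalBridges}); this substitution is legitimate because $e\in r_i$ is on neither $\topleft{P}_i$ nor $\topright{P}_i$ (as $i\notin\{i-2,i-1,i+1,i+2\}$), so no deleted edge interferes; (iii) check that after the replacement the new path is still disjoint from the other path $P_{3-j}$ — here one uses that the rerouted segment lies in $\cl(Q_i)\cup\leftspine_i\cup\rightspine i$, and that $P_{3-j}$, being a distinct path avoiding the used bridges' interiors, cannot meet it; (iv) conclude that the new pair uses strictly fewer global $H$-bridges in total, and iterate, using Claim \ref{sc:disjt-ri+5} once we reach the situation where a path is disjoint from $r_{i+5}$, or else finishing directly once each path uses at most one global bridge and at least one lies in $\cl(Q_i)$.

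The main obstacle I anticipate is case analysis bookkeeping: carefully verifying that the rerouted segment really does stay inside $K_i$ (not just $G$) and really does avoid $e$ in every configuration of where $e$ sits along $r_i$ and which of the spine-top/spine-bottom the two global bridges occupy. In particular, when both global bridges on $P_j$ are $2.5$-jumps with ends in $\oo{r_i}$ one must use the symmetry ``$e$ is not in $\cc{v_i,r_i,w}$'' (as in Claim \ref{sc:disjt-ri+5}) to choose which bridge to eliminate, and when one of them is the unique $3$-jump one must invoke Lemma \ref{lm:globalJumps} (\ref{it:2and3jumps}) to rule out a clashing $2.5$-jump. Once the rerouting is shown to be valid, the induction on the number of global $H$-bridges used closes the argument, reducing to the desired configuration where one path lies in $\cl(Q_i)$ and the other contains at most one global $H$-bridge.
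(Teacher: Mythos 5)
There is a genuine gap, and it sits in the very first structural step of your plan. You assert that a path carrying two global $H$-bridges must use one of them in the ``top'' ($r_{i-2}\,r_{i-1}$, $r_{i+1}\,r_{i+2}$) and one in the ``bottom'', on the grounds that two bridges both spanning the top would have overlapping spans. Both halves of this are incorrect. First, two $2.5$-jumps can both attach into $\oo{r_i}$ with edge-disjoint spans --- e.g.\ $v_{i-2}w_1$ and $w_2v_{i+3}$ with $w_1$ between $v_i$ and $w_2$ --- and this is exactly the configuration the argument must handle. Second, and decisively, the ``one top, one bottom'' configuration cannot occur: a $\leftspine_i\,\rightspine i$-path meets each spine in only one vertex, so each of the two global bridges it contains must have an end in $\oo{r_i}\cup\oo{r_{i+5}}$ and is therefore a $2.5$-jump; Lemma \ref{lm:globalJumps} (\ref{it:oppositeRims}) --- the one clause of that lemma you did not invoke --- says at most one of $\oo{r_i}$ and $\oo{r_{i+5}}$ contains an end of a $2.5$-jump. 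Hence the two bridges attach into the \emph{same} open rim branch, and the correct dichotomy is ``both at the top'' versus ``both at the bottom'', not one of each. Steps (i)--(iv) are built on the wrong picture and do not survive this correction.

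There are two further problems. Your step (ii) declares the rerouting safe because $e\in r_i$ lies on neither $\topleft{P}_i$ nor $\topright{P}_i$; but the span of a $2.5$-jump such as $v_{i-2}w_1$ with $w_1\in\oo{r_i}$ contains $\cc{v_i,r_i,w_1}$, so $e$ may well lie inside the span, and whether it does is precisely what forces the case split (if $e\notin\cc{w_1,r_i,w_2}$ one obtains a $\leftspine_i\,\rightspine i$-path disjoint from $r_{i+5}$ and finishes via Claim \ref{sc:disjt-ri+5}; if $e\in\cc{w_1,r_i,w_2}$ one must instead route through $\cc{v_i,r_i,w_1}$ and $\cc{w_2,r_i,v_{i+1}}$ and argue separately about $P_2$). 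Finally, even granting the rerouting, terminating with ``each path contains at most one global bridge'' is weaker than the claim's conclusion, which also requires one of the two disjoint paths to be contained in $\cl(Q_i)$; your induction does not produce that.
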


\begin{proof}  We may assume ${\pathone}$ contains two global $H$-bridges $B_1$ and $B_2$.  Both $B_1$ and $B_2$ are 2.5-jumps.  Both have ends in $\oo{r_i}\cup \oo{r_{i+5}}$.  By Lemma \ref{lm:globalJumps} (\ref{it:oppositeRims}), they both have an end in the same one of $\oo{r_i}$ and $\oo{r_{i+5}}$.     We choose the labelling so that $(B_1,B_2)$ is either $(v_{i-2}w_1,w_2v_{i+3})$ or $(v_{i+3}w_1,w_2v_{i+8})$.  We treat these cases separately.

Suppose $(B_1,B_2)=(v_{i-2}w_1,w_2v_{i+3})$.  Assume first that $e\notin \cc{w_1,r_1,w_2}$.  Then  $B_1\cup \cc{w_1,r_1,w_2}\cup B_2$ is disjoint from $r_{i+5}$, and we are done by Claim \ref{sc:disjt-ri+5}.  Therefore, we may assume $e\in \cc{w_1,r_1,w_2}$.  

In this case, ${\pathone}$ consists of $B_1$, $B_2$, and a $w_1w_2$-path $P'_1$ contained in $\cl(Q_i)$.   We know that $P'_1$ contains a vertex in $r_{i+5}$. \wording{ Lemma \ref{lm:globalJumps} (\ref{it:oppositeRims})}  implies that ${\pathtwo}$ consists of a global $H$-bridge with no vertex in $\oo{r_{i+5}}$.  Therefore, we may choose $\cc{v_i,r_i,w_1}\cup P'_1\cup{}$\wording{$ \cc{w_2,r_i,v_{i+1}}$} and ${\pathtwo}$ as the desired paths.   

We conclude the proof of this claim by considering the case $(B_1,B_2)=(v_{i+3}w_1,w_2v_{i+8})$.  \wording{First, by way of contradiction suppose $P_2$ is not contained in $\cl(Q_i)$.} Lemma \ref{lm:globalJumps} (\ref{it:oppositeRims}) implies that ${\pathtwo}$ consists of a global $H$-bridge having both ends in $\topleft{P}_i \cup \topright{P}_i$.   But then ${\pathtwo}$ is disjoint from $r_{i+5}$ and we are done by Claim \ref{sc:disjt-ri+5}.  \wording{Thus, we may assume} ${\pathtwo}\subseteq \cl(Q_i)$.  

 If ${\pathtwo}$ is disjoint from either $\co{v_{i+5},r_{i+5},w_1}$ or $\oc{w_2,r_{i+5},v_{i+6}}$, then we may replace either $B_1$ with the former or $B_2$ with the latter, and we are done again.  Otherwise, there is a $\co{v_{i+5},r_{i+5},w_1}\oc{w_2,r_{i+5},v_{i+6}}$-path $P'_2$ contained in ${\pathtwo}$ that is $r_{i+5}$-avoiding; let its ends be $w_3\in \co{v_{i+5},r_{i+5},w_1}$ and $w_4\in \oc{w_2,r_{i+5},v_{i+6}}$. 

If $P'_2$ is $r_i$-avoiding, then $P'_2\cup \cc{w_3,r_{i+5},w_4}$ is an $H$-green cycle. Since $B_1$ together with the subpath of $R$ it spans is also $H$-green, the edge of $\cc{v_{i+5},r_{i+5},w_1}$ incident with $w_1$ is in two $H$-green cycles, contradicting Theorem \ref{th:twoGreenCycles}.  

Therefore, $P'_2$ is not $r_i$-avoiding and so contains two subpaths, one being a $w_3r_i$-path $P'_2{}^1$ and the other being an $r_iw_4$-path $P'_2{}^2$.  For $k=1,2$, let $u_k$ be the vertex of $P'_2{}^k$ in $r_i$.  If $e\in \cc{v_i,r_i,u_1}$, then the paths $\cc{v_{i+5},r_{i+5},w_3}\cup P'_2{}^1\cup \cc{u_1,r_i,v_{i+1}}$ and $B_1\cup \cc{w_1,r_{i+5},v_{i+6}}$ constitute the required disjoint paths.  Otherwise, $\cc{v_i,r_i,u_1}\cup \cc{u_1,P'_2,w_4,r_{i+5},v_{i+6}}$ and $\cc{v_{i+5},r_{i+5},w_2}\cup B_2$ constitute the required disjoint paths. \end{proof}

To complete the proof of the lemma, we may now assume that, for each $j=1,2$, $P_j$ contains a unique global $H$-bridge $B_j$.

We first suppose, by way of contradiction, that both $B_1$ and $B_2$ have an end in $\oo{r_i}\cup\oo{r_{i+5}}$.  Lemma \ref{lm:globalJumps} (\ref{it:oppositeRims}) shows that such ends are in the same one of $\oo{r_i}$ and $\oo{r_{i+5}}$; let $i'\in \{i,i+5\}$ be such that, for $j=1,2$, $B_j$ has an end $w_j\in \oo{r_{i'}}$.  We may assume $B_1=v_{i'-2}w_1$ and $B_2=w_2v_{i'+3}$.  

Theorem \ref{th:twoGreenCycles} implies $w_1$ is closer to $v_{i'}$ in $r_{i'}$ than $w_2$ is.
The paths $P_1-v_{i'-2}$ and $P_2-v_{i'+3}$ are both in $\cl(Q_i)$; the former is a $w_1s_{i+1}$-path, with end $x_1\in s_{i+1}$, and the latter is a $w_2s_i$-path, with end $x_2\in s_i$.  

Recall that $\Pi[\cl(Q_i)]$ is a planar embedding of $\cl(Q_i)$ with $Q_i$ bounding a face.  The vertices $w_1,w_2,x_1,$ $x_2$ occur in this cyclic order in $Q_i$, so the disjoint paths $P_1-v_{i'-2}$ and $P_2-v_{i'+3}$ must cross in $\Pi[\cl(Q_i)]$, a contradiction.  Therefore, at most one of $B_1$ and $B_2$ has an end in $\oo{r_i}\cup \oo{r_{i+5}}$\dragominor{, while the other is equal to the path among $P_1$ and $P_2$ that contains it.}

We may choose the labelling so that $P_2$ consists only of $B_2$.  Theorem \ref{th:twoGreenCycles} implies no edge of $r_i\cup r_{i+5}$ is spanned by both $B_1$ and $B_2$; since $B_2$ spans one of $r_i$ and $r_{i+5}$ completely, one of $B_1$ and $B_2$ spans edges in $r_i$ and the other spans edges in $r_{i+5}$.  If either $B_j$ spans all of $r_i$, then, as it is disjoint from $r_{i+5}$, we are done by Claim \ref{sc:disjt-ri+5}.  In particular, $B_2$ spans $r_{i+5}$,  edges spanned by $B_1$ are in $r_i$, and $B_1$ does not span all of $r_i$.

Therefore, $B_1$ is a 2.5-jump with one end $w_1$ in $\oo{r_i}$.  We may assume the other end of $B_1$ is $v_{i+3}$.   If $e\notin \cc{v_i,r_i,w_1}$, then $\cc{v_i,r_i,w_1}\cup B_1$ is disjoint from $r_{i+5}$, and we are done by Claim \ref{sc:disjt-ri+5}.  If $e\in \cc{v_i,r_i,w_1}$, then   
 $(P_1-v_{i+3})\,\cc{w_1,r_i,v_{i+1}}$ and $P_2$ are the desired paths.
\end{cproof}

\begin{cproofof}{Lemma \ref{lm:Qspans}} The following claim settles one direction.

\begin{claim}  If there are not disjoint non-trivial $\leftspine_i\,  \rightspine i$-paths in $K_i-e$, then $G-e$ is planar.  \end{claim}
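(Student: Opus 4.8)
The plan is to prove the contrapositive as stated: assuming $G - e$ has a subdivision $H^*$ of $V_6$, I would produce disjoint non-trivial $\leftspine_i\,\rightspine i$-paths in $K_i - e$. The natural strategy is to use the embedding $\Pi$ witnessing tidiness together with the structural control over $H$-bridges established earlier (Theorems \ref{th:globalBridges}, \ref{th:twoGreenCycles}, Lemmas \ref{lm:globalJumps}, \ref{lm:noSpokeOnlyBridge}, \ref{lm:noBridgeTwoInSpoke}, and Corollary \ref{co:attsMissBranch}). A $V_6$ is just $K_{3,3}$, so $H^* \subseteq G - e$ is a subdivision of $K_{3,3}$ avoiding $e$; my first step would be to locate in $H^*$ the three branch-vertices on each side and to show that, after rerouting, $H^*$ can be taken so that it ``respects'' the quad $Q_i$ — concretely, that $H^*$ contains $\leftspine_i$ and $\rightspine i$ (or subpaths carrying the same connectivity) and that the remaining branches of $H^*$ give two internally disjoint paths from $\leftspine_i$ to $\rightspine i$ avoiding $e$.

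The key reduction I would carry out first: since $e \in r_i$, deleting $e$ disconnects $r_i$; in any $V_6$-subdivision of $G - e$, the two ``halves'' of $r_i$ must be rejoined through some paths not using $e$. The M\"obius ladder structure of $H$ forces any such rejoining path $P$ to either be a local $Q_{i-1}$- or $Q_i$- or $Q_{i+1}$-bridge-path staying in $\cl(Q_{i-1})\cup\cl(Q_i)\cup\cl(Q_{i+1})$, or be (or contain) a global $H$-bridge; and by Theorem \ref{th:globalBridges} such a global bridge is a single-edge $2$-, $2.5$-, or $3$-jump whose span is severely restricted. Next I would observe that for $H^*$ (a $V_6$) to exist in $G - e$ it must in particular be nonplanar, so $H^*$ must ``wrap around'' the M\"obius band; this forces at least two vertex-disjoint ``transversal'' paths crossing between the two spines $\leftspine_i$ and $\rightspine i$ — otherwise $H^*$ would be separated by a $2$-cut inside $\cl(Q_i)$ together with the complement $K^\natural_i$, contradicting $3$-connectivity and nonplanarity. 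Here is where the scope $K_i$ enters: I would argue that both transversal paths can be taken inside $K_i$ (any portion wandering into $K^\natural_i = \comp{(M_{Q_i})}$ minus the bridges of $\mathcal B_i$ can be short-circuited, using that $G = K_i \cup K^\natural_i$ and the bridges in $K^\natural_i$ attach in a controlled way on the spines).

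The main obstacle I expect is the bookkeeping of which global $H$-bridges may legitimately be ``absorbed'' into $K_i$ versus which force the transversal paths into $K^\natural_i$, and ensuring the two chosen paths remain vertex-disjoint and non-trivial after all reroutings — in particular excluding the degenerate outcome where one of the paths collapses to $v_{i-2}$ or $v_{i+3}$. Lemma \ref{lm:globalJumps}(\ref{it:oppositeRims}) and (\ref{it:globalNoHnode}) will be the crucial tools to rule out two bridges both sticking out of $\oo{r_i}$ (or both out of $\oo{r_{i+5}}$) in a conflicting way, and Theorem \ref{th:twoGreenCycles} prevents an edge of $r_i$ or $r_{i+5}$ from being spanned twice. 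I would finish by a short case analysis on whether the rejoining of $r_i - e$ uses a local bridge (then both transversals stay in $\cl(Q_i)$, and extending along $s_i$, $s_{i+1}$, and the four paths $\topleft P_i,\topright P_i,\botleft P_i,\botright P_i$ gives the two $\leftspine_i\,\rightspine i$-paths directly) or uses a $2.5$- or $3$-jump (then one transversal is that jump plus a spanned subpath, the other is a local path), in each case producing the required disjoint non-trivial $\leftspine_i\,\rightspine i$-paths in $K_i - e$, which is the contrapositive of the stated claim. The companion direction — disjoint non-trivial paths in $K_i - e$ yield a $V_6$ in $G - e$ — is the easy direction: glue the two paths to $\leftspine_i$, $\rightspine i$, and the rim arcs avoiding $r_i$ to directly exhibit the branch-vertices of a $K_{3,3}$-subdivision.
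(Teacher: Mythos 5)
There is a genuine gap, and it sits exactly at the step you flag as ``the main obstacle.'' The paper proves the claim directly and very briefly: if there are no two disjoint non-trivial $\leftspine_i\,\rightspine i$-paths in $K_i-e$, Menger's theorem (applied after splitting the copies of $v_{i-2}$ and $v_{i+3}$ in the two spines) yields a single cut-vertex $u$ of $K_i-e$ separating $\leftspine_i$ from $\rightspine i$; since $r_{i+5}$ is itself such a path, $u\in r_{i+5}$; and then, because $K_i-\{e,u\}$ is disconnected, one can draw a non-contractible simple closed curve in $\pp$ passing through the face vacated by $e$ and through $u$, meeting $\Pi[G-e]$ only at $u$. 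A representativity-$\le 1$ embedding in $\pp$ forces planarity, and the claim follows in three sentences. Your contrapositive route discards this topological shortcut and instead must extract two disjoint transversal paths in $K_i-e$ from an arbitrary subdivision $H^*$ of $K_{3,3}$ in $G-e$. The justification you offer for the existence of two such paths --- ``otherwise $H^*$ would be separated by a $2$-cut inside $\cl(Q_i)$ together with the complement $K^\natural_i$, contradicting $3$-connectivity and nonplanarity'' --- is essentially a restatement of the claim rather than an argument: $K^\natural_i$ contains all of $H$ except the interior of $Q_i$ and is highly connected, so a priori $H^*$ could be routed so that its branches cross between the two spines only once inside $K_i-e$, with all remaining connectivity supplied by $K^\natural_i$. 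Ruling this out is precisely the content of the claim, and nothing in your sketch does so.

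Beyond that central circularity, the bookkeeping you defer is not minor: you would need to show the transversal pieces of $H^*$ can be rerouted into $K_i-e$ (not merely into $\cl(Q_{i-1})\cup\cl(Q_i)\cup\cl(Q_{i+1})$, which is a different and larger object), that the two resulting paths are vertex-disjoint after all reroutings, and that neither degenerates to the trivial paths $v_{i-2}$ or $v_{i+3}$. None of Lemmas \ref{lm:globalJumps}, \ref{lm:noSpokeOnlyBridge}, \ref{lm:noBridgeTwoInSpoke} or Theorem \ref{th:twoGreenCycles} delivers these statements directly. If you want a correct proof along combinatorial lines, the efficient move is still Menger in the direct direction: assume only one path, get the cut-vertex $u$, and then use the projective-planar embedding (representativity $\le 1$ implies planar) to conclude --- which is the paper's argument. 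Your closing remark about the ``companion direction'' being easy is also too quick; that converse is Lemma \ref{lm:oneInClosure} plus a seven-case analysis in the paper, since the second path may contain a global $H$-bridge and the choice of the three spokes of the $V_6$ depends on which jump occurs.
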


\begin{proof}  \minor{For this proof, we need to apply Menger's Theorem; in order to do so, we treat the copies of $v_{i-2}$ and $v_{i+3}$ in $\leftspine_i$ as different from their copies in $\rightspine i$.}   Let $u$ be a cut-vertex of $K_i-e$ separating $\leftspine_i $ and $\rightspine i$.    Let $\topleft{K}_i$ be the union of the $\iso{u}$-bridges in $K_i-e$ that have an edge in $\leftspine_i $ and let $\topright{K}_i$ be the union of the remaining $\iso{u}$-bridges in $K_i-e$.  Then $K_i-e=\topleft{K}_i \cup \topright{K}_ i$ and $\topleft{K}_i \cap \topright{K}_ i$ is just $\iso{u}$.  

Let $\Pi$ be an embedding of $G$ in $\pp$ so that $H$ is $\Pi$-tidy.   Since $r_{i+5}\subseteq K_i-e$, $u\in r_{i+5}$.  Because $K_i-\{e,u\}$ is not connected, there is a non-contractible, simple closed curve in $\pp$ that meets $\Pi[G-e]$ only at $u$\wording{. Thus, there is no non-contractible cycle in $G-\{e,u\}$,} showing that $G-e$ is planar.  \end{proof}

For the converse,  Lemma \ref{lm:oneInClosure} shows there are disjoint non-trivial $\leftspine_i\,  \rightspine i$-paths ${\pathone}$ and ${\pathtwo}$ in $K_i-e$ so that ${\pathone}\subseteq \cl(Q_i)$.  In particular, ${\pathone}$ is an $s_is_{i+1}$-path.   It follows from the embedding $\Pi[K_i]$ that ${\pathtwo}$ is disjoint from either $r_i$ or $r_{i+5}$.  

In every case, we find our $V_6$ by adding three spokes to the cycle contained in  {\Large (}$R-(\oo{r_i}\cup\oo{r_{i+5}})${\Large)}${}\cup {\pathone}\cup {\pathtwo}\cup s_i\cup s_{i+1}$ and containing {\Large (}$R-(\oo{r_i}\cup\oo{r_{i+5}})${\Large)}${}\cup {\pathone}\cup {\pathtwo}$.

If ${\pathtwo}$ contains no global $H$-bridges, then $s_{i+2}$, $s_{i+3}$, and $s_{i+4}$ may be chosen as the spokes.

If ${\pathtwo}$ contains precisely one global $H$-bridge $B_2$, then $B_2$ is one of: 
\begin{enumerate}
\item $v_{i-2}v_{i+1}$ (symmetrically, $v_iv_{i+3}$); \item $v_{i-1}v_{i+2}$; \item $v_{i-2}w$ and $w$ is in $\oo{r_i}$ (symmetrically, $wv_{i+3}$); \item $wv_{i+1}$ and $w$ is in $\oo{r_{i-2}}$ (symmetrically, $v_iw$, with $w\in \oo{r_{i+2}}$); \item $v_{i-1}w$ and $w$ is in $\oo{r_{i+1}}$ (symmetrically, $wv_{i+2}$, with $w\in \oo{r_{i-1}}$);  \item $v_{i-1}v_{i+1}$ (symmetrically, $v_iv_{i+2}$); \item and the comparable jumps with ends in $r_{i+3}\,r_{i+4}\,r_{i+5}\,r_{i+6}\,r_{i+7}$.  
\end{enumerate}
We choose, in all cases, $s_{i-2}$ and $s_{i+2}$ as two of the spokes, with third spoke (taking the cases in the same order):
\begin{enumerate}
\item the $P_1P_2$-subpath of $s_{i+1}$ (symmetrically, the $P_1P_2$-subpath of $s_i$);
\item  $s_{i-1}$;
\item the $P_1P_2$-subpath of $s_{i+1}$ (symmetrically, the $P_1P_2$-subpath of $s_{i}$);
\item the $P_1P_2$-subpath of $s_{i+1}$ (symmetrically, the $P_1P_2$-subpath of $s_i$);
\item  $s_{i-1}$ (symmetrically, the same);
\item  $s_{i-1}$ (symmetrically, the same); and
\item these cases are symmetric to the preceding ones.
\end{enumerate}
In every case, we have found a $V_6$ in $G-e$, as required.
\end{cproofof}

We conclude this section by proving that every rim edge is either red, $H$-green, or $H$-yellow.

\begin{cproofof}{Theorem \ref{th:rimColoured}}  Let $e$ be an edge in the $H$-rim.  There is an $i$ so that $e\in r_i$.  By Lemma \ref{lm:Qspans}, $G$ is red if and only if there are no disjoint non-trivial $\leftspine_i\,  \rightspine i$-paths in $K_i-e$.   

Now suppose there are disjoint non-trivial $\leftspine_i\,  \rightspine i$-paths $P_1$ and $P_2$ in $K_i-e$.  By Lemma \ref{lm:oneInClosure}, we may assume $P_1$ is contained in $\cl(Q_i)$, while $P_2$ contains at most one global $H$-bridge.  If $P_1$ is disjoint from $r_{i+5}$, then every maximal $r_i$-avoiding subpath of $P_1$ is contained in an $H$-green cycle.  The edge $e$ is in one of these $H$-green cycles, as required.

Thus, we may assume $P_1$ contains a vertex in $r_{i+5}$.  If $P_2\subseteq \cl(Q_i)$, then the planar embedding of $\cl(Q_i)$ shows $P_2$ is disjoint from $r_{i+5}$ and the preceding paragraph, with $P_2$ in place of $P_1$, shows $e$ is $H$-green.  Consequently, we may further assume $P_2$ contains a global $H$-bridge $B_2$.

{\bf Case 1:}  {\em $B_2$ has its ends in $\topleft{P}_i\cup r_i\cup \topright{P}_i$.}

In this case, if $e$ is spanned by $B_2$, then there is an $H$-green cycle containing $e$, namely the cycle consisting of $B_2$ and the subpath of $R$ that it spans.  The only other possibility in this case is that $B_2$ is a 2.5-jump with an end $w_2$ in $\oo{r_i}$ and that $e$ is in the one of $\cc{v_i,r_i,w_2}$ and $\cc{w_2,r_i,v_{i+1}}$ not spanned by $B_2$.  For the sake of definiteness, we suppose $B_2=v_{i-2}w_2$ and that $e$ is in $\cc{w_2,r_i,v_{i+1}}$.  

Since $P_1\subseteq \cl(Q_i)$, we see that, in this case, $P_2$ is disjoint from $r_{i+5}$ and, therefore, we may assume $P_1=r_{i+5}$.  We replace $P_2$ with $\cc{v_i,r_i,w_2}\,(P_2-v_{i-2})$ so that there are disjoint $\leftspine_i\, \rightspine i$-paths contained in $\cl(Q_i)-e$; a situation resolved in the paragraph preceding this case.

{\bf Case 2:}  {\em $B_2$ has its ends in $\botleft{P}_i\cup r_{i+5}\cup \botright{P}_i$.}

In this case, either $P_2$ is $B_2$ or, up to symmetry, $B_2$ is a 2.5-jump $wv_{i+8}$, with $w\in \oo{r_{i+5}}$, and $P_2$ is $\cc{v_{i+5},r_{i+5},w}\cup B_2$.
On the other hand, $P_1$ is an $s_is_{i+1}$-path in $\cl(Q_i)$ intersecting $r_{i+5}$.  

Let $x$ be the first vertex in $r_{i+5}$ as we traverse $P_1$ from $s_i$ and let $P'_1$ be the $s_ix$-subpath of $P_1$.  We note that $P_2$ prevents $x$ from being in $\cc{v_{i+5},r_{i+5},w}$, so $x\in \oc{w,r_{i+5},v_{i+6}}$.  Let $y$ be the end of $P'_1$ in $s_i$.  The cycle $P'_1\,\cc{x,r_{i+5},v_{i+6}}\,s_{i+1}\,r_{i}\,\cc{v_i,s_i,y}$ is $H$-yellow, as witnessed by the $H$-green cycle containing $B_2$.  Therefore, $e$ is either $H$-yellow or $H$-green (in Definition \ref{df:yellow}, an $H$-yellow edge is not $H$-green).
\end{cproofof}

}

\chapter{Existence of a red edge and its structure}\printFullDetails{

In this section, we prove that if $G$ is a 3-connected, 2-crossing-critical graph containing a tidy subdivision $H$ of $V_{10}$, then some edge of the $H$-rim is red.  Furthermore, we prove that each red edge $e$ has an associated special cycle we call $\Delta_e$. These ``deltas" will be the glue that hold successive tiles together \dragominor{and so form a vital element of the tile structure.}

The argument for proving the existence of a red edge depends on whether or not there is a global $H$-bridge that is either a 2.5- or 3-jump.  Once these cases are disposed of, matters become simple.  However, with the knowledge of the $\Delta$'s, it turns out we can show that there is no 3-jump.  This will be our first aim and so, since we need the $\Delta$'s to complete the elimination of 3-jumps, we shall begin by determining the structure of the $\Delta$ of a red edge.

}\begin{theorem}\label{th:redHasDelta}  Let $G\in\m2$, $\hvfg$, with $H$ tidy.   Let $e=uw$ be a red edge of $G$ and let $i\in \{0,1,2,\dots,9\}$ be such that $e\in r_i$.  
Then there exists a vertex $x_e\in \cc{r_{i+5}}$\index{$x_e$}\index{$\Delta_e$!$x_e$} and internally disjoint $x_eu$- and $x_ew$-paths $A_u$\index{$A_u$}\index{$\Delta_e$!$A_u$} and $A_w$\index{$A_w$}\index{$\Delta_e$!$A_w$}, respectively, in $\cl(Q_i)$ so that, letting $\Delta_e=(A_u\cup A_w)+e$\index{$\Delta_e$}:
\begin{enumerate}
\item\label{it:twoBridges} there are at most two $\Delta_e$-bridges in $G$;
\item\label{it:digon} there is a $\Delta_e$-bridge $M_{\Delta_e}$\index{$M_{\Delta_e}$}\index{$\Delta_e$!$x_e$} so that $H\subseteq M_{\Delta_e}\cup \Delta_e$, while the other $\Delta_e$-bridge, if it exists, is one of two edges in a digon incident with $x_e$;  and 
\item\label{it:deltaDetail} when there are two $\Delta_e$-bridges, let $u^e$\index{$u_e$}\index{$\Delta_e$!$u_e$} and $w^e$\index{$w_e$}\index{$\Delta_e$!$w_e$} be the attachments of the one-edge $\Delta_e$-bridge, labelled so that $u^e\in A_u$ and $w^e\in A_w$;  otherwise let $u^e=w^e=x_e$.  In both cases, $\Delta_e-e$ contains unique $uu^e$- and $ww^e$-paths $P_u$\index{$P_u$}\index{$\Delta_e$!$P_u$} and $P_w$\index{$P_w$}\index{$\Delta_e$!$P_w$}, each containing at most one $H$-rim edge, which, if it exists, is in the span of a global $H$-bridge and, therefore, is $H$-green. 
\end{enumerate}
\end{theorem}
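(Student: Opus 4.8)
The plan is to exploit that $e$ is red: by Lemma~\ref{lm:Qspans}, $G-e$ has no subdivision of $V_6$, equivalently there are no disjoint non-trivial $\leftspine_i\,\rightspine i$-paths in $K_i-e$. First I would take an embedding $\Pi$ of $G$ in $\pp$ for which $H$ is $\Pi$-tidy, and locate the cut-vertex guaranteed by the analysis in the proof of Lemma~\ref{lm:Qspans}: since there are no two disjoint non-trivial $\leftspine_i\,\rightspine i$-paths in $K_i-e$, Menger's Theorem gives a single vertex $z$ separating $\leftspine_i$ from $\rightspine i$ in $K_i-e$ (treating the copies of $v_{i-2},v_{i+3}$ appropriately), and from that proof $z\in r_{i+5}$; this $z$ will be the vertex $x_e$. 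The key point to extract is that, after deleting $e$ and $x_e$, the scope $K_i$ falls apart, and in $G$ itself this means that $\{e\}$ together with paths through $x_e$ form a ``bottleneck''. I would then build $A_u$ and $A_w$ as the two $x_e$-ends of an $H$-avoiding (within $\cl(Q_i)$, so using $Q_i$-local bridges and the spokes) pair of paths from $x_e$ to $u$ and to $w$: concretely, $\cl(Q_i)-e$ is connected (because $Q_i$ is a cycle and the $Q_i$-local bridges attach to it), and 3-connectivity of $G$ forces at least two internally disjoint $x_e\{u,w\}$-paths; choosing them minimal and inside $\cl(Q_i)$ gives $A_u$ and $A_w$. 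Then $\Delta_e=(A_u\cup A_w)+e$ is a cycle through $x_e$, $u$, $w$.

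Next I would establish item (\ref{it:twoBridges}) and (\ref{it:digon}) by a bridge-counting argument analogous to the ones pervading Sections~\ref{sec:prebox} and later. The cycle $\Delta_e$ is $H$-close-like: $\Delta_e\cap H$ is contained in the union of $Q_i$ and nearby $H$-branches, so $\Delta_e$ (or a slight modification) is a prebox by Lemma~\ref{lm:closeIsPrebox} applied to an appropriate $V_6$, hence by Lemma~\ref{lm:preboxClean} $\Delta_e$ is clean in every $1$-drawing of $G$ less an edge of a spoke disjoint from $\Delta_e$. One $\Delta_e$-bridge is $M_{\Delta_e}$, the one containing $H-\Delta_e$ (it exists because $H\setminus\Delta_e$ is connected through the far spokes). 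Any other $\Delta_e$-bridge $B$ lies in $\cl(Q_i)$ and on the side of $\Delta_e$ opposite $M_{\Delta_e}$; if such a $B$ had more than the minimal structure, we could, as in the proof of Lemma~\ref{lm:closeIsCycle} or Lemma~\ref{lm:greenCycles}(\ref{it:oneCbridge}), find a box — $\Delta_e$ would have BOD (Lemma~\ref{lm:cleanBOD}) and be a $\comp{B}$-prebox — contradicting Lemma~\ref{lm:noBox}. The cut-vertex property at $x_e$ is exactly what pins $B$ down to being a single edge $u^ew^e$ forming a digon with the corresponding edge of $\Delta_e$ incident with $x_e$: since $x_e$ separates things after deleting $e$, $B$ must attach on both $A_u$ and $A_w$ but be ``squeezed'' next to $x_e$, so its attachments $u^e\in A_u$, $w^e\in A_w$ together with $x_e$ bound a region containing only $B$; 3-connectivity and Corollary~\ref{co:closeAtts} (no three $H$-close attachments, unique $H$-avoiding path between two $H$-close attachments) then force $B$ to be a single edge and the arcs of $\Delta_e$ between $x_e$ and $u^e$, $w^e$ to be single edges as well, giving the digon.

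For item (\ref{it:deltaDetail}) I would analyze the paths $P_u$ (the $uu^e$-subpath of $A_u$) and $P_w$ similarly; uniqueness again follows from Corollary~\ref{co:closeAtts}(1). The content is that $P_u$ contains at most one $H$-rim edge and, if it does, that edge is $H$-green via a global $H$-bridge: here I would use Theorem~\ref{th:rimColoured} (every rim edge is red, $H$-green, or $H$-yellow) together with the fact that $e$ is red and Lemma~\ref{lm:Qspans}/Lemma~\ref{lm:oneInClosure} to rule out $H$-yellow rim edges inside $P_u$ — a rim edge of $P_u$ lying in an $H$-yellow cycle would, with the spanning structure, produce the forbidden disjoint $\leftspine_i\,\rightspine i$-paths in $K_i-e$. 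An $H$-green rim edge in $P_u$ not spanned by a global bridge would lie in an $H$-green cycle inside $\cl(Q_i)$, and combined with $\Delta_e$ this again yields two disjoint non-trivial $\leftspine_i\,\rightspine i$-paths avoiding $e$, contradicting redness. The main obstacle I anticipate is the careful case analysis needed to show the ``extra'' $\Delta_e$-bridge, when present, is forced to be a single edge forming a digon at $x_e$ rather than something larger; this is where one must combine the prebox/box machinery (Lemmas~\ref{lm:noBox}, \ref{lm:cleanBOD}) with the tidiness restrictions (no $Q_i$-local bridges overlapping, Corollary~\ref{co:attsMissBranch}, Corollary~\ref{co:closeAtts}) and the cut-vertex structure at $x_e$ coming from redness, keeping track of which side of $\Delta_e$ in $\Pi$ each bridge sits on.
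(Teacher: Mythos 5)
Your starting point --- extracting the cut vertex $x_e\in r_{i+5}$ of $K_i-e$ from Lemma~\ref{lm:Qspans} --- matches the paper, but the proposal has a genuine gap at its central step. You propose to certify that $\Delta_e$ is well behaved by showing it is a prebox (via Lemma~\ref{lm:closeIsPrebox}) and hence clean (Lemma~\ref{lm:preboxClean}), and then to rule out extra $\Delta_e$-bridges with the box machinery (Lemmas~\ref{lm:cleanBOD} and~\ref{lm:noBox}). None of this can work for $\Delta_e$: the cycle $\Delta_e$ contains the red edge $e$, and by definition of red $G-e$ is planar, so $K-e$ is planar for every subgraph $K$ of $G$ and $\Delta_e$ is never a $K$-prebox; for the same reason $\Delta_e$ cannot be a box, since a box must be a $\comp{B}$-prebox. (Independently, $\Delta_e$ meets both $r_i$ and $r_{i+5}$, so it is not $H$-close and Lemma~\ref{lm:closeIsPrebox} would not apply in any case.) Without a cleanliness statement you have no control over the $\Delta_e$-bridges, and your construction of $A_u$ and $A_w$ as arbitrary minimal internally disjoint paths produces nothing canonical on which the digon-at-$x_e$ conclusion could be forced.

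The paper's proof replaces this with a different key idea that is absent from your outline: because $x_e$ is a cut vertex of $K_i-e$ separating the two spines, and $\cl(Q_i)$ is embedded (by tidiness) in the closed disc bounded by $Q_i$, there is a face $F_e$ of the embedding $\Pi[G]$ itself incident with both $e$ and $x_e$, and the cycle $C_e=\partial F_e$ is what becomes $(A_u\cup A_w)+e$. Being a face boundary of $G$ is precisely what limits the $\Delta_e$-bridges to $M_{\Delta_e}$ plus, possibly, one parallel edge; the fine structure (connectivity of $C_e\cap s_i$ and $C_e\cap r_i$, the digon at $x_e$, and the single $H$-green rim edge allowed in each of $P_u$ and $P_w$) is then extracted from tidiness together with Theorem~\ref{th:twoGreenCycles}, Lemma~\ref{lm:noSpokeOnlyBridge}, Corollary~\ref{co:attsMissBranch}, and Lemma~\ref{lm:notSeparated} (the last used to show that the edges of $\Delta_e\cap r_{i+5}$ are $H$-green). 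Your sketch for item~(3) is in the right spirit, but it presupposes the face and digon structure that the missing step was supposed to deliver.
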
\printFullDetails{

\begin{cproof}  Let $\Pi$ be an embedding of $G$ in $\pp$ for which $H$ is $\Pi$-tidy.  We may assume $r_i=\cb{v_i,r_i,u,e,w,r_i,}$ $\bc{v_{i+1}}$.   Lemma \ref{lm:Qspans} implies $K_i-e$ has a cut vertex $x_e$ separating \wording{$\leftspine_i$ and $\rightspine i$ (again adopting the perspective that $v_{i-2}$ and $v_{i+3}$ are split into different copies in $\leftspine_i$ and $\rightspine i$)}.  As $r_{i+5}$ is a \wording{$\leftspine_i\,\rightspine i$}-path in $K_i-e$,  $x_e$ is in $r_{i+5}$.  
 
Because $\cl(Q_i)$ is 2-connected and $\Pi[\cl(Q_i)]$ has $Q_i$ bounding the exterior face, there is a face $F_e$ of $G$ in $\pp$ that is in the interior of $Q_i$ and incident with both $e$ and $x_e$.  As $G$ is 3-connected and non-planar, $F_e$ is bounded by a cycle $C_e$ and $C_e-e$ consists of a $ux_e$-path $A_u$ and a $wx_e$-path $A_w$.

For (\ref{it:twoBridges}) and (\ref{it:digon}), we begin by noticing that $C_e\subseteq \cl(Q_i)$.  Thus, there is a $C_e$-bridge $M_{C_e}$ containing the three $H$-spokes not in $Q_i$.  

\begin{claim}\label{cl:connected}  Each of  $C_e\cap s_i$, $C_e\cap s_{i+1}$, and $C_e\cap r_i$ is connected.  Either $C_e\cap r_{i+5}$ is connected or it has two components that are joined by an edge $e'$ of $r_{i+5}$ and $C_e$ has an edge parallel to $e'$.  In particular, each of  $s_i$, $s_{i+1}$, $r_i$, and $r_{i+5}-e'$ is contained in $C_e\cup M_{C_e}$. \end{claim}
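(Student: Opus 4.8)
\textbf{Proof plan for Claim \ref{cl:connected}.}

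The plan is to argue by contradiction for each of the four pieces, exploiting the facts that $C_e$ bounds a face $F_e$ of the embedding $\Pi[G]$ (so $C_e$ is a contractible cycle and $C_e-e$ lies in the planar $2$-connected graph $\cl(Q_i)$ with $Q_i$ bounding the outer face), that $C_e\cup M_{C_e}$ contains the three $H$-spokes not in $Q_i$ (hence contains a $V_6$), and that $G\in\m2$, so $G-f$ is planar for every edge $f$. First I would record the standing picture: since $A_u\cup A_w=C_e-e\subseteq\cl(Q_i)$ and $\cl(Q_i)$ embeds in the disc with boundary $Q_i$, the intersection of $C_e$ with each of the four $H$-branches $s_i$, $s_{i+1}$, $r_i$, $r_{i+5}$ of $Q_i$ is a union of subpaths of that branch, and $x_e\in r_{i+5}$, $u,w\in r_i$.

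For $s_i$: suppose $C_e\cap s_i$ is disconnected. Then $C_e$ meets $s_i$ in (at least) two subpaths; because $C_e=A_u e A_w$ is a cycle traversed once, the two subpaths of $C_e$ between these pieces of $s_i$, together with the piece of $s_i$ omitted from $C_e$, form a $\theta$-graph. Using that $s_i\subseteq Q_i$ and that $C_e\cup M_{C_e}\supseteq R\cup(\text{three spokes})$, I would build from $C_e$ (with its two pieces of $s_i$) and the omitted subpart of $s_i$ a subdivision of $V_6$ avoiding some specific edge $f$ of $C_e$ lying ``strictly inside'' the face $F_e$, i.e.\ an $A_u$- or $A_w$-edge that is not forced to be in any $V_6$; the $3$-connectivity of $G$ then furnishes a $C_e$-bridge $B$ inside $F_e$ attaching on both sides of $f$, and we can take $C_e$ to be a box: $C_e$ has BOD (Lemma \ref{lm:cleanBOD} once we know one $C_e$-bridge is non-planar, which $M_{C_e}$ is since it contains a $V_6$) and $C_e$ is a $\comp B$-prebox (every edge-deletion of $C_e\cup H$ still contains a $V_6$, exactly as in Lemma \ref{lm:closeIsPrebox} since $C_e$ is $H$-close by construction inside $\cl(Q_i)$), contradicting Lemma \ref{lm:noBox}. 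The arguments for $s_{i+1}$ and for $r_i$ are identical after relabelling, using the $3$-connectivity of $G$ and Lemma \ref{lm:noBox} in the same way; in the $r_i$ case one also uses that $e=uw$ is the unique non-$H$ edge of $C_e$ on $r_i$, so a second component of $C_e\cap r_i$ would force a genuine $\theta$ inside the disc.

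For $r_{i+5}$: the same argument shows $C_e\cap r_{i+5}$ cannot have two components \emph{unless} the only obstruction to finding a box is the trivial one where the two components of $C_e\cap r_{i+5}$ are joined by a single omitted edge $e'$ of $r_{i+5}$ and $C_e$ contains an edge $f'$ with the same ends as $e'$ — i.e.\ a digon on $e'$. In that degenerate configuration there is no $C_e$-bridge inside $F_e$ forcing a contradiction (the ``box'' collapses to a bigon), so this possibility genuinely survives; this is the exceptional clause in the statement. I would verify that the digon $\{e',f'\}$ really can occur by checking it is consistent with tidiness (both $e'$ and the parallel edge $f'$ lie in $\cl(Q_i)$, and Observation \ref{obs:parallel} forces one of them into the $H$-rim, which is exactly $e'\in r_{i+5}$). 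Finally, the ``in particular'' is immediate: $C_e\cup M_{C_e}$ contains each $H$-spoke not in $Q_i$ together with all of $R$, hence contains $r_i$ entirely and contains $r_{i+5}$ except possibly the single edge $e'$; since $s_i,s_{i+1}\subseteq Q_i\subseteq C_e\cup M_{C_e}$ trivially, and $C_e\cap s_i$, $C_e\cap s_{i+1}$ are the full branches $s_i$, $s_{i+1}$ minus at most their ``far'' subpaths which land in $M_{C_e}$, connectivity of those intersections plus membership of the endpoints gives $s_i,s_{i+1}\subseteq C_e\cup M_{C_e}$.

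The main obstacle I expect is bookkeeping the box argument uniformly: in each of the $s_i,s_{i+1},r_i$ cases one must exhibit the specific $V_6$'s (there are up to three different ones, as in the proof of Theorem \ref{th:tiledAre2cc} and Lemma \ref{lm:greenCycles}) witnessing that $C_e$ is a prebox after deleting the relevant edge, and then correctly identify which $C_e$-bridge plays the role of the planar bridge $B$ inside $F_e$; and separately, pinning down that the $r_{i+5}$ case is the \emph{only} one where the box collapses requires checking that a disconnection of $C_e\cap r_{i+5}$ by more than one edge, or by one edge with no parallel companion, always yields a genuine (non-degenerate) box.
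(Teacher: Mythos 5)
Your strategy is genuinely different from the paper's, which reduces each disconnection to the existence of a $Q_i$-local $H$-bridge with all attachments in a single $H$-branch and then invokes the tidiness/green-cycle machinery (Lemma \ref{lm:noSpokeOnlyBridge} for the spokes; Corollary \ref{co:attsMissBranch}, Lemma \ref{lm:threeAtts} and Theorem \ref{th:twoGreenCycles} for $r_i$ and $r_{i+5}$). As written, your box argument has two concrete failures. First, you propose to find ``a $C_e$-bridge $B$ inside $F_e$ attaching on both sides of $f$''; but $F_e$ is by construction a face of $\Pi[G]$, so no $C_e$-bridge is embedded in $F_e$ at all (and an edge of $C_e$ cannot lie ``strictly inside'' $F_e$ either). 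Without a planar $C_e$-bridge $B$ the definition of a box --- BOD together with $C_e$ being a $\comp{B}$-prebox for a \emph{planar} bridge $B$ --- cannot even be instantiated. Second, your prebox verification leans on ``$C_e$ is $H$-close by construction inside $\cl(Q_i)$'', which is false: $C_e$ contains $e\in r_i$ and $x_e\in r_{i+5}$, two disjoint $H$-rim branches with no common node, so $C_e\cap H$ is contained in neither a closed $H$-branch nor an open $H$-claw, and Lemma \ref{lm:closeIsPrebox} does not apply.

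The idea you are missing is that, because $C_e$ bounds a face, a second component of $C_e\cap b$ (for $b$ one of the four branches of $Q_i$) traps the omitted subpath of $b$ behind an arc of $C_e$, forcing a $Q_i$-local $H$-bridge all of whose attachments lie in $b$. For $b\in\{s_i,s_{i+1}\}$ this contradicts Lemma \ref{lm:noSpokeOnlyBridge} outright. For $b\in\{r_i,r_{i+5}\}$ such a bridge is, by Corollary \ref{co:attsMissBranch} and Lemma \ref{lm:threeAtts}, a single edge whose union with the subpath of the rim it spans is an $H$-green cycle bounding a face; $3$-connectivity of $G$ and Theorem \ref{th:twoGreenCycles} then eliminate every configuration except a digon at $x_e$ --- which is exactly the exceptional clause of the statement --- and in the $r_i$ case one also uses that $e$ is red (Lemma \ref{lm:greenNotRed}) to rule out a green cycle spanning $e$. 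Finally, the ``in particular'' for $s_i$ and $s_{i+1}$ is not, as you assert, trivial: it requires the connectivity just established, since only then are the parts of $s_i$ outside $C_e$ the two end-segments, which reach $r_{i-1}$ and $r_{i+4}$ at the $H$-nodes and hence lie in $M_{C_e}$.
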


\begin{proof} 
\wording{Suppose by way of contradiction that} $C_e\cap s_i$ is not connected\wording{.  As $C_e$ bounds a face of $\Pi$, it follows that} there is a $Q_i$-local $H$-bridge having all its attachments in $s_i$, contradicting Lemma \ref{lm:noSpokeOnlyBridge}.  Thus, $C_e\cap s_i$ is connected.  

It follows that any part of $s_i$ that is not in $C_e$ is in the same $C_e$-bridge as either $r_{i-1}$ or $r_{i+4}$.  That is, it is in $M_{C_e}$, and therefore, $s_i\subseteq M_{C_e}\cup C_e$.  

Symmetry shows that this also holds for $s_{i+1}$. 

Now suppose $C_e\cap r_i$ is not connected.  Then there is a $Q_i$-local $H$-bridge $B$ having all attachments in $r_i$.  Corollary \ref{co:attsMissBranch} implies $B$ has precisely two attachments $x$ and $y$, and so Lemma \ref{lm:threeAtts} implies $B$ is just the edge $xy$.  Thus, $\cc{x,r_i,y}\rbsp B$ is  an $H$-green cycle $C$. Lemma \ref{lm:greenCycles} (\ref{it:CboundsFace}) shows $C$ bounds a face of $\Pi[G]$.

By symmetry, we may assume that $x$ and $y$ are both in $\cc{v_i,r_i,u}$.  
Suppose that $z$ is any vertex in $\oc{x,r_i,u}$.   

Suppose first that $z\ne u$.   As $G$ is 3-connected, $z$ has a neighbour $z'$ not in $\cc{x,r_i,y}$.   If $zz'$ is in the interior of $Q_i$\dragominor{, it must} be parallel to an edge in $r_i$, as any other edge would go into one of the faces of $\Pi[G]$ bounded by $C_e$ and $C$.
 Therefore, $zz'$ is outside $\Mob$ and, so is an edge of another $H$-green cycle.  But then one of the edges of $\cc{x,r_i,y}$ incident with $z$ is in two $H$-green cycles, contradicting Theorem \ref{th:twoGreenCycles}.
 
This same argument, however, also applies if $z=u$, with the small variation that, by Lemma \ref{lm:greenNotRed}, $zz'$ cannot span the red edge $e$, giving the contradiction that the edge of $\cc{x,r_i,u}$ incident with $u$ is in two $H$-green cycles.
Thus, $C_e\cap r_i$ is connected.  As it did for $s_i$, this implies that $r_i\subseteq M_{C_e}\cup C_e$.  

Finally, we consider $C_e\cap r_{i+5}$.  Proceeding as we did for $r_i$, if $C_e\cap r_{i+5}$ is not connected, there is (up to symmetry) a $Q_i$-local $H$-bridge $B$ having all attachments in $\cc{v_{i+5},r_{i+5},x_e}$; $B$ is a single edge and is in an $H$-green cycle.  One end of $B$ is $x_e$, and the $H$-green cycle containing $B$ consists of two parallel edges.  

Thus, there are at most two such $H$-bridges $B$, each of which is an edge parallel to an edge in $r_{i+5}$.  If they both exist, then the 3-connection of $G$ implies $x_e$ has another neighbour, which, as above, is adjacent to $x_e$ by an edge not in $\Mob$, showing one of the edges of $r_{i+5}$ incident with $x_e$ is in two $H$-green cycles, contradicting Theorem \ref{th:twoGreenCycles}.
\end{proof}

We can now define $\Delta_e$.  If $C_e\cap r_{i+5}$ is connected, then $\Delta_e=C_e$.  Otherwise, $\Delta_e$ is obtained from $C_e$ by replacing the edge of $C_e$ incident with $x_e$ and not in $r_{i+5}$ with its parallel mate that is in $r_{i+5}$.   Notice that the $\Delta_e$- and $C_e$-bridges are the same, except for these exchanged edges incident with $x_e$.   Set $M_{\Delta_e}$ to be $M_{C_e}$.    The following is evident from what has just preceded.  

\begin{claim}  $H\subseteq M_{\Delta_e}\cup \Delta_e$ and $\Delta_e\cap r_{i+5}$ is connected. \hfill\eopf\end{claim}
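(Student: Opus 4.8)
The plan is to establish Theorem \ref{th:redHasDelta} by building the cycle $\Delta_e$ as a facial cycle in the projective-planar embedding and then controlling its bridges. First I would fix a representativity-2 embedding $\Pi$ of $G$ in $\pp$ for which $H$ is $\Pi$-tidy (Theorem \ref{th:existTidy}). Since $e=uw\in r_i$ is red, Lemma \ref{lm:Qspans} tells us $K_i-e$ has no two disjoint non-trivial $\leftspine_i\rightspine i$-paths, hence (treating $v_{i-2},v_{i+3}$ as duplicated across $\leftspine_i$ and $\rightspine i$) a cut vertex separating $\leftspine_i$ from $\rightspine i$; since $r_{i+5}$ is itself such a path, this cut vertex $x_e$ lies on $r_{i+5}$. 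Because $\cl(Q_i)$ is $2$-connected and $\Pi[\cl(Q_i)]$ has $Q_i$ bounding its outer face, there is a face $F_e$ of $\Pi[G]$ interior to $Q_i$ incident with both $e$ and $x_e$; as $G$ is $3$-connected and nonplanar, $F_e$ is bounded by a cycle $C_e$, and $C_e-e$ splits into a $ux_e$-path $A_u$ and a $wx_e$-path $A_w$ in $\cl(Q_i)$. This gives the vertex $x_e$ and the paths $A_u,A_w$ required in the statement.

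Next I would prove $C_e\subseteq \cl(Q_i)$ (immediate since $F_e$ is interior to $Q_i$), so there is a $C_e$-bridge $M_{C_e}$ containing the three $H$-spokes not in $Q_i$. The technical heart is Claim \ref{cl:connected}: $C_e\cap s_i$, $C_e\cap s_{i+1}$, and $C_e\cap r_i$ are each connected, while $C_e\cap r_{i+5}$ is connected or has exactly two pieces joined by an edge $e'$ of $r_{i+5}$ with $C_e$ containing a parallel mate of $e'$. Each disconnection forces a $Q_i$-local $H$-bridge with all attachments in a single $H$-branch; Lemma \ref{lm:noSpokeOnlyBridge} kills the spoke cases, and for $r_i$ and $r_{i+5}$ such a bridge must (Corollary \ref{co:attsMissBranch}, Lemma \ref{lm:threeAtts}) be a single edge forming an $H$-green cycle, whence $3$-connectivity supplies a further neighbour whose edge lies outside $\Mob$ (it cannot fit in the interior faces bounded by $C_e$ and this green cycle), producing a rim edge in two $H$-green cycles and contradicting Theorem \ref{th:twoGreenCycles}; for $r_i$ one additionally invokes Lemma \ref{lm:greenNotRed} to forbid spanning the red edge $e$. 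Then I define $\Delta_e$ to be $C_e$ when $C_e\cap r_{i+5}$ is connected, and otherwise $\Delta_e$ is obtained from $C_e$ by swapping the edge at $x_e$ not in $r_{i+5}$ for its parallel mate in $r_{i+5}$; the $\Delta_e$- and $C_e$-bridges coincide except for these swapped edges at $x_e$, so setting $M_{\Delta_e}=M_{C_e}$ we get $H\subseteq M_{\Delta_e}\cup\Delta_e$ and $\Delta_e\cap r_{i+5}$ connected, which is exactly the displayed final Claim of the excerpt.

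For the remaining parts (\ref{it:twoBridges})--(\ref{it:deltaDetail}), I would argue that any $\Delta_e$-bridge other than $M_{\Delta_e}$ lies in the closed face $F_e$ (since $\Delta_e$ differs from the facial cycle $C_e$ only in a digon at $x_e$), so all its attachments lie on $\Delta_e\cap C_e$, and by Claim \ref{cl:connected} plus $3$-connectivity such a bridge can only be the second edge of a digon incident with $x_e$; this yields at most two $\Delta_e$-bridges and the stated description of the small one, with $u^e\in A_u$, $w^e\in A_w$ (or $u^e=w^e=x_e$). Finally, $\Delta_e-e$ is a path, so it contains unique $uu^e$- and $ww^e$-paths $P_u,P_w$ inside $\cl(Q_i)$; by the connectedness of $C_e\cap r_i$ and the green-cycle analysis, any $H$-rim edge on $P_u$ or $P_w$ other than $e$ must be spanned by a global $H$-bridge (else it lies in a fresh $H$-green cycle conflicting with $C_e$'s green cycle, contradicting Theorem \ref{th:twoGreenCycles}), hence is $H$-green; counting $H$-nodes along $r_i$ and using tidiness bounds this to at most one such edge on each of $P_u,P_w$. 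I expect the main obstacle to be the careful case analysis in Claim \ref{cl:connected} — in particular ruling out the parallel-edge configurations at $x_e$ and at $u$ without over-counting green cycles — since everything downstream is a fairly direct consequence of having $\Delta_e$ as a facial cycle.
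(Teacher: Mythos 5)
Your proposal matches the paper's argument essentially verbatim: the paper also obtains $x_e$ from Lemma \ref{lm:Qspans}, takes $C_e$ to be the boundary of a face of $\Pi[G]$ interior to $Q_i$ incident with $e$ and $x_e$, proves the same connectivity claim for $C_e\cap s_i$, $C_e\cap s_{i+1}$, $C_e\cap r_i$, and $C_e\cap r_{i+5}$ (with the same digon exception at $x_e$), and then defines $\Delta_e$ by the parallel-edge swap and sets $M_{\Delta_e}=M_{C_e}$, after which the stated claim is immediate. No differences worth noting.
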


Consider again $r_i\cap \Delta_e$.  It is connected, so if it is more than just $\cc{u,e,w}$, the symmetry shows we may assume it contains an edge $xu$ other than $e$.  The 3-connection of $G$ implies that $u$ is adjacent with a vertex $y$ other than $x$ and $w$.  The edge $uy$ is not interior to $Q_i$, as then it would be in the face of $G$ bounded by $C_e$.  

Thus, $uy$ is not in $\Mob$, and, as $uw$ is red, Lemma \ref{lm:greenNotRed} implies $uy$ spans $xu$.  The vertex $x$ is seen to be $H$-green by the $H$-green cycle $C_y$ containing $uy$.  Since $x$ has at least three neighbours in $G$, there is a neighbour of $x$ different from the two neighbours of $x$ in $r_{i-1}\, r_i$.  Because $C_y$ bounds a face of $G$ (Lemma \ref{lm:greenCycles} (\ref{it:CboundsFace})), \wording{every edge incident with $x$ and not in $r_{i-1}\,r_i$} is in $\Mob$. \wording{ There is a unique neighbour $z$ of $x$ so that $z$ is not in $r_{i-1}\,r_i$ and}  $xz$ is an edge of $\Delta_e$\wording{.  This shows} that $x$ is one end of $r_i\cap \Delta_e$.  These observations easily yield the following claim.

\begin{claim}\label{cl:atMostOneRiEdge} \dragominor{Each of $A_u\cap r_i$ and $A_w\cap r_i$ has at most one edge.} \hfill\eopf\end{claim}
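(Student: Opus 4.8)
The statement to prove is Claim~\ref{cl:atMostOneRiEdge}, asserting that each of $A_u\cap r_i$ and $A_w\cap r_i$ has at most one edge; by symmetry it suffices to handle $A_u\cap r_i$.

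\textbf{The plan is as follows.} First I would set up for contradiction: suppose $A_u\cap r_i$ has at least two edges. Since Claim~\ref{cl:connected} shows $\Delta_e\cap r_i$ is connected and consists of $\cc{u,e,w}$ together with $A_u\cap r_i$ and $A_w\cap r_i$, the subpath $A_u\cap r_i$ lives in $\cc{v_i,r_i,u}$ (using the symmetry mentioned in the excerpt to fix which side). Then there is an edge $xu$ of $A_u\cap r_i$ other than $e$, with $x\in\oo{v_i,r_i,u}$. The key structural input is 3-connectivity of $G$: the vertex $u$ has a neighbour $y$ distinct from its two $r_i$-neighbours $x$ and $w$. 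The crucial geometric observation is that the edge $uy$ cannot be drawn in the interior of $Q_i$ in $\Pi$, because $C_e=\partial F_e$ bounds a face incident with both $e$ and $x_e$, and any edge at $u$ going into $\Disc$ side of $Q_i$ would have to land in the face bounded by $C_e$ --- but then $uy$ would be a chord of $C_e$ inside a face, impossible.

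\textbf{Next} I would use the colour analysis: since $uy\notin\Mob$ (it is not interior to $Q_i$ and all $H$-spokes are in $\Mob$ by tidiness), $uy$ lies in $\Disc$ and is therefore part of a global $H$-bridge. But $uw=e$ is \emph{red}, so by Lemma~\ref{lm:greenNotRed} the edge $e$ is not $H$-green; however, the global $H$-bridge containing $uy$ together with the subpath of $R$ it spans forms an $H$-green cycle (the span of a global bridge is $H$-green by Theorem~\ref{th:globalBridges} and Definition~\ref{df:green}). For this green cycle $C_y$ to use the edge $xu$ (the edge of $A_u\cap r_i$ incident with $u$) without including $e$, the bridge $uy$ must span $xu$, so $C_y$ contains $xu$. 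Now I would apply Theorem~\ref{th:twoGreenCycles}: the edge $xu$ is already shown to be in one $H$-green cycle (namely $C_y$); I would exhibit a \emph{second} $H$-green cycle containing $xu$ to derive the contradiction. To get the second green cycle, I look at $x$: it is 3-connected so it has a neighbour $z$ distinct from its two $r_{i-1}\,r_i$-neighbours; since $C_y$ bounds a face of $\Pi[G]$ by Lemma~\ref{lm:greenCycles}~(\ref{it:CboundsFace}), every edge at $x$ not on $r_{i-1}\,r_i$ goes into $\Mob$; but wait --- that forces $xz$ into $\Mob$, which would make $x$ an internal attachment of a local bridge spanning into $\Mob$, and I would argue (via the preceding paragraph of the excerpt, which is essentially the $r_i$-connectivity argument from Claim~\ref{cl:connected} reprised) that this yields either a second green cycle through $xu$ contradicting Theorem~\ref{th:twoGreenCycles}, or else contradicts 3-connectivity directly. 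Actually the cleanest route: the argument in the paragraph immediately preceding Claim~\ref{cl:atMostOneRiEdge} already carries out exactly this chain (it shows $x$ is $H$-green, then finds the extra neighbour $z$ with $xz\in\Mob$ forced, then concludes $x$ is an end of $r_i\cap\Delta_e$), so Claim~\ref{cl:atMostOneRiEdge} is just the formal recording of that conclusion together with its mirror for $A_w$.

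\textbf{The main obstacle} I anticipate is making the ``$uy$ cannot go into the interior of $Q_i$'' step fully rigorous: one must carefully use that $\Pi[\cl(Q_i)]$ is a plane embedding with $Q_i$ bounding the outer face, that $C_e\subseteq\cl(Q_i)$ bounds the face $F_e$ on the $\Disc$-side, and that $u$ lies on $C_e$, so the rotation at $u$ inside $\Disc$ is entirely within $\partial F_e$; hence no new edge at $u$ can be embedded there. Once that is pinned down, the rest is a short application of Lemma~\ref{lm:greenNotRed}, Theorem~\ref{th:globalBridges}, Lemma~\ref{lm:greenCycles}~(\ref{it:CboundsFace}), and Theorem~\ref{th:twoGreenCycles}, and the mirror-image argument for $A_w$ is identical with the roles of $u$ and $w$ (and the two ends of $r_i$) interchanged. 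In the writeup I would phrase it as: ``This follows from the observations in the preceding paragraph, applied to $u$ for $A_u$ and symmetrically to $w$ for $A_w$,'' since the substance is already developed there.

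\begin{cproof}
By symmetry it suffices to bound $A_u\cap r_i$. As noted in the discussion preceding the claim, if $r_i\cap\Delta_e$ is more than $\cc{u,e,w}$, then (using the symmetry to fix the side) $A_u\cap r_i$ contains an edge $xu$ other than $e$ with $x\in\oo{v_i,r_i,u}$. Since $G$ is $3$-connected, $u$ has a neighbour $y\notin\{x,w\}$. The edge $uy$ is not interior to $Q_i$ in $\Pi$: the cycle $C_e=\partial F_e$ bounds a face of $G$ incident with both $e$ and $x_e$, and $u\in C_e$, so every edge at $u$ directed into the $\Disc$-side of $Q_i$ lies in $\partial F_e$; hence $uy$ would be a chord of $C_e$ embedded inside the face $F_e$, which is impossible. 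Therefore $uy\notin\Mob$ (by tidiness all $H$-spokes lie in $\Mob$, so $uy$ is not on $H$, and it is not a local bridge embedded in $\Mob$), so $uy$ lies in $\Disc$ and is part of a global $H$-bridge. By Theorem~\ref{th:globalBridges} that bridge is a $2$-, $2.5$- or $3$-jump, and together with the subpath of $R$ it spans it forms an $H$-green cycle $C_y$. Since $e=uw$ is red, Lemma~\ref{lm:greenNotRed} shows $e$ is not $H$-green, so $C_y$ does not contain $e$; for $C_y$ to be a cycle through $u$ using an edge of $r_i$ at $u$ other than $e$, the jump $uy$ must span $xu$, so $xu\in C_y$.

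Now consider $x$. By Lemma~\ref{lm:greenCycles}~(\ref{it:CboundsFace}), $C_y$ bounds a face of $\Pi[G]$, and $x$ is incident with $C_y$; since $G$ is $3$-connected, $x$ has a neighbour $z$ distinct from its two neighbours on $r_{i-1}\,r_i$, and as $C_y$ bounds a face, the edge $xz$ is directed away from $r_{i-1}\,r_i$ and hence into $\Mob$. But then, exactly as in the argument for $r_i\cap\Delta_e$ in Claim~\ref{cl:connected} (applied on the $\cc{v_i,r_i,u}$ side), either some local $H$-bridge has all its attachments on $r_i$, producing --- via Lemma~\ref{lm:threeAtts} and Lemma~\ref{lm:greenCycles}~(\ref{it:CboundsFace}) --- a second $H$-green cycle through $xu$, contradicting Theorem~\ref{th:twoGreenCycles}; or $z$ lies outside $\Mob$ on a further $H$-green cycle, again making one of the edges of $r_i$ at $x$ lie in two $H$-green cycles, contradicting Theorem~\ref{th:twoGreenCycles}. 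In either case we have a contradiction, so $A_u\cap r_i$ has at most one edge. Interchanging the roles of $u$ and $w$ (and the two ends of $r_i$) gives the same bound for $A_w\cap r_i$.
\end{cproof}
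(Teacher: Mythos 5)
Your proof tracks the paper's own argument faithfully up to the point where you examine the vertex $x$: the third neighbour $y$ of $u$, the fact that $uy$ cannot enter the interior of $Q_i$ (that whole side of the rotation at $u$ is the face $F_e$, since both $e$ and $xu$ lie on $C_e$), the conclusion that $uy$ is therefore a global $H$-bridge whose span must contain $xu$ rather than the red edge $e$, and the forcing of every non-rim edge at $x$ into $\Mob$ because $C_y$ bounds a face. The problem is the finish. As written, your argument assumes only that $A_u\cap r_i$ contains one edge $xu\ne e$ and then claims to reach a contradiction; but that hypothesis is satisfiable --- Theorem \ref{th:redHasDelta} (\ref{it:deltaDetail}) explicitly allows $P_u$ to contain exactly one $H$-rim edge lying in the span of a global $H$-bridge, which is precisely the configuration you have just constructed. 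Any valid proof of ``at most one edge'' must use the hypothetical second edge $x'x$ of $A_u\cap r_i$, and yours never does, so the ``contradiction'' you reach must be spurious.

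The step that actually fails is the closing dichotomy. From ``$xz\in\Mob$'' you conclude that either the bridge containing $xz$ has all its attachments on $r_i$ (yielding a second $H$-green cycle through $xu$) or $z$ lies outside $\Mob$. Neither alternative need hold: generically $xz$ is simply the edge of $\Delta_e$ leaving $r_i$ at $x$ on its way toward $r_{i+5}$, its bridge attaches far from $r_i$, and no second $H$-green cycle through $xu$ or $x'x$ arises. The correct finish --- the one the paper intends by ``these observations easily yield the following claim'' --- is a rotation count at $x$ using the second edge: if both $x'x$ and $xu$ were edges of $\Delta_e$, the face $F_e$ would be incident with both of them at $x$ and hence would fill the entire interior-of-$Q_i$ side of the rotation at $x$, while the face bounded by $C_y$ fills the entire $\Disc$ side (as $x$ is interior to the span of $uy$); the third edge at $x$ guaranteed by 3-connection then has no face in which to be embedded. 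Equivalently, the unique edge of $\Delta_e$ at $x$ other than $xu$ must be the edge $xz$ going into $\Mob$ and off $r_{i-1}\,r_i$, so $x$ is an end of $r_i\cap\Delta_e$ and $A_u\cap r_i$ is the single edge $xu$.
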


We now turn our attention to $r_{i+5}$.

\begin{claim}\label{cl:locallyUseful}  \begin{enumerate}\item\label{it:noYellowRi+5}  No edge of $r_{i+5}\cap \Delta_e$ is $H$-yellow.
\item\label{it:xeNotSpanned} No global $H$-bridge has $x_e$ in the interior of its span.
\end{enumerate}
\end{claim}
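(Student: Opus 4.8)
\textbf{Proof proposal for Claim \ref{cl:locallyUseful}.}

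The plan is to prove both parts by exploiting the face structure of $\Pi$ together with Theorems \ref{th:twoGreenCycles} and \ref{th:globalBridges}, in much the same spirit as the analysis of $r_i\cap\Delta_e$ just completed. For part (\ref{it:xeNotSpanned}), suppose some global $H$-bridge $B$ has $x_e$ in the interior of its span $S$. By Theorem \ref{th:globalBridges}, $B$ is a single edge with $\att(B)$ contained in three consecutive $H$-rim branches, and the span $S\subseteq R$ has at most four $H$-nodes; moreover $B$ together with $S$ forms an $H$-green cycle $C_B$, which by Lemma \ref{lm:greenCycles} (\ref{it:CboundsFace}) bounds a face of $\Pi[G]$. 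Since $x_e\in\cc{r_{i+5}}$ lies in the interior of $S$, both edges of $r_{i+5}$ incident with $x_e$ lie in $S$, so $x_e$ is an interior vertex of $C_B$'s path along $R$. On the other hand, $x_e$ has degree at least $3$ in $G$ (3-connection), and $C_e$ also bounds a face of $G$ incident with $x_e$ via the edge(s) of $r_{i+5}$; this forces a third edge at $x_e$ leaving into the $C_e$-side. But $x_e$ being interior to the face-bounding cycle $C_B$ means the only edges at $x_e$ are the two rim edges in $S$ and edges going to the opposite side of $C_B$ from $R$, i.e.\ into $\Mob$ if $C_B\subseteq\Disc$. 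Chasing which side $C_e$ and $C_B$ occupy, one of the two $r_{i+5}$-edges at $x_e$ then lies on two distinct $H$-green cycles (namely $C_B$ and an $H$-green cycle through the third edge), contradicting Theorem \ref{th:twoGreenCycles}. This is the routine part.

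For part (\ref{it:noYellowRi+5}), let $e'$ be an edge of $r_{i+5}\cap\Delta_e$ and suppose $e'$ is $H$-yellow, witnessed by an $H$-yellow cycle $C$ with decomposition $P_1P_2P_3P_4$ and witnessing $H$-green cycle $C'$ (Definition \ref{df:yellow}). By Lemma \ref{lm:yellowCycles} (\ref{it:greenGlobal}), $C'-\oo{C'\cap R}$ is a global $H$-bridge, and by Lemma \ref{lm:yellowCycles} (\ref{it:quadClosure}) there is a $j$ with $C\subseteq\cl(Q_j)$; since the $H$-yellow edge $e'$ of $P_3$ lies in $r_{i+5}$, a short index count (using that $P_3$ is a 3-rim path and $P_1\subseteq\oo{C'\cap R}$, and that $C'$ is global hence its span lies in three consecutive rim branches centred away from $r_{i+5}$) pins down $j\in\{i-1,i,i+1\}$ and in fact forces $r_{i+5}$ to contain edges of $P_3$ near $s_{i+1}$ or $s_i$. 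Now $\Delta_e=C_e$ or its digon-modification lives in $\cl(Q_i)$ and $r_{i+5}\cap\Delta_e$ is connected (previous claim); I would compare the planar embedding $\Pi[\cl(Q_i)]$, in which $Q_i$ bounds the outer face, with the placement of the yellow cycle $C\subseteq\cl(Q_j)$. The key point is that $C_e$ bounds a face of $\Pi[G]$ incident with $e'$, so no $C$ (which is non-facial: by Lemma \ref{lm:yellowCycles} (\ref{it:oneBridge}) it has a unique bridge $M_C$, hence cannot also bound a face without $G$ being planar) can have $e'$ on its boundary in the way a yellow cycle would require. More precisely, being $H$-yellow means $e'\in\oo{P_3}$ and $e'$ is \emph{not} $H$-green; but then running the $V_6$-construction from Lemma \ref{lm:greenNotRed} shows $e'$ would not be red, whereas an edge of $\Delta_e\cap r_{i+5}$ near $x_e$ is not itself claimed red --- so I instead argue directly that $C$ together with the face $F_e$ bounded by $C_e$ produces either a $\comp{B'}$-prebox structure or an $H$-green cycle through $e'$, contradicting respectively Lemma \ref{lm:noBox} or the assumption that $e'$ is not $H$-green.

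The main obstacle I anticipate is part (\ref{it:noYellowRi+5}): disentangling the position of an $H$-yellow cycle relative to $\cl(Q_i)$ and the face $F_e=C_e$. The cleanest route is probably to observe that $C_e$, being face-bounding and containing the connected path $r_{i+5}\cap\Delta_e$, forces the $C_e$-bridge containing $s_i,s_{i+1}$ and the three outside spokes to be $M_{C_e}=M_{\Delta_e}$, so every other $C_e$-bridge (in particular anything realizing the yellow-cycle detour $P_2$ or $P_4$) is embedded on the $F_e$-side; then the $R$-avoiding paths $P_2,P_4$ of the yellow cycle would have to be drawn inside $F_e$, which is impossible since $F_e$ is a face and $P_2\cup P_4$ is nonempty with both ends on $\partial F_e$ but interior vertices off it. I would write this up by first establishing that $r_{i+5}\cap\Delta_e\subseteq\partial F_e$, then that any $H$-bridge attaching into $\oo{r_{i+5}\cap\Delta_e}$ is a single edge in an $H$-green cycle (as in Claim \ref{cl:connected}), and finally noting such a green cycle through an edge of $r_{i+5}\cap\Delta_e$ together with a yellow cycle through the same edge violates Theorem \ref{th:twoGreenCycles} or Definition \ref{df:yellow}'s requirement that a yellow edge not be green.
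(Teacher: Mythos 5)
There are genuine gaps in both parts. For part (\ref{it:xeNotSpanned}), your ``routine'' contradiction never materializes: from the fact that $x_e$ has a third edge leaving the face bounded by $C_B$ you conclude that some $r_{i+5}$-edge at $x_e$ lies in two $H$-green cycles, but the third edge at $x_e$ is typically just the first edge of $A_u$ or $A_w$ heading up toward $e$ inside $Q_i$, and there is no reason it should place a rim edge at $x_e$ on a second $H$-green cycle; Theorem \ref{th:twoGreenCycles} is never actually violated. The argument the paper uses is of a different nature and is one line: since $e$ is red, $x_e$ is (by Lemma \ref{lm:Qspans}) a cut vertex of $K_i-e$ separating $\leftspine_i$ from $\rightspine i$, and a global $H$-bridge with $x_e$ in the interior of its span would combine with $r_{i+5}-x_e$ to give a $\leftspine_i\,\rightspine i$-path in $K_i-\{e,x_e\}$, contradicting that separation. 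Your proposal never invokes the cut-vertex property of $x_e$ at all, which is the whole point of how $x_e$ was chosen.

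For part (\ref{it:noYellowRi+5}) you have misread Lemma \ref{lm:yellowCycles} (\ref{it:oneBridge}): a cycle with a \emph{unique} bridge bounds a face (the side without the bridge is empty), so the witnessing $H$-yellow cycle $C$ is facial, not ``non-facial'' as you assert. That facial property is exactly what one needs: since $C\subseteq\cl(Q_i)$ (Lemma \ref{lm:yellowCycles} (\ref{it:quadClosure}), as $e'\in r_{i+5}$) and $C$ bounds a face incident with $e'$ on the $Q_i$-side, that face is $F_e$ and hence $C$ is $\Delta_e$ itself. Your claimed geometric impossibility --- that $P_2,P_4$ cannot be drawn ``inside'' $F_e$ --- is false, because $\Delta_e$ admits precisely such a decomposition with $P_2,P_4$ lying on $\partial F_e$; nothing is topologically obstructed. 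The actual contradiction is combinatorial: once $C=\Delta_e$, the path $P_1=\Delta_e\cap r_i$ contains $e$ and lies in the interior of the span of the witnessing $H$-green cycle, so $e$ would be $H$-green, contradicting the redness of $e$ (Lemma \ref{lm:greenNotRed}). Your closing sentence instead assumes the edge is already green and appeals to the definition of yellow, which establishes nothing.
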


\begin{proof} For (\ref{it:noYellowRi+5}), suppose by way of contradiction there were an $H$-yellow edge in $r_{i+5}\cap \Delta_e$.  Then Lemma \ref{lm:yellowCycles} (\ref{it:oneBridge}) shows the witnessing $H$-yellow cycle must be $\Delta_e$.  However, the witnessing $H$-green cycle must have $\Delta_e\cap r_i$ in the interior of its span, yielding the contradiction that $e$ is $H$-green.

For (\ref{it:xeNotSpanned}), suppose by way of contradiction that there is a global $H$-bridge $xy$ with $x_e$ in the interior of the span of $xy$.  Then $xy\cup (r_{i+5}-x_e)$ contains a \wording{$\leftspine_i\,\rightspine i$}-path in $K_i-\{e,x_e\}$, contradicting Lemma \ref{lm:Qspans}.  
\end{proof}

\begin{claim}\label{cl:ri+5}  \begin{enumerate}\item\label{it:threeConsec} If $\cc{v_{i+5},r_{i+5},x_e}\cap \Delta_e$ contains three vertices $x$, $y$, and $x_e$ of $r_{i+5}$, then (choosing the labelling of $x$ and $y$ appropriately) $\cc{v_{i+5},r_{i+5},x_e}\cap \Delta_e=\cc{x,xy,y,yx_e,x_e}$, $y$ and $x_e$ are joined by a digon, and $y$ is incident with a global $H$-bridge that spans $x$.  
\item\label{it:twoConsec} If $\cc{v_{i+5},r_{i+5},x_e}\cap \Delta_e$ does not contain three consecutive vertices of $r_{i+5}$, but has a vertex $x$ other than $x_e$, then either $x$ and $x_e$ are joined by a digon, or $x_e$ is incident with a global $H$-bridge that spans $x$.  
\end{enumerate}
The symmetric statements also hold for $\cc{x_e,r_{i+5},v_{i+6}}\cap \Delta_e$.
\end{claim}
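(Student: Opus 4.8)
The plan is to analyze, under the embedding $\Pi$, the structure of $\cc{v_{i+5},r_{i+5},x_e}\cap\Delta_e$ using the facts already assembled about $C_e$ (equivalently $\Delta_e$): it bounds a face $F_e$ of $\Pi[G]$, it is contained in $\cl(Q_i)$, $\Delta_e\cap r_{i+5}$ is connected (so the part of $r_{i+5}\cap\Delta_e$ on each side of $x_e$ is a subpath), and by Claim \ref{cl:locallyUseful} no edge of $r_{i+5}\cap\Delta_e$ is $H$-yellow and no global $H$-bridge has $x_e$ in the interior of its span. The two halves $\cc{v_{i+5},r_{i+5},x_e}$ and $\cc{x_e,r_{i+5},v_{i+6}}$ are interchanged by the evident symmetry of the configuration, so it suffices to treat the first. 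Write $x$ for the vertex of $\cc{v_{i+5},r_{i+5},x_e}\cap\Delta_e$ nearest $v_{i+5}$ along $r_{i+5}$ that is distinct from $x_e$ (if it exists). Since $\Delta_e\cap r_{i+5}$ is connected, $\cc{v_{i+5},r_{i+5},x_e}\cap\Delta_e$ is exactly the subpath of $\Delta_e$ from $x$ to $x_e$, which is also a subpath of $r_{i+5}$ \emph{except} possibly for one edge incident with $x_e$ (the parallel-edge exchange that defined $\Delta_e$ from $C_e$); but that exchanged edge is now inside $r_{i+5}$, so $\cc{x,r_{i+5},x_e}\subseteq\Delta_e$ and, conversely, no vertex of $\oo{x,r_{i+5},x_e}$ fails to be on $\Delta_e$. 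The main step is then to count the vertices of $r_{i+5}$ strictly between $x$ and $x_e$.

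First I would dispose of part (\ref{it:threeConsec}): suppose $\cc{x,r_{i+5},x_e}$ contains a third vertex $y\in\oo{x,r_{i+5},x_e}$. Since $G$ is 3-connected, $y$ has a neighbour $z$ not on $r_{i+5}$. Because $\Delta_e$ bounds the face $F_e$ and $\cc{x,r_{i+5},x_e}$ lies on its boundary, any edge of $G$ at $y$ not along $r_{i+5}$ cannot enter $F_e$; if it enters the interior of $Q_i$ on the $M_{\Delta_e}$-side, then, combined with $(r_{i+5}-y)$, it yields a $\leftspine_i\,\rightspine i$-path in $K_i-\{e,y\}$ — no, more carefully: such an edge, being interior to $Q_i$ but not bounding $F_e$, must be parallel to an edge of $r_{i+5}$ (exactly the argument used for $r_i$ in Claim \ref{cl:atMostOneRiEdge}), and if it is not such a parallel edge it lies in $\Mob^c$, i.e.\ is part of a global $H$-bridge. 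In the first case we get a digon at $y$; in the second, the global $H$-bridge $yz$ (together with its spanned path, an $H$-green cycle by Theorem \ref{th:globalBridges} and the remark after Definition \ref{df:span}) must span $x$ rather than $x_e$ — it cannot span $x_e$ by Claim \ref{cl:locallyUseful}(\ref{it:xeNotSpanned}), and if it spanned neither then $r_{i+5}\cap\Delta_e$ would have length $\ge 3$ with no global bridge explaining the extra vertex, and then iterating the 3-connectivity argument at $x_e$ itself and at the two edges of $\cc{x,r_{i+5},x_e}$ incident with $x$ and $x_e$ forces, via Theorem \ref{th:twoGreenCycles} (no edge in two green cycles), that there is exactly one interior vertex $y$, that $y$--$x_e$ is a digon, and that $y$ carries a global $H$-bridge spanning $x$. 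I would organize this as: show $|\oo{x,r_{i+5},x_e}|\le 1$ by the two-green-cycles argument applied at each interior vertex and at the endpoints, then identify the unique interior vertex $y$, then show the $y$--$x_e$ edge of $\Delta_e$ not in $r_{i+5}$ is the parallel mate of a $r_{i+5}$-edge (giving the digon), then produce the global $H$-bridge at $y$.

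For part (\ref{it:twoConsec}), assume $\oo{x,r_{i+5},x_e}$ is empty, so $\cc{x,r_{i+5},x_e}$ is a single edge (or a digon) joining $x$ to $x_e$; the goal is that either this is a digon or $x_e$ carries a global $H$-bridge spanning $x$. Here the argument is the same 3-connectivity / face-boundary dichotomy applied at $x_e$: $x_e$ has a neighbour $z$ not on $r_{i+5}\cup\leftspine_i\cup\rightspine i$ in the relevant direction, and the edge $x_ez$ is either parallel to the $x x_e$-edge (giving the digon at $x$, with the parallel mate lying on $\Delta_e$ as the exchanged edge) or lies in a global $H$-bridge; that global $H$-bridge has an end at $x_e$ and, not being allowed to have $x_e$ interior to its span, spans away from $x_e$, hence spans $x$ (using connectedness of $\Delta_e\cap r_{i+5}$ and Theorem \ref{th:twoGreenCycles} to rule out it spanning past $x$). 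Finally the symmetric statements for $\cc{x_e,r_{i+5},v_{i+6}}$ follow verbatim by the reflection exchanging the two halves of $r_{i+5}$ at $x_e$ and swapping $u\leftrightarrow w$, $A_u\leftrightarrow A_w$.

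The main obstacle is the bookkeeping in part (\ref{it:threeConsec}): one must simultaneously rule out \emph{more than} one interior vertex, pin down that the non-$r_{i+5}$ edge of $\Delta_e$ at $x_e$ is exactly the parallel mate producing the digon at $y$ (not at $x$), and produce the spanning global $H$-bridge at $y$ with the correct endpoint — all while keeping straight which edges lie in $\Mob$ versus $\Mob^c$ and invoking Theorem \ref{th:twoGreenCycles} at precisely the right edges (the ones of $r_{i+5}$ incident with $x$, $y$, $x_e$). Everything else is a direct reuse of the local arguments already carried out for $r_i$ earlier in this proof, so I expect the writeup to be short once the case $|\oo{x,r_{i+5},x_e}|\le 1$ is nailed down.
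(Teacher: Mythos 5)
Your outline matches the paper on two of the three ingredients: the global $H$-bridge at the interior vertex $y$ is obtained exactly as in the paper (3-connectivity gives $y$ a third neighbour $z$; the edge $yz$ cannot lie in $\Mob$, since it would sit inside the face bounded by $C_e$, so it is a 2.5-jump whose span cannot contain $x_e$ by Claim~\ref{cl:locallyUseful}~(\ref{it:xeNotSpanned}) and hence covers $x$), and the termination at $x$ (its third edge must lie in $\Mob$, hence in $\Delta_e$, so no further vertex of $r_{i+5}$ is on $\Delta_e$) is also the paper's argument. The gap is in the digons. In (\ref{it:threeConsec}) you propose to ``show the $y$--$x_e$ edge of $\Delta_e$ not in $r_{i+5}$ is the parallel mate of an $r_{i+5}$-edge,'' but every edge of $\Delta_e\cap r_{i+5}$ is by definition an edge of $r_{i+5}$; the existence of a mate parallel to $yx_e$ is precisely what must be proved, so this step is circular. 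In (\ref{it:twoConsec}) you extract the dichotomy from 3-connectivity at $x_e$, but that only yields \emph{some} third edge at $x_e$, which could serve the other side of $x_e$ entirely (a digon with the first $\Delta_e$-vertex of $\cc{x_e,r_{i+5},v_{i+6}}$, or a global bridge spanning toward $v_{i+6}$); nothing in your argument ties the extra structure to the edge $xx_e$, and the phrase ``in the relevant direction'' is doing unearned work.

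The missing ingredient is Lemma \ref{lm:notSeparated}. The paper first argues that the rim edge in question ($yx_e$, resp.\ $xx_e$) is not $R$-separated from the red edge $e$, so if it were neither $H$-green nor $H$-yellow, Lemma \ref{lm:notSeparated} would produce a 1-drawing of $G$, a contradiction; Claim~\ref{cl:locallyUseful}~(\ref{it:noYellowRi+5}) rules out $H$-yellow, so the edge is $H$-green. Only then is the witnessing green cycle analyzed: if it contains a global $H$-bridge, Claim~\ref{cl:locallyUseful}~(\ref{it:xeNotSpanned}) forces that bridge to attach at $x_e$, which is the second alternative of (\ref{it:twoConsec}) and is impossible in (\ref{it:threeConsec}) because $xy$ already lies in the green cycle of $yz$ and Theorem \ref{th:twoGreenCycles} forbids a second green cycle through $xy$; otherwise the green cycle is local in $\cl(Q_i)$ and Claim~\ref{cl:connected} forces it to be a digon. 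Without this ``not $R$-separated $\Rightarrow$ coloured $\Rightarrow$ green'' step, your argument produces neither the digon in (\ref{it:threeConsec}) nor the either/or in (\ref{it:twoConsec}).
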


\begin{proof}  For (\ref{it:threeConsec}), the fact that $\Delta_e\cap r_{i+5}$ is connected implies that there are vertices $x$ and $y$ so that $\cc{x,xy,y,yx_e,x_e}\subseteq \cc{v_{i+5},r_{i+5},x_e}$.    Because $G$ is 3-connected, $y$ is adjacent to a vertex $z$ other than $x$ and $x_e$.   The edge $yz$ cannot be in $\Mob$, as then it would be in the face of $G$ bounded by $C_e$, a contradiction.  Therefore, it is a 2.5-jump.  Claim \ref{cl:locallyUseful} (\ref{it:xeNotSpanned}) shows $yz$ does not span $x_e$.

As $G$ is 3-connected, $x$ has a neighbour $x'$ different from the two neighbours of $x$ in  $R$.  If the edge $xx'$ is in $\Disc$, then it is in the face bounded by the $H$-green cycle containing $yz$, a contradiction.  Therefore, $xx'$ is in $\Mob$ and, in particular, for that $x'$ giving the edge nearest to $xy$ in the cyclic rotation about $x$, $xx'$ is in $\Delta_e$ and, therefore, no other vertex of $\cc{v_{i+5},r_{i+5},x_e}$ is in $\Delta_e$.

Since $yx_e$ is not $R$-separated from $e$ in $G$, Lemma \ref{lm:notSeparated} implies $yx_e$ is either $H$-yellow or $H$-green.  Claim \ref{cl:locallyUseful} (\ref{it:noYellowRi+5}) implies it is not $H$-yellow; we conclude that $yx_e$ is $H$-green and let $C_{yx_e}$ be the witnessing  $H$-green cycle.

As pointed out in the first paragraph of the proof,  $C_{yx_e}$ cannot contain a global $H$-bridge that spans $x_e$.  On the other hand, $xy$ is $H$-green by the global $H$-bridge $yz$.  By Theorem \ref{th:twoGreenCycles}, this is the only $H$-green cycle containing $xy$.  Thus, the only $H$-rim edge contained in $C_{yx_e}$ is $yx_e$.  It follows that $C_{yx_e}$ is contained in $\cl(Q_i)$.  Claim \ref{cl:connected} implies $C_{yx_e}$ is a digon.

%The edge of $C_{yx_e}$ incident with $y$ that is not $x_e$ is in some $Q_i$-local $H$-%bridge $B$.  We show that $B$ is just an edge also with ends $y$ and $x_e$.    If $B$ has %an attachment $y'$ other than $y$ and $x_e$, then any $H$-avoiding $yy'$-path in $B$ has %some edge not in $C_e$ and, therefore, either in the face bounded of $G$ bounded by $C_%{yx_e}$ or in the face bounded by $C_e$; these are both contradictions.  Therefore, $y$ %and $x_e$ are the only attachments of $B$, so $B$ is just an edge joining $y$ and $x_e$, %as required.

For (\ref{it:twoConsec}), the fact that $\cc{v_{i+5},r_{i+5},x_e}\cap \Delta_e$ is connected implies that $\cc{v_{i+5},r_{i+5},x_e}\cap \Delta_e=\cc{x,xx_e,x_e}$.     Lemma \ref{lm:notSeparated} implies that $xx_e$ is either $H$-yellow or $H$-green, and Claim \ref{cl:locallyUseful} (\ref{it:noYellowRi+5}) shows it is not $H$-yellow.  Therefore, it is $H$-green.  

Claim \ref{cl:locallyUseful} (\ref{it:xeNotSpanned}) shows any global $H$-bridge spanning $xx_e$ has $x_e$ as an attachment.  Otherwise, the $H$-green cycle $C_{xx_e}$ containing $xx_e$ is contained in $\cl(Q_i)$.  Again, Claim \ref{cl:connected} shows $C_{xx_e}$ is a digon.
\end{proof}

There is one more observation to make before we complete the proof of the theorem.  From Claim \ref{cl:ri+5} (\ref{it:threeConsec}), it seems possible that both $\cc{v_{i+5},r_{i+5},x_e}\cap \Delta_e$ and $\cc{v_{i+5},r_{i+5},x_e}\cap \Delta_e$ have three vertices.  However, this is not possible, as $x_e$ must have a neighbour $z$ different from its neighbours in $R$.  But now $x_ez$ cannot be in $\Mob$, as then it would be in the face bounded by $C_e$, and it cannot be in $\Disc$, as then it is a global $H$-bridge and one of the digons incident with $x_e$ is also spanned by $x_ez$, contradicting Theorem \ref{th:twoGreenCycles}.  Therefore, $r_{i+5}\cap \Delta_e$ has at most three edges, and all such edges are $H$-green.

If there are no edges, then $r_{i+5}\cap \Delta_e$ is just $x_e$.   If no edge of $r_{i+5}\cap \Delta_e$ is in a digon, then $u^e$ and $w^e$ are defined in (\ref{it:deltaDetail}) of the statement to be $x_e$.  In this case, Claim \ref{cl:ri+5} (\ref{it:threeConsec}) implies there can be at most one edge of $r_{i+5}\cap \Delta_e$ on each side of $x_e$, but any such edge is spanned by a global $H$-bridge.  If there is a digon, then it is $u^ew^e$, each of $u^e$ and $w^e$ is incident with at most one other edge in $r_{i+5}\cap \Delta_e$, and any such edge is spanned by a global $H$-bridge.

Finally, By Lemma \ref{lm:globalJumps} (\ref{it:spanDiffHyper}), not both $u$ and $u^e$, for example, can be incident with such global $H$-bridges, so $P_u$ has at most one $H$-rim edge.
\end{cproof}

}\begin{definition}  Let $G\in \m2$, $\hvfg$, with $H$ tidy, and $e$ a red edge of $G$ with ends $u$ and $w$.  With $u^e$ and $w^e$ as in the statement of Theorem \ref{th:redHasDelta}, the {\em peak\/}\index{peak}\index{$\Delta_e$!peak} of $\Delta_e$ is the subgraph of $G$ induced by $u^e$ and $w^e$.  If the peak has just one vertex, then $\Delta_e$ is {\em sharp\/}\index{sharp}\index{$\Delta_e$!sharp}.  \end{definition}\printFullDetails{

The following observations are given to summarize important points from Theorem \ref{th:redHasDelta}.

}\begin{corollary}  Let $G\in \m2$, $\hvfg$, with $H$ tidy, and $e$ a red edge of $G$.  Then the peak of $\Delta_e$ is either a single vertex or a digon and no edge of the peak is in the interior of the span of a global $H$-bridge.  \end{corollary}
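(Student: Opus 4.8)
The plan is to extract the corollary directly from the content of Theorem~\ref{th:redHasDelta} and its proof, since essentially everything needed has already been established there. First I would recall the defining data: by Theorem~\ref{th:redHasDelta} there is a vertex $x_e\in\cc{r_{i+5}}$, internally disjoint paths $A_u$ and $A_w$ in $\cl(Q_i)$, and the cycle $\Delta_e=(A_u\cup A_w)+e$; moreover there are at most two $\Delta_e$-bridges, one being $M_{\Delta_e}$ with $H\subseteq M_{\Delta_e}\cup\Delta_e$ and the other (if it exists) being a single edge forming a digon incident with $x_e$. The peak of $\Delta_e$ is the subgraph induced by the vertices $u^e$ and $w^e$, where $u^e\in A_u$ and $w^e\in A_w$ are the attachments of the one-edge $\Delta_e$-bridge when it exists, and $u^e=w^e=x_e$ otherwise.

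The first assertion — that the peak is either a single vertex or a digon — is now immediate by case analysis on whether the second $\Delta_e$-bridge exists. If it does not exist, then $u^e=w^e=x_e$, so the peak is the subgraph induced by the single vertex $x_e$. If it does exist, then by Theorem~\ref{th:redHasDelta}(\ref{it:digon}) it consists of two parallel edges forming a digon incident with $x_e$, and its two attachments are exactly $u^e$ and $w^e$; hence the peak, being induced by $\{u^e,w^e\}$, is precisely this digon (the two vertices together with the two parallel edges joining them). This exhausts the cases, so the peak is a single vertex or a digon.

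For the second assertion — that no edge of the peak is in the interior of the span of a global $H$-bridge — the only nontrivial case is when the peak is a digon with edge-set $\{e_1,e_2\}$ joining $u^e$ and $w^e$, where one of these, say $e_1$, is the one-edge $\Delta_e$-bridge and $e_2$ is an edge of $r_{i+5}$ lying on $\Delta_e$. I would argue as follows. By Theorem~\ref{th:redHasDelta}(\ref{it:deltaDetail}), both $u^e$ and $w^e$ lie on $r_{i+5}$, so $e_2$ is an $H$-rim edge of $r_{i+5}$; the interior of the span of a global $H$-bridge is, by Definition~\ref{df:span}, an $R$-avoiding-internal subpath of $R$, so asking whether $e_2$ lies in such an interior is meaningful. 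Now $e_2\in r_{i+5}\cap\Delta_e$; in the proof of Theorem~\ref{th:redHasDelta} (Claim~\ref{cl:ri+5} and the paragraph following it), it is shown that every edge of $r_{i+5}\cap\Delta_e$ is $H$-green, and more precisely that the digon edges are those referenced in Claim~\ref{cl:ri+5}(\ref{it:threeConsec}) and (\ref{it:twoConsec}), where Claim~\ref{cl:locallyUseful}(\ref{it:xeNotSpanned}) forbids any global $H$-bridge from having $x_e$ in the interior of its span. Since $x_e$ is an endpoint of $e_2$ (the digon is incident with $x_e$), any global $H$-bridge spanning $e_2$ would have $x_e$ in the interior of its span — contradiction. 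As for $e_1$: it is not an edge of $R$ at all (it is a $\Delta_e$-bridge, hence a chord), so it trivially cannot lie in the span of any $H$-bridge, a span being by definition a subpath of $R$. This handles every edge of the peak. The main obstacle is purely bookkeeping: making sure the references into the internals of the proof of Theorem~\ref{th:redHasDelta} (the post-Claim~\ref{cl:ri+5} paragraph and Claim~\ref{cl:locallyUseful}(\ref{it:xeNotSpanned})) are invoked correctly so that the digon edge on $r_{i+5}$ is seen to be incident with $x_e$ and hence not interior to any global bridge's span; once that is pinned down, the corollary is a one-line consequence.
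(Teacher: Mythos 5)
Your handling of the first assertion and of the two digon edges is fine for the statement as literally worded: the ``single vertex or digon'' claim is indeed just a restatement of Theorem \ref{th:redHasDelta}, the non-rim edge of the digon lies in no span because spans are subpaths of $R$, and the rim edge of the digon is incident with $x_e$, so it cannot be interior to a span without $x_e$ being interior to that span. The genuine problem is that the paper's own proof establishes, and the paper later uses, the stronger fact that no \emph{vertex} of the peak lies in the interior of the span of a global $H$-bridge (this is exactly the form invoked in the proof of Theorem \ref{th:classification}, where one needs that the span of a global $H$-bridge does not contain a peak vertex of $\Delta_e$ in its interior). Your proposal proves only the edge version. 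Even in the digon case this is strictly weaker: the end $w^e$ of the rim edge other than $x_e$ could a priori be interior to the span of a global $H$-bridge whose span terminates at $x_e$, so that the digon's rim edge is spanned but not interior to the span; your argument says nothing about this, whereas the paper rules it out by noting that the rim edge would then lie in two $H$-green cycles (the digon itself and the one formed by the spanning bridge), contradicting Theorem \ref{th:twoGreenCycles}.

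More seriously, when the peak is a single vertex you declare the second assertion vacuous and stop, whereas the bulk of the paper's proof is devoted to precisely this case. There one must show the peak vertex $u^e=w^e$ is not interior to the span of any global $H$-bridge $B$: if $\Delta_e\cap r_{i+5}$ had an edge $e'$, then by Theorem \ref{th:redHasDelta}~(\ref{it:deltaDetail}) $e'$ is already $H$-green via a global $H$-bridge at $u^e$, and $B$ would supply a second $H$-green cycle through $e'$, again contradicting Theorem \ref{th:twoGreenCycles}; and if $\Delta_e\cap r_{i+5}$ is just $u^e$, then $B$ witnesses that $\Delta_e$ is an $H$-yellow cycle, making $e$ itself $H$-yellow and contradicting Lemma \ref{lm:greenNotRed}. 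This argument is entirely absent from your proposal, and it is the substantive content of the corollary; without it the later applications do not go through.
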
\printFullDetails{

\begin{cproof}  That the peak is either a single vertex or a digon is a rephrasing of Theorem \ref{th:redHasDelta} (\ref{it:digon}) and (\ref{it:deltaDetail}).  In the case the peak is a digon, neither  $u^e$ nor $w^e$ can be in the interior of the span of a global $H$-bridge, since then the $H$-rim edge in the digon is in two $H$-green cycles, contradicting Theorem \ref{th:twoGreenCycles}. 

So suppose the peak is just the vertex $u^e=w^e$, let $B$ be a global $H$-bridge with $u^e$ in the interior of its span, and let $i$ be such that $e\in r_i$.  If $\Delta_e\cap r_{i+5}$ has an edge $e'$, then $e'$ is incident with $u^e$ and, moreover, is $H$-green by a global $H$-bridge $B'$ incident with $u^e$.  But then $B$ provides a second $H$-green cycle containing $e'$, contradicting Theorem \ref{th:twoGreenCycles}.  So $\Delta_e\cap r_{i+5}$ is just $u^e$, in which case $B$ provides a witnessing $H$-green cycle that shows $\Delta_e$ is $H$-yellow.  But then $e$ is $H$-yellow, contradicting Lemma \ref{lm:greenNotRed}.  \end{cproof}

\begin{figure}[!ht]
\begin{center}
\scalebox{1.0}{\input{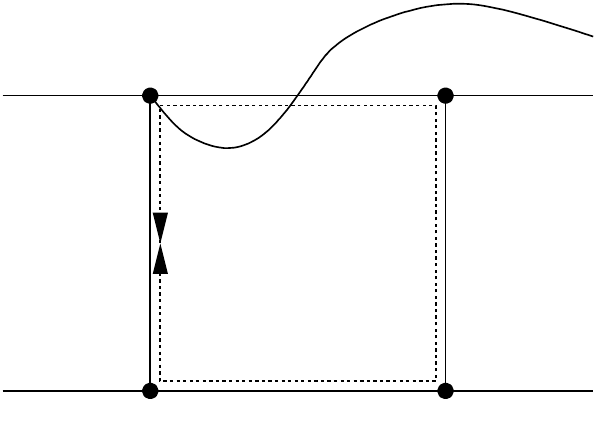_t}}
\end{center}
\caption{One of several examples of a $\Delta$.}\label{fg:delta}
\end{figure}

Our next goal is to eliminate 3-jumps.  For this the next two lemmas are helpful.

}\begin{lemma}\label{lm:staysYellow}  Let $G\in \m2$ and $\hvfg$, with $H$ tidy.  Suppose $C$ is an $H$-yellow cycle and $C'$ is the witnessing $H$-green cycle.  Let $e$ be an edge of $G$ not in $C\cup C'\cup R$.  Suppose either $C'$ does not contain a 3-jump or $e$ is in one of the four spokes containing an $H$-node spanned by $C'$.  Then no $H$-yellow edge in $C$ is crossed in any 1-drawing of $G-e$.  \end{lemma}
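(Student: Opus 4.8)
The plan is to argue that in a hypothetical $1$-drawing $D$ of $G-e$ the witnessing green cycle $C'$ is clean, transfer cleanliness to $C$, and then deduce that the yellow edges of $C$ cannot be the crossed pair. First I would recall the decomposition $C=P_1P_2P_3P_4$ as in Definition \ref{df:yellow}, with the yellow edges sitting in $P_3$ and $P_1\subseteq\oo{C'\cap R}$, and I would recall from Lemma \ref{lm:yellowCycles}(\ref{it:greenGlobal}) that $C'-\oo{C'\cap R}$ is a global $H$-bridge $B'$, spanning four $H$-nodes; call them $v_j,v_{j+1},v_{j+2},v_{j+3}$, so the ``four spokes'' of the statement are $s_j,s_{j+1},s_{j+2},s_{j+3}$. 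By Lemma \ref{lm:yellowCycles}(\ref{it:quadClosure}) there is an index $\ell$ with $C\subseteq\cl(Q_\ell)$; this pins down the position of $C$ relative to $R$ and, in particular, exhibits three $H$-spokes that are totally disjoint from both $C$ and $\oo{C'\cap R}$ (this is the count already used inside the proof of Lemma \ref{lm:yellowCycles}(\ref{it:oneBridge}) and of Lemma \ref{lm:greenNotRed}).

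The heart of the argument is to produce, inside $G-e$, enough subdivisions of $V_6$ to force all of $P_1,P_2,P_3,P_4$ clean in $D$. First I would show $C'$ is clean in $D$: this is exactly Lemma \ref{lm:greenCycles}(\ref{it:notCrossed}) applied to the green cycle $C'$ and the $1$-drawing $D$ of $G-e$, provided $H\cup C'\subseteq G-e$ — which holds since $e\notin C'\cup R$ and $C'-\oo{C'\cap R}=B'$ is a single global $H$-bridge not containing $e$ (here the hypothesis $e\notin C\cup C'\cup R$ is used; for a $3$-jump $B'$ one needs the alternative hypothesis to guarantee $e$ avoids $B'$ as well, see the last sentence below). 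In particular no edge of $P_1$, being in $C'\cap R\subseteq C'$, is crossed in $D$. Next, since $C'$ is clean, the spoke edges $s_j,\dots,s_{j+3}$ not deleted are available, and together with $R-\oo{C'\cap R}$ and $B'$ they give a $V_6$-subdivision $\widehat R$ inside $G-e$; one checks $\widehat R$ is edge-disjoint from $P_2$ and from $P_4$ (they are $R$-avoiding, contained in $\cl(Q_\ell)$, and internally disjoint from $C'$), so the unique crossing of $D$ cannot meet $P_2$ or $P_4$. Replacing $P_3$ by $P_2P_1P_4$ in $\widehat R$ gives a further $V_6$-subdivision disjoint from $P_3$, so no edge of $P_3$ is crossed either — and in particular no yellow edge of $C$ is crossed, which is the claim. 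The role of the hypothesis ``either $C'$ has no $3$-jump or $e$ lies on one of the four spanned spokes'' is precisely to guarantee that the $V_6$-subdivisions just built actually live in $G-e$: when $B'$ is a $2$- or $2.5$-jump it touches at most two of those spokes, so at least two of the four remain after deleting $e$ from anywhere, whereas a $3$-jump needs all four spokes intact except at the single node-spoke carrying $e$.

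I expect the main obstacle to be the bookkeeping that confirms $\widehat R$ and its $P_3\leftrightarrow P_2P_1P_4$ variant are genuine $V_6$-subdivisions of $G-e$ and are edge-disjoint from the relevant $P_i$: this requires carefully locating $C$ inside $\cl(Q_\ell)$, identifying which three $H$-spokes are disjoint from $C\cup\oo{C'\cap R}$, and verifying that deleting $e$ does not destroy any spoke used — the case split on whether $B'$ is a $2$-jump, a $2.5$-jump, or a $3$-jump is exactly where the two forms of the hypothesis get consumed. Everything else is an application of Lemma \ref{lm:greenCycles}(\ref{it:notCrossed}), Lemma \ref{lm:yellowCycles}, and the standard observation that in a $1$-drawing any edge lying in every nonplanar subgraph (here, any edge of a $V_6$-subdivision avoiding the crossing) is clean.
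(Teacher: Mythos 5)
Your route is genuinely different from the paper's, and it has real gaps. The paper's proof of this lemma is a two-line reduction: it observes that at least four $H$-spokes survive in $G-e$, that the disjunctive hypothesis guarantees at least one surviving spoke with no end in $C\cup C'$, and then invokes the technical $V_8$ statement, Lemma \ref{lm:technicalV8colour}~(\ref{it:V8yellow}). You instead rebuild the $V_6$-construction arguments of Lemmas \ref{lm:greenNotRed} and \ref{lm:yellowCycles}~(\ref{it:oneBridge}). The first gap is the count of spokes: the three $H$-spokes ``totally disjoint from both $C$ and $\oo{C'\cap R}$'' need not exist in $G$, let alone survive in $G-e$. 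Writing $C\subseteq\cl(Q_\ell)$ with $P_1\subseteq r_\ell$, the cycle $C$ can meet $s_\ell$ and $s_{\ell+1}$ (through the spoke segments of Lemma \ref{lm:yellowCycles}~(\ref{it:yellowP2P4})), and the interior of the span of the jump meets up to two further spokes; for a $3$-jump, or for a $2.5$-jump whose span ends inside $r_\ell$, these together cover three of the five spokes, leaving only two candidates. Even in the $2$-jump case, where three candidates exist, nothing prevents $e$ from lying on one of them (the hypothesis only restricts the location of $e$ when $C'$ contains a $3$-jump), so only two may survive in $G-e$. The counts you cite from Lemmas \ref{lm:greenNotRed} and \ref{lm:yellowCycles} come from settings in which no spoke edge has been deleted, so they do not transfer. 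The lemma's hypothesis is calibrated to guarantee exactly one surviving spoke avoiding $C\cup C'$ --- enough for the paper's $V_8$ argument, not for three $V_6$-spokes.

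The second gap is your opening step that $C'$ is clean in $D$. The condition $e\notin C\cup C'\cup R$ does not prevent $e$ from being a spoke edge --- indeed the hypothesis explicitly contemplates $e$ lying on a spoke --- so $H\cup C'$ need not be a subgraph of $G-e$ and Lemma \ref{lm:greenCycles}~(\ref{it:notCrossed}) does not apply; your verification only checks that $e$ avoids $C'$ and $R$. Worse, the assertion is false in general: Lemma \ref{lm:staysGreen} shows that a green cycle containing a $2.5$-jump can be crossed in a 1-drawing of $G-e$. This step happens not to be needed for your final conclusion, but as written it is incorrect. A repair along your lines would have to use, as $V_6$-spokes, the portions of $H$-spokes lying outside $C$, together with a careful case analysis on the jump type and the location of $e$; the paper's shortcut through Lemma \ref{lm:technicalV8colour}~(\ref{it:V8yellow}) avoids all of this.
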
\printFullDetails{

\begin{proof}  There are at least four $H$-spokes contained in $G-e$.  By hypothesis, at least one of these has no end in $C'$ and, therefore, no end in $C\cup C'$.  
Therefore, Lemma \ref{lm:technicalV8colour} (\ref{it:V8yellow}) applies. \end{proof}

}\begin{lemma}\label{lm:staysGreen}  Let $G\in \m2$ and $\hvfg$, with $H$ tidy.  Suppose $C$ is an $H$-green cycle in $G$.   Suppose that $C$ does not contain a 3-jump, $e$ is an edge of $G$ not in $R\cup C$ and $D$ is a 1-drawing of $G-e$.  If an edge $e'$ of $C$ is crossed in $D$, then $C$ contains a 2.5-jump with an end in $\oo{r_i}$, for some $i$, and $e'$ is in $r_i$. \end{lemma}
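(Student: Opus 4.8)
The plan is to argue by contradiction, exploiting the $H$-close structure of green cycles together with the fact that, when $C$ contains no $3$-jump, $C$ lies inside $\cl(Q_i)$ for a single $H$-quad $Q_i$. Since $C$ is $H$-green, Definition~\ref{df:green} gives a decomposition $C = P_1 P_2 P_3 P_4$ with $P_1 \subseteq R$, $P_2 P_3 P_4$ being $R$-avoiding, $P_2 \cup P_4 \subseteq H$, and $P_3$ being $H$-avoiding. By Lemma~\ref{lm:greenBasics}, $P_2$ and $P_4$ are each either trivial or contained in an $H$-spoke, and $P_3$ is non-trivial, so $P_3$ lies in some $C$-bridge. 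The first step is to observe that the $H$-rim edges crossed in $D$ can only be edges of $P_1$: the edges in $P_2 \cup P_3 \cup P_4$ are $R$-avoiding, hence (being part of a subgraph that together with three disjoint spokes forms a $V_6$ avoiding these edges) cannot be crossed in any $1$-drawing. So $e' \in P_1 \subseteq R$, say $e' \in r_j$ for some $j$.

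The next step is to locate $P_1$ relative to the $H$-quads. Since $C$ contains no $3$-jump, the global bridge $C - \oo{C \cap R}$ (if $C$ is global) is either a $2$-jump or a $2.5$-jump, and in either case $C$ is $H'$-close for a suitable $H' \topol V_6$ obtained by taking $R$ together with three $H$-spokes that are disjoint from $\oo{C \cap R}$; when $C \subseteq \cl(Q_i)$ this is even easier. Concretely: if $C$ is a $2$-jump $v_i v_{i+2}$ together with its spanned path $r_i r_{i+1}$, then $C$ is $H'$-close for $H' = R \cup s_i \cup s_{i+2} \cup s_{i+3}$; if $C \subseteq \cl(Q_i)$ it is $H'$-close for $H'$ using $s_i$, $s_{i+2}$, $s_{i+3}$ (cf.\ the arguments in the proof of Theorem~\ref{th:tiledAre2cc} and Lemma~\ref{lm:3jumpRed}). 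In each of these cases, Lemma~\ref{lm:closeIsPrebox} shows $C$ is a $(C \cup H')$-prebox, and since $e \notin R \cup C$ we have $H' \subseteq (G-e)$, so $C$ is a $(C \cup H')$-prebox in $G-e$. Then Lemma~\ref{lm:preboxClean} applied inside $G-e$ forces $C$ to be clean in $D$ — contradicting the assumption that $e'$ is crossed. Hence the only way $e'$ can be crossed is if no such $V_6$-containing witness $H'$ exists.

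Thus the third and final step is to pin down exactly when that witness fails. The witness $H'$ needs $R$ together with three $H$-spokes none of whose ends lies in $\oo{C \cap R}$ (and, in the global case, with $C - \oo{C \cap R}$ not meeting those spokes' interiors beyond what is needed). By Lemma~\ref{lm:greenCycles}~(\ref{it:shortJump}), $\oo{C \cap R}$ contains at most two $H$-nodes, so at least three $H$-spokes avoid $\oo{C \cap R}$ as long as $P_1$ is not exceptional; if $P_1$ is exceptional, $P_1 = r_i r_{i+1} r_{i+2}$ and $C$ is a $3$-jump (using the spoke-free face structure), which is excluded by hypothesis. So the only obstruction is that a global jump in $C$ has an \emph{interior} end on some rim branch $r_i$, i.e.\ $C$ contains a $2.5$-jump with an end $w \in \oo{r_i}$; indeed in that case the natural $V_6$-witness would have to route around $w$ and may fail precisely on the subpath of $r_i$ on the other side of $w$ from the span. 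One then checks, using Theorem~\ref{th:globalBridges} and Lemma~\ref{lm:globalJumps}, that the crossed edge $e'$ must lie in that very rim branch $r_i$: any other rim branch of $C$ is covered by a prebox witness. This gives the claimed conclusion. The main obstacle I expect is the careful bookkeeping in the third step — verifying in every configuration of $P_1$, $P_2$, $P_4$ (trivial or spoke, and which spokes) that a $V_6$-witness $H'$ disjoint from $e'$ exists unless $e'$ sits on the branch hosting an interior $2.5$-jump end; this is essentially a finite but somewhat delicate case analysis analogous to the proof of Lemma~\ref{lm:greenCycles}~(\ref{it:notCrossed}).
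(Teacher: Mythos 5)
There is a genuine gap, and it sits exactly where you flagged it: your third step. The paper's entire proof of this lemma is a one-line appeal to Lemma~\ref{lm:technicalV8colour}, specifically parts~(\ref{it:2halfJump1}) and~(\ref{it:2halfJump3/2}) (with part~(\ref{it:atMostTwo}) covering the shorter spans): for a $V_8$ and an $R$-avoiding path whose rim-span is two full branches plus part of a third, the two full branches are never crossed and only the partial branch can be, and then only in a prescribed way. That statement \emph{is} the content of the present lemma; your proposal never invokes it and instead defers the same verification to ``one then checks.'' Moreover, the prebox/$H'$-close machinery you set up in step~2 does not straightforwardly deliver that verification. The hypothesis allows $e\in\oo{s_k}$ for a spoke $s_k$, and in the $2.5$-jump case the span has two interior $H$-nodes, so a $V_6$ witness $H'$ built from $R$ and three \emph{full} spokes must avoid the two spokes at those interior nodes and the spoke containing $e$ --- leaving only two of the five spokes. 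The same shortage occurs for a local green cycle whose $R$-avoiding part meets two spokes (where, additionally, $C\cap H'$ then fails to lie in a single closed branch or open claw, so $C$ is not literally $H'$-close). The witnesses do exist, but they must be built from \emph{partial} spokes rerouted through the span and the jump --- precisely the constructions carried out in the proof of Lemma~\ref{lm:technicalV8colour} --- and that is the bookkeeping your sketch omits.

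Two smaller points. First, your step-1 justification is garbled: the reason an $R$-avoiding edge $f$ of $C$ cannot be crossed is that $(G-e)-f$ still contains a non-planar subgraph (a $V_8$ or $V_6$ inside $H$ minus at most two spoke fragments), so the unique crossing of $D$ lies on that subgraph and misses $f$; ``a $V_6$ avoiding these edges'' is the witness, but you should say why its existence forbids the crossing. Second, your framing that in the $2.5$-jump case ``the witness fails'' is too coarse: witnesses still exist for the two full rim branches of the span (via partial spokes), and the witness fails only for the partial branch $[v_i,r_i,w]$ --- distinguishing these two situations is the whole point of the lemma and is exactly what Lemma~\ref{lm:technicalV8colour}~(\ref{it:2halfJump1}) versus~(\ref{it:2halfJump3/2}) records.
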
\printFullDetails{

\begin{cproof}  This is a straightforward consequence of Lemma \ref{lm:technicalV8colour} (\ref{it:2halfJump1} and \ref{it:2halfJump3/2}).  \end{cproof}

}\begin{theorem}\label{th:no3jump}  Let $G\in \m2$ and $\hvfg$, with $H$ tidy.  Then no global $H$-bridge is a 3-jump.\end{theorem}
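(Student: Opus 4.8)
The plan is to argue by contradiction: suppose $v_iv_{i+3}$ is a $3$-jump. By Lemma~\ref{lm:globalJumps}~(\ref{it:atMostOne3jump}) it is the unique $3$-jump, and by Lemma~\ref{lm:tidyBODhyper} the hyperquad $\bQ_{i+1}$ has BOD (since the two candidate obstructing $3$-jumps $v_iv_{i-3}$ and $v_{i+2}v_{i+5}$ would each share an $H$-node with $v_iv_{i+3}$, contradicting Lemma~\ref{lm:globalJumps}~(\ref{it:globalNoHnode})). First I would use Lemma~\ref{lm:3jumpRed}: for each $j\in\{i-1,i+3\}$ there is an edge $e_j\in r_j$ that is neither $H$-green nor $H$-yellow, hence by Theorem~\ref{th:rimColoured} each $e_j$ is red. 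So we obtain red edges $e_{i-1}\in r_{i-1}$ and $e_{i+3}\in r_{i+3}$, and Theorem~\ref{th:redHasDelta} gives us the associated cycles $\Delta_{e_{i-1}}$ (living in $\cl(Q_{i-1})$, with its apex vertex in $r_{i+4}$) and $\Delta_{e_{i+3}}$ (living in $\cl(Q_{i+3})$, with its apex in $r_{i+8}=r_{i-2}$).

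Next I would extract the drawing-theoretic contradiction. Take an edge $f$ of $s_{i+1}$ and a $1$-drawing $D$ of $G-f$; since $\bQ_{i+1}$ has BOD and $s_{i+1}$ lies in a planar $\bQ_{i+1}$-bridge, Lemma~\ref{lm:BODcrossed} forces $\bQ_{i+1}$ to be crossed in $D$. Because $D$ restricted to $H-\oo{s_{i+1}}\topol V_8$ is a $1$-drawing of $V_8$ in which $\bQ_{i+1}$ is crossed, the crossing pits an edge of $r_{i-1}\,r_i$ against an edge of $r_{i+4}\,r_{i+5}$ (or symmetrically $r_{i+3}\,r_i$ against $r_{i-3}\,r_{i-2}$, the two situations being interchanged by the relabelling that swaps the two sides of the $3$-jump — I would fix one of them, say $r_{i-1}\cup r_i$ crosses $r_{i+4}\cup r_{i+5}$). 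Now the $H$-green cycle $C$ consisting of $v_iv_{i+3}$ and the $3$-rim path $r_i\,r_{i+1}\,r_{i+2}$ it spans is $H'$-close for $H'=R\cup s_{i-1}\cup s_i\cup s_{i+3}$, so by Lemmas~\ref{lm:closeIsPrebox} and~\ref{lm:preboxClean} no edge of $r_i$ is crossed in $D$; thus it is an edge of $r_{i-1}$ that is crossed. But the red edge $e_{i-1}$ together with its cycle $\Delta_{e_{i-1}}$ gives, via Theorem~\ref{th:redHasDelta}, that $\Delta_{e_{i-1}}$ (or more precisely the relevant close cycle containing the crossed edge of $r_{i-1}$, built from $\Delta_{e_{i-1}}$ and the spine $\leftspine_{i-1}\cup\rightspine{i-1}$) is a prebox in a suitable $V_6$ that avoids $s_{i+1}$, hence that crossed edge cannot be crossed in $D$ — a contradiction.

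The cleaner way to package the last step, which I would adopt, is to observe that $e_{i-1}$ being red means (Lemma~\ref{lm:Qspans}) there are \emph{no} disjoint non-trivial $\leftspine_{i-1}\,\rightspine{i-1}$-paths in $K_{i-1}-e_{i-1}$; equivalently $e_{i-1}$ lies in a $(K_{i-1}-e_{i-1})$-cut-vertex configuration, and in any $1$-drawing of $G-f$ the cycle $\Delta_{e_{i-1}}$ together with the three $H$-spokes avoiding $Q_{i-1}$ (namely $s_{i+1}$, $s_{i+2}$, $s_{i+3}$ — wait, we must avoid the one deleted) is a $V_6$-subdivision in which $\Delta_{e_{i-1}}$ is a prebox, so $\Delta_{e_{i-1}}$ is clean; since the crossed edge of $r_{i-1}$ lies on $\Delta_{e_{i-1}}$, this is impossible. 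I will have to be careful that at least three $H$-spokes avoid $Q_{i-1}$ and are present in $G-f$; since $f\in s_{i+1}$, the spokes $s_{i+1},s_{i+2},s_{i+3},s_{i+4}$ all avoid $Q_{i-1}$ (which contains only $s_{i-1}$ and $s_i$), so we may pick $s_{i+2},s_{i+3},s_{i+4}$, and the prebox/clean argument goes through. \textbf{The main obstacle} I expect is bookkeeping: correctly identifying which rim branch is crossed given the orientation ambiguity of the $3$-jump's two sides, and verifying that the $\Delta_{e_{i-1}}$-prebox argument survives deletion of the chosen spoke edge $f$ — i.e.\ that $M_{\Delta_{e_{i-1}}}-f$ still contains a $V_6$ (this follows because $M_{\Delta_{e_{i-1}}}$ contains $H-\oo{s_{i+1}}\supseteq$ a $V_8$ minus one spoke, still a $V_6$), and that the crossed edge genuinely lies on $\Delta_{e_{i-1}}$ rather than merely on $r_{i-1}$. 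Combining the two symmetric cases then completes the contradiction, so no global $H$-bridge is a $3$-jump.
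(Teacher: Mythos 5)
Your opening moves are sound and even streamline the paper's first step: combining Lemma \ref{lm:3jumpRed} with Theorem \ref{th:rimColoured} does produce red edges in $r_{i-1}$ and $r_{i+3}$, and deleting $f\in s_{i+1}$ does force (via cleanliness of the jump's green cycle and the structure of 1-drawings of $V_8$) a crossing of an edge of $r_{i-1}$ with an edge of $r_{i+5}\,r_{i+6}$. The fatal problem is your concluding step. A cycle through a red edge can never be a $K$-prebox for any subgraph $K$ of $G$: by definition, $e_{i-1}$ red means $G-e_{i-1}$ is planar, so $K-e_{i-1}$ is planar for every $K\subseteq G$, and the prebox condition fails at the edge $e_{i-1}$ itself. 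More fundamentally, ``the crossed edge of $r_{i-1}$ is red'' is not a contradiction --- it is the expected outcome. Red edges are exactly the rim edges that are neither $H$-green nor $H$-yellow, i.e.\ exactly the ones not protected from being crossed; the whole machinery of consecutive red edges (Theorem \ref{th:consecRed}) rests on the fact that red edges \emph{do} cross each other in 1-drawings of $G$ minus suitable edges. So no cleanliness argument for $\Delta_{e_{i-1}}$, or for any cycle containing $e_{i-1}$, can close the proof; your own flagged worry about whether the crossed edge lies on $\Delta_{e_{i-1}}$ is symptomatic rather than a mere bookkeeping issue.

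What is actually needed --- and what the paper spends most of its proof establishing --- is finer information before the final deletion. One must first show that the red edge of the rim branch adjacent to the jump is \emph{incident with the jump's endpoint} $v_i$ (the paper rules out yellow edges between $v_i$ and the nearest red edge, then forces that red edge onto $v_i$). Only then does one delete the edge $e^*$ of the spoke $s_i$ incident with $v_i$ (not an edge of $s_{i+1}$) and argue that the two red edges forced to cross in a 1-drawing of $G-e^*$ are $R$-separated in $G$ by a witnessing $V_8$ that avoids $e^*$; hence they remain $R$-separated in $G-e^*$ and cannot cross by Observation \ref{obs:separated}. The careful choice of the deleted edge is precisely what makes the $R$-separation survive the deletion; with your $f\in s_{i+1}$ there is no reason the witnessing $V_8$ for the separation of the red edges in $r_{i-1}$ and $r_{i+5}\,r_{i+6}$ avoids $f$ --- indeed, consecutive red edges are defined by the failure of such separation after deleting one well-chosen edge. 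To repair the argument you would need to import essentially all of the paper's intermediate claims.
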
\printFullDetails{

\begin{cproof}  The proof begins by showing that if $v_{i-3}v_i$ is a global $H$-bridge that is a 3-jump, then there is a red edge in $r_i$.  The next step is to show that the edge of $r_i$ incident with $v_i$ is red.  The final step is to show that, if $e^*$ is the edge of $s_i$ incident with $v_i$, then $\crn(G-e^*)\ge 2$, contradicting the criticality of $G$.  Let $\Pi$ be an embedding of $G$ in $\pp$ so that $H$ is $\Pi$-tidy.

\begin{claim}\label{cl:existsRed}  There is a red edge of $G$ in $r_i$. \end{claim}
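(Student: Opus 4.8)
\textbf{Proof proposal for Claim \ref{cl:existsRed}.}  The plan is to suppose, for contradiction, that $v_{i-3}v_i$ is a $3$-jump but no edge of $r_i$ is red, and derive a $2$-drawing-free obstruction.  By Theorem \ref{th:rimColoured}, every edge of $r_i$ is then either $H$-green or $H$-yellow.  First I would organize the edges of $r_i$ by their colours.  Since $v_{i-3}v_i$ is a $3$-jump, Theorem \ref{th:twoGreenCycles} and Lemma \ref{lm:globalJumps} (\ref{it:atMostOne3jump}) and (\ref{it:2and3jumps}) severely restrict which global $H$-bridges can span edges of $r_i$ or touch $v_i$: no other $H$-green cycle can contain an edge of $r_i$ that is spanned by $v_{i-3}v_i$, no $2.5$-jump can have an end in $\oo{r_{i-1}}$, and (Lemma \ref{lm:globalJumps} (\ref{it:globalNoHnode})) $v_i$ is in no other global $H$-bridge.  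Thus every $H$-green edge of $r_i$ is spanned either by $v_{i-3}v_i$ itself or by a $2.5$-jump with an end in $\oo{r_i}$, and every $H$-yellow edge of $r_i$ lies in an $H$-yellow cycle contained in $\cl(Q_{i-1})$ or $\cl(Q_i)$ whose witnessing $H$-green cycle is one of these restricted global bridges.

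Next I would use a deletion argument.  By Lemmas \ref{lm:tidyBODhyper} and \ref{lm:globalJumps} (\ref{it:atMostOne3jump}), $\bQ_{i+1}$ has BOD (neither $v_{i-2}v_{i+1}$ nor $v_{i+1}v_{i+4}$ can coexist with the $3$-jump $v_{i-3}v_i$, the former by Theorem \ref{th:twoGreenCycles}, the latter by Lemma \ref{lm:globalJumps} (\ref{it:atMostOne3jump})).  So let $e^*$ be the edge of $s_{i+1}$ incident with $v_{i+1}$ and let $D$ be a $1$-drawing of $G-e^*$; by Lemma \ref{lm:BODcrossed}, $\bQ_{i+1}$ is crossed in $D$.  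Since a crossing of $\bQ_{i+1}$ in $D$ forces some edge of $r_{i-1}\,r_i\,r_{i+1}\,r_{i+2}$ to be crossed, and since $e^*$ lies in a spoke rather than in $R$ or in any relevant green/yellow cycle, I would invoke Lemma \ref{lm:staysGreen} and Lemma \ref{lm:staysYellow} to show that no $H$-green or $H$-yellow edge of $r_i$ can be crossed in $D$: the green edges of $r_i$ are spanned only by $v_{i-3}v_i$ (not a $2.5$-jump with an end in $\oo{r_i}$ on the relevant side, by Lemma \ref{lm:globalJumps} (\ref{it:2and3jumps})) or by a $2.5$-jump whose protected side is the one containing that edge, and the yellow cycles through edges of $r_i$ have witnessing green cycles not equal to $v_{i-3}v_i$ except where forbidden, so in each case the hypotheses of Lemma \ref{lm:staysYellow} or \ref{lm:staysGreen} are met with $e=e^*$.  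The remaining edges of $r_{i-1}$, $r_{i+1}$, $r_{i+2}$ I would rule out using that $v_{i-3}v_i$ together with its span forms an $H'$-close cycle (with $H'=R\cup s_{i-3}\cup s_i\cup s_{i+1}$ or a similar $V_6$), so that Lemmas \ref{lm:closeIsPrebox} and \ref{lm:preboxClean} forbid crossing those edges, and similarly handle $r_{i-1}$ via Lemma \ref{lm:technicalV8colour} (\ref{it:atMostTwo}) or the $H$-yellow structure.  This leaves $\bQ_{i+1}$ uncrossable in $D$, the desired contradiction.

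The main obstacle I expect is the careful case analysis of exactly which $2.5$-jumps with ends in $\oo{r_i}$ can occur and which protected side (via Lemma \ref{lm:globalJumps} (\ref{it:oppositeRims}) and Lemma \ref{lm:staysGreen}) they supply, since a $2.5$-jump leaves one of the two subpaths of $r_i$ it straddles unprotected, and one must check that the unprotected edge is nonetheless prevented from being the crossed edge of $\bQ_{i+1}$ in $D$ — this is where the detailed geometry of the $\Pi$-embedding (all spokes in $\Mob$, global bridges in $\Disc$ bounding faces) and the precise statements of Lemma \ref{lm:technicalV8colour} parts (\ref{it:2halfJump1}) and (\ref{it:2halfJump3/2}) have to be combined.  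Once Claim \ref{cl:existsRed} is established, the later steps of the theorem (promoting to the specific edge of $r_i$ incident with $v_i$, then deriving $\crn(G-e^*)\ge 2$ for $e^*$ the spoke edge at $v_i$) follow by similar deletion arguments together with the structure of $\Delta_e$ from Theorem \ref{th:redHasDelta}.
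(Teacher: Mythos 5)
There is a genuine gap, and it is located exactly where you set up the deletion argument. You claim $\bQ_{i+1}$ has BOD via Lemmas \ref{lm:tidyBODhyper} and \ref{lm:globalJumps}, but the two global bridges that Lemma \ref{lm:tidyBODhyper} requires to be absent for $\bQ_{i+1}$ are $v_{(i+1)-1}v_{(i+1)-4}=v_iv_{i-3}$ and $v_{(i+1)+1}v_{(i+1)+4}=v_{i+2}v_{i+5}$ --- and the first of these is precisely the $3$-jump $v_{i-3}v_i$ you are assuming exists. (You computed the wrong pair; $v_{i-2}v_{i+1}$ and $v_{i+1}v_{i+4}$ are not the relevant jumps for $\bQ_{i+1}$.) So the BOD of $\bQ_{i+1}$ is not available, Lemma \ref{lm:BODcrossed} cannot be invoked, and the argument does not start. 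Moreover, even if $\bQ_{i+1}$ did have BOD, your intended contradiction would not materialize: a crossing of $\bQ_{i+1}$ only forces some edge of $r_{i-1}\,r_i\,r_{i+1}\,r_{i+2}$ to be crossed, and the $H'$-close cycle formed by $v_{i-3}v_i$ and its span $r_{i-3}\,r_{i-2}\,r_{i-1}$ protects neither $r_{i+1}$ nor $r_{i+2}$; nothing in the hypothesis ``no red edge in $r_i$'' prevents the crossing from landing on $r_{i+1}$ or $r_{i+2}$, so no contradiction follows.

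The paper works instead with $\bQ_{i-1}$, for which the blocking bridges in Lemma \ref{lm:tidyBODhyper} are $v_{i+5}v_{i-2}$ and $v_iv_{i+3}$; both are excluded by Lemma \ref{lm:globalJumps} (\ref{it:atMostOne3jump}) since each would be a second $3$-jump, so $\bQ_{i-1}$ genuinely has BOD. Deleting $\oo{s_{i-1}}$ then forces a crossing of $\bQ_{i-1}$ between $r_{i-3}\,r_{i-2}\,r_{i-1}\,r_i$ and the opposite side; the green cycle of the $3$-jump shields $r_{i-3}\,r_{i-2}\,r_{i-1}$, so the crossing is forced onto an edge $e$ of $r_i$, and that specific edge is then shown directly (not by contradiction) to be neither $H$-green nor $H$-yellow, hence red. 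If you want to salvage your plan, switch to $\bQ_{i-1}$ and make the argument constructive as above; the colour analysis you sketch for $r_i$ (no green cycle through $\co{v_i,r_i,v_{i+1}}$ contains a global bridge other than the $3$-jump, then Lemmas \ref{lm:staysGreen} and \ref{lm:staysYellow}) is essentially the right second half.
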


\begin{proof} Lemma \ref{lm:globalJumps} (\ref{it:atMostOne3jump}) implies neither $v_{i+5}v_{i-2}$ nor $v_iv_{i+3}$ is in $G$.  Thus, Lemma \ref{lm:tidyBODhyper} implies $\bQ_{i-1}$ has BOD.  

Let $D_{i-1}$ be a 1-drawing of $G-\oo{s_{i-1}}$.  Lemma \ref{lm:BODcrossed} implies $\bQ_{i-1}$ is crossed in $D_{i-1}$.  Let $H'$ be the subdivision of $V_6$ consisting of the $H$-rim $R$ and the three spokes $s_i$, $s_{i-3}$, and $s_{i+1}$.  Lemma \ref{lm:technicalV8colour} implies the cycle $r_{i-3}\,r_{i-2}\,r_{i-1}\,\cc{v_{i},v_{i-3}v_i,v_{i-3}}$ is clean in $D_{i-1}$.  In particular, the crossing must be of an edge in $r_{i+3}\cup r_{i+4}$ and an edge $e$ in $r_{i}$.  

We prove $e$ is red in $G$ by proving it is neither $H$-green nor $H$-yellow.  Lemma \ref{lm:globalJumps} (\ref{it:globalNoHnode}) and (\ref{it:2and3jumps}) imply that no global $H$-bridge other than $v_{i-3}v_i$ has an end in $\co{v_{i},r_{i},v_{i+1}}$.  Therefore, no $H$-green cycle containing $e$ can contain a global $H$-bridge.  Thus, any $H$-green cycle $C$ containing $e$ is contained in $\cl(Q_{i})$.  Lemma \ref{lm:staysGreen}  implies $C$ is not crossed in $D_{i-1}$, contradicting the fact that the edge $e$ is in $C$ and is crossed in $D_{i-1}$.  We conclude that $e$ is not $H$-green. 

So suppose $C$ is an $H$-yellow cycle containing $e$ and let $P_1P_2P_3P_4$ be the decomposition of $C$ into paths as in Definition \ref{df:yellow}.  By Lemma \ref{lm:yellowCycles}, there is a global $H$-bridge $B$ so that the interior of the span of $B$ contains $P_1$.  Lemma \ref{lm:globalJumps} (\ref{it:atMostOne3jump}) says there is at most one 3-jump in $G$, so $B$ is either a 2- or 2.5-jump.  

That $e$ is not $H$-yellow is an immediate consequence of Lemma \ref{lm:staysYellow}.  \end{proof}

We now aim to show that the edge of $r_i$ incident with $v_i$ is red.
By Claim \ref{cl:existsRed}, there is a red edge in $r_i$; let $e_1$ be the red edge nearest to $v_i$ in $r_i$.  Let $r'_i$ be the component of $r_i-e_1$ containing $v_i$ and let $u$ be the end of $e_1$ in $r'_i$.

\begin{claim}\label{cl:r'iNotYellow} No edge of $r'_i$ is $H$-yellow. \end{claim}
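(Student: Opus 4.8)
\textbf{Proof proposal for Claim \ref{cl:r'iNotYellow}.}

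The plan is to argue by contradiction, exploiting the fact that $e_1$ is red and is the red edge of $r_i$ \emph{nearest} $v_i$, so that every edge of $r'_i$ is either $H$-green or $H$-yellow (by Theorem \ref{th:rimColoured}), together with the presence of the $3$-jump $v_{i-3}v_i$ and the colour-stability lemmas (Lemmas \ref{lm:staysGreen} and \ref{lm:staysYellow}). Suppose some edge $f$ of $r'_i$ is $H$-yellow and let $C$ be a witnessing $H$-yellow cycle, with decomposition $P_1P_2P_3P_4$ as in Definition \ref{df:yellow} and witnessing $H$-green cycle $C'$. The $H$-yellow edges of $C$ lie in $P_3$, so $f\in P_3$; by Lemma \ref{lm:yellowCycles} (\ref{it:quadClosure}) there is some quad $Q_j$ with $C\subseteq\cl(Q_j)$, and since $P_3$ is a $3$-rim path meeting $r'_i\subseteq r_i$, the relevant quad is $Q_{i-1}$ or $Q_i$; I expect to be able to pin down $j$ using the location of $f$ relative to $v_i$ and the structure of the scope $K_i$ (Definition \ref{df:scope}, Lemma \ref{lm:Qspans}). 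The witnessing green cycle $C'$ contains a global $H$-bridge $B'$ (Lemma \ref{lm:yellowCycles} (\ref{it:greenGlobal})), whose span must cover the $H$-nodes appearing in the interior of $P_1$.

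The key step is then to produce a $1$-drawing of $G$ minus a suitable edge in which $f$ is both forced to be crossed and forced to be clean. Concretely, I would delete the edge $e^*$ of the spoke $s_{i-1}$ (or another spoke chosen so that the relevant hyperquad has BOD, as furnished by Lemma \ref{lm:tidyBODhyper} and Lemma \ref{lm:globalJumps} (\ref{it:atMostOne3jump}), since $v_{i-3}v_i$ already rules out the competing $3$-jumps), and consider a $1$-drawing $D$ of $G-e^*$. Lemma \ref{lm:BODcrossed} forces the relevant hyperquad to be crossed in $D$, and by the usual analysis of $1$-drawings of $V_8$ (via Lemma \ref{lm:technicalV8colour}, in particular parts (\ref{it:atMostTwo}), (\ref{it:V8yellow}), (\ref{it:2halfJump1}), (\ref{it:2halfJump3/2})) together with the clean $3$-jump cycle $r_{i-3}\,r_{i-2}\,r_{i-1}\,\cc{v_i,v_{i-3}v_i,v_{i-3}}$, the crossing is driven onto an edge of $r'_i$ — in particular, onto $f$ if the labelling is chosen correctly, much as in the proof of Claim \ref{cl:existsRed}. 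On the other hand, Lemma \ref{lm:staysYellow} (applied with $e=e^*$ in a spoke incident with an $H$-node spanned by $C'$, so that its hypotheses are met even though $C'$ may carry a $2.5$-jump) says that no $H$-yellow edge of $C$ — in particular $f$ — is crossed in any $1$-drawing of $G-e^*$. This contradiction shows $f$ is not $H$-yellow.

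The main obstacle I anticipate is bookkeeping around which deleted spoke makes both halves of the argument work simultaneously: deleting $e^*$ must (i) leave a hyperquad with BOD whose forced crossing lands on $r'_i$, and (ii) still satisfy the hypothesis of Lemma \ref{lm:staysYellow}, i.e.\ $e^*$ must lie in one of the four spokes incident with an $H$-node spanned by the green cycle $C'$ (this is needed because $C'$ could contain a $2.5$-jump). Resolving this requires knowing precisely which $H$-nodes $C'$ spans, which in turn uses Lemma \ref{lm:globalJumps} (\ref{it:globalNoHnode}), (\ref{it:2and3jumps}), (\ref{it:spanDiffHyper}) to locate $B'$ relative to the $3$-jump $v_{i-3}v_i$; in the worst case one splits into the subcases according to whether $C'$'s global bridge is a $2$-jump or a $2.5$-jump and whether $P_1$ sits to the left or right of $v_i$. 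I expect each subcase to be a short variant of the above, but the case split is where the real work lies.
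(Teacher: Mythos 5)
There is a genuine gap at the heart of your argument: the contradiction you aim for cannot materialize. Your plan is to delete a spoke edge $e^*$, force a hyperquad to be crossed, and ``drive the crossing onto $f$,'' then invoke Lemma \ref{lm:staysYellow} to conclude $f$ cannot be crossed. But the forced-crossing machinery (BOD plus Lemma \ref{lm:technicalV8colour}) only narrows the crossing down to ``some edge of $r_i$ crosses some edge of $r_{i+3}\,r_{i+4}$,'' exactly as in Claim \ref{cl:existsRed}; it never selects a particular edge of $r_i$. Which edge of $r_i$ is actually crossed is then decided by the colours: Lemmas \ref{lm:staysYellow} and \ref{lm:staysGreen} show the crossed edge must be red, and $r'_i$ contains no red edge by the very choice of $e_1$ as the red edge of $r_i$ nearest $v_i$. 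So the crossing simply lands on $e_1$ or beyond it in $\cc{u,r_i,v_{i+1}}$, the yellow edge $f$ is never touched, and no contradiction arises. The same lemmas you want to use to forbid the crossing of $f$ are precisely what guarantee the crossing avoids $f$ in the first place.

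The contradiction has to be extracted from somewhere else. The paper uses the hypothesized yellow edge only to pin down the witnessing green cycle's global bridge $B$: Lemma \ref{lm:globalJumps} (\ref{it:globalNoHnode}), (\ref{it:2and3jumps}), (\ref{it:spanDiffHyper}) together with the $3$-jump $v_{i-3}v_i$ force $B$ to run from $v_{i+6}$ to a point of $\oc{v_{i+3},r_{i+3},v_{i+4}}$. It then deletes not a spoke edge but the edge of $\Delta_{e_1}-e_1$ incident with $u$ (which lies in $\cl(Q_i)$, so the quad $Q_i$ --- with BOD by Theorem \ref{th:BODquads} --- is forced to be crossed by Lemma \ref{lm:BODcrossed}), shows the crossing must be of $r_i$ with $r_{i+6}$, and derives the contradiction from the fact that both crossed edges are red while the $R$-separation properties of red edges (Lemma \ref{lm:notSeparated}, Observation \ref{obs:separated}) are incompatible with such a crossing. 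If you want to salvage your write-up, the pieces to add are this use of the yellow cycle to locate $B$ and the switch to deleting a $\Delta_{e_1}$-edge; the ``force $f$ to be crossed'' step should be abandoned.
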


\begin{proof}  Suppose some edge $e'$ of $r'_i$ is $H$-yellow and let $C$ and $C'$ be the witnessing $H$-yellow and $H$-green cycles, respectively.  
 Lemma \ref{lm:yellowCycles} (\ref{it:greenGlobal}) implies $C'$ contains a global $H$-bridge $B$.  We note that Lemma \ref{lm:globalJumps} (\ref{it:globalNoHnode}) and  (\ref{it:2and3jumps}) imply (because $v_{i-3}v_i$ is present and $v_{i-3}=v_{i+7}$) that $B$ has no vertex in $\oc{v_{i+6},r_{i+6},v_{i+7}}$.  On the other hand, to make $C$ $H$-yellow, $B$ must have one end in $\oc{v_{i+5},r_{i+5},v_{i+6}}$.  

Due to the presence of $v_{i-3}v_i$, Lemma \ref{lm:globalJumps} (\ref{it:spanDiffHyper}) implies $v_{i+3}$ is not in $B$.  Therefore, Theorem \ref{th:globalBridges} implies $B$ has $v_{i+6}$ as one end and its other end is in $\oc{v_{i+3},r_{i+3},v_{i+4}}$.  Theorem \ref{th:redHasDelta} (\ref{it:deltaDetail}) implies the edge $e$ of $\Delta_{e_1}-e_1$ incident with $u$ is not in $H$; by Theorem \ref{th:redHasDelta}, it is in $\cl(Q_i)$.  

Let $D$ be a 1-drawing of $G-e$.  By Theorem \ref{th:BODquads}, $Q_i$ has BOD, so  Lemma \ref{lm:BODcrossed} implies $Q_i$ is crossed in $D$.  Lemma \ref{lm:technicalV8colour} implies no edge in $r_{i+4}\,r_{i+5}$ is crossed in $D$, so the crossing in $D$ is of $r_i$ with $r_{i+6}$.  

Lemmas \ref{lm:staysYellow} and \ref{lm:staysGreen} combine with Theorem \ref{th:rimColoured} to show that the edge $e''$ of $r_{i+6}$  crossed in $D$ is red in $G$.   Lemma \ref{lm:notSeparated} implies $e''$ and $e_1$ are $R$-separated in $G$ and we conclude that they are also $R$-separated in $G-e'$; in fact, $e''$ is $R$-separated from $r'_i\lbsp\cc{u,e_1,w}$.  It follows that the edge $f$ of $r_i$ crossed in $D$ is in $\cc{w,r_i,v_{i+1}}$.

Lemmas \ref{lm:staysYellow} and \ref{lm:staysGreen} combine with Theorem \ref{th:rimColoured} to show that $f$ is red in $G$; however, $e_1$ and $f$ are not $R$-separated in $G-e'$ and, therefore, not separated in $G$, contradicting Lemma \ref{lm:notSeparated}.  It follows that no edge of $r'_i$ is $H$-yellow, as required.
\end{proof}

\begin{claim}\label{cl:viIncidentRed} $u=v_i$. \end{claim}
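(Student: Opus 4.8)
The plan is to show that $u=v_i$ by deriving a contradiction from the assumption that $u\neq v_i$, i.e.\ that the component $r'_i$ of $r_i-e_1$ containing $v_i$ has at least one edge. The strategy parallels the structure already used in the proof of Theorem~\ref{th:no3jump}: we produce a well-chosen proper subgraph of $G$ (by deleting a single carefully selected edge), invoke that some cycle with BOD must be crossed in the resulting $1$-drawing, and then use the colouring results (Theorems~\ref{th:rimColoured}, \ref{th:twoGreenCycles}), Claim~\ref{cl:r'iNotYellow}, and the lemmas on yellow/green cycles staying clean (Lemmas~\ref{lm:staysYellow}, \ref{lm:staysGreen}, \ref{lm:technicalV8colour}, \ref{lm:notSeparated}) to show that no edge can be legitimately crossed, contradicting non-planarity.

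More concretely, first I would examine the edge $e_0$ of $r'_i$ incident with $v_i$. By the choice of $e_1$ as the red edge of $r_i$ nearest $v_i$, every edge of $r'_i$ is non-red, hence by Theorem~\ref{th:rimColoured} is $H$-green or $H$-yellow; Claim~\ref{cl:r'iNotYellow} kills the yellow option, so every edge of $r'_i$ is $H$-green. In particular $e_0$ lies in an $H$-green cycle $C_0$. Now $e_0$ is incident with $v_i$, which is an end of the $3$-jump $v_{i-3}v_i$, and also with the spoke $s_i$; the $H$-green cycle $C_0$ must (by definition of $H$-green and by Theorem~\ref{th:globalBridges}, Lemma~\ref{lm:globalJumps}\,(\ref{it:globalNoHnode})) either be local in $\cl(Q_i)$ or involve a global $H$-bridge having $v_i$ as an end — but the latter is excluded since $v_{i-3}v_i$ is already a global $H$-bridge at $v_i$ and Lemma~\ref{lm:globalJumps}\,(\ref{it:globalNoHnode}) forbids a second one. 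So $C_0\subseteq\cl(Q_i)$, and moreover its $R$-part is a subpath of $r_i$ with one end $v_i$. Then I would take $e^*$ to be the edge of $s_i$ incident with $v_i$ (as the proof outline already signals) and consider a $1$-drawing $D$ of $G-e^*$; Theorem~\ref{th:BODquads} gives that the relevant quad or hyperquad has BOD, so Lemma~\ref{lm:BODcrossed} forces a crossing. Since $C_0$ is an $H$-green cycle not containing a $3$-jump and edge-disjoint (up to its rim part) from the deleted spoke, Lemma~\ref{lm:staysGreen} says the crossed edge of $C_0$ can only be in some $\oo{r_j}$ spanned by a $2.5$-jump end — but $C_0$ is local, so this cannot happen; combined with the analysis of where crossings of $V_8$-type drawings can occur (Lemma~\ref{lm:1drawingsV2n}, Lemma~\ref{lm:technicalV8colour}), every candidate crossed edge is ruled out.

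The main obstacle, I expect, is the bookkeeping in the final deletion argument: one must identify exactly one edge whose deletion forces a crossing confined to a region controlled entirely by green/yellow cleanliness and by $R$-separation. The subtlety is that deleting $e^*$ may allow $s_i$ to become ``half-exposed'' or otherwise change which spokes are available, so one has to check that there are still enough intact spokes for the cleanliness lemmas (Lemma~\ref{lm:technicalV8colour}\,(\ref{it:V8yellow}), which needs a spoke with no end on the relevant cycle) to apply, and that the $3$-jump $v_{i-3}v_i$ together with the remaining structure still pins down the crossing to $r_i$ versus some far rim branch. I would handle this by, if necessary, choosing between deleting the spoke edge $e^*$ at $v_i$ and deleting the $\cl(Q_i)$-edge of $\Delta_{e_1}-e_1$ incident with $u$ (as was done in Claim~\ref{cl:r'iNotYellow}), whichever gives the cleaner configuration, and then chasing the crossing to a red edge in $r_i$ or $r_{i+5}\,r_{i+6}$ that is forced (by Lemma~\ref{lm:notSeparated}) to be $R$-separated from $e_1$, contradicting that it can cross anything in the same region. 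Once a contradiction is reached, $r'_i$ must be edgeless, i.e.\ $u=v_i$, completing Claim~\ref{cl:viIncidentRed}.
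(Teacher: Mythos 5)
The opening of your argument matches the paper: once $u\neq v_i$ is assumed, every edge of $r'_i$ is red-free by the choice of $e_1$, not $H$-yellow by Claim~\ref{cl:r'iNotYellow}, hence $H$-green by Theorem~\ref{th:rimColoured}, and the $3$-jump together with Lemma~\ref{lm:globalJumps} forces all the witnessing green cycles to be local. After that, however, there is a genuine gap. Your primary plan deletes $e^*$, the edge of $s_i$ at $v_i$ — but that is the edge used in the \emph{final} step of the proof of Theorem~\ref{th:no3jump}, a step which itself invokes Claim~\ref{cl:viIncidentRed}; it is not the right edge for proving the claim. Deleting a spoke edge means $H\not\subseteq G-e^*$, and worse, the local $H$-green cycle protecting the edge of $r'_i$ incident with $v_i$ may itself contain $e^*$ (its $R$-avoiding part is allowed to run along $s_i$ from $v_i$), so Lemma~\ref{lm:staysGreen} cannot be used to keep $r'_i$ clean. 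The paper instead deletes the edge $e$ of $\Delta_{e_1}-e_1$ incident with $u$, which Theorem~\ref{th:redHasDelta}~(\ref{it:deltaDetail}) guarantees is \emph{not} in $H$ — and this is precisely where the hypothesis $u\neq v_i$ enters. You mention this choice only as an undeveloped fallback.

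The deeper problem is your intended endgame: ``every candidate crossed edge is ruled out, contradicting non-planarity.'' That cannot happen — Lemma~\ref{lm:BODcrossed} forces a crossing, and one candidate genuinely survives: the crossed edge of $r_i$ is shown in the paper to lie in $r'_i$, whose green cycle contains the deleted edge $e$ and hence offers no protection. The contradiction must instead come from locating the crossing partner in $\cc{u^{e_1},r_{i+5},v_{i+6}}\,r_{i+6}$, showing it is red, and then transferring an $R$-separation from $G$ to $G-e$ by exhibiting a witnessing subdivision of $V_8$ that avoids $e$. Pinning the partner down requires a second auxiliary drawing $\widehat D$ of $G-\hat e$ with $\hat e\in s_{i-1}$, used to prove that $P_u$ meets $s_i$ and hence that every edge of $\cc{v_{i+5},r_{i+5},u^{e_1}}$ is $H$-green. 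Your sketch gestures at ``$R$-separated, hence cannot cross'' but never addresses the transfer issue (an $R$-separation in $G$ does not automatically persist after an edge deletion unless a witnessing $V_8$ avoiding that edge is produced), nor the need for the second drawing. As written, the argument would stall at the point where a crossing on $r'_i$ must be confronted rather than excluded.
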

  
\begin{proof}\startSubclaims By way of contradiction, suppose that $u\ne v_i$.  By definition of $e_1$, no edge of $r'_i$ is red, and Claim  \ref{cl:r'iNotYellow} shows no edge of $r'_i$ is $H$-yellow.  Theorem \ref{th:rimColoured} shows that every edge of $r'_i$ is  $H$-green.  Because of $v_{i-3}v_i$, Lemma \ref{lm:globalJumps} (\ref{it:globalNoHnode}) and (\ref{it:2and3jumps}) shows  no edge of $r'_i$ is $H$-green by a global $H$-bridge.   

Let $e$ be the edge of $\Delta_{e_1}-e_1$ incident with $u$; \wording{ Theorem \ref{th:redHasDelta} and the fact that $e_1$ is not incident with $v_i$ imply that} $e$ is not in $H$.  Let $D$ be a 1-drawing of $G-e$.  Note that $e$ is in a $Q_i$-local $H$-bridge.  Since $Q_i$ has BOD (Theorem \ref{th:BODquads}), it is crossed in $D$ (Lemma \ref{lm:BODcrossed}).  Every edge of $r_{i-1}$ is $H$-green in $G-e$; thus, Lemma \ref{lm:greenCycles} (\ref{it:notCrossed}) implies the following.

\begin{subclaim}\label{sc:r(i-1)notCrossed} \dragominor{No edge in $r_{i-1}$ is  crossed in $D$.}  \hfill $\Box$\end{subclaim}

\dragominor{We next rule out another possibility.}

\begin{subclaim}\label{sc:r(i+1)notCrossed} \dragominor{ No edge in $r_{i+1}$ is crossed in $D$.} \end{subclaim}

\begin{proof} Suppose some edge $e_i^D$ of $r_{i+1}$ is crossed in $D$.  Since $Q_i$ is crossed in $D$, the other crossed edge ${e'}_i^D$ is in $r_{i+5}$.  By Lemma \ref{lm:staysYellow}, no $H$-yellow edge in $r_{i+1}\cup r_{i+5}$ can be crossed in $D$.  Since $H\subseteq G-e$, Lemma \ref{lm:greenCycles} (\ref{it:notCrossed}) implies no $H$-green cycle not containing $e$ can be crossed in $D$; in particular, no $H$-green edge in $r_{i+1}\cup r_{i+5}$ can be crossed in $D$.  Now Theorem \ref{th:rimColoured} implies $e_i^D$ and ${e'}_i^D$ are both red in $G$.  

\wordingrem{(new paragraph)}Suppose first that  ${e'}_i^D$ is in $\cc{v_{i+5},r_{i+5},u^{e_1}}$.  (Recall that $u^{e_1}$ is the vertex in the peak of $\Delta_{e_1}$ nearest $u$ in $\Delta_{e_1}-e_1$.)  Lemma \ref{lm:notSeparated} implies ${e'}_i^D$ and $e_1$ are $R$-separated in $G$; this implies that $\Delta_{{e'}_i^D}$ is disjoint from $\Delta_{e_1}$.  One of the $r_ir_{i+5}$-paths in $\Delta_{{e'}_i^D}$, $s_{i+1}$, $s_{i+2}$, and $s_{i+3}$ combine with $R$ to show that ${e'}_i^D$ is $R$-separated in $G-e$ from every edge in $r_{i+1}$, a contradiction.

If, on the other hand, ${e'}_i^D$ is not in $\cc{v_{i+5},r_{i+5},u^{e_1}}$, then Lemma \ref{lm:notSeparated} shows $e_i^D$ and ${e'}_i^D$ are $R$-separated in $G$ and there is a subdivision of $V_8$ that both witnesses this separation and does not contain $e$ (the spokes are $s_{i+2}$, $s_{i+3}$, and the ``nearer" $(r_i\,r_{i+1})(r_{i+5}\,r_{i+6})$-paths in $\Delta_{e_i^D}$ and $\Delta_{{e'}_i^D}$).  This shows that $e_i^D$ and ${e'}_i^D$ are $R$-separated in $G-e$, a contradiction. \end{proof}

\dragominor{Since $Q_i$ is crossed in $D$, Subclaims \ref{sc:r(i-1)notCrossed} and \ref{sc:r(i+1)notCrossed} imply that some edge $e_i^D$ of $r_i$ is crossed in $D$.  }

\begin{subclaim}\label{sc:r'iCrossed}  \dragominor{$e_i^D\in r'_i$.}  \end{subclaim}

\begin{proof}  If $e_i^D$ is not in $r'_i$, then let ${e'}_i^D$ be the edge of $r_{i+4}\,r_{i+5}\,r_{i+6}$ that is crossed in $D$.  Then $e_i^D$ and ${e'}_i^D$ are not $R$-separated in $G-e$.  Observe that $\Delta_{e_1}$ shows no $H$-green or $H$-yellow cycle containing $e_i^D$ can also contain $e$.  Therefore, $e_i^D$ is red in $G$ and, consequently is $R$-separated from ${e'}_i^D$ in $G$.   In particular, $e$ is in every subdivision of $V_8$ that contains $R$ and witnesses the $R$-separation of $e_i^D$ and ${e'}_i^D$.   This implies that ${e'}_i^D$ is in $\cc{v_{i+5},r_{i+5},u^{e_1}}$.

As $e_1$ and ${e'}_i^D$ are both red in $G$, by Lemma \ref{lm:notSeparated} there is a subdivision $K$ of  $V_8$ containing $R$ and witnessing the $R$-separation of $e_1$ and ${e'}_i^D$.  There is an $r'_ir_{i+5}$-path $P$ in $K$ that is disjoint from $\Delta_{e_1}$.  Moreover, $P\subseteq \cl(Q_i)$.  But now, $P$ together with  the $r_ir_{i+5}$-path in $\Delta_{e_1}-u$, $s_{i+2}$, and $s_{i+3}$ make the four spokes of a subdivision of $V_8$ containing $R$ and witnessing the $R$-separation of $e_i^D$ and ${e'}_i^D$ in $G-e$, a contradiction.  \end{proof}

We now \wordingrem{(text removed)}%move to 
locate the edge ${e'}_i^D$.  To this end, let $\hat e$ be the edge of $s_{i-1}$ incident with $v_{i-1}$ and let $\widehat D$ be a 1-drawing of $G-\hat e$.  By Lemmas \ref{lm:tidyBODhyper}, \ref{lm:globalJumps} (\ref{it:atMostOne3jump}), and \ref{lm:BODcrossed}, $\bQ_{i-1}$ must be crossed in $\widehat D$.  However, Lemma \ref{lm:technicalV8colour} shows that none of $r_{i-3}\,r_{i-2}\,r_{i-1}$ can be crossed in $\widehat D$.  Since the edges in $r'_i$ are all $H$-green and none of the witnessing $H$-green cycles \wording{contains} a global $H$-bridge, Lemma \ref{lm:staysGreen} implies that no edge of $r'_i$ is crossed in $\widehat D$.  
\wordingrem{(text removed)}%
%The presence of $v_{i-3}v_i$ in $G-\hat e$ and Lemma \ref{lm:technicalV8colour} imply no edge of $r_{i-3}\,r_{i-2}\,r_{i-1}$ is crossed in $\widehat D$.  From the preceding paragraph, $\bQ_{i-1}$ is crossed in $\widehat D$, 
\wording{Thus,} some edge of $r_{i+3}\,r_{i+4}$ crosses an edge \minor{of $r_i-\oo{r'_i}$ in} $\widehat D$.  %We claim that no edge of $r_{i+4}$ is crossed in $\widehat D$; by way of contradiction, suppose $e'\in r_{i+4}$ is crossed in $\widehat D$.  Notice that no edge in $r_{i-3}\,r_{i-2}\,r_{i-1}$ is crossed in $\widehat D$ (Lemma \ref{lm:technicalV8colour}) and in the previous paragraph we showed that no edge of $r'_i$ is crossed in $\widehat D$.

\begin{subclaim}\label{sc:r(i+4)green} \dragominor{Every edge in $r_{i+4}$ is $H$-green in $G$ and no edge in $r_{i+4}$ is crossed in $\widehat D$.}  \end{subclaim}

\begin{proof}  If  $e'\in r_{i+4}$ is $H$-yellow, then $v_{i-3}v_i$ is in the witnessing $H$-green cycle and, therefore, the edge of $s_{i-1}$ incident with $v_{i+4}$ is in the interior of  an $H$-yellow cycle containing $s_{i-2}$; this contradicts Lemma \ref{lm:yellowCycles}, so $e'$ is not $H$-yellow.   

Now we eliminate the possibility that $e'$ is red.
To do this, it will be helpful to know that no $H$-green edge in $r_{i+4}$ is crossed in $\widehat D$:  fortunately, this is just Lemma \ref{lm:staysGreen}, combined with Lemma \ref{lm:globalJumps} (\ref{it:globalNoHnode}) and (\ref{it:spanDiffHyper}) to eliminate the possibility of a 2.5-jump.

Choose $e'$ to be the red (in $G$) edge in $r_{i+4}$ that is nearest in $r_{i+4}$ to $v_{i+5}$.  Lemma \ref{lm:notSeparated} implies $e'$ is $R$-separated from $e_1$ in $G$;  we may choose the witnessing subdivision $K$ of $V_8$ to contain $s_{i-2}$ and $s_{i+2}$; in particular, $K$ avoids $\hat e$.  Therefore, $e'$ is $R$-separated from $e_1$ in $G-\hat e$.  Since the edges in $r_{i+4}$ between $e'$ and $v_{i+5}$ are neither red (choice of $e'$) nor $H$-yellow (two paragraphs preceding), they are $H$-green (Theorem \ref{th:rimColoured}),  we know they are not crossed in $\widehat D$ (preceding paragraph).  The subgraph $K$ shows that none of the rest of $r_{i+3}\,r_{i+4}$ can be crossed in $\widehat D$, which is a contradiction.  Therefore, no edge of $r_{i+4}$ is red in $G$; since none is $H$-yellow by the preceding paragraph, Theorem \ref{th:rimColoured} shows they are all $H$-green. \end{proof}

It follows that an edge of $r_{i+3}$ is crossed in $\widehat D$ and it must cross some edge in $\cc{u,r_i,v_{i+1}}$. This further implies that the $uu^{e_1}$-subpath $P_u$ of $\Delta_{e_1}-e_1$ intersects $s_i$ as otherwise each edge of $\cc{u,r_i,v_{i+1}}$ is $R$-separated from $r_{i+3}$ in $G-\hat e$.  

We now return to consideration of $D$.  No edge in $r_{i+4}$ is red in $G$ and, because $P_u$ intersects $s_i$, every edge (if there are any) of $\cc{v_{i+5},r_{i+5},u^{e_1}}$ is $H$-green. This combines with Lemma \ref{lm:globalJumps} (\ref{it:globalNoHnode}) and (\ref{it:spanDiffHyper}) to show that no edge in $r_{i+4}\lbsp\cc{v_{i+5},r_{i+5},u^{e_1}}$ is in the span of a global $H$-bridge; therefore, Lemmas \ref{lm:staysYellow} and \ref{lm:staysGreen} imply that no edge of $r_{i+4}\lbsp\cc{v_{i+5},r_{i+5},u^{e_1}}$ is crossed in $D$.  Thus, \wording{the edge ${e'}_i^D$ that crosses $e_i^D$ in $D$ is} in $\cc{u^{e_1},r_{i+5},v_{i+6}}\rbsp r_{i+6}$\wordingrem{(text removed)}.  

Because of $v_{i-3}v_i$, no edge in $\cc{u^{e_1},r_{i+5},v_{i+6}}\rbsp r_{i+6}$ is in the span of a global $H$-bridge.  Therefore, Lemmas \ref{lm:staysYellow} and \ref{lm:staysGreen} imply ${e'}_i^D$ is red in $G$.  But now Lemma \ref{lm:notSeparated} implies ${e'}_i^D$ is $R$-separated in $G$ from $e_1$;  there is a witnessing subdivision $K$ of $V_8$ that contains $s_{i-1}$, $s_i$, and the nearer $(r_i\,r_{i+1})(r_{i+5}\,r_{i+6})$-paths in $\Delta_{e_1}$  and $\Delta_{{e'}_i^D}$.  \wording{Note that the path taken from $\Delta_{e_1}$ does not contain $e$.  Therefore,} $K$ is also contained in $G-e$; Observation \ref{obs:separated} (\ref{it:separatedNoCross}) shows that these edges cannot be crossed in $D$, the final contradiction that proves the claim.
\end{proof}

We now move into the \wording{final} phase of the proof that there is no 3-jump.   Let $ e^*$ be the edge of $s_i$ incident with $v_i$ and let $D^*$ be a 1-drawing of $G- e^*$.  Lemma \ref{lm:globalJumps} (\ref{it:atMostOne3jump}) implies $v_{i-3}v_i$ is the only 3-jump of $G$, so Lemma \ref{lm:tidyBODhyper} implies $\bQ_i$ has BOD.  Lemma \ref{lm:BODcrossed} implies $\bQ_i$ is crossed in $D^*$.   In particular, there is an edge $e$ in $r_{i+3}\,r_{i+4}\,r_{i+5}\,r_{i+6}$ that is crossed in $D^*$.
Lemma \ref{lm:technicalV8colour} shows that $r_{i+3}$ is not crossed in $D^*$.  

\begin{claim}  $e$ is red in $G$.  \end{claim}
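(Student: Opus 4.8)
The goal is to show that the edge $e$ of $r_{i+3}\,r_{i+4}\,r_{i+5}\,r_{i+6}$ that is crossed in $D^*$ is red in $G$; by Theorem~\ref{th:rimColoured}, it suffices to rule out that $e$ is $H$-green or $H$-yellow. The plan is to exploit the fact that the global $H$-bridge $v_{i-3}v_i$ is the \emph{only} 3-jump in $G$ (Lemma~\ref{lm:globalJumps}~(\ref{it:atMostOne3jump})), together with Lemmas~\ref{lm:globalJumps}~(\ref{it:globalNoHnode}), (\ref{it:2and3jumps}), and (\ref{it:spanDiffHyper}), to severely constrain which global $H$-bridges can span an edge of $r_{i+3}\,r_{i+4}\,r_{i+5}\,r_{i+6}$. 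Since $v_{i-3}=v_{i+7}$, the 3-jump spans $r_{i+4}\,r_{i+5}\,r_{i+6}$, so any other global bridge spanning an edge there cannot have $v_{i+4},v_{i+5},v_{i+6}$ as an attachment (by (\ref{it:globalNoHnode})), cannot be a 3-jump, and by (\ref{it:2and3jumps}) cannot reach into $\oc{v_{i+6},r_{i+6},v_{i+7}}$; combined with Theorem~\ref{th:globalBridges} this leaves only 2- or 2.5-jumps with ends in a very restricted window. I would then invoke Lemma~\ref{lm:staysGreen} and Lemma~\ref{lm:staysYellow} to conclude.

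Concretely, first I would handle the case $e$ is $H$-green: let $C$ be a witnessing $H$-green cycle. If $C$ contains no 3-jump, then since $e^*$ (the deleted edge) lies in $s_i$ and $v_i$ is an $H$-node not in the rim interior, $e^*\notin R\cup C$, so Lemma~\ref{lm:staysGreen} applies; but then $C$ is crossed in $D^*$ only if $C$ contains a 2.5-jump with an end in $\oo{r_j}$ for the relevant $j$ and the crossed edge is in that $r_j$, and a short case analysis on which $r_j$ (using that $e\in r_{i+3}\,\cdots\,r_{i+6}$ and Lemma~\ref{lm:technicalV8colour} already kills $r_{i+3}$) shows this is incompatible with the constraints from the previous paragraph, a contradiction. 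If instead $C$ \emph{does} contain a 3-jump, that 3-jump must be $v_{i-3}v_i$ by uniqueness; but then Lemma~\ref{lm:staysGreen}'s hypothesis needs $e^*$ to be in one of the four spokes containing an $H$-node spanned by $C$ — precisely $s_i, s_{i-3}, s_{i-2}, s_{i+1}$ — and $e^*\in s_i$ is exactly such a spoke, so Lemma~\ref{lm:staysGreen} still applies and we reach the same contradiction. Second, the case $e$ is $H$-yellow is symmetric: by Lemma~\ref{lm:yellowCycles}~(\ref{it:greenGlobal}) the witnessing $H$-green cycle $C'$ contains a global $H$-bridge; if $C'$ has no 3-jump, Lemma~\ref{lm:staysYellow} applies directly (with $e^*\in s_i\notin C\cup C'\cup R$), while if $C'$ contains the 3-jump $v_{i-3}v_i$, then again $e^*\in s_i$ is one of the four spokes through an $H$-node spanned by $C'$, so Lemma~\ref{lm:staysYellow}'s alternative hypothesis is met; in either subcase no $H$-yellow edge of $C$ is crossed in $D^*$, contradicting that $e$ (an $H$-yellow edge of $C$) is crossed.

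The main obstacle I anticipate is the bookkeeping in the $H$-green subcase: one must carefully verify, using Lemma~\ref{lm:globalJumps}~(\ref{it:2and3jumps}) and (\ref{it:spanDiffHyper}) together with the presence of $v_{i-3}v_i$, that a 2.5-jump witnessing $e$'s greenness simply cannot have the required end in the interior of the rim branch where the crossing occurs — this is a finite but fiddly check across $r_{i+4}$, $r_{i+5}$, $r_{i+6}$, and one needs to keep the index arithmetic ($v_{i-3}=v_{i+7}$, spoke labels mod $5$, rim labels mod $10$) straight throughout. Everything else is a direct citation of the already-established lemmas, so once the case split on ``does the witnessing cycle contain a 3-jump'' is set up and the spoke-membership check ``$e^*\in s_i$ is one of the four relevant spokes'' is made, the argument closes.
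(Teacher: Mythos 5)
Your plan follows essentially the same route as the paper: Lemma~\ref{lm:staysYellow} disposes of the $H$-yellow case outright, and in the $H$-green case Lemma~\ref{lm:staysGreen} forces a 2.5-jump $b$ with $e$ in the rim branch containing $b$'s non-node end, after which a six-way check (two candidate jumps for each of $r_{i+4}$, $r_{i+5}$, $r_{i+6}$) closes using Lemma~\ref{lm:globalJumps} and Theorem~\ref{th:twoGreenCycles}. Two slips should be fixed before writing this up. First, Lemma~\ref{lm:staysGreen} has no alternative hypothesis about ``$e^*$ lying in one of the four spokes''; that clause belongs to Lemma~\ref{lm:staysYellow}. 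Fortunately the subcase where you invoke it (``$C$ contains a 3-jump'') is vacuous: the only 3-jump is $v_{i-3}v_i$, and the unique $H$-green cycle containing it is the jump together with its span $r_{i+7}\,r_{i+8}\,r_{i+9}$, which misses $e$. Second, Lemma~\ref{lm:globalJumps}~(\ref{it:globalNoHnode}) only forbids a second global bridge from attaching at $v_{i-3}$ or $v_i$, not at $v_{i+4},v_{i+5},v_{i+6}$; that ``constraint'' is not available. Your case analysis nevertheless closes with the tools you actually cite: (\ref{it:globalNoHnode}) kills $xv_{i-3}$, (\ref{it:spanDiffHyper}) kills $v_{i+2}x$ and $v_{i+3}x$ (the jump and $v_{i-3}v_i$ would together span both rim sides of $\bQ_{i-2}$, resp.\ $\bQ_{i-1}$), Theorem~\ref{th:twoGreenCycles} kills $xv_{i-2}$ and $xv_{i-1}$, and (\ref{it:2and3jumps}) kills $v_{i+4}x$ since $\oo{r_{i+6}}\subseteq\oc{v_{i-4},r_{i-4},v_{i-3}}$. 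That last subcase is the one genuine divergence: the paper instead invokes its earlier Claim that the edge of $r_i$ incident with $v_i$ is red and observes that $v_{i+4}x$ would make $Q_i$ an $H$-yellow cycle; your route through (\ref{it:2and3jumps}) is equally valid and arguably more self-contained.
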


\begin{proof}  If $e$ is $H$-yellow in $G$, then Lemma \ref{lm:staysYellow} shows that $e$ is not crossed in $D^*$.  Thus, $e$ is not $H$-yellow.

Suppose $e$ is $H$-green in $G$, and let $C$ be the witnessing $H$-green cycle.   Lemma \ref{lm:globalJumps} (\ref{it:atMostOne3jump}) implies $C$ does not contain a 3-jump and Lemma \ref{lm:technicalV8colour} implies both that it does not contain a 2-jump and is not contained in the union of some $Q_j$ together with a $Q_j$-local $H$-bridge.  Therefore, $C$ contains a 2.5-jump $b$ and Lemma \ref{lm:technicalV8colour} implies $e$ is in the $H$-rim branch that contains the end $x$ of $b$ that is not an $H$-node.

The edge $e$ has already been shown to be in $r_{i+4}\,r_{i+5}\,r_{i+6}$.  Suppose $e$ is in $r_{i+4}$.  If $b=v_{i+2}x$, then we contradict Lemma \ref{lm:globalJumps} (\ref{it:spanDiffHyper}) --- $v_{i-3}v_i$ and $b$ span the opposite sides of $\bQ_{i-2}$, a contradiction.  
The other alternative is that $b=xv_{i-3}$, which violates Lemma \ref{lm:globalJumps} (\ref{it:globalNoHnode}).  Thus, $e\notin r_{i+4}$.

If $e\in r_{i+5}$, then either $b=xv_{i-2}$ or $b=xv_{i+3}$.  The former does not occur, as otherwise the edges of $r_{i-3}$ are all in two $H$-green cycles, contradicting Theorem \ref{th:twoGreenCycles}.  If the latter occurs, then we contradict Lemma \ref{lm:globalJumps} (\ref{it:spanDiffHyper}) --- $v_{i-3}v_i$ and $b$ span the opposite sides of $\bQ_{i-1}$.  Thus, $e\notin r_{i+5}$.

So $e\in r_{i+6}$.  In this case $b$ is either $xv_{i-1}$ or $xv_{i+4}$. For the former, the edges of $r_{i-3}\,r_{i-2}$ are all in two $H$-green cycles, contradicting Theorem \ref{th:twoGreenCycles}.  For the latter,  the edge $e_1$ of $r_i$ incident with $v_i$ is red by Claim \ref{cl:viIncidentRed}.  The existence of $b$ \minor{shows $Q_i$ is} $H$-yellow, contradicting the fact that $e_1$ is red.  This is the final contradiction that shows $e$ is red.
 \end{proof}

\dragominor{Recall that t}he edge $e$ is in $r_{i+3}\,r_{i+4}\,r_{i+5}\,r_{i+6}$, since it is involved in a crossing with $\bQ_i$.  We have already observed that $e$ is not in $r_{i+3}$.  

Suppose first that $e\in r_{i+4}$.  Lemma \ref{lm:notSeparated} implies $e$ and $e_1$ are $R$-separated in $G$; \minor{in particular, $v_{i}$} is not in $\Delta_e$.  But then $v_{i-3}v_i$ \wording{shows $\Delta_e\subseteq \cl(Q_{i-1})-v_i$ to} be an $H$-yellow cycle, contradicting the fact that $e$ is red.

Therefore, $e\in r_{i+5}\,r_{i+6}$.   Let $\hat e$ be the edge crossed by $e$ in $D^*$.  Since Lemma \ref{lm:technicalV8colour} implies $r_{i-2}$ is not crossed in $D^*$, $\hat e\notin r_{i-2}$.  Since $e$ and $e_1$ are both red in $G$, Lemma \ref{lm:notSeparated} implies they are $R$-separated in $G$; there is a witnessing subdivision $K$ of $V_8$ that contains $s_{i-1}$ and $s_{i-2}$.  This $K$ does not contain $e^*$, and so  is contained in $G-e^*$.  Therefore, $K$ separates $e$ from $r_{i-1}$ in $G-e^*$, \wording{and so,} in $D^*$, $e$ does not cross $r_{i-1}$.  Thus, $\hat e$ is not in $r_{i-1}$. 

Therefore, $\hat e\in r_i\,r_{i+1}$.  Lemma \ref{lm:globalJumps} (\ref{it:spanDiffHyper}) implies there is no 2.5-jump $xv_{i+4}$ --- it and $v_{i-3}v_i$ would span the opposite sides of $\bQ_{i-2}$.   Also, Lemma \ref{lm:globalJumps} (\ref{it:2and3jumps}) implies there is no 2.5-jump $xv_{i+3}$ with $x\in \oo{r_i}$.  

It follows from Lemmas \ref{lm:staysYellow} and \ref{lm:staysGreen} (the preceding pararaph is used here) that the edge $\hat e$ crossed by $e$ in $D^*$ is red in $G$.  This implies that $e$ and $\hat e$ are $R$-separated in $G$ and this in turn implies that $e$ and $\hat e$ are $R$-separated in $G- e^*$, the final contradiction.
\end{cproof}

}\begin{corollary}\label{co:hyperBOD}  Let $G\in \m2$ and $\hvfg$, with $H$ tidy.  Then every $H$-hyperquad has BOD.\end{corollary}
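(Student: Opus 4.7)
The plan is to observe that Corollary \ref{co:hyperBOD} is essentially an immediate combination of Lemma \ref{lm:tidyBODhyper} and Theorem \ref{th:no3jump}. Fix $i\in\{0,1,2,3,4\}$. By Lemma \ref{lm:tidyBODhyper}, if $\bQ_i$ fails to have BOD, then one of the two edges $v_{i-1}v_{i-4}$ and $v_{i+1}v_{i+4}$ must occur in $G$ as a global $H$-bridge. Reading the indices modulo~$10$, the endpoints of each of these two potential edges differ by~$3$, so each of them, if present, would be a $3$-jump in the sense of Definition \ref{df:span}.

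However, Theorem \ref{th:no3jump} guarantees that, for a tidy $\hvfg$, no global $H$-bridge is a $3$-jump. Hence neither $v_{i-1}v_{i-4}$ nor $v_{i+1}v_{i+4}$ can be a global $H$-bridge of $G$. Therefore the ``exceptional'' alternative in Lemma \ref{lm:tidyBODhyper} cannot occur, and we conclude that $\bQ_i$ has BOD. Since $i$ was arbitrary, every $H$-hyperquad has BOD.

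There is essentially no obstacle here, since all the real work has already been carried out in Section~12 to eliminate $3$-jumps. The only thing to be careful about is the indexing convention: one should verify once that the two edges singled out in Lemma \ref{lm:tidyBODhyper} are indeed of the form $v_jv_{j+3}$ (mod~$10$), so that Theorem \ref{th:no3jump} applies verbatim. The proof is therefore a one-line deduction, and I would write it as such, citing both results explicitly.
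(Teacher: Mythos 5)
Your proof is correct and is essentially identical to the paper's own (which simply invokes Theorem~\ref{th:no3jump} and Lemma~\ref{lm:tidyBODhyper} in that order). Your extra check that the indexing in Lemma~\ref{lm:tidyBODhyper} does produce $3$-jumps is a sensible sanity check, but the argument is the same.
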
\printFullDetails{

\begin{cproof}  By Theorem \ref{th:no3jump}, no global $H$-bridge is a 3-jump.  By Lemma \ref{lm:tidyBODhyper}, every $H$-hyperquad has BOD. \end{cproof}

We are now prepared for the main result of this section.

}\begin{theorem}\label{th:redInRim}  Let $G\in \m2$ and $\hvfg$, with $H$ tidy.  Then there is a red edge in the $H$-rim. \end{theorem}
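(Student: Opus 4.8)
The plan is to argue by contradiction: suppose no edge of $R$ is red. By Theorem \ref{th:rimColoured}, every $H$-rim edge is then either $H$-green or $H$-yellow. The strategy is to exploit the crossing of some $H$-hyperquad in a suitable 1-drawing, together with the "cleanliness" results for $H$-green cycles (Lemma \ref{lm:greenCycles} (\ref{it:notCrossed})) and $H$-yellow cycles (Lemma \ref{lm:staysYellow}), to show that the hyperquad cannot actually be crossed after all. Concretely, I would pick an $H$-hyperquad $\bQ_j$, delete an edge of $s_j$ (which lies in a $\bQ_j$-interior bridge, since $H\subseteq\Mob$ by tidiness), and get a 1-drawing $D$ of $G-\oo{s_j}$. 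By Corollary \ref{co:hyperBOD}, $\bQ_j$ has BOD, so Lemma \ref{lm:BODcrossed} forces $\bQ_j$ to be crossed in $D$. The crossing must involve an $H$-rim edge in $r_{j-2}\,r_{j-1}\,r_j\,r_{j+1}$ crossing one in $r_{j+3}\,r_{j+4}\,r_{j+5}\,r_{j+6}$. Each of these rim edges is $H$-green or $H$-yellow by hypothesis, and I want to show none of them can actually be crossed in $D$.

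First I would handle the $H$-green edges. An $H$-green edge $e'$ lies in an $H$-green cycle $C'$; since $H\subseteq G-\oo{s_j}$, Lemma \ref{lm:greenCycles} (\ref{it:notCrossed}) says $C'$ is clean in any 1-drawing of $H\cup C'$, in particular in $D$ (one must note $H\cup C'\subseteq G-\oo{s_j}$, or argue via Lemmas \ref{lm:greenCycles} (\ref{it:prebox}) and \ref{lm:preboxClean} together with the fact that the relevant $V_6$ avoids $s_j$). The only subtlety here is when $C'$ contains a 2.5-jump with an end in $\oo{r_k}$ for the very index $k$ of $e'$ --- but by Lemma \ref{lm:staysGreen}, that is the only way a green edge can be crossed. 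So I would choose $j$ so that the potential crossing edges avoid such bad configurations, which is possible because, by Lemma \ref{lm:globalJumps} (\ref{it:oppositeRims}), at most one of $\oo{r_k}$ and $\oo{r_{k+5}}$ can contain an end of a 2.5-jump, and Lemma \ref{lm:globalJumps} (\ref{it:globalNoHnode}) and (\ref{it:spanDiffHyper}) further restrict how two global bridges can be placed. For the $H$-yellow edges, Lemma \ref{lm:staysYellow} applies directly once $j$ is chosen so that $s_j$ (the deleted spoke) is not one of the four spokes spanned by the witnessing green cycle; again the jump-placement lemmas guarantee such a $j$ exists.

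The main obstacle is the bookkeeping of choosing the index $j$ so that simultaneously: (i) no crossing-candidate rim edge in $\bQ_j$ is "crossable-green" (i.e.\ none is in $r_k$ for a $2.5$-jump with an end in $\oo{r_k}$), and (ii) no crossing-candidate rim edge is "crossable-yellow" (i.e.\ $s_j$ is disjoint from all relevant witnessing green cycles, or those green cycles contain no 3-jump --- which holds automatically by Theorem \ref{th:no3jump}). Since there are five hyperquads and the global $H$-bridges are few and tightly constrained (Theorem \ref{th:globalBridges}, Lemma \ref{lm:globalJumps}), a counting/pigeonhole argument should deliver a good $j$; I expect the clean way to organize this is to show that if the conclusion failed for \emph{every} hyperquad then there would be too many 2.5-jumps or two 2.5-jumps violating (\ref{it:globalNoHnode}) or (\ref{it:spanDiffHyper}) or (\ref{it:oppositeRims}) of Lemma \ref{lm:globalJumps}. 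Once $j$ is fixed, the contradiction is immediate: $\bQ_j$ is forced to be crossed in $D$, yet every edge of $\bQ_j\cap R$ that could participate in the crossing is clean in $D$, and the crossing cannot be self-crossing of $\bQ_j$ within $H-\oo{s_j}$ in any other way, so $\bQ_j$ is clean --- contradicting Lemma \ref{lm:BODcrossed}. This contradiction shows some $H$-rim edge must be red.
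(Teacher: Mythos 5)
Your overall strategy (assume no red edge, force a hyperquad to be crossed in a 1-drawing of $G$ minus a spoke edge, and then show via the colour lemmas that no green or yellow rim edge can carry that crossing) is the same skeleton the paper uses, and it does go through essentially as you describe in the two easier situations: when there are no global $H$-bridges at all, and when the only global $H$-bridges are 2-jumps. The gap is in the 2.5-jump case, where your ``pigeonhole over the five hyperquads'' step is not substantiated and, as the paper's own analysis shows, cannot be made to work in a single stage. Part of the problem is a misreading of Lemma \ref{lm:staysYellow}: since 3-jumps are excluded by Theorem \ref{th:no3jump}, the operative hypothesis of that lemma is that the deleted edge $e$ lies outside $C\cup C'\cup R$, where $C$ is the \emph{yellow} cycle itself --- and $C$ may contain spoke edges through its paths $P_2$ and $P_4$. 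The condition you state (that $s_j$ avoid the spokes spanned by the witnessing green cycle $C'$) is not the right one, and it is exactly at this point that the one-shot argument breaks: for the natural choice of deleted spoke edge $e_i\in s_i$, the paper shows that the crossed edge $e_{i-1}\in\cc{v_{i-1},r_{i-1},w}$ \emph{is} $H$-yellow and that its witnessing yellow cycle \emph{does} contain $e_i$, so Lemma \ref{lm:staysYellow} gives no contradiction there.

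What the paper does instead is a two-stage argument. The first deletion ($e_i\in s_i$) is used not to obtain an immediate contradiction but to pin down structure: the crossed edge is forced to be yellow, its witnessing green cycle must contain a second global $H$-bridge $B$, and Lemma \ref{lm:globalJumps} then locates the attachments of $B$ (one in $\co{r_{i+5}\,r_{i+6}}$, one in $r_{i+3}$). Only with this information in hand --- in particular, knowing that no further 2.5-jump can sit between $B$ and $wv_{i+2}$, and that the relevant yellow cycle lives in $\cl(Q_{i-1})$ and so avoids $s_{i+1}$ --- can one show that in a 1-drawing of $G-\oo{s_{i+1}}$ none of $r_{i-1}\,r_i\,r_{i+1}\,r_{i+2}$ is crossed, contradicting the fact that $\bQ_{i+1}$ must be crossed. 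So the ``good'' hyperquad cannot be identified by counting jumps alone; its existence depends on structural consequences extracted from the failure of the first attempt. To repair your proof you would need to replace the pigeonhole step with this kind of bootstrapping (or supply a genuinely new argument that a clean hyperquad always exists a priori, which the constraints of Lemma \ref{lm:globalJumps} by themselves do not seem to give).
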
\printFullDetails{

\begin{cproof}  We prove this by first considering the case there is a global $H$-bridge.  By Theorem \ref{th:no3jump}, there is no 3-jump.  By Theorem \ref{th:globalBridges}, a global $H$-bridge is either a 2.5- or a 2-jump.  

\begin{claim}  If $G$ has a 2.5-jump, then $G$ has a red edge. \end{claim}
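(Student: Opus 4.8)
Suppose for contradiction that $G$ has a 2.5-jump $b$ but no red edge in the $H$-rim.  By Theorem~\ref{th:rimColoured}, every $H$-rim edge is then either $H$-green or $H$-yellow.  By Theorem~\ref{th:globalBridges} the 2.5-jump has, for some $i$, the vertex $v_i$ as one attachment and the other attachment $w$ in $\oo{r_{i-3}}\cup\oo{r_{i+2}}$; by symmetry take $w\in\oo{r_{i+2}}$.  The first step is to identify a single deleted edge $e^*$ that forces a crossing on a hyperquad we control.  The natural choice is an edge of the spoke $s_{i+2}$ (so that the hyperquad $\bQ_{i+2}$, which contains $s_{i+2}$, must be crossed after deleting $e^*$): by Corollary~\ref{co:hyperBOD} every $H$-hyperquad has BOD, and $s_{i+2}$ lies in a planar $\bQ_{i+2}$-bridge, so Lemma~\ref{lm:BODcrossed} gives that $\bQ_{i+2}$ is crossed in any 1-drawing $D^*$ of $G-e^*$.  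Thus some edge of $r_{i}\,r_{i+1}\,r_{i+2}\,r_{i+3}$ crosses some edge of $r_{i+5}\,r_{i+6}\,r_{i+7}\,r_{i+8}$ in $D^*$.

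The second step is to rule out every possible crossed rim edge in that range, using the fact that all rim edges are $H$-green or $H$-yellow.  Here the tools are Lemma~\ref{lm:staysGreen} and Lemma~\ref{lm:staysYellow}: an $H$-green edge not in a global bridge's span cannot be crossed unless its witnessing cycle contains a 2.5-jump with a free end in that very rim branch, and an $H$-yellow edge is never crossed provided the deleted edge lies on one of the four spokes through the $H$-nodes spanned by the witnessing green cycle.  The key geometric input is Lemma~\ref{lm:technicalV8colour}: since $H-\oo{s_{i+2}}\cong V_8$ contains the 2.5-jump $b$ spanning $v_i$ and $w\in\oo{r_{i+2}}$, parts (\ref{it:2halfJump1}) and (\ref{it:2halfJump3/2}) of that lemma restrict which edges of the half-jump region can be crossed and force, for instance, $r_{i+1}$, $r_{i+2}$ (up to the point $w$), and certain neighbouring branches to be clean.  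Combining these with Lemma~\ref{lm:globalJumps} (parts (\ref{it:globalNoHnode}), (\ref{it:2and3jumps}), (\ref{it:spanDiffHyper}), (\ref{it:oppositeRims})) — which severely limits where a second global bridge can attach given that $b$ is present — one checks case by case that neither candidate for the crossed pair can actually be crossed in $D^*$, contradicting that $\bQ_{i+2}$ is crossed.

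This is essentially the same case analysis as in the proof of Theorem~\ref{th:no3jump}, only shorter because a 2.5-jump spans strictly fewer edges than a 3-jump and hence blocks fewer $H$-green cycles; in particular one does not have to first promote the jump's endpoint edge to be red.  \emph{The main obstacle} I anticipate is the bookkeeping in the last step: when the crossed edge sits in $r_{i+1}$ or in the far half $r_{i+5}\dots r_{i+8}$, one must argue that the $H$-green cycle through it either is $\cl(Q_j)$-type (hence clean by Lemma~\ref{lm:staysGreen} since no global bridge is involved) or is witnessed by a global bridge whose existence, together with $b$, is forbidden by Lemma~\ref{lm:globalJumps}; getting all the index arithmetic right for each of the $\le 16$ candidate crossing pairs, and correctly invoking which of the symmetric variants of Lemma~\ref{lm:technicalV8colour} applies, is where the care is needed.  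Once a 2.5-jump is excluded, I expect the next claim (handling a 2-jump) and then the case with no global bridge at all to be comparatively routine, again via deleting a suitable spoke edge and reading off that a crossed rim edge cannot be green or yellow.
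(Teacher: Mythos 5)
Your overall strategy is the right one and matches the paper's in spirit: delete a spoke edge through an internal node of the jump's span, force the corresponding hyperquad to be crossed, and then exclude the candidate crossed rim edges using Lemmas~\ref{lm:technicalV8colour}, \ref{lm:staysGreen}, \ref{lm:staysYellow} and Theorem~\ref{th:rimColoured}. But the plan as written has two genuine gaps, and they are exactly the points where the paper's proof has to do substantially more work than ``check the $\le 16$ candidate pairs.''

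First, you assert that any second global bridge capable of witnessing a crossable $H$-green edge is ``forbidden by Lemma~\ref{lm:globalJumps}.'' That is not true. Two 2.5-jumps whose free ends lie in the interior of the same rim branch (spanning in opposite directions), or a 2.5-jump one branch further along, are perfectly compatible with all parts of Lemma~\ref{lm:globalJumps} and with Theorem~\ref{th:twoGreenCycles}. In those configurations Lemma~\ref{lm:staysGreen} permits a green edge in the relevant branch to be crossed, so your exclusion step fails. The paper handles precisely these configurations as two separate cases (with different deleted edges and different hyperquads), deriving contradictions not from the stays-lemmas directly but from forced additional jumps that then violate Theorem~\ref{th:twoGreenCycles}; only after these cases are disposed of can one assume such neighbouring jumps are absent.

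Second, Lemma~\ref{lm:staysYellow} requires the deleted edge $e$ to lie outside $C\cup C'\cup R$. When the deleted spoke edge lies \emph{on} the witnessing $H$-yellow cycle $C$ (which can happen, since the $R$-avoiding paths of a yellow cycle may run along spokes through the spanned $H$-nodes), the lemma gives nothing, and the yellow edge genuinely can be crossed in $D^*$. This is not a bookkeeping nuisance: in the main case the paper cannot reach a contradiction in the drawing $D^*$ at all. It instead extracts structural information about the witnessing global bridge from the fact that the crossed edge is yellow with $e_i\in C$, and then passes to a \emph{second} auxiliary drawing $G-\oo{s_{i+1}}$, showing that the hyperquad $\bQ_{i+1}$ cannot be crossed there. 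So a single deletion does not close the argument, contrary to your plan. (Your heuristic that the 2.5-jump case is easier than the 3-jump case because the span is shorter also points the wrong way: a shorter span leaves more room for coexisting jumps, which is what creates the extra cases.)
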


\begin{proof}   By symmetry, we may assume $wv_{i+2}$ is a 2.5-jump with $w\in \oo{r_{i-1}}$.   By way of contradiction, we assume that $G$ has no red edge. We first treat two special cases.

\medskip
{\bf Case 1:}  {\em there is a 2.5-jump $v_{i-3}w'$, with $w'\in \oo{r_{i-1}}$.}

\medskip
In this case, let  $D$ be a 1-drawing of $G-\oo{s_{i+2}}$.  \wording{Corollary \ref{co:hyperBOD} and Lemma \ref{lm:BODcrossed} show} that $\bQ_{i+2}$ is crossed in $D$.   Lemma \ref{lm:technicalV8colour} implies \wording{each of the cycles consisting of one of these two 2.5-jumps and the subpath of $R$ it spans is} clean in $D$.  The same lemma implies that neither $r_{i+3}$ nor $r_{i+5}$ is crossed in $D$.  The combination of facts imply that some edge $e_2$ in $r_{i+2}$ crosses some edge $e_6$ in $r_{i+6}$.    

Since $G$ has no red edge,  Theorem \ref{th:rimColoured} implies each of $e_2$ and $e_6$ is either $H$-yellow or $H$-green in $G$.  There is complete symmetry between them (relative to the two 2.5-jumps), so we treat $e_6$.  If $e_6$ is $H$-yellow in $G$, then it is in some witnessing $H$-yellow cycle $C$ for which there is a witnessing $H$-green cycle $C'$.  The only possibility is that $C'$ \major{contains $wv_{i+2}$.}

\major{We have that $C\subseteq \cl(Q_{i+1})-v_{i+2}$.  Let $C=P_1P_2P_3P_4$ be the composition of paths showing $C$ is $H$-yellow, as in Definition \ref{df:yellow}.  Since $P_1\subseteq \oo{C'\cap R}$, we have $P_1\subseteq r_{i+1}-v_{i+2}$.  Choose the labelling of $P_2$ and $P_4$ so the $r_{i+1}$-end of $P_2$ is nearer $v_{i+2}$ in $r_{i+1}$ than the $r_{i+1}$-end of $P_4$ is.} 

\major{If $P_2$ is not disjoint from $\oo{s_{i+2}}$, then the edge of $r_{i+1}$ incident with $v_{i+2}$ is in two $H$-green cycles, contradicting Theorem \ref{th:twoGreenCycles}.  Therefore,} $C\cup C'$ is disjoint from $\oo{s_{i+2}}$.   But then Lemma \ref{lm:staysYellow} implies $e_6$ is not crossed in $D$ and, therefore, $e_6$ is not $H$-yellow.  Likewise, $e_2$ is not $H$-yellow.

Therefore, $e_6$ is $H$-green, so Lemma \ref{lm:staysGreen} implies $e_6$ is spanned by some 2.5-jump $J_6$ and, moreover, is not in either $H$-rim branch fully contained in the span of $J_6$.  
By Theorem \ref{th:twoGreenCycles}, no $H$-rim edge is in two $H$-green cycles.  Thus, the only possibility for the 2.5-jump $J_6$ spanning $e_6$ is $v_{i+4}w_6$, with $w_6\in \oo{r_{i+6}}$.  An analogous argument applies to $e_2$, so $e_2$ is spanned by the 2.5-jump $J_2$ $w_2v_{i+5}$, with $w_2\in \oo{r_{i+2}}$.  But now we have that every edge of $r_{i+4}$ is in the distinct $H$-green cycles containing $J_2$ and $J_6$, contradicting Theorem \ref{th:twoGreenCycles}, completing the proof in Case 1.

\medskip
{\bf Case 2:}  {\em There is a 2.5-jump $v_{i-4}w'$, with $w'\in r_{i-2}$.}

\medskip\dragominor{\major{Let $D_1$ be a 1-drawing of $G-\oo{s_{i+1}}$.  Corollary \ref{co:hyperBOD} and Lemma \ref{lm:BODcrossed} imply $\bQ_{i+1}$ is crossed in $D$.  Lemma \ref{lm:technicalV8colour} (\ref{it:atMostTwo}) shows none of $\cc{w,r_{i-1}v_i}$, $r_i$, $r_{i+1}$, and $r_{i+6}$ is crossed in $D$, while (\ref{it:V8yellow}) of the same lemma shows $r_{i+2}$ is not crossed.  It follows that some edge $e_5\in r_{i+5}$ crosses an edge $e_9\in \cc{v_{i+9},r_{i+9},w}$.  }}

\dragominor{\major{Since $e_9$ is not red, Theorem \ref{th:rimColoured} shows it is either $H$-yellow or $H$-green.  If $e_9$ is $H$-yellow as witnessed by the $H$-yellow cycle $C$ and the $H$-green cycle $C'$, then the global $H$-bridge $J$ in $C'$ is a 2- or 2.5-jump (Theorems \ref{th:globalBridges} and \ref{th:no3jump}) and $C\subseteq \cl(Q_{-1})$ (Lemma \ref{lm:yellowCycles} (\ref{it:quadClosure})).   Lemma \ref{lm:staysYellow} implies that $e_9$ is not crossed in $D$, a contradiction.
}}

\dragominor{\major{Likewise, if $e_9$ is $H$-green, the Lemma \ref{lm:staysGreen} shows it is not crossed in $D$, the final contradiction completing the proof in Case 2.}}

\medskip
{\bf Case 3:}  {\em All the remaining cases.}

\medskip Let $e_i$ be the edge of $s_i$ incident with $v_i$ and let $D_i$ be a 1-drawing of $G-e_i$.  Corollary \ref{co:hyperBOD} and Lemma \ref{lm:BODcrossed} imply $\bQ_i$ is crossed in $D_i$.

Since $G$ (in particular, $r_{i-2}$) has no red edge,  Lemma \ref{lm:staysYellow} \dragominor{shows any $H$-yellow edge in $r_{i-2}$ is not crossed in $D_i$, while Lemma \ref{lm:staysGreen}  implies that, as we are not in Case 2, no $H$-green edge of $r_{i-2}$ is crossed in $D_i$.  Lemma \ref{lm:technicalV8colour} (\ref{it:atMostTwo})} implies no edge of $\cc{w,r_{i-1},v_i}\rbsp r_i\,r_{i+1}$ is crossed in $D_i$.  Therefore, it must be that some edge $e_{i-1}$ of $\cc{v_{i-1},r_{i-1},w}$ is crossed in $D_i$.  

As $e_{i-1}$ is not red in $G$,  Theorem \ref{th:rimColoured} implies $e_{i-1}$ is either $H$-green or $H$-yellow.  If it is $H$-green, then, because we are not in Case 1,   Lemma \ref{lm:staysGreen} implies $e_{i-1}$ is in an $H$-green cycle $C$ contained in $\cl(Q_{i-1})$ and $e_i\in C$.  But then every edge in $\cc{w,r_{i-1},v_i}$ is in two $H$-green cycles, contradicting Theorem \ref{th:twoGreenCycles}.  

We conclude that $e_{i-1}$ is $H$-yellow.  Let $C$ and $C'$ be the witnessing $H$-yellow and $H$-green cycles, respectively, and let $B$ be the global $H$-bridge contained in $C'$.  Lemma \ref{lm:staysYellow} implies $e_i\in C$.  Moreover, $v_{i+5}$ is in the span of $B$, as otherwise $B$ attaches at $v_{i+2}$, contradicting Lemma \ref{lm:globalJumps} (\ref{it:globalNoHnode}).  By Lemma \ref{lm:globalJumps} (\ref{it:spanDiffHyper}), $v_{i+7}$ is not in the span of $B$.  

\dragominor{If $B$ has an end in $\oo{r_{i+2}}$, then the other end of $B$ is $v_{i+5}$.  The $R$-avoiding path (one of $P_2$ and $P_4$ in the decomposition of the $H$-yellow cycle as in Definition \ref{df:yellow}) in $C$ containing $e_i$ contains a positive-length $H$-avoiding subpath joining a vertex of $\oo{s_i}$ to a vertex of $\co{v_{i+4},r_{i+5},v_{i+5}}$.  This yields the contradiction that the edge of $r_{i+4}$ incident with $v_{i+5}$ is in two $H$-green cycles.}
Therefore, $B$ has one attachment in $\co{r_{i+5}\,r_{i+6}}$ and one attachment in $r_{i+3}$.  

 Let $D_{i+1}$ be a 1-drawing of $G-\oo{s_{i+1}}$.   Lemma \ref{lm:staysYellow} implies no $H$-yellow edge in \dragominor{either $r_{i-1}$ or $r_{i+2}$} is crossed in $D_{i+1}$.  An $H$-green edge of $r_{i+2}$ is not spanned by a global $H$-bridge (there is no room for such a jump between $B$ and $wv_{i+2}$), so Lemma \ref{lm:staysGreen} implies no $H$-green edge of $r_{i+2}$ is crossed in $D_{i+1}$.   Because we are not in Case 1 and there is no 3-jump, Lemma \ref{lm:staysGreen} implies no $H$-green edge of either $r_{i-1}$ or $r_{i+2}$ is crossed in $D_{i+1}$.  

Lemma \ref{lm:technicalV8colour} (\ref{it:atMostTwo}) implies no edge of $r_i\,r_{i+1}$ is crossed in $D_{i+1}$.  Thus, none of $r_{i-1}\,r_i\,r_{i+1}\,r_{i+2}$ is crossed in $D_{i+1}$, and therefore $\bQ_{i+1}$ cannot be crossed in $D_{i+1}$. However,  Corollary \ref{co:hyperBOD} and Lemma \ref{lm:BODcrossed} imply that $\bQ_{i+1}$ is crossed in $D_{i+1}$.  This contradiction completes the proof that $G$ has a red edge when there is a 2.5-jump. 
\end{proof}

At this point, we may assume $G$ has no 2.5-jump and no 3-jump.

\begin{claim}  If $G$ has a 2-jump $v_iv_{i+2}$, then either $r_{i-1}$ or $r_{i+2}$ has a red edge.  \end{claim}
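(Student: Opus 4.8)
The plan is to argue by contradiction: suppose $G$ has no red edge, so that (by Theorem~\ref{th:rimColoured}) every edge of $r_{i-1}$ and of $r_{i+2}$ is either $H$-green or $H$-yellow. First I would record the consequences of Theorem~\ref{th:globalBridges}, Theorem~\ref{th:no3jump}, and Lemma~\ref{lm:globalJumps} in the presence of the $2$-jump $v_iv_{i+2}$: by Lemma~\ref{lm:globalJumps}~(\ref{it:globalNoHnode}), no other global $H$-bridge has $v_i$ or $v_{i+2}$ as an attachment; combined with the fact that there are no $2.5$- or $3$-jumps at this stage and Theorem~\ref{th:twoGreenCycles} (no rim edge in two green cycles), this severely restricts which global $H$-bridges can span edges of $r_{i-1}$ or $r_{i+2}$, since $v_iv_{i+2}$ already spans all of $r_i\, r_{i+1}$. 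The key structural input is Corollary~\ref{co:hyperBOD}, which gives that every $H$-hyperquad has BOD, so that Lemma~\ref{lm:BODcrossed} lets us force a crossing of $\bQ_j$ in a suitable $1$-drawing of $G-\oo{s_j}$.

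The main step would be to delete the appropriate spoke edge and analyze the forced crossing. I would take $e_{i+1}$ to be an edge of $s_{i+1}$ incident with $v_{i+1}$ (or $v_{i+2}$) and $D$ a $1$-drawing of $G-e_{i+1}$; since $\bQ_{i+1}$ has BOD, it is crossed in $D$ by Lemma~\ref{lm:BODcrossed}. The cycle consisting of $v_iv_{i+2}$ together with the path $r_i\, r_{i+1}$ it spans is $H$-close for a suitable $V_6$ (its complement being $R$ with three spokes among $s_{i-1}, s_i, s_{i+2}, s_{i+3}$), hence clean in $D$ by Lemmas~\ref{lm:closeIsPrebox} and~\ref{lm:preboxClean}; so $r_i\, r_{i+1}$ is not crossed. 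Lemma~\ref{lm:technicalV8colour}~(\ref{it:atMostTwo}) rules out $r_{i+4}$ (and/or the appropriate rim branches near the crossing), forcing the crossing of $\bQ_{i+1}$ to involve an edge of $r_{i-1}$ or $r_{i+2}$ against one in $\{r_{i+5}, r_{i+6}\}$ (or $\{r_{i-2}, r_{i-3}\}$, by symmetry). Call this crossed edge $e^{\bullet}\in r_{i-1}\cup r_{i+2}$. By assumption $e^{\bullet}$ is $H$-green or $H$-yellow; I would then apply Lemmas~\ref{lm:staysGreen} and~\ref{lm:staysYellow} to derive a contradiction. If $e^{\bullet}$ is $H$-green, Lemma~\ref{lm:staysGreen} forces its witnessing green cycle to contain a $2.5$-jump (impossible now) or else to lie in some $\cl(Q_j)$ with the deleted edge $e_{i+1}$ inside it, which by Theorem~\ref{th:twoGreenCycles} would put a rim edge of $r_{i-1}$ or $r_{i+2}$ into two green cycles — contradiction. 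If $e^{\bullet}$ is $H$-yellow, Lemma~\ref{lm:staysYellow} (checking that the relevant yellow and green cycles avoid $e_{i+1}$, using Lemma~\ref{lm:globalJumps}~(\ref{it:globalNoHnode}) to control the attachments of the global bridge in the witnessing green cycle) shows $e^{\bullet}$ is not crossed in $D$ — contradiction.

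There will likely be two or three sub-cases according to whether the forced crossing lands on $r_{i-1}$ or $r_{i+2}$ and whether it pairs with $r_{i+5}$, $r_{i+6}$ (or their mirror images), and possibly a separate argument deleting an edge of $s_{i+2}$ or $s_i$ to cover the case where the first deletion forces the crossing onto a branch where both alternatives ($H$-green via a jump or $H$-yellow) must be independently excluded. The expected main obstacle is exactly this bookkeeping: ensuring that in each sub-case the witnessing green or yellow cycle of $e^{\bullet}$ is edge-disjoint from the deleted spoke edge (so that Lemmas~\ref{lm:staysGreen} and~\ref{lm:staysYellow} apply), and ruling out the residual possibility that $e^{\bullet}$ is green via a $2$-jump on the \emph{far} side — here I would invoke Lemma~\ref{lm:globalJumps}~(\ref{it:spanDiffHyper}) together with Theorem~\ref{th:twoGreenCycles} to show no such far-side $2$-jump can coexist with $v_iv_{i+2}$, exactly as in the handling of the $2.5$-jump cases above. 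Once a red edge is located in $r_{i-1}\cup r_{i+2}$, the claim — and hence, combined with the preceding two claims and the no-global-bridge case, Theorem~\ref{th:redInRim} — follows.
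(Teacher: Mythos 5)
Your argument is correct and essentially the same as the paper's: delete (an edge of) $s_{i+1}$, use Corollary~\ref{co:hyperBOD} and Lemma~\ref{lm:BODcrossed} to force a crossing of $\bQ_{i+1}$, observe that $r_i\,r_{i+1}$ is clean because the $2$-jump makes it part of a prebox/green configuration (the paper simply cites Lemma~\ref{lm:technicalV8colour}~(\ref{it:atMostTwo})), so the crossing lands on $r_{i-1}\cup r_{i+2}$, and then conclude from Lemmas~\ref{lm:staysYellow} and~\ref{lm:staysGreen} together with Theorem~\ref{th:rimColoured} that the crossed edge is red. The differences are cosmetic — the paper deletes all of $\oo{s_{i+1}}$ rather than one edge and argues directly rather than by contradiction — and the sub-case bookkeeping you anticipate turns out to be unnecessary.
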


\begin{proof}  In this case, let $D_{i+1}$ be a 1-drawing of $G-\oo{s_{i+1}}$. Corollary \ref{co:hyperBOD} and Lemma \ref{lm:BODcrossed} imply that $\bQ_{i+1}$ is crossed in $D_{i+1}$. Lemma \ref{lm:technicalV8colour} (\ref{it:atMostTwo}) shows no edge of $r_i\,r_{i+1}$ is crossed in $D_{i+1}$. Therefore, some edge of $r_{i-1}\cup r_{i+2}$ must be crossed in $D_{i+1}$.
Lemmas \ref{lm:staysYellow} and \ref{lm:staysGreen} imply that no $H$-yellow or $H$-green edge in $r_{i-1}\cup r_{i+2}$ is crossed in $D_{i+1}$.  Therefore, \wording{Theorem \ref{th:rimColoured} shows} some edge in $r_{i-1}\cup r_{i+2}$ is red. \end{proof}

In the final case, there are no global $H$-bridges.  Therefore, there are no $H$-yellow cycles and every $H$-green cycle is contained in $\cl(Q_i)$, for some $i$.  \dragominor{For $j\in\{0,1,2,3,4\}$, let} $e_j$ be the edge in $s_j$ incident with $v_j$ and let $D_j$ be a 1-drawing of $G-e_j$.  Corollary \ref{co:hyperBOD} and Lemma \ref{lm:BODcrossed} imply that $\bQ_{j}$ is crossed in $D_{j}$, so some edge in $r_{j+3}\, r_{j+4}\, r_{j+5}\, r_{j+6}$ is crossed in $D_j$.  Since $e_j$ cannot be in any $H$-green cycle containing an edge in $r_{j+3}\, r_{j+4}\, r_{j+5}\, r_{j+6}$, Lemma \ref{lm:staysGreen} implies no $H$-green edge in $r_{j+3}\, r_{j+4}\, r_{j+5}\, r_{j+6}$ can be crossed in $D_j$.  Therefore the edge in $r_{j+3}\, r_{j+4}\, r_{j+5}\, r_{j+6}$ crossed in $D_j$  is red in $G$.
\end{cproof}

\wording{We conclude this section with the technical lemma (\ref{lm:noAdjacentRed}) below that will be used in the next section.  We start with four lemmas leading to a more refined understanding of $R$-separation in cases of interest for us.  The first three are primarily used in the proof of the fourth. (Recall that an $RR$-path is an $R$-avoiding path with both ends in $R$.)}

}\major{\begin{lemma}\label{lm:RRpathWithGlobal} Let $G\in \m2$ and let $\hvfg$, with $H$ tidy, witnessed by the embedding $\Pi$.  Let $P$ be an $RR$-path in $G$.  If $B$ is a global $H$-bridge so that one end of $P$ is in the interior of the span of $B$, then there is an $H$-quad $Q$ so that $P\subseteq \cl(Q)$  and the two cycles in $R\cup P$ containing $P$ are non-contractible in $\pp$. \end{lemma}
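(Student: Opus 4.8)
The plan is to analyze where the ends of $P$ can lie and then use the tidy embedding $\Pi$ together with the structure theorems for global $H$-bridges. Let $B$ be a global $H$-bridge with one end $x$ of $P$ in the interior of its span. By Theorem~\ref{th:globalBridges} and Theorem~\ref{th:no3jump}, $B$ is either a $2$-jump or a $2.5$-jump. First I would fix an index $i$ so that the span of $B$ is contained in $r_{i}\cup r_{i+1}$ (for a $2$-jump) or in $r_{i-2}\cup r_{i-1}\cup r_i$ or $r_i\cup r_{i+1}\cup r_{i+2}$ (for a $2.5$-jump), and so that $x$ lies in $\oo{r_j}$ for one of these rim branches $r_j$; note $B$ together with its span forms an $H$-green cycle $C'$ that, since $H$ is $\Pi$-tidy, bounds a face of $\Pi[G]$ contained in $\Disc$ and whose spanned path lies in $R\cap\Disc$'s boundary.

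The key step is to locate the other end $y$ of $P$. Since $P$ is $R$-avoiding, the edge of $P$ incident with $x$ is not on the $\Disc$-side of $R$ at $x$ (that side is occupied by the face bounded by $C'$ locally near $x$); hence that edge is in $\Mob$, and therefore $P$ is contained in $\Mob$ with only its ends on $R$. Thus $P$ lies inside the closed $\Pi[H]$-face bounded by some $H$-quad $Q$, and both ends of $P$ lie on $Q$; in particular $P\subseteq \cl(Q)$. I must check that $Q$ is the $H$-quad whose boundary contains $r_j$ on one of its rim branches: the relevant $\Pi[H]$-face incident with $x\in\oo{r_j}$ from the $\Mob$ side is bounded by a single $H$-quad, and $P$ must be drawn there. (Since one end of $P$ is in the interior of $r_j$, the only possibility is the $H$-quad containing $r_j$.)

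It remains to show that the two cycles of $R\cup P$ through $P$ are non-contractible. Write $R\cup P$ as the union of $P$ with the two arcs of $R$ joining the ends of $P$; these give two cycles $C_1$ and $C_2$, and $C_1\cup C_2 = R\cup P$, while $R = C_1\triangle C_2$ (symmetric difference of edge sets). Since $R$ is non-contractible in $\pp$ and a curve is non-contractible if and only if it is homologically nontrivial mod $2$, exactly one or all three of $C_1$, $C_2$, $R$ are non-contractible; as $R$ is non-contractible, at least one of $C_1, C_2$ is. To rule out that only one is, I would use that $P\subseteq \Mob$ with both ends on $R = \partial\Mob$ and that the cycle $C'$ (the $H$-green cycle containing $B$) bounds a face in $\Disc$: because $x$ is interior to $r_j$ and $r_j\subseteq \partial\Disc$, each of $C_1$ and $C_2$ contains the sub-arc of $r_j$ from $x$ to one of the ends of $r_j$, together with a piece of $P$ running into $\Mob$. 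In the disc picture $\Disc$ (reattached along $R$), $P$ cuts the M\"obius band $\Mob$ into regions; an $R$-avoiding arc in $\Mob$ with both endpoints on $\partial\Mob$ either separates $\Mob$ (both $C_1,C_2$ non-contractible is impossible here) or is non-separating (then both are non-contractible). I expect the main obstacle to be this last homological bookkeeping: showing that $P$ does not separate $\Mob$, i.e. that $P$ is an essential arc, not a trivial one. The argument I would give is that if $P$ were a trivial arc then one of $C_1,C_2$ would be contractible and, bounding a disc disjoint from the exposed side, one of the two cycles of $R\cup P$ would be a contractible cycle with a non-planar $C_i$-bridge (namely $M_Q$, which still contains a subdivision of $V_8$ after deleting one spoke); applying Lemma~\ref{lm:CdisjointNCcycle} or a box argument via Lemma~\ref{lm:noBox} — here the relevant cycle would be an $H$-close cycle, hence a prebox by Lemma~\ref{lm:closeIsPrebox}, and combined with BOD from Corollary~\ref{co:contractibleBOD} it would be a box — contradicting Lemma~\ref{lm:noBox}. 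This reduces the topological claim to the prebox/box machinery already established.
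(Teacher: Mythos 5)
Your proposal has two genuine gaps, and they are the two points the paper's proof actually has to work for.

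First, the step ``$P\subseteq\Mob$ and $R$-avoiding, therefore $P$ lies inside the closed $\Pi[H]$-face bounded by a single $H$-quad'' does not follow. An $RR$-path is only $R$-avoiding, not $H$-avoiding: it may enter the interior of an $H$-spoke, run along it, and exit into the adjacent quad, so a priori $P$ can traverse several quads. Almost all of the paper's proof is devoted to excluding this. It takes the first edge $e$ of $P$ not in $H$ (so $P$ may begin with a segment of a spoke $s_i$ when the end $u$ is an $H$-node of the span), lets $P'$ be the $H$-avoiding subpath containing $e$, and then uses Lemma~\ref{lm:noSpokeOnlyBridge}, Corollary~\ref{co:attsMissBranch} and Theorem~\ref{th:twoGreenCycles} to show the far end of $P'$ cannot lie in $\oo{s_{i-1}\,r_{i-1}\,r_i\,s_{i+1}}$ nor in $\oo{s_i}$, forcing it into $r_{i+4}\,r_{i+5}$. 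Only then does $P\subseteq\cl(Q_{i-1})$ or $\cl(Q_i)$ follow. Your parenthetical ``the only possibility is the $H$-quad containing $r_j$'' silently assumes $P$ is $H$-avoiding.

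Second, your non-contractibility argument starts from a false premise: in this setting $R$ is \emph{contractible} in $\pp$ (it bounds $\Disc$; this is established in Chapter~3). Hence $[C_1]+[C_2]=[R]=0$ in $H_1(\pp;\mathbb{Z}_2)$, so the two cycles containing $P$ are either both contractible or both non-contractible --- your ``exactly one or all three'' trichotomy and the conclusion ``at least one of $C_1,C_2$ is non-contractible'' are wrong. Deciding which alternative holds requires knowing where the second end of $P$ sits, which your argument never determines; once one knows (as the paper does) that the ends lie in $r_{i-1}\,r_i$ and $r_{i+4}\,r_{i+5}$ and $P\subseteq\cl(Q)$, $P$ runs ``parallel to a spoke'' and non-contractibility is immediate. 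Your fallback box argument is not a repair: the cycle $P$ plus a rim arc need not be $H$-close (the arc can contain too many $H$-nodes), no planar bridge is exhibited, and the configuration it is meant to exclude (both ends of $P$ close together on $R$) is in fact ruled out by the two-green-cycles theorem, not by the prebox machinery.
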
}\printFullDetails{

\begin{cproof} \major{\dragominor{As $P$ is $R$-avoiding, T}heorem \ref{th:twoGreenCycles} implies $P$ is not contained in $\disc$.    
 If $P$ is just an $H$-spoke, then both conclusions are obvious.  Otherwise, as we traverse $P$ from an end $u$ in the interior of the span of $B$, there is a first edge $e$ that is not in $H$.  Since $P\subseteq \Mob$, there is an $H$-quad $Q$ so that $e\in \cl(Q)$.   Let $P'$ be the $H$-bridge in $H\cup P$ containing $e$.  Then $P'$ is an $H$-avoiding path with both ends in $H$, so $P'\subseteq \cl(Q)$. }  

\major{\dragominor{Since $P$ is $R$-avoiding, if both ends of $P'$ are in $R$, then} $P'=P$ and $P\subseteq \cl(Q)$, as claimed.  Otherwise, one end $w$ of $P'$ is in the interior of some $H$-spoke $s_i$.  Our two claims eliminate many possibilities for the other end $x$ of $P'$.  } \dragominor{We choose the labelling so that $u\in r_{i-1}\,r_i$.}  

\major{\begin{claim}\label{cl:endNotBarQi}  $x$ is not in $\oo{s_{i-1}\,r_{i-1}\,r_i\,s_{i+1}}$. \end{claim}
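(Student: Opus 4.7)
The plan is to suppose for contradiction that $x\in\oo{s_{i-1}\,r_{i-1}\,r_i\,s_{i+1}}$ and, in each sub-configuration, exhibit an $H$-green cycle that shares a rim edge with the $H$-green cycle $C_B$ formed by $B$ together with its span, contradicting Theorem~\ref{th:twoGreenCycles}.  I first record that $u$ must be either the $H$-node $v_i$ (forced by the labelling $u\in r_{i-1}\,r_i$ as soon as the first edges of $P$ at $u$ lie in a spoke, so that they travel along $s_i$ from $v_i$ to $w$), or a point of $\oo{r_{i-1}}\cup\oo{r_i}$ whose first incident edge in $P$ is already $e$; in the latter case $u$ is itself an endpoint of $P'$, so $x=u$ and $x$ falls directly into case (3) or (4) below.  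The possibility $x=v_i$ is immediately impossible, as it would make $P$ a closed walk.

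For each remaining position of $x$ I produce a cycle $C$ by concatenating $\cc{v_i,s_i,w}$, $P'$, and a short return arc in $H$: (1) if $x\in\oo{s_{i-1}}$, $C=r_{i-1}\,\cc{v_{i-1},s_{i-1},x}\,P'\,\cc{w,s_i,v_i}$; (2) if $x\in\oo{s_{i+1}}$, the symmetric cycle using $r_i$, $s_{i+1}$ and $v_{i+1}$; (3) if $x\in\oo{r_{i-1}}$, $C=\cc{v_i,r_{i-1},x}\,P'\,\cc{w,s_i,v_i}$; (4) if $x\in\oo{r_i}$, the symmetric cycle $C=\cc{v_i,r_i,x}\,P'\,\cc{w,s_i,v_i}$; and (5) when $x=v_{i-1}$ or $x=v_{i+1}$, $C$ reduces to $r_{i-1}\,P'\,\cc{w,s_i,v_i}$ or its symmetric analogue, respectively.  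In each case the decomposition $P_1P_2P_3P_4$, with $P_1$ the displayed rim arc, $P_3=P'$, and $P_2,P_4$ the displayed spoke arcs (possibly trivial), satisfies the conditions of Definition~\ref{df:green}: $P_1\subseteq R$ has at most two $H$-nodes, $P_2\cup P_4\subseteq s_{i-1}\cup s_i\cup s_{i+1}$, and $P_2P_3P_4$ is $R$-avoiding.  Hence each $C$ is $H$-green.

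It remains to check that in every case the rim arc $P_1$ of $C$ shares at least one edge with the span of $B$.  By Theorems~\ref{th:globalBridges} and~\ref{th:no3jump}, $B$ is a $2$-jump or a $2.5$-jump, so its span is a subpath of $R$ of length at most $3$ whose interior contains $u$.  A short case-by-case inspection of the possible forms of $B$ shows that either $v_i$ is interior to the span (so the edge of $r_{i-1}\cup r_i$ incident with $v_i$ lies in both the span and in $P_1$), or $v_i$ is an endpoint of the span with $u\in\oo{r_{i-1}}\cup\oo{r_i}$ itself interior (so a positive-length subpath of $r_{i-1}$ or $r_i$ between $u$ and the other interior endpoint of the span lies in both $P_1$ and in the span).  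Either way $C$ and $C_B$ are distinct $H$-green cycles with a rim edge in common, contradicting Theorem~\ref{th:twoGreenCycles}.  I expect the main obstacle to be the bookkeeping in the $2.5$-jump cases whose spans terminate inside $r_{i-1}$ or $r_i$, where one must identify the correct overlap interval between $P_1$ and the portion of the span in that rim branch.
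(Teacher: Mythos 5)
Your strategy of exhibiting, for each position of $x$, an $H$-green cycle containing $P'$ and then invoking Theorem~\ref{th:twoGreenCycles} against the green cycle of $B$ is essentially the paper's own mechanism for most of the claim, and your constructions (1)--(5) are sound. The gap is at $x=v_i$. Your opening dichotomy omits the possibility that $u$ is one of the nodes $v_{i-1},v_i,v_{i+1}$ while the edge of $P$ at $u$ is already $e$; in that situation $u$ is an endpoint of $P'$ and $x=u$. The cases $u=v_{i\pm 1}$ are incidentally rescued by your case (5), but $u=v_i$, giving $x=v_i$, is not: there $P$ is a simple $RR$-path beginning at $v_i$ and does not revisit $v_i$, so your assertion that $x=v_i$ ``would make $P$ a closed walk'' is false. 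That reasoning applies only when the initial segment of $P$ first travels along $s_i$ from $v_i$ to $w$ and $P'$ then returns to $v_i$, which is precisely the case you did not suppose.

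The green-cycle machinery also cannot close this subcase: when $x=v_i$, both ends of $P'$ lie in the closed spoke $\cc{s_i}$, and the natural cycle $P'\cup\cc{w,s_i,v_i}$ has no rim arc to serve as $P_1$, hence is not $H$-green. The paper handles it by a different route: the $H$-bridge of $G$ containing $P'$ would have attachments $v_i$ and $w$, both in $\cc{s_i}$ with $w$ interior to the spoke, contradicting Lemma~\ref{lm:noSpokeOnlyBridge} (a contradiction also available directly from Lemma~\ref{lm:noBridgeTwoInSpoke}). You need some version of that argument to finish the $x=v_i$ case.
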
}

\begin{proof} \major{Suppose first that $u$ is an end of $P'$.  The choice of $e$ implies $P'=\cc{u,P,w}$ is just the edge $e$.    If $u$ is an end of $s_i$, then $e$ is an $H$-bridge having all its attachments in $s_i$, contradicting Lemma \ref{lm:noSpokeOnlyBridge}.  If $u$ is not an end of $s_i$, then there is an $H$-green cycle that contains an edge $f$ of $R$ incident with $u$.  But then $f$ is in two $H$-green cycles, contradicting Theorem \ref{th:twoGreenCycles}.  \dragominor{Thus, $u$ is not an end of $P'$.}}

\major{\dragominor{If $x\in \oo{s_{i-1}\,r_{i-1}\,r_i\,s_{i+1}}-u$, then $u=v_i$ and $P'=\cc{w,P,x}$ is} contained in either $\cl(Q_{i-1})-r_{i+4}$ or $\cl(Q_i)-r_{i+5}$.  In this case, we again have the contradiction that some edge of $R$ incident with $u$ is in two $H$-green cycles.}
\end{proof}

\major{\dragominor{Claim \ref{cl:endNotBarQi} implies $u=v_i$ and} $\cc{u,P,w}\subseteq s_i$.   Moreover, $x$ is in $Q_{i-1}\cup Q_i$ and either $P'\subseteq \cl(Q_{i-1})$ or $P'\subseteq \cl(Q_i)$.   The next claim eliminates another possibility for $x$.}

\major{\begin{claim}\label{cl:xNotInSi}  $x\notin \oo{s_i}$.  \end{claim}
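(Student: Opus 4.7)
The plan is to derive an immediate contradiction with Lemma \ref{lm:noBridgeTwoInSpoke}. Recall the setup established in the proof of Lemma \ref{lm:RRpathWithGlobal}: we have traversed $P$ from $u = v_i$ until the first edge $e$ of $P$ not lying in $H$; the $H$-bridge piece $P'$ of $P$ containing $e$ is an $H$-avoiding path with ends $w$ and $x$, where $w \in \oo{s_i}$ (by the choice of $e$ as the first off-$H$ edge encountered after leaving $s_i$) and $P' \subseteq \cl(Q_{i-1}) \cup \cl(Q_i)$. Since $P'$ contains the edge $e \notin H$, we have $w \neq x$.

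Suppose for contradiction that $x \in \oo{s_i}$, and let $B'$ be the $H$-bridge of $G$ containing $P'$ (equivalently, containing $e$). Then $B'$ has both $w$ and $x$ as attachments, with $w$ and $x$ distinct vertices of $\cc{s_i}$, and $w \in \oo{s_i}$. This directly violates Lemma \ref{lm:noBridgeTwoInSpoke}, which forbids an $H$-bridge from having an attachment in $\oo{s_i}$ together with any other attachment in $\cc{s_i}$. Hence $x \notin \oo{s_i}$, as required.

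No real obstacle is expected here: the lemma on attachments of $H$-bridges in spokes was tailor-made for exactly this situation, and the only subtlety is to record that the path $P'$ is genuinely non-trivial so that $w$ and $x$ are distinct attachments, which is immediate from the presence of the non-$H$ edge $e$ in $P'$.
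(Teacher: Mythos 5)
Your proof is correct and is essentially the same as the paper's, but invokes a different (and more efficient) prior lemma. The paper argues by hand: it notes that $B'$ is local, applies Corollary \ref{co:attsMissBranch} to conclude $\att(B')=\{w,x\}\subseteq \oo{s_i}$, and then contradicts Lemma \ref{lm:noSpokeOnlyBridge}. You skip this two-step derivation and cite Lemma \ref{lm:noBridgeTwoInSpoke} directly — a stronger lemma (proved earlier from exactly the ingredients the paper reuses here) whose hypotheses are immediately verified: $w\in\oo{s_i}$, $x\in\oo{s_i}\subseteq\cc{s_i}$, and $w\neq x$ because $P'$ contains the non-$H$ edge $e$. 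Both routes are valid and available at this point in the paper; yours is the cleaner citation, while the paper's rederivation is slightly redundant given that Lemma \ref{lm:noBridgeTwoInSpoke} already packages the argument.
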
}

\begin{proof}  \major{Suppose by way of contradiction that $x\in \oo{s_i}$.  Let $B'$ be the $H$-bridge containing $P'$.  Observe that $B'$ is $H$-local and that $w$ and $x$ are both attachments of $B'$ in $\oo{s_i}$.  Corollary \ref{co:attsMissBranch} implies that these are the only attachments of $B'$, contradicting Lemma \ref{lm:noSpokeOnlyBridge}. }
\end{proof}

\major{We conclude from Claims \ref{cl:endNotBarQi} and \ref{cl:xNotInSi} that  $x$ is in $r_{i+4}\,r_{i+5}$.  Evidently,  $P$ is in $\cl(Q_{i-1})$ or $\cl(Q_i)$, respectively, as required for the first conclusion. \dragominor{Furthermore,  both cycles in $\Pi[R\cup P]$ that contain $P$ are non-contractible in $\pp$.}} 
\end{cproof}

}\major{\begin{lemma}\label{lm:nonCtrPaths}  Let $G\in \m2$ and let $\hvfg$, with $H$ tidy, witnessed by the embedding $\Pi$.  For $i\in \{0,1,2,3,4\}$ and  $j\in \{i+3,i+4,i+5\}$, let $e\in r_i$ and $f\in r_{j}$ be edges that are not $H$-green.  
 Suppose $P$ is an $RR$-path in $\Mob$ having both ends in the  component $R'$ of $R-\{e,f\}$ containing $r_{i+6}\,r_{i+7}\,r_{i+8}\,r_{i+9}$ and so that the  cycle in $\Pi[R'\cup P]$ is non-contractible. 
Then $$
P\subseteq \bigg(\cl(Q_j)-\co{v_{j},s_{j},v_{j-5}}\bigg)\cup \left(\bigcup_{j-5<k<i}\cl(Q_k)\right)\cup \bigg(\cl(Q_i)-\oc{v_{i+6},s_{i+1},v_{i+1}}\bigg)\,.$$
 \end{lemma}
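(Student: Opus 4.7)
The plan is to decompose $P$ into pieces according to the partition of $\Mob$ by the $H$-quads. Since $P$ is $R$-avoiding at interior vertices and $P\subseteq\Mob$, every maximal $H$-avoiding subpath of $P$ lies in a single local $H$-bridge, which by tidiness (Definition~\ref{df:tidy}) is contained in $\cl(Q_k)$ for some $k$; between these $H$-avoiding arcs, $P$ traverses only spoke segments by Lemma~\ref{lm:greenBasics}(\ref{it:p2p4spoke}) applied to each $H$-subpath of $P$. Hence $P$ admits a natural decomposition $P=P^{(1)}P^{(2)}\cdots P^{(m)}$, where each $P^{(t)}\subseteq\cl(Q_{k_t})$ and consecutive pieces meet at a vertex of a common $H$-spoke $s_{\ell_t}$ separating $Q_{k_t}$ and $Q_{k_{t+1}}$. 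Consecutive indices $k_t,k_{t+1}$ therefore differ by $\pm 1\bmod 5$.

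First I would pin down the starting and ending quads $Q_{k_1},Q_{k_m}$ from the locations of $u_1,u_2\in R'$: each endpoint lies on a rim branch belonging to $R'$, and this rim branch is a side of at most two consecutive $H$-quads; these two quads are the only candidates for $k_1$ and $k_m$. Next I would use the non-contractibility of the cycle in $\Pi[R'\cup P]$ together with $P\subseteq\Mob$ to force the sequence $(k_1,\ldots,k_m)$ to sweep monotonically through the cyclic order of quads the ``long way'' around $\Mob$, passing through exactly those $Q_k$ whose closures appear in the claimed union. Any shortcut---that is, a back-and-forth in the index sequence---would let us replace a subpath of $P$ with a shorter one inside $\Mob$, yielding two $R'\cup P$-cycles both of which are contractible, contradicting the hypothesis.

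The hardest step will be the two endpoint exclusions, namely that $P$ avoids $\co{v_j,s_j,v_{j-5}}$ and $\oc{v_{i+6},s_{i+1},v_{i+1}}$. Suppose toward contradiction that $P$ meets $\oo{s_{i+1}}$ or $v_{i+6}$ via its first piece $P^{(1)}$. Combining the portion of $P$ from $u_1$ to this meeting point with the appropriate subarc of $s_{i+1}$ and the subpath of $r_i$ from $u_1$ to $v_{i+1}$ (which together with $e$ make up a subpath of $R$ containing $e$) yields an $RR$-configuration $P_1P_2P_3P_4$ of Definition~\ref{df:green} witnessing that $e$ is $H$-green, contradicting the hypothesis on $e$. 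Here $P_1\subseteq r_i$ contains $e$, $P_2$ is either trivial or a subpath of $s_{i+1}$ meeting $R$ at $v_{i+1}$, $P_3$ is the $H$-avoiding portion coming from $P$, and $P_4$ is trivial or lies in $s_i$. A symmetric argument at the other end, using that $f$ is not $H$-green, excludes $\oo{s_j}$ and the vertex $v_j$ (noting $v_j\notin R'$ already prevents $v_j$ from being an endpoint of $P$).

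Together with the monotone quad-sweep established in the second paragraph, these two exclusions confine $P$ to the union claimed. The principal obstacle is the careful case analysis needed at the endpoints: one must verify that the described $H$-green cycle is genuinely available regardless of whether $u_1$ is an $H$-node or an interior vertex of $r_i$, and whether $P^{(1)}$ is $H$-avoiding or begins with a spoke segment; the tidiness hypothesis (no two overlapping $Q_i$-local bridges) and Lemma~\ref{lm:RRpathWithGlobal} will be used to rule out the delicate configurations in which the hypothesized $H$-green cycle would be obstructed by a global $H$-bridge.
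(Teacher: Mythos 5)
Your endpoint-exclusion step (paragraph 3) has a real gap. You construct an $H$-green cycle whose $P_1$ is ``the subpath of $r_i$ from $u_1$ to $v_{i+1}$,'' which silently assumes $u_1\in r_i$. But the end $u_1$ can lie anywhere in $R'$; in particular it could lie in $\oo{r_{i+5}}$ (the opposite rim branch of $Q_i$), in which case the first piece $P^{(1)}$ still sits in $\cl(Q_i)$ and could, a priori, reach $\oo{s_{i+1}}$, yet the cycle you build would contain a subpath of $r_{i+5}$, not $e$, and thus yields no contradiction with $e$ not being $H$-green. The paper handles exactly this delicacy by conditioning Claim~\ref{cl:PmissSpoke} on ``$v_{i+1}$ and $u$ lie on the same $ab$-subpath of $R$'' and then showing separately that this can always be arranged; your proof has no substitute for that conditioning.

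More fundamentally, you never convert the non-contractibility hypothesis into a usable constraint. Your ``monotone sweep'' step says that a back-and-forth in the quad-index sequence would let you ``replace a subpath of $P$ with a shorter one,'' yielding two cycles in $R'\cup P$ that are contractible --- but $R'\cup P$ contains only one cycle, the replacement is not described, and no argument is offered that the replaced path is still in $G$ (we need a subgraph, not just a curve in $\Mob$). The paper's actual mechanism is the curve $\gamma$: since $P$ is internally $R$-avoiding and $\Pi[G]\cap\gamma=\{a,b\}$, the interior of $P$ misses $\gamma$, so the non-contractibility of the cycle in $\Pi[R'\cup P]$ forces the two ends $u,w$ of $P$ to lie on different $ab$-subpaths of $R$; the containments then come from analysing $\Mob\setminus(\gamma\cup s_j)$. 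Without $\gamma$ and the $ab$-subpath structure, I don't see how to make your sweep argument rigorous, and the endpoint exclusions remain incomplete.
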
}\printFullDetails{

\begin{cproof} \major{Choose the labelling $u$ and $w$ of the ends of $P$ so that $u$ is nearer in $R'$ to the end incident with $f$ than $w$ is.}

\major{Let $\gamma$ be a non-contractible curve meeting $\Pi[G]$ in just the two points $a$ and $b$; we note that $u$ and $w$ are on different $ab$-subpaths of $R$ (allowing $a$ or $b$ to be an end of $P$).  We may choose the labelling of $a$ and $b$ so that $a\in R'$, and if both $a$ and $b$ are in $R'$, then $a$ is closer to the end of $R'$ incident with $f$ than $b$ is.
}

\major{\begin{claim}\label{cl:PmissSpoke} \begin{enumerate}
\item\label{it:vjw} If $v_{j}$ and $w$ are on the same $ab$-subpath of $R$, then $P\cap \oo{s_{j}}$ is empty.   
\item\label{it:vi+1u}  If $v_{i+1}$ and $u$ are in the same $ab$-subpath of $R$, then $P\cap \oo{s_{i+1}}$ is empty.\end{enumerate}
\end{claim}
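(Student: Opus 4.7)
The plan is to prove both parts by contradiction; since they are symmetric under the interchange $(e,u,v_{i+1},s_{i+1})\leftrightarrow (f,w,v_j,s_j)$, I focus on (1). Assume for contradiction that $v_j$ and $w$ lie on a common $ab$-subpath of $R$ and that $P\cap\oo{s_j}\neq\varnothing$. Since $P$'s interior is $R$-avoiding and $s_j\cap R=\{v_j,v_{j+5}\}$, any vertex of $P$ in $\oo{s_j}$ is internal to $s_j$. Let $x$ be the first such vertex as one traverses $P$ from $u$; then $\cc{u,P,x}\cap s_j=\{x\}$, and $P^*=\cc{u,P,x}\cup\cc{x,s_j,v_j}$ is an $RR$-path from $u$ to $v_j$ lying in $\Mob$.

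The key step is a parity argument. Since $\gamma\cap\Pi[G]=\{a,b\}$ and any $RR$-path in $\Mob$ has its interior disjoint from $R$, a simple closed curve in $\Pi[G]$ is non-contractible in $\pp$ iff it passes through exactly one of $a$ and $b$. Applying this to $C=P\cup\cc{u,R',w}$ (non-contractible by hypothesis), we see that $u$ and $w$ lie on \emph{opposite} $ab$-subpaths of $R$. Combined with the hypothesis that $v_j$ and $w$ share an $ab$-subpath, $u$ and $v_j$ lie on opposite $ab$-subpaths. Hence \emph{every} arc of $R$ from $u$ to $v_j$ contains exactly one of $\{a,b\}$, so the cycle $C^*=P^*\cup A$ is non-contractible for either choice of $R$-arc $A$ joining $u$ to $v_j$.

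Now choose $A$ to be the arc of $R$ from $u$ through $f$ to $v_j$. Since $u$ lies on the $f$-end of $R'$, this arc is contained in at most three consecutive $H$-rim branches and therefore qualifies (possibly via the exceptional case) as the $P_1$-piece of an $H$-green cycle in the sense of Definition~\ref{df:green}. The cycle $C^*$ then decomposes as $P_1=A$, $P_4=\cc{x,s_j,v_j}\subseteq s_j\subseteq H$, together with $\cc{u,P,x}$ split into a (possibly trivial) initial spoke segment $P_2$ at $u$ and an $H$-avoiding middle segment $P_3$. This would exhibit $C^*$ as an $H$-green cycle containing $f$, contradicting the hypothesis that $f$ is not $H$-green.

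The main obstacle is verifying that $\cc{u,P,x}$ really has this clean ``spoke-then-$H$-avoiding'' form -- that is, that $\cc{u,P,x}$ does not re-enter $H$ via another spoke $s_\ell$, which would break the structure of $P_3$. Here the tidiness of $H$ is essential: every $H$-bridge in $\Mob$ is local to a single $H$-quad (Definition~\ref{df:tidy}), so the possible entries of $\cc{u,P,x}$ into $H$ are highly constrained, and Lemma~\ref{lm:RRpathWithGlobal} further restricts the behaviour of any $RR$-subpath whose end lies in the span of a global $H$-bridge. When $u$ is far from $f$ in edge-distance within $R'$ (so the naïve arc $A$ through $f$ would have too many $H$-nodes), one instead selects $x$ to be the last vertex of $P\cap\oo{s_j}$ traversed from $w$ and runs the mirror argument to produce an $H$-green cycle through $e$; the non-contractibility of $C$ ensures one of the two sides always works. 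Boundary cases where $u$ or $v_j$ coincides with $a$ or $b$ need only mild adjustments to the parity bookkeeping. Part (2) follows by the completely symmetric argument with $(e,u,v_{i+1},s_{i+1})$ replacing $(f,w,v_j,s_j)$.
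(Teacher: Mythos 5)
The proposal takes a genuinely different route, but there are real gaps in it that the proposed fixes do not close.

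You traverse $P$ from $u$ (the $f$-near end), build the $RR$-path $P^*=\cc{u,P,x}\cup\cc{x,s_j,v_j}$, and close with the arc $A$ from $u$ through $f$ to $v_j$, hoping to exhibit the non-contractible cycle $C^*=P^*\cup A$ as $H$-green.  For that to follow the definition you need \emph{both} (a) that $A$ has at most three $H$-nodes (or is exceptional), and (b) that $\cc{u,P,x}$ decomposes as at most one initial spoke segment at $u$ followed by an $H$-avoiding path. Neither is established.  For (a), ``$u$ nearer to the $f$-end of $R'$ than $w$'' does not place $u$ near $f$; $u$ can sit in $r_{i+8}$ or $r_{i+9}$, giving $A$ four or more $H$-nodes.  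For (b), $\cc{u,P,x}$ is merely $R$-avoiding and can meet the interiors of several spokes, so it need not split as $P_2P_3$; a green cycle allows spoke segments only at the \emph{two} ends of its $R$-avoiding arc, and one of those is already taken by $\cc{x,s_j,v_j}$.  You flag both issues, but the appeals to tidiness and to Lemma~\ref{lm:RRpathWithGlobal} do not resolve them --- the latter requires an end of the $RR$-path to lie in the interior of the span of a global $H$-bridge, which is excluded here precisely because $e$ and $f$ are not $H$-green --- and the ``mirror argument'' (re-anchor at $w$ and reach for a green cycle through $e$) is left as an assertion; the arc from $w$ through $e$ to $v_{j+5}$ is not short either.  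Note also that an $H$-green cycle in an $H$-friendly embedding is always contractible, so your non-contractibility computation, while correct, is working against you: it already tells you that $C^*$ cannot actually be $H$-green, which is further evidence the decomposition will not work as stated.

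The paper's proof sidesteps both issues by traversing $P$ from $w$, the end that (under the claim's hypothesis) shares an $ab$-subpath with $v_j$ and $f$.  The prefix $P'$ up to the first vertex $x\in\oo{s_j}$ then lies in a single component $\Mob'$ of $\Mob\setminus(\gamma\cup s_j)$, the one also containing $f$.  If $w\in r_j$, $P'$ closes up with $\cc{w,r_j,v_j,s_j,x}$ inside the disc of $Q_j$; otherwise $P'$ must cross $s_{j+1}$, and the final $\oo{s_{j+1}}\oo{s_j}$-subpath of $P'$ is trapped inside the disc bounded by $Q_j$, hence $H$-avoiding, and combines with $r_j$ and spoke segments into an $H$-green cycle containing $f$.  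That construction makes (a) automatic (the $R$-piece is a subpath of a single rim branch, $r_j$) and (b) automatic (the relevant subpath of $P$ is confined to one $H$-quad).  To salvage your route you would need an analogous localization lemma for the $u$-prefix, and the non-contractibility of $C^*$ is precisely the obstruction to obtaining one.
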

}

\begin{proof} \major{The statements are symmetric, so it suffices to prove the first.    Suppose to the contrary that $P\cap \oo{s_{j}}$ is not empty.  As we traverse $P$ from $w$ (which is not incident with $s_j$), let $x$ be the first vertex in $\oo{s_{j}}$ and let $P'$ denote the $wx$-subpath of $P$.  Evidently, $P'$ is contained in one component $\Mob'$ of $\Mob\setminus (\gamma\cup s_{j}$).  On the other hand, $f$ is between $v_{j}$ and $w$, and so $f$ is in $\Mob'$.  If $w\in r_j$, then $P'$ and $f$ are in an $H$-green cycle, a contradiction. }

\major{Otherwise, $v_{j+1}\in \Mob'$ and $P'$ intersects $\oo{s_{j+1}}$.  In this case, some $\oo{s_{j+1}}\oo{s_j}$-subpath of $P'$ is in an $H$-green cycle with $f$, also a contradiction.}
\end{proof}

\major{If both $v_{i+1}$ and $w$ are in the same $ab$-subpath of $R$ and both $v_{i+4}$ and $u$ are in the same $ab$-subpath of $R$, then Claim \ref{cl:PmissSpoke} implies $P$ is trapped between $s_j$ and $s_{i+1}$, as required.  By symmetry,  we may assume that $v_{i+1}$ is not in the same $ab$-subpath of $R$ as $w$. \dragominor{Let $R_w$ denote the $ab$-subpath of $R$ containing $w$ and let $R_{i+1}$ denote the other $ab$-subpath of $R$, so $v_{i+1}\in R_{i+1}$.}}

\major{This implies that $v_{i+1}$, $v_{i+2}$, \dots, $v_{j}$ are all in $R_{i+1}$.  We noted above that $u\notin R_w$, so  $u$ is also in $R_{i+1}$.  From Claim \ref{cl:PmissSpoke} (\ref{it:vjw}), we conclude that $P$ is disjoint from $\oo{s_{j}}$.  Thus, $P$ is contained in the component of $\Mob-s_{j}$ disjoint from $v_{i+2}$.  }

\major{It follows from the fact that all the $H$-spokes are in $\Mob$ that $v_{j-5}$ is on the same $ab$-subpath as $w$.  This combines with the fact that $v_{i+1}$ is not in that $ab$-subpath and the fact that $P$ meets $\gamma$ at most in $a$ to tell us that $$P\subseteq \bigg(\cl(Q_{j})-\oo{s_{i-1}}\bigg)\cup\left(\bigcup_{j-5<k<i}\cl(Q_k)\right)\cup \bigg(\cl(Q_i)-\oo{s_{i+1}}\bigg)\,,$$  as required.}

\major{The additional fact that $P$ cannot include $v_j$ and $v_{i+1}$ follows from the knowledge that these vertices are not in $R'$.} \end{cproof}

\dragominor{In a similar vein, we have the following.}

}\major{\begin{lemma}\label{lm:shortSideOfSeparator}  \dragominor{Let $G\in \m2$ and let $\hvfg$, with $H$ tidy as witnessed by the embedding $\Pi$.   Suppose $e\in r_{i}$, $f\in r_{i+3}\,r_{i+4}$ and $P$ is an $RR$-path with both ends in the component of $R-\{e,f\}$ containing $r_{i+1}\,r_{i+2}$.  If $e$ is not $H$-green, then both cycles in $\Pi[R\cup P]$ containing $P$ are contractible. }\end{lemma}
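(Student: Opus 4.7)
I argue by contradiction.  Suppose some cycle in $\Pi[R\cup P]$ containing $P$ is non-contractible.  The two cycles in $R\cup P$ that contain $P$ have symmetric difference equal to $R$, which is contractible in $\pp$.  Since $H_1(\pp;\mathbb{Z}_2)\cong\mathbb{Z}_2$, both cycles represent the same homology class, so both are non-contractible.  Let $u,w$ be the ends of $P$, let $R_s$ be the component of $R-\{e,f\}$ containing $r_{i+1}\,r_{i+2}$, and let $P^s_R$ be the $uw$-subpath of $R$ lying inside $R_s$.  I focus on $C_s:=P\cup P^s_R$, which is non-contractible.

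Because the interior of $P$ lies in $\Mob\setminus R$ and $\gamma\cap G=\{a,b\}$, we have $C_s\cap\gamma\subseteq\{a,b\}$.  As $\gamma$ is a non-contractible simple closed curve in $\pp$ and $C_s$ is non-contractible, $\gamma$ must meet $C_s$ in an odd number of points; hence exactly one of $a,b$ lies on $P^s_R\subseteq R_s$.  Since $R_s\subseteq r_i\cup r_{i+1}\cup r_{i+2}\cup r_{i+3}\cup r_{i+4}$, this locates that point of $\{a,b\}$ in one of these five rim branches and fixes the standard labeling near $R_s$ (in particular, constraining whether $H$ has an exposed spoke and where).

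With this structural information in hand, I analyze how $P$ threads through $\Mob$.  Each of the spokes $s_0,s_1,s_2,s_3,s_4$ combines with an appropriate subpath of $R$ to form a non-contractible reference cycle, which must meet $C_s$ in odd parity.  Because $P^s_R\subseteq R_s$ cannot meet endpoints in $R_\ell$, this forces $P$ itself to cross at least one of the spokes $s_{i+1},s_{i+2},s_{i+3},s_{i+4}$ or to pass through a local $H$-bridge attached to it.  Appealing to the tidy structure of $H$ (in particular Lemmas \ref{lm:noSpokeOnlyBridge} and \ref{lm:noBridgeTwoInSpoke}, the constrained form of global $H$-bridges from Theorem \ref{th:globalBridges}, and the absence of $3$-jumps from Theorem \ref{th:no3jump}), I extract from $P$ a subpath $P_3$ that is $H$-avoiding and, together with (possibly trivial) subpaths $P_2,P_4$ contained in $H$-spokes, yields a decomposition fitting Definition \ref{df:green}.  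Concatenating this with a short $R$-subpath containing $e$ produces an $H$-green cycle witnessing $e$ as $H$-green, contradicting the hypothesis.

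The main obstacle will be step three: one must carefully follow how $P$ alternates between spoke fragments and local-bridge fragments inside $\Mob$, and verify that a minimality choice on the $RR$-path witnessing non-contractibility (for instance, taking the shortest such path inside the given non-contractible cycle) forces a clean $P_2P_3P_4$ decomposition in the sense of Definition \ref{df:green}.  The pinned location of the point of $\{a,b\}$ within $R_s$ then keeps the rim branch $r_i$ containing $e$ adjacent enough to the extracted structure to splice $e$ into the resulting $H$-green cycle, which is exactly where the hypothesis ``$e$ not $H$-green'' is used to close the contradiction.
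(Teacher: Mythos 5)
Your plan does not follow the paper's route, and it has a real gap at the point you yourself flag as the ``main obstacle''.  The paper argues for contractibility \emph{directly}: letting $C$ be the cycle in $R'\cup P$ (where $R'$ is the short component of $R-\{e,f\}$), it first shows that a subpath of $P$ of a very specific kind---an $\bigl(\cc{u_e,r_i,v_{i+1}}\cup s_{i+1}\bigr)s_i$-path contained in $\cl(Q_i)$---would immediately yield an $H$-green cycle through $e$; having ruled that out, it constructs an arc inside the $\Pi[Q_i]$-face from $\co{v_i,r_i,u_e}$ to $r_{i+5}$ that is disjoint from $C$.  That arc is a ``core-crossing'' arc of $\Mob$, and its existence forces $C$ onto one side of a cut of $\Mob$, hence contractible.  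The ``$e$ not green'' hypothesis is used only to kill one specific subcase; the remaining work is a positive geometric construction, not a contradiction.

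By contrast, you want every non-contractible $C_s$ to produce an $H$-green cycle containing $e$.  The gap is that the $H$-avoiding piece $P_3$ you hope to extract need have nothing to do with $r_i$: the hypotheses allow $P$ to have both ends in, say, $\oo{r_{i+2}}$, and then the green cycle you would splice together (and the short $R$-subpath $P_1$ that goes with it) lives in $\cl(Q_{i+1})$ or $\cl(Q_{i+2})$, not through $e$.  Nothing in your steps pins the extracted structure to $\cl(Q_i)$ near $e$.  What is actually true---and what your sketch does not establish---is that non-contractibility of $C_s$ forces $P$ to traverse $\cl(Q_i)$ from $s_i$ to the $s_{i+1}$-side; that is precisely the content the paper gets for free out of the failed arc construction, and it is the only way ``$e$ not green'' ever enters.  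Two smaller issues: the parity claim ``exactly one of $a,b$ lies on $P^s_R$'' needs a transversality argument at $a$ and $b$ (touching versus crossing) that you don't give, and the case $P\subseteq\Disc$ should be dispatched first (then $C_s$ sits in a disc and is trivially contractible) before you may assume $P\subseteq\Mob$.
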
}\printFullDetails{

\begin{cproof}\dragominor{ Let $R'$ be the component of $R-\{e,f\}$ containing $r_{i+1}\,r_{i+2}$ and let $C$ be the cycle in $R\cup P$ that contains $P$ and is contained in $R'\cup P$.   Since $R$ is contractible, the other cycle in $R\cup P$ containing $P$ is homotopic to $C$; thus, it suffices to show $C$ is contractible.}

\dragominor{Let $u_e$ be the end of $R'$ incident with $e$.  Suppose there is
a  $(\cc{u_e,r_i,v_{i+1}}s_{i+1})s_i$-path $P'$ in $P$ contained in $\cl(Q_i)$.   Since $C$ is disjoint from $r_{i+1}$, $P'$ is contained in an $H$-green cycle containing $e$, a contradiction.}

\dragominor{Thus, there is no $(\cc{u_e,r_i,v_{i+1}}s_{i+1})s_i$-path in $P$ contained in $\cl(Q_i)$.  Since $C$ is disjoint from $r_{i+5}$, there is an arc in the disc bounded by $\Pi[Q_i]$ joining a point of $\co{v_i,r_i,u_e}$ to $r_{i+5}$ that is disjoint from $C$; this shows that $C$ is contractible, as required.}\end{cproof}

Our next lemma takes us one step closer to the useful description of $R$-separation.

}\major{\begin{lemma}\label{lm:shapeOfV8separator} Let $G\in \m2$ and let $\hvfg$, with $H$ tidy.   Suppose $e\in r_{i}$ and $f\in r_{i+3}\,r_{i+4}$ are $R$-separated as witnessed by the subdivision $H'$ of $V_8$.  If $e$ is not $H$-green, then the component of $R-\{e,f\}$ containing both ends of some $H'$-spoke is the one containing $r_{i+5}\,r_{i+6}\,r_{i+7}\,r_{i+8}\,r_{i+9}$.
\end{lemma}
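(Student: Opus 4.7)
The plan is to argue by contradiction. Let $R_L$ denote the component of $R - \{e, f\}$ containing $r_{i+5}\,r_{i+6}\,r_{i+7}\,r_{i+8}\,r_{i+9}$, and let $R_S$ denote the other component (which contains $r_{i+1}\,r_{i+2}$). By Observation~\ref{obs:separated}(\ref{it:twoSpokes}) applied to the given subdivision $H'$ of $V_8$, there exist two $H'$-spokes $P_1$ and $P_2$ whose four endpoints all lie in a single component of $R - \{e, f\}$. The aim is to show that this common component is $R_L$; I suppose for contradiction that instead all four endpoints lie in $R_S$.

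Under this assumption, each of $P_1$ and $P_2$ is an $RR$-path with both ends in the component of $R - \{e, f\}$ containing $r_{i+1}\,r_{i+2}$. Since $e$ is not $H$-green by hypothesis, Lemma~\ref{lm:shortSideOfSeparator} then applies to both $P_1$ and $P_2$, yielding that every cycle in $\Pi[R \cup P_j]$ containing $P_j$ is contractible in $\pp$, for $j = 1, 2$.

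Next I would exploit the abstract structure of $V_8$: each spoke, together with either of the two four-edge arcs of the $V_8$-rim joining its endpoints, forms a $5$-cycle. Translated via the subdivision, such a cycle is of the form $P_j \cup A$, where $A$ is a subpath of $R$ composed of four $H'$-rim branches. Because $V_8$ is 3-connected and non-planar, any projective-planar embedding of $V_8$ (and hence of $H'$, via the inherited embedding from $\Pi$) is cellular and is unique up to homeomorphism; in the unique embedding the rim bounds the single $8$-gon face and the remaining four faces are the $4$-gons formed by pairs of consecutive spokes with their connecting rim edges. A brief $\mathbb{Z}/2$-homological check---attempting to express the $5$-cycle as a sum of facial cycles and observing that the resulting linear system on face coefficients forces incompatible values on adjacent rim edges (an equation of the form $a_k + b = 1$ on one rim edge and $a_k + b = 0$ on another)---will show that each such spoke-plus-four-rim $5$-cycle is non-zero in $H_1(\pp;\mathbb{Z}/2)$, hence non-contractible as a curve in $\pp$. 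Applied to the cycles $P_j \cup A$ obtained above, this contradicts the contractibility supplied by Lemma~\ref{lm:shortSideOfSeparator}.

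The hard part is the topological step in the third paragraph. I need to justify both the uniqueness of cellular $\pp$-embeddings of 3-connected non-planar graphs---so that the induced embedding of $H'$ must coincide with the standard $\pp$-embedding of $V_8$, and not some non-cellular pathology, which in any case is ruled out because a non-disc face would exhibit a planar embedding of $H'$ contradicting the non-planarity of $V_8$---together with the direct $\mathbb{Z}/2$-homological computation verifying non-contractibility of the spoke-plus-four-rim $5$-cycles. Once these are in hand, the contradiction arises immediately, and the lemma follows.
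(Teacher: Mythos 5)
Your overall strategy is close to the paper's: apply Observation~\ref{obs:separated}~(\ref{it:twoSpokes}) to get two $H'$-spokes $P_1,P_2$ whose four ends lie in a single component of $R-\{e,f\}$, invoke Lemma~\ref{lm:shortSideOfSeparator} to conclude contractibility of the cycles in $R\cup P_j$, and derive a contradiction from the topology of $H'\cong V_8$ in $\pp$. However, the topological step has a genuine gap. The uniqueness claim you invoke --- that the cellular $\pp$-embedding of $V_8$ is unique, with the rim bounding an octagonal face and the four quads being the remaining faces --- is false. The paper itself records that, for $n\ge 4$, $V_{2n}$ has \emph{two} distinct representativity-2 embeddings in $\pp$, differing in whether one spoke is \emph{exposed} (drawn in $\Disc$). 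These embeddings have different face-degree multisets ($\{4,4,4,4,8\}$ versus $\{5,5,4,4,6\}$ for $V_8$), so they are genuinely non-homeomorphic, and a priori the embedding $\Pi$ inherits on $H'$ could be either one. In the exposed-spoke embedding, the spoke $s_0\subseteq\Disc$ splits $\Disc$ into two discs, and the $5$-cycle $s_0\cup r_0r_1r_2r_3$ bounds one of them; it is \emph{contractible}, and your $\mathbb{Z}/2$ face-coefficient system is consistent for that cycle. So your homological argument does not produce a contradiction when one of $P_1,P_2$ is the exposed spoke.

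The repair is precisely the observation the paper makes: at most one $H'$-spoke can be embedded in $\Disc$ (any two would interlace on $R$ and hence cross), so at least one of $P_1,P_2$ is contained in $\Mob$. For that spoke $s\subseteq\Mob$, the two cycles in $R\cup s$ are non-contractible --- this needs only the elementary fact that an $H'$-spoke cannot be boundary-parallel in $\Mob$, because its two ends are $H'$-nodes that are not joined by an $H'$-rim branch, so there are other $H'$-spoke ends on both sides of it on $R$. Applying Lemma~\ref{lm:shortSideOfSeparator} to that single spoke, directly (not by contradiction), then forces its ends into the component containing $r_{i+5}\cdots r_{i+9}$, which is the lemma's conclusion. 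Your $\mathbb{Z}/2$-homology computation is correct for the unexposed embedding, but it is heavier machinery than needed; the ``interlacing chords cannot both sit in $\Disc$'' observation does the whole job without computing in $H_1$.
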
}\printFullDetails{

\begin{cproof} \dragominor{Let $\Pi$ be an embedding of $G$ in $\pp$ so that $H$ is $\Pi$-tidy.}  \major{\dragominor{Recall that $R$ is also the $H'$-rim.  Observation \ref{obs:separated} (\ref{it:twoSpokes}) shows that two of the four $H'$ spokes have all their ends in the same component of $R-\{e,f\}$.  Of the four $H'$-spokes, at most one can be in $\disc$.  Thus, of the two that have both ends in the same component $R'$ of $R-\{e,f\}$, there is at least one, call it $s$, that is in $\Mob$.}}

\dragominor{  In particular, the two cycles in $R\cup s$ containing $s$ are non-contractible.  Now Lemma \ref{lm:shortSideOfSeparator} shows the two ends of the $RR$-path $s$ are not in the component of $R-\{e,f\}$ containing $r_{i+1}\,r_{i+2}$ and so must be in the component containing $r_{i+5}\,r_{i+6}\,\dots\,r_{i+9}$, as claimed.
}\end{cproof}

\wording{\dragominor{Our next lemma in the series gives a quite refined description of $R$-separation.}}

}\major{\begin{lemma}\label{lm:refinedRseparation}  Let $G\in \m2$ and let ${\hvfg}$, with $H$ tidy.     Let $e\in r_i$ and $f\in r_{i+4}\, r_{i+5}$ be edges that are both not $H$-green.  If $e$ and $f$ are $R$-separated in $G$, then there is a witnessing subdivision $H'$ of $V_8$ having $s_{i+2}$ and $s_{i+3}$ as $H'$-spokes and the other two $H'$-spokes are in $\cl(Q_{i-1})\cup \cl(Q_i)$.
 \end{lemma}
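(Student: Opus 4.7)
I plan to begin with any witnessing $V_8$-subdivision $H'_0$ of the $R$-separation of $e$ and $f$ and then refine its spokes in stages. Let $\Pi$ be an embedding under which $H$ is $\Pi$-tidy, and denote by $R^+$ the component of $R-\{e,f\}$ containing $r_{i+6},r_{i+7},r_{i+8},r_{i+9}$, and by $R^-$ the other component. Because $e$ and $f$ lie in disjoint $H'_0$-quads, the $V_8$-structure forces the eight $H'_0$-nodes on $R$ to split as exactly two in $R^-$ and six in $R^+$, and the four $H'_0$-spokes to split as exactly two \emph{crossing} spokes (one endpoint in each component) and exactly two \emph{non-crossing} spokes (both endpoints in $R^+$). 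For Case~A ($f\in r_{i+4}$), Lemma~\ref{lm:shapeOfV8separator} directly supplies at least one non-crossing spoke lying in $\Mob$ whose associated $R$-cycle is non-contractible; for Case~B ($f\in r_{i+5}$), where the hypothesis of Lemma~\ref{lm:shapeOfV8separator} is not met, I would re-derive the analogous conclusion by applying Lemma~\ref{lm:shortSideOfSeparator} once from the $e$-side and, symmetrically, once from the $f$-side, both available because $e$ and $f$ are both non-green.

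Next I would show that each non-crossing spoke $P_j$ ($j=1,2$) lies in $\cl(Q_{i-1})\cup\cl(Q_i)$. If some endpoint of $P_j$ is in the interior of the span of a global $H$-bridge, then Lemma~\ref{lm:RRpathWithGlobal} places $P_j$ in some $\cl(Q)$; the fact that both endpoints are in $R^+$ forces $Q\in\{Q_{i-1},Q_i\}$. Otherwise $P_j\subseteq\Mob$ and the cycle in $R\cup P_j$ through $P_j$ is non-contractible, so Lemma~\ref{lm:nonCtrPaths}, applied with index $i+4$ in Case~A and $i+5$ in Case~B (in each case the middle union of its conclusion is empty, and the boundary terms reduce to $\cl(Q_{i-1})\cup\cl(Q_i)$ or to $\cl(Q_i)$, respectively), yields the desired containment. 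Should $P_j$ happen to be a global $H$-bridge in $\Disc$---a possibility that Theorems~\ref{th:globalBridges} and~\ref{th:no3jump} restrict to a small enumerable list of $2$-jumps with both ends among $v_i,v_{i+5},v_{i+6},\dots,v_{i+9}$---I would replace it by $s_i$, which is itself an $RR$-path in $\cl(Q_{i-1})\cap\cl(Q_i)$ with both endpoints in $R^+$, and verify that the replacement preserves the $V_8$-structure.

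Finally, define $H':=R\cup s_{i+2}\cup s_{i+3}\cup P_1\cup P_2$ and check that $H'$ is a $V_8$-subdivision with $R$ as its rim and $e,f$ in disjoint $H'$-quads. The eight $H'$-nodes consist of $v_{i+2},v_{i+3},v_{i+7},v_{i+8}$ together with the four endpoints of $P_1,P_2$: two lying in $r_{i+4}\cup r_{i+5}$ and two in $r_{i-1}\cup r_i$, all four in $R^+$. Disjointness of $P_1,P_2$ from each other and from $s_{i+2},s_{i+3}$, combined with their containment in $\cl(Q_{i-1})\cup\cl(Q_i)$, pins down the cyclic order of these eight nodes on $R$ and forces the $V_8$-pairing to be exactly $\{s_{i+2},s_{i+3},P_1,P_2\}$; a direct check then places $e$ and $f$ in opposite, hence disjoint, $H'$-quads. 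The main obstacle will be the Case~B analysis, since Lemma~\ref{lm:shapeOfV8separator} is stated only for $f\in r_{i+3}\cup r_{i+4}$ and its two-non-crossing-spoke conclusion must be reproved for $f\in r_{i+5}$ by symmetric exploitation of non-greenness of both $e$ and $f$; a secondary technical point is the enumeration of global-bridge configurations that could interfere with the final substitution.
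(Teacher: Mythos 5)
Your structural setup (the $2$--$6$ split of $H'_0$-nodes, the two crossing and two non-crossing spokes, the use of Lemmas \ref{lm:RRpathWithGlobal} and \ref{lm:nonCtrPaths}, and the final substitution of $s_{i+2},s_{i+3}$ for the crossing spokes) is essentially the paper's route, and the computation that Lemma \ref{lm:nonCtrPaths} collapses to $\cl(Q_{i-1})\cup\cl(Q_i)$ (resp.\ $\cl(Q_i)$) for $j=i+4$ (resp.\ $j=i+5$) is correct. But there is a genuine gap in the one place where the real work lies: handling a non-crossing spoke $P_j$ that is a global $H$-bridge embedded in $\Disc$. You propose to replace such a $P_j$ by $s_i$, but this cannot work. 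Both ends of the remaining non-crossing spoke $P_{3-j}$ lie in $R^+$, and both of $s_i$'s ends $v_i,v_{i+5}$ are endpoints (the extreme $H$-nodes) of $R^+$, so $v_i$ and $v_{i+5}$ lie on the \emph{same} arc of $R\setminus \{\text{ends of }P_{3-j}\}$; hence $s_i$ does not interlace with $P_{3-j}$, and $R\cup s_{i+2}\cup s_{i+3}\cup s_i\cup P_{3-j}$ is not a subdivision of $V_8$. Worse, in your Case B ($f\in r_{i+5}$), $v_{i+5}\in R^-$, so $s_i$ is not even a non-crossing path. (Also, Theorems \ref{th:globalBridges} and \ref{th:no3jump} allow $2.5$-jumps, so the list is not restricted to $2$-jumps with $H$-node ends.) The paper avoids this entire case by \emph{proving it cannot occur}: if $s\subseteq\Disc$ is a global $H$-bridge, the other three $H_1$-spokes are in $\Mob$, each with exactly one end in the span of $s$, and these are shown to produce an $H$-yellow cycle with more than one bridge, contradicting Lemma \ref{lm:yellowCycles}~(\ref{it:oneBridge}). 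Without some argument ruling out a $\Disc$-spoke, your non-contractibility claim (``Otherwise $P_j\subseteq\Mob$'') is unjustified and the proof does not close.

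A secondary issue: your plan for Case~B of applying Lemma \ref{lm:shortSideOfSeparator} ``once from the $e$-side and once from the $f$-side'' cannot be carried out, since when $e\in r_i$ and $f\in r_{i+5}$ the branches are antipodal, so neither direction satisfies the hypothesis $f\in r_{j+3}\cup r_{j+4}$. This is repairable --- the paper invokes the automorphism swapping $e$ and $f$ to make the conclusion of Lemma \ref{lm:shapeOfV8separator} available by relabelling --- but the tool you cite does not do the job.
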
}\printFullDetails{

\begin{cproof} \major{ Let $\Pi$ be an embedding of $G$ in $\pp$ for which $H$ is $\Pi$-tidy.   Let $H_1$ be a subdivision of $V_8$ witnessing the $R$-separation of $e$ and $f$.  Let $s$  be an $H_1$ spoke having both ends in the same component $R'$ of $R-\{e,f\}$. }

\begin{claim} \major{The cycles in \dragominor{$\Pi[R\cup s]$} containing $s$ are non-contractible.}\end{claim}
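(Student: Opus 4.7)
I would prove this claim by contradiction. Suppose some cycle in $\Pi[R\cup s]$ containing $s$ is contractible; since the two such cycles differ by $R$, which is null-homotopic in $\pp$ (it bounds $\disc$), both are contractible. Let $A$ denote the arc of $R$ between the endpoints of $s$ contained in $R'$, so $C := s \cup A$ bounds a closed disc $\Delta$ in $\pp$. The plan is to derive a contradiction with the fact that $H_1$ witnesses $R$-separation of $e$ and $f$, exploiting the hypothesis that neither $e$ nor $f$ is $H$-green.

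First I would handle embedding location. At most one $H_1$-spoke lies in $\disc$ (being an $RR$-path in $\disc$ would force particular attachment structure), while by Observation \ref{obs:separated} (\ref{it:twoSpokes}) at least two $H_1$-spokes have both endpoints in the same component of $R-\{e,f\}$, so we may assume $s\subseteq \Mob$; then $\Delta\subseteq \Mob$ since $\partial \Delta = s\cup A \subseteq \Mob\cup R$ and any disc in $\pp$ with boundary in $\Mob$ must itself lie in $\Mob$ (using that $R$ bounds $\Mob$ and $\disc$).

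Second, having $\Delta\subseteq \Mob$ with $A\subseteq R'$, I would apply (an analogue of) Lemma \ref{lm:shortSideOfSeparator} to $P = s$. The stated lemma handles $f\in r_{i+3}\,r_{i+4}$ and concludes contractibility from short-side endpoints; I would use the contrapositive/parallel form: contractibility forces the endpoints of $s$ to lie on the short side of $R-\{e,f\}$, i.e., the component containing $r_{i+1}\,r_{i+2}$. For the subcase $f\in r_{i+4}$, Lemma \ref{lm:shortSideOfSeparator} applies directly, and the ends of $s$ would have to lie in the short side. For the subcase $f\in r_{i+5}$, I would run an argument analogous to that of Lemma \ref{lm:shortSideOfSeparator}: find an $r_is_{i+1}s_i$-path inside $\cl(Q_i)$ formed by $s$ and a subpath of $R$, then deduce, using that $e\in r_i$ is not $H$-green, that this configuration completes to an $H$-green cycle containing $e$, a contradiction.

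Finally, with both ends of $s$ in the short-side component, I would conclude as follows. By Observation \ref{obs:separated} (\ref{it:twoSpokes}) there is a second $H_1$-spoke $s^*$ with both ends in the same component of $R-\{e,f\}$; applying the same argument to $s^*$ (or choosing $s$ to be $s^*$ if the configuration demands) would place both pairs of endpoints on the short side. But the $V_8$ labeling requires $H_1$-spokes to join opposite $H_1$-nodes in the cyclic order on $R$, so all four endpoints of these two spokes being confined to the short-side component would violate the opposite-vertex structure of $V_8$ (since fewer than four $H_1$-nodes remain on the long side to receive spoke endpoints in the required alternating pattern), contradicting that $H_1\topol V_8$.

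The main obstacle is step two, specifically covering $f\in r_{i+5}$, which falls outside the range of Lemma \ref{lm:shortSideOfSeparator}. I expect this to require duplicating the lemma's proof strategy in the shifted range, where the key input is again that $e$ is not $H$-green so that the $RR$-path $s$ combined with the appropriate rim arc cannot be absorbed into an $H$-green cycle without contradicting Theorem \ref{th:twoGreenCycles}.
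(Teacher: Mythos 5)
The proposal has a genuine gap at the outset, and it diverges significantly from the paper's approach at every stage.

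First, "we may assume $s\subseteq\Mob$" is not available to you. The claim has to hold for the spoke $s$ that is fixed in the ambient proof of Lemma~\ref{lm:refinedRseparation}, and the ambient proof relies on the claim holding for both $s$ and the second spoke $s'$. You cannot simply select the one that happens to lie in $\Mob$. The possibility that $s$ is a global $H$-bridge (lying in $\Disc$) is precisely what the paper spends the bulk of the claim's proof ruling out: it shows that in this case $s$ is a $2$- or $2.5$-jump (Theorems~\ref{th:globalBridges} and~\ref{th:no3jump}), that the three remaining $H_1$-spokes are then trapped in quad closures (Lemma~\ref{lm:RRpathWithGlobal}), and that this forces an $H$-yellow cycle $C$ with $t_2$ and $s$ on opposite sides of $\Pi[C]$, contradicting Lemma~\ref{lm:yellowCycles}~(\ref{it:oneBridge}). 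Your proposal has no substitute for this argument, and the parenthetical "(being an $RR$-path in $\disc$ would force particular attachment structure)" does not do the work. Incidentally, the auxiliary assertion that a disc in $\pp$ with boundary in $\Mob$ lies in $\Mob$ is false as stated: $\Disc$ itself has boundary $R\subseteq\Mob$.

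Second, your step 2 needs the \emph{converse} of Lemma~\ref{lm:shortSideOfSeparator}, not its contrapositive. The lemma asserts that short-side endpoints (plus $e$ not $H$-green) yield contractibility; the contrapositive is that non-contractibility forces the endpoints off the short side. You want "contractible implies short-side endpoints," which the lemma does not say, and there is no reason to expect it without a fresh argument. Once $s\subseteq\Mob$ is in hand, the paper instead closes with a single topological observation: the three remaining $H_1$-spokes all interlace $s$ on $R$, at most one can be exposed, and a spoke in $\Mob$ that interlaces an inessential $s$ would have to enter the disc bounded by $s$ and an arc of $R$ and then be unable to exit (it is $R$-avoiding and disjoint from $s$). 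That is both simpler and correct, and it renders your steps 2 and 3 unnecessary. Your step 3 node-counting also isn't justified: $H_1$-nodes need not be $H$-nodes and can be distributed quite flexibly along the "short" component, so the "fewer than four remain" claim needs a real argument.
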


\begin{proof} \major{Suppose first by way of contradiction that $\Pi[s]$ in not contained in $\Mob$.  Since $H$ is $\Pi$-tidy, $s$ is a global $H$-bridge.  Theorems \ref{th:globalBridges} and \ref{th:no3jump} show $s$ is either a \dragominor{2- or  a 2.5-jump}.  By hypothesis, it is not possible for both $e$ and $f$ to be in the span of $s$ and, therefore, neither is.  On the other hand, each of the other three $H_1$-spokes has precisely 1 end in the span of $s$, and is contained in $\Mob$.    Let these spokes appear in the order $t_1,t_2,t_3$ in the span of $s$.}

\major{We claim that the $t_i$ imply the existence of an $H$-yellow cycle that does not bound a face of $\Pi[G]$, contradicting Lemma \ref{lm:yellowCycles} (\ref{it:oneBridge}).    Let $P$ be the span of $s$ and, for $i=1,2,3$, let $u_i$ be the end of $t_i$ that is not in $P$.  Because $\Pi[s\cup P]$ bounds a closed disc, \dragominor{both cycles in $\Pi[R\cup t_i]$ containing $t_i$ are non-contractible.} Thus, $t_i$ has an end in each of the $ab$-subpaths of $R$.}

\major{Lemma \ref{lm:RRpathWithGlobal} implies that each $t_i$ is contained in an $H$-quad.  Thus $t_1\cup t_2\cup t_3$ is contained the the union of the closures of the $H$-quads that have an edge in $P$.  In particular, $u_1$, $u_2$, and $u_3$ occur in a 3-rim path $P_1$ having $u_1$ and $u_3$ as ends.  Letting $P_3$ be the minimal subpath of $P$ containing the ends of the $t_i$, we see that $P_1\,t_1\,P_3\,t_3$ is an $H$-yellow cycle $C$.  \dragominor{However, $\Pi[C]$ bounds a face of $\Pi[G]$; the contradiction is that $t_2$ and $s$ are on different sides of $\Pi[C]$. } }

\major{Thus, $s$ is contained in $\Mob$.   Since $s$ is one of four $H_1$-spokes, the two cycles in $\Pi[R\cup s]$ that contain $s$ are non-contractible.}\end{proof}

\major{In particular, $s$ has an end in each of the two $ab$-subpaths of $R$ determined by the standard labelling of $\Pi[G]$.  }

\major{In the case $f\in r_{i+5}$, we may,  if necessary, use the reflective symmetry $j\leftrightarrow 4-j$ (for $0\le j\le 4$), to arrange that the end $s_f$ of $s$ is, in $\Pi[R']$, between the end $u_f$ of $f$ in $R'$ and $a$, say, while the other end $s_e$ of $s$ is between $a$ and the end $u_e$ of $e$.   In particular, $v_{i+1}$, $v_{i+2}$, $v_{i+3}$, and $v_{i+4}$ are not in $R'$.  Lemma \ref{lm:shapeOfV8separator}  shows this always holds when $f\in r_{i+4}$.}

\major{\dragominor{Let $s'$ be the other $H'$-spoke having both ends in $R'$.  The arguments above for $s$ apply equally well to $s'$.  Lemma \ref{lm:nonCtrPaths} shows that $(s\cup s')\subseteq \cl(Q_{i-1})\cup \cl(Q_i)$. In particular, $s$ and $s'$ are disjoint from $s_{i+2}$ and $s_{i+3}$, so these $H$-spokes may replace the two $H_1$-spokes having ends in both components of $R-\{e,f\}$, as required.}}
\end{cproof}

\wording{The final technical lemma of this section will be used in the next.}

}\begin{lemma}\label{lm:noAdjacentRed}  Let $G\in\m2$ and $\hvfg$, with $H$ tidy.  If $e$ and $e'$ are red edges in the same $H$-rim branch, then $\Delta_e$ and $\Delta_{e'}$ are disjoint.  \end{lemma}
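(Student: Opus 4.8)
The plan is to argue by contradiction: suppose $e$ and $e'$ are red edges in the same $H$-rim branch $r_i$, say with $\Delta_e$ and $\Delta_{e'}$ sharing a vertex (or edge). The first step is to use the structure of deltas from Theorem \ref{th:redHasDelta}: each of $\Delta_e$ and $\Delta_{e'}$ is a cycle contained in $\cl(Q_i)$ except for the single rim edge ($e$, respectively $e'$), it has a peak vertex $x_e$ (resp.\ $x_{e'}$) on $r_{i+5}$, and it separates (up to one digon) the "Möbius" $\Delta_e$-bridge $M_{\Delta_e}$ from a planar side. Since $e$ and $e'$ both lie in $r_i$, their spans overlap or nest, and I would first observe that $e$ and $e'$ are not $R$-separated in $G$ from one another whenever the $\Delta$'s meet — more precisely, I plan to show that if $\Delta_e$ and $\Delta_{e'}$ share a vertex then there is a small non-planar configuration inside $K_i$ forcing a contradiction.

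The key technical tool will be Lemma \ref{lm:Qspans} together with Lemma \ref{lm:oneInClosure}: the existence of the separating vertex $x_e$ in $K_i-e$ (and $x_{e'}$ in $K_i-e'$) says precisely that $e$ and $e'$ are red, and the structure of $\Delta_e$, $\Delta_{e'}$ is read off from these cut vertices. So the plan is: take a $1$-drawing $D$ of $G-f$ for a suitably chosen edge $f$ of one of the deltas (e.g.\ the edge of $\Delta_e-e$ incident with an end of $e$, which by Theorem \ref{th:redHasDelta}(\ref{it:deltaDetail}) lies in $\cl(Q_i)$ and is not a rim edge, or is rim-green), apply Theorem \ref{th:BODquads}/Corollary \ref{co:hyperBOD} and Lemma \ref{lm:BODcrossed} to force $Q_i$ (or $\bQ_i$) to be crossed in $D$, then use Lemma \ref{lm:technicalV8colour} and the fact that $e,e'$ are neither $H$-green nor $H$-yellow (Lemma \ref{lm:greenNotRed}) together with Lemmas \ref{lm:staysYellow}, \ref{lm:staysGreen} to pin down exactly which rim edges can be crossed. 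If $\Delta_e$ and $\Delta_{e'}$ share a vertex, the paths $A_u,A_w$ of $\Delta_e$ and $A'_{u'},A'_{w'}$ of $\Delta_{e'}$ inside $\cl(Q_i)$ cannot be routed disjointly in the planar embedding $\Pi[\cl(Q_i)]$ with $Q_i$ bounding a face; this planarity constraint is what yields the contradiction, because two deltas sharing a vertex would force one of $e$, $e'$ to have a second disjoint non-trivial $\leftspine_i\,\rightspine i$-path avoiding it in $K_i$ minus the other delta edge, contradicting redness via Lemma \ref{lm:Qspans}.

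Concretely, I would organize it as: (1) By symmetry assume $e$ lies closer to $v_i$ than $e'$ in $r_i$, write $e=uw$, $e'=u'w'$ with $u,w,u',w'$ appearing in this order along $r_i$. (2) Show the span-paths satisfy $A_w$ (the $wx_e$-path) and $A'_{u'}$ (the $u'x_{e'}$-path) are "between" $e$ and $e'$ and must be vertex-disjoint by a planarity/face-boundary argument in $\Pi[\cl(Q_i)]$, using that $\Delta_e$ and $\Delta_{e'}$ each bound a face of $\Pi[G]$ (the faces $F_e$, $F_{e'}$ from the proof of Theorem \ref{th:redHasDelta}). (3) If instead they are not disjoint, the shared vertex lies on $r_{i+5}$ or in the interior of $\cl(Q_i)$; in the former case $x_e=x_{e'}$, and then $\Delta_e\cup\Delta_{e'}$ together with three spokes among $s_{i+2},s_{i+3},s_{i+4}$ and the portion of $R$ avoiding $r_i$ gives a $V_6$ in $G-e$ that also contains a $V_6$ in $G-e-e'$ only if... — more carefully, $x_e = x_{e'}$ forces $\cl(Q_i)-e$ to contain disjoint non-trivial $\leftspine_i\rightspine i$-paths still present in $\cl(Q_i)-e-e'$ after rerouting through $\Delta_{e'}$, contradicting Lemma \ref{lm:Qspans} applied to $e$. (4) Handle the interior-intersection case similarly, using $3$-connectivity and the peak/digon structure to route around.

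The main obstacle I expect is case (3)–(4): carefully showing that a shared vertex of $\Delta_e$ and $\Delta_{e'}$ produces the forbidden pair of disjoint non-trivial $\leftspine_i\,\rightspine i$-paths in $K_i-e$ (or $K_i-e'$). This requires tracking exactly how the two delta-cycles sit relative to the spokes $s_i,s_{i+1}$ and the spine paths $\topleft P_i,\topright P_i,\botleft P_i,\botright P_i$, and ensuring no global $H$-bridge in $\mathcal B_i$ interferes — here I would lean on Lemma \ref{lm:globalJumps}(\ref{it:spanDiffHyper}) and the facts, established in the proof of Theorem \ref{th:redHasDelta}, that the only rim edges of $\Delta_e-e$ are $H$-green via global bridges and that $e$ is neither $H$-green nor $H$-yellow. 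The rest (the planarity argument inside $\cl(Q_i)$, invoking the stability lemmas in a $1$-drawing of $G$ minus a chosen delta-edge) is routine given the machinery already built.
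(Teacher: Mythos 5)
Your setup (delete the first non-rim edge $f$ of $\Delta_e-e$ at $w$, take a $1$-drawing $D$ of $G-f$, force $Q_i$ to be crossed, and use the colour-stability lemmas) matches the paper's opening moves, but the mechanism you then propose for the contradiction does not work. You claim that if $\Delta_e$ and $\Delta_{e'}$ share a vertex, this ``forces one of $e$, $e'$ to have a second disjoint non-trivial $\leftspine_i\,\rightspine i$-path'' in $K_i$ minus that edge, contradicting redness via Lemma \ref{lm:Qspans}. But $\Delta_{e'}$ is already a subgraph of $K_i$; its existence, and its intersection with $\Delta_e$, adds no new paths to $K_i-e$ and in particular produces nothing that avoids the cut-vertex $x_e$. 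Two facial cycles of a planar graph (here $\Delta_e$ and $\Delta_{e'}$, which bound the faces $F_e$ and $F_{e'}$ inside $Q_i$) can perfectly well share vertices or edges without violating any cut condition, so there is no local planarity or connectivity contradiction to be extracted from the intersection alone. The obstruction to $\Delta_e\cap\Delta_{e'}\ne\varnothing$ is genuinely global: it comes from the criticality of $G$, not from the structure of $\cl(Q_i)$.

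The missing idea is the production of a \emph{third} red edge. In the paper's proof, the crossing partner $e_2$ of the crossing in $D$ is shown (using Lemmas \ref{lm:staysYellow}, \ref{lm:staysGreen} and, crucially, Lemma \ref{lm:refinedRseparation} to choose witnessing subdivisions of $V_8$ that avoid $f$) to be a red edge lying in $\cc{w^e,r_{i+5},x^{e'}}$, strictly between the peaks of $\Delta_e$ and $\Delta_{e'}$ and not in either peak digon. Lemma \ref{lm:notSeparated} then gives that $\Delta_{e_2}$ is disjoint from both $\Delta_e$ and $\Delta_{e'}$, and since $\Delta_{e_2}$ sits between them inside $\cl(Q_i)$, it separates them, forcing $\Delta_e\cap\Delta_{e'}=\varnothing$. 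Without this intermediate red edge and the refined $R$-separation machinery (needed to rule out the other locations for $e_2$ and to show the relevant $V_8$'s survive the deletion of $f$), the argument does not close.
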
\printFullDetails{

\begin{cproof}  \wordingrem{(Text removed.)}We may choose the labelling of $e$ and $e'$ so that $e=uw$ and $e'=xy$ are such that $r_i =\cc{v_i,r_i,u,w,r_i,x,y,r_i,v_{i+1}}$.  As we follow $\Delta_e-e$ from $w$, there is a first edge $f$ that is not in $R$.  \wording{ In fact, Theorem \ref{th:redHasDelta} (\ref{it:deltaDetail}) implies $f$ is incident with $w$, as there can be no global $H$-bridge spanning $e'$.}

\wording{  Observe that $f$ is not in $H$, so $H\subseteq G-f$.  Moreover, if $f$ is in an $H$-yellow cycle, then either $e$ or $e'$ is $H$-yellow, a contradiction.  Thus, Lemmas \ref{lm:staysYellow} and \ref{lm:staysGreen} imply the colours of an edge of $R$ are the same in $G$ and $G-f$, unless
the edge is in an $H$-green cycle in $G$ that contains $f$.   Such an edge is necessarily in $[w,r_i,x]$.}

%\major{ Since $\Delta_{e}$ and $\Delta_{e'}$ have a vertex in common, there are not disjoint $RR$-paths with one end in $\cc{w,r_i,x}$ and the other end in $r_{i+5}$.}    
%
%
\major{Let $D$ be a 1-drawing of $G-f$ and let $e_1$ and $e_2$ be the edges of $G-f$ crossed in $D$. Since $f$ is incident with $w\in \oo{r_i}$, Theorem \ref{th:BODquads} and Lemma \ref{lm:BODcrossed} imply that $Q_i$ is crossed in $D$, so we may assume $e_1\in r_{i-1}\,r_i\,r_{i+1}$ and $e_2\in r_{i+4}\,r_{i+5}\,r_{i+6}$.  Moreover, no $H$-green cycle containing $e_2$ contains $f$, so $e_2$ is red in $G$.   In particular, Lemma \ref{lm:notSeparated} implies $e_2$ is $R$-separated from both $e$ and $e'$.}

\major{Let $u^e$ and $w^e$ be the first vertices in $r_{i+5}$ as we traverse $\Delta_e-e$ from $u$ and $w$, respectively.  Likewise, we have $x^{e'}$ and $y^{e'}$ in  $r_{i+5}\cap (\Delta_{e'}-e')$.}

\begin{claim}\label{cl:e2location}
\major {\dragominor{ $e_2\in  \cc{u^{e},r_{i+5},y^{e'}}$.}}\end{claim}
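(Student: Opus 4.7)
The plan is to argue by contradiction: assume $e_2 \notin [u^e, r_{i+5}, y^{e'}]$ and produce a subdivision $H''$ of $V_8$ contained in $G-f$ that places $e_1$ and $e_2$ in disjoint $H''$-quads. Since any subdivision of $V_8$ is non-planar, $H''$ must be self-crossed in the $1$-drawing $D$ of $G-f$; the unique crossing of $D$ therefore cannot involve $e_1$ or $e_2$, contradicting the choice of $e_1, e_2$ as the crossing pair of $D$.

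The key leverage will come from the asymmetric choice of $f$: since $f$ was taken as the first non-rim edge of $\Delta_e - e$ encountered when traversing from $w$, we have $f \in A_w$, and hence the path $A_u \subseteq \cl(Q_i)$ from $u$ to $u^e$ remains intact in $G-f$. A completely symmetric construction on $\Delta_{e'}$ produces a path $A_y \subseteq \cl(Q_i) \cap (G-f)$ joining $y$ to $y^{e'}$. In the spirit of Lemma~\ref{lm:refinedRseparation}, these two paths will serve as substitutes for the $H$-spokes $s_i$ and $s_{i+1}$ in the construction of $H''$, always paired with $s_{i+2}, s_{i+3}$.

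I will split the argument into sub-cases by the position of $e_2$. For $e_2 \in r_{i+4} \cup [v_{i+5}, r_{i+5}, u^e]$, the choice $H'' = R \cup A_u \cup s_{i+1} \cup s_{i+2} \cup s_{i+3}$ places $e_2$ in the $H''$-quad bounded by $A_u$ and $s_{i+3}$, whose disjoint partner is the quad bounded by $s_{i+1}$ and $s_{i+2}$ (containing the arcs $r_{i+1}$ and $r_{i+6}$). The sub-cases $e_2 \in [y^{e'}, r_{i+5}, v_{i+6}] \cup r_{i+6}$ are handled symmetrically via $H'' = R \cup s_i \cup s_{i+1} \cup s_{i+2} \cup A_y$. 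A direct inspection of the eight rim arcs of each $H''$ confirms that the relevant quads are indeed disjoint.

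The main obstacle will be showing that $e_1$ actually lands in the \emph{correct} disjoint quad. A priori, $e_1 \in r_{i-1}\,r_i\,r_{i+1}$ could occupy an $H''$-arc sharing a spoke with the quad of $e_2$, in which case $H''$ alone does not separate them. I plan to eliminate these residual possibilities with the same toolkit used earlier in the proof: Lemmas~\ref{lm:staysYellow} and \ref{lm:staysGreen}, together with Theorem~\ref{th:rimColoured}, will show that any candidate $e_1$ other than $e$ or $e'$ must itself be red in $G$, whereupon Lemmas~\ref{lm:notSeparated} and \ref{lm:refinedRseparation} will furnish a second $V_8$-separator inside $G-f$; combining this separator with $H''$ will rule out the crossing in every sub-case and complete the contradiction.
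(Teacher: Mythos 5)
Your high-level intuition — produce a subdivision of $V_8$ inside $G-f$ that $R$-separates $e_1$ from $e_2$, then invoke Observation~\ref{obs:separated} to forbid the crossing — matches the paper's strategy. But the route you sketch has a real gap, and the explicit $H''$ you build does not actually carry the weight of the argument.

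First, your $H''=R\cup A_u\cup s_{i+1}\cup s_{i+2}\cup s_{i+3}$ places $e_2$ (for $e_2\in r_{i+4}\cup\cc{v_{i+5},r_{i+5},u^e}$) in the $H''$-quad bounded by $A_u$ and $s_{i+3}$, whose disjoint partner quad is bounded by $s_{i+1}$ and $s_{i+2}$, with rim arcs $r_{i+1}$ and $r_{i+6}$. That separates $e_1$ from $e_2$ only when $e_1\in r_{i+1}$. For $e_1\in r_{i-1}\cc{v_i,r_i,u}$ the edge $e_1$ lands in the \emph{same} $H''$-quad as $e_2$, and for $e_1\in\cc{u,r_i,v_{i+1}}$ it lands in the adjacent quad bounded by $A_u$ and $s_{i+1}$. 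So the $H''$ you define does nothing in the two sub-cases that actually arise; all the work you defer to the ``main obstacle'' is the proof. There is also an unresolved disjointness issue: $A_u\subseteq\Delta_e$ may itself contain a rim edge of $r_i$ or $r_{i+5}$ (Theorem~\ref{th:redHasDelta}~(\ref{it:deltaDetail}) permits one such), and its intersections with the spokes of $Q_i$ need to be controlled before $H''$ is even a legitimate subdivision of $V_8$.

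Second, the fallback you describe — ``any candidate $e_1$ other than $e$ or $e'$ must itself be red in $G$'' — is not correct. The colour-preservation observation (Lemmas~\ref{lm:staysYellow}, \ref{lm:staysGreen}) only guarantees that the colour of a rim edge is the same in $G$ and $G-f$ when the edge is \emph{not} in an $H$-green cycle of $G$ that contains $f$, and such edges can exist precisely in $\cc{w,r_i,x}\subseteq\cc{u,r_i,v_{i+1}}$. So if $e_1$ is in that interval and is neither $e$ nor $e'$, you cannot conclude $e_1$ is red in $G$, and your plan to derive a second $R$-separation from $e_1$ collapses there. The paper sidesteps this exactly: for $e_1\in r_{i-1}\cc{v_i,r_i,u}$ it argues $e_1$ is red and applies Lemma~\ref{lm:refinedRseparation} to the pair $(e_1,e_2)$; for $e_1\in\cc{u,r_i,v_{i+1}}r_{i+1}$ it does \emph{not} claim $e_1$ is red but instead applies Lemma~\ref{lm:refinedRseparation} to the known red pair $(e,e_2)$, noting that the resulting witness $H'$ (containing $s_{i+2}$, $s_{i+3}$ and two spokes in $\cl(Q_{i-1})\cup(\cl(Q_i)-f)$) separates $e_1$ from $e_2$ because $e_1$ lies on the $e$-side of $H'$ and $H'$ avoids $f$ by the structure of $\Delta_e$. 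In neither sub-case does the paper construct the auxiliary $H''$; the witness from Lemma~\ref{lm:refinedRseparation} is the only $V_8$ that is needed. You should drop the $A_u$-construction and argue the two sub-cases as the paper does.
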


\begin{proof}\major{Suppose by way of contradiction that $e_2\in r_{i+4}\cc{v_{i+5},r_{i+5},u^e}$; a similar argument will treat the case $e_2\in \cc{y^{e'},r_{i+5},v_{i+6}}r_{i+6}$.  }

\major{If $e_1\in r_{i-1}\cc{v_i,r_i,u}$, then $e_1$ is red in $G$, so $e_1$ and $e_2$ are $R$-separated in $G$.  Note that either $e_1\in r_i$ or $e_2\in r_{i+5}$.   Lemma \ref{lm:refinedRseparation} implies there is a witnessing subdivision $H'$ of $V_8$ that contains $s_{i+2}$ and $s_{i+3}$, while the other two spokes are in $\cl(Q_{i-1})\cup \cl(Q_i)$.  Furthermore, $\Delta_e$ shows that $f\notin H'$; therefore, $H'\subseteq G-f$ shows that $e_1$ and $e_2$ are $R$-separated in $G-f$, and therefore cannot cross in $D$, a contradiction.}  

\major{The other possibility is that $e_1\in \cc{u,r_i,v_{i+1}}r_{i+1}$.  Since $e$ and $e_2$ are both red in $G$, Lemma \ref{lm:notSeparated} implies $e_2$ is $R$-separated from $e$ in $G-f$.  As in the preceding paragraph, we may choose the witnessing subdivision $H'$ of $V_8$ to contain $s_{i+2}$ and $s_{i+3}$, while the other two spokes are in $\cl(Q_{i-1})\cup (\cl(Q_i)-f)$.  Again $H'$ witnesses the $R$-separation of $e_1$ and $e_2$ in $G-f$, a contradiction.  } \end{proof}

\major{\dragominorrem{(Text removed.)}Theorem \ref{th:redHasDelta} (\ref{it:digon}) shows that any edge in either $\Delta_e\cap r_{i+5}$ or $\Delta_{e'}\cap r_{i+5}$ is in a digon in $G$ and so is not $e_2$.  Thus, $e_2$ is further restricted to be in \dragominor{$\cc{w^e,r_{i+5},x^{e'}}$}.  Lemma \ref{lm:notSeparated} implies $\Delta_e$ and $\Delta_{e_2}$ are disjoint, as are $\Delta_{e_2}$ and $\Delta_{e'}$, which further implies that $\Delta_e$ and $\Delta_{e'}$ are disjoint, as required.}
%
%\major{If $e_1$ is in an $H$-green cycle $C$, then, as mentioned above, $e_1\in \cc[w,r_i,x]$.  As $e_1$ is not $R$-separated from $e_2$ in $G-f$, it follows that $e_2$ }
%
%
%
%\major{As a first case, we suppose $e_1$ is not in an $H$-green cycle containing $f$, so that $e_1$ is also red in $G$.  Lemma \ref{lm:notSeparated} implies $e_1$ and $e_2$ are $R$-separated in $G$.  If they were $R$-separated in $G-f$, then they could not cross in $D$, so they are not $R$-separated in $G-f$.  Thus, if $H'$ is any subdivision of $V_8$ in $G$, with $H'$-rim $R$ and having $e_1$ and $e_2$ in disjoint $H'$-quads, then $f\in H'$.}
%
%\major{For any such $H'$, there is precisely one $H'$-spoke having an end in $\cc{w,r_i,x}$:  there is at least one from the preceding paragraph and at most one from the paragraph before that.  From this we deduce that $f$ is the only edge of $\Delta_e\cup \Delta_{e'}$ having an end in $\cc{w,r_i,x}$.  Since both $w$ and $x$ are incident with such an edge, we conclude $w=x$.   }
%
%\major{By symmetry, we may assume $e_1\in r_{i-1}\cc{v_i,r_i,w}$.  Let $x_{e'}$ be the first vertex in $r_{i+5}$ as we traverse $\Delta_{e'}-e'$ from $x$.  It follows from the preceding arguments that $e_2\in \cc{x_{e'},r_{i+5},v_{i+6}}r_{i+6}$.  We conclude that $e$ is $R$-separated from $e_2$ in $G$ but not in $G-f$.  However, this implies that $e'$ and $e_2$ are red edges in $G$ that are not separated in $G$.  Lemma \ref{lm:notSeparated} implies the contradiction that $\crn(G)\le 1$.}
%  
%  
  \end{cproof}
  }

\chapter{The next red edge and the tile structure}\printFullDetails{\label{sec:nextRed+Tiles}

We now know that there are red edges and every red edge comes equipped with a $\Delta$.  The tiles are determined by what is between ``consecutive" red edges.   In this section, we explain what ``consecutive" means, show that consecutive red edges determine one of the tiles, and complete the \wording{proof of our main result\dragominor{, Theorem \ref{th:classification},}} by demonstrating that every red edge has a consecutive red edge on each side.  

}\begin{definition}\label{df:consecutive}  Let $G\in\m2$ and $\hvfg$, with $H$ tidy.  Let $e=uw$ be a red edge in $r_i$, labelled so that $r_i=\cc{v_i,r_i,u,e,w,r_i,v_{i+1}}$.  
  %In the case $\Delta_e\cap{r_{i+5}}$ has just $x_e$, the {\em peak of $\Delta_e$\/} is just %$x_e$.  In every other case, the {\em peak of $\Delta_e$\/} consists of the digon that %contains an edge of $\Delta_e\cap r_{i+5}$ incident with $x_e$.
A red edge $e_w$  is {\em $w$-consecutive for $e$\/}\index{consecutive}\index{$w$-consecutive}\index{$u$-consecutive} if:
\begin{enumerate}\item\label{it:whereNext} $e_w\in \cc{w^e,r_{i+5},v_{i+6}}\rbsp r_{i+6}\,r_{i+7}$ (recall that $w^e$ is the vertex in the peak of $\Delta_e$ nearest $w$ in $\Delta_e-e$);
\item\label{it:betweenPeaks1} there is no red edge in  $\cc{w^e,r_{i+5},v_{i+6}}\rbsp r_{i+6}\,r_{i+7}$ between $w^e$ and $e_w$; 
\item\label{it:betweenPeaks2} there is no red edge in $\cc{w,r_i,v_{i+1}}\rbsp r_{i+1}\,r_{i+2}$ between $w$ and the peak of $\Delta_{e_w}$;
\item\label{it:consec1drawing} \wording{if $e^w$ is} the edge of $P_w$ nearest $w$ that is not in $R$, then there is a 1-drawing $D$ \wording{of $G-e^w$ in} which $e$ crosses $e_w$.
\wording{\item  There is an analogous definition for {\em $u$-consecutive}.}
\end{enumerate}
\end{definition}\printFullDetails{

Our first main goal is, therefore, the following.

}\begin{theorem}\label{th:consecRed}  Let $G\in \m2$ and $\hvfg$, with $H$ tidy.  Let $e=uw$ be red in $G$.  Then there is a $w$-consecutive red 
edge \wording{and a $u$-consecutive red edge for $e$}.
\end{theorem}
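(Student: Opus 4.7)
The plan is to take $e^w$ as the edge of $P_w$ incident with $w$, consider a 1-drawing $D$ of $G - e^w$, show that $Q_i$ is crossed in $D$ with $e$ as one of the crossed edges, and identify the partner as the desired $e_w$. By Theorem \ref{th:redHasDelta} (\ref{it:deltaDetail}), $e^w$ exists; moreover, $e^w \notin R$ because any rim edge of $P_w$ incident with $w$ would make $e$ lie in an $H$-green cycle via a global jump having $w$ in its span, contradicting Lemma \ref{lm:globalJumps} (\ref{it:globalNoHnode}) together with $w$ being an endpoint of the red edge $e$. In particular $e^w \notin H$, so $H \subseteq G - e^w$. Let $B$ be the $Q_i$-local $H$-bridge containing $e^w$; by Theorem \ref{th:BODquads} (\ref{it:quadBOD}), $B$ is planar. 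Fix a 1-drawing $D$ of $G - e^w$ (which exists by $2$-crossing-criticality). Since $\comp B \subseteq G - e^w$, the restriction of $D$ to $\comp B$ is a 1-drawing of $\comp B$, and Lemma \ref{lm:BODcrossed} forces $Q_i$ to be crossed in $D$.

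Next, I would analyze the crossing in $D$. The surviving portion $\Delta_e - e^w$ contains $e$, all of $A_u$, and most of $A_w$, and it embeds planarly inside the closed disc bounded by $\Pi[Q_i]$ in the tidy embedding. This structure would be used to rule out two kinds of alternative crossings: first, a crossing of type $s_i \times s_{i+1}$, which would leave the vicinity of $w$ and $w^e$ on the same face and permit re-inserting $e^w$ without a second crossing (contradicting $\crn(G) \ge 2$); second, a crossing involving some edge $e_1 \in r_i \setminus \{e\}$, which would allow rerouting inside $\Delta_e - e^w$ via $e$ and $A_u$ and again re-inserting $e^w$. With these ruled out, $e$ is one of the crossed edges, and its partner $e_w$ lies in $r_{i+5}$. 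Extending the range of $e_w$ to $\cc{w^e,r_{i+5},v_{i+6}}\,r_{i+6}\,r_{i+7}$ comes from applying the same scheme to $\bQ_{i+1}$ and $\bQ_{i+2}$ via Corollary \ref{co:QibarBOD}, in case $Q_i$ happens to be clean but a larger hyperquad is crossed in an alternative 1-drawing of $G - e^w$.

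Finally, since $H \subseteq G - e^w$, Lemma \ref{lm:greenCycles} (\ref{it:notCrossed}) forbids any $H$-green edge of $G$ from being crossed in $D$, and Lemma \ref{lm:staysYellow} together with the restrictions from Lemma \ref{lm:globalJumps} forbids $e_w$ from being $H$-yellow; Theorem \ref{th:rimColoured} then ensures $e_w$ is red. The no-red-edges-between conditions of Definition \ref{df:consecutive} follow from Lemma \ref{lm:noAdjacentRed}: a red edge $e'$ strictly between $w^e$ and $e_w$ on $r_{i+5}$, or between $w$ and the peak of $\Delta_{e_w}$ on $r_i$, would force $\Delta_{e'}$ to be disjoint from both $\Delta_e$ and $\Delta_{e_w}$, and the crossing in the corresponding drawing would then select $e'$ as the partner, contradicting our choice of $e_w$ as the actual partner in $D$. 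The $u$-consecutive case is obtained by symmetry between $u$ and $w$. The main obstacle I expect is ruling out alternative crossings in the second paragraph --- especially showing that the $r_i$-edge in the crossing is exactly $e$ rather than a nearby edge --- since this requires a delicate interplay between the partial $\Delta_e$ structure and the tidy planar embedding of $\cl(Q_i)$ to guarantee that $e^w$ cannot be re-inserted into any alternative crossing without producing a 1-drawing of $G$.
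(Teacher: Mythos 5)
Your proposal fails at the first step. You assert that $e^w$ (which the paper defines as the edge of $P_w$ nearest $w$ that is not in $R$) is always incident with $w$, arguing that a rim edge of $P_w$ at $w$ would force $e$ into an $H$-green cycle. That implication is false. Theorem \ref{th:redHasDelta} (\ref{it:deltaDetail}) explicitly permits $P_w$ to contain one $H$-rim edge $wx$ (with $x$ on the side of $w$ away from $e$), which is then spanned by a global $H$-bridge $J_w$. For $J_w$ to avoid spanning $e$, the vertex $w$ need only be an \emph{endpoint} of $J_w$'s span; this happens precisely when $J_w$ is the 2.5-jump $wv_{i+3}$ with $w\in\oo{r_i}$, which is perfectly compatible with $e$ being red and with Theorem \ref{th:globalBridges}. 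Your appeal to Lemma \ref{lm:globalJumps} (\ref{it:globalNoHnode}) is misplaced: that item forbids \emph{two distinct} global $H$-bridges from sharing an $H$-node, and says nothing about a single bridge attaching at $w$ --- indeed $w$ need not even be an $H$-node. This is exactly the paper's Case 2, which you do not address at all. And in that case $e^w$ is not incident with $w$; the paper shows $e^w\in s_{i+1}$, so $e^w\in H$ and your later use of ``$H\subseteq G-e^w$'' also collapses.

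Even in the case you do consider (Case 1, where $e^w$ is incident with $w$), your argument is too coarse. The paper crosses $\bQ_{i+1}$ rather than $Q_i$, then splits on whether the crossed rim edge $f\in r_{i+4}\,r_{i+5}\,r_{i+6}\,r_{i+7}$ is red; the non-red branch involves a four-way subcase analysis on the position of $f$, with arguments about 2.5-jumps, constructing auxiliary $V_8$ and $V_{10}$ subdivisions, and $R$-separation in $G-e^w$. The key device is Lemma \ref{lm:separated}, which converts ``$e$ and $\hat e$ not $R$-separated in $G-e^w$'' into the existence of a $w$-consecutive red edge; that is also what handles the no-red-between conditions of Definition \ref{df:consecutive}, not Lemma \ref{lm:noAdjacentRed} as you suggest. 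Your ``rule out alternative crossings and re-insert $e^w$'' sketch papers over all of this; the main obstacle you flag in your last sentence is precisely where the bulk of the real proof lives, and the proposal offers no mechanism to discharge it.
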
\printFullDetails{

The next lemma will be helpful in the proof.

}\begin{lemma}\label{lm:separated} Let $G\in \m2$ and $\hvfg$, with $H$ tidy.   Let $e=uw$ and $\hat e$ be red edges in $G$, with $e\in r_i$ \wording{and the labelling chosen so that} $r_i=\cc{v_i,r_i,u,e,w,r_i,v_{i+1}}$\wordingrem{(comma removed)} and $\hat e\in \cc{w^e,r_{i+5},v_{i+6}}\rbsp r_{i+6}\,r_{i+7}$.  \wording{If $e^w$ is} the $w$-nearest edge of $P_w$ that is not in $R$ and $e$ and $\hat e$ are not $R$-separated \wording{in $G-e^w$, then} $e$ has a $w$-consecutive red edge. \end{lemma}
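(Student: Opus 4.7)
The plan is to convert the hypothesis into the 1-drawing demanded by condition (\ref{it:consec1drawing}) of the definition of $w$-consecutive, and then use a minimality choice of candidate to satisfy the remaining three conditions.

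First I would verify the setup for Lemma \ref{lm:notSeparated}. By Theorem \ref{th:redHasDelta} (\ref{it:deltaDetail}), $P_w$ contains at most one $H$-rim edge (which, if present, is the edge at $w$, $H$-green via a global $H$-bridge), so $e^w$ is a non-rim edge of $\cl(Q_i)$. One then arranges that $H \subseteq G - e^w$, if necessary rerouting an $H$-spoke edge through a local $H$-bridge of $\cl(Q_i)$ to obtain a tidy $V_{10}$-subdivision $H' \subseteq G - e^w$. Both $e$ and $\hat e$ are red in $G$, hence neither $H$-green nor $H$-yellow; since removing an edge cannot introduce new $H$-green or $H$-yellow cycles, this property persists in $G - e^w$.

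Next I would apply Lemma \ref{lm:notSeparated} with $G_0 = G$ and the subgraph $G - e^w$: by hypothesis $e$ and $\hat e$ are not $R$-separated in $G-e^w$, so the lemma produces a 1-drawing $D$ of $G - e^w$ in which $e$ crosses $\hat e$. (If $\hat e \in r_{i+7}$, a symmetric version using the analogous argument on the $uu^e$-side --- or equivalently a reindexing --- yields the same conclusion.) Now I would take $e_w$ to be the red edge in the region $\cc{w^e,r_{i+5},v_{i+6}}\rbsp r_{i+6}\,r_{i+7}$ nearest to $w^e$ for which some 1-drawing of $G-e^w$ has $e$ crossing $e_w$; the existence of $\hat e$ shows this set is nonempty. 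Then (\ref{it:whereNext}) and (\ref{it:consec1drawing}) are automatic, and (\ref{it:betweenPeaks1}) follows by minimality of the distance from $w^e$.

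For (\ref{it:betweenPeaks2}), I would argue by contradiction: suppose $e'$ is a red edge in $\cc{w,r_i,v_{i+1}}\rbsp r_{i+1}\,r_{i+2}$ strictly between $w$ and the peak of $\Delta_{e_w}$. Lemma \ref{lm:noAdjacentRed} gives pairwise disjoint $\Delta_e, \Delta_{e'}, \Delta_{e_w}$; stitching together the $r_i$-to-$r_{i+5}$ paths from these $\Delta$'s with $s_{i+2}$ and $s_{i+3}$ (or their appropriate analogues) should produce a subdivision of $V_8$ inside $G - e^w$ that $R$-separates $e$ from $e_w$, contradicting the 1-drawing in which they cross. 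The chief obstacle is this final construction: one must locate the peak of $\Delta_{e_w}$ precisely (via Theorem \ref{th:redHasDelta}) and verify that the four spokes of the separating $V_8$ can all be chosen to avoid $e^w$ inside $\cl(Q_i)$.
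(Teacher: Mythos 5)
The overall shape of your argument --- invoke Lemma \ref{lm:notSeparated} to produce a 1-drawing of $G-e^w$ in which $e$ crosses something red, then verify the four defining conditions of $w$-consecutiveness --- is the same as the paper's, but there are two concrete gaps where your argument diverges from the paper's in a way that does not close.

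First, the choice of $e_w$. You take $e_w$ to be the red edge nearest $w^e$ \emph{for which a crossing 1-drawing of $G-e^w$ exists}, and then claim condition (\ref{it:betweenPeaks1}) ``follows by minimality.'' But (\ref{it:betweenPeaks1}) demands that there be no red edge at all between $w^e$ and $e_w$, not merely no red edge with a crossing 1-drawing. If the genuinely $w^e$-nearest red edge $\hat e'$ happened to have no crossing 1-drawing, your $e_w$ would be further out and $\hat e'$ would sit between $w^e$ and $e_w$, killing (\ref{it:betweenPeaks1}). The paper avoids this by defining $\hat e'$ simply as the $w^e$-nearest red edge --- so (\ref{it:betweenPeaks1}) is immediate --- and then \emph{proving} that $\hat e'$ has the required 1-drawing. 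The mechanism is a transitivity step: $\hat e'$ is $R$-separated from $e$ in $G$ (since both are red, $\crn(G)\ge 2$, and Lemma \ref{lm:notSeparated} applies with $G_0=G$); if $\hat e'$ were also $R$-separated from $e$ in $G-e^w$, the witnessing $V_8$ would put $\hat e$ in a quad disjoint from $e$'s as well, contradicting the hypothesis; hence $\hat e'$ is \emph{not} $R$-separated from $e$ in $G-e^w$ and Lemma \ref{lm:notSeparated} yields the crossing 1-drawing. Your argument could be repaired by the same transitivity observation, but as written the minimality is over the wrong set.

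Second, your argument for (\ref{it:betweenPeaks2}) appeals to Lemma \ref{lm:noAdjacentRed}, but that lemma is stated only for red edges lying in the \emph{same} $H$-rim branch, which $e$, $e'$, and $e_w$ will not generally share. The paper's route to (\ref{it:betweenPeaks2}) is different and comes \emph{before} $\hat e'$ is even defined: a red $e'$ between $w$ and the peak of $\Delta_{\hat e}$ would (being red) be $R$-separated from $\hat e$; one chooses the witnessing $V_8$ to avoid $e^w$, so $e'$ and $\hat e$ are $R$-separated in $G-e^w$; and since $e$ sits on $e'$'s side of the cut, $e$ and $\hat e$ are then $R$-separated in $G-e^w$, contradicting the hypothesis. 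Condition (\ref{it:betweenPeaks2}) for $\hat e'$ then follows because the peak of $\Delta_{\hat e'}$ lies inside the interval $[w,\text{peak of }\Delta_{\hat e}]$ (the $\Delta$'s are disjoint and their peaks inherit the cyclic order). Your instinct that the $\Delta$'s should be pairwise disjoint is right, but the cited lemma is not the tool for it.

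Your opening worry about whether $H\subseteq G-e^w$ (i.e., whether $e^w$ might lie on $s_{i+1}$, as it does in some cases in Theorem \ref{th:classification}) is legitimate and is in fact glossed over in the paper's proof too --- that is a useful observation, though the rerouting fix you sketch is not verified and is not how the paper handles it.
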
\printFullDetails{

\begin{cproof}  \dragominor{Suppose there is a red \wording{edge $e'$} in $r_i\,r_{i+1}\,r_{i+2}$} between $w$ and the peak of $\Delta_{\hat e}$.  \wording{Then $e'$} is $R$-separated from $\hat e$ in both $G$ \wording{and $G-e^w$}, showing that $e$ and $\hat e$ are $R$-separated \wording{in $G-e^w$}, a contradiction.  \dragominor{Thus, no such red edge exists.} 
 
\dragominorrem{(Text removed.)}\wording{Let $\hat e'$} be the $w^e$-nearest red edge in $\cc{w^e,r_{i+5},v_{i+6}}\rbsp r_{i+6}\,r_{i+7}$. 
Lemma \ref{lm:notSeparated} \wording{implies $\hat e'$} is $R$-separated from $e$ in $G$; \wording{if $\hat e'$} were also $R$-separated from $e$ \wording{in $G-e^w$}, then so would $\hat e$, which contradicts the hypothesis.  But now Lemma \ref{lm:notSeparated} implies there is a 1-drawing \wording{of $G-e^w$} in which $e$ \wording{crosses $\hat e'$}, as required. \end{cproof}

And now the final major proof \wording{needed to prove Theorem \ref{th:classification}.}

\begin{cproofof}{Theorem \ref{th:consecRed}}  \wording{It obviously suffices to prove the existence of a $w$-consecutive red edge for $e$.}  Let $r_i$ be the $H$-rimbranch containing $e$.   Let $e^w$ be the edge of  $P_w$ nearest $w$ and not in $R$.  There are two principal cases.

\medskip\noindent{\bf Case 1:}  {\em $e^w$ is incident with $w$.}

\medskip
We note that $e^w$ is contained in a $\bQ_{i+1}$-bridge that is not $M_{\bQ_{i+1}}$.  Let $D$ be a 1-drawing of $G-e^w$.  Corollary \ref{co:hyperBOD} and Lemma  \ref{lm:BODcrossed} show that $\bQ_{i+1}$ is crossed in $D$.  

Let
\begin{itemize}\item  $f$ be the edge of $r_{i+4}\,r_{i+5}\,r_{i+6}\,r_{i+7}$ that is crossed in $D$ and \item  $f'$ be the other edge crossed in $D$; thus, $f'\in r_{i-1}\,r_i\,r_{i+1}\,r_{i+2}$. 
\end{itemize}

\begin{claim}\label{cl:fIsRed}  If $f$ is not red \wording{in $G$}, then there is a $w$-consecutive red edge for $e$.  \end{claim}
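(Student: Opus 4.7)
The plan is to combine Theorem \ref{th:rimColoured} with the colour-preservation Lemmas \ref{lm:staysYellow} and \ref{lm:staysGreen}, and then invoke Lemma \ref{lm:separated} to upgrade a nearby red edge into a $w$-consecutive one for $e$.

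First, since $f$ is not red, Theorem \ref{th:rimColoured} forces $f$ to be either $H$-green or $H$-yellow. In the $H$-green case, let $C_f$ be the witnessing cycle. Theorem \ref{th:no3jump} rules out $3$-jumps entirely, so Lemma \ref{lm:staysGreen}, applied to the 1-drawing $D$ of $G-e^w$ in which $f$ is crossed, yields that $C_f$ contains a $2.5$-jump whose short end $x$ lies in the rim branch $r_j$ containing $f$. In the $H$-yellow case, Lemma \ref{lm:staysYellow} forces the witnessing $H$-green companion to contain a $2.5$-jump with an end in $\oo{r_j}$. Either way I extract a global $2.5$-jump $B$ anchored in $r_j$, with $j\in\{i+4,i+5,i+6,i+7\}$.

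Next, I use the tidy embedding $\Pi$ together with Lemma \ref{lm:globalJumps} to pin down how $B$ can sit relative to $\Delta_e$. Item (\ref{it:globalNoHnode}) prevents $B$ from sharing an $H$-node with any other global bridge (in particular with one at the peak of $\Delta_e$), and item (\ref{it:spanDiffHyper}) restricts the side of $\bQ_{i+1}$ that $B$ can span. Theorem \ref{th:twoGreenCycles} then confines the short end $x$ of $B$ to lie outside the peak interval of $\Delta_e$. Using Theorem \ref{th:rimColoured} once more to colour the rim near $x$, I locate a concrete candidate red edge $\hat e\in \cc{w^e,r_{i+5},v_{i+6}}\rbsp r_{i+6}\,r_{i+7}$: namely, the $w^e$-nearest red edge of that window, which must exist because the window cannot be entirely filled by $H$-green and $H$-yellow edges in the presence of $B$ (there are simply not enough $2.5$-jumps available, by Lemma \ref{lm:globalJumps}(\ref{it:globalNoHnode}) and (\ref{it:oppositeRims}), together with Theorem \ref{th:redInRim} applied locally).

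To close, I must verify that $e$ and $\hat e$ are not $R$-separated in $G-e^w$. The crossing of $f$ and $f'$ in $D$ shows $f$ and $f'$ are not $R$-separated in $G-e^w$. When $f'=e$ this gives non-separation of $e$ and $f$ directly, and I transfer it to $\hat e$ by observing that the edge $B$ is embedded in $\Disc$ and hence cannot appear in any witnessing $V_8$-subdivision that has $R$ as its rim (by Lemma \ref{lm:refinedRseparation}), while the short arc of $r_j$ between $f$ and $\hat e$ sits on the same side of any putative separator. When $f'\ne e$, I expect to use Corollary \ref{co:hyperBOD} and Lemma \ref{lm:BODcrossed} to show $D$ necessarily has $e$ as one of the crossing edges, or else to modify $D$ by rerouting via $\Delta_e$ to obtain a 1-drawing of $G-e^w$ in which $e$ itself is in the crossing. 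Lemma \ref{lm:separated} then produces the desired $w$-consecutive red edge.

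The main obstacle, I expect, is the $H$-yellow subcase with $f\in r_{i+5}$: here the window $\cc{w^e,r_{i+5},v_{i+6}}\rbsp r_{i+6}\,r_{i+7}$ starts at $w^e\in r_{i+5}$ itself, and one must carefully use the digon/peak structure from Theorem \ref{th:redHasDelta}(\ref{it:digon}) and Lemma \ref{lm:noAdjacentRed} to be sure that $\hat e$ lies strictly past the peak of $\Delta_e$ rather than being absorbed into the witnessing $H$-yellow cycle or coinciding with a digon edge at the peak.
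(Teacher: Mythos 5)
Your high-level plan (colour $f$ via Theorem \ref{th:rimColoured}, propagate colours with Lemmas \ref{lm:staysYellow} and \ref{lm:staysGreen}, then invoke Lemma \ref{lm:separated}) is the right skeleton, but the proposal has two substantive gaps. First, you should not be carrying an $H$-yellow subcase at all: since we are in Case 1, $e^w$ is incident with $w$, so $e^w$ lies in no $H$-yellow cycle witnessing any edge of $r_{i+4}\,r_{i+5}\,r_{i+6}\,r_{i+7}$. Lemma \ref{lm:staysYellow} then rules out $f$ being $H$-yellow outright, and Theorem \ref{th:rimColoured} forces $f$ to be $H$-green, with a 2.5-jump $J$ whose partial-span branch contains $f$. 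This simplification is not cosmetic; it is what makes the four-way split on $j\in\{i+4,\dots,i+7\}$ tractable and also pins down $w=v_{i+1}$, a fact you will need.

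Second, and more seriously, your central step --- that ``the window cannot be entirely filled by $H$-green and $H$-yellow edges in the presence of $B$'' by counting 2.5-jumps via Lemma \ref{lm:globalJumps} and Theorem \ref{th:redInRim} ``applied locally'' --- is a claim, not an argument, and as stated it does not hold: nothing in those lemmas directly forbids a window of rim edges from being covered by green/yellow cycles near a single 2.5-jump. What the paper actually does in each of the four subcases is pull in auxiliary 1-drawings $D_j$ of $G-\oo{s_j}$ for carefully chosen $j$, force $\bQ_j$ to be crossed via Corollary \ref{co:hyperBOD} and Lemma \ref{lm:BODcrossed}, and then use Lemma \ref{lm:technicalV8colour} together with the structure of $\Delta_e$ to pin the crossing edge into the target window and show that it must be red; the $f\in r_{i+4}$ subcase in fact terminates in an outright contradiction rather than producing a consecutive edge. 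Likewise your final paragraph on transferring non-$R$-separation from $(f,f')$ to $(e,\hat e)$ is speculative (``I expect to use \dots''); the paper builds explicit witnessing subdivisions of $V_8$ or $V_{10}$ in $G-e^w$ (via Lemmas \ref{lm:shapeOfV8separator}, \ref{lm:refinedRseparation}, and the paths $P_u$, $P_w$ from Theorem \ref{th:redHasDelta}) to carry this step out in each subcase. Without filling in these two places, the proof does not close.
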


\begin{proof} \startSubclaims Because we are in Case 1, no global $H$-bridge has $w$ in the interior of its span and, therefore,  $e^w$ is not in any $H$-yellow cycle that could witness the $H$-yellowness of any edge in $r_{i+4}\,r_{i+5}\,r_{i+6}\,r_{i+7}$, (in particular, the $H$-yellowness of $f$).  Therefore, Lemma \ref{lm:staysYellow} shows $f$ is not $H$-yellow.  \dragominor{Since $f$ is not red,  Theorem \ref{th:rimColoured}} implies $f$ is $H$-green.  Lemma \ref{lm:staysGreen} implies there is a 2.5-jump $J$ that spans $f$ and so that $f$ is in the $H$-rim \dragominor{branch} whose interior contains an end of $J$. We note that if $v_{i+6}$ is in the span of $J$, then Lemma \ref{lm:technicalV8colour} \dragominor{(\ref{it:atMostTwo})} shows no edge in the span of $J$ is crossed in $D$.  Therefore, $v_{i+6}$ is not in the span of $J$.  Furthermore, if $e^w$ is not in $s_{i+1}$, then $H\subseteq G-e^w$ and, therefore Lemma \ref{lm:greenCycles} (\ref{it:notCrossed}) implies $f$ is not crossed in $D$, a contradiction.  This implies $w=v_{i+1}$.   \wording{We summarize these remarks as follows.}

\begin{subclaim}\label{cl:redOr2.5jump}  
\begin{itemize}
\item \wordingrem{(text moved)}\wording{$w=v_{i+1}$ and}
\item there is a 2.5-jump $J$ so that:
\begin{itemize} \item $f$ is spanned by $J$;
\item $f$ is in the $H$-rim branch whose interior contains an end of $J$; \wording{and}
\item $v_{i+6}$ is not in the span of $J$. \wordingrem{(semi-colon becomes period.)} \hfill$\Box$
\end{itemize}
\end{itemize}
\end{subclaim}

\begin{subclaim}\label{cl:restOFfBranchNotYellow}  Let $j\in \{i+4,i+5,i+6,i+7\}$ so that $f$ is in the $H$-rim branch $r_j$.  Then no edge of $r_j$ is $H$-yellow. \end{subclaim}

\begin{proof}  Suppose some edge $e'$ of $r_j$ is $H$-yellow.  This implies $e'$ is not $H$-green and, therefore, is not spanned by $J$. \wordingrem{(Text removed.)}%
% In particular, $e'$ is between $f$ and $v_{j+1}$ in $r_j$.  
Let $C$ and $C'$ be the witnessing $H$-yellow and $H$-green cycles, respectively.  %We note that $e$ is not $H$-green and so is not in $C'$, while  $e^w$ is in $s_{i+1}$ and so is not in $C'$.  Also, the remarks in the paragraph preceding the claim show $e^w$ is not in $C$.  Therefore, Lemma \ref{lm:staysYellow} implies $C$ is not crossed in $D$ and, therefore, we conclude $f$ is not in $C$.  

Suppose first that $j\in \{i+4,i+5\}$.  \minor{Then $r_j=\cc{v_j,r_j,f,r_j,e',r_j,v_{j+1}}$.}  \wording{Because $e\in r_i$} is not $H$-green, \wording{$v_{j+5}\in \{v_{i-1},v_i\}$} is in the interior of $C'\cap R$.  This implies there is an $H$-yellow cycle containing $s_j$ and the portion of $r_j$ from $v_j$ to $e'$.  By Lemma \ref{lm:yellowCycles} (\ref{it:oneBridge}), this $H$-yellow cycle must be $C$ and, therefore, $f\in C$.  Now the fact that $f$ is crossed in $D$ contradicts Lemma \ref{lm:staysYellow}.  A completely analogous argument holds for $j\in \{i+6,i+7\}$.
\end{proof}

Let $\widehat w$ be the vertex in $r_{i+5}$ that is nearest $w$ in $P_w$.  Observe that $\widehat w$ is not necessarily in the peak of $\Delta_e$.  (See Figure \ref{fg:delta}, where $\widehat w$ is the vertex of $\Delta_e$ at the top right hand corner of $\Delta_e$.)  The following claim will be helpful in completing the proof of Case 1.

\begin{subclaim}\label{cl:hatWv+6green} If $\widehat w\ne v_{i+6}$, then $\cc{\widehat w,r_{i+5},v_{i+6}}$ is in an $H$-green cycle contained in $\cl(Q_i)$. \end{subclaim}

\begin{proof}  Let $P'_w$ be the $\widehat ws_{i+1}$-subpath of $P_w$.  Since $e^w\in s_{i+1}$, $P'_w\subseteq P_w-w$.  Let $\widehat w^e$ be the end \wording{of $P'_w$ in} $s_{i+1}$.  Since $\widehat w\notin s_{i+1}$ and $\widehat w^e\in s_{i+1}$,  $\widehat w\ne \widehat w^e$.  By definition of $\widehat w$, $P'_w-\widehat w$ is disjoint from $r_{i+1}$.  Therefore,  $P'_w\lbsp\cc{\widehat w^e,s_{i+1},v_{i+6},r_{i+5},\widehat w}$ is an $H$-green cycle containing $\cc{\widehat w,r_{i+5},v_{i+6}}$, as required.  \end{proof}

The proof \wording{of Claim \ref{cl:fIsRed}} is completed now by treating separately each of the four possibilities for $f$:  $f\in r_{i+4}$, $f\in r_{i+5}$, $f\in r_{i+6}$, and $f\in r_{i+7}$.

\medskip\noindent{\bf Subcase 1:}  $f\in r_{i+4}$.

\medskip In this case, $J$ has an end $x'\in \oo{r_{i+4}}$ and the other end of $J$ is $v_{i+2}$.   Lemma \ref{lm:technicalV8colour} (\ref{it:2halfJump3/2}) implies $f'\in r_i\,r_{i+1}$.  We claim that if $f'\in r_{i+1}$, then there is another 1-drawing of $G-e^w$ in which $f$ crosses $e$.

Since $f\in r_{i+4}$ and $f'\in r_{i+1}$, we see that $s_i$ is exposed in the 1-drawing $D$ of $G-e^w$.  \dragominor{Note that $D[Q_{i-1}]$ consists} of a simple closed curve crossed by $D[f']$, with $D[r_i]$ on one side (the {\em inside\/} of $D[Q_{i-1}]$) and most of $D[H]$ on the other side (this is the {\em outside\/} of $D[Q_{i-1}]$).  

We claim that we may reroute $f$ inside $D[Q_{i-1}]$ so that it crosses $e$ instead of $f'$.  \minor{If this fails, then there is an $(H-\oo{s_{i+1}})$-avoiding path $P$ having one end in the component of $r_{i+1}-f'$ that contains $v_{i+1}$, and having its other end in $Q_{i-1}\cup \cc{v_i,r_i,u}$.}  

We note that $D[s_{i+1}-v_{i+1}]$ (which is possibly just $v_{i+6}$) is completely outside $D[Q_{i-1}]$.  Therefore, $P$ is $H$-avoiding.   In $\pp$, we conclude that $P$ cannot start inside $Q_{i+1}$.  Thus, $P$ is contained in a global $H$-bridge.  Therefore, $P$ is a global $H$-bridge; we note that $P$ has one end in the component of $r_{i+1}-f$ containing $v_{i+1}$.  No edge of $r_{i+2}$ can be spanned by $P$, as such an edge is already spanned by $J$ and therefore would contradict Theorem \ref{th:twoGreenCycles}.  In the other direction, $P$ cannot span $e$, as $e$ is red and not $H$-green.  This contradiction shows that $f$ may be redrawn as claimed.  Consequently, we may assume $f'\in r_i$.

\wording{Observe} that no global $H$-bridge can have an end $y$ in $\oo{r_i}$, since $yv_{i+3}$ shows $e$ is $H$-green, a contradiction, and $yv_{i-2}$ shows $f$ is $H$-yellow and, therefore, by Lemma \ref{lm:staysYellow} cannot be crossed in $D$.
It follows from this, using Lemmas \ref{lm:staysYellow} and \ref{lm:staysGreen} and Theorem \ref{th:rimColoured}, that $f'$ is red in $G$.

Suppose first that some edge $e'$ of $\cc{x',r_{i+4},v_{i+5}}$ is red in $G$.  Then $\Delta_e$ and $\Delta_{e'}$ are $R$-separated in $G$ \wording{as witnessed by a subdivision $H'$ of $V_8$ consisting of $R$}, $s_{i-3}$, $s_{i-2}$, and two $RR$-paths $P_1$ and $P_2$, contained in $\Delta_e$ and $\Delta_{e'}$, respectively.  
The paths $P_1$ and $P_2$ are disjoint from $s_{i+1}$ except that, possibly $P_1$ contains $v_{i+6}$.   \wording{Thus, %$R\cup P_1\cup P_2 \cup s_{i-3}\cup s_{i-2}$ is a subdivision of $V_8$ in $G$ that shows, 
$H'$ and Lemma \ref{lm:technicalV8colour} show that} $f$ cannot be crossed in $D$, a contradiction.  Therefore, there is no red edge in $\cc{x',r_{i+4},v_{i+5}}$.  

Furthermore, no global $H$-bridge other than $J$ has an end in $\co{x',r_{i+4},v_{i+5}}$, as otherwise either $e$ is $H$-yellow, or $f$ is in two $H$-green cycles, both contradictions, the latter of Theorem \ref{th:twoGreenCycles}.  We conclude that each edge of $\cc{x',r_{i+4},v_{i+5}}$ is either $H$-yellow or contained in an $H$-green cycle in $\cl(Q_{i-1})$.   
\wording{Subclaim \ref{cl:restOFfBranchNotYellow} shows the following.}

\bigskip\noindent\wording{{\bf Subcase 1 Observation:} {\em Each edge of $\cc{x',r_{i+4},v_{i+5}}$ is in an $H$-green cycle contained in $\cl(Q_{i-1})$. }}

\bigskip  

Suppose there is a red edge $e'$ in $r_{i+5}$.  By \dragominor{Lemma \ref{lm:notSeparated}, $e'$} is $R$-separated from $e$ in $G$.  Therefore, $P_u$ is disjoint from $s_i$ and now we see that $G-e^w$ contains the subdivision $H'$ of $V_{10}$  consisting of $(H-\oo{s_{i+1}})\cup P_u$.   But $J$ is in an $H'$-green cycle $C$ and so, by Lemma \ref{lm:greenCycles} (\ref{it:notCrossed}), $C$, and in particular, $f$, is not crossed in $D$, a contradiction.

Thus, no edge of $r_{i+5}$ is red in $G$.  We consider next a 1-drawing $D_{i-1}$ of $G-\oo{s_{i-1}}$.  By Corollary \ref{co:hyperBOD} and Lemma \ref{lm:BODcrossed}, $\bQ_{i-1}$ is crossed in $D_{i-1}$.  From Lemmas \ref{lm:staysYellow}, \ref{lm:staysGreen}, and \ref{lm:technicalV8colour} (\ref{it:atMostTwo}), no edge in $r_{i+2}\,r_{i+3}\,r_{i+4}$ is crossed in $D_{i-1}$.  Therefore, it is some edge $f''$ in $r_{i+5}$ that is crossed in $D_{i-1}$.   Since no edge of $r_{i+5}$ is red in $G$, Lemmas \ref{lm:staysYellow} and \ref{lm:staysGreen} imply that $f''$ is spanned by a 2.5-jump $J''=x''v_{i-2}$, with $x''\in \oo{r_{i+5}}$.  

Now consider a 1-drawing $D_{i+3}$ of $G-\oo{s_{i+3}}$.  As for $\bQ_{i-1}$ in the preceding paragraph, $\bQ_{i+3}$ is crossed in $D_{i+3}$.  In this \minor{case, $r_{i+1}$ is} contained in the $H$-yellow \minor{cycle $Q_{i+1}$}  (with witnessing $H$-green cycle containing $J''$).  \minor{Therefore, $r_{i+1}$} is not crossed in $D_{i+3}$.  Lemma \ref{lm:technicalV8colour} (\ref{it:atMostTwo}) implies no edge in the span of $J$ is crossed in $D_{i+3}$.  \wording{Subcase 1 Observation combines with Lemma \ref{lm:staysGreen} to} show that no edge in $\cc{x',r_{i+4},v_{i+5}}$ is crossed in $D_{i+3}$.  But now we see that $\bQ_{i+3}$ cannot be crossed in $D_{i+3}$, a contradiction that shows Subcase 1 cannot occur.

\medskip\noindent{\bf Subcase 2:}  $f\in r_{i+5}$.

\medskip In this case, $J$ has an end $x'\in \oo{r_{i+5}}$.  \dragominor{Subclaim \ref{cl:redOr2.5jump} implies that $v_{i+6}$ is not spanned by $J$, so the other end of $J$ is $v_{i+3}$.} Lemma \ref{lm:technicalV8colour} implies the edge $f'$ (crossed by $f$ in $D$) is in $r_{i+2}$.

We first show that there is no global $H$-bridge spanning any edge in $r_i\,r_{i+1}\,r_{i+2}$.  For if $J'$ is a global $H$-bridge that spans such an edge, then $J'$ does not span $e$, while Lemma \ref{lm:globalJumps} (\ref{it:globalNoHnode}) shows it cannot be the 2-jump $v_{i+1}v_{i+3}$.  Theorem \ref{th:twoGreenCycles} shows $J'$ cannot span any edge in $r_{i+3}$, so no edge of $r_{i+1}\,r_{i+2}$ is spanned by a global $H$-bridge.
On the other side, $J'$ would have to span $r_{i-2}\,r_{i-1}$.  In that case, $J$ and $J'$ contradict Lemma \ref{lm:globalJumps} (\ref{it:spanDiffHyper}).  

We also conclude that no edge of $r_{i+5}\,r_{i+6}\,r_{i+7}$ is $H$-yellow.  

Our next principal aim is to show that each edge of $\cc{x',r_{i+5},v_{i+6}}$ is $H$-green, witnessed by a cycle in $\cl(Q_i)$.  
We have already seen that none of the edges in $\cc{x',r_{i+5},v_{i+6}}$ is $H$-yellow; to see they are $H$-green, it suffices by Theorem \ref{th:rimColoured} to show none is red.

 If $e'$ is one of these edges that is red, then Lemma \ref{lm:notSeparated} implies it is $R$-separated from $e$.  We note that $\Delta_e$ and $\Delta_{e'}$ are disjoint, both are in $\cl(Q_i)$, and $w=v_{i+1}$.  Therefore, $e'$ is in $r_{i+5}$, between $x'$ and the peak of $\Delta_e$.   However, this shows $e'$ and $e$ are $R$-separated in $G-e^w$ and, therefore, $f$ and $r_{i+2}$ are $R$-separated in $G-e'$, showing that $f$ cannot cross anything in $D$, a contradiction.  Therefore, no edge of $\cc{x',r_{i+5},v_{i+6}}$ is red, and so they are all $H$-green.
 
\wording{We next show they are not spanned by a global $H$-bridge.  Recall that $\widehat w$ is the vertex in $r_{i+5}$ that is nearest $w$ in $P_w$.}

 If $\widehat w\ne v_{i+6}$, then \dragominor{$(P_w-e^w)\cup (s_{i+1}-e^w)\cup \cc{\widehat w,r_{i+5},v_{i+6}}$ contains an $H$-green cycle that contains $\cc{\widehat w,r_{i+5},v_{i+6}}$ and} is contained in $\cl(Q_i)$.  Theorem \ref{th:twoGreenCycles} shows no edge of $\cc{\widehat w,r_{i+5},v_{i+6}}$ is spanned by a global $H$-bridge, so no edge of $\cc{x',r_{i+5},v_{i+6}}$ is $H$-green by a global $H$-bridge.    In this case, every edge of  $\cc{x',r_{i+5},v_{i+6}}$ is $H$-green by a local cycle.  

So suppose $\widehat w=v_{i+6}$.  By way of contradiction, we suppose there is a global $H$-bridge $J''$ spanning the edge of $r_{i+5}$ incident with $v_{i+6}$. Then $J''$ must be $x''v_{i+8}$, for some $x''\in \cc{x',r_{i+5},v_{i+6}}$.  All edges in $\cc{x',r_{i+5},x''}$ are $H$-green by local cycles.  For $j\in\{i+3,i+8\}$, let $e_{j}$ be the edge of $s_{i+3}$ incident with $v_{j}$ and let $D_{j}$ be a 1-drawing of $G-e_{j}$.  Corollary \ref{co:hyperBOD} implies $\bQ_{i+3}$ has BOD and Lemma \ref{lm:BODcrossed} implies $\bQ_{i+3}$ is crossed in $D_j$.  Lemma \ref{lm:technicalV8colour} (\ref{it:2halfJump1}) implies neither $r_{i+6}\,r_{i+7}$ nor $r_{i+3}\,r_{i+4}$ is crossed in $D_j$, while (\ref{it:V8yellow}) of the same lemma implies neither $r_{i+9}$ nor $r_{i+1}$ is crossed in $D_j$.  Therefore, $r_{i+8}$ crosses $r_{i+2}$.  

If the edge $e'_{i+8}$ of $r_{i+8}$ that is crossed in $D_{i+3}$ is $H$-green because of some 2.5-jump, then  Lemma \ref{lm:technicalV8colour} implies $e'_{i+8}$ can cross only $r_{i+1}$ in $D_{i+3}$.  Therefore, Theorem \ref{th:rimColoured} and Lemmas \ref{lm:staysYellow} and (because no $H$-green cycle containing $e'_{i+8}$ can contain $e_{i+3}$)  \ref{lm:staysGreen} imply $e'_{i+8}$ is red in $G$.  Likewise the edge $e'_{i+2}$ of $r_{i+2}$ that is crossed in $D_{i+8}$ is red in $G$.

By Lemma \ref{lm:notSeparated}, $e'_{i+2}$ and $e'_{i+8}$ are $R$-separated in $G$.  Moreover,  the nearer of the  $(r_{i+7}\,r_{i+8})(r_{i+2}\,r_{i+3})$-paths $P_2$ in $\Delta_{e'_{i+2}}$ and $P_8$ in $\Delta_{e'_{i+8}}$, along with $s_{i}$ and $s_{i+1}$ witness their $R$-separation.    We now show that $P_8$ is contained in $\cl(Q_{i+8})$ and must be disjoint from $s_{i+4}$.

If $P_8$ intersects $s_{i+4}$ at a vertex other than $v_{i+4}$, then $P_8\cup s_{i+4}\cup r_{i+8}$ contains an $H$-green cycle that includes $e'_{i+8}$.  Otherwise, $P_8$ and $s_{i+4}$ intersect just at $v_{i+4}$, in which case $P_8\cup s_{i+4}\cup r_{i+8}$ contains a cycle $C$ that includes $e'_{i+8}$.  The $H$-green cycle containing $J$ shows $C$ is $H$-yellow.  Both possibilities contradict the fact that $e'_{i+8}$ is red.

Symmetrically, we use $J''$ to show that $P_2$ is disjoint from $s_{i+2}$.   Thus, $G$ contains a subdivison of $V_{12}$ consisting of $R$, $P_2$, $P_8$, $s_{i-1}$, $s_i$, $s_{i+1}$ and $s_{i+2}$.  But then $G-e^w$ contains a subdivision of $V_{10}$, yielding the contradiction that $f$  cannot be crossed in $D$.  Therefore, there is no global $H$-bridge $J''$ spanning the edge of $r_{i+5}$ incident with $v_{i+6}$.  

We conclude that every edge of $\cc{x',r_{i+5},v_{i+6}}$ is in an $H$-green cycle contained in $\cl(Q_i)$.

We are now in a position to show that $r_{i+6}$ has a red edge.   By way of contradiction, we suppose $r_{i+6}$ has no red edge.  \wording{If there were a} global $H$-bridge having an end in $\oo{r_{i+6}}$, \wording{then $r_{i+2}$ is $H$-yellow; Lemma} \ref{lm:staysYellow} shows $r_{i+2}$ is not crossed in $D$, a contradiction.  \wording{Thus, no global $H$-bridge has an end in $\oo{r_{i+6}}$.}   

\wordingrem{(New paragraph.)}Let $D_i$ be a 1-drawing of $G-\oo{s_i}$.  Then \wording{Corollary \ref{co:hyperBOD} and Lemma \ref{lm:BODcrossed} imply} $\bQ_i$ is crossed in $D_i$.  However, Lemma \ref{lm:technicalV8colour} shows none of $r_{i+3}\,r_{i+4}\,r_{i+5}\,r_{i+6}$ can be crossed in $D_i$, a contradiction.  

Thus, $r_{i+6}$ has a red edge $e'$.  Then $e$ is $R$-separated from $e'$ in $G$.  If $e$ is $R$-separated from $e'$ in $G-e^w$, then $f$ is $R$-separated from $r_{i+2}$ in $G-e^w$ and so $f$ cannot be crossed in $D$, a contradiction.  Therefore, $e$ is not $R$-separated from $e'$ in $G-e^w$, so Lemma \ref{lm:separated} implies there is $w$-consecutive red edge for $e$, completing the proof in Subcase 2.  

\medskip\noindent{\bf Subcase 3:}  $f\in r_{i+6}$.

\medskip  In this case, $J$ has an end $x'\in \oo{r_{i+6}}$ and the other end is $v_{i+9}$.   Also, Lemma \ref{lm:technicalV8colour} implies $f'$ (crossed by $f$ in $D$) is in $r_{i+9}$.

Suppose by way of contradiction that no edge of $r_{i+6}$ is red in $G$.  \major{We show that no edge of $r_{i+6}$ is $H$-yellow.  As every edge in $\cc{x',r_{i+6},v_{i+7}}$ is $H$-green (because of $J$), we assume by way of contradiction that there is an $H$-yellow edge  in $\cc{v_{i+6},r_{i+6},x'}$.  Let $C$ and $C'$ be the witnessing $H$-yellow and $H$-green cycles, respectively.  Lemma \ref{lm:yellowCycles} (\ref{it:greenGlobal}) implies there is a global $H$-bridge  $B$ contained in $C'$,  while (\ref{it:quadClosure}) shows $C\subseteq \cl(Q_{i+1})$.  
}

\major{\dragominor{The edges of the span $P_B$ of $B$ are all $H$-green, so $P_B$ does not contain the red edge $e$. One end of $B$ is in $\cc{w,r_i,v_{i+1},r_{i+1},v_{i+2}}$ and the other end is in $r_{i+3}$.  Furthermore, Lemma \ref{lm:globalJumps} (\ref{it:spanDiffHyper}) and the presence of $J$ shows $v_{i+4}$ is not the other end of $B$.}}

\major{\dragominor{
Write $C=P_1P_2P_3P_4$ as in Definition \ref{df:yellow} ($H$-yellow).  Because $C$ bounds a face $\Pi[G]$, $C\subseteq \cl(Q_{i+1})$, so that $P_1=r_{i+1}\cap C$.  In particular,  $e^w\notin C$.}
}

\major{Choose the labelling of $P_2$ and $P_4$ so that the end of $P_2$ in $r_{i+6}$ is nearer to $v_{i+6}$ than is the corresponding end of $P_4$.  Since there is an $H$-yellow cycle containing $P_2$ and $s_{i+2}$, Lemma \ref{lm:yellowCycles} (\ref{it:oneBridge}) shows this must be $C$.  It follows that $P_4=s_{i+2}$.
}

\dragominorrem{(Text removed.)}

\major{Consider the subdivision $H'$ of $V_6$ whose rim consists of $(R-\oo{P_B})-\oo{x',r_{i+6},v_{i+6}}$, $B$, $C-\oo{r_{i+6}\cap C}$, and whose spokes are $s_{i-1}$, $s_i$, and $s_{i+3},\cc{v_{i+3},r_{i+3},z}$.  Then $H'$ does not contain $e^w$ and so must contain the unique crossing in $D$.  Since $f$ is not in $H'$, this is a contradiction, showing that no edge of $\cc{v_{i+6},r_{i+6},x'}$ is $H$-yellow.}

\dragominor{ Because of $J$, a global $H$-bridge spanning an edge in $\cc{v_{i+6},r_{i+6},x'}$ would have to be a 2.5-jump having $v_{i+4}$ as an end.  But then $e$ is in an $H$-yellow cycle, which is impossible.}  \wording{Thus, for each edge $\bar e$ of  $\cc{v_{i+6},r_{i+6},x'}$, $\bar e$ is in an $H$-green cycle $C_{\bar e}$ contained in $\cl(Q_{i+1})$.   Theorem \ref{th:twoGreenCycles} implies $C_{\bar e}$ is disjoint from $s_{i+2}$.}

Let $D_{i+2}$ be a 1-drawing of $G-\oo{s_{i+2}}$.   We know that $\bQ_{i+2}$ is crossed in $D_{i+2}$ (Corollary \ref{co:hyperBOD} and Lemma \ref{lm:BODcrossed}).  \wording{Lemma \ref{lm:staysGreen} shows no edge in $\cc{v_{i+6},r_{i+6},x'}$ is crossed in $D_{i+2}$, while $J$ and Lemma \ref{lm:technicalV8colour} show no edge in $\cc{x',r_{i+6},v_{i+7}}r_{i+7}\,r_{i+8}$ is crossed in $D_{i+2}$.  Therefore,  the crossing in $D_{i+2}$ must be of an} edge $f''$ in $r_{i+5}$ crossing $r_{i+1}\,r_{i+2}$.    

If $f''$ is red in $G$, then Lemma \ref{lm:notSeparated} implies $f''$ and $e$ are $R$-separated in $G$.  Since $e^w\in s_{i+1}$,  $f''$ is 
 between (in $r_{i+5}$) $v_{i+5}$ and the peak of $\Delta_e$.  Thus, $f''$ and $e$ are $R$-separated in $G-\oo{s_{i+2}}$ (using $s_{i+3}$ and $s_{i+4}$ as two of the four spokes).  In turn, this implies $f''$ cannot cross $r_{i+1}\,r_{i+2}$ in $D_{i+2}$, a contradiction that shows $f''$ is not red.  Therefore, Lemmas \ref{lm:staysYellow} and \ref{lm:staysGreen} imply $f''$ is spanned by a 2.5-jump $v_{i+3}x''$, with $x''\in \oo{r_{i+5}}$.  

Now let $D_{i+3}$ be a 1-drawing of $G-\oo{s_{i+3}}$.  We know that $\bQ_{i+3}$ is crossed in $D_{i+3}$.  However:\begin{itemize}
\item Lemma \ref{lm:staysGreen} implies $\cc{v_{i+6},r_{i+6},x'}$ is not crossed in $D_{i+3}$;
\item Lemma \ref{lm:technicalV8colour} (\ref{it:atMostTwo}) implies $\cc{x',r_{i+6},v_{i+7}}\rbsp r_{i+7}\,r_{i+8}$ is not crossed in $D_{i+3}$; and
\item Lemma \ref{lm:staysYellow} implies $r_{i+9}$ is not crossed in $D_{i+3}$.
\end{itemize}
These three observations imply the contradiction that $\bQ_{i+3}$ cannot be crossed in $D_{i+3}$, showing that some edge $e'$ in $r_{i+6}$ is red in $G$.   

Obviously, $e'\in \cc{v_{i+6},r_{i+6},x'}$.  \major{By way of contradiction, suppose $e$ and $e'$ are $R$-separated in $G-e^w$.  Because $e\in r_i$ and $e'\in r_{i+6}$, Lemmas \ref{lm:shapeOfV8separator} and  \ref{lm:refinedRseparation} imply that there is a  a witnessing subdivision $H'$ of $V_8$ with two $H'$-spokes in $\cl(Q_i)\cup \cl(Q_{i+1})$ and the other two $H'$-spokes are $s_{i+3}$ and $s_{i+4}$. Furthermore, six of the eight ends of the $H'$-spokes are in the component $R'$ of $R-\{e,e'\}$ containing $r_{i+1}\,r_{i+2}\,r_{i+3}\,r_{i+4}$.   }

\major{Let $y$ be the end of $e'$ in $R'$.  Because $w=v_{i+1}$ and $x'\in \co{v_{i+6},r_{i+6},x}$, $R'$ is contained in 
$$
r_{i+1}\,r_{i+2}\,r_{i+3}\,r_{i+4}\,r_{i+5}\co{v_{i+6},r_{i+6},x}\,.
$$  
In particular, $J$ is not an $H'$-spoke and at most two of the $H'$-spokes have ends in the span of $J$.
Lemma \ref{lm:technicalV8colour} (\ref{it:atMostTwo}) implies the contradiction that the span of $J$, which includes $f$, cannot be crossed in $D$.  We conclude that $e$ and $e'$ are not $R$-separated in $G-e^w$.  Lemma \ref{lm:separated} implies that $e$ has a $w$-consecutive edge, as required.
}

% Suppose there is a subdivision $H'$ of $V_8$  showing $e$ and $e'$ are $R$-separated in $G-e^w$.  There are four components of $R\cap (\Delta_e\cup \Delta_{e'})$, each containing two of the ends of the four $H'$-spokes.  Since $J$ has both ends in one of these four components, $J$ spans at most two $H'$-spokes.  But now Lemma \ref{lm:technicalV8colour} (\ref{it:atMostTwo}) shows that the span of $J$ is not crossed in $D$, contradicting the fact that $f$ is crossed in $D$.

%It follows that $e$ and $e'$ are not $R$-separated in $G-e^w$.  Now Lemma \ref{lm:separated} implies there is a $w$-consecutive red edge for $e$, as required.
  
\medskip\noindent{\bf Subcase 4:}  $f\in r_{i+7}$.

\medskip In this case, $J$ has an end $x'\in\oo{r_{i+7}}$.  If the other end of $J$ is $v_{i+5}$, then Lemma \ref{lm:technicalV8colour} (\ref{it:2halfJump3/2}) implies $f'$ is in $r_{i+3}$.  The contradiction is that $\bQ_{i+1}$ is not crossed in $D$.  Therefore, the other end of $J$ is $v_{i}$.  Lemma \ref{lm:technicalV8colour} (\ref{it:2halfJump3/2}) implies $f'$ is in $r_i\,r_{i+1}$.

Suppose there is no red edge in $r_{i+6}\,r_{i+7}$.  Let $e_{i+8}$ be the edge of $s_{i+3}$ incident with $v_{i+8}$ and let $D_{i+8}$ be a 1-drawing of $G-e_{i+8}$.  Corollary \ref{co:hyperBOD} and  Lemma \ref{lm:BODcrossed} imply $\bQ_{i+3}$ is crossed in $D_{i+8}$.  No edge in $r_{i+6}$ is spanned by a 2.5-jump having an end in $\oo{r_{i+6}}$, as otherwise $e$ is $H$-yellow.  Therefore, Lemmas \ref{lm:staysYellow} and \ref{lm:staysGreen} imply no edge of $r_{i+6}$ is crossed in $D_{i+8}$.  Lemma \ref{lm:technicalV8colour} (\ref{it:atMostTwo}) shows that no edge of $\cc{x',r_{i+7},v_{i+8}}\rbsp r_{i+8}\,r_{i+9}$ is crossed in $D_{i+8}$.  We conclude that some edge $\hat f$ of $\cc{v_{i+7},r_{i+7},x'}$  is crossed in $D_{i+8}$.

Lemmas \ref{lm:staysYellow} and \ref{lm:staysGreen} imply that there is a 2.5-jump $v_{i+5}x''$, with $x''\in \oc{v_{i+7},r_{i+7},x'}$, \wording{and, furthermore, that} $\hat f\in \cc{v_{i+7},r_{i+7},x''}$.  Lemma \ref{lm:technicalV8colour} (\ref{it:2halfJump3/2}) implies $\hat f$ crosses an edge $e'$ in $r_{i+4}$.  Lemmas \ref{lm:staysYellow} and \ref{lm:staysGreen} imply $e'$ is \wording{red in $G$}.

\wordingrem{(Text moved.)}Let $y$ be the end of $e'$ nearest $v_{i+5}$ in $r_{i+4}$.  The $r_ir_{i+5}$-path $P_0$ contained in the $uu^e$-subpath of $\Delta_e-e$ must have $v_{i+5}$ as an end, since otherwise $e$ is either $H$-green or $H$-yellow.  Symmetrically, \minor{the $r_{i+4}r_{i+9}$-path} $P_4$ contained in the $yy^{e'}$-subpath of $\Delta_{e'}-e'$ has $v_i$ as an end.  
   
   \wordingrem{(Slight reordering of text.)}Lemma \ref{lm:notSeparated} implies $e'$ is $R$-separated from $e$ in $G$.  \wording{Therefore, $P_0$ and $P_4$ are disjoint.  T}his implies that $R\cup P_0\cup P_4\cup s_{i+2}\cup s_{i+3}\cup s_{i+4}$ is a subdivision $V_{10}$ in $G-e^w$, showing that $f$ cannot be crossed in $D$, a contradiction \wording{that proves there} is a red edge $e''$ in $r_{i+6}\,r_{i+7}$. 

\wording{Suppose $e$ and $e''$ are $R$-separated in $G-e^w$.  }   \major{Lemma \ref{lm:shapeOfV8separator} implies that  a witnessing sub\-di\-vision $H'$ of $V_8$ is such that the component $R'$ of $R-\{e,f\}$ containing six of the eight ends of $H'$-spokes contains $r_{i+1}\,r_{i+2}\,r_{i+3}\,r_{i+4}\,r_{i+5}$.  }

\major{However, $J$ spans $\cc{x',r_{i+7},v_{i+8}}r_{i+8}\,r_{i+9}$, so at most two $H'$-spokes have ends that are in the span of $J$.  Lemma \ref{lm:technicalV8colour} (\ref{it:atMostTwo}) combines with $H'$ to yield the contradiction that the span of $J$, including $f$, cannot be crossed in $D$.
}
 It follows that $e$ and $e''$ are not $R$-separated in $G-e^w$, and now Lemma \ref{lm:separated} implies $e$ has a $w$-consecutive red edge\dragominor{, completing the proof of Claim \ref{cl:fIsRed}}.  \end{proof}

With Claim \ref{cl:fIsRed} in hand, we may assume $f$ is red.   Recall that $f$ and $f'$ are the edges crossed in $D$, with $f\in r_{i+4}\,r_{i+5}\,r_{i+6}\,r_{i+7}$ and $f'\in r_{i-1}\,r_{i}\,r_{i+1}\,r_{i+2}$. The proof in Case 1 is completed by finding a $w$-consecutive red edge for $e$.  We proceed in four cases, basically depending on which side of $\Delta_e$ each of $f$ and $f'$ is on.

\medskip\noindent{\bf Subcase 1:} {\em $f$ is in $r_{i+4}\lbsp\cc{v_{i+5},r_{i+5},u^e}$  and $f'$ is in $r_{i-1}\cc{v_i,r_i,u}$.}

\medskip  Since $f$ and $f'$ are not $R$-separated in $G-e^w$ and, therefore, not $R$-separated in $G$,  $f'$ cannot be red (Lemma \ref{lm:notSeparated}).  If $f'$ is $H$-yellow in $G$, then Lemma \ref{lm:staysYellow} shows it is not crossed in $D$.  Therefore, Theorem \ref{th:rimColoured} implies $f'$ is $H$-green in $G$.  Lemma \ref{lm:staysGreen} says there is a 2.5-jump $J$ spanning $f'$ so that $f'$ is in the partial $H$-rim branch spanned by $J$.  As $J$ cannot span $e$ ($e$ is not $H$-green), Lemma \ref{lm:technicalV8colour} (\ref{it:2halfJump3/2}) and our current context ($f$ in $r_{i+4}\,r_{i+5}$ and $f'$ in $r_{i-1}\,r_i$) implies \wording{this is possible only if} $f'\in r_{i-1}$ and $f\in r_{i+5}$.  However, the red edges $f$ and $e$ are $R$-separated in $G$, implying that $G-e^w$ still has five spokes (we may replace $s_{i+1}$ with the $r_ir_{i+5}$ subpath of $P_u$).  Thus, $f'$ is $H'$-green in $G-e^w$, for some $H'\topol V_{10}$.  This is impossible, as $f'$ is crossed in $D$ (Lemma \ref{lm:greenCycles} (\ref{it:notCrossed})).

\medskip\noindent{\bf Subcase 2:} {\em $f\in r_{i+4}\lbsp\cc{v_{i+5},r_{i+5},u^e}$ and $f'\in \cc{u,r_i,v_{i+1}}\rbsp r_{i+1}\,r_{i+2}$.}

\medskip   In this subcase, $f$ is $R$-separated in $G$ from $e$. The witnessing subdivision $H'$ of $V_8$ can be chosen to contain the ``nearer" $(r_{i-1}\,r_i)(r_{i+4}\,r_{i+5})$-paths, one from each of $\Delta_f$ and $\Delta_e$, \wording{\dragominor{together} with the $H$-spokes $s_{i+2}$ and $s_{i+3}$ to construct $H'$.}   

We claim that this $H'$ also shows that $f$ is $R$-separated from $f'$ in $G-e^w$.  If $f\in r_{i+4}$, then, since $\bQ_{i+1}$ is crossed in $D$, $f'\in r_i\, r_{i+1}$.  In this case, $H'$ contains the spokes $s_{i+2}$ and $s_{i+3}$, so indeed $f$ and $f'$ are in disjoint $H'$-quads, as required.  If $f\in r_{i+5}$, then $f'\in r_{i+2}$ by Lemma \ref{lm:technicalV8colour} (\ref{it:2halfJump3/2}), and again $f$ and $f'$ are in disjoint $H'$-quads\dragominor{, showing $f$ and $f'$ are $R$-separated in $G-e^w$.  Observation \ref{obs:separated} (\ref{it:separatedNoCross}) yields the contradiction that $f$ and $f'$ do not cross each other in $D$.}

\medskip\noindent{\bf Subcase 3:} {\em  $f\in \cc{w^e,r_{i+5},v_{i+6}}\rbsp r_{i+6}\,r_{i+7}$ and $f'\in r_{i-1}\cc{v_i,r_i,u}$. } 

\medskip If $f$ is $R$-separated from $e$ in $G-e^w$, then it cannot cross $f'$ in $D$, a contradiction.  Otherwise, Lemma \ref{lm:separated} implies there is a $w$-consecutive red edge for $e$.

\medskip\noindent{\bf Subcase 4:} {\em $f\in \cc{w^e,r_{i+5},v_{i+6}}\rbsp r_{i+6}\,r_{i+7}$ and $f'\in \cc{u,r_i,v_{i+1}}\rbsp r_{i+1}\,r_{i+2}$}.

\medskip If $f'=e$, then we are done:  Lemma \ref{lm:separated} implies $e$ has a $w$-consecutive edge.   

So we assume $f'\ne e$. If $f'$ is red in $G$, then Lemma \ref{lm:notSeparated} implies it is $R$-separated from $f$ in $G$.  Therefore, $f'$ is $R$-separated from $f$ in $G-e^w$, a contradiction; so $f'$ is not red in $G$. 

\major{Suppose by way of contradiction that $f'$ is $H$-yellow, with witnessing $H$-yellow and $H$-green cycles $C$ and $C'$, respectively.  If $e^w$ is not in $C$, then Lemma \ref{lm:staysYellow} yields the contradiction that $f'$ is not crossed in $D$.}  

\major{If $e^w$ is in $C$, then let $P_2$ be the $RR$-subpath of $C$ containing $e^w$, let $P'$ be the $RR$-subpath of $\Delta_e-e$ that contains $e^w$, and let $J$ be the global $H$-bridge contained in $C'$.  The end of $P'$ in $r_{i+5}$ cannot be in the interior of the span of $J$, as then either the peak of $\Delta_e$ is a vertex, in which case we have that  $\Delta_e$ is $H$-yellow, yielding the contradiction that $e$ is $H$-yellow, or the peak of $\Delta_e$ consists of parallel edges, both in the span of $J$, contradicting Theorem \ref{th:twoGreenCycles}.}

\major{It follows that $P'$ has its end in $r_{i+5}$, but not in the interior of the span of $J$.  On the other hand, $P_2$ has, by Definition \ref{df:yellow}, one end in the interior of the span of $J$.  But now $(P_2\cup P')-e^w$ contains an $R$-avoiding subpath that intersects at most the one spoke $s_{i+1}$.  Therefore, this subpath is in an $H$-green cycle and contains an edge spanned by $J$, contradicting Theorem \ref{th:twoGreenCycles}.  It follows that $f'$ is $H$-green.
}  

\wording{Theorem \ref{th:no3jump} implies that $H$ has no 3-jumps.  If $f'$ is} $H$-green by a 2.5-jump $J$, then, because $J$ cannot span $e$, Lemma \ref{lm:technicalV8colour}  (\ref{it:2halfJump3/2}) implies $f\in \cc{w^e,r_{i+5},v_{i+6}}\rbsp r_{i+6}$ and $f'\in r_{i+2}$.  Let $x$ be the end of $f$ closest to $w^e$ in $r_{i+5}\,r_{i+6}$.  Let $H'$ be the subdivision of $V_8$ obtained from $H-\oo{s_{i+1}}$ by replacing $s_{i+2}$ with $P_x$ \dragominor{(recall this is defined in Theorem \ref{th:redHasDelta} (\ref{it:deltaDetail}))}.   Now $f$ and $f'$ violate  Lemma \ref{lm:technicalV8colour} (\ref{it:2halfJump3/2}) relative to $H'$.    Therefore, $f'$ is not $H$-green by a 2.5-jump.

Lemma \ref{lm:technicalV8colour} implies $f'$ is not $H$-green by a 2-jump, as then it is not crossed in $D$.  Thus, $f'$ is $H$-green by a local $H$-green cycle $C$.  Lemma \ref{lm:staysGreen} implies $e^w$ is in $C$.  Since $f$ cannot be $R$-separated from $f'$ in $G-e^w$, we see that $f$ is not $R$-separated from $e$ in $G-e^w$.  Now Lemma \ref{lm:separated} implies there is a $w$-consecutive red edge for $e$, \dragominor{concluding the proof for Case 1.}

%\newpage
%\vglue .25truein

\bigskip
\noindent{\bf Case 2:}  {\em $e^w$ not incident with $w$.}

\medskip  By Theorem \ref{th:redHasDelta} (\ref{it:deltaDetail}), $w$ is incident with a global $H$-bridge $J_w$.  Since $w$ is not incident with $e^w$, $w\ne v_{i+1}$, and therefore $J_w$ is the 2.5-jump $wv_{i+3}$.

We observe that, since $e_w$ is not incident with $w$, its incident vertex in $r_i$ is in the interior of the span of $J_w$.  Moreover, $e_w$ is the first edge of an $R$-avoiding $r_ir_{i+5}$-path $P$ in $\Delta_e-e$, which, together with a subpath of $r_i\,r_{i+1}$, $s_{i+2}$, and a subpath of $r_{i+5}\,r_{i+6}$ makes an $H$-yellow cycle $C$.  By Lemma \ref{lm:yellowCycles} (\ref{it:oneBridge}), there is only one $C$-bridge in $G$ and, therefore, $P=s_{i+1}$.  In particular, $e_w\in s_{i+1}$.

\begin{claim}\label{cl:noYellow} No edge in  $r_{i+7}\,r_{i+8}$ is $H$-yellow.\end{claim}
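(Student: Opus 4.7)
The plan is to argue by contradiction: assume some $e'\in r_{i+7}\cup r_{i+8}$ is $H$-yellow, with witnessing $H$-yellow cycle $C=P_1P_2P_3P_4$ (in the notation of Definition~\ref{df:yellow}, with $e'\in P_3$) and witnessing $H$-green cycle $C'$. Lemma~\ref{lm:yellowCycles}(\ref{it:greenGlobal}) supplies a global $H$-bridge $B\subseteq C'$; by Theorems~\ref{th:globalBridges} and~\ref{th:no3jump}, $B$ is a 2-jump or a 2.5-jump. Lemma~\ref{lm:yellowCycles}(\ref{it:quadClosure}) yields an index $j$ with $C\subseteq\cl(Q_j)$; since $\cl(Q_j)\cap R=r_j\cup r_{j+5}$ and $P_1\cup P_3$ is not contained in any 3-rim path, $P_3$ must lie in whichever of $r_j,r_{j+5}$ contains $e'$ and $P_1$ in the other. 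This forces $P_1\subseteq r_{i+2}$ in the $r_{i+7}$ subcase and $P_1\subseteq r_{i+3}$ in the $r_{i+8}$ subcase.

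All contradictions are extracted from the 2.5-jump $J_w=wv_{i+3}$ produced at the start of Case~2, whose $H$-green cycle spans the rim path $\cc{w,r_i,v_{i+1}}\,r_{i+1}\,r_{i+2}$. Two constraints suffice: (i) by Lemma~\ref{lm:globalJumps}(\ref{it:globalNoHnode}), $v_{i+3}\notin\att(B)$; and (ii) by Theorem~\ref{th:twoGreenCycles}, the spans of $B$ and $J_w$ share no rim edge. For $e'\in r_{i+7}$, $J_w$ already spans every edge of $r_{i+2}$, so either $P_1$ contains a rim edge of $r_{i+2}$ (immediate clash with (ii)) or $P_1$ reduces to a single vertex $v\in r_{i+2}$ lying interior to the span of $B$, in which case $B$'s span must contain an edge of $r_{i+2}$ incident with $v$, again contradicting (ii).

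For $e'\in r_{i+8}$ the approach is the same but requires a brief enumeration of the 2- and 2.5-jumps whose span meets $r_{i+3}$: each candidate either has $v_{i+3}$ as attachment (killed by (i)) or its span reaches into $r_{i+1}\cup r_{i+2}$ (killed by (ii)), with the sole stubborn survivor being the 2.5-jump $B=xv_{i+6}$ with $x\in\oo{r_{i+3}}$. This is the main obstacle of the proof; the plan for handling it is to invoke the additional Case~2 structure --- namely that $\Delta_e$ contains $s_{i+1}$ and that its peak coincides with $v_{i+6}$ --- and to combine this with the fact that, because $P_1\subseteq\oo{x,r_{i+3},v_{i+4}}$ has no $H$-node endpoint, Lemma~\ref{lm:yellowCycles}(\ref{it:yellowP2P4}) forces $P_2$ and $P_4$ to be entirely $H$-avoiding paths in $\cl(Q_{i+3})$ joining the interior of $r_{i+3}$ to $r_{i+8}$. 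The resulting union of $B$ together with $P_2\cup P_4$ and the $\Delta_e$-bridge structure at $v_{i+6}$ is meant to contradict either Theorem~\ref{th:redHasDelta}(\ref{it:twoBridges})-(\ref{it:deltaDetail}) or tidiness (Definition~\ref{df:tidy}(\ref{it:QnoOverlap})), closing the last case.
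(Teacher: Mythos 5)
Your argument has a genuine gap in the $r_{i+7}$ subcase, and your $r_{i+8}$ subcase stops at a plan sketch that does not match the paper's route.

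For $e'\in r_{i+7}$, both of your constraints (i) and (ii) are derived from Lemma~\ref{lm:globalJumps}(\ref{it:globalNoHnode}) and Theorem~\ref{th:twoGreenCycles} under the tacit assumption that $B\ne J_w$; they tell you nothing if $B=J_w$. But since $P_1\subseteq r_{i+2}$ and $J_w$'s span contains all of $r_{i+1}\,r_{i+2}$, the chain of ``clashes with (ii)'' that you deduce actually proves $B=J_w$ rather than producing a contradiction. That is precisely the only surviving case, and it is the heart of the matter: the paper observes that with $J'=J_w$, the union $C\cup Q_{i+1}$ contains an $H$-yellow cycle $C''$ that has a $C''$-interior bridge holding an edge of $s_{i+2}$, violating Lemma~\ref{lm:yellowCycles}(\ref{it:oneBridge}) (the one-bridge property of yellow cycles). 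Your write-up never reaches this case, so the $r_{i+7}$ subcase is not actually closed.

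For $e'\in r_{i+8}$, you correctly pin down $B=xv_{i+6}$ with $x\in\oo{r_{i+3}}$ as the only surviving jump, and you correctly note $P_1\subseteq r_{i+3}$ via Lemma~\ref{lm:yellowCycles}(\ref{it:quadClosure}). But what follows is, by your own wording, ``the plan for handling it,'' not a proof, and the targets you name (Theorem~\ref{th:redHasDelta}(\ref{it:twoBridges})--(\ref{it:deltaDetail}) or Definition~\ref{df:tidy}(\ref{it:QnoOverlap})) are not where the paper's contradiction lives. The paper again uses the one-bridge property: $C\cup Q_{i+4}$ contains an $H$-yellow cycle with an interior bridge that carries an edge of $s_{i+4}$, contradicting Lemma~\ref{lm:yellowCycles}(\ref{it:oneBridge}). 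A minor inaccuracy in your sketch: since $\oo{C'\cap R}$ excludes only the two endpoints $x$ and $v_{i+6}$ of the span, $P_1$ can end at $v_{i+4}$, so the claim that ``$P_1\subseteq\oo{x,r_{i+3},v_{i+4}}$ has no $H$-node endpoint'' (used to force $P_2$ and $P_4$ to be entirely $H$-avoiding via Lemma~\ref{lm:yellowCycles}(\ref{it:yellowP2P4})) is not forced. Both subcases are settled in the paper by the same idea --- producing a yellow cycle with more than one bridge --- and neither of your routes delivers that punchline.
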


\begin{proof}\startSubclaims  Suppose some edge $e'$ in $r_{i+7}\,r_{i+8}$ is $H$-yellow.  Let $C$ and $C'$ be the witnessing $H$-yellow and $H$-green cycles, respectively.  By Lemma \ref{lm:yellowCycles} (\ref{it:greenGlobal}), $C'$ contains a global $H$-bridge $J'$.

In the case $e'$ is in $r_{i+7}$, the span of $J'$ contains a vertex of $r_{i+2}$ in its interior.  Theorem \ref{th:twoGreenCycles} implies  $J'=J_w$.  But now $C\cup Q_{i+1}$ contains an $H$-yellow cycle $C''$ for which there is a $C''$-interior $C''$-bridge containing an edge of $s_{i+2}$, contradicting Lemma \ref{lm:yellowCycles} (\ref{it:oneBridge}).  Therefore, no edge in $r_{i+7}$ is $H$-yellow.

Now we suppose $e'$ is in $r_{i+8}$.  Lemma \ref{lm:globalJumps} (\ref{it:globalNoHnode}) shows $J'$ does not have $v_{i+3}$ as an end, so $J'$ has one end $x\in \oo{r_{i+3}}$ and its other end is $v_{i+6}$.  But now $C\cup Q_{i+4}$ contains an $H$-yellow cycle $C''$ having a $C''$-interior $C''$-bridge containing an edge of $s_{i+4}$, contradicting Lemma \ref{lm:yellowCycles} (\ref{it:oneBridge}).  
\end{proof}

\begin{claim}\label{cl:ri+7HasRed} Some edge of $r_{i+7}$ is red.\end{claim}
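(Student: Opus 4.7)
The plan is by contradiction: suppose no edge of $r_{i+7}$ is red. By Claim \ref{cl:noYellow} and Theorem \ref{th:rimColoured}, every edge of $r_{i+7}$ is then $H$-green. Recall that in Case 2 there is a 2.5-jump $J_w = wv_{i+3}$ with $w\in\oo{r_i}$ and that $e^w\in s_{i+1}$. The $H$-green cycle $C_w = J_w\cup\cc{w,r_i,v_{i+1},r_{i+1},v_{i+2},r_{i+2},v_{i+3}}$ contains every edge of $r_{i+1}\cup r_{i+2}$, and by Theorem \ref{th:twoGreenCycles} it is the unique $H$-green cycle through any such edge.

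I would take $e^*$ to be an interior edge of $s_{i+2}$ and consider a 1-drawing $D^*$ of $G-e^*$. Since the $\bQ_{i+2}$-bridge containing $s_{i+2}$ is planar, Corollary \ref{co:hyperBOD} together with Lemma \ref{lm:BODcrossed} gives that $\bQ_{i+2}$ is crossed in $D^*$. Lemma \ref{lm:staysGreen} applied to $C_w$ (noting that $e^*\notin C_w$ and that neither end of $J_w$ lies in $\oo{r_{i+1}}\cup\oo{r_{i+2}}$) rules out crossings in $r_{i+1}$ and $r_{i+2}$. To rule out crossings in $r_{i+7}$, I would argue that Lemma \ref{lm:staysGreen} forces any such crossing to come from a 2.5-jump $J'$ with an end in $\oo{r_{i+7}}$, and the only candidates are $J'=xv_{i+9}$ and $J'=v_{i+5}x$. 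Each is ruled out by combining $J_w$ with $J'$: the first creates two distinct $H$-green cycles through an edge of $r_{i+2}$ or $r_{i+3}$, contradicting Theorem \ref{th:twoGreenCycles}; the second produces an $H$-yellow edge in $r_{i+7}$ with $C_w$ as witnessing $H$-green cycle, contradicting Claim \ref{cl:noYellow}. Here Lemma \ref{lm:globalJumps} is also invoked to preclude the jumps from sharing an $H$-node with $J_w$ or violating the spanning constraints across hyperquads.

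With $r_{i+1}$, $r_{i+2}$, and $r_{i+7}$ all uncrossed in $D^*$, the crossing of $\bQ_{i+2}$ must involve an edge $\hat f\in r_{i+6}$. The final step is to show this is impossible by exhibiting a subdivision of $V_8$ in $G-e^*$ witnessing the $R$-separation of $\hat f$ from its crossing partner in $r_{i+1}\cup r_{i+2}$, mirroring the closing moves in Subcases 2 and 3 of Case 1. For this, I would apply Lemma \ref{lm:notSeparated} to conclude $R$-separation in $G$, then use Lemma \ref{lm:refinedRseparation} to place the two crucial $H'$-spokes of the witnessing $V_8$ inside $\cl(Q_{i-1})\cup\cl(Q_i)$, which automatically avoids $e^*\in s_{i+2}$, so that the separation persists in $G-e^*$ and Observation \ref{obs:separated}(\ref{it:separatedNoCross}) delivers the contradiction. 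The main obstacle will be the middle step: cleanly discharging the two candidate 2.5-jumps on $r_{i+7}$, since that is where the interaction between a hypothetical new global bridge and $J_w$ must be handled with some care using Theorem \ref{th:twoGreenCycles} together with Lemma \ref{lm:globalJumps}.
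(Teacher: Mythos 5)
Your approach is genuinely different from the paper's: you delete an interior edge $e^*$ of $s_{i+2}$ and invoke $\bQ_{i+2}$, whereas the paper deletes the edge $e_3$ of $s_{i+3}$ incident with $v_{i+3}$ and works with $\bQ_{i+3}$. Unfortunately this particular choice creates a gap that your argument does not close.

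The problem is that $s_{i+2}\subseteq Q_{i+1}$, and $Q_{i+1}$ is precisely the $H$-yellow cycle (witnessed by the $H$-green cycle $C_w$ containing $J_w$) whose $H$-yellow edges are those of $r_{i+6}$. Lemma \ref{lm:staysYellow} requires the deleted edge to lie \emph{outside} $C\cup C'\cup R$, so with $e^*\in s_{i+2}\subseteq Q_{i+1}=C$ the lemma simply does not apply to $r_{i+6}$, and you cannot rule out that the crossing of $\bQ_{i+2}$ lands on an edge $\hat f\in r_{i+6}$. (This is exactly why the paper chooses $e_3\in s_{i+3}$, which avoids $Q_{i+1}$ entirely.) Once you concede that $\hat f$ may lie in $r_{i+6}$, your closing step collapses: you plan to apply Lemma \ref{lm:notSeparated} and Lemma \ref{lm:refinedRseparation}, but Lemma \ref{lm:notSeparated} requires both crossed edges to be neither $H$-green nor $H$-yellow, while $\hat f\in r_{i+6}$ is $H$-yellow, so the hypothesis fails. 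There is also a small location error --- given that $r_{i+1}$ and $r_{i+2}$ are uncrossed, $\hat f$'s crossing partner sits in $r_i\cup r_{i+3}$, not in $r_{i+1}\cup r_{i+2}$ as you state. Finally, you have silently omitted the paper's two preparatory Subclaims (no red edges in $r_{i+3}\,r_{i+4}$ or $r_{i+8}\,r_{i+9}$); while your choice of $\bQ_{i+2}$ sidesteps $r_{i+9}$, it brings in $r_{i+5}$ and $r_{i+8}$, and nothing in your outline justifies excluding those from being the crossed edge either. The net effect is that your plan does not reach a contradiction; the paper's choice of $\bQ_{i+3}$ (paired with the subclaims ruling out red edges in $r_{i+8}\,r_{i+9}$) is essential to making the "forced to be a 2.5-jump" case analysis work.
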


\begin{proof}\startSubclaims  Suppose no edge of $r_{i+7}$ is red.  By Theorem \ref{th:rimColoured} and Claim \ref{cl:noYellow}, every edge in $r_{i+7}$ is $H$-green.  

\begin{subclaim}\label{sc:niceRed}  If there is a red edge in either $r_{i+3}\,r_{i+4}$ or $r_{i+8}\,r_{i+9}$, \dragominor{then there is a red edge  in $r_{i+8}\,r_{i+9}$.  Furthermore, among all such red edges, the one $e''$ with an end $x''$  nearest $v_{i+8}$ in $r_{i+8}\,r_{i+9}$ is such that $(e'')^{x''}$ is not incident with $x''$ (that is, Case 1 does not apply to $e''$ and $x''$).} \end{subclaim}

\begin{proof} We first suppose no edge of $r_{i+3}\,r_{i+4}$ is red.  Then there is a red edge  in $r_{i+8}\,r_{i+9}$.  \dragominor{For any such red edge $e''$, if  the end $x''$ of $e''$ nearest to $v_{i+8}$ is incident with $(e'')^{x''}$\!, then Case 1 shows there is an $x''$-consecutive red edge $\hat e$ for $e''$.}  By Definition \ref{df:consecutive} (\ref{it:whereNext}),  $\hat e\in r_{i+1}\,r_{i+2}\,r_{i+3}\,r_{i+4}$. \dragominor{ Since the edges in $r_{i+1}\,r_{i+2}$ are $H$-green,  $\hat e\notin r_{i+1}\,r_{i+2}$.}  But then $\hat e$ is a red edge in $r_{i+3}\,r_{i+4}$, a contradiction.   Therefore, $x''$ is not incident with $(e'')^{x''}$, as required.

The alternative is that there is a red edge in $r_{i+3}\,r_{i+4}$.  Among all such edges, let $e'$ be the one having an incident vertex $x'$ nearest $v_{i+3}$ in $r_{i+3}\,r_{i+4}$.  Because of Theorem \ref{th:twoGreenCycles} and $J_w$, $x'$ is not incident with a 2.5-jump \dragominor{$x'v_{i+1}$ or $x'v_{i+2}$}.  Therefore, \dragominor{$x'$ is incident with $(e')^{x'}$}, and we conclude \dragominor{from Case 1} that there is an $x'$-consecutive red edge $e''$ for $e'$.  Because of $J_w$, every edge in $r_{i+6}$ is either $H$-yellow or $H$-green and so, in particular, is not red.  By assumption, no edge of $r_{i+7}$ is red.  \wording{By Definition \ref{df:consecutive} (\ref{it:whereNext}),}  $e''\in r_{i+8}\,r_{i+9}$.  Also, $\Delta_{e''}$ separates  $s_{i+3}$ from $\Delta_{e'}$ in $\cl(Q_{i+3})\cup \cl(Q_{i+4})$.

Let $x''$ be the end of $e''$ nearest $v_{i+8}$ in $r_{i+8}\,r_{i+9}$.  By way of contradiction, suppose $x''$ \dragominor{is incident with $(e'')^{x''}$\!.  Then Case 1 shows} there is an $x''$-consecutive red edge $\hat e$ for $e''$.  But $\hat e$ is not in $r_{i+1}\,r_{i+2}$ because $J_w$ makes every one of those edges $H$-green.  Therefore, $\hat e$ is in $r_{i+3}\,r_{i+4}$.  Since $\Delta_{\hat e}$ separates $s_{i+3}$ from $\Delta_{e''}$ in $\cl(Q_{i+3})\cup \cl(Q_{i+4})$, we see that $\hat e$ is  nearer to $v_{i+3}$ than $e'$ is, contradicting the choice of $e'$.  Therefore \dragominor{$x''$ is not incident with $(e'')^{x''}$}\!, as required.     \end{proof}

\begin{subclaim}\label{sc:noRed} No edge in either $r_{i+3}\,r_{i+4}$ or $r_{i+8}\,r_{i+9}$ is red. \end{subclaim}

\begin{proof}  Suppose by way of contradiction that there is a red edge in either $r_{i+3}\,r_{i+4}$ or $r_{i+8}\,r_{i+9}$.  By Subclaim \ref{sc:niceRed}, there is a red edge $e''$ in $r_{i+8}\,r_{i+9}$ so that the end $x''$ of $e''$ nearest $v_{i+8}$ in $r_{i+8}\,r_{i+9}$ \dragominor{is not incident with $(e'')^{x''}$}\!.  Therefore, Theorem \ref{th:redHasDelta} (\ref{it:deltaDetail}) implies $x''$ is incident with a 2.5-jump that is either $x''v_{i+6}$ or $x''v_{i+7}$.  It cannot be the former, as the 2.5-jumps $x''v_{i+6}$ and $J_w$ contradict Lemma \ref{lm:globalJumps} (\ref{it:spanDiffHyper}).  Therefore, $x''$ is in the interior of $r_{i+9}$ and the 2.5-jump is $x''v_{i+7}$.
The contradiction is obtained by showing that $\crn(G)\le 1$.  

Let $D$ be a 1-drawing of $G-\oo{r_{i+7}}$.  There is still a subdivision $H'$ of $V_8$ in $G-\oo{r_{i+7}}$ consisting of the rim $(R-\oo{r_{i+7}})\,\cup\, x''v_{i+7}$ and the four spokes $s_i$, $s_{i+1}$, $s_{i+2}$ and %either 
$s_{i+3}\,r_{i+8}\lbsp\cc{v_{i+9},r_{i+9},x''}$% or $s_{i+4}\lbsp\cc{v_{i+9},r_{i+9},x''}$
.  We note that $x''v_{i+7}$ is an $H'$-rim branch, contained in an $H'$-quad $Q$ consisting of $s_{i+2}$, $r_{i+2}$, $s_{i+3}\,r_{i+8}\lbsp\cc{v_{i+9},r_{i+9},x''}$, and $x''v_{i+7}$.

We aim to show $D[Q]$ is clean, so by way of contradiction, we assume  $D[Q]$ is not clean.  The $H'$-rim branches of $Q$ are $r_{i+2}$ and $x''v_{i+7}$.  
Since $r_{i+1}\,r_{i+2}$ is not crossed in $D$ (Lemma \ref{lm:technicalV8colour} (\ref{it:2halfJump1})), we deduce that $x''v_{i+7}$ is crossed in $D$.  Furthermore, the cycle $r_{i+3}\,s_{i+4}\,r_{i+8}\,s_{i+3}$ (which is $Q_{i+3}$ in $G$) is $H'$-close and, therefore Lemmas \ref{lm:closeIsPrebox} and \ref{lm:preboxClean} imply $Q_{i+3}$ is not crossed in $D$.  It follows that $x''v_{i+7}$ crosses $r_{i+4}$ in $D$, so $s_{i+3}\,r_{i+8}\lbsp\cc{v_{i+9},r_{i+9},x''}$ is exposed in $D$, from which $D[H']$ is completely determined.  (See Figure \ref{fg:D[H']determined}\wordingrem{(This figure has been modified.)}.)

\begin{figure}[!ht]
\begin{center}
\scalebox{1.2}{\input{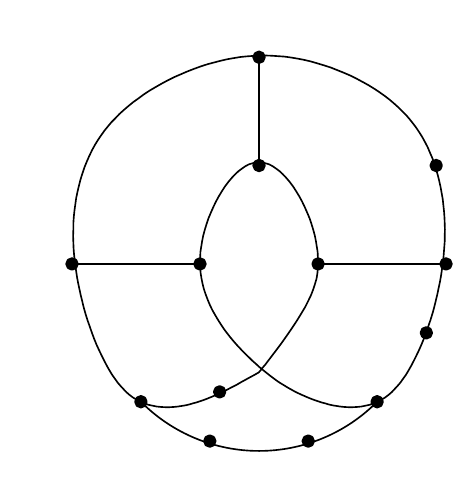_t}}
\end{center}
\caption{\wordingrem{(Figure modified)}$D[H']$}\label{fg:D[H']determined}
\end{figure}

Our contradiction is obtained from a detailed consideration of $\Delta_{e''}$.  We first show that $v_{i+4}$ is in the peak of $\Delta_{e''}$.  To see this, we note that the $r_{i+9}r_{i+4}$-subpath of $\Delta_{e''}-e''$ that starts nearest $x''$ is simply $s_{i+4}$, as otherwise there is an $H$-yellow cycle $C$ with more than one $C$-bridge.  Theorem \ref{th:redHasDelta} (\ref{it:deltaDetail}) implies the subpath of $\Delta_{e''}-e''$ from $x''$ to the peak of $\Delta_{e''}$ has at most one edge in $R$; therefore, there is no edge of $r_{i+4}$ between $v_{i+4}$ and the peak of $\Delta_{e''}$.  That is, $v_{i+4}$ is in the peak of $\Delta_{e''}$.

Let $y''$ be the end of $e''$ different from $x''$.  Because $y''$ is too close to $J_w$, it is not incident with a global $H$-bridge.  Thus, the edge of $\Delta_{e''}-e''$ incident with $y''$ is not in $R$ and, therefore, is the first edge of an $r_{i+9}r_{i+4}$-subpath $P$ of $\Delta_{e''}-e''$.  Let $z''$ be the other end of $P$.

We note that $z''\ne v_{i+4}$, as $D[P]$ cannot cross $D[H']$.  Therefore, $z''\in \ob{v_{i+4},r_{i+4},}$ $\bc{v_{i+5}}$.  If $z''$ is in the peak of $\Delta_{e''}$, then $z''$ and $v_{i+4}$ are joined by parallel edges, one of which is not in $H'$.  That one must cross $D[H']$, which is a contradiction.  Therefore, $z''$ is not in the peak of $\Delta_{e''}$.  But now Theorem \ref{th:redHasDelta} (\ref{it:deltaDetail}) implies $z''$ is in the interior of the span of a global $H$-bridge $J''$ that has an end in the peak of $\Delta_{e''}$; therefore, this end of $J''$ is in $r_{i+4}$.  

The end of $J''$ in $r_{i+4}$ must be $v_{i+4}$, as otherwise $J''$ is a 2.5-jump with one end being $v_{i+7}$, which, together with $x''v_{i+7}$, contradicts Lemma \ref{lm:globalJumps} (\ref{it:globalNoHnode}).  Therefore, $J''$ is either $v_{i+4}v_{i+6}$ or $v_{i+4}u''$, with $u''\in \oo{r_{i+6}}$.  However, Lemma \ref{lm:technicalV8colour} (\ref{it:atMostTwo}) or  (\ref{it:2halfJump1}) and $J''$ show that $r_{i+4}$ cannot be crossed in $D$, a contradiction that finally shows $D[Q]$ is clean.

We can now obtain the claimed 1-drawing of $G$.  Observe that %$Q_{i+1}$ is $H$-yellow and, therefore, by Lemma \ref{lm:yellowCycles} (\ref{it:oneBridge}) bounds a face of $G$ in $\Pi$.  Also, 
$x''v_{i+7}$ is in an $H$-green cycle that, by Lemma \ref{lm:greenCycles} (\ref{it:CboundsFace}), has only one bridge.  Also, if there is a $Q_{i+2}$-bridge other than $M_{Q_{i+2}}$, then $\cl(Q_{i+2})$  has an edge $f$ not in $Q_{i+2}$.  But Theorem  \ref{th:BODquads} and Lemma \ref{lm:BODcrossed} imply $Q_{i+2}$ would be crossed in any 1-drawing of $G-f$; however, both $r_{i+2}$ and $r_{i+7}$ are $H$-green courtesy of $J_w$ and $x''v_{i+7}$.  Therefore, $Q_{i+2}$ has only one bridge.  It follows that there are only two $Q$-bridges in $G$, one of which is $r_{i+7}$.   Since $D[Q]$ is clean,  it bounds a face of $D[G-\oo{r_{i+7}}]$ and it is easy to put $r_{i+7}$ into this face so as to obtain a 1-drawing of $G$.  That is, $\crn(G)\le 1$, a contradiction completing the proof of the subclaim. \end{proof}

We are now in a position to finish the proof of Claim \ref{cl:ri+7HasRed}.  Let $e_3$ be the edge of $s_{i+3}$ incident with $v_{i+3}$ and let $D$ be a 1-drawing of $G-e_3$.  Corollary \ref{co:hyperBOD} and Lemma \ref{lm:BODcrossed} imply $\bQ_3$ is crossed in $D$.  It follows that there is an edge $\hat e$  in $r_{i+6}\,r_{i+7}\,r_{i+8}\,r_{i+9}$ that is crossed in $D$.  

The $H$-yellow cycle $Q_{i+1}$ contains $r_{i+6}$, so Lemma \ref{lm:staysYellow} implies $r_{i+6}$ is not crossed in $D$.  By assumption for $r_{i+7}$ and by Subclaim \ref{sc:noRed} for $r_{i+8}\,r_{i+9}$, no edge of $r_{i+7}\,r_{i+8}\,r_{i+9}$ is red.  Lemmas \ref{lm:staysYellow} and \ref{lm:staysGreen} imply that $\hat e$ is spanned by some 2.5-jump $J'$, and, moreover, $\hat e$ is in the $H$-rim branch whose interior contains the end $x'$ of $J'$.  

If $\hat e\in r_{i+7}$, then $J'$ is either $x'v_{i+5}$ or $x'v_{i}$.  Suppose first that $J'=x'v_{i+5}$.  Lemma \ref{lm:technicalV8colour} (\ref{it:2halfJump3/2}) implies $\hat e$ crosses an edge in $r_{i+4}$.  But Theorem \ref{th:twoGreenCycles} shows $r_{i+4}$ cannot be in the span of a 2.5-jump, so Lemmas \ref{lm:staysYellow} and \ref{lm:staysGreen} imply no edge of $r_{i+4}$ is crossed in $D$.  Thus, $J'\ne x'v_{i+5}$.

Now we suppose $J'=x'v_{i}$.  In this case, Lemma \ref{lm:technicalV8colour} (\ref{it:2halfJump3/2}) implies $\hat e$ crosses an edge in $r_{i+1}$, while (\ref{it:atMostTwo}) of the same lemma implies no edge in the span of $J$, which includes $r_{i+1}$, is crossed in $D$.  We conclude that $\hat e\notin r_{i+7}$.

If $\hat e\in r_{i+8}$, then $J'$ is either $x'v_{i+6}$ or $x'v_{i+1}$.  Theorem \ref{th:twoGreenCycles} shows the latter does not happen.  Lemma \ref{lm:globalJumps} (\ref{it:spanDiffHyper}) shows the former does not happen.  Therefore, $\hat e\notin r_{i+8}$.

The last possibility is that $\hat e\in r_{i+9}$.  In this instance, $J'$ is either $x'v_{i+7}$ or $x'v_{i+2}$.  Theorem \ref{th:twoGreenCycles} precludes the latter possibility, so we assume $J'=x'v_{i+7}$.  However, in this case, Lemma \ref{lm:technicalV8colour} (\ref{it:2halfJump3/2}) implies $\hat e$ crosses an edge $\tilde e$ in $r_{i+5}$, in which case neither $\hat e$ nor $\tilde e$ is in $\bQ_{i+3}$, contradicting the fact that $\bQ_{i+3}$ is crossed in $D$.
\end{proof}

We now finish the proof of Case 2 and, therefore, Theorem \ref{th:consecRed}.  By Claim \ref{cl:ri+7HasRed}, we may let 
 $e'=xy$ be the red edge in $r_{i+7}$ that is nearest $v_{i+7}$ in $r_{i+7}$, labelled so that $x$ is nearer $v_{i+7}$ in $r_{i+7}$ than $y$ is.  We look for the $x$-consecutive red edge for $e'$.  As the edges spanned by $J_w$ are $H$-green, $e$ is the only possibility for the $x$-consecutive red edge for $e'$.

Suppose first that $e'$ and $x$ satisfy the condition for Case 1.  We have proved there is an $x$-consecutive red edge for $e'$ and, as just mentioned, this can only be $e$.  This implies that $x=v_{i+7}$.  To see that $e'$ is the $w$-consecutive red edge for $e$, it remains to show that $e$ and $e'$ can be crossed in $G-e^w$.  (This is the only asymmetric condition in the definition of consecutive.)

The $H$-quad $Q_{i+1}$ is also an $H$-yellow cycle and so (Lemma \ref{lm:yellowCycles} (\ref{it:oneBridge})) bounds a face of $G$.  It follows that $e$ and $e'$ are not $R$-separated in $G-e^w$ and, therefore Lemma \ref{lm:notSeparated} implies there is a 1-drawing of $G-e^w$ in which $e$ and $e'$ are crossed, as required.

The alternative is that $e'$ and $x$ do not satisfy the condition for Case 1.  Then, just as for $w$ above, there is a 2.5-jump $J_x=xv_{i+5}$ incident with $x$.  Also, the edge $e^x$ of $\Delta_{e'}-e'$ that is nearest $x$ and not in $R$ is in $s_{i+7}$.  Since $Q_{i+1}$ bounds a face of $G$, $e$ and $e'$ are not $R$-separated in $G-e^w$ and, therefore, Lemma \ref{lm:notSeparated} implies there is a 1-drawing of $G-e^w$ in which they are crossed.
\end{cproofof}

The following is a consequence of Definition \ref{df:consecutive} and Theorem \ref{th:consecRed}.   

}\begin{lemma}\label{lm:consecSymm}  Let $G\in \m2$ and $\hvfg$, with $H$ tidy.  With the labelling of $e=uw$ and $e_w$ as in Definition \ref{df:consecutive}, if $x$ is the end of $e_w$ nearest $w^e$ in $\cc{w^e,r_{i+5},v_{i+6}}\rbsp r_{i+6}\,r_{i+7}$, then $e$ is $x$-consecutive for $e_w$. \end{lemma}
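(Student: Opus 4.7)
The plan is to verify the four conditions of Definition \ref{df:consecutive} (in its $u$-consecutive form, per part (5)) for $e$ with respect to $e_w$. First, I would check that $x$ plays the role of the ``$u'$-end'' of $e_w$: as one traverses the path $\cc{w^e,r_{i+5},v_{i+6}}\rbsp r_{i+6}\,r_{i+7}$ from $w^e$, one meets $x$ before the opposite end $y$ of $e_w$, so if $e_w\in r_j$ with $j\in\{i+5,i+6,i+7\}$, then $x$ is the $v_j$-end of $e_w$. Consequently, ``$x$-consecutive for $e_w$'' means the $u'$-consecutive red edge for $e_w$, whose admissible range is $\cc{x^{e_w},r_{j+5},v_{j+5}}\rbsp r_{j+4}\,r_{j+3}$ (the backward analog of the forward range used for $w$-consecutive).

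Conditions (1), (2), (3) are essentially a direct rereading of what we already know. For (1), condition (3) of the hypothesis (that $e_w$ is $w$-consecutive for $e$) places the peak $x^{e_w}$ of $\Delta_{e_w}$ inside $\cc{w,r_i,v_{i+1}}\rbsp r_{i+1}\,r_{i+2}$, and $r_{j+5}$ is precisely the rim branch containing this peak; tracking the index shift shows $e\in r_i$ lies on the correct (backward) side of $x^{e_w}$. For (2), the nonexistence of a red edge between $x^{e_w}$ and $e$ is exactly condition (3) of the original hypothesis ($w$-consecutivity of $e_w$ for $e$), since the portion of $\cc{x^{e_w},r_{j+5},v_{j+5}}\rbsp r_{j+4}\,r_{j+3}$ between $x^{e_w}$ and $e$ is the subpath $[w,r_i,x^{e_w}]$. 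For (3), the absence of a red edge between $x$ and the peak of $\Delta_e$ follows from condition (2) of the original hypothesis: no red edge lies between $w^e$ and $e_w$ in $\cc{w^e,r_{i+5},v_{i+6}}\rbsp r_{i+6}\,r_{i+7}$, and $x$ is an end of $e_w$.

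Condition (4) is the substantive content. I would apply Lemma \ref{lm:notSeparated}: it suffices to exhibit that $e$ and $e_w$ fail to be $R$-separated in $G-e^x$, where $e^x$ is the edge of the $x$-side path $P_x$ in $\Delta_{e_w}-e_w$ nearest $x$ and not in $R$. Since the 1-drawing $D$ of $G-e^w$ provided by the hypothesis crosses $e$ with $e_w$, Observation \ref{obs:separated} (\ref{it:separatedNoCross}) shows they are not $R$-separated in $G-e^w$. The target is to deduce the same for $G-e^x$. Suppose, for contradiction, a witnessing subdivision $H'\topol V_8$ exists in $G-e^x$: by Lemma \ref{lm:refinedRseparation} we may take two of its spokes to be $s_{i+2}$ and $s_{i+3}$ while the remaining two spokes lie in $\cl(Q_{i-1})\cup\cl(Q_i)$. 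Using Theorem \ref{th:redHasDelta}, the two remaining spokes must avoid both peaks of $\Delta_e$ and $\Delta_{e_w}$ in a way that is incompatible with the location of $e^x$ (which lies in the $x$-side $A_x$-path of $\Delta_{e_w}$ emanating from $r_j$); an unused $RR$-path across $\Delta_{e_w}$, together with the remainder of $\Delta_e$, can then be assembled to witness the $R$-separation also in $G-e^w$, contradicting the existence of $D$.

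The main obstacle is condition (4): all the combinatorial care of the earlier proofs in this section (in particular the casework on $P_u$ versus $P_w$ and how they interact with spokes, cf.\ Case 1 and Case 2 of the proof of Theorem \ref{th:consecRed}) must be reused to show that any putative $H'$-witness in $G-e^x$ can be converted into an $H'$-witness in $G-e^w$. Once this is done, Lemma \ref{lm:notSeparated} directly yields the required 1-drawing of $G-e^x$ in which $e_w$ crosses $e$, completing the verification of (4) and hence the lemma.
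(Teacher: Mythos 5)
Your overall approach — directly verifying the four conditions of Definition \ref{df:consecutive} — misses the much shorter route the paper takes, and the part you correctly identify as ``the substantive content'' (condition (4)) is precisely where your sketch stalls: you say the casework ``must be reused'' to convert a putative $R$-separating $V_8$ in $G-e^x$ into one in $G-e^w$, but you do not actually carry this out, and it is not at all routine. That step is, essentially, the entire content of Theorem \ref{th:consecRed}, which you should be \emph{invoking} rather than re-deriving.

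The paper's proof is three lines: by Theorem \ref{th:consecRed}, there \emph{exists} some $x$-consecutive red edge $e''$ for $e_w$; then conditions (\ref{it:betweenPeaks1}) and (\ref{it:betweenPeaks2}) of Definition \ref{df:consecutive}, applied once to the pair $(e,e_w)$ and once to the pair $(e_w,e'')$, sandwich the red edges so that $e''=e$ is forced. Concretely, if $e''\neq e$ then either $e$ lies strictly between $x^{e_w}$ and $e''$ (contradicting (\ref{it:betweenPeaks1}) for $e''$ being $x$-consecutive for $e_w$) or $e''$ lies strictly between $w$ and the peak of $\Delta_{e_w}$ (contradicting (\ref{it:betweenPeaks2}) for $e_w$ being $w$-consecutive for $e$). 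All four conditions, including the 1-drawing condition (\ref{it:consec1drawing}), then hold for $e$ automatically because they hold for $e''$. Your conditions (1)--(3) analysis is roughly in the spirit of this uniqueness argument, but by not using the existence theorem you were forced into a direct attack on (4) that you never close.
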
\printFullDetails{

\begin{cproof}  By Theorem \ref{th:consecRed}, there is an $x$-consecutive red edge $e''$ for $e_w$.  Conditions (\ref{it:betweenPeaks1}) and (\ref{it:betweenPeaks2}) of Definition \ref{df:consecutive} applied to $e_w$ being $w$-consecutive for $e$ and the same conditions applied to $e''$ being $x$-consecutive for $e_w$ imply that $e=e''$. \end{cproof}

The main goal of this work is to prove Theorem \ref{th:classification}.  The following lemma will be very helpful.

}\begin{lemma}\label{lm:findGreen}  Let $G\in \m2$, $\hvfg$, and let $\Pi$ be an embedding of $G$ in $\pp$ so that $H$ is $\Pi$-tidy.   Let $C$ be a contractible cycle contained in $\Mob$ so that $C$ is the union of  a 3-rim path $C\cap R$ (recall Definition \ref{df:yellow} (\ref{it:3rimPath})) and an \wording{$R$-avoiding path $P$}.  Then, for every edge $e$ of $C\cap R$, there is an $H$-green cycle containing $e$ and \wording{contained in $H\cup P$}. \end{lemma}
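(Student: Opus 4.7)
The plan is to exhibit the cycle $C$ itself, equipped with a natural four-piece decomposition, as the required $H$-green cycle containing every edge of $C \cap R$. Set $P_1 := C \cap R$, and let $x,y$ be its endpoints (which are also the endpoints of $P$, since $P$ is $R$-avoiding). Define $P_2$ to be the maximal initial subpath of $P$ (starting at $x$) contained in $H$, $P_4$ the maximal terminal subpath (ending at $y$) contained in $H$, and $P_3$ the middle subpath. Because $P$ is $R$-avoiding, every vertex of $P$ that lies in $H\setminus R$ must lie in the interior of some $H$-spoke, so $P_2$ and $P_4$, when nontrivial, are each contained in a single $H$-spoke.

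Next I would verify the five clauses of Definition~\ref{df:green} for the decomposition $C = P_1 P_2 P_3 P_4$: (a) $P_1 \subseteq R$ has length at least $1$ since $e \in P_1$; (b) $P_2 P_3 P_4 = P$ is $R$-avoiding by hypothesis; (c) $P_2 \cup P_4 \subseteq H$ by construction; (d) $P_3$ is $H$-avoiding by the maximality of $P_2$ and $P_4$; and (e) $P_1$, being a 3-rim path contained in $r_i \cup r_{i+1} \cup r_{i+2}$ for some $i$, contains at most four $H$-nodes, with exactly four $H$-nodes forcing $P_1 = r_i\,r_{i+1}\,r_{i+2}$, which is the exceptional case. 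Thus $C$ is $H$-green, and since $e \in P_1 \subseteq C$ and $C = P_1 \cup P \subseteq R \cup P \subseteq H \cup P$, the single cycle $C$ serves as the required $H$-green cycle for every $e \in C \cap R$.

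The proof is essentially bookkeeping, so there is no substantial obstacle to overcome; the one noteworthy feature is that the contractibility and $\Mob$-containment hypotheses are not used in the verification at all. They presumably frame the geometric context of later applications (for instance, so that one may invoke Lemma~\ref{lm:greenCycles}(\ref{it:CboundsFace}) to conclude the resulting green cycle bounds a face), but $H$-greenness itself follows purely from the 3-rim-path structure of $C \cap R$ together with the $R$-avoiding property of $P$. As a sanity check consistent with Lemma~\ref{lm:greenBasics}(\ref{it:p3}), $P_3$ is automatically non-trivial in this decomposition: otherwise $P = P_2 P_4$ would lie in $H$ and, being $R$-avoiding with endpoints on $R$, would be the entire spoke $s_k$ for some $k$, forcing $P_1$ to be a $v_k v_{k+5}$-subpath of $R$ with at least six $H$-nodes, contradicting the 3-rim-path hypothesis.
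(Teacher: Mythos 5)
Your claim at step (d) — that $P_3$ is $H$-avoiding ``by the maximality of $P_2$ and $P_4$'' — is a non-sequitur and is where the argument breaks. Maximality of the initial and terminal pieces only controls the two edges of $P$ immediately adjacent to $P_3$; it says nothing about the interior of $P_3$. Since $P$ is merely assumed $R$-avoiding, it is free to re-enter $H$ through the interior of a spoke, exit, and re-enter another spoke. Concretely, with $C\cap R = r_j\,r_{j+1}\,r_{j+2}$, nothing forbids $P$ from running from $x$ through a $Q_j$-local $H$-bridge into $\oo{s_{j+1}}$, along a piece of $s_{j+1}$, through a $Q_{j+1}$-local $H$-bridge into $\oo{s_{j+2}}$, along a piece of $s_{j+2}$, and finally through a $Q_{j+2}$-local $H$-bridge to $y$. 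In that case your $P_2$ and $P_4$ are single vertices, your $P_3$ contains pieces of two spokes, $P_3$ is not $H$-avoiding, and $C$ is not an $H$-green cycle at all — indeed no single decomposition of $C$ as $P_1P_2P_3P_4$ satisfying Definition~\ref{df:green} exists, because the single $H$-avoiding piece $P_3$ cannot absorb two separated excursions into spoke interiors.

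You flagged, correctly, that your argument never uses contractibility or $\Mob$-containment and speculated these are only for downstream applications. That observation should have been treated as a red flag rather than a curiosity: both hypotheses are essential. The actual proof works \emph{locally}: for each edge $e\in r_j$ one takes the face $F$ of $\Pi[H\cup P]$ that lies in $\Mob$ and is incident with $e$ (this is where $\Mob$-containment enters), observes that $F$ is both $Q_j$-interior and $C$-interior (this is where contractibility enters, since $C$-interiority requires $C$ to bound a disc), and concludes that the boundary cycle $C'$ of $F$ meets $H$ only inside $\oo{s_j\,r_j\,s_{j+1}}$. From $C'$ one then extracts a subpath $P'\subseteq P$ joining the two components of $\oo{s_j\,r_j\,s_{j+1}}-e$, and the cycle in $\oo{s_j\,r_j\,s_{j+1}}\cup P'$ is the required $H$-green cycle. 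In general the green cycle depends on $e$ and is a proper subgraph of $C$; the statement ``for every edge $e$'' is there precisely because a single green cycle need not serve all edges.
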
\printFullDetails{

\begin{cproof}   The \wording{graph $H\cup P$ is} 2-connected and not planar, so every \wording{face of $\Pi[H\cup P]$ is} bounded by a cycle. There is a face \wording{$F$ of $H\cup P$ contained} in $\Mob$ and incident with $e$; by the preceding \dragominor{remark, $F$} is bounded by a cycle $C'$.

Let $j$ be the index so that $e\in r_j$; thus, $F$ is $Q_j$-interior.   Since $F$ is also $C$-interior, $C'\cap H\subseteq \oo{s_j\,r_j\,s_{j+1}}$.  In particular, there is at least one edge of $C'$ \wording{that is in $P$ but} not in $H$.

Observe that $\oo{s_j\,r_j\,s_{j+1}}-e$ has two components $K_1$ and $K_2$.  Since $C'$ contains a vertex in each of $K_1$ and $K_2$ (namely the ends of $e$), $C'$ contains \wording{an $\oo{s_j\,r_j\,s_{j+1}}$-avoiding $K_1K_2$-path $P'$.  Thus, $P'\subseteq P$}.  

Let $C''$ be the \wording{cycle in $\oo{s_j\,r_j\,s_{j+1}}\cup P'$}.  Then $C''$ is evidently an $H$-green cycle containing $e$, as required.  
\end{cproof}

Now for the main result.

}\bigskip\noindent{\bf Theorem \ref{th:classification}}  {\em If $G$ is a 3-connected\wording{,}\wordingrem{(comma added)} 2-crossing-critical graph containing a subdivision of $V_{10}$, then $G\in \mathcal T(S)$.}\printFullDetails{

\bigskip
\begin{cproof}  By Theorem \ref{th:existTidy}, $G$ contains a tidy subdivision $H$ of $V_{10}$;  let $\Pi$ be an embedding of $G$ in $\pp$ so that $H$ is $\Pi$-tidy.  The strategy is to show that, between every red edge $e=uw$ and its $w$-consecutive red edge $e_w$, there is one of the thirteen pictures (as defined just before Lemma \ref{lm:2degenerate}).  This is accomplished by showing that $e$ produces ``one side" of the picture and $e_w$ produces the other.   Let $i\in \{0,1,2,\dots,9\}$ be such that $e\in r_i$; we choose the labelling so that $r_i=\cc{v_i,r_i,u,e,w,r_i,v_{i+1}}$.  Thus, $e_w\in r_{i+5}\,r_{i+6}\,r_{i+7}$.

Let $x$ be the end of $e_w$ so that $e$ is the $x$-consecutive red edge for $e_w$.  Let $P_1$ be the $w^ex$-subpath of $R$ that is a 3-rim path \wording{ (Definition \ref{df:yellow} (\ref{it:3rimPath}))}; likewise $P_2$ is the $x^{e_w}w$-subpath of $R$ that is a 3-rim path.  

\begin{claim}\label{cl:P1jump}  Let $B$ be a global $H$-bridge spanning an edge of $P_1$.  Then:\begin{enumerate}[label=(\alph*)]
\item  $B$ has ends $w^e$ and $x$;
\item  $w^e=v_{i+5}$; 
and 
\item $e^w\in s_{i+1}$ and $(e_w)^x\in s_{i+2}$.
\end{enumerate}
The analogous claims holds for $P_2$.\end{claim}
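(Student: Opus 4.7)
The plan is to classify $B$ via Theorems \ref{th:globalBridges} and \ref{th:no3jump}, which together force $B$ to be either a 2-jump or a 2.5-jump. In either case the span of $B$, together with $B$, forms an $H$-green cycle, so every $H$-rim edge in the span is $H$-green, and by Lemma \ref{lm:greenNotRed} neither $e$ nor $e_w$ can lie in that span. Combining this with the location of $P_1\subseteq \cc{w^e,r_{i+5},v_{i+6}}\rbsp r_{i+6}\,r_{i+7}$, the span of $B$ must be a subpath of $R$ that hits $P_1$, avoids both $e$ and $e_w$, and therefore is trapped in the segment of $R$ running from $w^e$ to $x$.

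For (a), I would rule out any end of $B$ lying strictly inside the interior of the segment $(w^e,x)$ or strictly outside it. The key tool is Theorem \ref{th:twoGreenCycles} applied to the structure produced by Theorem \ref{th:redHasDelta} (\ref{it:digon}) and (\ref{it:deltaDetail}): if the peak of $\Delta_e$ is a digon, then the digon is already an $H$-green cycle carrying the $r_{i+5}$-edge at $w^e$, so $B$ cannot span that edge on pain of putting it in two $H$-green cycles; if the peak is sharp and $w^e$ lies in the interior of the span of $B$, then the first edge of $P_w$ incident with $w^e$ is not in $R$, so the $H$-green cycle cutting it off would force a second rim edge into $P_w$, contradicting Theorem \ref{th:redHasDelta} (\ref{it:deltaDetail}). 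Symmetric reasoning with $\Delta_{e_w}$ handles the $x$-end. This pins the ends of $B$ to be exactly $w^e$ and $x$.

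For (b), note that by Theorems \ref{th:globalBridges} and \ref{th:no3jump} at least one end of $B$ is an $H$-node, and I would use Theorem \ref{th:redHasDelta} together with Lemma \ref{lm:globalJumps} to rule out the remaining possibilities. If $B$ is a 2.5-jump with $w^e\in\oo{r_{i+5}}$ as its non-node end, then the other end is $v_{i+3}$ or $v_{i+8}$, and in both cases one of $e$ or $e_w$ is forced to be in the span of $B$ (or an overlap with a neighbouring global $H$-bridge violates Lemma \ref{lm:globalJumps} (\ref{it:spanDiffHyper}) or (\ref{it:globalNoHnode})); hence $w^e$ is an $H$-node in $r_{i+5}$, so $w^e\in\{v_{i+5},v_{i+6}\}$. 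The case $w^e=v_{i+6}$ is ruled out because then the subpath of $R$ spanned by $B$ is contained in $r_{i+6}\,r_{i+7}$ and the $r_{i+5}$-side of the peak of $\Delta_e$ would already produce a second $H$-green cycle (via the digon structure from Theorem \ref{th:redHasDelta}) containing the $r_{i+5}$-edge at $v_{i+6}$, violating Theorem \ref{th:twoGreenCycles}; what remains is $w^e=v_{i+5}$.

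Finally, (c) will follow quickly once (a) and (b) are in hand: since $w^e=v_{i+5}$ is an $H$-node and $P_w$ is a $ww^e$-path in $\cl(Q_i)$ with at most one rim edge by Theorem \ref{th:redHasDelta} (\ref{it:deltaDetail}), the first edge $e^w$ of $P_w$ incident with $w$ must be the spoke edge $s_{i+1}$ (otherwise the rim-edge budget is exceeded or $w^e$ fails to land on the spoke endpoint $v_{i+5}$); the symmetric argument on $\Delta_{e_w}$ yields $(e_w)^x\in s_{i+2}$. The main obstacle is the careful bookkeeping in step (a)–(b): several configurations of $w^e$ (interior versus node, sharp versus digon peak) and of $e_w$ (in $r_{i+5}$, $r_{i+6}$, or $r_{i+7}$) interact, and each has to be eliminated using the correct combination of Theorem \ref{th:twoGreenCycles}, Theorem \ref{th:redHasDelta}, and Lemma \ref{lm:globalJumps}.
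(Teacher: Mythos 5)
Your opening moves match the paper: classify $B$ via Theorems \ref{th:globalBridges} and \ref{th:no3jump}, note the span is $H$-green so cannot contain $e$ or $e_w$, and use the corollary to Theorem \ref{th:redHasDelta} to exclude peak vertices from the interior of the span; this pins $\att(B)\subseteq P_1$ and, with the jump classification, gives $w^e=v_{i+5}$. But your logical ordering is backwards and the hard step is missing.

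The genuine gap is in part (c). You assert that $(e_w)^x\in s_{i+2}$ and $e^w\in s_{i+1}$ ``follow quickly'' from (a) and (b), but the paper's proof goes in the opposite direction: it establishes $(e_w)^x\in s_{i+2}$ \emph{first}, by a substantial contradiction argument, and only then derives that $x$ is an end of $B$. That argument picks the edge $e_{i+7}\in s_{i+2}$ incident with $v_{i+7}$, takes a 1-drawing $D$ of $G-e_{i+7}$, invokes Corollary \ref{co:hyperBOD} and Lemma \ref{lm:BODcrossed} to force $\bQ_{i+2}$ to be crossed, uses Lemma \ref{lm:technicalV8colour} to localize the crossing, separates the red edges with $P_w$ and the spokes to build a $V_8$ showing an $R$-separation, and then chains Observation \ref{obs:separated}, Lemmas \ref{lm:staysYellow}, \ref{lm:staysGreen}, \ref{lm:notSeparated}, and \ref{lm:refinedRseparation} to reach a contradiction. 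There is no route around this: the identity $\att(B)=\{w^e,x\}$ genuinely depends on first knowing where $(e_w)^x$ lives, because it is what forces the rim edge of $P_x$ to be $xv_{i+7}$ and hence $x$ to coincide with the $\oo{r_{i+7}}$-end of $B$. Your ``rim-edge budget'' heuristic does not supply this; $P_w$ having at most one rim edge says nothing by itself about $e^w$ lying on the spoke.

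Two smaller issues: in (a) you announce you will also rule out ends of $B$ strictly inside $(w^e,x)$, but the arguments given only address the outside case (the inside case is in fact excluded by the 2-/2.5-jump shape once $\att(B)\subseteq P_1$, but you must say so). And in (b), the claim that when $w^e\in\oo{r_{i+5}}$ is the non-node end of a 2.5-jump ``one of $e$ or $e_w$ is forced to be in the span'' fails for $B=v_{i+3}w^e$: that span lies in $r_{i+3}\,r_{i+4}\,r_{i+5}$ and contains neither; the correct observation there is that this $B$ spans no edge of $P_1$ at all, contradicting the hypothesis.
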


\begin{proof}
We remark that the span of $B$ does not include in its interior a peak vertex of $\Delta_e$, and does not include $e_w$.   Therefore, $B$ has both its attachments in $P_1$.  

\wording{Consequently,} the attachments of $B$ are contained in $ r_{i+5}\,r_{i+6}\lbsp\co{v_{i+7},r_{i+7},v_{i+8}}$.  Theorem \ref{th:globalBridges} implies one end of $B$ is $v_{i+5}$ and the other end is in $\co{v_{i+7},r_{i+7},v_{i+8}}$.    

It follows that $w^e=v_{i+5}$.  At the other end, we claim $x$ is in $B$.  We note that $e_w$ is in $r_{i+7}$, so that $H-\oo{s_{i+4}}$ shows that $e$ and $e_w$ are $R$-separated.   Let $x'$ be the end of $B$ in $r_{i+7}$.

\dragominor{If $(e_w)^x$ is not in $s_{i+2}$, then let $e_{i+7}$ be the edge of $s_{i+2}$ incident with $v_{i+7}$ and let $D$ be a 1-drawing of $G-e_{i+7}$.   Corollary \ref{co:hyperBOD} and Lemma \ref{lm:BODcrossed} imply $\bQ_{i+2}$ is crossed in $D$. The presence of $J$ and $B$ combine with Lemma \ref{lm:technicalV8colour} (\ref{it:atMostTwo}) to show that neither $\cc{w,r_i,v_{i+1}}r_{i+1}\,r_{i+2}$ nor $r_{i+6}\cc{v_{i+7},r_{i+7},x'}$, respectively, is crossed in $D$.}

\dragominor{It follows that some edge $e'$ in $\cc{x',r_{i+7},v_{i+8}}$ is crossed in $D$.   Let $P_w$ be the path in $\Delta_e$ described in Theorem \ref{th:redHasDelta} (\ref{it:deltaDetail}).  Since $P_w$ does not have $v_{i+1}$ as one end, and its other end is $v_{i+5}$, its only intersection with $s_{i+1}$ can be in $\oo{s_{i+1}}$.  Such an intersection produces an $H$-green cycle that shows the edge of $r_i$ incident with $v_{i+1}$ is in two $H$-green cycles, contradicting Theorem \ref{th:twoGreenCycles}.  Therefore, $P_w$ is disjoint from $s_{i+1}$.}

\dragominor{Using $P_w$, $s_{i+1}$, $s_{i+3}$ and $s_{i+4}$ as spokes and $R$ as the rim yields a $V_8$ that shows $r_{i+7}$ is $R$-separated in $G-e_{i+7}$ from $\cc{v_i,r_i,w}$; thus, Observation \ref{obs:separated} (\ref{it:separatedNoCross}) shows $e'$ crosses an edge $e''$ of $r_{i+3}$ in $D$.  Lemmas \ref{lm:staysYellow} and \ref{lm:staysGreen} shows $e'$ and $e''$ are red in $G$.  Lemma \ref{lm:notSeparated} shows that $e'$ and $e''$ are $R$-separated in $G$.  Lemma \ref{lm:refinedRseparation} shows that a witnessing $V_8$ can be chosen to avoid $e_{i+7}$.  But now $D$ contradicts Observation \ref{obs:separated} (\ref{it:separatedNoCross}).
Therefore,  $(e_w)^x$ \wording{is in $s_{i+2}$} and incident with $v_{i+7}$.   }

If $B$ has an end in $\oo{r_{i+7}}$, then Theorem \ref{th:redHasDelta} (\ref{it:deltaDetail}) implies $P_x\cap r_{i+7}$ has just one edge, namely $xv_{i+7}$ and, consequently, $x$ is in $B$.

If, on the other hand, $v_{i+7}$ is an end of $B$, then Theorem \ref{th:redHasDelta} (\ref{it:deltaDetail}) implies $x$ must be incident with $e^x$ and, \wording{therefore $x=v_{i+7}$}.  Again, we see that $x$ is in $B$. 

\dragominor{Observe that $J_w$ and $B$ are now seen to be completely symmetric with respect to $(e,w)$ and $(x,e_w)$; in particular, we conclude that $e^w\in s_{i+1}$.  }\end{proof}

  If there is a global $H$-bridge $B$ spanning an edge of $P_1$, then we let $P'_1=B$.  Otherwise,  we let $P'_1=P_1$.
A completely analogous discussion holds for $P_2$ to yield the $wx^{w_e}$-path $P'_2$.  

Our next claim identifies the cycle that is the boundary of our picture.

\begin{claim}\label{cl:CeCycle} The closed walk $P_wP'_1P_xP'_2$ is a cycle. \end{claim}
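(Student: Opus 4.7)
The plan is to check three things: (i) each of $P_w$, $P'_1$, $P_x$, $P'_2$ is a path; (ii) their endpoints match so that the concatenation is a closed walk; and (iii) distinct pieces share only the prescribed junction vertex (and non-adjacent pieces share no vertex at all). Items (i) and (ii) are essentially bookkeeping: $P_w$ and $P_x$ are paths by Theorem~\ref{th:redHasDelta}~(\ref{it:deltaDetail}); the subpaths $P_1, P_2$ of $R$ are paths; and in the alternative $P'_1 = B$, Claim~\ref{cl:P1jump} together with Theorem~\ref{th:globalBridges} gives a single-edge path with the correct ends, and symmetrically for $P'_2$. The endpoint chain $w \to w^e \to x \to x^{e_w} \to w$ is then immediate.

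For (iii), adjacent-piece disjointness is handled as follows. The interior of $P_w$ lies in $\cl(Q_i)$ and, by Theorem~\ref{th:redHasDelta}~(\ref{it:deltaDetail}), contains at most one $H$-rim edge, which is spanned by a global $H$-bridge and so is $H$-green. If $P'_1 \subseteq R$, any further shared vertex would produce a second $H$-green cycle through some edge of $P_1$, violating Theorem~\ref{th:twoGreenCycles}; if $P'_1 = B$, then $B \subseteq \disc$ meets $P_w \subseteq \Mob$ only at the endpoints on $R$. The analogous analyses cover $P_x \cap P'_1$, $P_x \cap P'_2$, and $P_w \cap P'_2$.

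For the non-adjacent pair $P_w, P_x$ I would split on which $H$-rim branch contains $e_w$. If $e_w \in r_{i+7}$ then $P_x \subseteq \cl(Q_{i+2})$ and $P_w \subseteq \cl(Q_i)$ can intersect only in $\{v_{i+2}\}$, a vertex that lies in neither path by Theorem~\ref{th:redHasDelta}. If $e_w \in r_{i+6}$ then the intersection is confined to $\cl(Q_i) \cap \cl(Q_{i+1}) = s_{i+1}$, and a shared interior vertex on this spoke combined with the $R$-avoiding nature of the interiors of $P_w$ and $P_x$ would produce an $H$-bridge with two attachments in $s_{i+1}$, contradicting Lemma~\ref{lm:noBridgeTwoInSpoke}. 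If $e_w \in r_{i+5}$ then both paths live in $\cl(Q_i)$; here the consecutivity conditions Definition~\ref{df:consecutive}~(\ref{it:betweenPeaks1}) and (\ref{it:betweenPeaks2}), together with Theorem~\ref{th:redHasDelta}~(\ref{it:deltaDetail}) applied to both $\Delta_e$ and $\Delta_{e_w}$, pin down the cyclic order of $w, w^e, x, x^{e_w}$ along $Q_i$ in a non-interleaving pattern, so that planarity of the embedding $\Pi[\cl(Q_i)]$ (with $Q_i$ bounding a face) forces the two chords to lie in distinct regions and share no vertex.

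Finally, for $P'_1 \cap P'_2$: when both are subpaths of $R$ they occupy disjoint portions of $R$ determined by the $r_i$-location of $e$ and the $r_{i+5}\cup r_{i+6}\cup r_{i+7}$-location of $e_w$; when either is a global $H$-bridge, Lemma~\ref{lm:globalJumps}~(\ref{it:globalNoHnode}) and Theorem~\ref{th:twoGreenCycles} rule out shared vertices. The main obstacle will be the last subcase above, $e_w \in r_{i+5}$, where $P_w$ and $P_x$ both live inside the same quad closure; the work there is to extract from the consecutivity definition and from the structure of the peaks of $\Delta_e$ and $\Delta_{e_w}$ (particularly the $u^e, w^e, u^{e_w}, w^{e_w}$ arrangement given by Theorem~\ref{th:redHasDelta}) enough information about the positions of the four endpoints on $Q_i$ to reduce to a planar argument in $\Pi[\cl(Q_i)]$.
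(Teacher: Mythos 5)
The paper's own proof is much shorter than yours and attacks a different thing: it checks only that each of the four ``corner'' vertices $w$, $w^e$, $x$, $x^{e_w}$ has degree exactly 2 in the walk. The nontrivial case is when the edge of $P_w$ incident with $w$ lies in $R$ --- then both $P_w$ and the rim path $P_2$ would try to use that same rim edge at $w$. The paper dispatches this by observing that, by Theorem~\ref{th:redHasDelta}~(\ref{it:deltaDetail}), a rim edge of $P_w$ at $w$ forces $w$ to be an attachment of a global $H$-bridge $B$, and then Claim~\ref{cl:P1jump} identifies $B = P'_2$; so in the problematic case $P'_2$ is a single bridge edge, not a rim path, and the two arcs enter $w$ on distinct edges. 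Your write-up never makes this corner observation: your item~(ii) dismisses the endpoint chain as ``immediate,'' which is exactly where the subtlety sits, and the later item~(iii) discussion of $P_w \cap P'_2$ argues by ``a second $H$-green cycle'' contradiction, which does not obviously rule out the two arcs sharing their first edge at $w$. To make your route rigorous here you would need to reproduce the constraint-propagation step: if $P_w$ starts with a rim edge then $P'_2$ cannot be $P_2$.

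Separately, your treatment of the non-adjacent pair $P_w, P_x$ is a genuine addition relative to the paper (the paper leaves internal disjointness of the arcs implicit), but you yourself flag the $e_w \in r_{i+5}$ subcase as the ``main obstacle'' and leave it as a plan rather than a proof. The reduction you propose --- use the consecutivity conditions and the peak structure to pin the cyclic order of the four endpoints on $Q_i$ and invoke planarity of $\Pi[\cl(Q_i)]$ --- is plausible and in the right spirit, but as written it is an outline, not an argument. So the proposal is a different and more comprehensive decomposition of the claim, but it has two concrete gaps: it omits the specific corner-degree argument that the paper's proof consists of, and it leaves the hardest disjointness subcase unfinished.
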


\begin{proof}  If the edge of $P_w$ incident with $w$ is in $R$, then Theorem \ref{th:redHasDelta} (\ref{it:deltaDetail}) shows that $w$ is incident with a global $H$-bridge $B$.  Claim \ref{cl:P1jump} implies $B=P'_2$.  Thus, $w$ has degree 2 in the closed walk $P_wP'_1P_xP'_2$.  Otherwise, $P'_2=P_2$, $w$ is incident with $e^w$, and again $w$ has degree 2 in $P_wP'_1P_xP'_2$.

The other ``corners" $w^e$, $x$, and $x^{e_w}$ \dragominor{are treated similarly.}  \end{proof}

}\begin{definition}\label{df:Ce}\minorrem{(Reworking of definition of $C_e$ so that it becomes referenceable.)}\minor{Let $e$ and $e'$ be red edges and let $w$ and $x$ be the ends of $e$ and $e'$, respectively, so that $e'$ is the $w$-consecutive red edge for $e$ and $e$ is the $x$-consecutive red edge for $e'$.  Let $P_1$ be the $x\Delta_{e}$-path in $R$ that is a 3-rim path and let $P_2$ be the $w\Delta_{e'}$-path in $R$ that is a 3-rim path.  Let $P_w$ be the $ww^e$-path in $\Delta_e-e$ and let $P_x$ be the $xx^{e'}$-path in $\Delta_{e'}-e'$.  For $i=1,2$, let $P'_i$\index{$P'_i$} be $P_i$ unless there is a global $H$-bridge $B_i$ spanning an edge of $P_i$, in which case $P'_i=B_i$.}

\minor{The cycle $C_e$\index{$C_e$} is the composition $P_wP'_1P_xP'_2$.} \end{definition}\printFullDetails{

We will see that $C_e$ is the outer boundary of the one of the thirteen pictures that occurs.   We observe that $C_e$ is in the boundary of the closed disc in $\pp$ consisting of the union of the closed discs bounded by $r_i\,r_{i+1}\,r_{i+2}\,s_{i+3}\,r_{i+7}\,r_{i+6}\,r_{i+5}\,s_i$, $P'_1P_1$ (if $P'_1\ne P_1$), and $P'_2P_2$ (if $P'_2\ne P_2$).  Therefore, $C_e$ is the boundary of a closed \wording{disc $\Disc_e$ in $\pp$}.  

We now prove three claims that will be useful for finding the various parts of the picture.

\begin{claim}\label{cl:P1disjointBoundsFace}  Let $C$ be a cycle contained in $\Disc_e$.  If either $C\cap P'_1$ or $C\cap P'_2$ is empty, then $C$ bounds a face of $\Pi[G]$.\end{claim}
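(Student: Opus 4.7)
The plan is to prove the contrapositive via Lemma \ref{lm:noBox}: if $C$ does not bound a face of $\Pi[G]$, I shall show that $C$ is a box in $G$, yielding a contradiction. Without loss of generality assume $C \cap P'_2 = \emptyset$ (the case $C \cap P'_1 = \emptyset$ is handled symmetrically).

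Since $C \subseteq \Disc_e$ and $\Disc_e$ is a closed disc in $\pp$, $C$ is contractible in $\pp$ and bounds a closed sub-disc $\Delta_C \subseteq \Disc_e$ that is disjoint from $P'_2$. If $C$ does not bound a face of $\Pi[G]$, then some $C$-bridge $B$ satisfies $\Nuc(B) \subseteq \mathrm{int}(\Delta_C)$; because $\Pi[C \cup B] \subseteq \Delta_C$, this $B$ is planar. To apply Lemma \ref{lm:noBox}, it remains to verify that $C$ has BOD and that $C$ is a $\comp{B}$-prebox.

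For BOD, the hypothesis $C \cap P'_2 = \emptyset$ together with tidiness of $H$ forces $C \cap R \subseteq P_1 \subseteq r_{i+5}\,r_{i+6}\,r_{i+7}$. The spoke $s_{i+4}$ lies entirely outside $\Disc_e$, hence is disjoint from $C$, and combines with the $v_{i+4}v_{i+9}$-arc of $R$ traversing $r_{i+3}, r_{i+2}, \ldots, r_i, r_{i+9}$ to form a non-contractible cycle of $\pp$ disjoint from $C$. Lemma \ref{lm:CdisjointNCcycle} then gives that $C$ has BOD and exactly one non-planar $C$-bridge. For the prebox property, I first establish that $C$ admits a decomposition $C = P_1^C P_2^C P_3^C P_4^C$ analogous to that of an $H$-green cycle, with $P_1^C = C \cap R$ a single subpath of $P_1$, with $P_2^C$ and $P_4^C$ contained in the interior spokes $s_{i+1}$ and $s_{i+2}$ (constrained by Lemmas \ref{lm:noSpokeOnlyBridge} and \ref{lm:noBridgeTwoInSpoke}), and with $P_3^C$ an $R$-avoiding arc. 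For each edge $f \in C$, I exhibit a subdivision of $V_6$ in $(\comp{B}) - f$: when $f \notin H$, the subdivision $H$ itself lies in $(\comp{B}) - f$, since tidiness keeps $H$-vertices outside $\mathrm{int}(\Delta_C)$; when $f$ is a spoke edge, the remaining four $H$-spokes with $R$ give a subdivision of $V_8 \supseteq V_6$; and when $f \in R$, the new rim is $(R - \oo{P_1^C}) \cup P_2^C P_3^C P_4^C$, with three spokes chosen from $\{s_{i+4}, s_i, s_{i+3}\}$ so as to avoid $f$.

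This shows $C$ is a box, contradicting Lemma \ref{lm:noBox}, whence $C$ bounds a face of $\Pi[G]$. The principal obstacle is the structural decomposition of $C$: proving that $C \cap R$ is a single connected subpath of $P_1$, and separately handling the degenerate case $C \cap R = \emptyset$, in which $C$ is an $R$-avoiding cycle in $\Disc_e$. In this degenerate case tidiness together with the constraint $C \cap P'_2 = \emptyset$ and the restrictions on interior spokes (Lemmas \ref{lm:noSpokeOnlyBridge} and \ref{lm:noBridgeTwoInSpoke}) must be used to show that no such cycle can fail to bound a face, completing the case analysis.
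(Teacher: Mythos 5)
Your approach — reducing to Lemma \ref{lm:noBox} by exhibiting $C$ as a box — is plausible in spirit, since a box contradiction is morally equivalent to the paper's use of cleanliness plus Corollary \ref{co:TutteTwo}. But there are concrete gaps, two of which you flag yourself and one you do not.

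First, your claim that $C \cap P'_2 = \emptyset$ together with tidiness forces $C \cap R \subseteq P_1$ is not established, and is in general false: by Theorem \ref{th:redHasDelta} (\ref{it:deltaDetail}), $P_w$ and $P_x$ may each contain one $H$-rim edge (lying in $r_i$ or $r_{i+5}$, in the span of a global $H$-bridge), and $C$ is free to intersect these, yet these edges are not in $P_1$. Second, the non-contractible cycle you propose for the BOD step — $s_{i+4}$ joined to the $v_{i+4}v_{i+9}$-arc of $R$ traversing $r_{i+3}\,r_{i+2}\,r_{i+1}\,r_i\,r_{i+9}$ — passes through $r_i$, part of which lies on the boundary of $\Disc_e$; without further argument there is no guarantee this cycle is disjoint from $C$, so Lemma \ref{lm:CdisjointNCcycle} cannot be invoked. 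Third, as you acknowledge, the decomposition $C = P_1^C P_2^C P_3^C P_4^C$ and the degenerate case $C \cap R = \emptyset$ are asserted, not proved.

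The paper sidesteps all three difficulties by a different device. It defines a modified $V_6$ subdivision $H' = \bigl(R - (\oo{P_1}\cup\oo{P_2})\bigr) \cup (P'_1 \cup P'_2) \cup s_i \cup s_{i+3} \cup s_{i+4}$, in which the arcs $P_1$ and $P_2$ of $R$ are replaced by $P'_1$ and $P'_2$. It then proves Subclaim \ref{sc:CeDelta} ($C_e \cap s_i \subseteq \iso{v_{i+5}}$ and $C_e \cap s_{i+3} \subseteq \iso{v_{i+3}}$, using disjointness of $P_w$ from $P_u$, and similarly for $\Delta_{e_w}$), from which $C$ is $H'$-close, and finishes by Lemmas \ref{lm:closeIsPrebox}, \ref{lm:preboxClean}, \ref{lm:cleanBOD}, and Corollary \ref{co:TutteTwo} applied to $G-e'$ for $e'$ in a $C$-interior bridge. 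Because the rim of $H'$ goes through $P'_1$ and $P'_2$ rather than $P_1$ and $P_2$, the troublesome $R$-edges of $P_w$ and $P_x$ are absorbed into the spans of the global bridges and lie off $H'$; this is precisely what your decomposition attempt lacks. It also handles $C \cap R = \emptyset$ and nonconnected $C \cap R$ uniformly via $H'$-closeness, with no case analysis needed. To salvage your approach you would need to replicate the $H'$ construction (or something equivalent) to get the prebox and BOD properties to go through.
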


\begin{proof}   By symmetry, we may suppose $C\cap P'_1$ is empty. Let $M$ be the $C$-bridge containing $s_{i+4}$. 

\startSubclaims
\begin{subclaim}\label{sc:contractible} If $B$ is a $C$-bridge different from $M$, then $\Pi[C\cup B]$ is contractible in $\pp$.\end{subclaim}

\begin{proof}  We start by noting that $\Pi[B]\subseteq \Mob$, since $P'_2$ is either just an edge that is a global $H$-bridge \dragominor{(and so in $\Disc$ and forcing $B$ to be in $\Mob$)} or $P'_2=P_2$ and there is no global $H$-bridge having an attachment in $\oo{P_2}$.   In the latter case, any global $H$-bridge having an attachment at an end of $P_2$ (say $w$), has its other attachment in the $H$-rim $R-\oo{P_2}$.  Such an attachment is in $\Nuc(M)$, contradicting the assumption  that $B\ne M$.

It follows that $\Pi[C\cup B]$ is contained in $\Mob$ and totally disjoint from $s_{i+4}$.  Therefore, $\Pi[C\cup B]$ is contractible, as claimed.\end{proof}

Let $H'$ be the subgraph of $H\cup P'_1\cup P'_2$ consisting of $(R-(\oo{P_1}\cup \oo{P_2}))\cup (P'_1\cup P'_2)$ and the three $H$-spokes $s_{i+3}$, $s_{i+4}$, and $s_{i}$.  The following claim shows that $H'$ is a subdivision of $V_6$.  (The notation $\iso{y}$ is in Definition \ref{df:bridge} (\ref{it:isolated}).)

\begin{subclaim}\label{sc:CeDelta}  $C_e\cap s_i\subseteq \iso{v_{i+5}}$ and $C_e\cap s_{i+3}\subseteq \iso{v_{i+3}}$.  \end{subclaim}

\begin{proof}  Recall that $P_w$ is contained in $\Delta_e$.  Theorem \ref{th:redHasDelta}  (the existence of $A_u$ and $A_w$\wording{, together with}  (\ref{it:deltaDetail})) implies $P_w$ is internally disjoint from $P_u$ and, therefore, cannot intersect  $s_i$, except possibly at their common end point $v_{i+5}$.   The analogous argument using $\Delta_{e_w}$ applies for $s_{i+3}$.  \end{proof}

If $C$ does not bound a face of $\Pi[G]$, then let $e'$ be any edge of any $C$-interior $C$-bridge and let $D$ be a 1-drawing of $G-e$.
Subclaim \ref{sc:CeDelta} implies that $C$ is $H'$-close (Definition \ref{df:Hclose}).  Lemmas \ref{lm:closeIsPrebox} and \ref{lm:preboxClean} imply $C$ is clean in $D$.  Therefore, $D$ contains a 1-drawing of $C\cup M$ in which $C$ is clean and Lemma \ref{lm:cleanBOD} implies $C$ has BOD.
  It now follows from Corollary \ref{co:TutteTwo} that $\crn(G)\le 1$, the final contradiction.
\end{proof}

We find structures in the $C_e$-interior that lead to the pictures.   Our discussion will be $w$-centric; there is a completely analogous discussion for $x$.

A useful observation is the following.  Recall that $P_w$ is the $ww^e$-path in $\Delta_e-e$ (Theorem \ref{th:redHasDelta} (\ref{it:deltaDetail})) and $P_x$ is the analogous $xx^{e_w}$-path in $\Delta_{e_w}$.

\begin{claim}\label{cl:noTileCrossing}   \begin{enumerate}\item\label{it:wSide} No $C_e$-interior $C_e$-bridge has an attachment in each of the components of $(C_e-P_x)-e^w$. 
\item\label{it:xSide} No $C_e$-interior $C_e$-bridge has an attachment in each of the components of $(C_e-P_w)-e^x$.
\end{enumerate}
 \end{claim}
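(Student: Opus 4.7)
By the symmetry between the roles of $P_w,e^w$ and $P_x,e^x$, it suffices to establish (\ref{it:wSide}).  I argue by contradiction: assume there is a $C_e$-interior $C_e$-bridge $B$ with an attachment $p$ in the component $C_1$ of $(C_e-P_x)-e^w$ containing $w^e$ and an attachment $q$ in the component $C_2$ containing $w$.  Choose a $C_e$-avoiding $pq$-path $P_{pq}\subseteq B$, which lies in the closed disc $\Disc_e$ bounded by $C_e$ in $\pp$.

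The engine of the argument is the 1-drawing $D$ of $G-e^w$ supplied by condition~(\ref{it:consec1drawing}) of Definition~\ref{df:consecutive}, in which the unique crossing is $e\times e_w$.  Neither $e$ nor $e_w$ is an edge of any of $P_w,P'_1,P_x,P'_2$, or of the $C_e$-avoiding path $P_{pq}$, so $(C_e\cup P_{pq})-e^w$ is drawn by $D$ without any crossings.  Let $Q^{*}$ denote the unique $pq$-subpath of the path $C_e-e^w$; note that $Q^{*}$ contains $P_x$.  Then $C^{*}=P_{pq}\cup Q^{*}$ is a cycle of $G-e^w$ whose image $D[C^{*}]$ is a simple closed curve, and hence $D[C^{*}]$ separates the plane of the drawing into two regions.

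The plan is to use Theorem~\ref{th:redHasDelta} to locate the remaining edges of $G$ in $D$ and force a second crossing.  By Theorem~\ref{th:redHasDelta}~(\ref{it:digon}), $H\subseteq \Delta_e\cup M_{\Delta_e}$, so the three $H$-spokes $s_{i+2},s_{i+3},s_{i+4}$ lie in $M_{\Delta_e}$; in $\Pi$ their interiors all lie outside $\Disc_e$, and, being edge-disjoint from $e$ and $e_w$, they are uncrossed in $D$ as well.  On the other hand, because $e$ crosses $e_w$ exactly once in $D$, the end $u$ of $e$ and the end of $e_w$ other than $x$ (neither of which lies on $Q^{*}$) sit on opposite sides of $D[Q^{*}]$.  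Combined with the structure of $\Delta_e$ and $\Delta_{e_w}$ supplied by Theorem~\ref{th:redHasDelta}~(\ref{it:deltaDetail}) --- in particular, the path $P_u$ and its $\Delta_{e_w}$-analogue, each containing at most one $R$-edge --- this pins down the sides of $D[C^{*}]$ on which these two endpoints lie.  Tracing $s_{i+2},s_{i+3},s_{i+4}$ from their endpoints on the $H$-rim then forces at least one of them to traverse $D[C^{*}]$, producing a second crossing in $D$ and the desired contradiction.

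The principal obstacle is the requisite case analysis, which depends on whether $P'_1=P_1$ or $P'_1=B_1$ (with $B_1$ a global $H$-bridge), and likewise for $P'_2$; on the positions of $p\in C_1$ and $q\in C_2$ among the various possibilities within the subpath structure of $C_e$; and on whether $e^w$ is incident with $w$ or lies in the spoke $s_{i+1}$ (the Case~1 versus Case~2 dichotomy in the proof of Theorem~\ref{th:consecRed}).  In each configuration, the topological bookkeeping above identifies a concrete subpath of $G$ that is forced to cross $D[C^{*}]$, violating the uniqueness of the $e\times e_w$ crossing in $D$.
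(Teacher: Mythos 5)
Your approach is genuinely different from the paper's, but the proposal is incomplete and one of its central topological assertions appears to be wrong.

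The paper's proof never inspects the drawing $D$ directly; it uses the $R$-separation machinery.  Since $e$ and $e_w$ are both red, Lemma~\ref{lm:notSeparated} gives a witnessing subdivision $H'$ of $V_8$.  Because $e$ and $e_w$ cross in $D$ (your 1-drawing of $G-e^w$), Observation~\ref{obs:separated}~(\ref{it:separatedNoCross}) shows they are \emph{not} $R$-separated in $G-e^w$, and symmetrically not in $G-e^x$.  Hence $e^w$ and $e^x$ must lie in $H'$, and they sit in disjoint $H'$-spokes $P^w$, $P^x$.  A subclaim (which you would also need some version of) arranges $P^x=P_x$.  A $C_e$-interior bridge $B$ joining the two components of $(C_e-P_x)-e^w$ then supplies an $R$-avoiding path that replaces $P^w$ and \emph{avoids} $e^w$, producing a witness for $R$-separation inside $G-e^w$ --- contradiction.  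This sidesteps all direct drawing analysis.

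Your route tries to force a second crossing in $D$ itself.  Two concrete problems.  First, your pivotal topological claim --- that because $e\times e_w$ is the unique crossing, the end $u$ of $e$ and the end of $e_w$ other than $x$ ``sit on opposite sides of $D[Q^*]$'' --- is stated about an arc, not a closed curve, and even reading it as $D[C^*]$ it looks backwards: if $C^*$ is clean and both $e$ and $e_w$ have an endpoint on $C^*$, then the arcs $D[e]$ and $D[e_w]$ can meet only if they enter the \emph{same} face of $D[C^*]$, so $D[u]$ and the other end of $e_w$ should land on the same side, not opposite sides.  (And in the configurations where $w\notin Q^*$ you need a separate argument to even place $D[w]$ relative to $D[C^*]$.)  Second, you explicitly defer the crux: ``The principal obstacle is the requisite case analysis \dots{} In each configuration, the topological bookkeeping above identifies a concrete subpath of $G$ that is forced to cross $D[C^*]$.''  That is the entire burden of the proof, and it is not carried out.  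As written, this is a plan rather than a proof, built around an unverified (and likely incorrect) step.
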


\begin{proof}   Let $H'$ be a subdivision of $V_8$ witnessing the $R$-separation of $e$ and $e_w$.  As $e$ and $e_w$ are $R$-separated in neither $G-e^w$ nor $G-e^x$, $e^w$ and $e^x$ are both in $H'$.  Since $e$ and $e_w$ are in disjoint $H'$-quads, $e^w$ and $e^x$ are in disjoint $H'$-spokes, \dragominor{which we denote as $P^w$ and $P^x$\!, respectively; $P^w$ and $P^x$ are contained in the closed disc bounded by $\Pi[C_e]$. }   

\startSubclaims

\begin{subclaim}\label{sc:niceSeparation}  There is such an $H'$ so that $P^x=P_x$.  \end{subclaim}

\begin{proof}  As a first case, suppose $C_e\cap s_i=\varnothing$.  Then we may choose $H'$ to be $R$, $s_i$, $s_{i+4}$, $P_w$, and $P_x$, and we are done.  In the second case, $C_e\cap s_{i+3}=\varnothing$; replace $s_i$ with $s_{i+3}$. 

In the final case, $C_e\cap s_i$ and $C_e\cap s_{i+3}$ are not empty.  In this instance, $e_w\in r_{i+7}$.  We may choose $H'$ to consist of $R$, $s_{i+4}$, $s_i$, $s_{i+1}$, and $P_x$, the latter being contained in  $\cl(Q_{i+2})$.
\end{proof}

By symmetry, it suffices to prove (\ref{it:wSide}).  Suppose by way of contradiction that there is a $C_e$-interior $C_e$-bridge $B$ having an attachment in each component of $(C_e-P_x)-e^w$.  Subclaim \ref{sc:niceSeparation} implies there is a subdivision $H'$ witnessing the $R$-separation of $e$ and $e_w$ so that $P_x\subseteq H'$.    Let $P^w$ be the other $H'$-spoke contained in the interior of $C_e$.

Let $C'$ be the cycle bounding the $C_e$-interior face of $C_e\cup P^w$ that is incident with $e^w$.  The $C_e$-bridge $B$ contains a subpath $P'$ joining the two components of $(C'-P_x)-e^w$.  Now $((C'-P_x)-e^w)\cup P'$ contains an $R$-avoiding path $P''$ that can replace $P^w$ in $H'$ to get another subdivision of $V_8$ that witnesses the $R$-separation of $e$ and $e_w$ in $G-e^w$.  However, this contradicts the fact that $e$ and $e_w$ are not $R$-separated in $G-e^w$.
\end{proof}

Here is our \wordingrem{(text deleted)}final preliminary claim.

\begin{claim}\label{cl:twoAtts}  Let $B$ be a $C_e$-interior $C_e$-bridge.  Then $B$ \wording{is just an edge and its ends}.  \end{claim}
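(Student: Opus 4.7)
The plan is to first apply Claim~\ref{cl:noTileCrossing} twice to restrict where $\att(B)$ can lie on $C_e$, and then to rule out the possibility of a third attachment or a nontrivial nucleus in each of the remaining configurations.

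First I would combine the two parts of Claim~\ref{cl:noTileCrossing}: part~(\ref{it:wSide}) forces $\att(B)$ into one of the two components of $(C_e-P_x)-e^w$, and part~(\ref{it:xSide}) forces it into one of the two components of $(C_e-P_w)-e^x$. Intersecting these constraints, $\att(B)$ lies entirely in a short arc of $C_e$; up to the symmetry swapping $(e,w)$ and $(e_w,x)$, this arc is either contained in $P_w \cup \cc{w^e,P'_1,\cdot}$ up to $e^x$, or in $P_x \cup \cc{x^{e_w},P'_2,\cdot}$ up to $e^w$, and symmetrically on the other side. In all cases the arc lies inside $\disc_e$.

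Next I would show $|\att(B)|\le 2$. Suppose for contradiction there is an $\{x_1,x_2,x_3\}$-claw $Y$ in $B$. Pick a $C_e$-avoiding $x_jx_k$-path $P$ in $Y$ and let $C'$ be the cycle formed by $P$ together with the $x_jx_k$-subpath of $C_e$ that avoids the other talon. Because $\att(B)$ lies in the short arc produced above, at least one of $P'_1$ or $P'_2$ is disjoint from $C'$, so Claim~\ref{cl:P1disjointBoundsFace} shows $C'$ bounds a face of $\Pi[G]$. Depending on the location of the attachments, $C'$ together with an appropriate selection of $H$-spokes either (i) exhibits an $H$-green cycle sharing a rim edge with an existing $H$-green cycle, contradicting Theorem~\ref{th:twoGreenCycles}; (ii) witnesses that $e$ or $e_w$ is itself $H$-green or $H$-yellow (via Lemma~\ref{lm:findGreen} or an $H$-yellow decomposition), contradicting that it is red; or (iii) after removing $e^w$ (resp.\ $e^x$), yields a subdivision of $V_8$ witnessing the $R$-separation of $e$ and $e_w$ in $G-e^w$ (resp.\ $G-e^x$), contradicting Definition~\ref{df:consecutive}~(\ref{it:consec1drawing}) through Lemma~\ref{lm:notSeparated}. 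The third talon of $Y$ is what supplies the extra rim- or spoke-path needed to build the separating $V_8$.

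Finally, with $|\att(B)|\le 2$ established, 3-connectivity of $G$ forces $|\att(B)|=2$ and $\Nuc(B)=\varnothing$ (otherwise the two attachments separate $\Nuc(B)$ from $\comp B$). Any two parallel edges of $B$ would give a digon in $G$ with neither edge in $R$ (since edges of $B$ are not in $C_e\supseteq P_1\cup P_2$), contradicting Observation~\ref{obs:parallel}~(\ref{rim}); hence $B$ is precisely a single edge with its two ends, as claimed.

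The main obstacle is the case analysis in the second step, particularly near the four corners $w,w^e,x,x^{e_w}$ of $C_e$, where the peaks of $\Delta_e,\Delta_{e_w}$, possibly the global bridges $P'_1,P'_2$ when these are 2.5-jumps (from Claim~\ref{cl:P1jump}), and the spoke-segments $C_e\cap s_i,C_e\cap s_{i+3}$ all interact. Dispatching these corner subcases will rely on the detailed structure of $\Delta_e$ given by Theorem~\ref{th:redHasDelta}~(\ref{it:deltaDetail}), on Lemma~\ref{lm:noAdjacentRed} to keep $\Delta_e$ and $\Delta_{e_w}$ disjoint, and on tidiness (to ensure no $H$-avoiding path crosses between forbidden $3$-rim-path regions).
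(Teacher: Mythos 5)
Your proposal correctly identifies the two tools that do the preliminary work --- Claim~\ref{cl:noTileCrossing} and Claim~\ref{cl:P1disjointBoundsFace} --- and your reduction from $|\att(B)|\le 2$ to ``$B$ is an edge'' is fine (this is just Lemma~\ref{lm:threeAtts}).  However, the proof as written does not close the main gap.  First, your opening step overreaches: Claim~\ref{cl:noTileCrossing} only excludes attachments in {\em both} components of $(C_e-P_x)-e^w$ (respectively of $(C_e-P_w)-e^x$); it says nothing about attachments on $P_x$ (respectively $P_w$) itself, so intersecting the two constraints does {\em not} confine $\att(B)$ to a single short arc.  The paper instead proves a localisation in two pieces: Claim~\ref{cl:P1disjointBoundsFace} is used to show $B$ has at most two attachments in each of $C_e-P'_1$ and $C_e-P'_2$, and only then is Claim~\ref{cl:noTileCrossing} used to push any $P'_1$- or $P'_2$-attachment to the corner vertices $w, w^e, x, x^{e_w}$, with a further parity argument pinning $\att(B)\cap(P'_1\cup P'_2)$ to $\{w^e,x^{e_w}\}$.

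Second and more seriously, your step~2 (deriving a contradiction from a third attachment) is a gesture, not a proof: you list three candidate contradiction mechanisms (Theorem~\ref{th:twoGreenCycles}, red-edge colouring via Lemma~\ref{lm:findGreen}, or $R$-separation contradicting Definition~\ref{df:consecutive}~(\ref{it:consec1drawing})), but the actual endgame in the paper is none of these.  Once it is known that $\att(B)=\{w^e,x^{e_w},y\}$ with $y\in\oo{P_w}$ and $B\cong K_{1,3}$ with centre $c$, the paper first uses Claim~\ref{cl:P1disjointBoundsFace} to show both cycles $\cc{y,c,w^e,y}$ and $\cc{y,c,x^{e_w},P'_2,w,P_w,y}$ bound faces, hence $y$ has degree $3$ in $G$.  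Then it deletes the edge $e'=cw^e$, takes a $1$-drawing $D'$ of $G-e'$, exhibits a $V_6$-subdivision in $G-e'$ whose existence forces $P_w\cup(B-e')$ to be clean in $D'$, and re-inserts $e'$ alongside the clean path $(c,y)\cup \cc{y,P_w,w^e}$ to obtain a $1$-drawing of $G$ --- the contradiction.  This direct redrawing argument, not a green/yellow colouring or a $V_8$-separation argument, is the load-bearing step, and it is absent from your proposal.  You should either reproduce it or genuinely verify that one of your alternatives goes through in every subcase (including the ones you flag near the four corners), which at present you have only promised.
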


\begin{proof}  Suppose to the contrary that $B$ is a $C_e$-interior $C_e$-bridge with at least three attachments.

\startSubclaims
\begin{subclaim}\label{sc:P1andP2}  $B$ has at most two attachments in each of $C_e-P'_1$ and $C_e-P'_2$. \end{subclaim}

\begin{proof} By symmetry, it suffices to prove the first of these. Suppose $B$ has at least two attachments in $C_e-P'_1$.  Let $y$ and $z$ be the ones nearest the two ends of $C_e-P'_1$.  There is a cycle in $B\cup C_e$ consisting of a $C_e$-avoiding $yz$-path in $B$ and the $yz$-subpath of $C_e-P'_1$.  Claim \ref{cl:P1disjointBoundsFace} implies this cycle bounds a face of $\Pi[G]$ and, therefore, $B$ can have no other attachment in $C_e-P'_1$.
\end{proof}

\begin{subclaim}\label{sc:corners}  $\att(B)\cap P'_1\subseteq \{x,w^e\}$ and $\att(B)\cap P'_2\subseteq \{w,x^{e_w}\}$. \end{subclaim}

\begin{proof}  By symmetry, it suffices to prove the first of these.  
By way of contradiction, suppose $B$ has an attachment $y$ in $\oo{P'_1}$.  Because $B$ has at least three attachments, Subclaim \ref{sc:P1andP2} implies $B$ has an attachment $z$ in $P'_2$.  Any $C_e$-avoiding $yz$-path in $B$ contradicts Claim \ref{cl:noTileCrossing}.
\end{proof}

From these two subclaims, we easily deduce that:
\begin{itemize}\item $B$ has at most four attachments; \item one of $w$ and $x^{e_w}$ is an attachment of $B$; and \item one of $x$ and $w^e$ is an attachment of $B$. 
\end{itemize}

Observe that Claim \ref{cl:noTileCrossing} (\ref{it:wSide}) implies that not both $w$ and $w^e$ are attachments of $B$, while (\ref{it:xSide}) implies that not both $x$ and $x^{e_w}$ are attachments of $B$.  Therefore, $\att(B)\cap (P'_1\cup P'_2)$ is either $\{w,x\}$ or $\{w^e,x^{e_w}\}$.

\begin{subclaim}\label{sc:wxNot}   $\att(B)\cap (P'_1\cup P'_2)=\{w^e,x^{e_w}\}$.\end{subclaim}

\begin{proof} Suppose by way of contradiction that  $\att(B)\cap (P'_1\cup P'_2)=\{w,x\}$.  As $B$ has at least three attachments, there is an attachment $y$ in $\oo{P_w}\cup \oo{P_x}$.   By symmetry, we may assume $y\in \oo{P_w}$.   Let $P^{yw}$ be a $C_e$-avoiding $yw$-path in $B$.  Then the union of $P^{yw}$ and the $yw$-subpath of $P_w$ is a cycle $C^{yw}$ in $\Disc_e$.  

Since $y$ and $w$ are in $P_w-w^e$, $C^{yw}$ is disjoint from $P'_1$.   Claim \ref{cl:P1disjointBoundsFace} implies $C^{yw}$ bounds a face of $\Pi[G]$.  On the other hand, $P_w$ is contained in the boundary of the face bounded by $\Delta_e$ and, therefore, $C^{yw}\cap P_w$ is in the boundary of two faces of $\Pi[G]$. We deduce that $C^{yw}\cap P_w$ is just the edge $wy$.  

Furthermore, Claim \ref{cl:noTileCrossing} implies $w$ and $y$ are in the same component of $P_w-e^w$.  Therefore, the definition of $e^w$ implies $wy$ is in $R$, and consequently $P'_2$ is a global $H$-bridge spanning $wy$.   However, any edge of $B$ incident with $w$ --- and there is at least one such --- must be \wording{in the interior} of the face of $\Pi[G]$ bounded by the $H$-green cycle containing $P'_2$ (Lemma \ref{lm:greenCycles} (\ref{it:CboundsFace})).  This contradiction proves the subclaim.\end{proof}

We are now ready to complete the proof of the claim.  Any vertex in $\att(B)\setminus \{w^e,x^{e_w}\}$ is in $\oo{P_w}\cup \oo{P_x}$.  Subclaim \ref{sc:P1andP2} implies there is at most one of these.  Since $B$ has at least three attachments, there is at least one of these.  We conclude there is exactly one such attachment $y$.  We may choose the labelling so that $y\in \oo{P_w}$.  
\minorrem{(Two paragraphs replaced with the following sentence.)}\minor{Lemma \ref{lm:threeAtts} implies $B$ is isomorphic to $K_{1,3}$.}

%Let $T$ be a $\{w^e,x^{e_w},y\}$-claw in $B$ (Definition \ref{df:Hclose}).  If $T\ne B$, then let $e^*$ be an edge of $B$ that is not in $T$ and let $D^*$ be a 1-drawing of $G-e^*$.

%Let $H'$ be the subdivision of $V_6$ consisting of $(R-(\oo{P_1}\cup \oo{P_2})\cup (P'_1\cup P'_2)$, $P_x$, $s_i$, and $s_{i+4}$.  Then $H'$ shows $T$ is not crossed in $D^*$.  We may replace $D[T]$ with $\Pi[B]$ to obtain a 1-drawing of $G$, which is impossible.  Therefore, $T=B$; that is, $B$ is isomorphic to $K_{1,3}$.  

The vertex $y$ is in the interior of $P_w$.  Thus, both edges of $P_w$ incident with $y$ are in the boundary of the face bounded by $\Pi[\Delta_e]$. Consequently, any edge of $G$ incident with $y$ is in $\Disc_e$.  

Let $c$ be the vertex of degree 3 in $B$.  Claim \ref{cl:P1disjointBoundsFace} implies that the cycles $\cc{y,c,w^e,y}$ and  $\cc{y,c,x^{e_w},P'_2,w,P_w,y}$ both bound faces in $\Disc_e$.  Therefore, $y$ has degree 3 in $G$.

Let $e'$ be the edge $cw^e$ \wording{of $B$ and let} $D'$ be a 1-drawing of $G-e'$.   Consider the subdivision $H'$ of $V_6$ consisting of $(R-(\oo{P_1}\cup \oo{P_2})\cup (P'_1\cup P'_2)$, $P_x$, $s_i$, and $s_{i+4}$.  Then $H'$ shows that $P_w\cup (B-e')$ is not crossed in $D'$.  

The path $P'=\cc{c,cy,y,P_w,w^e}$ is not crossed in $D'$.  Since $y$ has degree 3 in $D'$, we may add the edge $w^ec$ to $D'$ alongside $P'$ without crossing to obtain a 1-drawing of $G$.  This is the final contradiction that \wording{shows $B$ has only two attachments.  Lemma \ref{lm:threeAtts} shows $B$ is just an edge and its ends.}
\end{proof}

We now have our preliminary lemmas in hand and proceed to complete the proof of Theorem \ref{th:classification}.

}\begin{definition}\label{df:halfTiles}  Let $C_e$ be decomposed as $P_wP'_1P_xP'_2$ \wording{as in Definition \ref{df:Ce}}.
\begin{enumerate}\item\label{it:wDigon} \wording{If $f$ is an edge not in $C_e$ with ends $w$ \dragominor{and $x_{e_w}$} and $P'_2$ has length 1, then $f$ is a {\em $w$-chord\/}\index{$w$-chord}\index{chord}.}

% A {\em $w$-chord\/} consists of an edge $wx^{e_w}$ not in $C_e$  The $wx^{e_w}$-path $P'_2$ has length 1.  
\item\label{it:wSlope} \wording{If $f$ is an edge not in $C_e$ joining $w$ to a vertex $y\in \oo{P_x}$ and the $yx^{e_w}$-subpath of $P_x$ has length 1, then $f$ is a {\em $w$-slope\/}\index{$w$-slope}\index{slope}.} 
%A {\em $w$-slope\/} is an edge not in $C_e$ joining  $w$ to a vertex $y$ in $\oo{P_x}$.  The $yx^{e_w}$-subpath of $P_x$ has length 1.
\item\label{it:wDigonSlope} \wording{If $f$ and $f'$ are edges not in $C_e$, with $f$ joining $w$ with $z\in \oo{P'_2}$ and $f'$ joining $z$ to $z'\in \oo{P_x}$, and if $P'_2$ has length 2, while the $z'x^{e_w}$-subpath of $P_x$ has length 1, then $\{f,f'\}$ is a {\em $w$-chord+$w$-slope\/}\index{$w$-chord+$w$-slope}\index{chord+slope}.}
\item\label{it:wBackSlope} \wording{If $f$ is an edge not in $C_e$ joining $x^{e_w}$ to a vertex $y$ in $\oo{P_w}$, and both $P'_2$ and the $yw$-subpath of $P_w$ have length 1, then $f$ is a {\em $w$-backslope\/}\index{$w$-backslope}\index{backslope}.}
\item\label{it:wHgraph} \wording{If $f$ is an edge not in $C_e$ joining $y\in \oo{P_w}$ and $z\in \oo{P_x}$, and the paths $P_w$ and $P_x$ have length 2, while $P'_1$ and $P'_2$ have length 1, then $f$ is a {\em crossbar\/}\index{crossbar}.}
\end{enumerate}
\end{definition}\printFullDetails{

\dragominor{The five situations in Definition \ref{df:halfTiles} are illustrated in Figure \ref{fg:halfTiles}.}

\begin{figure}[!ht]
\begin{center}
\scalebox{1.0}{\input{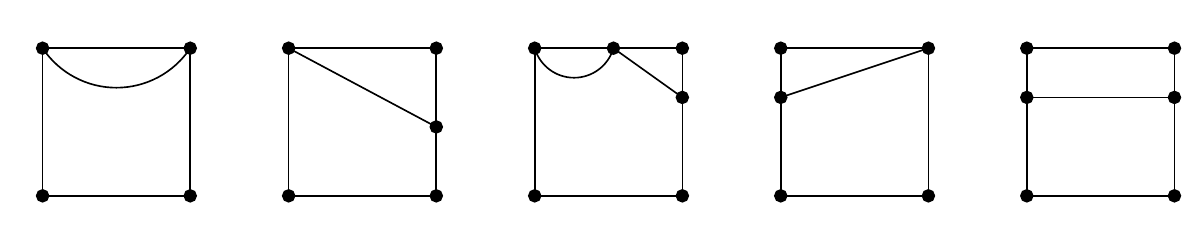_t}}
\end{center}
\caption{Definition \ref{df:halfTiles}.}             
\label{fg:halfTiles}
\end{figure}

\wordingrem{(Text removed.)}% 

\begin{claim}\label{cl:ewNotGreenYellow}  If $e^w$ is in neither an $H$-yellow nor an $H$-green cycle, then every edge of $P_2$ is $H$-green.  If $\mathcal C$ is the \wording{set of  $H$-green cycles containing edges} of $P_2$, then $C_e\cup (\bigcup_{C\in\mathcal C})C$ contains either: 
\begin{enumerate}[label=(\alph*)]
\item\label{it:digonA} $C_e$ plus a $w$-chord; \item\label{it:slopeA} $C_e$ plus a $w$-slope; or \item\label{it:digonSlopeA} $C_e$ plus a $w$-chord+$w$-slope.  
\end{enumerate}\end{claim}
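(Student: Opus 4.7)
The proof breaks into two parts corresponding to the two conclusions.

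For the first conclusion, note that Definition \ref{df:consecutive}(\ref{it:betweenPeaks2}) forbids any red edge in $P_2$ between $w$ and the peak of $\Delta_{e_w}$, so Theorem \ref{th:rimColoured} implies each edge of $P_2$ is $H$-green or $H$-yellow. I will rule out the $H$-yellow case by contradiction: suppose $f\in P_2$ is $H$-yellow with witnessing $H$-yellow cycle $C_y$ and $H$-green cycle $C_g$. Lemma \ref{lm:yellowCycles}(\ref{it:greenGlobal},\ref{it:quadClosure}) places $C_y\subseteq\cl(Q_{i-1})\cup\cl(Q_i)$ and yields a global $H$-bridge $B\subseteq C_g$. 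The $R$-avoiding portion of $C_y$ consists of two spoke segments joined by an $H$-avoiding detour inside $\cl(Q_i)$ (or $\cl(Q_{i-1})$); since $P_w\subseteq\cl(Q_i)$ ends at $w$ and $e^w$ is the first non-$R$ edge of $P_w$ at $w$, a planarity argument inside $\cl(Q_i)$ (whose embedding has $Q_i$ bounding a face) forces $e^w$ either to lie in the spoke segment used by $C_y$ or to be incident with the detour; in both subcases I produce an $H$-green or $H$-yellow cycle containing $e^w$, contradicting the hypothesis.

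For the second conclusion, let $C\in\mathcal C$. The argument above, applied once more using $P_w$ and the embedding of $\cl(Q_i)$, shows that $C$ cannot be a non-digon local $H$-green cycle contained in $\cl(Q_{i-1})\cup\cl(Q_i)$: such a $C$ would again trap $e^w$ between its spoke segment and $P_w$, producing a green cycle through $e^w$. Hence every $C\in\mathcal C$ is either a digon with a single edge of $P_2$ or contains a global $H$-bridge spanning some edges of $P_2$.

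To finish, Theorems \ref{th:globalBridges} and \ref{th:no3jump} restrict each such global $H$-bridge to a $2$- or $2.5$-jump, while Theorem \ref{th:twoGreenCycles} shows that no edge of $P_2$ lies in two distinct members of $\mathcal C$. Applying Lemma \ref{lm:globalJumps}(\ref{it:globalNoHnode},\ref{it:spanDiffHyper},\ref{it:oppositeRims}) to rule out incompatible combinations, together with the requirement that any bridge in $\mathcal C$ terminate so as not to conflict with $\Delta_{e_w}$ (in particular, to be compatible with $x^{e_w}$), the possible configurations of $\mathcal C$ collapse to exactly three: a single bridge whose span equals $P_2$, making $P_2'$ have length $1$, with its parallel companion giving the $w$-chord of Definition \ref{df:halfTiles}(\ref{it:wDigon}); a single bridge from $w$ to the penultimate vertex of $P_x$, giving the $w$-slope of Definition \ref{df:halfTiles}(\ref{it:wSlope}); or a length-$2$ bridge $P_2'$ together with a second jump from its middle vertex to $P_x$, giving the $w$-chord+$w$-slope of Definition \ref{df:halfTiles}(\ref{it:wDigonSlope}). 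The main obstacle is the embedding argument in the first two paragraphs, which must correctly locate the $H$-avoiding detour of $C_y$ (or of a problematic local $C\in\mathcal C$) so as to force $e^w$ into an auxiliary green or yellow cycle; once that tracing is in hand, the enumeration of bridge patterns is a routine consequence of the earlier results on global $H$-bridges.
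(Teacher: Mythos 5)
The central step in your second paragraph is false. You assert that once every edge of $P_2$ is $H$-green, every $C\in\mathcal C$ is ``either a digon with a single edge of $P_2$ or contains a global $H$-bridge spanning some edges of $P_2$,'' and then enumerate configurations from that classification. But this fails exactly in the configuration that should lead to conclusion~\ref{it:slopeA}: there the unique green cycle $C\in\mathcal C$ is a local cycle in $\cl(Q_i)$ consisting of the single edge of $P_2$, a $C_e$-bridge edge $wz$ with $z\in\oo{P_x}$, and the (length-$1$) $x^{e_w}z$-subpath of $P_x$. This is a triangle, not a digon, and it contains no global $H$-bridge. The paper does not classify elements of $\mathcal C$ the way you do; instead it fixes $C$ to be the green cycle through the edge of $P_2$ incident with $w$, extracts the $C_e$-bridge $B=wz$ containing the edge $e'$ of $C$ at $w$ that is not in $C_e$, and then splits on where $z$ lies: $z\in P_x-x^{e_w}$ yields~\ref{it:slopeA}; $z=x^{e_w}$ yields~\ref{it:digonA}; and $z\in P_2$, $z\ne x^{e_w}$ forces (via a second bridge $zy$ with $y\in\oo{P_x}$) conclusion~\ref{it:digonSlopeA}. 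Your attempted classification of $\mathcal C$ cannot be salvaged by the jump lemmas because its premise is wrong.

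Two further issues. First, the paper begins by observing that the hypothesis that $e^w$ is not in a yellow cycle forces (via Theorem \ref{th:redHasDelta}~(\ref{it:deltaDetail})) that $w$ is incident with $e^w$; you do not mention this and it is used repeatedly. Second, your treatment of the first conclusion (no $H$-yellow edge in $P_2$) is a sketch, not a proof. Your appeal to ``a planarity argument inside $\cl(Q_i)$'' to trap $e^w$ in a yellow cycle does not supply the needed structural facts. The paper's actual argument first distinguishes whether some global $H$-bridge spans an edge of $P_2$ (in which case Claim \ref{cl:P1jump} immediately gives both that $P_2$ is all green and that configuration~\ref{it:slopeA} occurs), and otherwise locates the hypothetical yellow edge's witnessing green bridge $B$ on the $P_1$ side via the $\Disc$'s cut by $\Delta_e,\Delta_{e_w}$, applies Claim \ref{cl:P1jump} to pin $B=P_1'$, $w^e=v_{i+5}$, $e^w\in s_{i+1}$, $w=v_{i+1}$, rules out $Q_{i+1}$ being yellow (so $x=v_{i+7}$), and then kills the two remaining placements of $f$. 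Your sketch does not establish these anchor points and cannot reach the contradiction. In short, the first part is under-argued and the second part rests on a false structural claim about $\mathcal C$.
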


\begin{proof}   Because $e^w$ is not in an $H$-yellow cycle, Theorem \ref{th:redHasDelta} (\ref{it:deltaDetail}) implies $w$ is incident with $e^w$. 

\medskip{\bf Case 1:}  {\em some edge of $P_2$ is spanned by a global $H$-bridge.}

\medskip  Let $B$ be a global $H$-bridge spanning an edge of $P_2$.  Claim \ref{cl:P1jump} implies $B$ has ends $x^{e_w}$ and $w$,  $x^{e_w}=v_{i+3}$, $e^w\in s_{i+1}$,  and $e^x\in s_{i+2}$.  Since $w$ is incident with $e^w$, we have $w=v_{i+1}$.  

We show \ref{it:slopeA} occurs by proving that $r_{i+1}$ is a $w$-slope.  We show that $r_{i+1}$ and $r_{i+2}$ are both paths of length 1, starting with the latter.

We note that $P_x$ is equal to $s_{i+2}\,r_{i+2}$.  Moreover, $r_{i+2}$ has the face of $\Pi[G]$ bounded by the $H$-green cycle $C_g$ containing $B$ on one side and the face bounded by $\Delta_{e_w}$ on the other.  Thus, $r_{i+2}$ is just a single edge.  

\wording{Claim \ref{cl:twoAtts} shows that the $C_e$-bridge $B'$ containing $r_{i+1}$ is just an edge and its ends. Thus, $r_{i+1}$ is $B'$ and so has length 1, as required, completing the proof in Case 1.}

\medskip{\bf Case 2:}  {\em no edge of $P_2$ is spanned by a global $H$-bridge.}

\medskip In this case, $P'_2=P_2$.  We start by showing that every \wording{edge of $P_2$ is} $H$-green.

Because $e_w$ is $w$-consecutive for $e$,  Definition \ref{df:consecutive} implies no edge of $P_2$ is red.  By Theorem \ref{th:rimColoured}, we need only show that none is $H$-yellow.  Suppose to the contrary that there is an $H$-yellow edge $f$ in $P_2$, as witnessed by the $H$-yellow cycle $C_y$ and the $H$-green cycle $C_g$.  Lemma \ref{lm:yellowCycles} implies there is a global $H$-bridge $B$ contained in $C_g$.  

The face of $\Pi[G]$ bounded by $C_y$ (Lemma \ref{lm:yellowCycles} (\ref{it:oneBridge})) is in $\Mob$.  Now the faces of $\Pi[G]$ bounded by $\Delta_e$ and $\Delta_{e_w}$ separate $\Mob$ into two parts, one of which contains $f$, and therefore $P_2$.   It follows that $P_1$ is also in this part and $C_y$ has at least a vertex in $P_1$. We conclude that $B$ spans an edge of $P_1$.  Claim \ref{cl:P1jump} implies $B=P'_1$, $w^e=v_{i+5}$, $e^w\in s_{i+1}$, and $e^x\in s_{i+2}$.  Because $P'_2=P_2$, and $e^w\in s_{i+1}$, we deduce that $w=v_{i+1}$.  

Since $e^w$ is not in an $H$-yellow cycle, we  conclude that $Q_{i+1}$ is not an $H$-yellow cycle.  The other attachment of $B$, namely $x$, which is in $\co{v_{i+7},r_{i+7},v_{i+8}}$, must therefore be $v_{i+7}$. 

If the $H$-yellow edge $f$ is in $r_{i+1}$, then $C_y\cap P_1$ is contained in the interior of the span of $B$.    This implies that $s_{i+1}$ is in an $H$-yellow cycle and, therefore, $e^w$ is in an $H$-yellow cycle, contrary to the hypothesis.

We have noted that $x=v_{i+7}$ is an end of $B$.  Consequently, no edge of $\cc{v_{i+2},r_{i+2},x^{e_w}}$ can be $H$-yellow.  That is, every edge of $P_2$ is $H$-green.

We now complete the proof in Case 2.   Let $C$ be the $H$-green cycle containing the edge of $P_2$ that is incident with $w$.    Because $e^w$ is not in any $H$-green cycle,  $w$ is incident with an edge $e'$ in $C$ that is not in $C_e$.  Let $B$ be the $C_e$-bridge containing $e'$.  

Claim \ref{cl:twoAtts} implies $B$ is just an edge with the two ends $w$ and a second vertex $z$.
 The path $C\cap P_2$ is in the boundary of the face of $\Pi[G]$ bounded by $C$ (Lemma \ref{lm:greenCycles} (\ref{it:CboundsFace})).  Also, there is no global $H$-bridge spanning an edge of $P_2$ (\wording{we are in Case 2}).  These two facts imply $C\cap P_2$ is just an edge.

Suppose first that $z\in P_x-x^{e_w}$.   Because $C$ is $H$-green, it is \wording{disjoint from $P_1$}.  Thus, Claim \ref{cl:P1disjointBoundsFace} implies that the cycle $C'$ that is the union of the $wz$-subpath of $P_2P_x$ and $B$ bounds a face of $\Pi[G]$.  This face is contained in $\Mob$, as is the face bounded by $C$.  Both are incident with the edge of $P_2$ incident with $w$ and so they are the same face.  We conclude that $C=C'$.

Now $C\cap P_x$ is in the boundary \wording{of a face inside the disc bounded by $\Delta_{e'}$ on} one side and the face bounded by $C$ on the other.  \dragominor{Because $G$ is 3-connected}, this subpath has length 1.  In this case, we have \ref{it:slopeA}.

The other possibility is that $z$ is in $P_2$.  We have already shown that $w$ and $z$ are the ends of a digon.  If $z=x^{e_w}$, then we have \ref{it:digonA}.  Therefore, we may suppose $z\ne x^{e_w}$.

Since $G$ is 3-connected, $z$ has a neighbour $y$ distinct from its neighbours in $P_2$.  Let $B'$ be the $C_e$-bridge containing $zy$. Claim \ref{cl:twoAtts} implies $B'$ is just an edge joining $z$ and $y$.  

The choice of $y$ shows $y\ne w$.  Claim \ref{cl:noTileCrossing} (\ref{it:wSide}) and (\ref{it:xSide})  imply, respectively, that  $y\notin \oo{P_wP'_1}$  and that $y\ne x$.   If $y\in P_2$, then (just as for $w$ and $z$) $z$ and $y$ are the ends of a digon, so $y$ is a neighbour of $z$ in $P_2$, contradicting the choice of $y$.  Therefore, $y\in \oo{P_x}$.  

Let $C'$ be the cycle consisting of $zy$ and the $zy$-subpath of $P_2P_x$.  Claim \ref{cl:P1disjointBoundsFace} implies $C'$ bounds a face of $\Pi[G]$.

To see that \ref{it:digonSlopeA} holds, notice that  $C'\cap P_x$ is in the boundary of the faces bounded by $C'$ and $\Delta_{e_w}$. \dragominor{ Again, the 3-connection of $G$ shows} $C'\cap P_x$ is a path of length 1.   Likewise, $C'\cap P_2$ is in the boundary of the face bounded by $C'$.  There is no global $H$-bridge spanning any edge of $P_2$, so $C'\cap P_2$ is also a path of length 1, completing the proof that \ref{it:digonSlopeA} occurs and the proof of Claim \ref{cl:ewNotGreenYellow}.
\end{proof}  

It remains to consider the possibilities that $e^w$ is in either an $H$-yellow or an $H$-green cycle.  We do the latter first.

\begin{claim}  If $e^w$ is in an $H$-green cycle $C$, then either  \begin{enumerate}[label=(\alph*), start=4]\item\label{it:backSlope} $C_e\cup C$ contains $C_e$ plus a back-slope or \item\label{it:hgraph} $C_e\cup C$ is $C_e$ plus a crossbar. \end{enumerate}\end{claim}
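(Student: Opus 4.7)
\begin{claimproof}
The plan is to locate the face $F$ of $\Pi[G]$ bounded by $C$ (Lemma \ref{lm:greenCycles} (\ref{it:CboundsFace})) and thereby deduce the structure of $C$. Since $e^w \in P_w \subseteq C_e$ lies on $\Delta_e$, whose bounded face sits inside $\Disc_e$, the face $F$ is on the opposite side of $e^w$ and so lies outside $\Disc_e$. Decomposing $C = Q_1 Q_2 Q_3 Q_4$ as in Definition \ref{df:green}, and noting $e^w \notin R$, I will use Theorem \ref{th:twoGreenCycles} to show that $C$ shares no rim edge with (i) the $H$-green cycles arising from any global jump appearing in $P'_1$ or $P'_2$, and (ii) the $H$-green cycle through any rim edge of $P_w$ (which would be spanned by a global $H$-bridge by Theorem \ref{th:redHasDelta} (\ref{it:deltaDetail})). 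This pins the allowed locations of $Q_1$ to rim segments near $x^{e_w}$ on the ``$P_2$-side'' of $C_e$.

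Next I plan a case analysis on where $e^w$ sits. The principal case is $e^w$ incident with $w$, say $e^w = wy$ with $y \in \oo{P_w}$; here the goal is to show that $C$ is either the 3-cycle $w, y, x^{e_w}, w$ whose third edge is $P'_2$ (of length 1) and whose middle edge is a backslope $f = y x^{e_w}$ (case (d)), or the 4-cycle $w, y, z, x^{e_w}, w$ with $z \in \oo{P_x}$, crossbar $f = yz$, $P_w$ and $P_x$ both of length $2$, and $P'_1, P'_2$ both of length $1$ (case (e)). The remaining case, in which $e^w$ is not incident with $w$, means $P_w$ begins with a rim edge $wa$ spanned by a global $H$-bridge by Theorem \ref{th:redHasDelta} (\ref{it:deltaDetail}); I plan to exclude this by mimicking the topology argument of Claim \ref{cl:P1jump} together with Theorem \ref{th:twoGreenCycles}, producing an $H$-green cycle that conflicts with the one through the jump.

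The hardest part will be the case analysis ruling out larger or differently shaped cycles $C$: a priori $C$ could extend further along $P_w$ or $P_x$, or wind back through regions outside $\Disc_e$. I plan to use Claim \ref{cl:twoAtts} applied on the exterior side of $C_e$ (viewing the attachments of $C$ as vertices of $C_e$ that $C$ meets), Claim \ref{cl:noTileCrossing} to forbid $C$ from separating the ``$w$-side'' and the ``$x$-side'' of the tile inappropriately, and the consecutivity of $e$ and $e_w$ (Definition \ref{df:consecutive}) -- which asserts no red edge lies strictly between them in the relevant rim intervals -- to force $C$ to be precisely a 3- or 4-cycle of the stated form. Maintaining the symmetry with the ``$x$-side'' structure, particularly for the crossbar case (where both $P_w$ and $P_x$ must simultaneously have length 2), will be the main technical hurdle.
\end{claimproof}
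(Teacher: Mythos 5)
Your geometric orientation at the start is backwards, and this is a genuine gap. You assert that the face of $\Pi[G]$ bounded by $\Delta_e$ sits inside $\Disc_e$ and therefore the face $F$ bounded by $C$ (being on the other side of $e^w$) lies outside $\Disc_e$. In fact the opposite is true: $\Delta_e$'s face is the thin sliver between $P_u$ and $P_w$ in $\cl(Q_i)$, which lies on the side of $P_w$ \emph{away} from $P_x$, $P'_1$, $P'_2$, whereas $\Disc_e$ is the region on the $P_x$ side of $P_w$. So $F$, being the face adjacent to $e^w$ other than $\Delta_e$'s, is \emph{$C_e$-interior}, i.e., $F\subseteq \Disc_e$; this is exactly what the paper records and what drives its proof. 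This error is not cosmetic: it sends your subsequent steps looking at the wrong $C_e$-bridge. With the correct orientation, the $R$-avoiding part of $C$ lies in a $C_e$-\emph{interior} bridge, so Claim~\ref{cl:twoAtts} applies directly and forces it to be a single edge. Your plan to apply Claim~\ref{cl:twoAtts} ``on the exterior side of $C_e$'' cannot work: that claim is stated only for $C_e$-interior bridges, and the exterior side contains $M_{C_e}\supseteq H$, which is certainly not a single edge.

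Beyond the orientation issue, two tools the paper relies on are missing from your plan. First, Claim~\ref{cl:P1disjointBoundsFace} is used (twice) to show that the auxiliary cycle built from $C$'s $R$-avoiding subpath and parts of $P_wP_2P_x$ bounds a face; this is how the paper identifies that cycle with $C$, pins $z=x^{e_w}$, and later (in the crossbar subcase) shows $P'_1$, $C\cap P_w$, $C\cap P_x$ all have length~$1$. Second, the case $e^w$ not incident with $w$ is dispatched more directly than you suggest: one observes that $w$ would then be an attachment of a global bridge, whence $C\cap R$ lies entirely in that bridge's span and Theorem~\ref{th:twoGreenCycles} is violated; there is no need to re-run the topological argument of Claim~\ref{cl:P1jump}. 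Your intended conclusion (a $3$-cycle giving a backslope, or a $4$-cycle giving a crossbar) is the right one, and your identification of Theorem~\ref{th:twoGreenCycles} and Claim~\ref{cl:noTileCrossing} as key ingredients is apt, but the proof as outlined does not close without first correcting the side of $P_w$ on which $F$ lives.
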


\begin{proof}  Let $F$ be the face bounded by $C$ (Lemma \ref{lm:greenCycles} (\ref{it:CboundsFace})). Obviously $F$ is \wording{not inside the} face bounded by $\Delta_e$, and, since $F$ is contained in $\Mob$, $F$ is $C_e$-interior.  Let $y$ be the end of $e^w$ nearer $w$ in $P_2$; then $y\in r_i$.  From the definition of $H$-green cycle (Definition \ref{df:green}), the edge of the $yx^{e_w}$-subpath of $P_2$ incident with $y$ is in $C$.

If $w$ is an attachment of a global $H$-bridge, then every edge of $C\cap R$ is in two $H$-green cycles, which is impossible by Theorem \ref{th:twoGreenCycles}.  Therefore, $P_2=P'_2$,  $y=w$, and $C$ is the union of the $wz$-path $C\cap P_2$ (this defines $z$) and an $R$-avoiding $wz$-path $P$.  

The path $P$ contains a subpath $P'$ joining a vertex of the $zx$-subpath of $P_2P_x$ to a vertex of the component of $P_w-e^w$ containing $w^e$; we may assume $P'$ is $C_e$-avoiding.  Claim \ref{cl:P1disjointBoundsFace} implies that the cycle contained in $P'\cup P_wP_2P_x$ bounds a face of $\Pi[G]$.  As $z$ is in this cycle, it must be that $z$ is an end of $P'$ and, moreover, this cycle is $C$.  In particular, $P$ is just $P'$ plus a subpath of $P_w$.
We know that $C\cap P_2$ is just an edge.  Since the path $C\cap P_w$ is in the boundary of the faces bounded by both $C$ and $\Delta_e$, it is \wording{also just the edge $e_w$}.

If $z\ne x^{e_w}$, then $P'=P$ and the $zw^e$-path contained in $P\cup P_w$ contradicts Claim \ref{cl:noTileCrossing} (\ref{it:wSide}).  Therefore, $z=x^{e_w}$.

Let $B$ be the $C_e$-bridge containing $P'$.   Claim \ref{cl:twoAtts} implies $B$ has precisely two attachments $w'\in P_w$ and $x'\in P_x$:  therefore, $B$ is just the edge $w'x'$ (this is also $P'$).  If $x'$ is $x^{e_w}$, then $B$ is a $w$-backslope.  

Finally, suppose $x'$ is in $P_x-x^{e_w}$.  Then $C$ bounds a face incident with $C\cap P_x$.  Since $C\cap P_x$ is also in the boundary of the face bounded by $\Delta_{e_w}$,  it has length 1.   

On the $P'_1$ side, $B$ together with the $w'x'$-subpath of $\oo{P_wP'_1P_x}$ is a cycle $C'$ disjoint from $P'_2$.  By Claim \ref{cl:P1disjointBoundsFace},  $C'$ bounds a face of $\Pi[G]$.  As above, each of $C'\cap P_w$, $C'\cap P_x$, and $P'_1$ all have length 1.  Therefore, $B$ is a crossbar.\end{proof}
 
Our final case is that $e^w$ is in an $H$-yellow cycle.

\begin{claim}  If $e^w$ is in an $H$-yellow cycle $C$, then  either \begin{enumerate}[label=(\alph*), start=4]\item\label{it:backSlopeA} $C_e\cup C$ contains $C_e$ plus a back-slope or \item\label{it:hgraphA} $C_e\cup C$ is $C_e$ plus a crossbar. \end{enumerate}\end{claim}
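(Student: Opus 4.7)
The plan is to follow very closely the pattern of the preceding $H$-green claim, replacing the role of the local $H$-green cycle with the $H$-yellow cycle $C$ together with its witnessing $H$-green cycle. Write $C=P_1P_2P_3P_4$ as in Definition~\ref{df:yellow}, and let $C'$ be the witnessing $H$-green cycle, which by Lemma~\ref{lm:yellowCycles} (\ref{it:greenGlobal}) contains a global $H$-bridge $B$. By Theorem~\ref{th:redHasDelta} (\ref{it:deltaDetail}) the path $P_w\subseteq\Delta_e-e$ is contained in $\cl(Q_i)$, and $e^w$ is the first non-rim edge of $P_w$ starting at $w\in r_i$; in particular $e^w\in\cl(Q_i)$, so since $e^w$ must appear in the $R$-avoiding portion of $C$, Lemma~\ref{lm:yellowCycles} (\ref{it:quadClosure}) forces $C\subseteq\cl(Q_i)$.

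Next I would show that $B=P'_2$. Since $P_1$ and $P_3$ are the $R$-components of $C$, both lie in $r_i\cup r_{i+5}$; being 3-rim paths whose union is not a 3-rim path, one is in $r_i$ and the other is in $r_{i+5}$. The $P_1$-end lies in $\oo{C'\cap R}$, so the span of $B$ contains $P_1$. I would argue using Claim~\ref{cl:P1jump} (applied with the roles of $P_1,P_2$ from the $C_e$-decomposition) that $B$ must coincide with one of $P'_1,P'_2$, and then rule out $B=P'_1$ since the cycle $C$ lives on the $w$-side of $C_e$ (the side incident with $e^w$). So $B=P'_2$, which fixes $w^e=v_{i+5}$, $x^{e_w}=v_{i+3}$, and $e^w\in s_{i+1}$.

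The rest is then a direct structural read-off. The $R$-avoiding piece of $C$ containing $e^w$ runs from $r_i$ across $\cl(Q_i)$ to $r_{i+5}$, and its second endpoint either lies at $x^{e_w}$ (yielding a backslope) or strictly inside $P_x$ (yielding a crossbar). In each case I would close a suitable cycle using part of $C_e$, invoke Claim~\ref{cl:P1disjointBoundsFace} to conclude it bounds a face, then apply Claim~\ref{cl:twoAtts} to force that the corresponding $C_e$-bridges are single edges and that the implicated subpaths of $P_w$, $P_x$, and $P'_1$ all have length one; this will match exactly Definition~\ref{df:halfTiles} (\ref{it:wBackSlope}) or (\ref{it:wHgraph}). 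Claim~\ref{cl:noTileCrossing} rules out attachments that would yield any other configuration.

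The main obstacle I anticipate is cleanly excluding the possibility that the $R$-avoiding path of $C$ containing $e^w$ uses part of an $H$-spoke (the case permitted by Lemma~\ref{lm:yellowCycles} (\ref{it:yellowP2P4})) in a way that produces extra intersections with $P_w$ or $P_x$ not accounted for by a backslope or crossbar. Lemma~\ref{lm:noBridgeTwoInSpoke} together with Theorem~\ref{th:twoGreenCycles} should suffice: any spoke-detour in $C$ would create a second $H$-green cycle through a rim edge already in the $H$-green cycle associated with $B$, a contradiction. Once this exclusion is in place, the final case analysis is an exact parallel of the $H$-green case and yields the two listed outcomes.
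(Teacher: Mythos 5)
There is a genuine gap. Your plan hinges on ruling out the possibility that the global $H$-bridge $B$ (what the paper calls $J$) in the witnessing $H$-green cycle $C'$ equals $P'_1$, arguing ``the cycle $C$ lives on the $w$-side of $C_e$.'' This is incorrect, and in fact the $B=P'_1$ case is exactly the one that produces the back-slope. The yellow cycle $C$ contains $e^w$ in an $R$-avoiding subpath that runs across the quad from $r_i$ to $r_{i+5}$, so $C$ has rim portions near \emph{both} $w$ (in $r_i$) and $w^e,x$ (in $r_{i+5}$); the global jump witnessing $C$ can therefore span either $P_1$ or $P_2$ of $C_e$. The paper treats the two cases separately: when $J$ spans $P_2$ it shows $v_{i+1}$ is interior to $J$'s span, forces $C=Q_{i+1}$, and always obtains a crossbar; when $J$ spans $P_1$ (so $J=w^e x$, $w^e=v_{i+5}$) it either reduces to the $P_2$-case by swapping the roles of $(e,w)$ and $(e_w,x)$, or, when $J$ is incident with $v_{i+7}$, obtains the back-slope. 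Because you discard the $P_1$ case, you cannot reach the back-slope outcome, so you compensate by asserting the $P_2$ case yields either outcome --- but the paper shows it always gives a crossbar.

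A second confusion compounds the first: you write that $B=P'_2$ ``fixes $w^e=v_{i+5}$,'' but $w^e=v_{i+5}$ is the conclusion of Claim~\ref{cl:P1jump} for a bridge spanning $P_1$; the $P_2$-analogue yields $x^{e_w}=v_{i+3}$ instead. Finally, your preliminary step claiming $C\subseteq\cl(Q_i)$ via Lemma~\ref{lm:yellowCycles}(\ref{it:quadClosure}) does not follow from $e^w\in\cl(Q_i)$, since $e^w\in s_{i+1}$ is shared between $\cl(Q_i)$ and $\cl(Q_{i+1})$; indeed in the $P_2$-case the paper shows $C=Q_{i+1}$, which is not contained in $\cl(Q_i)$ because it uses $r_{i+1}$ and $r_{i+6}$.
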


\begin{proof}  Let $C'$ be the $H$-green cycle witnessing that the cycle $C$ containing $e^w$ is $H$-yellow.  Then $C'$ contains an $H$-jump $J$ and either both ends of $J$ are in $P_1$ or both ends of $J$ are in $P_2$.    In either case, Claim \ref{cl:P1jump} implies the span of $J$ is all of $P_1$ or $P_2$.  We treat these two possibilities separately.

\startSubclaims
\begin{subclaim}\label{sc:bothP2}  If both ends of $J$ are in $P_2$, then \ref{it:hgraphA} occurs.
\end{subclaim}

\begin{proof} In this case, Claim \ref{cl:P1jump} implies $J$ has ends $w$ and $x^{e_w}$, $x^{e_w}= v_{i+3}$, $e^w\in s_{i+1}$, and $e^x\in s_{i+2}$.  

Because $e_w$ is both incident with $v_{i+1}$ and in an $H$-yellow cycle as witnessed by the $H$-green cycle $C'$ containing $J$, $v_{i+1}$ is in the interior of the span of $J$; consequently, $w\in \oo{r_i}$.  Therefore, the edge of $r_i$ incident with $v_{i+1}$ is $H$-green.  

We observe that $J$ witnesses that $Q_{i+1}$ is an $H$-yellow cycle.  It follows from Lemma \ref{lm:yellowCycles} (\ref{it:oneBridge}) that $C=Q_{i+1}$.  The same part of the same lemma combines with the fact that $e$ is not $H$-green to show that $P_w$ consists of $\cc{w,r_i,v_{i+1},s_{i+1},v_{i+6}}$ and that $P_w$ has length precisely two.  Symmetrically, $P_x$ consists of $s_{i+2}\,r_{i+2}$ and has length 2.   Therefore, we have \ref{it:hgraphA}, as required. \end{proof}

It remains to consider the possibility that both ends of $J$ are in $P_1$.
 Claim \ref{cl:P1jump} implies $J=w^ex$,  $w^e=v_{i+5}$, $e^w\in s_{i+1}$, and $e^x\in s_{i+2}$.  Also, $r_{i+5}$ is in the $H$-green cycle $C'$ containing $J$, and so $P_w$ contains $r_{i+5}\,s_{i+1}$.  Since Theorem \ref{th:redHasDelta} (\ref{it:deltaDetail}) implies $P_w$ has at \wording{most one $H$-rim edge}, we conclude that $w=v_{i+1}$.  Recall that $P_w$ is in the boundary of the face of $\Pi[G]$ bounded by $\Delta_e$.  The path $r_{i+5}$ is also in the boundary of the face bounded by $C'$ and so is just an edge.  The path $s_{i+1}$ is also in the boundary of the face bounded by $C$, so it too is just an edge.

If $J$ is not incident with $v_{i+7}$, then the situation is precisely that Subclaim \ref{sc:bothP2} with the roles of $(e,w)$ and $(e_w,x)$ interchanged.  Therefore, $C_e\cup C$ is \ref{it:hgraphA}, as required.

Therefore, we may assume $J$ is incident with $v_{i+7}$.   At this point, we know that $s_{i+1}\,r_{i+5},J$ and at least the edge $e^x$ of $s_{i+2}$ are contained in $C_e$.  There is a $C_e$-bridge containing $r_{i+6}$; Claim \ref{cl:twoAtts} implies this $C_e$-bridge is precisely $r_{i+6}$ and this is just an edge.

The cycle $C$ has a second edge $e'$ incident with $v_{i+6}$.  There is a $C_e$-bridge $B$ containing $e'$.  Claim \ref{cl:twoAtts} implies $B$ has precisely two attachments, namely $v_{i+6}$ and some other vertex $y$.  

If $y\in P_x-x^{e_w}$, then $B$ together with the $yv_{i+6}$-subpath of $C_e-P'_2$ contains a cycle disjoint from $P'_2$ and yet does not bound a face (it contains $r_{i+6}$).  We know that $r_{i+5}\,r_{i+6}\,J$ bounds a face of $\Pi[G]$, so $y$ is not in $J\,r_{i+5}$.   Claim \ref{cl:noTileCrossing} implies $y\notin P'_2-x^{e_w}$.  Thus, $y=x^{e_w}$.  

To finish the proof that \ref{it:backSlopeA} occurs, note first that $s_{i+1}$ and $B$ are both edges; thus, it suffices to prove that $P'_2$ is just an edge.  In fact, Claim \ref{cl:P1disjointBoundsFace} implies $P'_2\,B\,s_{i+1}$ bounds a face of $\Pi[G]$.  In particular, $P_2$ is not inside this face; therefore,  $P'_2=P_2$.  Consequently, $P'_2=P_2$ is just an edge. \end{proof}

In order to determine the 13 pictures, we remark that, from the perspective of both $e$ and $e_w$, any of (\ref{it:wDigon})-(\ref{it:wHgraph}) in Definition \ref{df:halfTiles} can occur.  However, if (\ref{it:wHgraph}) occurs for either, then Claim \ref{cl:P1disjointBoundsFace} implies $C_e$ and this crossbar is all that is in $\Disc_e$.  In the cases (\ref{it:wSlope})(\ref{it:wBackSlope}) and (\ref{it:wDigonSlope})(\ref{it:wBackSlope}), there are two possibilities, as the slope and the back-slope can have either distinct or common ends in the spoke; the latter is denoted by a ${}^+$ in the listing below.  There is no third possibility, since the slope and back-slope do not cross in $\Disc_e$.    Thus, there are the 13 pictures (\ref{it:wDigon})(\ref{it:wDigon}), (\ref{it:wDigon})(\ref{it:wSlope}), (\ref{it:wDigon})(\ref{it:wDigonSlope}), (\ref{it:wDigon})(\ref{it:wBackSlope}), (\ref{it:wSlope})(\ref{it:wSlope}), (\ref{it:wSlope})(\ref{it:wDigonSlope}), (\ref{it:wSlope})(\ref{it:wBackSlope}), (\ref{it:wSlope})(\ref{it:wBackSlope})${}^+$, (\ref{it:wDigonSlope})(\ref{it:wDigonSlope}), (\ref{it:wDigonSlope})(\ref{it:wBackSlope}), (\ref{it:wDigonSlope})(\ref{it:wBackSlope})${}^+$, (\ref{it:wBackSlope})(\ref{it:wBackSlope}), and (\ref{it:wHgraph})(\ref{it:wHgraph}).

\dragominor{Label the red edges in $G$ as $e_0,e_1,\dots,e_{k-1}$ so that, for $i=0,1,\dots,k-1$, $e_i$ has ends $u_i$ and $v_i$ and so that, reading indices modulo $k$, $e_{i+1}$ is the $v_i$-consecutive red edge for $e_i$.  This implies that $e_i$ is the $u_{i+1}$-consecutive red edge for $e_{i+1}$.  }

\dragominor{Since there are no red edges between $e_{i-1}$ and $e_{i+1}$ on the ``peak of $\Delta_{e_i}$" portion of $R$, defining adjacency to mean ``consecutive" shows the set of red edges make a cycle.  Furthermore, $v_i$ and $u_{i+1}$ are both in the cycle $C_{e_i}$ that determines the picture $\mathfrak P_i$ between $e_i$ and $e_{i+1}$.  Taking any $v_iu_{i+1}$-path $P_i$ in $\mathfrak P_i$, we see that $P_i$ together with either of the $v_iu_{i+1}$-subpaths of $R$ makes a non-contractible cycle in $\pp$.  In this sense, $e_i$ and $e_{i+1}$ are on opposite sides of $R$. }

\dragominor{If we think of $e_0$ as being on ``top" and $e_1$ on the ``bottom", then $e_2$, $e_4$, \dots are all on top and $e_3$, $e_5$ \dots, are on the bottom.    When we get back to $e_0$ from $e_{k-1}$, we have gone once around the M\"obius strip, so $e_0$ is now on the bottom.  It follows that $e_{k-1}$ is on top and, therefore, $k-1$ is even, showing $k$ is odd.}

It follows that $G$ contains a subgraph $H$ that is in $\mathcal T(S)$.  (There may be edges in the interior of $C_e$ ``between" the structures we identified ``near" $P'_1$ and $P'_2$.)  However,   Theorem \ref{th:tiledAre2cc} implies $H\in \m2$, so we conclude $G=H$.  That is, $G\in \mathcal T(S)$.
\end{cproof}
}

\chapter{Graphs that are not 3-connected}\label{sec:not2conn}\printFullDetails{

The rest of this work is devoted to: describing all the \2cc\ graphs that are not 3-connected, discussed in this section; finding all 3-connected \2cc\ graphs that do not contain a subdivision of $V_8$, treated in Section \ref{sec:3conNotI4c}; and showing that the number of 3-connected \2cc\ graphs that do not contain a subdivision of $V_{2n}$ is finite, which is Section \ref{sc:noV2n}.  These last two combine with the preceding work to show that there are only finitely many 3-connected \2cc\ graphs to be determined, namely those that have a subdivision of $V_8$ but no subdivision of $V_{10}$.

In this section we show that every 2-crossing-critical graph that is not 3-connected is either one of a few known examples or is obtained from a graph in $\m2$ by replacing 2-parallel edges with a ``digonal" path (that is, a path in which every edge is duplicated).   We remark that we continue assuming that the minimum degree is at least 3, as subdividing edges does not affect crossing number.  We first determine all the 2-crossing-critical graphs that are not 2-connected.

}\section{2-critical graphs that are not 2-connected}\printFullDetails{

Since the crossing number is additive over components,
any $2$-crossing-critical graph can have 
at most two components, each of them equal to 
either $K_{3,3}$ or $K_5$.  Thus, there are only three different such graphs:  two disjoint copies of $K_5$, two disjoint copies of $K_{3,3}$, and disjoint copies of each.

Similarly, the crossing number is easily seen to be additive over blocks.  Thus, the blocks of a connected, 
but not $2$-connected, $2$-crossing-critical graph must be $1$-critical 
graphs, and therefore all such graphs can be obtained from the aforementioned 
disconnected $2$-crossing-critical graphs by identifying two vertices 
from distinct components.    The identified vertex may be a new vertex that subdivides some edge.   For example, there are three possibilities in which both blocks are $K_5$:  the identified vertex is a node in both, or only in one, or in neither.  Likewise for $K_{3,3}$.  There are four 2-crossing-critical graphs in which one block is a subdivision of $K_5$ and the other is a subdivision of $K_{3,3}$.

}\begin{proposition}
The thirteen graphs in  Figure \ref{fg:1connected} are precisely those 2-crossing-critical graphs that are not 2-connected.
\end{proposition}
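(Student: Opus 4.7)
The plan is to exploit the fact, noted in the paragraph preceding the proposition, that the crossing number is additive over both components and blocks. First I would make this additivity explicit: for a graph $G$ with components $G_1,\ldots,G_k$, $\crn(G)=\sum_i\crn(G_i)$, and for a connected graph $G$ with blocks $B_1,\ldots,B_m$, $\crn(G)=\sum_j\crn(B_j)$. Both statements are standard and follow from the fact that a drawing of minimum crossing number can be assumed to have each component (respectively each block) drawn inside a disc disjoint from the others.

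Next I would handle the disconnected case. If $G$ is disconnected and $2$-crossing-critical with minimum degree at least $3$, then every component $C$ must be non-planar (otherwise removing an edge in another component still leaves crossing number at least $2$), and in fact $1$-crossing-critical by additivity. By Kuratowski's Theorem (restated in the introduction), each component is $K_5$ or $K_{3,3}$, and $\crn(G)\ge 2$ forces exactly two components. This yields exactly three graphs: $2K_5$, $K_5\cup K_{3,3}$, and $2K_{3,3}$.

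Then I would handle the connected but not $2$-connected case. Here $G$ has a cut vertex, so it has at least two blocks. Additivity over blocks, combined with criticality, forces each non-trivial block to be $1$-crossing-critical, hence a subdivision of $K_5$ or $K_{3,3}$; moreover there must be exactly two non-trivial blocks (and no trivial ones, since minimum degree is at least $3$ forbids pendant edges at a cut vertex). So $G$ is obtained by identifying a vertex $x_1$ of one such block with a vertex $x_2$ of another. The only freedom, up to isomorphism, is the vertex-transitivity class of $x_i$ in its block: a node (original vertex of $K_5$ or $K_{3,3}$) or an interior vertex of a subdivided edge. Since $K_5$ and $K_{3,3}$ are both vertex- and edge-transitive, there are exactly $\binom{2+1}{2}=3$ choices when the two blocks are isomorphic (two $K_5$'s or two $K_{3,3}$'s) and $2\cdot2=4$ choices when they are not ($K_5$ glued to $K_{3,3}$). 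This gives $3+3+4=10$ connected examples, and $3+10=13$ in total.

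I would close by checking that each of the thirteen constructed graphs is indeed $2$-crossing-critical: crossing number $\ge 2$ is immediate from additivity, and deleting any edge $e$ drops the crossing number of the block containing $e$ to $0$, hence drops the total to $1$. The only mildly non-routine step is verifying that the enumeration of $3+3+4$ gluings is exhaustive and irredundant; I expect this to be the main (but still mild) obstacle, and it is handled by the transitivity observation above together with a direct comparison with Figure~\ref{fg:1connected}.
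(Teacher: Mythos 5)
Your proof follows the same path as the paper's: additivity of the crossing number over components and over blocks, reduction to exactly two $1$-critical blocks (topological $K_5$ or $K_{3,3}$), and enumeration of the three gluings in each of the two symmetric cases and four in the mixed case, using vertex- and edge-transitivity. One small repair: appealing to minimum degree $3$ only rules out pendant bridges, whereas an internal $K_2$ block joining the two non-planar blocks is compatible with the degree condition and must instead be excluded by criticality --- removing its edge $e$ disconnects $G$ while leaving both non-planar blocks intact, so $\crn(G-e)\ge 2$.
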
\printFullDetails{

\begin{figure}[!ht]
\begin{center}
\includegraphics[scale=.275]{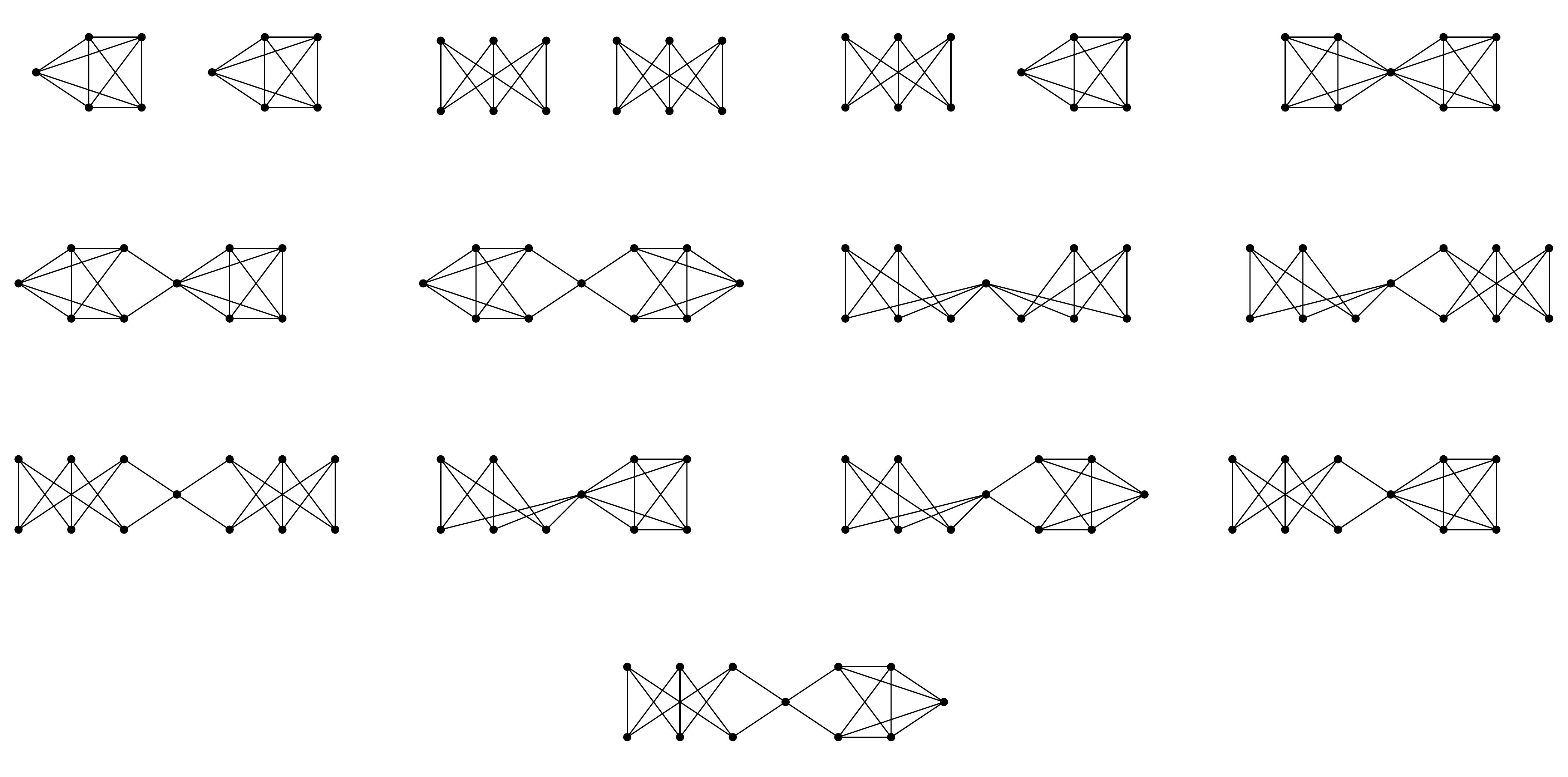}
\caption{The $2$-crossing-critical graphs that are not $2$-connected.}             
\label{fg:1connected}
\end{center}
\end{figure}

}
\section{2-connected 2-critical graphs that are not 3-connected}{\label{sec:2conNot3conn}\printFullDetails{

\def\iso#1{\|#1\|}

\def\isouv{\iso{\{u,v\}}}

In this subsection, we treat
$2$-crossing-critical graphs that are $2$-connected, but not 
$3$-connected.  With 36 exceptions,  these all arise from 3-connected 2-crossing-critical graphs that have {\em digons\/}\index{digon} (i.e., two edges with the same two ends).  The digons may be replaced with arbitrarily long  ``digonal paths"\index{digonal path} --- these are simply paths in which every edge is converted into a digon.

Tutte  \cite{TutCinG, TutEncy} developed a decomposition theory of a 2-connected graph into its {\em cleavage units\/}\index{cleavage unit}, which are either 3-connected graphs, cycles of length at least 4, or for $k\ge 4$, $k$-bonds (a {\em $k$-bond\/}\index{bond}\index{$k$-bond} is a graph with $k$ edges, all having the same two ends).  We provide here a brief review of this theory.  A {\em $2$-separation\/}\index{2-separation}\index{separation}  of a 2-connected graph $G$ is a pair $(H,K)$ of edge-disjoint subgraphs of $G$, each having at least two edges, so that $H\cup K=G$ and $H\cap K=\isouv$ \wording{(recall $\isouv$ is the graph with just the vertices $u$ and $v$ and no edges.)}.   Notice that a 3-cycle and a 3-bond have no 2-separations and, therefore, are to be understood in this context to be 3-connected graphs.

\wordingrem{(Redundant text removed.)}% 
The 2-separation $(H,K)$ \wording{with $H\cap K=\isouv$ is} a {\em hinge-separation\/}\index{hinge}\index{hinge-separation} if at least one of $H$ and $K$ is a $\isouv$-bridge and at least one of them is 2-connected.  Another way to say the same thing, but in terms of $H\cap K$:  $\isouv$ is a {\em hinge\/} if either there are at least three $\isouv$-bridges, not all just edges, or there are exactly two $\isouv$-bridges, at least one of which is 2-connected.

The theory of cleavage units develops as follows.  Let $G$ be a 2-connected graph.   

\begin{enumerate} \item  If $\isouv$ is a hinge and $(H,K)$ is a hinge-separation (possibly of another hinge), then there is some 
$\isouv$-bridge containing either $H$ or $K$.  

\item $G$ has no hinge if and only if $G$ is 3-connected, a cycle of length at least 4, or a $k$-bond, for some $k\ge 4$.  (Recall that a 3-cycle and a 3-bond are 3-connected.)  In each of these cases, $G$ is its own cleavage unit.

\item If  $(H,K)$ is a hinge-separation and $H\cap K=\isouv$, then the cleavage units of $G$ are the cleavage units of the two graphs $H+uv$ and $K+uv$ obtained from $H$ and $K$ by adding a {\em virtual edge\/}\index{virtual edge} between $u$ and $v$, respectively.  This inductively determines the cleavage units.

\item There is a decomposition tree $T$ whose vertices are the cleavage units of $G$ and whose edges are the virtual edges.  A virtual edge joins in $T$ the two cleavage units of $G$ containing it.

\item $G$ contains a subdivision of each of its cleavage units.

\item\label{subdiv} If $G$ contains a subdivision of some 3-connected graph $H$, then some cleavage unit of $G$ contains a subdivision of $H$.

\end{enumerate}

In attempting to reconstruct $G$ from its decomposition tree and its cleavage units, each time we combine two graphs along a virtual edge, there are two possibilities for how to identify the vertices of the corresponding hinge.  This ambiguity will play a small role in constructing the 2-crossing-critical graphs that are 2- but not 3-connected.

It is easy to see that $G$ is planar if and only if every cleavage unit is planar. (We could apply Kuratowski's Theorem and Item \ref{subdiv} or prove it more directly.)  Since we are interested in non-planar graphs, there are two relevant possibilities:  one or more than one of the cleavage units of $G$ is not planar.  We start by treating the latter case.  We remark that the following discussion makes clear that the crossing number is not additive over cleavage units.  Related discussions can be found in \v Sir\'a\v n \cite{siran},  Chimani, Gutwenger, and Mutzel \cite{chimani} (but see \cite{bb} for significant comments about the latter), Beaudou and Bokal \cite{bb}, and Lea\~nos and Salazar \cite{ls}.

}\begin{lemma}\label{twounits}  Let $G$ be a 2-connected graph.  If two cleavage units of $G$ are not planar,  then $\crn (G)\ge 2$. \end{lemma}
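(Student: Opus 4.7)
The plan is to extract, from each of the two non-planar cleavage units, a Kuratowski subgraph of $G$ that overlaps the other only on a prescribed $uv$-path; then a single crossing in a drawing of $G$ cannot witness the non-planarity of both subgraphs.

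First, I would locate an appropriate 2-separation. Both $U_1$ and $U_2$ are $3$-connected (cycles and $k$-bonds are planar, so the non-planar cleavage units must be of the $3$-connected type). Taking the edge of the decomposition tree that lies on the unique path between $U_1$ and $U_2$ yields a hinge $\{u,v\}$ and a $2$-separation $(H,K)$ of $G$ with $H\cap K=\iso{\{u,v\}}$, $U_1$ a cleavage unit of $H+uv$ and $U_2$ one of $K+uv$. By the fact that $G$ contains a subdivision of each cleavage unit, $H+uv$ and $K+uv$ each contain a subdivision of the non-planar cleavage unit on their side, hence are non-planar. Applying Menger's theorem inside the $3$-connected $U_1$ to the pair $u,v$ gives three internally disjoint $uv$-paths in $U_1$; since one of these is the edge $uv$ itself, the other two lie in $U_1-uv$ and translate via the subdivision inside $H+uv$ to two internally disjoint $uv$-paths in $H$. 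Symmetrically, $K$ contains two internally disjoint $uv$-paths.

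Second, I would carry Kuratowski subgraphs over to $G$. Inside the non-planar $U_i$, Kuratowski's theorem provides a subdivision $L_i$ of $K_5$ or $K_{3,3}$. I realize $L_i$ as a subdivision $L_i^*\subseteq G$ by realizing the subdivision of $U_i$ inside $H+uv$ or $K+uv$ and, if $L_i$ uses the virtual edge $uv$, replacing that edge by a $uv$-path on the opposite side: a path $P\subseteq K$ for $L_1^*$, and $Q\subseteq H$ for $L_2^*$. Using the two internally disjoint pairs from the first step, $P$ can be chosen to bypass any designated single edge of $K$ and $Q$ any designated single edge of $H$. Crucially, every edge common to $L_1^*$ and $L_2^*$ lies in $E(P)\cup E(Q)$, because the non-substituted parts of $L_1^*$ sit in $H$ while those of $L_2^*$ sit in $K$, and $E(H)\cap E(K)=\emptyset$.

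Finally, I would conclude by contradiction. Suppose $\crn(G)\le 1$ and let $D$ be a 1-drawing of $G$ with its sole crossing on $\{e_1,e_2\}$. At least one of $e_1,e_2$, say $e_1$, lies in $H$ or $K$; if $e_1\in H$ I choose $Q$ to avoid $e_1$, and then $E(L_2^*)\subseteq E(K)\cup E(Q)$ misses $e_1$, so the crossing cannot appear in $D[L_2^*]$, contradicting the non-planarity of $L_2^*$. The case $e_1\in K$ is handled symmetrically through $L_1^*$ and $P$, so in either case $\crn(G)\ge 2$. The main obstacle will be the bookkeeping for further virtual edges of $U_1$ or $U_2$ beyond the chosen hinge edge $uv$: when $H+uv$ or $K+uv$ decomposes further, those additional virtual edges must also be substituted by paths through the appropriate subordinate cleavage units when building $L_i^*$, and I expect the hardest part to be checking that such substitutions stay entirely within one side of $(H,K)$ so that the containment $E(L_1^*)\cap E(L_2^*)\subseteq E(P)\cup E(Q)$ continues to hold.
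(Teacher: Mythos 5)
Your plan departs from the paper's proof, which avoids cleavage units entirely once a suitable $2$-separation is fixed: the paper chooses a $2$-separation $(H,K)$ with $K$ \emph{minimal} so that both $H+uv$ and $K+uv$ are non-planar, observes (by varying $uv$-paths $P_H\subseteq H$, $P_K\subseteq K$) that in a $1$-drawing the crossing must lie on every such pair, concludes that the crossed edges are cut-edges of $H$ and $K$ separating $u$ from $v$, and then splits $K$ along its crossed cut-edge $wx$ to contradict minimality. That minimality step is precisely what handles the case you gloss over, and that is where your argument has a genuine gap.

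Concretely, your Menger step is unjustified. You apply Menger inside $U_1$ (resp.\ $U_2$) to the pair $u,v$; this requires $u,v\in V(U_1)$, which holds only when $U_1$ is incident in the decomposition tree with the chosen hinge $\{u,v\}$. Your phrase ``the edge of the decomposition tree that lies on the unique path between $U_1$ and $U_2$'' tacitly assumes $U_1$ and $U_2$ are adjacent in the tree, but nothing in the hypothesis guarantees this. If some intermediate (planar) cleavage unit sits between them --- for instance a cycle of length $\ge 4$ whose two hinge vertices are $u$ and $v$ --- then $K$ may have a cut-edge separating $u$ from $v$, so $K$ has \emph{no} two edge-disjoint $uv$-paths. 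In that situation your final case ``$e_1\in K$: choose $P\subseteq K$ to avoid $e_1$'' is impossible when $e_1$ is that cut-edge, and your contradiction never materializes. Indeed the paper's own argument shows this is not a phantom concern: starting from any $1$-drawing it derives that both $H$ and $K$ must \emph{lack} two edge-disjoint $uv$-paths, which is the opposite of what you need, and then uses minimality to finish. (The bookkeeping of further virtual edges that you flag as the ``main obstacle'' is real but, as you suspect, can be made to work since those substitutions stay inside $H$ or inside $K$; the missing ingredient is the path-redundancy, not the bookkeeping.)
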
\printFullDetails{

It is an important consequence that, if $G$ is 2-crossing-critical, 2-connected, and has 2 non-planar cleavage units, then $G$ is simple, i.e., has no digons.

\begin{cproofof}{Lemma \ref{twounits}}
Among all 2-separations $(H,K)$ of $G$, we choose the one that has $K$ minimal so that both $H+uv$ and $K+uv$ are not planar, where $H\cap K=\isouv$.  If the crossing number of $G$ is not at least 2, then $\crn(G)\le1$\wording{, so, by way of contradiction, suppose $D$ is a} 1-drawing of $G$.

Let $P_K$ and $P_H$ be $uv$-paths in $K$ and $H$ respectively.  Since $G$ contains the subdivision $H\cup P_K$ of $H+uv$, $G$ is not planar.  Therefore, $D$ has a crossing.
Evidently, $D(H\cup P_K)$ and $D(K\cup P_H)$ both contain the crossing.  We conclude that the crossing in $D$ is of an edge of $P_H$ with an edge of $P_K$.  It follows that there are not edge-disjoint $uv$-paths in either $H$ or $K$ and that the crossed edges are cut-edges in their respective subgraphs.

Let $w$ and $x$ be the ends of the edge in $K$ that is crossed, labelled so that $w$ is nearer to $u$ in $P_K$ than $x$ is.  Let $K_u$ and $K_v$ be the two components of $K-wx$, with the former containing $u$.  Since $K+uv$ is not planar,  either $K_u+uw$ or $K_v+vx$ is not planar.  We may assume it is the former. Notice that $(H\cup K_v)+xu$ contains a subdivision of $H+uv$ and, therefore, is not planar.   But then $((H\cup K_v),K_u)$ is a 2-separation contradicting the minimality of $K$. 
\end{cproofof}

We are now in a position to determine the 2-connected, 2-crossing-critical graphs having two non-planar cleavage units.

}\begin{theorem}\label{2conn}  Let $G$ be a 2-connected, 2-crossing-critical graph having two non-planar cleavage units.  Then $G$ is one of the 36 graphs in Figures \ref{2units} and \ref{3units}.\end{theorem}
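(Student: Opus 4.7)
The plan begins by invoking Lemma \ref{twounits}: having two non-planar cleavage units already gives $\crn(G) \geq 2$. The consequence that $G$ is simple also follows from criticality: if $G$ had a parallel edge $e$, removing $e$ would leave both non-planar cleavage units structurally untouched (each being 3-connected and not involving parallel edges), hence $\crn(G - e) \geq 2$ by Lemma \ref{twounits}, contradicting 2-crossing-criticality.

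Next, I would show that each non-planar cleavage unit $U$ of $G$ is $K_5$ or $K_{3,3}$, and that there are exactly two such units. Suppose $U$ contained a real edge $e$ with $U - e$ still non-planar. Then in $G - e$ the other non-planar cleavage unit $U_2$ remains intact, and the (possibly re-decomposed) $U - e$ still contains a non-planar cleavage unit in its own decomposition. Thus $G - e$ has at least two non-planar cleavage units, and by Lemma \ref{twounits} satisfies $\crn(G - e) \geq 2$, a contradiction. So each non-planar cleavage unit is 1-crossing-critical, and by Kuratowski's theorem (together with 3-connectedness) equals $K_5$ or $K_{3,3}$. The same line of reasoning rules out three or more non-planar cleavage units, since deleting a single real edge could reduce one such unit to a planar graph but would still leave at least two non-planar cleavage units in $G - e$.

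The final step is a structural analysis of the cleavage tree $T$ of $G$, with the two non-planar nodes $U_1, U_2 \in \{K_5, K_{3,3}\}$ joined either directly or through a single planar middle cleavage unit. In the direct (2-node) case, $G$ is determined by the unordered pair $\{U_1, U_2\}$, by the placement of the hinge $\{u,v\}$ inside each unit (all edges of $K_5$ are equivalent, but in $K_{3,3}$ a hinge pair may be cross-part or same-part, the latter realised by the cleavage unit $K_{3,3}$ with an added virtual edge), and by whether the edge $uv$ is present in $G$; the enumeration, up to isomorphism, gives the graphs of Figure \ref{2units}. In the 3-node case, the middle planar cleavage unit $M$ must be very constrained — a $4$-cycle, a $4$-bond, or a small $3$-connected planar graph with carefully placed virtual edges — so that deleting any real edge of $M$ either reduces the number of non-planar cleavage units or destroys $2$-connectivity in such a way that the blocks of $G - e$ still have crossing-number sum $\leq 1$. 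Together with the restrictions of simplicity and minimum degree $\geq 3$, this yields the remaining graphs of Figure \ref{3units}, for a total of 36.

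The hard part is the 3-node case: one must identify every admissible middle cleavage unit configuration and verify, for each resulting $G$, both that it is simple with minimum degree $\geq 3$ and that it is genuinely 2-crossing-critical. The lower bound $\crn(G) \geq 2$ is automatic from Lemma \ref{twounits}, but for every proper subgraph one has to exhibit an explicit 1-drawing; this, rather than any single conceptual obstruction, is the main source of case-work.
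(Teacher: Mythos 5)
Your high-level plan shares the paper's backbone — use Lemma~\ref{twounits} for the lower bound, use criticality to pin down the non-planar cleavage units, then enumerate the cleavage tree — but there are two genuine gaps that change the conclusion, not merely the bookkeeping.

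\textbf{The non-planar cleavage units are not \texorpdfstring{$K_5$}{K5} or \texorpdfstring{$K_{3,3}$}{K33}.} Your argument correctly shows that for every \emph{real} edge $e$ of a non-planar cleavage unit $C$, the graph $C - e$ is planar. But $C$ also contains a \emph{virtual} edge $u v$ coming from the hinge, and that virtual edge is not an edge of $G$; your deletion argument does not apply to it, so $C - uv$ may well remain non-planar. Hence $C$ need not be 1-crossing-critical, and the conclusion ``$C \in \{K_5, K_{3,3}\}$'' is wrong. The paper (Claim~\ref{unittypes}) lands instead on \emph{five} possibilities for $C$: $K_5$, $K_{3,3}$, and three families where $C - uv$ is a (possibly subdivided) $K_{3,3}$ and $uv$ is a chord or an edge joining subdivision vertices. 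Your enumeration gestures at the ``same-part chord'' case but omits the two subdivision types entirely (one and two subdivided edges); these account for a large fraction of the 16 two-unit graphs and a good share of the 20 three-unit ones, so missing them is not a counting slip but a structural omission.

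\textbf{The size and shape of the cleavage tree is not established.} You assume without argument that the two non-planar nodes are joined directly or through a \emph{single} planar middle cleavage unit, and you describe that middle unit loosely as ``a 4-cycle, a 4-bond, or a small 3-connected planar graph.'' Neither claim is proved, and the second one is incorrect: the paper's Claim~\ref{threeunits} and the paragraph preceding it show the middle unit, if present, is specifically a 3-cycle or a 4-cycle. The paper gets this by building an explicit subgraph $H \subseteq G$ consisting of the real edges of the two non-planar cleavage units plus two disjoint connecting paths, observing $\crn(H) \ge 2$ by Lemma~\ref{twounits}, and hence $H = G$ by criticality; this immediately bounds the tree to at most three nodes and forces the middle one (using the minimum-degree-3 hypothesis) to be a small cycle. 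Without that step your ``the hard part is the 3-node case'' paragraph has no finite target to enumerate.
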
\printFullDetails{

\begin{figure}[!ht]
\begin{center}
\includegraphics[scale=.275]{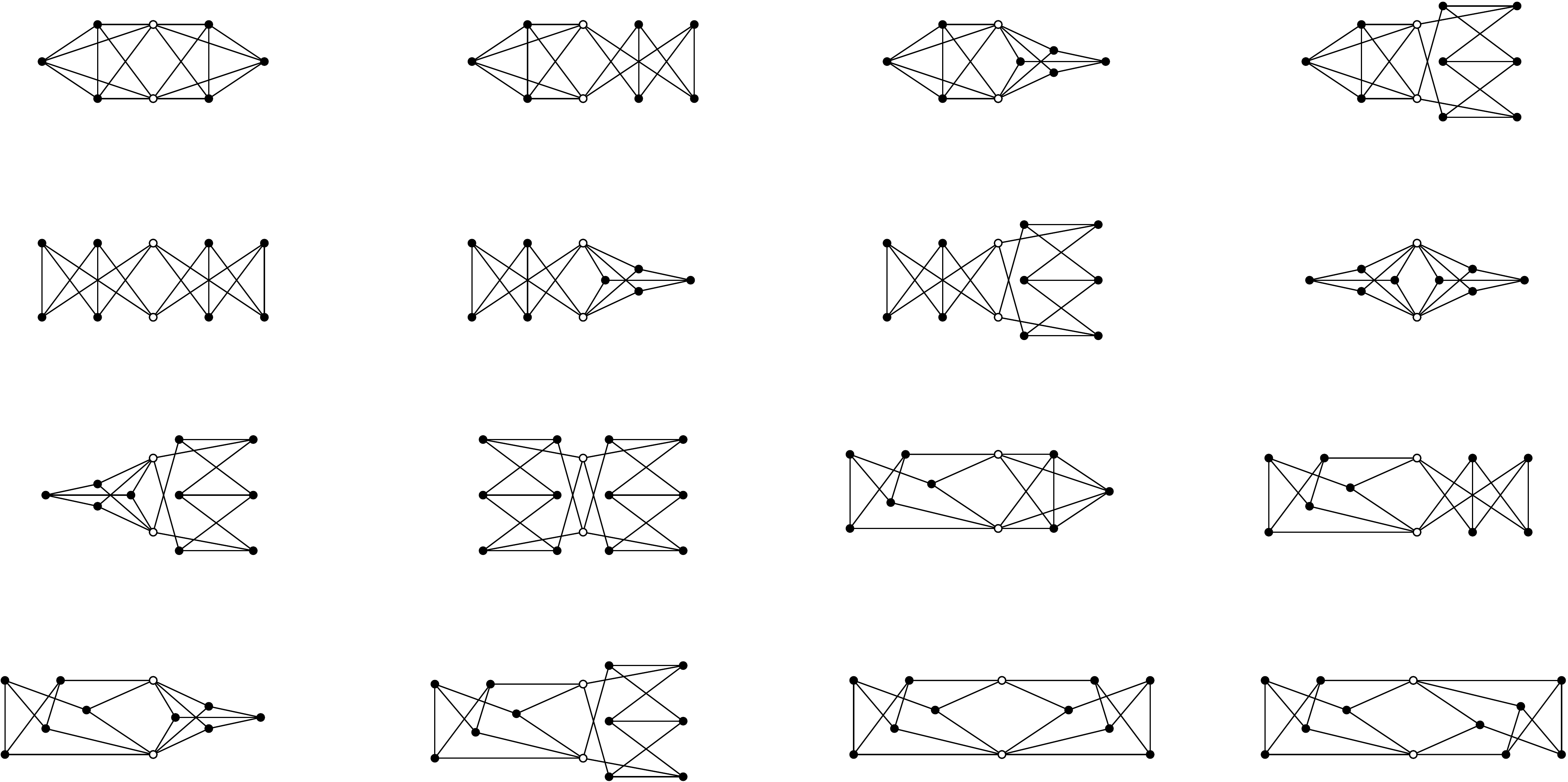}
\caption{\wording{2-connected, not 3-connected, $2$-crossing-critical graphs, 2 non-planar cleavage units}}          
\label{2units}
\end{center}
\end{figure}

%\bigskip
\begin{figure}[!ht]
\begin{center}
\includegraphics[scale=.75]{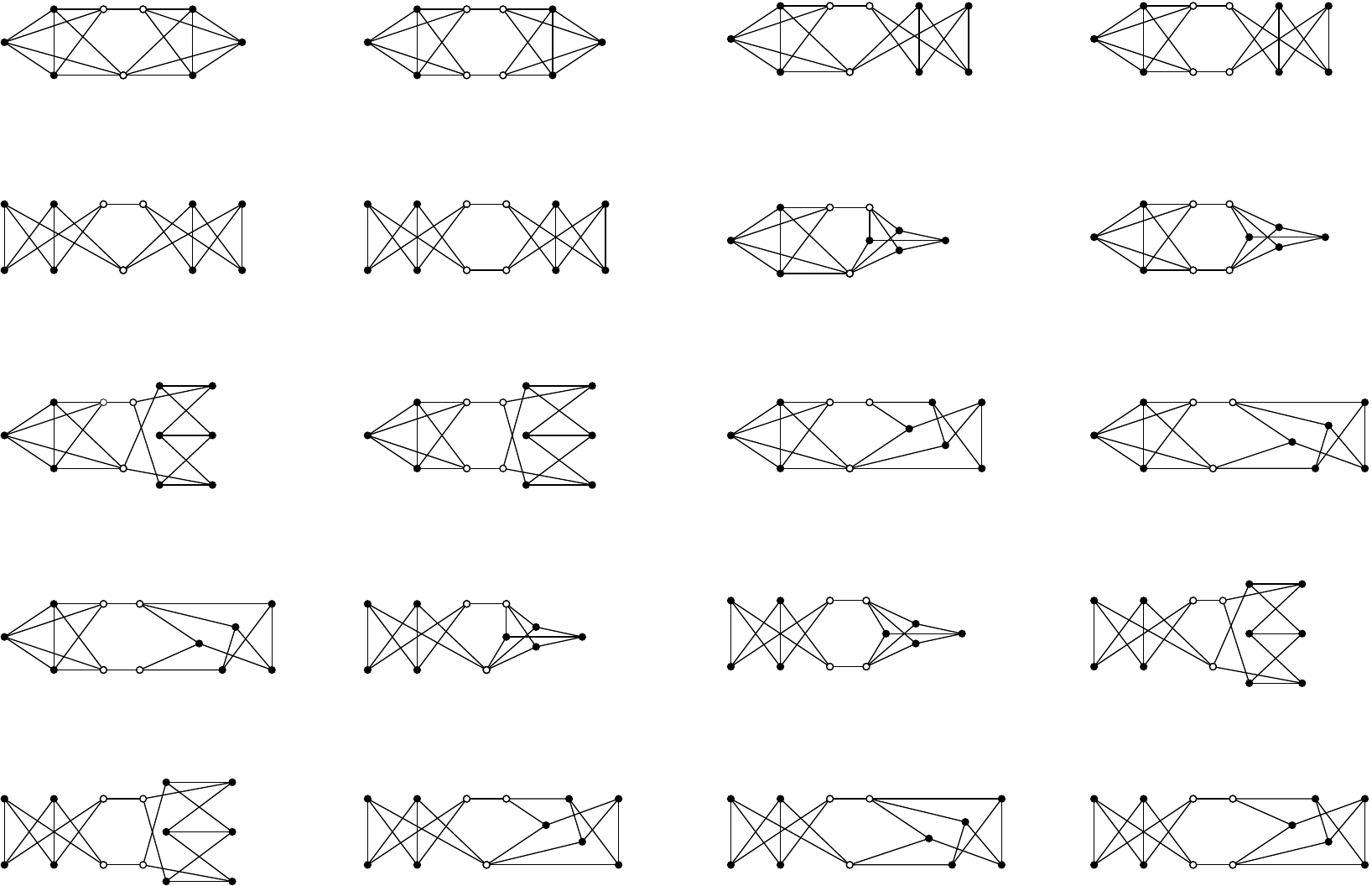}
\caption{\wording{2-connected, not 3-connected, $2$-crossing-critical graphs, 3 cleavage units, 2 of which are non-planar.}}             
\label{3units}
\end{center}
\end{figure}

\begin{cproof} Let $C_1$ and $C_2$ be non-planar cleavage units of $G$.

\begin{claim}  $G$ has at most three cleavage units:  $C_1$, $C_2$ and possibly a 3- or 4-cycle; if there are three, then the 3- or 4-cycle is the internal vertex in the decomposition tree. \end{claim}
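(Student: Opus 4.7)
The plan is to analyze the decomposition tree $T$ of $G$ and prove two separate statements: first, that $T$ is a path with $C_1$ and $C_2$ as its only leaves; second, that the path has length at most $2$ and any middle vertex is a $3$-cycle or $4$-cycle.

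For the first step, I would argue that every leaf $L$ of $T$ must be a non-planar $3$-connected cleavage unit, hence equal to $C_1$ or $C_2$. If $L$ is a cycle of length $\ge 3$, then the corresponding $\{u,v\}$-bridge $B$ in $G$ (satisfying $B+uv \cong L$, where $uv$ is $L$'s unique virtual edge) is a path of length $\ge 2$, producing an internal vertex of degree $2$ in $G$, contradicting the standing assumption that $G$ has minimum degree $\ge 3$. If $L$ is a $k$-bond with $k \ge 3$, then $B$ consists of $k-1 \ge 2$ parallel edges, contradicting simplicity of $G$ (noted immediately after Lemma~\ref{twounits}). So $L$ is $3$-connected on at least $4$ vertices. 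If further $L$ were planar, so $L \notin \{C_1, C_2\}$, I would set $H = G - (B - \{u,v\})$; the cleavage units of $H + uv$ are exactly those of $G$ other than $L$, so include both $C_1$ and $C_2$, and Lemma~\ref{twounits} gives $\crn(H + uv) \ge 2$. Picking any $uv$-path $P \subseteq B$, the subgraph $H \cup P$ is a proper subgraph of $G$ (since $L$ is 3-connected with $\ge 4$ vertices, $B$ has edges outside $P$) containing a subdivision of $H + uv$, so $\crn(H \cup P) \ge 2$, contradicting $2$-crossing-criticality of $G$.

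For the second step, each internal cleavage unit $U$ has exactly two virtual edges (degree $2$ in the path $T$), and is planar by the first step. The same arguments used for leaves rule out $k$-bonds ($k \ge 3$, by simplicity) and cycles of length $\ge 5$ (the unique internal vertex of the two real-edge paths would have degree $2$ in $G$). If $U$ were $3$-connected planar on $\ge 4$ vertices, a leaf-style argument applies: pick any real edge $e$ of $U$, and observe that $G - e$ still has $C_1, C_2$ as non-planar cleavage units (removing an edge from the interior of $U$ does not alter the other cleavage units), so Lemma~\ref{twounits}, applied to the block(s) of $G - e$ containing each of $C_1, C_2$ together with additivity of the crossing number over blocks, gives $\crn(G - e) \ge 2$, contradicting $2$-crossing-criticality. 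Thus $U$ is a $3$-cycle or a $4$-cycle, with virtual edges placed to avoid creating degree-$2$ vertices.

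Finally, I would show that there is at most one internal cleavage unit. Suppose the path $T$ contained two consecutive internal cleavage units $U_1, U_2$ on the path $C_1 - U_1 - U_2 - C_2$, each a $3$-cycle or $4$-cycle. I would choose a real edge $e$ of $U_1$ and argue that in $G - e$, the subdivisions of $C_1$ and $C_2$ still lie in distinct blocks (because the remaining connectivity through $U_1$ and $U_2$ is reduced enough that $\{u\}$ becomes a cut vertex for an appropriate hinge-incident $u$), so by additivity over blocks $\crn(G - e) \ge \crn(C_1) + \crn(C_2) \ge 2$, again contradicting criticality.

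The main obstacle will be the final step: for a $3$- or $4$-cycle internal cleavage unit, removing a real edge can reduce the graph's $2$-connectedness in subtle ways, and the bookkeeping of which edges are forced vs. optional requires a careful case split by the exact configuration of virtual edges in $U_1$ and $U_2$ (adjacent-virtual-edge $3$-cycles versus opposite-virtual-edge $4$-cycles and the identification of their hinge vertices). The analogous case analysis also underlies the later detailed enumeration producing the $36$ graphs of Figures~\ref{2units} and~\ref{3units}.
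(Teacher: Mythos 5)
Your approach is a genuinely different route from the paper's. The paper exhibits a single minimal $2$-connected subgraph $H=\widetilde C_1\cup\widetilde C_2\cup Q$ (where $\widetilde C_i$ realizes $C_i-u_iv_i$ inside $G$ and $Q$ is two disjoint paths joining the two hinges), invokes Lemma~\ref{twounits} to get $\crn(H)\ge 2$, concludes $H=G$ by criticality, and then the structure follows at once from the minimum-degree hypothesis. Your stepwise classification of the decomposition tree works in outline, but two of your steps have gaps.

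First, you claim simplicity rules out $k$-bond internal units for every $k\ge 3$; this fails for $k=3$, since a $3$-bond internal unit has two virtual edges and only one real edge, so it creates no parallel edges in $G$. You need criticality here: deleting the lone real edge leaves a $2$-connected graph that still has $C_1,C_2$ as cleavage units, and Lemma~\ref{twounits} yields a contradiction (this is exactly the deletion argument you already use for planar $3$-connected internal units). Second, the crossing-number argument you sketch for your final step cannot produce $\crn(G-e)\ge 2$: once the cut vertex appears, the blocks of $G-e$ meeting $C_1$ and $C_2$ contain only subdivisions of $C_1-u_1v_1$ and $C_2-u_2v_2$ (not $C_1$ and $C_2$ themselves), and the paper's next Claim shows these are $K_5-e$, $K_{3,3}-e$, or a subdivision of $K_{3,3}$; the first two are planar, so additivity over blocks can give $0$. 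In fact no such argument is needed, because in a Tutte decomposition two adjacent cleavage units are never both polygons: if internal units $U_1$ and $U_2$ were both cycles ($3$-cycles included) sharing a hinge $\{q,r\}$, then neither $\iso{\{q,r\}}$-bridge of $G$ would be $2$-connected (each contains a third polygon vertex as a cut vertex), so $\{q,r\}$ would not be a hinge. Since your second step forces every internal unit to be a polygon, there is at most one internal unit, and the claim follows.
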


\begin{proof} For $i=1,2$, let $\{u_i,v_i\}$ be the hinge of $G$ contained in $C_i$ such that $C_1$ and $C_2$ are contained in different $\iso{\{u_i,v_i\}}$-bridges.  For any other virtual edge $xy$ in $C_i$, there is a path $P_{xy}$ in $G$ that is $C_1\cup C_2$-avoiding.  Let $\widetilde C_i$ be $C_i\cap G$ (i.e., $C_i$ with none of its virtual edges) together with all these $P_{xy}$.  Let $H$ be the subgraph of $G$ consisting of $\widetilde C_1\cup \widetilde C_2\cup Q$, where $Q$ consists of two  disjoint $\{u_1,v_1\}\{u_2,v_2\}$-paths in $G$.  Evidently, $H$ is 2-connected and $C_1$ and $C_2$ are cleavage units of $H$.

Lemma \ref{twounits} implies $\crn(H)\ge 2$.  Since $H\subseteq G$ and $G$ is 2-crossing-critical, $H=G$.  Since $G$ has no vertices of degree 2, $G$ consists of either two or three cleavage units, namely $C_1$, $C_2$, and possibly a 3- or 4-cycle between them. \end{proof}

We next determine the possibilities for $C_1$ and $C_2$.

\begin{claim}  For each $i=1,2$, one of the following occurs:
\begin{enumerate} \item $C_i$ is $K_5$;
\item  $C_i$ is $K_{3,3}$;
\item  $C_i-u_iv_i$ is a subdivision of $K_{3,3}$.
\end{enumerate}\end{claim}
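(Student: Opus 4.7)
First I would observe that $C_i$ must be 3-connected, since the only non-3-connected cleavage units are cycles and $k$-bonds, both of which are planar.

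The crucial technical step is to show that $C_i - e$ is planar for every non-virtual edge $e$ of $C_i$. The plan is to argue by contradiction: since $C_i$ is 3-connected, $C_i - e$ remains 2-connected, so $\{u_i,v_i\}$ is still a 2-separator in $G-e$. If $C_i-e$ were non-planar, then $G-e$ would exhibit both the subgraph corresponding to $C_i-e$ and the side containing $C_{3-i}$ as non-planar cleavage-like pieces, each becoming non-planar when augmented by the virtual edge $u_iv_i$. Replaying the argument of the proof of Lemma \ref{twounits} on $G-e$ would then give $\crn(G-e) \ge 2$, contradicting the 2-crossing-criticality of $G$.

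The argument then splits according to whether $C_i - u_iv_i$ is planar. If it is, then by the key step every edge-removal from $C_i$ yields a planar graph, so $C_i$ is 1-crossing-critical; being 3-connected with minimum degree $3$, it must be $K_5$ or $K_{3,3}$, giving cases (1) and (2). If $C_i - u_iv_i$ is non-planar, then the key step applied to its edges shows that this graph is 1-crossing-critical after suppressing its degree-2 vertices (which can only be $u_i$ or $v_i$), and hence is a subdivision of $K_5$ or $K_{3,3}$.

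The main obstacle is ruling out the $K_5$-subdivision possibility. I would argue that if $C_i - u_iv_i$ were a subdivision of $K_5$, then $C_i$ itself would contain a $K_{3,3}$-subdivision that uses the virtual edge $u_iv_i$ essentially. For instance, if $u_i$ is a $K_5$-node and $v_i$ subdivides a $K_5$-edge $P_{xy}$, then $C_i$ contains a $K_{3,3}$-subdivision with bipartition $(\{u_i,x,y\},\{z_1,z_2,v_i\})$, where $z_1,z_2$ are the remaining $K_5$-nodes and the $u_iv_i$-edge of this $K_{3,3}$ is realised by the virtual edge. The $K_5$-edge from $u_i$ to $x$ is a non-virtual edge of $C_i$ lying outside this $K_{3,3}$-subdivision, so removing it leaves $C_i$ non-planar, contradicting the key technical step. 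The remaining subcases (both $u_i$ and $v_i$ being $K_5$-nodes, or both being subdivision vertices) are handled by analogous constructions exploiting the virtual edge $u_iv_i$. This contradiction forces $C_i - u_iv_i$ to be a subdivision of $K_{3,3}$, giving case (3).
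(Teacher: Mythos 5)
Your proposal is correct, and it shares the key step with the paper's proof (showing that $C_i - e$ is planar for every non-virtual edge $e$, by noting that otherwise $G-e$ would be a 2-connected graph with two nonplanar cleavage units, contradicting 2-criticality via Lemma \ref{twounits}). However, from there the two proofs diverge. The paper applies Hall's theorem to $C_i$ itself: since $C_i$ is 3-connected, simple, and non-planar, it is either $K_5$ or contains a subdivision $K$ of $K_{3,3}$; in the latter case, the key step forces $C_i - u_iv_i \subseteq K$, which immediately gives either $C_i = K = K_{3,3}$ or $C_i - u_iv_i = K$, a subdivision of $K_{3,3}$. You instead apply Kuratowski's characterization of 1-crossing-critical graphs to $C_i$ (when $C_i - u_iv_i$ is planar) or to $C_i - u_iv_i$ (when it is not), getting that the relevant graph is a subdivision of $K_5$ or $K_{3,3}$, and then you must manually rule out the $K_5$-subdivision possibility in the latter case by exhibiting a $K_{3,3}$-subdivision through the virtual edge and finding a non-virtual edge outside it. Your case analysis works (the subcases where $u_i$ or $v_i$ would be adjacent in the $K_5$-subdivision are ruled out by the simplicity of the 3-connected cleavage unit, and the remaining configurations all admit the explicit construction), but it amounts to re-deriving a special case of Hall's theorem by hand. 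The paper's appeal to Hall's theorem is shorter and avoids the $K_5$-vs-$K_{3,3}$ branching entirely.
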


\begin{proof} Hall proved that every 3-connected non-planar graph is either $K_5$ or contains a subdivision of $K_{3,3}$ \cite{dwh}.  Since $G$ is simple and $C_i$ is 3-connected, we deduce that $C_i$ is either $K_5$ or contains a subdivision of $K_{3,3}$.   So suppose $C_i$ contains a subdivision $K$ of $K_{3,3}$.

Suppose $C_i-u_iv_i$ has an edge $e$ for which $C_i-e$ is not planar.  Since $C_i-e$ is \wording{2-connected, $G-e$ is} 2-connected and has at least two non-planar cleavage units ($C_{3-i}$ and another contained in $C_i-e$).  By Lemma \ref{2units}, $\crn(G-e)\ge 2$, contradicting 2-criticality of $G$.  So $C_i-u_iv_i\subseteq K$.    Thus, either $C_i=K$ or $C_i-u_iv_i=K$, as claimed. \end{proof}

\begin{claim}\label{unittypes}  There are five possibilities for $C_i$,  namely:
\begin{enumerate}
\item\label{k5} $C_i$ is $K_5$;
\item\label{k33} $C_i$ is $K_{3,3}$; 
\item\label{k33a}   $C_i-u_iv_i$ is $K_{3,3}$ and $u_iv_i$ joins two non-adjacent nodes of $K_{3,3}$;
\item\label{k33b}  $C_i-u_iv_i$ is $K_{3,3}$ with one edge subdivided once and $u_iv_i$ joins the degree 2 vertex to a  node of $K_{3,3}$ that is not incident with the subdivided edge; and
\item\label{k33c}  $C_i-u_iv_i$ is $K_{3,3}$ with two non-adjacent edges both subdivided once  and $u_iv_i$ joins the two degree 2 vertices.
\end{enumerate}\end{claim}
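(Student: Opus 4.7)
The previous claim reduces the analysis to the case $C_i - u_iv_i = K$ is a subdivision of $K_{3,3}$; the first two cases there are exactly (\ref{k5}) and (\ref{k33}). The plan is to classify where the virtual edge $u_iv_i$ sits relative to $K$, using two structural constraints: that $C_i$ is simple (since a $3$-connected cleavage unit that is not a bond is simple), and that for every edge $e$ of $K$ the graph $C_i - e$ is planar — a fact already established in the proof of the preceding claim.

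First I would count the subdivision vertices of $K$. Any such vertex has degree $2$ in $K$ and must reach degree $\ge 3$ in the $3$-connected graph $C_i$, so it must be incident with $u_iv_i$. Hence the set of subdivision vertices of $K$ is contained in $\{u_i,v_i\}$, and $K$ is obtained from $K_{3,3}$ by subdividing $0$, $1$, or $2$ edges once each. This splits into three subcases. If there are no subdivision vertices, $K=K_{3,3}$ and $u_i,v_i$ are nodes; they cannot be adjacent in $K_{3,3}$, for otherwise $u_iv_i$ would be parallel to an existing edge and $C_i$ would not be simple. This gives (\ref{k33a}). If exactly one of $u_i,v_i$, say $u_i$, is a subdivision vertex, then $K$ is $K_{3,3}$ with a single edge subdivided and $v_i$ is a node; the same simplicity argument forbids $v_i$ from being an endpoint of the subdivided edge, giving (\ref{k33b}). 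If both $u_i$ and $v_i$ are subdivision vertices, they must lie on distinct edges (a single edge carrying both would introduce a third degree-$2$ vertex between them), which is the skeleton of (\ref{k33c}).

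The only substantive step is showing, in this last subcase, that the two subdivided edges are non-adjacent in $K_{3,3}$; this is the main obstacle. I would argue by contradiction. Suppose the subdivided edges are $xy$ and $xz$, sharing the node $x$, with $u_i$ on $xy$ and $v_i$ on $xz$. By the structural fact above, $C_i - u_ix$ must be planar. However, in $C_i - u_ix$ the vertex $u_i$ has degree $2$ (its only neighbours being $y$ and $v_i$), so suppressing it identifies the path $y-u_i-v_i$ with a single edge $yv_i$. The resulting graph is $K_{3,3}$ with the edges $xy$ and $xz$ deleted, together with a new vertex $v_i$ adjacent to $x$, $y$, and $z$. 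Let $w$ be the third neighbour of $x$ in $K_{3,3}$, and let $b,c$ be the two nodes on the same side of the bipartition as $x$. Then taking branch vertices $\{v_i,b,c\}$ and $\{y,z,w\}$ gives a subdivision of $K_{3,3}$: the edges $v_iy$, $v_iz$, and all of $by,bz,bw,cy,cz,cw$ are present directly, while the ninth branch $v_i \to w$ is realised by the path $v_i-x-w$. This contradicts planarity of $C_i - u_ix$ and completes the classification. The bookkeeping for this $K_{3,3}$ subdivision — in particular, verifying that the path through $x$ is internally disjoint from the direct edges — is where the care is needed; the simplicity arguments in the other subcases are routine.
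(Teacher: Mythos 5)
Your proof follows essentially the same route as the paper's: classify by the number ($0$, $1$, or $2$) of subdivision vertices of $K=C_i-u_iv_i$, use simplicity of $C_i$ to rule out $u_iv_i$ becoming a parallel edge, and for the final case derive a contradiction by showing that deleting the edge from a subdivision vertex to the common node would leave $C_i$ non-planar. Your explicit exhibition of the $K_{3,3}$ subdivision in $C_i-u_ix$ fills in detail that the paper compresses into a one-line appeal to $2$-criticality, and it checks out (the path $v_i$--$x$--$w$ is internally disjoint from the direct branch edges).

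One small slip worth fixing: the parenthetical reason offered for $u_i$ and $v_i$ not lying on the same edge of $K_{3,3}$ --- that ``a single edge carrying both would introduce a third degree-$2$ vertex between them'' --- is not correct. You have already shown every subdivision vertex of $K$ is $u_i$ or $v_i$, so a single subdivided edge carrying both would give the path $x$--$u_i$--$v_i$--$y$, which has no third degree-$2$ vertex. The actual obstruction is that $u_iv_i$ is then already an edge of $K$, so adding the virtual edge $u_iv_i$ would make $C_i$ non-simple --- the same simplicity argument you correctly invoke in cases \ref{k33a} and \ref{k33b}. The conclusion is right; only the stated reason needs replacing.
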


\begin{proof}  If $C_i$ is neither $K_5$ nor $K_{3,3}$, then it must be a subdivision $K$ of  $K_{3,3}$ with the additional edge $u_iv_i$.  Clearly $K$ has at most two vertices of degree 2.  If $K$ has no vertices of degree 2, then, since $C_i$ is simple, we have (\ref{k33a}).  Likewise, if $K$ has only one vertex of degree 2, that vertex (one of $u_i$ and $v_i$)  cannot be in  a branch incident with the other one of $u_i$ and $v_i$, which is (\ref{k33b}).  Finally, suppose $u_i$ and $v_i$ are both of degree 2 in $K$.  Then their containing branches cannot be incident with a common vertex $w$, as otherwise, we could delete the edge $u_iw$ and still have two non-planar cleavage units, contradicting 2-criticality.  This proves (\ref{k33c}).  \end{proof}

Note that in all five cases of Claim \ref{unittypes}, there is only  one possibility for $C_i$, up to isomorphism.  Only (\ref{k33b}) has non-isomorphic labellings of $u_i$ and $v_i$.

\begin{claim}  If $G$ has just two cleavage units, then $G$ is one of the 16 graphs in Figure  \ref{2units}.\end{claim}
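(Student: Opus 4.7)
The plan is to enumerate all possible graphs $G$ with exactly two cleavage units by a case analysis on the unordered pair of types for $(C_1,C_2)$ from Claim~\ref{unittypes} and, for each pair, determine the number of non-isomorphic graphs obtained by identifying the hinges. Since $G$ has two cleavage units joined along the virtual edge $u_iv_i$ of each, $G$ is recovered from $C_1$ and $C_2$ by deleting the virtual edges and identifying the pair $\{u_1,v_1\}$ with $\{u_2,v_2\}$ via one of the two possible bijections.

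First I would examine the symmetries of the hinge in each of the five types. In type (1), $C_i=K_5$, and any transposition of two vertices extends to an automorphism, so the two bijections always yield isomorphic graphs. In types (2), (3), and (5), the vertices $u_i$ and $v_i$ are still interchangeable by an automorphism of $C_i$: in (2), $\{u_i,v_i\}$ is an adjacent pair in $K_{3,3}$ and is swapped by a transposition of the two appropriate parts; in (3), $u_i$ and $v_i$ are the two endpoints of the extra edge and play symmetric roles; in (5), both $u_i$ and $v_i$ are degree-$2$ subdivision vertices lying on a symmetric pair of non-adjacent subdivided edges of $K_{3,3}$. Type (4) is the sole exception: $u_i$ is a degree-$2$ subdivision vertex while $v_i$ is a degree-$3$ node of $K_{3,3}$, so these two hinge vertices are structurally distinct, and the two bijections can produce non-isomorphic graphs (distinguishable by the degree sequence at the identified hinge vertices).

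Counting: there are $\binom{5}{2}=10$ unordered pairs of distinct types together with $5$ pairs of equal types, giving $15$ unordered type pairs. For every pair that contains at least one type admitting the hinge involution (i.e., all pairs except $\{(4),(4)\}$), the two choices of bijection yield isomorphic graphs, so each such pair contributes one graph; the pair $\{(4),(4)\}$ contributes two graphs, as just argued. This produces $14+2=16$ candidates, and graphs from different type pairs are non-isomorphic because the collection of cleavage units is an invariant of $G$.

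For each candidate, Lemma~\ref{twounits} gives $\crn(G)\ge 2$, and $2$-criticality follows by producing, for every edge $e$ of $G$, a $1$-drawing of $G-e$. Since $e$ lies in a single cleavage unit $C_i$, the key ingredient is a planar drawing of $C_i-e$ with $u_i$ and $v_i$ on a common face; such a drawing exists by the planarity condition used to derive Claim~\ref{unittypes}, namely that $C_i-e$ is planar for every edge $e\ne u_iv_i$. Combining this planar drawing, with $\{u_i,v_i\}$ on the outer face, with a $1$-drawing of $C_{3-i}$ having $\{u_{3-i},v_{3-i}\}$ on its outer face and identifying the hinges yields a $1$-drawing of $G-e$. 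The main obstacle will be verifying, for each of the five types, that the planar drawing of $C_i-e$ can always be chosen so that $u_i$ and $v_i$ are cofacial (this requires care in types (4) and (5), where the subdivision vertices lie on interior faces of the natural embedding), and finally confirming that the $16$ resulting graphs are precisely those displayed in Figure~\ref{2units}.
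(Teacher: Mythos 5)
Your proposal is correct and follows essentially the same route as the paper: enumerate unordered type pairs from Claim~\ref{unittypes}, determine how many non-isomorphic graphs each pair yields via the two hinge-identifications, and count. The paper compresses the symmetry discussion into a single remark (``Only (\ref{k33b}) has non-isomorphic labellings of $u_i$ and $v_i$'') and tallies $10+4+2=16$, while you spell out the hinge involutions for each of types (1), (2), (3), (5) and count $14+2=16$ --- the same total. One clarification worth making: the claim sits inside Theorem~\ref{2conn}, whose hypothesis already assumes $G$ is $2$-crossing-critical, so you only need the forward direction (that such a $G$ must be one of the 16 candidates). The criticality verification you sketch at the end --- constructing $1$-drawings of $G-e$ by placing $u_i,v_i$ cofacially --- is not required for this claim, so the ``main obstacle'' you flag is not actually a gap in what needs to be proved here; the enumeration argument you gave before it is complete.
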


\begin{proof}  If neither  $C_1$ nor  $C_2$ is (\ref{k33b}) from Claim \ref{unittypes}, then, with repetition allowed, there are 10 possible unordered pairs for $C_1$ and $C_2$.  Each of the pairs uniquely produces the graph $G$.  There are four graphs having $C_1$ but not $C_2$ satisfying Claim \ref{unittypes} (\ref{k33b}), and there are two graphs having both $C_1$ and $C_2$  satisfying Claim \ref{unittypes} (\ref{k33b}).\end{proof}

\begin{claim}\label{threeunits} If $G$ has three cleavage units, then at least one of $C_1$ and $C_2$ is either $K_{5}$ or $K_{3,3}$.\end{claim}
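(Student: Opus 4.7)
The plan is to derive a contradiction from the assumption that neither $C_1$ nor $C_2$ is $K_5$ or $K_{3,3}$. Under that assumption, Claim \ref{unittypes} forces each $C_i$ to satisfy one of (\ref{k33a}), (\ref{k33b}), or (\ref{k33c}); in all three of these cases $C_i - u_iv_i$ is a subdivision of $K_{3,3}$. Writing $G_i$ for the subgraph of $G$ corresponding to $C_i$ with its virtual edge $u_iv_i$ deleted (interpreted as an actual subgraph of $G$), I will therefore have two non-planar subgraphs $G_1, G_2 \subseteq G$, each containing a subdivision of $K_{3,3}$.

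Next I will pin down the structure of the middle cleavage unit $M$, which the preceding claim forces to be a $3$- or $4$-cycle. Because $G$ has minimum degree at least $3$, any vertex of $M$ that is not an endpoint of one of the two virtual edges attaching $M$ to $C_1$ or $C_2$ would have degree exactly $2$ in $G$; hence every vertex of $M$ is an endpoint of a virtual edge, i.e., a hinge-vertex. If $M$ is a $3$-cycle then its two virtual edges must share a vertex $w$, so after relabeling within each hinge I may assume $u_1 = u_2 = w$, and the unique real edge of $M$ in $G$ is $e := v_1 v_2$. If $M$ is a $4$-cycle then, since there are only four hinge-vertices available, the two virtual edges of $M$ must be opposite; the two hinges are then disjoint and the two real edges of $M$ form a matching between them, which I label $e := v_1v_2$ and $u_1u_2$.

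To finish I will delete the edge $e$ and invoke the additivity of crossing number over blocks (noted at the beginning of this section). In the $3$-cycle case $G - e = G_1 \cup G_2$ with $G_1 \cap G_2 = \{w\}$, so $w$ is a cut-vertex and $G_1, G_2$ appear as distinct blocks of $G - e$, each containing a subdivision of $K_{3,3}$. In the $4$-cycle case $G - e = G_1 \cup \{u_1u_2\} \cup G_2$, and the vertex $u_1$ is a cut-vertex whose removal separates $G_1 - u_1$ from $G_2$; once again $G_1$ and $G_2$ sit inside distinct blocks of $G - e$ that each contain a subdivision of $K_{3,3}$. Block-additivity together with $\crn(K_{3,3}) = 1$ then yields $\crn(G - e) \ge 2$, contradicting the $2$-criticality of $G$. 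The only delicate step is the minimum-degree analysis that pins down the structure of $M$ in each of the two cases; once those configurations are identified, the block-additivity argument is routine.
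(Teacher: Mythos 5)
Your proposal is correct and takes essentially the same approach as the paper's: delete a real edge $e$ of the middle $3$- or $4$-cycle, observe that $C_1-u_1v_1$ and $C_2-u_2v_2$ become blocks of $G-e$, and invoke block-additivity to force one side planar, hence (by Claim \ref{unittypes}) $K_5$ or $K_{3,3}$. The paper states this more tersely, leaving the min-degree analysis of the middle cycle and the identification of the cut-vertex to the reader, but the argument is the same.
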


\begin{proof}  Let $e$ be an edge of $G$ in the third cleavage unit of $G$; recall that this cleavage unit is either a 3- or a 4-cycle.  The blocks of $G-e$ include $C_1-u_1v_1$ and $C_2-u_2v_2$; if these were both non-planar, then $\crn(G-e)\ge2$, contradicting 2-criticality of $G$.  Hence, at least one of $C_1-u_1v_1$ and $C_2-u_2v_2$ is  planar.  By Claim \ref{unittypes}, such a one must be either $K_5$ or $K_{3,3}$. \end{proof}

\begin{claim} If $G$ has three cleavage units, then $G$ is one of the 20 graphs in Figure \ref{3units}.\end{claim}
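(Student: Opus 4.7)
The plan is to extend the case analysis already begun in the preceding claims by enumerating all non-isomorphic graphs consistent with the structural constraints. By the first claim of this proof, the three cleavage units of $G$ form a path $C_1$--$M$--$C_2$ in the decomposition tree, with $M$ a 3- or 4-cycle. The length of $M$ is restricted to these two values because $G$ is simple (ruling out $k$-bonds with $k\ge 4$) and has minimum degree at least $3$ (ruling out longer cycles, whose internal vertices would have degree $2$); moreover, in the 4-cycle case the two virtual edges of $M$ must be non-adjacent, for the same reason. When $M$ is a 3-cycle, the hinges $\{u_1,v_1\}$ and $\{u_2,v_2\}$ share a single vertex in $G$, and the non-virtual edge of $M$ joins the remaining two hinge vertices; when $M$ is a 4-cycle, the hinges are disjoint in $G$ and joined by two disjoint edges coming from $M$.

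By Claim \ref{threeunits}, at least one of $C_1, C_2$ is $K_5$ or $K_{3,3}$; relabel so that $C_1$ is of type (1) or (2) of Claim \ref{unittypes}, and let $C_2$ range over the five types of that claim. For each choice of types of the ordered pair $(C_1, C_2)$ and each choice of $M$, one counts the graphs up to isomorphism by taking account of the following data: the orbits of unordered hinge pairs $\{u_i,v_i\}$ inside $C_i$ under $\operatorname{Aut}(C_i)$ (one orbit in $K_5$, two orbits---adjacent and non-adjacent---in $K_{3,3}$, and only one orbit in types (3), (4), (5), where $u_iv_i$ is a distinguished edge); the two non-isomorphic labellings of the ordered pair $(u_2, v_2)$ in type (4); and, when gluing $M$ to $C_i$, the possible identifications of the two hinge vertices of $M$ with the two vertices of the hinge of $C_i$. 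Throughout, one must discard those combinations in which the actual edges of $M$ would be parallel to pre-existing edges of $C_1$ or $C_2$, because $G$ is simple.

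Organizing the enumeration first by the unordered pair of types of $(C_1, C_2)$, then by $M$, and finally by the choices of hinge orbit and hinge identification, one checks that exactly $20$ non-isomorphic graphs arise, matching the list displayed in Figure \ref{3units}. Each such graph is 2-crossing-critical: by Lemma \ref{twounits} it has crossing number at least $2$, and for every edge $e$ of $G$, at most one of the blocks of $G-e$ is non-planar, so $G-e$ has crossing number at most $1$. The main obstacle is carrying out this enumeration without overcounting or omitting a graph; the subtle point is that the two different vertex identifications available when gluing across a virtual edge can either yield isomorphic graphs (and must then be counted once) or genuinely non-isomorphic ones, and this interacts non-trivially with the two hinge orbits of $K_{3,3}$ and the two labellings of type (4). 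Working this out carefully, case by case, against Figure \ref{3units} completes the argument.
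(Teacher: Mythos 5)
Your approach is essentially the same as the paper's: organize the enumeration by the pair of types of $C_1$ and $C_2$ (using Claim~\ref{threeunits} to ensure at least one is type~(1) or~(2)), then by the choice of the middle unit $M$ (3- or 4-cycle), accounting for the asymmetry of type~(4). The paper just gives the bare arithmetic: $3\times 2 = 6$ graphs with both $C_i$ in $\{K_5,K_{3,3}\}$, then $2\times 3 = 6$ ordered choices for a mixed pair, with the 4-cycle giving $6$ and the 3-cycle giving $8$ (two extra from the two attachments of the type~(4) unit), for $6+6+8 = 20$.

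One detail in your framing is wrong, though, and would cause overcounting if you followed it literally. You write that the orbits of unordered hinge pairs inside $C_i$ are \emph{two} in $K_{3,3}$ (adjacent and non-adjacent). But a hinge is by definition the endpoint pair of a virtual edge, hence always an edge of the cleavage unit. If the cleavage unit $C_i$ is $K_{3,3}$ (type~(2)), the hinge is an edge of $K_{3,3}$, and $K_{3,3}$ is edge-transitive, so there is exactly one orbit. The case you describe as ``non-adjacent hinge in $K_{3,3}$'' is not a second orbit inside type~(2): it is type~(3), namely $C_i - u_iv_i \cong K_{3,3}$ with $u_iv_i$ joining two nodes in the same part. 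Treating it both as a hinge orbit inside $K_{3,3}$ and as a separate type of $C_2$ would double-count. Since you defer the actual count to ``one checks,'' the error does not propagate to a wrong total, but the organizational scheme as stated is not sound; the paper avoids this by making types~(2) and~(3) disjoint cases from the start and recognizing that every cleavage-unit type has a unique hinge orbit. Your closing remark about criticality of the 20 graphs is not needed for the claim (the hypothesis already assumes $G$ is 2-crossing-critical), though it is relevant to justifying the figure.
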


\begin{proof}  There are three pairs in which both $C_1$ and $C_2$ are one of $K_5$ and $K_{3,3}$ and two possibilities for the third cleavage unit, yielding six graphs.  Now suppose $C_1$ is one of $K_5$ and $K_{3,3}$ and $C_2$ is not.  There are three possibilities for $C_2$ and two possibilities for the third bridge.  However, when the third bridge is a 3-cycle, there are two ways to attach $C_2$ when it is of Type (\ref{k33b}) from Claim \ref{unittypes}.  Thus, there are 6 graphs with the third cleavage unit a 4-cycle and 8 when it is a 3-cycle.  \end{proof}

From the claims, we see that the 36 graphs shown in Figures  \ref{2units}  and \ref{3units} are all the cases in which $G$ is 2-connected, but not 3-connected, and has two non-planar cleavage units.
\end{cproof}

In the remaining cases of 2-connected, but not 3-connected, 2-crossing-critical graphs, there is only one non-planar cleavage unit $C$.   The graph $C$ is simple.  The following result shows how to obtain $G$ from a 3-connected 2-crossing-critical graph.  \wording{It requires the following definition.}

}\begin{definition}\label{df:digonalPath}   \wording{A {\em digonal path\/}\index{digonal path} is a graph obtained from a path $P$ by adding, for every edge $e$ of $P$, an edge parallel to $e$.} \end{definition}\printFullDetails{

}\begin{theorem}  Let $G$ be a 2-crossing-critical graph with minimum degree at least 3.  Suppose that $G$ is 2-connected but not 3-connected and has only exactly one non-planar cleavage unit, $C$.  The graph $\widetilde C$ obtained from $C$ by replacing each of its virtual edges with a digon is 2-crossing-critical and 3-connected.  The graph $G$ is recovered from $\widetilde C$ by replacing these virtual edge pairs by digonal paths. \end{theorem}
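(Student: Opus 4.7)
The plan is to show that every cleavage unit of $G$ other than $C$ must be a 3-bond (that is, three parallel edges between the same pair of vertices). The theorem then follows readily: the subtree of the cleavage decomposition tree rooted at each hinge $\{u,v\}$ of $C$ consists of 3-bonds arranged in a path, whose realization in $G$ is precisely a digonal $uv$-path between the corresponding hinge vertices. Thus $G$ is a subdivision of $\widetilde C$ in which every digon is subdivided into a digonal path, and $\widetilde C$ is recovered from $G$ by contracting each such path back to a digon.

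I would proceed by ruling out, one by one, each other type of cleavage unit permitted by Tutte's theory. First I would handle cycles of length at least $4$: if $U$ were such a cycle with hinge $\{u,v\}$ meeting $C$'s tree-neighbour, the realization of $U$ in $G$ would be a path through degree-2 intermediate vertices, contradicting the minimum-degree-3 hypothesis. Next I would rule out $k$-bonds with $k \ge 4$ by invoking Observation 8.2(3) (at most two parallel edges in a 2-crossing-critical graph) directly when $U$ has few virtual edges; the case where $U$ has many virtual edges I would handle using Observation 8.2(4), because deleting two parallel edges must leave a planar graph, whereas $G$ continues to contain a subdivision of the non-planar $C$. Finally, ruling out 3-connected planar cleavage units on four or more vertices is accomplished by forming $G^*$ from $G$ by replacing such a $U$ with a digon between its hinge vertices. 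Since $U + uv$ is planar, one can show $\crn(G^*) = \crn(G)$ by drawing $U$'s planar embedding inside a small neighbourhood of the digon; then 2-criticality of $G$, applied to any edge $e$ of $U$ not incident with the hinge, forces $G - e$ to have crossing number at most $1$, and comparing drawings of $G - e$ and $G^*$ yields a contradiction with $\crn(G) \ge 2$.

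With every non-$C$ cleavage unit thus identified as a 3-bond, the reconstruction of $G$ from $\widetilde C$ by digonal-path substitution is immediate. Three-connectedness of $\widetilde C$ is clear, since adding parallel edges to a 3-connected graph preserves vertex connectivity. Since subdivisions preserve crossing number, $\crn(\widetilde C) = \crn(G) \ge 2$. For 2-criticality of $\widetilde C$, each edge $e$ of $\widetilde C$ is either an edge of $C$ (and thus of $G$), in which case $\crn(\widetilde C - e) = \crn(G - e) \le 1$; or $e$ lies in one of the added digons, in which case deleting $e$ from $\widetilde C$ corresponds to deleting all edges of one parallel path of the corresponding digonal path in $G$, which is a subgraph of $G - f$ for any edge $f$ of that path, giving $\crn(\widetilde C - e) \le \crn(G - f) \le 1$.

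The main obstacle will be ruling out 3-connected planar cleavage units larger than a 3-bond. Although the contraction $G \leadsto G^*$ preserves the crossing number, the delicate point is exhibiting an edge $e$ in such a $U$ whose deletion forces crossing number at least $2$ in $G - e$: one must use the 3-connectedness of $U$ to guarantee enough internal structure, together with the fact that the non-planarity contributed by $C$ cannot be ``rerouted'' through the planar piece $U$. I expect this step to require a careful case analysis based on the interaction of $U$'s planar embedding with the hinge $\{u,v\}$, likely supported by the parallel-edge tools of Observation 8.2 and the structural lemmas (e.g., Corollary \ref{co:TutteTwo} and Lemma \ref{lm:noBox}) developed earlier in the paper.
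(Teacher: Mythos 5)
Your central classification---that every cleavage unit other than $C$ is a $3$-bond---is false, and the failure lies precisely in the step where you dismiss cycle cleavage units. Consider the case where $\comp{B}_{uv}$ (the union of $\isouv$-bridges not meeting $C\cap G$) is a digonal $uv$-path of length $k\ge 2$ through vertices $u=w_0,w_1,\dots,w_k=v$. Its Tutte decomposition consists of $k$ $3$-bonds (one per digon) together with a $(k+1)$-cycle \emph{all of whose edges are virtual}. Because every edge of that cycle is virtual, each of its vertices is a hinge vertex shared with a $3$-bond or with $C$; none has degree~$2$ in $G$, so your degree argument does not apply. Moreover, in Tutte's theory two bond cleavage units are never adjacent in the decomposition tree (they would merge into a single larger bond), so your picture of ``$3$-bonds arranged in a path'' is not a possible tree structure for any path of length $\ge 2$; the digonal paths the theorem asserts to exist require exactly the cycle units you claim to have ruled out. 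A similar issue affects your treatment of $k$-bonds with $k\ge 4$: Observation~\ref{obs:parallel} only bounds the number of \emph{real} parallel edges in $G$, not the number of virtual edges in a bond cleavage unit, so it cannot eliminate a $k$-bond whose edges are mostly virtual.

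Beyond that, you acknowledge that the elimination of $3$-connected planar cleavage units on $\ge 4$ vertices is unresolved, and you rely on an unproved equality $\crn(G^*)=\crn(G)$ whose inequality $\crn(G)\le\crn(G^*)$ is plausible but whose converse direction requires a real argument. The paper sidesteps all of these difficulties by making no attempt to classify cleavage units. Instead, for each virtual edge $uv$ of $C$ it works directly with the complementary bridge $\comp{B}_{uv}$: using $1$-drawings of $G-e'$ and the planarity of $\comp{B}_{uv}+uv$, it shows first that $\comp{B}_{uv}$ has no cut-edge separating $u$ from $v$ (otherwise a $1$-drawing of $G$ could be assembled), then that every edge of $\comp{B}_{uv}$ lies in a $2$-edge-cut separating $u$ from $v$ (otherwise $D_e$ would restrict to a planar embedding of the non-planar $C$), and concludes directly that $\comp{B}_{uv}$ is a digonal path. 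That route is both shorter and avoids all the Tutte-decomposition bookkeeping. Your strategy, as stated, contains false intermediate claims and would need to be substantially reworked before it could succeed.
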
\printFullDetails{

\begin{cproof}  That $\widetilde C$ is 3-connected is a trivial consequence of the fact that $C$ is 3-connected.   

As for the rest, let $uv$ be a virtual edge in $C$.  Then $\isouv$ is a hinge of $G$.  We consider the $\isouv$-bridges in $G$; let $B_{uv}$ be the one that contains $C\cap G$, and \wording{let $\comp B_{uv}$ be} the union of the remaining $\isouv$-bridges.   We have two objectives:  to show that $\widetilde C$ is 2-crossing-critical and that\wording{, for each $uv$, $\comp B_{uv}$ is a digonal $uv$-path}.

For the former, we first show $\crn(\widetilde C)\ge 2$.  Otherwise $\widetilde C$ has a 1-drawing $D$.  Obviously no edge in a digon of $\widetilde C$  is crossed in $D$.  For each virtual edge $uv$ \wording{of $C$, $\comp B_{uv}+uv$ is} planar, so it may be inserted into $D$ in place of the $uv$-digon  in $D$ to obtain a 1-drawing of $G$, which is a contradiction.  \wording{Therefore, $\crn(\widetilde C)\ge 2$.}

We next claim that \wording{each $\comp B_{uv}$ consists} of  digonal  $uv$-paths.  Assume first \wording{that $\comp B_{uv}$ has} a cut-edge $e$ separating $u$ and $v$.  Since $G$ has no vertices of degree 2 \wording{and $\comp B_{uv}$ is not} just a single \wording{edge, $\comp B_{uv}$ contains} some edge $e'$ so \wording{that $\comp B_{uv}-e'$ still} contains a $uv$-path.  

If no \wording{edge of $\comp B_{uv}$ is} crossed \wording{in a 1-drawing $D_{e'}$ of $G-e'$, then, since $\comp B_{uv}-e'$ contains a $uv$-path, $\comp B_{uv}$} may be substituted \wording{for $\comp B_{uv}-e'$ in $D_{e'}$ to} obtain a 1-drawing of $G$, which is impossible.  So some \wording{edge of $\comp B_{uv}$ is} crossed in $D_{e'}$.  Deleting edges from $\comp{B}_{uv}-e'$ to leave only a $uv$ path shows that $D_{e'}$ restricts to a 1-drawing of $B_{uv}+uv$ in which there is at most one crossing; if there is a crossing, then $uv$ is crossed.  Since every planar \wording{embedding of $\comp B_{uv}+uv$ has} $uv$ and $e$ on the same face, the 1-drawing of $B_{uv}+uv$ and a planar embedding \wording{of $\comp B_{uv}+uv$ may be} merged to produce a 1-drawing of $G$ \wording{in which $e$ is crossed.  This} contradiction that \wording{shows $\comp B_{uv}$ contains} edge-disjoint $uv$-paths.

Let $e$ be an \wording{edge of $\comp B_{uv}$}.  Then \wording{a 1-drawing $D_e$ of $G-e$ must} have a crossing of some edge $e'$ \wording{of $\comp B_{uv}$.  If $\comp B_{uv}-\{e,e'\}$ has} a $uv$-path $P$, then $D_e$ restricts to a planar embedding of $C$ by using $P$ to represent $uv$.  But $C$ is non-planar, so every \wording{edge of $\comp B-{uv}$ is} in an edge-cut of size at most 2 separating $u$ and $v$.  \wording{Combining this with the preceding paragraph shows that every edge of $\comp B_{uv}$ is in an edge-cut of size exactly 2.  It is an easy exercise to see that this implies $\comp B_{uv}$ is} a pair of digonal paths.

We conclude by showing that, for every edge $e$ of $\widetilde C$, $\crn(\widetilde C-e)\le 1$.   Suppose first that $e$ is not in a digon.  \wording{Each $\comp B_{uv}$ has} a  $uv$-path $P_{uv}$ that is clean in $D_e$.  Thus, $D_e[G-e]$ contains a subdivision of $C-e$ in which no virtual edge (represented in the subdivision by $P_{uv}$) is crossed.  Therefore, the virtual edges may be replaced with digons to give a 1-drawing of $\widetilde C-e$, as claimed.

Now suppose $e$ is in the $uv$-digon.  Let $e'$ be any \wording{edge of $\comp B_{uv}$}.  Then $D_{e'}$ contains a 1-drawing of $C$, in which every other virtual edge $wx$ is represented by a $wx$-path $P_{wx}$ \wording{in $\comp B_{wx}$ that} is clean in $D_{e'}$.  All these other virtual edges may be replaced with digons to give a 1-drawing of $\widetilde C-e$, as required.  \end{cproof}

}

\chapter{On 3-connected graphs that are not \wording{peripherally}-4-connected}\label{sec:3conNotI4c}\printFullDetails{

In this chapter, we reduce the problem of finding all 3-connected \2cc\ graphs to the consideration of non-planar, \wording{peripherally}-4-connected\index{\p4c} graphs.  \wording{Our motivation for doing this is to use a known characterization of internally-4-connected graphs (a concept intimately related to peripherally-4-connected graphs) with no subdivision of $V_8$ to find all the 3-connected, 2-crossing-critical graphs with no subdivision of $V_8$.}

}\begin{definition}\label{df:i4c}   A graph $G$ is {\em \wording{peripherally}-4-connected\/} if $G$ is 3-connected and, for any 3-cut $S$ of $G$ and any partition of the components of $G-S$ into two non-null subgraphs $H$ and $K$, at least one of $H$ and $K$ has just one vertex. 
\end{definition}\printFullDetails{

We begin this section by finding the four 3-connected, not \wording{peripherally}-4-connected, 2-crossing-critical graphs that are not obtained from planar substitutions into a \wording{peripherally}-4-connected graph.  The bulk of the section is devoted to explaining in detail how to obtain the remaining 3-connected 2-crossing-critical graphs from \wording{peripherally}-4-connected graphs.  \wording{Finally, this theory} is used to explain how to find all the 3-connected \2cc\ graphs that do not contain a subdivision of $V_8$.

}\section{A 3-cut with two non-planar sides}\label{ssc:twoNonPlanar}\printFullDetails{

In this section we find the four 3-connected, not \wording{peripherally}-4-connected, \2cc\ graphs that are not obtained by substituting planar pieces into degree-3 vertices in a \wording{peripherally}-4-connected graph (this substitution process being the remainder of the section).   We start by describing the four graphs and showing that they are \2cc.

}\begin{definition}\label{df:k34*} The graph $K_{3,4}^*$\index{$K_{3,4}^*$} is obtained from disjoint copies of $K_{2,3}$ by joining the parts of the bipartition having three vertices in each of the copies by a perfect matching $M$.  \end{definition}\printFullDetails{

\wording{Observe that $K_{3,4}$ is obtained from $K_{3,4}^*$ by contracting all the edges of the matching $M$.}  The following generalizes the well-known fact that $K_{3,4}$ is 2-crossing-critical.

}\begin{lemma}\label{lm:k34*} If $H$ is obtained from $K_{3,4}^*$ by contracting some subset of $M$, then $H$ is 2-crossing-critical.\end{lemma}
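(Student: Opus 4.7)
The plan is to verify the two defining conditions of 2-crossing-criticality for $H$: (i) $\crn(H) \ge 2$, and (ii) $\crn(H-e) \le 1$ for every edge $e$ of $H$.

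For (i), I would invoke the well-known fact that edge-contraction cannot increase crossing number: given any drawing of a graph $G$, shrinking an edge $e=uv$ to a point yields, after a local perturbation near the merged vertex, a drawing of $G/e$ in which the crossings of $e$ with other edges disappear and all other crossings are preserved, so $\crn(G/e) \le \crn(G)$. Observe that, because $b_i$ and $c_i$ have no common neighbour in $K_{3,4}^*$, contracting any subset of $M$ produces a simple graph. Now $H$ is obtained from $K_{3,4}^*$ by contracting some subset $S \subseteq M$; contracting also the edges in $M \setminus S$ yields $K_{3,4}$. Since $\crn(K_{3,4}) = 2$ (the classical value), we conclude $\crn(H) \ge \crn(K_{3,4}) = 2$.

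For (ii), I would exploit the symmetries of $K_{3,4}^*$: the $S_3$-action permuting the three matching pairs $(b_i,c_i)$; the independent involutions $a_1 \leftrightarrow a_2$ and $d_1 \leftrightarrow d_2$; and the side-swap $a_i \leftrightarrow d_i,\ b_j \leftrightarrow c_j$. The subgroup preserving the contracted set $S$ descends to a symmetry group of $H$ and partitions the edges of $H$ into at most three orbits: (a) a surviving matching edge $b_i c_i$ (when $b_ic_i \notin S$); (b) an edge incident to a degree-$4$ vertex arising from a contraction (when $S \ne \emptyset$); and (c) an outer edge such as $a_1 b_1$ or, symmetrically, $d_1 c_1$. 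For each orbit, it suffices to exhibit a 1-drawing of $H-e$ for a single representative. Such drawings can most conveniently be produced as projective-planar embeddings of $H-e$, yielding 1-drawings by placing the crosscap at any point.

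The main obstacle will be verifying that the 1-drawings exist in all four cases $|S|=0,1,2,3$. The case $|S|=3$, where $H=K_{3,4}$, is the classical 2-crossing-criticality of $K_{3,4}$ and provides the base. For $|S|<3$, I expect the 1-drawings of $H-e$ to be obtained by ``splitting'' a degree-4 vertex in a known 1-drawing of $K_{3,4}-e$ into two adjacent degree-3 vertices, effectively reversing a contraction; the care needed lies in checking that this splitting preserves the 1-drawing property in each case and does not force an additional crossing. The symmetry reduction keeps the number of cases to verify small, and since $|V(H)| \le 10$ each drawing is directly inspectable.
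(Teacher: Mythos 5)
Your argument for part (i), the lower bound $\crn(H)\ge 2$, has a fatal gap: the claimed general principle that contracting an edge cannot increase the crossing number, $\crn(G/e)\le\crn(G)$, is \emph{false}. The paper's remark immediately following this very lemma supplies a counterexample to exactly the inequality you need: there exist vertex-splits $G'$ of $K_{3,4}$ with $\crn(G')=1$ (split two of the degree-4 vertices so that the two resulting bipartitions of the four neighbours differ), and contracting the splitting edges in $G'$ recovers $K_{3,4}$ with $\crn=2$. The ``slide $v$ to $u$'' intuition breaks because if $e=uv$ is crossed $m$ times in a drawing, the other $\deg(v)-1$ edges at $v$ must be rerouted through a tubular neighbourhood of the former curve of $e$, and each of them inherits those $m$ crossings; crossings can multiply rather than disappear. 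So $\crn(H)\ge\crn(K_{3,4})$ does not follow from $K_{3,4}$ being a contraction-minor of $H$, and establishing $\crn(H)\ge 2$ is precisely the nontrivial content of the lemma. The paper proves it directly: assume a $1$-drawing $D$ of $H$; for each degree-$3$ vertex $v$ of one of the two $K_{2,3}$-halves there is a subdivision $H_v$ of $K_{3,3}$ obtained by adjoining three internally disjoint paths from $v$ to the other half; since $D$ has only one crossing, varying $v$ over the several choices (and then swapping the roles of the two halves) forces that crossing to lie entirely on edges of $M$, which then places it on two branches of some $H_v$ incident with a common node --- impossible.

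Your plan for part (ii) is a workable alternative but heavier than necessary. The paper instead exhibits, for each $e\notin M$, a single $1$-drawing of $K_{3,4}^*-e$ in which every edge of $M$ is clean; contracting uncrossed edges never increases the number of crossings, so this one drawing contracts to a $1$-drawing of $H-e$ for every choice of $S$ simultaneously, and for $e\in M$ one observes $H-e$ is planar. Note the asymmetry: contraction of \emph{clean} edges is safe, which is why (ii) can use contraction while (i) cannot.
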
\printFullDetails{

\begin{cproof} Suppose $e$ is an edge of $K_{3,4}^*$ not in $M$.  Then there is a 1-drawing of $K_{3,4}^*-e$ in which no edge of $M$ is crossed.  Thus, $\crn(H-e)\le 1$.  If $e\in M$, then $H-e$ is planar. It remains to show $\crn(H)\ge 2$.

Suppose to the contrary that $H$ has a 1-drawing $D$.  Let $H_1$ and $H_2$ be the $K_{2,3}$ subgraphs of $H$ contained in $K_{3,4}^*-M$.  For each vertex $v$ of degree 3 in $H_2$, there are three disjoint $vH_1$-paths in $H$; adding $v$ and these paths to $H_1$ yields a subdivision $H_v$ of $K_{3,3}$ in $H$.  Thus, $D[H_v]$ has a crossing, and, since there are two choices for $v$, this crossing involves only edges of $H_1$ and $M$.  

Interchanging the roles of $H_1$ and $H_2$ shows the crossing in $D$ involves only edges of $M$.  But then $D[H_v]$ has its only crossing on branches incident with $v$, which is impossible.  \end{cproof}

We remark that there are splits of $K_{3,4}$ \wording{that have} crossing number 1 --- split two of the degree 4 vertices so that the two partitions of the four neighbors are different.  Fortunately, they do not occur in our context.  

In order to show that these are the only four graphs with ``non-planar 3-cuts", we need to understand just what ``non-planar 3-cuts" are. 

}\begin{definition}\label{df:3cut+}  Let $S$ be a 3-cut in a 2-connected graph, so there are subgraphs $H$ and $K$ of $G$ such that $G=H\cup K$ and $H\cap K=\iso S$.  For $L\in \{H,K\}$,  $L^+$\index{$L^+$} denotes the graph obtained from $L$ by the addition of a new vertex adjacent to precisely the vertices in $S$.  \end{definition}\printFullDetails{

 We will see that, in the case $G$ is 2-crossing-critical,  with the exception of $K_{3,4}$, there are at most three non-trivial $S$-bridges, and so at least one of $H$ and $K$ is an $S$-bridge.  Our next goal is to show that the four graphs in Lemma \ref{lm:k34*} are the only four that have both $H^+$ and $K^+$ non-planar.  We start with the following\wording{, which is likely well-known; however, we could not find a reference.  It extends Hall's Theorem \cite{dwh} that there is a subdivision of $K_{3,3}$}.

}\begin{lemma}\label{lm:k33node} Let $G$ be a 3-connected non-planar graph different from $K_5$ and let $v$ be a vertex of $G$.  Then $G$ has a subdivision $H$ of $K_{3,3}$ in \wording{which $v$ is an} $H$-node.\end{lemma}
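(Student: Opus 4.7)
The plan is to start from Hall's theorem, which provides some $K_{3,3}$-subdivision in $G$, and then modify it so that $v$ plays the role of a node. Concretely, choose a subdivision $H$ of $K_{3,3}$ in $G$ maximizing $\deg_H(v)$. Since a vertex of a $K_{3,3}$-subdivision has degree $0$, $2$, or $3$ in it, it suffices to rule out the first two possibilities.

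First, I would show $v\in V(H)$. Suppose not, and take any $K_{3,3}$-subdivision $K$ in $G$; by 3-connectivity there exist three internally disjoint $vK$-paths $P_1,P_2,P_3$ with distinct endpoints $z_1,z_2,z_3\in V(K)$. A short case analysis on the positions of the $z_i$ among the branches and nodes of $K$ produces a new $K_{3,3}$-subdivision $K'$ containing $v$. For example, if $z_1,z_2$ lie on a common branch $b$ of $K$, replacing the $z_1z_2$-subpath of $b$ by $P_1\cup P_2$ already gives such a $K'$; if the $z_i$ land in three different branches, one reroutes exactly two of them through $v$; and if some $z_i$ is already a node, the corresponding $P_i$ extends $K$ through $v$ directly. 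In each subcase, $v\in V(K')$, contradicting our assumption.

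Next, assume for contradiction that $v\in V(H)$ with $\deg_H(v)=2$, so that $v$ is internal to a branch $b$ of $H$ with endnodes $x,y$. Since $G$ is 3-connected, $\deg_G(v)\ge 3$, so some edge at $v$ lies outside $b$; it belongs to an $H$-bridge $B$ with $v$ as an attachment, and $B$ has another attachment $z$ reached from $v$ by an $H$-avoiding path $P\subseteq B$. I now case-analyze the position of $z$: either $z$ is a node of $H$, or $z$ is internal to some branch $b'\ne b$. In each subcase I would build a new $K_{3,3}$-subdivision $H'$ in which $v$ is a node by using the two halves of $b$ (from $v$ to $x$ and from $v$ to $y$) as two of the three branches incident to $v$, and $P$ (possibly extended past $z$ through a half of $b'$ or through a branch at $z$) as the third. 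Since $x$ and $y$ lie on opposite sides of the original $K_{3,3}$-bipartition, the two halves of $b$ naturally sit on the two sides at $v$, and the third branch at $v$ can always be chosen to connect $v$ to a node on the appropriate side, possibly after suppressing $z$ as a degree-$2$ vertex or discarding one of the original nodes now made redundant.

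The main obstacle is the bookkeeping in this last step: for each configuration of $z$ (and each choice of which branch of $b'$ or which node-branch at $z$ the path $P$ extends into), one has to verify that the proposed $H'$ really is a $K_{3,3}$-subdivision — all six chosen nodes have degree $3$ in $H'$, the branches between them are internally disjoint, and the bipartite incidence pattern of $K_{3,3}$ is preserved. The 3-connectivity of $G$ is used essentially twice: to obtain the three $vK$-paths in the first step, and to guarantee the existence of the third edge at $v$ (hence of the bridge $B$ and the path $P$) in the second step.
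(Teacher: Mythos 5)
Your overall two-step strategy — first show $v \in V(H)$, then promote $v$ from an internal vertex to an $H$-node — is the same as the paper's, and the first step is fine. The problem is in the second step.

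You take an $H$-bridge $B$ attaching at $v$ and an arbitrary other attachment $z$ of $B$, and then assert the dichotomy ``$z$ is a node of $H$, or $z$ is internal to some branch $b'\ne b$.'' That dichotomy is false: nothing stops $z$ from being internal to $b$ itself, or from being one of the endnodes $x$ or $y$. For example, $v$ could have degree exactly $3$ in $G$ with its only non-$b$ edge a chord $vz$ into $\oo{b}$, in which case the only $H$-bridge at $v$ is that chord and your case analysis has nothing to grab onto. Likewise if $z=x$, your proposed $H'$ gives $v$ two parallel branches to $x$ (one half of $b$ and $P$), which is not a $K_{3,3}$-subdivision. In all these bad cases your construction does not produce a third branch from $v$ to a node on the correct side of the bipartition.

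What is missing is exactly the point the paper handles with Menger's theorem. Setting $L'=H-\oo{b}$, the two halves of $b$ already give two disjoint $L'$-avoiding $v\{x,y\}$-paths; by the fan version of Menger's theorem (using $3$-connectivity), one can find \emph{three} internally disjoint $L'$-avoiding $vL'$-paths whose three ends in $L'$ include $x$ and $y$. This forces the third end $z$ to lie in $L'$ (hence not interior to $b$) and to be different from $x$ and $y$ — precisely the control you need before the rerouting can go through. Your bridge-and-attachment argument gives no such guarantee, so as written the second step does not close.
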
\printFullDetails{

\begin{cproof} \wording{Here is an outline of the easy, but tedious, proof.}
As a first step, we show that there is a subdivision of $K_{3,3}$ containing $v$.  By Hall's Theorem \cite{dwh}, $G$ contains a subdivision $L$ of $K_{3,3}$. \wordingrem{(Text removed.)}If $v\notin L$, then there are three disjoint $vL$-paths.  \minor{There are three possibilities for the ends of these paths in $L$:  two are in the same closed $L$-branch; two are in $L$-branches incident with a common $L$-node; and the $L$-branches containing the ends of the paths are pairwise disjoint.  In the first case, $v$ is incorporated into the interior of a branch of a new subdivision of $K_{3,3}$, while in the other cases, $v$ is incorporated as a node of the new subidivision of $K_{3,3}$.}

\ignore{If two of these land on the same closed $L$-branch, then obviously we can replace part of that $L$-branch with a path through $v$.

If there is a closed $L$-branch $\cc{u,w}$ and a half-open $L$-branch $\co{u,x}$ so that the ends of two of the paths are in these, then it is easy to see that $v$ is a node of some subdivision of $K_{3,3}$.  

Finally, if all three land in disjoint $L$-branches, then $v$ can be one node, the three ends of the $vL$-paths are nodes and two other nodes of $L$ are nodes of a subdivision of $K_{3,3}$ in $G$.}

So now assume that $v$ is in $L$, but not as a node.  Then $v$ is interior to some $L$-branch $b$ with ends $u$ and $w$.  Let $L'=L-\oo{b}$ --- this is a subdivision of $K_{3,3}$ less an edge.  \wording{Because there are, in $G$, disjoint} $L'$-avoiding $v\{u,w\}$-paths, standard proofs of Menger's Theorem imply that \wordingrem{(text removed)}there are three disjoint $L'$-avoiding \wording{$vL'$-paths}, having $u$ and $w$ among their \wording{three $L'$-ends}.  Therefore, we may assume not only is $v$ in the interior of $b$, but there is an $L$-avoiding $vx$-path from $v$ to some other vertex $x$ \wording{of $L'$}.

Up to symmetry, there are three possibilities for $x$:  it is a node of $L$ other than $u$ and $w$; it is interior to an $L$-branch incident with $u$ but not with $w$; and it is interior to an $L$-branch not incident with either $u$ or $w$.  Let $y$ and $z$ be nodes of $L'$ (note that $u$ and $w$ are not actually nodes of $L'$).  We can assume $x$ is either $y$, or in the $L$-branch $\cc{w,y}$, or in the $L$-branch $\cc{y,z}$.  Let $Y$ be a $\{u,w,x\}$-claw with centre $v$, so that $Y\cap L'$ is just $u$, $w$, and $x$.   

If $x$ is either $z$ or in $\oo{y,z}$, then $(L'\cup Y)-\oo{w,x}$ is a subdivision of $K_{3,3}$ with $v$ as a node.  If $x$ is in $\oo{y,z}$, then $(L'\cup Y)-\oo{w,y}$ is a subdivision of $K_{3,3}$ having $v$ as a node.  \end{cproof}

We are now ready for the classification of the 3-connected \2cc\ graphs with two non-planar sides to a 3-cut.

}\begin{theorem}\label{th:3CutBothNonPlanar} Let $G\in \m2$ have subgraphs $H$ and $K$ of $G$ and a set $S$ of three vertices of $G$ such that:
\begin{enumerate}
\item
 $G=H\cup K$;
 \item 
$H\cap K=\iso S$;
\item
\label{it:Sbridge}
$H$ and $K$ both \wording{have an} $\iso S$-bridge having all of $S$ as attachments; and
the two graphs $H^+$ and $K^+$  are both non-planar.
\end{enumerate}
Then $G$ is one of the four graphs obtained from $K_{3,4}^*$ by contracting some subset of $M$.\end{theorem}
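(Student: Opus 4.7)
My plan is to exhibit a subdivision of one of the four graphs from $K_{3,4}^*$ inside $G$ and then argue $G$ equals that subdivision. First, $v_H$ has degree exactly $3$ in $H^+$, so $H^+ \neq K_5$. Since $H^+$ is non-planar, an argument in the spirit of Lemma \ref{lm:k33node} (using Hall's theorem to find a subdivision of $K_{3,3}$ in $H^+$ and modifying it so as to incorporate $v_H$ as a node, with the necessary connectivity provided by the three edges from $v_H$ to $S$) yields a subdivision $L_H$ of $K_{3,3}$ in $H^+$ with $v_H$ as a node. Since $v_H$ has only three neighbors in $H^+$, namely $S$, the three $L_H$-branches incident to $v_H$ begin with the three edges $v_H s$ for $s \in S$ and then continue in $H$ as internally disjoint paths from $S$ to the three opposite-side nodes $\{h_1,h_2,h_3\}$ of $L_H$. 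Together with the six $L_H$-branches joining $\{h_1,h_2,h_3\}$ to the two remaining same-side nodes $\{h_4,h_5\}$, these exhibit inside $H$ a subgraph $\widehat L_H$ which is a subdivision of the ``$H$-half'' of $K_{3,4}^*$: a $K_{2,3}$ on $\{h_4,h_5\} \cup \{h_1,h_2,h_3\}$, plus three matching-like paths from $\{h_1,h_2,h_3\}$ to $S$. The analogous construction in $K^+$ produces $\widehat L_K \subseteq K$.

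The union $\widehat L_H \cup \widehat L_K \subseteq G$ is then a subdivision of a graph obtained from $K_{3,4}^*$ by contracting some subset of $M$ (the contraction at the $j$-th matching edge corresponding to the coincidence of $h_j$ with the appropriate vertex of $S$ on both sides). By Lemma \ref{lm:k34*}, every such contracted graph has crossing number $2$, hence $\crn(\widehat L_H \cup \widehat L_K) \geq 2$ as well (since crossing number is invariant under subdivision). Two claims finish the proof. First, the minimum-degree-$3$ hypothesis together with $S$ being a $3$-cut force every path in the subdivision to be a single edge: an internal vertex of such a path has degree $2$ in $\widehat L_H \cup \widehat L_K$ and so has another neighbor in $G$ on the same side of $S$; deleting the incident chord produces a proper subgraph of $G$ that still contains the subdivision $\widehat L_H \cup \widehat L_K$ and hence has crossing number at least $2$, contradicting criticality. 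Second, an essentially identical argument rules out any edge of $G$ outside $\widehat L_H \cup \widehat L_K$. Thus $\widehat L_H \cup \widehat L_K$ is literally one of the four concrete graphs of Lemma \ref{lm:k34*} (not just a subdivision of one), and $G$ equals this graph.

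The main obstacle is the rigidity claim above: showing that no extra chord or edge exists. The critical inputs are (i) that $\widehat L_H \cup \widehat L_K$, viewed as a subdivision of one of the four graphs, already has crossing number at least $2$ (Lemma \ref{lm:k34*}), which lets us trade any extra edge for a contradiction with $G$'s $2$-criticality; and (ii) the hypothesis that each of $H$ and $K$ has an $\iso{S}$-bridge with all of $S$ as attachments, which is what ensures the $K_{3,3}$-subdivisions in $H^+$ and $K^+$ can be extracted with $v_H$ and $v_K$ playing the role of nodes of degree $3$ adjacent precisely to $S$. A secondary technical wrinkle is handling the possibility that $H^+$ (respectively $K^+$) is not $3$-connected, so that Lemma \ref{lm:k33node} cannot be applied verbatim; this can be addressed by adapting the proof of that lemma, since $v_H$'s adjacencies to all of $S$ provide sufficient local connectivity to run the Menger-based arguments at the core of the proof.
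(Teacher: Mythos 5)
Your proposal is correct and follows essentially the same route as the paper: invoke Lemma \ref{lm:k33node} in each of $H^+$ and $K^+$ to get a $K_{3,3}$-subdivision with the apex as a node, delete the two apices, glue along $S$, and observe that the result is a subdivision of a contraction of $K_{3,4}^*$; then 2-criticality plus Lemma \ref{lm:k34*} forces equality with $G$. The paper wraps up more compactly — once $G'$ is a subgraph of $G$ with $\crn(G')\ge 2$, criticality gives $G'=G$ directly, and 3-connectivity of $G$ rules out any internal degree-2 vertices, so $G'$ is literally one of the four graphs rather than a proper subdivision; your two-step "no long branches, no extra edges" argument achieves the same thing with a bit more machinery than needed. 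On the technical wrinkle you flag, the paper's answer is that $L^+$ is a subdivision of a 3-connected graph (the only candidate degree-2 vertices are the elements of $S$), so after suppressing those and applying Lemma \ref{lm:k33node} to the underlying 3-connected graph, the resulting $K_{3,3}$-subdivision lifts back into $L^+$ with $v^+_L$ still a node; your sketch of adapting the Menger arguments would work too, but the suppression trick is cleaner.
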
\printFullDetails{

\begin{cproof} \wording{Let $u$, $v$, and $w$ be the vertices in $S$.}  For $L\in\{H,K\}$, let $v^+_L$ denote the vertex in $L^+$, but not in $L$.  The graph $L^+$ is a subdivision of a 3-connected graph (the only possible vertices of degree 2 are $u$, $v$, and $w$).  Since $L^+$ is not planar and has a vertex of degree 3, it is not a subdivision of $K_5$ and, therefore, by Lemma \ref{lm:k33node} contains a subdivision $L'$ of $K_{3,3}$ in which $v^+_L$ is a node.  Now $G'=(H'-v^+_H)\cup (K'-v^+_K)$ is a subdivision of $K_{3,4}^*$, with some subset of $M$ contracted.  By Lemma \ref{lm:k34*}, $\crn(G')=2$, so $G'=G$, as required.  \end{cproof}

}\section{3-reducing to \wording{peripherally}-4-connected graphs}\printFullDetails{

In this section, we discuss the general details of reducing a 3-connected graph to a \wording{peripherally}-4-connected graph.  These results apply in some generality and not just in the context of \2cc graphs.  These are the first of several steps toward finding all the 3-connected \2cc\ graphs that do not contain a subdivision of $V_8$.

These results are fairly technical but essential to this part of the theory.

}\begin{definition}\label{df:non-peripheral}\begin{enumerate}\item A 3-cut $S$ in a 3-connected graph is {\em reducible\/}\index{reducible (3-cut)} if $G-S$ has at most 3 components and they partition into two subgraphs each having at least two vertices.
\item\label{it:noK34} The set $\mfK$\index{$\mfK$} consists of those 3-connected graphs that do not contain a subdivision of $K_{3,4}$.
\end{enumerate}
\end{definition}
\printFullDetails{

The following result is obvious from the definitions and begins to explain the appearance of $K_{3,4}$ in Definition \ref{df:non-peripheral} (\ref{it:noK34}).  

}\begin{lemma}  Let $G$ be a 3-connected graph that is not \wording{peripherally}-4-connected.  Then either $G$ has a reducible 3-cut or $G$ has $K_{3,4}$ as a subgraph.  \end{lemma}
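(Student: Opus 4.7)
The plan is to use the definition of ``not peripherally-4-connected'' to extract a 3-cut $S$ together with a witnessing partition, and then branch on how many components $G-S$ has. Since $G$ fails to be peripherally-4-connected, there is a 3-cut $S$ of $G$ and a partition of the components of $G-S$ into two non-null subgraphs $H$ and $K$, with $|V(H)| \ge 2$ and $|V(K)| \ge 2$.

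If $G-S$ has at most three components, then this very partition $(H,K)$ demonstrates that $S$ is a reducible 3-cut, and the conclusion holds. So the interesting case is when $G-S$ has at least four components $C_1,C_2,C_3,C_4,\dots$; here I will exhibit a subdivision of $K_{3,4}$. Write $S=\{u,v,w\}$. The first step is to observe that every component $C_i$ of $G-S$ has each of $u,v,w$ as an attachment. Indeed, if some vertex of $S$, say $u$, had no neighbour in $C_i$, then $\{v,w\}$ would separate $C_i$ from any other component $C_j$ in $G$ (any $C_iC_j$-path in $G-\{v,w\}$ would have to pass through $u$ and hence have an edge entering $C_i$ at $u$), contradicting the 3-connectedness of $G$. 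Having at least four components is what lets me invoke this argument for every one of $C_1,\dots,C_4$ simultaneously.

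For each $i\in\{1,2,3,4\}$, pick $u_i,v_i,w_i\in C_i$ adjacent to $u,v,w$ respectively. Since $C_i$ is connected, it contains a Steiner tree $T_i$ for $\{u_i,v_i,w_i\}$; $T_i$ either has a branch vertex $x_i$ with three internally disjoint paths to $u_i,v_i,w_i$, or is a path in which one of $u_i,v_i,w_i$ (call it $x_i$) lies between the other two. In either case, $C_i\cup S$ contains an $\{u,v,w\}$-claw with centre $x_i$. The four claws obtained this way meet pairwise only in the set $\{u,v,w\}$, because the interior of each claw is confined to its own component $C_i$. Together they form a subdivision of $K_{3,4}$ with bipartition $(\{u,v,w\},\{x_1,x_2,x_3,x_4\})$, as required.

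There is really no hard step: the plan is largely a definition-chase, with the only non-trivial input being Menger-style reasoning to conclude that each component of $G-S$ is attached to all three vertices of $S$ when $G-S$ has $\ge 4$ components. The mild point worth flagging is that the statement reads ``$G$ has $K_{3,4}$ as a subgraph'', which in view of the surrounding definition of $\mfK$ (graphs with no \emph{subdivision} of $K_{3,4}$) is to be read as ``$G$ contains a subdivision of $K_{3,4}$''; the construction above produces exactly such a subdivision.
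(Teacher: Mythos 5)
Your proof is correct and fills a gap the paper leaves open: the paper offers no argument, merely remarking that the lemma ``is obvious from the definitions.'' Your case split is the natural one. If $G-S$ has at most three components, the partition $(H,K)$ witnessing failure of peripheral-4-connectedness makes $S$ reducible by definition. If $G-S$ has at least four components, each component $C_i$ must have all of $S$ as its attachment set, since the set of attachments of $C_i$ is a cut separating $C_i$ (non-null) from the remaining components (also non-null) and $G$ is $3$-connected; this is the content of your Menger-style observation. Extracting a claw from each of four components then yields a subdivision of $K_{3,4}$ with the internally disjoint branches you describe.

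You are right to flag the mismatch between ``subgraph'' and ``subdivision,'' and indeed the lemma is false as literally written. Take $G$ on vertex set $\{u,v,w\}\cup\{a_i,b_i : 1\le i\le 4\}$ with edges $a_ib_i$, $a_iu$, $a_iv$, $b_iu$, $b_iw$ for each $i$. One checks that $G$ is $3$-connected, that $\{u,v,w\}$ witnesses failure of peripheral-4-connectedness (four components, each of size two), that the only other $3$-cuts are of the form $\{u,v,b_i\}$ or $\{u,w,a_i\}$ (each isolating a single vertex), so $G$ has no reducible $3$-cut; yet no vertex of $G$ is adjacent to all three of $u,v,w$, and since $u,v,w$ are the only vertices of degree at least four, $G$ contains no $K_{3,4}$ subgraph. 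It does contain the $K_{3,4}$-subdivision your construction produces. The surrounding text (the definition of $\mfK$, the proof of Lemma \ref{lm:stay3conn}, and the passage preceding it) all work with subdivisions, so ``subgraph'' in the statement is a slip for ``subdivision of $K_{3,4}$,'' as you inferred.
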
\printFullDetails{

The next result sets up the basic scenario that we will use throughout our reduction to \wording{peripherally}-4-connected graphs.
 
}\begin{lemma}\label{lm:stay3conn}  Let $G\in \mfK$.  Then there is a sequence $G_0,G_1,\dots,G_k$ of 3-connected graphs in $\mfK$ so that:  $G_0=G$; $G_k$ is \wording{peripherally}-4-connected; and, for each $i=1,2,\dots,k$, there is a 3-cut $S_i$ in $G_{i-1}$ and a non-trivial, planar $S_i$-bridge $B_i$ so that $\Nuc(B_i)$ has at least two vertices and $G_ i$ is obtained from $G_{i-1}$ by contracting the nucleus of $B_i$.  \end{lemma}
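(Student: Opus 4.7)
My plan is to build the sequence inductively by repeatedly contracting the nucleus of a suitable planar bridge. Suppose $G_{i-1}\in\mfK$ is 3-connected but not peripherally-4-connected. The preceding lemma (combined with $G_{i-1}\in\mfK$, which forbids a $K_{3,4}$-subdivision and hence a $K_{3,4}$-subgraph) yields a reducible 3-cut $S$, and so a partition $(H,K)$ of the components of $G_{i-1}-S$ into two subgraphs each having at least two non-$S$ vertices. I claim that at least one of $H^+$ and $K^+$ is planar. If both were non-planar, then because the apex vertices $v_H^+$ and $v_K^+$ have degree 3, Lemma \ref{lm:k33node} would produce subdivisions $H'\subseteq H^+$ and $K'\subseteq K^+$ of $K_{3,3}$ in which $v_H^+$ and $v_K^+$ are nodes. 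Since the neighborhood of $v_L^+$ in $L^+$ is exactly $S$ ($L\in\{H,K\}$), it follows that $H'-v_H^+\subseteq H$ and $K'-v_K^+\subseteq K$ are subdivisions of $K_{2,3}$ sharing $S$ as their three-vertex side; because $H\cap K=\iso S$, their union is a subdivision of $K_{3,4}$ in $G_{i-1}$, contradicting $G_{i-1}\in\mfK$.

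Assume then that $K^+$ is planar. I produce a single non-trivial planar $S$-bridge $B\subseteq K$ with $|\Nuc(B)|\ge 2$ as follows. Every $S$-bridge contained in $K$ has all of $S$ as attachments, since otherwise its attachment set would be a 2-cut of $G_{i-1}$. If some such bridge $B$ has nucleus of size at least 2, take it: $B^+$ embeds as a subgraph of $K^+$, so $B^+$ (and therefore $B$) is planar. Otherwise each $S$-bridge in $K$ is a single vertex adjacent to $S$, and since $|V(K)\setminus S|\ge 2$ there are at least two such vertices $v_1,v_2$; together with $v_K^+$ they form a copy of $K_{3,3}$ inside $K^+$, contradicting its planarity. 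Set $S_i=S$, $B_i=B$, and $G_i=G_{i-1}/\Nuc(B_i)$, writing $z$ for the contracted vertex.

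It remains to verify that $G_i$ is 3-connected and in $\mfK$, and that the process terminates. For 3-connectivity, suppose $\{z,b\}$ were a 2-cut of $G_i$ separating $u$ from $v$; Menger's Theorem supplies three internally disjoint $uv$-paths in $G_{i-1}$. At most one uses $b$, and at most one traverses $\Nuc(B_i)$, since any path through $\Nuc(B_i)$ enters and exits via two distinct vertices of the 3-element set $S_i$, so two internally disjoint such paths would require four distinct elements of $S_i$. Hence a third path avoids both $b$ and $\Nuc(B_i)$ and so avoids $\{z,b\}$ in $G_i$, a contradiction. For the $\mfK$ property, suppose $G_i$ contains a subdivision $J$ of $K_{3,4}$. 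If $z\notin V(J)$, then $J\subseteq G_{i-1}$, a contradiction. If $z$ is interior to a branch of $J$, its two incident branch edges go to distinct $s_a,s_b\in S_i$, and an $s_as_b$-path through the connected set $\Nuc(B_i)$ reroutes that branch inside $G_{i-1}$. If $z$ is a node of $J$, then $\deg_{G_i}(z)=3$ forces $z$ to be a degree-3 node of $K_{3,4}$ whose three branches exit through three distinct vertices $s_1,s_2,s_3\in S_i$; picking any $w\in\Nuc(B_i)$ and any $v'\notin V(B_i)$, three internally disjoint $wv'$-paths in the 3-connected graph $G_{i-1}$ have initial $B_i$-segments that are internally disjoint $w$-to-$\{s_1,s_2,s_3\}$ paths in $B_i$, producing a claw in $B_i$ that replaces $z$ in $J$ and yields a $K_{3,4}$-subdivision in $G_{i-1}$, a contradiction. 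Termination is immediate because $|V(G_i)|<|V(G_{i-1})|$. The principal technical difficulty in the plan is the last case of the $\mfK$-preservation argument, where lifting a degree-3 node of the $K_{3,4}$-subdivision relies crucially on the 3-connectivity of $G_{i-1}$ together with the connectedness of $\Nuc(B_i)$.
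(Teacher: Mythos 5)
The $3$-connectivity step fails. Your counting argument asserts that at most one of the three internally disjoint $uv$-paths traverses $\Nuc(B_i)$ because each such path ``enters and exits via two distinct vertices of the $3$-element set $S_i$'', but those entry/exit vertices need not be internal to the paths: if $u\in S_i$, two of the paths can both begin $u\to\Nuc(B_i)\to\cdots$, sharing the endpoint $u$ and between them using at most one further vertex of $S_i$, so no four distinct elements are forced. And in fact the conclusion is simply false for an arbitrary planar $S$-bridge with nucleus of size at least two: take $V(G)=\{s_1,s_2,s_3,a_1,a_2,p_1,p_2\}$ with edges $a_1a_2$, $a_1s_1$, $a_1s_2$, $a_1s_3$, $a_2s_1$, $a_2s_3$, $p_1p_2$, $p_1s_1$, $p_1s_2$, $p_2s_2$, $p_2s_3$. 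This $G$ is $3$-connected and has only $11<12$ edges, hence $G\in\mfK$; the $3$-cut $S=\{s_1,s_2,s_3\}$ is reducible with sides $\{a_1,a_2\}$ and $\{p_1,p_2\}$, both $H^+$ and $K^+$ are planar, and $B$ with $\Nuc(B)=\{a_1,a_2\}$ satisfies all of your hypotheses. Yet after contracting $\Nuc(B)$, the vertex $s_1$ has only the two distinct neighbours $z$ and $p_1$, so $G_i$ is not $3$-connected.

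The ingredient you are missing, and which the paper's proof supplies, is that $B_i$ must be chosen \emph{inclusion-wise maximal} (ranging over all $3$-cuts) among bridges with at least two nucleus vertices. Under that choice, a $2$-cut $\{z,v\}$ of $G_i$ forces a singleton component $\{s\}$ with $s\in S_i$ whose $G_{i-1}$-neighbours all lie in $\Nuc(B_i)\cup\{v\}$, so that $\Nuc(B_i)\cup\{s\}$ is a strictly larger nucleus for the $3$-cut $(S_i\setminus\{s\})\cup\{v\}$, contradicting maximality. (In the example above the maximal bridge has nucleus $\{s_1,s_3,a_1,a_2\}$ for the cut $\{s_2,p_1,p_2\}$; contracting it yields $K_4$.) You would therefore also have to abandon the commitment to staying inside a fixed side $K$ of the initial reducible $3$-cut. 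Your production of a planar $S$-bridge (where, invoking Lemma~\ref{lm:k33node}, one should observe as the paper does in the proof of Theorem~\ref{th:3CutBothNonPlanar} that $L^+$ is only a \emph{subdivision} of a $3$-connected graph) and your $\mfK$-preservation argument are both fine and actually more explicit than the paper's treatment; the difficulty you flagged as principal was not the one that breaks. You should also record, even if briefly, the easy case of a $2$-cut of $G_i$ not containing the contraction vertex $z$.
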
\printFullDetails{

\begin{cproof}  Suppose $G_{i-1}$ is 3-connected.  Among all the choices of $S_i$ and  $S_i$-bridges $B_i$ so that $\Nuc(B_i)$ has at least two vertices, choose $B_i$ to be inclusion-wise maximal.    We claim that the graph $G_i$ obtained from $G_{i-1}$ by contracting $\Nuc(B_i)$ to a vertex is 3-connected.  

Otherwise, there is some pair $\{u,v\}$ of vertices so that $G_{i}-\{u,v\}$ is not connected.  If the vertex of contraction of $\Nuc(B_i)$ is neither $u$ nor $v$, then $\{u,v\}$ is a 2-cut in $G_{i-1}$, a contradiction.  Therefore, we can assume $u$ is the contraction of $\Nuc(B_i)$.   

Let $H$ and $K$ be components of $G_i-\{u,v\}$, with the labelling chosen so that $|S_i\cap V(H)|\ge |S_i\cap V(K)|$; in particular, $|S_i\cap V(K)|\le 1$.   Let $h\in V(H)$; if there is a vertex $k\in V(K)\setminus S_{i}$, then $\{v\}\cup (S_i\cap V(K))$ separates $k$ from $h$ in $G_{i-1}$, which contradicts the assumption that $G_{i-1}$ is 3-connected.  

Therefore $V(K)\subseteq S_i$, so there is a single vertex $s$ in $K$, and $s\in S_i$.  It follows that $s$ is adjacent to only vertices in $\Nuc(B_i)$ and possibly to $v$.  But this contradicts the maximality of $B_i$:  let $S'=(S\setminus\{s\})\cup \{v\}$. Observe that $B_i+s$ is a planar $S'$-bridge, contradicting maximality of $B_i$.

\wording{Lastly, we show} that if $G_{i-1}$ does not have a subdivision of $K_{3,4}$, then \wording{neither does} $G_i$.  Any subdivision of $K_{3,4}$ in $G_{i}$ must contain the vertex $v_i$ of contraction.  Since $v_i$ has degree 3 \wording{in $G_i$} and $B_{i-1}$ is an $S$-bridge, we can reroute the subdivision of $K_{3,4}$ in $G_i$ into $B_{i-1}$ to obtain a subdivision of $K_{3,4}$ in $G_{i-1}$. \end{cproof}

}\begin{definition}\label{df:planar3reduction}  Let $G\in \mfK$.    
\begin{enumerate}\item Then $G$ {\em reduces to $G'$ by 3-reductions\/}\index{reduces (by 3-reductions)}\index{3-reductions} if there is a sequence $G_0,G_1,\dots,G_k$ of 3-connected graphs so that $G_0=G$; $G_k=G'$; and, for each $i=1,2,\dots,k$, there is a 3-cut $S_i$ in $G_{i-1}$ and an $S_i$-bridge $B_i$, whose nucleus at least two vertices, so that $G_ i$ is obtained from $G_{i-1}$ by contracting the nucleus of $B_i$. 
\item For each vertex $v$ of $G'$ and each $i=0,1,2\dots,k$,  $K_v^i$\index{$K_v^i$} denotes the connected subgraph of $G_i$ that contracts to $v$.  We also set $K_v=K_v^0$\index{$K_v$}.
\item If $v$ has just three neighbours $x$, $y$, and $z$ in $G'$, then $G_v$\index{$G_v$} is the graph obtained from $K_v$ by adding $x$, $y$, and $z$, and, for each $t\in \{x,y,z\}$ and each edge $v't'$ of $G$ with $v'\in K_v$ and $t'\in K_t$, adding the edge $v't$.
\end{enumerate}
\end{definition}\printFullDetails{

We now commence a lengthy series of technical lemmas that all play vital roles in usefully reducing the 3-connected graph \2cc\ graph $G$ to a smaller 3-connected \2cc\ graph $G_{\rep(v)}$.   The culmination of this part of the work is Theorem \ref{th:TUreplace} in the next section, showing that $G_{\rep(v)}$ is \2cc.  This will lead to a program for determining all the 3-connected \2cc\ graphs that reduce to a particular \wording{peripherally}-4-connected graph.

}\begin{lemma}\label{lm:no2matching}  Let $G\in \mfK$ and suppose $G$ reduces by 3-reductions to the \wording{peri\-phe\-rally}-4-connected graph $G^{\ei4c}$.  For any two vertices $u,v$ of $G^{\ei4c}$, there is a single vertex in $G$ incident with all edges having one end in $K_u$ and one end in $K_v$.
\end{lemma}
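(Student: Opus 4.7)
The plan is to proceed by induction on $k$, the number of 3-reduction steps in the sequence $G = G_0 \to G_1 \to \cdots \to G_k = G^{\ei4c}$. The base case $k = 0$ is trivial: $K_u = \{u\}$ and $K_v = \{v\}$ are single vertices, so any cross-edge is incident with $u$. For the inductive step, I would let the first reduction contract a planar $S_1$-bridge $B_1$, where $S_1 = \{x,y,z\}$, to a vertex $v_1 \in V(G_1)$. Since $G_1$ is 3-connected, lies in $\mfK$ (as contraction preserves the non-existence of a $K_{3,4}$-subdivision), and reduces to $G^{\ei4c}$ in $k-1$ steps, the inductive hypothesis applies: for any $u, v \in V(G^{\ei4c})$, the edges between $K_u^1$ and $K_v^1$ in $G_1$ share a common endpoint $t \in V(G_1)$.

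The next step is to relate edges in $G_0$ to those in $G_1$. The subgraphs $K_u^0, K_v^0$ coincide with $K_u^1, K_v^1$ except that the vertex $v_1$ (if it belongs to one of them) is replaced by $\Nuc(B_1)$. If $v_1 \notin V(K_u^1) \cup V(K_v^1)$ the edge sets coincide and we are done. Otherwise, without loss of generality $v_1 \in V(K_u^1)$, and the edges between $K_u^0$ and $K_v^0$ in $G_0$ split into (I) edges of $G_0$ between $V(K_u^1) \setminus \{v_1\}$ and $V(K_v^1)$, identical to the corresponding edges in $G_1$, and (II) edges of $G_0$ from $V(\Nuc(B_1))$ to $T := S_1 \cap V(K_v^1)$, corresponding to the edges $v_1 s$ ($s \in T$) in $G_1$. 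If $t \neq v_1$, then in $G_1$ no Type II edges exist, so in $G_0$ all edges are Type I and share $t$. If $t = v_1$, then no Type I edges exist, and the remaining edges in $G_0$ all go from $\Nuc(B_1)$ to $T$. When $|T| \le 1$ the single element of $T$ is the shared endpoint; when $|T| = 3$, $v_1$'s only neighbors in $G_1$ lie in $V(K_v^1)$, forcing $V(K_u^1) = \{v_1\}$ and making $v$ a cut vertex of $G^{\ei4c}$, violating 3-connectivity—so this case cannot occur.

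The main obstacle is the residual subcase $|T| = 2$, say $T = \{x,y\}$. Here I must show that a unique vertex $n^* \in V(\Nuc(B_1))$ is adjacent to every element of $T$ at which $B_1$ has edges, so that the Type II edges in $G_0$ share $n^*$ as endpoint. My plan is a proof by contradiction: suppose there exist distinct vertices $n_1, n_2 \in V(\Nuc(B_1))$ with $n_1 x, n_2 y \in E(G_0)$ and no vertex of $\Nuc(B_1)$ adjacent to both $x$ and $y$. Using the connectedness of $\Nuc(B_1)$ (a component of $G - S_1$) and of $K_v^0$, one obtains an $n_1n_2$-path $Q$ in $\Nuc(B_1)$ and an $xy$-path $P_{xy}$ in $K_v^0$; applying Menger's theorem via 3-connectivity of $G_0$, each $n_i$ has three internally disjoint paths in $B_1$ to the attachments $\{x,y,z\}$, reaching each of them. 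The plan is to combine these ingredients—together with a third vertex $n_3$ of $\Nuc(B_1)$ supplied by $|\Nuc(B_1)| \geq 2$ and 3-connectivity, and an auxiliary vertex inside $K_v^0$—to exhibit the seven branch vertices and twelve internally disjoint paths of a $K_{3,4}$-subdivision in $G_0$, contradicting $G \in \mfK$. The hard part of this plan is the explicit construction of the $K_{3,4}$-subdivision, which requires carefully verifying internal disjointness of the paths obtained from Menger's theorem and resolving the case $z \in V(K_v^0)$ separately from $z \in V(K_u^0)$ and $z$ lying in a third clump $K_w^0$.
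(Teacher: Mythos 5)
Your inductive plan has a genuine gap in the residual subcase $|T|=2$, which you yourself flag as the ``hard part,'' and I do not believe it can be completed as stated. Two issues. First, the hypothesis you negate is not the right one: what must be negated is ``all $\Nuc(B_1)\{x,y\}$-edges share a single end,'' whose negation is simply ``two such edges with distinct $\Nuc(B_1)$-ends exist.'' Your added clause ``no vertex of $\Nuc(B_1)$ adjacent to both $x$ and $y$'' is not forced (consider $n^{*}x$, $n^{*}y$, and $n'x$ with $n'\neq n^{*}$: the conclusion fails, but your contradiction hypothesis is not met). Second, and more seriously, a $K_{3,4}$-subdivision need not be forthcoming from the local picture near $B_1$. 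Take $\Nuc(B_1)=\{n_1,n_2\}$ with edges $n_1n_2$, $n_1x$, $n_2y$, $n_1z$, $n_2z$: this already realizes $n_1x$, $n_2y$ with $n_1\neq n_2$ and no common end, yet $B_1$ together with $\{x,y,z\}$ is a tiny planar graph, and $K_v^0$ and $K_u^0\setminus\Nuc(B_1)$ could well be trees. Nothing the induction carries forward (just that in $G_1$ all $K_u^1K_v^1$-edges meet $v_1$) obviously forces the dense structure a $K_{3,4}$-subdivision requires.

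The paper's proof is not an induction on the length of the sequence at all; it is an extremal argument. It chooses $i$ \emph{largest} so that disjoint $K_u^{i-1}K_v^{i-1}$-edges $ab$, $cd$ exist, so that after step $i$ one of the pairs $\{a,c\}$, $\{b,d\}$ has been identified (say $a,c$); then $b,d$ are attachments of $B_i$ and the third attachment $z_i$ is shown to lie in $K_u$. Both the pair $(a,z_i)$ (in $K_u$) and the pair $(b,d)$ (in $K_v$) must eventually be identified, and the paper shows that whichever merger happens first produces, at that step, either a $2$-cut in the current graph or a vertex with only two neighbours, contradicting 3-connectivity of the intermediate $G_j$. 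The contradiction thus comes from the \emph{future} of the reduction sequence --- exactly the information your local argument around $B_1$ has discarded. To repair the inductive approach you would need to import this downstream 3-connectivity in a usable form rather than hope for a $K_{3,4}$-subdivision from membership in $\mfK$ alone.
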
\printFullDetails{ 

\begin{cproof}  Let $G=G_0,G_1,\dots,G_k=G^{\ei4c}$ be a sequence of 3-reductions.  Choose $i$ to be largest so that there are disjoint $K_u^{i-1}K_v^{i-1}$-edges
 $ab$ and $cd$  with $a,c\in K_u^{i-1}$ and $b,d\in K_v^{i-1}$.  In $G_{i}$, either $a$ and $c$ have been identified or $b$ and $d$ have;  by symmetry, we may assume the former.  

The vertices $b$ and $d$ are obviously attachments of $B_i$ and so these are in $S_i$.  Let $z_i$ be the third vertex in $S_i$.  \wording{Since $K_u^{i-1}$ is} connected and since, by Definition \ref{df:planar3reduction}, $u$ has three neighbours in $G^{\ei4c}$, $z_i\in K_u$.  
Continue using the label $a$ for the vertex obtained by contracting $\Nuc(B_i)$.

At some point in the later 3-reductions\wording{,}\wordingrem{(comma added)} $a$ and $z_i$ are identified and at another point $b$ and $d$ are identified.  We show that neither can be done before the other, which is impossible.

Suppose $z_i$ and $a$ are identified first.  When this identification occurs, a 3-cut $S_j$ and an $S_j$-bridge $B_j$ so that $z_i$ and $a$ are in $\Nuc(B_j)$.  The vertices $b$ and $d$ are again attachments of $B_j$ and so are in $S_j$; let $z_j$ be the third vertex in $S_j$.

Because \wording{$i$ is largest so there are disjoint} $K_u^{i-1}K_v^{i-1}$-edges, all edges between $K_u^j$ and $K_v^j$ at this moment are incident with $a$.  It follows that $\{a,z_j\}$ is a 2-cut in the current graph, separating $z_i$ from $b$.  But this contradicts the fact that $G_{j-1}$ is 3-connected.  Therefore, $z_i$ and $a$ are not identified before $b$ and $d$.

On the other hand, suppose $b$ and $d$ are identified first, by the contraction of $\Nuc(B_j)$.  When $b$ and $d$ are identified, the only neighbours of $a$ are $b$, $d$, and $z_i$.  Following the identification of $b$ and $d$, the only neighbours of $a$ are $z_i$ and the vertex of identification, again contradicting 3-connection of $G_j$.
\end{cproof}

We need a slight variation on a standard definition.

}\begin{definition}\label{df:isthmus}  Let $G$ be a connected graph.  
\begin{enumerate}  \item An {\em isthmus\/}\index{isthmus} is a set $I$ of parallel edges so that $G-I$ is not connected. 
\item A {\em cut-edge\/}\index{cut-edge} is an edge $e$ so that $G-e$ is not connected. 
\end{enumerate}
\end{definition}\printFullDetails{

Obviously,  $e$ is a cut-edge of $G$ if and only if $\{e\}$ is an isthmus, but an isthmus may have more than one edge.  The distinction comes into play because at various points we will consider edge-disjoint paths in certain subgraphs of our \2cc\ graph; if there are not two edge-disjoint $uv$-paths, then there is a cut-edge separating $u$ and $v$.  On the other hand, the 3-connection of $G$ does not preclude the possibility of parallel edges; at several points we will be able to identify that two vertices $u$ and $v$ have the property that they must be adjacent, but \wording{be unable to distinguish} whether they are joined by 1 or 2 edges.  A common scenario will have the set of edges between them making an isthmus in some subgraph.

In particular, the case that $K_v$ has an isthmus is a central one in reducing \2cc\ graphs.

}\begin{lemma}\label{lm:isthmus3reduction}  Let $G\in\mfK$ reduce to the \wording{peripherally}-4-connected graph $G^{\ei4c}$ by a sequence of 3-reductions.  Suppose there is a vertex $v$ of $G^{\ei4c}$ so that the graph $K_v$ has an isthmus $I$.  Then, for each component $K$ of $K_v-I$, there are at least two neighbours $x$ and $y$ of $v$ in $G^{\ei4c}$ so that there are $KK_x$- and $KK_y$-edges in $G$.  
\end{lemma}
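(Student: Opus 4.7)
The plan is to suppose, for contradiction, that some component $K$ of $K_v-I$ has at most one neighbour $x$ of $v$ in $G^{\ei4c}$ with a $KK_x$-edge. Write the endpoints of $I$ as $a\in K$ and $b\notin K$, so that the edges leaving $K$ in $G$ are the parallel edges of $I$ (all to $b$) together with the $KK_x$-edges.

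First I would handle the case in which no neighbour $x$ yields a $KK_x$-edge. Every edge leaving $K$ then runs through $I$ to $b$, so either $\{b\}$ alone is a cut-vertex separating $\{a\}$ (when $|K|=1$) or $\{a,b\}$ separates $K\setminus\{a\}$ from the rest (when $|K|\ge 2$); either way the $3$-connectedness of $G$ is violated.

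Next suppose exactly one neighbour $x_0$ of $v$ yields a $KK_{x_0}$-edge, and let $\xi$ be the vertex given by Lemma~\ref{lm:no2matching} that is incident with every $K_vK_{x_0}$-edge. If $\xi\in K_{x_0}$ then every $KK_{x_0}$-edge has $\xi$ as its outer endpoint, and $\{b,\xi\}$ separates $K$ from $\bigcup_{i\ne 0}K_{x_i}$, contradicting $3$-connectedness. If $\xi\in K_v$ then necessarily $\xi\in K$, and when $|K|\ge 2$ the pair $\{b,\xi\}$ separates $K\setminus\{\xi\}$ from the rest of $G$, giving the same contradiction.

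The principal obstacle is the remaining configuration $K=\{a\}=\{\xi\}$, where $a$'s $G$-neighbours are $b$ (with multiplicity $|I|$) together with distinct vertices $y_1,\dots,y_{k'}\in K_{x_0}$ and $k'\ge 2$ by $3$-connectedness. To rule it out I would trace the reduction $G=G_0,\dots,G_k=G^{\ei4c}$ and choose the first index $j$ at which $a$ belongs to a contracted nucleus $\Nuc(B_j)$. Connectedness of $\Nuc(B_j)$ together with $|\Nuc(B_j)|\ge 2$ forces $a$ to have some $G_{j-1}$-neighbour $c\in\Nuc(B_j)$; since the contracted image of $\Nuc(B_j)$ eventually becomes $v$, the preimage of $c$ lies in $K_v$, and as the only $G$-neighbour of $a$ inside $K_v$ is $b$, the preimage of $c$ must contain $b$. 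Applying $3$-connectedness of $G_{j-1}$ at $a$ yields at least two distinct images of the $y_i$ adjacent to $a$, each lying in the intermediate image of $K_{x_0}$ and hence in $S_j$ (as $\Nuc(B_j)$ is contained in the intermediate image of $K_v$). Since every $K_vK_{x_0}$-edge passes through $a$, no other nucleus vertex is adjacent to those $K_{x_0}$-images, while $3$-connectedness applied at each $c'\in\Nuc(B_j)\setminus\{a\}$ forces additional outside neighbours of $\Nuc(B_j)$ lying outside the intermediate image of $K_{x_0}$; together with the requirement that $v$ have at least three neighbours in $G^{\ei4c}$, careful accounting yields $|S_j|\ge 4$, contradicting $|S_j|=3$.
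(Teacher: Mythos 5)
Your proof is correct, but it follows a genuinely different route from the paper's. The paper locates the single step $i$ of the reduction at which the contracted bridge $B_i$ contains the isthmus $I$, argues in $G_{i-1}$ that each of the two components of $B_i-I$ reaches at least two vertices of the $3$-cut $S_i$ (so they share an attachment $s$, which Lemma~\ref{lm:no2matching} forces to absorb \emph{all} the $K_v^{i-1}K_x^{i-1}$-edges), and then deletes $\{s\}\cup I$ to see that each component must additionally reach a different $K_y^{i-1}$. You instead run a case analysis directly in $G$: if $K$ reaches no $K_x$ at all, you get a $\le 2$-cut $\{a,b\}$ immediately; if $K$ reaches exactly one $K_{x_0}$, you split on whether the Lemma~\ref{lm:no2matching} vertex $\xi$ lies in $K_{x_0}$ or in $K_v$ and on $|K|$, and in every sub-case except $K=\{a\}=\{\xi\}$ the pair $\{b,\xi\}$ is already a $2$-cut of $G$, finishing without touching the reduction sequence. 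Only in the residual case do you dip into the sequence, and even then you choose a different step than the paper does --- the first time $a$ is absorbed into a nucleus --- and count attachments. The underlying mechanism (Lemma~\ref{lm:no2matching} plus $3$-connectivity somewhere along the sequence) is the same, but your version front-loads the elementary $2$-cut arguments in $G$ and defers the sequence to one corner case, at the cost of more explicit case-splitting. Both proofs are sound; yours is arguably more self-contained for a reader who does not want to track the isthmus through the contractions. One small polish: in the final case the conclusion $|S_j|\ge 4$ does follow, but the decisive observation is worth making explicit --- since $a$ carries every $K_vK_{x_0}$-edge, $\Nuc(B_j)\setminus\{a\}$ (nonempty because $|\Nuc(B_j)|\ge 2$) has no neighbour in $K_{x_0}^{j-1}$, and hence by $3$-connectivity of $G_{j-1}$ it has two distinct neighbours in $S_j$ other than $a$, $\bar y_1$, $\bar y_2$.
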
\printFullDetails{

\begin{cproof}    At some moment in the reduction of $G$, $G_{i-1}$ has a 3-cut $S_i$ and $B_i$ is the planar $S_i$-bridge in $G_{i-1}$ that contains $I$.  Then $B_i-I$ is not connected; the ends $u$ and $w$ of the edge or edges in $I$ are in different components $K$ and $L$, respectively, of $B_i-I$.    

Let $x$, $y$, and $z$ be the neighbours of $v$ in $G^{\ei4c}$ and let $t$ be any vertex of $G_{i-1}$ not in $K_v^i\cup K_x^i\cup K_y^i\cup K_z^i$.  (Since $G^{\ei4c}$ is not planar, it has at least five vertices.)   In $G_{i-1}$ there are three pairwise internally-disjoint $ut$-paths.  These three paths leave $B_i$ through distinct attachments of $B_i$; these are the vertices in $S_i$.  The same argument applies for $wt$-paths.  

In particular, two of the $ut$-paths leave $K$ on edges incident with vertices in  $S_i$.  Likewise for $L$.  Therefore, $K$ and $L$ are both joined by edges to the same attachment $s\in S_i$.  It follows that $s$ is not in $K_v^i$, so $s$ is in  $K_x^i$, say.  Moreover, since the $K_v^i$-ends of these two edges are not the same, Lemma \ref{lm:no2matching} implies all the edges between $K_v^i$ and $K_x^i$ are incident with $s$.  

Since $G_{i-1}$ is 3-connected, $G_{i-1}-(\{s\}\cup I)$ is connected.  Therefore, there are edges of $G_{i-1}$ leaving each of $K$ and $L$; each of these edges is also leaving $K_v^i$ and, therefore, has its other end in one of $K_x^i$, $K_y^i$, and $K_z^i$.  However, this other end cannot be $s$ and, consequently, cannot be in $K_x^i$, as required.
\end{cproof} 

\minorrem{(Useless -- and incomprehensible -- lemma deleted.)}%
The connectivity of $G$ has further implications about the structure of the $K_v$.

}\begin{lemma}\label{lm:attsDistinct}  Let $G\in\mfK$ reduce by 3-reductions to a \wording{peripherally}-4-connected graph $G^{\ei4c}$.   Let $v$ be a vertex of $G^{\ei4c}$ with just the three neighbours $x$, $y$, and $z$ and suppose $K_v$ has at least two vertices.  For each $t\in \{x,y,z\}$, let $t'$ be any vertex incident with all the $K_vK_t$-edges.  Then $x'$, $y'$, and $z'$ are all distinct. \end{lemma}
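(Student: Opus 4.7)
The plan is to assume for contradiction that two of $x'$, $y'$, $z'$ coincide --- by symmetry, $x'=y'=s$ --- and then to exhibit a vertex cut of size at most two, either in $G$ itself or in one of the intermediate $3$-connected graphs $G_{i-1}$ of the reduction sequence. The tool that locates these cuts is Lemma~\ref{lm:no2matching}, which pins down where the edges between the various $K_t$'s must be concentrated.

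First, I would observe that, since $s$ is incident with both a $K_vK_x$-edge and a $K_vK_y$-edge, $s\in(K_v\cup K_x)\cap(K_v\cup K_y)=K_v$, as the $K_t$'s are pairwise disjoint. Let $q$ be the vertex supplied by Lemma~\ref{lm:no2matching} that is incident with every $K_vK_z$-edge, so that every external edge of $K_v$ is incident either with $s$ (for edges to $K_x\cup K_y$) or with $q$ (for edges to $K_z$). In every case except one, this already produces a small cut in $G$: if $q\in K_z$, or if $q\in K_v\setminus\{s\}$ and $|K_v|\geq 3$, then $\{s,q\}$ is a $2$-cut isolating $K_v\setminus\{s\}$ or $K_v\setminus\{s,q\}$, respectively, from the rest of $G$, the latter being non-empty since $G^{\ei4c}$ has at least five vertices; and if $q=s$, then $s$ is the only attachment of $K_v$ to the outside, so $\{s\}$ is a $1$-cut isolating the non-empty set $K_v\setminus\{s\}$. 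Each of these contradicts the $3$-connectivity of $G$.

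The only remaining case is $q\in K_v$, $q\neq s$, and $|K_v|=2$, so $K_v=\{s,q\}$. Here $\{s,q\}$ isolates nothing in $G$, so I would descend to the step $i$ of the reduction sequence at which the subgraph $K_v^{i-1}=\{s,q\}$ gets contracted. Then $\{s,q\}=\Nuc(B_i)$ is a component of $G_{i-1}-S_i$, forcing the edge $sq$ to exist and every external neighbour of $\{s,q\}$ in $G_{i-1}$ to lie in the three-vertex set $S_i$. Since $s$ needs at least one external neighbour in each of $K_x^{i-1}$ and $K_y^{i-1}$ and $q$ needs at least one in $K_z^{i-1}$, counting forces exactly one in each; in particular $q$ has a unique external neighbour $\zeta\in K_z^{i-1}$, so the distinct neighbours of $q$ in $G_{i-1}$ form the two-element set $\{s,\zeta\}$. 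This $2$-cut contradicts the $3$-connectivity of $G_{i-1}$, completing the proof. The main obstacle is this last subcase, which is the only one that cannot be resolved inside $G$ itself: one must exploit the history of the reduction to find the required cut in the intermediate graph $G_{i-1}$.
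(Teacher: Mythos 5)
Your proposal is correct and follows the paper's argument in all essentials: after deducing $x'=y'\in K_v$, both you and the paper observe that $\{x',z'\}$ would be a $2$-cut in $G$ unless $K_v$ consists of exactly those two vertices, and both then pass to the intermediate graph $G_{i-1}$ at the moment of contraction to derive the final contradiction. The one small difference is that the paper dispatches that final step by invoking Lemma~\ref{lm:isthmus3reduction} (applied to the isthmus $x'z'$ of $K_v$), which directly forces $z'$ to have edges to a second $K_t$, whereas you re-derive the needed special case by hand, using the pigeonhole on $S_i$ to show $q$ has only two distinct neighbours in $G_{i-1}$; this is a perfectly valid and slightly more self-contained version of the same $3$-connectivity argument that underlies that lemma.
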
\printFullDetails{

\begin{cproof}  Suppose $x'=y'$.  Then $x'$ is in $K_v$.  Observe that  no vertex of $K_v-\{x',z'\}$ is adjacent to any vertex of of $G-\{x',z'\}$ not in $K_v$.  Since $G$ is 3-connected, it follows that $K_v$ consists of just $x'$ and $z'$.  In particular, $z'\ne x'$.  Also, recall that $K_v$ contracts to a single vertex in the sequence of planar 3-reductions.

At the moment of contraction of $K_v$, $G_{i-1}$ is 3-connected and $x'z'$ is an isthmus.   Therefore, Lemma \ref{lm:isthmus3reduction} implies that  $z'$ is joined to at least one of $K_x^i$ and $K_y^i$; this contradicts the fact that all edges from $K_v^i$ to $K_x^i\cup K_y^i$ are incident with $x'$.
\end{cproof}

The vertices $x'$, $y'$, and $z'$ are not uniquely determined. It is possible that there is only one vertex in each of $K_v$ and $K_x$ incident with all $K_vK_x$-edges; one obvious instance is if there is only one $K_vK_x$-edge.   We will follow up on this a little later.

Here is a very simple and very useful observation.

}\begin{lemma}\label{lm:triangle}  Let $H$ be a simple, non-planar, \wording{peripherally}-4-connected graph.   There is no 3-cycle of $H$ having two  \wording{vertices with just 3 neighbours}.  \end{lemma}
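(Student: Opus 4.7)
The plan is to argue by contradiction, show that the peripherally-4-connected hypothesis forces $H$ to have only six vertices with a very rigid adjacency pattern, and then derive planarity via Kuratowski's theorem.

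Suppose for contradiction that $H$ contains a triangle $uvw$ with $\deg_H(u)=\deg_H(v)=3$. Let $x$ denote the third neighbour of $u$ (other than $v,w$) and $y$ the third neighbour of $v$ (other than $u,w$). First, I would dispose of the degenerate case $x=y$. Here every neighbour of $u$ or $v$ lies in $\{u,v,w,x\}$, so $\{w,x\}$ separates $\{u,v\}$ from $V(H)\setminus\{u,v,w,x\}$. Since $H$ is $3$-connected this forces $V(H)=\{u,v,w,x\}$, but any simple $3$-connected graph on four vertices is $K_4$, which is planar, contradicting the non-planarity of $H$.

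So one may assume $x\neq y$. Then $S=\{w,x,y\}$ is a $3$-cut of $H$, and the argument above shows that $\{u,v\}$ is a component of $H-S$. Partition the components of $H-S$ by putting $\{u,v\}$ on one side and the remaining components on the other. The side containing $\{u,v\}$ has at least two vertices, so peripherally-4-connectedness forces the other side to consist of a single vertex $z$. Consequently $V(H)=\{u,v,w,x,y,z\}$, so $|V(H)|=6$. Since $H$ is $3$-connected, $\deg_H(z)\ge 3$; as the neighbourhoods of $u$ and $v$ are exhausted, every neighbour of $z$ lies in $S$, so $z$ is adjacent to each of $w,x,y$.

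The final step is a short Kuratowski argument. A $K_5$-subdivision has five branch vertices of degree at least $4$, but among the six vertices of $H$ only $w,x,y$ have degree $\ge 4$, so no $K_5$-subdivision exists. A $K_{3,3}$-subdivision has exactly six branch vertices; since $|V(H)|=6$, every vertex of $H$ must be a branch vertex, and the subdivision is in fact a spanning subgraph $K_{3,3}$. Writing the bipartition as $A\cup B$, note that all three edges incident to the branch vertex $u$ must lie in the subdivision, and therefore the opposite part of $u$ must contain the three vertices $v,w,x$. By cardinality $B=\{v,w,x\}$ and $A=\{u,y,z\}$, forcing $zv$ to be an edge of $H$. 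But $v$'s neighbours in $H$ are $u,w,y$ only; this is a contradiction, so no $K_{3,3}$-subdivision exists either. By Kuratowski's theorem, $H$ is planar, contradicting the hypothesis and completing the proof.

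The step most likely to require care is the deduction that the ``other side'' of the $3$-cut $S$ collapses to a single vertex $z$: one must check that $\{u,v\}$ really is a component of $H-S$ (which uses both $x\neq y$ and the degree-$3$ hypothesis) and then correctly invoke peripherally-4-connectedness for the partition that isolates $\{u,v\}$. Once this is in hand the remaining Kuratowski analysis is routine because the graph has only six vertices.
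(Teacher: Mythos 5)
Your proof follows essentially the same route as the paper's: remove the $3$-set $S$ consisting of the third triangle vertex and the two ``outside'' neighbours, use peripheral $4$-connectedness to force a single extra vertex $z$ on the other side, and conclude that the resulting six-vertex graph is planar. You are more careful than the paper on two points (you handle the degenerate case $x=y$ explicitly, and you give an explicit Kuratowski check that the six-vertex graph is planar, whereas the paper simply asserts it), but you have a small gap precisely where the paper is more careful. You assert that $S=\{w,x,y\}$ is a $3$-cut, but for $S$ to be a cut at all, $H-S$ must be disconnected, i.e.\ there must exist a vertex outside $\{u,v,w,x,y\}$. If $|V(H)|=5$ then $H-S=\{u,v\}$ is connected and the peripherally-$4$-connected hypothesis has nothing to act on. The paper closes this by observing that a non-planar $3$-connected graph with a degree-$3$ vertex cannot be $K_5$ and hence, by Hall's theorem, contains a subdivision of $K_{3,3}$, so it has at least six vertices. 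In your setting the case $|V(H)|=5$ is equally cheap to dismiss (the only simple non-planar graph on five vertices is $K_5$, which is $4$-regular, contradicting $\deg u=3$), but it does need to be said. With that one line added, your argument is complete and correct.
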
\printFullDetails{

\begin{cproof}  Suppose to the contrary there are three vertices $x,y,z$ making a 3-cycle, with $x$ and $y$ having \wording{only three neighbours each}.  Let $v$ and $w$ be the other neighbours of $x$ and $y$.  Then \wordingrem{(word deleted)}$x$ and $y$ are the vertices of one component of $H-\{v,w,z\}$. 

Observe that $H$ is non-planar, 3-connected, and has a vertex of degree 3.  Therefore $H$ is not $K_5$ and so contains a subdivision of $K_{3,3}$.  It follows that $H$ has at least six vertices.  Thus, there is another component of $H-\{v,w,z\}$.

Since $H$ is \wording{peripherally}-4-connected, the only possibility is that there is exactly one other component and it consists of a single vertex $u$, adjacent to all of $v$, $w$, and $z$.  The only other possible edges in $H$ are between $v$, $w$, and $z$.  However, the resulting graph is planar, a contradiction.  \end{cproof}

The following result assures us that useful (and expected) paths exist in each $K_v$.

}\begin{lemma}\label{lm:x'y'z'paths1}  Let: \begin{enumerate}
\item  $G\in \mfK$ reduce by 3-reductions to the \wording{peripherally}-4-connected graph $G^{\ei4c}$;
\item $G^{\ei4c}$ have at least five vertices;
\item  $v$ be a vertex of $G^{\ei4c}$ so that $K_v$ has at least two vertices; and
\item   $x$, $y$, and $z$ be the neighbours of $v$ in $G^{\ei4c}$, with corresponding vertices $x'$, $y'$, and $z'$ in $G$ as in Lemma \ref{lm:attsDistinct}.  
\end{enumerate}
Then:\begin{enumerate}[label=\alph*)]\item\label{it:3disjt} for any vertex $w$ in $K_{v}-\{x',y',z'\}$, there are three $w\{x',y',z'\}$-paths in $G_v$ that are pairwise disjoint except for $w$; and
\item\label{it:primeDisjt} if $x'\in K_v$, then there are $x'y'$- and $x'z'$-paths in $G_v-x$ that are disjoint except for $x'$.
\end{enumerate}
  \end{lemma}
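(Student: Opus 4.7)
The plan is to derive both parts from Menger's Theorem, using the 3-connectedness of $G$ together with the structural information provided by Lemmas \ref{lm:no2matching} and \ref{lm:attsDistinct}. The key observation that drives the argument is the following: by Lemma \ref{lm:no2matching}, for each $t\in\{x,y,z\}$, every $K_vK_t$-edge of $G$ is incident with the common attachment vertex $t'$; by Lemma \ref{lm:attsDistinct}, the three vertices $x',y',z'$ are pairwise distinct. Consequently, every path in $G$ from a vertex of $K_v$ to a vertex of $G$ outside $K_v\cup K_x\cup K_y\cup K_z$ passes through one of $x',y',z'$, and such a path leaves $K_v$ via an edge that, when translated into $G_v$, becomes an edge of $G_v$ ending at the corresponding vertex in $\{x,y,z\}$ (if $t'\notin K_v$) or starting at $t'\in K_v$.

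For part (a), fix a vertex $u$ of $G^{\ei4c}$ outside $\{v,x,y,z\}$; such a $u$ exists because $G^{\ei4c}$ has at least five vertices. Pick any $u^{*}\in K_u$. Since $G$ is 3-connected, Menger's Theorem provides three internally disjoint $wu^{*}$-paths $P_1,P_2,P_3$ in $G$. Each $P_i$ must leave $K_v$ on a $K_vK_{t_i}$-edge for some $t_i\in\{x,y,z\}$, and by the key observation together with the internal disjointness of the $P_i$, the three exits are forced through three distinct $t'$'s. Truncating each $P_i$ at its first exit edge, and translating the truncation into $G_v$ — by terminating the path at $t'$ when $t'\in K_v$, or by replacing the final $K_vK_t$-edge with the corresponding edge of $G_v$ (so that the path terminates at $t$) when $t'\in K_t$ — yields three $w\{x',y',z'\}$-paths in $G_v$, pairwise disjoint except at $w$.

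For part (b), assume $x'\in K_v$. I would repeat essentially the same argument, but starting from $x'$ in place of $w$ and selecting three internally disjoint $x'u^{*}$-paths $P_1,P_2,P_3$ in $G$. At most one of these can exit $K_v$ through a $K_vK_x$-edge incident with $x'$; the other two are forced to exit via $y'$ and $z'$. Their truncations and translations into $G_v$, as above, avoid the vertex $x$ of $G_v$ and give the required $x'y'$- and $x'z'$-paths in $G_v-x$, disjoint except at $x'$. The main technical nuisance will be the bookkeeping when some $t'$ lies in $K_t$ rather than in $K_v$: one must check that the translated paths remain pairwise internally disjoint, which follows because internal disjointness in $G$ forces the three exit vertices $t_1',t_2',t_3'$ to be distinct, hence the three terminal vertices in $\{x,y,z\}\subseteq V(G_v)$ are distinct as well.
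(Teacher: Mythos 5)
Part (a) of your argument is essentially the same as the paper's: pick $u^*$ in some $K_u$ with $u\notin\{v,x,y,z\}$, apply Menger in $G$ to get three internally disjoint $wu^*$-paths, and observe that the set $\{x',y',z'\}$ separates $K_v$ from the rest of $G$, so the three paths must exit through three distinct members of $\{x',y',z'\}$. Fine.

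Part (b) has a real gap. Your key claim --- that among three internally disjoint $x'u^*$-paths, at most one can leave $K_v$ on a $K_vK_x$-edge, so the other two must leave via $y'$ and $z'$ --- is not justified and is in general false. Since $x'\in K_v$, Lemma~\ref{lm:no2matching} says only that all $K_vK_x$-edges are incident with $x'$ on the $K_v$ side; $x'$ may still be adjacent to two or more distinct vertices of $K_x$. In that case two of your three $x'u^*$-paths can begin with distinct $K_vK_x$-edges out of $x'$, and only one path remains to exit through $\{y',z'\}$. Your construction then yields at most one of the two required paths, and no contradiction is reached. Notice also that you never invoke Lemma~\ref{lm:isthmus3reduction}, yet it is essential for (b): without it one can build a near-example ($K_v$ a path $x'\!-\!p\!-\!q$ with $y'$ attached at $p$, $z'$ at $q$, and $x'$ having all its remaining neighbours in $K_x$) in which the conclusion of (b) genuinely fails. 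The paper's proof is by contradiction: if (b) fails, by Menger there is a single vertex $w$ of $G_v-x$ separating $x'$ from $\{y',z'\}$; $w$ must lie in $K_v$; since $\{x',w\}$ cannot be a 2-cut of the 3-connected $G$, the vertices $x'$ and $w$ must be adjacent in $K_v$, so the parallel class $I$ between them is an isthmus of $K_v$ whose component $\{x'\}$ is joined only to $K_x$, contradicting Lemma~\ref{lm:isthmus3reduction}. You should rework part (b) along those lines.
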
\printFullDetails{

\begin{cproof} For \ref{it:3disjt}, let $u$ be any vertex of $G$ not in $K_{v}\cup K_{x}\cup K_{y}\cup K_{t_3}$.  Since $G$ is 3-connected, there are three pairwise internally-disjoint $wu$-paths in $G$.  The result follows from the observation that $w$ and $u$ are in different components of $G-\{x',y',z'\}$.

If \ref{it:primeDisjt} fails, then there is a vertex $w$ of $G_{v}-x$ that separates $x'$ from $\{y',z'\}$.  Since $K_v$ is an $\iso{\{x,y,z\}}$-bridge in $G_v$, $w$ is in $K_v$ (possibly $w=y'$ or $w=z'$).   Since $\{x',w\}$ is not a 2-cut in $G$, $x'$ and $w$ are adjacent in $K_v$.  But now they are joined by an isthmus $I$ in $K_v$.  Since $x'$ is a component of $K_v-I$ joined only to $K_x$, we have a contradiction of Lemma \ref{lm:isthmus3reduction}.
\end{cproof}

}\section{Planar 3-reductions}\printFullDetails{

In this section we now turn our attention to the particular case $G\in \m2$.  We want to show that the 3-reductions can be taken to be contractions of planar bridges.  So suppose $S$ is a non-peripheral 3-cut in $G$.  
 
If there are four or more non-trivial $S$-bridges (that is, having a nucleus), then $G$ has a subdivision of $K_{3,4}$ and so is $K_{3,4}$.  In the remaining cases, there are at most three non-trivial $S$-bridges.  If there are three and $B$ is one of them so that $B^+$  is not planar (as in Subsection \ref{ssc:twoNonPlanar}), then the union $K$ of the remaining $S$-bridges has $K^+$ not planar.  Theorem \ref{th:3CutBothNonPlanar} implies that $G$ is one of four \2cc\ graphs.  Thus, if there are three non-trivial $S$-bridges, we may assume that, for each one $B$, $B^+$ is planar.  Finally, consider the case that  there are  precisely two non-trivial $S$-bridges $B_1$ and $B_2$.   Since $S$ is not \wording{peripheral}, both $B_i$ have at least two vertices.  If both $B_i^+$ are non-planar, then we are in the case dealt with in Theorem \ref{th:3CutBothNonPlanar}, so we may assume that one of them is planar.  In summary, in every case, we may assume that $G^{\ei4c}$ is obtained from 3-reductions in $G$ in which the contracting $S_{i}$-bridge $B_{i}$ is always planar.

}\begin{definition}  Let $G$ be a 3-connected graph and let $G^{\ei4c}$ be a \wording{peripherally}-4-connected graph. Then {\em $G$ reduces to $G^{\ei4c}$ by planar 3-reductions\/}\index{planar 3-reductions}\index{3-reductions!planar} if there is a sequence $G=G_0,G_1,G_2,\dots,G_k=G^{\ei4c}$ of 3-reductions so that, for each $i=1,2,\dots,k$, $G_i$ is obtained from $G_{i-1}$ by contracting $\Nuc(B_{i-1})$ and $B_{i-1}^+$ is planar. \end{definition}\printFullDetails{

We need two results about $K_v$ in the context of planar 3-reductions.  This requires further definitions.

}\begin{definition}\label{df:KmaxKmin} Let $G$ be a 3-connected graph that reduces by 3-reductions to the \wording{peri\-pherally}-4-con\-nec\-ted graph $G^{\ei4c}$.  Suppose $v$ is a vertex of $G^{\ei4c}$ having only the neighbours $x$, $y$, and $z$.   For each $t\in \{x,y,z\}$, let $m_t$ denote the number of vertices in $K_v$ adjacent to vertices in $K_t$ and let $n_t$ denote the number of vertices in $K_t$ adjacent to vertices in $K_v$.  (Lemma \ref{lm:no2matching} implies that at least one of $m_t$ and $n_t$ is 1.)
\begin{enumerate}\item The subgraph $K^{\max}_v$\index{$K^{\max}_v$} induced by $K_v$ together with, for each $t\in\{x,y,z\}$ with $n_t=1$,  the vertex of $K_t$ adjacent to vertices in $K_v$.
\item The subgraph $K^{\min}_v$\index{$K^{\min}_v$} induced by $K_v$ together with, for each $t\in \{x,y,z\}$ with $m_t>1$, the vertex of $K_t$ adjacent to vertices in $K_v$.
\end{enumerate}
\end{definition}\printFullDetails{

We remark that $K_v\subseteq K^{\min}_v\subseteq K^{\max}_v$, and, for $t\in\{x,y,z\}$, $K^{\max}_v$ has a vertex $t'\in K_t$ that is not in $K^{\min}_v$ precisely when $n_t=m_t=1$.

}\begin{lemma}\label{lm:x'y'z'Cycle} \wording{Let $G\in\mfK$ reduce by 3-reductions to a \wording{peripherally}-4-connected graph $G^{\ei4c}$.   Let $v$ be a vertex of $G^{\ei4c}$ with just the three neighbours $x$, $y$, and $z$ and suppose $K_v$ has at least two vertices.  Then} there is a cycle $C$ in $K^{\min}_v$ containing all of $x'$, $y'$ and $z'$.  \end{lemma}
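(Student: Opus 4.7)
The plan is to deduce the cycle through $x'$, $y'$, $z'$ in $K^{\min}_v$ by showing that $K^{\min}_v$ is $2$-connected and admits a plane embedding with $x'$, $y'$, $z'$ on a common face; because a face of a $2$-connected plane graph is bounded by a cycle, this bounding cycle will pass through $x'$, $y'$, $z'$ and serve as the required $C$.

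For $2$-connectedness, I will suppose by contradiction that $c$ is a cut-vertex of $K^{\min}_v$. If $c \notin K_v$ (so $c = t'$ for some $t$ with $m_t > 1$), then $K^{\min}_v - c$ equals $K_v$ together with the other external $t''$s, each joined to $K_v$ by multiple edges, which is connected --- a contradiction. So $c \in K_v$. Let $A_1, A_2, \ldots$ be the components of $K^{\min}_v - c$. By Lemma \ref{lm:no2matching}, every vertex of $A_i \cap K_v$ has all its $G$-neighbors outside $K_v$ routed through the unique attachment $t' \in K^{\min}_v$, and each such $t'$ lies in exactly one component; so if $A_i$ contains none of $\{x', y', z'\}$ then no vertex of $A_i$ has a $G$-neighbor outside $K_v$, making $\{c\}$ a cut of $G$ --- contradicting $3$-connectedness. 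Hence each $A_i$ contains at least one of $\{x', y', z'\}$, and the partition is $(1,2)$, $(2,1)$, or $(1,1,1)$.

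The key reduction is that any component $A$ containing only one of $\{x', y', z'\}$ --- say $x'$ --- must equal $\{x'\}$. Otherwise, every vertex $u \in A \cap K_v$ other than $x'$ has no $G$-neighbor outside $K_v \cup \{x'\}$, since any edge from $u$ to $K_y$ or $K_z$ would force $y'$ or $z'$ into $A$ (either directly when $m_t=1$, or via the $K^{\min}_v$-edge $u\text{-}t'$ when $m_t>1$, putting $t'$ in the same component as $u$). Hence $\{c, x'\}$ separates $A \cap K_v \setminus \{x'\}$ from the rest of $G$, violating $3$-connectedness. Thus $A = \{x'\}$, making $x'$ a pendant of $c$ in $K^{\min}_v$. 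When $m_x = 1$, the (possibly parallel) edges $cx'$ form an isthmus of $K_v$ whose small side $\{x'\}$ has $G$-edges only to $K_x$, contradicting Lemma \ref{lm:isthmus3reduction}; when $m_x > 1$, the vertex $x' \in K_x$ has multiple $K_v$-neighbors that must lie in $A = \{x'\}$, absurd. So no cut-vertex exists and $K^{\min}_v$ is $2$-connected.

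For the plane embedding with $x', y', z'$ on a common face, I will use the planar $3$-reduction structure. Each reduction contracts a planar bridge $B_i$ whose three attachments lie on the boundary of the face containing $v^+$ in any plane embedding of $B_i^+$. Composing these embeddings inductively --- replacing each $\hat v_i$ sitting on the outer face with a plane embedding of $\Nuc(B_i)$ whose attachment triple is on the outer boundary --- yields a plane embedding of $K^{\min}_v$, with any external $t'$s for $m_t > 1$ placed naturally on the outer boundary adjacent to their $K_v$-neighbors, so that $x'$, $y'$, and $z'$ all lie on the outer face. Combined with $2$-connectedness, the bounding cycle of that face is the desired $C$. The main obstacle is the $2$-connectedness argument, since unifying the $m_t=1$ and $m_t>1$ cases requires coordinating both the $2$-cut structure of $G$ and Lemma \ref{lm:isthmus3reduction}; the planar embedding step is conceptually routine once the nested bridge structure is set up, but requires care in tracking the attachment triple through the reductions.
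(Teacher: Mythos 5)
Your Step 1 (showing $K^{\min}_v$ has no cut-vertex) is correct and takes a somewhat different route from the paper. The paper splits at a cut-vertex $w$ into sides $X$ and $Y$, argues that the single special vertex $y'$ trapped in $Y$ cannot lie in $K_v$ (via Lemma \ref{lm:x'y'z'paths1}(b)) nor outside it (via $3$-connectedness), while you instead analyze the components of $K^{\min}_v - c$ directly, showing each must contain one of $x', y', z'$, that any singleton-attachment component must be a pendant, and then closing with Lemma \ref{lm:isthmus3reduction}. Both work; your variant handles the $m_t = 1$ and $m_t > 1$ cases uniformly at the cost of a slightly longer case split (and you omit listing the $(1,1)$ partition that occurs when $c$ is itself one of $x', y', z'$, though your per-component argument covers it).

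Step 2 is where the two arguments genuinely diverge, and your version has a gap. You propose to build a plane embedding of $K^{\min}_v$ with $x', y', z'$ on a common face by composing, along the nested sequence of planar $3$-reductions, plane embeddings of the various $\Nuc(B_i)$ with their attachment triple on the outer boundary. The problem is rotation compatibility: when you replace a contracted vertex $\hat v_i$ (which may have many parallel edges to the three vertices of $S_i$) by a plane embedding of $\Nuc(B_i)$, the cyclic order of those edges around $\hat v_i$ in the ambient embedding has to agree with the cyclic order in which they emerge from the face of the $\Nuc(B_i)$-embedding, up to the symmetries available. That these orders can always be made to match (for example, that the ambient rotation is always "grouped'' by $S_i$-vertex) is not automatic and is never verified, nor is it checked that each $\hat v_i$ being replaced actually sits where the replacement is geometrically possible. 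The paper avoids this entirely: having $2$-connectedness, it takes a cycle $C$ through $x'$ and $y'$, uses Menger to get two internally disjoint $z'C$-paths $P_1, P_2$, observes that if $P_1, P_2$ land on different $x'y'$-arcs of $C$ then $G_v^+$ contains a $K_{3,3}$-subdivision contradicting planarity of the reductions, and otherwise reroutes $C$ through $P_1, z', P_2$ to pick up $z'$. That argument uses only the planarity hypothesis locally and sidesteps the need to construct a global embedding of $K^{\min}_v$. You should either prove the rotation-compatibility claim (essentially that your inductively built embedding of $K^{\min}_v$ plus a hub on $x', y', z'$ is planar) or switch to the rerouting argument.
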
\printFullDetails{

\begin{cproof}  Suppose $w$ is a cut-vertex of $K^{\min}_v$, so there are subgraphs $X$ and $Y$ of $K^{\min}_v$ with  $X\cup Y=K^{\min}_v$, $X\cap Y=\iso w$, and both $X-w$ and $Y-w$ are not empty.  We may choose the labelling so that $X$ has at least the two vertices $x'$ and $z'$ from $\{x',y',z'\}$, while $Y-w$ has at most one; we may further assume $x'\ne w$.  If $y'\notin Y-w$, then $w$ is a cut-vertex of $G$, contradicting the fact that $G$ is 3-connected.  Therefore, $y'\in Y-w$.

However, if $y'\in K_v$, then we have a contradiction to Lemma \ref{lm:x'y'z'paths1} (\ref{it:primeDisjt}.  Therefore, $y'\notin K_v$.  If there is a vertex in $Y$ other than $w$ and $y'$, then we contradict 3-connection of $G$, so $y'$ is adjacent only to $w$ in $G_v$.  But then $y'\notin K^{\min}_v$.    

It follows that there is no cut-vertex in $K^{\min}_v$.  Thus, there is a cycle $C$ in $K^{\min}_v$ containing $x'$ and $y'$.  Obviously, we are done if $z'\in C$, so we assume $z'\notin C$.  

Since there is no cut-vertex in $K^{\min}_v$, there are two $z'C$-paths $P_1$ and $P_2$ that are disjoint except for $z'$.  If the $C$-ends of $P_1$ and $P_2$ are not both on the same $x'y'$-subpath of $C$, then $G^+_v$ contains a subdivision of $K_{3,3}$.  This contradicts the fact that we are doing planar 3-reductions.  Therefore, the $C$-ends of $P_1$ and $P_2$ are on the same $x'y'$-subpath of $C$ and it is easy to find the desired cycle through all of $x'$, $y'$, and $z'$.
\end{cproof}

The following is the last lemma we need to get the main result of this section.

}\begin{lemma}\label{lm:twoNeighbours}   Let $G\in \m2$ and suppose $G$ reduces by planar 3-reductions to the \wording{peri\-pher\-ally}-4-connected graph $G^{\ei4c}$.  Let $v$ and $x$ be adjacent vertices in $G^{\ei4c}$.  Then there are at most two vertices in $K_v$ adjacent to vertices in $K_x$.  \end{lemma}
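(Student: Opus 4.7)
The plan is to assume $m_x\ge 3$ and derive a contradiction with $G\in\m2$ by exhibiting a $1$-drawing of $G$. By Lemma \ref{lm:no2matching} all $K_vK_x$-edges share a common endpoint, which since $m_x\ge 2$ must lie in $K_x$; denote it by $x'$, and let $a,b,c\in K_v$ be three distinct neighbours of $x'$. By Lemma \ref{lm:attsDistinct} the contact vertices $y'$ and $z'$ for two other neighbours of $v$ in $G^{\ei4c}$ are distinct from $x'$, and $\{x',y',z'\}$ forms a 3-cut of $G$ whose removal separates $K_v$ (minus whichever of $y',z'$ lie in it) from the rest of $G$. Let $H$ denote the side of the corresponding separation containing $K_v\cup\{x'\}$.

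I would next establish that the augmentation $H^+$, obtained from $H$ by adjoining a new vertex $v^*$ adjacent to $x'$, $y'$ and $z'$, is planar. This follows by induction along the planar 3-reduction sequence $G=G_0,\dots,G_k=G^{\ei4c}$: the base case is trivial, and at each contraction step the bridge $B_i$ has $B_i^+$ planar, so the accumulated gadget associated with $v$ remains planar when we glue each $B_i^+$ along its three attachments. A consequence is that $H^+$ admits a planar embedding in which the three cut vertices $x'$, $y'$, $z'$ all lie on a common face.

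The crucial step is then to take a $1$-drawing $D$ of $G-ax'$ (available since $G\in\m2$) and show that $a$ and $x'$ lie on a common face of $D$. Once this is established the edge $ax'$ can be re-inserted inside that common face, producing a $1$-drawing of $G$ and contradicting $\crn(G)\ge 2$. I would argue this by splitting on where the unique crossing of $D$ lies: if it is outside $H$ then $H-ax'$ is drawn planarly in $D$, and the planarity of $H^+$ combined with the remaining attachments $bx'$ and $cx'$ of $x'$ in $K_v$ forces $a$ onto a face bounded in part by $x'$. If instead the crossing lies inside $H$, a local re-routing within the planar gadget $H$, exploiting the extra attachment $cx'$ that does not feature in the crossing, suffices to move the crossing outside $H$, reducing to the previous case.

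The main obstacle is this last face-locating argument: the planarity of $H^+$ only furnishes \emph{some} planar embedding with the three ports cofacial, while the drawing $D$ may realise $H-ax'$ in a different planar embedding. The redundancy provided by having three neighbours $a,b,c$ of $x'$ rather than only two is precisely what enables the re-routing, but carrying this out cleanly---particularly in the subcase where one of $y',z'$ lies outside $K_v$ so that the structure of the gadget $H^+$ differs slightly---will require a careful case analysis and will constitute the bulk of the proof.
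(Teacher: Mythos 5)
There is a genuine gap, and you yourself flag it in your final paragraph: the face-locating step is asserted but not proved, and that step is where the difficulty actually lives. Let me pin down why the assertions you make won't carry through as stated.

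First, your claim that "the planarity of $H^+$ combined with the remaining attachments $bx'$ and $cx'$ ... forces $a$ onto a face bounded in part by $x'$" does not follow. Planarity of $H^+$ gives you an abstract planar embedding with $x',y',z'$ cofacial, but the restriction of $D$ to $H-ax'$ may realise a completely different planar embedding of $H-ax'$; and, even if $a$ and $x'$ are cofacial in $D[H-ax']$, the corresponding face of $D[G-ax']$ may be subdivided by pieces of $G$ outside $H$, so $a$ and $x'$ need not be cofacial in $D[G-ax']$. Controlling that interaction is precisely where a global bipartite-overlap argument is needed. Second, the re-routing claim in the case where the crossing lies inside $H$ is not generally true: one cannot simply "move the crossing outside $H$" — the crossing can be genuinely forced to involve a cut-edge of $K_v$, and analysing that situation requires knowing which $K_vK_x$-edge was deleted.

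This last point brings out a second gap: you delete an arbitrary edge $ax'$, but the choice of which of the three $K_vK_x$-edges to delete matters. The paper uses the planar embedding of $G_v^+$ to read off the cyclic order of the edges $x'w, x's, x't, x'u$ around $x'$ and deletes the \emph{middle} one $x't$; it then builds the cycle $C$ through $x',s,t,u$ so that $t$ is trapped between $s$ and $u$. The argument for reinserting $x't$ in the 1-drawing of $G-x't$ hinges on this sandwich structure, on showing that $OD_G(C)$ is bipartite (Claim~\ref{cl:Cbridges} in the paper's proof), on Corollary~\ref{co:TutteTwo} when $C$ is clean, and on a case analysis of which edge crosses $C$ when it is not clean — this case analysis uses Lemma~\ref{lm:isthmus3reduction} and Lemma~\ref{lm:triangle} to eliminate possibilities. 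None of this machinery appears in your outline, and it is not cosmetic: a blind "re-route around $x'$ using $cx'$" has no way to rule out the case where $C$ is crossed by an edge incident with $x'$, which in the paper leads to $e_v\in C$ being a cut-edge of $K_v$ and a delicate argument about which side of $C$ each piece of $K_v-e_v$ falls on. In short, the overall shape you propose (contradiction via reinsertion after planar gadget analysis) is reasonable, but the central argument is missing and the omission is not safely fillable without essentially the cycle-plus-BOD apparatus that the paper develops.
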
\printFullDetails{

\begin{cproof}  This is obvious if 
$
K_v
$ 
has at most one vertex.  In the remaining case, 
$
v
$ 
has degree 3 in 
$
G^{
\ei4c
}
$; let 
$y$ and 
$z$ be its other neighbours.

 Suppose by way of contradiction that 
$s$, 
$t$, and 
$u$ are distinct vertices in 
$K_v$ all adjacent to vertices in 
$K_x$.  By Lemma 
\ref{lm:no2matching}, there is a vertex 
$x'$ incident will all the 
$K_vK_x$-edges and, evidently, 
$x'\in K_x$. 

In the planar embedding 
$D^+_v$ of 
$G_v^+$, letting 
$w$ denote the new vertex adjacent to each of 
$x$, 
$y$, and 
$z$, we may choose the labelling so that the edges 
$xw,xs,xt,xu$ occur in this cyclic order around 
$x$.\wordingrem{(Text moved and adjusted.)} 

\begin{claim}
\label{cl:stuPath}  
There is an 
$su$-path  in 
$K_v$ containing 
$t$.  
\end{claim}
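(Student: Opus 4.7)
The plan is to prove Claim \ref{cl:stuPath} by contradiction, exploiting the planar embedding $D_v^+$ of $G_v^+$ together with the 3-connectivity of $G$. First I will pick any simple $su$-path $P_0$ in $K_v$, which exists since $K_v$ is connected; if $t\in P_0$ the claim is immediate, so I may assume $t\notin P_0$, and form the cycle $C_0:=xs\cdot P_0\cdot ux$ in $G_v^+$. In the embedding $D_v^+$, $C_0$ bounds a closed disk $\Delta$, and the cyclic order $xw,xs,xt,xu$ at $x$ will force $xt$ (hence $t$) to lie inside $\Delta$ while $xw$ (hence $w$) lies outside; the edges $wy$ and $wz$ then force $y$ and $z$ outside $\Delta$ as well by planarity.

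Second, assuming no $su$-path in $K_v$ contains $t$, I will apply Menger's theorem to the auxiliary graph obtained from $K_v$ by adding a new vertex $s^*$ adjacent to both $s$ and $u$, seeking two internally-disjoint $ts^*$-paths. Their non-existence will produce a cut-vertex $c\neq t,s^*$ of the auxiliary graph separating $t$ from $s^*$, falling into one of three cases: $c\in K_v\setminus\{s,t,u\}$; $c=s$; or $c=u$. In each case, let $K_\dagger$ be the component of $K_v-c$ containing $t$; a direct analysis of the components of $K_v-c$ and the structure of $P_0$ then yields that $K_\dagger$ is connected, contains $t$, and is disjoint from $P_0$.

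Third, I will invoke 3-connectivity of $G$ to force $K_\dagger$ to contain a vertex $t^*$ with an edge in $G$ to some vertex of $K_y\cup K_z$: otherwise, since by Lemma \ref{lm:no2matching} every edge of $G$ between $K_v$ and $K_x$ shares the endpoint $x'$, the pair $\{c,x'\}$ would form a 2-cut of $G$ separating $K_\dagger$ from the rest, contradicting 3-connectivity. In $G_v$ this edge becomes an edge $t^*y$ (the $K_z$ case is symmetric).

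Finally, the contradiction will come from planarity: since $y$ lies outside $\Delta$ and the edge $t^*y$ of $G_v^+$ cannot cross $C_0$, the vertex $t^*$ must also lie outside $\Delta$ (it cannot lie on $C_0$, since $K_\dagger$ is disjoint from $P_0$ and does not contain $x$). But $t\in K_\dagger$ lies inside $\Delta$ and $K_\dagger$ is connected, so any $tt^*$-path in $K_\dagger$ must cross $C_0$ topologically while using only vertices of $K_\dagger$, which are disjoint from $V(C_0)=\{x\}\cup V(P_0)$; this violates the Jordan curve theorem for $C_0$ in the planar embedding $D_v^+$. The hard part will be the second step: each of the three Menger-cut cases must be separately verified to produce a $K_\dagger$ that is connected, contains $t$, and is disjoint from $P_0$, so that the Jordan-curve contradiction of the last step applies in every case.
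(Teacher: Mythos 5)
Your proof is correct and follows essentially the same approach as the paper's: both rest on the Jordan-curve separation of $t$ from $\{w,y,z\}$ by $D_v^+[C_0]$, Menger's theorem to extract a cut vertex, Lemma~\ref{lm:no2matching} to confine $K_v K_x$-edges to $x'$, and 3-connectivity of $G$ to rule out the resulting 2-cut $\{c,x'\}$. The only cosmetic differences are that you run the argument by contradiction and apply Menger to an auxiliary vertex $s^*$, whereas the paper applies the fan version of Menger to $t$ and $P$ directly and then reroutes $P$ through $t$ constructively.
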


\begin{proof}   
As 
$K_v$ is connected, there is an 
$su$-path 
$P$ in 
$K_v$. We are obviously done if 
$t\in P$, so we assume 
$t\notin P$.  Let 
$C$ be the cycle obtained by adding 
$x'$ to 
$P$ and joining it to 
$s$ and 
$u$.

\wording{The rotation at $x$ implies} that 
$t$ is on one side of 
$D^+_v[C]$, \wording{while 
$w$, $y$, and, consequently,
$z$, are} on the other.  Therefore, every 
$t\{
y,z
\}$-path in 
$G^+_v$ goes through either 
$x$ or 
$P$.  

If there is a \wording{cut-vertex 
$r$ in} 
$K_v$ separating 
$t$ from 
$P$, \wording{then 
$\{
r,x'
\}$ 
is a} 2-cut in the 3-connected graph 
$G$, which is impossible.  Therefore, there are 
$tP$-paths 
$Q$ and 
$R$ in 
$K_v$ that are disjoint except for 
$t$.  We can now reroute 
$P$ through 
$t$ to obtain the desired path.
\end{proof}

\wording{Since 
$G$ is 2-crossing-critical, there is a 1-drawing 
$D$ of 
$G-x't$.   From Claim \ref{cl:stuPath}, there is an $su$-path $P$ in $K_v$ containing $t$.  Let $C$ be the cycle obtained from $P$ by adding $x'$, $x's$ and $x'u$.}

\medskip
\begin{claim}\label{cl:D[C]faces}  All the vertices of $G-(K_v\cup K_x)$ are in the same face of $D[C]$.  \end{claim}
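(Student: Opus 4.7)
The plan is to argue by contradiction: assume there exist vertices $a, b$ of $H := G - (K_v \cup K_x)$ lying in different faces of $D[C]$. Since $V(C) \subseteq K_v \cup \{x'\} \subseteq K_v \cup K_x$, the subgraph $H$ is disjoint from $V(C)$, and the strategy is to produce two edge-disjoint $ab$-paths in $H$ whose drawings each force a distinct crossing with $D[C]$, contradicting that $D$ is a $1$-drawing.

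The central preparatory claim is that $H$ is $2$-connected. First, I establish that $G^{\ei4c} - \{v,x\}$ is $2$-connected: if $w^*$ were a cut-vertex, then $\{v, x, w^*\}$ would be a $3$-cut of $G^{\ei4c}$, and by peripheral-$4$-connectedness one side would be a single vertex $p$ adjacent to all of $v, x, w^*$. Since $vx$ is an edge, $v, p, x$ form a $3$-cycle in which both $v$ (with neighbors $x, y, z$) and $p$ (with neighbors $v, x, w^*$) have only three neighbors, contradicting Lemma \ref{lm:triangle}. The $2$-connectedness of $H$ then follows by lifting through the contracted pieces: given two internally disjoint paths in $G^{\ei4c} - \{v,x\}$ between the images of $a$ and $b$, one realizes them as internally disjoint $ab$-paths in $H$ by routing through the interior of each traversed $K_{w^*}$, using Lemma \ref{lm:x'y'z'paths1} at the pieces containing $a$ and $b$ to ensure internal disjointness.

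With internally disjoint $ab$-paths $Q_1, Q_2$ in $H$, both disjoint from $V(C)$, I then split into two cases based on whether $D[C]$ self-crosses. If $D[C]$ is a simple closed curve, it bounds two faces of the plane, and since $a$ and $b$ lie in different ones, each $D[Q_i]$ must cross $D[C]$ at least once; edge-disjointness of $Q_1$ and $Q_2$ yields at least two distinct crossings in $D$, contradicting the $1$-drawing hypothesis. If $D[C]$ self-crosses, the self-crossing is the unique crossing of $D$, so no other pair of edges is crossed; then $D[Q_1]$ is an arc from $a$ to $b$ disjoint from $D[C]$, placing $a$ and $b$ in the same connected region of the plane complement of $D[C]$, again a contradiction.

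The main obstacle I anticipate is the lifting step in establishing $2$-connectedness of $H$: when $a$ and $b$ both lie in the same $K_{w^*}$ and that piece internally has a cut-vertex separating them, one of the two disjoint paths must depart $K_{w^*}$ and re-enter via a distinct distinguished attachment, so the argument must leverage both the $2$-connectedness of $G^{\ei4c} - \{v,x\}$ and the existence of the cycle through $x', y', z'$ in $K^{\min}_{w^*}$ given by Lemma \ref{lm:x'y'z'Cycle}. A secondary subtlety is that Lemma \ref{lm:triangle} is stated for simple graphs, but the triangle $vpx$ arising above involves three distinct vertices whose degrees are unaffected by multi-edges elsewhere in $G^{\ei4c}$, so the lemma's conclusion applies in our situation.
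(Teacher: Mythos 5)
Your proposal takes a genuinely different route from the paper's. The paper argues directly at the level of the crossing edge $f$: it splits according to whether some $K_p$ has vertices on both sides of $D[C]$, uses Lemma \ref{lm:isthmus3reduction} to analyze the components of $K_p-f$, and in each branch derives a forbidden triangle (Lemma \ref{lm:triangle}) or a 3-cut. In contrast, you first establish that $G^{\ei4c}-\{v,x\}$ is 2-connected (your argument here, via peripheral-4-connectedness and Lemma \ref{lm:triangle}, is correct), then lift this to 2-connectedness of $G-(K_v\cup K_x)$, and finish with a clean topological observation: two edge-disjoint $ab$-paths in a subgraph vertex-disjoint from $C$ force two crossings with $D[C]$, while a self-crossing $D[C]$ already accounts for the one allowed crossing. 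The endgame is elegant, and your handling of both the simple and self-crossing cases of $D[C]$ is correct; the paper postpones the self-crossing observation to just after this claim, so it too must implicitly address this.

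The genuine gap is exactly the one you flag: lifting 2-connectedness through the $K_p$'s. This is not a routine blow-up argument. If $a$ and $b$ lie in the same $K_p$ with an internal cut-vertex $w$, or if some $K_p$ internally has a cut-vertex separating attachment structure, you cannot simply reroute through the interior. Verifying 2-connectedness of $G-(K_v\cup K_x)$ really does require a case analysis that invokes Lemma \ref{lm:no2matching} (to pin down where the $K_pK_q$-edges attach), Lemma \ref{lm:x'y'z'paths1} or Lemma \ref{lm:x'y'z'Cycle} (to get disjoint connections among the attachments inside $K_p$), and Lemma \ref{lm:triangle} again (to exclude the case where $p$ is adjacent to both $v$ and $x$, since $K_p$ having $\ge 2$ vertices forces $\deg(p)=3$ and the triangle $vpx$ would then have two degree-3 vertices). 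As written, your sketch names the right ingredients but does not carry out the check, which is roughly as much work as the paper's own case analysis — so while your approach is sound and arguably more modular, it is not complete as stated.
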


\begin{proof}
Suppose by way of contradiction that there are vertices in 
$G-(K_v\cup K_x)$ that are in different faces of 
$D[C]$.

\medskip
{\bf Case 1:}  {\em there is a vertex 
$p$ in 
$G^{\ei4c}$ so that 
$K_p$ contains vertices that are in different faces of 
$D[C]$. 
}

\medskip In this case there is an edge 
$f$ of 
$K_p$ that crosses 
$D[C]$.  As 
$D$ has at most one crossing, 
$f$ is a cut-edge of 
$K_p$.  Lemma \ref
{lm:isthmus3reduction} 
implies each component of 
$K_p-f$ is adjacent to at least two different 
$K_n$'s.  If one of them is adjacent to both 
$K_x$ and 
$K_v$, then we have a 3-cycle 
$pxv$ in 
$G^{\ei4c}$ in which both 
$p$ and 
$v$ have degree 3, contradicting Lemma \ref
{lm:triangle}.

Therefore, we may assume each is adjacent to one, say 
$K_q$ and 
$K_r$, that is neither 
$K_x$ nor $K_v$.  However, now 
$\{v,x,p\}$ is a 3-cut in 
$G^{\ei4c}$ separating $q$ and $r$ in 
$G^{\ei4c}$.  Therefore one of \wording{them --- say $q$ --- is} adjacent to precisely these three \wording{vertices in $G^{\ei4c}$,} producing \wording{the  3-cycle $\{q,v,x\}$ in} 
$G^{\ei4c}$ that contradicts Lemma 
\ref{lm:triangle}.

\medskip 
{\bf Case 2:}  
{\em any two vertices of $G-(K_v\cup K_x)$ in different faces of $D[C]$ are in different $K_p$'s.}

\medskip    Since $G-(K_v\cup K_x)$ is connected, there is a path in $G-(K_v\cup K_x)$ joining vertices in different faces of $D[C]$.  Therefore, there is, for some vertices $q$ and $r$ of 
$G^{\ei4c}$, a $K_qK_r$-edge $f$ that crosses $D[C]$.   It follows \wording{that $D[C]$ has no self-crossings, so $D[C]$} has only two faces.

Clearly 
$G^{\ei4c}-
\{x,v,f\}$ has $K_q$ and $K_r$ in different components.  
Since $G^{\ei4c}$ has at least six vertices, it has a vertex $m$ different from all of $v$, $x$, $q$ and $r$.  We may choose the labelling so that $D[K_q]$ is in one face of $D[C]$, while $D[K_r\cup K_m]$ is contained in the other.  It follows that $\{v,x,r\}$ is a 3-cut in $G^{\ei4c}$ separating $q$ from $m$.  

Since $G^{\ei4c}$ is \wording{peripherally}-4-connected, \wording{one of $q$ and $m$ --- say $q$ --- is} adjacent precisely to $v$, $x$, and $r$, yielding \wording{the 3-cycle $\{v,x,q\}$ in $G^{\ei4c}$ that} has two vertices \wording{with only three neighbours}, contradicting Lemma \ref{lm:triangle}. \end{proof}

We note that the crossing in $D$ cannot involve two edges, each incident with a vertex in $K_v$, as otherwise $G^{\ei4c}$ is planar.  In particular, $D[C]$ is not self-crossing.

\begin{claim}\label{cl:Cbridges}  $OD_{G^+_v}( C)$ is isomorphic to $OD_G( C)$.  In particular, $OD_G( C)$ is bipartite. \end{claim}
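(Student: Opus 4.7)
The plan is to build a bijection between the $C$-bridges of $G$ and those of $G^+_v$ that preserves the set of attachments on $C$ under the identification $x'\leftrightarrow x$; such a bijection automatically preserves overlap, since overlap of two bridges on a common cycle is determined entirely by the cyclic pattern of their attachments (Lemma \ref{lm:overlapClaw}). The bipartiteness of $OD_G(C)$ will then follow from Theorem \ref{th:TutteOne} (\ref{it:TutteOneB}) applied to the planar graph $G^+_v$, whose planar embedding $D^+_v$ is already at hand from the planar 3-reduction structure.

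First I would set up the ``interior'' part of the bijection. Since $V(C)\subseteq V(K_v)\cup\{x'\}$ and $V(C)\cap K_x=\{x'\}$, any $C$-bridge of $G$ whose nucleus lies in $K_v-V(C)$, as well as any chord of $C$ with both ends in $V(C)\cap(K_v\cup\{x'\})$ (in particular the chord $x't$), appears also as a $C$-bridge of $G^+_v$ once $x'$ is relabelled $x$; conversely, every $C$-bridge of $G^+_v$ whose nucleus is contained in $K_v-V(C)$ arises this way, and the chord $xt$ of $G^+_v$ corresponds to the chord $x't$ of $G$.

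The hard part is handling the ``exterior'' side: I must show that all vertices of $G$ outside $K_v\cup\{x'\}$ lie in the nucleus of a single $C$-bridge $B^o$ of $G$, which will then be matched with the unique $C$-bridge of $G^+_v$ containing the new vertex $w$ (the one adjacent to $x$, $y$, $z$). For this, two connectivity observations suffice. Since $G^{\ei4c}$ is peripherally-4-connected on at least five vertices, $G^{\ei4c}-\{v,x\}$ is connected, hence $\bigcup_{u\neq v,x} K_u$ is connected in $G$. Moreover, if every edge from $K_x$ to the rest of $G-K_v$ were incident with $x'$, then $\{x'\}$ would be a cut vertex of $G$, contradicting 3-connectivity; so $K_x-x'$ is connected, within $G-V(C)$, to $\bigcup_{u\neq v,x} K_u$. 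These two facts put all of $G-K_v-\{x'\}$ into a single component of $G-V(C)$, which is the nucleus of $B^o$.

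Finally, I would check that the attachments of $B^o$ on $C$ are $x'$ together with the vertices of $V(C)\cap K_v$ adjacent in $G$ either to $K_y\cup K_z$ or to components of $K_v-V(C)$ joined to the outside; these match, under $x'\leftrightarrow x$, exactly the attachments of the exterior bridge of $G^+_v$ (namely $x$ together with the vertices of $V(C)\cap K_v$ adjacent to $y$, $z$, or outside-connected interior nuclei). The main obstacle is the exterior-uniqueness step just described, since a putative second exterior bridge of $G$ could a priori disturb bipartiteness; once it is ruled out, the matching of attachments and Lemma \ref{lm:overlapClaw} deliver the isomorphism $OD_G(C)\cong OD_{G^+_v}(C)$, and the bipartiteness of $OD_G(C)$ follows from the planarity of $G^+_v$.
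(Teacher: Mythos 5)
Your proposal is correct and follows essentially the same route as the paper: show that everything outside $K_v+\{x'\}$ lies in a single ``exterior'' $C$-bridge $B^o$ (using that $G^{\ei4c}-\{v,x\}$ is connected and that $K_x-x'$ reaches the other $K_u$'s), observe that every other $C$-bridge of $G$ lies inside $K_v+\{x'\}$ and hence is literally a $C$-bridge of $G_v^+$ under $x'\leftrightarrow x$, match $B^o$ with the bridge of $G_v^+$ containing the apex vertex by attachment set, and read off bipartiteness from the planarity of $G_v^+$. The one variation is local: for the $K_x-x'$ step the paper invokes Lemmas \ref{lm:attsDistinct} and \ref{lm:x'y'z'paths1}, whereas you give a direct 3-connectivity cut-vertex argument; that works, but it should really be applied to each component $L$ of $K_x-x'$, noting that $L$ has no edges to $K_v$ (all $K_vK_x$-edges are incident with $x'\in K_x$), so $L$ is attached only to $x'$ and $\bigcup_{u\neq v,x}K_u$, and if the latter were empty then $x'$ would be a cut vertex.
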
 

\begin{proof}  The main point is that there is a single $C$-bridge in $G$ containing $G-(K_v+x')$.  To prove this, we show that any two vertices in $G-(K_v+x')$ are connected by a $C$-avoiding path.  For vertices not in $K_v\cup K_x$, this is easy:  for any two vertices $p$ and $q$ in $G^{\ei4c}-\{v,x\}$, there is a $pq$-path in $G^{\ei4c}-\{v,x\}$, showing that any two vertices in $K_p\cup K_q$ are joined by a path in $G-(K_v\cup K_x)$.  

If $p\in K_x-x'$, then Lemma \ref{lm:attsDistinct} implies that the three vertices separating $K_x$ from its neighbours are distinct.   \wording{For one of these vertices $w'$ that is not $x'$,} Lemma \ref{lm:x'y'z'paths1} implies there is a $pw'$-path in $K_x-x'$, completing the proof that there is a single $C$-bridge $B$ in $G$ containing $G-(K_v+x')$.  

Every other $C$-bridge in $G$ is contained in $K_v+x'$.  These are all $C$-bridges in $G_v^+$; the only other $C$-bridge in $G_v^+$ is the one containing the vertex joined to $x$, $y$, and $z$.  This $C$-bridge has precisely the same attachments as $B$.  This shows that $OD_G( C)$ and $OD_{G^+_v}( C)$ are isomorphic.

Since $G^+_v( C)$ is planar, $OD_{G^+_v}( C)$ is bipartite, yielding the fact that $OD_G(C )$ is bipartite.
\end{proof}

Suppose first that $C$ is clean in $D$.  \wording{Since $B$ is the} unique non-planar $C$-bridge in $G$, $D$ yields a 1-drawing of $C\cup B$ with $C$ clean.  Therefore, Corollary \ref{co:TutteTwo} implies $\crn(G)\le 1$, a contradiction.  

If, on the other hand, $C$ is not clean in $D$, then $C$ is crossed by an edge $f$\major{.  By Claim \ref{cl:D[C]faces}, $f$ is incident with a vertex in $K_v\cup K_x$.  If $f$ is incident with a vertex in $K_v$, then contract $K_v$ (with a vertex inserted at the crossing point, if necessary, to get a 1-drawing of $G^{\ei4c}$ so that both edges incident with the crossing are incident with $v$.  This implies the contradiction that $G^{\ei4c}$ is planar.}

\major{If $f$ is not incident with $x'$, then $K_x-x'$ has vertices on both sides of $D[C]$.  One of these is in a component $K^1_x$ of $K_x-f$ that is on the side of $D[C]$ that does not contain any vertex of $G-(K_v\cup K_x)$.  Lemma \ref{lm:isthmus3reduction} implies $K^1_x-x$ is joined to a vertex in some other $K_w$, $w\ne v$, which cannot happen without crossing $D[C]$ a second time, a contradiction.   It follows that $f$ is incident with $x'$.  Furthermore, Lemmas \ref{lm:x'y'z'Cycle} and \ref{lm:x'y'z'paths1} (\ref{it:3disjt} imply that $f$ is in a cycle $C_f$ in $G-K_v$.} % The other end of $f$ is not in $K_v$, so there is a cycle $C_f$ in $G-K_v$ containing $f$.  
The ends of the edge $e_v$ of $K_v$ crossed in $D$ are separated by $D[C_f]$, so $e_v$ is a cut-edge of $K_v$.  Moreover, $e_v$ is in $C$.

We now see that the $C$-bridges are $B$, those contained in one component of $K_v-e_v$, and those contained in the other component of $K_v-e_v$.  Notice that $B$ is a cut-vertex of $OD_G(C )$, and so it overlaps $C$-bridges of both the other types.  

Since $OD_G(C )$ is connected and bipartite, it follows that the $C$-bridges in either of the components of $K_v-e_v$ occur on the same side of $D[C]$ that they do in $D^+_v$.  In particular, $x't$ may be reintroduced to $D$ to obtain a 1-drawing of $G$, which is impossible.\end{cproof}

\noindent{\bf Strategy.}   {\em The strategy now is to show that if we replace any $K_v$ with a smallest possible representative subject to the preceding observations, then we produce a 2-crossing-critical graph.   This is the last part of this section.  This implies that $G^{\ei4c}$ turns into a 2-crossing-critical graph by choosing these smallest possible representatives.  From this, it is then  possible to determine (although not in a theoretical sense, but rather in a definite, finite --- really manageable --- way that we shall describe) all the 3-connected 2-crossing-critical graphs that have these configurations and reduce to $G^{\ei4c}$ by planar 3-reductions.  

There will remain the issue of determining all the possible $G^{\ei4c}$.  Of course, one can list them all, but it is not clear at what point to stop.   Fortunately, Theorem 2.14 shows that we do not need to do this when $G$ contains a subdivision of $V_{10}$, as we already know what $G$ looks like.  When $G$ does not contain a subdivision of $V_8$, a theorem of Robertson plus some analysis implies that $G^{\ei4c}$ has at most 9 vertices.  We are left with the open question of finding the graphs in $\m2$ that contain a subdivision of $V_8$ but do not contain a subdivision of $V_{10}$.  In Section \ref{sc:noV2n}, we show that any such graph has at most about 4 million vertices.}

\bigskip

We next characterize certain properties of the graphs $G_v$; our goal is to show that these (more or less) determine the crossing number of $G$.

}\begin{definition}\label{df:TUconfig}   Let $x$, $y$, and $z$ be vertices in a graph $H$ so that $H$ is an $\iso{\{x,y,z\}}$-bridge. Then:
\begin{itemize}\item  $T$ is the set of vertices $w\in \{x,y,z\}$ so that there are edge-disjoint $w(\{x,y,z\}\setminus\{w\})$-paths in $H$; and
\item $U$ is the set of vertices $w\in \{x,y,z\}$ for which there are edge-disjoint paths in $H-w$ joining the two vertices in $\{x,y,z\}\setminus\{w\}$.
\item\label{it:TUconfigPlanar} $(H,\{x,y,z\})$ is a {\em $(T,U)$-configuration\/}\index{$(T,U)$-configuration}\index{configuration} if the graph $H^+$ obtained from $H$ by adding a new vertex adjacent just to $x$, $y$, and $z$ is planar.
\end{itemize}
\end{definition}\printFullDetails{

Our entire argument depends on the fact, to be proved in the next section, that the pairs $(T,U)$ effectively characterize 2-criticality.  Theorem \ref{th:Regrow}, the main point of this section, shows that substituting one $(T,U)$-configuration for another retains the fact that the crossing number is at least 2.

For a $(T,U)$-configuration, obviously there are only four possibilities for $|T|$.  It is a routine analysis of cut-edges to see that, if $|T|\le 1$, then $U$ is  empty, while if, for example, $T=\{x,y\}$, then $U=\{z\}$.  Thus, for $|T|\le 2$, $U$ is determined by $T$.  This is not the case for $|T|=3$.  In this instance,  if $z\notin U$, then there is a cut-edge in $G_v-z$ separating $x$ and $y$.  From here and the fact that $T=\{x,y,z\}$,  one easily sees that $x,y\in U$.  Thus, if $T=\{x,y,z\}$, then $|U|$ can be either 2 or 3.  Therefore, there are in total five possibilities for the pair $(|T|,|U|)$.

We first show that replacing a $(T,U)$-configuration with another $(T,U)$-con\-fi\-gu\-ra\-tion does not lower the crossing number below 2.  First the definition of substitution.

}\begin{definition}\label{df:substitution}  Let $G$ reduce by planar 3-reductions to the \wording{peripherally}-4-connected graph $G^{\ei4c}$\index{$G^{\ei4c}$}.  Suppose $v$ is a vertex of $G^{\ei4c}$ with neighbours $x$, $y$, and $z$ so that $(G_v,\{x,y,z\})$ is, for some subsets $T$ and $U$ of $\{x,y,z\}$, a $(T,U)$-configuration.  Let $N$ be the set of vertices $t$ in $\{x,y,z\}$ for which $K^{\max}_v\cap K_t$ is null.  (See Definition \ref{df:KmaxKmin} for $K^{\max}_v$.)  Let $\bar N_v$ denote the attachments of $K^{\max}_v$:  these are the vertices that are of the form $t'$, $t\in \{x,y,z\}$, chosen to be in $K_t$ whenever possible.  
\begin{enumerate}
\item A $(T,U)$-configuration $(H,\{x,y,z\})$ is {\em $(G,K_v)$-compatible\/}\index{compatible}\index{$K_v$!compatible} if: \begin{enumerate}\item  for each $t\in N$, then there is only one neighbour of $t$ in $H$;
\item\label{it:equalDegrees} the degrees of each $t\in \{x,y,z\}$ are the same in both $G_v$ and $H$; and 
\item \wording{setting $\bar N_H$\index{$\bar N_H$} to consist of the union of  the} set of vertices of $H$ in $\{x,y,z\}\setminus N$ together with the neighbours in $H$ of the vertices in $N$, $H-N$ either has a single vertex or contains a cycle through all the vertices in $\bar N_H$. 
\end{enumerate} 
\item  The {\em substitution of the $K_v$-compatible $(T,U)$-configuration $(H,\{x,y,z\})$ for $K_v$ in $G$\/}\index{substitution}\index{compatible!substitution}\index{$K_v$!compatible substitution} is the graph $G^H_v$\index{$G^H_v$} obtained from $G$ by adding  $H-N$ by identifying the vertices in $\bar N_v$ with those in $\bar N_H$ in the natural way, and then deleting all vertices in $K^{\max}_v-\bar N_v$.
\end{enumerate}
\end{definition}\printFullDetails{

We are almost ready for a major plank in the theory.  

Our plan is to show that we can replace a ``large" $(T,U)$-configuration by a ``small" $(T,U)$-configuration and still be 3-connected and 2-crossing-critical.    There is one special case that requires particular attention.

}\begin{definition}\label{df:doglike}  A $(T,U)$-configuration $(H,\{x,y,z\})$ is {\em doglike \wording{with nose $n$}\/}\index{doglike}\index{doglike!nose}\index{nose} if $|T|=3$ and $|U|=2$ \wording{and $n$ is the vertex in $T\setminus U$}.  \end{definition}\printFullDetails{

}\begin{theorem}\label{th:Regrow}
Let $G$ reduce by planar 3-reductions to the \wording{peripherally}-4-connected graph $G^{\ei4c}$.  Suppose $v$ is a vertex of $G^{\ei4c}$ with precisely the neighbours $x$, $y$, and $z$ so that $K_v$ has at least two vertices so that $(G_v,\{x,y,z\})$ is, for some subsets $T$ and $U$ of $\{x,y,z\}$, a $(T,U)$-configuration.   Let $(H,\{x,y,z\})$ be a $(G,K_v)$-compatible  $(T,U)$-configuration.  If $\crn(G)\ge 2$, then $\crn(G^H_v)\ge 2$.
\end{theorem}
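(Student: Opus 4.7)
The plan is to argue by contradiction: assume $D^*$ is a 1-drawing of $G^H_v$ and convert it into a 1-drawing of $G$, which contradicts the hypothesis $\crn(G)\ge 2$.   The key conceptual point is that both $K^{\max}_v - \bar N_v$ and $H - N$ are ``plugged in'' to the rest of $G^H_v$ (respectively $G$) through the three attachment vertices under the identification $\bar N_v \leftrightarrow \bar N_H$, and the matching $(T,U)$-pattern together with the planarity of $H^+$ and $G_v^+$ (which is guaranteed since $(G_v,\{x,y,z\})$ is a $(T,U)$-configuration) should allow one to swap one plug-in for the other inside any drawing with at most a single crossing.

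First I would separate into two cases depending on whether the unique crossing of $D^*$ (if present) lies inside an edge of $H-N$ or outside.  When the crossing is outside $H-N$, the restriction $D^*[H-N]$ is a planar drawing, and by the compatibility condition together with the cycle-through-$\bar N_H$ clause of Definition~\ref{df:substitution}, this drawing sits inside a closed disc $\Delta$ whose boundary visits the three attachments in some cyclic order $\sigma$.  I then replace $D^*[H-N]$ with a planar drawing of $K^{\max}_v - \bar N_v$ inside $\Delta$ whose boundary visits $\bar N_v$ in the order $\sigma$; such a drawing exists because $G_v^+$ is planar and Lemma~\ref{lm:x'y'z'Cycle} supplies a cycle through $x'$, $y'$, $z'$ in $K^{\min}_v \subseteq K^{\max}_v$, which can be taken as the ``outer face'' of a planar embedding of $K^{\max}_v$.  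The result is a 1-drawing of $G$, the desired contradiction.  When the crossing lies inside $H-N$, the same swap --- now without any crossing to carry over --- produces a \emph{planar} embedding of $G$, again a contradiction since $G$ is non-planar.

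The hard part will be verifying that the cyclic order $\sigma$ can actually be realised on both sides of the swap: this is where the $(T,U)$-data is decisive, since the two planar embeddings of $H^+$ and $G_v^+$ each dictate, up to reflection, a cyclic order of $\{x,y,z\}$ around the face that contained the auxiliary vertex, and the matching pair $(T,U)$ should force these orders to agree.  I expect the argument to break into the five possibilities for $(|T|,|U|)$: the four cases $(|T|,|U|) \in \{(0,0),(1,0),(2,1),(3,3)\}$ reduce to a single combinatorial check because in each of them the attachment-pattern is symmetric enough that the two cyclic orders are equivalent up to reflection of $\Delta$; the doglike case $(|T|,|U|)=(3,2)$ (Definition~\ref{df:doglike}) requires that the distinguished ``nose'' vertex be tracked consistently between $H$ and $K^{\max}_v$, since this is the unique vertex whose deletion disconnects the other two attachments by edge-cuts, and this asymmetry has to carry across the substitution.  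Once this cyclic-order matching is in hand, the planar swap in each case is routine, and the contradiction is achieved.
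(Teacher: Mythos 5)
Your dichotomy ``crossing inside $H-N$'' versus ``crossing outside $H-N$'' does not correctly handle the decisive sub-case, namely the one in which the crossing is of an edge $e_v$ of $H'=H-\{x,y,z\}$ with an edge $f$ that is \emph{not} incident with any vertex of $H'$.  In that situation, after you delete the drawing of $H-N$ from $D^*$ you indeed have a planar drawing of the remainder, but the three attachment vertices in $\bar N_H$ do \emph{not} all lie on the boundary of a single face of that planar drawing: the arc of $f$ separates the attachments of the two components $H^1,H^2$ of $H'-e_v$ (since $e_v$ is forced to be a cut-edge of $H'$ by the crossing with a cycle through $f$).  Consequently the proposed planar re-insertion of $K^{\max}_v-\bar N_v$ cannot be performed at all, and your claim that Case~B ``produces a planar embedding of $G$'' is simply false in this sub-case.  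The worry you flag as the ``hard part'' --- matching the cyclic order of the three attachments --- is actually not the obstruction: three points on a face boundary have a unique cyclic order up to reflection, which is always realisable.  The obstruction is that the three attachments may fail to share a face.

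This missing sub-case is exactly what the paper's argument is built around.  It first shows (your Case~A, their Claim~\ref{cl:noH'crosses}) that some edge of $H'$ must be crossed, then (their Claim~\ref{cl:allH'Crosses}) that not all crossed edges can be incident with $H'$, which pins the crossing as $e_v\in H'$ against $f$ disjoint from $H'$.  It then proves $f$ lies on a cycle $C_f$ of $G^H_v-H'$ so that $e_v$ is a cut-edge of $H'$, analyses (Claim~\ref{cl:ft'}) where $f$ can attach, and shows (Claim~\ref{cl:H1H2oneNbr}) that $H^1$ and $H^2$ have exactly one common neighbour, which forces $T=\{x,y,z\}$ and $U$ to have size $2$ (the doglike case); only then can a substitution of $K_v$ be carried out with one crossing, using the matching cut-edge structure of $G_v$.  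Your intuition that the doglike case needs special nose-tracking is correct but you have not done the work showing that, outside the doglike case, the mixed crossing cannot occur at all, and that inside it, the swap requires a crossing rather than being planar.
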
\printFullDetails{   

\begin{cproof} We remark that the non-planarity of $G$ and the fact that we are doing {\em planar\/} 3-reductions implies $G^{\ei4c}$ is not planar.  This fact will be used throughout the proof.

Let $H'=H-\{x,y,z\}$ and let $N$ be the set of vertices $t$ in $\{x,y,z\}$ so that $K^{\max}_v\cap K_t$ is null.  By way of contradiction, we suppose $G^H_v$ has a 1-drawing $D$.  

We start with two simple observations.

\begin{claim}\label{cl:noH'crosses}  Some edge of $H'$ is crossed in $D$. \end{claim}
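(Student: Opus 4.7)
The plan is to argue by contradiction, exploiting the planarity built into the notion of a $(T,U)$-configuration. Assume no edge of $H'=H-\{x,y,z\}$ is crossed in the hypothesized $1$-drawing $D$ of $G^H_v$. I will convert $D$ into a $1$-drawing of $G$, contradicting $\crn(G)\ge 2$.

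First I would isolate the relevant region. Let $A$ be the union of $H'$ together with all edges of $H$ joining $H'$ to $\bar N_H$, and consider $D':=D[G^H_v]\setminus A$ (delete the edges and interior vertices of $A$). Since $A$ is uncrossed by assumption, $D'$ is a drawing of $G^H_v-A$ with at most one crossing. Moreover, by condition (c) of $(G,K_v)$-compatibility, either $H-N$ is a single vertex (the trivial case, which is handled immediately) or $H-N$ contains a cycle $C_H$ through every vertex of $\bar N_H$. In the latter case, $D[C_H]$ is uncrossed, so $D[A]$ is drawn inside the closed disc bounded by $D[C_H]$; consequently, there is a single face $F$ of $D'$ whose closure contains every vertex of $\bar N_H$, and the cyclic order of these attachments around $\partial F$ is the one realised by $H-N$ in the planar embedding of $H^+$.

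Next I would insert $K_v$ into $F$. Because $(G_v,\{x,y,z\})$ is a $(T,U)$-configuration, $G_v^+$ is planar; delete the apex to get a planar embedding of $G_v$ in which $x$, $y$, $z$ lie on the outer face. Condition (b) of compatibility (equal degrees at $x,y,z$) and Lemma \ref{lm:x'y'z'Cycle} (a cycle through $x',y',z'$ in $K_v^{\min}$) guarantee that, after the identifications prescribed by the substitution, the cyclic rotation of the attachment vertices $\bar N_v$ around the outer face of this planar embedding of $G_v$ matches the cyclic rotation of $\bar N_H$ around $\partial F$. Therefore I can paste the planar embedding of $K^{\max}_v-\bar N_v$ (with the interfacing edges reattached at the correct attachments) into $F$ without introducing new crossings, obtaining a drawing $D^*$ of $G$ with at most one crossing. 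This contradicts $\crn(G)\ge 2$ and proves the claim.

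The main obstacle, and the reason compatibility is defined the way it is, is step two: ensuring that the cyclic orders of the attachments around $\partial F$ (inherited from $D$ via $H-N$) and around the outer face of a planar embedding of $G_v$ really agree, so that $K^{\max}_v$ can be pasted in without forcing new crossings. When $(T,U)$ is doglike, with nose $n$, some extra care is needed because there is a cut-edge incident with $n$ and the compatible cyclic orderings at $n$ need to be examined on both sides of this cut-edge; here the cycle through $\bar N_H$ given by (c) again fixes the ambiguity. Once the rotations match, the substitution is purely combinatorial and no further drawing-theoretic work is needed.
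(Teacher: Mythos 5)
Your proof is correct and takes essentially the same approach as the paper, whose own argument is the terse one-liner that the equal-degree condition lets one resubstitute $K_v$ for $H'$ in $D$ to get a $1$-drawing of $G$. Your additional care over cyclic rotations and the doglike case is not strictly needed (with at most three attachment vertices the rotation scheme always matches up to reflection), but it does not alter the argument.
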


\begin{proof}
  If no edge of $H'$ is crossed in $D$, then Definition \ref{df:substitution} (\ref{it:equalDegrees}) implies we may resubstitute $K_v$ for $H'$  to obtain a 1-drawing of $G$, a contradiction.  \end{proof}    

\begin{claim}\label{cl:allH'Crosses}  There is no drawing $D'$ of $G^H_v$  in which each crossed edge is incident with a vertex in $H'$.  \end{claim}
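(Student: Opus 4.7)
My plan is to prove the claim by contradiction: assume such a drawing $D'$ of $G^H_v$ exists, and derive a planar embedding of $G$, contradicting $\crn(G)\ge 2$.

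The first step is to show, using topological moves that preserve the combinatorial type of $D'$, that we may assume that all crossings of $D'$ lie inside a closed disc $\Delta$ in the plane with $V(H')\subseteq\operatorname{int}\Delta$, no other vertex of $G^H_v$ in $\Delta$, and $\partial\Delta$ meeting $D'$ transversally only along the edges from $V(H')$ to $\bar N_H$. Since every crossing of $D'$ is incident to some vertex of $V(H')$, each crossing sits on an edge that either has both ends in $V(H')$ or runs from $V(H')$ to a vertex $t'\in\bar N_H$; in the latter case, we slide the crossing along the edge toward its $H'$-endpoint. Performing the slides sequentially, starting from the crossing closest to $V(H')$ and moving outward, each slide is unobstructed, and the final drawing has all crossings confined to an arbitrarily small neighbourhood of $V(H')$, which we enclose in $\Delta$. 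Outside $\Delta$ the drawing is crossing-free.

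Next, I invoke Lemma \ref{lm:x'y'z'Cycle} in $K^{\min}_v$ (applicable since $K_v$ has at least two vertices) to obtain a cycle through the three attachment vertices $\bar N_v$, and the planarity of $G_v^+$ supplied by the $(T,U)$-configuration hypothesis. Together, these give a planar embedding of $K^{\max}_v$ in a closed disc with $\bar N_v$ on its boundary in any prescribed cyclic order, the two possible cyclic orders being exchanged by reflection. I choose such an embedding whose cyclic order on the boundary matches the cyclic order in which the edges of $D'$ leave $\partial\Delta$ at the three vertices of $\bar N_H$.

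Finally, I excise the interior of $\Delta$ from $D'$ and insert this planar embedding of $K_v$ in its place, identifying the attachment vertices $\bar N_v$ with the corresponding points on $\partial\Delta$ and reconnecting each edge that was entering $\partial\Delta$ to the matching attachment. The substitution reverses the operation that produced $G^H_v$ from $G$, so the result is a drawing of $G$; by construction it has no crossings inside $\Delta$, and by the first step it has none outside $\Delta$. Hence $G$ is planar, contradicting $G\in\m2$. The main obstacle is the topological localisation in the first step: one must verify that a crossing on an edge from $V(H')$ to $\bar N_H$ can be continuously slid toward the $H'$-end without creating new crossings or passing through any vertex, a verification made possible by performing the slides in order of increasing distance from $V(H')$.
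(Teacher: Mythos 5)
Your approach diverges sharply from the paper's proof, which is a two-line minor argument: insert a vertex at each crossing, declare the new vertices to be part of $H'$, then contract the extended $H'$ and each $K_u$ to a point; since contraction preserves planarity, the result is a planar embedding of $G^{\ei4c}$, contradicting its non-planarity. That route completely avoids localizing crossings, matching cyclic orders, or reconstructing $G$.

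Your argument has two genuine gaps. First, the localization step is not valid: dragging a crossing on an edge $e$ toward its $H'$-endpoint also drags the other crossing edge $f$ along, and there may be vertices or edges of $G^H_v - H'$ drawn between the current crossing point and the $H'$-end of $e$; moving $f$ past these creates new crossings not incident with $H'$, destroying the invariant you are trying to preserve. The ordering ``from closest to farthest'' does not save this, because the obstructions need not themselves be crossings, and a crossing can be wedged by arbitrary parts of the drawing. Second, even granting localization, the cyclic order in which the edges from $H'$ cross $\partial\Delta$ is determined by $D'$ and the (arbitrary) crossings it packs inside $\Delta$, and there is no reason this should agree with the boundary cyclic order of any planar disc embedding of $K^{\max}_v$. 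A planar embedding of $K^{\max}_v$ in a disc with $\bar N_v$ on the boundary forces the edges landing at a given attachment vertex to leave $\partial\Delta$ consecutively; $D'$ imposes no such grouping, and a reflection only fixes the coarse cyclic order of the three attachments, not the interleaving of individual edges, so your excision-and-reglue step cannot in general be carried out.
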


\begin{proof}  Otherwise, insert a vertex at each crossing point, and add this vertex to $H'$.  Then contract every edge in the new graph that has both ends in $H'$, and also contract all the $K_u$ to single vertices.  The result is a planar embedding of $G^{\ei4c}$, a contradiction.  \end{proof}

Therefore, we may assume the crossing edges are $e_v\in H'$ with some other edge $f$ not incident with any vertex in $H'$. Observe that $H'$ cannot be a single vertex.

\begin{claim}\label{cl:fNotIsthmus} $f$ is not a cut-edge of $G^H_v-H'$.\end{claim}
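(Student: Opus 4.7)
The plan is a proof by contradiction. Assume $f$ is a cut-edge of $G^H_v - H'$, and let $A$ and $B$ be the two components of $(G^H_v - H') - f$, with $p \in V(A)$ and $q \in V(B)$ being the endpoints of $f$. My aim is to show this structure forces either a 2-cut in $G^H_v$ (contradicting 3-connectedness inherited from $G$) or a redrawing contradicting Claim \ref{cl:allH'Crosses}.

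First I would use 3-connectedness to bound edges between sides. Since $G^H_v$ is 3-connected it is 3-edge-connected, so the edge-cut $[V(A), V(B) \cup V(H')]$ --- which equals $\{f\}$ together with the edges from $A$ to $H'$ --- has size at least $3$, giving at least two edges from $A$ to $H'$, and symmetrically at least two from $B$ to $H'$. The edges between $H'$ and $G^H_v - H'$ correspond to the edges of $H$ incident with $\{x,y,z\}$, and after the identifications described in Definition \ref{df:substitution} they attach to $G^H_v - H'$ only at the at most three vertices of $\bar N_v$. Hence the attachment vertices of $H'$ in $G^H_v-H'$ split between $A$ and $B$ in a tightly restricted way: if $A$ contains only one attachment vertex $t'$, then $\{t',q\}$ is a $2$-cut in $G^H_v$ separating $V(A)\setminus\{t'\}$ from the rest, forcing $A=\{t'\}$ with $t'=p$. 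A symmetric statement holds for $B$.

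Next I would analyse the $1$-drawing $D$. Because the sole crossing of $D$ is $e_v \times f$ and $e_v \in H'$, the restriction $D[G^H_v - H']$ is planar. Since $f$ is a cut-edge of $G^H_v-H'$, both endpoints of $f$ lie on the boundary of a single face $F$ of $D[(G^H_v-H')-f]$, and the crossing $e_v \cap f$ occurs within $F$. Using the compatibility condition of Definition \ref{df:substitution} --- specifically the cycle in $H-N$ through $\bar N_H$ --- I would reroute the subgraph $H'$ (and in particular $e_v$) inside $F$ following the planar attachment structure dictated by this cycle. The resulting drawing of $G^H_v$ has each crossing (if any) incident with a vertex of $H'$, contradicting Claim \ref{cl:allH'Crosses}; in the degenerate subcases where $A$ or $B$ has collapsed to a single attachment vertex, the edge-cut count from the first step already produces a $2$-cut of $G^H_v$, contradicting 3-connectedness directly.

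The main obstacle will be the case analysis: the attachment vertices of $H'$ can lie in $A$, in $B$, or coincide with $p$ or $q$, and in each configuration the rerouting of $e_v$ inside $F$ must respect both the planar drawing of $G^H_v - H'$ and the cyclic attachment structure through $\bar N_H$ guaranteed by compatibility. Bookkeeping of the face incidences in $D[G^H_v - H']$, together with the fact that the compatibility cycle ensures all attachments are accessible from a common planar region, will be needed to verify that the rerouting is realisable in every subcase.
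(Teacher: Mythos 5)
Your proposal is headed in broadly the right direction---exploit the planarity of $D[G^H_v - H']$, the cut-edge property of $f$, and end by contradicting Claim~\ref{cl:allH'Crosses}---but it misses the one observation that makes the whole thing trivial and, as a result, piles on machinery the paper never needs. You only need to reroute the \emph{single edge} $e_v$, not the subgraph $H'$. Since $D[G^H_v - H']$ is a plane embedding (the only crossing of $D$ involves $e_v \in H'$) and $f$ is a cut-edge of $G^H_v-H'$, the standard fact is that \emph{both sides of $D[f]$ bound the same face} of $D[G^H_v-H']$. The arc $D[e_v]$ meets $D[G^H_v-H']$ only at the single crossing point on $f$, so, away from that point, $D[e_v]$ lies entirely in that one face; in particular the two ends of $e_v$ sit in a common face of $D[G^H_v-H']$. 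One can therefore redraw $e_v$ as an arc disjoint from $D[G^H_v-H']$ (it may cross other edges of $H'$, which is fine). In the resulting drawing every crossing is between $e_v$ and an edge incident with a vertex of $H'$, which is exactly what Claim~\ref{cl:allH'Crosses} forbids. No case analysis, no use of 3-connectedness, no 2-cut dichotomy, and no appeal to the compatibility cycle of Definition~\ref{df:substitution} is required.

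Because you propose to push all of $H'$ into the face $F$, you create exactly the difficulties you then list as ``the main obstacle'': where the attachments of $H'$ sit relative to $A$, $B$, $p$, $q$, and whether the compatibility cycle lets you realise the rerouting. Those difficulties are artifacts of over-rerouting, not genuine obstacles in the problem. Likewise, your first paragraph's bound on edges out of $A$ and $B$ and the degenerate 2-cut subcase are dead weight---none of it is needed, and the 2-cut conclusion would anyway only tell you something about $G^H_v$ (which is already handled by Phase 1 of Theorem~\ref{th:TUreplace} elsewhere). Strip everything back to ``redraw $e_v$ alone'' and the claim falls out in three lines.
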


\begin{proof} Suppose $f$ is a cut-edge of $G^H_v-H'$.  Since $D[G^H_v-H']$ has no crossing, it is planar.  Therefore, the faces on each side of $f$ in $D[G^H_v-H']$ are the same.  Thus, the ends of $e_v$ are in the same face of $D[G^H_v-H']$.

Consider now the planar embedding $D[G^H_v-e_v]$.  The two ends of $e_v$ are in the same face of the subembedding $D[G^H_v-H']$ and so may be joined by an arc that is disjoint from $D[G^H_v-H']$.  This produces a drawing of $G^H_v$ in which all the crossings involve $e_v$ and edges incident with at least one vertex in $H'$.  This contradicts  Claim \ref{cl:allH'Crosses}. \end{proof}

Since $f$ is not a cut-edge of $G^H_v-H'$, there is a 
cycle $C_f$ of $G^H_v-H'$ containing $f$.  Moreover, $D[C_f]$ separates the two ends of $e_v$, so $e_v$ is an cut-edge of $H'$.  Let $H^1$ and $H^2$ be the two components of $H'-e_v$.

The next claim is central to the remainder of the argument.

\begin{claim}\label{cl:ft'}  Let $t\in \{x,y,z\}$ be a common neighbour of $H^1$ and $H^2$.  Then $f$ is incident with $t'\in K_t$ and one of the faces of $H'+t'$ incident with both $t'$ and $e_v$ is empty except for the segment of $f$ from $t'$ to the crossing with $e_v$. \end{claim}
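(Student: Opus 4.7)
The plan is to localize the attachment vertex $t'$ on the simple closed curve $D[C_f]$ using the one-crossing constraint, and then perform a face analysis in the planar drawing $D[H'+t']$ to force $f$ to emanate directly from $t'$, whence the ``empty face'' conclusion is automatic.

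For the localization, let $t'$ denote the unique vertex of $V(G^H_v)\setminus V(H')$ through which every $H'K_t$-edge in $G^H_v$ passes, so $t'=\alpha_t\in K_t$ when $t\notin N$ and $t'$ is the identified image $\beta_t=\alpha_t\in K_v$ when $t\in N$. The common-neighbour hypothesis supplies $t'$ with an edge into $H^1$ and an edge into $H^2$. Because $C_f\subseteq G^H_v-H'$ and the sole crossing of $D$ is $e_v\times f$ with $e_v\in H'$, the image $D[C_f]$ is a simple closed curve that separates the endpoints of $e_v$, and since $H'-e_v=H^1\cup H^2$ it must separate $D[H^1]$ (say inside) from $D[H^2]$ (outside). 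Since no edge of $D$ other than $e_v$ and $f$ crosses $D[C_f]$, the edges from $t'$ to $H^1$ force $t'$ to lie inside or on $D[C_f]$, while the edges from $t'$ to $H^2$ force $t'$ to lie outside or on $D[C_f]$; hence $t'\in V(C_f)$.

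For the incidence step, I proceed by contradiction, assuming $f$ is not incident with $t'$. Then both endpoints of $f$ lie outside $V(H')\cup\{t'\}$, so $f\notin H'+t'$ and $D[H'+t']$ is planar. At $t'$, the two local directions of $D[C_f]$ separate the $H^1$-edges of $H'+t'$ from the $H^2$-edges, producing two faces $F_{\mathrm{in}},F_{\mathrm{out}}$ of $D[H'+t']$ each incident with both $t'$ and $e_v$ (their boundary walks pass through $H^1$, $e_v$, and $H^2$). The arc of $D[C_f]$ leaving $t'$ into $F_{\mathrm{in}}$ must remain in the closure of $F_{\mathrm{in}}$ until it reaches the crossing point with $e_v$, because any earlier departure would require $D[C_f]$ to cross an edge of $H'+t'$ — impossible, as the one crossing of $D$ is already spent. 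Hence every intermediate vertex of $C_f$ on this arc lies in the interior of $F_{\mathrm{in}}$. By the minimum-degree-three assumption on $G^H_v$, each such vertex has an incident edge outside $C_f$, and that edge must also stay in the closure of $F_{\mathrm{in}}$; iterating (together with the symmetric analysis on $F_{\mathrm{out}}$) produces an $H^1H^2$-connection in $G^H_v-e_v$ that permits rerouting $e_v$ to obtain a planar embedding of $G^H_v$, contradicting the non-planarity of $G^H_v$.

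The ``empty face'' assertion then falls out: once $f$ is incident with $t'$, let $F$ be the one of $F_{\mathrm{in}}, F_{\mathrm{out}}$ on the side of $D[C_f]$ into which $f$ departs from $t'$; any additional vertex or edge of $G^H_v$ drawn inside $F$ would, by the same reasoning, force a second crossing in $D$, so $F$ contains nothing beyond the segment of $f$ from $t'$ to the crossing with $e_v$. The main obstacle is making the rerouting step precise — specifically, showing that the substructures inside $F_{\mathrm{in}}$ and $F_{\mathrm{out}}$ built via the degree-three argument combine into a genuine $H^1H^2$-path that bypasses $e_v$ without re-introducing a crossing.
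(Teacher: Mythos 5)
Your localization step is sound: arguing that $t'$ must lie on $V(C_f)$ because the edges from $t'$ to $H^1$ and to $H^2$ cannot cross $D[C_f]$ corresponds, in spirit, to the paper's observation that $D[C_f]$ must meet a cycle $C\subseteq H'+t'$ through $e_v$ a second time, and this second meeting can only be at $t'$. The genuine gap is in your incidence step. The assertion that ``iterating \dots\ produces an $H^1H^2$-connection in $G^H_v-e_v$ that permits rerouting $e_v$ to obtain a planar embedding of $G^H_v$'' is not justified: having an edge leave each intermediate vertex of the $C_f$-arc toward the interior of $F_{\mathrm{in}}$ or $F_{\mathrm{out}}$ gives no control over where those edges end up (they could simply return to $C_f$ or wander elsewhere), so there is no guaranteed $H^1H^2$-path; and even if such a path $Q$ in $G^H_v-e_v$ were drawn inside one of those faces, this would not by itself yield a planar embedding of $G^H_v$ --- you would still have to place $e_v$, and the presence of $Q$ in that face makes the placement harder, not easier. (Note also that the minimum-degree-three assumption you invoke is not automatically available for the interior of a $(T,U)$-configuration.) Your ``empty face'' conclusion is then circular, as it appeals to ``the same reasoning'' that has not been established.

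The paper's route is quite different and relies on structural input you never invoke. Working with a cycle $C\subseteq H'+t'$ through $e_v$ (hence through $t'$), it assumes for contradiction that vertices of $G-(K_v\cup K_t)$ lie on both sides of $D[C]$, deduces (using the uniqueness of the crossing) that $f$ must be a cut-edge of $(G-K_v)-t'$, and then uses the peripheral 4-connectedness of $G^{\ei4c}$ together with Lemma \ref{lm:isthmus3reduction} and Lemma \ref{lm:triangle} to rule out both ways this could happen. The contradiction is driven by the structure of $G^{\ei4c}$, not by the non-planarity of $G^H_v$. Once one side of $D[C]$ is known to contain no vertex of $G-(K_v\cup K_t)$, the incidence of $f$ with $t'$ and the emptiness of the corresponding face drop out together. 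Your proposal replaces this with a planarity argument that cannot be closed as written, and I do not see how to finish it without importing essentially the same case analysis.
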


\begin{proof}  Let $C$ be any cycle in $H'+t'$ containing $e_v$.  Since $e_v$ is a cut-edge of $H'$, $t'\in C$.  Since $G^{\ei4c}-\{v,t\}$ is connected, $G-(K_v\cup K_t)$ is connected.  

Suppose by way of contradiction that there are vertices $u$ and $w$ of $G-(K_v\cup K_t)$ on both sides of $D[C]$.  By the preceding paragraph, there is a $uw$-path $P$ in $G-(K_v\cup K_t)$.  Since $P$ is graph-theoretically disjoint from $C$, but $D[u]$ and $D[w]$ are on different sides of $D[C]$, $D[P]$ crosses $D[C]$; this must be at the unique crossing of $D$, so $f\in P$ and the crossing of $D[P]$ with $D[C]$ is the crossing of $f$ with $e_v$.  

Moreover, $D[C_f]$ crosses $D[C]$ at the crossing of $D$ and so they must cross somewhere else.  As $C_f$ and $H'$ are disjoint, the second crossing is at the vertex $t'$.  Since this is true of any cycle $C_f$ in $G-K_v$, $f$ is a cut-edge of $(G-K_v)-t'$.  

We now consider two cases.

\medskip{\bf Case 1:}  {\em there are distinct vertices $t_1$ and $t_2$ of $G^{\ei4c}-\{t,v\}$ so that $D[K_{t_1}]$ and $D[K_{t_2}]$ are on different sides of $D[C]$.}

\medskip In this case, either (i) for some vertex $s$ of $G^{\ei4c}$,  $f\in K_s$, in which case $t_1$ and $t_2$ are in different components of $G^{\ei4c}-\{t,v,s\}$, or (ii) since $G^{\ei4c}$ is non-planar and so has at least five vertices, for some vertex $s$ of $G^{\ei4c}$ that is an end of $f$, we may choose $t_1$ and $t_2$ to again be in different components of $G^{\ei4c}-\{t,v,s\}$.

In either case, the internal 4-connection of $G^{\ei4c}$ implies that there is an $i\in\{1,2\}$ so that $t_i$ is the only vertex in its component of $G^{\ei4c}-\{t,v,s\}$.  But then $tvt_i$ is a 3-cycle in $G^{\ei4c}$ having $v$ and $t_i$ as degree 3 vertices, contradicting Lemma \ref{lm:triangle}.

\medskip{\bf Case 2:} {\em there are not distinct vertices $t_1$ and $t_2$ of $G^{\ei4c}-\{t,v\}$ so that $D[K_{t_1}]$ and $D[K_{t_2}]$ are on different sides of $D[C]$.}

\medskip In this case, there is a vertex $s$ of $G^{\ei4c}-\{t,v\}$ so that $f\in K_s$ and all the vertices of $G-(K_v\cup K_x)$ on one side of $D[C]$ are in one component $K_s^1$ of $K_s-f$, while all the other vertices of $G-(K_v\cup K_x)$, including the other component $K_s^2$ of $K_s-f$, are on the other side of $D[C]$.

Lemma \ref{lm:isthmus3reduction} implies that $K_s^1$ has neighbours in two $K_r$'s.  According to $D$, these can only be $K_v$ and $K_t$.  But now the 3-cycle $tvs$ has the two degree 3 vertices $v$ and $s$, contradicting Lemma \ref{lm:triangle}.  

Since $f$ is on both sides of $D[C]$, but one side has no vertex, it must be that the end of $f$ on that side is in $C$.  But $f$ is disjoint from $H'$, and so this end can only be $t'$.\end{proof}

Our proof proceeds by considering how many common neighbours among $K_x$, $K_y$, and $K_z$ there are for $H^1$ and $H^2$.  We start by noting that there cannot be three, since then the graph $H^+$ is not planar, contradicting Definition \ref{df:TUconfig}.

\begin{claim}\label{cl:H1H2oneNbr}  $H^1$ and $H^2$ have exactly one common neighbour.  \end{claim}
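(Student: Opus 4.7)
The plan is to show that the two components $H^1$ and $H^2$ of $H'-e_v$ share exactly one common neighbour in $\{x,y,z\}$, by systematically ruling out $0$, $2$, and $3$ as possibilities. Since $H$ is an $\{x,y,z\}$-bridge that attaches at all three of $x,y,z$, the attachment sets $A_1, A_2 \subseteq \{x,y,z\}$ of $H^1$ and $H^2$ satisfy $A_1 \cup A_2 = \{x,y,z\}$, so $|A_1 \cap A_2| \in \{0,1,2,3\}$.

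First I will rule out $|A_1 \cap A_2| = 3$. In this case, contracting each $H^i$ to a single vertex $h_i$ inside $H^+$ yields a graph containing $K_{3,3}$ with bipartition $\{x,y,z\} \mid \{w, h_1, h_2\}$, where $w$ is the apex vertex adjacent to $x,y,z$ in $H^+$. This contradicts the planarity of $H^+$ guaranteed by $(H,\{x,y,z\})$ being a $(T,U)$-configuration. Next I will rule out $|A_1 \cap A_2| = 0$: the sets $A_1, A_2$ would then partition $\{x,y,z\}$, so one of them, say $A_1$, has exactly one element, which we may label $x$. Then $\{x, e^2_v\}$ (where $e^2_v$ is the end of $e_v$ in $H^2$) separates the nucleus of $H^1$ from the rest of $G^H_v$; if $H^1$ has at least two vertices this is a $2$-cut contradicting $3$-connectivity of $G^H_v$, while if $H^1$ is a single vertex $e^1_v$, then $e^1_v$ has degree at most $2$ in the underlying simple graph, contradicting the minimum-degree-$3$ hypothesis (after ruling out parallel edges via Observation \ref{obs:parallel}).

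The main obstacle is ruling out $|A_1 \cap A_2| = 2$, say with common neighbours $x$ and $y$. Applying Claim \ref{cl:ft'} separately with $t=x$ and $t=y$ shows that $f$ is incident with both $x'$ and $y'$; since these are distinct by Lemma \ref{lm:attsDistinct}, we conclude $f = x'y'$. Claim \ref{cl:ft'} further provides faces $F_x$ of $H'+x'$ and $F_y$ of $H'+y'$, each incident with $e_v$, each containing only the corresponding half of $f$ and otherwise empty. From this emptiness I will argue that the drawing $D$ restricted to a neighbourhood of $e_v$ can be modified by rerouting $f$ through the union of $F_x$ and $F_y$ around the endpoint of $e_v$ bordering both faces, rather than across $e_v$, thereby producing a crossing-free drawing of $G^H_v$. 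Since $G^H_v$ contains $G^{\ei4c}$ as a minor (obtained by contracting each $K_u$ for $u \neq v$ together with the substituted $H-\{x,y,z\}$) and $G^{\ei4c}$ is non-planar, $G^H_v$ is itself non-planar, yielding the desired contradiction and completing the proof.
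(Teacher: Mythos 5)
Your case decomposition (3, 0, or 2 common neighbours, ruling out all but 1) is the same as the paper's, and your treatment of the 3-common-neighbours case via a $K_{3,3}$-subdivision in $H^+$ matches the paper's preliminary remark exactly. The problems are in the other two cases.

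For $|A_1\cap A_2|=0$, your argument hinges on $G^H_v$ being $3$-connected and having minimum degree $3$, but neither property is hypothesized in Theorem \ref{th:Regrow} nor established at this point (Phase~1 of Theorem \ref{th:TUreplace} later proves $3$-connectivity only for the very specific substitution $G_{\rep(v)}$, not for an arbitrary compatible $(T,U)$-configuration $H$). The paper avoids this entirely: it observes that $e_v$ being a cut-edge separating $x$ from $\{y,z\}$ in $H$ means $x\notin T$, and since $G_v$ has the \emph{same} $(T,U)$-type, $G_v$ also has such a cut-edge $e'_v$; one can then substitute $K_v$ back for $H'$ in $D$ so that $f$ crosses $e'_v$, producing a $1$-drawing of $G$ and contradicting $\crn(G)\ge 2$. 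This cut-edge transfer is the key mechanism you are missing, and it is precisely what makes $(T,U)$-equivalence the right notion here.

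For $|A_1\cap A_2|=2$, you correctly extract $f=x'y'$ from Claim~\ref{cl:ft'}, but the proposed reroute through $F_x\cup F_y$ ``around the endpoint of $e_v$'' does not obviously work: $F_x$ and $F_y$ are faces of the different subgraphs $H'+x'$ and $H'+y'$, and near the crossing point they sit on opposite sides of $D[e_v]$ (since the two arcs of $f$ do), so whether a crossing-free channel around an endpoint of $e_v$ exists is not clear from cl:ft' alone. The paper instead first notes (by the planarity of $H^+$) that at least one of $H^1,H^2$ — say $H^2$ — has no neighbour in $K_z$, which localizes $D[H^2]$ in a region attached only at $x'$, $y'$, and $e_v$; the reroute of $f$ then goes around $D[H^2]$ rather than through the $F_x\cup F_y$ channel. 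You would need to either supply that asymmetry argument or give a much more careful topological justification for your channel reroute.
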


\begin{proof}  We have already ruled out the possibility that $H^1$ and $H^2$ have three common neighbours.

To rule out two common neighbours, suppose by way of contradiction that $H^1$ and $H^2$ have the two common neighbours $K_x$ and $K_y$.  By the preceding remark, at least one of $H^1$ and $H^2$ does not have a neighbour in $K_z$.  Since $H'$ does have a neighbour in $K_z$, we may choose the labelling so that $H^1$ has a neighbour in $K_z$ and $H^2$ does not. 

Claim \ref{cl:ft'} implies $f$ is incident with both $x'$ and $y'$.  But now $D[f]$ can be rerouted along the other side of the $x'H^2$-edges, around $H^2$, and on to $y'$ so that $G^H_v$ has no crossings.  This implies the contradiction that $G^{\ei4c}$ is planar.  We conclude that $H^1$ and $H^2$ have at most one common neighbour.

If they have no common neighbours, then $H^1$ has neighbours just in $K_x$, while $H^2$ has neighbours in $K_y$ and $K_z$, but not in $K_x$.  In this case, $e_v$ is a cut-edge in $H$ separating $x$ from $\{y,z\}$.  It follows that $x\notin T$.  Since $G_v$ is also a $(T,U)$-configuration, there is an cut-edge $e'_v$ of $G_v$ separating $x$ from $\{y,z\}$.  Now we can replace $H'$ in $T$ with $K_v$ in such a way that $e'_v$ (in fact the only edge of $G_v$ incident with $x$) is crossed by $f$ to yield a 1-drawing of $G$.  This contradiction completes the proof of the claim.
\end{proof}

We conclude from Claim \ref{cl:H1H2oneNbr} that $H^1$ and $H^2$ have precisely one common neighbour $x'$.  Claim \ref{cl:ft'} implies that $f$ is incident with $x'$.  

If, for some $i\in \{1,2\}$, $H^i$ has no other neighbour, then we may reroute $f$ to go around $D[H^i]$, yielding a planar embedding of $G^H_v$ and, therefore, of the non-planar graph $G^{\ei4c}$, a contradiction.

Thus, we may choose the labelling so that $H^1$ has at least one neighbour in $K_y$, while $H^2$ has at least one neighbour in $K_z$.  If, say, $H^1$ is joined to $K_y$ by only one edge, then $y\notin T$; therefore, $y$ is incident with a unique edge in $G_v$ and we can replace $D[H]$ with the planar embedding of $K_v$ so that it is the $yK_v$-edge that is crossed by $f$.  This yields that contradiction that $G$ has a 1-drawing.

Thus, we may assume that $T=\{x,y,z\}$.  However,  there are not edge-disjoint $yz$-paths in $H-x$ ($e_v$ is a cut-edge separating $y$ and $z$).  Therefore, $U=\{y,z\}$, showing $G_v$ is doglike.  It follows that $G_v-x$ has a cut-edge $e'_v$ separating $y$ and $z$.  We may substitute the planar embedding of $K_v$ for $D[H]$ so that $e'_v$ crosses $f$, yielding the final contradiction that $G$ has a 1-drawing.
\end{cproof}

}\section{Reducing to a basic 2-crossing-critical example}\printFullDetails{ 

In this section, we show that if $G$ is a 3-connected \2cc\ graph that reduces by planar 3-reductions to a \wording{peripherally}-4-connected graph, then there is a ``basic" 3-connected \2cc\ graph from which $G$ is obtained by the regrowth mechanism of the preceding section.

}\begin{theorem}\label{th:TUreplace} Let $G\in\m2$ reduce by planar 3-reductions to a \wording{peripherally}-4-connected graph $G^{\ei4c}$.   Let $v$ be a vertex of $G^{\ei4c}$ with just the three neighbours $x$, $y$, and $z$, so that $(G_v,\{x,y,z\})$ is a $(T,U)$-configuration and $K_v$ has at least two vertices.  Let $G_{\rep(v)}$ be the graph obtained from $G$ by contracting as indicated in the following cases. 
\begin{enumerate}
\item If $(G_v,\{x,y,z\})$ is doglike, then let $e$ be the cut-edge of $K_v$ and contract each component of $K_v-e$ to a vertex.
\item If $(G_v,\{x,y,z\})$ is not doglike, then we have the following subcases.
\begin{enumerate}
\item If none of $G_x$, $G_y$, and $G_z$ is doglike, then contract $K_v$ to a vertex.
\item If $(|T|,|U|)=(3,3)$, then contract $K_v$ to a vertex.
\item\label{it:dogNbr} If $G_x$ is doglike and $y\notin T$, then let $C$ be a cycle in $G_v^+$ containing $x'$, $y'$, and $z'$,  delete everything in $K_v-E( C)$ and contract the edges of  $C$ to the 3-cycle $x'y'z'$.
\end{enumerate}
\end{enumerate}
Then $G_{\rep(v)}\in\m2$.
 \end{theorem}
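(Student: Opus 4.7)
The plan is to realize $G_{\rep(v)}$ as a substitution $G^{H}_v$ for a suitably small $(G,K_v)$-compatible $(T,U)$-configuration $(H,\{x,y,z\})$ that shares the $(T,U)$-type of $(G_v,\{x,y,z\})$, and then verify the three defining properties of $\m2$: the lower bound on crossing number is immediate from Theorem~\ref{th:Regrow}, 3-connectivity is a bookkeeping exercise, and edge-criticality is the real work.

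First I would identify $H$ explicitly in each of the cases of the statement. In the doglike case (1), $H$ is the two-vertex graph obtained by contracting each side of the cut-edge of $K_v$ to a point; the edge between these two points plays the role of the cut-edge, so $H$ retains the $(3,2)$-type with the same nose. In case (2)(a) (no doglike neighbour) and case (2)(b) (type $(3,3)$), $H$ is a single vertex $v^{*}$ adjacent to each of $x,y,z$, with multiplicities dictated by Lemma~\ref{lm:twoNeighbours}: these multiplicities (one or two, each at most two) are forced so that the star has exactly the $(T,U)$-type of $G_v$, using that $(3,3)$ in (2)(b) requires two edge-disjoint paths from each $t$ and hence a double edge. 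In case (2)(iii), $H$ is the 3-cycle $x'y'z'$ obtained by contracting the cycle through $x',y',z'$ supplied by Lemma~\ref{lm:x'y'z'Cycle}; one checks that when $y\notin T$ this triangle carries the same $(T,U)$ pattern as $G_v$. Compatibility in each case follows from Definition~\ref{df:substitution}: the degrees at $x,y,z$ are preserved by construction; Lemma~\ref{lm:no2matching} and Lemma~\ref{lm:attsDistinct} guarantee that the three attachment points of $K^{\max}_v$ are distinct and behave correctly; and the required cycle through $\bar N_H$ in $H-N$ is trivial in each case or is supplied by Lemma~\ref{lm:x'y'z'Cycle}.

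With $G_{\rep(v)}=G^{H}_v$ so identified, Theorem~\ref{th:Regrow} immediately gives $\crn(G_{\rep(v)})\ge 2$. For 3-connectivity, suppose $\{u,w\}$ were a 2-cut in $G_{\rep(v)}$. If $\{u,w\}$ is disjoint from $H$, it is already a 2-cut in $G$, a contradiction. Otherwise at least one of $u,w$ lies in the image of $H$, and the explicit small structure of $H$ (a single vertex, a triangle, or two adjacent vertices) together with the attachment control given by Lemmas~\ref{lm:no2matching}, \ref{lm:isthmus3reduction}, and \ref{lm:x'y'z'paths1} lets one pull $\{u,w\}$ back to a 2-cut in $G$, again contradicting 3-connectivity of $G$.

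The main obstacle is edge-criticality: for every edge $e$ of $G_{\rep(v)}$, one must produce a 1-drawing of $G_{\rep(v)}-e$. I would split on whether $e$ lies in $H$ or in $G_{\rep(v)}-H$. For $e\notin H$, start with a 1-drawing $D$ of $G-e$, which exists by 2-criticality of $G$; the subdrawing $D[K^{\max}_v]$ lies in a disc-like region of the plane bounded (away from the crossing) by a cycle passing through the attachment points, so the planar graph $H^{+}$ can be drawn inside this region in place of $D[K^{\max}_v]$ without introducing new crossings, yielding a 1-drawing of $G_{\rep(v)}-e$. For $e\in H$, one associates to $e$ a ``representative'' edge $e^{\ast}$ of $K_v$ having the same combinatorial role (the doglike cut-edge maps to the cut-edge of $K_v$; each edge of the triangle in case (2)(iii) maps to an edge of the corresponding path of $C$; each edge at $v^{*}$ maps to an edge at the corresponding attachment of $K_v$). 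Because $H$ and $G_v$ share the same $(T,U)$-type, deleting $e$ from $H$ produces the same cut structure as deleting $e^{\ast}$ from $G_v$, so a 1-drawing of $G-e^{\ast}$ restricts, via the same planar substitution as above, to a 1-drawing of $G_{\rep(v)}-e$. The technical subtlety is ensuring the planar embedding of $H^{+}$ (or $H^{+}-e$) respects the cyclic order of attachments in $D$: this is guaranteed by the Ordering Lemma~\ref{lm:orderingLemma} applied to the $\iso{\{x',y',z'\}}$-bridges, together with the fact that we are performing only planar 3-reductions so the local embedding is unambiguous.
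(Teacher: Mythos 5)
Your reduction of $G_{\rep(v)}$ to a substitution $G^H_v$ for an explicitly small $(G,K_v)$-compatible $(T,U)$-configuration $H$ matches the paper's framing, and Phase~2 (the lower bound $\crn(G_{\rep(v)})\ge 2$) is indeed an immediate consequence of Theorem~\ref{th:Regrow}, exactly as you say. Your 3-connectivity sketch is also in the spirit of the paper's Phase~1.

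However, your edge-criticality argument (your ``Phase~3'') has a genuine gap. You assert that in a 1-drawing $D$ of $G-e_G$, the subdrawing $D[K^{\max}_v]$ ``lies in a disc-like region of the plane bounded (away from the crossing) by a cycle passing through the attachment points,'' so $H^+$ can simply be drawn in its place. This is false in general: nothing forces the unique crossing of $D$ to avoid $K_v$. The hard case --- and the one the paper spends nearly all of its Phase~3 on --- is precisely when the crossing is of some $e_v\in L_w\subseteq K_v$ with an edge $f$ outside of $L_w$. In that case $K^{\max}_v$ is not drawn in a disc, $e_v$ must be a cut-edge of $L_w$, and one must carefully re-route one side of $L_w$ across $f$ (or else use the doglike structure, or the cycle $C$ through $x',y',z'$) to recover a 1-drawing of $G_{\rep(v)}-e$. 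The paper's argument splits into ``$f$ is a cut-edge of $(G-e_G)-L_w$'' versus ``$f$ lies on a cycle $C_f$,'' and within the latter carefully distinguishes the doglike and non-doglike subcases (including the ``special situation'' in the choice of $e_G$ when $K_v$ contracts to a single vertex and $e$ is one of two parallel edges $vx$). Your proposal does not engage with any of this; the reassignment of $e\in H$ to a ``representative'' $e^*\in K_v$ inherits the same untreated difficulty. Until you handle the case where the crossing is incident with $K_v$, the criticality claim is unproven.
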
\printFullDetails{

There is one clarification that is required to understand one fine detail of $G_{\rep(v)}$.  If, for example, the vertex $x'$ is in $K_v$, then we proceed precisely as described in the statement.  If, however, $x'$ is in $K_x$ and $x\in T$, then in $G_{\rep(v)}$ we retain only two edges between $x'$ and the contracted vertex in $K_{\rep(v)}$ to which it is joined.  This especially applies in the case \ref{it:dogNbr}:  if $z'\in K_z$, then we keep only the two edges of $C$ incident with $z'$, while if $z'\in K_v$, then we keep all the $z'K_z$-edges. 

There is also an important remark to be made.  We had long thought that it was possible to reduce each $K_v$ to a single vertex and retain 2-criticality.  This might be true in the particular cases of 3-connected \2cc\ graphs with no subdivision of $V_8$, but it is certainly not true of all 3-connected \2cc\ graphs.  

In Definition \ref{df:theTiles} we described the set $\mathcal S$ of all graphs that can be obtained from the 13 tiles and the two frames.  These graphs are all 3-connected and \2cc.  Consider any one of these that uses the right-hand frame in Figure \ref{fg:frames} and uses the second picture in the third row of Figure \ref{fg:pictures}. With appropriate choices of the neighbouring pictures, the 3-cycle in the upper half of the picture is \wording{part of a doglike} $G_v$ that contains the parallel edges in the picture and the parallel edges in the frame:  the horizontal edge in the 3-cycle is $K_v$.  The vertical edge in the other 3-cycle in the picture is a $K_x$.    When we do the planar 3-reductions in this case, the contractions of $K_x$ and $K_v$ produce a pair of parallel edges not in the rim.  The conclusion is that the resulting \p4c\ graph plus parallel edges is not \2cc.  Thus, the technicalities we must endure in the statement of Theorem \ref{th:TUreplace} seem to be unavoidable.
\minorrem{(This paragraph refers to Figure \ref{fg:pictures}.  When the pictures are done properly, we will likely have to rewrite the reference to that picture.)}

\bigskip

\begin{cproof}   We use the notation $K_{\rep(v)}$\index{$K_{\rep(v)}$} for the contraction of $K_v$ in $G_{\rep(v)}$. 

\medskip{\bf Phase 1:}  {\em showing $G_{\rep(v)}$ is 3-connected.}

\medskip Let $t$ and $u$ be vertices of $G_{\rep(v)}$.  We show $G_{\rep(v)}-\{t,u\}$ is connected. 

Let $w_t$ and $w_u$ be the vertices of $G^{\ei4c}$ so that $t\in K_{w_t}$ and $u\in K_{w_u}$ (taking, for example, $K_{w_t}$ to be $K_{\rep(v)}$ if $t\in K_{\rep(v)}$).
It follows from Lemma \ref{lm:x'y'z'paths1} that every vertex of every $K_s$ has a path in $G-\{t,u\}$ to at least \wording{one neighbour of $K_s$} that is not one of $K_{w_t}$ or $K_{w_u}$.   This is also true of $K_{\rep(v)}$, as may be seen by checking the analogues for $K_{\rep(v)}$ of Lemma \ref{lm:x'y'z'paths1} in the three cases for which $K_{\rep(v)}$ has at least two vertices.   (Note there are two possible outcomes for $K_{\rep(v)}$ in Case \ref{it:dogNbr}, depending on whether $z'\in K_v$, in which case $K_{\rep(v)}$ is a 3-cycle, or $z'\in K_z$, in which case $K_v$ is an edge.)

Since each $K_s$ is connected, $G_{\rep(v)}-\{t,u\}$ is connected.

\medskip {\bf Phase 2:}  {\em showing $\crn(G_{\rep(v)})\ge 2$.}

\medskip  The graph $\bar K_{\rep(v)}$ obtained from $K_{\rep(v)}$ by adding $x$, $y$, and $z$ is a $(G,K_v)$-compatible $(T,U)$-configuration.  Therefore, Phase 2 follows immediately from Theorem \ref{th:Regrow}.

\medskip{\bf Phase 3:}  {\em showing that $G_{\rep(v)}$ is 2-crossing-critical.}

\medskip Let $e$ be any edge of $G_{\rep(v)}$.  Then there is \wording{an edge $e_G$ in $G$ naturally} corresponding to $e$ (in the sense that precisely the same contractions and deletions of $G$ and $G-e_G$ can be used to obtain both $G_{\rep(v)}$ \wording{and $G_{\rep(v)}-e$)}.   

\medskip {\bf Special situation.}  {\em There is one case where the choice of $e_G$ must be made with special care.  Suppose $K_v$ contracts down to the single vertex $v$ and $e$ is one of two parallel edges $vx$.  In the case $K_v$ has a cut-edge $e'$, Lemma \ref{lm:isthmus3reduction} implies each component of $K_v-e'$ is joined to two of the neighbours of $v$.  Suppose that $K_x$ is the only common neighbour of these two components.  Since $G_v$ is not  doglike, some component $L$ of $K_v-e'$ is joined by exactly one edge to its other neighbour; choose $e_G$ to be an $xL$-edge.}

\medskip
}\begin{definition}  For each vertex $w$ of $K_{\rep(v)}$, $L_w$\index{$L_w$} denotes the subgraph of $K_v$ that contracts to $w$.  \end{definition}\printFullDetails{

Since $G$ is 2-crossing-critical, there is a 1-drawing $D$ of $G-e_G$.  If no edge of any $L_w\subseteq K_v$ is crossed in $D$, then these may each be contracted to obtain a 1-drawing of $G_{\rep(v)}-e$, and we are done.    

\begin{claim}\label{cl:allL_wCrosses}  If there is a drawing of $G-e_G$ in which all the crossings are between edges incident with vertices in $L_w$, then $G_{\rep(v)}-e$ is planar.  \end{claim}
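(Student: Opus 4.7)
The plan is to exploit the fact that, in the hypothesized drawing $D$ of $G-e_G$, every crossing is confined to edges with at least one end in the connected subgraph $L_w$, and then use the standard trick of inserting a vertex at each crossing and contracting a connected set containing all such insertions. Concretely, subdivide each crossing of $D$ with a new degree-$4$ vertex to obtain a \emph{planar} embedding $D^\times$ of an auxiliary graph $G^\times$. For each inserted vertex $p$, the two original edges crossing at $p$ each have an end in $L_w$, so $p$ is joined in $G^\times$ to a vertex of $L_w$ by (a segment of) at least one of those edges. Consequently the subgraph $J$ of $G^\times$ consisting of $L_w$ together with every inserted vertex and every half-edge joining an inserted vertex to $L_w$ is connected.

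Since $D^\times$ is a planar embedding, contracting the connected subgraph $J$ to a single point $w$ yields a planar embedding of a graph $G^*$. By construction, $G^*$ is obtained from $G-e_G$ by contracting all of $L_w$ to the single vertex $w$ (the inserted vertices disappear, their remaining half-edges joining at $w$ reconstitute the original non-$L_w$ portions of the crossing edges). For every other vertex $w'\in K_{\rep(v)}$ with $w'\ne w$, the subgraph $L_{w'}$ contains no endpoint of any edge crossed in $D$, hence is drawn without any crossings in $D^\times$; therefore $L_{w'}$ lies in a closed disk of the planar embedding of $G^*$, and contracting it to a single vertex preserves planarity. Iterating over all $w'\ne w$ produces a planar embedding of the graph obtained from $G-e_G$ by contracting every $L_{w'}$, which is precisely $G_{\rep(v)}-e$ (up to possibly extra parallel edges or loops that may be harmlessly deleted while preserving planarity). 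Hence $G_{\rep(v)}-e$ is planar, as required.

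\textbf{Anticipated obstacle.} The only real subtlety is bookkeeping: verifying that the graph produced after all contractions really is $G_{\rep(v)}-e$ and not some edge-multiplicity-altered variant, especially in light of the ``Special situation'' singled out just before the claim (where $K_v$ contracts to a single vertex, $e$ is one of two parallel edges, and the choice of $e_G$ was made with care to match). In that situation one must check that the correspondence between $e$ and $e_G$ and the doglike/non-doglike case analysis guarantees that the edges surviving contraction have the correct multiplicity in $G_{\rep(v)}-e$; any excess multi-edges or loops produced by the contraction are simply deleted, which does not destroy planarity.
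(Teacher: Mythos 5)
Your proof is correct and takes essentially the same approach as the paper: insert a vertex at each crossing, contract down to $G_{\rep(v)}-e$, and read off planarity. The paper states this in two sentences (``insert vertices at each crossing point, and contract every edge in the new graph that has both ends in some $L_u$''); your version is a little more careful, in that you explicitly include the inserted crossing vertices in the connected set $J$ that is contracted to $w$, whereas the paper's literal wording would leave those degree-$4$ vertices dangling. Absorbing them into the contraction of $L_w$ — justified because both edges at a crossing have an end in $L_w$, so $J$ is connected — is exactly what is needed to land precisely on $G_{\rep(v)}-e$, so your added care is well placed rather than a departure.
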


\begin{proof}  Insert vertices at each crossing point and contract every edge in the new graph that has both ends in some $L_u$.  The result is a planar embedding of $G_{\rep(v)}-e$.  \end{proof}

Therefore, we may assume the crossing edges are $e_v\in L_w\subseteq K_v$ with some other edge $f$ not incident with any vertex in $L_w$.

\medskip{\bf Case 1:} {\em $f$ is a cut-edge of $(G-e_G)-L_w$.}

\medskip In this case, $D[(G-e_G)-L_w]$ has no crossing, so it is planar.  Therefore, the faces on each side of $f$ in $D[(G-e_G)-L_w]$ are the same.  Thus, the ends of $e_v$ are in the same face of $D[(G-e_G)-L_w]$.

Consider now the planar embedding $D[(G-e_G)-e_v]$.  The two ends of $e_v$ are in the same face of the subembedding $D[(G-e_G)-L_w]$ and so may be joined by an arc that is disjoint from $D[(G-e_G)-L_w]$.  This produces a drawing of $G-e_G$ in which all the crossings involve $e_v$ and edges incident with at least one vertex in $L_w$.  Claim \ref{cl:allL_wCrosses} implies $G_{\rep(v)}-e$ is planar, as required.

\medskip{\bf Case 2:}  {\em $f$ is not a cut-edge of $(G-e_G)-L_w$.}

\medskip In this case, $f$ is in a cycle $C_f$ of $(G-e_G)-L_w$.  Moreover, $D[C_f]$ separates the two ends of $e_v$, so $e_v$ is a cut-edge of $L_w$.  Let $L_w^1$ and $L_w^2$ be the components of $L_w-e_v$.

We \wording{consider separately} two cases for $G_v$.

\medskip{\bf Subcase 2.1:}  {\em $G_v$ is doglike.}

\medskip In this subcase, $K_{\rep(v)}$ is two vertices $w$ and $\bar w$ joined by a cut-edge $e'$ of $G_v-x$, each joined by an edge to $x'$, $w$ is joined by at least two edges to $K_y$ and $\bar w$ is joined by at least two edges to $K_z$.   Lemma \ref{lm:twoNeighbours} implies that $K_x$ has at most two neighbours in $K_v$.  We already know there is one in each of $L_w$ and $L_{\bar w}$.  Lemma \ref{lm:no2matching} now implies there is a vertex $x'\in K_x$ incident with all the $K_vK_x$-edges in $G$. Thus, we may choose the labelling of $L_w^1$ and $L_w^2$ so that the neighbour of $x'$ in $L_w$ is in $L_w^1$.  

We see that $x'$ and the end of $e_v$ in $L_w^2$ are neighbours of vertices in $L_w^1$, and neither of these vertices is in $L_w^1$.  The only other possibilities for neighbours of $L_w^1$ outside of $L_w^1$ are in $K_y$ and $L_{\bar w}$, the latter being the end of $e'$.  
A similar remark holds for $L_w^2$:  it has the neighbour (via $e_v$) in $L_w^1$, and can have at most neighbours in $K_y$ and $L_{\bar w}$ (via $e'$).

Since $G$ is 3-connected, for each $i=1,2$,  $L_w^i$ has at least two neighbours outside of $L_w^i$ other than $x'$.  From the neighbour analysis of the preceding paragraph, there are at most three in total:  two to $K_y$ and one to $L_{\bar w}$.    There are two ways this can happen.

In the first way, both edges from $L_w$ to $K_y$ have their ends in $L_w^2$, while $e'$ has an end in $L_w^1$.  But then $e_v$ is a cut-edge of $K_v$ that violates Lemma \ref{lm:isthmus3reduction}:  the edge $e_G$ cannot connect $L_w^2$ to either $x'$ (Lemma \ref{lm:twoNeighbours} or $K_z$ (because $e'$ is a cut-edge of $G_v-x$), so the component $L_w^2$ of $K_v-e_v$ is joined only to $K_y$. 

Therefore, $e'$ has one end in $L_w^2$ and the two $K_vK_y$ edges have ends in different ones of $L_w^1$ and $L_w^2$.  It follows that $y'$ is incident with these \wording{edges, so Lemma \ref{lm:twoNeighbours}} implies $y'$ has precisely these neighbours in $K_v$.

\wording{Contract $D[e_v]$ so that} $L_w^1$ is pulled across $f$ and, if necessary, shrink $D[L_w^1]$ so that we obtain a new drawing $D^1$ of $G-e_G$ in which $f$ crosses the edges from $x'$ and $y'$ to $L_w^1$.  

\begin{claim}\label{cl:fNotInLbarW}  $f\notin L_{\bar w}$.\end{claim}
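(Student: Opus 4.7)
The plan is to argue by contradiction: suppose $f\in L_{\bar w}$ and produce a cycle $C^*$ through $f$ inside $(G-e_G)-L_w$ that avoids the vertex $y'$, then exploit the fact that $y'$ is adjacent (by uncrossed edges) to both $L_w^1$ and $L_w^2$.

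First, I would construct $C^*$. If $f$ is not a cut-edge of $L_{\bar w}$, some cycle through $f$ lies entirely inside $L_{\bar w}$, and since $y'\in K_y$ is disjoint from $L_{\bar w}\subseteq K_v$, such a cycle automatically avoids $y'$. Otherwise $f$ is a cut-edge of $L_{\bar w}$, hence also a cut-edge of $K_v$. Letting $L_{\bar w}^1,L_{\bar w}^2$ be the components of $L_{\bar w}-f$, with $n'\in L_{\bar w}^i$, the singleton component $L_{\bar w}^{3-i}$ of $K_v-f$ must have neighbours in at least two distinct $K_t$'s by Lemma \ref{lm:isthmus3reduction}; the only candidates are $K_x$ and $K_z$, so $n$ lies in $L_{\bar w}^{3-i}$ (separated from $n'$ by $f$) and $L_{\bar w}^{3-i}$ has a $K_z$-neighbour. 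A parallel edge-counting argument combined with the minimum-degree-$3$ hypothesis on $G$ shows that $L_{\bar w}^i$ must also have a $K_z$-neighbour (otherwise $L_{\bar w}^i=\{n'\}$ would be a degree-$2$ vertex). Since $K_z$ is connected and $G$ is $3$-connected, I would then verify that the two $K_z$-neighbours of $L_{\bar w}^1$ and $L_{\bar w}^2$ remain connected in $((G-e_G)-L_w)-y'$, yielding an $L_{\bar w}^1 L_{\bar w}^2$-path whose concatenation with $f$ gives the desired cycle $C^*\subseteq(G-e_G)-L_w$ disjoint from $y'$.

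Second, I would use the topology of $D$ to obtain a contradiction. Since $y'\notin C^*$, the point $D[y']$ lies in one of the two faces of the closed curve $D[C^*]$. The two edges joining $y'$ to $L_w^1$ and $L_w^2$ are uncrossed in $D$ (the unique crossing is $e_v\times f$), so neither crosses $D[C^*]$. Hence both $D[L_w^1]$ and $D[L_w^2]$ lie in the same face of $D[C^*]$ as $D[y']$. But $D[e_v]$ crosses $D[C^*]$ exactly once, at the point where $e_v$ meets $f\in C^*$, so its two endpoints, one in $L_w^1$ and one in $L_w^2$, must lie in \emph{different} faces of $D[C^*]$. This contradiction establishes $f\notin L_{\bar w}$.

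The main obstacle will be the cut-edge subcase in Step 1: verifying that removing the single edge $e_G$ and the single vertex $y'$ from $(G-L_w)$ does not disconnect the $K_z$-neighbours of $L_{\bar w}^1$ from those of $L_{\bar w}^2$. This requires combining Lemma \ref{lm:isthmus3reduction}, the minimum-degree hypothesis, and $3$-connectivity, and is most delicate when $e_G$ lies inside $K_z$ or on the $K_z$-to-$K_x$ path in $G-K_v-K_y$; if a single edge deletion could genuinely disconnect these routes one would have to fall back on the 3-connection of $G$ (equivalently, on $3$-edge-connectivity) to reroute through a different $K_t$-neighbour, which is always available given the absence of $3$-cycles with two degree-$3$ vertices in the peripherally-4-connected graph $G^{\ei4c}$ (Lemma \ref{lm:triangle}).
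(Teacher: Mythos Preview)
Your approach is correct and is essentially the paper's argument; both construct a cycle through $f$ disjoint from $L_w\cup\{y'\}$ and then use crossing parity. The paper's proof is just more compressed: it observes that ``exactly the same analysis as for $L_w$'' applies with the roles of $(e_v,L_w,y)$ and $(f,L_{\bar w},z)$ swapped, giving the structure of $L_{\bar w}^1,L_{\bar w}^2$ directly, and then notes that the disjoint cycles in $L_w+y'$ (through $e_v$) and in $L_{\bar w}+z'$ (through $f$) cross exactly once in $D$, which is impossible. In particular, your Case~A never arises, since the cycle in $L_w+y'$ through $e_v$ forces $f$ to be a cut-edge of $L_{\bar w}$.

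Your ``main obstacle'' is a non-issue. Once you know both $L_{\bar w}^1$ and $L_{\bar w}^2$ have $K_z$-neighbours, Lemma~\ref{lm:no2matching} tells you all $K_vK_z$-edges share a single vertex; since the $L_{\bar w}^1$- and $L_{\bar w}^2$-ends are distinct, that shared vertex is $z'\in K_z$. So $C^*$ is simply $L_{\bar w}^1\to z'\to L_{\bar w}^2\to f\to L_{\bar w}^1$, and $z'\ne y'$ trivially. No routing through $K_z$, no worry about where $e_G$ lies, and no appeal to Lemma~\ref{lm:triangle} is needed. (Also, your justification that $L_{\bar w}^i$ has a $K_z$-neighbour should invoke 3-connectivity rather than minimum degree: with only the two exit edges $e'$ and $f$, either $|L_{\bar w}^i|=1$ and that vertex has degree~2, or the other ends of $e',f$ form a 2-cut in $G$.)
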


\begin{proof}  If $f\in L_{\bar w}$, then exactly the same analysis as for $L_w$ implies that $L_{\bar w}-f$ has two components $L_{\bar w}^1$, from which there is an edge to $x'$ and an edge to $z'$, and $L_{\bar w}^2$, from which there is an edge to $z'$ and $L_w^2$.  But now the graph-theoretically disjoint cycles in $L_w+y'$ containing $e_v$ and $L_{\bar w}+z'$ containing $f$ cross exactly once in $D$, which is impossible.  \end{proof}

It follows from Claim \ref{cl:fNotInLbarW} that $f\notin L_{\bar w}$.  We contract the uncrossed $D^1[L_w]$ and $D^1[L_{\bar w}]$ to obtain a drawing $D^2$ of $G_{\rep(v)}-e$, in which the only crossings are of $f$ with the edges from $x'$ and $y'$ to $L_w^1$.  In $D^2$, there are parallel edges $y'w$; the one from $y'$ to $L_w^2$ is not crossed in $D^2$, so we may make all the others go alongside the uncrossed one.  This yields a drawing $D^3$ of $G_{\rep(v)}-e$ in which the only crossing is $x'w$ with $f$, so $D^3$ is a 1-drawing of $G_{\rep(v)}-e$, as required.

\medskip{\bf Subcase 2:}  {\em $G_v$ is not doglike.}

\medskip{\bf Subcubcase 2.1:} {\em there is a neighbour $x$ of $v$ in $G^{\ei4c}$ so that $G_x$ is doglike and $x'\in K_v$ is the nose of $G_x$.}

\medskip  Let $C$ be the cycle in $G_v$ that we contracted to the 3-cycle $x'y'z'$.  We let $G^C$ be the subgraph of $G$ obtained by deleting all edges between the various $L_u$ except the one or three edges in $C$.  Choose the labelling so that $y$ is a neighbour of $v$ in $G^{\ei4c}$ so that there is exactly one $K_vK_y$-edge in $G$; thus $y'\in K_v$.

Let $r$ be that element of $\{x,y,z\}$ so that $r'\in L_w$.  There are precisely two edges $e_1$ and $e_2$ in $G^C$  coming out of $L_w$ in $G_v-r$.     

Let $L_w^1$ be the component of $L_w-e_v$ containing $r'$ and let $L_w^2$ be the other.  Since  $C$ goes through $r'$, at least one of $e_1$ and $e_2$ is incident with a vertex in $L_w^1$.  Therefore, at most one of $e_1$ and $e_2$ has an end in $L_w^2$.  

We claim that $L_w^2$ is not joined to any other vertex in $G^C$.  The only possibility is that there is an edge from $L_w^2$ to $K_x\cup K_y\cup K_z$.  Since all the $K_vK_x$- and $K_vK_y$-edges in $G$ are incident with $x'$ and $y'$, respectively and $x'$ and $y'$ are not in $L_w^2$,  there are no edges in $G$ from $L_w^2$ to $K_x\cup K_y$.  

As for the possibility of an $L_w^2K_z$-edge, this can only exist if $z'\in K_z$.  But $z'$ already has two known neighbours in $K_v$, namely the $K_v$-ends of the edges of $C$ incident with $z'$.  Lemma \ref{lm:twoNeighbours} implies these are the only vertices of $K_v$ adjacent to vertices in $K_z$.  Therefore these known $z'$-neighbours are the only ones; in particular, $z'$ has no neighbour in $L_w^2$, as claimed.

We obtain a 1-drawing of $G_{\rep(v)}-e$ by partially contracting $D[e_v]$ and, if necessary, scaling $D[L_w^2]$ down so that $L_w^1$ and $L_w^2$ are now drawn on the same side of $f$.  The only crossing in this new drawing is of the edge of $D[G^C]$, if it exists, that is not $e_v$ and joins $L_w^2$ to the rest of $G^C$.  Now we may contract all the $L_u$ to single vertices to obtain the required 1-drawing of $G_{\rep(v)}-e$.

\medskip{\bf Subsubcase 2:}  {\em there is no neighbour $x$ of $v$ in $G^{\ei4c}$ so that $G_x$ is doglike and $x'\in K_v$ is the nose of $G_x$..}

\medskip At this stage, $K_v$ contracts to a single vertex of $G_{\rep(v)}$.  In this case, $K_v-e_v$ has two components $K_v^1$ and $K_v^2$.   Lemma \ref{lm:isthmus3reduction} implies each of $K_v^1$ and $K_v^2$ are connected in $G$ to at least two of $K_x$, $K_y$ and $K_z$.  Because $G^+_v$ is planar, at most two of $K_x$, $K_y$, and $K_z$ can be adjacent to both $K_v^1$ and $K_v^2$.

If both $K_x$ and $K_y$ have neighbours in both $K_v^1$ and $K_v^2$, then there is an $i\in\{1,2\}$ so that $K_v^i$  has adjacencies only in those two.    Now pull $D[K_v^i]$ across $f$ and, scaling $D[K_v^i]$ if necessary, to obtain a planar embedding of $G-e_G$.  This contracts to a planar embedding of $G_{\rep(v)}-e$, as required.

Thus, we may assume $K_v^1$ and $K_v^2$ have precisely one common neighbour in $G$.  Each has its own neighbour.  Since $G_v$ is not doglike, one of these, say $K_v^1$, is joined by a single edge to that unique neighbour and now we can drag $K_v^1$ across $f$.  This works unless $e$  goes to $K_v^2$ and $K_v^2$ is joined to its unique neighbour by two edges.  But this is the special situation, and $e$ is joined to $K_v^1$, not $K_v^2$.
\end{cproof}

}\section{Growing back from a given peripherally-4-connected graph}\printFullDetails{

The important corollary of Theorem \ref{th:TUreplace} is that, if we replace each $K_v$ with its $K_{\rep(v)}$, then we get a \2cc\ model of $G^{\ei4c}$ with very simple replacements for the vertices of $G^{\ei4c}$.     In this section, we explain how to obtain all the 3-connected \2cc\ graphs that reduce by planar 3-reductions to a particular \p4c\ graph.

Let  $L$ be a non-planar \p4c\ graph.  For each vertex $v$ of $L$ having only three neighbours $x$, $y$, and $z$, we decide on the type of $v$; that is, we choose $T_v\subseteq \{x,y,z\}$ and, in the case $|T_v|=3$, we decide on $U_v$: either $U_v=\{x,y,z\}$, or $U_v$ consists of two of $\{x,y,z\}$.  For each edge of $L$ joining two vertices of degree at least 4, we decide whether the edge will be a single edge or a parallel pair.

The choices must be made so that $x\in T_v$ if and only if $v\in T_x$.  If, for some $v$, $(|T_v|,|U_v|)=(3,2)$ ($v$ is chosen to be doglike), then some other implications (as in Theorem \ref{th:TUreplace}) must be maintained.  Choose the labelling so that $x\notin U_v$.  Then $x$ is the nose of the dog, $v$ is replaced with $K_v$, so that $K_v$ is an edge $y'z'$, so that $y'$ incident with two edges going to $K_y$, and likewise for $z'$ to $K_z$.  Each of $y'$ and $z'$ is also incident with an edge to $x'\in K_x$.  Furthermore, $K_x$ can be either a vertex, or, if $|T_x|\ne 3$, an edge, or a 3-cycle.

Once all these choices have been made, the resulting graph is tested for 2-criticality.  Thus, for a given \wording{\wording{peripherally}-4-connected graph} $L$, there will be many graphs that require testing.  If one of the resulting graphs $L'$ is found to be \2cc, then there may be many other 3-connected \2cc\ graphs that arise from $L'$.  Recall that, for each vertex of $L$ that has only three neighbours, we have made a choice as to what type that vertex has.   The following lemma explains what may replace the vertex of each type.

\begin{lemma}\label{lm:buildingUp}  Suppose the \p4c\ graph $L$ has choices as explained in the preceding paragraphs to produce a 3-connected \2cc\ graph $L'$.  Suppose $G$ is a 3-connected \2cc\ graph that reduces by planar 3-reductions to $L$ so that $L'$ is the graph obtained from $G$ by the replacements described in Theorem \ref{th:TUreplace}.  Then, for each $K_v$ in $L'$, $K_v$ is replaced by one of the possibilities shown in Figures \ref{fg:TUreplace}, depending on $(T_v,U_v)$.  \end{lemma}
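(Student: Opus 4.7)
The plan is to reverse-engineer the possible forms of $K_v$ by combining (i) the structural restrictions on $K_v$ obtained in the preceding lemmas with (ii) the planarity of $K_v^+$ from Definition \ref{df:TUconfig}, and then to verify that each candidate actually arises by substitution. Concretely, I would first fix a vertex $v$ of $L'$ with neighbours $x,y,z$, assume $K_v$ has at least two vertices, and invoke Lemma~\ref{lm:no2matching} together with Lemma~\ref{lm:attsDistinct} to identify three \emph{distinct} attachment vertices $x',y',z'$; Lemma~\ref{lm:twoNeighbours} then bounds the number of $K_vK_t$-edges on the $K_v$ side by two. This bounds the number of ``external half-edges'' of $K_v$, which is the crucial source of finiteness in the enumeration.

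Next I would classify $K_v$ by the pair $(|T_v|,|U_v|)$. For each of the five admissible pairs, Lemma~\ref{lm:x'y'z'Cycle} gives a cycle $C$ in $K_v^{\min}$ through $\{x',y',z'\}$, and Lemma~\ref{lm:x'y'z'paths1} provides the required internally disjoint paths joining $\{x',y',z'\}$ to every interior vertex. Together with Lemma~\ref{lm:isthmus3reduction} (any isthmus of $K_v$ must separate external attachments into at least two different $K_t$'s), and the constraint that $K_v^+$ be planar, these force $K_v$ to be one of a short finite list of planar 3-connected gadgets on at most six vertices with at most a few parallel pairs. Observation~\ref{obs:parallel} provides the final cap on multiplicities of parallel edges, as every parallel pair must include a rim edge of a suitable $V_{2n}$-subdivision or obstruct 2-criticality. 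I would carry out the case analysis in the order $(|T|,|U|) = (0,0), (1,0), (2,1), (3,2), (3,3)$, noting that the first three are essentially determined (since $U$ is forced by $T$ when $|T|\le 2$), while $(3,2)$ and $(3,3)$ produce the richest families.

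For existence of each candidate listed in Figure~\ref{fg:TUreplace}, I would argue as follows. Let $H$ be any candidate gadget of the correct $(T_v,U_v)$-type that is $(G,K_v)$-compatible in the sense of Definition~\ref{df:substitution}. By Theorem~\ref{th:Regrow}, substituting $H$ for the existing $K_v$-gadget in the basic model $L'$ preserves $\crn\ge 2$; 2-criticality of the resulting graph is obtained by repeating the edge-deletion argument of Phase~3 of Theorem~\ref{th:TUreplace} edge-by-edge inside $H$, since Theorem~\ref{th:Regrow} and the fact that $L'$ is 2-crossing-critical together supply a 1-drawing of every proper subgraph. Uniqueness (that no other $K_v$ is possible) is obtained from the structural bounds in the previous paragraph, because any $K_v$ that satisfies all the structural constraints but is not in the listed family either violates planarity of $K_v^+$, contains an isthmus forbidden by Lemma~\ref{lm:isthmus3reduction}, or forces a vertex of degree $<3$ in $G$.

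The main obstacle is the case analysis for doglike vertices (the $(3,2)$-case), because there the nose $n$ and the cut-edge of $G_v-n$ interact delicately with the parallel-edge constraints coming from Observation~\ref{obs:parallel}; in particular, one must verify that the ``two parallel edges not in any rim'' pathology identified in the remark following Theorem~\ref{th:TUreplace} is excluded by the explicit shape of $K_v$ in the corresponding entry of Figure~\ref{fg:TUreplace}. A secondary obstacle is to keep track of whether each of the three attachment vertices $x',y',z'$ lies in $K_v$ or in the adjacent $K_t$, since the parenthetical remark following the statement of Theorem~\ref{th:TUreplace} shows these two alternatives lead to genuinely different gadgets (an internal triangle versus a shared vertex), and both possibilities need to appear in the figure.
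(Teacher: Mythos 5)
Your proposal correctly lists the relevant supporting lemmas and correctly organizes the argument as a case analysis on $(|T_v|,|U_v|)$, but the central mechanism that actually bounds the size of $K_v$ is missing. You claim that the constraints from planarity of $K_v^+$, the isthmus condition of Lemma~\ref{lm:isthmus3reduction}, and the degree-at-least-3 requirement ``force $K_v$ to be one of a short finite list of planar 3-connected gadgets on at most six vertices.'' This is not so: consider $K_v$ equal to a path $v_1v_2\cdots v_k$ with every edge doubled, with $x'$ joined to $v_1$ by two parallel edges, $y'$ joined to $v_k$ by two parallel edges, and $z'$ joined once to $v_1$ and once to $v_k$. For every $k$ this has $K_v^+$ planar, satisfies the isthmus lemma (each side of any cut-digon attaches to two of $K_x$, $K_y$, $K_z$), gives every vertex degree at least $3$, has distinct $x',y',z'$, and respects the at-most-two-neighbours bound of Lemma~\ref{lm:twoNeighbours}; it is a $(T,U)=(\{x,y,z\},\{x,y,z\})$-configuration, yet $k$ is unbounded. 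Your structural step alone does not cut this down.

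The mechanism the paper uses, and which your argument needs, is Theorem~\ref{th:Regrow} applied against the 2-criticality of $G$ itself, not of $L'$. One exhibits a \emph{proper} subgraph $H'$ of $G_v$ that is still a $(G,K_v)$-compatible $(T_v,U_v)$-configuration (in the $(3,3)$ case this is a digonal pair $M$ together with two $xM$-paths $R_1,R_2$; in the doglike case a tiny subdivision on each side of the cut-edge). Then $G^{H'}_v$ is a proper subgraph of $G$, and Theorem~\ref{th:Regrow} gives $\crn(G^{H'}_v)\ge 2$. Since $G$ is 2-crossing-critical, this forces $G^{H'}_v=G$, i.e.\ $G_v=H'$. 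Only \emph{after} this reduction does one apply 3-connectivity and the attachment lemmas to bound the size of $H'$; the 2-cut argument in the paper's $(3,3)$ case is carried out on $M\cup R_1\cup R_2$, not on an arbitrary $K_v$, and would not succeed without the preceding reduction. In your write-up, Regrow and 2-criticality are deployed only for ``existence'' of the candidate gadgets, which is not asserted by the lemma and does not rescue the finiteness claim. Reorder the argument so that the Regrow-against-criticality step is what does the shrinking, and the 3-connectivity count then finishes the enumeration.
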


\begin{figure}
%\psfragscanon
%\psfrag{G1}[c]{$G_1=\crt{C_3}{C_3}$}
%\psfrag{G1a}[c]{$\crt{C_3}{C_3}-e$}
\begin{center}
\includegraphics[scale=.4]{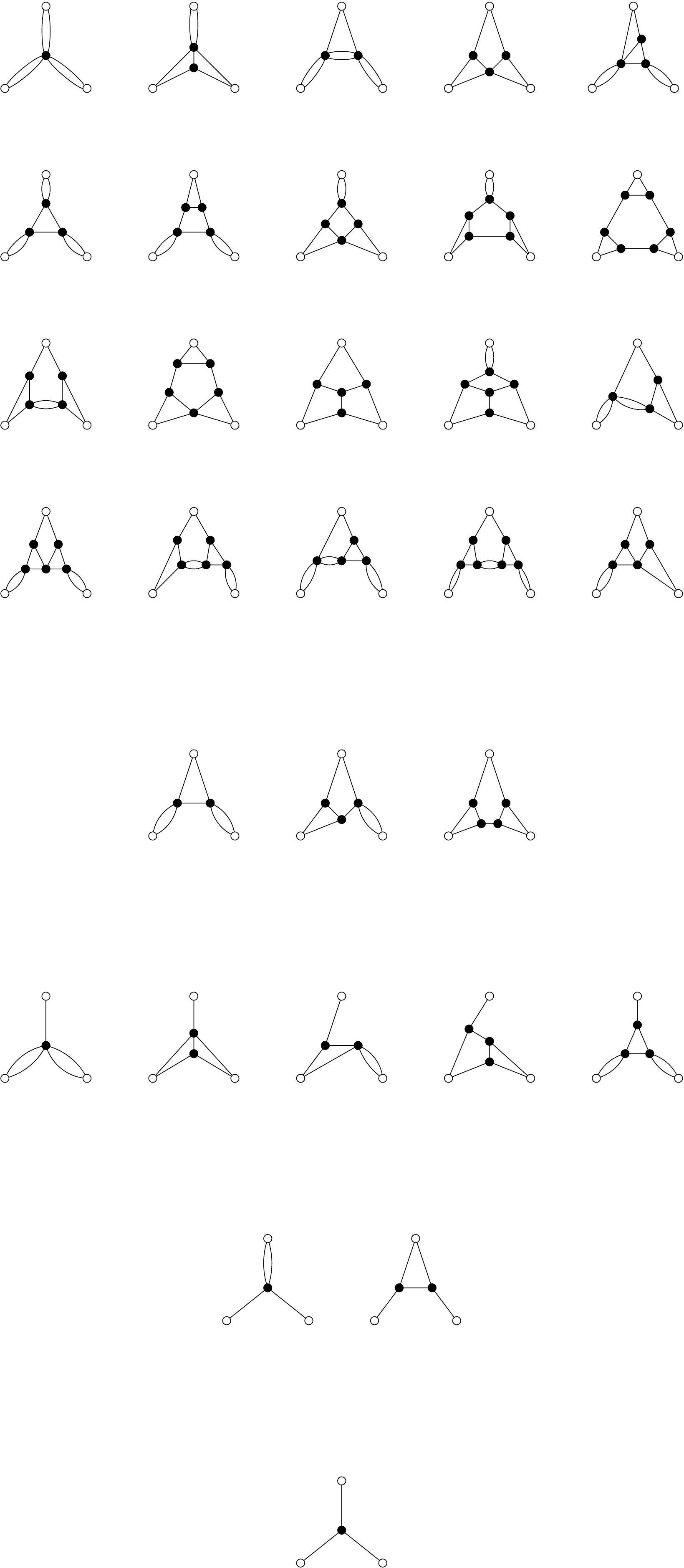}
\caption{The possible $(T,U)$-configurations.}
\label{fg:TUreplace}
\end{center}
\end{figure}

\begin{cproof}  We only illustrate the tedious proof in a couple of cases.

\bigskip {\bf Case 1:}  $(T_v,U_v)=(\{x,y,z\},\{y,z\})$.

\medskip Let $e$ be a cut-edge in $G_v-x$ separating $y$ and $z$.  Let $K_v-e$ have the two components $K_v^y$, containing the neighbour(s) of $y$, and $K_v^z$, containing the neighbour(s) of $z$.  If $K_v^y$, for examples, is not just either a single vertex or an edge joining the two neighbours of $y$, then it contains a subdivision of one of these (either pick a path in $K_v^y$ joining the neighbour of $y$ to the $K_v^y$-end of $e$ or pick a path joining the two neighbours of $y$).  It is easy to see that the subdivision (making a similar choice on the $z$-side) is also a $(T_v,U_v)$-configuration.  By Theorem \ref{th:Regrow}, the subgraph has crossing number 2, and so is all of $G$.  Thus, $K_v$ can be at most one of the three figures in Figure \ref{fg:TUreplace} corresponding to $(|T|,|U|)=(3,2)$.

\bigskip {\bf Case 2:}  $T_v=\{x,y,z\}=U_v$.

\medskip  In this case, $G_v-x$ contains edge-disjoint $yz$-paths.  Therefore, it contains two such paths $P$ and $Q$ that make a digonal pair.  
  If $P$ and $Q$ are internally disjoint, then there is a $(P-\{y,z\})(Q-\{y,z\})$-path $R$.  If $P$ and $Q$ are not internally disjoint, then set $R=\varnothing$.  In either case, set $M=P\cup Q\cup R$.   There are two $x(M-\{y,z\})$-paths  $R_1$ and $R_2$ in $G_v$.

If the ends of $P$ and $Q$ are in the same digon of $P\cup Q$, then planarity of $G_v^+$ implies $R_1$ and $R_2$ have their ends in the same one of $P$ and $Q$.   It follows that $M\cup R_1\cup R_2$ is a $(T_v,U_v)$-configuration, and so is $G_v$ by 2-criticality and Theorem \ref{th:Regrow}.

The fact that $G$ is 3-connected implies that there cannot be more than four common internal vertices to $P$ and $Q$, as if there were six digons, then some two consecutive ones would not contain an end of either $R_1$ or $R_2$.  This would readily yield a 2-cut in $G$, which is impossible.  This is why the number of possibilities for $G_v$ in this case is finite. \end{cproof}

\minor{In some of the larger $(T,U)$-configurations, there are edges that are not required to produce the relevant paths between $s$, $t$, and $u$, but, rather, are there to maintain the connectedness of the configuration.  These edges might be deletable without reducing the crossing number below 2.  Thus, each candidate 3-connected graph produced by the method described needs to have its criticality checked.  
}
%The larger $(T,U)$-configurations do not necessarily have only critical edges.  Their edges which are required to preserve the edge-disjoint paths needed to be a $(T,U)$-configuration are critical.  However, some of their edges are there only to keep $K_v$ (rather than $G_v$) connected.  {\bf We should, if possible, find two examples, one where substituting a given $(T,U)$-configuration preserves criticality, and another where substituting the same $(T,U)$-configuration does not preserve criticality.}

}\section{\major{Further reducing to internally-4-connected graphs}}\printFullDetails{

\major{In order to find the \2cc\ graphs that do not contain $V_8$, we wish to use the characterization by Robertson of $V_8$-free graphs.  This characterization, described in the next section, is in terms of {\em \i4c\ graphs\/}.  These graphs are very closely related to \p4c\ graphs and it is the purpose of this section to describe the reduction of a \p4c\ graph to an \i4c\ graph, and back again.}

\begin{definition}  \major{A {\em \hug\/}\index{hug} in a graph $G$ is an edge $e$ in a triangle $T$ whose vertex $v$ not incident with $e$ has degree 3.  The triangle $T$ is the {\em \triang{$e$}\/}\index{triangle ($e$-)}\index{$e$-triangle}, $v$ is the {\em \head\/}\index{head (of a hug)}\index{hug!head} of the \hug\ and the two edges of $T$ other than $e$ are the {\em \arm s\/}\index{arm (of a hug)}\index{hug!arm} of the \hug. } \end{definition}

\begin{definition}  \major{A $G$ is \i4c\ \index{\i4c} if it is \p4c\ and has no \hug s.} \end{definition}

\major{It is not correct that simply deleting (successively) the hugs from a \p4c\ graph produces an \i4c\ graph.  There is a particular situation that arises that needs special care.}

\begin{definition} \major{\begin{enumerate}\item A \hug\ $e$ with \head\ $v$ is a {\em \bearhug\/}\index{bearhug}\index{hug!bearhug} if there is an end $u$ of $e$, incident with a second hug $uy$ whose head $t$ is different from $v$, and so that, with $w$ the other end of $e$, the neighbours of $u$ are contained in the union of $\{t,v,w\}$ and the set of neighbours of $t$.  (See Figure \ref{fg:bearHug}.)
\item A \hug\ is {\em deletable\/}\index{deletable (hug)}\index{hug!deletable} if it is not a \bearhug.
\item A pair of \bearhug s having a common end is {\em simultaneously deletable\/}\index{simultaneously deletable}\index{deletable (hug)!simultaneously deletable}\index{hug!simultaneously deletable}.
\end{enumerate}}\end{definition}

\begin{figure}
\begin{center}
\scalebox{1.0}{\input{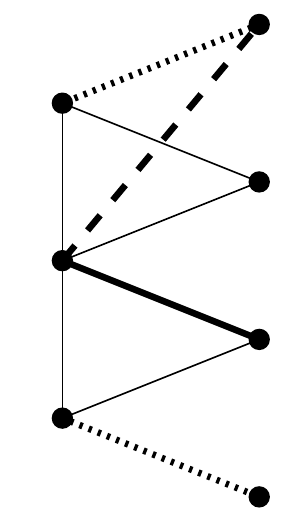_t}}
\end{center}
\caption{The thick edge is a \bearhug.  The dotted edges $tw$ and $vz$ might be subdivided, and the dashed edge $uw$ need not be present. If $uw$ is not present, then $\{ux,uy\}$ is a simultaneously deletable pair of \bearhug s.}\label{fg:bearHug}
\end{figure}

\major{We are now in a position to reduce a \p4c\ graph to an \i4c\ graph. }

\begin{theorem}\label{th:hugElimination}  \major{Let $G$ be a non-planar \p4c\ graph and let $G=G_0,G_1,$ $\dots,G_k$ be a sequence of graphs so that, for each $i=1,2,\dots,k$, there is either a \hug\ $h_i$ or a simultaneously deletable pair $h_i$ of \bearhug s  in $G_{i-1}$ so that $G_i=G_{i-1}-h_i$.  Then, for $i=0,1,2,\dots,k$: 
\begin{enumerate} \item $G_i$ is a subdivision of a non-planar \p4c\ graph;
\item if $v$ has degree 2 in $G_i$ but not in $G_{i-1}$, then $h_i$ is a simultaneously deletable pair of \bearhug s in $G_{i-1}$, both incident with $v$; and 
\item every degree 2 vertex in $G_i$ has two degree 3 neighbours in $G_i$.
\end{enumerate}}

\major{Furthermore, if the sequence $G_0,G_1,\dots,G_k$ is maximal, then $G_k$ is a subdivision of an \i4c\ graph.} \end{theorem}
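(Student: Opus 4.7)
I would prove the theorem by induction on $i$, with the base case $i=0$ being immediate: $G_0=G$ is a non-planar \p4c\ graph by hypothesis, and since $G$ has minimum degree at least $3$, conditions (2) and (3) hold vacuously. The inductive step reduces to analyzing a single deletion $G_i=G_{i-1}-h_i$ assuming $G_{i-1}$ is a subdivision of a non-planar \p4c\ graph $H_{i-1}$ with the stated degree properties, and proceeds by cases depending on whether $h_i$ is a single deletable \hug\ or a simultaneously deletable pair of \bearhug s.

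In the first case, write $h_i=e$ with \head\ $v$ and \triang{$e$} $vxw$. The first critical step is to show both $x$ and $w$ have degree at least $4$ in $G_{i-1}$: if $x$ had degree $3$, then $vw$ would also be a \hug\ (with \head\ $x$), so setting $u=w$ in Definition~12 and tracing through the neighbour-containment condition $N(w)\subseteq\{x,v,w\}\cup N(x)$ would force $e$ itself to be a \bearhug, contradicting deletability. This verifies (2) and preserves (3). Since $e$ lies in a triangle it corresponds to an actual edge of $H_{i-1}$ (subdividing any edge of a triangle destroys the triangle), so set $H_i=H_{i-1}-e$. Non-planarity of $H_i$ is immediate: a planar embedding of $H_i$ would extend to one of $H_{i-1}$ by routing $e$ alongside the arms $vx,vw$. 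For 3-connectivity and the peripheral condition, any new cut of size at most $3$ in $H_i$ would be traversed by $e$ in $H_{i-1}$; since $v$ is adjacent in $H_i$ to both $x$ and $w$ via the arms, $v$ must lie in any such cut, and a careful analysis of the third neighbour of $v$ combined with the \p4c ness of $H_{i-1}$ shows the resulting structure again forces $e$ to be a \bearhug.

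In the second case, $h_i=\{e_1,e_2\}$ is a simultaneously deletable pair of \bearhug s sharing a common end $u$; the local structure near $u$ is precisely that of Figure \ref{fg:bearHug} (minus the edge $uw$). Removing both edges decreases $\deg(u)$ by exactly $2$, producing a new degree-$2$ vertex at $u$, which verifies (2). The \bearhug\ containment $N(u)\subseteq\{t,v,w\}\cup N(t)$ together with $u$'s old degree $3$ forces $u$'s two remaining neighbours in $G_i$ to be exactly the two \head s $t$ and $v$, each of degree $3$, establishing (3). Passing to the underlying graph by suppressing $u$ yields $H_i$, and the same arguments as in the first case (with the \bearhug\ structure making the analysis explicit) show $H_i$ is non-planar and \p4c.

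For the final assertion, maximality means neither a deletable \hug\ nor a simultaneously deletable pair of \bearhug s remains in $G_k$. If $G_k$ still possessed any \hug, it would have to be a \bearhug\ not paired with another \bearhug\ at its common end; however, the very configuration of a \bearhug\ (Figure \ref{fg:bearHug}, with the \hug\ $uy$ at $t$ playing a symmetric role) produces a second \bearhug\ at $u$ unless one of the structural edges in the configuration is missing, in which case the \hug\ is in fact already deletable. Hence $G_k$ has no \hug s, and its underlying graph is \i4c. The main obstacle throughout is step~(2) of the single-\hug\ case: verifying that the three-word definition of "non-\bearhug" is precisely what prevents a new low-order cut in $H_i$ or a new isolated degree-$2$ vertex, since every way the deletion could destroy \p4c ness or condition~(3) must be traced back into the explicit configuration of Figure \ref{fg:bearHug} and ruled out.
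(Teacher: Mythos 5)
The overall inductive strategy — induction on $i$, splitting on whether $h_i$ is a single deletable hug or a simultaneously deletable pair, re-adding edges alongside the arms to establish non-planarity, and arguing that any new small cut in $H_i$ must be crossed by an edge of $h_i$ with the head of the triangle in the cut — does match the shape of the paper's proof. However, there are two real problems.

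First, your argument that the other two triangle vertices $x,w$ have degree at least $4$ does not work as written. You argue that if $x$ had degree $3$ then $vw$ would be a hug with head $x$, and then claim that "tracing through the neighbour-containment condition" forces $e=xw$ to be a bearhug. But the existence of the second hug $vw$ at $w$ does not by itself yield $N(w)\subseteq\{x,v,w\}\cup N(x)$; the vertex $w$ can have many neighbours unrelated to $x$'s, so you cannot conclude that $e$ is a bearhug this way. The correct and far simpler route, which the paper takes, is Lemma \ref{lm:triangle}: in a (subdivision of a) non-planar peripherally-$4$-connected graph, no triangle has two vertices of degree $3$, so since the head $v$ already has degree $3$, the other two vertices of the $e$-triangle must have degree at least $4$.

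Second, and more seriously, the part you identify yourself as the "main obstacle" — showing that the deletion preserves the peripheral-$4$-connectivity condition — is not actually carried out. You correctly note that the head $v$ must lie in any new bad $3$-cut $\{a,b,c\}$, but then you write that "a careful analysis of the third neighbour of $v$ combined with the \p4c ness of $H_{i-1}$ shows the resulting structure again forces $e$ to be a \bearhug." That is precisely the content that needs a proof. The paper's argument at this point requires a substantial further development: one locates $r_H\in H-\{a,b,c\}$ and $r_J\in J-\{a,b,c\}$ as the ends of the deleted edge, lets $q$ be the triangle head, examines the third neighbour $s$ of $q$, deduces via peripheral-$4$-connectivity the existence of a further vertex $p$ joined by branches to $r_H$, $b$, and $c$, splits on whether $s\in\{b,c\}$ (which, with Lemma \ref{lm:triangle} and a seven-node analysis, forces $G_{i-1}$ into a planar configuration, a contradiction) or $s\in J-\{q,b,c\}$, and only then shows via the triangles $pr_Hb$ and $qr_Hr_J$ that $r_Hr_J$ is a bearhug, so $r_Hb$ must be its pair and $H-\{a,b,c\}$ has a single node, a contradiction. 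None of this is in your sketch, and your statement that Case~2 follows by "the same arguments" compounds the gap rather than closing it. Without this analysis the proof is incomplete.

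The treatment of the final assertion (maximality implies $G_k$ is a subdivision of an internally-$4$-connected graph) is roughly right: if $G_k$ has a hug $e$ that is a bearhug, then the second hug $uy$ in the bearhug configuration is either itself not a bearhug (hence deletable) or is a bearhug sharing the end $u$ with $e$, giving a simultaneously deletable pair. But you should state this observation directly rather than as a claim about a second bearhug "unless one of the structural edges … is missing," which is not the relevant dichotomy. You should also supply the step — present in the paper — that failing to be internally-$4$-connected actually forces a hug to exist in $G_k$ in the first place; this again uses Lemma \ref{lm:triangle} to show the two non-axle vertices of the offending triangle have degree at least $4$, so the arms survive and the chord is a genuine hug in $G_k$.
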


We emphasize that, in the reduction process described in the statement, $G_i$ is obtained from $G_{i-1}$ by the deletion of either one  or two edges.

\bigskip
\begin{cproof}   \major{Suppose by way of contradiction that $i$ is least so that $G_i$ is planar.  Since $G_0$ is not planar, $i>0$, so $G_i=G_{i-1}-h_i$.  Each edge in $h_i$ joins two neighbours of a degree 3 vertex in $G_i$ and so may be added to the planar embedding of $G_i$ to produce a planar embedding of $G_{i}$ together with that edge of $h_i$. {\bogdan{In the case $|h_i|=2$, the heads of the hugs are not adjacent.  Thus, both hugs may be added simultaneously, while preserving planarity.  Thus, $G_{i-1}$ is planar, contradicting the choice of $i$.} }}

\major{By way of contradiction, we may let $i$ be least so that $G_i$ is not a subdivision of a \p4c\ graph.  Thus, $i\ge 1$.
  Throughout the proof, when we refer to the vertices $t,u,v,w,x,y,z$, we are always referring to the labelling in Figure \ref{fg:bearHug}.  In each of the three cases, there are two possibilities for $h_i$ to be considered.
}
  
\major{It will be helpful to notice that, in the case $h_i$ consists of a simultaneously deletable pair of bear hugs, the vertex $u$ is not a node of $G_i$ and is incident with both deleted edges.  
}

\begin{claim} \major{$G_i$ is a subdivision of a 3-connected graph.} \end{claim}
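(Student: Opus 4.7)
The plan is to argue by contradiction. Suppose $G_i$ is not a subdivision of a 3-connected graph, and let $H_i$ denote the graph obtained from $G_i$ by suppressing all degree-two vertices. Then $H_i$ has a 2-cut $\{a,b\}$, which lifts to a separation $(L, R)$ of $G_i$ with $L \cap R = \iso{\{a,b\}}$ and with each of $L \setminus \{a,b\}$ and $R \setminus \{a,b\}$ containing a vertex of degree at least 3 in $G_i$. By the minimality of $i$, $G_{i-1}$ is a subdivision of a \p4c\ graph, so no such 2-separation exists in $G_{i-1}$; hence the one or two edges of $h_i$ are precisely the edges of $G_{i-1}$ joining $L \setminus \{a,b\}$ to $R \setminus \{a,b\}$.

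I would first dispose of the single-hug case: $h_i = \{e\}$ with $e = xy$ in the triangle $xvy$ whose head $v$ has degree exactly $3$ in $G_{i-1}$ with third neighbor $u$. The arms $xv$ and $vy$ both survive in $G_i$, so the path $xvy$ crosses the separation unless $v \in \{a,b\}$; say $v = a$. Since $G_{i-1}$ is 3-connected in the suppressed sense, there are three internally disjoint $xy$-paths in $G_{i-1}$ avoiding $\{v, b\}$; removing the single edge $e$ (which can lie on at most one of these) still leaves two internally disjoint $xy$-paths in $G_i$ avoiding $\{v, b\}$, contradicting the fact that $\{v,b\}$ separates $L \setminus \{a,b\}$ from $R \setminus \{a,b\}$ in $G_i$. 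A small amount of extra care is needed when $x$ or $y$ equals $b$; here one uses that the third neighbor $u$ of $v$ lies in $G_i \setminus \{a,b\}$ and plays the role of a surrogate endpoint in the path argument.

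The main obstacle is the bear hug case, in which $h_i$ consists of two edges $uw$ and $uy$ sharing the end $u$, with associated triangles $uvw$ and $uty$ and heads $v$ and $t$ respectively, as in Figure \ref{fg:bearHug}. Here $G_{i-1}$ has its connectivity potentially reduced by two on going to $G_i$, so a genuine 2-cut could in principle appear. The decisive structural input is the bear hug hypothesis that in $G_{i-1}$ every neighbour of $u$ lies in $\{t, v, w\}$ or is a neighbour of $t$; this sharply restricts the edges at $u$ that remain in $G_i$. Since the paths $wvu$ and $ytu$ still lie in $G_i$, the same argument as before forces $\{a,b\}$ to meet $\{v,t\}$. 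I would split into the subcases $\{a,b\} = \{v,t\}$ and $|\{a,b\} \cap \{v,t\}| = 1$; in each, invoking three internally disjoint paths across $\{a,b\}$ in $G_{i-1}$ and using that the only edges of $G_{i-1}$ joining the two sides of the separation are $uw$ and $uy$, one exhibits a third such path through $u$ that survives in $G_i$, using the bear-hug neighbourhood constraint on $u$ together with $\deg_{G_{i-1}}(v) = \deg_{G_{i-1}}(t) = 3$ to force it to avoid the deleted edges --- the desired contradiction.
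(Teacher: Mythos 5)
There is a genuine gap in your single-hug argument, and the bear-hug case is only a plan, not a proof. The core problem is the sentence ``there are three internally disjoint $xy$-paths in $G_{i-1}$ avoiding $\{v,b\}$.'' Three-connectivity of (the suppressed) $G_{i-1}$ gives three internally disjoint $xy$-paths, but at most one of them avoids both $v$ and $b$: each of $v$ and $b$ can appear as an internal vertex of at most one of the three paths, so you only get \emph{one} path guaranteed to miss $\{v,b\}$, not three. (To force all three to miss a prescribed pair of vertices you would need $5$-connectivity.) Worse, that single guaranteed path may be the edge $e=xy$ itself, which is precisely the edge you delete --- so even the corrected weaker statement does not yield your contradiction. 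To close the gap one would have to invoke the extra structure you have not used here (that $G_{i-1}$ is a subdivision of a \emph{peripherally}-4-connected graph, Lemma \ref{lm:triangle}, and the resulting constraints on the $3$-cuts $\{v,b,x\}$ and $\{v,b,y\}$); the ``small amount of extra care when $x$ or $y$ equals $b$'' is not cosmetic either. The bear-hug subsection is written as intent (``I would split into the subcases\dots one exhibits a third such path\dots'') rather than an argument, so it cannot be evaluated.

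For comparison, the paper does not argue by contradiction through a $2$-separation at all. It proceeds directly: given any two nodes $a,b$ of $G_i$ (which are also nodes of $G_{i-1}$), it takes three internally disjoint $ab$-paths in $G_{i-1}$ and, for each deleted hug edge $e\in h_i$ lying on one of the paths, replaces the piece $P_j\cap T$ by its complement in the $e$-triangle $T$, using only that the head of $T$ has degree $3$. This rerouting is local, uses at most two modifications, and never needs a separation analysis; if you want to use a separation argument, it will take substantially more care than your sketch provides.
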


\begin{proof}  \major{Let $a$ and $b$ be distinct nodes of $G_i$.  Then $a$ and $b$ are distinct nodes of $G_{i-1}$, so there are three internally disjoint $ab$-paths $P_1,P_2,P_3$ in $G_{i-1}$.  }

\major{If $e\in h_i$, then the head $c$ of the $e$-triangle has degree 3.  {\bogdan{If $e$ is in some $P_i$ and $T$ is the triangle containing $e$ and its head, then we may replace $P_i\cap T$ with the path in $T$ complementary to $P_i\cap T$.  The at most two modifications result in three internally disjoint paths that}} are also paths in $G_i$. } \end{proof}

\medskip

\begin{claim} \major{If $a$ has degree at least three in $G_{i-1}$ and degree 2 in $G_i$, then:\begin{enumerate}
\item $|h_i|=2$;
\item  $a$ is incident with both edges in $h_i$; and
\item  both neighbours of $a$ have degree 3 in $G_i$.  
\end{enumerate}}\end{claim}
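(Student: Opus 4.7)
Since deleting the edges of $h_i$ is the only way $a$ can lose degree, $a$ must be incident with at least one edge of $h_i$. My plan is to argue that if $a$ is incident with only one edge of $h_i$, this forces that edge to be a bear hug in a configuration incompatible with how $h_i$ was chosen, driving us to $|h_i|=2$ with $a$ the common end of the two bear hugs. The remaining task is then to identify the two surviving neighbors of $a$ and verify their degrees.

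For the impossibility step, suppose $a$ is incident with exactly one edge $e=aw\in h_i$ with $v$ the head of the $e$-triangle. Then $a$ loses exactly one incidence, so $\deg_{G_{i-1}}(a)=3$; write $N_{G_{i-1}}(a)=\{v,w,x\}$. I would then show that the structural constraints---$G_{i-1}$ being a subdivision of a 3-connected graph, the previous claim asserting that $G_i$ is a subdivision of a 3-connected graph, and the position of $a$ as a degree-3 node of $G_{i-1}$---force the existence of a second hug $ay$ incident with $a$ whose head $t$ differs from $v$. Concretely, either $ax$ or $av$ lies in a triangle whose third vertex has degree $3$, providing the required $t$; with $N(a)=\{v,w,x\}\subseteq\{t,v,w\}\cup N(t)$ (using $x\in N(t)$ via the new triangle), $e$ is seen to be a bear hug. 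This contradicts either $e$ being a single deletable hug (Case $|h_i|=1$) or the precise neighborhood structure of a simultaneously deletable pair of bear hugs when $a$ is not the common end (Case $|h_i|=2$ with $a \ne u$).

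Having ruled out the alternatives, $|h_i|=2$ and $a$ is the common end $u$ of the two bear hugs $e_1=u\alpha$ and $e_2=u\beta$ with distinct heads $v_1,v_2$. Since $a$ loses two incidences, $\deg_{G_{i-1}}(a)=4$, proving part (1) and (2). Applying the bear hug containments $N(u)\subseteq\{v_2,v_1,\alpha\}\cup N(v_2)$ and $N(u)\subseteq\{v_1,v_2,\beta\}\cup N(v_1)$ together with $\deg(u)=4$ forces $N_{G_{i-1}}(u)=\{v_1,v_2,\alpha,\beta\}$. Hence $N_{G_i}(a)=\{v_1,v_2\}$, and each $v_j$ has degree $3$ in $G_{i-1}$ (as the head of a hug) and is incident with neither $e_1$ nor $e_2$ (since $v_j\notin\{u,\alpha,\beta\}$), so $v_j$ retains degree $3$ in $G_i$---yielding part~(3).

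The main obstacle is the structural identification of the second hug at $a$ in the impossibility step: one must carefully use the 3-connectedness of the underlying \p4c\ graph, the fact that $a$ is a node of $G_{i-1}$ (having degree 3 there), and the inductively maintained property that the $G_j$ are subdivisions of 3-connected graphs, in order to locate a triangle at $a$ whose apex has degree $3$ and differs from $v$. This triangle-finding argument, together with the verification that the bear hug configuration is then incompatible with the chosen form of $h_i$, is the combinatorial heart of the proof.
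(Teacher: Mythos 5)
Your plan misses the short argument the paper uses and, as a consequence, ends up chasing a contradiction that does not exist in the theorem's framework. The key observation — which makes the whole claim nearly immediate — is that \emph{both ends of any hug have degree at least~4 in $G_{i-1}$}: the head of the $e$-triangle has degree 3, and since $G_{i-1}$ is a subdivision of a \p4c\ graph, Lemma \ref{lm:triangle} forbids two degree-3 vertices in the triangle, so the two ends of $e$ each have degree $\ge 4$. This kills the case you are trying to rule out on the spot: if $a$ were incident with exactly one edge of $h_i$, then $a$ is an end of a hug and so has degree $\ge 4$; losing one incidence leaves it with degree $\ge 3$, not~2. In particular the deduction $\deg_{G_{i-1}}(a)=3$ in your impossibility step is itself already a contradiction, not a hypothesis to build on.

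Your substitute route — deducing a second hug at $a$ and concluding $e$ is a bear hug, then saying this ``contradicts $e$ being a single deletable hug'' — has a further logical problem: the theorem's hypothesis only says $h_i$ is ``a hug or a simultaneously deletable pair of bear hugs.'' It never restricts to deletable hugs, so ``$e$ is a bear hug'' is not in itself a contradiction. You would additionally have to show $\{e,ay\}$ is a simultaneously deletable pair and somehow argue that a single bear hug is never chosen as $h_i$ — but that restriction is not part of the theorem statement. You also acknowledge that the triangle-finding step (locating a degree-3 apex $t\ne v$ adjacent to $a$) is not actually carried out; and indeed Lemma \ref{lm:triangle} forbids such a configuration at a degree-3 vertex, so the object you want to construct cannot exist. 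The verification of part~(3) — that $N_{G_i}(a)=\{v_1,v_2\}$ with both heads retaining degree 3 — is fine once $|h_i|=2$ and $a=u$ are established, and matches the paper's conclusion.
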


\begin{proof} \major{ Let $e\in h_i$.    The head $b$ of the $e$-triangle has degree 3 in $G_{i-1}$ and, since $G_{i-1}$ is a subdivision of a \p4c\ graph, no other vertex of the $e$-triangle has degree 3, so Lemma \ref{lm:triangle} shows they both have degree at least 4.  
It follows that if $e$ is the only edge in $h_i$, then the ends of $e$ have degree at least 3 in $G_i$ and no new vertex of degree 2 is introduced in $G_i$.}

\major{
Therefore $h_i$ is a deletable pair.  The only new vertex of degree 2 in $G_i$ is $u$, so $a=u$. Also, the only neighbours of $u$  in $G_i$ have degree 3 in $G_i$. 
}\end{proof}

\major{
The remaining possibility is that there is a set $\{a,b,c\}$ of nodes of $G_i$ and a 3-separation  $(H,J)$  of $G_i$ so that $H\cap J=\iso{a,b,c}$ and both $H-\{a,b,c\}$ and $J-\{a,b,c\}$ have at least two nodes of $G_i$.  
}
  
\major{
Because $G_{i-1}$ is a subdivision of a \p4c\ graph, there is an edge $e\in h_i$ having one end $r_H$ in $H-\{a,b,c\}$ and one end $r_J$ in $J-\{a,b,c\}$.  
}
  
\major{
Suppose for the moment that $h_i$ has a second edge.  Since $G_{i-1}$ is a subdivision of a \p4c\ graph, not all the neighbours of $u$ in $G_{i-1}$  can be in the same one of $H$ {\bogdan{and $J$}}.  We may choose the labelling so that $x=r_J$.  As $t$ is a common neighbour of $u=r_H$ and $x=r_J$, we conclude that $t\in \{a,b,c\}$, say $t=a$.  
}
  
\major{
It follows that at least one of $v$ and $y$ (the other two neighbours of $u$) is in $H-\{t,b,c\}$.  Since $v$ and $y$ are adjacent, it follows that both are in $H$ and, furthermore,  $uy$ is also in $H$.  In particular, there is a unique edge in $h_i$ that has one end in $H-\{a,b,c\}$ and one end in $J-\{a,b,c\}$.
}
  
\major{
Now the two possibilities for $h_i$ are merged:  $e$ is the unique edge in $h_i$ having one end $r_H$ in $H-\{a,b,c\}$ and one end $r_J$ in $J-\{a,b,c\}$.  The head $q$ of the $e$-triangle must be in $\{a,b,c\}$, say $q=a$.
}
  
\major{
  Since $q$ has degree 3, we may choose the labelling so that $r_H$ is the only neighbour of $q$ in $H-\{q,b,c\}$.  The neighbour $r_J$ of $q$ is in $J-\{q,b,c\}$.   Note that $r_H$ and $r_J$ are both nodes of $G_{i-1}$.
}
  
\major{  
 The third neighbour $s$ of $q$ is in $J$, so $\{r_H,b,c\}$ is a 3-cut in $G_{i-1}$.  Since $G_{i-1}$ is \p4c, there is a unique node $p$ in $H-r_H$, which is {\bogdan{joined by branches in $G_{i-1}$}} to all of $r_H$, $b$, and $c$.
}
  
\major{  
  If $s\in \{b,c\}$, then the discussion in the preceding paragraph applies with $r_J$ and $J$ in place of $r_H$ and $H$, respectively.  The nodes of $G_{i-1}$ are now all known (there are only 7), and the edges are almost completely determined.  In particular, $G_{i-1}$ is a subgraph of the planar graph shown in Figure \ref{fg:guoli}, contradicting the fact that $G_{i-1}$ is non-planar.   Therefore,  $s$ is in $J-\{q,b,c\}$.
    }
    
    \begin{figure}
\begin{center}
\scalebox{.9}{\input{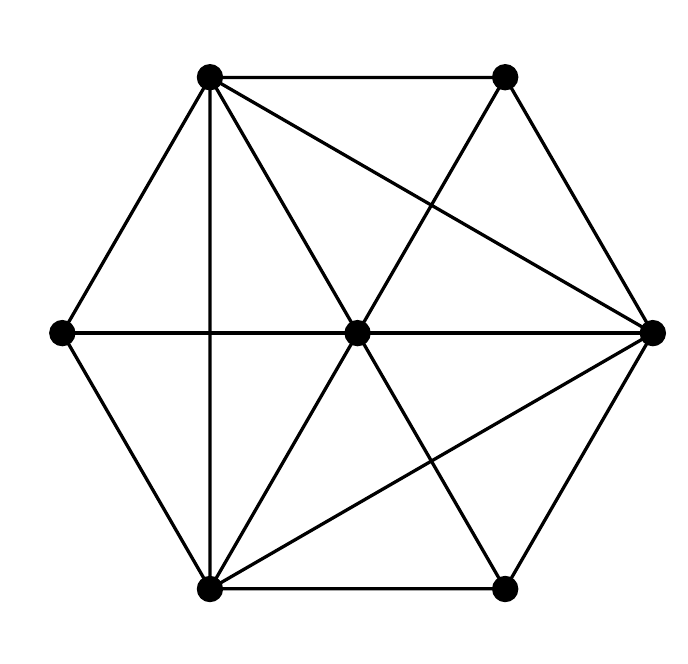_t}}
\end{center}
\caption{\major{When $s=b$, $G_{i-1}$ is a subgraph of the illustrated planar graph.}}\label{fg:guoli}
\end{figure}

\major{
The vertex $r_H$ is the only candidate for the second branch vertex {\bogdan{(after $p$)}} of $G_i$ in $H-\{q,b,c\}$, so it must be joined by a $G_i$-branch to at least one of $b$ and $c$; choose the labelling so that $b$ is an end of such a $G_i$-branch.
}
  
\major{
If $b$ has only one neighbour  in $J-\{q,b,c\}$, then $p$ and $b$ are both degree 3 vertices in a triangle in $G_{i-1}$; since $G_{i-1}$ is a subdivision of a \p4c\ graph, this contradicts Lemma \ref{lm:triangle}.  The same reasoning implies that both $r_H$ and $b$ have degree at least 4 in $G_{i-1}$.  These imply that $r_Hp$, $r_Hb$, and $pb$ are all edges of $G_{i-1}$.
}
  
\major{
Because $r_Hr_J$ is in $h_i$ and $q$ is the head of the $r_Hr_J$-triangle, we know that $r_Hr_J$, $qr_H$, and $qr_J$ are all edges of $G_{i-1}$.   Furthermore, $r_Hs$ is not a $G_{i-1}$-branch (it would yield a second edge with one end in each of $H-\{a,b,c\}$ and $J-\{a,b,c\}$). 
}
  
\major{
The triangles $pr_Hb$ and $qr_Hr_J$ show that $r_Hr_J$ is a \bearhug.  Since it was deleted, it must be in a simultaneously deletable pair of \bearhug s.  This implies that $r_Hb$ is the other edge in that pair.  Thus, $H-\{a,b,c\}$ has only one node in $G_i$, a contradiction that completes the proof that each $G_i$ is a subdivision of a \p4c\ graph.}  

\major{We move on to showing that a maximal sequence ends in a subdivision of an \i4c\ graph.  So suppose $G_i$ is not a subdivision of an \i4c graph.  Since it is a subdivision of a \p4c\ graph $H$, there is a 3-cut $\{a,b,c\}$ in $H$ so that $ab$ is an edge of $G_i$.   Since $H$ is \p4c, there is a vertex $p$ adjacent in $H$ to all of $a,b,c$ and with no other neighbours in $H$.  Lemma \ref{lm:triangle} shows that the triangle $p,a,b$ has at most one vertex of degree 3; since $p$ is such a vertex, $a$ and $b$ have degree at least 4 in $H$.  It follows that $pa$ and $pb$ are edges of $G_i$ and, therefore, $ab$ is a \hug\ in $G_i$.   }

\major{
It is evident from the definitions that, as soon as $G_i$ has a \hug, then either $G_i$ has a \hug\ that is not a \bearhug\ or $G_i$ has a pair of simultaneously deletable \bearhug s.  In either case, $G_i$ is not the last in a maximal sequence.}\end{cproof}

\major{We conclude this section with a brief discussion of the reverse process:  how to generate all the \p4c\ graphs that reduce to a given non-planar \i4c\ graph $G$.  Every graph created through iterating the following procedure is \p4c\ and  non-planar.  We choose either two non-adjacent neighbours of a degree 3 vertex and add the edge between them, or we choose an edge $e$ joining degree 3 vertices and a neighbour of each vertex incident with $e$, subdivide $e$ once, and join both the chosen neighbours to the vertex of subdivision.}

\major{Every \i4c\ graph produces only finitely many \p4c\ graphs through this process, as the number of possible additions is initially finite and strictly decreasing.}

}\section{The case of $V_8$-free 2-crossing-critical graphs}\label{V8free}\printFullDetails{

In this section, \wording{we complete our analysis} of \p4c\ \2cc\ graphs by considering the case of 3-connected \2cc graphs that do not contain a subdivision of $V_8$.  This is the whole reason for studying \p4c\ graphs, since there is a characterization of \wording{the closely related} \i4c\ graphs that do not contain a subdivision of $V_8$.

Two important classes of graphs in this context are the following.

}\begin{definition}\label{df:bicycle4covered}  \begin{enumerate}\item A {\em bicycle wheel\/}\index{bicycle wheel} is a graph consisting of a {\em rim}\index{rim (of a bicycle wheel)}\index{bicycle wheel!rim}, which is a cycle $C$, and an {\em axle\/}\index{axle}\index{bicycle wheel!axle}, which is consists of two adjacent vertices $x$ and $y$ not in the rim, together with {\em spokes\/}\index{bicycle wheel!spokes}\index{spokes (of a bicycle wheel)}, which are edges from $\{x,y\}$ to $C$.
\item A {\em 4-covered graph\/}\index{4-covered graph} is a graph $G$ containing a set $W$ of four vertices so that $G-W$ has no edges.
\end{enumerate}
\end{definition}\printFullDetails{

\wording{Maharry and Robertson \cite{mr} prove Robertson's Theorem that an internally-4-connected graph with no subdivision of $V_8$ is one of the following:}
\begin{enumerate}\item \wording{a planar graph;}
\item a non-planar graph with at most seven vertices;
\item $C_3\,\Box\,C_3$;
\item \wording{a bicycle wheel; and}
\item a 4-covered graph.
\end{enumerate}

Suppose $G$ is a 3-connected graph that does not contain a subdivision of $V_8$ and $G$ reduces by planar 3-reductions to the \p4c\ graph $G^{\ei4c}$.    It follows that $G^{\ei4c}$ has no $V_8$.  \major{Eliminating hugs as described in Theorem \ref{th:hugElimination} produces an internally-4-connected graph $G^{\eifc}$.   Deleting hugs does not affect the planarity of the graph; since $G^{\ei4c}$ is not planar, so is $G^{\eifc}$. By Robertson's Theorem, one of the following happens:
\begin{enumerate}\item $G^{\eifc}$ is not planar and has at most seven vertices;
\item $G^{\eifc}$ is $C_3\,\Box\,C_3$;
\item $G^{\eifc}$ is a bicycle wheel; and
\item $G^{\eifc}$ is a 4-covered graph.
\end{enumerate}}

\major{Our ambition in the remainder of this section is to explain how to determine all the peripherally-4-connected graphs $G^{\ei4c}$ that can be the outcome of a sequence of planar 3-reductions starting from a 3-connected, 2-crossing-critical graph $G$ that has no subdivision of $V_8$.  Any peripherally-4-connected graph with no subdivision of $V_8$ that either has crossing number exactly 1 or is itself 2-crossing-critical needs to be tested.  Those with crossing number 1 might extend to a 2-crossing-critical example by duplication of edges and/or replacing vertices of degree 3 by one of the basic $(T,U)$-configurations, as explained in the preceding section.}

\minor{The first two items arising from Robertson's Theorem are easily dealt with.  A computer program can easily find all internally-4-connected graphs with at most 7 vertices and determine which ones either have crossing number 1 or are 2-crossing-critical. 
The graph  $C_3\,\Box\,C_3$ is itself \2cc, so this is one of the 3-connected, \2cc\ graphs that do not contain a subdivision of $V_8$. }%The \i4c, non-planar graphs with at most 7 vertices are shown in Figure {\bf identify the figure}.

}\begin{definition} \major{Let $G^{\ei4c}$ be a peripherally-4-connected graph and let $G^{\eifc}$\index{$G^{\eifc}$} be the internally 4-connected graph obtained from $G^{\ei4c}$ by simplifying (that is, leaving only one edge in each parallel class) and eliminating hugs.  Then $G^{\ei4c}$ is a {\em peripherally-4-connected extension of $G^{\eifc}$\/}\index{\p4c!extension}.}\end{definition}\printFullDetails{

\major{We conclude this section by showing how to which bicycle wheels and 4-covered graphs $G^{\eifc}$ can have such a \2cc\ $G^{\ei4c}$ as an extension.  In particular, $G^{\eifc}$ must either have crossing number 1 or itself be 2-crossing-critical; in the latter case $G^{\ei4c}=G^{\eifc}$.}

}\bigskip\noindent {\bf CASE 1:} \wording{{\em the bicycle wheels.}}\printFullDetails{

\bigskip\major{Let $x$ and $y$ be the adjacent vertices making the axle of the bicycle wheel $G^{\eifc}$, and let $C$ be the cycle that is the rim.   Our goal is to provide sufficient limitations on $C$ to show that the computation is feasible.
Here is our first limitation, which can very likely be improved.
}

}\begin{lemma}\label{lm:no6consecX} \major {Suppose $G\in\m2$ reduces by planar 3-reductions to the graph $G^{\ei4c}$ that is a peripherally-4-connected extension of $G^{\eifc}$.  If $G^{\eifc}$ is a bicycle wheel with axle $xy$ and rim $C$, then $x$ is not adjacent in $G^{\eifc}$ to six consecutive vertices on $C$, none of which is adjacent to $y$.}\end{lemma}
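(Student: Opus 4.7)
The plan is to derive a contradiction with Lemma \ref{lm:triangle} applied to $G^{\eifc}$. First I would verify the three hypotheses of that lemma. The graph $G^{\eifc}$ is simple by construction, since the reduction from $G^{\ei4c}$ to $G^{\eifc}$ collapses each parallel class to a single edge. It is peripherally-4-connected because internal 4-connectivity strictly refines peripheral 4-connectivity. Finally, it is non-planar: $G\in\m2$ is non-planar, planar 3-reductions preserve non-planarity, and the hug-elimination procedure of Theorem \ref{th:hugElimination} produces a subdivision of a non-planar peripherally-4-connected graph whenever the starting graph is non-planar. Hence Lemma \ref{lm:triangle} applies and no $3$-cycle of $G^{\eifc}$ has two vertices of degree $3$.

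With Lemma \ref{lm:triangle} in hand, the remainder of the argument is a short structural observation. Assume, for contradiction, that $v_0,v_1,\ldots,v_5$ are six consecutive vertices of $C$, all adjacent to $x$ and none adjacent to $y$. By the definition of a bicycle wheel (Definition \ref{df:bicycle4covered}), the only edges of $G^{\eifc}$ are the edges of $C$, the axle $xy$, and the spokes from $\{x,y\}$ to $C$. Consequently each $v_i$ has exactly three neighbours in $G^{\eifc}$, namely its two cycle-neighbours and $x$. The rim edge $v_0v_1$ together with the spokes $xv_0$ and $xv_1$ then forms a triangle in $G^{\eifc}$ whose vertices $v_0$ and $v_1$ both have degree $3$, contradicting Lemma \ref{lm:triangle}.

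The main obstacle worth mentioning is confirming the non-planarity of $G^{\eifc}$; once this is established the rest is immediate. I note that the argument actually proves the much stronger statement that no two consecutive vertices on $C$ can both be adjacent only to $x$, so the stated bound of six is considerably weaker than what the proof yields. This is consistent with the authors' remark that this first limitation can very likely be improved.
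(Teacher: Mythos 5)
Your proposal is correct, and it takes a genuinely different (and considerably shorter) route than the paper's proof. The paper works in $G^{\ei4c}$: it invokes Lemma \ref{lm:triangle} to conclude that, among the six consecutive vertices, no two consecutive ones can have only three neighbours \emph{in $G^{\ei4c}$}, deduces that some of them must have acquired extra neighbours in the extension, and then derives a contradiction through a fairly involved $1$-drawing argument. Your approach instead applies Lemma \ref{lm:triangle} directly to $G^{\eifc}$. Since $G^{\eifc}$ is by construction simple, non-planar (this follows from Theorem \ref{th:hugElimination} and the discussion preceding Robertson's Theorem, exactly as you note), and internally-$4$-connected (hence peripherally-$4$-connected), and since in the bicycle wheel any two consecutive rim vertices adjacent to $x$ but not to $y$ have degree exactly $3$ and form a $3$-cycle with $x$, Lemma \ref{lm:triangle} gives an immediate contradiction. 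This yields the strictly stronger conclusion that not even \emph{two} consecutive such vertices can exist, which matches the authors' own remark that their stated bound of six ``can very likely be improved.'' One could even dispense with Lemma \ref{lm:triangle} entirely: a single degree-$3$ vertex in a triangle of $G^{\eifc}$ already produces a hug, contradicting internal-$4$-connection directly. So what your argument buys is a much shorter proof and a sharper bound, at the (trivial) cost of needing to observe that the degree count is taken in $G^{\eifc}$ rather than in $G^{\ei4c}$.
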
\printFullDetails{

\begin{cproof}\major{Suppose by way of contradiction that $x_1,x_2,x_3,x_4,x_5,x_6$ are six consecutive (in this order) vertices of $C$ adjacent to $x$ but not $y$.  Lemma \ref{lm:triangle} implies no two consecutive ones of these vertices have only three neighbours in $G^{\ei4c}$.   By symmetry, we may assume $x_3$ has a neighbour $u$ that is not adjacent to $x_3$ in $G^{\eifc}$.   
}

\major{Because $G^{\ei4c}$ is a peripherally-4-connected extension of $G^{\eifc}$, there are vertices $w$ and $z$ so that $x_3$, $u$, and $w$ are the neighbours (in both graphs) of $z$ and no other vertex has just these three neighbours.  Since $y$ is not adjacent to $x_3$ and $x$ has more than 3 neighbours, $z\in C$.  If follows that $x_3$ and $u$ are the $C$-neighbours of $z$ and $w$ is the neighbour of $z$ that is in $\{x,y\}$.  In particular, $z$, being a neighbour of $x_3$ is either $x_2$ or $x_4$, so $w=x$.  In either case, three consecutive vertices from $x_1,x_2,\dots,x_6$ are such that the outer two are adjacent by a chord in $G^{\ei4c}$; if necessary, we relabel so these are $x_1,x_2,x_3$.  In particular, $x_2$ has just three neighbours in $G^{\ei4c}$.}

\major{Let $D$ be a 1-drawing of $G^{\ei4c}-xx_2$ and let $K$ be the subgraph of $G^{\ei4c}-xx_2$ induced by $x$, $x_1$, $x_2$, and $x_3$.   }

\begin{claim} \major{$K$ is clean in $D$.}\end{claim}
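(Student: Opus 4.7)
The plan is to exploit the basic principle that, in a $1$-drawing, any edge whose removal leaves a non-planar graph cannot be crossed: if $e$ were crossed in $D$, then the unique crossing of $D$ would involve $e$, so removing $e$ from the drawing would produce a plane embedding of $(G^{\ei4c}-xx_2)-e$, contradicting non-planarity. Thus, to prove that no edge of $K$ is crossed in $D$, I would show that for each of the five edges $e\in\{xx_1,xx_3,x_1x_2,x_2x_3,x_1x_3\}$ of $K$, the graph $(G^{\ei4c}-xx_2)-e$ remains non-planar.

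Non-planarity in each case will be witnessed by an explicit subdivision of $K_{3,3}$ extracted from the rich structure around $x$. After deleting $xx_2$, the vertex $x$ still has the five spoke-neighbours $\{x_1,x_3,x_4,x_5,x_6\}$ together with the axle-neighbour $y$; the vertex $y$, being of degree at least $3$, has at least two further neighbours on $C$, which must lie outside $\{x_1,\ldots,x_6\}$ by the standing hypothesis that none of $x_1,\ldots,x_6$ is adjacent to $y$; the chord $x_1x_3$ is available; and the rim $C$ itself is a long cycle linking everything. For each of the four edges $xx_1,xx_3,x_1x_2,x_2x_3$, one picks a bipartition of the form $\{x,\text{rim vertex},\text{$y$-neighbour}\}$ versus $\{y,\text{two more rim vertices}\}$ and routes the nine branches along the remaining spokes, along arcs of $C$ that detour past the deleted edge, and across the chord $x_1x_3$ when needed.

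For the case $e=x_1x_3$, the vertex $x_2$ becomes of degree $2$ in $(G^{\ei4c}-xx_2)-x_1x_3$, so one may suppress it and observe that the resulting graph is precisely $G^{\ei4c}-x_2$: a bicycle wheel on the shorter rim $(C-x_2)+x_1x_3$ still retaining at least five $x$-spokes and at least two $y$-spokes, which contains a subdivision of $K_{3,3}$ by a standard three-path argument of Hall's Theorem type.

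The principal obstacle is purely combinatorial bookkeeping: one must verify for each case that the chosen three internally disjoint paths really do avoid the deleted edge $e$ and meet only at the prescribed bipartition vertices, and Lemma~\ref{lm:triangle} (preventing two of $x_1,\ldots,x_6$ from being adjacent degree-$3$ vertices) is the tool that rules out pathological collisions forced by ``compressed'' neighbourhoods. The most delicate case is $e=x_1x_3$, where the collapse of $x_2$ removes an obvious spoke from consideration and one must check that the residual bicycle-wheel skeleton is genuinely non-planar rather than, say, a subdivision of a planar graph.
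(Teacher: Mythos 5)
Your guiding principle — that an edge whose removal leaves a non-planar graph cannot carry the unique crossing — is correct, but it cannot carry the whole claim, and the digon edges $x_1x_2$, $x_2x_3$, $x_1x_3$ expose a genuine gap. The paper never attempts to prove that $(G^{\ei4c}-xx_2)-e$ is non-planar for those three edges. Instead it notes that once $xx_2$ is gone, $x_2$ has degree $2$, so $x_1x_3$ and the path $(x_1,x_2,x_3)$ form a parallel pair; in a $1$-drawing one can always reroute a crossed member of a parallel pair alongside the uncrossed one, and hence assume neither is crossed. That works regardless of whether deleting one of these edges leaves a planar graph, which is exactly the situation your principle cannot handle.

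And that situation is a real worry here. You assert that the suppressed graph $G^{\ei4c}-x_2$, being a bicycle wheel ``retaining at least five $x$-spokes and at least two $y$-spokes,'' contains a subdivision of $K_{3,3}$. That is false for bicycle wheels in general: the bicycle wheel with rim $(c_1,\dots,c_7)$, axle $xy$, $x$ adjacent to $c_1,\dots,c_5$, and $y$ adjacent to $c_6,c_7$ has exactly that spoke count and is planar (place $x$ inside $C$ and $y$ in the face bounded by $xc_5$, $c_5c_6$, $c_6c_7$, $c_7c_1$, $c_1x$). Spoke counting alone cannot give non-planarity; you would have to import the hypothesis that $G^{\eifc}$ is non-planar, and since suppressing $x_2$ is a contraction — an operation that preserves planarity but not non-planarity — it is not automatic that $G^{\ei4c}-x_2$ inherits it. You have not made that connection.

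For $xx_1$ and $xx_3$ the paper also avoids the combinatorial bookkeeping you flag as the main obstacle. Assuming $xx_1$ is crossed, the induced planar embedding of $G^{\eifc}-\{xx_1,xx_2\}$ is extended to a planar embedding of $G^{\eifc}$ by drawing $xx_2$ alongside $(x,x_3,x_2)$ — possible precisely because $x_3$ has only three neighbours in $G^{\eifc}-\{xx_1,xx_2\}$ — and then $xx_1$ alongside $(x,x_2,x_1)$, contradicting non-planarity of $G^{\eifc}$. This exploits the very degree-$3$ vertices $x_2$, $x_3$ that the hypothesis provides, rather than hunting for an explicit $K_{3,3}$ for each case.
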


\begin{proof} \major{In $G^{\ei4c}-xx_2$, $x_2$ has only two neighbours, so the edge $x_1x_3$ and the path $(x_1,x_2,x_3)$ make a pair of parallel edges.  Therefore, we may assume neither of these is crossed in $D$.}

\major{It suffices to prove that $xx_1$ is not crossed in $D$, as the proof for $xx_3$ is symmetric.  Suppose by way of contradiction that $xx_1$ is crossed in $D$ and consider the planar embedding of $G^{\ei4c}-\{xx_1,xx_2\}$ induced by $D$.   Since $G^{\eifc}-\{xx_1,xx_2\}$ is a subgraph, it is also planar, embedded in the plane by $D$.}

\major{Since $x_3$ has only three neighbours in $G^{\eifc}-\{xx_1,xx_2\}$, we can add the edge $xx_2$ alongside the path $(x,x_3,x_2)$ to obtain a planar embedding of $G^{\eifc}-xx_1$.   Then we may add the edge $xx_1$ alongside the path $(x,x_2,x_1)$ to get a planar embedding of $G^{\eifc}$. However, this contradicts the fact that $G^{\eifc}$ is not planar.}\end{proof}

\major{Now let $K$ be the subgraph of $G^{\ei4c}-xx_2$ induced by $x$, $x_1$, $x_2$, and $x_3$.  Because $x_1$, $x_2$, and $x_3$ are consecutive along $C$, there is a unique $K$-bridge $B$ in $G^{\ei4c}-xx_2$. The claim shows $K$ is clean in $D$, so $D[B]$ is contained in one face $F$ of $D[K]$.}

\major{Adjusting which of $D[x_1x_3]$ and $D[(x_1,x_2,x_3)]$ is which, if necessary, we may arrange $D$ so that both $x$ and $x_2$ are incident with a face of $D[K]$ that is not $F$.  This permits us to add $xx_2$ to $D$ without additional crossings, to obtain a 1-drawing of $G$.  This final contradiction yields the result.
}
\end{cproof}

\major{Along the same lines, we have the following limitation.}

}\begin{lemma}\label{lm:noFourXy} \major {Suppose $G\in\m2$ reduces by planar 3-reductions to the graph $G^{\ei4c}$ that is a peripherally-4-connected extension of $G^{\eifc}$.  If $G^{\eifc}$ is a bicycle wheel with axle $xy$ and rim $C$, and there are four distinct vertices of $C$ adjacent to both $x$ and $y$, then these are the only six vertices of $G^{\eifc}$.}
\end{lemma}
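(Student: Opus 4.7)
The plan is to argue by contradiction: suppose there is a fifth rim vertex $w\in V(C)\setminus\{v_1,v_2,v_3,v_4\}$, and aim to produce a subdivision of $V_8$ in $G$, contradicting the $V_8$-free standing hypothesis of this subsection. The starting observation is that each of $x,y,v_1,v_2,v_3,v_4$ has degree at least~$4$ in $G^{\eifc}$: each $v_i$ is adjacent to both axle ends and its two $C$-neighbours, while $x$ and $y$ are joined to each other and to at least the four $v_i$. Because no such vertex can be the head of a hug or an internal vertex of a $(T,U)$-configuration in the reductions from $G$ to $G^{\ei4c}$ and then to $G^{\eifc}$, these six vertices persist as vertices of $G$ with the edges of the $K_{2,4}+xy$-subgraph of $G^{\eifc}$ intact, up to possible subdivision by other rim vertices of $C$ and possible doubling by parallel edges introduced in the peripherally-$4$-connected extension.

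The main construction is an $8$-cycle in $G$ of the form
$$
v_1,\; w,\; v_2,\; x,\; v_3,\; w',\; v_4,\; y,\; v_1,
$$
where $w$ lies on the arc $A_1$ of $C$ joining $v_1$ to $v_2$, and $w'$ is an eighth branch vertex supplied either by another rim vertex on the arc $A_3$ from $v_3$ to $v_4$, or by a subdivision point inserted during hug deletion in passing from $G^{\ei4c}$ to $G^{\eifc}$, or by an internal vertex of the $(T,U)$-configuration expanding $w$ when $w$ has degree~$3$ in $G^{\eifc}$. The four diameters of the $V_8$-subdivision are then produced using the $K_{2,4}$ and the arcs of $C$ not used by the rim: $v_1v_3$ routes through a suitably deflected copy of a $v_1 y v_3$-path, $v_2v_4$ routes analogously through $x$, while the two remaining diameters (at $w$ and $w'$) are routed along arc pieces of $C$ together with the axle edge $xy$, all while keeping the diameter interiors pairwise disjoint from the chosen rim.

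I expect the main obstacle to be the extremal case $|V(C)|=5$, in which $G^{\eifc}$ has just $7$ vertices and therefore cannot itself contain a subdivision of $V_8$. In that case the eighth branch vertex must arise from genuine structure of $G$ that collapses upon reduction: since $v_1,\dots,v_4,x,y$ all have degree at least~$4$ in $G^{\eifc}$, the only vertex available to carry a non-trivial $(T,U)$-configuration is $w$ itself, so $w$ must be adjacent to exactly one of $x,y$, and the expansion described in Lemma~\ref{lm:buildingUp} must be non-trivial (or else $G^{\ei4c}$ has a pair of parallel edges introduced by the extension). Lemma~\ref{lm:triangle}, combined with the $3$-connectivity of $G$, is then used to rule out the degenerate triangle configurations that would prevent $w$'s expansion from supplying a fresh branch vertex. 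Once this case analysis is discharged, the resulting $V_8$-subdivision in $G$ contradicts the $V_8$-free assumption, and the proof concludes.
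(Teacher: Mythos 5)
Your plan is to derive a contradiction by producing a subdivision of $V_8$ in $G$, invoking what you call ``the $V_8$-free standing hypothesis of this subsection.'' This cannot work. The lemma is not stated with a $V_8$-freeness hypothesis, and — more to the point — its conclusion is not a consequence of $V_8$-freeness. In the extremal case you yourself flag, $|V(C)|=5$, the graph $G^{\eifc}$ has only $7$ vertices; a subdivision of $V_8$ requires $8$ distinct branch vertices, so $G^{\eifc}$ cannot contain one, and yet the lemma still forbids $|V(C)|=5$. Your attempted rescue — recruiting an eighth branch vertex from a $(T,U)$-configuration expanding a degree-$3$ vertex, or from a hug head, or from a subdivision point — is not a proof. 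Vertices created by planar $3$-reductions live inside planar pieces attached to the rest of $G$ through a $3$-element boundary; by Theorem~\ref{th:3CutBothNonPlanar} (or simply by planarity of $G_v^+$), such a piece cannot contribute the ``non-planar'' structure needed for a new branch vertex of a subdivision of a $3$-connected non-planar graph like $V_8$. So the $|V(C)|=5$ case is genuinely out of reach by this method, not just a technical wrinkle.

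The paper proves the lemma by an entirely different, drawing-theoretic route that uses $2$-crossing-criticality directly rather than $V_8$-exclusion. One takes a $1$-drawing $D$ of $G^{\ei4c}-xu_5$ (available because the corresponding edge-deleted subgraph of $G$ has crossing number at most~$1$). The four rim vertices adjacent to both axle ends force $x$ and $y$ onto opposite sides of $D[C]$: otherwise too few of $u_1,\dots,u_4$ lie on the boundary of the face of $D[K]$ containing $y$, and the missing spoke edges would cost two crossings. Hence the unique crossing in $D$ is $xy$ with $C$. It then remains to show that $xu_5$ can be added back into $D$ without a second crossing. The only possible obstruction is a chord $w_1w_2$ of $C$ with ends on either side of $u_5$ inside the face of $D[K]$ containing $x$ and $u_5$; since such a chord is not an edge of the bicycle wheel $G^{\eifc}$, it must be a hug edge in the peripherally-$4$-connected extension, and an analysis of degrees pins its head down to be $u_5$ itself. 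That chord can then be re-embedded on the $y$-side of $C$, after which $xu_5$ fits without a crossing, contradicting $\operatorname{cr}(G^{\ei4c})\ge 2$. Your proposal shares none of this machinery; the $1$-drawing of $G^{\ei4c}-xu_5$ and the identification of obstruction chords with hugs are the essential ideas, and neither appears in your sketch.

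A smaller point: your proposed $8$-cycle $v_1,w,v_2,x,v_3,w',v_4,y,v_1$ presupposes that $w$ lies on the rim arc between $v_1$ and $v_2$ and that a second extra vertex $w'$ lies on the arc between $v_3$ and $v_4$. Nothing in the hypotheses guarantees that cyclic arrangement of $v_1,\dots,v_4$ around $C$, nor the existence of $w'$, so even where a $V_8$ could in principle exist the construction is not pinned down.
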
\printFullDetails{

\begin{cproof} \major{Suppose to the contrary that $u_1$, $u_2$, $u_3$,  and $u_4$ are distinct vertices of $C$ adjacent to both $x$ and $y$ in $G^{\ei4c}$ and there is another vertex $u_5$.  We may choose the labelling of $x$ and $y$ so that $xu_5\in G^{\eifc}$.  Let $D$ be a 1-drawing of $G^{\ei4c}-xu_5$.    
}

\major{Let $K$ be the subgraph of $G^{\ei4c}-xu_5$ consisting of $C$ and all edges between $x$ and vertices of $C$.  (We do not include any chords of $C$ that might exist in $G^{\ei4c}$.)  If $x$ and $y$ are both in the same face of $D[C]$, then $y$ is in some face $F$ of $D[C]$ and at least two of $u_1$, $u_2$, $u_3$, and $u_4$ are not incident with $F$.  This implies the contradiction that $D$ has at least two crossings.}

\major{We conclude that $y$ is not in the same face of $D[C]$ with $x$.  It follows that $xy$ crosses $C$ in $D$ and this is the only crossing.   We claim we can add the edge $xu_5$ to $D$ to obtain a 1-drawing of $G^{\ei4c}$.}

\major{Let $F$ be the unique face of $D[K]$ incident with both $u_5$ and $u$ and let $C'$ be the cycle bounding $F$.  If we cannot add $xu_5$ in $F$, then there is an edge $e$ of $G^{\ei4c}$ that has an end in each of the two components of $C'-\{x,u_5\}$.  Since $C'-x\subseteq C$, it follows that both ends $w_1$ and $w_2$ of $e$ are in $C$.}

\major{Since $e$ is not an edge of $G^{\eifc}$, there are vertices $w_3$ and $z$ of $G^{\ei4c}$ so that $z$ has just the neighbours $w_1$, $w_2$, and $w_3$.    Since both $x$ and $y$ have at least four neighbours, $z\notin\{x,y\}$.  Since one of $x$ and $y$ is a neighbour of $z$, $w_3\in\{x,y\}$.  Finally, $z$ has at least two neighbours in $C$, so these are $w_1$ and $w_2$.  We conclude that $z=u_5$.}

\major{We note that $xy$ cannot cross the 3-cycle $u_5w_1w_2$ in $D$.  Therefore, we can move $w_1w_2$ to the face of $D[C]$ that contains $y$; in this new 1-drawing of $G^{\ei4c}-xu_5$, $x$ and $u_5$ are incident with the same face, giving the contradiction that $G^{\ei4c}$ has a 1-drawing.   
}
\end{cproof}

\major{The final limitation is the following.}

}\begin{lemma}\label{lm:noFourAlternations}   \major {Suppose $G\in\m2$ reduces by planar 3-reductions to the graph $G^{\ei4c}$ that is a peripherally-4-connected extension of $G^{\eifc}$.  Suppose $G^{\eifc}$ is a bicycle wheel with axle $xy$ and rim $C$, and there are six distinct vertices $x_1$, $y_1$, $x_2$, $y_2$, $x_3$, $y_3$ in this cyclic order on $C$, so that, for $i=1,2,3,$, $x_i$ is adjacent to $x$ and $y_i$ is adjacent to $y$.   Then these are the only six vertices of $C$.} \end{lemma}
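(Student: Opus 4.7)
The plan is to proceed by contradiction, supposing that $C$ has a seventh vertex $w$. The three-fold cyclic symmetry of the labelling $x_1,y_1,x_2,y_2,x_3,y_3$, combined with the $x\leftrightarrow y$ reflection symmetry, lets me assume without loss of generality that $w$ lies on the arc of $C$ joining $y_3$ to $x_1$ and internally disjoint from the other five marked vertices. Since $G^{\eifc}$ is internally 4-connected and the only non-rim vertices of a bicycle wheel are $x$ and $y$, the vertex $w$ is adjacent to at least one of $x$ and $y$; after again invoking the $x\leftrightarrow y$ swap I may assume $w\sim x$.

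First I would consider a $1$-drawing $D$ of $G^{\ei4c}-xw$, which exists because otherwise we would already be done (via Theorem~\ref{th:TUreplace}, any $1$-drawing of $G^{\ei4c}$ extends to a $1$-drawing of $G$, contradicting $\crn(G)\ge 2$). I would then analyze $D$ restricted to the subgraph $H$ of $G^{\ei4c}$ consisting of the rim $C$, the six spokes $xx_i,yy_i$, and the axle $xy$. The key structural fact is that $H$ contains three pairwise internally disjoint $xy$-paths of the form $x\,x_i\cdot P_i\cdot y_i\,y$, where $P_i$ is an arc of $C$ between $x_i$ and $y_i$, together with the axle $xy$; these four internally disjoint $xy$-paths give four faces bounded by pairwise-edge-disjoint cycles in any planar embedding of a subgraph. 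In addition, because the alternation is three-fold, $H$ contains (up to reassignment of arcs) three overlapping subdivisions of $K_{3,3}$, each with bipartition of the form $\{x,y_i,y_j\}$ vs.\ $\{y,x_i,x_j\}$, whose pairwise overlap is rich enough to force the unique crossing of $D$ to be incident with an edge of the arc $y_3\,w\,x_1$.

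Once the crossing in $D$ is localized to the arc through $w$, a face-tracing argument shows that $x$ and $w$ lie on a common face of the subdrawing $D[G^{\ei4c}-xw]$: the end $w$ of the missing edge has only the two $C$-neighbours $y_3$ and $x_1$ in $G^{\ei4c}-xw$, so deleting $xw$ leaves $w$ of degree $2$ (or $3$ if $w\sim y$) and the face structure around $w$ is tightly controlled by the crossing. Reinserting $xw$ along an arc in the common face yields a $1$-drawing of $G^{\ei4c}$, and through Theorem~\ref{th:TUreplace} a corresponding $1$-drawing of $G$, contradicting $\crn(G)\ge 2$.

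The main obstacle will be the cofaciality step in the second paragraph: the three-alternation structure barely avoids forcing a $V_8$-subdivision in $G^{\eifc}$ (parallel to the failed attempts to embed $V_8$ into such a wheel), so one cannot simply quote a subdivision lemma. Instead, the argument must carry out a direct case analysis on the position of the unique crossing in $D$, using Lemma~\ref{lm:triangle} and the internal $4$-connectivity of $G^{\eifc}$ to eliminate configurations where the crossing lies elsewhere in $H$ or involves an extraneous edge of $G^{\ei4c}$. The subcases $w\sim y$ only and $w$ adjacent to both $x$ and $y$ are then dispatched by parallel arguments after applying the $x\leftrightarrow y$ symmetry.
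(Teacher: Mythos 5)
Your setup (contradiction with a seventh vertex $w$, WLOG $w\sim x$, consider a $1$-drawing $D$ of $G^{\ei4c}-xw$) matches the paper, but from there the argument diverges, and the gap is real. The paper does \emph{not} localize the unique crossing to the arc $y_3\,w\,x_1$ --- that claim isn't established by your $K_{3,3}$-overlap sketch and would be hard to make precise. What the paper actually shows is the much weaker (and easier) fact that $x$ and $y$ lie on opposite sides of $D[C]$: consider $K=C\cup\{\text{$x$-spokes}\}$; if $y$ were on the $x$-side of $D[C]$ it would sit in a single sector of $D[K]$, whose boundary arc of $C$ can contain at most one of the interleaved $y_i$, forcing two crossings from the $y$-spokes. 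So the one crossing in $D$ is of the axle $xy$ with some edge of $C$, location unspecified.

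The heart of the paper's proof is what you relegate to ``the argument must carry out a direct case analysis'': controlling the edges of $G^{\ei4c}$ that could block reinsertion of $xw$. Let $C'$ bound the face of $D[K]$ incident with both $x$ and $w$. If $xw$ cannot be drawn in that face, there must be a blocking edge $e=w_1w_2$ of $G^{\ei4c}$ with one end in each component of $C'-\{x,w\}$; since $C'-x\subseteq C$, both $w_1,w_2\in C$, so $e$ is a chord of $C$ absent from the bicycle wheel $G^{\eifc}$. Here is where peripheral-4-connected \emph{extension} does the work: such a chord is the base of a triangle whose head $z$ has only the three neighbours $w_1,w_2,w_3$. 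Because $x$ and $y$ have degree $\ge 4$ in $G^{\eifc}$, $z\in C$; because $z$ has two $C$-neighbours and one axle-neighbour $w_3\in\{x,y\}$, the only candidate is $z=w$, whence $w$ has degree exactly $3$, so $w\not\sim y$ (since $w\sim x$), and the triangle $w\,w_1\,w_2$ can be flipped to the other side of $C$ to free up a face for $xw$. Your ``face-tracing argument'' never confronts this chord obstruction, and no amount of degree-counting at $w$ alone resolves it; you must use the structure theorem for chords in a p4c extension. One final detail the paper handles and you omit: it chooses the seventh vertex $u$, if possible, so that $u$ is adjacent to only one of $x,y$; this choice meshes with the degree-3 conclusion for $z=u$ at the end. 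Overall your high-level plan is compatible with the paper, but the two steps you flagged as ``the main obstacle'' are exactly where the paper invests its real argument, and your sketches of those steps do not go through.
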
\printFullDetails{

\major{We remark that we allow for the possibility that some (or all) of the $x_i$ are also adjacent to $y$ and, likewise, some of the $y_i$ can be adjacent to $x$.}

\bigskip

\begin{cproof} \major{ Suppose to the contrary that there is another vertex $u$ in $C$.  If possible, choose the $x_i$, $y_i$ and $u$ so that $u$ is adjacent to only one of $x$ and $y$.  We may assume that $u$ occurs between $x_1$ and $y_1$ in the cyclic order on $C$.  By the choice of the $x_i$, $y_i$, and $u$, if $u$ is adjacent to both $x$ and $y$, then so are $x_1$ and $y_1$ and all vertices between them on $C$.}

\major{Let $D$ be a 1-drawing of $G^{\ei4c}-xu$.  Let $K$ be the subgraph of $G^{\ei4c}-xu$ consisting of $C$ and all edges between $x$ and vertices of $C$.  (We do not include any chords of $C$ that might exist in $G^{\ei4c}$.)  If $x$ and $y$ are on the same side of $D[C]$, then at most one of the $y_i$ is incident with the face of $D[K]$ containing $y$, showing $D$ has at least two crossings, a contradiction.  Therefore, the crossing of $D$ is of $xy$ with an edge of $C$.
}

\major{There is a face of $D[K]$ incident with both $x$ and $u$; let $C'$ be its bounding cycle.  If we cannot add $xu$ to $D$, it is because there is an edge $e$ of $G^{\ei4c}-xu$ with an end in each of the components of $C'-\{x,u\}$.  Since $C'-x\subseteq C$, it follows that the ends $w_1$ and $w_2$ of $e$ are both in $C$.  Because $G^{\ei4c}$ is a peripherally-4-connected extension of a bicycle wheel, there are vertices $z$ and $w_3$ so that $z$ has only the neighbours $w_1$, $w_2$, and $w_3$.
}

\major{Both $x$ and $y$ have at least four neighbours in $G^{\eifc}$, so $z\notin \{x,y\}$; thus, $z\in C$.  Since $z$ has two neighbours in $C$ and at least one in $\{x,y\}$, it follows that $w_3\in \{x,y\}$, while $w_1$ and $w_2$ are the two $C$-neighbours of $z$.  Therefore, $z=u$.  As $u$ is adjacent to $x$, we conclude that $u$ is not also adjacent to $y$.  But now we can move the edge $w_1w_3$ to the other side of $C$ so that the resulting 1-drawing of $G^{\ei4c}-xu$ extends to a 1-drawing of $G$, a contradiction.
}
\end{cproof}

\major{Lemmas \ref{lm:no6consecX}, \ref{lm:noFourXy}, and \ref{lm:noFourAlternations} effectively limit the possibilities for $G^{\eifc}$.  Each of these must be checked for either having crossing number 1 or being 2-crossing-critical.  Those with crossing number 1 must have their peripherally-4-connected extensions tested for 2-criticality.  No matter what improvement is made to Lemma \ref{lm:no6consecX}, this will require computer work to complete.}  

}\bigskip\noindent {\bf CASE 2:} \wording{{\em the 4-covered graphs.}}\printFullDetails{

\bigskip

We begin our analysis by describing three particular \i4c\ \2cc\ graphs that are 4-covered.

\begin{definition}  \begin{enumerate}\item The 3-cube $Q_3$\index{$Q_3$} is the 3-regular, 3-connected, planar, bipartite graph with 8 vertices.
\item The graph $Q_3^v$\index{$Q_3^v$} is the bipartite graph obtained from $Q_3$ by adding one new vertex joined to all four vertices on one side of the bipartition of $Q_3$.
\item The graph $Q_3^{2e}$\index{$Q_3^{2e}$} is the bipartite graph obtained from $Q_3$ by adding two of the four missing (bipartite-preserving) edges.
\item The graph $Q_3^t$\index{$Q_3^t$} is the graph obtained from $Q_3$ by adding a 3-cycle $abc$ on one side of the bipartition of $Q_3$ together with one edge joining the fourth vertex $d$ of the same part to the non-adjacent vertex in the other part of the bipartition.
\end{enumerate}
\end{definition}

}\begin{lemma}\label{lm:3cube}  The graphs $Q_3^v$, $Q_3^{2e}$, and $Q_3^t$ are all \2cc. \end{lemma}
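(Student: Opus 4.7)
The plan is to verify for each graph $G \in \{Q_3^v, Q_3^{2e}, Q_3^t\}$ the two defining properties of $2$-crossing-criticality: (i) $\crn(G) \geq 2$, and (ii) for every edge $e$ of $G$, $\crn(G-e) \leq 1$. Both parts reduce to casework, with the automorphism groups of the three graphs cutting the number of edge-orbits down substantially.

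For the lower bound $\crn(G) \geq 2$, the cleanest route is to exhibit, in each of the three graphs, two edge-disjoint non-planar subgraphs. Since each non-planar subgraph forces a crossing in any drawing and edge-disjoint subgraphs can never share a single crossing, this immediately gives $\crn(G) \geq 2$. For $Q_3^v$, two edge-disjoint $K_{3,3}$-subdivisions can be extracted using the apex vertex together with disjoint perfect matchings inside $Q_3$. For $Q_3^{2e}$, each of the added edges is a chord of a $6$-cycle in $Q_3$, and together with opposite halves of $Q_3$ they produce two edge-disjoint $K_{3,3}$-subdivisions. For $Q_3^t$, the triangle provides a $K_5$-minor-like structure, and one can produce two edge-disjoint Kuratowski subdivisions by using the triangle's three edges together with the extra edge $dy$ and suitable portions of $Q_3$. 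An alternative, perhaps more uniform, approach is to verify directly that none of the three graphs embeds in the projective plane (by checking against Archdeacon's list of $103$ obstructions, or by locating each of them in the list \cite{ghw} cited just after Figure \ref{fig:2cc-nonPP}); the discussion at the start of Chapter 3 then gives $\crn \geq 2$ for free.

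For the upper bound, I use the symmetries of each graph. The stabilizer of $Q_3$ in $Q_3^v$ reduces to an action fixing the apex, so the edges of $Q_3^v$ split into just two orbits (the $Q_3$-edges incident to the apex side, and the four apex-incident edges). The swap symmetry of $Q_3^{2e}$ together with the automorphisms of $Q_3$ preserving the two added chords reduces its edges to about four orbits. For $Q_3^t$, the $S_3$ symmetry on the triangle plus the natural automorphisms of $Q_3$ cut the edge set into a handful of orbits. For each orbit representative $e$, I would give an explicit planar or $1$-crossing drawing of $G-e$: start from a standard plane embedding of $Q_3$, and for the added edges either draw them inside a face they are incident to, or — when this is forced to create a single crossing — draw them across exactly one edge.

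The main obstacle is not conceptual but bookkeeping: verifying the lower bound and producing the $1$-drawings of $G-e$ for each orbit of edges requires care, particularly for $Q_3^t$ where the triangle destroys bipartiteness and makes the edge orbits less symmetric. The cleanest write-up is likely to invoke the non-projective-planarity route for the lower bound (turning a potentially fiddly edge-disjoint Kuratowski calculation into a citation) and then to display, via figures, one $1$-drawing of $G-e$ per edge orbit.
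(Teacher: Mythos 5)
Your lower-bound argument has a concrete flaw. You propose to find two edge-disjoint Kuratowski subdivisions in each of $Q_3^v$, $Q_3^{2e}$, and $Q_3^t$, but none of these graphs has enough edges: $Q_3$ has $12$ edges, so $Q_3^v$ and $Q_3^t$ have $16$ edges and $Q_3^{2e}$ has only $14$, while two edge-disjoint $K_{3,3}$-subdivisions need at least $18$ edges (and using $K_5$ is worse). So the explicit constructions you sketch (``disjoint perfect matchings'', ``opposite halves of $Q_3$'') cannot produce what you claim, and this route is dead on arrival. Your fallback — verifying non-projective-planarity — is also not safe to take for granted: being 2-crossing-critical does not imply being an obstruction for the projective plane (plenty of 2-crossing-critical graphs embed there with representativity~2, and Chapter~3 of this paper is built around exactly that phenomenon), so you would owe a genuine verification that these three specific graphs fail to embed, which you do not supply.

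The paper's lower-bound argument takes a different and much lighter route: it first proves that any 1-drawing of $Q_3$ \emph{is} the planar embedding (because every pair of non-adjacent edges of $Q_3$ lies in a pair of disjoint cycles, so no two edges can cross), and then argues directly from the planar embedding of $Q_3$ that adding the apex/chords/triangle forces at least two crossings (for $Q_3^v$: each face of $Q_3$ meets only two of the four target vertices; for $Q_3^{2e}$: each added chord joins vertices not on a common face; for $Q_3^t$: the new $d$-edge crosses $Q_3$ and the triangle $abc$ then either crosses $Q_3$ too or separates the ends of that edge). This exploits the rigidity of $Q_3$ rather than trying to pack two Kuratowski certificates into the graph. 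Your upper-bound plan (reduce to edge-orbit representatives by symmetry and exhibit a 1-drawing of each $G-e$) is in the same spirit as the paper's and is fine as far as it goes, but the lower bound needs a genuinely different idea than the one you offer.
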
\printFullDetails{

\begin{cproof}  We start with the following observation.

\begin{claim}\label{cl:1drawingQ3} If $D$ is a 1-drawing of $Q_3$, then $D$ is the unique planar embedding of $Q_3$. \end{claim}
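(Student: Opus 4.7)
The plan is as follows. The statement asserts both that $D$ has no crossings and that the resulting planar embedding is uniquely determined. The second assertion is immediate from Whitney's theorem: since $Q_3$ is $3$-connected and planar, its embedding in the sphere is unique up to reflection. So everything reduces to the first assertion.

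For that assertion I argue by contradiction. Suppose $D$ has exactly one crossing, between disjoint edges $e = uw$ and $f = xy$. Inserting a new vertex $v$ at the crossing point turns $D$ into a planar embedding of the graph $H$ obtained from $Q_3$ by subdividing $e$ and $f$ at a common vertex $v$. Because $e$ and $f$ genuinely \emph{cross} (rather than merely meet), the cyclic order of the four edges at $v$ in this planar embedding must alternate between the endpoints of $e$ and the endpoints of $f$; that is, it must be $(u,x,w,y)$ up to reversal. The job is now to show that $H$ does not admit any such planar embedding.

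I reduce to a small number of cases using $\operatorname{Aut}(Q_3)$. The $12$ edges of $Q_3$ fall into three parallel classes of $4$ edges each (one per coordinate direction), and the action of $\operatorname{Aut}(Q_3)$ on pairs of disjoint edges has three orbits: \emph{parallel-close} (two edges of a common $4$-face), \emph{parallel-far} (two antipodal edges in the same parallel class), and \emph{perpendicular} (edges in distinct parallel classes). I fix a representative $(e,f)$ in each orbit. For each representative I examine the planar embeddings of $H - v = Q_3 - e - f$. This graph is $2$-connected, on $8$ vertices and $10$ edges, with four degree-$2$ vertices (the endpoints of $e$ and $f$), and Whitney's theorem in the $2$-connected case pins its planar embeddings down up to a short list of Whitney twists about $2$-cuts. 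For each admissible embedding, the cyclic order of $v$'s neighbours in $H$ is forced by the order in which $u,w,x,y$ appear along the boundary of the face containing $v$; I then check that the required alternating order $(u,x,w,y)$ is never achieved. This produces the required contradiction, so $D$ is crossing-free and hence the unique planar embedding of $Q_3$.

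The main obstacle is the bookkeeping of Whitney twists in $Q_3 - e - f$ and a careful identification of the faces capable of containing $v$. Fortunately, because $|V(Q_3)| = 8$ and the face structure of $Q_3$ is so symmetric (six $4$-faces, three meeting at each vertex), each case reduces to inspecting only a handful of embeddings, and the enumeration is entirely mechanical.
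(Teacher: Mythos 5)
Your argument is sound in outline (though you leave the actual case enumeration unfinished), but it takes a much longer route than the paper's one-line proof. The paper observes that any two non-adjacent edges $e,f$ of $Q_3$ lie on vertex-disjoint cycles --- in every one of your three orbits one can take two opposite $4$-faces of the cube --- and two disjoint simple closed curves in the plane cross an even number of times, so the single hypothetical crossing cannot involve $e$ and $f$. This dispatches all three orbits simultaneously, with no planarization, no enumeration of Whitney twists of $Q_3-e-f$, and no tracking of the rotation at the crossing vertex. Your route does work; in particular the $2$-connectivity of $Q_3-e-f$ that your appeal to Whitney's $2$-isomorphism theorem silently requires does hold, since a cut vertex $w$ would force $\{e,f\}$ to be a $2$-edge-cut of $Q_3-w$, and $Q_3-w$ is a subdivision of $K_4$ whose only $2$-edge-cuts consist of adjacent edges. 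Still, the case work buys you nothing here, and the ``disjoint cycles cannot cross exactly once'' device is exactly the standard tool for showing an edge is clean in a $1$-drawing; it recurs throughout this paper and is worth internalizing.
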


\begin{proof}  If $e$ and $f$ are two non-adjacent edges of $Q_3$, then it is easy to see that they are in disjoint cycles.  Therefore, no two edges of $Q_3$ cross in $D$. \end{proof}

We use Claim \ref{cl:1drawingQ3} to show that $\crn(Q_3^v)\ge 2$, $\crn(Q_e^{2e})\ge 2$, and $\crn(Q_3^t)\ge 2$.  

Adding the one vertex to the planar embedding of the 3-cube yields 2 crossings, since each face of the 3-cube is incident with only 2 of the four vertices joined to the new vertex.  This shows $\crn(Q_3^{v})\ge 2$. 

For $Q_3^{2e}$, each of the two new edges joins vertices not on the same face of $Q_3$ and so each has a crossing with $Q_3$.  Thus, $\crn(Q_3^{2e})\ge 2$.

For $Q_3^t$, the new edge $e$ incident with $d$ must cross $Q_3$ in any drawing $D$ of $Q_3^t$ for which $D[Q_3^t]$ has no crossings.  If the 3-cycle $D[abc]$ also has a crossing with $Q_3$, then $D$ has two crossings.  Otherwise, $D[abc]$ separates the two ends of $D[e]$, so $D[e]$ crosses $D[abc]$.  Thus, $\crn(Q_3^t)\ge 2$.

We now consider 2-criticality in each case.

For $Q_3^v$, deleting any edge of the 3-cube makes a face incident with 3 of the four vertices and so yields a 1-drawing.  Likewise deleting one of the edges incident with the new vertex yields a 1-drawing.

For $Q_3^{2e}$, obviously deleting either of the edges not in $Q_3$ yields a 1-drawing.  On the other hand, if $e$ is an edge of $Q_3$ incident with at most one of the vertices of $Q_3^{2e}$ of degree 4, then deleting $e$ makes one of the newly adjacent pairs now lie on the same face, yielding the required 1-drawing.  If $e$ is one the remaining two edges of $Q_3$, there is a 1-drawing of $Q_3-e$ with one crossing that extends to a 1-drawing of $Q_3^{2e}-e$.

For $Q_3^t$, criticality of all the edges not incident with $d$ is obvious, as it is the new edge  $e$ incident with $d$.  The remaining three edges are symmetric.  Deleting any one of these results in a subgraph that has crossing number 1 (we may move the other end of $e$ to the other side of $abc$ to get a 1-drawing). \end{cproof}

}\begin{lemma}\label{lm:4covered}  Suppose $G\in \m2$ reduces by planar 3-reductions to \wording{a peripherally}-4-connected $G^{\ei4c}$ with at least 8 vertices \wording{that is an extension of the internally-4-connected 4-covered graph $G^{\eifc}$.}  Then either $G$ is one of the graphs $Q_3^{v}$, $Q_3^{2e}$, \minor{or $G^{\ei4c}$ has exactly 8 vertices.}\end{lemma}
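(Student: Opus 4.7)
The plan is to classify the internally-4-connected 4-covered graph $G^{\eifc}$ subject to $\crn(G^{\eifc}) \le 2$, and then to track the extension $G^{\eifc} \to G^{\ei4c}$ and the $K_v$-expansion $G^{\ei4c} \to G$. First I would set up notation: let $W = \{w_1, w_2, w_3, w_4\}$ be the 4-cover and $X = V(G^{\eifc}) \setminus W$. Since $G^{\eifc}$ is 3-connected, each $x \in X$ has degree 3 or 4, so is adjacent to either three or all four of the $w_i$. Let $n_4$ denote the number of degree-4 vertices in $X$ and, for each triple $\{i,j,k\} \subseteq \{1,2,3,4\}$, let $n_{ijk}$ count degree-3 vertices in $X$ adjacent to exactly $\{w_i, w_j, w_k\}$, so that $|X| = n_4 + \sum_{ijk} n_{ijk}$.

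Next I would establish three structural restrictions. (a) If $w_iw_j$ is an edge of $G^{\eifc}$, then no degree-3 vertex of $X$ is a common neighbor of $w_i$ and $w_j$, else that edge is a hug, contradicting internal-4-connectedness. (b) $n_{ijk} \le 1$ for every triple: otherwise $\{w_i, w_j, w_k\}$ is a non-peripheral 3-cut separating the two $x$'s from $\{w_\ell\}$ together with the remaining vertices of $X$ (the latter containing at least two vertices because $|V(G^{\eifc})| \ge 8$). (c) $n_4 + n_{ijk} \le 3$ for every triple: otherwise $G^{\eifc}$ contains $K_{3,4}$ as a subgraph; since $K_{3,4}$ is 2-crossing-critical with only 7 vertices but $|V(G^{\eifc})| \ge 8$, this would force $G^{\eifc}$ to not be 2-crossing-critical, hence $\crn(G^{\eifc}) = 1$, while the $K_{3,4}$-subgraph simultaneously forces $\crn(G^{\eifc}) \ge 2$, a contradiction. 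Summing (c) over the four triples gives $3n_4 + |X| \le 12$, and (b) gives $|X| \le n_4 + 4$; for $|V(G^{\eifc})| \ge 9$, i.e.\ $|X| \ge 5$, these inequalities force $n_4 \in \{1, 2\}$.

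If $n_4 = 1$, then $|X| = 5$ and every $n_{ijk} = 1$, so $G^{\eifc}$ is precisely $Q_3^v$. If $n_4 = 2$, then $|X| \in \{5, 6\}$ and at least three $n_{ijk}$ equal $1$; these configurations produce several edge-overlapping $K_{3,3}$-subdivisions (each triple together with both degree-4 vertices and the corresponding degree-3 vertex), and I expect this to force $\crn(G^{\eifc}) \ge 3$, ruling out the case. Hence $|V(G^{\eifc})| \ge 9$ forces $G^{\eifc} = Q_3^v$. Since the four degree-3 vertices of $Q_3^v$ lie in $X$ and are therefore pairwise non-adjacent, the edge-subdivision type extensions of Theorem \ref{th:hugElimination} cannot apply; only edge-addition extensions remain, and these insert edges inside $W$. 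Direct verification using 2-criticality shows such additions either reproduce $Q_3^v$ or destroy 2-criticality, so $G^{\ei4c} = Q_3^v$. Finally, any nontrivial $(T,U)$-expansion at a degree-3 vertex of $Q_3^v$ fails the 2-criticality tests classified in Lemma~\ref{lm:buildingUp}, giving $G = Q_3^v$.

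For the remaining case $|V(G^{\eifc})| = 8$ and $|V(G^{\ei4c})| \ge 9$, an extension has introduced a new vertex by subdividing an edge between two degree-3 vertices of $G^{\eifc}$. I would enumerate the short list of 8-vertex internally-4-connected 4-covered graphs with $\crn \le 2$ that arise from the classification above (essentially $Q_3$, $Q_3^{2e}$, and $Q_3^t$) and check that the only extension together with a $(T,U)$-expansion yielding a 2-crossing-critical $G$ with more than $8$ vertices leads to $G = Q_3^{2e}$. The main obstacle will be the case $n_4 = 2$: I need to confirm that the 5- and 6-vertex configurations really do have crossing number at least 3, and this does not follow directly from Zarankiewicz-type bounds on bipartite subgraphs (those only yield $\crn \ge 2$). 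A careful argument using several overlapping $K_{3,3}$'s with forced crossings, or direct case analysis on the finitely many candidate graphs, will be necessary. A secondary obstacle is ruling out nontrivial extensions of $Q_3^v$ and nontrivial $(T,U)$-expansions, each of which requires checking that some edge-deletion fails to give a 1-drawing.
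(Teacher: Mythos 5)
Your combinatorial setup (classify vertices outside the cover by which triple they attach to; at most one per triple by internal 4-connectivity; at most two degree-4 vertices by $K_{3,4}$-avoidance) matches the paper in substance. The gap is in the machinery you propose for closing the cases, and you flag it yourself: for $n_4 = 2$ with $|X| \ge 5$ you want $\crn(G^{\eifc}) \ge 3$, which would indeed be delicate to establish and is in fact the wrong tool. What the paper uses for all three of its case closures is the 2-criticality of $G$ itself. In the $n_4 = 2$, $|X|\ge 5$ case, $Q_3^{2e}$ is a \emph{proper} subgraph of $G^{\eifc}$ (8 of $\ge 9$ vertices). Since $G$ contains a subdivision of $G^{\ei4c}$, and hence of $G^{\eifc}$, $G$ has a proper subgraph that is a subdivision of $Q_3^{2e}$. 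But $Q_3^{2e}$ is 2-crossing-critical (Lemma \ref{lm:3cube}), so that proper subgraph has crossing number at least $2$, contradicting the 2-criticality of $G$. No crossing-number-$\ge 3$ estimate enters.

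The same observation collapses the rest of the proposal. For $n_4=1$, $|X|=5$: $Q_3^v \subseteq G^{\eifc}$, and $Q_3^v$ is 2-crossing-critical, so $G$ must equal the subdivision of $Q_3^v$ it contains; minimum degree $\ge 3$ gives $G = Q_3^v$ outright. Your paragraph ruling out hug-extensions and nontrivial $(T,U)$-expansions of $Q_3^v$ is then superfluous. Your step (c) also contains a non-sequitur: ``$G^{\eifc}$ is not 2-crossing-critical, hence $\crn(G^{\eifc}) = 1$'' does not follow — failure of criticality never bounds the crossing number above. The correct form of (c) is again via 2-criticality of $G$: a $K_{3,4}$ subgraph of $G^{\eifc}$, on 7 of its $\ge 8$ vertices, forces $G$ to properly contain a subdivision of $K_{3,4}$, which has crossing number $2$, contradiction. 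One further caution: you bound $|V(G^{\eifc})|$ throughout, whereas the lemma's hypothesis and final conclusion concern $|V(G^{\ei4c})|$; the paper's own proof is loose on this distinction as well, but you should be aware of it.
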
\printFullDetails{

\ignore{\begin{figure}
%\psfragscanon
%\psfrag{G1}[c]{$G_1=\crt{C_3}{C_3}$}
%\psfrag{G1a}[c]{$\crt{C_3}{C_3}-e$}
\begin{center}
\includegraphics[scale=.25]{4covered}\qquad\includegraphics[scale=.25]{4covered1a}\qquad\includegraphics[scale=.25]{4covered1b}\qquad\includegraphics[scale=.25]{4covered2a}\qquad \includegraphics[scale=.25]{4covered2b}
\end{center}
\end{figure}

\begin{figure}
\begin{center}\includegraphics[scale=.25]{4covered2c}\qquad\includegraphics[scale=.25]{4covered2d}\qquad\includegraphics[scale=.25]{4covered3b}\qquad\includegraphics[scale=.25]{4covered3b}\qquad\includegraphics[scale=.25]{4covered3c}
\end{center}
\end{figure}

\begin{figure}
%\psfragscanon
%\psfrag{G1}[c]{$G_1=\crt{C_3}{C_3}$}
%\psfrag{G1a}[c]{$\crt{C_3}{C_3}-e$}
\begin{center}
\includegraphics[scale=.25]{4covered3d}\qquad\includegraphics[scale=.25]{4covered4b}\qquad\includegraphics[scale=.25]{4covered4c}\qquad\includegraphics[scale=.25]{4covered4d}\qquad\includegraphics[scale=.25]{4covered5b}
\caption{The 4-covered graphs}\label{fg:rest4covered}
\end{center}
\end{figure}
}

\begin{cproof}  Let $a,b,c,d$ be the four vertices so \wording{that $G^{\eifc}-\{a,b,c,d\}$ is} an independent set $I$.  For each $x\in \{a,b,c,d\}$, let $X$ be the set of vertices in $I$ adjacent to everything in $\{a,b,c,d\}\setminus \{x\}$, and let $R$ be the remaining vertices in $I$; a vertex in $R$ is joined to all of $\{a,b,c,d\}$.  

Note that a vertex in $R$ has degree 4 \wording{in $G^{\eifc}$, so} it is also a vertex of $G$; it cannot be the outcome of any 3-reductions.   If $|R|\ge 3$, then $G$ contains \wordingrem{(text removed)}$K_{3,4}$ and so $G=K_{3,4}$, a contradiction.  Thus, $|R|\le 2$.

If, for some $x\in \{a,b,c,d\}$, $|X|\ge 2$, then $\{a,b,c,d\}\setminus\{x\}$ is a 3-cut \wording{in $G^{\eifc}$ that} separates any two vertices $v,w$ in $X$ from all the other vertices in $I\setminus \{v,w\}$, of which there are at least two.  This contradicts the fact \wording{that $G^{\eifc}$ is} internally 4-connected.  Thus, $|X|\le 1$.

This implies that $G^{\ei4c}$ has at most 10 vertices, but we can proceed a little further. 

If $R=\varnothing$, \wording{then $G^{\eifc}$ is} planar (adding the $K_4$ on $\{a,b,c,d\}$ does not affect planarity), which is a contradiction.  Thus, $|R|>0$.

If, for each $x\in\{a,b,c,d\}$, $|X|=1$, then the bipartite subgraph \wording{of $G^{\eifc}$ consisting} of $\{a,b,c,d\}$ and the four vertices in $A\cup B\cup C\cup D$ is the 3-dimensional cube $Q_3$. 
Adding one of the vertices in $R$ to $Q_3$ produces $Q_3^v$.  That is, if all of $A$, $B$, $C$, and $D$ are not empty, $|R|= 1$ and $G=Q_3^v$. 

Thus, we may assume $R\ne \varnothing$ and $D=\varnothing$.

If $|R|=2$, then \wording{for $G^{\eifc}$ to} have at least 8 vertices, at least two of  $A$, $B$,  and $C$ are not empty.  Thus, \wording{$Q_3^{2e}\subseteq G^{\ei4c}$, so $G^{\ei4c}=Q_3^{2e}$}.

\minor{In the final situation, we have $|R|=1$ and, because $G^{\ei4c}$ has at least 8 vertices, all of $A$, $B$, and $C$ are not empty.  In particular, $G^{\ei4c}$ has exactly 8 vertices, as required.}  
\end{cproof}

\wording{A computer search can find all the peripherally-4-connected graphs having 8 vertices.  These will include all the examples that are peripherally-4-connected extensions of internally-4-connected, 4-covered graphs having 8 vertices.  This completes our analysis of 3-connected, 2-crossing-critical graphs with no subdivision of $V_8$.}
}

\chapter{Finiteness of 3-connected 2-crossing-critical graphs with no $V_{2n}$}\label{sc:noV2n}\printFullDetails{

This section is devoted to showing that, for each $n\ge 3$, there are only finitely many 3-connected \2cc\ graphs that do not contain a subdivision of $V_{2n}$.   In particular, Theorem \ref{th:allbounded} asserts that if $G$ has a subdivision of $V_{2n}$ but no subdivision of $V_{2n+2}$, then $|V(G)|= O(n^3)$.

   The finiteness has been proved previously by completely different methods in \cite{DOTV}.   In our particular context, this shows that there are only finitely many 3-connected \2cc\ graphs that have a subdivision of $V_8$ but do not have a subdivision of $V_{10}$; these are the only ones missing from a complete determination of the \2cc\ graphs.

The first subsection shows that, if $G$ is a 3-connected \2cc\ graph that does not contain a subdivision of $V_{2n+2}$, then, for any $\hvng$, each $H$-bridge in $G$ has at most 88 vertices.  The second subsection shows that, for a particular subdivision $H$ of $V_{2n}$, there are only $O(n^3)$ $H$-bridges having a vertex that is not an $H$-node.  These easily combine to give the $O(n^3)$ bound of Theorem \ref{th:allbounded}.

}

\section{$V_{2n}$-bridges  are small}\label{sec:bridgesSmall}\printFullDetails{

The main result of this subsection is to show that if \wording{$G\in\m2$ and $\hvng$}, then any $H$-bridge $B$ is a tree with a bounded number of leaves, so that $|V(B)|\le 88$.  In the next subsection, we show that there are only $O(n^3)$ non-trivial $H$-bridges.

The next lemma will have as a corollary the first main result of this subsection.

}\begin{lemma}
\label{lm:bridgesAreSmall}
Let $G\in\m2$, $\hvng$, $n\ge 3$, and $B$ an $H$-bridge. 
Then $|\att(B)|\le 11n+12$.
\end{lemma}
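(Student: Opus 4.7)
The plan is to bound the attachments of $B$ by separately controlling how many can lie in the interior of each individual $H$-branch (rim branch or spoke) and how many can be $H$-nodes. Since $V_{2n}$ has $n$ spokes, $2n$ rim branches, and $2n$ nodes, a uniform constant bound per branch interior will translate into a bound linear in $n$. Specifically, the target constants are at most $3$ interior attachments per rim branch (contributing $6n$), at most $3$ interior attachments per spoke (contributing $3n$), and at most $2n$ node attachments, giving the $11n$ part of the bound; the additive $12$ should absorb a small number of boundary-case exceptions.

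For the per-branch bound, I would argue by contradiction. Suppose some $H$-branch $b$ contains at least four interior attachments $x_1,\dots,x_4$ of $B$. Since $B$ is connected, inside $B$ we have a subtree $Y\subseteq B$ with these vertices as leaves, so in particular there are $H$-avoiding $x_ix_j$-paths in $B$ for each pair. For any edge $e$ in the $x_1x_4$-subpath of $b$, $G-e$ is 2-crossing-critical-minus-an-edge, hence has a 1-drawing $D$. Using a path in $Y$ through $x_i$ and $x_j$ we can replace a chunk of $b$ to obtain a new subdivision $H'$ of $V_{2n}$ contained in $G-e$. Varying the choice of $i,j$ produces several such $H'$, all drawn by $D$. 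By Lemma \ref{lm:1drawingsV2n}, every 1-drawing of $V_{2n}$ ($n\ge 4$) has its unique crossing between a rim branch $r_i$ and one of $r_{i+n-1}, r_{i+n}, r_{i+n+1}$; the case $n=3$ ($V_6=K_{3,3}$) is standard. The many incompatible structures forced by the different routings through $Y$ cannot all respect this rigid crossing pattern simultaneously, yielding the desired contradiction. The case where $b$ is a spoke is analogous, with the additional use of Lemma \ref{lm:threeAtts} to constrain the shape of $B$ when the number of attachments is small.

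Assembling the pieces: interior attachments in the $2n$ rim branches contribute at most $6n$; interior attachments in the $n$ spokes contribute at most $3n$; and $H$-node attachments contribute at most $2n$. This totals $11n$, and a small constant (at most $12$) is added to cover the low-representativity regime where the tight per-branch count fails, together with bridges that are very small (e.g.\ the cases $|\att(B)|\in\{2,3\}$ handled by Lemma \ref{lm:threeAtts}) and do not need any counting at all.

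The main obstacle I anticipate is establishing the per-branch bound of $3$. The routing argument inside $B$ is straightforward, but extracting a contradiction from the coexistence of several subdivisions of $V_{2n}$ in a single 1-drawing of $G-e$ requires careful tracking of how the crossing forced by Lemma \ref{lm:1drawingsV2n} interacts with the reroutings. In particular, one must ensure that the choices of $i,j$ in the rerouting produce subdivisions whose forced crossings involve genuinely different pairs of branches, so that no single crossing can serve them all. Handling the boundary cases (spokes versus rim branches, small $n$, and bridges with few attachments) cleanly enough to absorb them into the additive constant $12$ is the second technical point that will require attention.
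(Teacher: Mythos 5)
Your proposal has a genuine gap, and the approach is not the one the paper takes.

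The central unjustified step is the per-branch bound: you assert that no $H$-branch can contain four interior attachments of $B$, but the argument you sketch for this does not close. Rerouting $b$ through the subtree $Y$ to obtain alternative subdivisions $H'$ of $V_{2n}$ in $G-e$ only changes the branch $b$ itself; all other rim branches and all spokes are unchanged. Lemma \ref{lm:1drawingsV2n} forces the unique crossing of $D$ to lie between two rim branches, but nothing in your rerouting forces those two rim branches to differ across the choices of $i,j$: the same single crossing can serve every one of the rerouted $H'$ simultaneously, since they share almost all of $H$. So no contradiction is extracted. In fact the paper's own argument is consistent with a branch carrying as many as $11$ attachments of $B-e$ — the contradiction only appears once a single branch carries $12$, and even then only after first restricting attention to the branches on the boundary of the face of $D_e[H]$ containing $D_e[B-e]$, a restriction your proposal never invokes.

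That restriction is the key structural idea you are missing. The paper fixes an edge $e$ of $B$ incident with an attachment $x$ and a nucleus vertex $y$, takes a $1$-drawing $D_e$ of $G-e$, and observes that $D_e[B-e]$ lies in one face $F$ of $D_e[H]$, whose boundary is incident with at most $n+1$ $H$-branches. Pigeonhole then produces a single branch carrying at least $12$ attachments of $B-e$. From these $12$ it builds a minimal tree $T\subseteq B$ with selected attachments as leaves, forms the graph $Y=T\cup Q$ where $Q$ is a subpath of $b$, and extracts cycles $C_1,C_4,C_7,C_{10}$ in $Y$ each meeting $b$ in a short subpath. A delicate bridge-analysis (Claim \ref{cl:BiBj} in the paper) shows that for one of these cycles $C=C_i$, the vertex $y$ lies in the $C$-bridge $M_i$ containing $H$; then $C$ is shown to be clean in $D_e$, to have BOD in $G$, and a further edge-deletion $e'$ produces a $1$-drawing of $C\cup M$ with $C$ clean, at which point Corollary \ref{co:TutteTwo} gives $\crn(G)\le 1$, the contradiction. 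None of this machinery — the face restriction giving the factor $n+1$, the selection of disjoint cycles $C_i$ inside a single branch, the BOD argument, and the final appeal to Corollary \ref{co:TutteTwo} — appears in your plan, and without it the per-branch constant you need (which is $11$, not $3$) and the factor $n+1$ (not $3n$) do not materialize. The numerology $6n+3n+2n=11n$ in your decomposition is therefore a coincidence, not evidence that the decomposition can be carried out.
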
\printFullDetails{
\begin{cproof}
Let $e$ be an edge of $B$ incident with $x\in \att(B)$ and $y\in \Nuc(B)$. Then $D_e[B-e]$ is contained 
in a face $F$ of $D_e[H]$.  Because we know the 1-drawings of $V_{2n}$, we know that each face of $D_e[H]$ is incident with at most $n+1$ $H$-branches.  Moreover, $B-e$ is an $H$-bridge in $G-e$ and $\att_{G-e}(B-e)$ is either $\att_G(B)$ or $\att_G(B)\setminus\{x\}$.

 If $B$ has at least $11(n+1)+2$ attachments, then 
some $H$-branch $b$ contains at least 12 attachments of $B-e$. Let $a_1\ldots a_{12}$ be any 12 distinct attachments of $B-e$ occurring in this order in $b$. 
Let $T\subseteq B$ be a minimal tree that meets 
$\att(B)$ at $a_1$, $a_3$, $a_4$, $a_6$, $a_7$, $a_9$, $a_{10}$, 
and $a_{12}$, so that these $a_i$ are the leaves of $T$, and let $Q
=\cc{a_1,b,a_{12}}$. Set $Y=T\cup Q$. 

For $i=1,4,7,10$, 
there is a unique cycle $C_i\subseteq Y$ that meets $b$ precisely 
in  $a_iQa_{i+2}$. 
Let $I\subseteq\{1,4,7,10\}$ be the subset such that, for $i\in I$, 
$x\notin C_i$; clearly $|I|\ge 3$. 

For each $i\in I$, let $M_i$ be the $C_i$-bridge in $G-e$ with 
$H\subseteq M_i\cup C_i$. As $x\notin C_i$, $x\in\Nuc(M_i)$. 
Let $B_i$ be the $C_i$-bridge in $G-e$ containing $y$ or $B_i=y$ 
if $y\in C_i$. Let $P_i$ be a minimal subpath of $C_i$ containing 
$B_i\cap C_i$, so that $a_iQa_{i+2}\not\subseteq P_i$.

\begin{claim}
\label{cl:BiBj}
Let $i,j,k\in I$ be distinct. If $y\notin M_i\cup M_j$, then:\begin{itemize}\item $B_i=B_j$;\item $P_i=P_j\subseteq C_i\cap C_j$; and \item  $y\in M_k$.\end{itemize}
\end{claim}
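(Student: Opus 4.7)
The plan rests on exploiting two facts: every vertex of $H$ off $C_i$ lies in $M_i$ (since $H \subseteq M_i \cup C_i$), and the cycles $C_i$ share the tree $T$ inside the connected bridge $B$ along the $a_i a_{i+2}$-paths of $T$ (call this path $T_i = C_i \cap T$).

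First, I would observe that if $y \notin M_i$, then the $C_i$-bridge $B_i$ containing $y$ has $\Nuc(B_i)$ disjoint from $V(H)$. Consequently, $B_i$'s attachments on $C_i$ lie either on the $H$-portion $[a_iQa_{i+2}]$ of $C_i$ (where, as attachments of $B$ sitting on $b$, they must be among the distinguished $a_\ell$'s) or on the $T$-portion $T_i$. The second step is to argue that $P_i$ in fact lies in $T_i$: if some attachment of $B_i$ lay in the interior of $[a_iQa_{i+2}]$, then the minimality condition on $P_i$ (not containing all of $[a_iQa_{i+2}]$) would force $P_i$ around the other arc, but the remaining attachments of $B$ in $b$ together with $B$'s connectivity would exhibit a cycle strictly shorter than $T_i$ contained in $Y$, contradicting the minimality of the Steiner tree $T$.

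With $P_i \subseteq T_i$ (and likewise $P_j \subseteq T_j$), the identification $B_i = B_j$ under $y \notin M_i \cup M_j$ would follow by comparing the $y$-components of $G - V(C_i)$ and of $G - V(C_j)$. Each is contained in $B - V(H)$, which is connected since $\Nuc(B)$ is connected; each is determined by the removal of $T_i$ or $T_j$ respectively. If these $y$-components differed, then $y$ would be separated from some vertex $z$ of $\Nuc(B)$ by $T_j$ but not by $T_i$, forcing a $B$-path from $z$ to $V(H) \setminus V(C_j)$ missing $T_j$, which would place $z$ (and hence $y$, via $T_i$-avoidance) into $M_j$'s nucleus, contradicting $y \notin M_j$. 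Once $B_i = B_j$, the common attachment path lies on both $C_i$ and $C_j$, hence in $T_i \cap T_j = C_i \cap C_j$ (the $b$-parts of $C_i$ and $C_j$ are disjoint for distinct $i, j \in \{1,4,7,10\}$).

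Finally, the conclusion $y \in M_k$ will be proved by contradiction. If $y \notin M_k$ either, then the previous two paragraphs applied to each pair give $B_i = B_j = B_k$ with common attachment path $P$ contained in $T_i \cap T_j \cap T_k$. In the Steiner tree $T$ on the eight leaves, three $a_\alpha a_{\alpha+2}$-paths of this form share at most a single Steiner point, so $P$ is trivial or near-trivial. The upshot is that $\Nuc(B_i)$ is attached to the three cycles through essentially one common vertex $p$; together with the two rim-edge-cuts around $p$ one then produces either a 2-cut in $G$ involving $p$ and an $H$-node (contradicting 3-connectivity) or, by rerouting the $B$-paths from $y$ to $a_1, a_3, a_4, a_6, a_7, a_9, a_{10}, a_{12}$ through $p$, a cycle structure in $G - e$ showing $\crn(G) \ge 2$ persists after deleting a second edge --- contradicting $G \in \m2$. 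The main obstacle will be this last step: the Steiner tree $T$ need not have a single central vertex meeting all three $T_\ell$, so the 2-cut/criticality argument must be executed uniformly over the possible shapes of $T$, using the full list of attachments $a_1,\ldots,a_{12}$ to re-route as necessary.
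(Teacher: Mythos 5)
The central gap is in the final part ($y \in M_k$), where you yourself concede an ``obstacle'' and defer to a case analysis over the ``possible shapes of $T$''. The paper needs no such analysis. Once $B_i = B_j = B_k$ (call it $\tilde B$), all of $\att(\tilde B)$ lies in $P' := C_i \cap C_j \cap C_k$, a non-null path. The paper then uses \emph{planarity}, not connectivity, to shrink $P'$ to a point: if $P'$ had an edge, then $P' \cup C_i \cup C_j \cup C_k$ would contain a subdivision of $K_{2,3}$ whose three degree-two vertices all lie on a common face of the drawing, which is impossible. Only after $P'$ is reduced to a single vertex $z$ does 3-connectivity enter: if $z \ne y$ then $\{z\}$ or $\{z,x\}$ is a cut-set of $G$; if $z = y$ then $y$ is an attachment of some $M_t$, contradicting the hypothesis. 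Your sketch skips the planarity step entirely and so never reaches the single-vertex bottleneck that makes the cut-set argument clean; the alternative you gesture at (a fresh criticality argument, or a 2-cut located through Steiner branch points) is not worked out and would indeed have to fight the tree's shape, which the $K_{2,3}$ argument handles uniformly.

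There is also a problem in your first step. You assert that $B_i$'s attachments on $a_iQa_{i+2}$ must be among the distinguished $a_\ell$'s; this is false, since $a_1,\ldots,a_{12}$ are merely twelve chosen attachments of $B-e$ in $b$, and $B_i$ may attach at some other attachment of $B$ strictly between $a_i$ and $a_{i+2}$. Nor does the appeal to ``minimality of the Steiner tree $T$'' yield a contradiction: $T$ is a tree, so ``a cycle strictly shorter than $T_i$ contained in $Y$'' has no bearing on its minimality. The paper avoids both issues with a cleaner intermediate step: it shows that every $y(C_i\cup C_j)$-path has its end in $C_i\cap C_j$ (else $y$ would lie in a bridge containing $H$), so that the $(C_i\cup C_j)$-bridge of $y$ has all its attachments in $C_i\cap C_j$ --- which yields $B_i = B_j$ and $P_i = P_j \subseteq C_i\cap C_j$ in one stroke, without ever isolating the subsidiary claim $P_i\subseteq T_i$.
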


\begin{proof}  If $u$ and $v$ are vertices in $C_i\cap C_j$, then \wording{$u$ and $v$ are not in $b$ and} there is a unique $uv$-path $P$ in $T$.  \wording{We note that} $P\subseteq C_i\cap C_j$.  Thus, $C_i\cap C_j$ is a path.

If there were a $yC_i$-path disjoint from $C_j$, then $y\in M_i$, a contradiction.  Therefore, every $yC_i$-path meets $C_j$ and, symmetrically, every $yC_j$-path meets $C_i$.  Thus, every $y(C_i\cup C_j)$-path has one end in $C_i\cap C_j$.   It follows that if $y\in C_i\cup C_j$, then $y\in C_i\cap C_j$, so in this case $B_i=B_j=\iso y$.

In the case $y\notin C_i\cup C_j$, let $B$ be the $(C_i\cup C_j)$-bridge containing $y$.  The preceding paragraphs show that $\att(B)\subseteq C_i\cap C_j$, so that in fact $B$ is also both a $C_i$- and a $C_j$-bridge.  In particular, $B_i=B_j=B$.

For the last part, we assume $y\notin M_k$ and note that  $B=B_i=B_j=B_k$ and $C_i\cap C_j\cap C_k$ is a non-null path $P'$.  If $P'$ has length at least one, then $P'\cup C_i\cup C_j\cup C_k$ contains a subdivision of $K_{2,3}$ and yet has all three of the vertices on one side incident with a common face, which is impossible.  Therefore, $P'$ consists of a single vertex $z$.  

If $z$ is not $y$, $B$ has only $z$ as an attachment in $G-e$.  It follows that either $z$ or $\{z,x\}$ is a cut-set of $G$, contradicting the fact that $G$ is 3-connected.   Thus, $z=y$, and so, for some $t\in \{i,j,k\}$, $y$ is an attachment of $M_t$; in particular, $y\in M_t$, a contradiction.
\end{proof}

By Claim \ref{cl:BiBj}, there is an $i\in I$ such that $y\in M_i$. For such an $i$, set $C=C_i$ and note that $x\in M_i-\att(M_i)$, so that $M=M_i+e$ is a $C$-bridge in $G$.  Furthermore, $\att_G(M)=\att_{G-e}(M-e)$.

Notice that $D_e[C]$ is clean, since the crossing of $D_e$ is between disjoint $H$-branches.  Thus, $C$ has BOD in $G-e$. Also, any $C$-bridge $B'\neq M$ 
has $C\cup B'$ planar. As $\att_G(M)=\att_{G-e}(M-e)$, $C$ has BOD in $G$.

Recall that the $H$-bridge $B$ has $a_i$, $a_{i+1}$, and $a_{i+2}$ as attachments.  For any vertex $u$ of $B$ not in $b$, there is an $H$-avoiding $ua_{i+2}$-path, whose edge $e'$ incident with $u$ is in some $C$-bridge $B'$.  Since $x$ and $y$ are on the same side of $D_e[C]$, $M$ is contained on that side of $D_e[C]$ and $e'$ is on the other side.  Therefore, $B'\ne M$.  

In $D_{e'}$, the crossing is in $H$ and $D_{e'}[C]$ is clean.  That is, $D_{e'}[C\cup M]$ is a 1-drawing with $C$ clean.  Corollary \ref{co:TutteTwo} shows $\crn(G)\le 1$, the final contradiction.  \end{cproof}

The following corollary is the first main result of this section.  

}\begin{corollary}
\label{cr:bridgesAreSmall}
Let $G\in\mc{3}$, $\hvng$, $n\ge 3$, $B$ an $H$-bridge. Then $|\att(B)|\le 45$.
\end{corollary}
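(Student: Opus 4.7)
The plan is to reduce to the case $n=3$ of Lemma~\ref{lm:bridgesAreSmall}, in which the bound $11n+12$ evaluates exactly to $45$. For $n=3$ the corollary follows at once, so the content of the statement lies in the case $n\ge 4$, which is what the following strategy addresses.

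The strategy is to exhibit, inside $G$, a subdivision $H'$ of $V_6$ and an $H'$-bridge $B'$ of $G$ for which $\att_G(B)\subseteq\att_G(B')$. If this can be achieved, then Lemma~\ref{lm:bridgesAreSmall} applied to $(H',B')$ yields
$$|\att(B)|\le|\att(B')|\le 11\cdot 3+12=45,$$
as required. To build $H'$, note that the rim $R$ of $H$ together with any three spokes $s_{i_1},s_{i_2},s_{i_3}$ of $H$ is itself a subdivision of $V_6$: the three chosen spokes are three pairwise-interlacing chords of the $2n$-cycle $R$, and this interlacing realizes the $K_{3,3}=V_6$ pattern.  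If every attachment of $B$ that lies in the interior of a spoke of $H$ already lies in at most three of the $n$ spokes, I would simply take $H'=R\cup s_{i_1}\cup s_{i_2}\cup s_{i_3}$ for those three spokes; then $\att_H(B)\subseteq V(R)\cup s_{i_1}\cup s_{i_2}\cup s_{i_3}=V(H')$ and the (unique) $H'$-bridge $B'\supseteq\Nuc(B)$ inherits every attachment of $B$.

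The hard part will be the case in which $B$ has attachments in the interiors of four or more spokes.  Here one must absorb the extra spokes into $H'$.  The natural device is to use $B$ itself: for each attachment $x\in\oo{s_j}$ with $s_j$ not among the three chosen spokes, there exists, by the connectedness of $B$, an $H$-avoiding path in $B$ joining $x$ to a vertex of $H'$, and such a path can be spliced into $H'$ in place of a suitable subpath, absorbing $x$ into $V(H')$ while leaving the $V_6$-subdivision structure intact. Iterating this across the $n-3$ excess spokes yields a subdivision $H'\topol V_6$ of $G$ with $\att_H(B)\subseteq V(H')$; the $H'$-bridge $B'$ of $G$ containing what remains of $\Nuc(B)$ then inherits every attachment of $B$ as required.

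The main obstacle is to verify that these successive detours can be chosen \emph{simultaneously} so that the final graph is still a subdivision of $V_6$ and the inclusion $\att_G(B)\subseteq\att_G(B')$ survives every rerouting step. In particular, one needs to control that distinct detour paths in $B$ are compatible, that no detour destroys an attachment that a previous detour was designed to capture, and that after all adjustments a single $H'$-bridge $B'$ still contains enough of $\Nuc(B)$ to have $\att_G(B)$ in its attachment set. This careful bookkeeping is the technical heart of the proof, and is where I expect the main work to lie.
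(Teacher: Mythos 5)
Your reduction to the $V_6$ case of Lemma~\ref{lm:bridgesAreSmall} is the right idea, and your first two cases ($n=3$, and $n\ge4$ with interior spoke attachments confined to at most three spokes $s_{i_1},s_{i_2},s_{i_3}$, using $H'=R\cup s_{i_1}\cup s_{i_2}\cup s_{i_3}\subseteq H$) are correct and complete. However, your third case is a genuine gap, and you are honest that it is: the proposed rerouting scheme for attachments spread over four or more spokes is not carried out, and the difficulties you flag are real. Splicing an $H$-avoiding path of $B$ into $H'$ puts vertices of $\Nuc(B)$ into $H'$, which can fragment $\Nuc(B)$ into several $H'$-bridges; there is no reason a single $H'$-bridge $B'$ would then absorb all of $\att(B)$, and coordinating several such detours without destroying the $V_6$ structure (each splice must preserve pairwise interlacing of the three spoke-branches on the rim) is not a bookkeeping exercise one can wave away.

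The paper avoids the hard case entirely by a structural observation you are missing: if $B$ has attachments in the interiors of two \emph{non-consecutive} spokes of $H$ (which necessarily happens, for $n\ge4$, as soon as three or more spokes carry interior attachments), then the $H$-avoiding path in $B$ between those two interior points, together with $H$, yields a subdivision of the Petersen graph. Since the Petersen graph has crossing number $2$, 2-criticality forces $G$ to be (a subdivision of, hence exactly) the Petersen graph, which has only $10$ vertices — so $|\att(B)|\le10\le45$ is immediate. In other words, the regime you set out to handle by rerouting simply cannot occur except in one tiny graph where the bound is trivial. You should replace your Case~3 with this observation rather than attempt the splicing argument.
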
\printFullDetails{

\begin{cproof}  If $n=3$, then the result is an immediate consequence of Lemma \ref{lm:bridgesAreSmall}.  Thus, we may assume $n\ge 4$.   If $B$ has attachments in the interiors of non-consecutive spokes, then $G$ is the Petersen graph and the result clearly holds.  

Otherwise, $B$ has attachments in at most two consecutive spokes.  Thus, there is a subdivision $H'$ of $V_6$ contained in $H$ that contains all the attachments of $B$.  Applying Lemma \ref{lm:bridgesAreSmall} to $H'$, we again see that $|\att(B)|\le 45$. \end{cproof}

We now turn to the other half of the argument that bounds the number of vertices in an $H$-bridge, namely, that the bridge is a tree.  We need a new notion.

}\begin{definition} Let $T^*$ be a graph consisting of subdivision of a $K_{2,3}$ together with three pendant edges, one incident with each of the three degree 2 vertices in the $K_{2,3}$.   A {\em tripod\/}\index{tripod} is any graph $T$ obtained from $T^*$ by contracting any subset of the pendant edges; if all three pendant edges are contracted, then an edge is added between the two copies of $K_{1,3}$, but not having a vertex of contraction as an end --- this may be done in any of three essentially different ways.  The {\em attachments\/}\index{attachments!of a tripod}\index{tripod!attachments} of the tripod are the degree 1 and 2 vertices in $T$.\end{definition}\printFullDetails{

We are now ready for the second half of the main result of this section.

}\begin{lemma}  
\label{lm:bridgeIsTree}
Suppose $G\in \mc{3}$, $\hvng$, $n\ge 3$, $G$ has no \wording{subdivision of} $V_{2(n+1)}$, and  
$B$ is an $H$-bridge.  
Then either $B$ is a tree or $B$ has a tripod, $n = 3$ and $|V(G)|\le 10$.
\end{lemma}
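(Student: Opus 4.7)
The plan is to assume $B$ contains a cycle and derive both structural consequences in sequence: first that such a $B$ must contain a tripod whose attachments are attachments of $B$ on $H$, and then that this forces $n=3$ and $|V(G)|\le 10$. Suppose $B$ is not a tree, so $B$ contains a cycle $C$. By Lemma~\ref{lm:threeAtts}, since $B$ is not just an edge (it has a cycle), $|\att(B)|\ge 3$. I would then invoke the 3-connectivity of $G$, which translates into the following statement about $B$: for any three distinct attachments $a_1,a_2,a_3$ of $B$, there exist three internally disjoint paths in $B$ from $C$ to $\{a_1,a_2,a_3\}$, one ending at each $a_i$. The union of $C$ with these three paths contains a subdivision of $K_{2,3}$ together with pendant segments to the attachments; after discarding edges not in a minimal such structure, this is by definition a tripod $T\subseteq B$ with attachments $a_1,a_2,a_3$.

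Next, I would show that the existence of such a tripod inside an $H$-bridge forces $n\le 3$. Assume $n\ge 4$. The tripod $T$ supplies two branch vertices $x,y$ joined by three internally disjoint paths $P_1,P_2,P_3$ whose other endpoints include the three attachments $a_1,a_2,a_3$ on $H$. By Corollary~\ref{co:closeAtts}, $\iso{\{a_1,a_2,a_3\}}$ is not $H$-close, so the three attachments lie neither in a common closed $H$-branch nor in a common open $H$-claw. The key construction is to build a subdivision of $V_{2(n+1)}$ in $H\cup T$ by using one path of $T$ as a new spoke joining two antipodal rim vertices, and splicing the other two paths into the $H$-rim to lengthen it by two. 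The extra room in $V_{2n}$ when $n\ge 4$ (at least eight $H$-rim branches and four spokes) guarantees that, no matter where $a_1,a_2,a_3$ lie, at least $n$ old $H$-spokes survive disjointly from $T$ as spokes of the new Möbius ladder, and the remaining combinatorics of pairing up antipodal vertices works out. The argument has a handful of subcases depending on how many of the $a_i$ are $H$-nodes versus interior to $H$-branches, but in every case the resulting graph contains a $V_{2(n+1)}$, contradicting our hypothesis. Hence $n=3$.

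Finally, when $n=3$, $H$ is a subdivision of $V_6=K_{3,3}$, which has only six $H$-nodes, nine $H$-branches and two faces, leaving very little room. A case analysis on the placement of $a_1,a_2,a_3$ (using Corollary~\ref{co:closeAtts} again to avoid the $H$-close configurations) shows that $H\cup T$ already contains a subdivision of $V_8$ unless the tripod meets $H$ in three attachments distributed in one of a small number of highly symmetric patterns. In each remaining pattern, Corollary~\ref{cr:bridgesAreSmall} (with $n=3$, giving $|\att(B)|\le 45$), together with the fact that a tripod leaves very few ``loose'' degrees, forces $B$ itself to be small; combined with the rigidity of a representativity-$2$ embedding in the projective plane and 3-connectivity, the only graphs realising such a configuration are $K_{3,3}$-like examples on at most ten vertices (the extremal case being the Petersen graph), yielding $|V(G)|\le 10$.

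The main obstacle is the constructive step in the second paragraph: producing an explicit $V_{2(n+1)}$ from $H$ together with a tripod for arbitrary placements of $a_1,a_2,a_3$. The cleanest way to execute this is probably to route two of the tripod's internally disjoint paths along the $H$-rim to replace two diametrically opposite $H$-rim segments (thereby growing the rim by two and producing a new pair of antipodal vertices $x,y$), and to use the third path as the new spoke; verifying in every case that the resulting graph is genuinely a subdivision of $V_{2(n+1)}$ rather than just a larger non-planar graph requires careful bookkeeping of which old spokes remain, and is where the hypothesis $n\ge 4$ is essential.
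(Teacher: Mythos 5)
Your proposal has a genuine gap at its very first structural step. You assert that if $B$ contains a cycle $C$, then 3-connectivity of $G$ gives three internally-disjoint paths from $C$ to three attachments $a_1,a_2,a_3$, and that ``the union of $C$ with these three paths contains a subdivision of $K_{2,3}$ together with pendant segments.'' This is false. A cycle with three paths leaving it at three distinct points is a ``spider'' with three branch vertices on the cycle itself; it has \emph{three} vertices of degree $3$ arranged cyclically and is therefore not a subdivision of $K_{2,3}$ (which has exactly \emph{two} degree-$3$ vertices joined by three internally-disjoint paths). No tripod can be extracted from it. So the conclusion ``a non-tree bridge contains a tripod'' does not follow, and the entire second and third paragraphs of your argument, which begin from a tripod, have no foundation. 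Indeed, the fact that bridges containing cycles need not contain tripods is exactly why the paper's proof is substantially longer: after establishing a suitable face-boundary cycle $C'$ and showing $(C'\cup B)^t$ is $3$-connected, the paper invokes Mohar's disc-embedding obstruction result (reference \cite{mohar}) which gives a dichotomy --- either $B$ has a tripod with attachments on $C'$, or $B$ has two $C'$-skew paths --- and the skew-paths alternative must then be eliminated by a multi-claim argument involving two separate $1$-drawings and the hypothesis that $G$ has no subdivision of $V_{2(n+1)}$.

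Two further remarks. First, even granting a tripod, your route to $n=3$ and $|V(G)|\le 10$ (``construct a $V_{2(n+1)}$ by splicing, the combinatorics works out'') is a sketch of a sketch, while the paper disposes of the tripod case in a few lines: the three attachments of the tripod form a $3$-cut with $T^+$ non-planar (the tripod plus an apex contains a subdivision of $K_{3,3}$) and $H'^+$ non-planar (taking $H'$ to be a $V_6$ in $H$ containing the attachments), so Theorem~\ref{th:3CutBothNonPlanar} applies directly and $G$ is one of the four graphs obtained from $K_{3,4}^*$. Second, your appeal to Corollary~\ref{co:closeAtts} requires $n\ge 4$, so it cannot be used uniformly in the $n=3$ endgame you describe. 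I recommend abandoning the ``cycle implies tripod'' plan and instead following the planarization-plus-obstruction strategy; the real work of the lemma is ruling out the skew-paths case, which your proposal never addresses.
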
\printFullDetails{

\begin{cproof}  By way of contradiction, suppose $B$ has a cycle $C$.
 If $ \att(B)\cap C\ne\emptyset$, let $e$ 
be an edge of $C$ incident with $u\in \att(B)$.  If $C\cap \att(B)=\emptyset$, 
then let $e$ be any edge of $C$.  The choice of $e$ shows that $B-e$ is an $H$-bridge in $G-e$ and that $\att_{G-e}(B-e)= \att_G(B)$.  
Since $D_e[H]$ contains the crossing in $D_e[G-e]$,  $D_e[B-e]$ 
is \wording{contained in a face $F$} of $D_e[H]$.  

Let $C'= \partial F^\times$, so $C'$  is a cycle in $G'=(G-e)^\times$.
Since $G'$ is planar, $C'$ has BOD in $G'$ and $C'\cup B'$ is planar for each $C'$-bridge
$B'$ in $G'$. If $C'\cup B$ were planar, then $G'+e$ would be planar, in which case $\crn(G)\le1$, a contradiction.  Therefore, $C'\cup B$ is not planar.

We now introduce a convenient notion.

}\begin{definition} Let $G$ be a graph.  The graph $G^t$\index{$G^t$} is the graph whose vertices are the $G$-nodes and whose edges are the $G$-branches. 
\end{definition}\printFullDetails{

\begin{claim}\label{cl:3conn}
 $(C'\cup B)^t$ is $3$-connected.
\end{claim}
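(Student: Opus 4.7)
The plan is to proceed by contradiction. Suppose $(C'\cup B)^t$ has a 2-cut $\{x,y\}$, witnessed by a separation $(H_1,H_2)$ with $H_1\cap H_2=\iso{\{x,y\}}$, $H_1\cup H_2=C'\cup B$, and both $H_i-\iso{\{x,y\}}$ non-null. (If $\{x,y\}$ involves the crossing vertex $\times$ of $G'$, replace it by one of the ends of a crossing edge; we keep calling the resulting pair $\{x,y\}$, noting that $\{x,y\}\subseteq V(G)$.) The goal is to construct a 2-cut of $G$, contradicting the fact that $G\in\m2$ is 3-connected.

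First, I will analyse the ``outside'' of $C'\cup B$ in $G$. Because $C'=\partial F^\times$ and $D_e[B-e]\subseteq F$, every attachment of $B$ in $H$ lies on $C'$. Also, $C'\subseteq H^\times$, and any $H$-bridge $B''\ne B$ has $D_e[B'']$ disjoint from the open face $F$, so $C'\cup B''$ is a $C'$-bridge of $G'$ lying outside $F^\times$. Thus $G-(C'\cup B)$ is contained in the union of $H-C'$ with the nuclei of the other $H$-bridges, and every vertex of $G-(C'\cup B)$ is connected to $C'\cup B$ only through vertices of $C'$. Moreover, we already observed that $C'$ has BOD in $G'$ and that every $C'$-bridge of $G'$ other than $B$ is planar.

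Next I will show that each external $C'$-bridge $B''$ of $G'$ (including $e$, re-interpreted as a $C'$-bridge of $G'+e$) has all its attachments lying on a single component of $C'-\{x,y\}$. Indeed, if $B''$ had attachments $u_1\in H_1-\{x,y\}$ and $u_2\in H_2-\{x,y\}$, then, since $C'\cup B''$ is planar and lies outside $F^\times$, one could reembed an arc of $B''$ inside $F^\times$ alongside a $u_1u_2$-path in $B''$, producing an edge in $(C'\cup B)^t$ joining $H_1-\{x,y\}$ and $H_2-\{x,y\}$, contradicting that $(H_1,H_2)$ is a separation. (To carry this out cleanly, I will use the BOD of $C'$ in $G'$ together with the fact that each $C'$-bridge $B''\neq B$ of $G'$ is planar, so that $B''$ does not overlap $B$ on $C'$ and is therefore confined to one face of $C'$.)

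Combining these two steps: every $H_1H_2$-path in $G$ meets $\{x,y\}$. Indeed, such a path either lies in $C'\cup B$, where it must meet $\{x,y\}$ by hypothesis, or exits into $G-(C'\cup B)$; each such excursion enters and leaves the external region at attachments of a single external $C'$-bridge, hence on the same side of $\{x,y\}$ in $C'$. Thus $\{x,y\}$ is a 2-cut of $G$, the desired contradiction.

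The main obstacle will be handling the crossing vertex $\times$ and the edge $e$: $\times\in V(G')\setminus V(G)$, so if $\times\in\{x,y\}$ the 2-cut has to be translated to $G$ by choosing an end of one of the crossing edges, and $e=xy^*$ must be treated as an extra $C'$-bridge in $G'$ that also respects the side structure of $\{x,y\}$. I expect the proof to be a careful case analysis of whether $\times$ and $e$ lie in $H_1$ or $H_2$, using Tutte's theorem (Theorem \ref{th:TutteOne}) and the planarity of non-$B$ $C'$-bridges to show that in every case $\{x,y\}$ (or a suitable substitute of size at most two) separates $G$.
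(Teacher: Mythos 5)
Your strategy --- showing that a hypothetical 2-cut $\{x,y\}$ of $(C'\cup B)^t$ would lift to a 2-cut of $G$ --- is genuinely different from the paper, which argues directly by exhibiting three internally-disjoint paths between each pair of nodes of $(C'\cup B)^t$, with cases according to whether the two nodes lie in $\Nuc(B)$ or on $C'$. But your execution has a gap at its pivotal step. The claim that each external $C'$-bridge $B''$ of $G'$ attaches only to one arc of $C'-\{x,y\}$ is justified by saying one could ``reembed an arc of $B''$ inside $F^\times$, producing an edge in $(C'\cup B)^t$''; but $(C'\cup B)^t$ is a fixed combinatorial graph, and redrawing a different bridge creates no new edge in it, so the justification is unsound. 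Worse, in the case where both $x$ and $y$ lie on $C'$, the claim is simply false: the $C'$-bridge of $G'$ containing the rest of $H$ will in general attach to nodes on both arcs of $C'-\{x,y\}$, and then $\{x,y\}$ is not a 2-cut of $G$, so the contradiction you are aiming for does not materialize.

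The argument can be repaired by handling that case differently. Since $\Nuc(B)$ is connected, if $\{x,y\}\subseteq V(C')$ then the (non-empty) set of $(C'\cup B)^t$-nodes in $\Nuc(B)$ lies entirely in one side of the separation, say $H_1$; but then $H_2-\{x,y\}$ is a segment of $C'$ carrying no attachment of $B$, hence no node at all, so $(H_1,H_2)$ was not a proper 2-separation --- this case cannot arise. In the remaining cases at least one of $x,y$ is in $\Nuc(B)$, so $C'-\{x,y\}$ is connected, your external-bridge claim is vacuous, the side of the separation disjoint from $C'$ lies wholly inside $\Nuc(B)$, and since $\Nuc(B)$ reaches the rest of $G$ only through $\att(B)\subseteq C'$, the pair $\{x,y\}$ really is a 2-cut of $G$, the contradiction you want. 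Replacing the external-bridge step with this case split closes the argument and yields a clean alternative to the paper's direct three-paths construction.
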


\begin{proof}
Let $L=(C'\cup B)^t$.
If $|V(\Nuc(B))|=1$, then $L$ is a wheel and the claim follows.
So assume $|V(\Nuc(B))|\ge 2$. We show that any two vertices of $L$ are 
joined by three internally disjoint paths. For $u,w\in \Nuc(B)$, this is 
true in $G$, so let $P_1$, $P_2$, $P_3$ be such paths in $G$. If at least
one $P_i$ is contained in $B-C'$, then we can easily modify the others
to use $C'$ rather than $G-B$ to get three paths in $L$. If all 
three intersect $C_e$, then $B\cap (P_1\cup P_2\cup P_3)$ is two claws
$Y_u$ and $Y_w$. There is a $Y_uY_w$-path in $\Nuc(B)$, which returns us to
the previous case.

\wording{If $u\in \Nuc(B)$ and} $w\in C'$, then $w$ is an attachment of $B$.  Let $Y$ be a claw in $B$ with centre $u$ and talons on $C'$.  Using a $C'$-avoiding $wY$-path in $B$, if necessary, we can assume $w$ is a talon of $Y$.  It is then easy to use $C'$ to extend the other two paths in $Y$ to $w$.

Finally, if $u,w\in C'$, then both $u$ and $w$ are attachments of $B$, so there is a $C'$-avoiding path joining them.  This path and the two $uw$-paths in $C'$ yield the required three paths.
\end{proof}

}\begin{definition}  Let $C$ be a cycle in a graph $G$ and let $P_1$ and $P_2$ be disjoint $C$-avoiding paths in $G$.  Then {\em $P_1$ and $P_2$ are $C$-skew paths\/}\index{$C$-skew paths}\index{skew paths}\index{bridge!skew paths} if the two $C$-bridges in $C\cup P_1\cup P_2$ overlap.\end{definition}\printFullDetails{

As $C'\cup B$ has no planar embedding, \cite{mohar} implies $B$ has either  a tripod whose attachments are in $C'$ or two $C'$-skew paths.

\begin{claim}
\label{cl:tripod}
If $B$ has a tripod $T$, then $n=3$, $G=H\cup T$  and $|V(G)|\le 14$.
\end{claim}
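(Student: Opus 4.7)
The plan is to handle the two conclusions in turn: first that the tripod hypothesis forces $n=3$, then that $G=H\cup T$, from which the bound on $|V(G)|$ follows by a direct count.

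For the first assertion, I would argue by contradiction, assuming $n\ge 4$. The tripod $T$ contains a subdivision of $K_{2,3}$, with its three attachments lying on the face-boundary cycle $C'$ of $F$ in the planarization $(G-e)^{\times}$ of $D_e[H]$. By Lemma \ref{lm:1drawingsV2n}, the unique crossing of $D_e[H]$ is between $r_i$ and some $r_j$ with $j\in\{i+n-1,i+n,i+n+1\}$, so $C'$ is one of a short list of candidate cycles, each incident with at most one $H$-spoke and a bounded number of rim segments, while at least $n-1\ge 3$ $H$-spokes lie entirely outside $F$. Using the internal $K_{2,3}$ of $T$ to furnish an extra rim-to-rim path inside $F$, I would then re-route the rim and spokes of $H$ to construct a subdivision of $V_{2(n+1)}$ in $G$, contradicting the standing hypothesis. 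The argument necessarily fails for $n=3$ because the faces of a 1-drawing of $V_6=K_{3,3}$ leave no room to introduce a new spoke outside the tripod region.

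Assuming now that $n=3$, to prove $G=H\cup T$, suppose by way of contradiction that $G$ has an edge outside $E(H\cup T)$; this edge lies either in some $H$-bridge $B'\neq B$ or in $B$ itself, outside the tripod. In the former case, $B'$ together with $H$ either provides enough extra connectivity to build a subdivision of $V_8$ (contradicting the assumption that $G$ has no $V_{2(n+1)}$), or $B'$ lies inside a single face of the embedding of $H\cup T$ and can be eliminated by Corollary \ref{co:attsMissBranch} together with the 2-criticality of $G$. In the latter case, the 3-connectedness of $(C'\cup B)^t$ from Claim \ref{cl:3conn}, combined with the planarity analysis inside $F$, forces any extra edge of $B$ to create a second $C'$-bridge skew to $T$, contradicting the fact that $D_e[B-e]$ was contained in the single face $F$. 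Hence $G=H\cup T$.

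For the bound, $V_6$ has $6$ nodes; any subdivision vertex of $H$ that is not an attachment of $T$ would have degree $2$ in $G$, violating the standing minimum-degree-$3$ assumption, so $H$ contributes at most $6+3=9$ vertices. The tripod $T^*$ in its uncontracted form has $2$ apex vertices and $3$ middle vertices of its internal $K_{2,3}$-subdivision, for $5$ nucleus vertices (the three attachments are already counted as vertices of $H$), and contractions of pendant edges only decrease this count. Summing, $|V(G)|\le 9+5=14$. The main obstacle is the explicit $V_{2(n+1)}$-construction in the $n\ge 4$ case: one must enumerate the possible faces $F$ of $D_e[H]$---the crossing-face together with its neighbouring spoke-faces---and in each case exhibit how the three attachments of $T$ on $C'$, together with the internal $K_{2,3}$ of $T$, supply exactly the additional rim-to-rim path needed to insert a new rung into the ladder while preserving the cyclic structure of the rim and the remaining spokes.
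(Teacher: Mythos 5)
Your proof takes a fundamentally different route from the paper, and that route has genuine gaps. The paper's argument is short and sharp: the attachment set $S$ of the tripod is a $3$-cut, and the tripod supplies a non-planar piece on one side (a tripod plus a new vertex joined to its three attachments contains a $K_{3,3}$-subdivision), while any $V_6$-subdivision $H'\subseteq H$ containing $S$ supplies a non-planar piece on the other. Theorem~\ref{th:3CutBothNonPlanar} then forces $\crn(H'\cup T)\ge 2$, so by $2$-criticality $G=H'\cup T$, which immediately gives $n=3$ and (via the same theorem's characterization in terms of $K_{3,4}^*$) a vertex bound. You never invoke Theorem~\ref{th:3CutBothNonPlanar}; instead you try to derive $n=3$ by constructing a $V_{2(n+1)}$-subdivision directly from the tripod when $n\ge 4$, and to derive $G=H\cup T$ by an ad hoc elimination of extra edges.

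The concrete gap is in both halves of your argument for $n=3$ and for $G=H\cup T$. For $n\ge 4$ you assert ``I would then re-route the rim and spokes of $H$ to construct a subdivision of $V_{2(n+1)}$,'' but you never show this is possible, and it is not at all clear that a tripod's internal $K_{2,3}$-subdivision produces the rim-to-rim path with the correct interlacing needed to insert a new rung into the M\"obius ladder: the three tripod attachments sit on a single face boundary $C'$, which is mostly a stretch of rim, and the two apex vertices of the $K_{2,3}$ attach only to those three points. What one actually obtains from the tripod is a witness of non-planarity across the $3$-cut $S$, not an extra ladder rung. Likewise, your argument that no other edge of $G$ exists (``either provides enough extra connectivity to build a $V_8$... or lies inside a single face... and can be eliminated'') is not a proof: you do not show either horn, and Corollary~\ref{co:attsMissBranch} governs local bridges of a $V_{10}$-subdivision and is not applicable here. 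The correct, and much shorter, route is the one the paper takes: recognize that the tripod realizes exactly the hypotheses of Theorem~\ref{th:3CutBothNonPlanar} and let that theorem do all the work, giving $G=H'\cup T$, $n=3$, and the vertex bound in one stroke.
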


\begin{proof}  Let $S$ be the attachments of $T$.  As $H\cup T$ is 2-connected and, relative to the cut $S$, both $H'{}^+$ (taking $H'$ to be any $V_6$ containing $S$)  and $T^+$ are non-planar.  By Theorem \ref{th:3CutBothNonPlanar}, $\crn(H'\cup T)\ge 2$.  Thus, $G=H'\cup T$, so $n=3$ and, again by Theorem \ref{th:3CutBothNonPlanar}, $|V(G)|\le 10$.  \end{proof}

% see Picture 79

% Drago suggested to get rid of the Claim status:

%\begin{claim}\label{cl:notripodsmall}
%If $B$ has no tripod, then $|V(G)| \le \theb2$.
%\end{claim}

Thus, we can assume $B$ has no tripod.
Then $B$ has $C'$-skew paths, say $P_1$ and $P_2$. Since these do not
exist in $B-e$, $e$ is in one of them. If $C\cap \att(B)=
\emptyset$, choose $e'$ any edge of $C$ not in $P_1\cup P_2$. If
$C\cap \att(B)\neq\emptyset$, choose $e'$ to be the other edge of $C$
incident with the same attachment as $e$. 

% %

 Repeat with $G-e'$.
This yields $C''$ so that $B$ has 
$C''$-skew paths $u'_1 u_2'$ and $w_1' w_2'$ ($e'$ incident with
$u'_1$).  Since $u_1 u_2 \cup w_1 w_2 \subseteq B-e'$, they are
not $C''$-skew. In $C'$, we have the cyclic 
order $u_1, w_1, u_2, w_2$, say. 
In $C''$ we have $u_1 u_2 w_1 w_2$. 
Likewise in $C'$ we have $u'_1 u_2' w_1' w_2'$, while in $C''$ we have
$u'_1 w_1' u_2' w_2'$. 

Let $D$ and $D'$ be
$1$-drawings of $H$ having all attachments of $B$ on faces $F, F'$,
respectively, so that the cyclic orders of $\att(B)$ are different in
$\partial{F}$ and $ \partial{F'}$.

\begin{claim}\label{cl:nisnot3}
$n\ge 4$.
\end{claim}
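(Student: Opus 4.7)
The plan is to prove the contrapositive: if $n=3$, then $G$ contains a subdivision of $V_8$, contradicting the hypothesis that $G$ has no subdivision of $V_{2(n+1)}$. So I would assume for contradiction that $n=3$, making $H\cong V_6$, and derive a $V_8\topol G$.

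First, I would pin down the structural data we have assembled. Repeating the argument with $G-e'$ in place of $G-e$ gives us two distinct cycles $C',C''$ and two pairs of skew paths in $B$: $P_1=u_1u_2$, $P_2=w_1w_2$ (skew on $C'$ with cyclic order $u_1,w_1,u_2,w_2$, but ``parallel'' on $C''$ with cyclic order $u_1,u_2,w_1,w_2$), together with $P_1'=u_1'u_2'$, $P_2'=w_1'w_2'$ (skew on $C''$ but parallel on $C'$). The corresponding 1-drawings $D,D'$ of $H$ yield faces $F,F'$ whose boundaries contain all attachments of $B$ but in different cyclic orders, so the transition from $D$ to $D'$ must reflect a change in the crossing pair of $H\cong V_6$. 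Up to the symmetries of $V_6$, I would enumerate the face boundaries of 1-drawings of $V_6$ that can accommodate all four attachments $u_1,u_2,w_1,w_2$, and identify the (few) possible configurations in which the attachments admit two distinct cyclic orderings across different faces.

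Next, for each such configuration I would explicitly build a subdivision of $V_8$ in $G$. The guiding idea is that two independent pairs of skew paths — one skew on $C'$ and the other skew on $C''$ — together provide four internally disjoint ``rungs,'' which I would use as the four spokes of the $V_8$. The rim of the $V_8$ is built by splicing together subpaths of the $H$-rim with portions of $P_1,P_2,P_1',P_2'$ and, where necessary, arcs from $C'\cup C''$; the two ``parallel'' orderings of one pair of skew paths conveniently produce the extra two rim vertices beyond the six $H$-nodes, giving the $8$ nodes required for $V_8$. Finally, I would verify that the resulting subgraph is indeed a subdivision of $V_8$ (adjacencies of spoke attachments along the rim alternating as in a Möbius ladder), yielding the contradiction.

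The main obstacle will be the case analysis in the previous step: constructing the $V_8$ subdivision uniformly across all configurations of the four attachments (whether each is an $H$-node or interior to a branch) and all choices of the crossing pair in each of $D,D'$. Some care will also be needed because $C'$ and $C''$ are cycles in the ``crossed'' graphs $G^\times$ rather than in $G$, so one must track which portion of each of the two crossing edges survives into the constructed $V_8$ and check that no two of the chosen paths share internal vertices. Once the uniform construction is verified, the contradiction with the absence of $V_8\topol G$ establishes $n\ge 4$.
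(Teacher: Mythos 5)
Your plan misses the key simplification that makes this claim cheap, and the construction you sketch in its place is substantially harder than it needs to be and has gaps you acknowledge but do not close.

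The crucial observation the paper uses is that for $H\cong V_6$, a \emph{single} $H$-avoiding path $P$ whose two ends lie in the interiors of two \emph{disjoint} $H$-branches already gives $H\cup P\cong V_8$. So one does not need to assemble four new spokes and a new rim from $P_1,P_2,P_1',P_2'$; one only needs to locate one attachment-to-attachment path of $B$ straddling a pair of disjoint branches. The case analysis then hinges on where $\att(B)$ can sit: if $\att(B)$ is not contained in a $4$-cycle of $H$, the relevant face boundary $\partial F$ has the form $\times v_1v_2v_3\times$, and an attachment in $\oo{\times v_1}$ together with one in $\oo{v_3\times}$ (disjoint branches) immediately yields the $V_8$. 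If instead $\att(B)$ lies in a $4$-cycle $Q$, the changing cyclic order forces $Q$ to be self-crossed in one of $D,D'$, which again splits $\att(B)$ across two disjoint branches with at least two attachments on each side.

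By contrast, your plan to use $P_1,P_2,P_1',P_2'$ as the four spokes of a $V_8$ has a genuine problem: these four paths all live in the same $H$-bridge $B$ and need not be internally disjoint from one another, so they cannot in general serve as disjoint spokes; nor is it clear how to carve a consistent $8$-node rim out of the $V_6$-rim plus fragments of these paths. You flag these difficulties (``check that no two of the chosen paths share internal vertices'', ``case analysis...'') but do not resolve them, and there is no reason to expect the construction to go through uniformly. The fix is to abandon the four-spoke construction entirely and use the one-path observation above; once you notice it, the claim follows from a short case split on whether $\att(B)$ fits in a $4$-cycle.
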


\begin{proof}
Let $H$ be a subdivision of $V_6$ in $G$. We remark that if $f$ and $f'$ are any disjoint $H$-branches having internal vertices that are ends of an $H$-avoiding path $P$ in $G$, then $H\cup P$ is a subdivision of $V_8$ in $G$.  

We consider first the case that  $\att(B)$ is not contained in any $4$-cycle of
$H$. Because we know the $1$-drawings of $H$ and $\att(B)$ is contained in the boundary $\partial F$ of a face $F$ of such a 1-drawing,  $\partial{F}$ is
$\times v_1v_2v_3\times$. If $B$ has attachments in both $\oo{\times v_1}$ and
$\oo{v_3\times}$, then $G$ has a subdivision of $V_8$, as required.  Thus, we may assume that $\att(B)$ is contained \wording{in a 4-cycle $Q$} of $H$, which we may take to be
$\cc{v_1v_2v_3v_4v_1}$.

In at least one of
$D$ and $D'$, $Q$ is self-crossed (otherwise the cyclic orders of $\att(B)$ are the same) and $B$ is 
drawn in the face $\times v_1v_6v_3\times$. However, in this case $\att(B) \subseteq \oc{\times,v_1} \cup \co{v_3,\times}$
and at least two attachments of $B$ are in each. In this case, we again have a subdivision of $V_8$ in $G$, as required.
\end{proof}

\begin{claim}\label{cl:nospokeatt}%\marginpar{$n \ge 4$ assumed}
$B$ has no (interior) spoke attachment.
\end{claim}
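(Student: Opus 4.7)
The plan is to derive a contradiction by building a subdivision of $V_{2(n+1)}$ in $G$ from an assumed interior spoke attachment. So I would assume, for contradiction, that $B$ has an attachment $v \in \oo{s_j}$ for some $j$. Since $v$ is an attachment and $B$ is connected, there is an $H$-avoiding path $Q$ in $B$ from $v$ to a vertex $w$ of $H-\cc{s_j}$; the two edges of $s_j$ incident with $v$ together with $Q$ make $v$ a degree-$3$ node of the graph $H \cup Q$, and $s_j$ becomes two half-spokes $s_j^+$ and $s_j^-$ joined at $v$.

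The heart of the argument is to show that $H \cup Q$ (possibly augmented by one of the skew paths $P_1,P_2$) already contains a subdivision of $V_{2(n+1)}$. I would carry out a case analysis on the location of $w$: a spoke interior $\oo{s_k}$, a rim branch interior $\oo{r_k}$, or a node $v_k$. In each case the new rim $R^*$ of length $2(n+1)$ is obtained from the old rim $R$ by inserting a detour through $v$ and $w$: one replaces a suitable $RR$-segment of $R$ by the concatenation $s_j^+ \, Q\, (\text{portion of a branch through } w)$, so that both $v$ and $w$ become rim-vertices of $R^*$; the $n+1$ spokes of the new M\"obius ladder then consist of $n-1$ untouched spokes of $H$, the other half $s_j^{\mp}$ of $s_j$ (now joining $v$ antipodally across $R^*$), and one further $RR$-path taken either from the piece of $H$ adjacent to $w$ or from one of the $C'$-skew paths $P_1, P_2$ of $B$. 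One must verify in each case that antipodality in $R^*$ is preserved, so the resulting graph is indeed a $V_{2(n+1)}$ subdivision.

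The main obstacle I expect is the bookkeeping in the case analysis. The two trickiest points are (i) ensuring that every vertex designated as a node of $V_{2(n+1)}$ has degree exactly $3$ in the constructed subdivision --- in particular, when $w$ is an $H$-node of degree $3$, one must suppress one of its $H$-branches from the subdivision --- and (ii) when $w$ is close to $s_j$ on $R$, the naive detour shortcircuits the rim and produces an $R^*$ that is too short; here one needs to use the $C'$-skew structure of $B$ to route a second $RR$-path between the two sides of $R$, which supplies the missing rim length. Because $n\ge 4$ by Claim~\ref{cl:nisnot3}, there is always enough room on $R$ and enough untouched spokes to complete the construction, yielding the desired $V_{2(n+1)}$ in $G$ and hence the contradiction.
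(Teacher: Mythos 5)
The approach you propose is genuinely different from the paper's, and it has a gap that I don't think can be repaired along the lines you sketch.

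Your core move is the claim that, once $B$ has an attachment $v$ in the interior of a spoke $s_j$, an $H$-avoiding path $Q$ from $v$ to some $w\in H-\cc{s_j}$ (possibly supplemented by one of the skew paths) already produces a subdivision of $V_{2(n+1)}$. This works for $n=3$ (it is exactly the remark in the proof of Claim~\ref{cl:nisnot3}, which exploits the fact that $V_6=K_{3,3}$ is edge-transitive), but it fails for $n\ge 4$, which is precisely the regime this claim lives in. Concretely, take $H\cong V_8$ and let $P$ be a single $H$-avoiding path from $a\in\oo{r_0}$ to $b\in\oo{s_2}$. The resulting cubic graph on the ten vertices $\{v_0,\dots,v_7,a,b\}$ has exactly two $4$-cycles (the undisturbed $v_3v_4v_0v_7$ and the new $a\,b\,v_2\,v_1$), whereas $V_{10}$ has five; so $H\cup P$ is not a subdivision of $V_{10}$. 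The obstruction is structural: your ``detour'' rim $R^*$ through $v$ and $w$ shortcuts a segment of $R$ containing some interior $H$-nodes, and each such node costs you a spoke; with only $s_j^{\mp}$ and one further $RR$-path to make up the deficit, the counts only close when the shortcut is a single rim branch, which you cannot ensure. Invoking $P_1,P_2$ doesn't obviously save it either, since those paths may be local to one or two $H$-quads and contribute nothing toward a longer Möbius ladder. In short, ``spoke attachment $\Rightarrow V_{2(n+1)}$'' is not a theorem you get to use here, and the proposal never actually establishes it.

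The paper avoids this entirely. It leans on what was already extracted upstream: there are two $1$-drawings $D,D'$ of $H$ in which the attachments of $B$ lie on face boundaries $\partial F,\partial F'$ in \emph{different} cyclic orders. Since $B$ has no tripod, $n\ge4$ (Claim~\ref{cl:nisnot3}) and $B$ has a spoke attachment, the possible face boundaries of a $1$-drawing of $V_{2n}$ containing $\att(B)$ are ten explicit walks; a direct check of all pairs shows the cyclic order of $\att(B)$ is forced to be the same in $\partial F$ and $\partial F'$ in every case, contradicting what was established. That is a case verification, not a construction of $V_{2(n+1)}$, and it sidesteps exactly the issue your argument runs into. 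If you want to salvage a constructive route you would need a genuinely new argument showing that the differing cyclic orders, together with the skew-path structure, force a $V_{2(n+1)}$ --- and that is not something your sketch provides.
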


% Picture 80 is relevant here

\begin{proof}  From Claim \ref{cl:nisnot3}, we know that $n\ge 4$. By way of contradiction, we assume $B$ has an attachment in $\oo{s_0}$.  From the listing of the faces of 1-drawings of $V_{2n}$, the only possibilities for each of $\partial{F}$ and $ \partial{F'}$ are:
\begin{description}
\item{(1)} $\cc{v_0,r_0,v_1,s_1,v_{n+1},r_n,v_n,s_0,v_0}$;
\item{(1')}
$\cc{v_0,r_{-1},v_{-1},s_{-1},v_{n-1},r_{n-1},v_{n},s_0,v_0}$;
\item{(2)} 
$\oo{v_1, r_0, v_0, s_0, v_n, r_n, v_{n+1}, r_{n+1}, v_{n+2}}$;
\item{(2')}
$\oo{v_{-1}, r_{-1}, v_0, s_0, v_n, r_{n-1}, v_{n-1}, 
r_{n-2}, v_{n-2}}$;
\item{(3)} $\oo{v_{n-1}, r_{n-1}, v_n, s_0, v_0, r_{-1}, v_{-1}, 
r_{-2}, v_{-2}}$;
\item{(3')}
$\oc{v_{n+1},r_n,v_n,s_0,v_0,r_0,v_1,r_1,v_2}$;
\item{(4)} $\oo{v_{-1}, r_{-1}, v_0, s_0, v_n, r_n, v_{n+1}}$;
\item{(4')}
$\oo{v_{n-1}, r_{n-1}, v_n, s_0, v_0, r_0, v_{1}}$;
\item{(5)} $\cc{v_0, v_1, v_2, \ldots, v_n, s_0, v_0}$;
\item{(5')}
$\cc{v_0,s_0,v_n,v_{n+1},v_{n+2},\ldots,v_{-1},v_0}$.
\end{description}

% *****
We now consider these possibilities in pairs.  In every case, the ends of the skew paths will occur in the same cyclic order on the boundaries of the two faces, which is impossible.

\begin{description}
\item{(1,1')} $\att(B) \subseteq s_0$; same cyclic order, a contradiction.
\item{(2,2')} $\att(B) \subseteq s_0$; same cyclic order, a contradiction.
\item{(3,3')} $\att(B) \subseteq s_0$; same cyclic order, a contradiction.
\item{(4,4')} $\att(B) \subseteq s_0$; same cyclic order, a contradiction.
\item{(5,5')} $\att(B) \subseteq s_0$; same cyclic order, a contradiction.
\item{(1,2)} $\att(B) \subseteq \oc{v_1,r_0,v_0,s_0,v_n,r_n,v_n+1}$;
  same cyclic order, a contradiction.
\item{(1,2')} $\att(B) \subseteq \cc{v_0, s_0, v_n}$;
  same cyclic order, a contradiction.
\item{(1,3)} $\att(B) \subseteq s_0$;
  same cyclic order, a contradiction.
\item{(1,3')} $\att(B) \subseteq \oc{v_{n+1},r_n,v_n,s_0,v_0,r_0,v_1}$;
  same cyclic order, a contradiction.
\item{(1,4)} $\att(B) \subseteq \oc{v_1,r_0,v_0,s_0,v_n}$;
  same cyclic order, a contradiction.
\item{(1,4')} $\att(B) \subseteq \cc{v_1,r_0,v_0,s_0,v_n}$;
  same cyclic order, a contradiction.
\item{(1,5)} $\att(B) \subseteq \cc{v_1,r_0,v_0,s_0,v_n}$;
  same cyclic order, a contradiction.
\item{(1,5')} $\att(B) \subseteq \cc{v_{n+1},r_n,v_n,s_0,v_0}$;
  same cyclic order, a contradiction.

\item{(2,3)} $\att(B) \subseteq s_0$;
  same cyclic order, a contradiction.
\item{(2,3')} $\att(B) \subseteq \oc{v_{n+1},r_n,v_n,s_0,v_0,r_0,v_1}$;
  same cyclic order, a contradiction.
\item{(2,4)} $\att(B) \subseteq \oc{v_{n+1},r_n,v_n,s_0,v_0}$;
  same cyclic order, a contradiction.
\item{(2,4')} $\att(B) \subseteq \oc{v_1,r_0,v_0,s_0,r_n}$;
  same cyclic order, a contradiction.
\item{(2,5)} $\att(B) \subseteq \oc{v_1,r_0,v_0,s_0,v_n}$;
  same cyclic order, a contradiction.
\item{(2,5')} $\att(B) \subseteq \co{v_0,s_0,v_n,r_n,v_{n+1},r_{n+1},v_{n+2}}$;
  same cyclic order, a contradiction.

\item{(3,4)} $\att(B) \subseteq \oc{v_{-1},r_{-1},v_0,s_0,v_n}$;
  same cyclic order, a contradiction.
\item{(3,4')} $\att(B) \subseteq \oc{v_{n-1},r_{n-1},v_n,s_0,v_0}$;
  same cyclic order, a contradiction.
\item{(3,5)} $\att(B) \subseteq \oc{v_{n-1},r_{n-1},v_n,s_0,v_0}$;
  same cyclic order, a contradiction.
\item{(3,5')} $\att(B) \subseteq \oc{v_{-2},r_{-2},v_{-1},r_{-1},v_0,
  s_0, v_n}$;
  same cyclic order, a contradiction.

\item{(4,5)} $\att(B) \subseteq \cc{v_0,s_0,v_n}$;
  same cyclic order, a contradiction.
\item{(4,5')} $\att(B) \subseteq \oo{v_{-1},r_{-1},v_0,s_0,v_n,r_n,v_{n+1}}$;
  same cyclic order, a contradiction.
\end{description}
% *****

As any pair \wording{gives the same cyclic order}, we always get a contradiction.
\end{proof}

\begin{claim}\label{cl:bnotlocal}
$B$ is not a local $H$-bridge.
\end{claim}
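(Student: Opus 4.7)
The plan is to assume for contradiction that $B$ is $Q_i$-local for some $i$, and to derive from this a subdivision of $V_{2(n+1)}$ in $G$, contradicting the hypothesis. By Claim \ref{cl:nospokeatt}, $B$ has no attachment in the interior of any $H$-spoke, so under the assumption we would have $\att(B)\subseteq r_i\cup r_{i+n}$.

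The first step would be to locate the four endpoints of the $C'$-skew paths $P_1$ and $P_2$ inside $r_i\cup r_{i+n}$. Since $P_1,P_2$ are $C'$-skew for $C'=\partial F$ with $F$ a face of the $1$-drawing $D_e[H]$, their endpoints interlace along $C'$. I would enumerate the faces of $1$-drawings of $V_{2n}$ whose boundary can contain attachments only in the antipodal pair $r_i,r_{i+n}$: by Lemma \ref{lm:1drawingsV2n} the unique crossing in any $1$-drawing of $V_{2n}$ is between cyclically close rim branches, so only drawings in which $r_i$ or $r_{i+n}$ is itself the crossed branch can realize such a face. Running this case analysis in parallel for $D_e$ and $D_{e'}$ (which give different cyclic orders on $\att(B)$) would force the endpoints to split two-and-two between $r_i$ and $r_{i+n}$, yielding two disjoint $(r_i)(r_{i+n})$-paths $P_1:x_1\to y_1$ and $P_2:x_2\to y_2$ that are internally disjoint from $H$, with $x_1,x_2$ occurring in order along $r_i$ and $y_1,y_2$ in the corresponding order along $r_{i+n}$.

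Given such $P_1,P_2$, I would then exhibit a subdivision of $V_{2(n+1)}$ in $G$. The new rim $R'$ would be formed from $R$ by replacing the subpath $\cc{x_1,r_i,x_2}$ with $P_1 \cup \cc{y_1,r_{i+n},y_2}\cup P_2^{-1}$ rearranged to give a cycle of length $2n+2$; the $n-1$ spokes $s_j$ with $j\notin\{i,i+1\}$ carry over unchanged, while the remaining two spokes of the $V_{2(n+1)}$ are obtained by combining $s_i$ and $s_{i+1}$ with the short rim segments $\cc{v_i,r_i,x_1}$, $\cc{x_2,r_i,v_{i+1}}$, $\cc{v_{i+n},r_{i+n},y_1}$, $\cc{y_2,r_{i+n},v_{i+n+1}}$ to join diametrically opposite vertices on $R'$. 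The main obstacle will be the last step: verifying that the chosen subgraph is genuinely a subdivision of $V_{2(n+1)}$, in particular that its two new spokes connect vertices antipodal on $R'$ rather than ones that are merely close together on it. Once the $V_{2(n+1)}$-subdivision is in hand, it contradicts the hypothesis and forces $B$ to be non-local.
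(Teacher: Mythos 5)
Your approach has the right overall shape --- assume $B$ is local, restrict to $\att(B)\subseteq r_i\cup r_{i+n}$ via Claim \ref{cl:nospokeatt}, and try to produce a subdivision of $V_{2(n+1)}$ --- but you miss the key simplification and your construction at the end is not correct.

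The paper's key observation is that a \emph{single} $H$-avoiding path $P$ from a vertex $x\in\oo{r_i}$ to a vertex $y\in\oo{r_{i+n}}$ already yields a subdivision of $V_{2(n+1)}$: keep the rim $R$ unchanged (it now carries $2n+2$ marked nodes, the original $2n$ plus $x$ and $y$) and add $P$ as an $(n{+}1)$st spoke; since $r_i$ and $r_{i+n}$ are antipodal, $P$ slots in correctly between $s_i$ and $s_{i+1}$ in the M\"obius pattern. Hence $B$ has no attachments in both $\oo{r_i}$ and $\oo{r_{i+n}}$, so (since the skew-path ends cannot all be on one rim branch without giving the same cyclic order in $\partial F$ and $\partial F'$) the attachments in one branch are forced to be exactly the two end nodes, and the contradiction then comes from the observation that the only faces with the requisite boundary occur when $Q_i$ is not self-crossed, giving the same cyclic order in both drawings. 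Your proposal does not spot this one-path shortcut; instead you try to force the four skew-path ends to split two-and-two between the two \emph{interiors} and build $V_{2(n+1)}$ from both paths. That split is never established (and, if it held, the one-path argument would already finish), and your assertion that only drawings in which $r_i$ or $r_{i+n}$ is the crossed branch can have a face meeting both is false: the uncrossed face bounded by $Q_i=r_i\,s_{i+1}\,r_{i+n}\,s_i$ is exactly such a face.

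Finally, the rim surgery you propose does not produce a cycle. Replacing $\cc{x_1,r_i,x_2}$ in $R$ by $P_1\cup\cc{y_1,r_{i+n},y_2}\cup P_2^{-1}$ makes the resulting closed walk traverse part of $r_{i+n}$ twice (once in the inserted segment and once as part of the untouched remainder of $R$), so it is not a subdivision of a cycle, let alone a rim of a $V_{2(n+1)}$. You flag this as ``the main obstacle,'' and indeed it is: the construction as written fails, and the repair is to keep $R$ intact and view the single interior-to-interior path as a new spoke, which is exactly what the paper does.
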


\begin{proof} Suppose $B$ is local, with $\att(B)\subseteq Q_0$. From Claims \ref{cl:nisnot3} and \ref{cl:nospokeatt}, we may assume $n\ge4$ and  $B$ has no spoke attachment.   Thus, $\att(B)\subseteq r_0\cup r_n$.  
 Moreover, $B$ cannot have attachments in both $\oo{r_0}$ and $\oo{r_n}$
because $G$ has no subdivision of $V_{2(n+1)}$. On the other hand, $B$ has at least two attachments
in both $r_0$ and $r_n$ or else the cyclic order of the ends of the skew paths is always the
same. So we may assume $\att(B) \cap r_0 = \{v_0,v_1\}$. We need two
attachments in $r_n$. From the listing of faces in 1-drawings of $V_{2n}$, the only possibilities for $\partial F$ and $\partial F'$ occur when $Q_0$ is not self-crossed and so the cyclic orders of the attachments of $B$ are the same in both cases, a contradiction.\end{proof}

\begin{claim}\label{cl:nearlyLocal}
For some $i$, $\att(B) \subseteq r_i\cup r_{i+n+1}$.
\end{claim}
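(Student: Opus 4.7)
The plan is to mirror the case analysis of Claim~\ref{cl:nospokeatt}, now with attachments restricted to rim branches rather than spokes. By Claim~\ref{cl:nospokeatt}, $\att(B)$ has no interior spoke attachment, so $\att(B)\subseteq R$; by Claim~\ref{cl:bnotlocal}, $B$ is not local, so $\att(B)$ meets at least two rim branches that are not contained in a common $H$-quad. The goal is to show these rim branches must sit in the ``almost antipodal'' configuration $r_i\cup r_{i+n+1}$.

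First I would invoke the two $1$-drawings $D$ and $D'$ of $H$ constructed just before Claim~\ref{cl:nisnot3}, together with the faces $F$ and $F'$ containing $\att(B)$, arranged so that the cyclic orders of the skew-path ends $u_1,w_1,u_2,w_2$ on $\partial F$ and $\partial F'$ differ. Using Lemma~\ref{lm:1drawingsV2n}, I would enumerate the $1$-drawings of $V_{2n}$: in each one, there is an index $k$ so that $r_k$ crosses exactly one edge of $r_{k+n-1}\cup r_{k+n}\cup r_{k+n+1}$. For each such crossing type, list those faces whose intersection with $R$ is contained entirely in two rim branches (since spoke-interior attachments are forbidden). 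The resulting catalog of face possibilities is short.

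Second, I would pair the possible types of $F$ and $F'$ as in Claim~\ref{cl:nospokeatt}, and for each pair check one of three outcomes: either (a) $\att(B)$ is forced into $r_i\cup r_{i+n+1}$ for some $i$, giving the conclusion; or (b) the cyclic order of $u_1,w_1,u_2,w_2$ is the same in $\partial F$ and $\partial F'$, contradicting the hypothesis that the two orders differ; or (c) $B$ has attachments in the interiors of two rim branches $r_p$ and $r_q$ with $q-p\notin\{0,n-1,n,n+1\}\pmod{2n}$, in which case an $H$-avoiding $r_pr_q$-path in $B$ can be combined with the rim and a suitable spoke to produce a subdivision of $V_{2(n+1)}$, contradicting the hypothesis of the lemma. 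The target conclusion $r_i\cup r_{i+n+1}$ (rather than $r_i\cup r_{i+n}$) arises because any pair of attachments on $r_i$ and $r_{i+n}$ already lies in a common $H$-quad, contradicting non-locality after the skew-path analysis; only the $r_i,r_{i+n-1}$ and $r_i,r_{i+n+1}$ cases survive, and these are related by relabeling $i\mapsto i+n-1$.

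The main obstacle is the bookkeeping of the case analysis: the number of face types of $1$-drawings of $V_{2n}$ whose rim-intersection lies in just two rim branches is small but not trivial, and for each pair $(F,F')$ one must verify simultaneously that the cyclic orders truly can differ and that no forbidden attachment creates a $V_{2(n+1)}$. I expect the argument to close cleanly exactly as in Claim~\ref{cl:nospokeatt}: in every admissible $(F,F')$ pair, either the orders coincide or $\att(B)$ is confined to the two opposite-side rim branches $r_i$ and $r_{i+n+1}$.
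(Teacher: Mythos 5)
Your high-level plan — mirror the face-type case analysis of Claim~\ref{cl:nospokeatt}, using the two drawings $D,D'$ together with the cyclic-order comparison on $\partial F$ and $\partial F'$ — is exactly what the paper does. However, two of the intermediate steps as you have stated them would fail.

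First, the $V_{2(n+1)}$ argument in your outcome (c) is essentially backwards. If $B$ has attachments in $\oo{r_p}$ and $\oo{r_q}$, an $H$-avoiding $r_pr_q$-path $P$ in $B$, together with $R$ and all the $H$-spokes, forms a subdivision of $V_{2(n+1)}$ only when every spoke $s_j$ interlaces with $P$ on the rim, and a direct count shows this happens precisely when $q-p\equiv n\pmod{2n}$: for $q-p\in\{2,\dots,n-2\}$ only $q-p<n$ of the spokes interlace, so the resulting graph is not $V_{2(n+1)}$. You have listed $n$ among the excluded distances, whereas $n$ is the one case that genuinely yields the contradiction (the paper uses exactly this, to forbid simultaneous attachments in $\oo{r_0}$ and $\oo{r_n}$). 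So outcome (c) as stated cannot be invoked for the near-local pairs, and without it the case analysis does not close. Second, your face catalog — ``list those faces whose intersection with $R$ is contained entirely in two rim branches'' — is too restrictive. In a 1-drawing of $V_{2n}$ one has, besides the small corner face bounded by $\times r_i v_i s_i v_{i+n} r_{i+n-1}\times$ (which is the only one giving the claim immediately), a corner face meeting three rim branches $r_{i-1},r_i,r_{n+i-1}$, and several large faces meeting roughly $n$ rim branches. The paper must consider all of these: it dismisses the large-large pairings by the same-cyclic-order argument, and handles the three-branch corner face by a separate calculation that eventually appeals to $V_{2(n+1)}$ at distance $n$. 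Restricting the catalog up front would skip exactly the case that requires the most work.
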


\begin{proof} %{\bf \Large This proof has been modified significantly.}
By Claims \ref{cl:nisnot3}, \ref{cl:nospokeatt}, and \ref{cl:bnotlocal}, $n\ge 4$, $B$ has no spoke attachments, and $B$ is not local.

We consider in turn the possibilities for the face of $D
_e[H]$ that contains $B-e$.  We know $B$ is not local, so it can only be contained in a face whose boundary has one of the following forms:
\begin{enumerate}
\item\label{it:smallSpoke} $\cc{\times,r_i, v_i, s_i,v_{i+n},r_{i+n-1},\times}$;
\item\label{it:smallInternalNotLocal} $\cc{\times,r_i,v_i,r_{i-1},v_{i-1},s_{i-1},v_{n+i-1},r_{n+i-1},\times}$;
\item\label{it:largeExposedNoSpoke} $\cc{\times,r_i,v_{i+1},r_{i+2},\dots,v_{i+n-1},r_{i+n-1},\times}$;
\item\label{it:largeExposedSpoke} $\cc{v_i,s_i,v_{n+i},r_{n+i},v_{n+i+1},\dots,r_{i-1},v_i}$; or 
\item\label{it:largeNoExposedSpoke} $\cc{\times,r_i,v_{i+1},r_{i+1},\dots,r_{n+i-1},v_{n+i},r_{n+i},\times}$.
\end{enumerate}

As in the proof of Claim \ref{cl:nospokeatt}, the faces of $D_e[H]$ and $D_{e'}[H]$ containing $B-e$ and $B-e'$, respectively, cannot both be of one of the types (\ref{it:largeExposedNoSpoke}, \ref{it:largeExposedSpoke}, \ref{it:largeNoExposedSpoke}): the vertices of $\att(B)$ will occur in the same order in both cases.

If one of the drawings has $B-e$ or $B-e'$ in a face of type (\ref{it:smallSpoke}), then we are done:  $\att(B)\subseteq r_i\cup r_{i+n-1}$.  The remaining case is that one of the drawings has $B-e$ or $B-e'$ drawn in a face of type (\ref{it:smallInternalNotLocal}).

All other possibilities having been eliminated, we may assume (taking $i=n+1$) $$\att(B) \subseteq
\cc{\times,r_1,v_1,r_0,v_0,s_0,v_{n},r_n,\times}\,.$$ 
Because $B$ is not local, $\att(B)\cap \oo{r_{1}} \neq
\emptyset$. Because $\att(B)$ occurs in different orders in $\partial{F}$
and
$\partial{F'}$, $\att(B) \cap r_n \neq \emptyset$. By way of
contradiction, we suppose $B$ also has an attachment in 
$\co{v_0,r_0,v_1}$. The only other face which could allow these three
attachments is  $\cc{\times,r_0,v_1,r_1,\dots,v_{i-1},r_{i-1},v_n,r_n,\times}$. Notice $v_0$ is not in this second boundary, so one attachment
is in $\oo{r_0}$.  Because $V_{2(n+1)} \not\subseteq G$, no attachment
is in $\oo{r_n}$. Thus $\att(B) \cap r_n = \{ v_{n}\}$. But then, once again, the attachments of $B$ occur in the same cyclic orders in $\partial F$ and $\partial F'$, a contradiction.\end{proof}

  As we have seen above, the alternative to ``$B$ is neither a tree nor contains a tripod" is that $B$ has the $C'$-skew paths $P_1$ and $P_2$, as well as the $C''$-skew paths $P'_1$ and $P'_2$.  Claim \ref{cl:nearlyLocal} shows the four ends of $P_1$ and $P_2$ are in $r_0 \cup
r_{n+1}$. If three of them are in $r_0$, say, then they occur
in the same cyclic order in $\partial F$ and $\partial F'$, a contradiction. So two are in $r_0$ and two
in $r_{n+1}$. If $P_1$ has both ends in $r_0$, say, then the ends of
$P_1$ and $P_2$ can never interlace, a contradiction as they interlace
in $\partial{F}$. So each has one end in each of $r_0$ and
$r_{n+1}$. Likewise for $P_1', P_2'$. 

Adding at most $3$ paths in $B - \att(B)$ to $P_1\cup P_2 \cup P_1'
\cup P_2'$, we obtain $B'\subseteq B$ containing $P_1\cup P_2\cup P_1'
\cup P_2'$ so that $B'$ is an $H$-bridge in $H\cup B'$. 

Recall that $n\ge 4$ by Claim \ref{cl:nisnot3}.  All the attachments of $B'$ are in $H-\oo{s_3}$.  
Suppose $D''$ is a $1$-drawing of $(H\cup B')-\oo{s_3}$. Then $D''[B']$ is in a
face $F''$ of $D''[H-\oo{s_3}]$. Since $r_0$ and $r_{n+1}$ both have at least two
attachments of $B'$, they are both incident with $F''$. Thus one of
the pairs $P_1, P_2$ and $P_1', P_2'$ is a $\partial F''$-skew pair\wording{.  Therefore, $\crn((H\cup B')-\oo{s_3}) \ge 2$, contradicting the fact that $G$ is 2-crossing-critical.} 
\end{cproof} 

Combining Corollary \ref{cr:bridgesAreSmall} and Lemma \ref{lm:bridgeIsTree}, we immediately have the main result of this section.

}\begin{theorem}\label{th:b88} Let $G\in \m2$, $\hvng$, $n\ge 3$, and suppose $G$ has no \wording{subdivision of} $V_{2(n+1)}$.  If $B$ is an $H$-bridge, then $|V(B)|\le 88$.\end{theorem}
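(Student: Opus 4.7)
\begin{proofof}{Theorem \ref{th:b88} (sketch/plan)}
The plan is to combine Corollary \ref{cr:bridgesAreSmall} and Lemma \ref{lm:bridgeIsTree} in the straightforward way the author advertises, with a short degree-counting argument in the tree case.

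First, by Corollary \ref{cr:bridgesAreSmall}, $|\att(B)|\le 45$. Next, Lemma \ref{lm:bridgeIsTree} tells us that either $B$ is a tree, or else $B$ contains a tripod, $n=3$, and $|V(G)|\le 10$. In the second case, $|V(B)|\le |V(G)|\le 10\le 88$, so there is nothing to prove. It remains to handle the case that $B$ is a tree.

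The only minor point is to turn the bound on $|\att(B)|$ into a bound on $|V(B)|$. For this I would argue as follows. Write $a=|\att(B)|$ and let $L$ be the number of leaves of the tree $B$. Every leaf of $B$ must be an attachment: a vertex $v\in\Nuc(B)$ has all of its incident edges in $B$, so by the assumption that $G$ has minimum degree at least $3$ (in force throughout the paper), $\deg_B(v)\ge 3$, forcing $v$ to be internal in the tree. Let $I_a$ be the number of internal vertices of $B$ that are attachments and $I_n$ the number of internal vertices in $\Nuc(B)$; then $a=L+I_a$ and $|V(B)|=L+I_a+I_n$. Since $B$ is a tree on $|V(B)|$ vertices the sum of degrees in $B$ is $2(|V(B)|-1)$, and on the other hand it is at least $L+2I_a+3I_n$, giving $I_n\le L-2$. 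Hence
\[
|V(B)|=L+I_a+I_n\le L+I_a+(L-2)\le 2a-2\le 2\cdot 45-2=88\,,
\]
as required.

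The main obstacle was already overcome in the preceding two subsections, namely establishing Corollary \ref{cr:bridgesAreSmall} (bound on $|\att(B)|$) and Lemma \ref{lm:bridgeIsTree} (tree structure or tiny graph); the final theorem itself is only a short combinatorial wrap-up.
\end{proofof}
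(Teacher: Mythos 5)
Your proposal is correct and is essentially the argument the paper has in mind: the paper states Theorem~\ref{th:b88} "immediately" follows from Corollary~\ref{cr:bridgesAreSmall} and Lemma~\ref{lm:bridgeIsTree}, and your tree/leaf degree-count (giving $|V(B)|\le 2\,|\att(B)|-2\le 88$, with leaves necessarily being attachments because nucleus vertices inherit degree $\ge 3$ from the $3$-connected $G$) is precisely the short combinatorial wrap-up that the authors leave implicit.
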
\printFullDetails{

This completes the first main step of our effort to show that 3-connected, 2-crossing-critical graphs \wording{with no subdivision of  $V_{2n}$} have bounded size.  

}

\section{The number of bridges is bounded}\printFullDetails{\label{sec:fewBridges}

This subsection, the final leg of this work, is devoted to showing that there is a particular subdivision $H$ of $V_{2n}$ in $G$ so that there are at most $O(n^3)$ $H$-bridges in $G$ that have a vertex that is not an $H$-node.  \wording{Theorem \ref{th:b88} shows that}, for any $\hvng$, all $H$-bridges have  at most 88 vertices (when there is no subdivision of $V_{2(n+1)}$).  The combination easily implies $G$ has at most $O(n^3)$ vertices.

}\begin{definition}  Let $G$ be a graph and let $n$ be an integer, $n\ge 3$.  A subdivision $H$ of $V_{2n}$ in $G$ is {\em smooth\/}\index{smooth} if, whenever $B$ is an $H$-bridge with all its attachments in the same $H$-branch, $B$ is just an edge that is in a digon with an edge of $H$.
\end{definition}\printFullDetails{
 
We begin by showing that every $G\in\m2$ with a subdivision $V_{2n}$ has a smooth subdivision $H$ of $V_{2n}$.  For such an $H$, every vertex of $G$ either is an $H$-node or is in an $H$-bridge that does not have all its attachments in the same $H$-branch.  So it will be enough to show that the number of these $H$-bridges is $O(n^3)$.  

This analysis is completed in three parts.  We start with the result that there are not many $H$-bridges having an attachment in a particular vertex of $H$ and an attachment in the interior of some $H$-branch.  This is useful for $H$-bridges having both node and branch attachments, but is also used in the second part, which is to bound the number of $H$-bridges having attachments in the interiors of the same two $H$-branches.  The final part puts these together with those $H$-bridges having attachments in three or more $H$-nodes.

We start by showing that every $G\in \m2$ with a  subdivision of $V_{2n}$ has a smooth subdivision of $V_{2n}$.

}\begin{lemma}\label{lm:smooth}   Let $G\in \m2$ and suppose $G$ contains a subdivision of $V_{2n}$, with $n\ge 3$.  Then $G$ has a smooth subdivision of $V_{2n}$.\end{lemma}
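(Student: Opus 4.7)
My plan is to prove the lemma by an extremal argument: among all subdivisions of $V_{2n}$ in $G$, select one $H$ that extremizes a well-chosen measure, and then show that any violation of smoothness contradicts the extremal choice by an explicit modification that produces a ``better'' subdivision. The natural primary measure is $|V(H)|$ (to be maximized), and, subject to that, $|E(H)|$ (also to be maximized); if necessary, a tertiary tie-breaker (such as minimizing the number of $H$-bridges that have all attachments in a single branch) can be added.

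Fix such an extremal $H$, and suppose toward a contradiction that some $H$-bridge $B$ has all attachments in a single $H$-branch $b$ with endpoints (nodes) $p$ and $q$, and that $B$ is not a single edge forming a digon with an edge of $b$. The first step is to rule out $|\att(B)|\ge 3$. If there were three attachments $x,y,z$ appearing in this order on $b$, then one can take an $\{x,y,z\}$-claw $Y$ in $B$ with centre $c\notin V(H)$ (either directly, or via Lemma~\ref{lm:threeAtts} when it applies), and reroute the $xy$-subpath of $b$ through $Y_x \cup Y_y$ to obtain a new subdivision $H'$ of $V_{2n}$ with $c$ now added to $V(H')$. A careful count shows that $|V(H')|>|V(H)|$, contradicting the primary extremality.

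Thus $|\att(B)|=2$, and by Lemma~\ref{lm:threeAtts} the bridge $B$ is a single edge $uv$. The remaining task is to show that $u$ and $v$ are adjacent on $b$; otherwise the $uv$-subpath $b_{uv}$ of $b$ has an internal vertex $x$. Since $G$ has minimum degree $3$, the vertex $x$ has a neighbour $y\neq x_{-},x_{+}$ (the neighbours of $x$ in $b$); this edge $xy$ lies in some other $H$-bridge $B'$. Using the new shortcut edge $uv$ together with the branch $b$, one can promote $uv$ to part of the skeleton of a new subdivision $H'$ of $V_{2n}$: reroute the branch from $p$ to $q$ as $p\,b\,u\,v\,b\,q$, and then, exploiting the edge $xy$, attach the still-needed interior path $x_{-}\cdots x\cdots x_{+}$ via $y$ so that $x$ reappears inside $V(H')$ along with some additional vertices of $B'$. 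A length count then gives $|V(H')|\ge |V(H)|$ with $|E(H')|>|E(H)|$, or $|V(H')|>|V(H)|$ outright, contradicting the extremal choice.

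The main obstacle is this last step: the extra edge $xy$ does not automatically yield an enlarged subdivision of $V_{2n}$, because $B'$ could itself be entangled with $b$ (for instance by having multiple attachments inside $b_{uv}$, or by being a bridge attached to other branches). The proof therefore requires a case analysis on where $y$ lies---whether $y\in V(b)$, $y$ is interior to another branch, or $y\in V(B')\setminus V(H)$---together with a bookkeeping argument that each case permits the construction of a strictly better $H'$. The tertiary measure counting ``bridges contained in a single branch'' is the natural fallback to handle the residual case in which $|V(H)|$ and $|E(H)|$ cannot be strictly improved, but a single bad bridge is nevertheless replaceable by a digon after local rearrangement.
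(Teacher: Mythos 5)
Your extremal quantity is chosen in the wrong direction, and this is a genuine gap, not a technicality. You propose to \emph{maximize} $|V(H)|$ (with $|E(H)|$ as a tie-break, but note that for a subdivision of $V_{2n}$ one always has $|E(H)|=|V(H)|+n$, so the tie-break is vacuous). The paper instead chooses $H$ to \emph{minimize} $|E(H)|$. This matters: in the modification you describe, you replace a sub-path $P=[x,b,y]$ of a branch $b$ by a path through the bridge $B$. By Lemma~\ref{lm:threeAtts}, a bridge with three attachments is a $K_{1,3}$, so the replacement path through $B$ contributes exactly one new vertex (the centre $c$). Meanwhile you delete $|\oo{P}|$ interior vertices of $P$. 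Unless $x$ and $y$ happen to be adjacent in $b$ — which is not guaranteed — you \emph{lose} vertices, so $|V(H')|<|V(H)|$, and maximality of $|V(H)|$ gives you no contradiction. The same problem recurs even more starkly in your two-attachment case: replacing $[u,b,v]$ by the single edge $uv$ always \emph{removes} interior vertices, so it is never progress toward larger $|V(H)|$. You acknowledge this as the ``main obstacle,'' but the difficulty is not a residual case to patch up — the direction of the extremal argument is simply inverted.

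The paper's proof is short and clean precisely because it goes the other way. Minimize $|E(H)|$; let $B$ be a bad bridge and $P$ the minimal subpath of $b$ containing $\att(B)$. Then $B\cup P$ is a $2$-connected $H$-close subgraph, so Lemma~\ref{lm:closeIsCycle} (which rests on the box/BOD machinery of Chapter~5 and on $G$ being $2$-crossing-critical) forces $B\cup P$ to be a cycle, hence $B$ is an edge. Now $H'=(H\cup B)-\oo{P}$ is a subdivision of $V_{2n}$ with $|E(H')|=|E(H)|-|E(P)|+1$, and minimality of $|E(H)|$ gives $|E(P)|\le 1$, i.e.\ $P$ is an edge and $B\cup P$ is a digon. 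Your proposal never invokes Lemma~\ref{lm:closeIsCycle} or the criticality of $G$, and tries to get by with rerouting plus Lemma~\ref{lm:threeAtts} alone; as written it cannot close either of the two gaps (ruling out $\ge 3$ attachments, and forcing the two attachments to be adjacent). If you want to pursue your combinatorial route, you should flip the extremal measure to minimize $|E(H)|$ and then see which parts still need the criticality-based lemma; you will find that ruling out a bridge that is a long path (rather than a single edge) cannot be done by rerouting alone without appealing to something like Lemma~\ref{lm:closeIsCycle} or Corollary~\ref{co:closeAtts}.
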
\printFullDetails{

\begin{cproof}  Choose $H$ to be a subdivision of $V_{2n}$ in $G$ that minimizes the number of edges \wording{of $G$ that are in $H$}.  We claim $H$ is smooth.  

To this end, let $B$ be an $H$-bridge with all attachments in the same $H$-branch $b$ and let $P$ be a minimal subpath of $b$ containing $\att(B)$.  Set $K=B\cup P$ and notice that $K$ is both $H$-close and 2-connected.  By Lemma \ref{lm:closeIsCycle}, $K$ is a cycle, so $B$ is just a path and, since $G$ is 3-connected, just an edge.  It remains to prove that $P$ is just an edge as well.

Let $H'=(H\cup B)-\oo P$.  Evidently $H'$ is a subdivision of $V_{2n}$ in $G$ and $|E(H')|=|E(H)|-|E(P)|+1$.  Since $|E(H)|\le |E(H')|$ by the choice of $H$, we see that $|E(P)|\le 1$, and, therefore, $P$ is just an edge, as required.  \end{cproof}

We now turn our attention to the $H$-bridges of a smooth subdivision $H$ of $V_{2n}$.   There are three main steps.

\bigskip {\bf Step 1:}  {\em Bridges attaching to a particular vertex and branch.}

\bigskip

The first step in bounding the number of $H$-bridges is to bound the number of them that can have an attachment at a particular vertex of $H$ and in the interior of a particular $H$-branch. This is the content of this step.  %\def\somenumber{{41}}

}\begin{lemma}\label{lm:fromvertextobranch}
Let $G\in \mc{3}$, $\hvng$, $n\ge 3$ and suppose $H$ is smooth. For a vertex (not necesssarily a
node) $u$ of $H$ and an $H$-branch $b$, there are at
most $41$ $H$-bridges with an
attachment at $u$ \wording{and an attachment} in $\oo{b}-u$.
\end{lemma}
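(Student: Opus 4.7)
Suppose for contradiction there are 42 distinct $H$-bridges $B_1,\ldots,B_{42}$ each having an attachment at $u$ and an attachment in $\oo{b}-u$.  For each $i$, fix an $H$-avoiding $uv_i$-path $P_i\subseteq B_i$ with $v_i\in\oo{b}-u$, and let $e_i$ be the edge of $P_i$ incident with $u$.  Since the bridges are pairwise internally disjoint (except at common attachments), the paths $P_i$ share only $u$ in their interiors.  The rough plan is:  delete one $e_i$, use the 1-drawing of $G-e_i$ to force $B_i-e_i$ into a single face of the drawing of $H$, exploit the fact that 1-drawings of $V_{2n}$ have very restricted face structure, and run a pigeonhole on the location/orientation of the $B_j$'s in this drawing to find three bridges producing either a $C$-skew configuration on the cycle $C_i = \cc{u,e_i,v_i}\lbsp\cc{v_i,b,u}$ (mimicking the counting strategy of the proof of Lemma~\ref{lm:bridgesAreSmall}) or a subdivision of $K_{3,3}$ on two sides of the crossing that cannot be 1-drawn.

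\medskip\noindent First, I would fix a particular $B_1$ and take a 1-drawing $D$ of $G-e_1$.  The unique crossing of $D$ is either in $H$ (in which case the 42 paths $P_j$ for $j\ge 2$ are all clean) or it involves an edge of $B_1-e_1$ (in which case $D[H]$ is a planar embedding of $V_{2n}$ and the $P_j$'s for $j\ge 2$ still lie in faces of $D[H]$).  Either way, for each $j\ge 2$, $B_j$ is drawn inside a single face $F_j$ of $D[H]$ whose boundary is incident with both $u$ and some point of $\oo{b}-u$.  Since each vertex of $V_{2n}$ (even under the one allowed crossing) is incident with only a bounded number of faces, and since such a face must contain a subpath of $b$ containing $v_j$, I would pigeonhole the $B_j$'s into a small number of face-types.

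\medskip\noindent Once several bridges (say $B_{i_1},B_{i_2},B_{i_3}$) sit in the ``same'' face relative to $u$ and $b$, the cycle $C=\cc{u,P_{i_1},v_{i_1}}\lbsp\cc{v_{i_1},b,u}$ is clean in $D$, so it has BOD by Lemma~\ref{lm:cleanBOD}.  The paths $P_{i_2}$ and $P_{i_3}$, both attaching at $u$ and in $\oo{b}-v_{i_1}$ (or on the appropriate residual arcs), can then be used --- via the Ordering Lemma \ref{lm:orderingLemma} --- to force an overlapping pair of planar $C$-bridges of $\comp{M_C}$ and apply Corollary~\ref{co:TutteTwo} to conclude $\crn(G)\le 1$, a contradiction; alternatively, directly produce skew $C$-paths inside $\comp{M_C}$ giving $\crn(C\cup\{P_{i_2}\cup P_{i_3}\cup \text{rim}\})\ge 2$ in a proper subgraph, again contradicting 2-criticality.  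The smoothness of $H$ is used to rule out the degenerate case that some $B_j$ is a single edge entirely buried in $b$.

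\medskip\noindent The main obstacle will be the case analysis required to handle the different possibilities for where the crossing of $D_{e_1}$ lies (in $H$ versus inside $B_1-e_1$) combined with the two shapes of faces of a 1-drawing of $V_{2n}$ incident with $u$; each case calls for a slightly different way of extracting the forbidden skew configuration, and the constant $41=42-1$ presumably arises from a precise count of how many bridges can simultaneously lie in the allowable faces on each side of $b$ before a skew triple is forced.  Once the case analysis is done, the contradiction comes from either producing a box (forbidden by Lemma~\ref{lm:noBox}) or a second crossing in a 1-drawing of a proper subgraph.
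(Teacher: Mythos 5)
Your plan correctly identifies some of the right ingredients (delete an edge, take a $1$-drawing, pigeonhole on faces, invoke the Ordering Lemma and BOD/Corollary~\ref{co:TutteTwo}), but the mechanism you propose for the final contradiction does not work, and it misses the genuinely delicate part of the argument.

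The core problem is here: once you place several bridges $B_{i_1},B_{i_2},B_{i_3}$ in a common face $F$ of $D[H]$, they all lie on the \emph{same} side of the cycle $C=\cc{u,P_{i_1},v_{i_1}}\lbsp\cc{v_{i_1},b,u}$.  They therefore do not overlap as $C$-bridges, so there is no skew pair and no direct route to ``$\crn(G)\le 1$'' via Corollary~\ref{co:TutteTwo}, nor to ``$\ge 2$ crossings in a proper subgraph.''  The Ordering Lemma gives you a linear order on the bridges, not an overlap.  The paper's proof has to work much harder precisely because a single drawing $D_{e}$ cannot supply the contradiction.  What it actually does is: use the Ordering Lemma to get consecutively nested bridges $B_1,\dots,B_{11}$; from each consecutive triple $B_i,B_{i+1},B_{i+2}$ build a $2$-connected subgraph $K_{i,i+2}$ and a cycle $C_i$ (essentially the ``outer'' boundary around $B_{i+1}$), and show $C_i$ has BOD in $G$; then, for $i\in\{2,5,8\}$, delete a new edge $e_i\in B_{i+1}$ and observe that $C_i$ \emph{must} be crossed in the $1$-drawing $D_i$ of $G-e_i$ (else Corollary~\ref{co:TutteTwo} gives $\crn(G)\le 1$); then pin down what can cross $C_i$ (some $H$-branch $b_i'$ incident with $u$, and only on the ``outer'' side); and finally pigeonhole: since at most two $H$-branches incident with $u$ can cross $b$, two of the three indices give $b_i'=b_j'$, and the two associated $H$-avoiding paths $Q_i,Q_j$ produce a second crossing in $D_j$ — the contradiction.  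Your proposal uses a single deletion and a single drawing; the paper's contradiction essentially requires comparing information across the three drawings $D_2,D_5,D_8$, and there is no shortcut of the form you describe that avoids this.

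Two smaller remarks.  First, the constant $41$ does not come from ``how many bridges can lie in allowable faces before a skew triple is forced'' in one drawing; it comes from needing $11$ bridges in a single face (so $4\cdot 10+1=41$ plus the one deleted bridge) so that the Ordering Lemma yields nine consecutive bridges with trivial $Q_{B_i}=\{u\}$, of which you keep the three at positions $2,5,8$ with gaps to make the $C_i$'s pairwise ``independent.''  Second, the box mechanism (Lemma~\ref{lm:noBox}) that you offer as an alternative is not used here and does not obviously apply: a box requires a $C$-bridge $B$ for which $C$ is a $\comp{B}$-prebox, and you have not indicated how to verify the prebox property in this setting.
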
\printFullDetails{

\def\bof{{(\partial{F})^\times}}

\begin{cproof}
Suppose there are $42$ such $H$-bridges. Let $B_0$ be one
of them, let $e\in E(B_0)$ and \wording{let $D$ be a 1-drawing of $G-e$}. If
$u\notin\oo{b}$, then at most $4$
faces of $D[H]$ are incident with $\oo{b}$, and therefore at least $11$ of these
$H$-bridges (other than $B_0$) are in the same face $F$ of $D[H]$. If $u\in\oo{b}$, then precisely
two faces of $D[H]$ are incident with $u$, so at least $21$ of these
bridges are in the same face $F$ of $D[H]$ and of these at least $11$ have an
attachment in the same component of $D[b-u] \cap (\partial{F})^{\times}$. In both
cases, let $\bbb$ be the set of $11$ bridges, contained in $F$, having
$u$ as an attachment and an attachment in the same component $b'$ of $D[b-u]
\cap (\partial{F})^\times$. As $D[\bof \cup (\cup_{B\in\bbb} \bbb)]$ is planar
with $\bof$ bounding a face, no two $\bof$-bridges in $\bbb$ overlap.

Let $P=b'$ and $Q=\bof - \oo P$. Lemma \ref{lm:orderingLemma} applies to $\bof$, $P$, $Q$,
$\bbb$. As there are no digons disjoint from $H$, there is a unique (up
to inversion) ordering $B_1,\ldots, B_{11}$ of $\bbb$ so that
$P=P_{B_1}\ldots P_{B_{11}}$ and $Q=Q_{B_1}\ldots Q_{B_{11}}$. 

Because
$u\in Q_{B_1} \cap Q_{B_2} \cap \cdots \cap Q_{B_{11}}$ and the
$Q_{B_i}$ are internally disjoint subpaths of $Q$, all of
$Q_{B_2}, \ldots, Q_{B_{10}}$ are just $u$.
For $i=1,\ldots,11$, let  $a_i$ and $a_i'$ be the ends of $P_i$, so that
$P=(\ldots,a_2,\ldots,a_2',$ $
\ldots,a_3,\ldots,a_3',\ldots,a_{10},\ldots,a_{10}',\ldots)$. 

\begin{claim}\label{cl:distinctAtts}
For  $i\in \{2,\ldots,9\}$, $a_i
\neq a'_{i+1}$. 
\end{claim}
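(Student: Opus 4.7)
The plan is to prove Claim \ref{cl:distinctAtts} by contradiction: assuming $a_i = a'_{i+1}$ for some $i \in \{2,\ldots,9\}$, I expect to force $B_i$ and $B_{i+1}$ to become a pair of parallel edges not contained in $H$, contradicting Observation \ref{obs:parallel}\,(\ref{rim}).

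First I would exploit the linear order that Lemma \ref{lm:orderingLemma} imposes on the attachments along $P$. The listed order is
\[
P = (\ldots, a_2, \ldots, a'_2, \ldots, a_3, \ldots, a'_3, \ldots, a_{10}, \ldots, a'_{10}, \ldots),
\]
so along $P$ we have $a_i$ no later than $a'_i$, no later than $a_{i+1}$, no later than $a'_{i+1}$. Thus the equality $a_i = a'_{i+1}$ would collapse this chain, giving $a_i = a'_i = a_{i+1} = a'_{i+1}$. In particular, the minimal subpaths $P_{B_i}$ and $P_{B_{i+1}}$ of $P$ both degenerate to the single vertex $a_i$.

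Next I would combine this with the observation already recorded in the main proof that, for $j\in\{2,\ldots,10\}$, the subpath $Q_{B_j}$ of $Q$ is just the vertex $u$. Hence the $\bof$-attachments of $B_i$ and of $B_{i+1}$ both lie in $P_{B_j}\cup Q_{B_j}=\{a_i,u\}$. Because any $H$-attachment of $B_i$ lives inside $\bof$ (its containing face-bounding cycle in $D^\times$), we have $\att(B_i)=\att(B_{i+1})=\{u,a_i\}$. Lemma \ref{lm:threeAtts} then forces each of $B_i,B_{i+1}$ to be a single edge joining $u$ and $a_i$.

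The main obstacle is to convert this configuration into a definite contradiction. Since $B_i\neq B_{i+1}$ are both edges with ends $\{u,a_i\}$ and neither lies in $H$, they form a pair of parallel edges in $G$ none of which is in the rim $R$ of $H$. Observation \ref{obs:parallel}\,(\ref{rim}) (valid for $n\ge 4$) says that one member of any parallel pair must lie in $R$, an immediate contradiction. For the residual case $n=3$, the argument is analogous but uses that any parallel pair in a 2-crossing-critical graph must have one edge in the rim of \emph{some} subdivision $V_6\subseteq G$; by relabelling we may assume $H$ is chosen so that neither $B_i$ nor $B_{i+1}$ is in that rim, again giving the contradiction and completing the proof of the claim.
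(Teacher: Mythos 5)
Your overall line matches the paper's: the paper also argues that $a_i=a'_{i+1}$ forces $a_i=a'_i=a_{i+1}=a'_{i+1}$ and then that $B_i$ and $B_{i+1}$ constitute ``a digon disjoint from $H$, which is impossible.'' Your expansion of that one-line statement (pinning down $\att(B_i)=\att(B_{i+1})=\{u,a_i\}$, using $3$-connectivity to force each of them to be a single edge) is correct and fills in the details the paper elides. The $n\ge 4$ branch, invoking Observation \ref{obs:parallel}\,(\ref{rim}), is sound.

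The $n=3$ branch, however, has a genuine gap. You assert that ``any parallel pair in a $2$-crossing-critical graph must have one edge in the rim of \emph{some} subdivision $V_6\subseteq G$,'' but that is not the paper's Observation \ref{obs:parallel}\,(\ref{rim}), which requires $n\ge 4$ because its proof rests on Lemma \ref{lm:1drawingsV2n}, itself stated only for $n\ge 4$; the extension to $n=3$ is not established anywhere. Moreover, even granting some version of it, your next step --- ``by relabelling we may assume $H$ is chosen so that neither $B_i$ nor $B_{i+1}$ is in that rim'' --- is not coherent here: the entire configuration ($\bbb$, the $P_{B_j}$, $Q_{B_j}$, and hence $a_i$, $a'_{i+1}$) was built from the fixed smooth $H$, so one cannot swap $H$ mid-argument. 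The clean fix, which works uniformly for all $n\ge 3$ and eliminates the case split entirely, is to invoke Observation \ref{obs:parallel}\,(\ref{classplanar}): since $B_i$ and $B_{i+1}$ are parallel edges of the $2$-crossing-critical graph $G$, the graph $G-\{B_i,B_{i+1}\}$ is planar; but $B_i,B_{i+1}$ are $H$-bridges, so $H\subseteq G-\{B_i,B_{i+1}\}$, and $H$ is a (non-planar) subdivision of $V_{2n}$ --- a contradiction.
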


\begin{proof} Otherwise, $a_i = a_i' = a_{i+1} = a_{i+1}'$, implying that $B_i$ and $B_{i+1}$ constitute a digon disjoint from $H$, which is
impossible. 
\end{proof}

For $i,j\in\{2,3,\dots,10\}$ with $i<j$, set $K_{ij} = (\cup_{k=i}^j B_k ) \cup a_i P a_j'$.

\begin{claim}
For $i,j \in \{2,\ldots,10\}$ with $i<j$, $K_{ij}$
 is $2$-connected.
\end{claim}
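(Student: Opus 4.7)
The plan is to verify that $K_{ij}$ is connected and has no cut vertex. Connectedness is immediate: each bridge $B_k$ (for $i\le k\le j$) contains the common vertex $u$, and attaches to the path $a_iPa_j'$ through its attachments on $P_{B_k}\subseteq a_iPa_j'$, so the whole subgraph is glued together at $u$ and along the path.

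To rule out a cut vertex $v\in V(K_{ij})$ I will do a case analysis. If $v=u$, then each component of each $B_k-u$ must contain an attachment of $B_k$ on $P_{B_k}$, for otherwise that component would be separated from the rest of $G$ by the single vertex $u$, contradicting the 3-connectedness of $G$; hence every component reaches the connected path $a_iPa_j'$, and $K_{ij}-u$ is connected. If $v$ lies on $a_iPa_j'-u$ and is not an attachment of any $B_k$, the path splits into at most two subpaths, one containing $a_i$ and the other $a_j'$, each of which reaches $u$ through $B_i$ or $B_j$, respectively, and the bridges themselves are unaffected. Finally, if $v\in \Nuc(B_k)-\{u\}$, then by the 3-connectedness of $G$ every component of $B_k-v$ retains at least one attachment in $(\{u\}\cup V(P_{B_k}))\setminus\{v\}$, which reconnects it inside $K_{ij}-v$.

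The delicate subcase is when $v$ lies on $a_iPa_j'$ and happens to be an attachment of one or two consecutive bridges (the latter occurring precisely when $v=a_k'=a_{k+1}$ for some $k$). Here deleting $v$ affects $B_k$ itself, and the argument will require showing via 3-connectedness of $G$ that each surviving component of $B_k-v$ (and of $B_{k+1}-v$, if applicable) still has an attachment different from $v$; that attachment lies in $\{u\}\cup V(P_{B_k})\setminus\{v\}$ and therefore reconnects the component either to $u$ or to the surviving portion of $a_iPa_j'$. Components that collapse to a single vertex on $P$ are automatically present in $K_{ij}-v$, whereas a component meeting $\Nuc(B_k)$ with only $v$ as an attachment would furnish a cut set of $G$ of size at most two, contradicting the assumed 3-connectedness. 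This is where I expect the bookkeeping to be heaviest; the unifying principle in every case, however, is that 3-connectedness of $G$ forbids any component of any $B_k-\{v\}$ from being shepherded out of $K_{ij}-v$ through a single vertex.
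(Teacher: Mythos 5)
Your proof is correct, but it takes a genuinely different route from the paper's. The paper first constructs an explicit cycle $C_{ij} = R_i \cup R_j \cup a_iPa_j'$ inside $K_{ij}$, where $R_i$, $R_j$ are $H$-avoiding $ua_i$- and $ua_j'$-paths in $B_i$, $B_j$ respectively, notes that every vertex of $K_{ij}\cap H$ already lies on $C_{ij}$, and then for each $x\in\Nuc(B_k)$ extracts, from three internally disjoint $xw$-paths in $G$ (with $w$ an $H$-node distinct from $u$), two internally disjoint $xC_{ij}$-paths within $K_{ij}$; since a cycle has no cut vertex, no single vertex can disconnect $K_{ij}$. You instead run a direct cut-vertex case analysis on the location of the prospective cut vertex $v$ ($v=u$, $v$ in $\oo{a_iPa_j'}$, $v$ in a nucleus, $v$ an attachment on the path). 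Both arguments rest on the same hidden fact established just before the claim: for $k\in\{2,\ldots,10\}$, $Q_{B_k}=\{u\}$, so $\att(B_k)\subseteq \{u\}\cup V(P_{B_k})$; you invoke this implicitly when asserting every component of $B_k-v$ has an attachment in $(\{u\}\cup V(P_{B_k}))\setminus\{v\}$, and you should state it. The paper's construction buys economy: one cycle plus one fan-type argument handles all cases at once. Your case analysis is more elementary but longer, and your ``heaviest bookkeeping'' case ($v$ an attachment on the path) is in fact the easiest: removing an attachment vertex cannot disconnect a bridge, because $\Nuc(B_k)$ is connected and every remaining attachment has an edge into it, so $B_k-v$ is still connected and the subcases you anticipate do not arise.
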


\begin{proof}
Let $R_i$ be an $H$-avoiding $ua_i$-path in $B_i$, and $R_j$ an
$H$-avoiding $ua_j'$-path in $B_j$. Then $C_{ij}:= R_i \cup R_j \cup
a_i P a_j' \subseteq K_{ij}$ is a cycle containing $u$ and $a_i P
a_j'$.

For $x \in B_k$, $i \le k \le j$, $x\notin H$, \wording{for any $H$-node $w \neq u$}, $G$ has $3$ internally disjoint $xw$-paths; at least two \wording{of
these leave $B_k$} in $a_k P a_k'$, and so no cut vertex of $K_{ij}$
separates $x$ from $C_{ij}$.
\end{proof}

Since $b'$ is not crossed in $D$, $D[K_{i,i+2}]$ is
clean and is contained in $F\cup \partial{F}$. 
%Since $K_{i,i+2}$ is
%$2$-connected, there is a unique cycle $C_i$ of $K_{i,i+2}$ so that
%$D[C_i]$ bounds the face $F_i$ of $D[K_{i,i+2}]$ with $F_i \not\subseteq F$.
\wording{There is a unique face $F_i$ of $D[K_{i,i+2}]$ so that $F_i\not\subseteq F$;  since $K_{i,i+2}$ is 2-connected, $F_i$ is bounded by a cycle $C_i$.}
As $D[K_{i,i+2}] \subseteq F \cup \partial{F}$, $\partial{F} \subseteq
F_i \cup \partial{F_i}$. As $D[u] \in \partial{F} \cap D[K_{i,i+2}]$, 
$D[u] \in \partial{F_i}$. Likewise $D[a_i P a_{i+2}']\subseteq \partial F_i$.

\wording{Thus, $u \in C_i$ and} $a_i P a_{i+2}' \subseteq C_i$.  Therefore, $C_i\cap H$ is $u$ and $a_iPa'_{i+2}$, from which we deduce that there is a $C_i$-bridge $M_i$ so that $H\subseteq C_i\cup M_i$.  Observe that  $B_{i+1}$ is a $C_i$-bridge
different from $M_i$.

\wordingrem{(Text moved and modified.)}\wording{For $i=2,5,8$, let $e_i$ be an edge of $B_{i+1}$ incident with $u$,
and let $D_i$ be a 1-drawing of $G-e_i$.}

\begin{claim}\label{cl:BOD}  For $i\in \{2,5,8\}$, $C_i$ has BOD in $G$ \wording{and $D_i[C_i]$ is not clean}.
\end{claim}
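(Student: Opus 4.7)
The plan is to establish the two conclusions in turn: first the BOD of $C_i$ in $G$, then the non-cleanness of $D_i[C_i]$ by contradiction against Corollary \ref{co:TutteTwo}.

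For BOD of $C_i$ in $G$, I would work in the 1-drawing $D$ of $G-e$ already fixed in the proof. Since $C_i \subseteq K_{i,i+2}$ and the drawing $D[K_{i,i+2}]$ was shown to be clean (because $D[K_{i,i+2}] \subseteq F \cup \partial F$ and the branch $b'$ is uncrossed in $D$), $D[C_i]$ is also clean. Because $e$ belongs to the $H$-bridge $B_0$, we have $e \notin E(H)$, so $H \subseteq (C_i \cup M_i) - e$; therefore the $C_i$-bridge of $G-e$ containing $\Nuc(M_i)-e$ is non-planar. Lemma \ref{lm:cleanBOD} then gives that $C_i$ has BOD in $G-e$ and that this bridge is the unique non-planar $C_i$-bridge in $G-e$. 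To transfer BOD from $G-e$ to $G$, I would note that reinstating $e$ either enlarges $B_0-e$ back to the bridge $B_0$ (keeping all attachments fixed) or, if both endpoints of $e$ happen to lie on $C_i$, creates a new single-edge $C_i$-bridge whose attachments already appear among the attachments of $B_0-e$; in either case the overlap structure on $C_i$ is not disturbed (an appeal to Lemma \ref{lm:overlapClaw} handles the mechanical check), so $OD_G(C_i)$ remains bipartite.

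For the non-cleanness of $D_i[C_i]$, suppose for contradiction that $D_i[C_i]$ is clean. Since $e_i \in E(B_{i+1})$ and $B_{i+1}$ is an $H$-bridge distinct from $M_i$, the bridge $M_i$ is unaltered in $G-e_i$; as $H \subseteq C_i \cup M_i$, $M_i$ is a non-planar $C_i$-bridge in $G-e_i$. Lemma \ref{lm:cleanBOD} then tells us that $C_i$ has BOD in $G-e_i$ and that $M_i$ is the unique non-planar $C_i$-bridge there. In particular, every other $C_i$-bridge of $G-e_i$ is planar, including $B_{i+1}-e_i$. Reinstating $e_i$, which has one endpoint $u \in C_i$ and one endpoint in $\Nuc(B_{i+1})$, yields $B_{i+1}$; by flipping planar $C_i$-bridges across the bipartition of $OD_{G-e_i}(C_i)$ (standard in these arguments, as used in the proof of Corollary \ref{co:TutteTwo}), we may arrange the restriction of $D_i$ to $C_i \cup (B_{i+1}-e_i)$ so that $u$ and the other endpoint of $e_i$ lie on a common face, showing that $C_i \cup B_{i+1}$ is planar in $G$. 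Thus in $G$ every $C_i$-bridge except $M_i$ is planar, $C_i$ has BOD in $G$ (by the first half), and $D_i$ restricted to $C_i \cup M_i$ is a 1-drawing of $C_i \cup M_i$ in which $C_i$ is clean. Corollary \ref{co:TutteTwo} then forces $\crn(G) \le 1$, contradicting $G \in \m2$. Hence $D_i[C_i]$ is not clean.

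The main obstacle is the bookkeeping around transferring BOD between $G$ and $G-e$, and planarity of the individual $C_i$-bridges between $G$ and $G-e_i$, when single edges are removed or restored. In particular, the delicate step is the rearrangement of $D_i$ (by flipping planar $C_i$-bridges) needed to conclude that $C_i \cup B_{i+1}$ is planar in $G$, which is what converts the cleanness hypothesis on $D_i[C_i]$ into the hypotheses of Corollary \ref{co:TutteTwo} applied to the whole graph $G$ rather than to $G-e_i$.
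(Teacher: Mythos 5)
Your overall strategy follows the paper's: exploit cleanness of $C_i$ in an auxiliary $1$-drawing to get BOD via Lemma \ref{lm:cleanBOD}, transfer BOD to $G$, then invoke Corollary \ref{co:TutteTwo} to rule out $D_i[C_i]$ being clean.  The paper gets BOD for one $i\in\{5,8\}$ first and chains to the others through the non-cleanness of $D_i[C_i]$, whereas you get BOD for all three at once from $D$; that structural difference is harmless.

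There is, however, a genuine gap in the non-cleanness step.  You assert that, by flipping planar $C_i$-bridges of $G-e_i$ according to the bipartition of $OD_{G-e_i}(C_i)$, the planar drawing $D_i[C_i\cup(B_{i+1}-e_i)]$ can be rearranged so that $u$ and the other endpoint of $e_i$ share a face, and hence that $C_i\cup B_{i+1}$ is planar.  Planarity of $C_i\cup(B_{i+1}-e_i)$ together with BOD does not guarantee such a face.  A clean counterexample: let $C$ be a triangle and $B$ a $C$-bridge with $C\cup B\cong K_5$, and let $e$ be an edge of $B$ joining a vertex of $C$ to a nucleus vertex of $B$.  Then $B-e$ is still a single $C$-bridge on all three attachments, $C\cup(B-e)$ is $K_5$ less an edge and hence planar, yet $C\cup B$ is not planar, and there is nothing to flip.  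Your argument, taken at face value, would conclude wrongly in that situation.  The fact that rescues the claim is available from $D$ rather than $D_i$: since $D[K_{i,i+2}]$ is clean and $C_i\cup B_{i+1}\subseteq K_{i,i+2}$, the drawing $D[C_i\cup B_{i+1}]$ is already crossing-free, so $B_{i+1}$ is a planar $C_i$-bridge of $G$.  Equivalently, Lemma \ref{lm:cleanBOD} applied to $D$ shows that $M_i-e$ is the unique non-planar $C_i$-bridge of $G-e$, and since $e\notin B_{i+1}$, the bridge $B_{i+1}$ is the same in $G$ and in $G-e$ and hence planar.  That is exactly the hypothesis Corollary \ref{co:TutteTwo} requires, and it is why the paper can invoke the corollary directly without the detour through $D_i$.
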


\begin{proof}  \wordingrem{(Text removed.)}%Let $e$ be an edge of $B_{3}$ incident with $u$. 
At most one of $D_2[C_i]$, $i\in \{2,5,8\}$ is crossed, so for at
least one $i \in \{5,8\}$, $D_e[C_i]$ is clean. It follows that $C_i$ has BOD in $G-e$.

By Claim \ref{cl:distinctAtts}, $a_3 \neq a_i$, whence $B_3
\subseteq M_i$, and $B_3 - e   \subseteq M_i - e$. Furthermore, $u \in
H$, so $u\in \att(M_i - e)$. Thus $\att_{G-e}(M_i - e) = \att_G(M_i)$
and $M_i - e$ is a $C_i$-bridge in $G-e$. We conclude that the overlap diagrams for $C_i$ in $G-e$ and $G$ are isomorphic and, therefore, $C_i$ has BOD in $G$.

We now show that all three $C_j$, $j\in\{2,5,8\}$, have BOD in $G$.
\wordingrem{(Text removed.)}%Let $e' \in B_{i+1}$ be incident with $u$. 
\wording{If $D_{i}[C_i]$ is} clean,
\wording{then $D_{i}[C_i \cup M_i]$ is} a $1$-drawing of $C_i \cup M_i$,
implying via 
Corollary~\ref{co:TutteTwo} 
that $\crn(G) \le
  1$, a contradiction.  \wording{So $D_{i}[C_i]$ is} not clean, and, therefore, for $j\in
\{2,5,8\}\setminus\{i\}$, \wording{$D_{i}[C_j]$ is} clean.  Thus, $C_j$ has \wording{BOD in $G-e_i$}, and, following the argument above for $C_i$, we deduce that $C_j$ has BOD in $G$. \end{proof}

\wordingrem{(Text moved above and text moved to statement of Claim 3.)}%  As in the proof of Claim \ref{cl:BOD},
%$D_i[C_i]$ is not clean. 

\begin{claim}\label{cl:sides}
For $i\in \{2,5,8\}$, one face of $D_i[C_i]$ contains all $H$-nodes, other than
(possibly) $u$.
\end{claim}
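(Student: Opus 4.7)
The plan is to analyze the one possible crossing in $D_i$ and exploit the connectedness of $\Nuc(M_i)$, noting that every $H$-node not on $C_i$ must lie in $\Nuc(M_i)$. By Claim \ref{cl:BOD}, $D_i[C_i]$ is not clean, so the unique crossing of $D_i$ involves some edge $f\in E(C_i)$; let $f'$ denote the other edge involved in that crossing. The two cases to handle are (i) $f'\in E(C_i)$ and (ii) $f'\notin E(C_i)$.

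In case (i), since the sole crossing is confined to $C_i$, the subdrawing $D_i[G-e_i-E(C_i)]$ has no crossings. Because $\Nuc(M_i)$ is connected (it comes from a single component of $G-V(C_i)$) and is contained in $G-e_i-E(C_i)$, its drawing $D_i[\Nuc(M_i)]$ lies in a single face $F^*$ of $D_i[C_i]$. Since $H\subseteq C_i\cup M_i$, every $H$-node is either on $C_i$ or in $\Nuc(M_i)$. The $H$-nodes on $C_i$ lie on $C_i\cap H=\{u\}\cup(a_iPa'_{i+2})$, and any such node other than $u$ is in the interior of the subpath $a_iPa'_{i+2}\subseteq b$ (an endpoint of $b$), hence on $\partial F^*$. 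So all $H$-nodes except possibly $u$ lie in $\overline{F^*}$.

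In case (ii), I first argue that $f'\in E(M_i)$. If instead $f'$ lay in some other $C_i$-bridge $B'\neq M_i$, then $D_i[C_i\cup M_i]$ would be a 1-drawing of $C_i\cup M_i$ in which $C_i$ is clean, and combining this with Claim \ref{cl:BOD}'s BOD of $C_i$ and the planarity of $C_i\cup B$ for every $C_i$-bridge $B\neq M_i$ would give via Corollary \ref{co:TutteTwo} that $\crn(G)\le1$, contradicting $G\in\m2$. With $f'\in E(M_i)$, the subdrawing $D_i[M_i-f']$ is clean, so each component of $M_i-f'$ is drawn in a single face of $D_i[C_i]$. If $M_i-f'$ (equivalently $\Nuc(M_i)-f'$) is connected, we finish as in case (i). Otherwise $f'$ is a cut-edge of $M_i$ separating $\Nuc(M_i)$ into two pieces $M^1$ and $M^2$, with $D_i[M^j]$ in a face $F^j$ of $D_i[C_i]$ on opposite sides of the crossing.

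The main obstacle is showing, in this last sub-case, that only one of $M^1$, $M^2$ contains any $H$-node. The plan to handle it is to use the $3$-connectivity of $G$ together with the fact that $M_i$ has many attachments on $C_i$: if both $M^1$ and $M^2$ contained $H$-nodes, then by routing paths through the remaining bridges $B_j$ in $\bbb$ (whose attachments at $u$ and into $b$ are abundant) one obtains either additional $C_i$-skew attachments contradicting the BOD of $C_i$ in $G$, or else a second crossing in $D_i$ forced by topological interleaving of the $M^j$'s attachments with the $B_j$'s. Consequently one of $F^1$, $F^2$ (together with the non-$u$ nodes of $a_iPa'_{i+2}$ on its boundary) is the desired face containing all $H$-nodes other than possibly $u$.
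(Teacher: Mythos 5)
Your overall organization—isolating the crossing, observing it must hit $C_i$, and reducing to a connectivity argument about $\Nuc(M_i)$—is sensible, and the first two cases are handled correctly. (Case (i) is in fact vacuous: the unique crossing of $D_i$ must lie in $D_i[H]$ since $H\subseteq G-e_i$ is nonplanar, and a 1-drawing of a subdivision of $V_{2n}$ has its crossing between two \emph{disjoint} rim branches; since $C_i\cap H\subseteq\{u\}\cup a_iPa'_{i+2}\subseteq\{u\}\cup b$, both crossing edges cannot lie in $C_i$.) Your argument that $f'\in M_i$ is correct though roundabout: once you know both crossing edges are in $H$ and one is in $C_i$, the other is in $H\setminus C_i\subseteq M_i$ immediately, by the definition of $M_i$.

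The gap is precisely the sub-case you label ``the main obstacle,'' and your sketch of how to close it does not work. Since $D_i[f']$ crosses $D_i[C_i]$ exactly once, the two ends of $f'$ lie in different faces of $D_i[C_i]$ (unless one end is $u$, in which case $\Nuc(M_i)$ is connected and clean, and you are done). So in the main sub-case $f'$ really is a cut-edge of $\Nuc(M_i)$, and you must rule out $H$-nodes landing in both pieces $M^1$, $M^2$. Your proposed repair routes paths ``through the remaining bridges $B_j$ in $\bbb$''; but every $B_j$ attaches only at $u$ and in $\oo{b}\subseteq C_i$, so the $B_j$ provide no connections between $H$-nodes away from $C_i$ and cannot produce the needed skewness or forced second crossing. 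The connections you actually need come from $H$ itself: both crossed edges lie in the rim, in two disjoint rim branches $b$ and $b'_i$, so $R-(\oo{b}\cup\oo{b'_i})$ has exactly two arcs $K$ and $L$, and the \emph{spokes} of $V_{2n}$ join nodes of $K$ to nodes of $L$. A spoke is disjoint from $C_i$ except possibly at $u$ and avoids $f$ and $f'$, so its drawing cannot change sides of $D_i[C_i]$; this is what pins all of $K$ and $L$ (except possibly $u$) into a single face, and it is exactly the ingredient missing from your argument. The paper's proof of this claim goes directly to that structural observation about the rim and spokes rather than passing through an abstract cut-edge analysis of $\Nuc(M_i)$.
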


%}\begin{remark}
%Note this implies the Lemma if $u\in \cc{b}$. In general, as long as
%no branch containing $u$ can cross $b$.
%\end{remark}\printFullDetails{

\begin{proof}
Let $e'_i$ be the edge of $H$ so that $D_i[e'_i]$ crosses $D_i[a_iba'_{i+2}]$ and let $b'_i$ be the $H$-branch containing $e'_i$.
If $n = 3$, let $R$ be a hexagon
in $H$ containing $b$ and $b'_i$. For $n\ge 4$, both $b$ and $b'_i$ are in the rim $R$ of  $H$.

Since $b$ and $b'_i$ are disjoint, for
$n \ge 3$, $R - (\oo{b}\cup\oo{b'})$ has two components, each with at
least two nodes of $H$. Either of these with $\le n$ nodes has all its
nodes adjacent by spokes to the other component. Obviously, there is
at least one such.

Observe that if $A$ is any path in $R - (\oo{b} \cup \oo{b'_i})$ such that $D_i[A]$ has a vertex in each face of $D_i[C_i]$, then $u\in V(A)$ and the two paths $P,P'$ in $A$ having $u$ as an end are such that $D_i[P]$ and $D_i[P']$ are in different faces of $D_i[C_i]$.

Let $K$ be a component of $R - (\oo{b} \cup \oo{b'_i})$ not containing
$u$ and let $L$ be the other. Then $D_i[K]$ is in the closure of a
face $F_i$ of $D_i[C_i]$. We claim that $D_i[L] \subseteq F_i \cup
\{u\}$. 

Any $H$-node $w$ in $L$ that is joined by a spoke to an $H$-node $w'$ in $K$ has $D_i[w]\subseteq F_i\cup D_i[u]$, since otherwise $D_i[ww']$ crosses $D_i[C_i]$.  

If there is an $H$-node $w$ in $L$ that is not adjacent by a spoke to any vertex in $K$, then $w$ is adjacent by a spoke to another $H$-node $w'$ in $L$ and, moreover, $w$ and $w'$ are the first and last nodes of $L$.  As $D_i[ww']$ is disjoint from $D_i[C_i]$, we deduce that there is a face $F$ of $D_i[C_i]$ so that  $D_i[w]$ and $D_i[w']$ are both in $F\cup D_i[u]$. Therefore, $D_i[L]$ is contained in that face.  As at least one $H$-node in $L$ is adjacent by a spoke to an $H$-node in $K$, we conclude that $D_i[L]\subseteq F_i\cup D_i[u]$.
\end{proof}

Let $F_i$ be the face of $D_i[C_i]$ containing all the $H$-nodes and let $F'_i$ be the other face of $D_i[C_i]$.

\begin{claim}
For $i\in\{2,5,8\}$, the crossing in $D_i$ is not in $\oo{a_{i+1},b,a_{i+1}'}$.
\end{claim}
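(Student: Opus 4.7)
The plan is to argue by contradiction: assume the crossing in $D_i$ is an edge $f\in \oo{a_{i+1},b,a'_{i+1}}$ that crosses $e'_i\in b'_i$, and construct from $D_i$ a $1$-drawing of $G$ by re-inserting $e_i$, contradicting $\crn(G)\ge 2$. Since $B_{i+1}$ has the three distinct attachments $u,a_{i+1},a'_{i+1}$, it is not just the single edge $e_i$; hence $e_i=uu'$ with $u'\in\Nuc(B_{i+1})$, and $u'$ is still present in $D_i$.

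First I would analyze the drawing $D_i$ near the crossing. By the Ordering Lemma applied to $\bbb$ on $\bof$, the $P$-intervals $P_{B_j}$, $j\in\{2,\dots,10\}$, are internally disjoint, and $\oo{a_{i+1},b,a'_{i+1}}\subseteq P_{i+1}$ is disjoint from every other $P_{B_j}$. Consequently, for each $j\in\{2,\dots,10\}\setminus\{i+1\}$, the attachments of $B_j$ on $b$ lie outside the crossing region, and since the unique crossing of $D_i$ is on $f\in b$ and $e'_i\in b'_i$, the subdrawing $D_i[B_j]$ is clean in $D_i$. Moreover, since the $B_j$ pairwise do not overlap on $\bof$, in $D_i$ they all lie on the same side of $\bof$ around $u$, producing a well-defined cyclic order of ``sectors'' at $u$: $\dots,B_i,B_{i+1},B_{i+2},\dots$.

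Second, I would locate $u'$ in the drawing. The subgraph $B_{i+1}-e_i$ is drawn cleanly in $D_i$ (the unique crossing involves $f$ and $e'_i$, neither of which is in $B_{i+1}$), has its remaining $H$-attachments in $\cc{a_{i+1},b,a'_{i+1}}$, and must be drawn in the sector at $u$ previously ``reserved'' for $B_{i+1}$ between the sectors of $B_i$ and $B_{i+2}$. Hence $u'$ lies in a face of the subembedding $D_i[H\cup (B_{i+1}-e_i)]$ that is incident with $b$ inside $\oo{a_{i+1},b,a'_{i+1}}$. Because the crossing point $f\times e'_i$ lies in the interior of this same interval of $b$, the crossing locally ``opens'' the two sides of $f$ into a single region of the $1$-drawing, and this region is precisely the face that reaches both $u$ (along the boundary sector separating the drawings of $B_i$ and $B_{i+2}$ near $u$) and $u'$ (since $u'$ sits inside the $B_{i+1}$-sector connected to this same face through the opening at the crossing).

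Finally, I would add an arc representing $e_i$ in this common face, producing a drawing of $G$ whose only crossing is the pre-existing $f\times e'_i$; this is a $1$-drawing of $G$, contradicting $\crn(G)\ge 2$. The main obstacle is to justify rigorously the assertion in the second step that $u$ and $u'$ lie in a common face of $D_i$. This will require a careful case analysis of the embedding of $B_{i+1}-e_i$ in its sector and of how $e'_i$ traverses that sector to meet $f$; the Ordering Lemma and the fact that the $B_j$ for $j\ne i+1$ are clean in $D_i$ will keep all other bridges from obstructing the required arc from $u$ to $u'$, but verifying that the crossing does not merely open one of the two adjacent faces (i.e., a face not incident with $u$) is the delicate point.
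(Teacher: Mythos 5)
Your proposal takes a genuinely different route from the paper's, and it is incomplete in a way you yourself flag. The paper disposes of this claim with a short parity argument on disjoint cycles: assuming some $f\in\oo{a_{i+1},b,a'_{i+1}}$ is crossed by $e'_i$, it notes $a_{i+1}\ne a'_{i+1}$, takes an $H$-avoiding $a_{i+1}a'_{i+1}$-path $P$ in $B_{i+1}$ (which misses $u$, hence misses $e_i$, since $P$ is $H$-avoiding and $u\in H$ is not an endpoint of $P$), sets $C=P\cup\cc{a_{i+1},b,a'_{i+1}}$, and uses the $2$-connectivity of $H-\oo{b}$ to find a cycle $C'\subseteq H$ through $e'_i$. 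Since $C$ and $C'$ are graph-theoretically disjoint, their drawings cross an even number of times; but they cross at the unique crossing of $D_i$, so there would be a second crossing --- contradiction. No faces, sectors, or re-insertion are needed.

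Your plan, by contrast, is to show $u$ and $u'$ lie on a common face of $D_i$ and then re-insert $e_i$ to get a $1$-drawing of $G$. The conclusion would indeed be a contradiction, but the pivotal assertion --- that the crossing ``opens'' the two sides of $f$ into one region reaching both $u$ and $u'$ --- is exactly where the argument stops being rigorous, and you say as much. Around a crossing there are four local quadrants which in general belong to distinct faces of the $1$-drawing; nothing in your sketch pins down which of these faces $u'$ sits in, nor why that face must also be incident with $u$. One would have to classify how $B_{i+1}-e_i$ and the other bridges partition the $B_{i+1}$-sector, and how $e'_i$ enters it, before the claimed arc from $u$ to $u'$ can be drawn. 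This is substantially more bookkeeping than the paper's two-cycle argument and has not been carried out, so as written the proposal has a genuine gap at its central step.
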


\begin{proof}
Suppose by way of contradiction that $e'_i$ is an edge of $G-e_i$ so that $D_i[e'_i]$ crosses $\oo{a_{i+1},b,a'_{i+1}}$.  Clearly, $a_{i+1} \neq a_{i+1}'$.   Since $H-\oo b$ is 2-connected, there is a cycle $C'\subseteq H$ containing $e'_i$.  Let $P$ be an $H$-avoiding $a_{i+1}a'_{i+1}$-path in $B_{i+1}$ and let $C$ be the cycle $P\cup [a_{i+1},b,a'_{i+1}]$.  Then $C$ and $C'$ are graph-theoretically disjoint and $D_i[C]\cap D_i[C']$ contains the crossing of $D_i$.  But then $D_i[C]$ and $D_i[C']$ must cross a second time, a contradiction. 
\end{proof}

\begin{claim}\label{cl:onlyoverlap}
The only $C_i$-bridge that overlaps $B_{i+1}$ is $M_i$.
\end{claim}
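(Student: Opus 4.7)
My plan is to verify the claim in two stages: first, check that $M_i$ overlaps $B_{i+1}$, and then show that no other $C_i$-bridge does. For existence, choose any attachment $v\in\att(B_{i+1})\cap\oo{a_{i+1},b,a_{i+1}'}$; then on $C_i$ the four vertices $u$, $a_i$, $v$, $a_{i+2}'$ occur in this cyclic order, alternating between attachments of $B_{i+1}$ and $M_i$, so Lemma~\ref{lm:overlapClaw}(\ref{it:skew}) gives skew overlap.

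For uniqueness, suppose by way of contradiction that a $C_i$-bridge $B'\ne M_i$ also overlaps $B_{i+1}$. Claim~\ref{cl:BOD} shows $OD_G(C_i)$ is bipartite, so $B'$ must lie in the same bipartition class as $M_i$ and hence cannot overlap $M_i$; consequently $\att(B')$ lies in a single residual arc $A$ of $M_i$ on $C_i$. Because $H\subseteq M_i\cup C_i$ and $C_i\setminus H$ consists precisely of $\oo{R_i}\cup\oo{R_{i+2}}$, the arc $A$ is contained in one of $\oo{R_i}$, $\oo{R_{i+2}}$, or a subpath of $b$ inside $\oo{a_i,P,a_{i+2}'}$.

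If $A\subseteq\oo{R_i}$, or symmetrically $A\subseteq\oo{R_{i+2}}$, then $\att(B')$ lies strictly in the open arc of $C_i$ from $u$ to $a_i$ through $R_i$, which is entirely contained in one arc of $C_i-\att(B_{i+1})$ since $\att(B_{i+1})\subseteq\{u\}\cup[a_i,P,a_{i+2}']$; Lemma~\ref{lm:overlapClaw} then forbids $B'$ from overlapping $B_{i+1}$. The remaining, and most delicate, case is $A\subseteq b$: here $\att(B')\subseteq b$, and since $H\subseteq M_i\cup C_i$ forces $\Nuc(B')\cap H=\varnothing$, $B'$ is genuinely an $H$-bridge whose attachments all lie on the single $H$-branch $b$. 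Smoothness of $H$ then makes $B'$ a single edge $w_1w_2$ parallel to an edge of $b$, with $w_1,w_2$ adjacent on $b$.

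The main obstacle is then to verify that two adjacent vertices $w_1,w_2\in b$ cannot produce overlap of $B'$ with $B_{i+1}$. Since $|\att(B')|=2$, only skew overlap is possible; for a skew configuration one needs two attachments of $B_{i+1}$ strictly separated by $w_1$ and $w_2$ on $C_i$, so one of $u$, $a_{i+1}$, $a_{i+1}'$ (or another $b$-attachment of $B_{i+1}$) must lie between $w_1$ and $w_2$ on $C_i$. But $u$ lies off $b$ entirely, and no vertex of $b$ can lie strictly between two adjacent $b$-vertices, so any $b$-attachment of $B_{i+1}$ between $w_1$ and $w_2$ must coincide with $w_1$ or $w_2$, collapsing the would-be four-vertex skew configuration to three. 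This contradicts the overlap of $B'$ and $B_{i+1}$ and completes the proof.
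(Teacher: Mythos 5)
Your proof is correct and reaches the claim by the contrapositive of the paper's argument. The paper asserts $\att(B)\subseteq\cc{a_i,b,a_{i+2}'}\cup\{u\}$, uses smoothness to force $u\in\att(B)$, shows $B$ must then overlap $M_i$, and reads off that $\{B,B_{i+1},M_i\}$ is an odd cycle in $OD(C_i)$, contradicting Claim~\ref{cl:BOD}. You run the bipartiteness in the opposite direction: $B'$ is in the same class as $M_i$, so cannot overlap $M_i$, so $\att(B')$ lies in a single residual arc of $M_i$; a three-way case analysis on that arc (smoothness killing the $b$-interior case via the digon observation) shows $B'$ cannot in fact overlap $B_{i+1}$. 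The tools are identical --- Claim~\ref{cl:BOD}, the attachments of $M_i$ at $u,a_i,a_{i+2}'$, and smoothness of $H$ --- but your residual-arc decomposition is more explicit about the non-$H$ arcs of $C_i$, a possibility the paper's opening assertion passes over without comment. Two small repairs: the residual arcs of $M_i$ through the $R_i$ and $R_{i+2}$ sides are closed, so they include $u$, $a_i$, $a_{i+2}'$ at their ends, not only the open interiors (your conclusion still holds, since any such attachments of $B'$ still fall into a single residual arc of $B_{i+1}$); and in the existence step the open interval $\oo{a_{i+1},b,a_{i+1}'}$ may contain no attachment of $B_{i+1}$, but Claim~\ref{cl:distinctAtts} guarantees that one of $a_{i+1},a_{i+1}'$ differs from both $a_i$ and $a_{i+2}'$ and serves in its place.
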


\begin{proof}
Let $B$ be a $C_i$-bridge different from $M_i$ overlapping
$B_{i+1}$. Then $\att(B) \subseteq [a_i b a_{i+2}'] \cup \{u\}$.
As $H$ is smooth, $u\in \att(B)$.   We claim both $B_{i+1}$ and $B$ overlap $M_i$. 

By Claim \ref{cl:distinctAtts}, $a_i\neq a_{i+1}'$, so $B_{i+1}$ either has an
attachment in $\oo{a_i,a_{i+2}'}$ or it has both $a_i$ and $a_{i+2}'$
as attachments. In either case, $B_{i+1}$ overlaps $M_i$ (which has
attachments at $u,a_i, a_{i+2}'$).

Likewise $B$ either has two attachments in $\cc{a_i, a_{i+2}'}$ or at
least one attachment in $\oo{a_{i+1},a_{i+1}'} \subseteq \oo{a_i,a_{i+2}'}$,
so B overlaps $M_i$.  But now $B_{i+1}$, $B_i$, and $M_i$ make a triangle in $OD(C_i)$, \wording{contradicting Claim \ref{cl:BOD}}.
\end{proof}

\ignore{\begin{claim}
The ends of $e_i$ are not incident with the same face of $D_i[C_i\cup
  M_i]$. \marginpar{\tiny DOESN'T seem to be used anywhere}
\end{claim}

\begin{proof}
Suppose they are on the same face of $D_i[C_i\cup M_i]$. Let $b'$ be
the $H$-branch crossing $C_i$ in $D_i$. 
Let $x$ be the $H$-node incident with $b'$ so that the crossing is in
$xu$. Then $H- a_i P a_{i+2}' - x b' u   $ contains a claw $Y$ with points
\marginpar{\tiny Points: Nail tips of claws} $a_i, a_{i+2}'$, and
$u$. Note that $Y \cup C_i$ is a subdivision of $K_4$ and $D_i[C_i\cup
Y]$ is clean. Hence $D[C_i]$ bounds a face of $D_i[C_i\cup Y]$ and
this is the only face incident with $u$ and $a_{i+1} P
a_{i+1}'$. Hence
$D_i[B_{i+1} - e_i]$ is in this face.

Since no $C_i$-bridge other than $M_i$ overlaps $B_{i+1}$, and
$D_i[G-e_i]$ embeds all the other $C_i$-bridges, $u$ and $w$ are on
the same face of $D_i$. This yields $\ucr(G) \le 1$ \blitza.
\end{proof}}

Let $b'$ be the $H$-branch that crosses $C_i$ in $D_i$ and let $x$ be the $H$-node so that the crossing is in $[x,b',u]$.

\begin{claim}\label{cl:overlapbip1}
Let $L$ be the graph $[D_i[G-e_i]\cap (\hbox{\rm cl}(F_i'))]^\times
\cup B_{i+1}$. Then the $C_i$-bridge containing $[\times, b', u]$ overlaps
$B_{i+1}$ in $L$.
\end{claim}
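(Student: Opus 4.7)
\begin{cproof}[Plan]
The plan is to exhibit the overlap by identifying attachments of $B$ that interlace with attachments of $B_{i+1}$ on $C_i$. First I would observe that since the unique crossing of $D_i$ occurs at $\times$, and since by Claim~\ref{cl:sides} the $H$-node $x$ lies in $\text{cl}(F_i)$, the subpath $[\times,b',u]$ of $b'$ lies in $\text{cl}(F_i')$; it therefore survives into $L$ and is contained in the $C_i$-bridge $B$. Consequently $B$ has attachments at $\times$ and at $u$. Moreover, $B_{i+1}$ has attachments on $C_i$ including $u$, $a_{i+1}$, and $a_{i+1}'$, with all its attachments contained in $[a_{i+1},b,a_{i+1}']\cup\{u\}$.

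By an earlier claim, the crossing is not in $\oo{a_{i+1},b,a_{i+1}'}$, so $\times$ lies in $(a_i,b,a_{i+1})$ or $(a_{i+1}',b,a_{i+2}')$; by the symmetry of the whole construction (swapping $a_i\leftrightarrow a_{i+2}'$ and $a_{i+1}\leftrightarrow a_{i+1}'$) I may assume $\times\in(a_i,b,a_{i+1})$. Then $\times$ sits in the residual arc of $B_{i+1}$ running from $u$ through $a_i$ to $a_{i+1}$. Thus to prove overlap it suffices to exhibit one further attachment of $B$ on the complementary arc $[a_{i+1},b,a_{i+1}']\cup[a_{i+1}',b,a_{i+2}']\cup R_{i+2}\cup\{u\}$ other than $u$ itself, as this together with $\times$ will then interlace with $\{a_{i+1},a_{i+1}'\}$ to witness skew overlap via Lemma~\ref{lm:overlapClaw}.

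The crucial, and hardest, step is to establish this extra attachment. My approach is by contradiction. Suppose all attachments of $B$ are contained in the arc $[u,R_i,a_i,b,a_{i+1}]$ of $C_i$. Then $B$ and $B_{i+1}$ are non-overlapping $C_i$-bridges in $L$. Since the drawing of $G-e_i$ restricted to $\text{cl}(F_i')$ is planar apart from the single crossing at $\times$ (which becomes an ordinary vertex in the $(\cdot)^\times$ operation), $L-e_i$ inherits a planar embedding in $\text{cl}(F_i')$ with $C_i$ bounding a face. Since $B$ does not overlap $B_{i+1}$ in $L-e_i$, one can adjust this planar embedding so that the attachment $u$ of $B_{i+1}$ lies on the outer face of the embedding of $B_{i+1}$, leaving room to route $e_i$ without creating new crossings. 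Combining this with the planar embedding of the rest of $G$ outside $\text{cl}(F_i')$ --- which exists because by Claim~\ref{cl:onlyoverlap} only $M_i$ overlaps $B_{i+1}$, so all other $C_i$-bridges on the $F_i$-side can be kept on a single side of $C_i$ --- yields a $1$-drawing of $G$ in which the only crossing is still the crossing of $b'$ with $C_i$. This contradicts Claim~\ref{cl:BOD} (in particular, the fact that inserting $e_i$ back must force a second crossing somewhere on the $F_i'$-side). This contradiction forces the required additional attachment of $B$ and completes the proof.

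The main obstacle will be the last step: carefully verifying that the planar embedding of $L-e_i$ inside $\text{cl}(F_i')$ really does combine cleanly with the drawing of the rest of $G$ on the $F_i$-side into a $1$-drawing of $G$. This requires a tight bookkeeping of which $C_i$-bridges sit on which side of $C_i$, and a careful use of Claim~\ref{cl:onlyoverlap} to guarantee that no $C_i$-bridge other than $M_i$ blocks the reinsertion of $e_i$.
\end{cproof}
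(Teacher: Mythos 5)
Your proposal is sound at its core and rests on the same contradiction the paper uses: if the asserted overlap failed, one could glue a plane embedding of $L$ with $C_i$ bounding a face to $D_i$ restricted to $\hbox{\rm cl}(F_i)$ and obtain a $1$-drawing of $G$, which is impossible. Where you diverge is in the packaging. The paper never locates any attachment of $B$: it notes that every $C_i$-bridge $B'$ of $L$ has $C_i\cup B'$ planar and that all $C_i$-bridges other than $B_{i+1}$ lie on one side of $C_i$ in $D_i$ (hence are pairwise non-overlapping by Lemma~\ref{lm:planeNotOverlap}); since $L$ cannot embed with $C_i$ bounding a face, some overlap exists and must involve $B_{i+1}$; Claim~\ref{cl:onlyoverlap} then forces that partner to be the sub-bridge of $M_i$ in $L$, i.e., the bridge containing $[\times,b',u]$. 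This route sidesteps the attachment bookkeeping where your argument has two small slips: the pair $\{a_{i+1},a_{i+1}'\}$ witnesses the skew overlap only when the extra attachment $w$ of $B$ lies strictly between $a_{i+1}$ and $a_{i+1}'$ --- for $w$ on $[a_{i+1}',b,a_{i+2}']\cup R_{i+2}$ the correct witnessing pair is $\{a_{i+1},u\}$ or $\{a_{i+1}',u\}$; and $w=a_{i+1}$ should be excluded from your ``complementary arc,'' since $\att(B)=\{\times,u,a_{i+1}\}$ is skew to nothing in $\att(B_{i+1})$ (it falls into the non-overlap case you treat by contradiction, so no harm is done, but the cases as written are not disjoint). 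Your last paragraph correctly singles out the delicate step; the reason it goes through cleanly is the standard fact, underlying Theorem~\ref{th:TutteOne}, that individually planar, pairwise non-overlapping $C$-bridges admit an embedding with $C$ bounding a face, so no bespoke re-routing of $e_i$ is required.
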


\begin{proof}
If $L$ embeds in the plane with $C_i$ bounding a face, then \wordingrem{(useless text removed)}%$\ucr(G) \le1$
 this embedding combines with $D_i$ restricted to the closure of $F$ to yield a 1-drawing of $G$, which is impossible.  As each individual $C_i$-bridge $B$ in $L$ has $C_i\cup B$ planar, there are overlapping $C_i$-bridges in $L$.

By definition, $L$ is planar with all $C_i$-bridges other
than $B_{i+1}$ on the same side of $C_i$.  
Therefore $B_{i+1}$ overlaps some other $C_i$-bridge in $L$.
By Claim~\ref{cl:onlyoverlap}, this is not any $C_i$-bridge \wording{other
than $D_i[M_i]^\times \cap D_i[L]$, that is}, the one containing $[\times,b',u]$.
\end{proof}

By Claim~\ref{cl:sides}, $[a_i, b ,a_{i+2}'] - \times$ has a component $A$
containing $\att(B_{i+1}) - u$. Let $z$ be the one of $ a_i$ and $ a_{i+2}$ that is an end of $A$ and let $Q$ be the minimal subpath of $A$ containing
all of $z, a_{i+1}, a_{i+1}'$. By Claim~\ref{cl:overlapbip1}, $M_i$
has an attachment $w_i \in \co{zQ}$ and an $H$-avoiding path $Q_i$
from $w_i$ to a vertex $x_i \in \oo{\times, b', u}$. Notice that, if $j \in
\{2,5,8\} \setminus \{i\}$, then $Q_i \cap C_j = \emptyset$.

There are at most two $H$-branches (or subpaths thereof) incident
with $u$ that can cross $b$. Thus for some $i,j \in \{2,5,8\}$, $b_i'
= b_j'$.  Choose the labelling so that $x_i$ is no further in $b_i'$
from $u$ than $x_j$ is. Since $x b_j' u$ contains $x_i$, $D_j[x_i]
\subseteq F_j'$ but $D_j[w_i] \subseteq F_j$. Since $Q_i \cap C_j =
\emptyset$, $D_j[Q_i]$ crosses $C_j$, the final contradiction.
\end{cproof}

The other steps in the argument are to show that a smooth subdivision $H$ of $V_{2n}$ in $G$ has few bridges with attachments in the interiors of distinct $H$-branches.  There are two parts to this:  either the branches do or do not have a node in common.  We first deal with the latter case.

\bigskip{\bf Step 2:} {\em $H$-bridges joining interiors of disjoint $H$-branches.}

\bigskip

}\begin{lemma}\label{lm:disjointbranches}
Let $G\in \mc{3}$, $\hvng$, $n\ge 3$, $H$ smooth and suppose $G$ has no subdivision of $V_{2(n+1)}$.  If $b_1, b_2$ are disjoint
$H$-branches, then there are at most $164n+9$ $H$-bridges having
attachments in both $\oo{b_1}$ and $\oo{b_2}$. 
\end{lemma}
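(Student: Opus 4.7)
My proof would proceed by contradiction: suppose there are more than $164n+9$ $H$-bridges in the set $\bbb$ of $H$-bridges having attachments in both $\oo{b_1}$ and $\oo{b_2}$. The plan is to pick an arbitrary $B_0\in\bbb$ and an edge $e\in E(B_0)$, then exploit a $1$-drawing $D$ of $G-e$. Every other $B\in\bbb$ is still an $H$-bridge in $G-e$, and because $\partial{D[H]^\times}$ contains no crossing, $D[B]$ lies in a face of $D[H]^\times$ whose boundary meets both $\oo{b_1}$ and $\oo{b_2}$.

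The first step is to restrict the number of relevant faces. Using Lemma~\ref{lm:1drawingsV2n} and a direct enumeration of the $1$-drawings of $V_{2n}$ (analogous to the case analysis done inside the proof of Lemma~\ref{lm:fromvertextobranch}), the crossing of $D[H]$ lies in two consecutive rim branches, so the faces of $D[H]^\times$ whose boundary meets both $\oo{b_1}$ and $\oo{b_2}$ come from a short list depending on whether $b_1,b_2$ are spokes, rim branches, and on their position relative to the crossing. I expect this to give at most four such faces in all configurations. By pigeonhole, some face $F$ contains the images of at least $(164n+9-1)/4 > 41n+2$ bridges of $\bbb$, and these bridges, being embedded in a common face of a planar drawing, pairwise fail to overlap on the cycle $C=(\partial F)^\times$.

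The second step is to apply the Ordering Lemma (Lemma~\ref{lm:orderingLemma}) with $P=b_1\cap C$ and $Q$ the complementary subpath in $C$, obtaining a linear order $B_{j_1},\ldots,B_{j_N}$ in which consecutive bridges $B_{j_s},B_{j_{s+1}}$ have minimal subpaths $P_{j_s},P_{j_{s+1}}$ in $b_1$ that meet at most in a vertex, and similarly on $b_2$. The crucial dichotomy on a consecutive pair is: either they share a common attachment vertex in $b_1\cup b_2$, or they have four pairwise distinct attachments. In the second case, the two $H$-avoiding paths inside $B_{j_s}$ and $B_{j_{s+1}}$ play the role of two additional internally disjoint $b_1b_2$-paths in $G$; together with the $n-1$ spokes of $H$ disjoint from $b_1\cup b_2$ and appropriately chosen subpaths of $R$, they produce a subdivision of $V_{2(n+1)}$ in $G$, contradicting the hypothesis. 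Thus consecutive bridges in the ordering always share a vertex on $b_1$ or on $b_2$.

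The final step is a counting argument. Break the ordered sequence into maximal runs of consecutive bridges that share a common vertex $u\in \oo{b_1}\cup\oo{b_2}$; each such run has length at most $41$ by Lemma~\ref{lm:fromvertextobranch}, because every bridge in the run has $u$ as an attachment and another attachment in the branch not containing $u$. Between consecutive runs the anchoring vertex switches from some $u$ to some $u'$, and the transitional bridge must itself belong to both runs, bounding how often this can happen; I expect the tally to give at most $4(41n)+9=164n+9$ bridges in $\bbb$, where the factor $n$ comes from the number of distinct shared vertices we can afford before forcing either a digon off $H$ (forbidden by smoothness of $H$) or a subdivision of $V_{2(n+1)}$. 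The hardest part, and the main obstacle, is the forbidden-subdivision argument in the non-sharing case: one must verify that, regardless of whether $b_1$ and $b_2$ are rim branches or spokes and regardless of their relative position, two disjoint $b_1b_2$-paths can always be spliced with the rest of $H$ to form a subdivision of $V_{2(n+1)}$, which requires handling several geometric cases and using the smoothness of $H$ to rule out degenerate pairs.
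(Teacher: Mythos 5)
Your opening step (pass to a $1$-drawing $D_e$ of $G-e$ for $e$ in some $B_0\in\bbb$, note the crossing lies in $H$, distribute the other bridges among at most $4$ faces incident with $\oo{b_1}$, and apply the Ordering Lemma inside the most populous face) agrees with the paper's proof. The difficulty is entirely in your step~2 dichotomy.

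The claim that two consecutive bridges without a shared attachment yield a subdivision of $V_{2(n+1)}$ is not correct, and this is the crux. Two disjoint $\oo{b_1}\oo{b_2}$-paths $P,P'$ together with the spokes of $H$ and subpaths of $R$ do \emph{not} in general assemble into a $V_{2(n+1)}$. Two obstructions arise. First, if $b_1,b_2$ are not ``antipodal'' rim branches, then $P,P'$ fail to interlace with some spokes of $H$ on the rim $R$ (e.g., with $b_1=r_0$, $b_2=r_2$, neither $P$ nor $P'$ interlaces $s_0$). Second, and more fundamentally, the order in which $P,P'$ attach on $b_2$ (as delivered by the Ordering Lemma) may be the ``wrong'' one for $R$: the cyclic sequence of the four new attachment vertices on $R$ can be nested ($u,u',w',w$) rather than twisted ($u,u',w,w'$), in which case $P$ and $P'$ do not even interlace with \emph{each other} as $R$-chords. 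In that configuration no choice of four pairwise-interlacing chords among $\{P,P',s_0,\dots,s_{n-1}\}$ exists, and the candidate rim $R$ simply cannot serve. You would need a different rim that traverses $b_1$ and $b_2$ in opposite senses, but to build such a cycle inside $H$ you necessarily consume some spokes of $H$, leaving too few spokes to total $n+1$.

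What the paper actually does is extract $n{+}1$ \emph{pairwise} totally disjoint $\oo{b_1}\oo{b_2}$-paths (not just two): by the Ordering Lemma combined with Lemma~\ref{lm:fromvertextobranch}, any $42$ consecutive bridges in the face ordering cannot all share an attachment, so $B_i$ and $B_{i+41}$ are totally disjoint, and one picks every $41$st bridge from the $41n+1$ ``interior'' ones to get $n{+}1$ disjoint paths. These $n{+}1$ paths are then used as \emph{all} the spokes of the target $V_{2(n+1)}$; the original spokes of $H$ are discarded. The rim of the $V_{2(n+1)}$ is one member of a ``$b_1b_2$-twisting'' pair of cycles inside $H$ --- a cycle that traverses $b_1$ and $b_2$ in opposite orders --- and the existence of such twisting cycles is verified by embedding $b_1,b_2$ as disjoint branches of a $V_6$ inside $H$ (three cases depending on whether $b_1,b_2$ are rim or spoke branches). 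Choosing whichever of the two twisting cycles respects the order given by the Ordering Lemma makes the $n{+}1$ disjoint paths appear in the interlacing (Möbius) pattern $u_1,\dots,u_{n+1},w_1,\dots,w_{n+1}$, producing the forbidden $V_{2(n+1)}$. Your run-length counting in step~3 has the right flavor (runs of length $\le 41$, via Lemma~\ref{lm:fromvertextobranch}), but without the $n{+}1$-disjoint-paths-plus-twisting-cycle construction the forbidden subdivision never materializes, so the argument does not close.
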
\printFullDetails{

\begin{cproof}
Suppose there is a set $\bbb$ of $164n+10$ $H$-bridges having attachments in
both $\oo{b_1}$ and $\oo{b_2}$. Let $B_0 \in \bbb$ and let $e\in
B_0$. In $D_e$, at most $4$ faces are incident with $\oo{b_1}$,
so there is a set $\bbb'$ consisting of  $41n+3$ elements of $\bbb\setminus\{B_0\}$ in
the same face of $D_e[H]$. By Lemma~\ref{lm:orderingLemma}, there is 
a unique ordering $(B_1,\ldots,B_{41n+3})$ of the elements of $\bbb'$
so they appear in this order in both $\oo{b_1}$ and $\oo{b_2}$. It follows that 
$B_2,\ldots,B_{41n+2}$ have all attachments in $\oo{b_1}\cup \oo{b_2}$. By
Lemmas~\ref{lm:orderingLemma} and~\ref{lm:fromvertextobranch}, $B_i$
and $B_{i+41}$ are totally disjoint.  So there are 
$n+1$ totally disjoint $\oo{b_1}\oo{b_2}$-paths with their ends having
the same relative orders on both.

We aim to use these disjoint paths to find a subdivision of $V_{2(n+1)}$ in $G$.  We need the following new notion.

}\begin{definition}  Let $e=uw$ and $f=xy$ be edges in a graph $G$.  Two cycles $C$ and $C'$ in $G$ are {\em $ef$-twisting\/}\index{twisting}\index{$ef$-twisting} if $C=(u,e,w,\dots,x,f,y,\dots)$ and $C'=(u,e,w,\dots,y,f,x,\dots)$, i.e., $C$ and $C'$ traverse the edges $e$ and $f$ in opposite ways.
\end{definition}\printFullDetails{

We note that $V_6$ has edge-twisting cycles:  if
 $e=uw$ and $f=xy$ \wording{are disjoint edges} in $V_6$, with $u,x$ not
adjacent, then the $4$-cycle $(u,w,x,y,u)$ and the $6$-cycle
$(u,w,z,y,x,z',u)$ are $ef$-twisting.

Next suppose $n \ge 4$. There are three possibilities for $b_1$ and
$b_2$. 
\label{pi:134}
\begin{description}

\item{{\bf Case 1}: {\sl Both $b_1$ and $b_2$ are in $R$.}} We may
  assume without loss of generality (recall that $b_1$ and $b_2$ are
  not adjacent)
 that $b_1 = r_0$, $b_2 = r_i$, $2 \le i
 \le n$. Set $H' = R \cup s_0 \cup s_1 \cup s_2$, so $H'\cong V_6$. Then $b_1$ and
 $b_2$ are in disjoint $H'$-branches and so $H'$, and therefore $H$,
 contains $b_1 b_2$-twisting cycles.
 
\item{{\bf Case 2}: {\sl One is in $R$, the other is a spoke.}}
We may assume without loss of generality that $b_1 = r_0$, $b_2 = s_i,
i\notin \{0,1\}$. Set $H'= R \cup s_0 \cup s_1 \cup s_i$. Then $b_1$
and $b_2$ are in disjoint $H'$-branches, so $H'$, and therefore $H$,
 contains $b_1 b_2$-twisting cycles.

\item{{\bf Case 3}: {\sl Both $b_1$ and $b_2$ are spokes.}}
We may assume without any loss of generality that $b_1 = s_0, b_2 =
s_i$. Then there exists $j \in \{ 0,\ldots,
n-1\}\setminus\{0,i\}$. Set $H'=R\cup s_0  \cup s_i \cup
s_j$.  Then $b_1$ and $b_2$ are in disjoint $H'$-branches and so $H'$,
and therefore $H$, contains $b_1 b_2$-twisting cycles.
\end{description}

\label{pi:135}

Choose the cycle $C$ in the twisting pair in $H$ for $b_1$ and $b_2$ so that $C$ traverses $b_1$
and $b_2$ in order so that the ends $u_i, w_i$ of the $n+1$ disjoint paths
occur in $C$ as $u_1, u_2, \ldots, u_{n+1}, \ldots,$ $w_1, \ldots,
w_{n+1}$. Then $C$ and these paths are a subdivision of $V_{2(n+1)}$ in $G$, contradicting the assumption that $G$ has no subdivision of $V_{2(n+1)}$.
\end{cproof}
\label{pi:136}

\wordingrem{(Text moved from below.)}\wording{Next is the third and final consideration.}

\bigskip\wording{{\bf Step 3:}  {\em $H$-bridges joining interiors of $H$-branches having a common node.}}

}\begin{lemma}\label{lm:consecutivebranches}% \marginpar{\footnotesize
    %See alternative statement Lm~\ref{lm:consecutivebranches2} and
   % proof below. Eventually, choose one}
Let $G\in \mc{3}$, $\hvng$, $n\ge 3$, and let $b_1, b_2$ be adjacent
$H$-branches. Then at most $2$ $H$-bridges have attachments in both
$\oo{b_1}$ and $\oo{b_2}$. 
\end{lemma}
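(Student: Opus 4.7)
The plan is to argue by contradiction. Suppose $B_1, B_2, B_3$ are three distinct $H$-bridges, each having an attachment in $\oo{b_1}$ and an attachment in $\oo{b_2}$. Let $v$ be the common $H$-node of $b_1$ and $b_2$. For each $i\in\{1,2,3\}$, choose attachments $x_i\in\att(B_i)\cap\oo{b_1}$ and $y_i\in\att(B_i)\cap\oo{b_2}$, and a single $H$-avoiding $x_iy_i$-path $P_i\subseteq B_i$. A useful preliminary observation (for $n\ge 4$) is that the set $\{x_i,y_i\}$ is $H$-close, since both vertices lie in the open $H$-claw centred at $v$; hence Corollary \ref{co:closeAtts}(1) guarantees that the $H$-avoiding $x_iy_i$-path is unique in $G$, which forces the pairs $(x_i,y_i)$ to be pairwise distinct (otherwise two distinct bridges would contain the same path).

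Let $Q_1$ be the minimal subpath of $b_1$ containing $\{x_1,x_2,x_3\}$ and let $Q_2$ be the minimal subpath of $b_2$ containing $\{y_1,y_2,y_3\}$; define
\[
K \;=\; P_1\cup P_2\cup P_3\cup Q_1\cup Q_2.
\]
Since $Q_1\subseteq \oo{b_1}$ and $Q_2\subseteq \oo{b_2}$, and the $P_i$ are $H$-avoiding, we have $K\cap H=Q_1\cup Q_2$, which is contained in the open $H$-claw at $v$; thus $K$ is $H$-close. The first technical step is to check that $K$ is 2-connected: the interiors of the $P_i$ are pairwise disjoint because they lie in distinct $H$-bridges, and the cycle obtained by concatenating any two $P_i$'s with the corresponding subpaths of $Q_1$ and $Q_2$ shows every vertex of $K$ lies on some cycle, which together with standard ear-decomposition reasoning yields 2-connectivity.

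Assuming $n\ge 4$, Lemma \ref{lm:closeIsCycle} then forces $K$ to be a cycle. However, among the distinct $x_i$'s (or, if two coincide, among the distinct $y_i$'s, which must then be three in number by the uniqueness observation above), the ``middle'' attachment has degree at least $3$ in $K$: two neighbours along $Q_1$ (or $Q_2$) plus one neighbour along the incident $P_i$. This contradicts $K$ being a cycle, completing the argument for $n\ge 4$.

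The main obstacle is the case $n=3$, where Lemma \ref{lm:closeIsCycle} and Corollary \ref{co:closeAtts} do not directly apply. My plan here is to extend $H$: use the existing $V_6\cong H\subseteq G$ together with the paths $P_1,P_2,P_3$ (which together with suitable portions of $H$ form a richer non-planar subgraph) to either embed the argument inside a larger $V_{2m}$ living in $G$, or to appeal directly to the specific structure of $K_{3,3}$-subdivisions and the drawing analysis underlying Lemma \ref{lm:closeIsCycle}. In either sub-case the conclusion is the same: constructing $K$ and examining it in any $1$-drawing of $G-e$ (for a suitable edge $e$) yields a structure incompatible with 2-crossing-criticality, reducing once more to the fact that $K$ carries a vertex of degree at least three while its $H$-closeness would otherwise force it to be a cycle.
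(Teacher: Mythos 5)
Your argument is a genuinely different route from the paper's: you assemble the three $H$-avoiding connections into a single $2$-connected $H$-close subgraph $K$ and then call Lemma \ref{lm:closeIsCycle} to force $K$ to be a cycle, whereas the paper works directly with $1$-drawings $D_i$ of $G-e_i$ for $e_i\in B_i$, orders the three bridges via Lemma \ref{lm:orderingLemma}, extracts a cycle $C$ through the middle bridge $B_2$ and a subpath of $b_1\cup b_2$ containing the common node, shows that $C$ has BOD, and invokes Corollary \ref{co:TutteTwo}. For $n\ge 4$ your route is viable and arguably tidier, though both the $2$-connectivity of $K$ and the existence of a degree-$\ge 3$ vertex in $K$ deserve a more careful case analysis on the coincidences among the pairs $(x_i,y_i)$ than your sketch provides.

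The genuine gap is the case $n=3$, which your sketch does not close. Both Lemma \ref{lm:closeIsCycle} and Corollary \ref{co:closeAtts} are stated with the hypothesis $n\ge 4$ and really do need it: their proofs delete an edge $e$ of an $H$-spoke disjoint from the subgraph in question and then argue inside a $1$-drawing of $G-e$, which works because for $n\ge 4$ the graph $H-e$ still contains a $V_6$, so the prebox/cleanliness machinery applies. For $n=3$, $H\cong K_{3,3}$ and $H-e$ is planar, so that machinery gives nothing. Your two proposed workarounds also do not hold up: ``embedding the argument inside a larger $V_{2m}$'' is impossible when $G$ has no $V_8$ (which the hypotheses permit), and ``appealing to the drawing analysis underlying Lemma \ref{lm:closeIsCycle}'' runs precisely into the obstacle just named. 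The paper's proof sidesteps this entirely by deleting edges from the bridges $B_i$ rather than from $H$, so that $H$ survives intact inside $G-e_i$; the cycle built from $B_2\cup b_1\cup b_2$ remains $H$-close, hence a prebox and clean in $D_i$, and the BOD plus Corollary \ref{co:TutteTwo} argument then works uniformly for all $n\ge 3$. You would need a comparably self-contained argument for $n=3$, most likely mimicking the paper's construction, before your proof is complete.
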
\printFullDetails{

\begin{cproof}
By way of contradiction, suppose there is a set $\{B_1,B_2,B_3\}$ of
$3$ such 
$H$-bridges. For each $i\in\{1,2,3\}$, let $e_i\in B_i$. There is
precisely
one \wording{face $F_i$, of a 1-drawing $D_i$ of $G-e_i$, that is}   incident  with both $\oo{b_1}$ and
$\oo{b_2}$.  Thus, for each $B_j$, $j\neq i$, $D_i[B_j] \subseteq
F_i$. Clearly for $\{j,k\} = \{1,2,3\}\setminus\{i\}$, $B_j$ and $B_k$
do not overlap on $F_i$. In particular, their attachments in $b_1$ and
$b_2$ are in the same order as we traverse them from their common end
$u$.
\label{pi:137}
Thus we may assume $B_1, B_2, B_3$ appear in this order from $u$ on
both $b_1$ and $b_2$. 

\wording{Notice that  
$\att(B_3)\ne \att(B_2)$.  Therefore,}  there is a cycle $C
\subseteq B_2 \cup b_1 \cup b_2$ consisting of a
$\oo{b_1}\oo{b_2}$-path in $B_2$ and a subpath of $b_1\cup b_2$
containing $u$, such that $C$ does not contain some attachment $w$ of
$B_3$. \wording{Reselect $e_3\in B_3$ to be incident with $w$.  Let $M_C$ be the $C$-bridge so that $H\subseteq C\cup M_C$.}
% some other attachment of $B_3$. 

\label{pi:138}
\wording{Then $w\in \Nuc(M_C)$, so $B_3\subseteq M_C$.  Furthermore, if  $e_3$ is incident with an
attachment $x$ of $M_C$, then $x$ is contained in $R$.  In particular, it is incident with another edge of $M_C$.}   Thus, $M_C - e_3$ is a
$C$-bridge in $G-e_3$ \wording{having the same attachments as $M_C$ has in $G$.  Because $C$ is $H$-close, $D_1[C]$ is clean; furthermore,  $D_1[C\cup M_C]$ is a 1-drawing of $C\cup M_C$.  Since $D_3[C]$ is also clean,} $C$ has BOD in
$G-e_3$ and hence in $G$. Corollary~\ref{co:TutteTwo} implies \wording{the contradiction that $\ucr(G)
\le 1$.}
\end{cproof}}

% ---------- End of July 2, 2008 ------------

% -------------- July 3, 2008 ---------------

\wordingrem{(Other version of Lemma 16.13 removed.)}

We end this section with the asserted finiteness of 3-connected \2cc\ graphs with no subdivision of $V_{2n+2}$.

\label{pi:142}
}\begin{theorem}\label{th:allbounded} Suppose $G\in \mc{3}$ and there is an $n\ge3$ so that $G$ has a subdivision of $V_{2n}$, but no subdivision of $V_{2(n+1)}$.
Then $|V(G)|  = O(n^3)$. 
\end{theorem}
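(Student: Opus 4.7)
The plan is to combine two ingredients from Section~16.2: the bound on the size of individual $H$-bridges (Theorem~\ref{th:b88}), and the counting lemmas (Lemmas~\ref{lm:fromvertextobranch}, \ref{lm:disjointbranches}, and~\ref{lm:consecutivebranches}) that limit how many $H$-bridges have attachments of a given type. First I would apply Lemma~\ref{lm:smooth} to fix a smooth subdivision $H$ of $V_{2n}$ in $G$; this has $2n$ $H$-nodes and $3n$ $H$-branches. Every $H$-bridge then has at most $88$ vertices by Theorem~\ref{th:b88} and at most $45$ attachments by Corollary~\ref{cr:bridgesAreSmall}.

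Next I would partition $V(G)$ into three groups: the $2n$ $H$-nodes; the interior vertices of the $H$-branches (each of which has degree at least $3$ in $G$ and therefore is an interior attachment of some $H$-bridge); and the vertices lying in bridge nuclei (whose union has size at most $88$ times the number of $H$-bridges with non-empty nucleus, since nuclei of distinct bridges are disjoint). Letting $N_{\mathrm{int}}$ denote the number of $H$-bridges with at least one interior attachment and $N_{\mathrm{nuc}}$ the number with non-empty nucleus, this gives
\[
|V(G)| \;\le\; 2n + 45\, N_{\mathrm{int}} + 88\, N_{\mathrm{nuc}}\,,
\]
so it suffices to show each of $N_{\mathrm{int}}$ and $N_{\mathrm{nuc}}$ is $O(n^3)$.

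To bound $N_{\mathrm{int}}$, I would sort bridges according to where their interior attachments lie. By Lemma~\ref{lm:disjointbranches} there are $O(n)$ such bridges per pair of disjoint $H$-branches, contributing $O(n^3)$ over the $\binom{3n}{2}$ pairs; by Lemma~\ref{lm:consecutivebranches} at most $2$ per adjacent pair, contributing $O(n)$; and smoothness forces any remaining bridge with an interior attachment on only one branch to have further attachments at $H$-nodes, so Lemma~\ref{lm:fromvertextobranch} applied to the $O(n^2)$ (node, branch) pairs contributes $O(n^2)$ more. For $N_{\mathrm{nuc}}$, Lemma~\ref{lm:threeAtts} shows that any bridge with non-empty nucleus has at least three attachments; bridges with any interior attachment are already counted in $N_{\mathrm{int}}$, so it remains to handle bridges with at least three attachments, all at $H$-nodes, and non-empty nucleus. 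I expect to handle this by a triple-by-triple analysis over the $\binom{2n}{3}=O(n^3)$ node triples, using the absence of a $V_{2(n+1)}$-subdivision (which prevents too many parallel claw-structures with the same three talons) together with Theorem~\ref{th:b88} and the $3$-connectedness of $G$ to show that each triple supports only $O(1)$ such bridges.

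The principal obstacle is this last sub-case: the counting lemmas of Section~16.2 do not directly control bridges whose attachments are all $H$-nodes. I expect to resolve it by observing that the $H$-avoiding $\{u_1,u_2,u_3\}$-claws inside such bridges, combined with suitable $H$-paths around the rim and spokes, would assemble a subdivision of $V_{2(n+1)}$ once too many coexist for the same triple, forcing a constant per-triple bound. Combining the bounds yields $N_{\mathrm{int}}+N_{\mathrm{nuc}}=O(n^3)$ and hence $|V(G)|=2n+O(n^3)=O(n^3)$, as required.
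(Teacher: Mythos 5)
Your overall strategy mirrors the paper's: fix a smooth subdivision $H$ of $V_{2n}$ via Lemma~\ref{lm:smooth}, bound each $H$-bridge's size by Theorem~\ref{th:b88}, and then bound the number of relevant $H$-bridges via Lemmas~\ref{lm:fromvertextobranch}, \ref{lm:disjointbranches}, and~\ref{lm:consecutivebranches}. The decomposition into node/interior-of-branch/nucleus vertices is essentially the same bookkeeping.

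However, the sub-case you flag as ``the principal obstacle'' — bridges with at least three attachments, all at $H$-nodes, and non-empty nucleus — is a genuine gap, and your proposed remedy does not work. You suggest that too many $H$-avoiding $\{u_1,u_2,u_3\}$-claws with common talons would assemble a subdivision of $V_{2(n+1)}$; but stacking claws on the same three nodes produces a $K_{3,k}$-type structure, not a longer Möbius ladder, and there is no evident way to convert that into an extra rim vertex and rung. The paper's argument here is different in kind: if three $H$-bridges all have the nodes $u,v,w$ as attachments, then the three claws together with $H$ (which also supplies three disjoint paths from $u,v,w$ to a fourth vertex) contain a subdivision of $K_{3,4}$; since $K_{3,4}$ is already $2$-crossing-critical (Lemma~\ref{lm:k34*}), $2$-criticality of $G$ forces $G$ to \emph{be} $K_{3,4}$, which bounds $|V(G)|$ trivially. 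This gives the constant ``at most $2$ per triple'' you were hoping for. Without this criticality-of-$K_{3,4}$ argument, the per-triple bound remains unproved.

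A secondary issue: your $N_{\mathrm{int}}$ as stated includes digon bridges (those with both attachments interior to the same $H$-branch), and these can be as numerous as the edges of $H$ — potentially $\Theta(|V(G)|)$ of them — so they are not controlled by the three counting lemmas and your inequality $|V(G)| \le 2n + 45 N_{\mathrm{int}} + 88 N_{\mathrm{nuc}}$ would then fail to be a useful bound. You need the paper's explicit observation that every interior vertex of $H$ is, by smoothness and $3$-connectivity, an attachment of some $H$-bridge whose attachments are \emph{not} all confined to one $H$-branch; it is only these bridges that must be counted, and the counting lemmas apply to them.
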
\printFullDetails{

\begin{cproof}
By Lemma  \ref{lm:smooth},
$G$ has a smooth subdivision $H$ of $V_{2n}$.
\label{pi:seeAlso146}
We may assume no $H$-bridge contains a tripod, as otherwise
$|V(G)| \le 14$ by Lemma~\ref{lm:bridgeIsTree}.
\label{pi:145end} 

We first claim that a vertex $u$ of $H$ that is not an $H$-node is an attachment of some $H$-bridge $B$ not having all its attachments in the same $H$-branch.   Since $u$ has degree 2 in $H$ and degree greater than 2 in $G$, $u$ is an attachment of some $H$-bridge.  Because $H$ is smooth, an $H$-bridge that has all its attachments in the same $H$-branch is an edge in a digon.  If all the $H$-bridges attaching at $u$ are such edges, then $u$ has only two neighbours and $G$ is not 3-connected, a contradiction.

Thus, every vertex of $G$ is either an $H$-node or is in some $H$-bridge that does not have all its attachments in the same $H$-branch.
We bound the number of these  \label{pi:142again}
$H$-bridges as follows.  

We claim that,  \wordingrem{(text removed)}%if $G$ has at least 8 nodes, then, 
for \wording{any three $H$-nodes $u,v,w$}, at most two $H$-bridges have all three of $u,v,w$ as attachments.  To see this, suppose three nontrivial $H$-bridges
$B_i, i=1,2,3$, all have all of $u,v,w$ as attachments. Each $B_i$ contains a
claw $Y_i$ having $u,v,w$ as talons.  Then $Y_1 \cup Y_2 \cup Y_3 \cup H$
contains a subdivision of $K_{3,4}$, in which case \wording{2-criticality implies} $G$ is $K_{3,4}$.  Thus,
\label{pi:144}
at most two $H$-bridges
have attachments in any three nodes. So there are at most 
%$2{2n\choose 3}$ 
$2\binom{2n}3$
nontrivial $H$-bridges with only node
attachments. 

Every other $H$-bridge of concern has an attachment in the interior of some $H$-branch and at some vertex of $H$ not in that $H$-branch.
Lemma~\ref{lm:fromvertextobranch}
implies that there are at most $(2n) (3n) 41$ $H$-bridges with an
attachment in an $H$-node and in an open $H$-branch.

\label{pi:145}
 Lemma~\ref{lm:disjointbranches} implies there are at most 
%$({3n\choose 2}-6n)(164n +9)$ 
$(\binom{3n}2-6n)(164n +9)$ 
$H$-bridges having attachments in the
interiors of disjoint $H$-branches.

\wording{Lemma~\ref{lm:consecutivebranches} implies there are at most 2} $H$-bridges with attachments on two given adjacent $H$-branches and so there are \wording{at most $6n(2)$ $H$-bridges} with attachments on two adjacent $H$-branches.  

Every $H$-bridge has at most  $88$
vertices, and every vertex of $G$ is either an $H$-node or in one of these enumerated $H$-bridges.  Therefore,  
\begin{equation}
\nonumber
|V(G)| \le 88 \left\{ 
2 \binom{2n} 3 + 2n\cdot 3n \cdot 41 + 6n(2)+\left[\binom{3n}2-6n\right]\biggl [164 n + 9\biggr]
\right\}\,.
\end{equation}

\vskip -.4truein
\end{cproof}
}

\chapter{Summary}\printFullDetails{

\bigskip
This short section provides a single theorem and some remarks summarizing the current state of knowledge about \2cc\ graphs.

}\begin{theorem}[Classification of 2-crossing-critical graphs] Let $G$ be a 2-crossing-critical graph. 
\begin{enumerate}\item\label{it:degTwo} Then $G$ has minimum degree at least two and is a subdivision of a \2cc\ graph with minimum degree at least three. 

Thus, we henceforth assume $G$ has minimum degree at least three.

\item If $G$ is 3-connected and contains a subdivision of $V_{10}$, then $G\in\tileS$ (Definition \ref{df:t(s)}).   That is, $G$ is a twisted circular sequence of tiles, each tile being one of the 42 elements of $\mathcal S$ (Definition \ref{df:theTiles}). 

\item If $G$ is 3-connected and does not have a subdivision of $V_{10}$, then $G$ has at most three million vertices (so there are only finitely many such examples). Each of these examples either 
\begin{itemize}\item has a subdivision of $V_8$ or 
\item is either one of the four graphs described in Theorem \ref{th:3CutBothNonPlanar} or obtained from a 2-crossing-critical peripherally-4-connected graph with at most ten vertices by replacing each vertex $v$ having precisely three neighbors with one of at most twenty patches, each patch having at most six vertices (so $G$ has at most sixty vertices).
\end{itemize}
\item If $G$ is not 3-connected, then either 
\begin{itemize} \item $G$ is one of 13 examples that are not 2-connected, or
\item $G$ is 2-connected, has two nonplanar cleavage units, and is one of 36 graphs, or
\item   $G$ is 2-connected, has one nonplanar cleavage unit, and is obtained from a 3-connected 2-crossing-critical graph by replacing digons with digonal paths.
\end{itemize}
\end{enumerate}
\end{theorem}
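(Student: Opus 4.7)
The plan is to assemble the theorem as a direct consequence of the structural results established throughout the paper; the ``proof'' is essentially a dispatch across the cases in the statement, invoking the appropriate chapter of the work.

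First, for (1), I would observe that inserting or suppressing a degree-2 vertex does not change the crossing number, and no 2-crossing-critical graph can have a vertex of degree 0 or 1 (such a vertex could be deleted without decreasing the crossing number) or a component that is a cycle (which has crossing number 0). This reduction was already noted in the introduction and justifies the standing assumption that the minimum degree is at least three. For (2), assuming $G$ is 3-connected and contains a subdivision of $V_{10}$, I would invoke Theorem~\ref{th:classification} verbatim: $G\in\tileS$, which by Definition~\ref{df:t(s)} means $G$ is the cyclization of a sequence of tiles of the form $(T_0,{\updown}T\updown_1,T_2,\dots,{\updown}T\updown_{2m-1},T_{2m})$ with each $T_i\in\cS$, and $|\cS|=42$ by Definition~\ref{df:theTiles}.

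For (3), I would combine Theorem~\ref{th:allbounded} (with $n=4$, i.e.\ $G$ has a $V_8$ but no $V_{10}$) to obtain the $O(4^3)$ bound, absorbing the constant into the ``three million vertices'' claim. If in addition $G$ has no subdivision of $V_8$, I would split according to whether $G$ has a reducible 3-cut with two nonplanar sides or not. In the former case, Theorem~\ref{th:3CutBothNonPlanar} supplies the four $K_{3,4}^*$-contractions. In the latter case, Lemma~\ref{lm:stay3conn} reduces $G$ by planar 3-reductions to a peripherally-4-connected graph $G^{\ei4c}$ with no $V_8$; Theorem~\ref{th:hugElimination} further reduces $G^{\ei4c}$ to an internally-4-connected graph $G^{\eifc}$ with no $V_8$; Robertson's theorem (Section~\ref{V8free}) restricts $G^{\eifc}$ to a planar graph, a small non-planar graph, $C_3\,\Box\,C_3$, a bicycle wheel, or a 4-covered graph, and Lemmas~\ref{lm:no6consecX}--\ref{lm:4covered} bound the possibilities to at most ten vertices. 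Theorem~\ref{th:TUreplace} then describes how each degree-3 vertex of the peripherally-4-connected model is replaced by one of the basic $(T,U)$-configurations (at most twenty, each with at most six vertices), yielding the asserted bound.

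For (4), I would invoke Section~\ref{sec:not2conn} directly. If $G$ is not 2-connected, additivity of the crossing number over blocks forces each block to be 1-crossing-critical (i.e., $K_5$ or $K_{3,3}$), and the resulting identifications enumerate exactly the 13 graphs in Figure~\ref{fg:1connected}. If $G$ is 2-connected with two nonplanar cleavage units, Theorem~\ref{2conn} yields the 36 graphs in Figures~\ref{2units} and~\ref{3units}. If $G$ is 2-connected with exactly one nonplanar cleavage unit $C$, then by the theorem immediately after Theorem~\ref{2conn} the graph $\widetilde C$ obtained by replacing each virtual edge of $C$ by a digon is 3-connected and 2-crossing-critical, and $G$ is recovered from $\widetilde C$ by replacing those digons with digonal paths (Definition~\ref{df:digonalPath}). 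No single step here is an obstacle in itself; the proof is a bookkeeping exercise, and the only mild care required is to verify that the case split in the statement is exhaustive and that the cited results together cover it without overlap.
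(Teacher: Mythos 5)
Your proposal is correct and takes essentially the same approach the paper intends: the summary theorem is a dispatch across the cases, with (1) following from the degree-2 observations in the introduction, (2) from Theorem~\ref{th:classification}, (3) from Theorem~\ref{th:allbounded} (with $n=4$) for the $V_8$-containing case and from Theorem~\ref{th:3CutBothNonPlanar} together with the planar 3-reduction machinery (Lemma~\ref{lm:stay3conn}, Theorem~\ref{th:hugElimination}, Robertson's theorem via Section~\ref{V8free}, and Theorem~\ref{th:TUreplace}/Lemma~\ref{lm:buildingUp}) for the $V_8$-free case, and (4) from the results of Chapter~\ref{sec:not2conn}. The only thing worth tightening is your phrasing of the case split in (3): the precise dichotomy used before Theorem~\ref{th:TUreplace} is not merely ``reducible 3-cut with two nonplanar sides or not'' but (a) if there are four or more nontrivial $S$-bridges then $G=K_{3,4}$; (b) if some 3-cut $S$ has both sides, in the sense of Theorem~\ref{th:3CutBothNonPlanar}, with nonplanar completion, then $G$ is one of the four $K_{3,4}^*$-contractions; (c) otherwise every 3-reduction can be chosen to contract a planar $S$-bridge, and the peripherally-4-connected analysis proceeds.
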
\printFullDetails{

We conclude with some remarks on what remains to be done to find all 2-crossing-critical graphs.

\begin{remark}  In Section \ref{V8free}, we provided a method for finding all 3-connected, 2-crossing-critical graphs not containing a subdivision of $V_8$.  It would be desirable for this program to be completed.  \end{remark}

\begin{remark}  The remaining unclassified 3-connected, 2-crossing-critical graphs have a subdivision of $V_8$ but not of $V_{10}$.  The works of Urrutia \cite{isabel} and Austin \cite{bethann} have found many of these, but more work is needed to find a complete set.  It may be helpful to note that we have found all such examples that do not have a representativity 2 embedding in the projective plane.  The known instances are all quite small, so it is reasonable to expect that each of these has at most 60 vertices or so.\end{remark}

}
\bigskip \centerline{\bf ACKNOWLEDGEMENTS}

Initial impetus to this project came through Shengjun Pan, who described mechanisms for proving a version of Theorem \ref{th:classification} (for $G$ containing a subdivision of $V_{2n}$, with $2n$ likely somewhat larger than 10).

We are grateful to CIMAT for hosting us on multiple occasions for work on this project.  In particular, we appreciate the support of Jose Carlos G\'omez Larra\~naga, then director of CIMAT.

%\appendix
%    Include appendix "chapters" here.
%\include{}

\backmatter
%    Bibliography styles amsplain or harvard are also acceptable.
\bibliographystyle{amsplain}

%    See note above about multiple indexes.
\printindex

\end{document}